\theoremstyle{plain}
\newtheorem{theorem}{Theorem}[chapter]
\newtheorem{lemma}[theorem]{Lemma}
\newtheorem{proposition}[theorem]{Proposition}
\newtheorem{corollary}[theorem]{Corollary}
\numberwithin{section}{chapter}
\numberwithin{equation}{chapter}
\theoremstyle{definition}
\newtheorem{definition}[theorem]{Definition}
\newtheorem{remark}[theorem]{Remark}
\newtheorem{example}[theorem]{Example}
\theoremstyle{remark}
\newcommand{\bin}{{\mu}}
\newcommand{\bA}{{\mathbf A}}
\newcommand{\bX}{{\mathbf X}}
\newcommand{\bL}{{\mathbf L}}
\newcommand{\Bo}{{\mathbf B}}
\newcommand{\Do}{{\mathbf D}}
\newcommand{\bPs}{{\mathbf \Psi}}
\newcommand{\To}{\Psi}
\newcommand{\bcD}{\boldsymbol{\mathcal D}}
\newcommand{\bD}{{\mathbf D}}
\newcommand{\bo}{{\boldsymbol \omega}}
\newcommand{\bgam}{{\boldsymbol \gamma}}
\newcommand{\by}{{\mathbf y}}
\newcommand{\bi}{{\mathbf i}}
\newcommand{\bj}{{\mathbf j}}
\newcommand{\bS}{{\mathbf S}}
\newcommand{\bT}{{\mathbf T}}
\newcommand{\bu}{{\mathbf u}}
\newcommand{\bU}{{\mathbf U}}
\newcommand{\bx}{{\mathbf x}}
\newcommand{\cA}{{\mathcal A}}
\newcommand{\cC}{{\mathcal C}}
\newcommand{\cD}{{\mathcal D}}
\newcommand{\cE}{{\mathcal E}}
\newcommand{\cF}{{\mathcal F}}
\newcommand{\cG}{{\mathcal G}}
\newcommand{\cH}{{\mathcal H}}
\newcommand{\cJ}{{\mathcal J}}
\newcommand{\cK}{{\mathcal K}}
\newcommand{\cL}{{\mathcal L}}
\newcommand{\cM}{{\mathcal M}}
\newcommand{\cN}{{\mathcal N}}
\newcommand{\cO}{{\mathcal O}}
\newcommand{\cQ}{{\mathcal Q}}
\newcommand{\cR}{{\mathcal R}}
\newcommand{\cS}{{\mathcal S}}
\newcommand{\cU}{{\mathcal U}}
\newcommand{\cX}{{\mathcal X}}
\newcommand{\cY}{{\mathcal Y}}
\newcommand{\cZ}{{\mathcal Z}}
\newcommand{\bbD}{{\mathbb D}}
\newcommand{\D}{{\mathbb D}}
\newcommand{\T}{{\mathbb T}}
\newcommand{\B}{{\mathbb B}}
\newcommand{\bcs}[1]{\left(\begin{smallmatrix} #1
                \end{smallmatrix}\right)}
\newcommand{\Ob}{\boldsymbol{{{\mathfrak O}}}}
\newcommand{\Gr}{{\boldsymbol{\mathfrak G}}}
\newcommand{\sbm}[1]{\left[\begin{smallmatrix} #1
		\end{smallmatrix}\right]}
\newcommand{\free}{{\mathbb F}^{+}_{d}}
\newcommand{\bzeta}{\overline{\zeta}}
\newcommand{\bmu}{{\boldsymbol \mu}}
\begin{document}


\title[Function theory, model theory and linear systems]
{Hardy-space function theory, operator model theory, and dissipative linear systems:
the multivariable, free-noncommutative, weighted Bergman-space setting}
\author[J.A. Ball]{Joseph A. Ball}
\address{Department of Mathematics,
Virginia Tech,
Blacksburg, VA 24061-0123, USA}
\email{joball@math.vt.edu}
\author[V. Bolotnikov]{Vladimir Bolotnikov}
\address{Department of Mathematics,
The College of William and Mary,
Williamsburg VA 23187-8795, USA}
\email{vladi@math.wm.edu}
\date{}

\subjclass{47A13; 47A45, 47A48, 47B32, 46E22, 93B28}
\keywords{Noncommutative formal power series and reproducing kernel Hilbert spaces, contractive and inner multipliers,
shift-invariant subspace, hypercontraction, Bergman-inner function and Bergman-inner family, functional model}

\maketitle

\tableofcontents

\chapter{Introduction} \label{S:Intro}
\section{Overview}  \label{S:Overview}
\setcounter{equation}{0}
For $\cX$ and $\cY$ any pair of Hilbert spaces, we use the notation
$\cL(\cX, \cY)$ to denote the space of bounded, linear operators
from $\cX$ to $\cY$, shortening the notation $\cL(\cX,\cX)$ to $\cL(\cX)$.  
We start with the classical discrete-time linear system
\begin{equation}
\Sigma(\bf U): \quad\left\{ \begin{array}{rcl}
x(k+1)&= & Ax(k)+Bu(k)\\
y(k)&= & Cx(k)+Du(k)\end{array} \right.
\label{1.1pre}
\end{equation}
with $x(k)$ taking values in the {\em state space} $\cX$, $u(k)$ taking
values in the {\em input-space} $\cU$ and $y(k)$ taking values in the
{\em output-space} $\cY$, where $\cU$, $\cY$ and $\cX$ are given Hilbert spaces
and where the {\em connection matrix} (sometimes also called {\em colligation matrix} or {\em system matrix}
of the system
$$
{\bf U}=\begin{bmatrix} A & B \\ C & D\end{bmatrix}: \, \begin{bmatrix} \cX \\ 
\cU\end{bmatrix}\to \begin{bmatrix} \cX \\ \cY\end{bmatrix}
$$
is a given bounded linear operator. If we let the system evolve on the
nonnegative  integers $n \in {\mathbb Z}_{+}$, then the whole
trajectory $\{u(n), x(n), y(n)\}_{n \in {\mathbb Z}_{+}}$ is
determined from the input signal $\{u(n)\}_{n \in {\mathbb Z}_{+}}$
and the initial state $x(0)=x$ according to the formulas
\begin{align}
x(k) & = A^{k}x + \sum_{j=0}^{k-1} A^{k-1-j} B u(j), \notag \\
y(k) & = C A^{k} x + \sum_{j=0}^{k-1} C A^{k-1-j} B u(k) + D u(k).
\label{1.2pre}
\end{align}
Application of the $Z$-transform
$$\{f(k)\}_{k \in {\mathbb Z}_{+}} \mapsto \widehat f(\lambda) =
{\displaystyle \sum_{k=0}^{\infty} f(k) \lambda^{k}}
$$
to the system equations \eqref{1.1pre} converts the
expressions \eqref{1.2pre} to the so-called frequency-domain formulas   
\begin{align}
           \widehat x(\lambda) & = (I - \lambda A)^{-1} x + \lambda
           (I - \lambda A)^{-1} B \widehat
           u(\lambda),   \notag \\
           \widehat y(\lambda) & = C(I - \lambda A)^{-1} x + [D + \lambda
           C(I - \lambda A)^{-1}B]
           \widehat u(\lambda) 
            = {\mathcal O}_{C,A} x + \Theta_{\bU}(\lambda) \widehat
           u(\lambda), \notag  
\end{align}
where
\begin{equation}
{\cO}_{C,A} \colon \; x \mapsto \sum_{k=0}^{\infty}(CA^k x)\, \lambda^k=C(I-\lambda 
A)^{-1} x
\label{1.4pre}
\end{equation}
is the observability operator and where
\begin{equation}
\Theta_{\bU}(\lambda) = D + \lambda C (I - \lambda A)^{-1} B
\label{1.5pre}
\end{equation}
is the {\em transfer function} of the system $\Sigma$ given by \eqref{1.1pre}.  
In particular, if the input signal $\{u(n)\}_{n \in {\mathbb Z}_{+}}$
is taken to be zero, the resulting output $\{y(n)\}_{n \in {\mathbb
Z}_{+}}$ is given by $y = {\mathcal O}_{C,A} x(0)$.
If $\cO_{C,A}$ is injective, i.e., if $(C,A)$ satisfies the so-called 
{\em observability condition}
\begin{equation}   \label{observable}
\bigcap_{k=0}^{\infty} {\rm Ker}\, C A^{k} = \{0\}, 
\end{equation}
we say that the output pair $(C,A)$ is {\em observable}.  In case ${\mathcal O}_{C,A}$ 
is bounded as an operator from $\cX$ into
the standard vector-valued Hardy space of the unit disk
$$
    H^2_{\cY} = \big\{ f(\lambda) = \sum_{k\ge 0} f_{k} \lambda^{k} \colon
    \sum_{k\ge 0} \|f_{k}\|_{\cY}^{2} < \infty \big\},
$$
we say that the pair $(C,A)$ is {\em output-stable}.

\smallskip

The case where the operator connection matrix ${\bf U}$ is {\em isometric}, or more generally just {\em contractive}, is of special 
interest. In system-theoretic terms the isometric property of ${\bf U}$ has the interpretation that the system 
$\Sigma({\bf U})$ is {\em conservative} in the sense that  the energy stored by the state at 
time $k$ ($\| x(k+1)\|^{2} - \| x(k) \|^{2}$) is exactly compensated by the
net energy put into the system from the outside environment
($\|u(k)\|^{2} - \| y(k)\|^{2}$).    In case ${\bf U}$ is contractive the system $\Sigma({\bf U})$ is said to be {\em dissipative}  in the sense
that the net energy stored by the state at time $k$ $\|x(k+1)\|^2 - \| x(k)\|^2$ is no more than the net
energy put into the system from the outside environment ($\|u(k)\|^{2} - \| y(k)\|^{2}$) at time $k$.
In case the system is dissipative (i.e., $\| {\bf U} \| \le 1$), the transfer function $\Theta_{\bf U}$ is in the 
 {\em Schur class} $\cS(\cU,\cY)$ (i.e., analytic on the open unit disk $\D$ and such that $\Theta(z)$ is a contraction in
$\cL(\cU,\cY)$ for every $z\in\D$), and moreover the observability operator $\cO_{C, A} \colon \cX \to H^2_\cY$ is contractive.
Conversely, if $\Theta$ is in the Schur class, then $\Theta$ has a realization as $\Theta = \Theta_{\bf U}$
as in \eqref{1.1pre} with $\Sigma({\bf U})$ dissipative (in fact, even conservative).  

\smallskip

If  $\bU$ is isometric 
and in addition the state space operator $A$ is {\em strongly stable} in
the sense that $\|A^n x \| \to 0$ as $n \to \infty$ for each $x \in\cX$, then the observability operator
is a partial isometry (even an isometry in case $(C,A)$ is observable) and the transfer function $\Theta_\bU$ is {\em inner}
(the boundary values $\Theta_\bU(\zeta)$ existing as strong radial limits from inside ${\mathbb D}$ for almost every $\zeta$ on the unit circle $\T$ are isometric operators 
from $\cU$ to $\cY$), and conversely:  {\em any inner function $\Theta$ arises in this way as $\Theta = \Theta_\bU$ with $\bU = \sbm{ A & B \\ C & D}$
isometric with $A$ strongly stable.}

\smallskip
\noindent
We say that a subspace 
$\cM \subset H^2_\cY$ is {\em shift-invariant} if $f \in \cM \Rightarrow S_\cY f \in \cM$ where $S_\cY$ is 
the shift operator given as the coordinate multiplication operator on $H^2_\cY$ 
\begin{equation}   \label{shift}
   S_\cY = M_\lambda \colon f(\lambda) \mapsto \lambda f(\lambda).
\end{equation}
Note that if $\Theta$ is inner, then $\cM : = M_\Theta H^2_\cU = \Theta \cdot H^2_\cU$ is a shift-invariant subspace for $S_\cY$;
the content of the Beurling-Lax theorem is that conversely, any such invariant subspace can be represented in this way.
Similarly we say that the subspace $\cN \subset H^2_\cY$ is {\em backward shift-invariant} if
$f \in \cN \Rightarrow S_\cY^* f \in \cN$ where the backward-shift operator $S_\cY^*$, the Hilbert-space 
adjoint of the forward-shift operator $S_\cY$, works out to be
$$
 S_\cY^* \colon f(\lambda) \mapsto [f(\lambda) - f(0)]/\lambda.
$$
The computation 
$$
S_\cY^* \colon C (I - \lambda A)^{-1} x \mapsto  z^{-1} [ C (I - \lambda A)^{-1} - C] x = C(I - \lambda A)^{-1} A x
$$
shows that, for any output-stable pair $(C, A)$, $\operatorname{Ran} \cO_{C, A}$ is $S_\cY^*$-invariant.  Conversely, 
 {\em if $\cM^\perp \subset H^2_\cY$ is $S_\cY^*$-invariant, then there is an output pair $(C, A)$} (with
$C^*C = I - A^*A$) {\em so that $\cM^\perp = \operatorname{Ran} \cO_{C, A}$} (see \cite{Agler1982} for additional background and generalizations).
Moreover the Sz.-Nagy--Foias characteristic function for a completely nonunitary contraction operator $T$ on a Hilbert space $\cH$
\begin{equation}  \label{NFcharfunc}
\Theta_T(\lambda) = [ -T + \lambda D_{T^*} (I - \lambda T^*)^{-1} D_T]|_{\cD_T} \colon \cD_T \to \cD_{T^*},
\end{equation}
where we use the standard notation
$$
  D_T = (I - T^* T)^{\frac{1}{2}}, \quad D_{T^*}= (I - T T^*)^{\frac{1}{2}}, \quad
  \cD_T = \overline{\operatorname{Ran}}  \, D_T, \quad   \cD_{T^*} = \overline{\operatorname{Ran}} \, D_{T^*},
$$ 
amounts to the transfer function $\Theta_\bU$ associated with the unitary connection matrix
$$
    \bU = \begin{bmatrix} T^* & D_T \\ D_{T^*} & -T \end{bmatrix} \colon \begin{bmatrix} \cH \\ \cD_{T} \end{bmatrix}
    \to \begin{bmatrix} \cH \\ \cD_{T^*} \end{bmatrix}.
$$
In summary 
the following themes have developed and matured over the last several decades connecting
vectorial Hardy-space function theory, theory of Hilbert-space contraction operators, and conservative discrete-time 
linear systems:
\begin{enumerate}
\item A backward-shift invariant subspace of $H^2_\cY$ arises as the range of some observability operator.

\item  A forward-shift invariant subspaces of $H^2_\cY$ has Beurling-Lax inner-function representations.

\item  In the case of a conservative linear system with strongly stable state operator $A$ (i.e., $\bU = \sbm{ A & B \\ C & D}$ is isometric
and $\| A^n x \| \to 0$ as $n \to \infty$ for each $x \in \cX$), the observability
operator $\cO_{C,A} \colon \cX \to H^2_\cY$  and the transfer-function multiplier operator $M_{\Theta_\bU}$ are isometric, and one has 
the orthogonal decomposition
$$
  H^2_\cY = \operatorname{Ran} \cO_{C,A} \oplus \operatorname{Ran} M_{\Theta_\bU},
$$
and conversely:  if $\cM \subset H^2_\cY$ is $S_\cY$-invariant (and hence $\cM^\perp \subset H^2_\cY$ is $S_\cY^*$-invariant),
then there is an isometric $\bU = \sbm{ A & B \\ C & D}$ with $A$ strongly stable such that $\cM^\perp = \operatorname{Ran} \cO_{C, A}$
and $\cM = \Theta_\bU \cdot H^2_\cU$.

\item Inner (and more generally, contractive) multipliers $M_\Theta \colon H^2_\cU \to H^2_\cY$ arise as the
Sz.-Nagy--Foias characteristic function for some completely nonunitary Hilbert-space contraction operator $T$
which in turn induces a canonical functional model for the operator $T$.
\end{enumerate}

Much work has been done to extend this set of ideas, particularly themes \#2 and \#4 above (the operator-model theory aspects without the 
system-theoretic connections) to more general settings, e.g.,

(i)  to Bergman-spaces and hypercontraction operators; see Agler \cite{aglerhyper}, M\"uller \cite{muller}, 
M\"uller-Vasilescu \cite{MV}, Hedenmalm-Korenblum-Zhu \cite{HKZ2000}, Duren-Schuster \cite{DS2004}), 

(ii) to the Drury-Arveson space and commutative row-contractive operator tuples; see Bhattacharyya-Eschmeier-Sarkar \cite{BES, BES2}, 
Bhattacha\-ryya-Sarkar \cite{BS}, Ball-Bolotnikov \cite{BB11}, 

(iii) to more general domains in ${\mathbb C}^d$ than the ball and associated
more general commutative operator tuples; see Athavale \cite{Athavale},  Curto-Vasilescu \cite{CV1993, CV1995}, Timotin \cite{Timotin},
Pott \cite{Pott}, Ambrozie-Engli\u{s}-M\"uller \cite{AEM}, Arazy-Engli\u{s} \cite{ArazyEnglis}.  

(iv) to the full Fock space and freely noncommutative row-contractive operator tuples, possibly also constrained to lie in a prescribed noncommutative
operator variety; see Bunce \cite{Bunce}, Frazho \cite{Frazho}, 
Popescu \cite{PopescuNF0, PopescuNF1, PopescuNF2, PopescuBL,  Popescu-var1, Popescu-var2},  

(v) to a more general formalism of representations of 
certain operator algebras based on tensor-algebra constructions; see Muhly-Solel \cite{MS99, MS08, MS05, MS16}), and

(vi) to noncommutative hypercontractive operator tuples modeled on noncommutative varieties (see Popescu \cite{PopescuJFA2013,
PopescuTAMS2016, PopescuMANN}) as well as a weighted version of the tensor-algebra context 
(see Muhly-Solel \cite{MS16}).

\smallskip

  Identification of a characteristic function 
defined by a formula of the Sz.-Nagy--Foias
type \eqref{NFcharfunc} (the main thrust of theme \#4 above) can be found (i) for the Bergman space setting only recently first in the work of 
Olofsson \cite{oljfa, olieot, olaa} and then followed up by the authors \cite{BBIEOT, BBSzeged} and Eschmeier \cite{Esch2018}, (ii) for the Drury-Arveson 
space setting earlier in the work of 
Bhattacharyya-et-al \cite{BES, BES2, BS}, (iv) for the full Fock space in the work of Popescu \cite{PopescuNF2}, Ball-Bolotnikov-Fang
 \cite{BBF1, BBF3} and Ball-Vinnikov \cite{Cuntz-scat}, for the tensor-algebra context in Muhly-Solel \cite{MS05}.

\smallskip

Let us note at this stage that theme \#4 simplifies considerably in case the completely nonunitary contraction operator $T$ is in the 
Sz.-Nagy--Foias class $C_{\cdot 0}$ (i.e., $T^*$ is {\em strongly stable} or equivalently, the contraction operator $T$ is {\em pure} in 
the terminology of some authors).  Indeed, $T \in C_{\cdot 0}$ is equivalent to $\Theta_T$ being inner, i.e., we are in the setting of 
item \#3 in the list of themes above:
$ H^2_{\cD_{T^*}} = \cN \oplus \cM$ where $\cN = \operatorname{Ran} \cO_{D_{T^*}, T^*}$ is backward-shift invariant and
$\cM = \Theta_T \cdot H^2_{\cD_T}$ is forward-shift invariant.  It then turns out that $T$ is unitarily equivalent to its Sz.-Nagy--Foias model 
operator $\bT$ given simply by $\bT : = P_\cN M_\lambda|_\cN$ where $M_\lambda \colon f(\lambda) \mapsto \lambda f(\lambda)$
is the coordinate multiplication operator on $H^2_{\cD_{T^*}}$.

\smallskip

In the multivariable context, with only a couple of exceptions (see \cite{BB11, Cuntz-scat}) the full completely nonunitary Sz.-Nagy--Foias model
theory has proved to be elusive.  A common compromise choice for moving past the $C_{\cdot 0}$ (or pure) case is to invoke an analogue of
the assumption that $T$ is {\em completely non-coisometric} (see \cite{BES2, PopescuNF2, MS05}).  In this case the observability operator
$\cO_{D_{T^*}, T^*}$ is no longer isometric but is still injective (i.e. the output pair $(D_{T^*}, T^*)$ is observable) and one can equip
$\operatorname{Ran} \cO_{D_{T^*}, T^*}$ with the lifted norm from $\cX$ (rather than the induced norm from containment in the ambient
Hardy space $H^2_{\cD_{T^*}}$) to view $T^*$ as unitarily equivalent to the restriction of the backward shift $S_{\cD_{T^*}}$
to the subspace $\operatorname{Ran} \cO_{D_{T^*}, T^*}$ which is now only contractively included in $H^2_{\cD_{T^*}}$.
Many authors construct an additional defect space $\overline{\Delta_{\Theta_T} H^2_{\cD_{T^*}}}$ so that $\operatorname{Ran} \cO_{D_{T^*}, T^*}$
(with lifted norm from $\cX$) can be identified isometrically with a subspace of the two-component space 
$$
\operatorname{Ran} \cO_{D_{T^*}, T^* } \, \oplus \, \overline{\Delta_{\Theta_T} H^2_{\cD_{T^*}  }} 
$$
(with the first component now taken with $H^2_{\cD_{T^*}}$-norm), but from our point of view this
is not necessary.  Then there is a Cholesky-factorization algorithm for constructing an operator $\sbm{ B \\ D } \colon \cU \to 
\sbm{ \cX \\ \cD_{T^*}}$ so that $\bU  = \sbm{ T^* & B \\ D_{T^*} & D} \colon \sbm{ \cH \\ \cD_T} \to \sbm{ \cH \\ \cD_{T^*}}$
is unitary (for the classical case one simply takes $\cU = \cD_T$, $B = D_T \colon \cD_T \to \cH$ and $D = -T|_{\cD_T}$),
and we can define the transfer function $\Theta_\bU$ associated with connection matrix $\bU$ to be the characteristic function of $T$.
Then the backward shift $S_{\cD_{T^*}}^*$ restricted to the subspace $\cN: = \operatorname{Ran} \cO_{D_{T^*}, T^*}$ considered with
lifted norm is the de Branges-Rovnyak model for the original contraction operator $T$. The link with the characteristic function $\Theta_T$
is due to the validity of the formula
$$
 D_{T^*} (I - \lambda T^*)^{-1} ( I - \overline{\zeta} T)^{-1} D_{T^*} + \frac{\Theta_T(\lambda) \Theta_T(\zeta)^*}{1 - \lambda \overline{\zeta}}
 = \frac{ I_{\cD_{T^*}} }{1 - \lambda \overline{\zeta}}
$$
(where $k(\lambda, \zeta) = \frac{1}{1 - \lambda \overline{\zeta}}$ is the {\em reproducing kernel} for $H^2$)
which can be interpreted as saying that the subspace $\cM: = \Theta_T \cdot H^2_{\cU} \subset H^2_{\cD_{T^*}}$ with lifted norm is 
the {\em Brangesian complement}
 of  the backward-shift invariant subspace $\cN = \operatorname{Ran} \cO_{D_{T^*}, T^*}$ (also contractively included in $H^2_{\cD_{T^*}}$);
see Section \ref{S:Brangesian} below.  The idea of the generalized Beurling-Lax Theorem whereby one represents contractively included
 forward-shift invariant subspaces as being of the form $\Theta \cdot H^2_\cU$ with $\Theta$ being in the Schur class rather than inner
 appears already in the work of de Branges-Rovnyak \cite{dBR1} and indeed the de Branges-Rovnyak model space $\cH(\Theta)$
 for the noninner case can be considered as a contractively included backward-shift invariant subspace $\cN$ as above.  
 
\smallskip

 Here we focus on a general setting of model spaces, forward and backward shift-operator tuples, and their joint invariant subspaces
 which simultaneously contain as special cases the Bergman setting (i) and the full Fock space setting (iv) mentioned above.  Before plunging
 into the most general setting, we next sketch how the system theory setup \eqref{1.1pre}, \eqref{1.2pre}, \eqref{1.4pre}, \eqref{1.5pre},
 \eqref{observable} adapts to these motivating special cases.

\section{Standard weighted Bergman spaces} \label{S:standard}
\noindent
For a Hilbert space $\cY$ and an integer $n\ge 1$, we denote by $\cA_{n,\cY}$ 
the Hilbert space of $\cY$-valued functions $f$  analytic
in the open unit disk $\bbD$ and with finite norm $\|f\|_{A_{n,\cY}}$:
$$
\cA_{n,\cY}=\big\{f(\lambda)={\displaystyle\sum_{j\ge 0}f_j \lambda^j} \colon \; 
\|f\|^2_{\cA_{n,\cY}}:={\displaystyle \sum_{j\ge 0}\bin_{n,j} \cdot
\|f_j\|_{\cY}^2}<\infty\big\},
$$
where the weights $\mu_{n,j}$'s are defined by
\begin{equation}
\bin_{n,j}:=\frac{1}{\binom{j+n-1}{j}} =
\frac{j!(n-1)!}{(j+n-1)!}.
\label{1.6pre} 
\end{equation}
The space $\cA_{n,\cY}$ can be alternatively characterized as 
the reproducing kernel Hilbert space with reproducing kernel $k_n(\lambda,\zeta)I_\cY$ where 
\begin{equation}   
k_n(\lambda,\zeta)=(1-\lambda\overline{\zeta})^{-n}.
\label{1.7pre}
\end{equation}
We introduce the function $R_n$ and its shifted counterparts $R_{n,k}$ by the formulas
\begin{equation}  
R_n(\lambda):=(1-\lambda)^{-n}=\sum_{j=0}^\infty \mu_{n,j}^{-1}\lambda ^j\quad\mbox{and}\quad 
R_{n,k}(\lambda)=\sum_{j=0}^\infty \mu_{n,j+k}^{-1}\lambda^j,
\label{1.8pre}
\end{equation}
so that $R_{n,0}=R_n$ and $k_n(\lambda,\zeta)=R_n(\lambda\overline{\zeta})$.
Observe that functions \eqref{1.8pre} satisfy the following relations:
\begin{align}   
 R_{n,k}(\lambda) &= \bcs{n+k-1 \\ k} + \lambda R_{n,k+1}(\lambda),\label{1.9pre} \\
 R_{n,k}(\lambda) & = \sum_{\ell=1}^{n} \bcs{\ell + k - 2 \\ \ell - 1} 
 R_{n-\ell + 1}(\lambda)\quad \text{for}\quad k \ge 1. \label{1.10pre}
\end{align}
Identity \eqref{1.9pre} follows directly from  definition 
\eqref{1.8pre} while the proof of \eqref{1.10pre}
can be found in \cite[Section 2]{BBIEOT}.
We also record that for any operator
$A\in\cL(\cX)$ with  spectral radius $\rho_A$, the operator-valued functions
\begin{equation}
 \label{1.12pre}
R_{n,k}(\lambda A)=\sum_{j=0}^\infty\mu_{n,k+j}^{-1}A^j\lambda^j
\end{equation}
are analytic on the disk $\{\lambda: \; |\lambda|<1/\rho_A\}$ for any $k\in{\mathbb 
Z}_+$.

\smallskip

In \cite{BBIEOT}, we considered the following discrete-time time-varying 
linear system:
\begin{equation}  \label{1.13pre}
 \Sigma_{n}\left( \left\{\left[ \begin{smallmatrix} A & B_{j} \\ C & D_{j}
 \end{smallmatrix}\right]  \right\}_{j\in{\mathbb Z}_+} \right) \colon    \left\{
\begin{array}{rcl}
 x(j+1) & = & \frac{j+n}{j+1}\cdot Ax(j)+\binom{j+n}{j+1}\cdot B_ju(j) \\ [3mm] 
 y(j) & = & C x(j)+\binom{j+n-1}{j}\cdot D_ju(j)
 \end{array} \right.
 \end{equation}
where
$A \in \cL(\cX), \; C \in \cL(\cX, \cY), \;
B_k \in \cL(\cU_k, \cX), \;  D_k \in \cL(\cU_k, \cY)$
are given bounded linear operators acting between given Hilbert spaces
$\cX$, $\cY$ and $\cU_k$ ($k\ge 0$).  We note that the case where $n=1$ and where the
operators $B_{k} =  B$ and $D_{k} = D$ are taken independent of the time parameter
 $k\in{\mathbb Z}_+$ reduces to the classical time-invariant case 
\eqref{1.1pre}. If we let the system \eqref{1.13pre} evolve on ${\mathbb Z}_{+}$, 
then the whole 
trajectory $\{u(j), x(j), y(j)\}_{j \in {\mathbb Z}_{+}}$ is
determined from the input signal $\{u(j)\}_{j \in {\mathbb Z}_{+}}$
and the initial state $x(0)$ according to the formulas
\begin{align}
x(j) &= \mu_{n,j}^{-1}\cdot \big(A^{j} x(0)+\sum_{\ell=0}^{j-1}A^{j-\ell-1} B_\ell
u(\ell)\big),
\label{1.14pre}\\
y(j) &= \mu_{n,j}^{-1}\cdot \big(CA^{j} x(0)+\sum_{\ell=0}^{j-1}CA^{j-\ell-1} B_\ell
u(\ell) +D_ju(j)\big).\label{1.15pre}
  \end{align}
Formula \eqref{1.14pre} is established by simple induction arguments, while \eqref{1.15pre} is
obtained by substituting \eqref{1.14pre} into the second equation in \eqref{1.13pre}.

\smallskip

To write the $Z$-transformed version of the system-trajectory formula \eqref{1.14pre}, 
we multiply both sides of \eqref{1.14pre} by $\lambda^{j}$ and sum over $j\ge 0$ to get, on account 
of \eqref{1.12pre},
\begin{align}
\widehat x(\lambda)= \sum_{j=0}^{\infty} x(j) \lambda^{j}
&=\bigg(\sum_{j=0}^\infty \mu_{n,j}^{-1}A^j\lambda^j\bigg)x(0)+\sum_{k=1}^{\infty}
\bigg(\sum_{j=k}^\infty \mu_{n,j}^{-1} A^{j-k}\lambda^j\bigg)B_{k-1}u(k-1)\notag\\
&=(I-\lambda A)^{-n}x(0)+\sum_{k=1}^{\infty}\lambda^k\bigg(\sum_{j=0}^\infty
\mu_{n,j+k}^{-1}A^j\lambda^j\bigg)B_{k-1}u(k-1)\notag\\
&=(I-\lambda A)^{-n}x(0)+\sum_{k=0}^{\infty}\lambda^{k+1}R_{n,k+1}(\lambda A)B_ku(k).\notag
\end{align}  
The same procedure applied to \eqref{1.15pre} gives
\begin{align}  
\widehat y(\lambda)&=C(I-\lambda A)^{-n}x(0)+\sum_{k=0}^{\infty}\lambda^k \left(\mu_{n,k}^{-1}D_k+
\lambda CR_{n,k+1}(\lambda A)B_k\right)u(k)\notag\\
&=\cO_{n,C,A}x(0)+\sum_{k=0}^{\infty}\lambda^k \Theta_{n,k}(\lambda)u(k), \label{1.16pre}
\end{align}  
where
 \begin{equation}
\cO_{n,C,A} \colon \; x \mapsto
\sum_{j=0}^\infty\left(\mu_{n,j}^{-1}CA^jx\right) \, \lambda^j=C(I-\lambda A)^{-n} x
\label{1.17pre}
\end{equation}
is the $n$-observability operator and where
$$
\Theta_{n,k}(\lambda)=\mu_{n,k}^{-1}D_k+
\lambda CR_{n,k+1}(\lambda A)B_k \qquad (k=0,1,\ldots)
$$
is the family of transfer functions. 

\smallskip
 
Note that observability of the output pair $(C,A)$ in the classical sense \eqref{observable} 
is also equivalent to the injectivity of the $n$-observability operator 
$\cO_{n,C,A}$ \eqref{1.17pre}.  Following \cite{BBIEOT}, we say that 
the output pair $(C,A)$ is {\em $n$-output stable} if $\cO_{n, C, A}$ 
is bounded as an operator from $\cX$ into $\cA_{n,\cY}$.

\smallskip

We note next that the transfer
function $\Theta_{n,k}(z)$ encodes the result of a pulse input-vector
$u$ being applied at time $j = k$:
$$
\widehat y(\lambda) = \Theta_{n,k}(\lambda) \cdot \lambda^{k} u \quad\text{if}\quad x(0) = 0
\quad\text{and}\quad u(j) = \delta_{j,k} u
$$
(where $\delta_{jk}$ stands for the Kronecker symbol).
In fact the functions $\Theta_{n,k}(\lambda)$ could have been derived in
this way and then one could arrive at input-output relation
\eqref{1.16pre} via superposition of all these time-$k$ impulse
responses.  There is a notion of {\em conservative} for a system of 
the form \eqref{1.13pre}  involving the connection matrix $\sbm{A 
& B_{j} \\ C & D_{j}}$ being unitary with respect to an appropriate 
choice of weights (see formulas (6.7) and (6.21) in \cite{BBIEOT}).
When these metric constraints are satisfied, the associated 
transfer-function family $\{\Theta_{n,k}\}$ serves as a representer 
of a shift-invariant subspace in the weighted Bergman space while 
the image space of an observability operator $\cO_{n,C,A}$ is the 
model for a backward shift invariant subspace in $\cA_{n,\cY}$ (see 
\cite{BBIEOT, BBSzeged}).

\smallskip

\section{The Fock space setting}   \label{S:Fock0}
The classical results on the system \eqref{1.1pre} admit nice extensions to a number of 
multivariable settings,
both commutative and noncommutative. In this section we recall the case where the Hardy 
space $H^2_\cY$
is replaced by the Fock space $H^2_\cY(\free)$.

\smallskip

To define the Fock space, we let $\free$ denote the unital free
semigroup (i.e., monoid)  generated by the set of $d$ letters $\{1, \dots, 
d\}$.
Elements of $\free$ are words of the form $i_{N}
\cdots i_{1}$ where $i_{\ell} \in \{1, \dots, d\}$ for each $\ell\in\{1,
\dots, N\}$ with multiplication given by concatenation. The unit element
of $\free$ is the empty word denoted by $\emptyset$.
For $\alpha = i_{N} i_{N-1} \cdots i_{1} \in {\mathcal
F}_{d}$, we let $|\alpha|$ denote the number $N$ of letters in $\alpha$ and we let
$\alpha^{\top} : = i_{1}  \cdots i_{N-1} i_{N}$ denote the {\em transpose}
of $\alpha$. We let $z = (z_{1}, \dots,z_{d})$ to be a
collection of $d$ formal noncommuting variables and let $\cY\langle\langle
z\rangle\rangle$ denote the set of noncommutative formal power series
$\sum_{\alpha \in \free} f_\alpha z^\alpha$ where $f_\alpha \in \cY$ and where
\begin{equation}
\label{1.19pre}
z^{\alpha} =z_{i_{N}}z_{i_{N-1}} \cdots
z_{i_{1}}\quad\mbox{if}\quad \alpha= i_{N}i_{N-1} \cdots i_{1}.
\end{equation}
The Fock space $H^2_{\cY}(\free)$ is then defined as
\begin{equation}   \label{Fock}
H_{\cY}^{2}(\free) = \bigg\{\sum_{\alpha \in \free} 
f_{\alpha}z^{\alpha}
\in \cY\langle\langle z\rangle\rangle\colon \;
\sum_{\alpha \in \free} \|f_{\alpha}\|_{\cY}^{2} <\infty \bigg\}.
\end{equation}
The Fock-space counterpart of \eqref{1.1pre} is the system
\begin{equation}
\Sigma(\bf U): \quad\left\{ \begin{array}{ccc}
x(1 \alpha) &= & A_{1} x(\alpha)+ B_1 u(\alpha) \\
\vdots & \vdots  & \vdots  \\
x(d \alpha) & = & A_d x(\alpha)+ B_d u(\alpha)  \\
y(\alpha) &  = & Cx(\alpha)+Du(\alpha)\end{array}\right.
\label{1.20pre}
\end{equation}
which evolves along the free semigroup $\free$, and, for
each $\alpha \in \free$, the state vector $x(\alpha)$, input signal 
$u(\alpha)$
and output signal $y(\alpha)$ take values in the {\em state space} $\cX$,
{\em input space} $\cU$ and {\em output space} $\cY$. The {\em
connection matrix} ${\bf U}$ has the form
\begin{equation}
   {\bf U} = \begin{bmatrix} A & B \\ C & D \end{bmatrix} = \begin{bmatrix}
   A_{1} & B_{1} \\ \vdots & \vdots \\ A_{d} & B_{d} \\ C & D
\end{bmatrix} \colon \begin{bmatrix} \cX \\ \cU \end{bmatrix} \to
\begin{bmatrix} \cX \\ \vdots \\ \cX  \\ \cY \end{bmatrix}.
\label{1.20}
\end{equation}
Such systems were introduced in \cite{Cuntz-scat} and with further elaboration
in \cite{BGM1} and \cite{BGM2}; following \cite{BGM1} we call this
type of system a {\em noncommutative Fornasini-Marchesini linear system}.

\smallskip

We extend the noncommutative functional calculus \eqref{1.19pre}
from noncommuting indeterminates $z = (z_{1}, \dots, z_{d})$ to a
$d$-tuple of operators ${\mathbf A} = (A_{1}, \dots, A_{d})$ by letting 
\begin{equation}         {\mathbf A}^{\alpha} := A_{i_{N}}A_{i_{N-1}} \cdots A_{i_{1}}\quad
\text{if}\quad   \alpha = i_{N} i_{N-1} \cdots i_{1} \in \free,
\label{1.21pre}
\end{equation}
where the multiplication is now operator composition. Letting
\begin{equation}
Z(z) = \begin{bmatrix} z_{1} & \cdots & z_{d}
\end{bmatrix}\otimes I_{\cX}, \quad
A=\begin{bmatrix} A_{1} \\ \vdots \\ A_{d} \end{bmatrix},
\quad B = \begin{bmatrix} B_{1} \\ \vdots \\ B_{d} \end{bmatrix},
\label{1.23pre}  
\end{equation}
we next observe that
\begin{equation}
(Z(z)A)^j={\displaystyle\bigg(\sum_{i=1}^dz_iA_i\bigg)^j}={\displaystyle\sum_{\alpha 
\in\free:
\, |\alpha|=j}{\bf A}^\alpha z^\alpha}\quad\mbox{for all}\quad j\ge 0
\label{1.24pre}
\end{equation}
and therefore,
$$
(I-Z(z)A)^{-1}=\sum_{j=0}^\infty (Z(z)A)^j=
\sum_{j=0}^\infty \sum_{\alpha \in\free: \, 
|\alpha|=j}{\bf A}^\alpha z^\alpha
=\sum_{\alpha \in\free}{\bf A}^\alpha z^\alpha.
$$
Application of the formal noncommutative $Z$-transform
\begin{equation}  \label{1.25pre}
\{f_{\alpha} \}_{\alpha \in \free} \mapsto \widehat f(z) = 
\sum_{\alpha \in \free} f_{\alpha} z^{\alpha}
\end{equation}
to the system \eqref{1.20pre} then gives
\begin{align}
\widehat x(z) &= (I - Z(z) A)^{-1} x(\emptyset) +(I - Z(z) A)^{-1}Z(z)B \widehat 
u(z),\notag\\
\widehat y(z) &= \cO_{C,{\bf A}} x(\emptyset) +\Theta_{\bf U} (z) \widehat 
u(z),\label{1.26pre}
\end{align}
where
\begin{equation}
\cO_{C,{\bf A}}: \, x\mapsto C(I - Z(z) A)^{-1} x=\sum_{\alpha 
\in\free}(C{\bf A}^\alpha x)z^\alpha
\label{1.27pre}
\end{equation}
is the observability operator of $(C,{\bf A})$ and where $\Theta_{\bf 
U}(z)\in\cL(\cU,\cY)\langle\langle z\rangle\rangle$ is given by
\begin{equation}
\Theta_{\bf U}(z) = D + C (I - Z(z) A)^{-1} Z(z) B=D+\sum_{\alpha \in\free}\sum_{j=1}^d 
C{\bf A}^\alpha B_j z^\alpha z_j.
 \label{1.28pre}
  \end{equation}
  Thus the initial state $x = x_{\emptyset}$ is uniquely determined 
  by the output signal $\widehat y(z)$ when the input signal $\widehat 
  u(z)$ is taken to be zero exactly when $\cO_{C,\bA}$ is injective; 
  when this is the case, we say that the  output pair $(C, \bA)$ is 
  {\em observable}.
The pair $(C,{\bf A})$ is called {\em output-stable} if $\cO_{C,{\bf A}}$ 
is bounded as an operator  from $\cX$ into $H^{2}_{\cY}(\free)$, 
and {\em exactly observable} if $\cO_{C, \bA}$ is bounded and bounded 
below.  As in the single-variable case, the connection matrix \eqref{1.20} being unitary 
corresponds to a notion of energy conservation; for details on this 
we refer to \cite{Cuntz-scat}. The $d$-tuple ${\mathbf A} =
(A_{1}, \dots, A_{d})$ is called {\em strongly stable} if
\begin{equation}
\label{bAstable}
           \lim_{N \to \infty} \sum_{\alpha \in \free \colon |\alpha| = N} \|{\mathbf
           A}^{\alpha} x\|^{2} \to 0 \quad \text{for all} \; \; x \in {\mathcal X}.
          \end{equation}
Again, as in the single-variable case, the condition \eqref{bAstable} 
in conjunction with $\sbm{  A \\ C }$ being isometric guarantees that 
$\cO_{C, \bA}$ is a partial isometry and that the operator 
$M_{\Theta_{\bU}} \colon f(z) \mapsto \Theta_{\bU}(z) f(z)$ of 
multiplication by the transfer function \eqref{1.28pre} 
acts isometrically as an operator from $H^{2}_{\cU}(\free)$ into 
$H^{2}_{\cY}(\free)$ (see \cite{Cuntz-scat,  BBF1, BBF3} for details).
The representation $\cM = \Theta_{\bU} \cdot H^{2}_{\cU}(\free)$ amounts to 
the Beurling-Lax representation for right-shift invariant subspaces 
(i.e., $f(z) \in \cM \Rightarrow f(z) \cdot z_j \in \cM$ for $j = 1, \dots, d$)
while backward  shift-invariant subspaces arise as the range of an observability 
operator $\cO_{C,\bA}$ (see \cite{PopescuNF1, Cuntz-scat, BBF1, BBF3}).

\section{Weighted Bergman-Fock spaces}  \label{S:Fock}
We introduce a family of weighted Bergman-Fock spaces as a multivariable noncommutative 
counterpart of standard weighted Bergman spaces; the system-theoretic 
point of view presented here combines the single-variable setting handled in 
\cite{BBIEOT, BBSzeged} with the unweighted multivariable setting from 
section \ref{S:Fock0}.
Given an integer $n\ge 1$, the free semigroup $\free$, and the coefficient
Hilbert space $\cY$, we let
\begin{equation}
\cA_{n,\cY}(\free) = \bigg\{\sum_{\alpha \in \free} 
f_{\alpha}z^{\alpha}
\in \cY\langle\langle z\rangle\rangle\colon \;
\sum_{\alpha \in \free} \mu_{n,|\alpha|}\cdot \|f_{\alpha}\|_{\cY}^{2} <\infty \bigg\}
\label{1.30pre}
\end{equation}   
where, according to \eqref{1.6pre}, $\mu_{n,|\alpha|}=\frac{|\alpha|! 
\, (n-1)!}{(n+|\alpha|-1)!}$.
We propose to consider the following  multidimensional system with evolution along the 
free semigroup $\free$:
\begin{equation}    \label{1.31pre}
\Sigma_{\{\bU_\alpha\}, n}\colon \left\{ \begin{array}{ccc}
x(1 \alpha) &= & 
\frac{n+|\alpha|}{|\alpha|+1}A_{1}x(\alpha)+\bcs{n+|\alpha| \\ 
|\alpha|+1} B_{1,\alpha} u(\alpha) \\
\vdots &\vdots & \vdots  \\
x(d \alpha) & = & 
\frac{n+|\alpha|}{|\alpha|+1}A_{d}x(\alpha)+\bcs{n+|\alpha| \\ 
|\alpha|+1} B_{d,\alpha} u(\alpha) \\
y(\alpha) &  = & Cx(\alpha)+\bcs{n+|\alpha|-1\\ |\alpha|} D_\alpha u(\alpha)\end{array}
\right.
\end{equation}
with the $d$-tuple of state space operators ${\bf A}=(A_1,\ldots,A_d)$ and the state-output 
operator $C\in\cL(\cX,\cY)$. Here in addition we have a family of connection matrices and the 
family of  input spaces indexed by $\alpha\in\free$:
\begin{equation}
{\bf U}_\alpha=\begin{bmatrix}A & \widehat B_\alpha \\ C & D_\alpha \end{bmatrix}: \;
\begin{bmatrix}\cX \\ \cU_\alpha\end{bmatrix}\to \begin{bmatrix}\cX^d \\ 
\cY\end{bmatrix},\quad\mbox{where}\quad
A=\begin{bmatrix}A_1 \\ \vdots \\ A_d\end{bmatrix},\;
\widehat B_\alpha=\begin{bmatrix}B_{1,\alpha} \\ \vdots \\ B_{d,\alpha}\end{bmatrix}
\label{coll}
\end{equation}
together with additional  $\alpha$-dependent weights in the system equations indexed by the natural number $n$.
Upon running the system  \eqref{1.31pre}  with a fixed
initial condition $x(\emptyset)=x\in\cX$ we get recursively
\begin{align}
x(\alpha)=&\mu_{n,|\alpha|}^{-1}\cdot\big(\bA^\alpha x 
+\sum_{\alpha^{\prime\prime}j \alpha ^\prime=\alpha}
{\bf A}^{\alpha^{\prime\prime}}B_{j,\alpha^\prime}u(\alpha^\prime)\big),\label{1.32pre}\\
y(\alpha)=&\mu_{n,|\alpha|}^{-1}\cdot\big(C\bA^\alpha x+ 
\sum_{\alpha^{\prime\prime}j\alpha^\prime=\alpha}
C{\bf A}^{\alpha^{\prime\prime}}B_{j,\alpha^\prime}u(\alpha^\prime)+D_\alpha u(\alpha)
\big).\label{1.33pre}
\end{align}
Making use of notation \eqref{1.23pre} and equality \eqref{1.24pre} we observe that 
\begin{equation}
(I-Z(z)A)^{-n}=\sum_{j=0}^\infty 
\mu_{n,j}^{-1}\cdot\sum_{\alpha\in\free: \, |\alpha|=j}{\bf A}^\alpha 
z^\alpha
=\sum_{\alpha \in\free} \mu_{n,|\alpha|}^{-1}{\bf A}^\alpha z^\alpha,\label{2.13pre}
\end{equation}
and then define $R_{n,k}(Z(z)A)$ via formal power series
\begin{equation}
R_{n,k}(Z(z)A)=\sum_{\alpha \in\free} \mu_{n,|\alpha|+k}^{-1} {\bf 
A}^\alpha z^{\alpha}.
\label{2.14pre}
\end{equation}
We next apply the noncommutative $Z$-transform \eqref{1.25pre} to \eqref{1.32pre}
and then invoke \eqref{2.13pre}, \eqref{2.14pre} to get
\begin{align}
\widehat x(z)=&\sum_{\alpha\in\free} \mu_{n,|\alpha|}^{-1}
\bigg(\bA^\alpha x +\sum_{\alpha^{\prime\prime}j\alpha^\prime=\alpha}
{\bf A}^{\alpha^{\prime\prime}}B_{j,\alpha^\prime}u(\alpha^\prime)\bigg)z^\alpha \notag\\
=&\sum_{\alpha\in\free}\big(\mu_{n,|\alpha|}^{-1}\bA^\alpha  x\big)z^\alpha\notag\\
&+\sum_{\alpha^\prime\in\free}\bigg(\sum_{\alpha^{\prime\prime}\in\free}
\mu_{n,|\alpha^{\prime\prime}|+|\alpha^\prime|+1}^{-1}
{\bf A}^{\alpha^{\prime\prime}}z^{\alpha^{\prime\prime}}\bigg)\bigg( \sum_{j=1}^d
z_jB_{j,\alpha^\prime}\bigg)z^{\alpha^\prime}u(\alpha^\prime)\notag\\
=&(I-Z(z)A)^{-n}x+\sum_{\alpha\in\free}R_{n,|\alpha|+1}(Z(z)A)Z(z)\widehat
B_{\alpha}z^{\alpha}u(\alpha).\notag
\end{align}
The same procedure applied to \eqref{1.33pre} now gives
\begin{align}
\widehat y(z)=&C(I-Z(z)A)^{-n}x\notag\\
&+\sum_{\alpha \in\free}\left(CR_{n,|\alpha|+1}(Z(z)A)Z(z)\widehat
B_{\alpha}+\mu_{n,|\alpha|}^{-1} D_\alpha \right)z^{\alpha}u(\alpha)\notag\\
=&\cO_{n,C,{\bf A}}(z) x+\sum_{\alpha\in\free}\Theta_{n, \bU_\alpha}(z)z^{\alpha}u(\alpha),
\label{1.36pre}
\end{align}
where the first term on the right presents the $n$-observability operator 
\begin{equation}   \label{ncobsop}
    \cO_{n,C,\bA}(z) x = C (I - Z(z) A)^{-n} x = \sum_{\alpha \in \free}
    \mu_{n,|\alpha|}^{-1} (C \bA^{\alpha} x) z^{\alpha}
\end{equation}
associated with the state space $d$-tuple ${\bf A}$ and the state-output operator $C$ and 
where
\begin{equation}
\Theta_{n, \bU_\alpha}(z)=\mu_{n,|\alpha|}^{-1}D_\alpha+CR_{n,|\alpha|+1}(Z(z)A)Z(z)\widehat 
B_{\alpha}
\label{1.37pre}
\end{equation}
is the family of transfer functions indexed by $\alpha \in \free$. One can see that 
the notion of the $n$-observability operator \eqref{ncobsop} generalizes the 
single-variable notion \eqref{1.17pre} as well as the unweighted multivariable one 
in \eqref{1.27pre}.
We say that the output pair $(C, \bA)$ is $n$-observable if 
$\cO_{n,C, \bA}$ is injective; from \eqref{ncobsop} we see that this 
is equivalent to $(C, \bA)$ being observable when viewed as an output 
pair for an unweighted system as in \eqref{1.27pre}, i.e., observability is equivalent to
\begin{equation}  \label{ncobs}
    \bigcap_{\alpha \in \free} {\rm Ker}\, C 
    \bA^{\alpha} = \{0\}.
\end{equation}
We say that the output pair $(C,\bA)$ is {\em $n$-output stable} if 
$\cO_{n,C, \bA}$ is bounded as an operator from $\cX$ into 
$\cA_{n,\cY}(\free)$ and {\em exactly $n$-observable} if also 
$\cO_{n,C, \bA}$ is bounded below.  

\smallskip

In parallel with the discussion 
at the end of Section \ref{S:standard}, we use the formula 
\eqref{1.36pre} to view the transfer function $\Theta_{n,\bU_\alpha}(z)$
as encoding the result of a pulse input vector $u$ being applied at 
position $\alpha \in \free$ with zero initial state:
$$
  \widehat y(z) = \Theta_{n, \bU_\alpha}(z) \cdot z^{\alpha} u \; \text{ if } \;  
  x_{\emptyset} = 0 \; \text{ and } \; u(\beta)  = \delta_{\alpha, \beta} u.
$$
A preliminary notion of {\em noncommutative $n$-Bergman conservative  
system} for systems of the form \eqref{1.31pre} will be developed in 
Section \ref{S:metric} below.  The associated {\em $n$-Bergman inner 
family} is the main ingredient for one version of a Beurling-Lax 
representation for forward-shift invariant subspaces for this setting 
(see Section \ref{S:BL7} below). We shall see that backward shift-invariant subspaces
in this setting arise as the range of an $n$-observability operator  
form $\cO_{n,C,\bA}$ for an appropriate choice of a state-space $d$-tuple $\bA$ and 
an state-output operator $C$. 

\smallskip

The main goal of this paper is to carry out the program outlined above in themes \#1--\#4 (including the refinements 
where $\Theta_T$ is allowed to be a non-inner Schur-class function and $T$ is not required to be pure but only completely
noncoisometric) for the setting where the system $\Sigma(\bU)$ \eqref{1.1pre} is replaced by the (time-varying) system
$\Sigma_{\{\bU_\alpha, n\}}$ (for a fixed $n \in {\mathbb N}$),  where the Hardy space $H^2_\cY$ is replaced by the weighted Bergman-Fock 
space $\cA_{n, \cY}(\free)$ \eqref{1.30pre}, where the observability operator $\cO_{C, A}$ \eqref{1.4pre} becomes the $n$-observability
operator $\cO_{n, C, \bA}$ \eqref{ncobsop}, where the transfer function $\Theta_\bU(\lambda)$ \eqref{1.5pre} becomes the
 family of formal-power-series transfer functions $\{ \Theta_{n, \bU_\alpha}(z) \}_{\alpha \in \free}$ \eqref{1.37pre}, where the shift 
 operator $S_\cY$ \eqref{shift} becomes the
right-shift operator tuple $S_{\bmu_n,R} = (S_{\bmu_n, R, 1}, \dots, S_{\bmu_n, R, d})$ on $\cA_{n,\cY}(\free)$ (see \eqref{bo-shift}),
and where the contraction operator $T$ becomes a $*$-$n$-hypercontractive operator tuple, i.e., an operator tuple 
$\bT = (T_1, \dots, T_n) \in \cL(\cH)^d$ such that 
\begin{equation}   \label{nhypertuple}
(I - B_\bT^*)^m[I]= \sum_{\alpha \in \free \colon |\alpha| \le m} (-1)^{|\alpha|} \left( \sbm{m \\ |\alpha|} \right) T^{\alpha^\top} T^{* \alpha} \succeq 0
\end{equation}
for $1 \le m \le n$, where in general $B_{\bT^*}[X] = \sum_{j=1}^d T_j X T_j^*$.

\smallskip

Our results are actually more general than the setting based on the reciprocal binomial coefficient weights $\bmu_n$. In section 
\ref{S:Hardy} we introduce a class of weights $\bo = \{ \omega_j\}_{j \ge 0}$ satisfying certain natural
admissibility conditions (automatically satisfied by $\bo = \bmu_n$ for all $n=1,2,\dots$) and eventually show how the whole
program \#1-- \#4 extends to this level of generality.
Let us point out that the setting $\bo = \bmu_n$ is also part of the setting studied in the recent paper of Popescu \cite{PopescuMANN}.
Many pieces of the program \#1--\#4 for the setting with $\bo = \bmu_n$ are also handled in Popescu's paper, but many of our results 
along with our point of view and approach are complementary.
The setting with a general admissible $\bo$ not equal to $\bmu_n$ however appears to be a strict generalization and is not covered by
Popescu's results.

We also discuss a generalization of the setting $\bo = \bmu_n$ whereby one fixes a free noncommutative function 
given by a formal power series in freely noncommuting arguments $z = (z_1, \dots, z_d)$
\begin{equation}   \label{p}
  p(z) = \sum_{\alpha \in \free} p_\alpha z^\alpha
 \end{equation}
 which is {\em regular}, meaning that 
 $$
 p_\emptyset = 0, \quad p_\alpha > 0 \; \text{ if } \; |\alpha| = 1, \quad p_\alpha \ge 0 \; \text{ for all } \; \alpha \in \free,
 $$
 the series \eqref{p} has positive radius of convergence $\rho > 0$:  {\em $p(\bA) = \sum_{\alpha \in \free} p_\alpha \bA^\alpha$ converges 
 absolutely whenever $\| \begin{bmatrix} A_1 & \cdots & A_d \end{bmatrix} \| < \rho$.}  We then consider the class of operator tuples 
 $\bT = (T_1, \dots, T_d)$ which are {\em  $*$-$(p,n)$-hypercontractive} in the sense that condition \eqref{nhypertuple} is replaced by
 \begin{equation}   \label{pnhypertuple}
 (1 - p)^m(B_{\bT^*})[I] \succeq 0 \; \text{ for } \; m=1, \dots, n.
 \end{equation}
 In Chapter 9 we sketch how to carry out the program \#1--\#4 for this setting.  This generalization originates in the work of Pott \cite{Pott} for the univariate
 case and is handled by  Popescu in \cite{PopescuMemoir2010} for the case $\bo = \bmu_1$ and in \cite{PopescuMANN} for the case $\bo= \bmu_n$.  
 The setting in \cite{PopescuMANN} has an additional layer of flexibility:
 there is incorporated a constraint on the freeness of the lack of commutativity in the components $T_1, \dots, T_d$ of the
 tuple $\bT$; in particular the set of constraints $T_i T_j = T_j T_i$ implies that one can pick up the commutative version of the setting
 as a special case.  Let us mention that the related Popescu paper \cite{PopescuTAMS2016} has still another layer of flexibility which allows one
 to pick up the commutative polydisk and the freely noncommutative ball as special cases.
 
 \smallskip
 
 This paper is organized as follows: 
 
 \smallskip
 
 After the present Introduction,  Chapter 2 recalls the notion of the 
noncommutative formal reproducing kernel Hilbert space (NFRKHS) from \cite{NFRKHS, Cuntz-scat} which will be a substantial
tool for the subsequent analysis here.  

\smallskip

The notion of contractive multiplier between two NFRKHSs works out well and is the topic of Chapter 3.
 We  specialize the general setting to the case of contractive multipliers from one Fock space 
into another as well as from a Fock space to a more general weighted Bergman-Fock spaces, with particular focus
on inner  multipliers (taken in various distinct senses).
Some of these results are obtained by using a noncommutative 
version of Leech theorem, \cite{Leech},  two different proofs of which are presented in Section \ref{S:Leech}.

\smallskip

Chapter 4 provides an elaboration of theme \#1 (backward-shift invariant subspaces and observability operators) 
in these more general settings. Much of the analysis hinges on the study of multivariable Stein equations and
connections with stability for discrete-time linear systems---a standard topic in classical linear system theory 
in the univariate setting (see e.g.\ the book of Dullerud--Paganini \cite{DulPag}).  Included here is an exposition of the basic
properties of the model shift-operator tuple on $H^2_{\bo, \cY}(\free)$ and a converse characterization of which
$\bo$-isometric operator tuples are unitarily equivalent to such a shift-operator tuple, as the shift part in the
Wold decomposition for a more general class of operator tuples which we call $\cC(\bo)$, thereby generalizing
results of Olofsson, Giselsson, and Wennman \cite{gisolof, olofw} and Eschmeier-Langend\"orfer \cite{EL}.
The analysis in this chapter builds on and clarifies
the material from \cite{Cuntz-scat, BBF1, BBF3}   treating the Fock-space case.

\smallskip

Chapter 5   moves on to an elaboration of theme \#2 (Beurling-Lax representation theorems for shift-invariant subspaces).  
For these more general settings Chapter 5 has results for (i) shift-invariant subspaces $\cM$
isometrically included in the noncommutative weighted Hardy space $H^2_{\bo, \cY}(\free)$ in terms of a McCullough-Trent
(McCT) inner multiplier $\Theta$ (a formal power series $\Theta$ for which the multiplication operator is a partial isometry), thereby
generalizing the result of McCullough-Trent \cite{MCT} to the multivariable noncommutative setting, 
(ii) for contractively-included shift-invariant subspaces $\cM$  in terms of a contractive multiplier $\Theta$,  thereby generalizing
results of de Branges-Rovnyak \cite{dBR1, dBR2} for the univariate case.  For both of these representations, we use as the
the Fock space $H^2_\cU(\free)$ as the input space for the multiplier (so $M_\Theta \colon H^2_\cU(\free) \to H^2_{\bo, \cY}(\free)$)
rather than the same weighted Bergman-Fock space 
but with a different coefficient space as the model for the input space (so $M_\Theta \colon H^2_{\bo, \cU}(\free) \to  H^2_{\bo, \cY}(\free)$)
as in some previous treatments (see \cite{BBIEOT, BBSzeged, PopescuTAMS2016}); this enables one to have a more natural
minimal set of hypotheses, as has already been seen in the nice abstract setup for commutative setting by Sarkar and coauthors
 \cite{sarkar, sarkarII, BEKS} and in
the noncommutative multivariable setting as here in work of the authors and Popescu \cite{BBCR, PopescuMANN}.

\smallskip

Chapter 6  also is concerned with theme \#2 but of a different flavor.  Namely, here we give our noncommutative multivariable version 
of the {\em quasi-wandering subspace} of Izuchi and others \cite{iziziz1, iziziz, chen}.

\smallskip

Chapter 7 is concerned exclusively with  Beurling-Lax representation results for
isometrically included subspaces $\cM$, but now in terms of a Bergman-inner multiplier $\Theta$ (i.e., $\Theta$ maps the constants
isometrically onto a wandering subspace for $\cM$---generalizing work of Aleman, Duren, Khavinson, Richter, Shapiro, Sundberg
\cite{ars, DKSS, DKSS1} to the noncommutative multivariable setting.   By using a whole family of Bergman inner functions rather than
a single Bergman inner function, we get a more orthogonal Beurling-Lax representation, closer to but more complicated than, the classical
case.  We also obtain analogues of the expansive multiplier property and some results on characterizations of Bergman-inner multipliers as 
extremal solutions of interpolation problems analogous to results of Duren, Hedenmalm, Khavinson, Shapiro, Sundberg, Vukoti\'c
\cite{heden1, DKSS, vuk} for the univariate case. 

\smallskip

Chapter 8 uses the results of Chapters 5 and 7 to flesh out themes \#3 and \#4 for our general noncommutative multivariable setting.
There are two distinct types of model theory depending on whether one uses the Beurling-Lax representation results of Chapter 5
or the Beurling-Lax representation results of Chapter 7.  The former includes model theory results for at least some classes of
completely noncoisometric $*$-$\bo$-hypercontractive tuples while the latter to this point is only for pure (or $*$-strongly stable)
$*$-$\bo$-hypercontractive operator tuples.  Our use of contractively included subspaces and Brangesian complementary spaces 
for the former  gives some complementary results and an alternative perspective to the results of Popescu in \cite{PopescuMANN}.

\smallskip

Chapter 9, as already mentioned, deals with the $(p,n)$ setting where the weighted Bergman-Fock space is adjusted to handle the model
theory for $*$-$(p,n)$-hypercontrac\-tive operator tuples $\bT$ as in \eqref{pnhypertuple}.  It turns out that, with the proper (not always
expected) adjustments, all the results for the $\bmu_n$ setting extend to this more general setting.
In this way we recover the operator-model theory results of Popescu \cite{PopescuMANN} for this setting.

\chapter{Formal Reproducing Kernel Hilbert Spaces}
\label{S:NFRKHS}

As was mentioned in Section \ref{S:standard}, the standard weighted
Bergman space $\cA_{n,\cY}$ can be viewed as a reproducing kernel
Hilbert space with reproducing kernel given by \eqref{1.7pre}.
It is useful to have a similar point of view for the weighted
Bergman-Fock spaces discussed in Section \ref{S:Fock}.
\section{Basic definitions}
In this section we review the notion of {\em formal reproducing kernel 
Hilbert space} developed in \cite[Section 3]{NFRKHS}.
\smallskip

Given a collection of freely noncommuting indeterminates $z = (z_{1}, 
\dots, z_{d})$, we suppose that we are given a Hilbert space $\cH$ 
whose elements are formal power series 
\begin{equation}   \label{elem}
f(z) = \sum_{\alpha \in 
\free} f_{\alpha} z^{\alpha} \in \cY \langle \langle z \rangle \rangle,\qquad f_{\alpha}\in\cY, 
\end{equation}
with coefficients from a coefficient Hilbert space $\cY$.  We say that $\cH$ is a NFRKHS ({\em 
noncommutative formal reproducing kernel Hilbert space}) if, for each 
$\beta \in \free$, the linear operator $\Phi_{\beta} \colon \cH 
\mapsto \cY$ defined by 
\begin{equation}   \label{phi}
\Phi_{\beta}: \; f(z) = \sum_{\alpha \in \free} f_{\alpha} 
z^{\alpha} \mapsto f_{\beta},
\end{equation}
is continuous.  As any such power series is completely 
determined by the list of its coefficients $\alpha \mapsto 
f_{\alpha}$ for $\alpha \in \free$, equivalently we can view the elements $f(z)$ as the 
functions $\alpha \mapsto f_{\alpha}$ on $\free$.  Hence, by the 
noncommutative Aronszajn 
theory of reproducing kernel Hilbert spaces (see e.g.~\cite[Theorem 
1.1]{NFRKHS}), there is a  positive kernel 
$K \colon \free \times \free \to \cL(\cY)$ so that $\cH$ is the 
reproducing kernel Hilbert space associated with $K$.  To spell this 
out,  in the present 
context we denote the value of $K$ at $(\alpha, \beta)$ by 
$K_{\alpha, \beta} \in 
\cL(\cY)$ rather than $K(\alpha, \beta)$.  Since we view an element $f \in \cH$ 
as a formal power series \eqref{elem} rather 
than as a function $\alpha \mapsto f_{\alpha}$ on $\free$, we write, for a 
given $\beta \in \free$ and $y \in \cY$, the element 
$\Phi_{\beta}^{*} y \in \cH$ as $\Phi_{\beta}^{*} y = K_{\beta}(\cdot) 
y$, where
\begin{equation}   \label{Kbeta}
K_{\beta}(z) y = \sum_{\alpha \in \free} K_{\alpha, \beta}y \, z^{\alpha}.
\end{equation}
Then the reproducing kernel property can be written as 
\begin{equation}  \label{reprod-prop}
    \langle f, \, K_{\beta}(\cdot) y \rangle_{\cH} =  \langle f, \, \Phi_{\beta}^{*} y \rangle_{\cH}
=\langle \Phi_{\beta}f, \, y \rangle_{\cY}=\langle f_{\beta}, \, y \rangle_{\cY}.
\end{equation}
We can make the notation more suggestive of the classical case as 
follows. Let $\bzeta = (\bzeta_{1}, \dots, \bzeta_{d})$ be a second $d$-tuple of 
noncommuting indeterminates.  
Given a coefficient Hilbert space $\cC$, we can
use the $\cC$-inner product to define pairings
$$
\langle \cdot, \cdot \rangle_{\cC \times \cC\langle \langle \bzeta \rangle
\rangle} \mapsto {\mathbb C} \langle \langle \zeta \rangle \rangle\quad\mbox{and}\quad
\langle \cdot, \cdot \rangle_{\cC\langle \langle \bzeta \rangle
\rangle\times\cC} \mapsto {\mathbb C} \langle \langle \zeta \rangle \rangle
$$
(where ${\mathbb C} \langle \langle \zeta \rangle \rangle$ is the
space of formal power series in the set of formal conjugate indeterminates
$\zeta = (\zeta_{1}, \dots, \zeta_{d})$ with coefficients in ${\mathbb C}$) by
$$
\big\langle c, \sum f_{\alpha} \bzeta^{\alpha}
\big\rangle_{\cC \times \cC\langle \langle \bzeta \rangle \rangle} = 
\sum\langle c, f_{\alpha} \rangle_{_{\cC}}\,  \zeta^{\alpha^{\top}},\quad
\big\langle \sum f_{\alpha} \zeta^{\alpha}, c
\big\rangle_{{\cC\langle \langle \zeta \rangle \rangle \times \cC}} = 
\sum\langle f_{\alpha}, c \rangle_{_{\cC}} \, \zeta^{\alpha}.
$$
These pairings can be seen as special cases of the more general pairing
\begin{equation}   \label{genpairing}
\bigg\langle \sum_{\alpha \in \free} f_{\alpha} \zeta^{\alpha}, \sum_{\beta 
\in \free} g_{\beta} \bzeta^{\beta} \bigg\rangle_{\cC\langle \langle 
\zeta \rangle\rangle \times \cC\langle \langle \bzeta \rangle \rangle} = 
\sum_{\alpha \in \free} \bigg[ 
\sum_{\beta, \gamma \colon \alpha =\gamma^{\top}  \beta  } \langle 
f_{\beta}, g_{\gamma} 
\rangle_{_{\cC}} \bigg] \zeta^{\alpha},
\end{equation}
which  can be written more suggestively as
\begin{equation}   \label{genpairing'}
\langle f(\zeta), g(\bzeta) \rangle_{\cC\langle \langle \zeta  \rangle \rangle \times \cC \langle \langle \bzeta \rangle \rangle}
= \big\langle \sum f_{\alpha} \zeta^{\alpha}, \, \sum
g_{\beta} \bzeta^{\beta} \big\rangle_{\cC\langle \langle \zeta \rangle \rangle \times \cC \langle \langle \bzeta \rangle \rangle}= 
g(\bzeta)^{*} f(\zeta)
\end{equation}
if we set
$$
  g(\bzeta)^{*} = \bigg( \sum g_{\beta} \bzeta^{\beta} \bigg)^{*} =
  \sum g_{\beta}^{*} \zeta^{\beta^{\top}},
$$
where we view $g_{\beta}^{*} \in \cL(\cC, {\mathbb C})$ as a linear 
functional on $\cC$ so that
$$
  g_{\beta}^{*} f_{\alpha} = \langle f_{\alpha}, g_{\beta}\rangle_{_{\cC}}\quad\mbox{for any}
  \quad f_{\alpha} \in \cC.
$$
Then, if $S(\zeta) \in \cL(\cU, \cY)\langle \langle \zeta \rangle \rangle$, $f(\zeta) \in \cU\langle 
\langle \zeta \rangle \rangle$ and $g(\bzeta)\in \cY\langle \langle 
\bzeta \rangle \rangle$, we see that
\begin{align}
 \langle S(\zeta) f(\zeta), \, g(\bzeta) \rangle_{\cY\langle \langle \zeta \rangle \rangle \times \cY \langle \langle \bzeta  \rangle \rangle} 
    & = g(\bzeta)^{*}\left( S(\zeta) f(\zeta) \right)
    = \left( g(\bzeta)^{*} S(\zeta) \right) f(\zeta)\notag  \\
   & = \langle f(\zeta), S(\zeta)^{*} g(\bzeta) \rangle_{\cU\langle 
    \langle \zeta \rangle \rangle \times \cU \langle \langle \bzeta  \rangle \rangle}.\label{MSadj}
\end{align}
The reproducing kernel property \eqref{reprod-prop} can be 
written more suggestively as
\begin{equation}   \label{reprod-prop'}
    \langle f, K(\cdot, \zeta) y \rangle_{\cH \times \cH\langle 
    \langle \bzeta \rangle \rangle} = \langle f(\zeta), y \rangle_{\cY\langle 
    \langle \zeta \rangle \rangle \times \cY},
\end{equation}
where we set
\begin{equation}   \label{formalkernel}
K(z,\zeta) = \sum_{\alpha, \beta \in \free} K_{\alpha, \beta}z^{\alpha} 
\bzeta^{\beta^{\top}} \in \cL(\cY)\langle \langle z, \bzeta \rangle 
\rangle.
\end{equation}
We note that, for each $y \in \cY$, the formal power series
$$
K(z,\zeta) y = \sum_{\alpha, \beta \in \free} K_{\alpha, \beta} y  \,
z^{\alpha} \bzeta^{\beta^{\top}} = \sum_{\beta \in \free}
\bigg[ \sum_{\alpha \in \free} K_{\alpha, \beta^{\top}}y \, z^{\alpha} \bigg] 
\bzeta^{\beta} 
$$
is an element of $ \cH\langle \langle \bzeta \rangle \rangle$.

\smallskip

The formal kernel \eqref{formalkernel} is of the 
so-called {\em hereditary form}, i.e., the formal power series 
\eqref{formalkernel} involves only noncommutative monomials in $z$ and 
$\bzeta$ of the form $z^{\alpha} \bzeta^{\beta^{\top}}$ (a monomial 
in $z$ followed by a monomial in $\bzeta$).  For our purposes here we define the space 
$\cL(\cY)\langle \langle z, \bzeta \rangle \rangle$ to consist only 
of such hereditary formal power series in the freely noncommuting 
indeterminates.  Here we are assuming that $z  = (z_{1}, \dots, 
z_{d})$ and $\bzeta = (\bzeta_{1}, \dots, \bzeta_{d})$ are each 
separately freely noncommuting indeterminates.  It will be convenient 
to assume in addition that $z_{k}$ commutes with each $\bzeta_{j}$
\begin{equation}   \label{hereditarynorm} \bzeta_{j} z_{k} = z_{k} \bzeta_{j}\quad 
 \text{for}\quad k,j = 1, \dots, d.
\end{equation}

The following summary (see Theorem 3.1 in \cite{NFRKHS}) of the structure of such NFRKHSs 
among other things  characterizes which formal power series $K(z,\zeta) \in \cL(\cY) 
\langle \langle z, \bzeta \rangle \rangle$ arise in this way from a NFRKHS.

\begin{theorem} \label{T:NFRKHS}
    Let $K(z, \zeta)$ of the form \eqref{formalkernel} be a given element of $\cL(\cY) 
    \langle \langle z, \bzeta \rangle \rangle$ where $z = (z_{1}, \dots, 
    z_{d})$ and $\bzeta = (\bzeta_{1}, \dots, \bzeta_{d})$ are $d$-tuples of 
    noncommuting indeterminates which pairwise commute with each 
    other.  Then the following are equivalent:
    \begin{enumerate}
	\item $K$ is the reproducing kernel for a unique NFRKHS denoted as $\cH(K)$.
	\item There is an auxiliary Hilbert space $\cH_{0}$ and a noncommutative formal power series 
$H(z) \in \cL(\cH_{0}, \cY)\langle \langle z \rangle \rangle$ such that
\begin{equation}   \label{Kol}
	  K(z,\zeta) = H(z) H(\zeta)^{*}
 \end{equation}
(Kolmogorov decomposition for $K$).  Here we use the conventions
$$
  (\zeta^{\beta})^{*} = \bzeta^{\beta^{\top}}, \quad H(\zeta)^*=\left( \sum H_{\beta} \zeta^{\beta}\right)^{*} = \sum 
  H_{\beta}^{*}\, \bzeta^{\beta^{\top}}.
$$

\item $K(z,\zeta)$ is a {\em positive formal kernel} in the sense that for all finitely supported $\cY$-valued functions $\alpha \mapsto
y_{\alpha} $ on $\free$,
\begin{equation}   \label{coef-pos}
   \sum_{\alpha, \beta \in \free} \langle K_{\alpha, \beta} \,
   y_{\alpha}, y_{\beta} \rangle_{_{\cY}} \ge 0.
\end{equation}
\end{enumerate}
Moreover, in this case the space $\cH(K)$ can be defined directly in 
terms of the formal power series $H(z)$ appearing in condition (2) via
$$
  \cH(K) = \{ H(z) h_{0} \colon h_{0} \in \cH_{0} \}
$$
with norm taken to be the ``lifted norm''
$$
  \| H(\cdot) h_{0} \|_{\cH(K)} = \| Q h_{0} \|_{\cH_{0}}
$$
where $Q$ is the orthogonal projection of $\cH_{0}$ onto the 
orthogonal complement of the kernel of the map $M_{H} \colon \cH_{0} 
\to \cY\langle \langle z \rangle \rangle$ given by $M_{H} \colon h_{0} 
\mapsto H(z) \cdot h_{0}$.
\end{theorem}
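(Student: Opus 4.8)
The plan is to prove the equivalence $(1)\Leftrightarrow(2)\Leftrightarrow(3)$ by a cyclic argument, following the blueprint of the classical Aronszajn theory adapted to the formal-power-series setting, and then to verify the concrete description of $\cH(K)$ via the lifted norm. I would organize the proof as $(2)\Rightarrow(1)\Rightarrow(3)\Rightarrow(2)$, since this routes the most constructive implication first.

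First, $(2)\Rightarrow(1)$ and the formula for $\cH(K)$: given the Kolmogorov decomposition $K(z,\zeta)=H(z)H(\zeta)^*$ with $H(z)\in\cL(\cH_0,\cY)\langle\langle z\rangle\rangle$, define $M_H\colon\cH_0\to\cY\langle\langle z\rangle\rangle$ by $M_H h_0 = H(z)h_0$, let $\cN=\ker M_H$, and set $\cH(K)=\operatorname{Ran} M_H$ with the transported norm $\|H(\cdot)h_0\|_{\cH(K)}=\|Qh_0\|_{\cH_0}$ where $Q$ is the projection onto $\cH_0\ominus\cN$. One checks that this norm is well-defined (independent of the representer $h_0$), that $\cH(K)$ is complete (it is isometrically a quotient $\cH_0/\cN$), and that its elements are genuine formal power series in $\cY\langle\langle z\rangle\rangle$. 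The key computation is that, writing $H(z)=\sum_\alpha H_\alpha z^\alpha$, one has $K_{\alpha,\beta}=H_\alpha H_\beta^*$, so the element $K_\beta(\cdot)y=\sum_\alpha K_{\alpha,\beta}y\,z^\alpha = H(z)H_\beta^* y$ lies in $\cH(K)$, and for $f=H(\cdot)h_0$ the inner product $\langle f,K_\beta(\cdot)y\rangle_{\cH(K)}=\langle Qh_0,\,QH_\beta^*y\rangle_{\cH_0}=\langle Qh_0,H_\beta^*y\rangle_{\cH_0}=\langle H_\beta Qh_0,y\rangle_\cY=\langle f_\beta,y\rangle_\cY$, which is exactly the reproducing property \eqref{reprod-prop}; continuity of each $\Phi_\beta$ is immediate from this. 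Uniqueness of the NFRKHS with kernel $K$ follows from the standard argument that the finite linear span of the $K_\beta(\cdot)y$ is dense and the reproducing property fixes the inner product on this span.

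Next, $(1)\Rightarrow(3)$: if $\cH(K)$ exists with reproducing kernel $K$, then for any finitely supported $\alpha\mapsto y_\alpha$ the element $g=\sum_\alpha K_\alpha(\cdot)y_\alpha\in\cH(K)$ satisfies $\langle g,g\rangle_{\cH(K)}=\sum_{\alpha,\beta}\langle K_\beta(\cdot)y_\beta,\,K_\alpha(\cdot)y_\alpha\rangle$, and repeated use of \eqref{reprod-prop} rewrites this as $\sum_{\alpha,\beta}\langle K_{\alpha,\beta}y_\alpha,y_\beta\rangle_\cY\ge 0$, giving \eqref{coef-pos}. Finally, $(3)\Rightarrow(2)$: from positivity of the kernel one builds the Kolmogorov decomposition in the usual GNS manner — form the algebraic vector space of finitely supported functions $\free\to\cY$, equip it with the (possibly degenerate) form $\langle e_\alpha\otimes y,\,e_\beta\otimes y'\rangle = \langle K_{\alpha,\beta}y,y'\rangle_\cY$, mod out the null space and complete to get $\cH_0$, and define $H_\alpha\in\cL(\cH_0,\cY)$ as the adjoint of $y\mapsto[e_\alpha\otimes y]$; then $H_\alpha H_\beta^* = K_{\alpha,\beta}$ by construction, which assembles into $K(z,\zeta)=H(z)H(\zeta)^*$ after keeping careful track of the transpose conventions $(\zeta^\beta)^* = \bzeta^{\beta^\top}$.

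The main obstacle I anticipate is purely bookkeeping rather than conceptual: one must be scrupulous about the formal-variable conventions — the hereditary ordering $z^\alpha\bzeta^{\beta^\top}$, the commuting relation \eqref{hereditarynorm}, and the transpose appearing in $K(z,\zeta)=\sum K_{\alpha,\beta}z^\alpha\bzeta^{\beta^\top}$ versus $K_\beta(z)y=\sum_\alpha K_{\alpha,\beta}y\,z^\alpha$ — so that the pairing \eqref{reprod-prop'} matches \eqref{reprod-prop} and the Kolmogorov identity \eqref{Kol} is literally correct as an identity of hereditary formal power series. Since all three equivalences and the model-space description are exactly Theorem 3.1 of \cite{NFRKHS}, I would present the above as a guided recollection of that proof, emphasizing the reproducing computation in $(2)\Rightarrow(1)$ and the GNS construction in $(3)\Rightarrow(2)$, and otherwise refer to \cite{NFRKHS} for the routine verifications.
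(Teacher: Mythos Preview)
Your proposal is correct and follows the standard Aronszajn--Kolmogorov blueprint. Note, however, that the paper itself does not prove this theorem: it is stated as a summary result with a reference to Theorem~3.1 in \cite{NFRKHS}, and no proof is given in the paper. Your sketch of the cyclic argument $(2)\Rightarrow(1)\Rightarrow(3)\Rightarrow(2)$, together with the lifted-norm construction of $\cH(K)$, is exactly the kind of proof one would expect to find in \cite{NFRKHS}, and you already acknowledge this at the end of your write-up.
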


The more general pairings \eqref{genpairing} and \eqref{genpairing'} are involved 
in the formulation of the following useful more general form of the reproducing 
kernel property.

\begin{proposition}  \label{P:genreprod}
Let $\cH(K)$ be the NFRKHS associated with a positive formal kernel  $K\in\cL(\cY)
    \langle \langle z, \bzeta \rangle \rangle$. Then, for all $f\in \cH(K)$ and 
$g(\zeta) \in \cY\langle \langle \bzeta \rangle \rangle$, we have
\begin{equation}   \label{genreprod}
    \langle f,\, K(\cdot, \zeta) g(\zeta) \rangle_{\cH(K) \times 
    \cH(K) \langle \langle \bzeta \rangle \rangle} =
    \langle f(\zeta), g(\zeta) \rangle_{\langle \cY\langle \langle 
    \zeta \rangle \rangle \times \cY \langle \langle \bzeta \rangle 
    \rangle}.
\end{equation}
\end{proposition}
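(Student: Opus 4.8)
The plan is to reduce the general identity \eqref{genreprod} to the elementary coefficientwise reproducing property \eqref{reprod-prop}, namely $\langle f,\Phi_{\beta}^{*}y\rangle_{\cH(K)}=\langle f_{\beta},y\rangle_{\cY}$ for $\beta\in\free$ and $y\in\cY$, by expanding $g$ into its formal power series and doing the word bookkeeping. By the definition of the pairings in \eqref{genpairing}--\eqref{genpairing'}, both sides of \eqref{genreprod} are formal power series in the conjugate indeterminates $\zeta=(\zeta_{1},\dots,\zeta_{d})$ with scalar coefficients, so it suffices to check that the coefficient of $\zeta^{\alpha}$ agrees on the two sides for each $\alpha\in\free$.

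First I would write $g(\zeta)=\sum_{\gamma\in\free}g_{\gamma}\bzeta^{\gamma}$ with $g_{\gamma}\in\cY$ and, using the expansion \eqref{formalkernel} of $K$ together with $\bzeta^{\beta^{\top}}\bzeta^{\gamma}=\bzeta^{\beta^{\top}\gamma}$, regroup the product by powers of $\bzeta$:
\[
K(z,\zeta)\,g(\zeta)=\sum_{\delta\in\free}\Big(\sum_{\substack{\beta,\gamma\in\free \\ \beta^{\top}\gamma=\delta}}K_{\beta}(z)g_{\gamma}\Big)\bzeta^{\delta},
\qquad\text{with}\qquad K_{\beta}(z)g_{\gamma}=\Phi_{\beta}^{*}g_{\gamma}\in\cH(K).
\]
For a fixed word $\delta$ the inner sum runs over the finitely many factorizations of $\delta$ into a prefix $\beta^{\top}$ followed by a suffix $\gamma$, hence is a finite $\cH(K)$-linear combination of the vectors $\Phi_{\beta}^{*}g_{\gamma}$; in particular $K(\cdot,\zeta)g(\zeta)$ genuinely lies in $\cH(K)\langle\langle\bzeta\rangle\rangle$, so the left-hand pairing in \eqref{genreprod} is meaningful and no convergence question arises. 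Pairing with $f\in\cH(K)$ via \eqref{genpairing} (specialized to $\cC=\cH(K)$, with $f$ in the empty-word position) and applying \eqref{reprod-prop} to each term yields
\[
\langle f,\,K(\cdot,\zeta)g(\zeta)\rangle_{\cH(K)\times\cH(K)\langle\langle\bzeta\rangle\rangle}
=\sum_{\delta\in\free}\Big(\sum_{\substack{\beta,\gamma\in\free \\ \beta^{\top}\gamma=\delta}}\langle f_{\beta},g_{\gamma}\rangle_{\cY}\Big)\zeta^{\delta^{\top}}.
\]
Finally I would reindex by $\alpha:=\delta^{\top}$, using that word transposition is an involution with $(\beta^{\top}\gamma)^{\top}=\gamma^{\top}\beta$: the constraint $\beta^{\top}\gamma=\delta$ becomes $\gamma^{\top}\beta=\alpha$ and $\zeta^{\delta^{\top}}=\zeta^{\alpha}$, so the above equals $\sum_{\alpha\in\free}\big(\sum_{\gamma^{\top}\beta=\alpha}\langle f_{\beta},g_{\gamma}\rangle_{\cY}\big)\zeta^{\alpha}$, which is precisely the expansion of the right-hand side $\langle f(\zeta),g(\zeta)\rangle$ prescribed by \eqref{genpairing}. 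This proves \eqref{genreprod}.

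I do not expect a genuine obstacle here. No analysis is involved, since for each fixed $\alpha$ (equivalently $\delta$) all the sums above are finite, and since $f$ is a fixed element of $\cH(K)$ no density or approximation argument is needed; the only content beyond the single-vector reproducing property \eqref{reprod-prop} is the careful tracking of the word operations (transpose, concatenation, prefix/suffix factorization) and the matching of the two distinct pairing conventions, so the write-up amounts to a transcription of this bookkeeping. One could instead insert the Kolmogorov decomposition $K(z,\zeta)=H(z)H(\zeta)^{*}$ of Theorem \ref{T:NFRKHS} and argue through the multiplication operator $M_{H}$, but the coefficientwise route above is more direct.
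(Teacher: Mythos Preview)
Your proof is correct. The paper takes a somewhat different route: rather than expanding everything in coefficients, it first observes that the single-vector reproducing property \eqref{reprod-prop'} can be rewritten as $(K(\cdot,\zeta)y)^{*}f=y^{*}f(\zeta)$ for all $y\in\cY$, whence $K(\cdot,\zeta)^{*}f=f(\zeta)$ as an identity in $\cY\langle\langle\zeta\rangle\rangle$; the general formula \eqref{genreprod} then follows in one line from the adjoint relation \eqref{MSadj} applied with $S(\zeta)=K(\cdot,\zeta)$. Your coefficientwise argument is the unpacking of that same computation: the identity $K(\cdot,\zeta)^{*}f=f(\zeta)$ is precisely your application of \eqref{reprod-prop} to each $K_{\beta}(\cdot)g_{\gamma}$, and your reindexing by $\alpha=\delta^{\top}$ is the content of the adjoint move. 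The paper's formulation is more compact, while yours makes explicit that only finite sums occur at each coefficient and that no approximation is needed; both are complete.
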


\begin{proof}
    Note first that the reproducing property \eqref{reprod-prop'}  
    can be rewritten as
 $$
 (K(\cdot, \zeta) y)^{*} f = y^{*} f(\zeta)\quad\mbox{for all}\quad y \in \cY,
$$
from which we deduce that $K(\cdot, \zeta)^{*} f = f(\zeta)$ for any $f \in \cH(K)$. Then we have
\begin{align*}
\langle f(\zeta), \, g(\zeta) \rangle_{\cY\langle \langle \zeta
     \rangle \rangle \times \cY \langle \langle \bzeta \rangle \rangle}&=
\langle K(\cdot, \zeta)^{*} f, \, g(\zeta) \rangle_{\cY\langle \langle \zeta
     \rangle \rangle \times \cY \langle \langle \bzeta \rangle \rangle}\\
&=\langle f, \, K(\cdot, \zeta) g(\zeta) \rangle_{\cH(K) \times
     \cH(K)\langle \langle \bzeta \rangle \rangle}
\end{align*}
for any $g(\zeta) \in \cY\langle \langle \bzeta \rangle \rangle$ which completes the proof.
\end{proof}

The following general principle (a variation of the last statement 
of Theorem \ref{T:NFRKHS}) will be useful in the sequel.

\begin{proposition}  \label{P:principle}
Suppose that $H(z) = \sum_{\alpha \in \free} H_{\alpha} z^{\alpha} \in \cL(\cX, 
\cY)\langle \langle z \rangle \rangle$ is a formal power series such that the set 
$$
\cM = \{ H( \cdot) x \colon x \in \cX \} \subset \cY \langle 
  \langle z \rangle \rangle
$$
is a Hilbert space with inner product $\langle \cdot, 
\cdot \rangle_{\cM}$ satisfying
\begin{equation} \label{H-principle}
    \langle H(\cdot) x, H(\cdot) x' \rangle_{\cM} = \langle G x, x' 
    \rangle_{\cX}
\end{equation}
where $G$ is an invertible positive definite operator on $\cX$.  
Then $\cM$ is a NFRKHS with formal reproducing kernel $K_{\cM}(z, \zeta)$ given by
 $$   K_{\cM}(z, \zeta) = H(z) G^{-1} H(\zeta)^{*}.$$
\end{proposition}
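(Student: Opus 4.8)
The plan is to reduce the statement to a standard Kolmogorov factorization and then quote the last assertion of Theorem \ref{T:NFRKHS}. First I would absorb $G$ into the factor: since $G$ is positive definite and invertible, $G^{1/2}$ and $G^{-1/2}=(G^{1/2})^{-1}$ are bounded, boundedly invertible and self-adjoint on $\cX$, so I set $\widetilde H(z):=H(z)G^{-1/2}\in\cL(\cX,\cY)\langle\langle z\rangle\rangle$. Because $G^{-1/2}$ is a bijection of $\cX$ we have $\cM=\{\widetilde H(\cdot)x\colon x\in\cX\}$, and using \eqref{H-principle} (with $x,x'$ replaced by $G^{-1/2}x,G^{-1/2}x'$) together with self-adjointness of $G^{\pm1/2}$,
\[
\langle \widetilde H(\cdot)x,\widetilde H(\cdot)x'\rangle_{\cM}=\langle G\,G^{-1/2}x,\,G^{-1/2}x'\rangle_{\cX}=\langle G^{1/2}x,\,G^{-1/2}x'\rangle_{\cX}=\langle x,x'\rangle_{\cX}.
\]
Thus the multiplication map $M_{\widetilde H}\colon x\mapsto \widetilde H(z)\cdot x$ is an isometry from $\cX$ onto the (complete) space $\cM$, hence a unitary operator; in particular $M_{\widetilde H}$ is injective, so $\ker M_{\widetilde H}=\{0\}$. (This unitarity also immediately gives boundedness of each coefficient functional $\Phi_\beta$ on $\cM$, since $\|f_\beta\|_\cY=\|\widetilde H_\beta x\|_\cY\le\|\widetilde H_\beta\|\,\|f\|_{\cM}$ for $f=\widetilde H(\cdot)x$, confirming that $\cM$ is a NFRKHS.)

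Next I would apply Theorem \ref{T:NFRKHS}. The formal power series $K_{\cM}(z,\zeta):=\widetilde H(z)\widetilde H(\zeta)^{*}$ lies in $\cL(\cY)\langle\langle z,\bzeta\rangle\rangle$ (it is of hereditary form, every monomial being $z^{\alpha}\bzeta^{\beta^{\top}}$), so condition (2) of Theorem \ref{T:NFRKHS} holds with auxiliary space $\cH_{0}=\cX$ and Kolmogorov factor $\widetilde H$. Hence $K_{\cM}$ is the reproducing kernel of a unique NFRKHS $\cH(K_{\cM})$, and the ``moreover'' part of the theorem identifies $\cH(K_{\cM})=\{\widetilde H(z)x\colon x\in\cX\}$ with lifted norm $\|\widetilde H(\cdot)x\|_{\cH(K_{\cM})}=\|Qx\|_{\cX}=\|x\|_{\cX}$, where $Q$ is the projection onto $(\ker M_{\widetilde H})^{\perp}=\cX$, i.e.\ $Q=I_{\cX}$. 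Comparing with the first paragraph, this says precisely that $\cH(K_{\cM})$ and $(\cM,\langle\cdot,\cdot\rangle_{\cM})$ are the same set with the same norm, so $\cM=\cH(K_{\cM})$ and its formal reproducing kernel is $K_{\cM}(z,\zeta)=\widetilde H(z)\widetilde H(\zeta)^{*}$.

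Finally I would unwind this formula. Writing $\widetilde H(\zeta)=\sum_{\beta}\widetilde H_{\beta}\zeta^{\beta}$ with $\widetilde H_{\beta}=H_{\beta}G^{-1/2}$, the adjoint convention for formal power series gives $\widetilde H(\zeta)^{*}=\sum_{\beta}\widetilde H_{\beta}^{*}\,\bzeta^{\beta^{\top}}=\sum_{\beta}G^{-1/2}H_{\beta}^{*}\,\bzeta^{\beta^{\top}}=G^{-1/2}H(\zeta)^{*}$, again by self-adjointness of $G^{-1/2}$; therefore $K_{\cM}(z,\zeta)=H(z)G^{-1/2}\,G^{-1/2}H(\zeta)^{*}=H(z)G^{-1}H(\zeta)^{*}$, as claimed. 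As an alternative to invoking Theorem \ref{T:NFRKHS} one can compute the kernel directly from \eqref{Kbeta}: for $\beta\in\free$ and $y\in\cY$ one seeks $\Phi_{\beta}^{*}y$ in the form $H(\cdot)w$, and the requirement $\langle H(\cdot)x,H(\cdot)w\rangle_{\cM}=\langle x,H_{\beta}^{*}y\rangle_{\cX}$ for all $x\in\cX$ forces $Gw=H_{\beta}^{*}y$, whence $w=G^{-1}H_{\beta}^{*}y$ and $(K_{\cM})_{\alpha,\beta}=H_{\alpha}G^{-1}H_{\beta}^{*}$, giving the same answer via \eqref{formalkernel}. I do not anticipate a genuine obstacle here; the only points demanding care are the use of \emph{invertibility} of $G$ (so that $G^{-1/2}$ exists, making $M_{\widetilde H}$ injective and the coefficient functionals bounded) and keeping the formal-power-series adjoint conventions straight so that the kernel comes out in hereditary form with the correct operator coefficients.
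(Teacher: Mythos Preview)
Your proof is correct. The paper takes a more direct route: it simply verifies the reproducing-kernel identity \eqref{reprod-prop'} by computing, for $x\in\cX$ and $y\in\cY$,
\[
\langle H(\zeta)x,\,y\rangle_{\cY\langle\langle\zeta\rangle\rangle\times\cY}
=\langle Gx,\,G^{-1}H(\zeta)^{*}y\rangle_{\cX\times\cX\langle\langle\bzeta\rangle\rangle}
=\langle H(\cdot)x,\,H(\cdot)G^{-1}H(\zeta)^{*}y\rangle_{\cM\times\cM\langle\langle\bzeta\rangle\rangle},
\]
the last step using \eqref{H-principle}; this shows in one stroke that $H(z)G^{-1}H(\zeta)^{*}$ has the kernel property. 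Your approach instead normalizes via $\widetilde H=HG^{-1/2}$ to reduce to the case $G=I$ and then quotes the ``moreover'' clause of Theorem \ref{T:NFRKHS}. This is a perfectly good alternative; it makes explicit that the proposition is really the Kolmogorov factorization statement in disguise, at the modest cost of a few more lines. The coefficient-by-coefficient computation you sketch at the end (solving $Gw=H_{\beta}^{*}y$ for $\Phi_{\beta}^{*}y$) is essentially the paper's argument unpacked termwise.
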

\begin{proof}
    For $x \in \cX$ and $y \in \cY$, we compute
\begin{align*}
    \langle H(\zeta)x, y \rangle_{\cY\langle \langle \zeta \rangle 
    \rangle\times \cY}  
  & = \langle Gx, G^{-1} H(\zeta)^{*} y \rangle_{\cX \times \cX 
  \langle \langle \bzeta \rangle \rangle} \\
&  = \langle H(\cdot) x, H(\cdot) G^{-1} H(\zeta)^{*} y 
  \rangle_{\cM \times \cM \langle \langle \bzeta \rangle \rangle}
\end{align*}
where the last step follows from the hypothesis \eqref{H-principle}. 
The result follows.
\end{proof}
\begin{remark}
{\rm Just as in the classical case, if $K$ is the 
reproducing kernel for the NFRKHS $\cH(K)$, then there is a canonical 
choice of $H(z)$ which yields the factorization \eqref{Kol} in part 
(2) of Theorem \ref{T:NFRKHS}, namely:  take $\cH_{0}$ equal to the space 
$\cH(K)$ itself and set $H(z) = \sum_{\beta \in \free} 
\Phi_{\beta} z^{\beta}$ where $\Phi_{\beta}$ is the
coefficient-evaluation functional \eqref{phi}.}
\label{R:fun}
\end{remark}
\begin{example}  \label{E:1.1}
The Fock space $H^{2}_{\cY}(\free)$ introduced in Section
    \ref{S:Fock0} is the NFRKHS $\cH(k_{\rm nc, Sz} I_{\cY})$
    where $k_{\rm nc,Sz}$ is the {\em noncommutative Szeg\H{o} kernel}
\begin{equation}
    k_{\rm nc, Sz}(z,\zeta) =\sum_{\alpha \in \free} z^{\alpha}
    \bzeta^{\alpha^{\top}}.
\label{ncsz}
\end{equation}
\end{example}

\begin{example}  \label{E:1.2}
More generally, the weighted Bergman-Fock space
$\cA_{n,\cY}(\free)$ \eqref{1.30pre} is the NFRKHS $\cH(k_{{\rm nc},n}
I_{\cY})$ where the {\em noncommutative weight-$n$ Bergman
kernel} $k_{{\rm nc },n}(z,\zeta)$ is defined by
\begin{equation}  \label{nc-n-Berg-ker}
    k_{{\rm nc }, n}(z,\zeta) = \sum_{\alpha \in \free}
   \mu_{n,|\alpha|}^{-1} z^{\alpha} \bzeta^{\alpha^{\top}},
\qquad\mu_{n,|\alpha|}=\frac{|\alpha|!(n-1)!}{(|\alpha|+n-1)!}.
\end{equation}
The reproducing kernel property will be verified in Section \ref{S:Hardy} for a more general setting.
We next point out two identities relating the kernels \eqref{ncsz} and \eqref{nc-n-Berg-ker}:
\begin{align}
&k_{\rm{nc}, n}(z,\zeta) - \sum_{j=1}^{d} \bzeta_{j} k_{\rm{nc}, n}(z,\zeta)
z_{j} = k_{\rm{nc}, n-1}(z, \zeta),\label{kern/n-1}\\
&k_{{\rm nc}, n}(z,\zeta)
= \sum_{\alpha \in \free} \mu_{n-1, |\alpha|}^{-1} \bzeta^{\alpha^{\top}} k_{\rm nc,
Sz}(z, \zeta) z^{\alpha}.\label{ncsz1}
\end{align}
To verify \eqref{kern/n-1} and \eqref{ncsz1}, it suffices to show that the coefficients of $z^{\alpha} \bzeta^{\alpha^{\top}}$ on both sides
are equal for each $\alpha\in\free$. Due to \eqref{nc-n-Berg-ker} and \eqref{ncsz}, the latter amounts to the respective equalities
$$
\mu_{n,|\alpha|}^{-1}-\mu_{n,|\alpha|-1}^{-1}=\mu_{n-1,|\alpha|}^{-1}\quad\mbox{and}\quad 
\mu^{-1}_{n,|\alpha|}=\mu^{-1}_{n-1,0}+\mu^{-1}_{n-1,1}+\ldots+\mu^{-1}_{n-1,|\alpha|}.
$$
By the definition \eqref{1.6pre} of $\mu_{n,|\alpha|}$, 	the first equality is the binomial-coefficient identity 
$\bcs{ |\alpha| + n -1\\ n-1}- \bcs{|\alpha|+n-2 \\ n-1}=\bcs{|\alpha|+n-2 \\ n-2}$, 
while the second follows from the first by an induction argument. We finally note that letting $n=1$ in \eqref{ncsz1} gives
\begin{equation}   \label{Sz-id}
 k_{\rm nc, Sz}(z, \zeta) - \sum_{k=1}^{d} \bzeta_{k} k_{\rm nc, Sz}(z, \zeta) z_{k} =1.
 \end{equation}
We shall use formal positive kernels \eqref{nc-n-Berg-ker} as a tool for
 obtaining our multivariable Beurling-Lax theorem for the space
 $\cA_{n, \cY}(\free)$ and the operator model theory for freely noncommuting
 $n$-hypercontractive operator $d$-tuples below.
 \end{example}

\begin{definition}
\label{kercon}
We say that the nc positive kernel $K(z,\zeta)$ in free indeterminates $z = (z_1, \dots, z_d)$ and
$\overline{\zeta} = (\overline{\zeta_1}, \dots, \overline{\zeta_d})$ is a {\em contractive kernel} if  
the space $\cH(K)$ is  invariant under the  right coordinate-variable multipliers 
$$
M^{r}_{z_j}: \, f(z)\mapsto f(z)\cdot z_j\quad\mbox{for}\quad j=1,\ldots,d
$$ 
and   the tuple ${\bf M}^{r}_z=(M^{r}_{z_1},\ldots,M^{r}_{z_d})$ is a row contraction, 
i.e., the block row matrix $M^r_z:=\begin{bmatrix} M^{r}_{z_{1}} & \cdots & 
M^{r}_{z_{d}} \end{bmatrix}$ is a contraction operator from $\cH(K)^{d}$ to $\cH(K)$: 
\begin{equation}  \label{dec22}
\|M^{r}_{z_1}f_1+ \cdots  + M^{r}_{z_d}f_d \| \le 
\|f_1\|^2+ \cdots + \|f_d\|_{\cM} ^2 \text{ \; for all }
f_1, \dots  f_d\in \cH(K).
\end{equation}
\end{definition}
\noindent
We have the following characterization of contractive formal positive kernels $K$.
\begin{proposition}   \label{P:referee}
A given positive formal kernel $K$ is a contractive formal positive kernel if and only if
$K$ has the form
    \begin{equation}   \label{kcm2}
 K(z, \zeta) = G(z)( k_{\rm nc, Sz}(z,\zeta) \otimes I_\cU) G(\zeta)^{*}
    \end{equation}
    for some $G\in \cL(\cU, \cY)\langle\langle z\rangle\rangle$
    and some auxiliary Hilbert space $\cU$.
\end{proposition}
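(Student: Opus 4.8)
The plan is to establish the two implications separately, the harder direction being the construction of the representation \eqref{kcm2} from the contractivity hypothesis. For the easy direction, suppose $K$ has the form \eqref{kcm2} for some $G \in \cL(\cU,\cY)\langle\langle z\rangle\rangle$. Then by Theorem \ref{T:NFRKHS} combined with Proposition \ref{P:principle} (applied to the NFRKHS $\cH(k_{\rm nc,Sz}\otimes I_\cU) = H^2_\cU(\free)$ composed with $M_G$), the space $\cH(K)$ is realized as $\{ G(z)\cdot h \colon h \in H^2_\cU(\free)\}$ with the lifted norm. The right coordinate multipliers $M^r_{z_j}$ on $H^2_\cU(\free)$ form a row isometry (in fact the $d$ operators $f\mapsto f z_j$ have orthogonal ranges summing to the subspace of series with empty-word coefficient zero), so $\bigl\| \sum_j M^r_{z_j} h_j\bigr\|^2_{H^2_\cU(\free)} = \sum_j \|h_j\|^2$. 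Since $M_G$ intertwines the right shift on $H^2_\cU(\free)$ with $M^r_{z_j}$ on $\cH(K)$ and is a coisometry onto $\cH(K)$ (by the lifted-norm construction), contractivity of the tuple ${\bf M}^r_z$ on $\cH(K)$ follows: pull back the $f_i$ to preimages $h_i$ in $H^2_\cU(\free)$ of equal or smaller norm, apply the row-isometry identity, and push forward.

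For the converse — the main content — I would argue as follows. Assume $\cH(K)$ is invariant under each $M^r_{z_j}$ and the row operator $\begin{bmatrix} M^r_{z_1} & \cdots & M^r_{z_d}\end{bmatrix}\colon \cH(K)^d \to \cH(K)$ is a contraction. The standard move is to translate the contractivity of the row shift into a positivity statement for a kernel built from $K$ and then apply the Kolmogorov decomposition (part (2) of Theorem \ref{T:NFRKHS}). Concretely, contractivity of the row multiplier is equivalent, via the reproducing-kernel identity $\langle M^r_{z_j} f, K(\cdot,\zeta)y\rangle = \langle f, (M^r_{z_j})^* K(\cdot,\zeta)y\rangle$ and the computation $(M^r_{z_j})^* K(\cdot,\zeta)y = \overline{\zeta_j}\, K(\cdot,\zeta) y$ (the adjoint of right multiplication by $z_j$ on the kernel acts as left multiplication by $\overline\zeta_j$ in the $\bzeta$ slot), to the statement that the formal kernel
$$
 K(z,\zeta) - \sum_{j=1}^d \bzeta_j\, K(z,\zeta)\, z_j
$$
is a positive formal kernel. (One checks this by testing against finitely supported tuples and using the definition \eqref{dec22} together with \eqref{coef-pos}; the cross terms organize exactly into the row-contraction inequality.) By the equivalence (1)$\Leftrightarrow$(2) in Theorem \ref{T:NFRKHS}, this positive kernel admits a Kolmogorov decomposition $K(z,\zeta) - \sum_j \bzeta_j K(z,\zeta) z_j = G(z) G(\zeta)^*$ for some $G \in \cL(\cU,\cY)\langle\langle z\rangle\rangle$ and auxiliary Hilbert space $\cU$.

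It remains to solve the resulting formal difference equation. Writing $K(z,\zeta) = \sum_{\alpha,\beta} K_{\alpha,\beta} z^\alpha \bzeta^{\beta^\top}$ and $G(z)G(\zeta)^* = \sum_{\alpha,\beta} (G G^*)_{\alpha,\beta} z^\alpha \bzeta^{\beta^\top}$, the relation $K - \sum_j \bzeta_j K z_j = GG^*$ becomes, coefficient by coefficient, a recursion that expresses $K_{\alpha,\beta}$ in terms of $(GG^*)_{\alpha',\beta'}$ with shorter words: if neither $\alpha$ nor $\beta$ is empty, write $\alpha = \alpha' i$, $\beta = \beta' i'$ and match — the $\sum_j \bzeta_j(\cdots)z_j$ term contributes only when the last letter of $\alpha$ equals the last letter of $\beta$, peeling one letter off each side. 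Iterating, one gets $K_{\alpha,\beta} = \sum (GG^*)_{\gamma,\delta}$ where the sum is over all ways of peeling a common suffix, which is precisely the coefficient of $z^\alpha\bzeta^{\beta^\top}$ in $G(z)(k_{\rm nc,Sz}(z,\zeta)\otimes I_\cU)G(\zeta)^*$ once one uses $k_{\rm nc,Sz}(z,\zeta) = \sum_{\eta\in\free} z^\eta\bzeta^{\eta^\top}$ and the identity \eqref{Sz-id}. Thus $K(z,\zeta) = G(z)(k_{\rm nc,Sz}(z,\zeta)\otimes I_\cU)G(\zeta)^*$, completing the proof.

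The step I expect to be the main obstacle is the careful bookkeeping in translating the operator inequality \eqref{dec22} into the positivity of $K(z,\zeta) - \sum_j \bzeta_j K(z,\zeta) z_j$ as a formal kernel: one must verify that $(M^r_{z_j})^*$ acts on reproducing-kernel elements by $K(\cdot,\zeta)y \mapsto \overline{\zeta_j} K(\cdot,\zeta)y$, handle the domains correctly (the $M^r_{z_j}$ are everywhere defined and bounded on $\cH(K)$ by the invariance hypothesis plus closed graph), and confirm that the quadratic form built from finitely supported data matches \eqref{coef-pos} for the difference kernel. The solvability of the formal recursion and its identification with the Szeg\H{o}-kernel formula is then a routine, if slightly intricate, word-combinatorics computation that I would not grind through in detail.
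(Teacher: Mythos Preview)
Your proposal is correct and follows essentially the same route as the paper: compute $(M^r_{z_j})^* K(\cdot,\zeta)y = \overline{\zeta_j}\, K(\cdot,\zeta)y$, translate row-contractivity into positivity of $K - \sum_j \bzeta_j K z_j$, take a Kolmogorov factorization $GG^*$, and solve the resulting recursion (the paper does this last step by iterating the identity $N$ times and passing to the limit, which is exactly your suffix-peeling computation written globally). One small slip: your invocation of Proposition~\ref{P:principle} for the easy direction is not quite apt, since that proposition concerns a constant invertible operator on a fixed state space rather than a multiplication operator on $H^2_\cU(\free)$; the paper instead argues directly that $\cH(K)$ consists of elements $G(z)g(z)$ with $g\in H^2_\cU(\free)$, checks $M^r_{z_j}$-invariance from that, and reads off the row-contraction inequality by combining \eqref{Sz-id} with the adjoint formula \eqref{adj-action}.
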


    \begin{proof} We claim that condition \eqref{dec22} along with shift invariance of $\cH(K)$ is equivalent 
    to the condition
   \begin{equation}   \label{kernelL}
    L(z, \zeta) = K(z, \zeta)-\sum_{j=1}^d \bzeta_j K(z,  \zeta) z_j \quad\text{is a positive kernel,}
  \end{equation}
i.e., that $L(z, \zeta)$ has a Kolmogorov decomposition
  \begin{equation} \label{L-Koldecom}
  L(z, \zeta) = G(z) G(\zeta)^{*}
  \end{equation}
  for some $G\in \cL(\cU, \cY)\langle\langle z\rangle\rangle$. As a 
  first step toward verifying \eqref{L-Koldecom}, we wish to compute 
  the adjoint of $M^{r}_{z_{j}}$ acting on a kernel element:
  \begin{equation}  \label{adj-action}
\big((M^{r}_{z_{j}})^{*} \otimes I_{{\mathbb C}\langle \langle 
\overline{\zeta} \rangle \rangle} \big) K(z, \zeta) = 
\overline{\zeta}_{j} K(z, \zeta)\quad \text{for} \quad j = 1, \dots, d.
\end{equation}
To this end, for $y \in \cY$, write out $K(z, \zeta)y$ as 
$K(z, \zeta)y = \sum_{\alpha} K_{\alpha}(z)y \,\overline{\zeta}^{\alpha^{\top}}$ 
with $K_{\alpha}y \in \cH(K)$ for each $\alpha \in \free$,
and then, for $f(\zeta)=\sum_{\alpha}f_\alpha\zeta^\alpha \in \cH(K)$, compute
\begin{align*}
\langle f, \; \overline{\zeta}_{j} K(\cdot, \zeta) y
\rangle_{\cH(K) \times \cH(K) \langle \langle \overline{\zeta}
\rangle \rangle}
&= \langle f, \sum_{\alpha \in \free} K_{\alpha} y \, \overline{\zeta}_{j}
\overline{\zeta}^{\alpha^{\top}}\rangle_{\cH(K) \times \cH(K) \langle
\langle \overline{\zeta} \rangle \rangle} \\
& = \sum_{\alpha \in \free} \langle f, K_{\alpha} y
\rangle_{\cH(K)} \zeta^{\alpha} \zeta_{j} \\
& = \sum_{\alpha \in \free} \langle f_{\alpha}, y \rangle_{\cY}
\zeta^{\alpha} \zeta_{j}= \langle f(\zeta) \cdot \zeta_{j}, \; y
\rangle_{\cY\langle \langle \, \zeta \rangle \rangle \times \cY}.
\end{align*}
Since on the other hand,
\begin{align*}
\left\langle f, \;  (M^{r}_{z_{j}})^*K(\cdot, \zeta) y \right\rangle
_{\cH(K)\times \cH(K) \langle \langle \overline{\zeta} \rangle \rangle}
&=\langle M^{r}_{z_{j}} f, \; K(\cdot, \zeta) y \rangle_{\cH(K)
\times \cH(K) \langle \langle \overline{\zeta} \rangle \rangle}\\
& = \langle f(\zeta) \cdot \zeta_{j}, \; y
\rangle_{\cY\langle \langle \zeta \rangle \rangle \times \cY}, 
\end{align*}
equality \eqref{adj-action} follows.
To verify \eqref{L-Koldecom} we compute next, for $y,y' \in \cY$,
\begin{align*}
  \langle L(z, \zeta) y, \; y' \rangle_{\cY \langle \langle z, 
    \overline{\zeta} \rangle \rangle \times \cY}  =&
\langle K(\cdot, \zeta) y, \; K(\cdot, z) y' \rangle_{\cH(K) 
\langle \langle \overline{\zeta} \rangle \rangle \times \cH(K) 
\langle \langle \overline{z} \rangle \rangle}  \\
&
-\sum_{j=1}^{d} \langle M^{r *}_{z_{j}} K(\cdot, \zeta) y, 
M^{r*}_{z_{j}} K(\cdot, z) y'\rangle_{\cH(K) \langle \langle 
\overline{\zeta} \rangle \rangle \times \cH(K)\langle \langle 
\overline{z} \rangle \rangle} \\
= &\left\langle \left( \Gamma \otimes I_{{\mathbb C} \langle \langle 
\overline{\zeta} \rangle \rangle} \right)
K(\cdot, \zeta) y, K(\cdot, z) y' \right\rangle_{\cH(K) \langle \langle 
\overline{\zeta} \rangle \rangle \times \cH(K)\langle \langle 
\overline{z} \rangle \rangle}
\end{align*}
where we set $\Gamma$ equal to
$$
\Gamma =  I_{\cH(K)} - M^{r}_{z} (M^{r}_{z})^*,\quad\mbox{where}\quad 
 M^{r}_{z}=\begin{bmatrix} M^{r}_{z_{1}} & \cdots &
M^{r}_{z_{d}} \end{bmatrix}. 
$$
The assumption that $\| \begin{bmatrix} M^{r}_{z_{1}} & \cdots & 
M^{r}_{z_{d}} \end{bmatrix} \| \le 1$ implies that 
the operator $\Gamma$ is positive semidefinite and has a factorization of 
the form $\Gamma = X^{*} X$ for an operator $X \in \cL(\cH(K), \cU)$ 
(for a convenient auxiliary Hilbert space $\cU$).  We may then 
continue the calculation above to get
\begin{align*}
\langle L(z, \zeta) y, y' \rangle_{\cY \langle \langle z,
    \overline{\zeta} \rangle \rangle \times \cY}&= \langle (X \otimes I_{{\mathbb C}\langle \langle
  \overline{\zeta} \rangle \rangle}) K(\cdot, \zeta) y,
  (X \otimes I_{{\mathbb C}\langle \langle
  \overline{z} \rangle \rangle}) K(\cdot, z) y' \rangle_{\cU \langle \langle
  \overline{\zeta} \rangle \rangle  \times  \cU \langle \langle
  \overline{z} \rangle \rangle}\\
  &= \bigg\langle \sum_{\beta \in \free} X K_{\beta}y
  \overline{\zeta}^{\beta^{\top}}, \sum_{\alpha \in \free} X K_{\alpha}y'
\overline{z}^{\alpha^{\top}} \bigg\rangle_{\cU \langle \langle
  \overline{\zeta}\rangle \rangle  \times  \cU \langle \langle
  \overline{z} \rangle\rangle}.
\end{align*}
Let us define $G \in \cL(\cU, \cY)\langle \langle z \rangle \rangle$ by
\begin{equation}  \label{L-Kolfactor}
 G(z) = \sum_{\alpha \in \free} G_{\alpha}^{*} z^{\alpha},\quad\mbox{where}\quad 
 G_{\alpha} \colon y \mapsto X K_{\alpha}y.
 \end{equation}
Then the above calculation continues as
\begin{align*}
 \langle L(z, \zeta) y, y' \rangle_{\cY \langle \langle z, 
    \overline{\zeta} \rangle \rangle \times \cY}  &=
 \bigg\langle \sum_{\beta \in \free} G_{\beta} y \; 
 \overline{\zeta}^{\beta^{\top}}, \, \sum_{\alpha \in \free} 
 G_{\alpha} y' \, \overline{z}^{\alpha^{\top}}\bigg\rangle_{ \cU \langle 
 \langle \overline{\zeta} \rangle \rangle \times \cU \langle \langle 
 \overline{z} \rangle \rangle} \\
 & = \sum_{\alpha, \beta \in \free} \langle G_{\alpha}^{*} G_{\beta} y, y' \rangle_{\cY} 
 z^{\alpha} \overline{\zeta}^{\beta^{\top}} = \langle G(z) 
 G(\zeta)^{*} y, y' \rangle_{\cY\langle \langle z, \overline{\zeta} 
 \rangle \rangle \times \cY},
 \end{align*}
 and we arrive at the Kolmogorov decomposition \eqref{L-Koldecom} 
 with the factor $G(z)$ given by \eqref{L-Kolfactor} as wanted.
 We next rewrite the identity \eqref{kernelL} in the form
  $$
  K(z, \zeta) = G(z) G(\zeta)^{*} + \sum_{j=1}^{d} \bzeta_{j} 
  K(z, \zeta) z_{j}.
  $$
  Iteration of this identity $N-1$ times then gives
  $$
  K(z, \zeta) = \sum_{\alpha\in\free \colon |\alpha| \le N} G(z) 
  z^{\alpha} \bzeta^{\alpha^{\top}} G(\zeta)^{*} + \sum_{\alpha\in\free 
  \colon |\alpha| = N+1} \bzeta_{j}^{\alpha^{\top}} K(z, \zeta) 
  z^{\alpha}.
  $$
  Taking the limit as $N \to \infty$ in this expression then leaves 
  us with the desired formula \eqref{kcm2} for $K$.

\smallskip

Conversely, if $K$ is of the form \eqref{kcm2}, then any element of $\cH(K)$ is of the form 
$G(z)g(z)$ for some $g\in \cH(k_{\rm nc, Sz} I_{\cU})=H^{2}_{\cU}(\free)$ and since $H^{2}_{\cU}(\free)$
is invariant under $M_{z_j}^r$ for $j=1,\ldots,d$, it follows that $\cH(K)$ is $M_{z_j}^r$-invariant as well.
Representation \eqref{L-Koldecom} follows from \eqref{Sz-id}, \eqref{kcm2}, and \eqref{kernelL}.
Inequality \eqref{dec22} follows by combining \eqref{kernelL} and \eqref{adj-action}.
 \end{proof}
 
 \begin{remark}  \label{R:FRKHSvsNCRKHS}
 Given a formal power series $f(z) = \sum_{\alpha \in \free} 
f_{\alpha} z^{\alpha} \in \cY\langle \langle z \rangle \rangle$ in $d$ free indeterminates $z = (z_{1}, 
\dots, z_{d})$ with coefficients $f_{\alpha}$ coming from a Hilbert space $\cY$, one may 
substitute in a $d$-tuple 
$Z = (Z_{1}, \dots, Z_{d})$ of $n \times n$ matrices for the formal 
indeterminates $z = (z_{1}, \dots, z_{d})$ to obtain a $n \times n$ matrix
\begin{equation}   \label{series}
f(Z) = \lim_{N \to \infty} \sum_{\alpha \in \free\colon 0 \le 
|\alpha| \le N} f_{\alpha}  \otimes Z^{\alpha} \in \cY \otimes 
{\mathbb C}^{N \times N} \cong \cL({\mathbb C}^{n}, \cY^{n})
\end{equation}
on a domain corresponding 
to the set of $d$-tuples $Z$ when the series \eqref{series} converges
(here $Z^{\alpha} = Z_{i_{N}} \cdots Z_{i_{1}} \in {\mathbb C}^{N 
\times N}$ if $\alpha$ is the word of the form $\alpha = i_{N} \cdots 
i_{1}$ with each $i_{j} \in \{1, \dots, d\}$). This is a particular setting for what has come 
to be called a \textbf{noncommutative function} (see \cite{K-VV, 
AMc1, AMc2, BMV1}).  Similarly, if 
$$
K(z,w) = \sum_{\alpha, \beta \in \free} K_{\alpha, \beta} 
z^{\alpha} \overline{w}^{\beta^{\top}}  \in \cL(\cY)\langle \langle 
z, \overline{w} \rangle \rangle
$$
is a formal kernel, one may substitute in a  $d$-tuple $Z = (Z_{1}, 
\dots, Z_{d})$ along with a $d$-tuple $W = (W_{1}, \dots, W_{d})$ of 
$n \times n$ complex matrices to get an evaluation
$$
  K(Z,W) = \lim_{N \to \infty} \sum_{\alpha, \beta \in \free \colon 0 
  \le |\alpha|, |\beta| \le N}  K_{\alpha, \beta} \otimes 
  Z^{\alpha} (W^{*})^{\beta^{\top}} \in \cL(\cY) \otimes {\mathbb 
  C}^{n \times n} \cong \cL(\cY^{n} \otimes \cY^{n}),
$$
where the limit exists for $(Z,W)$ in some domain ${\displaystyle\bigcup_{n=0}^{\infty} \cD_{n} \times 
\cD_{n}} \subset  {\displaystyle\bigcup_{n=0}^{\infty} {\mathbb C}^{n \times n} \times {\mathbb C}^{n 
\times n}}$.  More generally, if $Z$ is a $d$-tuple of $n \times n$ 
matrices, $W$ is a $d$-tuple of $m \times m$ matrices, and $P$ is $n 
\times m$ matrix, one may define an evaluation 
$$
{\mathbf K}(Z,W)(P) = \sum_{\alpha, \beta \in \free} K_{\alpha, 
\beta}  \otimes Z^{\alpha}  P W^{* \beta^{\top}} \in \cL(\cY^{m}, 
\cY^{n})
$$
wherever the series converges.  In case $K$ is a positive formal 
kernel, the result is an object satisfying the definition of a 
\textbf{completely positive noncommutative kernel} in the sense of 
\cite{BMV1}.  Thus the theory of formal noncommutative power series 
and formal positive kernels can be identified as a particular setting 
for the theory of noncommutative functions and completely positive 
noncommutative kernels; the precise connections are discussed in 
\cite[Section 3.3]{BMV1}.  We leave open the question as to how one 
should define noncommutative Bergman spaces over more general 
noncommutative domains.
\end{remark}
 
\section{Weighted Hardy-Fock spaces} \label{S:Hardy} 
Let us say that the formal kernel $K(z, \zeta) =
 \sum_{\alpha, \beta} K_{\alpha, \beta} z^\alpha \overline{\zeta}^{\beta^\top}$
 is a {\em scalar radial} kernel if $K_{\alpha, \beta} = 0$ for $\alpha \ne \beta$ and furthermore
 $K_{\alpha, \alpha} = k_{\alpha} I_\cY$ for some scalar-valued function 
 ${\mathbf k} \colon \free \to {\mathbb C}$ (${\mathbf k} \colon \alpha \mapsto k_\alpha$).
  i.e., if $K(z, \zeta)$ has the  form
 $$  
 K(z, \zeta) = \sum_{\alpha \in \free}  (k_\alpha  I_\cZ) z^\alpha \overline{\zeta}^{\alpha^\top}
 $$
 where $\cY$ is a coefficient Hilbert space.   Often it is the case that $k_\alpha \ne 0$ for all 
$\alpha$ and it is more convenient to write
 $k_\alpha$ as $k_\alpha = \omega_\alpha^{-1}$ for a nonzero-valued scalar function
 $\bo \colon \free \mapsto {\mathbb C} \setminus \{0\}$.  In many examples the sequence 
$k_\alpha$ in fact depends only the
 length $|\alpha|$ of $\alpha$, in which case we say that $K$ is a {\em symmetrized radial kernel}.
 
\smallskip

 Specifically we shall consider symmetrized radial kernels arising in the following way.
 Starting with a positive sequence $\bo=\{\omega_j\}_{j\ge 0}$ and
a Hilbert space $\cY$, we denote by $H^2_{\bo,\cY}(\free)$ be the {\em weighted Hardy-Fock space}
\begin{equation}
H^2_{\bo,\cY}(\free) = \bigg\{\sum_{\alpha \in \free}
f_{\alpha}z^{\alpha}
\in \cY\langle\langle z\rangle\rangle\colon \; \|f\|^2:=
\sum_{\alpha \in \free} \omega_{|\alpha|}\cdot \|f_{\alpha}\|_{\cY}^{2} <\infty \bigg\}.
\label{18.1}
\end{equation}
The space $H^2_{\bo,\cY}(\free)$ is the NFRKHS with  noncommutative reproducing kernel
$k_{\bo}(z, \zeta) I_\cY$ with
\begin{equation} \label{kbo}
k_{ \bo}(z,\zeta)=\sum_{\alpha \in \free}\omega_{|\alpha|}^{-1}
z^{\alpha} \bzeta^{\alpha^{\top}}
\end{equation}
which is verified by the standard computation for $f(z) = \sum_{\alpha \in \free} 
f_{\alpha} z^{\alpha} \in H^2_{\bo,\cY}(\free)$ and $y \in \cY$:
\begin{align*}
    \langle f, k_{{\rm nc}, \bo}(\cdot, \zeta) y \rangle_{H^2_{\bo,\cY}(\free) \times
    H^2_{\bo,\cY}(\free)\langle \langle \bzeta \rangle \rangle} &
= \sum_{\alpha \in \free} \langle f(z), \,  \omega_{|\alpha|}^{-1} y
\, z^{\alpha} \rangle_{H^2_{\bo,\cY}(\free)} \zeta^{\alpha} \\
    & = \sum_{\alpha \in \free} \langle f_{\alpha}, y \rangle_{\cY} \zeta^{\alpha}
 = \langle f(\zeta), y \rangle_{\cY\langle \langle \zeta \rangle \rangle
    \times \cY}.
\end{align*}
In the specific context of the weighted Hardy-Fock space $H^2_{\bo,\cY}(\free)$, 
the tuple of the right coordinate-variable multipliers ${\bf M}^r_z$ (see \eqref{dec22}) will 
be denoted by ${\bf S}_{\bo,R}$:
\begin{equation}
{\bf S}_{\bo,R}=(S_{\bo,R,1},\ldots,S_{\bo,R,d}),\quad S_{\bo,R,j}: \, f(z)\mapsto f(z)z_j.
\label{18.2b}
\end{equation}
The standard inner-product computation gives the formula for adjoint operators
\begin{equation}   \label{Sboj*}
    S_{\bo,R,j}^{*} \colon \sum_{\alpha \in \free} f_{\alpha}
    z^{\alpha} \mapsto \sum_{\alpha \in \free}
    \frac{\omega_{|\alpha|+1}}{\omega_{|\alpha|}} f_{\alpha j} z^{\alpha}.
\end{equation}
We say that the weight sequence $\bo=\{\omega_j\}_{j\ge 0}$ is {\em admissible} if it is 
subject to the following conditions:
\begin{equation}
\omega_0=1 \quad\mbox{and}\quad 1\le
\frac{\omega_j}{\omega_{j+1}}\le M \quad\mbox{for all}\quad j\ge 0\quad\mbox{and some}\quad 
M\ge 1.
\label{18.2}
\end{equation}  
The first condition says that ${\mathbb C}$ embeds isometrically into the space of constant nc functions
in $\cH_{\bo, \cY}(\free)$, the second  condition means that the shift-operator tuple ${\bf S}_{\bo,R}$ 
is a row contraction and $S_{\bo,R,j}$ is left-invertible for $j=1,\ldots,d$. Indeed, it follows from \eqref{Sboj*} that
$$
\| S^{*}_{\bo,R,j} f \|^{2}_{H^2_{\bo,\cY}} = \sum_{\alpha \in \free} 
\frac{\omega_{|\alpha|+1}^{2}}{ \omega_{|\alpha|}} \| f_{\alpha 
j} \|^{2}
$$
and hence, for $f(z) = \sum_{\alpha \in \free} f_{\alpha} z^{\alpha}$, we have
\begin{align*}
 \sum_{j=1}^{d} \|  S_{\bo,R,j}^{*} f\|^{2}_{H^2_{\bo,\cY}} &  =
\sum_{\alpha \ne \emptyset}  \frac{ 
\omega_{\alpha}^{2}}{\omega_{|\alpha|-1}} \| f_{\alpha} \|^{2}
= \sum_{\alpha \ne \emptyset} \omega_{|\alpha|} \left( 
\frac{\omega_{|\alpha|}}{\omega_{|\alpha|-1}} \right)  \| f_{\alpha}\|^{2} 
\\  &  \le \sum_{\alpha \in \free} \omega_{|\alpha|} \| f_{\alpha} 
\|^{2} = \| f \|^{2}_{H^2_{\bo,\cY}},
\end{align*}
and similarly, 
$$
\| S_{\bo,R,j}f \|^{2}_{H^2_{\bo, \cY}} = 
\sum_{\alpha \in \free} \omega_{|\alpha| +1} \| f_{\alpha} \|^{2} \ge 
\frac{1}{M} \sum_{\alpha \in \free} \omega_{|\alpha|} \| f_{\alpha} \|^{2}=\frac{1}{M}\|f\|^2_{H^2_{\bo, \cY}}
$$
for $j=1,\ldots,d$. Combining the row-contractive property of ${\bf S}_{\bo,R}$ with 
Definition \ref{kercon} we arrive at the following.

\begin{remark}
\label{R:examples}
The kernel \eqref{kbo} based on an admissible weight sequence $\bo$ is a contractive nc positive 
kernel. Furthermore, the sequence $\bmu_n=\{\mu_{n,j}\}$ \eqref{1.6pre} of reciprocal binomial coefficients is 
an admissible weight sequence which is strictly decreasing for $n> 1$. 

\smallskip

It is worth mentioning the following example $\bmu_\rho$ with a continuous real parameter $\rho > 1$ 
which interpolates the weights $\bmu_n$ with discrete parameter $n=1, 2, \dots$.  Namely, for $\rho >1$ real,
we define  $\bo = \bmu_\rho = \{ \mu_{\rho, k} \}_{ k \ge 0}$ where we set
 \begin{equation}   \label{bmurho}
   \mu_{\rho, k} = \frac{k!}{ \rho (\rho + 1) \cdots (\rho + k - 1)} = 
   \frac{ k! \Gamma(\rho)}{\Gamma(\rho + k )}.
 \end{equation}
 One can check that $\bmu_\rho$ meets the admissibility conditions \eqref{18.2} and that indeed
 $\mu_{\rho, k} =  \mu_{n,k}$ in case $\rho = n$ is a positive integer.   Therefore the weight sequence
 \eqref{bmurho} generates a formal nc reproducing kernel Hilbert space $\cH(k_{\bmu_\rho})$ 
 with formal kernel given by
  \begin{equation}  \label{kbmurho}
  k_{\bmu_\rho}(z, \zeta) = \sum_{\alpha \in \free}  \mu_{\rho, |\alpha|}^{-1}   z^\alpha \overline{\zeta}^{\alpha^\top}.
  \end{equation}
  The associated weighted Hardy
 spaces $H^2_{\bmu_\rho}(\free)$ are mentioned as examples of the more general spaces
 $H^2_\bo(\free)$ for the single-variable case $d=1$, in which case we simply write
 $H^2_\bo$ rather than $H^2_\bo({\mathbb F}^+_1)$.
 
\smallskip

In particular, the choice $\bo = {\boldsymbol \mu}_n$ leads to the identification of a weighted Hardy-Fock 
space with a weighted Bergman-Fock space:  $H^2_{{\boldsymbol \mu}_n,\cY}(\free)= \cA_{n, \cY}(\free)$. In short, 
formulas \eqref{ncsz}, \eqref{nc-n-Berg-ker}, \eqref{kbmurho}, \eqref{kbo} give examples  of increasing 
generality of symmetrized radial kernels which are also contractive kernels. As for shift operator-tuples on weighted Bergman-Fock spaces, we shall write $\bS_{n,R}$ rather than $\bS_{\bmu_n,R}$ and we shall also write 
$\bS_{1,R}$ for the shift operator tuple on the unweighted Fock space $H^2_\cY(\free)$.
 \end{remark}
 
We conclude this section by introducing a multidimensional system with evolution along $\free$ and based
on an admissible weight $\bo=\{\omega_j\}_{j\ge 0}$
\begin{equation}    \label{1.31prehardy}
\Sigma_{\{\bU_\alpha\}, \bo} \colon
\left\{ \begin{array}{ccc}
x(1 \alpha) &= &
\frac{\omega_{|\alpha|}}{\omega_{|\alpha|+1}}A_{1}x(\alpha)+\frac{1}{\omega_{|\alpha|+1}}B_{1,\alpha} u(\alpha) \\
\vdots &   & \vdots  \\
x(d \alpha) & = &
\frac{\omega_{|\alpha|}}{\omega_{|\alpha|+1}}A_{d}x(\alpha)+\frac{1}{\omega_{|\alpha|+1}}B_{d,\alpha} u(\alpha) \\
y(\alpha) &  = & Cx(\alpha)+\frac{1}{\omega_{|\alpha|}}D_\alpha u(\alpha)\end{array}
\right.
\end{equation}
with the the same family of connection matrices $\{{\bf U}_\alpha\}_{\alpha\in\free}$ as in \eqref{coll} but with additional $\alpha$-dependent
weights in the system equations determined by the weight sequence $\bo$.
Upon running the system  \eqref{1.31prehardy}  with a fixed initial condition $x(\emptyset)=x\in\cX$ we get recursively
the formulas for $x(\alpha)$ and $y(\alpha)$ (the same as \eqref{1.32pre}, \eqref{1.33pre} but with $\omega_{|\alpha|}$
instead of $\mu_{n,|\alpha|}$). Subsequent application of the noncommutative $Z$-transform \eqref{1.25pre} eventually leads us to
the formulas
\begin{align}
& \widehat y(z)=  \sum_{\alpha\in\free} \omega_{|\alpha|}^{-1}
\bigg(C\bA^\alpha x +\sum_{\alpha^{\prime\prime}j\alpha^\prime=\alpha}
{\bf A}^{\alpha^{\prime\prime}}B_{j,\alpha^\prime}u(\alpha^\prime)+D_{\alpha}u(\alpha)\bigg)z^\alpha =  \notag\\
& \sum_{\alpha\in\free}\big(\omega_{|\alpha|}^{-1}C\bA^\alpha  x\big)z^\alpha
+\sum_{\alpha\in\free}\bigg(\omega_{|\alpha|}^{-1}D_\alpha+\sum_{j=1}^d\sum_{\alpha^\prime\in\free}
\omega^{-1}_{|\alpha|+|\alpha^\prime|+1}C\bA^{\alpha^\prime}B_{j,\alpha}z^{\alpha^\prime j}\bigg)z^\alpha u(\alpha)
\notag\\
&  =\cO_{\bo,C,{\bf A}}x+\sum_{\alpha\in\free}\Theta_{\bo, \bU_\alpha}(z)z^{\alpha}u(\alpha),\label{1.36prehardy}
\end{align}
where the first term on the right presents the $\bo$-observability operator
\begin{equation}   \label{ncobsophardy}
    \cO_{\bo,C,\bA} x =\sum_{\alpha \in \free} \omega_{|\alpha|}^{-1} (C \bA^{\alpha} x) z^{\alpha}
\end{equation}
associated with output pair $(C,\bA)$ and where
\begin{equation}
\Theta_{\bo, \bU_\alpha}(z)=\omega_{|\alpha|}^{-1}D_\alpha+\sum_{j=1}^d\sum_{\alpha^\prime\in\free}
\omega^{-1}_{|\alpha|+|\alpha^\prime|+1}C\bA^{\alpha^\prime}B_{j,\alpha}z^{\alpha^\prime j}
\label{thetahardy}
\end{equation}
is the family of transfer functions indexed by $\alpha \in \free$. Making use of power series
$$
R_\bo(\lambda)=R_{\bo,0}(\lambda)
=\sum_{j=0}^\infty \omega_j^{-1}\lambda^j\quad\mbox{and}\quad
R_{\bo,k}(\lambda)=\sum_{j=0}^\infty \omega_{j+k}^{-1}\lambda^j \; \; (k\ge 1)
$$
extending those in \eqref{1.8pre} and operators $A$ and $\widehat{B}_\alpha$ as in \eqref{coll}, 
one can write the formulas for \eqref{ncobsophardy} and \eqref{thetahardy} as 
\begin{align}
\cO_{\bo,C,\bA} x &=CR_\bo(Z(z)A),\label{obsreal}  \\
\Theta_{\bo, \bU_\alpha}(z)&=\omega_{|\alpha|}^{-1}D_\alpha+CR_{\bo,|\alpha|+1}(Z(z)A)Z(z)
\widehat{B}_\alpha,
\label{thetareal}
\end{align}
i.e., in the form similar to \eqref{ncobsop} and \eqref{1.37pre}, respectively.

\smallskip

Now we may introduce an $\bo$-output stable pair $(C,\bA)$ as one for which the $\bo$-observability operator
\eqref{ncobsophardy} is bounded from $\cX$ to $H^2_{\bo,\cY}(\free)$. In this case, we see from \eqref{18.2aa} and \eqref{18.1}
that for every $x\in\cX$, 
$$
\langle \cG_{\bo,C,\bA}x, \, x\rangle_{\cX}=\|\cO_{\bo,C,\bA}x\|^2_{H^2_{\bo,\cY}(\free)}=
\sum_{\alpha\in\free}\omega_{|\alpha|}^{-1}\cdot \|C\bA^{\alpha}x\|^2_{H^2_{\bo,\cY}(\free)}.
$$
which justifies the representation of the $\bo$-observability gramian
$$
\cG_{\bo,C,\bA}:=\cO_{\bo,C,\bA}^*\cO_{\bo,C,\bA}=
\sum_{\alpha\in\free}\omega_{|\alpha|}^{-1}\bA^{*\alpha^\top}C^*C\bA^{\alpha}
$$
by the weakly (and therefore, strongly) convergent series. Observability operators and their range spaces will be 
studied in detail in Chapter 4.

\chapter{Contractive multipliers}\label{C:contrmult}

\section{Contractive multipliers in general}   \label{S:gen-contr-mult}
Let $K'$ and $K$ be noncommutative reproducing kernels with values in 
$\cL(\cU)$ and $\cL(\cY)$ respectively, and
let $\cH(K')$ and $\cH(K)$ be the associated 
NFRKHSs.   As we shall have many notions of "inner multiplier", we always 
use the term with some adjective to specify the type of inner under
consideration.  We note that the term "inner" has been used in the literature
for what we are here calling "strictly inner" as well as "McCT-inner".

\begin{definition}   \label{D:mult}
 A formal power series $\Theta\in\cL(\cU,\cY)\langle \langle z\rangle\rangle$
is called a {\em multiplier from  $\cH(K')$ to $\cH(K)$}
if the operator $M_\Theta$ of multiplication by $\Theta$  
$$
        M_{\Theta} \colon f(z) \mapsto \Theta(z) \cdot f(z)=
        \sum_{v \in \free} \bigg( \sum_{\alpha, \beta \colon \alpha
        \beta = v} \Theta_{\alpha} f_{\beta} \bigg) z^{v}
$$
is bounded from $\cH(K')$ to $\cH(K)$. The multiplier $\Theta$ 
is called {\em contractive}, {\em McCT-inner} (suggesting the authors of the seminal paper
\cite{MCT}) or {\em strictly inner}
if the operator $M_\Theta \colon \cH(K')\to \cH(K)$ is a contraction,
a partial isometry or an isometry, respectively.  
\end{definition}

The following result for the special case where $K' = k_{\rm nc, Sz} 
\otimes I_{\cU}$ and $K = k_{\rm nc, Sz} \otimes I_{\cY}$ appears in 
\cite[Theorem 3.15]{NFRKHS} and in \cite[Theorem 3.1]{BBF3}.  We give 
a simple direct proof based on an adaptation of the proof of 
\cite[Proposition 2.10]{PapaBear} where a more complicated two-sided 
commutative setting is studied.   

\begin{proposition}  \label{P:2.3}
Let $K \in \cL(\cY)\langle \langle z, \bzeta \rangle \rangle$ and $K' \in \cL(\cU)\langle \langle z, 
\bzeta \rangle \rangle$ be two positive formal kernels and let $\Theta$ be a formal power series in 
$\cL(\cU,\cY)\langle \langle z\rangle\rangle$. Then: 
\begin{enumerate}
   \item  $\Theta$ is a
contractive multiplier from $\cH(K')$ to $\cH(K)$ if and only if
\begin{equation}
K_\Theta(z,\zeta)=K(z,\zeta)-\Theta(z)K'(z,\zeta)\Theta(\zeta)^*\in\cL(\cY)\langle \langle z, \bzeta 
\rangle \rangle
\label{jan1b}
\end{equation}
is a positive formal kernel.
\item $\Theta$ is a coisometric multiplier from $\cH(K')$ to $\cH(K)$ 
if and only if $K_{\Theta} = 0$.
\end{enumerate}
\end{proposition}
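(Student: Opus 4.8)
The plan is to characterize contractivity and coisometry of $M_\Theta$ via the adjoint acting on kernel elements, mirroring the computation in the proof of Proposition~\ref{P:referee}. The key identity to establish first is the analogue of \eqref{adj-action}: for $\Theta \in \cL(\cU,\cY)\langle\langle z\rangle\rangle$ and the multiplication operator $M_\Theta \colon \cH(K') \to \cH(K)$ (assumed bounded), one has
\begin{equation}   \label{PP:madj}
\big( M_\Theta^* \otimes I_{{\mathbb C}\langle\langle \bzeta\rangle\rangle} \big) K(\cdot, \zeta) y = K'(\cdot, \zeta) \Theta(\zeta)^* y \qquad (y \in \cY).
\end{equation}
To prove \eqref{PP:madj}, I would pair against an arbitrary $f \in \cH(K')$: using the reproducing property \eqref{reprod-prop'} in $\cH(K)$ together with \eqref{MSadj} (which says $\langle \Theta(\zeta) f(\zeta), g(\bzeta)\rangle = \langle f(\zeta), \Theta(\zeta)^* g(\bzeta)\rangle$ for the formal pairings), we get
$$
\langle f, M_\Theta^* K(\cdot,\zeta) y\rangle_{\cH(K')\times\cH(K')\langle\langle\bzeta\rangle\rangle} = \langle M_\Theta f, K(\cdot,\zeta) y\rangle = \langle (\Theta f)(\zeta), y\rangle = \langle f(\zeta), \Theta(\zeta)^* y\rangle,
$$
and the right side equals $\langle f, K'(\cdot,\zeta)\Theta(\zeta)^* y\rangle_{\cH(K')\times\cH(K')\langle\langle\bzeta\rangle\rangle}$ again by \eqref{reprod-prop'} (in the generalized form of Proposition~\ref{P:genreprod}, applied coefficientwise in $\bzeta$). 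Since $f$ is arbitrary, \eqref{PP:madj} follows.

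With \eqref{PP:madj} in hand, both statements come from the standard span argument. For (1): $M_\Theta$ is a contraction iff $I - M_\Theta M_\Theta^* \succeq 0$ on $\cH(K)$, and since kernel elements $\{K(\cdot,\zeta) y\}$ have dense span in $\cH(K)$, this is equivalent to positivity of the Gram-type quadratic form evaluated on finite linear combinations of kernel elements. Computing
$$
\langle (I - M_\Theta M_\Theta^*) K(\cdot,\zeta) y,\, K(\cdot, z) y'\rangle = \langle K(\cdot,\zeta) y, K(\cdot,z) y'\rangle - \langle M_\Theta^* K(\cdot,\zeta) y,\, M_\Theta^* K(\cdot,z) y'\rangle,
$$
the first term yields $K(z,\zeta)$ (as a coefficient in $z^\alpha\bzeta^{\beta^\top}$) by the reproducing property, and the second yields $\Theta(z) K'(z,\zeta)\Theta(\zeta)^*$ by \eqref{PP:madj} together with the reproducing property in $\cH(K')$. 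Hence $I - M_\Theta M_\Theta^* \succeq 0$ is equivalent to positivity of $K_\Theta(z,\zeta) = K(z,\zeta) - \Theta(z) K'(z,\zeta)\Theta(\zeta)^*$ as a formal kernel, which by Theorem~\ref{T:NFRKHS}(3) is exactly the assertion that $K_\Theta$ is a positive formal kernel. For (2): $M_\Theta$ is coisometric iff $I - M_\Theta M_\Theta^* = 0$, and by the same dense-span computation this equals $K_\Theta = 0$. One subtlety: statement (1) as phrased does not presuppose boundedness of $M_\Theta$, so in the ``if'' direction I would start from positivity of $K_\Theta$, invoke its Kolmogorov decomposition $K_\Theta(z,\zeta) = F(z)F(\zeta)^*$ from Theorem~\ref{T:NFRKHS}, and run the lurking-isometry construction: the map $K(\cdot,\zeta) y \oplus 0 \mapsto \Theta(\zeta)^* y \oplus F(\zeta)^* y$ (suitably interpreted on the formal-power-series level) extends to an isometry, whose structure forces $M_\Theta$ to be a well-defined contraction from $\cH(K')$ into $\cH(K)$.

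The main obstacle is bookkeeping with the formal $\bzeta$-variables: all the ``inner products'' above are the mixed pairings of \eqref{genpairing}--\eqref{genpairing'} valued in formal power series, not genuine scalar inner products, so one must be careful that the manipulations (pulling $M_\Theta^*$ across, comparing coefficients of $z^\alpha\bzeta^{\beta^\top}$) are legitimate at the level of formal kernels rather than just formal pointwise evaluations. Concretely, the cleanest route is to reduce everything to the coefficient-positivity criterion \eqref{coef-pos}: test against finitely supported $\cY$-valued functions $\alpha \mapsto y_\alpha$, form $\sum_\alpha K(\cdot,\cdot) y_\alpha$ (finite sum of kernel sections), and verify that the two quadratic forms $\sum_{\alpha,\beta}\langle K_{\alpha,\beta} y_\alpha, y_\beta\rangle$ and $\sum_{\alpha,\beta}\langle (\Theta K' \Theta^*)_{\alpha,\beta} y_\alpha, y_\beta\rangle$ match the Hilbert-space quantities $\|\sum y_\alpha(\cdot)\|^2_{\cH(K)}$ and $\|M_\Theta^*(\sum y_\alpha(\cdot))\|^2_{\cH(K')}$ respectively; this sidesteps any convergence worry about the formal series. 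Once that dictionary is set up, both (1) and (2) are immediate.
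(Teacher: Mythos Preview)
Your proposal is correct and follows essentially the same approach as the paper: establish the adjoint formula \eqref{PP:madj} (the paper's \eqref{MTheta*act}) via the reproducing-kernel pairings, then reduce contractivity/coisometry of $M_\Theta$ to the coefficient-positivity criterion \eqref{coef-pos} for $K_\Theta$ by testing on finite combinations of kernel elements. For the ``if'' direction the paper does not go through a lurking isometry as you first suggest; it simply defines $T_0$ on the span of kernel elements by the formula $K_\beta(\cdot)y \mapsto \sum_{\gamma'\gamma=\beta} K'_\gamma(\cdot)\Theta_{\gamma'}^* y$, reads off from positivity of $K_\Theta$ that $T_0$ is contractive, extends by continuity, and then verifies $T_0^*=M_\Theta$---which is precisely the ``cleanest route'' you describe in your final paragraph.
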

\begin{proof} If $\Theta$ is a contractive 
multiplier from $\cH(K')$ to $\cH(K)$, then for all $f \in \cH(K')$ and $y \in \cY$, we have
\begin{align*}
&   \langle f, (M_{\Theta}^{*} \otimes I_{{\mathbb C}\langle \langle 
    \bzeta \rangle \rangle}) K( \cdot, \zeta) y \rangle_{\cH(K') \times 
    \cH(K')\langle \langle \bzeta \rangle \rangle} \\
& = \langle \Theta(\cdot) f(\cdot),\, K(\cdot, \zeta) y \rangle_{\cH(K) \times \cH(K) 
  \langle \langle \bzeta \rangle \rangle} \\
  &= \langle \Theta(\zeta) f(\zeta),\,  y 
  \rangle_{\cY\langle \langle \zeta \rangle \rangle \times \cY} 
  \quad\text{(by \eqref{reprod-prop'})}\\
 & = \langle f(\zeta), \Theta(\zeta)^{*} y \rangle_{\cU \langle 
  \langle \zeta \rangle \rangle \times \cU \langle \langle \bzeta 
  \rangle \rangle} \quad\text{(by \eqref{MSadj})} \\
 & = \langle f,\, K'(\cdot, \zeta) \Theta(\zeta)^{*} y 
  \rangle_{\cH(K') \times \cH(K')\langle \langle \bzeta \rangle 
  \rangle} \quad\text{(by \eqref{genreprod}}
\end{align*}
from which we conclude that
\begin{equation}  \label{MTheta*act}
\big( M_{\Theta}^{*} \otimes I_{{\mathbb C} \langle \langle \bzeta \rangle 
\rangle} \big) K(\cdot, \zeta) y = K'(\cdot, \zeta) \Theta(\zeta)^{*}y.
\end{equation}
Equating coefficients of $\bzeta^{\beta^{\top}}$ in \eqref{MTheta*act} 
and using the notation \eqref{Kbeta} then gives
\begin{equation}   \label{MTheta*}
M_{\Theta}^{*} K_{\beta}(\cdot) y = \sum_{\gamma, \gamma' \colon \gamma' 
\gamma = \beta} K'_{\gamma}(\cdot) \Theta_{\gamma'}^{*} y.
\end{equation}
The fact that $\| M_{\Theta}^{*} \| \le 1$ then implies that
$$
 \big \|\sum_{j=1}^{N} K_{\beta_{j}}(\cdot) y_{j} \big\|^{2}_{\cH(K)} - 
  \big\| \sum_{j=1}^{N} M_{\Theta}^{*} K_{\beta_{j}}(\cdot) y_{j} 
  \big\|^{2}_{\cH(K')} \ge 0,
$$
or, in terms of notation \eqref{formalkernel},
\begin{equation}   \label{MScontr}
\sum_{i,j = 1}^{N} \bigg( \langle K_{\beta_{i},\beta_{j}} y_{j}, y_{i} 
\rangle_{\cY} - \bigg\langle
\sum_{{\scriptsize\begin{array}{c}
\gamma_{j}, \gamma_{j}'\colon \gamma_{j}' \gamma_{j} = \beta_{j},\\
 \gamma_{i}, \gamma_{i}' \colon \gamma_{i}' \gamma_{i} = \beta_{i}\end{array}}}
\Theta_{\gamma_{i}^\prime}K'_{\gamma_{i}, \gamma_{j}} \Theta^{*}_{\gamma_{j}'} y_{j}, 
y_{i} \bigg\rangle_{\cY} \bigg) \ge 0
\end{equation}
for all choices of $N\in\mathbb N$ and 
$\beta_{1}, \dots, \beta_{N} \in \free$, $y_{1}, 
\dots, y_{N} \in \cY$.  On the other hand, the 
formal kernel \eqref{jan1b} has  power series expansion
\begin{equation}   \label{KTheta}
    K_{\Theta}(z, \zeta) = 
    \sum_{\alpha, \beta} \bigg( K_{\alpha, \beta} - 
\sum_{\gamma, \gamma', \delta, \delta' \colon \gamma' \gamma = 
\alpha,\, \delta' \delta = \beta} \Theta_{\gamma'} K^\prime_{\gamma, \delta} 
\Theta^{*}_{\delta'} \bigg) z^{\alpha} \bzeta^{\beta^{\top}}.
\end{equation}
It now becomes clear that the condition \eqref{MScontr} is just the 
criterion \eqref{coef-pos} for the positivity of the formal kernel 
$K_{\Theta}$.  Note that $M_{\Theta}$ being coisometric means that 
$M_{\Theta}^{*}$ is isometric, so \eqref{MScontr} holds with equality.  
This then forces the kernel \eqref{KTheta} to be zero.  This completes 
the proof of  necessity in Proposition \ref{P:2.3}.

\smallskip

To prove sufficiency in part (1) of Proposition \ref{P:2.3}, proceed as 
follows.  The formula \eqref{MTheta*}  for the action of 
$M_{\Theta}^{*}$ on kernel functions suggests that we define an 
operator $T_{0}$ mapping the span of kernel functions in $\cH(K)$ to 
the span of kernel functions in $\cH(K')$ via
$$
  T_{0} \colon \sum_{j=1}^{N} K_{\beta_{j}}\, y_{j} \mapsto
  \sum_{j=1}^{N} \sum_{\gamma_{j}, \gamma_{j}' \colon \gamma_{j}' 
  \gamma_{j} = \beta_{j}} K'_{\gamma_{j}} \Theta_{\gamma'_{j}}^{*} 
  y_{j}.
$$
The computation in the first part of the proof read backwards tells us
that the assumption $K_{\Theta}$ is a positive formal kernel 
implies that $T_{0}$ is contractive.  Hence $T_{0}$ extends uniquely 
by continuity to a well-defined contraction operator from $\cH(K)$ into $\cH(K')$.  
Furthermore, the action of $T: = T_{0} \otimes I_{{\mathbb C} \langle 
\langle \bzeta \rangle \rangle}$ is given by \eqref{MTheta*act}.
We then compute, for $f \in \cH(K')$ and $y \in \cY$,
\begin{align*}
    \langle T_{0}^{*} f,\, K(\cdot, \zeta) y \rangle_{\cH(K) \times 
    \cH(K)\langle \langle \bzeta \rangle \rangle} & =
    \langle f, T K(\cdot, \zeta) y \rangle_{\cH(K^\prime) \times \cH(K^\prime) 
    \langle \langle \bzeta \rangle \rangle}  \\
 & = \langle f, K'(\cdot, \zeta) \Theta(\zeta)^{*} y \rangle_{\cH(K') 
 \times \cH(K') \langle \langle \bzeta \rangle \rangle} \\
 & = \langle f(\zeta),  \Theta(\zeta)^{*} y \rangle_{\cU\langle \langle 
 \zeta \rangle \rangle \times \cU\langle \langle \bzeta \rangle 
 \rangle} \quad\text{(by \eqref{genreprod})} \\
 & = \langle \Theta(\zeta) f(\zeta), y \rangle_{\cY\langle \langle 
 \zeta \rangle \rangle \times \cY} \quad\text{(by \eqref{MSadj})}
\end{align*}
from which we conclude that $T_{0}^{*} = M_{\Theta}$ and hence $\| 
M_{\Theta} \| \le 1$.
\end{proof}
We note that a homological-algebra Hilbert-module proof of
part (2) in Proposition \ref{P:2.3} has been given by Douglas-Misra-Sarkar for
the classical (non-formal) setting \cite[Theorem 1]{DMS}.

\begin{proposition}\label{P:2.4}
Let $\cH(K_j)$ be the NFRKHS with $\cL(\cU_j)$-valued formal reproducing kernel 
$K_j(z,\zeta)$ for $j=1,\ldots,n$ and let
\begin{equation}
K(z,\zeta)=\sum_{j=1}^nF_j(z)K_j(z,\zeta)F_j(\zeta)^*
\label{jan1c}
\end{equation}
where $F_j\in \cL(\cU_j,\cX)\langle \langle z\rangle\rangle$. 
Then $F(z)=\begin{bmatrix}F_1(z) & \cdots &F_n(z)\end{bmatrix}$ is a 
coisometric multiplier from $\oplus_{j=1}^n\cH(K_j)$ to $\cH(K)$.
\end{proposition}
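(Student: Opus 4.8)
The plan is to reduce the assertion to part (2) of Proposition \ref{P:2.3}, applied with $K' := \bigoplus_{j=1}^n K_j$ and $\Theta := F$. First I would record the standard fact that the reproducing kernel of the orthogonal direct sum $\cH(K') := \bigoplus_{j=1}^n \cH(K_j)$, viewed as a Hilbert space of $\bigl(\bigoplus_{j=1}^n \cU_j\bigr)$-valued formal power series, is the block-diagonal formal kernel
\[
K'(z,\zeta) = \bigoplus_{j=1}^n K_j(z,\zeta) \in \cL\Bigl( \bigoplus_{j=1}^n \cU_j \Bigr)\langle \langle z, \bzeta \rangle \rangle .
\]
This follows either from the uniqueness clause in Theorem \ref{T:NFRKHS} together with a direct verification of the reproducing property \eqref{reprod-prop'}, or by assembling Kolmogorov decompositions $K_j(z,\zeta) = H_j(z) H_j(\zeta)^*$ into $K'(z,\zeta) = H(z) H(\zeta)^*$ with $H(z) = \operatorname{diag}(H_1(z), \dots, H_n(z))$ and invoking part (2) of Theorem \ref{T:NFRKHS}.

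Next I would note that $K$ as defined in \eqref{jan1c} is genuinely a positive formal kernel, so that $\cH(K)$ is well defined: each summand $F_j(z) K_j(z,\zeta) F_j(\zeta)^*$ equals $\bigl(F_j(z) H_j(z)\bigr)\bigl(F_j(\zeta) H_j(\zeta)\bigr)^*$ and hence is positive by part (2) of Theorem \ref{T:NFRKHS}, and a finite sum of positive formal kernels is again positive. The heart of the matter is then a one-line block computation: writing $F(z) = \begin{bmatrix} F_1(z) & \cdots & F_n(z) \end{bmatrix}$, so that $F(\zeta)^* = \begin{bmatrix} F_1(\zeta)^* \\ \vdots \\ F_n(\zeta)^* \end{bmatrix}$, and using that $K'(z,\zeta)$ is block-diagonal, one obtains
\[
F(z)\, K'(z,\zeta)\, F(\zeta)^* = \sum_{j=1}^n F_j(z) K_j(z,\zeta) F_j(\zeta)^* = K(z,\zeta),
\]
the last equality being precisely \eqref{jan1c}. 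Hence the defect kernel $K_F(z,\zeta) = K(z,\zeta) - F(z) K'(z,\zeta) F(\zeta)^*$ of \eqref{jan1b} vanishes identically, and part (2) of Proposition \ref{P:2.3} gives exactly that $F$ is a coisometric multiplier from $\cH(K') = \bigoplus_{j=1}^n \cH(K_j)$ to $\cH(K)$.

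I do not expect a substantial obstacle. The one point deserving a word of care is that one should not presuppose $M_F$ is bounded: this is not needed, since the sufficiency direction of Proposition \ref{P:2.3} — the construction there of the operator $T_0$ on the span of kernel functions out of the positivity (here, vanishing) of $K_F$ — already produces a bounded $M_F = T_0^*$, and when $K_F = 0$ that same computation shows $T_0$ is isometric, so $M_F = T_0^*$ satisfies $M_F M_F^* = I$, i.e.\ is coisometric. Equivalently, the vanishing of $K_F$ says that $K(\cdot,\zeta)y \mapsto K'(\cdot,\zeta)F(\zeta)^* y$ extends to an isometry $\cH(K) \to \cH(K')$ whose adjoint is $M_F$, which is another way of seeing that $M_F$ is a coisometry.
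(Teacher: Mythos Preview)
Your proposal is correct and follows essentially the same approach as the paper: identify the reproducing kernel of $\bigoplus_{j=1}^n \cH(K_j)$ as the block-diagonal kernel $K' = \operatorname{diag}(K_1,\dots,K_n)$, observe that \eqref{jan1c} is exactly the statement $K = F K' F^*$, and invoke part (2) of Proposition~\ref{P:2.3}. Your additional remarks on positivity of $K$ and on why boundedness of $M_F$ need not be assumed in advance are correct elaborations but not departures from the paper's argument.
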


\begin{proof}  We view $\oplus_{j=1}^{n} \cH(K_{j})$ as a NFRKHS with 
   block-diagonal reproducing kernel 
$$
K(z, \zeta) = \begin{bmatrix} K_{1}(z, \zeta) & & 0 \\ & 
    \ddots & \\ 0 & & K_{n}(z, \zeta)\end{bmatrix}.  
$$
As an application of part (2) of Proposition \ref{P:2.3}, we see that the identity 
    \eqref{jan1c} is the precise condition required for the map
    $$
    M_{F} \colon \sbm{ f_{1}(z) \\ \vdots \\ f_{n}(z) } \mapsto 
    \begin{bmatrix} F_{1}(z) & \cdots & F_{n}(z) \end{bmatrix}
	 \sbm{ f_{1}(z) \\ \vdots \\ f_{n}(z) }
$$
to be a partial isometry form $\oplus_{j=1}^{n} \cH(K_{j})$ onto $\cH(K)$.
\end{proof}
\noindent
Given a contractive multiplier $F$ from $\cH(K')$ to $\cH(K)$, we let $M_F\in\cL(\cH(K'),\cH(K))$ 
denote the multiplication operator
$$
  M_F \colon f(z) \mapsto F(z) \cdot f(z).
$$
We shall often view the linear space $\cM: = \operatorname{Ran} M_F \subset \cH(K)$ as a Hilbert space in its own right 
with its own norm
$$
\| M_F f \|_{\cM}= \inf \{ \| g \| \colon g \in \cH(K^\prime) \text{ such that } M_F g = M_F f \} = \| Q f \|_{\cH(K^\prime)},
$$
where $Q$ is the orthogonal projection of $\cH(K^\prime)$ onto $({\rm Ker} M_F)^\perp:=\cH(K^\prime)\ominus {\rm Ker} M_F$.

\subsection{Lifted-norm and pullback spaces and their Brangesian complements}   \label{S:Brangesian}

More generally, if $X \colon \cK' \to \cK$ is any contraction operator, let us write $\cH^\ell(X)$ for the {\em lifted norm space}
$$
 \cH^\ell(X) = \operatorname{Ran} X\quad\text{with}\quad \| X f \|_{\cH^\ell(X)} : = 
 \inf  \{ \| g \|_{\cK'} \colon X g = X f \}.
$$
It then follows that the inclusion map $\iota \colon \cM \to \cH(K)$ is contractive since
$$
   \| M_F f \|_{\cM} = \| M_F Q f \|_{\cM}  = \| Q f \|_{\cH(K')}  \ge \| M_F Q f ||_{\cH(K)} = \| M_F f \|_{\cH(K)}.
$$
Conversely, if $\cM$ is any Hilbert space contractively included in  an ambient Hilbert space $\cH$, then the inclusion map
$\iota \colon \cM \to \cK$ is contractive and we may view $\cM$ as equal to $\operatorname{Ran} \iota$.  Let 
$X \colon  \cK' \to \cK$ be any operator such that $X X^* = \iota \iota^*$.  Then $\operatorname{Ran} X
= \operatorname{Ran} \iota = \cM$ and we have
$$
  \| X f \|_\cM  = \inf \{  \| g \|_{\cK'} \colon X g = X f \}  \ge \| X f \|_\cK.
$$
In this case there is a complementary space, denoted by $\cM^{[\perp]}$ and called the {\em Brangesian complementary
space} (to $\cM$ relative to  $\cK$) defined as $\cH^\ell( (I - X X^*)^{\frac{1}{2}})$, i.e., 
$$
\cM^{[\perp]} =  \operatorname{Ran} (I - X X^*)^{\frac{1}{2}}
$$
with lifted norm 
$$
 \|(I - X X^*)^{\frac{1}{2}} f \|
  = \inf \{ \| g \|_{\cK} \colon (I - X X^*)^{\frac{1}{2}}g =( I - X X^*)^{\frac{1}{2}} f \}.
$$
We note that the space $\cM^{[\perp]}$ is independent of the choice of operator $X$ used to represent the Hilbert space
$\cM = \cH^\ell(X)$ since $\cM^{[\perp]}$ can be characterized intrinsically in terms of $\cM$ via the formula
$$
\cM^{[\perp]} = \left\{ g  \in \cK \colon  \| g \|_{\cM^{[\perp]}} : =
\sup \{ \| Xf + g \|^2_\cK - \| f\|^2_\cM \colon f \in \cM \} < \infty \right\}.
$$

It is sometimes more convenient to view a lifted-norm space $\cH^\ell(X)$ as a pullback space $H^p(\Pi)$ based
on the positive-semidefinite operator $\Pi = X X^*$ and defined as follows.  Given any positive semidefinite operator
$\Pi$ on $\cX$ (e.g., $\Pi = X X^*$), we define $\cH^p(\Pi)$ as the completion of $\operatorname{Ran} \Pi$ in the inner product
$$
   \langle \Pi x, \Pi y \rangle_{\cH^p(\Pi)} = \langle \Pi x, y \rangle_\cX.
$$
It is then not hard to show that the completion can be identified explicitly as 
$$
   \cH^p(\Pi) = \operatorname{Ran} \Pi^{\frac{1}{2}} = \operatorname{Ran} X \text{ (if $\Pi = X X^*$)}
$$
with $\operatorname{Ran} \Pi$ as a dense subset of $\cH^p(\Pi)$.  In particular for the computation
$$
 \langle X X^* f, XX^* g \rangle_{\cH^\ell(X)} = \langle X^*f, X^* g \rangle_\cX = \langle X X^* f, g \rangle_\cX
 = \langle X X^* f, X X^* g \rangle_{\cH^p(X X^*)}
$$
shows that $\cH^p(X X^*)$ is isometrically equal to $\cH^\ell(X)$.  In particular, if $\cM = \cH^\ell(X)$ is a contractively
included subspace of $\cX$, then its Brangesian complement $\cM^{[\perp]} = \cH^\ell((I - X X^*)^{\frac{1}{2}}$
is conveniently alternatively characterized as $\cM^{[\perp]} = \cH^p(I - X X^*)$.  Note that if $X$ is a partial isometry
and $\Pi = X X^*$ is a projection, then $\cM$ is isometrically included in $\cX$ and the Brangesian complement
$\cM^{[\perp]} = \cH^p(I - \Pi)$ collapses to the standard Hilbert-space orthogonal complement
$\cM^{[\perp]} = \cM^\perp$.
For more complete details on this material, we refer to \cite{BB-HOT, Sarason-deB} which has its origins in the work of
de Branges-Rovnyak \cite{dBR1, dBR2}.

\smallskip

Let us now return to the case where $\cK' = \cH(K')$ and $\cK = \cH(K)$ are noncommutative formal reproducing kernel
Hilbert spaces and the operator $X \colon \cH(K') \to \cH(K)$ is a contractive multiplication operator $X = M_F$.
Then we may view $\cM = \cH^\ell(M_F)$ as  being the lifted-norm Hilbert space induced by the contraction operator
$M_F$. In particular, $\cM$ is contractively included in $\cH(K)$, and we have the following result:

\begin{theorem}   \label{T:charNFRKHS}
Given a contractive multiplier $F \in \cL(\cU, \cY)\langle \langle z \rangle \rangle$ from
$\cH(K')$ to $\cH(K)$, set $\cM$ equal to the lifted norm space $\cM = \cH^\ell(M_F)$.  Then:
\begin{enumerate}
\item   $\cM$ is itself a NFRKHS with reproducing kernel $K_\cM(z, \zeta)$ given by
\begin{equation}  \label{K-M}
  K_\cM(z, \zeta) = F(z) K'(z, \zeta) F(\zeta)^*.
\end{equation}

\item The Brangesian complement $\cM^{[\perp]}$ is also a NFRKHS with reproducing kernel $K_{\cM^{[\perp]}}$ given by
\begin{equation}
K_{\cM^{[\perp]}}(z, \zeta)=  K(z, \zeta) - K_{\cM}(z, \zeta)= K(z, \zeta) - F(z) K'(z, \zeta) F(\zeta)^*.
  \label{K-Mperp}
\end{equation}
\end{enumerate}
\end{theorem}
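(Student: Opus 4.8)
The plan is to derive both parts from one general fact about pullback spaces, combined with the kernel--multiplier correspondence already in Proposition~\ref{P:2.3} and the Kolmogorov description of NFRKHSs in Theorem~\ref{T:NFRKHS}. The key preliminary step is the following lemma: \emph{if $\Pi\in\cL(\cH(K))$ is bounded and positive semidefinite, then the pullback space $\cH^p(\Pi)$ is itself a NFRKHS, with reproducing kernel $\widetilde K$ determined by $\widetilde K_\beta(\cdot)y=\Pi\big(K_\beta(\cdot)y\big)$ for all $\beta\in\free$ and $y\in\cY$}, where $K_\beta(\cdot)y=\Phi_\beta^{*}y$ is the kernel function of $\cH(K)$. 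The NFRKHS property is immediate since $\operatorname{Ran}\Pi^{1/2}\subseteq\cH(K)$ and the inclusion $\cH^p(\Pi)\hookrightarrow\cH(K)$ is bounded (with bound $\|\Pi\|^{1/2}$), so the coefficient functionals of $\cH^p(\Pi)$ are continuous. The kernel formula is then verified on the dense subset $\operatorname{Ran}\Pi\subseteq\cH^p(\Pi)$: for $g=\Pi h$ one uses the defining inner product of $\cH^p(\Pi)$ and then the reproducing property of $\cH(K)$ to get $\langle g,\,\Pi(\Phi_\beta^{*}y)\rangle_{\cH^p(\Pi)}=\langle\Pi h,\,\Phi_\beta^{*}y\rangle_{\cH(K)}=\langle(\Pi h)_\beta,\,y\rangle_\cY$, which says exactly that $\Pi(\Phi_\beta^{*}y)$ represents the $\beta$-th coefficient functional of $\cH^p(\Pi)$.

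For part~(1) I would apply the final assertion of Theorem~\ref{T:NFRKHS} to the formal power series $H(z)=F(z)H_{K'}(z)$, where $H_{K'}(z)=\sum_{\beta\in\free}\Phi'_\beta z^\beta$ is the canonical Kolmogorov factor of $K'$ from Remark~\ref{R:fun}, so that $K'(z,\zeta)=H_{K'}(z)H_{K'}(\zeta)^{*}$ and $M_{H_{K'}}$ is the identity on $\cH(K')$. One checks directly that the multiplication operator $M_H\colon\cH(K')\to\cY\langle\langle z\rangle\rangle$ coincides with $M_F$, and that $H(z)H(\zeta)^{*}=F(z)K'(z,\zeta)F(\zeta)^{*}=K_\cM(z,\zeta)$. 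Theorem~\ref{T:NFRKHS} then identifies $\cH(K_\cM)$ with $\operatorname{Ran}M_H=\operatorname{Ran}M_F$ carrying precisely the lifted norm, i.e.\ $\cH(K_\cM)=\cH^\ell(M_F)=\cM$; this is \eqref{K-M}.

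For part~(2) I would combine the two previous steps. By the discussion in Section~\ref{S:Brangesian} applied with $X=M_F$, one has $\cM=\cH^p(M_FM_F^{*})$ and $\cM^{[\perp]}=\cH^p(I-M_FM_F^{*})$. Applying the lemma with $\Pi=M_FM_F^{*}$ and invoking part~(1) forces the identity $(K_\cM)_\beta(\cdot)y=M_FM_F^{*}\big(K_\beta(\cdot)y\big)$ (this could also be obtained directly from \eqref{MTheta*} by left multiplication by $F(z)$ and comparison of $z^\alpha$-coefficients). Applying the lemma now with $\Pi=I-M_FM_F^{*}$ shows that $\cM^{[\perp]}$ is a NFRKHS whose kernel satisfies $(K_{\cM^{[\perp]}})_\beta(\cdot)y=K_\beta(\cdot)y-M_FM_F^{*}\big(K_\beta(\cdot)y\big)=\big(K-K_\cM\big)_\beta(\cdot)y$. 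Since a formal kernel is recovered from its kernel functions, $K_{\cM^{[\perp]}}=K-K_\cM=K-F K' F^{*}$, which is \eqref{K-Mperp}; note that the positivity of this kernel, needed for the NFRKHS to exist, is exactly the content of Proposition~\ref{P:2.3}(1) for the contractive multiplier $F$.

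I do not expect a genuine obstacle here: the work is in the bookkeeping. The two points to watch are (a) confirming that a lifted-norm space $\cH^\ell(X)$ and the corresponding pullback space $\cH^p(XX^{*})$ agree \emph{isometrically} and not merely as sets of formal power series (so that ``the reproducing kernel of $\cM$'' is unambiguous), and (b) keeping the two commuting families of indeterminates $z$ and $\bzeta$ --- and the accompanying transpose operation $\alpha\mapsto\alpha^{\top}$ and word concatenation --- straight when passing between the two-variable kernel $K(z,\zeta)$, the one-variable kernel functions $K_\beta(z)$, and the product $F(z)K'(z,\zeta)F(\zeta)^{*}$.
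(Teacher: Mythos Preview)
Your proposal is correct and arrives at the same conclusions as the paper, but the organization differs in a way worth noting. For part~(1) the paper first establishes abstractly that any space contractively included in a NFRKHS is itself a NFRKHS, via a closed-graph argument showing that each coefficient functional $\Phi_\alpha$ is closed on the complete space $\cM$; it then computes the kernel by writing $\cM=\cH^p(M_FM_F^{*})$ and using the identity $M_FM_F^{*}\big(K_\beta(\cdot)y\big)=F(\cdot)K'(\cdot,\zeta)F(\zeta)^{*}y$ derived from \eqref{MTheta*act}. Your route for part~(1) is more direct: you invoke the last clause of Theorem~\ref{T:NFRKHS} with the single Kolmogorov factor $H=FH_{K'}$, which yields both the NFRKHS structure and the explicit kernel $FK'F^{*}$ simultaneously. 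Your preliminary lemma on $\cH^p(\Pi)$---with the ``bounded inclusion composed with bounded evaluation'' argument replacing the closed-graph step---is cleaner, and the paper's kernel computation in the proof of part~(2) is precisely the instance $\Pi=M_FM_F^{*}$ and $\Pi=I-M_FM_F^{*}$ of that lemma. So the two proofs converge on the same mechanism for part~(2); the genuine difference is only in how part~(1) is packaged.
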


\begin{proof}
As the elements of $\cM$ consist of formal power series, in order to show that $\cM$ is a NFRKHS 
it suffices to verify that the evaluation functional $\Phi_\alpha \colon f(z) \mapsto f_\alpha$ 
is continuous (i.e., bounded as a linear operator from $\cM$ to $\cY$) for each $\alpha \in \free$. Since 
$\Phi_\alpha$ is defined on all of $\cM$ which is a complete space in its norm, it is enough to show that 
$\Phi_\alpha$ is a closed operator. To this end, we take an arbitrary sequence $\{f_n\}\subset \cM$ converging to $f\in\cM$ 
and assume that $\Phi_{\alpha}f_n$ converges to $y\in\cY$. 
Since $\cM$ is contained contractively in $\cH(K)$ (since $\cM = \cH^\ell(M_F)$ and $F$ is a contractive multiplier), 
we have
$$
   \| f_n - f \|_{\cH(K)} \le \| f_n - f \|_\cM.
 $$
Since $\{f_n\}$ converges to $f$ in $\cM$, it also converges to $f$ in $\cH(K)$.  
As $\cH(K)$ is a NFRKHS, it follows that $\Phi_\alpha f_n$ converges to $f_\alpha\in\cY$.
By uniqueness of limits in $\cY$ it now follows that $y = f_\alpha$ meaning that the operator $\Phi_\alpha:\cM\to\cY$ is closed.
The fact that $\cM^{[\perp]}$ is also a NFRKHS follows from the fact that $\cM^{[\perp]}$ is also a lifted-norm space
induced by a contraction operator, namely 
$$
\cM^{[\perp]} = \cH^\ell((I - M_F M_F^*)^{\frac{1}{2}})
$$
 together with the
general principle established in the preceding paragraph:
{\em any Hilbert space $\cN$ contractively included in a NFRKHS is itself a NFRKHS}.   

\smallskip

It remains to verify the expressions \eqref{K-M} and \eqref{K-Mperp} for the reproducing kernels of $\cM$ and $\cM^{[\perp]}$.
As a consequence of the formula
\eqref{MTheta*act} and the discussion of the pullback space preceding the statement of the theorem and making use
of the fact that we also have $\cM= \cH^p(M_F M_F^*)$, we see that 
\begin{align*}
& \big\langle \big( (M_F M_F^* \otimes I_{{\mathbb C}\langle \langle \overline{\zeta'} \rangle \rangle})
K(\cdot, \zeta') y' \big)(\zeta), \, y \big\rangle_{\cY\langle \langle \zeta, \overline{\zeta'} \rangle \rangle
\times \cY} \\
& = \big\langle \big(M_F M_F^* \otimes I_{{\mathbb C} \langle \langle \overline{\zeta'} \rangle \rangle}\big) K(\cdot, \zeta') y',
\, K(\cdot, \zeta) y  \big\rangle_{\cH(K)\langle \langle \overline{\zeta'} \rangle \rangle \times \cH(K) 
\langle \langle \overline{\zeta} \rangle \rangle}  \\
& =  \big\langle \big(M_F M_F^* \otimes I_{{\mathbb C} \langle \langle \overline{\zeta'} \rangle \rangle}\big) K(\cdot, \zeta') y',
\, K(\cdot, \zeta) y \big\rangle_{\cH(K)\langle \langle \overline{\zeta'} \rangle \rangle \times \cH(K) 
\langle \langle \overline{\zeta} \rangle
\rangle}  \\
& =  \big\langle \big(M_F M_F^* \otimes I_{{\mathbb C} \langle \langle \overline{\zeta'} \rangle \rangle}\big)
K(\cdot, \zeta') y',
\,  \big(    M_F M_F^* \otimes I_{{\mathbb C} \langle \langle \overline{\zeta} \rangle \rangle}  \big)
K(\cdot, \zeta) y \big\rangle_{\cM \langle \langle \overline{\zeta'} \rangle \rangle \times 
\cM \langle \langle \overline{\zeta} \rangle \rangle}.
\end{align*}
Since elements of the form $( M_F M_F^* )K_\beta(\cdot) y' $ ($ \beta \in \free$, $y' \in \cY$) span a dense set in $\cM$, 
we may then conclude by \eqref{MTheta*act} again that
$$
K_{\cM}(z, \zeta) y = \big( ( M_F M_F^* \otimes I_{{\mathbb C}\langle \langle \overline{\zeta} \rangle \rangle})
K(\cdot, \zeta) y \big) (z)= F(z) K'(z, \zeta) F(\zeta)^*  y
$$
in agreement with \eqref{K-M}. The formula \eqref{K-Mperp} for $K_{\cM^{[\perp]}}$ now follows by exactly the same argument as for
$K_\cM(z, \zeta)$ with the substitution that $\cM^{[\perp]} = \cH^p(I - M_F M_F^*)$ rather than
$\cM = \cH^p(M_F M_F^*)$.
\end{proof}

\section{Contractive multipliers between Fock spaces}  \label{S:con-mult}
Contractive multipliers from $H^2_{\cU}(\free)$ to $H^2_{\cY}(\free)$ are well known.
To place these into the context of related results from \cite{NFRKHS, BBF3}, recall that 
an operator-tuple ${\bf A}=(A_1,\ldots,A_d)\in\cX^d$ is called {\em strongly stable} if 
$$
\lim_{N \to \infty} \sum_{\alpha \in \free \colon |\alpha| = N} \|{\mathbf
           A}^{\alpha} x\|^{2} \to 0 \quad \text{for all} \; \; x \in {\mathcal X},
$$
where $\bA^\alpha$ is defined by ${\mathbf A}^{\alpha}$ is defined according to \eqref{1.21pre}.
 Furthermore, a pair $(C,\bA)$ consisting of 
an operator $C\in\cL(\cY,\cX)$ and a operator-tuple ${\bf A}=(A_1,\ldots,A_d)$ is called {\em output-stable}
if the observability operator 
\begin{equation}
\cO_{C,{\bf A}}: \, x\mapsto \sum_{\alpha \in\free}(C{\bf A}^\alpha x)z^\alpha
\label{1.27preagain}
\end{equation}
is bounded from $\cX$ into $H^2_{\cY}(\free)$. The following realization result states that any such multiplier appears 
as the transfer function of a unitary (conserva\-tive) system \eqref{1.20pre};
see \cite[Theorem 3.11]{NFRKHS}, \cite[Proposition 4.1.3]{Cuntz-scat}, \cite[Proposition 3.2]{BBF3}.

\begin{theorem}  
\label{T:NC1} 
Given $G(z)$ a contractive multiplier from $H^{2}_{\cU}(\free)$ to $H^{2}_{\cY}(\free)$,
there exists a Hilbert space $\cX$ and a unitary connection operator $\bU$ of the form \eqref{1.20}
so that $G(z)$ can be realized as a formal power series in the form
\begin{equation}   
\label{NCrealization}
G(z)=D +\sum_{j=1}^{d} \sum_{\alpha \in\free} C \bA^{\alpha}B_{j} z^{\alpha j} = D + C (I - Z(z) A)^{-1} Z(z) B.
\end{equation}
Conversely, if $G(z)$ has a realization as in \eqref{NCrealization} with a contractive connection matrix $\bU$, then $G$ is a 
contractive multiplier from $H^{2}_{\cU}(\free)$ to $H^{2}_{\cY}(\free)$.
\end{theorem}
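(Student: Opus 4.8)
The plan is to prove the two implications separately; the second one (contractive multiplier $\Rightarrow$ unitary realization) is the substantial one.

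\textbf{Realization $\Rightarrow$ contractive multiplier.} Assume $G(z)=D+C(I-Z(z)A)^{-1}Z(z)B$ with $\bU=\sbm{A&B\\ C&D}$ a contraction of the form \eqref{1.20}. I would argue by a state-space energy balance. Given a finitely supported series $\widehat u(z)=\sum_{\alpha\in\free}u(\alpha)z^{\alpha}$, run the Fornasini--Marchesini system \eqref{1.20pre} with zero initial state $x(\emptyset)=0$; by \eqref{1.26pre}--\eqref{1.28pre} the output is precisely $\widehat y=M_{G}\widehat u$. For each $\alpha$, contractivity of $\bU$ gives $\sum_{j=1}^{d}\|x(j\alpha)\|^{2}+\|y(\alpha)\|^{2}\le\|x(\alpha)\|^{2}+\|u(\alpha)\|^{2}$; summing over all words $\alpha$ with $|\alpha|\le N$, the state terms telescope (using $x(\emptyset)=0$) and leave $\sum_{|\alpha|=N+1}\|x(\alpha)\|^{2}+\sum_{|\alpha|\le N}\|y(\alpha)\|^{2}\le\sum_{|\alpha|\le N}\|u(\alpha)\|^{2}\le\|\widehat u\|^{2}_{H^{2}_{\cU}(\free)}$. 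Letting $N\to\infty$ gives $\|M_{G}\widehat u\|_{H^{2}_{\cY}(\free)}\le\|\widehat u\|_{H^{2}_{\cU}(\free)}$, and a routine density and coefficientwise-convergence argument then extends $M_{G}$ to a contraction on all of $H^{2}_{\cU}(\free)$. (Alternatively one could verify, via Proposition~\ref{P:2.3}(1), that $k_{\rm nc, Sz}(z,\zeta)I_{\cY}-G(z)(k_{\rm nc, Sz}(z,\zeta)\otimes I_{\cU})G(\zeta)^{*}$ is a positive kernel, using a Julia factorization of $I-\bU^{*}\bU$.)

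\textbf{Contractive multiplier $\Rightarrow$ unitary realization.} First, by Proposition~\ref{P:2.3}(1) (with $K=k_{\rm nc, Sz}\otimes I_{\cY}$ and $K'=k_{\rm nc, Sz}\otimes I_{\cU}$) the formal kernel $K_{G}(z,\zeta):=k_{\rm nc, Sz}(z,\zeta)I_{\cY}-G(z)(k_{\rm nc, Sz}(z,\zeta)\otimes I_{\cU})G(\zeta)^{*}$ is positive. Substituting the noncommutative Szeg\H o identity \eqref{Sz-id}, $k_{\rm nc, Sz}(z,\zeta)=1+\sum_{j=1}^{d}\bzeta_{j}\,k_{\rm nc, Sz}(z,\zeta)\,z_{j}$, into the definition of $K_{G}$ and using $z_{k}\bzeta_{j}=\bzeta_{j}z_{k}$ (see \eqref{hereditarynorm}), the mixed terms cancel and I obtain the one-step identity
\[
K_{G}(z,\zeta)+G(z)G(\zeta)^{*}=I_{\cY}+\sum_{j=1}^{d}\bzeta_{j}\,K_{G}(z,\zeta)\,z_{j}.
\]
Next I take the canonical Kolmogorov decomposition $K_{G}(z,\zeta)=H(z)H(\zeta)^{*}$ with $H(z)=\sum_{\beta\in\free}\Phi_{\beta}z^{\beta}\in\cL(\cX,\cY)\langle\langle z\rangle\rangle$ and $\cX:=\cH(K_{G})$ (Remark~\ref{R:fun}); since $\bzeta_{j}H(z)H(\zeta)^{*}z_{j}=(H(z)z_{j})(\bzeta_{j}H(\zeta)^{*})$, the one-step identity becomes the equality of two Kolmogorov decompositions
\[
\begin{bmatrix}H(z)&G(z)\end{bmatrix}\begin{bmatrix}H(\zeta)^{*}\\ G(\zeta)^{*}\end{bmatrix}
=\begin{bmatrix}H(z)z_{1}&\cdots&H(z)z_{d}&I_{\cY}\end{bmatrix}\begin{bmatrix}\bzeta_{1}H(\zeta)^{*}\\ \vdots\\ \bzeta_{d}H(\zeta)^{*}\\ I_{\cY}\end{bmatrix}.
\]
Comparing coefficients of $z^{v}\bzeta^{w^{\top}}$, the assignment $\sbm{\Phi_{v}^{*}y\\ G_{v}^{*}y}\mapsto(\text{the coefficient vector on the right})$ preserves inner products, so it extends to a well-defined isometry $V_{0}$ from $\overline{\operatorname{span}}\{\sbm{\Phi_{v}^{*}y\\ G_{v}^{*}y}:v\in\free,\ y\in\cY\}\subseteq\cX\oplus\cU$ into $\cX^{d}\oplus\cY$; since the kernel elements $\Phi_{v}^{*}y$ span $\cX$, the range of $V_{0}$ is all of $\cX^{d}\oplus\cY$. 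Extending $V_{0}$ by zero on the orthogonal complement of its domain produces a \emph{coisometric} colligation $\bU_{0}=\sbm{A&B\\ C&D}\colon\sbm{\cX\\ \cU}\to\sbm{\cX^{d}\\ \cY}$. An induction on word length---using at each stage the relations read off from $V_{0}$ together with $\bU_{0}\bU_{0}^{*}=I$---shows $\Phi_{\emptyset}=C$, $G_{\emptyset}=D$, $\Phi_{v'j}=\Phi_{v'}A_{j}$ and $G_{v'j}=\Phi_{v'}B_{j}$; hence $\Phi_{\alpha}=C\bA^{\alpha}$, so $H(z)=C(I-Z(z)A)^{-1}$ and $G(z)=D+C(I-Z(z)A)^{-1}Z(z)B$. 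Finally I apply a standard conservative dilation---the Fock-space analogue of the Sz.-Nagy--Foias unitary dilation of a colligation, cf.\ \cite{Cuntz-scat}---enlarging $\cX$ to some $\widetilde{\cX}\supseteq\cX$ so that $\bU_{0}$ dilates to a unitary colligation $\bU$ of the form \eqref{1.20}, with unchanged input and output spaces and unchanged transfer function, which gives the realization \eqref{NCrealization}.

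\textbf{Main obstacle.} The converse direction is routine. In the forward direction, well-definedness, the isometry property, and surjectivity of $V_{0}$ are all immediate once the matched-kernel identity is in hand; the genuine work lies in (i) carrying out the one-step-identity computation correctly in the formal-noncommutative category (keeping careful track of the commutation convention \eqref{hereditarynorm}), and (ii) the inductive bookkeeping that confirms the coisometric colligation $\bU_{0}$ has transfer function exactly $G$. The coisometric-to-unitary dilation is standard and may simply be quoted.
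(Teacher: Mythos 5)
Your proof is correct. The paper gives no in-text proof of Theorem \ref{T:NC1}—it defers to \cite{NFRKHS}, \cite{Cuntz-scat}, \cite{BBF3}—but your argument (the telescoping energy estimate for the converse, and, for the direct part, positivity of the de Branges--Rovnyak kernel via Proposition \ref{P:2.3}, the Szeg\H{o} identity \eqref{Sz-id}, a lurking isometry, and a unitary extension at the expense of enlarging the state space) is exactly the standard route taken in those references and reproduced almost verbatim in the paper's own Proof II of Theorem \ref{T:Leech}.
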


The next result gives some finer system-theoretic structure associated with a contractive connection matrix $\bU$.

\begin{theorem} \label{1.29pre}
Assume that the operator connection matrix ${\bf U}$ \eqref{1.20} is contractive. Then
(a) the pair $(C,\bA)$ is output-stable, (b) the observability operator $\cO_{C,{\bf A}}: \,
\cX\to H^{2}_{\cY}(\free)$ (see \eqref{1.27preagain}) is a contraction, and (c)
the transfer function  $\Theta_{\bf U}$ (see \eqref{1.28pre}) is a
contractive multiplier from $H^{2}_{\cU}(\free)$ to $H^{2}_{\cY}(\free)$. 
Moreover:
\begin{enumerate}
    
\item The operator $\cO_{C,{\bf A}}: \, \cX\to H^{2}_{\cY}(\free)$ is isometric if and
only if ${\bf A}$ is strongly stable and the pair $(C,\bA)$ is isometric in the sense that
\begin{equation}  \label{isoIO}
A_1^*A_1+\ldots +A_d^*A_d+C^*C=I_{\cX}.
\end{equation}

\item If  ${\bf A}$ is strongly stable and
${\bf U}$ is unitary, then $\Theta_{\bf U}$ is strictly inner  (i.e., $M_{\Theta_{\bf U}} \colon
H^2_\cU(\free) \to H^2_\cY(\free)$ is isometric) and the representation \eqref{1.26pre} is
orthogonal in $H^{2}_{\cY}(\free)$:
\begin{equation}   \label{orthogISO}
\|\widehat y\|_{ H^{2}_{\cY}(\free)}^2= \|\cO_{C,{\bf A}} x(\emptyset)\|_{H^{2}_{\cY}(\free)}^2+\|\Theta_{\bf U} 
\widehat u\|_{ H^{2}_{\cY}(\free)}^2=\|x(\emptyset)\|_\cX^2+\|\widehat u\|_{ H^{2}_{\cU}(\free)}^2.
\end{equation}
\end{enumerate}
\end{theorem}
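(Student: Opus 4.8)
The plan is to handle (a)--(c) via the observability gramian of $(C,\bA)$, part (1) via the Stein equation this gramian satisfies, and part (2) via the energy balance of the (conservative) system \eqref{1.20pre}. For (a)--(c): contractivity of $\bU$ as in \eqref{1.20} forces in particular the $(1,1)$-block inequality of $\bU^{*}\bU \preceq I$, namely $\Lambda[I_{\cX}] + C^{*}C \preceq I_{\cX}$, where I write $\Lambda[X] := \sum_{j=1}^{d} A_{j}^{*} X A_{j}$ for the bounded positive linear map attached to $\bA$. A one-line induction gives $\sum_{|\alpha|=k} (\bA^{\alpha})^{*} C^{*}C\,\bA^{\alpha} = \Lambda^{k}[C^{*}C]$, so the partial observability gramians are $\cG_{N} := \sum_{|\alpha|\le N} (\bA^{\alpha})^{*} C^{*}C\, \bA^{\alpha} = \sum_{k=0}^{N}\Lambda^{k}[C^{*}C]$. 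From $C^{*}C \preceq I - \Lambda[I]$ and positivity of $\Lambda$ each term is dominated by $\Lambda^{k}[I] - \Lambda^{k+1}[I]$, and the sum telescopes to $\cG_{N} \preceq I - \Lambda^{N+1}[I] \preceq I$. Hence $\{\cG_{N}\}$ increases and is bounded, so it converges strongly to some $\cG \preceq I$; since $\langle \cG x, x\rangle = \sum_{\alpha}\|C\bA^{\alpha}x\|^{2} = \|\cO_{C,\bA}x\|^{2}_{H^{2}_{\cY}(\free)}$, this establishes output-stability of $(C,\bA)$ and $\cO_{C,\bA}^{*}\cO_{C,\bA} = \cG \preceq I$, giving (a) and (b). For (c) there is nothing new to prove: $\Theta_{\bU}$ in \eqref{1.28pre} already has the realization form \eqref{NCrealization} with contractive connection matrix $\bU$, so Theorem \ref{T:NC1} applies directly.

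For part (1): applying $\Lambda$ to the strong limit $\cG_{N}\uparrow\cG$ and using $\Lambda[\cG_{N}] = \cG_{N+1} - C^{*}C$, I obtain the Stein identity $\cG - \Lambda[\cG] = C^{*}C$. If $\cO_{C,\bA}$ is isometric then $\cG = I$, so $I - \Lambda[I] = C^{*}C$, which is exactly \eqref{isoIO}; substituting this back makes the telescoping exact, $\cG_{N} = I - \Lambda^{N+1}[I]$, so $\cG_{N}\to I$ forces $\Lambda^{N+1}[I]\to 0$ strongly, i.e.\ $\sum_{|\alpha|=N+1}\|\bA^{\alpha}x\|^{2}\to 0$ for every $x$, which is strong stability \eqref{bAstable}. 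Conversely, under \eqref{isoIO} the telescoping is exact from the outset, so $\cG_{N} = I - \Lambda^{N+1}[I]$, and strong stability yields $\cG_{N}\to I$, i.e.\ $\cO_{C,\bA}$ is isometric.

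For part (2): a unitary $\bU$ is contractive, so (a)--(c) hold, and the $(1,1)$-block of $\bU^{*}\bU = I$ is precisely the equality \eqref{isoIO}; hence by part (1) the standing hypothesis that $\bA$ is strongly stable already makes $\cO_{C,\bA}$ isometric. The remaining norm identity I would get from the conservative energy balance: isometry of $\bU$ gives $\sum_{j}\|A_{j}\xi + B_{j}\eta\|^{2} + \|C\xi + D\eta\|^{2} = \|\xi\|^{2} + \|\eta\|^{2}$, and feeding in $\xi = x(\alpha)$, $\eta = u(\alpha)$ and the system equations \eqref{1.20pre} yields $\sum_{j}\|x(j\alpha)\|^{2} + \|y(\alpha)\|^{2} = \|x(\alpha)\|^{2} + \|u(\alpha)\|^{2}$. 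Summing over all $\alpha$ of length $k$ (using $\sum_{|\alpha|=k}\sum_{j}\|x(j\alpha)\|^{2} = \sum_{|\beta|=k+1}\|x(\beta)\|^{2}$), then over $k = 0,\dots,N$, and telescoping gives $\sum_{|\beta|=N+1}\|x(\beta)\|^{2} + \sum_{|\alpha|\le N}\|y(\alpha)\|^{2} = \|x(\emptyset)\|^{2} + \sum_{|\alpha|\le N}\|u(\alpha)\|^{2}$. For finitely supported input $\widehat u$, the trajectory formula $x(\beta) = \bA^{\beta}x(\emptyset) + \sum_{\beta''j\beta'=\beta}\bA^{\beta''}B_{j}u(\beta')$ together with strong stability of $\bA$ (applied to $x(\emptyset)$ and to each of the finitely many vectors $B_{j}u(\beta')$, with a triangle inequality in $\ell^{2}$ over words of length $N+1$) forces $\sum_{|\beta|=N+1}\|x(\beta)\|^{2}\to 0$. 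Letting $N\to\infty$ gives $\|\widehat y\|^{2} = \|x(\emptyset)\|^{2} + \|\widehat u\|^{2}$ for all finitely supported $\widehat u$. Finally, by \eqref{1.26pre} we have $\widehat y = \cO_{C,\bA}x(\emptyset) + M_{\Theta_{\bU}}\widehat u$; taking $x(\emptyset) = 0$ and passing to the closure (using that $M_{\Theta_{\bU}}$ is bounded by (c)) shows $M_{\Theta_{\bU}}$ is isometric, i.e.\ $\Theta_{\bU}$ is strictly inner, while polarizing the identity $\|\cO_{C,\bA}x + M_{\Theta_{\bU}}\widehat u\|^{2} = \|\cO_{C,\bA}x\|^{2} + \|M_{\Theta_{\bU}}\widehat u\|^{2}$ shows $\operatorname{Ran}\cO_{C,\bA}\perp\operatorname{Ran}M_{\Theta_{\bU}}$, which is the asserted orthogonality of the decomposition \eqref{1.26pre} and yields \eqref{orthogISO}.

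The only genuinely substantive step is the decay $\sum_{|\beta|=N+1}\|x(\beta)\|^{2}\to 0$ in part (2): one has to combine strong stability of $\bA$ with the pulse-response decomposition of the trajectory, which is transparent for finitely supported inputs and then extends by density together with the boundedness of $M_{\Theta_{\bU}}$. Everything else---the bookkeeping with $\Lambda$ on strongly convergent monotone sequences, and the block computations with $\bU$---is routine.
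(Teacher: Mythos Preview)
Your proof is correct and, for the energy-balance portion of part (2), essentially identical to the paper's: both telescope the isometry of $\bU$ along the system trajectories, kill the state term $\sum_{|\beta|=N+1}\|x(\beta)\|^2$ via strong stability for finitely supported inputs, and pass to the closure. For (a), (b), and (1) you supply the explicit Stein-equation/telescoping arguments that the paper delegates to \cite{BBF1}; these are the standard proofs, so there is no substantive divergence.

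The one genuine methodological difference is in the orthogonality half of (2). The paper derives $\operatorname{Ran}\cO_{C,\bA}\perp\operatorname{Ran}M_{\Theta_\bU}$ via reproducing-kernel machinery: it identifies $K_{C,\bA}(z,\zeta)=k_{\rm nc,Sz}(z,\zeta)I_\cY-\Theta_\bU(z)(k_{\rm nc,Sz}(z,\zeta)I_\cU)\Theta_\bU(\zeta)^*$ (citing \cite{BBF3}) and then invokes Theorem~\ref{T:charNFRKHS} on Brangesian complements. Your polarization of the norm identity $\|\cO_{C,\bA}x+M_{\Theta_\bU}\widehat u\|^2=\|\cO_{C,\bA}x\|^2+\|M_{\Theta_\bU}\widehat u\|^2$ is more elementary and entirely self-contained---it extracts the orthogonality directly from the energy balance without touching kernels. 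This also makes transparent that only the \emph{isometric} property of $\bU$ (together with strong stability of $\bA$) is used for the whole of (2), whereas the kernel identity the paper invokes typically relies on the coisometric side of unitarity. The trade-off is that the paper's route yields the kernel decomposition as a by-product, which feeds into later Beurling--Lax material, while yours gives a shorter path to the statement as written.
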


\begin{proof}
Items (a) and (b) follow from \cite[Proposition 2.3]{BBF1} while item (c) follows from \cite[Theorem 3.1]{BBF1}.
Item (1) is a consequence of Theorem 2.10 in \cite{BBF1}.   

\smallskip

We now address item (2) and
assume now that $\bA$ is strongly stable and that $\bU$ is unitary.  To show that $M_{\Theta_\bU} \colon H^2_\cU(\free)
\to H^2_\cY(\free)$ is isometric, it is convenient to return to the system equations \eqref{1.20pre}.  From the second of equations
\eqref{1.26pre} with $x(\emptyset) = 0$, we see that $M_{\Theta_\bU}$ being an isometry is equivalent to the following:
{\em whenever $(\bu, \bx, \by)$ is a system trajectory of $\Sigma_\bU$ \eqref{1.20pre} initialized with $x(\emptyset) = 0$,
then necessarily}
 \begin{equation}  \label{isom'}
\| \by \|^2_{\ell^2_\cY(\free)}
 = \| \bu \|^2_{\ell^2_\cU(\free)}.
\end{equation}
 (Here we use the notation
 $\bu = \{ u(\alpha)\}_{\alpha \in \free}$,     $\bx = \{ x(\alpha)\}_{\alpha \in \free}$,   $\by = \{ y(\alpha)\}_{\alpha \in \free}$.)
The following argument shows that 
this conclusion requires only that $\bU$ is isometric.  Indeed,  if $\bU$ is isometric, from the system equations we see that
for each $N\ge 0$,
$$
\sum_{\alpha \colon |\alpha| = N+1} \| x(\alpha) \|^2 +  \sum_{\alpha \colon |\alpha| = N} \| y(\alpha) \|^2 =
\sum_{\alpha \colon |\alpha| = N} \| x(\alpha) \|^2  +  \sum_{\alpha \colon |\alpha| = N} \| u(\alpha) \|^2 
$$
which we rewrite in the telescoping form
\begin{equation}  \label{telescope}
\sum_{\alpha \colon |\alpha| = N+1} \| x(\alpha) \|^2 - \sum_{\alpha \colon |\alpha| = N} \| x(\alpha) \|^2 =
\sum_{\alpha \colon |\alpha| = N} \| u(\alpha) \|^2  -  \sum_{\alpha \colon |\alpha| = N} \| y(\alpha) \|^2. 
\end{equation}
If we sum from $N=0$ to $N=M$ and use that $x(\emptyset) = 0$, we arrive at
$$
\sum_{\alpha \colon |\alpha| = N+1} \| x(\alpha) \|^2 = \sum_{\alpha \colon |\alpha| \le N} \| u(\alpha) \|^2
  -  \sum_{\alpha \colon |\alpha| \le N} \| y(\alpha) \|^2.
$$
As a first approximation, let us restrict to input signals $\bu$ such that $u(\alpha) = 0$ once $|\alpha| > K$
for some large $K$.  After running the system up to words of length $K$, we may consider the trajectory
generated on words of length larger than $K$ as determined by initializing the state $x$ at words of length $K$
and then continuing to run the system with zero inputs for words of length larger than $K$.  The strong-stability
assumption on $\bA$ implies that 
$$
\sum_{\alpha \colon |\alpha| = N+1} \| x(\alpha)\|^2 \to 0\quad\mbox{as}\quad N \to \infty.
$$
Hence we may take the limit as $N \to \infty$ in \eqref{telescope} to conclude that \eqref{isom'} holds for the case where
$\bu$ has finite support.   An approximation argument then implies that the same result holds for a general 
$\bu \in \ell^2_\cU(\free)$.

\smallskip

We next address the string of equalities \eqref{orthogISO}.  Since $\bU$ is unitary, in particular $(C,\bA)$ is an isometric
output-pair (i.e. \eqref{isoIO} holds) and hence by item (1) $\cO_{C, \bA}$ is isometric.  By Proposition \ref{P:principle} (see also Theorem 2.10 in \cite{BBF1})
the space $\cM:= \operatorname{Ran} \cO_{C, \bA}$ is then isometrically equal to the formal reproducing kernel Hilbert space $\cH(K_{C, \bA})$
with kernel
\begin{equation}  \label{K-CA}
K_\cM(z, \zeta) =    K_{C, \bA}(z, \zeta) = C (I - Z(z) A)^{-1} (I - A^* Z(\zeta)^*)^{-1}C^*.
\end{equation}
As $\bU$ is unitary, it follows from Proposition 3.2 in \cite{BBF3} that
$$
   K_{C, \bA}(z, \zeta) = k_{\rm nc, Sz}(z, \zeta) I_\cY - \Theta_\bU(z) (k_{\rm nc, Sz}(z, \zeta) I_\cU) \Theta_\bU(\zeta)^*.
$$
An application of Theorem \ref{T:charNFRKHS} (for the case where $\cM$ and $\cM^{[\perp]} = \cM^\perp$ are
isometrically contained in $\cH(k_{\rm nc, Sz} I_\cY)$)  then gives us that $\cM^\perp = \Theta_\bU \cdot H^2_\cU(\free)$.
Thus the decomposition
in the second equation in \eqref{1.26pre} is orthogonal and we have verified the first equality in \eqref{orthogISO}.
The second follows from the fact that we have already observed that both $\cO_{C, \bA}$ and $M_{\Theta_\bU}$ are
isometric.   
\end{proof}

Theorem \ref{T:cm} below provides a useful procedure for constructing a contractive multiplier from 
$H^{2}_{\cU}(\free)$ to $H^{2}_{\cY}(\free)$ with prescribed output pair $(C, \bA)$ in its contractive realization, and furthermore
identifying when this contractive multiplier is McCT-inner or even strictly inner.
For future convenient reference let us first set forth our precise terminology concerning operator inequalities.

\begin{definition}  \label{D:op-ineq} Let $X$ be a selfadjoint operator on a Hilbert space $\cX$.  Then we
say that
\begin{enumerate}
 \item $X$ is {\em positive-semidefinite}, written as $X \succeq 0$ or $-X \preceq 0$, if $\langle X x, x \rangle_\cX  \ge 0$
 for all $x \in \cX$,
 \item $X$ is {\em positive-definite} if $\langle X x, x \rangle_\cX > 0$ for all nonzero $x \in \cX$, and
 \item $X$ is {\em strictly positive-definite}, written as $X \succ 0$ or $-X \prec 0$, if there is $\epsilon > 0$ so that
 $\langle X x, x \rangle_\cX \ge \epsilon^2 \| x \|^2$ for all $x \in \cX$.  
Note that {\em positive-definite} and {\em strictly positive-definite} are equivalent in case $\cX$ is finite-dimensional.  
\end{enumerate}
In case
 $X - Y \succeq 0$ (respectively $X - Y \succ 0$), we write $X \succeq Y$ or $-Y \preceq -X$ (respectively $X \succ Y$ or $-Y \prec -X$).
\end{definition}

\noindent
Let us also mention the following general fact: whenever $C \in \cL(\cX, \cY)$, $A \in \cL(\cX, \cX')$, $0 \prec H \in \cL(\cX)$
and  $0  \prec H' \in \cL(\cX')$, then
\begin{equation}  \label{fact}
 H - A^* H' A \succeq C^*C \; \; \Longleftrightarrow \; \;
 \begin{bmatrix} H^{\prime -1} & 0 \\ 0 & I_\cY \end{bmatrix} - \begin{bmatrix} A \\ C \end{bmatrix} H^{-1} \begin{bmatrix} A^* & C^* \end{bmatrix}
 \succeq 0.
 \end{equation}
 Indeed, the inequality $H - A^* H' A \succeq C^*C$ means that $\left\| \sbm{ H^{\prime \frac{1}{2}} A H^{-\frac{1}{2}} \\ C H^{-\frac{1}{2}} } \right\| \le 1$
 which implies that $\| \begin{bmatrix} H^{-\frac{1}{2}} A^* H^{\prime \frac{1}{2}} &  H^{-\frac{1}{2}} C^* \end{bmatrix} \| \le 1$,
 which in turn implies that 
 $$
    \begin{bmatrix} H^{\prime \frac{1}{2}} A H^{-\frac{1}{2}} \\ C H^{- \frac{1}{2}} \end{bmatrix}
   \begin{bmatrix} H^{-\frac{1}{2}}A^* H^{\prime \frac{1}{2}} & H^{-\frac{1}{2}} C^* \end{bmatrix}
   \preceq \begin{bmatrix} I_{\cX'} & 0 \\ 0 & I_\cY \end{bmatrix},
$$
which finally leads to the inequality on the right side of the implication \eqref{fact}, and conversely.

\begin{theorem}  \label{T:cm}
Given a tuple $\bA=(A_1,\ldots,A_d)\in\cL(\cX)^d$ and $C\in\cL(\cX,\cY)$,
let $H\in\cL(\cX)$ be a strictly positive definite operator such that 
\begin{equation}
H-\sum_{j=1}^dA_j^*HA_j  \succeq C^*C.
\label{jan3}
\end{equation}
Let $A$ and $Z(z)$ be defined as in \eqref{1.23pre}. As a consequence of \eqref{fact} we can choose a solution
$\sbm{B \\ D}: \, \cU\to \sbm{\cX^d \\ \cY}$ of the Cholesky factorization problem
\begin{equation}
\begin{bmatrix} B \\ D \end{bmatrix}\begin{bmatrix}
B^{*} & D^{*}\end{bmatrix}=\begin{bmatrix} H^{-1}\otimes I_d & 0 \\0 &
I_{\cY} \end{bmatrix}-\begin{bmatrix} A \\ C
\end{bmatrix}H^{-1}\begin{bmatrix} A^{*} & C^{*}\end{bmatrix}.
\label{10}
\end{equation}
\begin{enumerate}
\item
Then the pair $(C,\bA)$ is output-stable and the power series
\begin{equation}
G(z)=D+C(I-Z(z)A)^{-1}Z(z)B
\label{jan3a}
\end{equation}
is a contractive multiplier from $H^2_{\cU}(\free)$ to $H^2_{\cY}(\free)$. Moreover,
\begin{align}
&k_{\rm nc, Sz}(z,\zeta)I_{\cY}-G(z)(k_{\rm nc, Sz}(z,\zeta) I_\cU)G(\zeta)^*\notag\\
&=C(I-Z(z)A)^{-1}H^{-1}(I-A^*Z(\zeta)^*)^{-1}C^*.
\label{jan3b}
\end{align}

\item
If \eqref{jan3} holds with equality and $\bA$ is strongly stable, then $G$ is McCT-inner.
Conversely, any McCT-inner multiplier arises in this way.

\item
If \eqref{jan3} holds with equality, $\bA$ is strongly stable, and the solution $\sbm{ B \\ D }$ of \eqref{10}
is normalized to be injective, then $G$ is strictly inner as a multiplier from $H^2_\cU(\free)$ to $H^2_\cY(\free)$.
Conversely, any such strictly inner multiplier arises in this way.
\end{enumerate}
\end{theorem}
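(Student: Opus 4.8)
The plan is to prove parts (1), (2), (3) in sequence, with part (1) doing the bulk of the work and parts (2), (3) being refinements. For part (1), I would first observe that the solvability of the Cholesky problem \eqref{10} is exactly the content of the equivalence \eqref{fact} applied with $H' = H \otimes I_d$ (so $H'^{-1} = H^{-1}\otimes I_d$), $A$ and $C$ as given, using the hypothesis \eqref{jan3} which says $H - A^* (H\otimes I_d) A \succeq C^* C$ once one unpacks $A^*(H\otimes I_d)A = \sum_j A_j^* H A_j$. Having fixed a solution $\sbm{B\\D}$, assemble the connection matrix $\bU = \sbm{A & B\\ C & D}$ and verify directly from \eqref{10} that $\bU \bU^* \preceq I$, i.e., $\bU$ is a contraction: indeed \eqref{10} rearranges to $\bU (H^{-1} \oplus I_\cU)^{?}$-type identity — more precisely one checks that $\sbm{A & B\\ C& D}\sbm{H^{-1} & 0\\ 0& I_\cU}\sbm{A^* & C^*\\ B^* & D^*} = \sbm{H^{-1}\otimes I_d & 0\\ 0& I_\cY}$, which says $\bU$ intertwines the weighted inner products and is coisometric from $(\cX\oplus\cU, H^{-1}\oplus I_\cU)$ to $(\cX^d\oplus\cY, (H^{-1}\otimes I_d)\oplus I_\cY)$; equivalently $\bU$ is a contraction in the standard inner products after the obvious similarity. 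Then Theorem \ref{T:NC1} (converse direction) gives immediately that $G(z) = D + C(I - Z(z)A)^{-1}Z(z)B$ is a contractive multiplier from $H^2_\cU(\free)$ to $H^2_\cY(\free)$, and Theorem \ref{1.29pre}(a) gives output-stability of $(C,\bA)$.

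For the kernel identity \eqref{jan3b}, I would argue by the now-standard lurking-isometry / power-series bookkeeping: write $K_G(z,\zeta) = k_{\rm nc,Sz}(z,\zeta)I_\cY - G(z)(k_{\rm nc,Sz}(z,\zeta)I_\cU)G(\zeta)^*$ and show it equals $C(I - Z(z)A)^{-1}H^{-1}(I - A^*Z(\zeta)^*)^{-1}C^*$. The cleanest route is to use the Kolmogorov/factored form: set $H(z) = \begin{bmatrix} C(I-Z(z)A)^{-1} H^{-1/2} & G(z) \end{bmatrix}$ and check, using \eqref{Sz-id} (the defining identity $k_{\rm nc,Sz} - \sum_j \bzeta_j k_{\rm nc,Sz} z_j = 1$) together with the resolvent identity $(I - Z(z)A)^{-1} = I + Z(z)A(I - Z(z)A)^{-1}$, that $H(z)(k_{\rm nc,Sz}(z,\zeta)I)H(\zeta)^*$ telescopes to $k_{\rm nc,Sz}(z,\zeta)I_\cY$; this uses precisely the identity $\sbm{A & B\\ C& D}$ satisfies \eqref{10}. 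Rearranging gives \eqref{jan3b}. (Alternatively one cites Proposition 3.2 of \cite{BBF3} in the contractive case.) The bulk of the routine work is confined to this verification, which is a direct generating-function computation.

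For part (2): assume equality in \eqref{jan3} and $\bA$ strongly stable. Equality makes $\bU$ coisometric from $(\cX\oplus\cU, H^{-1}\oplus I_\cU)$ to $(\cX^d\oplus\cY, (H^{-1}\otimes I_d)\oplus I_\cY)$ — in fact now unitary onto its range, and after the similarity $\bU$ may be taken isometric (possibly after shrinking $\cU$). Strong stability of $\bA$ plus the right-hand side of \eqref{jan3b} now identifies $C(I-Z(z)A)^{-1}H^{-1}(I-A^*Z(\zeta)^*)^{-1}C^*$ as the reproducing kernel of $\operatorname{Ran}\cO_{C,\bA}$ taken with the $H$-lifted norm (the observability gramian being $H$ under the equality hypothesis and strong stability — this is the multivariable Stein-equation computation: equality in \eqref{jan3} means $H$ solves the Lyapunov equation with free right-hand side $C^*C$, and strong stability selects the minimal solution $\mathcal G_{C,\bA} = H$). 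Then \eqref{jan3b} reads $k_{\rm nc,Sz}I_\cY = K_{C,\bA} + G(k_{\rm nc,Sz}I_\cU)G^*$, so by Theorem \ref{T:charNFRKHS} the space $G\cdot H^2_\cU(\free)$, with range norm, is the Brangesian complement of $\operatorname{Ran}\cO_{C,\bA}$ inside $H^2_\cY(\free)$; since that complement is a genuine orthogonal complement (because $\cO_{C,\bA}$ is now a partial isometry onto a closed subspace), $M_G$ is a partial isometry, i.e., $G$ is McCT-inner. For the converse, given a McCT-inner $G$, apply Theorem \ref{T:NC1} to realize $G = \Theta_\bU$ with $\bU$ unitary, set $H = \mathcal G_{C,\bA} = \cO_{C,\bA}^*\cO_{C,\bA}$ (finite because output-stable; strictly positive if we pass to the observable part, or note $H$ is the relevant gramian), and check it satisfies \eqref{jan3} with equality and that $\bA$ is strongly stable, reversing the above. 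The main obstacle here is the bookkeeping to confirm that $\mathcal G_{C,\bA}$ is exactly the minimal solution and strictly positive-definite — this is where one must be careful, and I would lean on the Stein-equation/observability material promised in Chapter 4 (or give a short self-contained argument: minimality follows by iterating the Lyapunov identity and using strong stability to kill the tail, and strict positivity is arranged by WLOG restricting to an observable pair).

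For part (3): under the hypotheses of (2), if in addition the Cholesky factor $\sbm{B\\D}$ is normalized to be injective, then $\cU$ is "no bigger than necessary," and the partial isometry $M_G$ becomes an isometry. Concretely, $\ker M_G$ would force a nonzero element of $\operatorname{Ran}\sbm{B\\D}^\perp$-type degeneracy; injectivity of $\sbm{B\\D}$ combined with the coisometry property of $\bU$ (equality in \eqref{jan3}) upgrades $\bU$ to a unitary on the nose, and then the argument of part (2) gives $M_G$ isometric rather than merely partially isometric, i.e., $G$ is strictly inner. The converse: given strictly inner $G$, realize as in part (2) with $\bU$ unitary and $\mathcal G_{C,\bA} = H$; the injectivity normalization of $\sbm{B\\D}$ can be imposed by compressing $\cU$ to $(\ker\sbm{B\\D})^\perp$, which does not change $G$ since $G$ only sees $\operatorname{Ran}$ of the relevant operators, and strict-innerness of $M_G$ forces this compressed $\cU$ to be the one that makes $M_G$ isometric. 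I expect the main subtlety across (2) and (3) to be this passage between "coisometric connection matrix in a weighted inner product" and the metric properties of $M_G$ and $\cO_{C,\bA}$, which is exactly the content of Theorem \ref{T:charNFRKHS} combined with the Stein-equation analysis; everything else is substitution into \eqref{jan3b} and \eqref{fact}.
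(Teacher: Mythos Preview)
Your arguments for part (1) and the forward directions of parts (2) and (3) are sound and align with the paper's approach; the paper makes the similarity $A_j \mapsto H^{1/2}A_j H^{-1/2}$, $C \mapsto CH^{-1/2}$ explicit to reduce to $H = I_\cX$, after which $\sbm{A & B\\ C & D}$ is coisometric and the kernel identity \eqref{jan3b} follows (the paper cites Proposition~3.2 of \cite{BBF3}, as you suggest).

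The genuine gap is in your converse for part (2). Invoking Theorem~\ref{T:NC1} gives a unitary realization $\bU = \sbm{A & B\\ C & D}$, and unitarity does yield $A^*A + C^*C = I$ (so equality in \eqref{jan3} with $H=I$), but it does \emph{not} give strong stability of $\bA$: a contractive multiplier---even a McCT-inner one---has many unitary realizations, and only some have strongly stable state operator. Your parenthetical remark that ``minimality follows by iterating the Lyapunov identity and using strong stability to kill the tail'' is circular: strong stability is exactly what you need to establish. Passing to the observable part may address positive-definiteness of $\cG_{C,\bA}$ but does not by itself force strong stability, and the compression to the observable subspace need not preserve the coisometry property of $\bU$ required for \eqref{10}.

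The paper's route avoids this obstruction entirely. Given McCT-inner $G$, set $\cN = \operatorname{Ran} M_G$ (closed and $\bS_{1,R}$-invariant), so $\cN^\perp$ is $\bS_{1,R}^*$-invariant; take directly the \emph{restricted model pair} $(C,\bA) = (E|_{\cN^\perp},\, \bS_{1,R}^*|_{\cN^\perp})$. This pair is isometric and $\bA$ is strongly stable by inheritance from the model tuple on $H^2_\cY(\free)$. Running the forward construction with this $(C,\bA)$ produces some $G'$; the kernel identity shows $M_G M_G^* = M_{G'} M_{G'}^*$, and Proposition~\ref{P:leech} (the equality case of the Leech theorem) supplies a partial isometry $W$ with $G = G' W$. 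One then verifies, using observability of $(C,\bA)$ to pass from the power-series coefficient identities $C\bA^\alpha B_j W = C\bA^\alpha \widetilde B_j$ to $BW = \widetilde B$, that the modified connection matrix $\sbm{A & BW\\ C & DW}$ still solves the Cholesky problem \eqref{10}.
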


The proof relies on the following general observations which will also come up in later applications.

\begin{proposition}   \label{P:elementary'}  Suppose that $U \colon \cH \to \cK$ is a Hilbert-space contraction operator.  
Then there exists a Hilbert space $\cJ$ and an  operator 
$ V \colon \cJ \to \cK$ so that
\begin{equation}  \label{CholeskyFact0}
V V^*  = I_\cK  - U U^*
\end{equation}
and then $\begin{bmatrix} U & V \end{bmatrix}$ is a coisometry from $\sbm{ \cH \\ \cJ }$ to $\cK$. Moreover,
\begin{enumerate}
\item  If $V \colon \cJ \to \cK$ is normalized to be injective and 
if  $V' \colon \cJ' \to \cK$ is another injective operator satisfying \eqref{CholeskyFact0}, then there is a unitary operator
 $W$ from $\cJ$ onto $\cJ'$ so that $V' = V W$.
\item The operator $\begin{bmatrix} U & V \end{bmatrix}$ is unitary if and only if the originally given operator $U$ is isometric
and $V$ is normalized to be injective.
\end{enumerate}
\end{proposition}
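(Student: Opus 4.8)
The plan is to construct $V$ via the standard defect-operator factorization and then verify the two claims by routine operator manipulations. Since $U \colon \cH \to \cK$ is a contraction, $I_\cK - UU^* \succeq 0$, so we may set $\cJ := \overline{\operatorname{Ran}}\,(I_\cK - UU^*)^{\frac12}$ and take $V$ to be the inclusion-composed square root $V := (I_\cK - UU^*)^{\frac12} \colon \cJ \to \cK$ (viewing $(I_\cK-UU^*)^{\frac12}$ as mapping into its own range-closure). This $V$ is injective by construction and satisfies \eqref{CholeskyFact0} trivially. The coisometry claim is then immediate: $\begin{bmatrix} U & V \end{bmatrix}\begin{bmatrix} U^* \\ V^* \end{bmatrix} = UU^* + VV^* = UU^* + (I_\cK - UU^*) = I_\cK$. (If one does not want to insist on $V$ injective in the base statement, any $\cJ'$ and $V' \colon \cJ' \to \cK$ with $V'V'^* = I_\cK - UU^*$ works equally well for the coisometry conclusion; the injectivity normalization is only needed for items (1) and (2).)

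For item (1), suppose $V \colon \cJ \to \cK$ and $V' \colon \cJ' \to \cK$ are both injective with $VV^* = V'V'^* = I_\cK - UU^*$. The standard argument is to define $W_0 \colon \operatorname{Ran} V^* \to \operatorname{Ran} V'^*$ by $W_0 \colon V^* k \mapsto V'^* k$ for $k \in \cK$; this is well-defined and isometric because $\| V^* k \|^2 = \langle VV^* k, k \rangle = \langle V'V'^* k, k \rangle = \| V'^* k \|^2$. Since $V$ is injective, $\operatorname{Ran} V^*$ is dense in $\cJ$ (and likewise $\operatorname{Ran} V'^*$ dense in $\cJ'$), so $W_0$ extends to a unitary $W^* \colon \cJ \to \cJ'$; equivalently we get a unitary $W \colon \cJ \to \cJ'$ with $W^* V^* = V'^*$, i.e.\ $V' = V W$. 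Wait—let me re-index so the statement matches: we want $V' = VW$ with $W \colon \cJ \to \cJ'$. From $W_0 V^* = V'^*$ on a dense set we get $V W_0^* = V'$ with $W_0^* \colon \cJ' \to \cJ$... so actually $W = W_0^*$ maps $\cJ' \to \cJ$. To land exactly on the claimed form $V' = VW$ with $W$ unitary from $\cJ$ onto $\cJ'$, one instead defines the isometry the other direction: set $W \colon \cJ \to \cJ'$ as the closure of $V^* k \mapsto V'^* k$ viewed appropriately, or simply note that the polar-decomposition-free identity $V^* = (\text{unitary})\,V'^*$ can be arranged either way; since both $V, V'$ are injective the two defect spaces are genuinely unitarily identified and the direction of $W$ is a matter of which one we name. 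I would write it cleanly as: the map $V'^* k \mapsto V^* k$ extends to a unitary $W \colon \cJ' \to \cJ$, hence the adjoint $W^* \colon \cJ \to \cJ'$ satisfies $V' = V W^*$; renaming $W^*$ as the desired unitary completes item (1).

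For item (2): if $\begin{bmatrix} U & V \end{bmatrix}$ is unitary then in particular it is an isometry, so $\begin{bmatrix} U^* \\ V^* \end{bmatrix}\begin{bmatrix} U & V \end{bmatrix} = \begin{bmatrix} U^*U & U^*V \\ V^*U & V^*V \end{bmatrix} = \begin{bmatrix} I_\cH & 0 \\ 0 & I_\cJ \end{bmatrix}$; the $(1,1)$ entry gives $U^*U = I_\cH$, so $U$ is isometric, and the $(2,2)$ entry gives $V^*V = I_\cJ$, so $V$ is isometric, hence in particular injective (and the injectivity normalization is automatic). Conversely, assume $U$ is isometric and $V$ is normalized to be injective with $VV^* = I_\cK - UU^*$; we already know $\begin{bmatrix} U & V \end{bmatrix}$ is a coisometry, so it remains to show it is an isometry. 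Isometry of $U$ gives $U^*U = I_\cH$. For the cross term $U^*V$: from $VV^* = I_\cK - UU^*$ we compute $V^* U U^* V = V^*(I_\cK - VV^*)V = V^*V - (V^*V)^2 = (V^*V)(I_\cJ - V^*V)$; also $UU^*$ is a projection (since $U$ isometric), so $UU^*V$ has norm-squared... more directly: $V^* U$ satisfies $(U^*V)^*(U^*V) = V^* UU^* V = V^*(I_\cK - VV^*)V$, and since $\| U^* V h \| \le \| V h\|$ while also $\langle (I_\cK - VV^*) Vh, Vh \rangle = \|Vh\|^2 - \|V^* V h\|^2$; combined with $UU^*$ being a projection orthogonal to $\operatorname{Ran} V = \operatorname{Ran}(I_\cK - UU^*)$ — indeed $\operatorname{Ran}(I_\cK - UU^*) = \operatorname{Ker}(UU^*)^\perp$-complement... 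I expect the cleanest route is: $UU^*$ and $VV^* = I - UU^*$ are complementary projections, so $\operatorname{Ran} U \perp \operatorname{Ran} V$, whence $U^*V = 0$ and $\begin{bmatrix} U & V \end{bmatrix}$ is isometric; together with the coisometry property it is unitary. The one point requiring a line of care — and the main (minor) obstacle — is verifying that $VV^*$ is a \emph{projection} here (it equals $I-UU^*$, a projection, since $U$ is isometric), which then forces $V$ to be a partial isometry with initial space all of $\cJ$ by injectivity, i.e.\ $V$ isometric and $\operatorname{Ran} V = \operatorname{Ran}(I_\cK - UU^*) \perp \operatorname{Ran} U$; I would spell this out explicitly.
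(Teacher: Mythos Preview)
Your argument is correct and follows essentially the same route as the paper: construct $V$ from the positive square root of $I_\cK - UU^*$, read off the coisometry identity, handle uniqueness via the isometry $V^* k \mapsto V'^* k$ extended by density, and for item (2) use that $U$ isometric makes $UU^*$ and $VV^* = I_\cK - UU^*$ complementary orthogonal projections, forcing the injective $V$ to be an isometry onto $(\operatorname{Ran} U)^\perp$. Your exploratory detours (the direction of $W$ in item (1), the abandoned $V^*UU^*V$ computation in item (2)) should be pruned in a final write-up, but the mathematics underneath is exactly what the paper does---indeed the paper gives fewer details on item (1) than you do.
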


\begin{proof}  
If $\| U \| \le 1$, then $I - U U^* \succeq 0$ and we can solve for $V$ so that $V V^*  = I - U U^*$.
Then it follows immediately that $\begin{bmatrix} U & V \end{bmatrix}$ is a coisometry.  
As this is the only freedom in the construction, the uniqueness statement
 follows as well for the case where $V$ is required also to be injective.
If $\begin{bmatrix} U & V \end{bmatrix}$ is unitary, then
$$
 \begin{bmatrix} U^* \\ V^* \end{bmatrix} \begin{bmatrix} U & V \end{bmatrix} = \begin{bmatrix} I_\cH  & 0 \\ 0 & I_\cJ \end{bmatrix},
$$
so in particular $U^* U = I_\cH$ or $U$ is an isometry, and $V^* V = I_\cJ$ so $V$ is injective.  Conversely, if $U$ is an isometry, 
then  $U U^* = P_{\operatorname{Ran}  U}$ is the orthogonal projection of $\cK$ onto the range of $U$.  
Then $V V^* = I_\cK - U U^* = P_{({\rm Ran}  \, U)^\perp}$ is the
 orthogonal projection onto the orthogonal complement $({\rm Ran} \, U)^\perp$ of the range of $U$
 in $\cK$.  Then $V$ is a partial isometry with final space equal to $(\operatorname{Ran}  U)^\perp$.
 If $V$ is injective, it follows that the initial space of $V$ is the whole space $\cJ$ and $V$ is an isometric
 embedding of $\cJ$ onto $({\rm Ran}\, U)^\perp$.  It then follows that
 $\begin{bmatrix} U & V \end{bmatrix}$ is a unitary transformation from $\cH \oplus \cJ$ onto  $\cK$.
 \end{proof}
 
Other ingredients needed for the proof Theorem \ref{T:cm} are the  shift operator-tuple
\begin{equation}
\bS_{1,R} = (S_{1,R,1}, \dots, S_{1,R,d}),\qquad
S_{1,R,j} \colon f(z)\to f(z)z_j
\label{march7}
\end{equation}
on $H_{\cY}^{2}(\free)$ (already mentioned in Section 2.2) and the empty-word-coefficient evaluation operator 
$E: \, \sum_{\alpha\in\free}f_\alpha z^\alpha\to f_{\emptyset}$ on $H^{2}_{\cY}(\free)$. 
The operators $S_{1,R,1},\ldots, S_{1,R,d}$ are isometries with mutually orthogonal ranges, while
the pair $(\bS_{1,R},E)$ is isometric in the sense that 
$$
\sum_{j=1}^d S_{1,R,j}S_{1,R,j}^*+EE^*=I_{H^{2}_{\cY}(\free)}.
$$
 \begin{proof}[Proof of Theorem \ref{T:cm}.]  
 
 \textbf{(1)} By replacing $A_j$ with $H^{\frac{1}{2}} A_j H^{-\frac{1}{2}}$ and $C$ with 
 $C H^{-\frac{1}{2}}$, we may assume without loss of generality that $H = I_\cX$.  With $G(z)$ defined as in \eqref{jan3a},
 the inequality \eqref{jan3} says that $(C, \bA)$ is a contractive output pair; it then follows that $(C, \bA)$ is output-stable
 by Proposition 2.3 in \cite{BBF1}.  Once $G$ is defined by \eqref{jan3a} with $\sbm{ A & B \\ C & D}$ coisometric, the identity
 \eqref{jan3b} follows, again by Proposition 3.2 of \cite{BBF3}.  The fact that the construction of $B, D$
 via \eqref{10} makes $\sbm{ A & B \\ C & D}$ a coisometry is a simple consequence of Proposition \ref{P:elementary'} with 
 $U = \sbm{ A \\ C}$ and $V = \sbm{ B \\ D}$ which then implies the identity \eqref{jan3b}.
 The validity of the identity \eqref{jan3b} in turn implies that $G$ is a contractive multiplier (see \cite[Theorem 3.15]{NFRKHS}).
  \smallskip
 
 \noindent
 \textbf{(2)}
  If \eqref{jan3} holds with equality and $\bA$ is strongly stable, then the observability operator
 $\cO_{C, \bA} \colon \cX \to H^2_\cY(\free)$ is isometric (see \cite[Proposition 2.3] {BBF1}), and then
 $\operatorname{Ran} \cO_{C, \bA}$ has reproducing kernel 
 $$
 K_{C, \bA}(z,\zeta) = C (I - Z(z) A)^{-1}( I - A^* Z(\zeta)^*)^{-1} C^*
 $$ 
 as a subspace of $H^2_\cY(\free)$ (see \cite[Theorem 2.10]{BBF1}).
 From the identity \eqref{jan3b}, we see that this kernel has the de Branges-Rovnyak form
 $$
K_{C, \bA}(z, \zeta) = k_{\rm nc, Sz} I_\cY - G(z) (k_{\rm nc, Sz}(z \zeta) I_\cU) G(\zeta)^*.
$$
 As a consequence of Theorem \ref{T:charNFRKHS}, it follows that $\cM^\perp = \cM^{[\perp]}$ is equal to the pullback
 space $\cH^p(M_G M_G^*)$.  Since $\cM^\perp$ is isometrically contained in $H^2_\cY(\free)$,  it follows that
 $M_G M_G^*$ is the projection onto $\cM^\perp$.  In particular, $M_G$ is a partial isometry, so $G$ is a 
 McCT-inner multiplier.
 
\smallskip

 Finally, suppose that $G$ is any McCT-inner multiplier from $H^2_\cU(\free)$ to $H^2_\cY(\free)$, and set
 $\cN = \operatorname{Ran} M_G$.  Then $\cN$ is isometrically contained in $H^2_\cY(\free)$ and also
 we can identify $\cN$ with the pullback space $\cH^p(M_G M_G^*)$ having reproducing kernel
 $$
K_\cN(z, \zeta) = G(z)( k_{\rm nc, Sz}(z, \zeta) I_\cU) G(\zeta)^*.
$$
  As $\cN = M_G \cdot H^2_\cU(\free)$ is
 clearly $\bS_{1, R}$-invariant, it follows that  $\cN^\perp$ is $\bS_{1, R}^*$-invariant.  Then we can represent
 $\cN^\perp$ as the range of an observability operator $\cN^\perp = \operatorname{Ran} \cO_{C, \bA}$ with
 $(C, \bA)$ the restricted model output-pair
 $$
   C = E|_{\cN^\perp} \text{ where } E \colon \sum_{\alpha \in \free} f_\alpha z^\alpha \mapsto f_\emptyset, \quad
   \bA = \bS_{1, R}^*|_{\cN^\perp}.
 $$
 Since the model output pair $(E, \bS_{1, R}^*)$ is an isometric pair with $\bS_{1, R}^*$ strongly stable,
 it follows that the same properties hold for $(C, \bA)$.  If we construct $\sbm{ B \\ D}$  from $\cU'$ to
 $\sbm{ \cN^\perp \\ \cY}$ according to the Cholesky factorization procedure \eqref{10} and define 
$$
G'(z) = D + C (I - Z(z) A)^{-1} B, 
$$
then by the first part of the proof  it follows that we also recover $\cN$ as
 $\cN = \cH^p(M_{G'} M_{G'}^*)$ and hence $M_{G'} M_{G'}^* = M_G M_G^*$, or in kernel form,
 $$
   G(z) (k_{\rm nc, Sz}(z, \zeta) I_\cU) G(\zeta)^* = G'(z) (k_{\rm nc, Sz}(z, \zeta) I_{\cU'}) G'(\zeta)^*.
 $$
 As a consequence of Proposition \ref{P:leech} to come, it follows that there is a partial isometry $W \colon \cU \to \cU'$
 so that $G(z) = G'(z) W$ and $G'(z) = G(z) W^*$.  
 Thus $G(z)$ has a realization of the form \eqref{jan3a} but with connection matrix
$\widetilde \bU$ given by
 $$
\widetilde \bU : =  \begin{bmatrix} \widetilde A & \widetilde B \\ \widetilde C & \widetilde D \end{bmatrix} =
 \begin{bmatrix} A & B W \\ C & D W  \end{bmatrix}.
 $$
 To show that the matrix $\widetilde \bU$ is coisometric (and hence $\sbm{ \widetilde B \\ \widetilde D}$
 arises as another solution of the Cholesky factorization procedure \eqref{10}), we need to show that
 $$
 \begin{bmatrix} \widetilde B \\ \widetilde D \end{bmatrix} \begin{bmatrix} \widetilde B^* & \widetilde D^* \end{bmatrix} =
 \begin{bmatrix} B \\ D \end{bmatrix} \begin{bmatrix} B^* & D^* \end{bmatrix},
 $$
 or equivalently,
 \begin{equation}  \label{need}
 \begin{bmatrix} B  \\ D \end{bmatrix} W W^* \begin{bmatrix} B^* & D^* \end{bmatrix} =
 \begin{bmatrix} B \\ D \end{bmatrix} \begin{bmatrix} B^* & D^* \end{bmatrix}.
 \end{equation}
 The equality $G(z) = G'(z) W$ when expressed in terms of power-series coefficients gives us
 \begin{equation}  \label{have}
     \widetilde D = D W, \quad C \bA^\alpha \widetilde B_j = C \bA^\alpha B_j W \quad\text{for all}\quad \alpha \in \free 
\; \text{ and } \; j=1, \dots, d.
 \end{equation}
 As $(C, \bA)$ is isometric and $\bA$ is strongly stable by the setup for item (2) in the theorem, it follows from
item (1) in Theorem \ref{1.29pre} that $\cO_{C, \bA}$ is isometric, so in particular $(C, \bA)$ is observable.
Thus the second equality in \eqref{have} implies that in fact $\widetilde B_j = B_j W$ for $j=1, \dots, d$, or 
$\widetilde B = B W$.  This combined with the first equality in \eqref{have} gives us the desired equality \eqref{need}.
 
 \smallskip
 
 \noindent
 \textbf{(3)} Suppose now that equality holds in \eqref{jan3}, $\bA$ is strongly stable and the solution 
 $\sbm{ B \\ D }$ of \eqref{10} is normalized to be injective,  Then the last part of Proposition \ref{P:elementary'}
 tells us that $\sbm{ A & B \\ C & D}$ so constructed is unitary.   Since we are also assuming that $\bA$ is strongly stable,
 item (2) in Theorem \ref{1.29pre} tells us that $G$ is strictly inner.  Conversely, if $G$ is strictly inner, then in particular
 $G$ is McCT-inner so the converse side of item (2) above tells us that $G$ has the form \eqref{jan3a} with
 $(C, \bA)$ isometric and $\bA$ strongly stable and $\sbm{ B \\ D }$ a solution of the Cholesky factorization problem
 \eqref{10}.  The assumption that $G$ is strictly inner precludes the possibility that $\sbm{ B \\ D}$ has a nontrivial kernel, i.e.,
 necessarily $\sbm{ B \\ D }$ is injective, and the realization for $G$ is of the form stated in item (3) of the theorem.
 In particular, as a consequence of the last part of Proposition \ref{P:elementary'}, $\sbm{ A & B \\ C & D}$ is unitary.
   \end{proof}
A close relation between McCT-inner and strictly inner multipliers is pointed out in Remark \ref{R:BBF3} below.
We start with a preliminary observation. 
\begin{proposition} \label{P:BBF3} Given any McCT-inner multiplier $G$ from
  $H^2_\cU(\free)$ to $H^2_\cY(\free)$, the space $\operatorname{Ker} M_G$ is reducing for $\bS_{1,R}$.
\end{proposition}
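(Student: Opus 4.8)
The plan is to show that $\operatorname{Ker} M_G$ is invariant under each $S_{1,R,j}$ and also under each $S_{1,R,j}^*$, which together give that $\operatorname{Ker} M_G$ reduces the tuple $\bS_{1,R}$. Forward invariance is immediate: if $f \in \operatorname{Ker} M_G$, then $G(z)\cdot(f(z)z_j) = (G(z)f(z))z_j = 0$, so $M_G S_{1,R,j} f = S_{1,R,j} M_G f = 0$, i.e.\ $S_{1,R,j}f \in \operatorname{Ker} M_G$. In other words $M_G S_{1,R,j} = S_{1,R,j}' M_G$ where $S_{1,R,j}'$ denotes the $j$-th shift on the target $H^2_\cY(\free)$ (the intertwining relation for multipliers between Fock spaces). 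Taking adjoints, $S_{1,R,j}^* M_G^* = M_G^* S_{1,R,j}^{\prime *}$.

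The substantive point is backward invariance, and this is where the McCT-inner (partial isometry) hypothesis enters. First I would record that since $M_G$ is a partial isometry, $P := M_G^* M_G$ is the orthogonal projection of $H^2_\cU(\free)$ onto $(\operatorname{Ker} M_G)^\perp$, so $\operatorname{Ker} M_G = \operatorname{Ker} P = \operatorname{Ran}(I - P)$. Thus it suffices to show $S_{1,R,j}^*$ commutes with $P$, equivalently that $S_{1,R,j}^* P = P S_{1,R,j}^* P$ and $P S_{1,R,j}^* = P S_{1,R,j}^* P$, i.e.\ that $(\operatorname{Ker} M_G)^\perp = \operatorname{Ran} P$ is $S_{1,R,j}^*$-invariant. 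Now $(\operatorname{Ker} M_G)^\perp = \overline{\operatorname{Ran} M_G^*}$, so I would instead show $\operatorname{Ran} M_G^*$ (hence its closure) is $S_{1,R,j}^*$-invariant; but the intertwining $S_{1,R,j}^* M_G^* = M_G^* S_{1,R,j}^{\prime *}$ established above says exactly that $S_{1,R,j}^*$ maps $\operatorname{Ran} M_G^*$ into $\operatorname{Ran} M_G^*$. Therefore $\overline{\operatorname{Ran} M_G^*} = (\operatorname{Ker} M_G)^\perp$ is $S_{1,R,j}^*$-invariant, and consequently its orthogonal complement $\operatorname{Ker} M_G$ is $S_{1,R,j}$-invariant --- but we want $\operatorname{Ker} M_G$ to be $S_{1,R,j}^*$-invariant, which is the orthogonal-complement statement of $(\operatorname{Ker} M_G)^\perp$ being $S_{1,R,j}$-invariant.

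So the one remaining piece is that $(\operatorname{Ker} M_G)^\perp$ is also $S_{1,R,j}$-invariant (equivalently $\operatorname{Ker} M_G$ is $S_{1,R,j}^*$-invariant). For this I would use that on $H^2_\cU(\free)$ the shifts satisfy $\sum_{j=1}^d S_{1,R,j}S_{1,R,j}^* + EE^* = I$ (the isometric property of $(\bS_{1,R},E)$ recorded before the proof of Theorem \ref{T:cm}), together with $E^* \cU \subset (\operatorname{Ker} M_G)^\perp$; the latter holds because any constant $c \in \cU$ satisfies $M_G c = G(z)c$, and $M_G E^* c = 0$ would force $G(z)c \equiv 0$, which combined with $M_G$ being a partial isometry and $c \perp \operatorname{Ker} M_G$ forces $c = 0$ --- more precisely, $E^*c$ decomposes as $p + q$ with $p \in \operatorname{Ker} M_G$, $q \in (\operatorname{Ker} M_G)^\perp$, and one checks using the forward-invariance intertwining that $M_G^* M_G E^* c$ must again be a constant, so $p$ is a constant in $\operatorname{Ker} M_G$. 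Then for $h \in (\operatorname{Ker} M_G)^\perp$, write $S_{1,R,k}h$ and test against an arbitrary $f \in \operatorname{Ker} M_G$: using $S_{1,R,j}^*$-invariance of $\operatorname{Ker} M_G$ on $f$ would need exactly the statement we are proving, so instead one argues directly that $\langle S_{1,R,k}h, f\rangle = \langle h, S_{1,R,k}^* f\rangle$ and applies the first (easy) backward intertwining to land in $\operatorname{Ker} M_G$ on the source, obtaining $0$. I expect the main obstacle to be organizing this symmetry cleanly: the two intertwining relations $M_G S_{1,R,j} = S_{1,R,j}' M_G$ and $S_{1,R,j}^* M_G^* = M_G^* S_{1,R,j}^{\prime *}$ give forward-invariance of $\operatorname{Ker} M_G$ and $S_{1,R,j}^{\prime *}$-invariance of $\operatorname{Ran} M_G^*$ directly, and the genuinely new input needed to close the loop (that $\operatorname{Ker} M_G$ is \emph{backward}-shift invariant) is the partial-isometry hypothesis, used via $M_G^* M_G = P_{(\operatorname{Ker} M_G)^\perp}$ to convert the adjoint intertwining into invariance of the kernel itself.
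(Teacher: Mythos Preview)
Your forward-invariance argument is fine, and you are right that the adjoint intertwining $S_{1,R,j}^* M_G^* = M_G^* S_{1,R,j}'^{\,*}$ makes $\overline{\operatorname{Ran}\,M_G^*} = (\operatorname{Ker} M_G)^\perp$ invariant for $S_{1,R,j}^*$. But, as you yourself observe, this is \emph{exactly} the orthogonal-complement reformulation of $\operatorname{Ker} M_G$ being $S_{1,R,j}$-invariant, so at that point you have proved only one of the two required invariances, not both. The substantive step --- $(\operatorname{Ker} M_G)^\perp$ is $S_{1,R,j}$-invariant, equivalently $\operatorname{Ker} M_G$ is $S_{1,R,j}^*$-invariant --- is never established.

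Your attempt to close this gap does not work. The claim that constants lie in $(\operatorname{Ker} M_G)^\perp$ is false in general: if $c\in\cU$ satisfies $G_\alpha c = 0$ for every $\alpha$, then $c\in\operatorname{Ker} M_G$. The assertion that $M_G^*M_G E^*c$ is again a constant has no justification, and the final duality argument, $\langle S_{1,R,k}h,f\rangle = \langle h,S_{1,R,k}^*f\rangle$ for $h\in(\operatorname{Ker} M_G)^\perp$ and $f\in\operatorname{Ker} M_G$, is circular: concluding that this vanishes requires $S_{1,R,k}^*f\in\operatorname{Ker} M_G$, which is precisely what is to be proved.

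The paper's argument is short and uses an idea you are missing: since $M_G$ is a partial isometry, $(\operatorname{Ker} M_G)^\perp$ is characterized by $\|M_G f\| = \|f\|$. Because each $S_{1,R,j}$ on the Fock space $H^2_\cU(\free)$ is an \emph{isometry}, one computes for $f\in(\operatorname{Ker} M_G)^\perp$ that
\[
\|M_G S_{1,R,j} f\| = \|S_{1,R,j}' M_G f\| = \|M_G f\| = \|f\| = \|S_{1,R,j} f\|,
\]
so $S_{1,R,j}f\in(\operatorname{Ker} M_G)^\perp$. It is the isometric property of the Fock-space shifts (specific to $\bo=\bmu_1$) together with the partial-isometry hypothesis that makes this work; your intertwining relations alone, which hold for \emph{any} bounded multiplier, cannot suffice.
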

\begin{proof} Since $M_G$ intertwines $S_{1,R,j}$ on $H^2_\cU(\free)$ with $S_{1,R,j}$ on $H^2_\cY(\free)$, 
the computation 
$$
M_G S_{1,R,j} f = S_{1,R,j} M_G f = 0\quad\mbox{for}\quad f \in \operatorname{Ker} M_G
$$ 
shows that $\operatorname{Ker} M_G$ is invariant for $\bS_{1,R}$ on $H^2_\cU(\free)$.  To show that $(\operatorname{Ker} M_G)^\perp$ 
is also invariant for $\bS_{1,R}$, we use the fact that each $S_{1,R,j}$ is an isometry and that $M_G$ being a partial isometry 
  implies that $(\operatorname{Ker} M_G)^\perp$ is characterized by the property
  $$
   f \in (\operatorname{Ker} M_G)^\perp \Leftrightarrow \| M_G f \|  = \| f \|.
  $$
 Thus $f \in (\operatorname{Ker} M_G)^\perp$ implies that
 $$
   \| M_G S_{1,R,j} f \| = \| S_{1,R,j} M_G f \| = \| M_G f \| = \| f \|,
  $$
  from which we conclude that $S_{1,R,j} f \in (\operatorname{Ker} M_G)^\perp$ as well for $j=1, \dots, d$.
\end{proof}  

\begin{remark}
 \label{R:BBF3}
 {\em For any McCT-inner multiplier $G$ from
  $H^2_\cU(\free)$ to $H^2_\cY(\free)$, there is an orthogonal decomposition  $\cU = \sbm{ \cU_{si} \\ \cU_0}$ of the input space
  so that with respect to this decomposition, $G(z)$ has the form $G(z) = \begin{bmatrix} G_{\rm si}(z) & 0 \end{bmatrix}$ 
  with $G_{si}$ strictly inner. }    
One way to see this statement is as an application of Proposition \ref{P:BBF3}:  given a McCT-inner multiplier
  $G$, set $\cU_0 = {\rm Ker} M_G$ and $\cU_{si} = \cU \ominus \cU_0$.
An alternative state-space proof can be derived from  Theorem \ref{T:cm} as follows.
  By item (2) in Theorem \ref{T:cm},   McCT-inner multipliers $G$ are characterized by having a realization \eqref{jan3a} with
  $(C, \bA)$ isometric and observable,  $\bA$ strongly stable, and $B, D$ solving the Cholesky factorization problem
  \eqref{10} (so $\sbm{ A & B \\ C & D}$ is coisometric).  By item (3), strictly inner multipliers are characterized via the same
  conditions with the additional property that $\sbm{B \\ D }$ be injective.  We may therefore set $\cU_{\rm si} = 
 ( \operatorname{Ker} \sbm{B \\ D})^\perp$ and $\cU_0 = \operatorname{Ker} \sbm{B \\ D}$  to recover the decomposition
 $G = \begin{bmatrix} G_{\rm si}  & 0 \end{bmatrix}$ of $G$ with $G_{\rm si}$ strictly inner.
  \end{remark} 

  This reduction of a McCT-inner $G$ to a strictly inner $G_{\rm si}$ fails in the more general context
  of McCT-inner multipliers between weighted Bergman spaces, causing the Beurling-Lax representations for shift-invariant 
  subspaces isometrically included in the ambient weighted Bergman space to be discussed in Section  \ref{S:NC-appl}   
  to be formulated only with McCT-inner (rather than strictly inner) multipliers. 

\smallskip
    
  This same phenomenon is behind the incorrect statement of Theorem 4.6
  in \cite{BBF3} where the additional hypothesis {\em the output pair $(C, \bA)$ is isometric} should be inserted 
  and the conclusion   {\em $S$ is (strictly) inner} should be changed to {\em $S$ is McCT-inner}.  
  Also the conclusion of part (2) of Theorem \ref{T:cm}
  is stated incorrectly in part (4) of Proposition 3.3 in \cite{BBF3}:  the hypothesis 
  {\em the output pair $(C,\bA)$ is a contractive} should be changed to 
  {\em the output pair $(C, \bA)$ is isometric}.

\smallskip

 We recall that multiplication operators $M_{F} \colon
H^{2}_{\cU}(\free) \to H^{2}_{\cY}(\free)$ are characterized as the operators intertwining
$\bS_{1,R} \otimes I_{\cU}$ with $\bS_{1,R} \otimes I_{\cY}$: if the operator $X$ in
$\cL(H^{2}_{\cU}(\free), H^{2}_{\cY}(\free))$ satisfies the intertwining equalities
$$
  X S_{1,R,j} = S_{1,R,j} X
\;  \text{ for } \; j=1, \dots, d,
$$
then $X = M_{F}\colon u(z)\mapsto F(z) u(z)$ for a multiplier $F \in \cL(\cU, \cY)\langle \langle
z \rangle \rangle$ (see \cite{Popescu-multi}). We shall have use for
the following Commutant Lifting Theorem for this setting due to
Popescu (see \cite[Theorem 3.2]{PopescuNF1}). 

\begin{theorem}  \label{T:CLT}
Suppose that the subspaces $\cM \subset H^{2}_{\cY}(\free)$ and $\cN \subset H^{2}_{\cU}(\free)$
are invariant for the backward right-shift tuple  $(\bS_{1,R})^{*}$ acting on $H^{2}_{\cY}(\free)$ and $H^{2}_{\cU}(\free)$ 
respectively, and that the operator $X \in \cL(\cM, \cN)$ satisfies
 $$
 \| X \| \le 1, \quad X   (S_{1,R,j}|_{\cM})^{*} =  (S_{1,R,j}|_{\cN})^{*} X \quad \text{for} \quad j = 1, \dots, d.  
 $$
 Then there is a contractive multiplier $F \in \cL(\cU, \cY)\langle
 \langle z \rangle \rangle$ so that $\; \left(M_{F}\right)^{*}|_{\cM} = X$.
\end{theorem}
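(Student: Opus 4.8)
The plan is to reduce the statement to a pure intertwining‑extension problem and then carry out the extension by the noncommutative one‑step‑extension (Schäffer / Popescu commutant‑lifting) method. First I would use the intertwining characterization of multipliers cited above (from \cite{Popescu-multi}): a bounded operator $Y\colon H^2_\cY(\free)\to H^2_\cU(\free)$ satisfies $Y S_{1,R,j}^{*}=S_{1,R,j}^{*}Y$ for $j=1,\dots,d$ exactly when $Y=(M_F)^{*}$ for some $F\in\cL(\cU,\cY)\langle\langle z\rangle\rangle$, and then $\|Y\|\le 1$ is equivalent to $F$ being a contractive multiplier. Since $\operatorname{Ran}X\subseteq\cN$, the desired conclusion $(M_F)^{*}|_\cM=X$ is precisely: there is a contraction $Y\colon H^2_\cY(\free)\to H^2_\cU(\free)$ intertwining the backward right‑shift tuples with $Y|_\cM=X$ (a genuine restriction, not merely a compression $P_\cN Y|_\cM=X$ — this is why I would build $Y$ as a direct extension of $X$ rather than quote a compression‑type commutant lifting as a black box).

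To construct $Y$, I would extend $X=X_0$ one ``letter'' at a time along the tree of words $\free$. Note that $\bigvee_{\alpha\in\free}\bS_{1,R}^{\alpha}\cM$ is reducing for the right‑shift tuple on $H^2_\cY(\free)$ (it is patently forward‑invariant, and applying $S_{1,R,k}^{*}$ either strips a letter or lands back in $\cM$ by backward‑invariance of $\cM$), and likewise for $\cN$; so after setting $Y=0$ on the orthogonal complement of the reachable reducing subspace we may assume $\bS_{1,R}$ is the minimal isometric dilation of the model tuple $(P_\cM S_{1,R,j}|_\cM)_j$ on $\cM$, and similarly on the $\cN$ side. Then, having defined a contraction $X_k$ on a $\bS_{1,R}^{*}$‑coinvariant subspace $\cM_k\supseteq\cM$ with $X_k|_\cM=X$ together with the appropriate partial intertwining relations, I would extend $X_k$ to the slightly larger coinvariant subspace obtained by adjoining one further layer of right‑shifted wandering vectors, by solving a Parrott‑type norm‑preserving completion problem; the point is that this completion problem is consistent (has a solution of norm $\le 1$) exactly because of the two hypotheses $\|X\|\le 1$ and $X S_{1,R,j}^{*}|_\cM=S_{1,R,j}^{*}|_\cN\,X$. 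Since $\bigcup_k\cM_k$ is dense, the uniform bound $\|X_k\|\le1$ lets the $X_k$ assemble into a contraction $Y$ on $H^2_\cY(\free)$, the partial intertwining relations pass to the limit to give $Y S_{1,R,j}^{*}=S_{1,R,j}^{*}Y$, and $Y|_\cM=X$ holds by construction; then $F$ with $Y=(M_F)^{*}$ is the required contractive multiplier.

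The step I expect to be the main obstacle is the one‑step extension lemma — showing the prescribed partial data always admits a contractive intertwining completion. This is the true content of the commutant lifting theorem, and the free noncommutative combinatorics of the word tree make the bookkeeping (which coefficients are already pinned down and which remain free, and how the norm constraint propagates) noticeably more delicate than in the classical single‑variable Sz.-Nagy--Foias argument; this is also the place where one must be careful not to lose the ``restriction'' (as opposed to ``compression'') of $X$. An alternative route, bypassing the iteration, would be to represent $\cM$ and $\cN$ through their Beurling--Lax (McCT‑inner) representations $\cM^{\perp}=M_\Phi H^2_{\cV}(\free)$ and $\cN^{\perp}=M_\Psi H^2_{\cW}(\free)$, translate the intertwining hypothesis on $X$ into a domination between positive formal kernels, and then apply the noncommutative Leech theorem of Section \ref{S:Leech} to produce $F$; there the delicate point migrates to verifying that the contractive multiplier so produced genuinely restricts to $X$ on $\cM$, which again rests on the $\bS_{1,R}^{*}$‑invariance of $\cM$ and $\cN$.
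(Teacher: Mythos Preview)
The paper does not give its own proof of this theorem; it simply attributes the result to Popescu (citing \cite[Theorem~3.2]{PopescuNF1}) and moves on. So there is nothing in the paper to compare your argument against directly.

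Your outline is essentially the Popescu approach: extend $X$ step by step along the minimal isometric dilation of the row contraction $(P_\cM S_{1,R,j}|_\cM)_j$ using a Parrott-type completion at each stage, carrying the intertwining condition and the norm bound through by induction. You are right that the one-step extension lemma is the substantive part, and you are also right to insist on obtaining a genuine restriction $Y|_\cM=X$ rather than a compression. The reduction to the case where the shift is the minimal dilation (by splitting off the orthogonal complement of the reachable reducing subspace and setting $Y=0$ there) is the standard move and is correctly identified.

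One caution on your alternative route via the Leech theorem: in this paper the Commutant Lifting Theorem (the very statement you are proving) is used as the main input for Proof~I of the Leech theorem (Theorem~\ref{T:Leech}), so invoking Leech to prove CLT would be circular within the paper's logical structure unless you are careful to base it on Proof~II (the lurking-isometry argument), which is independent of CLT. Even then, the passage from a kernel-domination inequality to the conclusion $(M_F)^*|_\cM=X$ as a restriction rather than merely $P_\cN(M_F)^*|_\cM=X$ requires an extra argument, so this route is not obviously shorter.
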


Strictly inner multipliers between Fock spaces are of particular interest as they serve 
as Beurling-Lax representers of shift invariant subspaces (see Chapter 5 for more details). 
The following Beurling-Lax type theorem was originally given by Popescu \cite{PopescuBL}; see also \cite{BBF3}.

\begin{theorem} \label{T:bl4a}
    Let $\cM$ be a closed ${\bf S}_{1,R}$-invariant subspace of
    $H^2_{\cY}(\free)$. Then there exist a Hilbert space $\cU$ and a strictly inner multiplier
 $G$ from $H^{2}_{\cU}(\free)$ to $H^2_{\cY}(\free)$ such that $\cM = G\cdot H^2_{\cU}(\free)$.
    \end{theorem}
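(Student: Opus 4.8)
The plan is to represent $\cM$ through the model output pair attached to its orthogonal complement and then pass to the associated strictly inner multiplier by way of Theorem \ref{T:cm}. Since $\cM$ is invariant for the right-shift tuple $\bS_{1,R}$, the complement $\cN := \cM^\perp$ is invariant for $\bS_{1,R}^*$. Following the construction used in the converse part of Theorem \ref{T:cm}(2), I would set $C = E|_\cN$, where $E \colon \sum_{\alpha\in\free} f_\alpha z^\alpha \mapsto f_\emptyset$, and $\bA = (A_1,\dots,A_d)$ with $A_j = S_{1,R,j}^*|_\cN$. Compressing the identity $\sum_{j=1}^d S_{1,R,j} S_{1,R,j}^* + E^* E = I_{H^2_\cY(\free)}$ to the invariant subspace $\cN$ gives $\sum_{j=1}^d A_j^* A_j + C^* C = I_\cN$, so the pair $(C,\bA)$ is isometric in the sense of \eqref{isoIO}, and $\bA$ inherits strong stability from $\bS_{1,R}^*$. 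A direct computation of the coefficients of $\bA^\alpha x$ for $x\in\cN$ shows that $\cO_{C,\bA}$ is simply the inclusion map $\cN \hookrightarrow H^2_\cY(\free)$; in particular $\operatorname{Ran}\cO_{C,\bA} = \cN$ is isometrically included in $H^2_\cY(\free)$ and hence, by Proposition \ref{P:principle} (with gramian $I_\cN$), is the NFRKHS with reproducing kernel $K_{C,\bA}(z,\zeta) = C(I - Z(z)A)^{-1}(I - A^* Z(\zeta)^*)^{-1} C^*$.

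Next I would apply part (3) of Theorem \ref{T:cm} with the strictly positive operator $H = I_\cN$. Equality holds in \eqref{jan3} because $(C,\bA)$ is isometric, $\bA$ is strongly stable, and the solution $\sbm{ B \\ D }$ of the Cholesky factorization problem \eqref{10} may be taken injective after replacing the auxiliary space $\cU$ by $(\operatorname{Ker}\sbm{ B \\ D})^\perp$. This produces a Hilbert space $\cU$ and a strictly inner multiplier $G(z) = D + C(I - Z(z)A)^{-1} Z(z) B$ from $H^2_\cU(\free)$ to $H^2_\cY(\free)$, and the identity \eqref{jan3b} (which holds already under the hypotheses of Theorem \ref{T:cm}(1)) takes the form $k_{\rm nc, Sz}(z,\zeta) I_\cY - G(z)(k_{\rm nc, Sz}(z,\zeta) I_\cU) G(\zeta)^* = K_{C,\bA}(z,\zeta)$.

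It then remains to verify that $\operatorname{Ran} M_G = \cM$. Both $\cM$ and $\cN = \cM^\perp$ are closed subspaces of the NFRKHS $H^2_\cY(\free) = \cH(k_{\rm nc, Sz} I_\cY)$, hence are themselves NFRKHSs, and since $P_\cM + P_\cN = I$ their reproducing kernels satisfy $K_\cM(z,\zeta) + K_\cN(z,\zeta) = k_{\rm nc, Sz}(z,\zeta) I_\cY$. Using $K_\cN = K_{C,\bA}$ together with the identity of the previous paragraph, we obtain $K_\cM(z,\zeta) = G(z)(k_{\rm nc, Sz}(z,\zeta) I_\cU) G(\zeta)^*$. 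On the other hand, $G$ is a contractive multiplier from $\cH(k_{\rm nc, Sz} I_\cU) = H^2_\cU(\free)$ to $\cH(k_{\rm nc, Sz} I_\cY) = H^2_\cY(\free)$, so by part (1) of Theorem \ref{T:charNFRKHS} the lifted-norm space $\cH^\ell(M_G) = \operatorname{Ran} M_G$ is the NFRKHS with reproducing kernel $G(z)(k_{\rm nc, Sz}(z,\zeta) I_\cU) G(\zeta)^*$; moreover, since $G$ is strictly inner, $M_G$ is isometric, so this lifted norm coincides with the $H^2_\cY(\free)$-norm. By uniqueness of the NFRKHS with a given reproducing kernel (Theorem \ref{T:NFRKHS}), $\cM = \operatorname{Ran} M_G = G \cdot H^2_\cU(\free)$, including equality of norms.

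The step I expect to require the most care is the reproducing-kernel bookkeeping in the last paragraph, and in particular ensuring that every space entering the argument --- $\cM$, $\cM^\perp = \operatorname{Ran}\cO_{C,\bA}$, and $\operatorname{Ran} M_G$ --- carries the norm inherited from $H^2_\cY(\free)$ rather than a strictly smaller lifted norm. This is precisely where strong stability of $\bS_{1,R}^*$ (hence of $\bA$) is indispensable: it is what upgrades $\cO_{C,\bA}$ to an isometry and, via part (3) of Theorem \ref{T:cm}, $M_G$ to an isometry; without it one would only obtain a contractively included Beurling--Lax representation by a merely contractive multiplier. The ancillary verifications --- that the compression of $\sum_j S_{1,R,j} S_{1,R,j}^* + E^* E = I$ to $\cN$ is again the identity, and that $\cO_{C,\bA}$ collapses to the inclusion map --- are routine, but are what make the kernel identification clean.
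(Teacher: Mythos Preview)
Your proof is correct. The paper itself does not prove Theorem \ref{T:bl4a} directly---it is stated as a known result with citations to Popescu \cite{PopescuBL} and \cite{BBF3}---so there is no line-by-line comparison to make. Your argument is precisely the natural state-space derivation one would expect from the paper's machinery: it is essentially the converse direction in the proof of Theorem~\ref{T:cm}(2)--(3) run backwards from a given $\cM$, and the paper's Remark~\ref{R:BBF3'} indicates an alternative route (first obtain a McCT-inner representer via Theorem~\ref{T:NC-BLisom} with $\bo=\bmu_1$, then pass to strictly inner via the decomposition in Remark~\ref{R:BBF3}), but your direct path through Theorem~\ref{T:cm}(3) is cleaner for this particular statement.

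One small point of care: in invoking Theorem~\ref{T:cm}(3) you rely on the fact that its proof does not depend on Theorem~\ref{T:bl4a}, and indeed it does not---the converse direction of Theorem~\ref{T:cm}(3) only uses Proposition~\ref{P:elementary'} and item~(2) of Theorem~\ref{1.29pre}. Your reproducing-kernel bookkeeping in the final paragraph is exactly what is needed and is handled correctly; the key point, which you identify, is that strong stability of $\bA$ makes $\cO_{C,\bA}$ isometric and thus collapses all the lifted norms to the ambient $H^2_\cY(\free)$-norm.
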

Characterization of strictly inner multipliers between two Fock spaces in terms of state space realizations
\eqref{jan3a} was given in Theorem \ref{T:cm}. Another characterization of strictly inner multipliers is as 
follows.

\begin{lemma}
Let $F$ be a contractive multiplier from $H^2_{\cU}(\free)$ to $H^2_{\cY}(\free)$.
Then $F$ is a strictly inner multiplier if and only if $\|Fu\|_{H^2_{\cY}(\free)}=\|u\|_{\cU}$ for all 
$u\in\cU$.
\label{L:in}
\end{lemma}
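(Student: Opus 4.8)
The plan is to prove the equivalence by relating the condition $\|Fu\|_{H^2_\cY(\free)} = \|u\|_\cU$ for all $u \in \cU$ to the assertion that the restriction of $M_F$ to the subspace of constants $\cU \subset H^2_\cU(\free)$ is isometric, and then to leverage the multiplier intertwining structure to propagate this isometric property to all of $H^2_\cU(\free)$. The forward direction is immediate: if $F$ is strictly inner then $M_F$ is isometric on all of $H^2_\cU(\free)$, in particular on the constants, so $\|Fu\|_{H^2_\cY(\free)} = \|u\|_\cU$ for every $u \in \cU$. The substance is in the converse.

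For the converse, suppose $F$ is a contractive multiplier with $\|Fu\| = \|u\|$ for all $u \in \cU$. First I would observe that $H^2_\cU(\free)$ is the closed linear span of $\{ f(z) \cdot z^\alpha : f \in \cU,\ \alpha \in \free\}$, that is, the constants together with all their images under words in the shift tuple $\bS_{1,R}$. Since $M_F$ intertwines $S_{1,R,j}$ on $H^2_\cU(\free)$ with $S_{1,R,j}$ on $H^2_\cY(\free)$ (Definition~\ref{D:mult} and the discussion preceding Theorem~\ref{T:CLT}), we have $M_F(u \cdot z^\alpha) = (M_F u) \cdot z^\alpha = S_{1,R}^{\alpha\top}(M_F u)$ in the obvious multi-index notation. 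The key point is that the operators $S_{1,R,1}, \dots, S_{1,R,d}$ on $H^2_\cY(\free)$ are isometries with \emph{mutually orthogonal ranges} (as recorded just before the proof of Theorem~\ref{T:cm}): consequently, for any $N$ and any finitely-supported choice of vectors $\{ u_\alpha \}_{|\alpha| \le N} \subset \cU$, the images $\{ (M_F u_\alpha) \cdot z^\alpha \}$ live in mutually orthogonal subspaces whenever the $z^\alpha$ are distinct — and more precisely the Gram matrix of the family $\{ u \cdot z^\alpha\}$ in $H^2_\cU(\free)$ matches the Gram matrix of $\{(M_F u)\cdot z^\alpha\}$ in $H^2_\cY(\free)$ provided $M_F$ preserves norms and inner products on the constants. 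Polarization upgrades the norm-preservation hypothesis $\|Fu\| = \|u\|$ to $\langle F u, F u' \rangle_{H^2_\cY(\free)} = \langle u, u'\rangle_\cU$ for all $u, u' \in \cU$.

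Assembling this, I would argue as follows. Take an arbitrary $g = \sum_{\alpha \in \free} u_\alpha z^\alpha \in H^2_\cU(\free)$ (first with finite support, then pass to the limit by the contractivity of $M_F$ and density). Then
\begin{align*}
\| M_F g \|^2_{H^2_\cY(\free)} &= \Big\| \sum_\alpha (M_F u_\alpha) z^\alpha \Big\|^2_{H^2_\cY(\free)} = \sum_\alpha \| (M_F u_\alpha) z^\alpha \|^2_{H^2_\cY(\free)} \\
&= \sum_\alpha \| M_F u_\alpha \|^2_{H^2_\cY(\free)} = \sum_\alpha \| u_\alpha \|^2_\cU = \| g \|^2_{H^2_\cU(\free)},
\end{align*}
where the second equality uses the orthogonality of the ranges of distinct monomial-multiplication operators on $H^2_\cY(\free)$, the third uses that each $S_{1,R,j}$ on $H^2_\cY(\free)$ is an isometry, and the fourth uses the polarized hypothesis applied to each coefficient. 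This shows $M_F$ is isometric on a dense subspace, hence on all of $H^2_\cU(\free)$, i.e.\ $F$ is strictly inner.

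\textbf{The main obstacle} I anticipate is being careful about the bookkeeping in the middle display: one must verify that $\| (M_F u_\alpha) z^\alpha \|_{H^2_\cY(\free)} = \| M_F u_\alpha \|_{H^2_\cY(\free)}$ (which follows since $z^\alpha$-multiplication is a word in the isometries $S_{1,R,j}$, hence itself an isometry on $H^2_\cY(\free)$) and, crucially, that cross terms $\langle (M_F u_\alpha) z^\alpha, (M_F u_\beta) z^\beta \rangle$ vanish for $\alpha \ne \beta$. This last point is where the mutually-orthogonal-ranges property of $S_{1,R,1}, \dots, S_{1,R,d}$ is essential — it is exactly the noncommutative analogue of orthogonality of distinct monomials $\lambda^j$ in the classical Hardy space, and it is a structural feature of the Fock space $H^2_\cY(\free)$ that would fail for a general weighted Bergman-Fock space, which is precisely why the lemma is stated for Fock spaces. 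Once this orthogonality is in hand the rest is routine; no appeal to the deeper realization theory (Theorem~\ref{T:cm}) is actually needed, though one could alternatively phrase the converse via that route by noting that $\|Fu\| = \|u\|$ forces the Cholesky factor $\sbm{B \\ D}$ in a realization to be injective.
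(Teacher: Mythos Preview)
Your argument has a genuine gap at exactly the point you flag as the ``main obstacle'': the claim that the cross terms $\langle (M_F u_\alpha) z^\alpha, (M_F u_\beta) z^\beta \rangle$ vanish for $\alpha \neq \beta$ by virtue of the mutually-orthogonal-ranges property of $S_{1,R,1}, \dots, S_{1,R,d}$ is false. That property only gives $\operatorname{Ran} S_{1,R,j} \perp \operatorname{Ran} S_{1,R,k}$ for $j \neq k$, hence $(M_F u_\alpha) z^\alpha \perp (M_F u_\beta) z^\beta$ when $\alpha$ and $\beta$ have \emph{different} rightmost letters. But when, say, $\beta = \delta \alpha$ with $\delta \neq \emptyset$, peeling off the common suffix using the isometric shift property reduces the cross term to $\langle M_F u_\alpha, (M_F u_\beta) z^\delta \rangle$, and nothing in the range structure of $H^2_\cY(\free)$ forces this to vanish: $M_F u_\alpha$ is a full power series, not a single monomial. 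In short, the subspaces $H^2_\cY(\free) \cdot z^\alpha$ and $H^2_\cY(\free) \cdot z^\beta$ are \emph{not} mutually orthogonal for general distinct $\alpha, \beta$---indeed one contains the other when one word is a suffix of the other.

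This is precisely where the contractivity hypothesis must enter, and your proof never invokes $\|M_F\| \le 1$ in establishing the cross-term vanishing. The paper's argument computes $\|\bS_{1,R}^\alpha M_F u + \bS_{1,R}^\beta M_F v\|^2$ two ways for $\alpha \neq \beta$: expanding directly gives $\|u\|^2 + \|v\|^2 + 2\operatorname{Re}\langle \bS_{1,R}^\alpha M_F u, \bS_{1,R}^\beta M_F v \rangle$ (using the hypothesis and the isometric shift property), while rewriting as $\|M_F(\bS_{1,R}^\alpha u + \bS_{1,R}^\beta v)\|^2$ and applying contractivity bounds it above by $\|\bS_{1,R}^\alpha u + \bS_{1,R}^\beta v\|^2 = \|u\|^2 + \|v\|^2$. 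Hence $\operatorname{Re}\langle \bS_{1,R}^\alpha M_F u, \bS_{1,R}^\beta M_F v \rangle \le 0$ for all $u,v \in \cU$; replacing $u$ by $-u$ and by $\pm iu$ then forces the inner product to vanish. With that in hand the rest of your outline goes through.
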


\begin{proof}
The ``only if" part is self-evident. 

\smallskip

To prove the ``if" part, we 
recall that the operators $S_{1,R,1},\ldots,S_{1,R,d}$ \eqref{march7} are isometries on 
$H^2_{\cY}(\free)$ with mutually orthogonal ranges, and therefore,
$$
\|\bS_{1,R}^\alpha M_{F}u\|_{H^2_{\cY}(\free)}=\|M_{F} u\|_{H^2_{\cY}(\free)}=\|u\|_\cU
$$
for all $u\in\cU$ and $\alpha\in\free$. Moreover,
$$
\|\bS_{1,R}^\alpha M_{F}u+\bS_{1,R}^\beta M_{F}v\|_{H^2_{\cY}(\free)}^2
=\|u\|^2_{\cU}+\|v\|^2_{\cU}+2{\rm Re} \big\langle \bS_{1,R}^\alpha M_{F}u, \, 
\bS_{1,R}^\beta M_{F}v\big\rangle_{H^2_{\cY}(\free)}
$$
for any $u,v\in\cU$ and $\alpha,\beta\in\free$. On the other hand, as $M_{F}: \; 
H^2_{\cU}(\free)\to H^2_{\cY}(\free)$ is a contraction, we have for $\alpha\neq \beta$,
\begin{align*}
\|\bS_{1,R}^\alpha M_{F}u+\bS_{1,R}^\beta M_{F}v\|_{H^2_{\cY}(\free)}^2
&=\|M_{F}(\bS_{1,R}^\alpha u+\bS_{1,R}^\beta v)\|_{H^2_{\cY}(\free)}^2\\
&\le \|\bS_{1,R}^\alpha  u+\bS_{1,R}^\beta v\|_{H^2_{\cU}(\free)}^2=\|u\|_{\cU}^2+\|v\|_{\cU}^2.
\end{align*}
Combining the two latter relations we conclude that
$$
2 {\rm Re} \big\langle \bS_{1,R}^\alpha M_{F}u, \,
\bS_{1,R}^\beta M_{F}v\big\rangle_{H^2_{\cY}(\free)}\le 0 \quad\mbox{for all $u,v\in\cU$ and 
$\alpha\neq \beta$,}
$$
which is possible (since $u$ can be replaced by $-u$ and by $\pm i u$) only if 
$$
\big\langle \bS_{1,R}^\alpha M_{F}u, \, \bS_{1,R}^\beta M_{F}v\big\rangle_{H^2_{\cY}(\free)}= 0.
$$
If this is the case,  then
$\|Fp\|_{H^2_{\cY}(\free)}=\|p\|_{H^2_{\cU}(\free)}$ for every $\cU$-valued ``polynomial" 
$p\in\cU\langle\langle z\rangle\rangle$ and therefore,
for any $p\in H^2_{\cU}(\free)$. Therefore, $M_{F}$ is an isometry from $H^2_{\cU}(\free)$ to
$H^2_{\cY}(\free)$ and hence, $F$  is strictly inner.
\end{proof}

The next result shows that the class of coisometric multipliers 
from $H^2_{\cU}(\free)$ to $H^2_{\cY}(\free)$ is quite narrow.

\begin{lemma} 
Let $F(z)=\sum_{\alpha\in\free}F_\alpha z^\alpha$ be a coisometric multiplier from 
$H^2_{\cU}(\free)$ to $H^2_{\cY}(\free)$. Then $F(z)$ is a coisometric constant:
$F_{\emptyset}F_{\emptyset}^*=I_{\cY}$ and $F_\alpha=0$ for all 
$\alpha\in\free\backslash\{\emptyset\}$.
\label{L:cois}
\end{lemma}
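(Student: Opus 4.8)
The statement is that a coisometric multiplier $F$ from $H^2_\cU(\free)$ to $H^2_\cY(\free)$ must in fact be a constant coisometry. The natural approach is to feed this into part (2) of Proposition \ref{P:2.3}, which says that $M_F$ is coisometric if and only if the defect kernel vanishes identically:
$$
K_F(z,\zeta) = k_{\rm nc, Sz}(z,\zeta) I_\cY - F(z)\, (k_{\rm nc, Sz}(z,\zeta) I_\cU)\, F(\zeta)^* = 0.
$$
So the whole problem reduces to extracting, from this single operator-valued identity in the formal indeterminates $z$ and $\bzeta$, the conclusion $F_\emptyset F_\emptyset^* = I_\cY$ and $F_\alpha = 0$ for $\alpha \ne \emptyset$.

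\textbf{Extracting coefficients.} First I would expand both sides in the monomial basis $z^\alpha \bzeta^{\beta^\top}$. On the left, $k_{\rm nc, Sz}(z,\zeta) I_\cY = \sum_{\alpha \in \free} I_\cY\, z^\alpha \bzeta^{\alpha^\top}$ contributes $I_\cY$ on the diagonal $\beta = \alpha$ and $0$ off-diagonal. On the right, using the product rule for formal power series (as in $M_\Theta$ in Definition \ref{D:mult}),
$$
F(z)\,(k_{\rm nc, Sz}(z,\zeta) I_\cU)\, F(\zeta)^* = \sum_{\alpha, \beta \in \free}\;\Bigl(\;\sum_{\substack{\gamma, \gamma', \delta \colon\\ \gamma = \gamma' \delta,\ \beta = \gamma'}} F_{\gamma'} F_{\delta}^*\;\Bigr) z^\alpha \bzeta^{\beta^\top}
$$
— more concretely, the coefficient of $z^\alpha \bzeta^{\beta^\top}$ is $\sum_{\delta} F_{\beta \delta} F_\delta^*$ summed over those $\delta$ with $\beta\delta = \alpha$ (so this is $0$ unless $\beta$ is a prefix of $\alpha$, in which case $\delta$ is the unique word with $\alpha = \beta\delta$). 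Matching coefficients with the left-hand side gives: for each $\alpha$, taking $\beta = \alpha$ (so $\delta = \emptyset$) yields $F_\alpha F_\emptyset^* = 0$; taking $\beta = \emptyset$, $\alpha$ arbitrary yields $\sum_{|\delta| = |\alpha|,\ \delta = \alpha} F_\alpha F_\alpha^* = I_\cY$ when $\alpha = \emptyset$, i.e.\ $F_\emptyset F_\emptyset^* = I_\cY$, and for $\alpha \ne \emptyset$ the relation $\sum_{\beta \text{ prefix of } \alpha} F_\beta F_{\beta^{-1}\alpha}^* = 0$ — wait, more carefully, with $\beta = \emptyset$ and $\alpha \ne \emptyset$ we get $F_\emptyset F_\alpha^* = 0$.

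\textbf{Finishing.} From $F_\emptyset F_\emptyset^* = I_\cY$ we see $F_\emptyset$ is a coisometry, in particular $F_\emptyset^*$ is injective, i.e.\ $\operatorname{Ker} F_\emptyset^* = \{0\}$. The relation $F_\alpha F_\emptyset^* = 0$ for every $\alpha$ then forces $F_\alpha = 0$ on $\operatorname{Ran} F_\emptyset^*$; but equivalently, from $F_\emptyset F_\alpha^* = 0$ (the $\beta=\emptyset$, $\alpha\neq\emptyset$ relation above) we get $\operatorname{Ran} F_\alpha^* \subseteq \operatorname{Ker} F_\emptyset = (\operatorname{Ran} F_\emptyset^*)^\perp$. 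Combined with $F_\alpha F_\emptyset^* = 0$, which says $\operatorname{Ran} F_\emptyset^* \subseteq \operatorname{Ker} F_\alpha$, hence $(\operatorname{Ker} F_\alpha)^\perp = \operatorname{Ran} F_\alpha^* \subseteq \operatorname{Ran} F_\emptyset^*$; together with the previous inclusion $\operatorname{Ran} F_\alpha^* \subseteq (\operatorname{Ran} F_\emptyset^*)^\perp$ we conclude $\operatorname{Ran} F_\alpha^* = \{0\}$, i.e.\ $F_\alpha = 0$ for all $\alpha \ne \emptyset$. That gives the claim. I do not anticipate a serious obstacle here; the one point that needs care is the bookkeeping of the prefix/suffix decompositions when reading off the coefficients of $F(z)(k_{\rm nc, Sz} I_\cU)F(\zeta)^*$, and making sure the two families of scalar relations ($\beta = \alpha$ and $\beta = \emptyset$) are correctly identified — the rest is elementary Hilbert-space range/kernel juggling.
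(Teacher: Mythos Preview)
Your overall strategy---invoke Proposition~\ref{P:2.3}(2) to get the vanishing of the defect kernel and then equate coefficients of $z^\alpha \bzeta^{\beta^\top}$---is exactly the paper's approach. However, the execution has two related problems.

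\textbf{The coefficient formula is wrong.} The coefficient of $z^\alpha \bzeta^{\beta^\top}$ in $F(z)\big(k_{\rm nc, Sz}(z,\zeta) I_\cU\big)F(\zeta)^*$ is
\[
\sum_{\substack{\mu,\nu,\gamma\in\free:\\ \mu\gamma=\alpha,\ \nu\gamma=\beta}} F_\mu F_\nu^{*},
\]
i.e.\ a sum over common \emph{suffixes} $\gamma$ of $\alpha$ and $\beta$, not a single term indexed by a prefix relation. With this correction, the off-diagonal relation at $\beta=\emptyset,\ \alpha\ne\emptyset$ does give $F_\alpha F_\emptyset^{*}=0$ (only $\gamma=\emptyset$ is possible), and by symmetry $F_\emptyset F_\alpha^{*}=0$; and $\alpha=\beta=\emptyset$ gives $F_\emptyset F_\emptyset^{*}=I_\cY$. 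So far so good.

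\textbf{Those relations alone do not force $F_\alpha=0$.} Your final range/kernel argument contains a reversed inclusion: from $\operatorname{Ran} F_\emptyset^{*}\subseteq \operatorname{Ker} F_\alpha$ you get $(\operatorname{Ker} F_\alpha)^\perp\subseteq (\operatorname{Ran} F_\emptyset^{*})^\perp$, \emph{not} $\subseteq \operatorname{Ran} F_\emptyset^{*}$. And indeed the conclusion fails from these hypotheses alone: take $d=1$, $\cU=\cY\oplus\cY$, $F_\emptyset=[I_\cY\ \ 0]$, $F_1=[0\ \ T]$ with $T\ne 0$. Then $F_\emptyset F_\emptyset^{*}=I_\cY$, $F_\emptyset F_1^{*}=0$, $F_1 F_\emptyset^{*}=0$, yet $F_1\ne 0$.

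The fix is to use the \emph{diagonal} relations. For $\alpha=\beta$ the coefficient identity reads
\[
\sum_{\mu\ \text{prefix of}\ \alpha} F_\mu F_\mu^{*}=I_\cY.
\]
Since the $\alpha=\emptyset$ case already gives $F_\emptyset F_\emptyset^{*}=I_\cY$, an induction on $|\alpha|$ immediately yields $F_\mu F_\mu^{*}=0$, hence $F_\mu=0$, for every $\mu\ne\emptyset$. This is what the paper's ``equating the coefficients \dots\ we get the desired conclusion'' is pointing to.
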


\begin{proof}
By specializing the second part of Proposition \ref{P:2.3} to the case where 
$K(z,\zeta)=k_{\rm nc, Sz}(z,\zeta)I_{\cY}$ and $K^\prime(z,\zeta)=k_{\rm nc, Sz}(z,\zeta)I_{\cU}$
we conclude that $F(z)$ is a coisometric multiplier from
$H^2_{\cU}(\free)$ to $H^2_{\cY}(\free)$ if and only if 
$$
k_{\rm nc, Sz}(z,\zeta)I_{\cY}-F(z)k_{\rm nc, Sz}(z,\zeta)F(\zeta)^*=0.
$$
Substituting explicit power series formulas for $F$ and  \eqref{ncsz} for $k_{\rm nc, Sz}$ into the 
latter equality and equating the coefficients of $z^\alpha\overline{\zeta}^{\beta^\top}$ on both 
sides we get the desired conclusion. 
\end{proof}

\section{A noncommutative Leech theorem}  \label{S:Leech}

The next result is the noncommutative version of the Leech theorem  (see \cite{Leech}) solving the problem of producing
a contractive analytic operator function $S$ which solves the factorization problem $F = G S$ for two given
meromorphic operator functions $G$ and $F$.

\begin{theorem}  \label{T:Leech}
Given power series $G\in\cL(\cY,\cX)\langle\langle z\rangle\rangle$ and $F\in\cL(\cU,\cX)\langle\langle 
z\rangle\rangle$, the formal kernel
\begin{equation}
K_{G,F}(z, \zeta): = G(z)(k_{\rm nc, Sz}(z,\zeta)\otimes I_{\cY})G(\zeta)^*-F(z)(k_{\rm 
nc, Sz}(z,\zeta)\otimes I_{\cU})F(\zeta)^*
\label{jan3c}
\end{equation}
is positive if and only if there exists a contractive multiplier $S$ from 
$H^2_{\cU}(\free)$ to $H^2_{\cY}(\free)$ such that $F(z)=G(z)S(z)$.
\end{theorem}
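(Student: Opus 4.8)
The plan is to recognize this as a direct consequence of the contractive-multiplier criterion in Proposition \ref{P:2.3}, applied to a well-chosen pair of kernels, with the only real work being to convert between the factorization $F = GS$ and a multiplier identity for $S$ between Fock spaces. First I would handle the easy direction: if such a contractive multiplier $S$ from $H^2_{\cU}(\free)$ to $H^2_{\cY}(\free)$ exists with $F(z) = G(z)S(z)$, then by part (1) of Proposition \ref{P:2.3} applied to $K = k_{\rm nc, Sz}\otimes I_\cY$ and $K' = k_{\rm nc, Sz}\otimes I_\cU$, the kernel $k_{\rm nc, Sz}\otimes I_\cY - S(z)(k_{\rm nc, Sz}\otimes I_\cU)S(\zeta)^*$ is positive, hence has a Kolmogorov decomposition $H(z)H(\zeta)^*$ for some $H$. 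Then
\[
K_{G,F}(z,\zeta) = G(z)\bigl(k_{\rm nc, Sz}\otimes I_\cY - S(z)(k_{\rm nc, Sz}\otimes I_\cU)S(\zeta)^*\bigr)G(\zeta)^* = \bigl(G(z)H(z)\bigr)\bigl(G(\zeta)H(\zeta)\bigr)^*,
\]
which is manifestly a positive formal kernel by criterion (3) of Theorem \ref{T:NFRKHS}; one checks that the composition $G(z)H(z)$ of formal power series is again a legitimate formal power series, so $K_{G,F}$ has the required Kolmogorov form.

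For the converse, suppose $K_{G,F}$ is a positive formal kernel, i.e.
\[
F(z)(k_{\rm nc, Sz}\otimes I_\cU)F(\zeta)^* \preceq G(z)(k_{\rm nc, Sz}\otimes I_\cY)G(\zeta)^*
\]
in the sense of positivity of the difference kernel. I would interpret both sides as reproducing kernels: let $\cK_F = \cH(K_F)$ and $\cK_G = \cH(K_G)$ be the NFRKHSs with $K_F(z,\zeta) = F(z)(k_{\rm nc, Sz}\otimes I_\cU)F(\zeta)^*$ and $K_G(z,\zeta) = G(z)(k_{\rm nc, Sz}\otimes I_\cY)G(\zeta)^*$. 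By Proposition \ref{P:2.4} (with $n=1$), $M_F$ is a coisometric multiplier from $H^2_\cU(\free)$ onto $\cK_F$, so $\cK_F = \operatorname{Ran} M_F$ with the lifted norm; similarly $\cK_G = \operatorname{Ran} M_G$ with the lifted norm, and by Theorem \ref{T:charNFRKHS} these are NFRKHSs with exactly the stated kernels. The positivity hypothesis $K_F \preceq K_G$ means, by the standard domination principle for reproducing kernels (which in the NFRKHS setting follows from criterion (3) of Theorem \ref{T:NFRKHS} together with the lifted-norm description), that $\cK_F$ is contractively contained in $\cK_G$; equivalently, there is a contraction $X\colon \cK_G \to \cK_F$ that acts as the ``inclusion'' in the appropriate sense, i.e. $X K_G(\cdot,\zeta)y = K_F(\cdot,\zeta)y$ for all $y$.

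The heart of the argument — and the step I expect to be the main obstacle — is to promote this abstract domination into a genuine factorization $F = GS$ with $S$ a \emph{contractive multiplier between Fock spaces}. The strategy is a commutant-lifting / back-substitution argument: since $M_G\colon H^2_\cY(\free) \to \cK_G$ is a coisometry intertwining the shift tuples $\bS_{1,R}$, and $M_F\colon H^2_\cU(\free)\to\cK_F$ is likewise a coisometric intertwiner, the contraction realizing $\cK_F \subseteq \cK_G$ pulls back to a contraction $T\colon H^2_\cY(\free)\to H^2_\cU(\free)$ (mod kernels) that intertwines the backward shift tuples $\bS_{1,R}^*$. One then wants $T = M_S^*$ for some contractive multiplier $S$, which is precisely the content of the noncommutative Commutant Lifting Theorem \ref{T:CLT} (applied with $\cM = H^2_\cY(\free)$, $\cN = H^2_\cU(\free)$): any contraction intertwining $\bS_{1,R}^*$ on two full Fock spaces is $M_S^*$ for a contractive $S$. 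Unwinding $T = M_S^*$ through the factorizations $K_F = M_F M_F^*$-style identities $K_G = M_G M_G^*$-style identities and the relation $X K_G(\cdot,\zeta) = K_F(\cdot,\zeta)$ then forces, by equating power-series coefficients and using that $M_G$ is coisometric, the functional identity $F(z) = G(z)S(z)$. The delicate points are (i) ensuring the abstract contraction $X$ is compatible with the module (shift) structure so that Theorem \ref{T:CLT} applies, and (ii) tracking the bookkeeping with kernels of $M_F$ and $M_G$ so that the coefficientwise comparison really yields $F = GS$ rather than merely $F = GS$ modulo something. I would carry this out by working with the dense spans of kernel functions throughout, as in the proof of Proposition \ref{P:2.3}, where the action of adjoints of multipliers on kernel functions is explicit.
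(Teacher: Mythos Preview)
Your easy direction matches the paper exactly. For the converse, your strategy is the paper's Proof~I (via commutant lifting), but with a genuine improvement: by routing through the NFRKHSs $\cH(K_G)$ and $\cH(K_F)$ rather than through the ambient Fock space $H^2_\cX(\free)$, you avoid the extra hypothesis the paper explicitly imposes for Proof~I (namely that $G$ and $F$ are themselves bounded multipliers into $H^2_\cX(\free)$, needed so that $M_G^*, M_F^*$ exist as Hilbert-space adjoints). Your detour makes $M_G\colon H^2_\cY(\free)\to\cH(K_G)$ a coisometry unconditionally by Proposition~\ref{P:2.4}, so the commutant-lifting argument goes through in full generality. The ``delicate points'' you flag can indeed be filled in: the inclusion $\iota\colon\cH(K_F)\hookrightarrow\cH(K_G)$ intertwines the restricted right shifts (both being $M^r_{z_j}$), hence $\iota^*$ intertwines their adjoints; the coisometries $M_G, M_F$ restrict to unitaries on $(\ker M_G)^\perp$, $(\ker M_F)^\perp$ which intertwine backward shifts; and the pulled-back map is $\widetilde T = M_F^*\,\iota^*\, M_G\big|_{(\ker M_G)^\perp}$. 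One correction: Theorem~\ref{T:CLT} must be applied with $\cM=(\ker M_G)^\perp\subset H^2_\cY(\free)$ and $\cN=(\ker M_F)^\perp\subset H^2_\cU(\free)$, not the full Fock spaces --- the pull-back does not intertwine backward shifts on all of $H^2_\cY(\free)$, only on this backward-shift-invariant subspace, and CLT is precisely what lifts it.

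The paper also gives a completely different Proof~II via the \emph{lurking isometry method}, which is what it presents as the full-generality argument. Starting from a Kolmogorov decomposition $K_{G,F}(z,\zeta)=H(z)H(\zeta)^*$, one applies the map $X\mapsto X-\sum_k \overline{\zeta}_k X z_k$ (which collapses $k_{\rm nc,Sz}$ to $1$) to obtain
\[
G(z)G(\zeta)^*-F(z)F(\zeta)^* = H(z)H(\zeta)^* - \sum_{k=1}^d H(z)z_k\,\overline{\zeta}_k H(\zeta)^*,
\]
equates coefficients of $z^\alpha\overline{\zeta}^{\beta^\top}$, builds an isometry $V$ on the span of the resulting column vectors, extends $V$ to a contractive (or unitary) colligation $\bU^*=\sbm{A^*&C^*\\B^*&D^*}$, and reads off $S(z)=D+C(I-Z(z)A)^{-1}Z(z)B$; the identity $G(z)S(z)=F(z)$ then falls out of the colligation equations. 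This route is more constructive --- it produces an explicit transfer-function realization of $S$ --- and invokes no commutant lifting at all.
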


\begin{proof}  If $F(z) = G(z) S(z)$ for a contractive multiplier 
    $S$, then
 $$
  K_{G,F}(z, \zeta) = G(z) \big( k_{\rm nc, Sz}(z, \zeta)\otimes 
  I_{\cY} - S(z) (  k_{\rm nc, Sz}(z, \zeta)\otimes I_{\cU} )
  S(\zeta)^{*} \big) G(\zeta)^{*}
 $$
 is a positive kernel as a consequence of the characterization of 
 contractive multipliers in Proposition \ref{P:2.3}.
  We provide two proofs of the converse which illustrate two distinct 
 approaches to metric-constrained interpolation theory for this 
 setting.  The second proof is more general in that the first proof 
 requires an additional hypothesis.
 
 \medskip
  \noindent
 \textbf{Proof I} (via \textbf{Commutant Lifting Theorem}):
For this proof we impose the additional hypothesis:

\smallskip

\noindent
\textbf{Additional Hypothesis:}  {\em Assume that both $G$ and $F$ 
are themselves bounded multipliers from $H^{2}_{\cY}(\free)$ to 
$H^{2}_{\cX}(\free)$ and from $H^{2}_{\cU}(\free)$ to 
$H^{2}_{\cX}(\free)$ respectively.}

\smallskip

\noindent
Suppose that $K_{G,F}$ is a positive formal kernel.
    By the computations in the proof of Proposition 
    \ref{P:2.3}, we see that the positivity of $K_{G,F}$ is equivalent to
 $$
 \| \sum_{j=1}^{N} M_{G}^{*} k_{{\rm nc, Sz}}(\cdot,\beta_{j}) x_{j} 
 \|^{2} - \| \sum_{j=1}^{N} M_{F}^{*}  k_{{\rm nc, Sz}}(\cdot,\beta_{j}) x_{j} \|^{2} \ge 0
 $$
 for all choices of $\beta_{1}, \dots, \beta_{N} \in \free$ 
 and $x_{1}, \dots, x_{N} \in \cX$.  Thus, the operator
 $$
  T_{0} \colon M_{G}^{*} f \mapsto M_{F}^{*} f\quad \text{for}\quad f \in 
  H^{2}_{\cX}(\free)
  $$
  extends to a contraction from $\cM=\overline{\rm Ran}\, 
  M_{G}^{*}\subset H^{2}_{\cY}(\free)$ into $\cN=\overline{\rm Ran}\, M_{F}^{*}\subset 
H^{2}_{\cU}(\free)$.  
Since 
  multiplication operators intertwine the right shift operators 
  $S_{1,R,j} \otimes I_{\cU}$ and $S_{1,R,j} \otimes I_{\cY}$, it is 
  easily verified that
  $$
    T_{0} \left((S_{1,R,j} \otimes I_{\cY})^{*}|_{\cM}\right) =\left((S_{1,R,j} \otimes 
    I_{\cU})^{*}|_{\cN}\right) T_{0} \quad\text{for}\quad j = 1, \dots, d.
  $$
  By the Commutant Lifting Theorem \ref{T:CLT}, we conclude 
  that there is a contractive multiplier $S \in \cL(\cU, \cY) \langle 
  \langle z \rangle \rangle$ such that $M_{S}^{*} |_{\cM} = T_{0}$, 
  i.e., $M_{S}^{*} M_{G}^{*} = M_{F}^{*}$.  Taking 
  adjoints gives $M_{G} M_{S} = M_{F}$, or $G(z) S(z) = F(z)$ as 
  wanted.
  
  \medskip 
  \noindent
  \textbf{Proof II} (via \textbf{Lurking Isometry method}):  Assume that $K_{G,F}$ 
  \eqref{jan3c} is a positive formal kernel.  Then by (3) 
  $\Rightarrow$ (2) in Theorem \ref{T:NFRKHS}, $K_{G,F}$ has a formal 
  Kolmogorov decomposition
  \begin{equation}   \label{KGF-Kol}
      K_{G,F}(z, \zeta) = H(z) H(\zeta)^{*}
 \end{equation}
 for a formal power series $H(z) \in \cL(\cX_{0}, \cX)$ ($\cX_{0}$ 
 is an auxiliary Hilbert space). Combining \eqref{jan3c} and \eqref{Sz-id} 
we see that the kernel $K_{G,F}(z, \zeta)$ satisfies
 $$
   K_{G,F}(z, \zeta) - \sum_{k=1}^{d} \bzeta_{k} K_{G,F}(z, \zeta) 
   z_{k} = G(z) G(\zeta)^{*} - F(z) F(\zeta)^{*}.
 $$
Applying the operation $X \mapsto X - \sum_{k=1}^{d} \bzeta_{k} X
 z_{k}$ to both sides of \eqref{KGF-Kol} and taking into account 
the last equality then yields
 $$
 G(z) G(\zeta)^{*} - F(z) F(\zeta)^{*} = H(z)H(\zeta)^{*}-
 \sum_{k=1}^{d}  H(z) z_{k} \bzeta_{k}H(\zeta)^{*}.
 $$
 Equate coefficients of $z^{\alpha} \bzeta^{\beta^{\top}}$ to get
 \begin{equation}   \label{lurking1}
   G_{\alpha} G_{\beta}^{*} - F_{\alpha} F_{\beta}^{*} = H_{\alpha} 
   H_{\beta}^{*} - \sum_{k=1}^{d} H_{\alpha k^{-1}}H_{\beta 
   k^{-1}}^{*}
 \end{equation}
 where we use the convention
 $$ \alpha k^{-1} = \begin{cases} \alpha', &\text{if } \alpha = 
 \alpha' k, \\
 \text{undefined,} &\text{otherwise,}
 \end{cases}
 $$
 and where by convention, $H_{\text{undefined}} = 0$. Rewrite \eqref{lurking1} as
 $$
   \sum_{k=1}^{d} H_{\alpha k^{-1}} H_{\beta k^{-1}}^{*} + 
   G_{\alpha} G_{\beta}^{*} = H_{\alpha} H_{\beta}^{*} + F_{\alpha} 
   F_{\beta}^{*}.
$$
Then the map 
\begin{equation}
  V \colon \begin{bmatrix} H_{\beta 1^{-1}}^{*} \\ \vdots \\ 
  H_{\beta d^{-1}}^{*} \\ G_{\beta}^{*} \end{bmatrix} x \mapsto
  \begin{bmatrix} H_{\beta}^{*} \\ F_{\beta}^{*} \end{bmatrix} x
\label{m1}
\end{equation}
extends to a well-defined isometry from the subspace
$$
  \cD = \overline{\rm span}_{\beta \in \free,\, x \in \cX} \left\{ 
  \begin{bmatrix} H_{\beta 1^{-1}}^{*} \\ \vdots \\ 
  H_{\beta d^{-1}}^{*} \\ G_{\beta}^{*} \end{bmatrix} x 
  \right\} \subset \begin{bmatrix} \cX_{0}^{d} \\ \cY \end{bmatrix}
$$
onto the subspace
$$
 \cR = \overline{\rm span}_{\beta \in \free,\, x \in \cX} \left\{ 
 \begin{bmatrix} H_{\beta}^{*} \\ F_{\beta}^{*} \end{bmatrix} x \right\}
   \subset  \begin{bmatrix} \cX_{0} \\ \cU \end{bmatrix} .
 $$
 Extend $V$ to a contraction (or even to a unitary at the possible 
 expense of enlarging the state space $\cX_{0}$) $\bU^{*} \colon 
 \sbm{ \cX_{0}^{d} \\ \cY } \to \sbm{  \cX_{0} \\ \cU}$.  Decompose 
 $\bU^{*}$ into block-matrix form
 $$
  \bU^{*} = \begin{bmatrix} A^{*} & C^{*} \\ B^{*} & D^{*} 
\end{bmatrix} = \begin{bmatrix} A_{1}^{*} & \cdots & A_{d}^{*} & 
C^{*} \\ B_{1}^{*} & \cdots & B_{d}^{*} & D^{*} \end{bmatrix} \colon 
\begin{bmatrix}  \cX_{0}^{d} \\ \cY \end{bmatrix} \to 
    \begin{bmatrix}  \cX_{0} \\ \cU \end{bmatrix}.
$$
Since $\bU^{*}$ is an extension of $V$, we get from \eqref{m1} the system of equations
$$
\sum_{k=1}^{d} A_{k}^{*} H_{\beta k^{-1}}^{*} + C^{*}
    G_{\beta}^{*} = H_{\beta}^{*}, \qquad
    \sum_{k=1}^{d} B_{k}^{*} H_{\beta k^{-1}}^{*} + D^{*}
   G_{\beta}^{*} = F_{\beta}^{*}.
$$
Actually the adjoint form of these equations will be more convenient:
$$
\sum_{k=1}^{d}H_{\beta k^{-1}} A_{k} + G_{\beta} C =
H_{\beta}, \qquad \sum_{k=1}^{d} H_{\beta k^{-1}} B_{k} + G_{\beta} D = F_{\beta}.
$$
Apply the formal $Z$-transform \eqref{1.25pre} to get the formal power 
series version of this system of equations:
\begin{align}
    H(z) Z(z) A + G(z) C & = H(z),  \label{1} \\
    H(z) Z(z) B + G(z) D & = F(z) \label{2}
 \end{align}
with $Z(z)$ as in \eqref{1.23pre} (but with $\cX_{0}$ in place of $\cX$).
Use \eqref{1} to solve for $H(z)$:
 $$
   H(z) = G(z) C (I - Z(z) A)^{-1}.
 $$
 Plug this expression for $H(z)$ back into \eqref{2} to get
 \begin{equation}   \label{GS=F}
   G(z) \left( C (I - Z(z)   A)^{-1} Z(z) B + D 
   \right) = F(z).
 \end{equation}
 This suggests that we set
 $$
   S(z) = D +  C (I - Z(z)   A)^{-1} Z(z) B.
 $$
Since $\bU$ is contractive/unitary, then $S$ is a contractive multiplier from $H^2_{\cU}(\free)$ to 
$H^2_{\cY}(\free)$, by Theorem \ref{T:cm}. Finally, it is seen from \eqref{GS=F} that $F(z)=G(z)S(z)$ which 
completes the proof.
\end{proof}

The following particular case of the Leech theorem is worth noting.

\begin{proposition}
Let $G\in\cL(\cY,\cX)\langle\langle z\rangle\rangle$ and $F\in\cL(\cU,\cX)\langle\langle
z\rangle\rangle$ satisfy the kernel identity
\begin{equation}
G(z)(k_{\rm nc, Sz}(z,\zeta) \otimes I_\cY)G(\zeta)^*=F(z)(k_{\rm nc, Sz}(z,\zeta) \otimes I_\cU) F(\zeta)^*.
\label{jan3ca}
\end{equation} 
Then there exists a partial isometry $W\in\cL(\cU,\cY)$ 
such that 
$F(z)=G(z)W$ and $G(z)=F(z)W^*$.
\label{P:leech}
\end{proposition}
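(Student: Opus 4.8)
The plan is to reduce the kernel identity \eqref{jan3ca} to an identity on the individual power-series coefficients of $G$ and $F$, and then to build $W$ directly as a constant partial isometry; this is the ``lurking isometry'' from the proof of Theorem \ref{T:Leech}, which in the present situation degenerates to a single operator because the Kolmogorov factor of the zero kernel may be taken to be $0$.

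First I would peel off one layer of the Szeg\H{o} kernel. Applying the operation $X \mapsto X - \sum_{k=1}^{d} \bzeta_{k} X z_{k}$ to both sides of \eqref{jan3ca} and using that each $\bzeta_{k}$ commutes with every $z_{j}$ (see \eqref{hereditarynorm}) together with the identity \eqref{Sz-id}, one finds that the left side collapses to $G(z)G(\zeta)^{*}$ and the right side to $F(z)F(\zeta)^{*}$, so that $G(z)G(\zeta)^{*} = F(z)F(\zeta)^{*}$ in $\cL(\cX)\langle\langle z, \bzeta \rangle\rangle$. Equating coefficients of $z^{\alpha}\bzeta^{\beta^{\top}}$ then gives
$$
 G_{\alpha}G_{\beta}^{*} = F_{\alpha}F_{\beta}^{*}\qquad\text{for all }\ \alpha, \beta \in \free.
$$

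Next I would construct $W$. Set $\cN_{G} = \overline{\rm span}\{G_{\alpha}^{*}x \colon \alpha \in \free,\ x \in \cX\} \subseteq \cY$ and $\cN_{F} = \overline{\rm span}\{F_{\alpha}^{*}x \colon \alpha \in \free,\ x \in \cX\} \subseteq \cU$. The coefficient identity shows that $\langle G_{\beta}^{*}x, G_{\alpha}^{*}y \rangle_{\cY} = \langle G_{\alpha}G_{\beta}^{*}x, y\rangle_{\cX} = \langle F_{\alpha}F_{\beta}^{*}x, y\rangle_{\cX} = \langle F_{\beta}^{*}x, F_{\alpha}^{*}y\rangle_{\cU}$, so the assignment $G_{\alpha}^{*}x \mapsto F_{\alpha}^{*}x$ extends to a well-defined isometry $V_{0} \colon \cN_{G} \to \cN_{F}$ which is surjective (an isometry has closed range, and here that range contains the dense subspace spanning $\cN_{F}$). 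I would then set $W := V_{0}^{-1}P_{\cN_{F}} \in \cL(\cU, \cY)$, where $P_{\cN_{F}}$ is the orthogonal projection of $\cU$ onto $\cN_{F}$; this $W$ is a partial isometry with $W^{*}W = P_{\cN_{F}}$ and $WF_{\alpha}^{*} = G_{\alpha}^{*}$ for every $\alpha$, hence $F_{\alpha}W^{*} = G_{\alpha}$, i.e.\ $G(z) = F(z)W^{*}$. Finally, since $F_{\alpha}^{*}\cX \subseteq \cN_{F}$ one has $F_{\alpha} = F_{\alpha}P_{\cN_{F}} = F_{\alpha}W^{*}W = G_{\alpha}W$, so that $F(z) = G(z)W$ as well, completing the proof.

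The computation is essentially routine; the only points needing care are the peeling step and the verification that $V_{0}$ is well-defined and surjective. One could instead invoke Theorem \ref{T:Leech} twice, applied to the trivially positive kernels $K_{G,F} = 0$ and $K_{F,G} = 0$, to produce contractive multipliers $S, T$ with $F = GS$ and $G = FT$, but extracting from $S$ and $T$ the constant partial isometry $W$ requires the same coefficient bookkeeping, so the direct route is preferable.
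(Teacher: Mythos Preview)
Your proof is correct and follows essentially the same route as the paper: both arguments reduce to the coefficient identity $G_{\alpha}G_{\beta}^{*}=F_{\alpha}F_{\beta}^{*}$ and then build the partial isometry from the resulting isometric map between $\overline{\rm span}\{G_{\alpha}^{*}x\}$ and $\overline{\rm span}\{F_{\alpha}^{*}x\}$. The only cosmetic difference is that the paper obtains the coefficient identity by noting that \eqref{jan3ca} forces the Kolmogorov factor $H$ in the proof of Theorem \ref{T:Leech} to vanish, whereas you derive it directly via the peeling operation $X\mapsto X-\sum_{k}\bzeta_{k}Xz_{k}$ and \eqref{Sz-id}; these are two phrasings of the same computation.
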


\begin{proof}
The ``if" part is clear. To prove the ``only if" part, we 
follow the lines of the second proof of Theorem \ref{T:Leech}.
Identity \eqref{jan3ca} means that the kernel $K_{G,F}$ \eqref{jan3c} is zero, so that 
the formal power series $H(z)$ from the formal Kolmogorov decomposition \eqref{KGF-Kol} is also 
zero. Then the map $V: \, G_\beta^*x\to F_\beta^*x$ extends to a well-defined isometry from 
the subspace 
$\mathcal G=\overline{\rm span}_{\beta \in \free,\, x \in \cX}G_\beta^*x\subset \cY$  
onto the subspace $\mathcal R=\overline{\rm span}_{\beta \in \free,\, x \in \cX}
F_{\beta}^{*} x \subset \cU$. 
Extending $V$ to a partial isometry $W^*: \, \cY\to\cU$ by letting $W^*\vert_{\cY\ominus\mathcal 
G}=0$, we get $W^*G_\beta^*=F_\beta^*$ and $W F_{\beta}^{*} = 
G_{\beta}^{*}$ or equivalently, 
$$
F_\beta=G_\beta W\quad\mbox{and}\quad G_\beta=F_\beta W^*\quad \mbox
{for all $\beta\in\free$}.
$$
Now the equalities $F(z)=G(z)W$ and $G(z)=F(z)W^*$ follow.
\end{proof}

\section[Contractive multipliers from $H^2_{\cU}(\free)$ to
$H^2_{\bo,\cY}(\free)$]
{Contractive multipliers from $H^2_{\cU}(\free)$ to 
$H^2_{\bo,\cY}(\free)$ for admissible $\bo$}\label{S:Hardy-con-mult}
We now suppose that $\bo = \{ \omega_{j}\}_{j\ge 0}$  is an admissible weight sequence
as in \eqref{18.2} with associated weighted Hardy-Fock space $H^2_{\bo}(\free)$.
By Remark \ref{R:examples}, the associated kernel $k_{{\rm nc},\bo}$ is a {\em contractive kernel},
i.e., the right shift operator tuple $\bS_{\bo,R} = (S_{\bo,R,1} \cdots, S_{\bo, R,d})$ is a row contraction.
By Proposition \ref{P:referee}, the kernel $K=k_{\bo}$ admits a factorization \eqref{kcm2} for an appropriately 
chosen $G$. Our next goal is to construct this $G$ explicitly. Toward this end,  let us introduce the space
 \begin{equation}   \label{jan4a}
 \ell^2_\cY(\free) = \bigg\{ \{ f_\alpha \}_\alpha \in \free \colon \sum_{\alpha \in \free} \| f_\alpha \|^2_\cY < \infty \bigg\}
 \end{equation}
 which is isomorphic to the Fock space $H^2_\cY(\free)$ via the noncommutative $Z$-transform \eqref{1.25pre}.
Let us define the operator $\bPs_{\bo} \colon \ell^{2}_{\cY}(\free) 
 \to H^2_{\bo, \cY}(\free)$ (a weighted noncommutative $Z$-transform) 
 by
 $$
 \bPs_{\bo} = \operatorname{Row}_{\alpha \in \free} [ 
 \omega_{|\alpha|}^{-\frac{1}{2}} z^{\alpha} ] \colon
 \{ f_{\alpha} \}_{\alpha \in \free} \mapsto \sum_{\alpha \in \free} 
 \bo_{|\alpha|}^{-\frac{1}{2}} f_{\alpha} z^{\alpha}.
 $$
 We identify $\bPs_{\bo}$ with an operator-valued power series:
 \begin{equation}   \label{powerseries}
 \bPs_{\bo}(z) = \sum_{\alpha \in \free} \left( 
 \operatorname{Row}_{\beta \in \free}[  \delta_{\alpha, \beta}
( \omega_{|\alpha|}^{-\frac{1}{2}} \otimes I_{\cY}] \right) z^{\alpha}\colon 
 \{f_{\alpha}\}_{\alpha \in\free} \mapsto \sum_{\alpha \in \free} 
 \omega_{|\alpha|}^{-\frac{1}{2}} f_{\alpha} z^{\alpha}.
 \end{equation}
 Note that $\bPs_{\bo}(z)$ is the factor for the Kolmogorov 
 decomposition of the positive noncommutative kernel $k_{\bo}$ 
 \eqref{kbo}:
 \begin{equation}  \label{kbo-Koldecom}
     k_{\bo}(z, \zeta) =  \bPs_{\bo}(z) 
     \bPs_{\bo}(\zeta)^{*}.
 \end{equation}
 Assume now that $\bo$ is {\em strictly decreasing} ($\omega_j < \Omega_{j-1}$ for $j \ge 1$),
 and define a new weight $\bgam = \{ \gamma_{j} \}_{j \ge 0}$ by
\begin{equation}   \label{bgam}
    \gamma_{0} = 1, \quad \gamma_{j} = (\omega_{j}^{-1} - 
    \omega_{j-1}^{-1})^{-1} = \frac{ \omega_{j} 
    \omega_{j-1}}{\omega_{j-1} - \omega_{j}} \; \text{ for } \; j \ge 1.
\end{equation}
Since $\bo$ is strictly decreasing and positive, we see that $\bgam$ is a positive weight 
sequence.  It then follows from \eqref{kbo} and \eqref{bgam} that the kernel 
\begin{equation}
\widetilde k_{ \bo}(z, \zeta): = k_{ \bo}(z,
    \zeta) - \sum_{k=1}^{d} \overline{\zeta_{k}} k_{ \bo}(z,\zeta) z_{k} =\sum_{\alpha \in \free} \gamma_{|\alpha|}^{-1}
    z^{\alpha} \overline{\zeta}^{\alpha^{\top}} = k_{\bgam}(z, \zeta)\label{kbgam}
\end{equation}
is a positive noncommutative kernel.  The latter can also be seen as a consequence 
of the fact that the right shift operator tuple $\bS_{\bo, R}$ is a 
row contraction -- by applying item (1) in Proposition \ref{P:2.3} with 
$$
K(z, \zeta) = k_{\bo}(z, \zeta),\quad
\Theta(z) = \begin{bmatrix} z_1  & \cdots & z_d \end{bmatrix},\quad 
\widetilde K(z,\zeta) = k_{ \bo}(z, \zeta) \otimes I_d.
$$
One can iterate \eqref{kbgam} to solve for $k_{ \bo}$ in terms of $k_{\bgam}$ as follows:
\begin{align*}
 k_{ \bo}(z, \zeta) &=  k_{ \bgam}(z, \zeta) + 
  \sum_{k=1}^{d} \overline{\zeta_{k}} k_{ \bo}(z, \zeta) 
  z_{k} \\
& = k_{\bgam}(z, \zeta) + \sum_{k=1}^{d} 
\overline{\zeta_{k}} k_{\bgam}(z, \zeta) z_{k} + \sum_{\alpha 
\colon |\alpha| = 2} \overline{\zeta}^{\alpha^{\top}} k_{\bo}(z, \zeta) z^{\alpha} \\
& = \cdots = \sum_{\alpha \colon |\alpha| \le N} 
\overline{\zeta}^{\alpha^{\top}} k_{\bgam}(z, \zeta) 
z^{\alpha} + \sum_{\alpha \colon |\alpha| = N+1} 
\overline{\zeta}^{\alpha^{\top}} k_{\bo}(z, \zeta) 
z^{\alpha}.
 \end{align*}
 Taking a limit as $N \to \infty$ then gives
 \begin{equation}  \label{id1}
     k_{\bo}(z, \zeta) = \sum_{\alpha \in \free} 
     \overline{\zeta}^{\alpha^{\top}} k_{\bgam}(z, \zeta) 
     z^{\alpha}.
 \end{equation}
 If we now recall \eqref{kbo-Koldecom} (with $\bgam$ in place of 
 $\bo$), we can rewrite \eqref{id1} as
 $$
     k_{ \bo}(z, \zeta) = \bPs_{\bgam}(z) \left( k_{\rm 
     nc, Sz}(z, \zeta) \otimes I_{\ell^{2}_{\cY}(\free)} \right) 
     \bPs_{\bgam}(\zeta)^{*},
 $$
which is the expected factorization \eqref{kcm2} for the noncommutative positive contractive kernel
$k_{\bo}$.

\begin{remark}  \label{R:bergfact}
In case $\bo=\bmu_n$ (for $n>1$), the sequence $\bgam$ defined via formulas \eqref{bgam} turns out to be 
$\bgam=\bmu_{n-1}$.
Then the formula \eqref{id1} takes the form 
\begin{equation}
     k_{n}(z, \zeta) = \bPs_{n-1}(z) \left( k_{\rm
     nc, Sz}(z, \zeta) \otimes I_{\ell^{2}_{\cY}(\free)} \right)
     \bPs_{n-1}(\zeta)^{*},
\label{jan4c}
\end{equation}
where $\bPs_{n-1}(z)=\bPs_{\bgam}(z)$ is given by 
$$
\bPs_{n-1}(z) = \sum_{\alpha \in \free} \left({\rm Row}_{\beta \in \free}
[\delta_{\alpha, \beta} (\mu_{n-1,|\alpha|}^{-\frac{1}{2}} \otimes
I_{\cY})] \right) z^{\alpha} \in\cL(\ell^2_{\cY}(\free),\cY)\langle\langle
z\rangle\rangle.
$$  
\end{remark}

The single-variable ($d=1$) case of the next theorem is the main 
result of \cite{BBPAMS2017}.

 \begin{theorem}   \label{T:3.8}
Let $M_{\bPs_{\bgam}} \colon H^2_{\ell^2_{\cY}(\free)}(\free)\to H^2_{\bgam,\cY}(\free)$
be the multiplication operator associated with 
$\bPs_{\bgam}(z)$ defined as in \eqref{powerseries}. 
 Then
\begin{enumerate}
    \item $M_{\bPs_{\bgam}}|_{\ell^{2}_{\cY}(\free)}$ is an isometry 
    from the space of constant functions $\ell^{2}_{\cY}(\free)$ in $H^2_{\ell^2_\cY(\free)}(\free)$ into 
    $H^2_{\bgam, \cY}(\free)$.
    
\item  $M_{\bPs_{\bgam}}$ is a coisometry from 
$H^{2}_{\ell^{2}_{\cY}(\free)}(\free)$ 
onto $H^2_{\bo, \cY}(\free)$.
    
\item  A given formal power series  
$F(z)\in \cL(\cU,\cY)\langle\langle z\rangle\rangle$ is a contractive
multiplier from $H^2_{\cU}(\free)$ to $H^2_{\bo,\cY}(\free)$ if and only if it is of the form
\begin{equation}
F(z)=\bPs_{\bgam}(z)S(z)
\label{jan4d}
\end{equation}
for some contractive multiplier $S(z)$ from $H^2_{\cU}(\free)$ to
$H^2_{\ell^2_{\cY}(\free)}(\free)$. 

\item Any contractive multiplier $S$ from
$H^{2}_{\cU}(\free)$ into $H^{2}_{\ell^{2}_{\cY}(\free)}(\free)$ for
which \eqref{jan4d} holds has the form
$$
S(z)= S_{0}(z)+ \Theta(z) H(z)
$$
where $S_{0}$ is any particular contractive multiplier for which 
\eqref{jan4d} holds, where $\Theta(z) \in \cL(\widetilde \cU, \ell^{2}_{\cY}(\free))\langle
\langle z \rangle \rangle$ is the Beurling-Lax representer
for the $\bS_{1,R}$-invariant subspace ${\rm Ker}\, M_{\Psi_\gamma}
\subset H^{2}_{\ell^{2}_{\cY}(\free)}(\free)$ and
where $H(z)$ is a bounded multiplier from $H^{2}_{\cU}(\free)$ into
$H^{2}_{\widetilde \cU}(\free)$ chosen so that $S_{0} + \Theta H$ is 
still a contractive multiplier.
\end{enumerate}
\end{theorem}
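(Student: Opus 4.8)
The plan is to dispose of the four parts in order, each time reducing to machinery already in place. For part (1) I would argue by a direct computation: by the power-series formula \eqref{powerseries} (with $\bo$ replaced by $\bgam$), a constant function $c=\{c_\alpha\}_{\alpha\in\free}\in\ell^2_\cY(\free)\subset H^2_{\ell^2_\cY(\free)}(\free)$ is carried by $M_{\bPs_\bgam}$ to $\sum_{\alpha\in\free}\gamma_{|\alpha|}^{-1/2}c_\alpha z^\alpha$, whose squared $\bgam$-weighted norm \eqref{18.1} is $\sum_{\alpha\in\free}\gamma_{|\alpha|}\cdot\gamma_{|\alpha|}^{-1}\|c_\alpha\|_\cY^2=\sum_{\alpha\in\free}\|c_\alpha\|_\cY^2=\|c\|_{\ell^2_\cY(\free)}^2$. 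For part (2) I would invoke the factorization \eqref{kcm2} of $k_\bo$ established in the discussion preceding the statement, namely $k_\bo(z,\zeta)I_\cY=\bPs_\bgam(z)(k_{\rm nc, Sz}(z,\zeta)\otimes I_{\ell^2_\cY(\free)})\bPs_\bgam(\zeta)^*$; since $H^2_{\ell^2_\cY(\free)}(\free)=\cH(k_{\rm nc, Sz}\otimes I_{\ell^2_\cY(\free)})$ (Example \ref{E:1.1}) and $H^2_{\bo,\cY}(\free)=\cH(k_\bo I_\cY)$, this says precisely that the kernel $K_{\bPs_\bgam}$ of \eqref{jan1b} vanishes identically, so part (2) of Proposition \ref{P:2.3} shows $M_{\bPs_\bgam}$ is a coisometry from $H^2_{\ell^2_\cY(\free)}(\free)$ onto $H^2_{\bo,\cY}(\free)$.

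For part (3), the ``if'' direction is immediate since $M_{\bPs_\bgam S}=M_{\bPs_\bgam}M_S$ is the composition of the contraction $M_S$ with the coisometry of part (2). For the ``only if'' direction I would start from Proposition \ref{P:2.3}(1): $F$ is a contractive multiplier from $H^2_\cU(\free)$ into $H^2_{\bo,\cY}(\free)=\cH(k_\bo I_\cY)$ if and only if $k_\bo(z,\zeta)I_\cY-F(z)(k_{\rm nc, Sz}(z,\zeta)\otimes I_\cU)F(\zeta)^*$ is a positive kernel. Substituting the factorization of $k_\bo I_\cY$ from part (2) turns this into the statement that the kernel $K_{\bPs_\bgam,F}$ of \eqref{jan3c}, formed with $G=\bPs_\bgam\in\cL(\ell^2_\cY(\free),\cY)\langle\langle z\rangle\rangle$, is positive; the noncommutative Leech theorem (Theorem \ref{T:Leech}) then delivers a contractive multiplier $S$ from $H^2_\cU(\free)$ to $H^2_{\ell^2_\cY(\free)}(\free)$ with $F(z)=\bPs_\bgam(z)S(z)$. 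The only point requiring care is the bookkeeping: in Theorem \ref{T:Leech} the coefficient space $\ell^2_\cY(\free)$ plays the role of the intermediate ``output'' space and $\cY$ that of the common ambient space.

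For part (4) I would first observe that $M_{\bPs_\bgam}$ intertwines the right-shift tuple $\bS_{1,R}$ on $H^2_{\ell^2_\cY(\free)}(\free)$ with $\bS_{\bo,R}$ on $H^2_{\bo,\cY}(\free)$, so $\operatorname{Ker}M_{\bPs_\bgam}$ is a closed $\bS_{1,R}$-invariant subspace of the Fock space $H^2_{\ell^2_\cY(\free)}(\free)$; by the Beurling--Lax Theorem \ref{T:bl4a} it equals $\operatorname{Ran}M_\Theta=\Theta\cdot H^2_{\widetilde\cU}(\free)$ for a strictly inner $\Theta\in\cL(\widetilde\cU,\ell^2_\cY(\free))\langle\langle z\rangle\rangle$, and in particular $\bPs_\bgam(z)\Theta(z)=0$. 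With $S_0$ a fixed solution from part (3), any bounded multiplier $H$ from $H^2_\cU(\free)$ to $H^2_{\widetilde\cU}(\free)$ for which $S_0+\Theta H$ is contractive gives a solution, since $\bPs_\bgam(S_0+\Theta H)=\bPs_\bgam S_0=F$. Conversely, given a contractive multiplier $S$ with $\bPs_\bgam S=F=\bPs_\bgam S_0$, the operator $M_{S-S_0}$ maps $H^2_\cU(\free)$ into $\operatorname{Ker}M_{\bPs_\bgam}=\operatorname{Ran}M_\Theta$; since $M_\Theta$ is isometric, $X:=M_\Theta^*M_{S-S_0}$ satisfies $M_\Theta X=M_{S-S_0}$ and $\|X\|\le 2$, and because $M_\Theta$ and $M_{S-S_0}$ intertwine the relevant right-shift tuples and $M_\Theta$ is injective, $X$ intertwines $\bS_{1,R}$ on $H^2_\cU(\free)$ with $\bS_{1,R}$ on $H^2_{\widetilde\cU}(\free)$. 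Hence $X=M_H$ for a bounded multiplier $H\in\cL(\cU,\widetilde\cU)\langle\langle z\rangle\rangle$ (the shift-intertwining characterization, see \cite{Popescu-multi}), so $S=S_0+\Theta H$ with $S_0+\Theta H$ contractive, as claimed.

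I expect no genuinely new argument to be required: the substance is imported from Proposition \ref{P:2.3}, Theorem \ref{T:Leech}, Theorem \ref{T:bl4a}, and the factorization \eqref{kcm2} of $k_\bo$. The main obstacle is organizational---lining up the spaces and kernels precisely in the Leech-theorem step of part (3), and carrying out the routine verification in part (4) that $M_\Theta^*M_{S-S_0}$ is again a multiplication operator.
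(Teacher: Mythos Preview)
Your proposal is correct and follows essentially the same route as the paper: part (1) by direct computation, part (2) via the kernel factorization of $k_\bo$ and Proposition \ref{P:2.3}(2), part (3) via Proposition \ref{P:2.3}(1) and the Leech theorem, and part (4) via the Beurling--Lax representation of $\operatorname{Ker} M_{\bPs_\bgam}$. The only differences are cosmetic---you spell out the ``if'' direction of (3) by composition rather than folding it into the Leech equivalence, and in (4) you make explicit the intertwining argument showing $M_\Theta^* M_{S-S_0}$ is a multiplication operator, which the paper asserts without detail.
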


\begin{proof}  For $\by = \{ y_{\alpha} \}_{\alpha \in 
    \free}$ in $\ell^{2}_{\cY}(\free)$, note that
 $$
 \| \bPs_{\bgam}(z) \by \|^{2}_{H^2_{\bgam, \cY}(\free)}  = 
 \sum_{\alpha \in \free} \bgam_{|\alpha|} \| \bgam_{\alpha|}^{-\frac{1}{2}} y_{\alpha} \|_{\cY}^{2} = \| \by 
 \|^{2}_{\ell^{2}_{\cY}(\free)},
 $$
 and statement (1) follows.  Statement (2) is an immediate consequence of the identity \eqref{id1} 
combined with 
 part (2) of Proposition \ref{P:2.3}.

\smallskip
     
 By Proposition \ref{P:2.3}, $F$ is a contractive multiplier from
$H^2_{\cU}(\free)$ to $H^2_{\bo,\cY}(\free)$ if and only if the formal kernel
$$
K_F(z,\zeta):=k_{\bo}(z,\zeta)\otimes I_{\cY}-F(z)(k_{\rm nc,
Sz}(z,\zeta)\otimes I_{\cU})F(\zeta)^*
$$
is positive. Due to \eqref{id1} we have
\begin{align*}
K_F(z,\zeta):=&\bPs_{\bgam}(z)\big(k_{\text{nc,
Sz}}(z,\zeta)\otimes I_{\ell^2_{\cY}(\free)}\big)\bPs_{\bgam}(\zeta)^*\\ 
&-F(z)(k_{\rm nc, Sz}(z,\zeta)\otimes I_{\cU})F(\zeta)^*,
\end{align*}
and by Theorem \ref{T:Leech}, the latter kernel is positive if and only if $F$ admits
the representation \eqref{jan4d} for some contractive multiplier $S(z)$ from
$H^2_{\cU}(\free)$ to $H^2_{\ell^2_{\cY}(\free)}(\free)$. Statement (3) now follows.

\smallskip

Suppose that $S_{0}$ is any particular contractive multiplier 
satisfying \eqref{jan4d} and $S$ is another such multiplier. Then
$M_{\bPs_{\bgam}}M_{S - S_{0}} = M_{\bPs_{\bgam}} (M_{S} - M_{S_{0}}) = 0$, 
i.e., ${\rm Ran}\, M_{S-S_{0}} \subset {\rm Ker}\, M_{\bPs_{\bgam}} = 
M_{\Theta} H^{2}_{\widetilde \cU}$.  It follows that $S - S_{0}$ has 
a factorization $S - S_{0} = \Theta H$ where $H$ is a multiplier from 
$H^{2}_{\widetilde \cU}(\free)$ to $H^{2}_{\cU}(\free)$ such that
$S_{0} + \Theta H$ is still a contractive multiplier.  As the 
converse direction is clear,  this completes the proof of statement 
(4).
\end{proof}

\begin{proposition}  \label{P:HardyBergmancoiso}
A contractive multiplier $F(z)$ from $H^2_{\cU}(\free)$ to $H^2_{\bo,\cY}(\free)$ is coisometric 
if and only if it is of the form $F(z)=\bPs_{\bgam}(z)$
for some coisometric operator $W\in\cL(\cU,\ell^2_{\cY}(\free))$.
\end{proposition}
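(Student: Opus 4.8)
The plan is to characterize coisometric contractive multipliers $F\colon H^2_\cU(\free)\to H^2_{\bo,\cY}(\free)$ by combining the factorization $F = M_{\bPs_{\bgam}} S$ from Theorem \ref{T:3.8}(3) with the fact (Theorem \ref{T:3.8}(2)) that $M_{\bPs_{\bgam}}$ is itself a coisometry. First I would observe that if $F = \bPs_{\bgam}W$ with $W\in\cL(\cU,\ell^2_\cY(\free))$ coisometric, then $M_W$ is a coisometric multiplier from $H^2_\cU(\free)$ onto $H^2_{\ell^2_\cY(\free)}(\free)$ (a coisometric \emph{constant} multiplier, which is genuinely coisometric by the computation $M_WM_W^* = M_{WW^*} = I$ acting coefficientwise), and hence $M_F = M_{\bPs_{\bgam}}M_W$ is a composition of two coisometries, hence coisometric. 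This gives the ``if'' direction.

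For the ``only if'' direction, suppose $F$ is a coisometric multiplier. By part (2) of Proposition \ref{P:2.3}, coisometry of $M_F$ is equivalent to the kernel identity
\begin{equation*}
  k_\bo(z,\zeta)\otimes I_\cY = F(z)\bigl(k_{\rm nc, Sz}(z,\zeta)\otimes I_\cU\bigr)F(\zeta)^*.
\end{equation*}
On the other hand, by \eqref{id1} rewritten via \eqref{kbo-Koldecom} we have the Kolmogorov-type factorization
\begin{equation*}
  k_\bo(z,\zeta)\otimes I_\cY = \bPs_{\bgam}(z)\bigl(k_{\rm nc, Sz}(z,\zeta)\otimes I_{\ell^2_\cY(\free)}\bigr)\bPs_{\bgam}(\zeta)^*.
\end{equation*}
Equating the two right-hand sides puts us exactly in the situation of Proposition \ref{P:leech} (the degenerate case of the Leech theorem) with $G = \bPs_{\bgam}$ and the given $F$: the kernel $K_{G,F}$ of \eqref{jan3c} vanishes, so there is a partial isometry $W\in\cL(\cU,\ell^2_\cY(\free))$ with $F(z) = \bPs_{\bgam}(z)W$ and $\bPs_{\bgam}(z) = F(z)W^*$.

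It then remains to upgrade $W$ from a partial isometry to a coisometry. Here I would use the second relation $\bPs_{\bgam}(z) = F(z)W^*$: reading off coefficients (or applying $M_{(\cdot)}$) gives $M_{\bPs_{\bgam}} = M_F M_{W^*}$. Since $M_{\bPs_{\bgam}}$ is onto $H^2_{\bo,\cY}(\free)$ (Theorem \ref{T:3.8}(2)) and factors through $M_{W^*}$, the operator $M_{W^*}$ — equivalently $W^*\colon \ell^2_\cY(\free)\to\cU$ acting coefficientwise — must have range dense enough that $\operatorname{Ran} W^*$ together with the partial-isometry relations forces $WW^* = I_{\ell^2_\cY(\free)}$; concretely, $W W^*$ is the projection onto $\operatorname{Ran} W = (\ker W^*)^\perp$, and if this projection were proper then $M_{\bPs_{\bgam}} = M_F M_{W^*}$ would fail to be surjective onto $H^2_{\bo,\cY}(\free)$ because $M_F\operatorname{Ran} M_{W^*} = M_F M_{WW^*}\operatorname{Ran}\subsetneq \operatorname{Ran} M_{\bPs_{\bgam}}$ — contradicting part (2) of Theorem \ref{T:3.8}. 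Hence $W$ is coisometric, which is the desired conclusion. The main obstacle I anticipate is making this last surjectivity argument airtight: one must be careful that $W$ is a genuine Hilbert-space operator between the coefficient spaces (a \emph{constant} multiplier) and that the relation $\bPs_{\bgam} = F W^*$ together with surjectivity of $M_{\bPs_{\bgam}}$ really does eliminate the kernel of $W^*$; an alternative, perhaps cleaner, route is to compute $F F^* $ directly at the multiplier level, use $M_F M_F^* = I$ and $M_F = M_{\bPs_{\bgam}} M_W$ plus $M_{\bPs_{\bgam}} M_{\bPs_{\bgam}}^* = I$ to deduce $M_{\bPs_{\bgam}}(I - M_W M_W^*) M_{\bPs_{\bgam}}^* = 0$, and then peel off the (injective-on-the-relevant-subspace) factor $M_{\bPs_{\bgam}}^*$.
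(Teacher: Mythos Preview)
Your overall strategy matches the paper's: for the ``only if'' direction, reduce to the kernel identity via Proposition~\ref{P:2.3}, invoke Proposition~\ref{P:leech} to get a partial isometry $W$ with $F=\bPs_{\bgam}W$ and $\bPs_{\bgam}=FW^{*}$, then upgrade $W$ to a coisometry. The difficulty is entirely in that last upgrade, and here both of your proposed routes have problems.

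Your surjectivity argument (approach 1) does not work. From $M_{\bPs_{\bgam}}=M_{F}M_{W^{*}}$ and the surjectivity of $M_{\bPs_{\bgam}}$ you want to conclude that $M_{W^{*}}$ must have full range, hence $WW^{*}=I$. But $M_{F}$ is itself coisometric, hence surjective onto $H^{2}_{\bo,\cY}(\free)$; a surjective operator can map a proper subspace onto the whole target, so nothing prevents $M_{W^{*}}$ from having proper range while $M_{F}M_{W^{*}}$ is still onto. This is a genuine gap.

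Your alternative (approach 2) is closer but the hint is misleading. From $M_{\bPs_{\bgam}}(I-M_{W}M_{W}^{*})M_{\bPs_{\bgam}}^{*}=0$ you propose to ``peel off'' the injective factor $M_{\bPs_{\bgam}}^{*}$. But injectivity of $M_{\bPs_{\bgam}}^{*}$ only lets you restrict to its range, which is $(\ker M_{\bPs_{\bgam}})^{\perp}$---a proper subspace---so you cannot conclude $M_{\bPs_{\bgam}}(I-M_{W}M_{W}^{*})=0$ on the whole space. What actually works is this: since $I-M_{W}M_{W}^{*}=M_{I-WW^{*}}\succeq 0$, the identity forces $M_{\bPs_{\bgam}}M_{(I-WW^{*})^{1/2}}=0$; now evaluate on \emph{constants} $c\in\ell^{2}_{\cY}(\free)$, where $\bPs_{\bgam}(z)(I-WW^{*})^{1/2}c=\sum_{\beta}\gamma_{|\beta|}^{-1/2}[(I-WW^{*})^{1/2}c]_{\beta}\,z^{\beta}$ vanishes as a power series only if $(I-WW^{*})^{1/2}c=0$. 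This uses the injectivity of $\bPs_{\bgam}$ on constants (Theorem~\ref{T:3.8}(1)), not the injectivity of $M_{\bPs_{\bgam}}^{*}$.

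The paper avoids all of this by reading off the coefficient relations directly: from $F=\bPs_{\bgam}W$ one gets $F_{\alpha}=\gamma_{|\alpha|}^{-1/2}W_{\alpha}$, and from $\bPs_{\bgam}=FW^{*}$ one gets $F_{\alpha}W_{\beta}^{*}=\delta_{\alpha\beta}\gamma_{|\alpha|}^{-1/2}I_{\cY}$; combining these yields $W_{\alpha}W_{\beta}^{*}=\delta_{\alpha\beta}I_{\cY}$, i.e.\ $WW^{*}=I_{\ell^{2}_{\cY}(\free)}$. This coefficient computation is shorter and sidesteps the operator-theoretic subtleties you are wrestling with.
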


\begin{proof}
By Proposition \ref{P:2.3}, $F(z)$ is a coisometric multiplier from 
$H^{2}_{\cU}(\free)$ to $H^2_{\bo,\cY}(\free)$ if and only if the 
associated kernel $K_{F}$ vanishes:
$$
 K_{F}(z, \zeta): = (k_{{\rm nc}, \bo}(z, \zeta) \otimes I_{\cY}) - 
 F(z) (k_{{\rm nc, Sz}}(z, \zeta) \otimes I_{\cU}) F(\zeta)^{*} = 0,
$$
or, equivalently by \eqref{id1},
\begin{equation}  \label{coiso-criterion}
\bPs_{\bgam}(z) \left( k_{\rm nc, Sz}(z, \zeta) \otimes 
I_{\ell^{2}_{\cY}(\free)} \right) \bPs_{\bgam}(\zeta)^{*}   = F(z) \left( k_{\rm nc, Sz}(z, 
\zeta) \otimes I_{\cU}\right) F(\zeta)^{*}.
\end{equation}

If $F(z) = \bPs_{\bgam}(z) W$ with $W W^{*} = 
I_{\ell^{2}_{\cY}(\free)}$, use the fact that $k_{\rm nc, Sz}(z, 
\zeta)$ has scalar coefficients to get
$$
W \left( k_{\rm nc, Sz}(z, \zeta) \otimes I_{\cU}\right) W^{*} =
\left( k_{\rm nc, Sz}(z, \zeta) \otimes I_{\ell^{2}_{\cY}(\free)} 
\right) W W^{*} =  k_{\rm nc, Sz}(z, \zeta) \otimes I_{\ell^{2}_{\cY}(\free)}
$$
to arrive at \eqref{coiso-criterion} and the sufficiency direction in 
Proposition \ref{P:HardyBergmancoiso} follows. 

\smallskip

To verify the necessity direction, assume that 
\eqref{coiso-criterion} holds. Use Proposition \ref{P:leech} to 
deduce that there is a partial isometry $W\in\cL(\cU,\ell^2_{\cY}(\free))$
such that 
\begin{equation}
F(z)=\bPs_{\bgam}(z)W\quad\mbox{and}\quad \bPs_{\bgam}(z) = F(z)W^*.
\label{march8c}
\end{equation}
It remains to show that $W \colon \cU\to \ell^2_{\cY}(\free)$ is a coisometry.
To this end, we compare the coefficients of $z^\alpha$ in \eqref{march8c} to get
$$
F_\alpha=\bgam_{|\alpha|}^{-\frac{1}{2}}W_\alpha,\quad
F_\alpha W_\alpha^*=\bgam_{|\alpha|}^{-\frac{1}{2}}I_{\cY},\quad
F_\alpha W_\beta^*=0 \; ; (\alpha\neq \beta).
$$
Therefore, 
$$
W_\alpha W_\alpha^*=\mu_{n-1,|\alpha|}^{\frac{1}{2}}F_\alpha W_\alpha^*=I_{\cY},\quad
W_\alpha W_\beta^*=\mu_{n-1,|\alpha|}^{\frac{1}{2}}F_\alpha W_\beta^*=0 \; \; (\beta\neq \alpha),
$$
so that $W=\{W_\alpha\}_{\alpha\in\free}$ is indeed a coisometry from $\cU$ to 
$\ell^2_{\cY}(\free)$.  
\end{proof}

\begin{proposition} 
There are no isometric multipliers from $H^2_{\cU}(\free)$ to $H^2_{\bo,\cY}(\free)$ for any strictly decreasing
admissible $\bo$.
\label{P:inn}
\end{proposition}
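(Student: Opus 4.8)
The plan is to argue by contradiction using the Kolmogorov-decomposition machinery and the structure of $\bgam$ developed immediately above. Suppose $F$ were an isometric multiplier from $H^2_\cU(\free)$ to $H^2_{\bo,\cY}(\free)$. By Proposition \ref{P:2.3}(1), $M_F$ being isometric is equivalent to the kernel identity
\begin{equation*}
k_{\bo}(z,\zeta)\otimes I_\cY = F(z)\bigl(k_{\rm nc, Sz}(z,\zeta)\otimes I_\cU\bigr)F(\zeta)^*,
\end{equation*}
i.e.\ $K_F(z,\zeta)=0$, \emph{plus} the additional condition that $M_F$ be injective (equivalently $M_F^*$ surjective). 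By Theorem \ref{T:3.8}(3), the kernel identity alone forces a factorization $F(z)=\bPs_{\bgam}(z)S(z)$ for some contractive multiplier $S$ from $H^2_\cU(\free)$ to $H^2_{\ell^2_\cY(\free)}(\free)$. The key point I would extract is that $M_F = M_{\bPs_{\bgam}} M_S$, and then I would exploit the \emph{wandering-subspace} information in Theorem \ref{T:3.8}(1): $M_{\bPs_{\bgam}}$ restricted to the constants $\ell^2_\cY(\free)$ is isometric onto a subspace of $H^2_{\bgam,\cY}(\free)$, but $M_{\bPs_{\bgam}}$ as a whole is a coisometry with a nontrivial kernel (for strictly decreasing $\bo$ the spaces $H^2_{\ell^2_\cY(\free)}(\free)$ and $H^2_{\bo,\cY}(\free)$ genuinely differ in ``size'').

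The cleanest route I would try first is a direct norm computation on the constant functions. Take $u\in\cU$ and feed it in as the degree-zero coefficient; then $F u = \sum_{\alpha\in\free} F_\alpha u\, z^\alpha$ and isometry of $M_F$ gives
\begin{equation*}
\|u\|_\cU^2 = \|Fu\|^2_{H^2_{\bo,\cY}(\free)} = \sum_{\alpha\in\free}\omega_{|\alpha|}\,\|F_\alpha u\|_\cY^2.
\end{equation*}
Now apply the same to $S_{1,R,j} u = u\,z_j$, i.e.\ to $z_j\cdot(\text{constant }u)$: since $M_F$ is isometric on all of $H^2_\cU(\free)$ and $S_{1,R,j}$ is isometric, $\|F\cdot(z_j u)\|^2 = \|u\|_\cU^2$ as well, and $F(z)z_j u = \sum_\alpha F_\alpha u\, z^\alpha z_j$, which reindexes to $\sum_{\beta = \alpha j} F_{\beta j^{-1}} u\, z^\beta$, with length shifted up by one. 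Comparing the two expansions,
\begin{equation*}
\sum_{\alpha\in\free}\omega_{|\alpha|}\,\|F_\alpha u\|^2 = \|u\|^2 = \sum_{\alpha\in\free}\omega_{|\alpha|+1}\,\|F_\alpha u\|^2.
\end{equation*}
Since $\omega_{|\alpha|} > \omega_{|\alpha|+1}$ strictly for every $\alpha$ (strict decrease), subtracting forces $\sum_\alpha(\omega_{|\alpha|}-\omega_{|\alpha|+1})\|F_\alpha u\|^2 = 0$, hence $F_\alpha u = 0$ for all $\alpha$, hence $F u = 0$. But then $\|u\|_\cU^2 = \|Fu\|^2 = 0$, so $u=0$; thus $\cU=\{0\}$, contradicting the existence of a nonzero isometric multiplier (an isometry from the zero space is vacuous and not usefully ``isometric'' in the sense of Definition \ref{D:mult}, which presupposes a nonzero domain). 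More carefully, one should phrase the conclusion as: the only ``isometric multiplier'' is the trivial one on $\cU=\{0\}$, so there are no isometric multipliers.

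The main obstacle, and the step I would be most careful about, is the reindexing identity $F(z)\cdot z_j u = \sum_\beta F_{\beta j^{-1}} u\, z^\beta$ together with justifying that squared norms add over the disjoint collection of monomials (which is immediate in $H^2_{\bo,\cY}(\free)$ since it is an orthogonally-weighted $\ell^2$-type space, with the orthogonality of distinct monomials built into \eqref{18.1}). One subtlety: the sets $\{\alpha j : \alpha\in\free\}$ for $j=1,\dots,d$ are disjoint and their union is $\free\setminus\{\emptyset\}$, so $\sum_{j=1}^d\|F(z)z_j u\|^2 = \sum_{\beta\neq\emptyset}\omega_{|\beta|}\|F_{\beta j^{-1}} u\|^2 = \sum_\alpha \omega_{|\alpha|+1}\|F_\alpha u\|^2$; I would actually run the argument with a single fixed $j$ to keep it simplest, as written above, which already suffices. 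An alternative, more structural proof would invoke Proposition \ref{P:HardyBergmancoiso}'s circle of ideas—an isometric multiplier would in particular be a McCT-inner multiplier whose kernel $K_F$ vanishes, forcing via Theorem \ref{T:Leech} and Theorem \ref{T:3.8} a factorization through $\bPs_{\bgam}$, whose own non-injectivity (coisometry with nontrivial kernel, by part (2) of Theorem \ref{T:3.8}) obstructs $M_F$ from being injective—but the direct norm computation is shorter and self-contained, so that is what I would present.
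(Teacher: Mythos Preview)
Your proof is correct and matches the paper's argument essentially line for line: compute $\|Fu\|^2=\sum_\alpha\omega_{|\alpha|}\|F_\alpha u\|^2$, compute $\|F\cdot z_j u\|^2=\sum_\alpha\omega_{|\alpha|+1}\|F_\alpha u\|^2$, subtract, and use strict monotonicity of $\bo$ to force $F_\alpha u=0$ for all $\alpha$, contradicting the isometry. The opening paragraph about $\bPs_{\bgam}$ and the structural factorization is unnecessary scaffolding---you never actually use it---so you could drop it and present only the direct norm computation, which is exactly what the paper does.
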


\begin{proof}
Let $F(z)=\sum_{\alpha\in\free}F_\alpha z^\alpha$ be an isometric multiplier from $H^2_{\cU}(\free)$ to
$H^2_{\bo,\cY}(\free)$. Then in particular,
\begin{equation}
\|Fu\|^2_{H^2_{\bo,\cY}(\free)}=\sum_{\alpha\in\free}\omega_{|\alpha|}\|F_\alpha 
u\|_{\cY}^2=\|u\|^2_{\cU}
\quad\mbox{for all}\quad u\in\cU.
\label{march8e}
\end{equation}
On the other hand, for the polynomial $p(z)=z_1 u$, we have $\|p\|_{H^2_{\cU}}=\|u\|_{\cU}$ and 
therefore,
\begin{equation}
\|Fp\|^2_{H^2_{\bo,\cY}(\free)}=\sum_{\alpha\in\free}\omega_{|\alpha|+1}\|F_\alpha u\|_{\cY}^2=
\|p\|_{H^2_{\cU}}=\|u\|^2_{\cU}.
\label{march8d}
\end{equation}
We then conclude from \eqref{march8e} and \eqref{march8d} that 
$$
0=\sum_{\alpha\in\free}(\omega_{|\alpha|}-\omega_{|\alpha|+1})\|F_\alpha u\|^2_{\cY},
$$
which is possible (since by assumption $\omega_{|\alpha|}>\omega_{|\alpha|+1}$ for all $\alpha\in\free$)
only if $F_\alpha=0$ for all $\alpha\in\free$. The latter contradicts \eqref{march8e}.
\end{proof}

In the sequel we shall be interested in more general contractive 
multipliers $M_{\Theta}$ from $H^2_{\bo', \cU}$ to $H^2_{\bo, \cY}$; note 
that Propositions \ref{P:HardyBergmancoiso} and \ref{P:inn} are 
concerned with the case $\omega'_n = 1$.  The following characterization of such 
contractive multiplication operators $M_{\Theta}$ among all operators 
between two weighted Bergman spaces, well known for the classical Hardy-space case 
where $d=1$, $\omega'_k = \omega_k = 1$ for all $k$ (see e.g.\  \cite{rosenrov}), will be useful in the sequel.
Note also that for the case of a general $d$ with $\omega'_k = \omega_k = 1$ for all $k$, the result can be seen as 
the special case of Theorem \ref{T:CLT} where $\cM = H^2_\cY(\free)$ and $\cN = H^2_\cU(\free)$.

\begin{proposition}   \label{P:RR}  Suppose that $G$ is an operator 
    from $H^2_{\bo', \cU}(\free)$ to $H^2_{\bo, \cY}(\free)$.  Then $G$ has 
    the form $G = M_{\Theta}$ for a contractive multiplier $\Theta$ 
    between $H^2_{\bo',\cU}(\free)$ and $H^2_{\bo, \cY}(\free)$ if and only if $\| G \| 
    \le 1$ and $G$ satisfies the intertwining conditions:
 \begin{equation}   \label{intertwine5}
     G S_{\bo', R, j} = S_{\bo,R,j}G \quad  \text{ for } j=1, \dots, d.
\end{equation}
\end{proposition}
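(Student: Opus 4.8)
The plan is to prove the two implications separately. For the easy direction, suppose $G = M_\Theta$ is a contractive multiplication operator with $\Theta \in \cL(\cU, \cY)\langle\langle z\rangle\rangle$. Then $\| G \| \le 1$ by hypothesis, and the intertwining conditions \eqref{intertwine5} follow from a direct computation with formal power series: for $f \in H^2_{\bo',\cU}(\free)$ we have $M_\Theta(S_{\bo',R,j}f) = \Theta(z) \cdot (f(z) z_j) = (\Theta(z) f(z)) z_j = S_{\bo,R,j}(M_\Theta f)$, using only associativity of the formal multiplication and the fact that both $S_{\bo',R,j}$ and $S_{\bo,R,j}$ are right multiplication by $z_j$ on the respective spaces. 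This requires no metric hypotheses at all.

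For the converse, suppose $\| G \| \le 1$ and $G$ satisfies \eqref{intertwine5}. First I would identify the formal power series $\Theta$ that should represent $G$. Apply $G$ to the constant function $u \in \cU \subset H^2_{\bo',\cU}(\free)$ (legitimate since $\omega'_0 = 1$ so constants embed isometrically) and set $\Theta(z) u := (Gu)(z) = \sum_{\alpha \in \free} (G_\alpha u) z^\alpha$, where $G_\alpha \colon \cU \to \cY$ is $G_\alpha u = \Phi_\alpha(Gu)$ with $\Phi_\alpha$ the coefficient-evaluation functional; this defines $\Theta = \sum_{\alpha} G_\alpha z^\alpha \in \cL(\cU,\cY)\langle\langle z\rangle\rangle$. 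The next step is to show $G = M_\Theta$ on all of $H^2_{\bo',\cU}(\free)$. Since the monomials $\{z^\beta u : \beta \in \free, u \in \cU\}$ span a dense subspace, and since $z^\beta u = S_{\bo',R}^\beta u$, the intertwining relation gives $G(z^\beta u) = G(S_{\bo',R}^\beta u) = S_{\bo,R}^\beta(Gu) = (\Theta(z)u) z^\beta$. On the other hand $M_\Theta(z^\beta u) = \Theta(z)(z^\beta u)$. These agree because $\Theta(z) u$ has coefficients only to build up $z^\alpha$-monomials and right multiplication by $z^\beta$ is exactly what $M_\Theta$ does: more carefully, $M_\Theta(z^\beta u) = \sum_\alpha (G_\alpha u) z^\alpha z^\beta = (\Theta(z) u) z^\beta$, matching. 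By continuity of $G$ and of $M_\Theta$ (once we know $M_\Theta$ is bounded) on the dense span, $G = M_\Theta$ everywhere, hence $M_\Theta = G$ is bounded with $\| M_\Theta \| = \| G \| \le 1$, so $\Theta$ is a contractive multiplier.

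The one subtlety I expect to be the main obstacle is the interchange between ``$M_\Theta$ is a well-defined bounded operator'' and ``$G = M_\Theta$ on the dense span.'' The argument above shows $G$ and $M_\Theta$ agree on monomials $z^\beta u$, but a priori $M_\Theta$ is only a formal-multiplication map, not obviously bounded. The clean way around this is to observe that the identity $G(z^\beta u) = (\Theta(z)u)z^\beta$ already shows that the formal product $\Theta(z) \cdot p(z)$ lies in $H^2_{\bo,\cY}(\free)$ for every $\cU$-valued noncommutative polynomial $p$, with $\| \Theta p \|_{H^2_{\bo,\cY}(\free)} = \| G p \|_{H^2_{\bo',\cU}(\free)} \le \| p \|_{H^2_{\bo',\cU}(\free)}$ by linearity and boundedness of $G$; then $M_\Theta$ extends by continuity from the dense set of polynomials to a contraction on $H^2_{\bo',\cU}(\free)$ which necessarily coincides with $G$. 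Thus $\Theta$ is a contractive multiplier and $G = M_\Theta$, completing the proof. (Note this mirrors, and slightly generalizes, the Fock-space case $\omega'_k = \omega_k = 1$ recorded before the statement, and in that case also follows from Theorem~\ref{T:CLT} with $\cM = H^2_\cY(\free)$, $\cN = H^2_\cU(\free)$.)
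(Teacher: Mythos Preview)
Your proof is correct and follows essentially the same approach as the paper's: define $\Theta(z)u := (Gu)(z)$ on constants, use the intertwining to extend $G = M_\Theta$ to monomials and hence to polynomials by linearity, then invoke density and the boundedness of $G$ to conclude. You give a bit more detail than the paper on the ``approximation'' step (observing that $M_\Theta$ is a priori only formal and explaining why agreement with $G$ on polynomials forces boundedness), which is a helpful clarification but not a different idea. One cosmetic point: with the paper's conventions $\bS_{\bo',R}^\beta u = u\, z^{\beta^\top}$ rather than $u\, z^\beta$, so your identification ``$z^\beta u = S_{\bo',R}^\beta u$'' has a transpose slip; this does not affect the argument since iterating \eqref{intertwine5} gives $G\,\bS_{\bo',R}^\gamma = \bS_{\bo,R}^\gamma\, G$ for every word $\gamma$, which is all you use.
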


\begin{proof}  If $G = M_{\Theta}$, then the intertwining conditions  \eqref{intertwine5} hold since $M_{\Theta}$ is a 
left-multiplication operator while each $S_{\bo',R,j}$ and $S_{\bo,R,j}$ is a right multiplication operator. 
   
Conversely, suppose that the operator $G: \, H^2_{\bo', \cU}(\free)\to  H^2_{\bo, \cY}(\free)$ 
satisfies the intertwining conditions \eqref{intertwine5}.  Define a power series 
$\Theta(z) \in \cL(\cU, \cY)\langle  \langle z \rangle \rangle$ by
 $$
 \Theta(z) u = (G u)(z)
 $$
 for $u \in \cU$,
 where here we view $u = u \cdot z^{\emptyset}$ as an element of 
 $H^2_{\bo', \cU}(\free)$.  From the intertwining conditions 
 \eqref{intertwine5}, we deduce that 
 $$
 (Gp)(z) = \Theta(z) \cdot p(z) = (M_{\Theta} p )(z)
 $$
 for any noncommutative polynomial $p \in \cU\langle z \rangle$.  
 Thus $G$ is a multiplication operator when restricted to the dense 
 set of polynomials in $H^2_{\bo', \cU}(\free)$.  By assumption $G$ is bounded; 
 an approximation argument then gives us that $\Theta$ is a 
 multiplier (contractive multiplier if $\| G \| \le 1$) and 
 necessarily $G = M_{\Theta}$ as an operator from $H^2_{\bo', \cU}(\free)$ to 
 $H^2_{\bo, \cY}(\free)$.
    \end{proof}

\section{$H^2_{\bo, \cY}(\free)$-Bergman-inner multipliers}  \label{S:Berg-inner-mult}

\begin{definition}
We say that a formal power series $\Theta \in \cL(\cU, \cY)\langle \langle z \rangle \rangle$ is a
{\em $H^2_{\bo, \cY}(\free)$-Bergman inner multiplier} if 
\begin{enumerate}
\item $\|\Theta u\|_{H^2_{\bo,\cY}(\free)}=\|u\|_{\cU}\; $ for all $\; u\in\cU$, and
\item $\Theta \cdot u \perp \Theta\cdot \bS_{1, R}^{\alpha} v\; $ in $\; H^2_{\bo, \cY}(\free)\; $ for all $\; u,v\in\cU$ and 
all nonempty $\alpha \in \free$.  
\end{enumerate} 
\label{D:defin}
\end{definition}

The next result says  in particular that $H^2_{\bo, \cY}(\free)$-Bergman-inner multipliers are automatically also
contractive multipliers.  For the case of single-variable weighted Bergman spaces, this result goes back to 
\text{Richter-et-al?}.  More generally, this result has been obtained for the commutative setting by 
Bhattacharjee-Eschmeier-Keshari-Sarkar \cite[Theorem 6.2]{BEKS} and for the general setting by Popescu 
\cite[Theorem 4.2]{PopescuMANN} by different proofs which do not lead to the estimate \eqref{march13}.

\begin{theorem}  \label{T:12.2}
Let $\Theta\in\cL(\cU,\cY)\langle\langle z\rangle\rangle$ be such that 
\begin{enumerate}
\item $\|M_{\Theta} u\|_{H^2_{\bo,\cY}(\free)}\le\|u\|_{\cU}\; $ for all $\; u\in\cU$, and
\item $M_{\Theta} u\perp M_{\Theta} {\bf S}_1^\alpha v$ for all $u,v\in\cU$ 
and all nonempty $\alpha\in\free$.
\end{enumerate}
Then $\Theta$ is a contractive multiplier from $H^2_\cU(\free)$ to $H^2_{\bo, \cY}(\free)$, as shown
by the following general estimate:
\begin{align}
\|M_{\Theta} f\|_{H^2_{\bo,\cY}(\free)}^2\le &
\|f\|^2_{H^2_{\cU}(\free)}\label{march13}\\
&-\sum_{\alpha\in\free}\sum_{j=1}^d
\left\|(I-S_{\bo,R,j}^*S_{\bo,R,j})^{\frac{1}{2}}
M_{\Theta}({\bf S}_{1,R}^{*})^{j\alpha^\top}f
\right\|^2_{H^2_{\bo,\cY}(\free)}\notag
\end{align}
for all $f\in H^2_{\cU}(\free)$. Moreover, if $\|M_{\Theta} u\|_{H^2_{\bo,\cY}(\free)}=\|u\|_{\cU}$ for 
all $u\in\cU$ (so $\Theta$ is $H^2_{\bo, \cY}(\free)$-Bergman-inner), then 
\eqref{march13} holds with equality for all $f\in H^2_{\cU}(\free)$.
\end{theorem}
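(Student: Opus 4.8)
The plan is to establish the key identity
\[
\|M_\Theta f\|^2_{H^2_{\bo,\cY}(\free)} = \|f\|^2_{H^2_\cU(\free)} - \sum_{\alpha\in\free}\sum_{j=1}^d \left\|(I - S_{\bo,R,j}^* S_{\bo,R,j})^{\frac12} M_\Theta (\bS_{1,R}^*)^{j\alpha^\top} f\right\|^2_{H^2_{\bo,\cY}(\free)}
\]
first for noncommutative polynomials $f$, and then pass to the limit. First I would record the orthogonality consequences of hypotheses (1)--(2): applying (2) together with the fact that each $S_{1,R,j}$ is an isometry with mutually orthogonal ranges, one gets (as in the proof of Lemma~\ref{L:in}) that $M_\Theta \bS_{1,R}^\alpha u \perp M_\Theta \bS_{1,R}^\beta v$ whenever $\alpha \ne \beta$ and $|\alpha| = |\beta|$, and more generally that the images under $M_\Theta$ of the ``layers'' $\{\bS_{1,R}^\alpha u : |\alpha| = k\}$ behave well. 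The cleanest route is to decompose $f \in H^2_\cU(\free)$ along the grading $f = \sum_{j=1}^d S_{1,R,j}(\bS_{1,R}^*)^j f + E^* E f$ where $E$ is the empty-word evaluation and $\sum_j S_{1,R,j} S_{1,R,j}^* + E^*E = I$, and iterate.

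The core computation I would carry out is a one-step telescoping identity. Using the intertwining $M_\Theta S_{1,R,j} = S_{\bo,R,j} M_\Theta$ (valid since $\Theta$ is a priori only a formal multiplier acting on polynomials, but these power-series identities hold coefficient-wise), write for a polynomial $f$,
\[
\|M_\Theta f\|^2_{H^2_{\bo,\cY}} = \Big\| M_\Theta E^* E f + \sum_{j=1}^d S_{\bo,R,j} M_\Theta (\bS_{1,R}^*)^j f \Big\|^2_{H^2_{\bo,\cY}}.
\]
The orthogonality from hypothesis (2) kills the cross terms between the $j=\emptyset$ part and the $S_{\bo,R,j}$-parts, and also (after one checks it) the cross terms between distinct $S_{\bo,R,j}$ and $S_{\bo,R,k}$ ranges — here one uses that $S_{\bo,R,j}$ maps onto the span of words beginning with $j$, which are mutually orthogonal. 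This yields
\[
\|M_\Theta f\|^2_{H^2_{\bo,\cY}} = \|M_\Theta E^* E f\|^2_{H^2_{\bo,\cY}} + \sum_{j=1}^d \|S_{\bo,R,j} M_\Theta (\bS_{1,R}^*)^j f\|^2_{H^2_{\bo,\cY}}.
\]
Then for each $j$, write $\|S_{\bo,R,j} g\|^2 = \|g\|^2 - \|(I - S_{\bo,R,j}^* S_{\bo,R,j})^{1/2} g\|^2$ with $g = M_\Theta (\bS_{1,R}^*)^j f$; note $I - S_{\bo,R,j}^* S_{\bo,R,j} \succeq 0$ by admissibility of $\bo$ (from \eqref{Sboj*} and \eqref{18.2}, $S_{\bo,R,j}^* S_{\bo,R,j}$ acts by the factors $\omega_{|\alpha|+1}/\omega_{|\alpha|} \le 1$). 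Meanwhile $\|M_\Theta E^* E f\|^2 = \|\Theta (Ef)\|^2_{H^2_{\bo,\cY}} \le \|Ef\|^2_\cU$ by hypothesis (1), with equality in the Bergman-inner case. Combining and recursing on $(\bS_{1,R}^*)^j f$ (whose degree drops), and using that $f$ is a polynomial so the recursion terminates, gives the stated inequality/equality for polynomials after reindexing the accumulated error terms by $\alpha$ running over $\free$ and $j \in \{1,\dots,d\}$ as $(\bS_{1,R}^*)^{j\alpha^\top}$.

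Finally I would pass from polynomials to general $f \in H^2_\cU(\free)$. Since the right-hand side of \eqref{march13} is $\le \|f\|^2_{H^2_\cU(\free)}$, the polynomial estimate shows $M_\Theta$ is bounded with norm $\le 1$ on the dense set of polynomials, hence extends to a contractive multiplier; then both sides of \eqref{march13} are continuous in $f$ (the infinite sum on the right being a supremum of its finite partial sums, each continuous, hence lower semicontinuous — one must check that equality of the limit is preserved, which follows because in the Bergman-inner case the polynomial identity is an exact identity and monotone convergence applies to the series of nonnegative terms). I expect the main obstacle to be the bookkeeping in the recursion: verifying that the cross terms between the distinct shifted pieces $S_{\bo,R,j} M_\Theta(\bS_{1,R}^*)^j f$ genuinely vanish at every stage (not just at the first), which requires propagating hypothesis (2) down through the recursion — the cleanest fix is to prove once and for all that $\{M_\Theta \bS_{1,R}^\alpha u\}_{\alpha \in \free,\, u \in \cU}$ is an orthogonal family in the sense that $M_\Theta \bS_{1,R}^\alpha u \perp M_\Theta \bS_{1,R}^\beta v$ whenever neither of $\alpha,\beta$ is a prefix of the other, and handle the prefix case by the grading, so that all needed orthogonalities are available uniformly.
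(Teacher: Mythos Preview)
Your proposal is correct and follows essentially the same approach as the paper's proof: both establish the one-step identity by splitting off the empty-word coefficient via hypothesis~(2) and the mutual orthogonality of the ranges of $S_{\bo,R,j}$, rewrite $\|S_{\bo,R,j}g\|^2 = \|g\|^2 - \|(I - S_{\bo,R,j}^*S_{\bo,R,j})^{1/2}g\|^2$, iterate on the truncated polynomial $(\bS_{1,R}^*)^{j}f$ until it vanishes, and then pass to general $f$ by density. Your closing concern about propagating hypothesis~(2) through the recursion is unnecessary: at each stage you are simply reapplying the one-step identity to the new polynomial $(\bS_{1,R}^*)^{\alpha^\top}f$, and hypotheses~(1)--(2) apply to it directly (with its own empty-word coefficient $f_\alpha$), so no strengthened orthogonality statement is needed.
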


\begin{proof}
We first verify \eqref{march13} for any $\cU$-valued ``polynomial"
\begin{equation}
f(z)=\sum_{\alpha \in\free: \, |\alpha|\le m} f_\alpha z^\alpha.
\label{jul21}
\end{equation}
Let ${\bf S}^*_{1,R}=(S^*_{1,R,1},\ldots,S^*_{1,R,d})$ be the right backward shift
tuple on $H^2_{\cU}(\free)$  so that for the polynomial $f$ as in \eqref{jul21},
\begin{equation}
S_{1,R,j}^* \colon  \sum_{\alpha \in\free: \, |\alpha|\le m}f_\alpha z^\alpha
\mapsto \sum_{\alpha\in \free: \, |\alpha|<m}f_{\alpha j} z^\alpha
\label{jul21a}
\end{equation}
for $j=1,\ldots,d$. By the assumptions of the theorem, and since the ranges of $S_{\bo,R,i}$ and
$S_{\bo,R,j}$ are orthogonal whenever $i\neq j$, we have
\begin{align}
\|M_{\Theta} f\|^2=&
\bigg\|\sum_{\alpha \in\free: \, |\alpha|\le m}{\bf
S}_{\bo,R}^{\alpha^\top} M_{\Theta} f_\alpha\bigg\|^2\label{nov10}\\
=&\|M_{\Theta} f_\emptyset \|^2+
\bigg\|\sum_{j=1}^d\sum_{\alpha\in\free: \, |\alpha|\le m-1}
 {\bf S}_{\bo,R}^{j\alpha^\top} M_{\Theta} f_{\alpha j} \bigg\|^2 \notag\\
\le & \|f_{\emptyset}\|^2+
\bigg\|\sum_{j=1}^d\sum_{\alpha \in\free: \, |\alpha|\le m-1}
 {\bf S}_{\bo,R}^{j\alpha^\top}M_{\Theta} f_{\alpha j} \bigg\|^2\notag\\
=& \|f_{\emptyset}\|^2+\sum_{j=1}^d\bigg\|
S_{\bo,R,j}M_{\Theta} \bigg(\sum_{\alpha \in\free: \,
|\alpha|\le m-1} f_{\alpha j}z^\alpha\bigg)
 \bigg\|^2\notag\\
=&\|f_{\emptyset}\|^2+\sum_{j=1}^d\left\|
S_{\bo,R,j}M_{\Theta} (S_{1,R,j}^*f) \right\|^2\notag\\
=&\|f_{\emptyset}\|^2+\sum_{j=1}^d\left\|M_{\Theta} (S_{1,R,j}^*f)
\right\|^2-\sum_{j=1}^d\left\|(I-S_{\bo,R,j}^*S_{\bo,R,j})^{\frac{1}{2}} M_\Theta (S_{1,R,j}^*f) \right\|^2.\notag
\end{align}
Observe that for any $\alpha \in\free$ with $|\alpha|\le m$,
\begin{equation}
{\bf S}_{1,R}^{*\alpha^\top}f=\sum_{\beta\in\free: |\beta|\le m-|\alpha|}f_{\beta\alpha}z^{\beta}
\label{nov11} 
\end{equation}
which can be seen upon iterating formula \eqref{jul21a}.  
It follows from \eqref{nov11} that
$$
({\bf S}_{1,R}^{*\alpha^\top}f)_{\emptyset}=f_{\alpha}.
$$
Replacing $f$ by ${\bf S}_{1,R}^{*\alpha^\top}f$ in \eqref{nov10} now gives
\begin{align}
\|M_{\Theta}{\bf S}_{1,R}^{*\alpha^\top}f\|^2
\le \|f_\alpha\|^2 &
+\sum_{j=1}^d\left\|M_{\Theta}({\bf S}_{1,R}^{*})^{j \alpha^\top}f \right\|^2 \notag\\
& -\sum_{j=1}^d\left\|(I-S_{\bo,R,j}^*S_{\bo,R,j})^{\frac{1}{2}}
M_{\Theta}({\bf S}_{1,R}^{*})^{j \alpha^\top}f \right\|^2
\label{8.3'}
\end{align}
for any $\alpha\in\free$.  Iterating the inequality \eqref{nov10} using
\eqref{8.3'} then gives, for any $m' = 1,2,\dots$,
\begin{align} 
&\|M_{\Theta} f\|_{H^2_{\bo,\cY}(\free)}^2  \le \sum_{|\alpha| < m'}\|f_\alpha \|^2_{\cU}  
+ \sum_{|\alpha| = m'} \left\|  M_\Theta (\bS_{1,R})^{\alpha^\top} f \right\|^2_{H^2_{\bo, \cY}(\free)}  \notag  \\
&  \quad  \quad \quad \quad
-\sum_{|\alpha|< m'}\sum_{j=1}^d   
\left\|(I-S_{\bo,R,j}^*S_{\bo,R,j})^{\frac{1}{2}}
M_{\Theta}({\bf S}_{1,R}^{*})^{j \alpha^\top}f
\right\|^2_{H^2_{\bo,\cY}(\free)}.\notag 
\end{align}
Since $f_\alpha = 0$ as well as $(\bS_{1,R}^*)^{\alpha^\top} f = 0$ once $|\alpha| \ge m$, it follows that, 
once $m' \ge m$, this last inequality collapses to
\begin{align}  
\| M_\Theta f \|^2& \le
\|f\|^2_{H^2_{\cU}(\free)}\notag\\
&-\sum_{\alpha \in\free:|\alpha|< m'}\sum_{j=1}^d
\left\|(I-S_{\bo,R,j}^*S_{\bo,R,j})^{\frac{1}{2}}
M_{\Theta}({\bf S}_{1,R}^{*})^{j \alpha^\top}f
\right\|^2_{H^2_{\bo,\cY}(\free)}.\label{march13a}
\end{align}
Letting $m' \to \infty$ in \eqref{march13a} now implies the validity
of \eqref{march13} for every $\cU$-valued polynomial $f\in H^2_{\cU}(\free)$.
We then get the result for a general $f$ in $H^{2}_{\cU}(\free)$ by 
approximating $f$ by finite truncations of its series 
representation $f(z) = \sum_{\alpha \in \free} f_{\alpha} z^{\alpha}$.  

\smallskip

If equality holds in condition (1) of the theorem (i.e., if $\|M_{\Theta} u\|_{H^2_{\bo,\cY}(\free)}=\|u\|_{\cU}$  holds for
all $u\in\cU$), then we have equalities throughout \eqref{nov10}, \eqref{8.3'} and therefore,
in \eqref{march13} as well. 
 \end{proof}

Part of the content of the next result is a converse of the previous result:  {\em a contractive multiplier from
$H^2_\cU(\free)$ to $H^2_{\bo, \cY}(\free)$ which acts isometrically on the coefficient space $\cU$ is
in fact a Bergman-inner multiplier.}

\begin{theorem}  \label{T:bergin}
Let $\Theta\in\cL(\cU,\cY)\langle\langle z\rangle\rangle$ be a contractive multiplier from $H^2_\cU(\free)$ to
$H^2_{\bo, \cY}(\free)$ which is isometric on constants:  
$$
\| M_\Theta u \|_{H^2_{\bo, \cY}(\free)} = \| u \|_\cU\quad\mbox{for all}\quad u \in \cU.  
$$
Then: 
\begin{enumerate}
\item There is a unique contractive multiplier $S$ from $H^2_{\cU}(\free)$ to 
$H^2_{\ell^{2}_{\cY}(\free)}(\free)$ such that 
\begin{equation}
\Theta(z) =\bPs_{\bgam}(z)S(z)
\label{march9}
\end{equation}
where $\bPs_{\bgam}(z)$ is defined as in \eqref{powerseries}. Moreover, this unique $S$ is a strictly inner multiplier 
from $H^2_{\cU}(\free)$ to $H^2_{\ell^{2}_{\cY}(\free)}(\free)$.
\item $\Theta u\perp \Theta {\bf S}_1^\alpha v$ for all $u,v\in\cU$ 
and all nonempty $\alpha\in\free$, i.e., $\Theta$ is a Bergman-inner multiplier.
\end{enumerate}
\end{theorem}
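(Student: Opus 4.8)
The plan is to derive both assertions from Theorem~\ref{T:3.8} and Lemma~\ref{L:in}, together with the fact that $M_{\bPs_{\bgam}}$ is a coisometry.

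\emph{Part (1): existence and strict innerness.} Since $\Theta$ is a contractive multiplier from $H^2_\cU(\free)$ to $H^2_{\bo,\cY}(\free)$, part (3) of Theorem~\ref{T:3.8} immediately furnishes a contractive multiplier $S$ from $H^2_\cU(\free)$ to $H^2_{\ell^2_\cY(\free)}(\free)$ with $\Theta(z)=\bPs_{\bgam}(z)S(z)$, i.e. $M_\Theta=M_{\bPs_{\bgam}}M_S$. I claim \emph{every} such $S$ is strictly inner. For a constant $u\in\cU$, using that $M_{\bPs_{\bgam}}$ is a coisometry (part (2) of Theorem~\ref{T:3.8}) and that $M_S$ is a contraction,
\[
\|u\|_\cU=\|M_\Theta u\|_{H^2_{\bo,\cY}(\free)}=\|M_{\bPs_{\bgam}}M_S u\|\le\|M_S u\|_{H^2_{\ell^2_\cY(\free)}(\free)}\le\|u\|_\cU,
\]
so all inequalities are equalities. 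In particular $\|M_S u\|=\|u\|_\cU$ for every $u\in\cU$, and Lemma~\ref{L:in}, applied with coefficient space $\ell^2_\cY(\free)$ in place of $\cY$, shows that $S$ is strictly inner. The same chain also gives $\|M_{\bPs_{\bgam}}(M_S u)\|=\|M_S u\|$, and since $M_{\bPs_{\bgam}}$ is a coisometry this forces $M_S u\in(\operatorname{Ker}M_{\bPs_{\bgam}})^\perp$ for all constants $u$; I will reuse this fact below.

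\emph{Part (1): uniqueness.} Suppose $S_1,S_2$ are contractive multipliers with $\bPs_{\bgam}S_1=\bPs_{\bgam}S_2=\Theta$. By the previous paragraph both are strictly inner, and for each constant $u\in\cU$ the elements $M_{S_1}u,M_{S_2}u$ lie in $(\operatorname{Ker}M_{\bPs_{\bgam}})^\perp$ while $M_{\bPs_{\bgam}}(M_{S_1}u-M_{S_2}u)=M_\Theta u-M_\Theta u=0$. Since $M_{\bPs_{\bgam}}$ is injective on $(\operatorname{Ker}M_{\bPs_{\bgam}})^\perp$, we obtain $M_{S_1}u=M_{S_2}u$ for every $u\in\cU$; as $S_i(z)u=(M_{S_i}u)(z)$ determines the power series $S_i$, this yields $S_1=S_2$.

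\emph{Part (2).} For a constant $u\in\cU$ the hypothesis $\|M_\Theta u\|_{H^2_{\bo,\cY}(\free)}=\|u\|_\cU$ together with $\|M_\Theta\|\le 1$ gives $\langle(I-M_\Theta^*M_\Theta)u,u\rangle=0$; since $I-M_\Theta^*M_\Theta\succeq 0$, this forces $M_\Theta^*M_\Theta u=u$. Hence, for $u,v\in\cU$ and any nonempty $\alpha\in\free$,
\[
\langle\Theta u,\,\Theta\bS_{1,R}^\alpha v\rangle_{H^2_{\bo,\cY}(\free)}=\langle M_\Theta^*M_\Theta u,\,\bS_{1,R}^\alpha v\rangle_{H^2_\cU(\free)}=\langle u,\,\bS_{1,R}^\alpha v\rangle_{H^2_\cU(\free)}=0,
\]
the last equality because $u$ is supported on the empty word whereas $\bS_{1,R}^\alpha v$ is supported on words of length $|\alpha|\ge 1$. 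This is condition (2) of Definition~\ref{D:defin} (condition (1) being the standing hypothesis), so $\Theta$ is an $H^2_{\bo,\cY}(\free)$-Bergman-inner multiplier. Alternatively, this may be read off the factorization: writing $\Theta\bS_{1,R}^\alpha v=M_{\bPs_{\bgam}}(\bS_{1,R}^\alpha S v)$ since $M_S$ intertwines the right shifts, and using $M_S u\in(\operatorname{Ker}M_{\bPs_{\bgam}})^\perp$ together with the fact that $M_S$ is isometric, reduces the inner product to $\langle M_S u,\,M_S\bS_{1,R}^\alpha v\rangle=\langle u,\,\bS_{1,R}^\alpha v\rangle_{H^2_\cU(\free)}=0$.

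All of these computations are short; the only point that needs care is the bookkeeping around $\operatorname{Ker}M_{\bPs_{\bgam}}$, namely that the norm-equality chain in Part (1) not only yields strict innerness of $S$ via Lemma~\ref{L:in}, but also places $M_S u$ in $(\operatorname{Ker}M_{\bPs_{\bgam}})^\perp$. This is exactly what makes the uniqueness assertion (and the factorization-based proof of Part (2)) go through, so it is the step I would write out most carefully.
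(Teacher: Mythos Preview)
Your proof is correct and, for Part (1), follows essentially the same route as the paper: existence from Theorem~\ref{T:3.8}(3), the norm-equality chain using the coisometry $M_{\bPs_\bgam}$ and the contraction $M_S$ to get both $\|M_S u\|=\|u\|$ and $M_S u\in(\operatorname{Ker}M_{\bPs_\bgam})^\perp$, then Lemma~\ref{L:in} for strict innerness and injectivity of $M_{\bPs_\bgam}$ on $(\operatorname{Ker}M_{\bPs_\bgam})^\perp$ for uniqueness. The paper packages uniqueness slightly differently, writing $M_S u=M_{\bPs_\bgam}^*M_{\bPs_\bgam}M_S u=M_{\bPs_\bgam}^*M_\Theta u$ to exhibit an explicit formula for $S$ in terms of $\Theta$, but this is the same content.

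For Part (2) your primary argument is a genuine simplification over the paper's. The paper proves orthogonality via the factorization: $\langle M_\Theta u,M_\Theta\bS_{1,R}^\alpha v\rangle=\langle M_{\bPs_\bgam}^*M_{\bPs_\bgam}M_S u,\bS_{1,R}^\alpha M_S v\rangle=\langle M_S u,M_S\bS_{1,R}^\alpha v\rangle=0$, using that $M_S$ is isometric. Your observation that $\|M_\Theta u\|=\|u\|$ with $\|M_\Theta\|\le 1$ forces $M_\Theta^*M_\Theta u=u$ directly (positive operator with zero quadratic form) gives $\langle M_\Theta u,M_\Theta\bS_{1,R}^\alpha v\rangle=\langle u,\bS_{1,R}^\alpha v\rangle=0$ without touching the factorization at all, which is cleaner and makes Part (2) independent of Part (1). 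Your alternative factorization-based argument is exactly the paper's.
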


\begin{proof}
Since $\Theta$ is, in particular, a contractive multiplier from $H^2_{\cU}(\free)$ to 
$H^2_{\bo,\cY}(\free)$, it is (by Theorem \ref{T:3.8} (3)) of the form \eqref{march9}
for some contractive multiplier $S(z)$ from $H^2_{\cU}(\free)$ to 
$H^2_{\ell^2_{\cY}(\free)}(\free)$. By assumption we have
\begin{equation}
\|u\|_{\cU}= \|M_{\Theta} u\|_{H^2_{\bo,\cY}(\free)}=\|M_{\bPs_{\bgam}} M_{S} u\|_{H^2_{\bo,\cY}(\free)}
\label{march6}
\end{equation}
for all $u\in\cU$. Since the operator 
$$
M_{S} \colon \cU \to 
H^2_{\ell^2_{\cY}(\free)}(\free)
$$
is contractive while the operator
$$
 M_{\bPs_{\bgam}} \colon H^2_{\ell^2_{\cY}(\free)}(\free)\to  H^2_{\bo,\cY}(\free)
$$ 
is coisometric (by Theorem \ref{T:3.8} (2)), equality \eqref{march6} 
implies that
\begin{equation}
\|M_{S} u\|_{H^2_{\ell^2_{\cY}(\free)}(\free)}=\|u\|_{\cU}\quad \text{and}\quad
 M_{S} u\perp
{\rm Ker} \, M_{\bPs_{\bgam}}\quad \text{for all}\quad u\in\cU.
\label{march6a}
\end{equation}
The first relation in \eqref{march6a} implies that $S$ is a strictly inner 
multiplier (by Lemma \ref{L:in}).  Let us rewrite the factorization 
\eqref{march9} in multiplication-operator form
\begin{equation}   \label{march9'}
    M_{\Theta} = M_{\bPs_{\bgam}} M_{S} \colon H^{2}_{\cU}(\free) \to 
    H^2_{\bo, \cY}(\free).
\end{equation}
As $M_{\bPs_{\bgam}}$ is a coisometry,
the second relation in \eqref{march6a} implies that
\begin{equation}  \label{S=Psi*PsiS}
M_{S} = M_{\bPs_{\bgam}}^{*} M_{\bPs_{\bgam}} M_{S}.
\end{equation}
Then multiplication of relation 
\eqref{march9'} on the left by $M_{\bPs_{\bgam}}^{*}$ and making use of 
\eqref{S=Psi*PsiS} leaves us with 
$$
 M_{\bPs_{\bgam}^{*}} M_{\Theta} = M_{\bPs_{\bgam}}^{*} M_{\bPs_{\bgam}} 
 M_{S}  = M_{S},
$$
i.e., $S$ is uniquely determined from $\Theta$ via the relation
$$
 S(z) u = \big( M_{\bPs_{\bgam}}^{*} M_{\Theta} u \big)(z)\quad \text{for all}\quad u \in \cU.
$$
 This completes the proof of part (1) of the theorem.

\smallskip

Again making use of relations \eqref{march9'} and \eqref{S=Psi*PsiS}, we see that, for $u, 
v \in \cU$ and $\alpha$ any nonempty word in $\free$,
\begin{align*}
\langle M_{\Theta} u, \, M_{\Theta} (\bS_{1,R}^\alpha v) 
\rangle_{\cA_{n,\cY}(\free)}  
   &  =\langle M_{\bPs_{\bgam}} M_{S} u, \, M_{\bPs_{\bgam}}\bS_{1,R}^\alpha 
   M_{S} v\rangle_{\cA_{n,\cY}(\free)}\\
&=\langle M_{\bPs_{\bgam}}^*M_{\bPs_{\bgam}} M_{S} u, \, \bS_{1,R}^\alpha 
M_{S}v  \rangle_{H^2_{\ell^2_{\cY}(\free)}(\free)}\\
&=\langle M_{S}u, \, M_{S} \bS_{1,R}^\alpha  v\rangle_{H^2_{\ell^2_{\cY}(\free)}(\free)}=0
\end{align*}
where we use the fact that $S$ is strictly inner for the last step.  This completes the proof of part (2)
of the theorem.
\end{proof}

\begin{remark}  \label{R:McCTvsBergman}
 Bergman-inner multipliers and McCT-inner multipliers are closely related in still other ways besides that revealed by
Theorem \ref{T:bergin}.  Indeed,  given a shift-invariant subspaces $\cM \subset H^2_{\bo, \cY}(\free)$ with Beurling-Lax representation
$\cM = \Theta \cdot H^2_{\cU}(\free)$ for a McCT-inner multiplier $\Theta$,  if we set $\cF: = \cU \cap \operatorname{Ran} \Theta^*$ 
and $\Psi(z) = \Theta(z)|_\cF$ and view $\Psi$ as an
element of $\cL(\cF, \cY)\langle \langle z \rangle \rangle$,  then $\Psi$ is Bergman inner and the wandering subspace
$\cE = \cM \ominus \left( \bigoplus_{j=1}^d S_{\bo, j} \cM\right)$  for $\cM$ is given by $\cE = \Psi \cdot \cF$
(see \cite[Theorem 6.6]{BEKS} for the commutative setting and \cite[Theorem 4.3]{PopescuMANN} for the general setting).  
\end{remark}

\begin{remark} \label{R:innermultno}
If $S$ is a strictly inner multiplier from $H^2_{\cU}(\free)$ to $H^2_{\ell^{2}_{\cY}(\free)}(\free)$,
then $\Theta$ of the form \eqref{march9} is a contractive multiplier from $H^2_{\cU}(\free)$ 
to $H^2_{\bo,\cY}(\free)$ by Theorem \ref{T:3.8}. However, $\Theta$ does not have to be 
a Bergman-inner multiplier.  For example, let $d=1$ and $\bo = \mu_2=\{\frac{1}{j+1}\}_{j\ge 0}$.
The single-variable function
$S(z) = \frac{1}{\sqrt{2}}\sbm{ z \\ z \\ 0 \\ \vdots }$ is $H^{2}_{\ell^{2}}$-inner and 
therefore, the function 
$$
 \Theta(z) = \Psi_{1}(z) S(z)=
\frac{1}{\sqrt{2}}
 \begin{bmatrix} 1 & z & z^{2} & \cdots \end{bmatrix} \sbm{ z \\ z \\ 0 \\ \vdots }
     = \frac{1}{\sqrt{2}} (z + z^{2})
$$
is a contractive multiplier from $H^2$ to $\cA_{2}$. However, 
$\Theta$ is not $\cA_{2}$-Bergman inner.
One reason is that $\|\Theta\|^{2}_{\cA_{2}} = \frac{1}{2} (\frac{1}{2} +
\frac{1}{3}) \ne 1$, and another reason is that 
$M_{\Theta} \cdot 1 =
\frac{1}{\sqrt{2}} (z + z^{2})$ and $M_{\Theta} \cdot z =
\frac{1}{\sqrt{2}} (z^{2} + z^{3})$ are not orthogonal in $\cA_{2}$.
\end{remark}

\begin{remark} \label{R:3.22}
Due to Theorems \ref{T:12.2} and \ref{T:bergin}, $H^2_{\bo,\cY}(\free)$-Bergman-inner multipliers can be equivalently
defined as noncommutative formal power series $\Theta\in \cL(\cU, \cY)\langle\langle z\rangle\rangle$ 
(for some Hilbert space $\cU$) which are contractive multipliers from $H^2_{\cY}(\free)$ to 
$H^2_{\bo,\cY}(\free)$ which are isometric on $\cU$.
\end{remark}

\chapter[Stein relations and observability-operator range spaces]
{Stein relations and observability-operator range spaces}\label{S:Stein}
In this chapter we flesh out Theme 1 from the Introduction for the $\bo$-setting.
In the classical setting of the discrete-time linear system \eqref{1.1pre} (see e.g.\ \cite{DulPag}), the output stability
of the pair $(C,A)$, that is, the boundedness of the observability operator $\cO_{C,A}: \cX\to H^2_{\cY}$
(see \eqref{1.4pre}), can be expressed in terms of certain Stein inequality, the minimal 
positive-semidefinite solution of which turns out to be equal to the {\em observability gramian}
$$
{\mathcal G}_{C, A}= \sum_{n=0}^\infty A^{*n}C^*CA^n.
$$
Precise statements are recorded next; here the notational conventions from Definition \eqref{D:op-ineq} are in place.

\begin{theorem}
\label{T:bbf1}
An output pair $(C,A)\in\cL(\cX,\cY)\times \cL(\cX)$ is output-stable (i.e., the observability operator $\cO_{C,A}$ is bounded as an operator
from $\cX$ into $H^2_\cY$) if and only if the Stein inequality
\begin{equation}
H-A^*HA \succeq C^*C
\label{bbfStein-ineq}
\end{equation}
has a positive semidefinite solution $H\in \cL(\cX)$. In this case, 
\begin{enumerate} 
\item The observability gramian $H={\mathcal G}_{C, A}$ is the minimal positive semidefinite solution of 
\eqref{bbfStein-ineq} and satisfies the Stein equation
\begin{equation}  \label{bbf1.5}
H - A^{*}HA = C^{*}C.
\end{equation}
\item ${\mathcal G}_{C, A}$ is a unique solution to this equality if  $A$ is strongly stable.
\item If $A$ is a contraction, then the Stein equation \eqref{bbf1.5} has a unique positive semidefinite solution 
if and only if $A$ is strongly stable.
\end{enumerate}
\end{theorem}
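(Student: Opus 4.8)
The plan is to reduce all three assertions to the behavior of the monotone nondecreasing sequence of positive-semidefinite operators $H_N := \sum_{k=0}^N A^{*k} C^* C A^k$ on $\cX$. First I would record, straight from the power-series description \eqref{1.4pre} of the observability operator, the identity
$$
\|\cO_{C,A} x\|^2_{H^2_\cY} = \sum_{k=0}^\infty \|C A^k x\|^2 = \lim_{N\to\infty} \langle H_N x, x\rangle \qquad (x \in \cX).
$$
Hence $(C,A)$ is output-stable precisely when $\sup_N \|H_N\| < \infty$, and in that case $\{H_N\}$, being bounded and monotone, converges strongly to a bounded positive-semidefinite operator which is exactly $\cG_{C,A} = \cO_{C,A}^* \cO_{C,A}$.

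Next I would establish the equivalence together with part (1). If $(C,A)$ is output-stable, then a telescoping computation gives $H_N - A^* H_N A = C^*C - A^{*(N+1)} C^* C A^{N+1}$; since $\{H_N x\}$ is Cauchy, $A^{*(N+1)} C^* C A^{N+1} x = (H_{N+1}-H_N)x \to 0$, so letting $N\to\infty$ yields the Stein equality \eqref{bbf1.5} for $H = \cG_{C,A}$ and, a fortiori, a positive-semidefinite solution of \eqref{bbfStein-ineq}. Conversely, if $H \succeq 0$ solves \eqref{bbfStein-ineq}, write $H = A^*HA + C^*C + R$ with $R \succeq 0$ and iterate to obtain $H = A^{*(N+1)} H A^{N+1} + \sum_{k=0}^N A^{*k}(C^*C+R)A^k \succeq H_N$ for every $N$; hence $\sup_N \|H_N\| \le \|H\|$, so $\cO_{C,A}$ is bounded, and passing to the limit gives $H \succeq \cG_{C,A}$. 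This proves the stated equivalence and the minimality in part (1) simultaneously.

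For part (2), if $A$ is strongly stable and $H_1, H_2$ are two bounded solutions of \eqref{bbf1.5}, their difference $D := H_1 - H_2$ satisfies $D = A^*DA$, hence $D = A^{*n} D A^n$ for all $n$, so $|\langle D x, x\rangle| = |\langle D A^n x, A^n x\rangle| \le \|D\|\,\|A^n x\|^2 \to 0$ and therefore $D = 0$. For part (3), the implication ``$A$ strongly stable $\Rightarrow$ unique positive-semidefinite solution'' follows at once from part (2) (a solution exists by part (1)); for the converse I would argue by contraposition. If $\|A\| \le 1$ but $A$ is not strongly stable, the nonincreasing sequence $A^{*n} A^n$ converges strongly to $\Lambda := \slim{n\to\infty} A^{*n} A^n \succeq 0$, which satisfies $A^* \Lambda A = \Lambda$ (both $A^{*(n+1)}A^{n+1}$ and $A^*(A^{*n}A^n)A$ have this common strong limit) and is nonzero since $\langle \Lambda x, x\rangle = \lim_n \|A^n x\|^2$ is not identically zero. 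Then $\cG_{C,A} + t\Lambda$ for $t \ge 0$ is a one-parameter family of mutually distinct positive-semidefinite solutions of \eqref{bbf1.5}, so uniqueness fails.

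The monotone- and strong-convergence bookkeeping is routine; the one step that genuinely demands care is the converse half of part (3), where one must both confirm that $\Lambda$ is a true fixed point of $X \mapsto A^*XA$ and identify $\Lambda \ne 0$ precisely with the failure of strong stability --- this is what converts a non-stable contraction into an explicit extra solution of the Stein equation.
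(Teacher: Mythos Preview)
Your proof is correct. The paper states this theorem as a classical result without proof, but its arguments for the more general Theorems \ref{T:7.2} and \ref{T:2-1.1} (the $\bo$- and $\bmu_n$-extensions) specialize to exactly your approach: the iteration $H \succeq A^{*(N+1)}HA^{N+1} + H_N$ for the equivalence and minimality, and for the converse of part (3) the construction of the nonzero fixed point $\Lambda = \slim{N} A^{*N}A^N$ (called $\Delta$ in the paper) to produce the additional solution $\cG_{C,A} + \Lambda$.
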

The range of the observability operator of an output-stable pair turns out to be a generic backward-shift
invariant subspace of $H^2_{\cY}$ that in turn serves as the functional-model space for a Hilbert space contraction 
operator and on the other hand is used to fairly explicitly construct the Beurling-Lax representer of a shift-invariant subspace
of $H^2_{\cY}$. 

\smallskip

Observability operators \eqref{1.17pre} appearing in the context of the time-varying linear system \eqref{1.13pre}
and multivariable observability operators \eqref{1.27pre} arising from a noncommutative Fornasini-Marchesini 
linear system \eqref{1.20pre} produce two quite different generalizations of the above observations and 
eventually lead to the functional-model theory of $n$-hypercontractions and $d$-row contractions, respectively. 
Both generalizations naturally merge into the framework 
of the linear system \eqref{1.31pre} which in  turn leads to the functional-model theory for row 
$n$-hypercontractions and to Beurling-Lax type theorems in the setting of weighted Bergman-Fock spaces. 

\smallskip

In this chapter, we consider stability questions and observability operator range spaces in the setting of the weighted 
Hardy-Fock space $H^2_{\bo,\cY}(\free)$ with an admissible weight sequence $\bo$; in addition we take care 
to spell out the special case $\bo=\bmu_n$ (i.e., of the standard weighted Bergman-Fock space) 
whenever the results can be simplified or detailed to a larger extent.

\section{Observability operators and gramians}
Let $\bo=\{\omega_j\}_{j\ge 0}$ be an admissible  weight sequence (in the sense of \eqref{18.2}). 
Let us make the following definition.
\begin{definition}   \label{D:bo-output-stable}
We say that the output pair $(C,\bA)\in\cL(\cX,\cY)\times\cL(\cX)^d$ is 
 {\em $\bo$-output stable} if the $\bo$-observability operator
\begin{equation}
\cO_{\bo,C,\bA}: \, x\mapsto
\sum_{\alpha\in\free}(\omega_{|\alpha|}^{-1}C\bA^{\alpha}x)z^\alpha
\label{18.2aaa}
\end{equation}
is bounded from $\cX$ to $H^2_{\bo,\cY}(\free)$ or equivalently, the $\bo$-observability gramian
\begin{equation}
\cG_{\bo,C,\bA}:=\cO_{\bo,C,\bA}^*\cO_{\bo,C,\bA}=
\sum_{\alpha\in\free}\omega_{|\alpha|}^{-1}\bA^{*\alpha^\top}C^*C\bA^{\alpha}
\label{18.2aa}
\end{equation}
is bounded on $\cX$. 
\end{definition}
We next recall the power series 
\begin{equation}
R_\bo(\lambda)=R_{\bo,0}(\lambda)
=\sum_{j=0}^\infty \omega_j^{-1}\lambda^j\quad\mbox{and}\quad
R_{\bo,k}(\lambda)=\sum_{j=0}^\infty \omega_{j+k}^{-1}\lambda^j \; \; (k\ge 1)
\label{1.6g}
\end{equation}
used in realization formulas \eqref{obsreal} and \eqref{thetareal} for $\cO_{\bo,C,\bA}$ and $\Theta_{\bo,\bU_\alpha}$.
Due to conditions \eqref{18.2}, the series \eqref{1.6g} converge in the open unit disk $\D$ and do not vanish there.
Hence, the reciprocal power series
\begin{equation}
\sum_{j=0}^\infty c_j\lambda^j:=\frac{1}{R_{\bo}(\lambda)}
=\bigg(\sum_{j=0}^\infty \omega_j^{-1}\lambda^j\bigg)^{-1}
\label{1.7g}
\end{equation}
converges on $\D$; for reasons that will become clear shortly, we assume that
the latter power series belongs to the Wiener class $W^+$, that is, the coefficients
$\{c_{j}\}_{j \ge 0}$ appearing in \eqref{1.7g} are absolutely summable:
\begin{equation}
\text{if } c_0=1 \text{ and recursively } c_k=-\sum_{j=0}^{k-1}c_j\omega_{k-j}^{-1}, \;
\text{ then } \; \sum_{j=0}^\infty |c_j|<\infty.
\label{1.8g}
\end{equation}
Meanwhile we record a useful identity satisfied by the sequences $\{\omega_j\}$ and $\{c_j\}$.
\begin{lemma}   \label{L:useful}
The sequence $\bo$ and the derived sequence $\{c_j \}_{j \ge 0}$ \eqref{1.8g} satisfy
the
equality
\begin{equation}
\sum_{r=0}^j\bigg(\frac{1}{\omega_{j-r}}\cdot
\sum_{\ell=1}^k\frac{c_{\ell+r}}{\omega_{k-\ell}}\bigg) =- \frac{1}{\omega_{k+j}}\quad \mbox{for}\quad j,k\ge 0.
\label{3.18u}
\end{equation}
\end{lemma}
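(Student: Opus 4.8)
The plan is to derive \eqref{3.18u} from the defining relation \eqref{1.7g}, namely $R_{\bo}(\lambda)\cdot\bigl(\sum_{j\ge 0}c_j\lambda^j\bigr)=1$, together with the bookkeeping identity $\omega_0=1$ and the recursion in \eqref{1.8g}. First I would rewrite the left-hand side of \eqref{3.18u} by switching the order of summation. Fixing $k\ge 0$, define
\begin{equation*}
T_j:=\sum_{r=0}^j\frac{1}{\omega_{j-r}}\sum_{\ell=1}^k\frac{c_{\ell+r}}{\omega_{k-\ell}}
=\sum_{\ell=1}^k\frac{1}{\omega_{k-\ell}}\sum_{r=0}^j\frac{c_{\ell+r}}{\omega_{j-r}}.
\end{equation*}
The inner sum $\sum_{r=0}^j c_{\ell+r}\,\omega_{j-r}^{-1}$ is the coefficient of $\lambda^{j+\ell}$ in the product $\bigl(\sum_m c_m\lambda^m\bigr)\bigl(\sum_m \omega_m^{-1}\lambda^m\bigr)$ restricted to those terms where the $c$-index is at least $\ell$; more precisely it equals $[\lambda^{j+\ell}]\bigl(R_{\bo}(\lambda)\sum_{m\ge \ell}c_m\lambda^m\bigr)$. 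Since $R_{\bo}(\lambda)\sum_{m\ge 0}c_m\lambda^m=1$, we have $R_{\bo}(\lambda)\sum_{m\ge \ell}c_m\lambda^m = 1 - R_{\bo}(\lambda)\sum_{m=0}^{\ell-1}c_m\lambda^m$, so for $j+\ell\ge 1$ the inner sum equals $-\,[\lambda^{j+\ell}]\bigl(R_{\bo}(\lambda)\sum_{m=0}^{\ell-1}c_m\lambda^m\bigr)=-\sum_{m=0}^{\ell-1}c_m\,\omega^{-1}_{j+\ell-m}$.

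Substituting this back gives
\begin{equation*}
T_j=-\sum_{\ell=1}^k\frac{1}{\omega_{k-\ell}}\sum_{m=0}^{\ell-1}\frac{c_m}{\omega_{j+\ell-m}}
=-\sum_{m\ge 0}c_m\sum_{\ell=m+1}^k\frac{1}{\omega_{k-\ell}\,\omega_{j+\ell-m}},
\end{equation*}
and reindexing the inner sum by $s=\ell-m$ (so $\ell=s+m$, $1\le s\le k-m$) this becomes $-\sum_{m=0}^{k-1}c_m\sum_{s=1}^{k-m}\omega_{k-m-s}^{-1}\,\omega_{j+s}^{-1}$. Now I recognize $\sum_{s=1}^{k-m}\omega_{k-m-s}^{-1}\,\omega_{j+s}^{-1}=[\lambda^{j+k-m}]\bigl(R_{\bo,j+1}(\lambda)\cdot\lambda\cdot R_{\bo}(\lambda)\bigr)$ type of convolution; alternatively, and more cleanly, I would apply the identity just proved once more in the ``shifted'' form with $j$ replaced by $j+s-?$, or simply set up a single generating-function computation from the start: multiply \eqref{3.18u} through by $\lambda^{k+j}$ and sum over $k$ and $j$ independently, recognizing the triple sum as a product of $R_{\bo}$, $R_{\bo}$, and $\sum_\ell c_\ell\lambda^\ell$ structures and collapsing via $R_{\bo}\cdot\sum c_m\lambda^m=1$. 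I expect the cleanest route is in fact this purely generating-function one: form $F(\lambda,w):=\sum_{j,k\ge 0}\bigl(\sum_{r=0}^j \omega_{j-r}^{-1}\sum_{\ell=1}^k c_{\ell+r}\omega_{k-\ell}^{-1}\bigr)\lambda^j w^k$, show it factors through $R_{\bo}(\lambda)$, $R_{\bo}(w)$ and the kernel $\sum_{r,\ell}c_{\ell+r}\lambda^r w^\ell$, simplify the last one using $\sum_{m}c_m\lambda^m=1/R_{\bo}(\lambda)$, and compare with the generating function $-\sum_{j,k}\omega_{k+j}^{-1}\lambda^j w^k$ of the right-hand side.

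The main obstacle will be handling the diagonal/boundary terms correctly: the sum over $\ell$ starts at $1$ rather than $0$, and the sum over $m$ in the intermediate step effectively runs up to $k-1$, so the generating-function manipulations require carefully subtracting off the $m=0,\dots$ initial segment of $R_{\bo}\sum c_m\lambda^m$ and tracking which monomials $\lambda^0 w^0$, $\lambda^0 w^k$, etc.\ actually contribute. Once the index ranges are pinned down, the algebra reduces to the single identity $R_{\bo}(\lambda)\cdot\sum_{m\ge 0}c_m\lambda^m=1$ applied twice, and the recursion in \eqref{1.8g} (which is just the coefficient form of that same identity) closes the argument; the Wiener-class hypothesis \eqref{1.8g} is not needed for the formal identity itself but guarantees all the rearrangements of series are legitimate as convergent sums for $|\lambda|,|w|<1$.
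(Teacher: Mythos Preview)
Your approach is correct and is essentially the paper's: both collapse the double sum by applying the coefficient identity coming from $R_{\bo}(\lambda)\sum_{m\ge0}c_m\lambda^m=1$ twice. The paper's execution is slightly more direct---it applies the recursion to the inner $\ell$-sum first (splitting $0=\sum_{\ell=0}^{k+r}c_\ell\,\omega_{k+r-\ell}^{-1}$ into the pieces with $\ell\le r$ and with $\ell>r$), reindexes, and applies it once more so that only the $r=0$ term survives---whereas you swap the order and simplify the $r$-sum first, which is the $j\leftrightarrow k$ mirror of the same move. Your second step goes through just as cleanly: in $T_j=-\sum_{m=0}^{k-1}c_m\sum_{s=1}^{k-m}\omega_{k-m-s}^{-1}\,\omega_{j+s}^{-1}$ the inner sum vanishes for $m\ge k$, so you may extend the outer sum to all $m\ge0$ and collapse via $\tfrac{1}{R_{\bo}}\cdot R_{\bo}=1$ to get $[\lambda^k]\sum_{s\ge1}\omega_{j+s}^{-1}\lambda^s=\omega_{j+k}^{-1}$; the boundary-term obstacles you anticipate do not actually arise.
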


\begin{proof}
By the recursion \eqref{1.8g}, we have
\begin{equation}
0=\sum_{\ell=0}^{k+r}\frac{c_\ell}{\omega_{k+r-\ell}}=\sum_{\ell=0}^{r}\frac{c_\ell}{\omega_{k+r-\ell}}+
\sum_{\ell=1}^{k}\frac{c_{r+\ell}}{\omega_{k-\ell}}\qquad (k+r\ge 1).
\label{3.18v}
\end{equation}
Substituting the latter equality into the left side of \eqref{3.18u}, changing the order of summation
and then again using \eqref{3.18v} (with $r$ instead of $k+r$) gives
\begin{align*}
\sum_{r=0}^j\bigg(\frac{1}{\omega_{j-r}}\cdot
\sum_{\ell=1}^k\frac{c_{\ell+r}}{\omega_{k-\ell}}\bigg) &=- \sum_{r=0}^j\bigg(\frac{1}{\omega_{j-r}}\cdot
\sum_{\ell=0}^{r}\frac{c_\ell}{\omega_{k+r-\ell}}\bigg)\\
& =-\sum_{r=0}^j\bigg(\frac{1}{\omega_{k+j-r}}\cdot\sum_{\ell=0}^r\frac{c_\ell}{\omega_{r-\ell}}\bigg)=
-\frac{1}{\omega_{k+j}}\cdot \frac{c_0}{\omega_{0}}=-\frac{1}{\omega_{k+j}}.
\end{align*}
and identity \eqref{3.18u} follows.
\end{proof}
We remark that the weights \eqref{1.6pre} meet the assumption \eqref{1.8g} as in this case,
$R_{\bmu_n}(\lambda):=R_n(\lambda)=(1-\lambda)^{-n}$,
and hence the power series \eqref{1.7g} amounts to a polynomial. The assumption \eqref{1.8g}
is imposed in order to define the operatorial maps \eqref{1.11g} and \eqref{3.5ag} below,
which in turn, will allow us to introduce the notion of an $\bo$-hypercontraction that 
extends the notion of a row-hypercontraction. 
It is not clear at the moment how much the assumption \eqref{1.8g} can be weakened to still
produce meaningful extensions of a hypercontractive fashion. 

\smallskip

As suggested by the Agler hereditary functional calculus (in Ambrozie-Engli\v s-M\"uller 
transcription \cite{AEM}) we introduce the operator $B_\bA: \, \cL(\cX)\to \cL(\cX)$ associated 
with the operator tuple $\bA = (A_1, \dots, A_d) \in \cL(\cX)^d$ via
\begin{equation}   \label{4.4}
B_\bA \colon X \mapsto B_\bA[X]: = \sum_{j=1}^d A_j^* X A_j.
\end{equation}
One easily sees from the definition that $B_\bA$ is a {\em positive map}, i.e.,
\begin{equation}  \label{BsubApos}
X \succeq 0 \Rightarrow B_{\bA}[X] \succeq 0.
\end{equation}
In fact, $B_\bA$  satisfies the stronger property of being a {\em completely positive map}
(see e.g.\ \cite{Pau}), but we shall not have need of this fact. Iterating \eqref{4.4} gives
\begin{equation} \label{4.5}
B_{\bA}^k \colon X \mapsto \sum_{\alpha \in\free: \,
|\alpha|=k}\bA^{*\alpha^\top} X \bA^{\alpha}\quad \mbox{for}\quad k\ge 1.
\end{equation}
The following general principle will be useful in the sequel.
\begin{lemma}  
\label{P:welldefined}
Given a function $f(\lambda)=\sum_{j\ge 0} f_j \lambda^j
\in W^+$, the operatorial map
\begin{equation}
f(B_\bA):=\sum_{j=0}^\infty f_j B_\bA^j: \;  X \mapsto 
\sum_{j=0}^\infty f_j \bigg(\sum_{|\alpha|=j}\bA^{*\alpha^\top}X\bA^{\alpha}\bigg)=
\sum_{\alpha\in\free} f_{|\alpha|}\, \bA^{*\alpha^\top}X\bA^{\alpha}
\label{1.10g}
\end{equation}
is well defined  for any $X\in\cL(\cX)$ subject to inequalities
\begin{equation}   \label{1.9g}
X\succeq \sum_{j=1}^d A_j^*XA_j\succeq 0.
\end{equation}
\end{lemma}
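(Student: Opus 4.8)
\textbf{Proof plan for Lemma \ref{P:welldefined}.}

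The plan is to show that the series on the right of \eqref{1.10g} converges in operator norm (hence defines a bounded operator $f(B_\bA)[X]$) whenever $X$ satisfies \eqref{1.9g}. First I would observe that the hypothesis \eqref{1.9g} says precisely that $X \succeq 0$ and $X - B_\bA[X] \succeq 0$. Applying the positive map $B_\bA$ repeatedly and using \eqref{BsubApos} together with monotonicity of $B_\bA$ (i.e. $Y \preceq Z \Rightarrow B_\bA[Y] \preceq B_\bA[Z]$, which follows from linearity and positivity of $B_\bA$), I get that the sequence of partial sums
$$
S_N : = \sum_{j=0}^N B_\bA^j[X] = \sum_{\alpha \in \free \colon |\alpha| \le N} \bA^{*\alpha^\top} X \bA^\alpha
$$
is monotone nondecreasing in $N$ in the positive-semidefinite order. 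The key point is that this sequence is also bounded above: the telescoping identity
$$
\sum_{j=0}^N B_\bA^j\big[ X - B_\bA[X] \big] = X - B_\bA^{N+1}[X] \preceq X
$$
shows that $\sum_{j=0}^N B_\bA^j[X - B_\bA[X]] \preceq X$ for every $N$; but also $B_\bA^{N+1}[X] \succeq 0$ for every $N$, so in fact the nondecreasing sequence $B_\bA^{N+1}[X]$ is bounded above by $X$, and thus converges strongly to some $0 \preceq Y_\infty \preceq X$. Consequently $S_N = \sum_{j=0}^N B_\bA^j[X-B_\bA[X]] + B_\bA^{N+1}[X]$ — wait, more directly: $S_N$ is nondecreasing and $B_\bA^{N+1}[X] \succeq 0$ gives $S_N = X - B_\bA^{N+1}[X] + (\text{lower order})$; the cleanest route is to note $S_N - S_{N-1} = B_\bA^N[X] \succeq 0$ and $\sum_{j=1}^N B_\bA^j[X] = \sum_{j=0}^{N-1} B_\bA^j[B_\bA[X]] \preceq \sum_{j=0}^{N-1} B_\bA^j[X] = S_{N-1}$, whence $S_N \preceq X + S_{N-1} - S_{N-1} $... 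I would instead argue: from $X - B_\bA[X]\succeq0$ we get $B_\bA^j[X] - B_\bA^{j+1}[X]\succeq 0$, so the $B_\bA^j[X]$ are nonincreasing and bounded below by $0$, hence $B_\bA^j[X]$ converges strongly; in particular $\|B_\bA^j[X]\|$ is bounded, say by $\|X\|$. Then for the \emph{weighted} series with coefficients $f_j$, since $f \in W^+$ means $\sum_j |f_j| < \infty$, the partial sums $\sum_{j=0}^N f_j B_\bA^j[X]$ form a norm-Cauchy sequence: $\big\| \sum_{j=M}^N f_j B_\bA^j[X] \big\| \le \sum_{j=M}^N |f_j|\, \|B_\bA^j[X]\| \le \|X\| \sum_{j=M}^N |f_j| \to 0$. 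Therefore $f(B_\bA)[X]$ is a well-defined bounded operator, and the rearrangement of the sum as $\sum_{\alpha \in \free} f_{|\alpha|} \bA^{*\alpha^\top} X \bA^\alpha$ in \eqref{1.10g} is justified by absolute convergence (grouping by word length, the inner sums $\sum_{|\alpha|=j}\bA^{*\alpha^\top}X\bA^\alpha = B_\bA^j[X]$ are finite).

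The main obstacle — really the only delicate point — is establishing the uniform bound $\sup_j \|B_\bA^j[X]\| < \infty$ from the hypothesis \eqref{1.9g} alone, since $\bA$ is not assumed to be a contraction or strongly stable and $B_\bA^j[X]$ need not converge to $0$. The resolution, as sketched above, is that \eqref{1.9g} forces $\{B_\bA^j[X]\}_{j \ge 0}$ to be a nonincreasing sequence of positive semidefinite operators (because $B_\bA$ applied to the positive operator $X - B_\bA[X]$ stays positive, giving $B_\bA^j[X] \succeq B_\bA^{j+1}[X]$), hence bounded in norm by $\|B_\bA^0[X]\| = \|X\|$; this is exactly where the two-sided inequality in \eqref{1.9g} (not just $X \succeq 0$) is used. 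Once this bound is in hand, the Wiener-class summability $\sum |f_j| < \infty$ does the rest through an elementary norm-Cauchy estimate, with no further properties of $\bA$ required.
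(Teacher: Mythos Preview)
Your proposal is correct and, once you settle on the argument, takes essentially the same approach as the paper: iterate \eqref{1.9g} (using positivity of $B_\bA$) to get $0 \preceq B_\bA^j[X] \preceq X$ and hence $\|B_\bA^j[X]\| \le \|X\|$ for all $j$, then invoke $\sum_j |f_j| < \infty$ to conclude norm convergence. The paper's proof is just these two lines; the detours through the unweighted partial sums $S_N$ (which need not be bounded in general---consider $B_\bA[X]=X$) are unnecessary and you were right to abandon them.
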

\begin{proof}
Indeed, iterating \eqref{1.9g}  yields that 
$$
X\succeq \sum_{\alpha\in\free:|\alpha|=j}\bA^{*\alpha^\top}X\bA^{\alpha}\succeq 0
\quad\mbox{for all}\quad j \ge 0,
$$
which together with \eqref{1.10g} implies
$\|f(B_\bA)[X]\|\le {\displaystyle\sum_{j=0}^\infty |f_j|\|X\|}=\|f\|_{_{W^+}}\cdot 
\|X\|$.
\end{proof}
As a consequence of Lemma \ref{P:welldefined},  the operatorial map 
\begin{equation}   
\label{1.11g}
\Gamma_{\bo,\bA}:=\frac{1}{R_{\bo}}(B_\bA)[X] =   \sum_{\alpha\in\free}
c_{|\alpha|}\, \bA^{*\alpha^\top }X\bA^\alpha
\end{equation}
is well defined  for any $X\in\cL(\cX)$ subject to inequalities \eqref{1.9g}. 
Note that the assumption \eqref{18.2} implies that $1\le \omega_j^{-1}\le \omega_{j+1}^{-1}$
forcing that $R_\bo(\lambda)$ (see \eqref{1.6g})
cannot be in the Wiener class $W^+$; nevertheless
 the assumption \eqref{1.8g} guarantees that for each $k\ge 0$, the function 
$$
\frac{R_{\bo,k}}{R_{\bo}}(\lambda)=\bigg(\sum_{j=0}^\infty
\frac{\lambda^j}{\omega_{k+j}}\bigg)\cdot\bigg(\sum_{j=0}^\infty c_j\lambda^j\bigg)
$$
does belong to $W^+$; furthermore one can make use of the recursion 
\eqref{1.8g} to see that
$$
\bigg(\sum_{j=0}^\infty
\frac{\lambda^j}{\omega_{k+j}}\bigg)\cdot\bigg(\sum_{j=0}^\infty c_j\lambda^j\bigg)
= \sum_{j=0}^\infty c_j^{\bo, k} \lambda^j, \;  \text{ where } \; c_j^{\bo,k} = - \sum_{\ell=1}^k \frac{c_{j+\ell}}{\omega_{k-\ell}}
$$
where the Taylor coefficients $c_j^{\bo, k}$ satisfy
$$
\sum_{j=0}^\infty | c_j^{\bo,k} | \le \bigg( \sum_{\ell = 1}^k \omega_{k-\ell} \bigg) \cdot \bigg(\sum_{j=1}^\infty |c_j|  \bigg) < \infty
$$
(see \cite[Proposition 3.3]{BBSzeged} for details). Again by Lemma \ref{P:welldefined}, 
the map
\begin{align}   
\Gamma^{(k)}_{\bo,\bA}:=   \frac{R_{\bo,k}}{R_{\bo}}(B_\bA)  \colon &  X \mapsto 
\sum_{j=0}^\infty c_j^{\bo, k} B_\bA^j [X]\notag\\
&=-\sum_{\alpha\in\free}\bigg(\sum_{\ell=1}^{k}\frac{c_{|\alpha|+\ell}}{\omega_{k-\ell}}\bigg)
\, \bA^{*\alpha^\top}X\bA^\alpha\quad (k\ge 1)
\label{3.5ag}
\end{align}
is well defined for any operator $X\in\cL(\cX)$ subject to inequalities \eqref{1.9g}.

\smallskip

The next result establishes connections between $\bo$-output stability, $\bo$-obser\-vability gramians, 
and solutions of associated Stein equations and inequalities.  Here we refer to the 
relations in \eqref{3.16g}  and \eqref{3.17g} as
the {\em Stein inequalities} and the {\em Stein equation} respectively. 

\begin{theorem}
\label{T:7.2}
Let us assume that the weight sequence $\bo$ meets conditions \eqref{18.2}, \eqref{1.8g} and
let $(C,\bA)\in\cL(\cX,\cY)\times\cL(\cX)^d$ be an output pair. Then:
\begin{enumerate}
\item The pair $(C,\bA)$ is $\bo$-output-stable if and only if there is
an $H\in \cL(\cX)$ subject to inequalities 
\begin{equation}
H\succeq \sum_{j=1}^d A_j^*HA_j\succeq 0,\qquad
\Gamma^{(k)}_{\bo,\bA}[H]\succeq 0\quad\mbox{for all}\quad k\ge 1
\label{3.15g}
\end{equation}
and the Stein inequality
\begin{equation}
\Gamma_{\bo,\bA}[H]\succeq C^*C.
\label{3.16g}
\end{equation}
\item If $(C,\bA)$ is a $\bo$-output-stable pair, then the gramian
$H={\mathcal G}_{\bo, C, \bA}$ is the  minimal positive semidefinite solution of the system \eqref{3.15g},
\eqref{3.16g} and it also satisfies the Stein equation
\begin{equation}
\Gamma_{\bo,\bA}[H]= C^*C.
\label{3.17g}
\end{equation}
\end{enumerate}
           \end{theorem}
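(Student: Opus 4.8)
The strategy is to unwind everything through the noncommutative $Z$-transform identification $H^2_\cY(\free)\cong\ell^2_\cY(\free)$ and the explicit series representation \eqref{obsreal} of the $\bo$-observability operator $\cO_{\bo,C,\bA}x = CR_\bo(Z(z)A)x = \sum_{\alpha\in\free}\omega_{|\alpha|}^{-1}(C\bA^\alpha x)z^\alpha$. The key bookkeeping device is the identity from Lemma \ref{L:useful}: the derived sequence $\{c_j\}$ from \eqref{1.8g} and the weights $\{\omega_j\}$ satisfy $\sum_{r=0}^j\big(\omega_{j-r}^{-1}\sum_{\ell=1}^k c_{\ell+r}\omega_{k-\ell}^{-1}\big) = -\omega_{k+j}^{-1}$, together with the fundamental recursion $\sum_{\ell=0}^j c_\ell\,\omega_{j-\ell}^{-1} = \delta_{j,0}$ that \emph{defines} $\{c_j\}$ as the Taylor coefficients of $1/R_\bo$. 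I will use these to show that $\Gamma_{\bo,\bA}$ acts as the ``inverse'' of the adjoint-observability recursion, so that $\Gamma_{\bo,\bA}[\cG_{\bo,C,\bA}] = C^*C$ identically (this is \eqref{3.17g}), while the auxiliary maps $\Gamma^{(k)}_{\bo,\bA}$ applied to $\cG_{\bo,C,\bA}$ reconstruct partial gramians and hence are automatically positive semidefinite.

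\textbf{Step 1 (necessity and the gramian computation).} Suppose $(C,\bA)$ is $\bo$-output stable, so $\cG_{\bo,C,\bA} = \sum_{\alpha\in\free}\omega_{|\alpha|}^{-1}\bA^{*\alpha^\top}C^*C\bA^\alpha$ converges (weakly, hence strongly) as noted at the end of Section \ref{S:Hardy}. First I would check that $H = \cG_{\bo,C,\bA}$ satisfies $H \succeq B_\bA[H]\succeq 0$: indeed $B_\bA[H] = \sum_{j=1}^d A_j^* H A_j = \sum_{\alpha\ne\emptyset}\omega_{|\alpha|}^{-1}\bA^{*\alpha^\top}C^*C\bA^\alpha$ since every nonempty word factors uniquely as $\alpha = \alpha' j$, and this equals $H - C^*C \preceq H$; positivity of $B_\bA[H]$ is clear since it is a sum of terms of the form $\bA^{*\beta^\top}C^*C\bA^\beta\succeq 0$. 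Hence Lemma \ref{P:welldefined} applies and $\Gamma_{\bo,\bA}[H]$, $\Gamma^{(k)}_{\bo,\bA}[H]$ are well defined. Now I compute $\Gamma_{\bo,\bA}[H] = \sum_{\beta\in\free}c_{|\beta|}\bA^{*\beta^\top}H\bA^\beta = \sum_{\beta\in\free}\sum_{\gamma\in\free}c_{|\beta|}\omega_{|\gamma|}^{-1}\bA^{*\beta^\top}\bA^{*\gamma^\top}C^*C\bA^\gamma\bA^\beta$. Reindexing by $\alpha = \gamma\beta$ (so $\alpha^\top = \beta^\top\gamma^\top$) and grouping terms with fixed $\alpha$: the coefficient of $\bA^{*\alpha^\top}C^*C\bA^\alpha$ is $\sum_{\substack{\beta,\gamma:\gamma\beta=\alpha}}c_{|\beta|}\omega_{|\gamma|}^{-1} = \sum_{r=0}^{|\alpha|}c_r\,\omega_{|\alpha|-r}^{-1}$ (summing over the $|\alpha|+1$ ways to split $\alpha$ into a prefix $\gamma$ of length $|\alpha|-r$ and suffix $\beta$ of length $r$... \emph{wait}, I must be careful: $\gamma\beta = \alpha$ means $\gamma$ is a prefix, and the number of such splittings with $|\beta| = r$ is exactly one, so the sum is $\sum_{r=0}^{|\alpha|}c_r\omega_{|\alpha|-r}^{-1}$). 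By the defining recursion \eqref{1.8g} this equals $\delta_{|\alpha|,0}$, so $\Gamma_{\bo,\bA}[H] = C^*C$, proving \eqref{3.17g} and in particular \eqref{3.16g} with equality.

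\textbf{Step 2 ($\Gamma^{(k)}$-positivity and sufficiency).} For \eqref{3.15g} it remains to check $\Gamma^{(k)}_{\bo,\bA}[H]\succeq 0$. Using the coefficients $c^{\bo,k}_j = -\sum_{\ell=1}^k c_{j+\ell}\omega_{k-\ell}^{-1}$ from \eqref{3.5ag} and the same reindexing as in Step 1, I find $\Gamma^{(k)}_{\bo,\bA}[H] = \sum_{\alpha\in\free}\big(\sum_{r=0}^{|\alpha|}c^{\bo,k}_r\omega_{|\alpha|-r}^{-1}\big)\bA^{*\alpha^\top}C^*C\bA^\alpha$, and the inner scalar $\sum_{r=0}^{|\alpha|}c^{\bo,k}_r\omega_{|\alpha|-r}^{-1} = -\sum_{r=0}^{|\alpha|}\omega_{|\alpha|-r}^{-1}\sum_{\ell=1}^k c_{r+\ell}\omega_{k-\ell}^{-1}$ equals $\omega_{k+|\alpha|}^{-1}$ by Lemma \ref{L:useful} (with $j = |\alpha|$). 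Hence $\Gamma^{(k)}_{\bo,\bA}[H] = \sum_{\alpha\in\free}\omega_{k+|\alpha|}^{-1}\bA^{*\alpha^\top}C^*C\bA^\alpha\succeq 0$ — it is literally a shifted observability gramian, and the series converges by the $W^+$-bound on $c^{\bo,k}_j$. This establishes that $\cG_{\bo,C,\bA}$ solves \eqref{3.15g}–\eqref{3.16g}, giving the ``only if'' direction of (1); conversely, if some $H$ solves \eqref{3.15g}–\eqref{3.16g}, then iterating $H\succeq B_\bA[H]$ and using $\Gamma_{\bo,\bA}[H]\succeq C^*C$ one telescopes to get $\sum_{\alpha:|\alpha|\le N}\omega_{|\alpha|}^{-1}\bA^{*\alpha^\top}C^*C\bA^\alpha\preceq$ a bounded-by-$\|H\|\,\|R_\bo\text{-type sum}\|$ expression — more precisely one uses the partial-sum identity dual to Step 1 to see the finite truncations of $\cG_{\bo,C,\bA}$ are dominated by an affine expression in $H$ and the $\Gamma^{(k)}_{\bo,\bA}[H]$, all $\succeq 0$ — so the gramian series converges and $\|\cO_{\bo,C,\bA}x\|^2 = \langle\cG_{\bo,C,\bA}x,x\rangle < \infty$, i.e. $\bo$-output stability; this also shows $\cG_{\bo,C,\bA}\preceq H$, giving minimality in (2).

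\textbf{Main obstacle.} The combinatorial reindexing $\alpha = \gamma\beta$ versus the transpose conventions ($\bA^\alpha = A_{i_N}\cdots A_{i_1}$ for $\alpha = i_N\cdots i_1$, and $\bA^{*\alpha^\top}$) needs to be executed with scrupulous care so that prefixes/suffixes and the order of operator composition line up correctly; getting a single $\alpha^\top$ backwards would break the telescoping. The convergence bookkeeping in the sufficiency/minimality direction — legitimizing the passage from finite truncations to the infinite series and the interchange of summation order inside $\Gamma_{\bo,\bA}[H]$ when $H$ is only assumed to satisfy \eqref{3.15g} rather than being a convergent gramian a priori — is the genuinely delicate part, and I would lean on the uniform $W^+$-estimates from Lemma \ref{P:welldefined} and \cite[Proposition 3.3]{BBSzeged} to push it through, mirroring the single-variable argument of \cite{BBSzeged}.
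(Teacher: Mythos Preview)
Your necessity argument (Steps 1--2) follows the same route as the paper --- compute $\Gamma_{\bo,\bA}[\cG_{\bo,C,\bA}]$ and $\Gamma^{(k)}_{\bo,\bA}[\cG_{\bo,C,\bA}]$ by reindexing and invoke the convolution identities for $\{c_j\}$ and $\{\omega_j^{-1}\}$ --- and is essentially correct. There is one slip: your claim $B_\bA[\cG_{\bo,C,\bA}]=\sum_{\alpha\ne\emptyset}\omega_{|\alpha|}^{-1}\bA^{*\alpha^\top}C^*C\bA^\alpha=\cG_{\bo,C,\bA}-C^*C$ is wrong. When you substitute $\alpha=\alpha'j$ the surviving weight is $\omega_{|\alpha'|}^{-1}=\omega_{|\alpha|-1}^{-1}$, not $\omega_{|\alpha|}^{-1}$, so in fact $B_\bA[\cG_{\bo,C,\bA}]=\sum_{\alpha\ne\emptyset}\omega_{|\alpha|-1}^{-1}\bA^{*\alpha^\top}C^*C\bA^\alpha$. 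The inequality $B_\bA[\cG_{\bo,C,\bA}]\preceq\cG_{\bo,C,\bA}$ then needs the monotonicity $\omega_{j-1}^{-1}\le\omega_j^{-1}$ from the admissibility hypothesis \eqref{18.2}; it is not an equality for general $\bo$.

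The sufficiency/minimality direction, however, has a genuine gap. Your phrase ``one telescopes \ldots\ more precisely one uses the partial-sum identity dual to Step 1'' does not supply the mechanism, and a naive dual does not work: to bound the partial sums of $\cG_{\bo,C,\bA}$ by $H$ you need an \emph{exact} finite identity, and this is where the hypothesis $\Gamma^{(k)}_{\bo,\bA}[H]\succeq 0$ actually enters. The paper's device is the recursion
\[
\Gamma^{(j)}_{\bo,\bA}[H]\;=\;B_\bA\big[\Gamma^{(j+1)}_{\bo,\bA}[H]\big]\;+\;\omega_j^{-1}\,\Gamma_{\bo,\bA}[H],
\]
obtained by dividing $R_{\bo,j}(\lambda)=\lambda R_{\bo,j+1}(\lambda)+\omega_j^{-1}$ by $R_\bo$ and applying the $B_\bA$-calculus. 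Applying $B_\bA^{\,j}$ and summing $j=0,\ldots,k-1$ telescopes to
\[
H\;-\;\Omega_k\;=\;\sum_{|\alpha|<k}\omega_{|\alpha|}^{-1}\,\bA^{*\alpha^\top}\Gamma_{\bo,\bA}[H]\,\bA^\alpha,
\qquad
\Omega_k:=\sum_{|\alpha|=k}\bA^{*\alpha^\top}\Gamma^{(k)}_{\bo,\bA}[H]\,\bA^\alpha.
\]
Now $\Gamma^{(k)}_{\bo,\bA}[H]\succeq 0$ gives $\Omega_k\succeq 0$, and $\Gamma_{\bo,\bA}[H]\succeq C^*C$ gives that the partial sums of $\cG_{\bo,C,\bA}$ are $\preceq H-\Omega_k\preceq H$; letting $k\to\infty$ yields both output stability and $\cG_{\bo,C,\bA}\preceq H$. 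Without this identity you have no handle on the partial sums, and the $W^+$-estimates from Lemma \ref{P:welldefined} alone are not enough --- they only control $\Gamma_{\bo,\bA}[H]$ and $\Gamma^{(k)}_{\bo,\bA}[H]$, not the gramian series.
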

\begin{proof}
Suppose first that $(C,\bA)$ is $\bo$-output-stable.
Then the series in \eqref{18.2aa} converges strongly to the operator
$H={\mathcal G}_{\bo,C,\bA}\succeq 0$, which, as will be now verified, satisfies relations 
\eqref{3.15g} and \eqref{3.17g}. Indeed, we have from \eqref{4.4}
$$
\cG_{\bo,C,\bA}- \sum_{j=1}^d A_j^*\cG_{\bo,C,\bA}A_j=C^*C+\sum_{\alpha\neq \emptyset}
\big(\omega_{|\alpha|}^{-1}-\omega_{|\alpha|-1}^{-1}\big) \bA^{*\alpha^\top}C^*C\bA^\alpha,
$$
which implies, since $\bo$ is non-increasing (by \eqref{18.2}) and since ${\mathcal G}_{\bo,C,\bA}\succeq 0$, that
\begin{equation}
\cG_{\bo,C,\bA}\succeq \sum_{j=1}^d A_j^*\cG_{\bo,C,\bA}A_j \succeq 0.
\label{3.18g}
\end{equation}
By Lemma \ref{P:welldefined}, the operators $\Gamma_{\bo,\bA}[\cG_{\bo,C,\bA}]$
and $\Gamma^{(k)}_{\bo,\bA}[\cG_{\bo,C,\bA}]$ are well-defined and the series
\eqref{1.11g}, \eqref{3.5ag} (with $X=\cG_{\bo,C,\bA}$) converge absolutely
when applied to any $x\in\cX$. Hence, we are now in position to apply the following
\textbf{general principle} due to Mertens (see \cite[Theorem 3.50]{Rudin} for the case of numerically-valued sequences):

\smallskip
\noindent
{\em If the series ${\displaystyle\sum_{n\ge 0}{\bf a}_n}$ converges absolutely and if $\; {\displaystyle\sum_{n\ge 0}{\bf a}_n}={\bf a}\; $ 
and $\; {\displaystyle\sum_{n\ge 0}{\bf b}_n}={\bf b}$, then }
\begin{equation}
\sum_{n=0}^{\infty}\bigg(\sum_{k=0}^n{\bf a}_k {\bf b}_{n-k}\bigg)=\bigg(\sum_{n= 0}^{\infty}{\bf a}_n\bigg)\cdot 
\bigg(\sum_{n=0}^{\infty}{\bf b}_n\bigg) ={\bf a}\cdot{\bf b}.
\label{GenPrin}
\end{equation}
Applying this \textbf{General Principle} with the choice
$$
{\mathbf a}_n = c_n B_\bA^n \in \cL(\cL(\cX)), \quad {\mathbf b}_n = \omega_n B_\bA^n[C^*C] \in \cL(\cX)
$$
then leads to
\begin{align}
\Gamma_{\bo,\bA}[\cG_{\bo,C,\bA}]&=  
\bigg( \sum_{n=0}^\infty c_n B_\bA^n \bigg) \bigg[ \sum_{n=0}^\infty \omega_n^{-1} B_\bA^n[C^* C]  \bigg] 
 = \sum_{n=0}^\infty \bigg( \sum_{k=0}^n  c_k \omega_{n-k}^{-1}  \bigg) B_\bA^n[C^* C]  \notag \\
&=\sum_{\alpha\in\free}\bigg(\sum_{j=0}^{|\alpha|}
c_{j}\omega_{|\alpha|-j}^{-1}\bigg)\bA^{*\alpha^\top}C^*C\bA^\alpha=C^*C
\label{3.19g}
\end{align}
where the last step follows from the recursion in \eqref{1.8g}. 

\smallskip

Similarly,  identity \eqref{3.18u} together with \eqref{18.2aa} and \eqref{3.5ag} leads us to
\begin{align}
\Gamma^{(k)}_{\bo,\bA}[\cG_{\bo,C,\bA}]   & =  \bigg( \sum_{n=0}^\infty c_n^{\bo,k} B_\bA^n \bigg)
\bigg[\sum_{n=0}^\infty \omega_n^{-1} B_\bA^n[C^*C] \bigg]  \notag \\
& = \sum_{n=0}^\infty \bigg( \sum_{j=0}^n   c_j^{\bo,k} \omega_{n-j}^{-1} \bigg) B_\bA^n[C^*C] \notag \\
& = \sum_{n=0}^\infty \bigg( \sum_{j=0}^n \sum_{\ell=1}^k \frac{c_{j+\ell}}{\omega_{k-\ell} \omega_{n-j}} \bigg) B_\bA^n [C^* C]
\notag \\
&=-\sum_{\alpha\in\free}\bigg(\sum_{j=0}^{|\alpha|}\sum_{\ell=1}^k
\frac{c_{\ell+j}}{\omega_{k-\ell}\omega_{|\alpha|-j}}\bigg)\bA^{*\alpha^\top}C^*C\bA^\alpha\notag\\
&=\sum_{\alpha\in\free}\omega_{|\alpha|+k}^{-1}\cdot 
\bA^{*\alpha^\top}C^*C\bA^\alpha \succeq 
0.\label{3.19gg}
\end{align}
We now see from \eqref{3.18g}, \eqref{3.19g} and the last inequality that 
$H={\mathcal G}_{\bo,C,\bA}$ satisfies relations \eqref{3.15g}, \eqref{3.17g} and hence, also \eqref{3.16g}.

\smallskip

Conversely, suppose that \eqref{3.15g}, \eqref{3.16g} hold for some $H\in\cL(\cX)$.
From definitions \eqref{1.6g}, it follows that for every $j\ge 0$,
\begin{equation}
R_{\bo,j}(\lambda)=\lambda R_{\bo,j+1}(\lambda)+\omega_j^{-1}.
\label{1.6g'}
\end{equation}
Dividing both sides by $R_\bo(\lambda)$ and applying the $B_\bA$ functional calculus 
to the resulting identity and to the chosen operator $H$ (this can be done thanks to Lemma \ref{P:welldefined}) 
gives the operator equality
$$
\Gamma^{(j)}_{\bo,\bA}[H]=B_\bA\Gamma^{(j+1)}_{\bo,\bA}[H]+\omega_j^{-1}
\Gamma_{\bo,\bA}[H]
=\sum_{k=1}^d A_k^*\Gamma^{(j+1)}_{\bo,\bA}[H]A_k+\omega_j^{-1}\Gamma_{\bo,\bA}[H]
$$
where $\Gamma^{(j)}_{\bo,\bA}[H]$ is simply $H$ for the case $j=0$.
Application of $(B_{\bA})^{j}$ to both sides of this last identity 
then gives us
$$
B_{\bA}^{j} \Gamma_{\bo, \bA}^{(j)}[H] - B_{\bA}^{j+1} \Gamma_{\bo, 
\bA}^{(j+1)}[H] = \omega_{j}^{-1} B_{\bA}^{j} \Gamma_{\bo, \bA}[H]
$$
or, more explicitly,
\begin{align}
&\sum_{\alpha\in\free:|\alpha|=j}\bA^{*\alpha^\top}\Gamma^{(j)}_{\bo,\bA}[H]\bA^\alpha-
\sum_{\alpha\in\free:|\alpha|=j+1}\bA^{*\alpha^\top}\Gamma^{(j+1)}_{\bo,\bA}[H]\bA^\alpha\notag\\
&=\omega_j^{-1}\sum_{\alpha\in\free:|\alpha|=j}\bA^{*\alpha^\top}\Gamma_{\bo,\bA}[H] \bA^\alpha\succeq 0.
\label{1.20g}
\end{align}
Define a sequence of operators  $\{\Omega_{k}\}$ ($k=0,1,2, \dots$) by
$$
\Omega_k=\sum_{\alpha\in\free:|\alpha|=k}\bA^{*\alpha^\top}\Gamma^{(k)}_{\bo,\bA}[H]\bA^\alpha.
$$
From \eqref{1.20g} and the fact that $\Gamma_{\bo, \bA}[H] \succeq 0$ by 
hypothesis \eqref{3.16g}, we see that $\Omega_{k}$ is a decreasing 
operator sequence.  As a consequence of hypothesis \eqref{3.15g}, we 
also have $\Omega_{k} \succeq 0$ for all $k$.  Therefore, there exists the strong limit
\begin{equation}
\Delta_{\bA,H}=\lim_{k\to\infty}\Omega_k.
\label{1.21g}
\end{equation}
Summing up equalities in \eqref{1.20g} for $j=0,\ldots,k-1$ and taking into account that 
$\Gamma^{(0)}_{\bo,\bA}[H]=H$, we get 
\begin{align}
\sum_{\alpha\in\free:|\alpha|< k}\omega_{|\alpha|}^{-1}\bA^{*\alpha^\top}\Gamma_{\bo,\bA}[H] 
\bA^\alpha&=H-\sum_{\alpha\in\free:|\alpha|=k}\bA^{*\alpha^\top}\Gamma^{(k)}_{\bo,\bA}[H]\bA^\alpha\notag\\
&=H-\Omega_k.
\label{1.21ga}
\end{align}
Combining \eqref{1.21g} with \eqref{3.16g} gives
\begin{align}
\sum_{\alpha\in\free:|\alpha|\le k}\omega_{|\alpha|}^{-1}A^{*\alpha^\top}C^*CA^\alpha
\preceq \sum_{\alpha\in\free:|\alpha| < k}\omega_{|\alpha|}^{-1}A^{*\alpha^\top}\Gamma_{\bo,\bA}[H]
A^\alpha =H-\Omega_k
\label{3.19ggg}
\end{align}
for all $k\ge 1$. By letting $k\to \infty$ in \eqref{3.19ggg} we conclude that the sum on the
left-hand side  converges to a bounded positive-semidefinite operator, which is
$\cG_{\bo,C,\bA}$ by \eqref{18.2aa}. Thus, passing to the limit in \eqref{3.19ggg} as $k\to \infty$ gives
\begin{equation}  \label{lim-ineq}
\cG_{\bo,C,\bA} \preceq H-\Delta_{\bA,H} \preceq H,
\end{equation}
where $\Delta_{\bA,H}\succeq 0$ is the limit defined in
\eqref{1.21g}. Therefore, the pair $(C,\bA)$ is $\bo$-output-stable
and  $\cG_{\bo,C,\bA}$ is indeed the minimal positive-semidefinite solution to the
system \eqref{3.15g}, \eqref{3.16g}.
\end{proof}

\begin{remark}  \label{R:extension}
Let us consider Theorem \ref{T:7.2} for the classical Hardy-space case where $d=1$ and $\omega_j=1$ for all 
$j\ge 0$.  Then
$\bo$-output stability amounts to classical output stability for the output pair $(C,A)$.
Furthermore we have
$$
\Gamma_{\bo,A}[H]=H-A^*HA\quad\mbox{and}\quad \Gamma^{(k)}_{\bo,A}[H]=I_\cX\quad\mbox{for all $k\ge 1$}.
$$ 
Therefore,
relations \eqref{3.16g} and \eqref{3.17g} amount to \eqref{bbfStein-ineq} and \eqref{bbf1.5},
respectively, while the only non-redundant inequality in the system of equations in the second part of 
\eqref{3.15g} is $H\succeq 0$.
Therefore, Theorem \ref{T:7.2} can be viewed as a  canonical extension of Theorem \ref{T:bbf1} to the 
$\bo$-setting, apart from the uniqueness statements (2) and (3) in Theorem \ref{T:bbf1}.
Observe that Theorem \ref{T:7.2} does not address these uniqueness issues as the latter require some 
suitable notion of {\em strong stability}.  See Remark \ref{R:extension2} below 
for further discussion on this point.    
\end{remark}

By Lemma \ref{P:welldefined}, the operatorial maps $\Gamma^{(k)}_{\bo,\bA}$ and 
$\Gamma^{(k)}_{\bo,\bA}$ are well defined on operators $X\in\cL(\cX)$ subject to inequalities 
\eqref{1.9g}. In particular (by choosing $X=I_{\cX}$), the operators $\Gamma_{\bo,\bA}[I_{\cX}]$ and 
$\Gamma^{(k)}_{\bo,\bA}[I_{\cX}]$ are well defined if the tuple $\bA$ is contractive in the sense that
\begin{equation}  \label{4.22}
A_{1}^{*}A_{1} + \cdots + A_{d}^{*}A_{d} \preceq I_{\cX},
\end{equation}
that is, $\bA^*=(A_1^*,\ldots,A_d^*)$ is a row-contraction.

\begin{definition} \label{D:7.3}
The tuple ${\mathbf A}=(A_1,\ldots,A_d)\in\cL(\cX)^d$ is called {\em $\bo$-contractive} if
it is contractive (i.e., \eqref{4.22} holds) and 
$$
\Gamma_{\bo,\bA}[I_\cX]=\sum_{\alpha\in\free}c_{|\alpha|}\bA^{*\alpha^\top}\bA^\alpha\succeq 
0.
$$
The tuple $\bA$ is called {\em $\bo$-hypercontractive} if it is $\bo$-contractive and 
$$
\Gamma^{(k)}_{\bo,\bA}[I_{\cX}]:=
-\sum_{\alpha\in\free}\bigg(\sum_{\ell=1}^{k}\frac{c_{|\alpha|+\ell}}{\omega_{k-\ell}}\bigg)
\, \bA^{*\alpha^\top}\bA^\alpha \succeq 0 \quad\mbox{for all}\quad k\ge 1.
$$
\end{definition}

\begin{definition}
\label{D:7.4}
An $\bo$-hypercontractive operator tuple $\bA\in\cL(\cX)^d$ will be called {\em $\bo$-strongly stable}
if the limit \eqref{1.21g} with $H=I_{\cX}$ equals zero, i.e.,
$$
\Delta_{\bA,I_{\cX}}=\lim_{k\to \infty}\sum_{\alpha\in\free:|\alpha|=k}
\bA^{*\alpha^\top}\Gamma^{(k)}_{\bo,\bA}[I_{\cX}]\bA^\alpha=0,
$$
or, equivalently (see \eqref{3.5ag}),
$$\lim_{k\to \infty}\sum_{\alpha\in\free:|\alpha|\ge k}
\bigg(\sum_{\ell=1}^{k}\frac{c_{|\alpha| -k  +\ell}}{\omega_{k-\ell}}\bigg)
\|\bA^{\alpha}x\|^2=0\quad\mbox{for all}\quad x\in\cX.
$$\end{definition}
As we will see in Remark \ref{R:stab} below, $\bo$-strong stability of an $\bo$-hypercontraction implies 
its strong stability in the usual sense \eqref{bAstable}.

\smallskip

Taking advantage of relations \eqref{3.15g}--\eqref{3.17g} characterizing $\bo$-output stability,
we introduce the notions of weighted contractive and isometric pairs.

\begin{definition} \label{D:7.5}
A pair $(C,\bA)$ (with $C\in\cL(\cX,\cY)$ and $\bA\in\cX^d$) will be called
$\bo$-{\em contractive output pair} if inequalities \eqref{3.15g},
\eqref{3.16g} hold with $H = I_{{\mathcal X}}$, i.e., if $\bA$ is $\bo$-hypercontractive and 
$$
\Gamma_{\bo,\bA}[I_{_\cX}]:=\sum_{\alpha\in\free}c_{|\alpha|}\bA^{*\alpha^\top}\bA^\alpha \succeq C^*C.
$$
The pair $(C,\bA)$ will be called $\bo$-{\em isometric} if  $\bA$ is
$\bo$-hypercontractive and
\begin{equation}
\Gamma_{\bo,\bA}[I_{\cX}]=\sum_{\alpha\in\free}c_{|\alpha|}\bA^{*\alpha^\top}\bA^\alpha=C^*C.
\label{1.23g}
\end{equation}
It is clear from \eqref{18.2aaa} that the gramian $\cG_{\bo,C,\bA}$ is positive-definite 
if and only if the pair $(C,\bA)$ is observable in the sense of \eqref{ncobs}.
We will say that  the pair $(C,\bA)$ is {\em exactly $\bo$-observable} if the
$\bo$-gramian $\cG_{\bo,C,\bA}$ is bounded and is strictly positive-definite.
\end{definition}

\begin{lemma} (1) Suppose $(C,\bA)\in\cL(\cX,\cY)\times\cL(\cX)^d$ 
is a $\bo$-contractive pair. Then $(C,\bA)$ is
$\bo$-output stable and $\cG_{\bo,C,\bA}\preceq I$ and hence $\cO_{\bo,C,\bA}: \, \cX\to
H^2_{\bo,\cY}(\free)$ is a contraction.

\smallskip

(2) Suppose $(C,\bA)$ is a $\bo$-isometric pair.
Then $\cG_{\bo,C,\bA}=I_{\cX}$ if and only if $\bA$ is $\bo$-strongly stable.
In particular, if $(C, \bA)$ is $\bo$-isometric and $\bA$ is $\bo$-strongly stable, then
$\cO_{\bo, C, \bA} \colon \cX \to H^2_{\bo, \cY}(\free)$ is isometric, and hence also the 
pair $(C, \bA)$ is exactly $\bo$-observable.
\label{L:7.6}
\end{lemma}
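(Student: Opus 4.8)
The plan is to read both statements off of Theorem \ref{T:7.2} applied with the choice $H = I_\cX$. For part (1), the hypothesis that $(C,\bA)$ is a $\bo$-contractive pair means, by Definition \ref{D:7.5}, precisely that the inequalities \eqref{3.15g} and \eqref{3.16g} hold with $H = I_\cX$: indeed $\bA$ being $\bo$-hypercontractive unwinds (via Definitions \ref{D:7.3}--\ref{D:7.4}) to $I_\cX \succeq B_\bA[I_\cX] \succeq 0$ together with $\Gamma^{(k)}_{\bo,\bA}[I_\cX] \succeq 0$ for all $k \ge 1$, which is exactly \eqref{3.15g} with $H = I_\cX$, while the $\bo$-contractivity estimate $\Gamma_{\bo,\bA}[I_\cX] \succeq C^*C$ is \eqref{3.16g} with $H = I_\cX$. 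Therefore part (1) of Theorem \ref{T:7.2} applies and gives that $(C,\bA)$ is $\bo$-output stable; moreover part (2) of that theorem tells us $\cG_{\bo,C,\bA}$ is the \emph{minimal} positive-semidefinite solution of the system \eqref{3.15g}, \eqref{3.16g}, and since $H = I_\cX$ is \emph{some} positive-semidefinite solution, minimality forces $\cG_{\bo,C,\bA} \preceq I_\cX$. Finally, $\cG_{\bo,C,\bA} = \cO_{\bo,C,\bA}^*\cO_{\bo,C,\bA} \preceq I_\cX$ is the statement that $\|\cO_{\bo,C,\bA} x\|^2 \le \|x\|^2$ for all $x$, i.e.\ $\cO_{\bo,C,\bA}$ is a contraction.

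For part (2), assume $(C,\bA)$ is $\bo$-isometric, so in particular it is $\bo$-contractive and hence (by part (1)) $\bo$-output stable, and $\cG_{\bo,C,\bA}$ is well defined and bounded. Since equality holds in \eqref{3.16g} (this is \eqref{1.23g}), we may run the converse direction in the proof of Theorem \ref{T:7.2} with $H = I_\cX$: that argument produces the operator $\Delta_{\bA,I_\cX} = \lim_{k\to\infty}\Omega_k \succeq 0$ with $\Omega_k = \sum_{|\alpha|=k} \bA^{*\alpha^\top}\Gamma^{(k)}_{\bo,\bA}[I_\cX]\bA^\alpha$, and the identity \eqref{1.21ga} specializes (using $\Gamma_{\bo,\bA}[I_\cX] = C^*C$) to
$$
\sum_{\alpha\in\free:\,|\alpha|<k}\omega_{|\alpha|}^{-1}\bA^{*\alpha^\top}C^*C\bA^\alpha = I_\cX - \Omega_k.
$$
Letting $k \to \infty$, the left side converges (by $\bo$-output stability) to $\cG_{\bo,C,\bA}$, so $\cG_{\bo,C,\bA} = I_\cX - \Delta_{\bA,I_\cX}$. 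Thus $\cG_{\bo,C,\bA} = I_\cX$ if and only if $\Delta_{\bA,I_\cX} = 0$, which by Definition \ref{D:7.4} is exactly the statement that $\bA$ is $\bo$-strongly stable. This proves the equivalence. The last assertion is immediate: if in addition $\bA$ is $\bo$-strongly stable then $\cO_{\bo,C,\bA}^*\cO_{\bo,C,\bA} = \cG_{\bo,C,\bA} = I_\cX$, i.e.\ $\cO_{\bo,C,\bA}$ is isometric, and in particular injective with closed range bounded below, so the $\bo$-gramian is strictly positive-definite and $(C,\bA)$ is exactly $\bo$-observable in the sense of Definition \ref{D:7.5}.

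The only genuinely substantive point — and the step I would be most careful about — is checking that the dictionary between the defined notions ($\bo$-contractive pair, $\bo$-hypercontractive tuple, $\bo$-isometric pair) and the inequality/equation systems \eqref{3.15g}--\eqref{3.17g} with $H = I_\cX$ is an exact translation, so that Theorem \ref{T:7.2} can be invoked verbatim; everything else is bookkeeping. No step requires new ideas beyond what is already established in Theorem \ref{T:7.2} and its proof, so I would write this proof as essentially a two-line appeal to that theorem for each part, spelling out the $H = I_\cX$ specialization of the relevant formulas (\eqref{1.21ga}, \eqref{1.21g}) to extract the equality $\cG_{\bo,C,\bA} = I_\cX - \Delta_{\bA,I_\cX}$.
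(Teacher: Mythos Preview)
Your proof is correct and follows essentially the same approach as the paper: both arguments specialize Theorem~\ref{T:7.2} (and the intermediate identities \eqref{1.21ga}, \eqref{1.21g} from its proof) to $H = I_\cX$, deriving the key relation $\cG_{\bo,C,\bA} = I_\cX - \Delta_{\bA,I_\cX}$ in the isometric case and reading off the equivalence from it. Your presentation is in fact slightly cleaner in that you derive this identity once and deduce both directions of part~(2) simultaneously, whereas the paper treats the two implications separately.
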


\begin{proof}
Suppose that $(C, \bA)$ is an $\bo$-contractive pair.  Then by Definition \ref{D:7.5} $H = I_\cX$ 
satisfies \eqref{3.15g} and \eqref{3.16g}.  Hence by part (1) of Theorem \ref{T:7.2} we conclude that 
$(C, \bA)$ is $\bo$-output stable, and  the inequality \eqref{lim-ineq} specified for the case $H = I_\cX$ 
gives us also that $\cG_{\bo, C, \bA}$ is contractive. This completes the proof of (1).

\smallskip

We now suppose that $(C, \bA)$ is $\bo$-isometric and $\bA$ is $\bo$-strongly stable.
Specifying \eqref{1.21g} to the present case where $H = I_\cX$, we conclude that the limit
$$
\Delta_{\bA,I}=\lim_{k\to \infty}\sum_{\alpha\in\free:|\alpha|=k}
\bA^{*\alpha^\top}\Gamma^{(k)}_{\bo,\bA}[I_{\cX}]\bA^\alpha
$$
exists. Setting $H=I_{\cX}$ in \eqref{1.21ga} and letting $k\to \infty$ we see that
the series below converges in the strong operator topology and satisfies
\begin{equation}
\sum_{\alpha\in\free}^\infty \omega_{|\alpha|}^{-1}A^{*\alpha^\top}\Gamma_{\bo,\bA}[I_{\cX}]\bA^\alpha=
I_{\cX}-\Delta_{\bA,I}.
\label{ref5g}
\end{equation}
Combining \eqref{ref5g} and \eqref{1.23g} now gives
$$
\sum_{\alpha\in\free}\omega_{|\alpha|}^{-1}\bA^{*\alpha^\top}C^*CA^\alpha= I-\Delta_{A,I}.
$$
Since $\Delta_{A,I}=0$ and since the series on the left converges to $\cG_{\bo,C,\bA}$, we conclude
$\cG_{\bo,C,\bA}=I_\cX$  from which it follows by definitions that $\cO_{\bo, C, \bA}$ is isometric and $(C, \bA)$ is exactly $\bo$-observable.

\smallskip

Conversely, suppose that $(C, \bA)$ is an $\bo$-isometric pair such that $\cG_{\bo, C, \bA} = I_\cX$.
Then we still arrive at the identity \eqref{ref5g}, where the series on the left is just the definition of
$\cG_{\bo, C, \bA}$.  The assumption that $\cG_{\bo, C, \bA} = I_\cX$ then forces  $\Delta_{\bA, I} = 0$,
i.e., that $\bA$ is $\bo$-strongly stable.  This  completes the proof.
\end{proof}

\begin{definition}
Let us say that  {\em the pair $(C, \bA)\in\cL(\cX,\cY)\times(\cL(\cX))^d$ is similar to the pair
$(\widetilde C, \widetilde \bA)\in\cL(\widetilde{\cX},\cY)\times(\cL(\widetilde{\cX}))^d$ } if there is an invertible operator 
$T:\cX\to \widetilde{\cX}$ so that
\begin{equation}
         \widetilde C = C T^{-1}, \qquad \widetilde A_{j} = T A_{j}T^{-1}
         \quad\text{for} \; \; j = 1, \dots, d.
\label{sep1}
\end{equation}
{\rm Then we have the following characterization of pairs
$(C, {\mathbf A})$ which are similar to an $\bo$-contractive or to an $\bo$-isometric pair.}
\label{D:sim}
\end{definition}

\begin{proposition}  \label{P:similar}
The pair $(C, \bA)$ is similar to an $\bo$-contractive (to an $\bo$-isometric) pair
$(\widetilde C, \widetilde \bA)$ if and only if there exists a
bounded, strictly positive-definite solution $H$ to the system \eqref{3.15g}, \eqref{3.16g}
(respectively, the system \eqref{3.15g}, \eqref{3.17g}).
         \end{proposition}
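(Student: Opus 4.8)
The plan is to establish the equivalence by transferring the Stein-relation characterization of $\bo$-output-stability from Theorem \ref{T:7.2} along a similarity transformation, mimicking the classical argument for similarity to a contraction. First I would prove the ``if'' direction: suppose $H$ is a bounded, strictly positive-definite solution of \eqref{3.15g}, \eqref{3.16g} (respectively \eqref{3.15g}, \eqref{3.17g}). Set $T = H^{\frac12}$, which is invertible with bounded inverse, and define $\widetilde C = C T^{-1} = C H^{-\frac12}$, $\widetilde A_j = T A_j T^{-1} = H^{\frac12} A_j H^{-\frac12}$. The key computation is that the operatorial maps behave naturally under this conjugation: for any $X \in \cL(\widetilde\cX)$ one has $B_{\widetilde\bA}[X] = H^{\frac12} B_\bA[H^{-\frac12} X H^{-\frac12}] H^{\frac12}$, and iterating, $B_{\widetilde\bA}^k[X] = H^{\frac12} B_\bA^k[H^{-\frac12} X H^{-\frac12}] H^{\frac12}$. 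Hence for any $f \in W^+$ (in particular $f = 1/R_\bo$ and $f = R_{\bo,k}/R_\bo$), applying $f(B_{\widetilde\bA})$ to $I_{\widetilde\cX}$ gives $f(B_{\widetilde\bA})[I] = H^{\frac12} f(B_\bA)[H^{-1}] H^{\frac12}$. Thus $\Gamma_{\bo,\widetilde\bA}[I] = H^{\frac12}\Gamma_{\bo,\bA}[H]H^{\frac12}$ and $\Gamma^{(k)}_{\bo,\widetilde\bA}[I] = H^{\frac12}\Gamma^{(k)}_{\bo,\bA}[H]H^{\frac12}$, and similarly the first inequality in \eqref{3.15g} for $H$ conjugates to $I_{\widetilde\cX} \succeq \sum_j \widetilde A_j^* \widetilde A_j \succeq 0$. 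Therefore \eqref{3.15g}, \eqref{3.16g} hold for $H$ if and only if the defining inequalities of an $\bo$-contractive pair hold for $(\widetilde C, \widetilde\bA)$ (Definition \ref{D:7.5}), and the equality case \eqref{3.17g} corresponds to the $\bo$-isometric case \eqref{1.23g}.

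For the ``only if'' direction, suppose $(C,\bA)$ is similar via an invertible $T$ to an $\bo$-contractive pair $(\widetilde C, \widetilde\bA)$, so $\widetilde C = CT^{-1}$, $\widetilde A_j = TA_jT^{-1}$. By Definition \ref{D:7.5}, $I_{\widetilde\cX}$ satisfies \eqref{3.15g}, \eqref{3.16g}. Running the conjugation formulas above in reverse with $H := T^* T$ (bounded and strictly positive-definite since $T$ is invertible with bounded inverse), one gets $\Gamma_{\bo,\bA}[H] = T^* \Gamma_{\bo,\widetilde\bA}[I_{\widetilde\cX}] T \succeq T^* \widetilde C^* \widetilde C T = C^* C$, and likewise the remaining relations in \eqref{3.15g} transfer, so $H = T^*T$ is a bounded, strictly positive-definite solution of \eqref{3.15g}, \eqref{3.16g}; the $\bo$-isometric case is handled the same way with equality. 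This completes the equivalence.

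The only technical point requiring care is the legitimacy of the conjugation identity $f(B_{\widetilde\bA})[X] = H^{\frac12} f(B_\bA)[H^{-\frac12} X H^{-\frac12}] H^{\frac12}$ at the level of the infinite series defining these maps. Here one should check that $I_{\widetilde\cX}$ satisfies the hypothesis \eqref{1.9g} needed for Lemma \ref{P:welldefined} to apply to $B_{\widetilde\bA}$ (equivalently, $H$ satisfies \eqref{1.9g} for $B_\bA$, which is the first relation in \eqref{3.15g}), so that both series converge absolutely in norm; then the term-by-term identity $B_{\widetilde\bA}^k[I] = H^{\frac12} B_\bA^k[H^{-1}] H^{\frac12}$ may be summed against the absolutely summable Wiener-class coefficients. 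I expect this bookkeeping — verifying at each stage that the operator to which a map $\Gamma^{(k)}_{\bo,\cdot}$ is applied meets the admissibility inequality \eqref{1.9g}, so that all the series manipulations are justified — to be the main (though routine) obstacle; the algebraic heart of the proof is simply the observation that conjugation by $H^{\frac12}$ intertwines $B_\bA$ and $B_{\widetilde\bA}$ and hence intertwines every polynomial and every convergent power series in them.
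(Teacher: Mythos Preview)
Your approach is essentially the paper's: both directions hinge on the conjugation identity $T^*\Gamma_{\bo,\widetilde\bA}[I_\cX]T = \Gamma_{\bo,\bA}[H]$ (and the analogous one for $\Gamma^{(k)}$) coming from the factorization $H = T^*T$, after which the equivalence is read off directly.

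There is a computational slip, though. With $T = H^{1/2}$ and $\widetilde A_j = H^{1/2} A_j H^{-1/2}$ one has $\widetilde A_j^* = H^{-1/2} A_j^* H^{1/2}$, so
\[
B_{\widetilde\bA}[X] \;=\; \sum_j H^{-1/2} A_j^* \bigl(H^{1/2} X H^{1/2}\bigr) A_j H^{-1/2}
\;=\; H^{-1/2}\, B_\bA\!\bigl[H^{1/2} X H^{1/2}\bigr]\, H^{-1/2},
\]
the reverse of what you wrote. Iterating and summing gives $\Gamma_{\bo,\widetilde\bA}[I] = H^{-1/2}\Gamma_{\bo,\bA}[H]H^{-1/2}$, which combines with $\widetilde C^*\widetilde C = H^{-1/2} C^*C H^{-1/2}$ to yield the desired equivalence $\Gamma_{\bo,\widetilde\bA}[I] \succeq \widetilde C^*\widetilde C \Longleftrightarrow \Gamma_{\bo,\bA}[H] \succeq C^*C$; your version of the formula would not produce this. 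With that correction, the rest of your argument (including the convergence bookkeeping via Lemma~\ref{P:welldefined}) matches the paper's proof.
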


\begin{proof}
Suppose that $H\succ 0$ is a solution to the system \eqref{3.15g}, \eqref{3.16g}.
Factor $H$ as $H = T^{*}T$ with $T$ invertible and define $\widetilde{C}$ and $\widetilde{A}_j$
by formulas \eqref{sep1}. Then the pair $(\widetilde C, \widetilde \bA)$ is similar to $(C,\bA)$
and it is readily seen from \eqref{1.11g}, \eqref{3.5ag} that
$$
T^*\Gamma_{\bo,\widetilde{\bA}}[I_{\cX}]T=\Gamma_{\bo,\bA}[I_{\cX}]\quad\mbox{and}\quad
T^*\Gamma^{(k)}_{\bo,\widetilde{\bA}}[I_{\cX}]T=\Gamma^{(k)}_{\bo,\bA}[I_{\cX}]
\quad\mbox{for}\quad k\ge 1.
$$
Thus, inequalities \eqref{3.15g}, \eqref{3.16g} can be equivalently written in terms of
$\widetilde{C}$ and $\widetilde{A}$ as
$$
I\succeq \widetilde{A}_1^*\widetilde{A}_1+\ldots+\widetilde{A}_d^*\widetilde{A}_d,\quad
\Gamma^{(k)}_{\bo,\widetilde{\bA}}[I_{\cX}] \succeq 0 \; \; (k\ge 1),\quad
\Gamma_{\bo,\widetilde{\bA}}[I_{\cX}]\succeq \widetilde C^{*}\widetilde C,
$$
and hence, the pair $(\widetilde C, \widetilde \bA)$ is $\bo$-contractive.

\smallskip

Conversely, if $(\widetilde C, \widetilde \bA)$ given by \eqref{sep1} is
$\bo$-contractive, then $H = T^{*}T$ is bounded and strictly positive-definite and
satisfies  inequalities \eqref{3.15g}, \eqref{3.16g}. The statement concerning 
$\bo$-isometric pairs follows in a similar way.
\end{proof}

\subsection{$\bmu_n$-hypercontractions}  \label{S:bo-bmun}
We now examine how the above general results specialize to 
the Bergman weights $\bmu_n$.  Within this setting, we will often write $n$ instead of $\bo=\bmu_n$.
Since $R_{\bmu_n}(\lambda)=R_n(\lambda)=(1-\lambda)^{-n}$, the formula 
\eqref{1.11g} amounts to 
\begin{equation}
\Gamma_{n, \bA}:=\Gamma_{\bmu_n,\bA}\colon X \mapsto  (I - B_\bA)^n[X]=
\bigg( \sum_{\ell=0}^n (-1)^\ell \left( \begin{smallmatrix} n \\ \ell \end{smallmatrix} \right)  B_\bA^\ell \bigg) [X] 
\label{Gamma-k}
\end{equation}
or more explicitly, on account of \eqref{4.5}, to 
\begin{equation}
\Gamma_{n, \bA}\colon X \mapsto  \sum_{ \alpha \in \free \colon |\alpha| \le n} (-1)^{|\alpha|} 
\left( \begin{smallmatrix} n \\ |\alpha|
\end{smallmatrix} \right) \bA^{* \alpha^\top} X \bA^\alpha,
\label{Gamma-ex}
\end{equation}
and makes sense for all $n\ge 0$. Upon applying the identity
$$
(I - B_{\bA})^{k}=(I - B_{\bA})^{k-1}-B_{\bA}(I - B_{\bA})^{k-1}
$$
to an operator $H\in\cL(\cX)$ and making use of definition \eqref{Gamma-k} we get 
\begin{equation} 
\Gamma_{k,\bA}[H] = \Gamma_{k-1,\bA}[H] - \sum_{j=1}^d A_j^{*} \Gamma_{k-1,\bA}[H]A_j
\label{4.8}
\end{equation}
for all $k\ge 1$. Dividing both sides of \eqref{1.10pre} by $R_n$ and applying the $B_{\bA}$ calculus to the resulting
identity we specialize \eqref{3.5ag} to the case $\bo = \bmu_n$:
$$
\Gamma_{n,\bA}^{(k)}= \frac{R_{n,k}}{R_{n}}(B_{\bA})= \sum_{\ell = 1}^{n} \bcs{\ell + k - 2 \\ \ell - 1}
    \frac{R_{n-\ell +1}}{R_{n}}(B_{\bA}) = \sum_{\ell = 1}^{n} \bcs{\ell + k - 2 \\ \ell - 1}
    (I-B_\bA)^{\ell-1}.
$$
Making use of formula \eqref{Gamma-k} for all $\ell=0,\ldots,n-1$, we conclude:
\begin{equation}   \label{Gamma-bmun}
\Gamma^{(k)}_{n, \bA} = \sum_{\ell = 0}^{n-1} \left( \begin{smallmatrix} \ell + k - 1 \\ 
\ell \end{smallmatrix} \right)\Gamma_{\ell, \bA}.
\end{equation}
Before turning to $\bmu_n$-contractions and $\bmu_n$-hypercontractions (obtained via specializing Definition \ref{D:7.3} 
to the case $\bo=\bmu_n$) let us recall a more elementary notion of {\em contractivity} and {\em hypercontractivity} 
indexed by a discrete parameter $n = 1,2, \dots$ defined as follows.
\begin{definition}  \label{D:4.1}
The tuple ${\mathbf A}=(A_1,\ldots,A_d)\in\cL(\cX)^d$ is called {\em $n$-contractive} if
$\Gamma_{n,\bA}[I_{\cX}]\succeq 0$, and it is called {\em $n$-hypercontractive} if 
$\Gamma_{k,\bA}[I_{\cX}] \succeq 0$ 
for all $1\le k\le n$.
\end{definition}

It was shown in \cite{aglerhyper} (see also \cite{muller} as well as
\cite{MV} for a commutative multivariable version) that inequalities
$\Gamma_{1,\bA}[I_{\cX}]\succeq 0$ and $\Gamma_{n,\bA}[I_{\cX}]\succeq 0$
imply that $\bA$ is an $n$-hypercontraction. This result extends to our free noncommutative
setting, even with $I_{\cX}$ replaced by an arbitrary $H \succeq  0$, as follows.

\begin{lemma}  \label{L:squeeze}
Let  us assume that the operators $H$ and $A_1,\ldots,A_d$ in $\cL(\cX)$ are such that
\begin{equation}
H \succeq \sum_{j=1}^d A_j^*HA_j\succeq 
0\quad\mbox{and}\quad\Gamma_{n,\bA}[H]\succeq 0
\label{4.12}
\end{equation}
for some integer $n\ge 3$. Then
\begin{equation}   
\Gamma_{k,\bA}[H] \succeq 0\quad\mbox{for all}\quad k=1,\ldots,n-1.
\label{4.13} 
\end{equation}
\end{lemma}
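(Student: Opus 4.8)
The statement to prove is Lemma~\ref{L:squeeze}: if $H \succeq \sum_{j=1}^d A_j^* H A_j \succeq 0$ and $\Gamma_{n,\bA}[H] \succeq 0$ for some $n \ge 3$, then $\Gamma_{k,\bA}[H] \succeq 0$ for $k = 1, \dots, n-1$. This is the noncommutative free analogue of Agler's hypercontraction result, with $I_\cX$ replaced by a general $H$. The plan is to run the classical Agler-style argument adapted to the $B_\bA$-functional calculus, using the key recursion \eqref{4.8}, namely $\Gamma_{k,\bA}[H] = \Gamma_{k-1,\bA}[H] - B_\bA \Gamma_{k-1,\bA}[H]$, together with the fact (from Lemma~\ref{P:welldefined}) that all the relevant operators are well defined because $H$ satisfies \eqref{1.9g}.

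First I would observe that $k=1$ is immediate: $\Gamma_{1,\bA}[H] = H - \sum_{j=1}^d A_j^* H A_j \succeq 0$ is precisely the first hypothesis in \eqref{4.12}. So the content is the intermediate range $2 \le k \le n-1$. The central idea is to express $\Gamma_{k,\bA}[H]$ for $k < n$ as a positive combination of $H$, of $B_\bA^m[H]$ for various $m$, and of $B_\bA^m[\Gamma_{n,\bA}[H]]$. Concretely, iterating the recursion \eqref{4.8} in the form $\Gamma_{k-1,\bA}[H] = \Gamma_{k,\bA}[H] + B_\bA \Gamma_{k-1,\bA}[H]$ and then telescoping, one gets an identity of the shape
$$
\Gamma_{k,\bA}[H] = \sum_{m=0}^{n-1-k} \binom{m+k-1}{m} B_\bA^m\big[\Gamma_{n,\bA}[H]\big] \;+\; (\text{tail term involving } B_\bA^{n-k}[\Gamma_{k,\bA}[H]]),
$$
but a cleaner route is to write $(I - B_\bA)^k = (I - B_\bA)^n \cdot \big(\sum_{m \ge 0} \binom{m + (n-k) - 1}{m} B_\bA^m\big)$ as a formal power series identity in the single operator $B_\bA$ — valid because $(1-\lambda)^{k} = (1-\lambda)^n (1-\lambda)^{-(n-k)}$ and $(1-\lambda)^{-(n-k)} = \sum_{m\ge 0}\binom{m+n-k-1}{m}\lambda^m$ has \emph{nonnegative} coefficients. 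Applying this to $H$ via the $B_\bA$-calculus (legitimate by Lemma~\ref{P:welldefined} since $H$ obeys \eqref{1.9g} and hence so does $\Gamma_{n,\bA}[H]\succeq 0$) yields
$$
\Gamma_{k,\bA}[H] = \sum_{m=0}^\infty \binom{m+n-k-1}{m} B_\bA^m\big[\Gamma_{n,\bA}[H]\big].
$$
Since $B_\bA$ is a positive map (see \eqref{BsubApos}), each summand $B_\bA^m[\Gamma_{n,\bA}[H]]$ is positive semidefinite, the binomial coefficients are nonnegative, and the series converges in norm by the estimate in the proof of Lemma~\ref{P:welldefined}; hence $\Gamma_{k,\bA}[H] \succeq 0$. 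One should double-check that the power-series manipulation is justified: $(1-\lambda)^{-(n-k)}$ lies in $W^+$ (it is analytic on $\D$ with summable nonnegative Taylor coefficients), and the composition of functional calculi is handled by the Mertens-type General Principle \eqref{GenPrin} exactly as in the proof of Theorem~\ref{T:7.2}, applied with $\mathbf{a}_m = \binom{m+n-k-1}{m} B_\bA^m$ and $\mathbf{b}$ coming from the expansion of $\Gamma_{n,\bA}[H]$.

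The main obstacle, such as it is, is bookkeeping rather than conceptual: one must verify that all the infinite series of operator-valued maps converge absolutely when applied to $H$, which is exactly where the hypothesis $H \succeq B_\bA[H] \succeq 0$ (giving $\|B_\bA^m[H]\| \le \|H\|$ for all $m$) is used, and that $\Gamma_{n,\bA}[H]$ itself satisfies the same domination $\Gamma_{n,\bA}[H] \succeq B_\bA[\Gamma_{n,\bA}[H]] \succeq 0$ so that the outer calculus $\sum_m \binom{m+n-k-1}{m} B_\bA^m$ may be applied to it — this follows by applying \eqref{4.8} with $k$ replaced by $n+1$ together with $\Gamma_{n,\bA}[H] \succeq 0$ and positivity of $B_\bA$, or more directly from the already-established chain. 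A subtle point worth a sentence is that one needs $n \ge 3$ only to have a nonempty intermediate range $2 \le k \le n-1$; for $k=1$ and $k=n$ the conclusion is trivial or hypothetical, so the restriction $n \ge 3$ is just to make the statement non-vacuous, and the proof works uniformly. I expect the write-up to be short once the formal power series identity $(1-\lambda)^k (1-\lambda)^{-(n-k)} = (1-\lambda)^n$ is invoked and the positivity of the resulting coefficients noted.
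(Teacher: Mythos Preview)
Your core idea—to write $(1-\lambda)^k = (1-\lambda)^n (1-\lambda)^{-(n-k)}$, apply the $B_{\bA}$-calculus, and read off
\[
\Gamma_{k,\bA}[H]\;=\;\sum_{m\ge 0}\binom{m+n-k-1}{m}\,B_{\bA}^m\bigl[\Gamma_{n,\bA}[H]\bigr]
\]
as a series with nonnegative terms—is appealing, but the convergence justification is where it breaks down. You assert that $(1-\lambda)^{-(n-k)}$ lies in $W^+$, i.e.\ has summable Taylor coefficients. It does not: the coefficients are $\binom{m+n-k-1}{m}\sim m^{\,n-k-1}/(n-k-1)!$, which diverge for every $k<n$. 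Consequently Lemma~\ref{P:welldefined} does not apply, and the Mertens principle \eqref{GenPrin} cannot be invoked (neither factor has an absolutely summable coefficient sequence once you expand against $H$). Your fallback—that $\Gamma_{n,\bA}[H]\succeq B_{\bA}[\Gamma_{n,\bA}[H]]$, i.e.\ $\Gamma_{n+1,\bA}[H]\succeq 0$—is also unjustified: this is \emph{not} a hypothesis, and it is false in general (already in $d=1$, the nilpotent $A=\sbm{0&1\\0&0}$ with $H=I$ satisfies $\Gamma_1[I]\succeq 0$ but $\Gamma_2[I]=\sbm{1&0\\0&-1}\not\succeq 0$). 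So even granting the domination you claim, the $W^+$ hypothesis of Lemma~\ref{P:welldefined} would still fail.

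The paper's argument avoids infinite series with unsummable coefficients altogether. It works inductively, proving $\Gamma_{n-1,\bA}[H]\succeq 0$ from the hypotheses and then descending. The key observation is that $\Gamma_{n,\bA}[H]\succeq 0$ gives $\Gamma_{n-1,\bA}[H]\succeq B_{\bA}[\Gamma_{n-1,\bA}[H]]$ (note: one level \emph{below} $n$, not above), so the sequence $S_{n-1,j}:=B_{\bA}^j[\Gamma_{n-1,\bA}[H]]$ is non-increasing. One then uses the crude two-sided bound $-2^m H\preceq B_{\bA}^j\Gamma_{m,\bA}[H]\preceq 2^m H$ (from $0\preceq B_{\bA}^\ell[H]\preceq H$) together with the telescoping identity $\sum_{j=0}^N S_{n-1,j}=\Gamma_{n-2,\bA}[H]-B_{\bA}^{N+1}[\Gamma_{n-2,\bA}[H]]$ to see that the partial sums $\sum_{j\le N}\langle S_{n-1,j}x,x\rangle$ are uniformly bounded. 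A non-increasing real sequence with bounded partial sums must have all terms nonnegative (else the tail is eventually below a negative constant and the partial sums diverge to $-\infty$); in particular $\langle S_{n-1,0}x,x\rangle=\langle\Gamma_{n-1,\bA}[H]x,x\rangle\ge 0$.

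Your series idea \emph{can} be repaired for the single step $n\to n-1$, though not by the route you wrote: from $H\succeq B_{\bA}[H]\succeq 0$ the monotone sequence $B_{\bA}^N[H]$ converges strongly to some $H_\infty\succeq 0$, whence $B_{\bA}^N[\Gamma_{n-1,\bA}[H]]=\sum_{j=0}^{n-1}(-1)^j\binom{n-1}{j}B_{\bA}^{N+j}[H]\to\bigl(\sum_j(-1)^j\binom{n-1}{j}\bigr)H_\infty=0$; combined with $\Gamma_{n-1,\bA}[H]\succeq B_{\bA}^N[\Gamma_{n-1,\bA}[H]]$ this gives $\Gamma_{n-1,\bA}[H]\succeq 0$. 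This is close in spirit to the paper's proof but uses the limit rather than boundedness of partial sums. Either way, the missing ingredient in your write-up is some control on the tail $B_{\bA}^N[\Gamma_{n-1,\bA}[H]]$, which you cannot get from a $W^+$ estimate.
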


\begin{proof}
Observe that the leftmost inequalities in \eqref{4.12} mean that $H$ and 
$\Gamma_{1,\bA}[H]$ are both positive-semidefinite. In light of the positivity property 
\eqref{BsubApos}, applying the map $B_{\bA}$ to both sides of the first inequality in 
\eqref{4.12} and subsequent iterating lead us to
\begin{equation}
H \succeq \sum_{\alpha \in\free: \, 
|\alpha|=j}\bA^{*\alpha^\top}H\bA^{\alpha}\quad\mbox{for all}\quad j\ge 0.
\label{4.14}
\end{equation}  
Let us introduce the Hermitian operators
\begin{equation}
S_{m,k}:=\sum_{\alpha\in\free: \, |\alpha|=k}\bA^{*\alpha^\top} 
\Gamma_{m,\bA}[H] \bA^{\alpha}
=B_\bA^k(I-B_\bA)^{m}[H]
\label{4.15}
\end{equation}
for $k\in{\mathbb Z}_+$ and $m=0,1,\ldots,n$
(observe that the second equality in \eqref{4.15} follows from
\eqref{4.5} and \eqref{Gamma-k}). Let us show that
\begin{equation}
-2^m\cdot H \preceq S_{m,k} \preceq 2^m\cdot H\qquad\mbox{for all $\; k\in{\mathbb Z}_+\; $ 
and $\; m=0,1,\ldots,n$.}
\label{4.16}
\end{equation}
Indeed, since $H$ is positive semidefinite, combining \eqref{Gamma-ex} and \eqref{4.14} gives
\begin{align*}
S_{m,k}&\preceq \sum_{\alpha \in\free: \, |\alpha|=k} \bA^{*\alpha^\top}\bigg(
\sum_{\beta\in\free: \, |\beta|\le m}  (-1)^{|\beta|}   \bcs{m \\ |\beta|}\cdot 
\bA^{*\beta^\top}H\bA^{\beta}\bigg)\bA^{\alpha}\\
&=\sum_{\alpha \in\free: \, k\le |\alpha|\le m+k} 
(-1)^{|\alpha|-k}     \bcs{m \\ |\alpha|-k}\cdot \bA^{*\alpha^\top}H\bA^{\alpha}   \\
&=\sum_{j=k}^{m+k}   (-1)^{j-k}   \bcs{m \\ j-k}\cdot \sum_{\alpha\in\free: \, 
|\alpha|=k} \bA^{*\alpha^\top}H\bA^{\alpha}
\preceq \sum_{j=k}^{m+k}  \bcs{m \\ j-k}\cdot H=2^m \cdot H,
\end{align*}
thus proving the right inequality in \eqref{4.16}. The left inequality follows
in much the same way. We next observe that due to \eqref{4.8} and the second inequality in 
\eqref{4.12},
$$
\Gamma_{n-1,\bA}[H] \succeq \sum_{j=1}^d A_j^{*}
\Gamma_{n-1,\bA}[H]A_j=B_\bA\left[\Gamma_{n-1,\bA}[H]\right].
$$
Therefore, on account of \eqref{BsubApos}, we have
\begin{equation}
S_{n-1,j}=B_\bA^j\left[\Gamma_{n-1,\bA}[H]\right] \succeq B_\bA^{j+1}\left[\Gamma_{n-1,\bA}[H]\right]=
S_{n-1,j+1}.  
\label{4.17}
\end{equation}  
Also, it follows from definitions \eqref{4.15} and \eqref{Gamma-k} that for any  $N\ge 1$,
\begin{align}
\sum_{j=0}^N S_{n-1,j}&=
\sum_{j=0}^N B_\bA^j(I-B_\bA)^{n-1}[H]\notag\\
&=(I-B_\bA^{N+1})(I-B_\bA)^{n-2}[H]\notag\\
&=\Gamma_{n-2,\bA}[H]-\sum_{\alpha \in\free: \, 
|\alpha|=N+1}\bA^{*\alpha^\top} \Gamma_{n-2,\bA}[H] \bA^{\alpha} \notag\\  
&=S_{n-2,0}-S_{n-2,N+1}.\notag
\end{align}
Taking the inner product of both parts in the latter equality against $x\in\cX$ and 
then making use of \eqref{4.16} gives
\begin{equation}
\big|\sum_{j=0}^N \left\langle S_{n-1,j}x, \, x\right\rangle\big|=
\left| \left\langle S_{n-2,0} x\, x\right\rangle-
 \left\langle S_{n-2,N+1} x, \, x\right\rangle\right|
\le 2^{n-1} \left\langle Hx, \, x\right\rangle.
\label{4.18}
\end{equation}
Due to \eqref{4.17}, on the left-hand side of \eqref{4.18} we have the partial sum
of a non-increasing sequence and, since the partial sums are uniformly bounded
(by $2^{n-1}\cdot\left\langle Hx, \, x\right\rangle$), it follows that all the terms in the 
sequence
are nonnegative. In particular,
$$
\left\langle S_{n-1,0}x, \, x\right\rangle=
\left\langle \Gamma_{n-1,\bA}[H]x, \, x\right\rangle\ge 0\quad\mbox{for every $\; 
x\in\cX$},
$$
and hence, $\Gamma_{n-1,\bA}[H]\succeq 0$. 
We then obtain recursively all the desired inequalities in \eqref{4.13}. 
\end{proof}
\begin{proposition}  \label{P:equiv}
For Bergman weights $\bo={\boldsymbol\mu}_n=\{\bin_{n,j}\}_{j\ge 0}$), the classes of 
${\boldsymbol\mu}_n$-hypercontractions, 
of ${\boldsymbol\mu}_n$-contractions, and of $n$-hypercontractions are identical.
\end{proposition}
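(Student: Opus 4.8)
The three classes in question are defined via positivity of the operators $\Gamma_{\bmu_n,\bA}^{(k)}[I_\cX]$ and $\Gamma_{k,\bA}[I_\cX]$. By Definition \ref{D:7.3} (specialized to $\bo = \bmu_n$) together with the explicit formula \eqref{Gamma-bmun}, a tuple $\bA$ is $\bmu_n$-hypercontractive precisely when it is contractive (i.e.\ \eqref{4.22} holds), when $\Gamma_{\bmu_n,\bA}[I_\cX] = \Gamma_{n,\bA}[I_\cX] \succeq 0$ (from \eqref{Gamma-k}), and when
$$
\Gamma^{(k)}_{\bmu_n, \bA}[I_\cX] = \sum_{\ell=0}^{n-1} \bcs{\ell + k - 1 \\ \ell} \Gamma_{\ell, \bA}[I_\cX] \succeq 0 \quad \text{for all } k \ge 1.
$$
On the other hand, $\bA$ is $n$-hypercontractive (Definition \ref{D:4.1}) precisely when $\Gamma_{k,\bA}[I_\cX] \succeq 0$ for all $1 \le k \le n$, and $n$-contractive when merely $\Gamma_{n,\bA}[I_\cX] \succeq 0$. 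So the statement amounts to the chain of equivalences: [$\bA$ is $\bmu_n$-hypercontractive] $\Leftrightarrow$ [$\bA$ is $\bmu_n$-contractive] $\Leftrightarrow$ [$\bA$ is $n$-hypercontractive] $\Leftrightarrow$ [$\bA$ is $n$-contractive].

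\textbf{First implications.} I would first dispatch the easy directions. A $\bmu_n$-hypercontraction is by definition a $\bmu_n$-contraction, and an $n$-hypercontraction is by definition an $n$-contraction. Conversely, an $n$-hypercontraction $\bA$ has $\Gamma_{k,\bA}[I_\cX] \succeq 0$ for $1 \le k \le n$, hence also $\Gamma_{0,\bA}[I_\cX] = I_\cX \succeq 0$; since $\Gamma_{1,\bA}[I_\cX] = I_\cX - \sum_j A_j^* A_j \succeq 0$ gives the contractivity \eqref{4.22}, and since every summand $\bcs{\ell+k-1 \\ \ell}\Gamma_{\ell,\bA}[I_\cX]$ with $0 \le \ell \le n-1$ is positive semidefinite, the displayed formula for $\Gamma^{(k)}_{\bmu_n,\bA}[I_\cX]$ shows it is a sum of positive-semidefinite operators, hence $\succeq 0$; combined with $\Gamma_{\bmu_n,\bA}[I_\cX] = \Gamma_{n,\bA}[I_\cX] \succeq 0$ this makes $\bA$ a $\bmu_n$-hypercontraction. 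This closes the loop [$n$-hypercontractive] $\Rightarrow$ [$\bmu_n$-hypercontractive] $\Rightarrow$ [$\bmu_n$-contractive].

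\textbf{The main step.} What remains — and this is the crux — is to show that an $n$-contraction is already an $n$-hypercontraction; equivalently, a $\bmu_n$-contraction is an $n$-hypercontraction. But this is exactly the content of Lemma \ref{L:squeeze} applied with $H = I_\cX$: the hypotheses \eqref{4.12} there read $I_\cX \succeq \sum_j A_j^* A_j \succeq 0$ and $\Gamma_{n,\bA}[I_\cX] \succeq 0$, and the conclusion \eqref{4.13} gives $\Gamma_{k,\bA}[I_\cX] \succeq 0$ for all $1 \le k \le n-1$, hence for all $1 \le k \le n$. The one gap is that Lemma \ref{L:squeeze} is stated for $n \ge 3$, so I must handle $n = 1, 2$ separately: for $n=1$ the classes coincide trivially (both notions reduce to $\Gamma_{1,\bA}[I_\cX]\succeq 0$), and for $n = 2$ the notions of $2$-contraction and $2$-hypercontraction differ only by the extra requirement $\Gamma_{1,\bA}[I_\cX] \succeq 0$ — but a $\bmu_2$-contraction is by Definition \ref{D:7.3} required to be contractive, i.e.\ $\Gamma_{1,\bA}[I_\cX] = I_\cX - B_\bA[I_\cX] \succeq 0$ is built in, and for the purely discrete notion of $2$-contraction one notes that the argument of Lemma \ref{L:squeeze} can be run with $n = 2$ as well, or more simply that $n$-contractivity as used here already presupposes contractivity in the relevant statements. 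The main obstacle, then, is purely bookkeeping: making sure the various definitions (the ``built-in contractivity'' clause in Definition \ref{D:7.3} versus the bare positivity clause in Definition \ref{D:4.1}) are reconciled, and that the small cases $n \le 2$ are not left dangling. Once Lemma \ref{L:squeeze} is invoked, the substance of the proposition is immediate.
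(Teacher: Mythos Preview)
Your proof is correct and follows essentially the same approach as the paper: the implication $\bmu_n$-contractive $\Rightarrow$ $n$-hypercontractive via Lemma~\ref{L:squeeze} with $H=I_\cX$, and the reverse implication $n$-hypercontractive $\Rightarrow$ $\bmu_n$-hypercontractive via the formula~\eqref{Gamma-bmun}. You are in fact more careful than the paper in explicitly treating the cases $n\le 2$ that fall outside the stated hypothesis $n\ge 3$ of Lemma~\ref{L:squeeze}.

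One minor point of caution: you list ``$\bA$ is $n$-contractive'' as a fourth equivalent condition, but the Proposition only claims equality of three classes, and with good reason --- by Definition~\ref{D:4.1}, $n$-contractivity means only $\Gamma_{n,\bA}[I_\cX]\succeq 0$ with no built-in contractivity condition \eqref{4.22}, so the class of $n$-contractions is in general strictly larger. Your actual argument does not use this extra claim (you correctly invoke contractivity as a hypothesis when applying Lemma~\ref{L:squeeze}), so this is purely a slip in the statement, not the proof.
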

\begin{proof} The statement follows from three implications below.

\smallskip
\noindent
$1$. {\em If a tuple $\bA\in\cL(\cX)^d$ is ${\bmu}_n$-contractive, then it is $n$-hypercontractive}.
Indeed, if $\bA$ is ${\bmu}_n$-contractive, then (see \eqref{4.22}) 
$$
\Gamma_{1, \bA}[I_\cX]=I_{\cX}-A_1^*A_1-\ldots-A_d^*A_d\succeq 0 
$$
and $\Gamma_{n,\bA}[I_{\cX}]:=\Gamma_{{\boldsymbol\mu}_n,\bA}[I_{\cX}]\succeq 0$.
Then it follows by Lemma \ref{L:squeeze} that 
$\Gamma_{\ell,\bA}[I_{\cX}]\succeq 0$ for $\ell=0,\ldots,n-1$,
so that $\bA$ is $n$-hypercontractive.

\smallskip
\noindent
$2$. {\em If a tuple $\bA\in\cL(\cX)^d$ is $n$-hypercontractive, then it is ${\bmu}_n$-hypercontractive}. Indeed,
if $\Gamma_{\ell,\bA}[I_{\cX}]\succeq 0$ for $\ell=0,\ldots,n-1$, then upon applying the formula \eqref{Gamma-bmun}
to the identity operator we see that
$$
\Gamma_{{\boldsymbol\mu}_{n},\bA}^{(k)}[I_{\cX}]= \sum_{\ell=0}^{n-1}
\bcs{\ell + k -1 \\ \ell} \Gamma_{\ell, \bA}[I_{\cX}]\succeq 0
$$
for all $k \ge 1$. The latter inequalities along with $\Gamma_{1, \bA}[I_\cX]\succeq 0$ and 
$\Gamma_{n, \bA}[I_\cX]\succeq 0$ imply that $\bA$ in ${\bmu}_n$-hypercontractive.
Since any $\bmu_n$-hypercontraction is clearly $\bmu_n$-contractive, the statement follows.  
\end{proof}

\subsection{$\bmu_n$-output-stability}  \label{S:bmun-stability}
 A crucial feature of the output-stability indexed by integers, as opposed to the
general $\bo$-output stability, is that $n$-output-stability implies the $k$-output-stability 
for any $0\le k<n$.
We shall say that an output pair $(C,\bA)$ is {\em $n$-output-stable} if it is the case that the series
$\sum_{\alpha\in\free}\mu_{n,|\alpha|}^{-1}\bA^{*\alpha^\top}C^*C\bA^{\alpha} $ is convergent
in the strong (or equivalently in the weak) operator topology of $\cL(\cX)$, in which case we define
the $n$-observability gramian $\cG_{n,C, \bA}$ as the sum of the series:
\begin{equation}
{\mathcal G}_{n,C, \bA}=:{\mathcal G}_{\bmu_n,C, \bA}=
\sum_{\alpha\in\free}\mu_{n,|\alpha|}^{-1}\bA^{*\alpha^\top}C^*C\bA^{\alpha} =: (I - B_\bA)^{-n}[C^* C].
\label{4.3}
\end{equation}
For the case where $n=0$ there are no convergence issues and we define
\begin{equation}
{\mathcal G}_{0,C, \bA}=C^*C. 
\label{ogo3}
\end{equation}

\begin{proposition}
\label{P:identities}
If the output pair $(C,\bA)$ is $n$-output stable, then it is also $k$-output stable for
$k=1,\dots, n-1$ and 
\begin{align}
 &  \cG_{k,C,\bA} - \sum_{j=1}^d A_j^* \cG_{k,C,\bA} A_j = \cG_{k-1, C, \bA},  \label{4.10}  \\
   &  \Gamma_{k,\bA}[\cG_{n,C,\bA}]  = \cG_{n-k,C,\bA} \quad\text{for}\quad k=0,\dots, n.
        \label{4.11}
  \end{align}
  Moreover we then have the chain of inequalities
  \begin{equation} \label{chain-ineq}
  C^* C \preceq \cG_{1, C, \bA} \preceq \cdots \preceq \cG_{n,C,\bA}.
  \end{equation}
 \end{proposition}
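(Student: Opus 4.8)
The strategy is to reduce everything to the key Stein-type recursion \eqref{4.8} together with the defining series \eqref{4.3} for the gramians. First I would establish that $n$-output stability forces $k$-output stability for $0 \le k < n$. The cleanest route is via the comparison $\mu_{n,|\alpha|}^{-1} \ge \mu_{k,|\alpha|}^{-1}$ for $k \le n$ (equivalently $\binom{|\alpha|+n-1}{|\alpha|} \ge \binom{|\alpha|+k-1}{|\alpha|}$, which is immediate from the monotonicity of binomial coefficients in the upper index). Consequently the partial sums $\sum_{|\alpha| \le N} \mu_{k,|\alpha|}^{-1} \bA^{*\alpha^\top} C^*C \bA^\alpha$ are dominated in the positive-semidefinite order by $\cG_{n,C,\bA}$, hence bounded and monotone increasing, hence strongly convergent; this gives $k$-output stability and simultaneously the existence of $\cG_{k,C,\bA}$. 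Actually it is cleaner still to get \eqref{4.11} first for all $k$ and deduce $k$-output stability from it, since \eqref{4.11} identifies $\cG_{n-k,C,\bA}$ as the value of a well-defined $B_\bA$-functional-calculus expression applied to the already-convergent $\cG_{n,C,\bA}$.

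For \eqref{4.11}, the plan is to apply Lemma \ref{P:welldefined}: since $(C,\bA)$ is $n$-output stable, $H := \cG_{n,C,\bA} = (I-B_\bA)^{-n}[C^*C]$ is a positive operator satisfying $H \succeq B_\bA[H] \succeq 0$ (this inequality is exactly the $k=1$ instance of what we are proving, and follows directly from the term-by-term comparison $\mu_{n,|\alpha|+1}^{-1} \ge \mu_{n,|\alpha|}^{-1}$ applied inside the series for $B_\bA[H]$ versus $H$ — or equivalently from \eqref{3.18g}). Then the polynomial identity $(1-\lambda)^k \cdot (1-\lambda)^{-n} = (1-\lambda)^{-(n-k)}$ in $W^+$, pushed through the $B_\bA$ calculus (legitimate by Lemma \ref{P:welldefined} and the Mertens General Principle \eqref{GenPrin} for multiplying absolutely convergent operator series, exactly as used in the proof of Theorem \ref{T:7.2}), yields $\Gamma_{k,\bA}[\cG_{n,C,\bA}] = (I-B_\bA)^k (I-B_\bA)^{-n}[C^*C] = (I-B_\bA)^{-(n-k)}[C^*C] = \cG_{n-k,C,\bA}$, which is \eqref{4.11}; the $k=n$ case reads $\Gamma_{n,\bA}[\cG_{n,C,\bA}] = C^*C$, recovering the Stein equation \eqref{3.17g}.

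Identity \eqref{4.10} is then immediate: it is precisely the $k=1$ instance of \eqref{4.8} applied to the operator $H = \cG_{n,C,\bA}$, combined with the substitution rule \eqref{4.11}. Indeed \eqref{4.8} gives $\Gamma_{k,\bA}[\cG_{n,C,\bA}] - \sum_{j=1}^d A_j^* \Gamma_{k-1,\bA}[\cG_{n,C,\bA}] A_j = \Gamma_{k-1,\bA}[\cG_{n,C,\bA}] - 2\sum_j A_j^*\Gamma_{k-1,\bA}[\cG_{n,C,\bA}]A_j + \dots$ — more simply, using $\Gamma_{k,\bA} = \Gamma_{k-1,\bA} - B_\bA \circ \Gamma_{k-1,\bA}$ one gets $\Gamma_{k-1,\bA}[\cG_{n,C,\bA}] - B_\bA[\Gamma_{k-1,\bA}[\cG_{n,C,\bA}]] = \Gamma_{k,\bA}[\cG_{n,C,\bA}]$, i.e.\ $\cG_{n-k+1,C,\bA} - B_\bA[\cG_{n-k+1,C,\bA}] = \cG_{n-k,C,\bA}$; reindexing $k \mapsto n-k+1$ gives \eqref{4.10}. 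Finally, the chain of inequalities \eqref{chain-ineq} follows from \eqref{4.10} (or \eqref{4.11}): each step $\cG_{k,C,\bA} - \cG_{k-1,C,\bA} = \sum_{j=1}^d A_j^* \cG_{k,C,\bA} A_j = B_\bA[\cG_{k,C,\bA}] \succeq 0$ by the positivity of the map $B_\bA$ (property \eqref{BsubApos}) since $\cG_{k,C,\bA} \succeq 0$; and the bottom of the chain, $C^*C = \cG_{0,C,\bA} \preceq \cG_{1,C,\bA}$, is the same inequality with $k=1$ together with \eqref{ogo3}. The main obstacle is purely bookkeeping: making the application of the Mertens principle to the product $(I-B_\bA)^k \cdot (I-B_\bA)^{-n}$ rigorous at the operator level, i.e.\ checking absolute convergence of the relevant series — but this is already done in the proof of Theorem \ref{T:7.2} and needs only to be invoked, with the finiteness of the $(1-\lambda)^k$ coefficient sequence being trivial.
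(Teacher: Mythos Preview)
Your proposal is correct and follows essentially the same route as the paper's own proof: both push the polynomial identity $(1-\lambda)^k\cdot(1-\lambda)^{-n}=(1-\lambda)^{-(n-k)}$ through the $B_\bA$-functional calculus via the Mertens principle \eqref{GenPrin} to obtain \eqref{4.11}, then read off \eqref{4.10} as the one-step recursion and deduce the chain \eqref{chain-ineq} from positivity of $B_\bA$. The only cosmetic differences are that the paper does not explicitly invoke Lemma \ref{P:welldefined} (since $(1-\lambda)^k$ is a polynomial, $\Gamma_{k,\bA}$ is trivially well-defined on any operator), and the paper derives \eqref{4.10} by a second direct application of Mertens rather than by combining \eqref{4.8} with \eqref{4.11} and reindexing.
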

  
  \begin{proof}
   Note that $\Gamma_{k,\bA} = (I - B_\bA)^k = \sum_{j=0}^k \left( \begin{smallmatrix} k \\ j \end{smallmatrix} \right) B_\bA^j$ is a polynomial in 
  $B_\bA$ (and hence absolutely convergent) while by the stability assumption
    $$
  \cG_{k, C, \bA} = (I - B_\bA)^{-k}[C^* C] = \sum_{j=0}^\infty  \mu_{n,j+k}^{-1} B_\bA^j [C^* C]
  $$
  is convergent.    Therefore by the general principle \eqref{GenPrin} applied with ${\mathbf a}_j\in \cL(\cL(\cX))$ and 
${\mathbf b}_j\in \cL(\cX)$ given by
 $$ {\mathbf a}_j =  \begin{cases} (-1)^j  \left( \begin{smallmatrix} k \\ j \end{smallmatrix} \right) B_\bA^j &\text{ for } 0 \le j \le k, \\
\quad   0 & \text{ for }j \ge k,\end{cases}\quad {\mathbf b}_j = \mu_{n,j+k}^{-1} B_\bA^j[C^*C] \; \; \mbox{for}\; \; j\ge 0, 
   $$
it follows (the general principle justifies the second equality in the computation below) that
$$
\Gamma_{k,\bA}[\cG_{n,C,\bA}] = (I - B_\bA)^k[ (I - B_\bA)^{-n}[C^* C]] 
= (I - B_\bA)^{-n+k}[C^* C] = \cG_{n-k, C, \bA}
$$
which verifies \eqref{4.11}. By  making use of \eqref{4.3} and \eqref{4.4}, a similar application of the general principle \eqref{GenPrin}
can be used to justify the calculation
\begin{align*}
\cG_{k, C, \bA} - \sum_{j=1}^d A_j^* \cG_{k,C,\bA} A_j &= (I - B_\bA)[\cG_{k,C,\bA}]=
(I - B_\bA) \big[(I - B_\bA)^{-k} [C^* C]\big] \\&=  (I - B_\bA)^{-(k-1)}[C^*C] = \cG_{k-1, C, \bA},
  \end{align*}
for $k=1, 2, \dots, n$, which verifies \eqref{4.10}.  As a consequence of \eqref{4.10} and the identity \eqref{ogo3}, 
we then get the chain of inequalities \eqref{chain-ineq} which in turn, implies that $n$-output stability for $(C, \bA)$ implies $k$-output stability
for $(C, \bA)$ for $0\le k<n$.
\end{proof}

\begin{remark}   \label{R:extension2}
As we pointed out in Remark \ref{R:extension}, Theorem \ref{T:7.2} is a partial extension of Theorem \ref{T:bbf1}
to the general $\bo$-setting. Although Definition \ref{D:7.4} provides a useful notion of $\bo$-strong stability 
in some contexts (see its role in the proof of statement (2) of Lemma \ref{L:7.6}), we do not know
how to make use of this notion to prove the uniqueness of a solution to the system \eqref{3.15g}, \eqref{3.17g}.
It is still quite remarkable that Theorem \ref{T:bbf1} admits a full-extent generalization to the standard-weight multivariable 
Bergman-Fock setting---as given in the next result.
\end{remark} 

\begin{theorem}
\label{T:2-1.1}
Given $C\in\cL(\cX,\cY)$ and  $\bA=(A_1,\ldots,A_d)\in\cL(\cX)^d$,
the pair $(C,\bA)$ is $n$-output-stable (see \eqref{4.3}) if and only if there exists
an operator $H\in \cL(\cX)$ satisfying the system of inequalities
\begin{equation}
H\succeq \sum_{j=1}^d A_j^*HA_j\succeq
0\quad\mbox{and}\quad\Gamma_{n,\bA}[H]\succeq  C^*C.
\label{4.20}
\end{equation}
If this is the case (i.e., if $(C,\bA)$ is $n$-output-stable), then:
\begin{enumerate}
\item The gramian $H = {\mathcal G}_{n, C, \bA}$  is the minimal positive semidefinite solution
of the system \eqref{4.20} and actually solves \eqref{4.20} in the stronger form
\begin{equation}
H\succeq \sum_{j=1}^d A_j^*HA_j\succeq  0 \quad\mbox{and}\quad \Gamma_{n,\bA}[H]= C^*C
\label{4.21}
\end{equation}
\item If $\bA$ is strongly stable (see \eqref{bAstable}), then
$H = \cG_{n,C,\bA}$ is the unique solution of the system \eqref{4.21}.
\item If ${\mathbf A}$ is contractive (see \eqref{4.22}), the solution of the Stein equation in \eqref{4.21} is unique
if and only if ${\mathbf A}$ is strongly stable.
           \end{enumerate}
           \end{theorem}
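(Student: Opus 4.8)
The statement to prove is Theorem \ref{T:2-1.1}, the full extension of the classical Theorem \ref{T:bbf1} to the $n$-Bergman-Fock setting. The plan is to break the proof into the equivalence (existence of a solution $H$ of \eqref{4.20} iff $n$-output stable), the minimality/equality refinement (1), and the two uniqueness statements (2) and (3). Much of the skeleton can be inherited from Theorem \ref{T:7.2} applied with $\bo = \bmu_n$: by Proposition \ref{P:equiv} the auxiliary inequalities $\Gamma^{(k)}_{\bmu_n,\bA}[H]\succeq 0$ in \eqref{3.15g} are automatic once $H\succeq B_\bA[H]\succeq 0$ and $\Gamma_{n,\bA}[H]\succeq 0$ hold (via Lemma \ref{L:squeeze} and the formula \eqref{Gamma-bmun}), so the system \eqref{3.15g}, \eqref{3.16g} collapses exactly to \eqref{4.20}. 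Thus Theorem \ref{T:7.2} gives the equivalence and statement (1) (minimality of $\cG_{n,C,\bA}$ and that it solves the Stein equation \eqref{4.21}) essentially for free, also using the identity \eqref{4.11} with $k=n$ which reads $\Gamma_{n,\bA}[\cG_{n,C,\bA}] = \cG_{0,C,\bA} = C^*C$.

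For statement (2), the plan is to assume $\bA$ strongly stable in the sense of \eqref{bAstable} and show any solution $H$ of \eqref{4.21} equals $\cG_{n,C,\bA}$. First I would rerun the telescoping computation from the proof of Theorem \ref{T:7.2}: applying $B_\bA^{j}$ to the identity $\Gamma^{(j)}_{n,\bA}[H] = B_\bA\Gamma^{(j+1)}_{n,\bA}[H] + \mu_{n,j}^{-1}\Gamma_{n,\bA}[H]$ and summing $j=0,\dots,k-1$ produces \eqref{1.21ga}: $H - \Omega_k = \sum_{|\alpha|<k}\mu_{n,|\alpha|}^{-1}\bA^{*\alpha^\top}\Gamma_{n,\bA}[H]\bA^\alpha$, where $\Omega_k = \sum_{|\alpha|=k}\bA^{*\alpha^\top}\Gamma^{(k)}_{n,\bA}[H]\bA^\alpha$. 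Since $\Gamma_{n,\bA}[H] = C^*C$ for a solution of \eqref{4.21}, the right side converges to $\cG_{n,C,\bA}$ and hence $\Omega_k \to H - \cG_{n,C,\bA} =: \Delta_{\bA,H}\succeq 0$. The key is to show $\Delta_{\bA,H}=0$ under strong stability. Using \eqref{Gamma-bmun}, $\Omega_k = \sum_{\ell=0}^{n-1}\binom{\ell+k-1}{\ell}\sum_{|\alpha|=k}\bA^{*\alpha^\top}\Gamma_{\ell,\bA}[H]\bA^\alpha$; each term is squeezed by the bound $-2^\ell H \preceq \sum_{|\alpha|=k}\bA^{*\alpha^\top}\Gamma_{\ell,\bA}[H]\bA^\alpha \preceq 2^\ell H$ reminiscent of \eqref{4.16}, but the binomial prefactor $\binom{\ell+k-1}{\ell}$ grows polynomially in $k$, so I cannot directly conclude. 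Instead I would iterate \eqref{4.14}-type domination more carefully: since $\sum_{|\alpha|=k}\bA^{*\alpha^\top}H\bA^\alpha \to 0$ strongly (which follows from \eqref{bAstable} together with $H\succeq B_\bA[H]\succeq 0$, because $\langle \bA^{*\alpha^\top}H\bA^\alpha x,x\rangle \le \|H\|\,\|\bA^\alpha x\|^2$ and the outer sum over $|\alpha|=k$ of $\|\bA^\alpha x\|^2 \to 0$), and each $\Gamma_{\ell,\bA}[H]$ is a fixed $B_\bA$-polynomial in $H$, the operators $B_\bA^k \Gamma_{\ell,\bA}[H] = B_\bA^k[(I-B_\bA)^\ell H]$ can be written as finite linear combinations $\sum_{m} \pm\binom{\ell}{m} B_\bA^{k+m}[H]$, each summand tending strongly to $0$. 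This kills the $\ell\ge 1$ terms; for $\ell=0$ one has the prefactor $\binom{k-1}{0}=1$ and $B_\bA^k[H]\to 0$ as well. Handling the polynomially-growing binomial coefficients against $B_\bA^k[H]$-type operators which only go to zero strongly (not in norm) is the main obstacle, and I expect to resolve it by grouping: the generating-function identity $\sum_{\ell=0}^{n-1}\binom{\ell+k-1}{\ell} (I-B_\bA)^\ell = \Gamma^{(k)}_{n,\bA}$ means $\Omega_k = B_\bA^k \Gamma^{(k)}_{n,\bA}[H]$, and one shows $\Omega_k \preceq C_n \sum_{|\alpha|=k}\bA^{*\alpha^\top}H\bA^\alpha$ for a constant $C_n$ depending only on $n$ by the same telescoping-positivity argument as in Lemma \ref{L:squeeze} (the partial sums $\sum_j \Omega_j$ telescoping through the $\cG_{\ell,C,\bA}$ are uniformly bounded, forcing each $\Omega_k\succeq 0$ and giving a clean bound), whence $\Omega_k\to 0$ strongly.

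For statement (3), assume $\bA$ contractive; the direction ``strongly stable $\Rightarrow$ uniqueness'' is (2). For the converse, suppose $\bA$ is contractive but \emph{not} strongly stable; I would produce two distinct positive-semidefinite solutions of the Stein equation $\Gamma_{n,\bA}[H]=C^*C$ (with $H\succeq B_\bA[H]\succeq 0$). One solution is $\cG_{n,C,\bA}$ itself (assuming $(C,\bA)$ is $n$-output-stable; note that contractivity of $\bA$ gives $n$-output-stability of $(C,\bA)$ whenever $\Gamma_{n,\bA}[\,\cdot\,]\succeq C^*C$ is solvable, e.g.\ by $H=\alpha I$ for large $\alpha$, via part (1)). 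A second solution is $\cG_{n,C,\bA} + \Delta$ where $\Delta := \slim{k\to\infty} B_\bA^k\Gamma^{(k)}_{n,\bA}[I_\cX]$ (the limit in Definition \ref{D:7.4}) when applied to suitable $I$, but the cleanest route is: since $\bA$ is contractive, $H_0 := \cG_{n,\bA}[I] := (I-B_\bA)^{-n}[\text{something}]$... more precisely, take $\widehat H := \slim{k} B_\bA^k[I_\cX]$, which exists (decreasing positive sequence since $B_\bA[I]\preceq I$ iterated) and is nonzero precisely because $\bA$ is not strongly stable; one checks $B_\bA[\widehat H]=\widehat H$, hence $\Gamma_{n,\bA}[\widehat H]=(I-B_\bA)^n[\widehat H]=0$, so $\cG_{n,C,\bA}$ and $\cG_{n,C,\bA}+\widehat H$ are two distinct solutions of \eqref{4.21}. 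Verifying that $\widehat H\ne 0 \iff \bA$ not strongly stable, and that $\cG_{n,C,\bA}+\widehat H$ still satisfies $H\succeq B_\bA[H]\succeq 0$, is routine once $\widehat H$ is a fixed point of $B_\bA$. I expect the main delicacy throughout to be the strong-operator-topology convergence bookkeeping in step (2); the rest is assembly of Theorem \ref{T:7.2}, Proposition \ref{P:equiv}, Lemma \ref{L:squeeze}, and Proposition \ref{P:identities}.
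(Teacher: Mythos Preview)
Your reduction of the equivalence and statement (1) to Theorem \ref{T:7.2} via Lemma \ref{L:squeeze} and the identity \eqref{Gamma-bmun} is exactly the paper's argument, and your approach to (3) --- building the strong limit $\widehat H=\lim_k B_\bA^k[I_\cX]$ (which exists since $\bA$ contractive makes the sequence decreasing and positive), checking $B_\bA[\widehat H]=\widehat H$, hence $\Gamma_{n,\bA}[\widehat H]=0$, and exhibiting $\cG_{n,C,\bA}+\widehat H$ as a second solution --- is also what the paper does.

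The gap is in your proposed proof of (2). You correctly identify the obstacle: from $\Omega_k=\sum_{\ell=0}^{n-1}\binom{\ell+k-1}{\ell}B_\bA^k\Gamma_{\ell,\bA}[H]$ the binomial prefactors grow polynomially in $k$, while strong stability only gives $B_\bA^k[H]\to 0$ strongly with no rate. Your proposed fix, the bound $\Omega_k\preceq C_n\, B_\bA^k[H]$ with $C_n$ \emph{independent of} $k$, is false. Take $d=1$, $\bA=a$ a scalar with $0<|a|<1$ and $H=I$; then $B_\bA^k[H]=|a|^{2k}$ while $\Omega_k=\sum_{\ell=0}^{n-1}\binom{\ell+k-1}{\ell}(1-|a|^2)^\ell|a|^{2k}\sim c_{n,a}\,k^{n-1}|a|^{2k}$, so $\Omega_k/B_\bA^k[H]\sim c_{n,a}\,k^{n-1}\to\infty$. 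The telescoping-positivity idea from Lemma \ref{L:squeeze} gives boundedness of partial sums, not a uniform bound on the individual $\Omega_k$ against $B_\bA^k[H]$.

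The paper bypasses this entirely with a cleaner inductive argument. For strongly stable $\bA$, the Stein equation $P-B_\bA[P]=Q$ with $P,Q\succeq 0$ determines $P$ uniquely: iterating gives $P=B_\bA^N[P]+\sum_{j<N}B_\bA^j[Q]$, and since $0\preceq P$ one has $\langle B_\bA^N[P]x,x\rangle\le\|P\|\sum_{|\alpha|=N}\|\bA^\alpha x\|^2\to 0$, forcing $P=\sum_j B_\bA^j[Q]$. Now apply this $n$ times, peeling off one factor of $(I-B_\bA)$ at a time: your $H$ solving \eqref{4.21} satisfies $\Gamma_{n-1,\bA}[H]\succeq 0$ (Lemma \ref{L:squeeze}) and $(I-B_\bA)[\Gamma_{n-1,\bA}[H]]=C^*C$, so $\Gamma_{n-1,\bA}[H]=\cG_{1,C,\bA}$; then $(I-B_\bA)[\Gamma_{n-2,\bA}[H]]=\cG_{1,C,\bA}$ gives $\Gamma_{n-2,\bA}[H]=\cG_{2,C,\bA}$; and so on down to $H=\cG_{n,C,\bA}$. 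This avoids any growth-vs-decay balancing.
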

           
\begin{proof}
If $H$ satisfies \eqref{4.20}, then $\Gamma_{\ell,\bA}[H]\succeq 0$ for $\ell=1,\ldots,n-1$, by Lemma \ref{L:squeeze}.
Applying identity \eqref{Gamma-bmun} to the operator $H$ then implies 
$$
\Gamma^{(k)}_{\bmu_n, \bA}[H] = \sum_{\ell = 0}^{n-1} \left( \begin{smallmatrix} \ell + k-1 \\
\ell \end{smallmatrix} \right)\Gamma_{\ell, \bA}[H]\succeq 0\quad\mbox{for all}\quad k\ge 1.
$$
Therefore, the inequalities $\Gamma^{(k)}_{\bmu_n, \bA}[H]\succeq 0$ in \eqref{3.15g} are redundant when $\bo=\bmu_n$.
Hence, all statements in Theorem \ref{T:2-1.1} except parts (2) and (3) follow by specializing Theorem \ref{T:7.2}
to the case $\bo=\bmu_n$.

To prove part (2),  suppose that $\bA$ is strongly stable and that $H$ solves the system \eqref{4.20}.
It is known from \cite[Theorem 2.2]{BBF1} that {\em if $\bA=(A_1,\ldots,A_d)$ is strongly stable and
$Q\in\cL(\cX)$ is positive  semidefinite, then, for $P \in \cL(\cX)$,} 
$$
P-\sum_{j=1}^d A_j^*PA_j=Q  \Leftrightarrow P=\sum_{\alpha \in\free} \bA^{*\alpha^\top}Q\bA^\alpha 
$$
In terms of the operator $B_\bA \colon H \mapsto \sum_{j=1}^d A_j^* H A_j$ on $\cL(\cX)$,  we can rephrase the latter statement as saying
that the operator $I_{\cL(\cX)} - B_\bA$ is invertible with the inverse given by
$$
  ( I_{\cL(\cX)} - B_\bA)^{-1} \colon Q \mapsto \sum_{\alpha \in \free} \bA^{*\alpha^\top} Q \bA^\alpha.
$$
If  $I_{\cL(\cX)} - B_\bA$ is invertible, so also is $(I_{\cL(\cX)} - B_\bA)^n$ with inverse given by
$$
\left( (I_{\cL(\cX)} - B_\bA)^n \right)^{-1} = \left((I_{\cL(\cX)} - B_\bA)^{-1} \right)^n.
$$
Indeed that formula for $( I - B_\bA)^{-n}$ works out to be
$$
  (I - B_\bA)^{-n} [Q] = \sum_{\alpha \in \free}  \mu^{-1}_{n, |\alpha|} \bA^{* \alpha^\top} Q \bA^\alpha,
$$
i.e., the series \eqref{4.3} converges with $C^*C$ replaced by any $Q \in \cL(\cX)$, not just for a special $Q = C^* C$.
Next note that the Stein equation in \eqref{4.21} can be viewed as the equation
$$
   (I - B_\bA)^n[H] = C^* C.
 $$
 Hence $H = (I - B_\bA)^{-n}[C^*C]$ necessarily is the unique solution.  As $\cG_{n, C, \bA}$ is given by the same formula  \eqref{4.3}
 we conclude that $\cG_{n, C, \bA}$ is the unique solution.    
 
\smallskip

To complete the proof of part (3) of the theorem it remains to  show that if $\bA$ is a
contraction and the system \eqref{4.21}  admits a unique solution (which necessarily is
$H={\mathcal G}_{n,C,\bA}$), then the tuple $\bA$ is strongly stable. We prove the contrapositive:
{\em if $\bA$ is not strongly stable, then the solution of \eqref{4.21} is not unique.}

\smallskip

Due to assumption \eqref{4.22}, the sequence of operators
$$
\Delta_{N} = B_{\bA}^N[I_{\cX}]=\sum_{\alpha \in \free \colon |\alpha| =
N} {\mathbf A}^{*\alpha^{\top}}
{\mathbf A}^{\alpha},\qquad N=1,2, \dots
$$
is decreasing and therefore has a strong limit $\Delta={\displaystyle\lim_{N \to\infty} \Delta_{N}}\succeq 0$. 
Since $\bA$ is assumed not to be strongly stable, this limit  $\Delta$ is not zero. 
Observe that
$$
\sum_{j=0}^k \bigg(\sum_{\alpha \in\free: \, |\alpha|=j}(-1)^j\bcs{k
\\ j}\bA^{*\alpha^\top}\Delta_{N+k-j}\bA^\alpha\bigg)
=\sum_{j=0}^k(-1)^j\bcs{k \\ j}\Delta_{N+k}
$$
for all $N\ge 0$ and $k\in\{1,\ldots,n\}$. 
Taking the limit in the last equality as $N\to \infty$ 
for a fixed $k$ we get zero on the right side (as  $\sum_{j=0}^{k} (-1)^{k} \bcs{k \\ j}=0$,
by the binomial theorem), while the left side expression tends to
\begin{align*}
\sum_{j=0}^k \bigg(\sum_{\alpha \in\free: \, |\alpha|=j}(-1)^j\bcs{k
\\ j}\bA^{*\alpha^\top}\Delta\bA^\alpha\bigg)&=
\sum_{\alpha \in\free: \, |\alpha|\le k}(-1)^j\bcs{k
\\ |\alpha|}\bA^{*\alpha^\top}\Delta\bA^\alpha\\
&=\Gamma_{k,\bA}[\Delta],
\end{align*}
by \eqref{Gamma-ex}. Thus, $\Gamma_{k,\bA}[\Delta]=0$ for $k=1,\ldots,n$ and therefore, 
the operator $H={\mathcal G}_{n,C,\bA}+\Delta$
(as well as ${\mathcal G}_{n,C,\bA}$) satisfies the system \eqref{4.21}
which therefore has more than one positive-semidefinite solution.
\end{proof}

In Lemma \ref{L:7.6} we saw that a $\bo$-isometric pair $(C,\bA)$ with $\bo$-strongly stable 
tuple $\bA$ is necessarily exactly $\bo$-observable. In the case of $\bo=\bmu_n$ this statement is immediate: 
if $(C,\bA)$ is $n$-isometric, then relations \eqref{4.21} hold for $H=I_{\cX}$ (by the definition of an 
$n$-isometric pair) as well as for $H=\cG_{n,C,\bA}$, by part (1) in Theorem \ref{T:2-1.1}. By the uniqueness
assertion in part (2) of the same theorem, $\cG_{n,C,\bA}=I_{\cX}$ and the pair $(C,\bA)$ is exactly $n$-observable. 
We do not know whether, conversely, the exact $\bo$-observability of a $\bo$-isometric pair $(C,\bA)$ implies the  
$\bo$-strong stability of $\bA$ in the general case. However, for $\bo=\bmu_n$ it does. We have the following
(even stronger) result.

\begin{proposition} \label{P:2-1.1'converse}
           Suppose that the pair $(C, {\mathbf A})$ is $n$-output-stable and
           exactly $n$-observable.  Then ${\mathbf A}$ is strongly stable.
        \end{proposition}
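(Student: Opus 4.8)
The strategy is to leverage the uniqueness statement in part (2) of Theorem \ref{T:2-1.1} together with the observation that the gramian of an exactly $n$-observable, $n$-output-stable pair is strictly positive-definite. First I would record that $n$-output stability gives us $H := \cG_{n,C,\bA} \succeq 0$ solving the Stein equation $\Gamma_{n,\bA}[H] = C^*C$, and that exact $n$-observability means $H$ is bounded and strictly positive-definite, say $H \succeq \epsilon^2 I_\cX$ for some $\epsilon > 0$. Then I would consider the defect operator $\Delta_{\bA, H} = \lim_{k\to\infty} \sum_{\alpha \in \free \colon |\alpha| = k} \bA^{*\alpha^\top}\Gamma^{(k)}_{n,\bA}[H]\bA^\alpha$ (the limit \eqref{1.21g}, which exists by the argument in the proof of Theorem \ref{T:7.2} since $\Omega_k$ is a decreasing sequence of positive operators, using \eqref{1.20g} and the fact that $\Gamma_{n,\bA}[H] = C^*C \succeq 0$). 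The key identity to exploit is \eqref{1.21ga}, which in the $\bmu_n$ case reads
$$
\sum_{\alpha\in\free:|\alpha|< k}\mu_{n,|\alpha|}^{-1}\bA^{*\alpha^\top}C^*C\bA^\alpha = H - \Omega_k,
$$
so letting $k \to \infty$ gives $\cG_{n,C,\bA} = H - \Delta_{\bA, H}$, i.e., since $H = \cG_{n,C,\bA}$, we get $\Delta_{\bA, H} = 0$.

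\textbf{Getting to strong stability of $\bA$.} Now $\Delta_{\bA, H} = 0$ means $\Omega_k = \sum_{|\alpha| = k} \bA^{*\alpha^\top}\Gamma^{(k)}_{n,\bA}[H]\bA^\alpha \to 0$ strongly. The point is to squeeze $\bA^{*\alpha^\top}H\bA^\alpha$ between multiples of $\Gamma^{(k)}_{n,\bA}[H]$-type quantities. Since $H \succeq \epsilon^2 I_\cX$, we have $\sum_{|\alpha|=k} \bA^{*\alpha^\top}\bA^\alpha = B_\bA^k[I_\cX] \preceq \epsilon^{-2} B_\bA^k[H] = \epsilon^{-2} S_{0,k}$ in the notation \eqref{4.15} with the present $H$. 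I would then use the relation \eqref{Gamma-bmun}, $\Gamma^{(k)}_{n,\bA} = \sum_{\ell=0}^{n-1} \binom{\ell+k-1}{\ell}\Gamma_{\ell, \bA}$, applied to $H$; the $\ell = 0$ term is simply $\Gamma_{0,\bA}[H] = H$, so $\Gamma^{(k)}_{n,\bA}[H] \succeq H \succeq \epsilon^2 I_\cX$ provided all $\Gamma_{\ell,\bA}[H] \succeq 0$ for $\ell = 1, \dots, n-1$ — which holds by Lemma \ref{L:squeeze} once we know $H \succeq B_\bA[H] \succeq 0$ and $\Gamma_{n,\bA}[H] = C^*C \succeq 0$ (the first of these is part of \eqref{4.21}). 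Consequently
$$
\Omega_k = \sum_{|\alpha|=k} \bA^{*\alpha^\top}\Gamma^{(k)}_{n,\bA}[H]\bA^\alpha \succeq \epsilon^2 \sum_{|\alpha|=k} \bA^{*\alpha^\top}\bA^\alpha = \epsilon^2 \sum_{|\alpha|=k}\|\bA^\alpha x\|^2 \text{ (as a quadratic form)}.
$$
Thus $\Omega_k \to 0$ forces $\sum_{|\alpha|=k}\|\bA^\alpha x\|^2 \to 0$ for all $x$, which is exactly strong stability of $\bA$ in the sense of \eqref{bAstable}.

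\textbf{Main obstacle.} The main technical point to get right is the bound $\Gamma^{(k)}_{n,\bA}[H] \succeq \epsilon^2 I_\cX$ uniformly in $k$, which rests on the decomposition \eqref{Gamma-bmun} with its nonnegative binomial coefficients together with the positivity of the lower-order $\Gamma_{\ell,\bA}[H]$. That positivity is where Lemma \ref{L:squeeze} does the real work, and one must verify that its hypotheses \eqref{4.12} are met: $H \succeq B_\bA[H] \succeq 0$ comes from \eqref{4.21} (valid since $H = \cG_{n,C,\bA}$), and $\Gamma_{n,\bA}[H] = C^*C \succeq 0$ is the Stein equation itself. For $n = 1, 2$ one should note the argument degenerates gracefully: for $n=1$ there are no intermediate $\ell$, and $\Gamma^{(k)}_{1,\bA}[H] = H$ directly; for $n = 2$ the only extra term is $\ell = 1$ with $\Gamma_{1,\bA}[H] = H - B_\bA[H] \succeq 0$ again by \eqref{4.21}, so Lemma \ref{L:squeeze} (stated for $n \ge 3$) is not even needed. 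Everything else is bookkeeping with strongly convergent series already justified in the proof of Theorem \ref{T:7.2}.
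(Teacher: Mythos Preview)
Your proof is correct, but it takes a somewhat different route from the paper's own argument.

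The paper works with the $\bmu_n$-specific identity \eqref{4.9} applied to $H=\cG_{n,C,\bA}$ and uses \eqref{4.11} to rewrite the tail as
\[
\cG_{n,C,\bA}=\sum_{|\alpha|\le N}\mu_{n,|\alpha|}^{-1}\bA^{*\alpha^\top}C^*C\bA^\alpha
+\sum_{k=1}^n\binom{N+n}{N+k}\,B_\bA^{N+k}[\cG_{k,C,\bA}].
\]
Letting $N\to\infty$ forces the whole tail to vanish, and isolating the single term $k=n$ (whose binomial coefficient is $1$) gives $B_\bA^{N+n}[\cG_{n,C,\bA}]\to 0$; exact $n$-observability then yields strong stability.

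You instead recycle the general $\bo$-machinery from the proof of Theorem~\ref{T:7.2}: you let $\Omega_k\to 0$ via \eqref{1.21ga}, and then exploit the decomposition \eqref{Gamma-bmun} together with $\Gamma_{\ell,\bA}[H]=\cG_{n-\ell,C,\bA}\succeq 0$ to obtain the \emph{uniform} lower bound $\Gamma^{(k)}_{n,\bA}[H]\succeq H\succeq\epsilon^2 I$ for all $k$, whence $\Omega_k\succeq\epsilon^2 B_\bA^k[I]$. Both arguments end by squeezing $B_\bA^k[I]$ below something tending to $0$; the difference is in how that dominating quantity is produced. Your route has the virtue of showing that the result is really a direct specialization of the $\bo$-framework once one knows \eqref{Gamma-bmun}, while the paper's route is more self-contained for the $\bmu_n$ case and avoids appealing back to the $\bo$-Stein analysis. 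One small remark: you invoke Lemma~\ref{L:squeeze} for the positivity of $\Gamma_{\ell,\bA}[H]$, but since $H=\cG_{n,C,\bA}$ you already have this directly from \eqref{4.11}, so the appeal to Lemma~\ref{L:squeeze} (and the separate handling of $n\le 2$) is unnecessary.
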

\begin{proof} 
We first verify that for any operator $H\in \cL(\cX)$ and any integers $n\ge 1$ and $N\ge 0$,
\begin{align}
&  H = \sum_{\alpha \in\free: \, |\alpha|\le N} \bcs{n+|\alpha|-1\\
|\alpha|} \bA^{*\alpha^\top} \Gamma_{n,\bA}[H] \bA^{\alpha}\notag\\
&\qquad\qquad + \sum_{\alpha\in\free: \, N+1\le |\alpha|\le N+n}\bcs{N+n\\
|\alpha|}\bA^{*\alpha^\top}\Gamma_{n+N-|\alpha|,\bA}[H]\bA^{\alpha}.
\label{4.9}
\end{align}
To this end,  we use the identity
$$
\sum_{j=0}^N\bcs{n+j-1 \\ j}z^j=
\frac{1}{(1-z)^n}-\sum_{j=1}^{n}\bcs{N+n\\
N+j}\frac{z^{N+j}}{(1-z)^{j}}
$$
giving the explicit formula for the truncation of the infinite series representation for
$(1-z)^{-n}$ (see \cite[Section 2]{BBIEOT} for the proof).
Multiplying both parts in the latter identity by $(1-z)^n$ we get
$$
1=\sum_{j=0}^N\bcs{n+j-1 \\ j}z^j(1-z)^n+\sum_{j=1}^{n}\bcs{N+n\\ N+j}z^{N+j}(1-z)^{n-j},
$$
which in turn implies the operator identity
\begin{equation}  \label{4.11'}
I_{\cL(\cX)}=\sum_{j=0}^N\bcs{n+j-1 \\ j}B_\bA^j(I-B_\bA)^n+\sum_{j=1}^{n}\bcs{N+n\\
N+j}B_\bA^{N+j}(I-B_\bA)^{n-j}.
\end{equation}
Upon applying this latter identity to an
operator $H\in\cL(\cX)$ and making use of \eqref{4.5} and of  definition \eqref{Gamma-ex}
we get \eqref{4.9}:
\begin{align*}
H=&\sum_{j=0}^N\bcs{n+j-1 \\ j}B_\bA^j\left[\Gamma_{n,\bA}[H]\right]+\sum_{j=1}^{n}\bcs{N+n\\
N+j}B_\bA^{N+j}\left[\Gamma_{n-j,\bA}[H]\right]\notag\\
=&\sum_{j=0}^N\bcs{n+j-1 \\ j}\cdot \bigg(\sum_{\alpha \in\free: \,
|\alpha|=j}\bA^{*\alpha^\top}\Gamma_{n,\bA}[H] \bA^{\alpha}\bigg)\notag\\
&+\sum_{j=1}^{n}\bcs{N+n\\ N+j}\cdot\bigg(\sum_{\alpha\in\free: \,
|\alpha|=N+j}
\bA^{*\alpha^\top}\Gamma_{n-j,\bA}[H]\bA^{\alpha}\bigg)\notag\\
=&\sum_{\alpha\in\free: \, |\alpha|\le N}\bcs{n+|\alpha|-1 \\
|\alpha|}\bA^{*\alpha^\top}\Gamma_{n,\bA}[H] \bA^{\alpha}\notag\\
&+\sum_{\alpha \in\free: \, N+1\le |\alpha|\le N+n}\bcs{N+n\\
|\alpha|}\bA^{*\alpha^\top}\Gamma_{n+N-|\alpha|,\bA}[H]\bA^{\alpha}.
\end{align*}
Since the pair $(C,\bA)$ is $n$-output-stable, the $k$-observability gramians $\cG_{k,C,\bA}$ 
are bounded operators for all $0\le k\le n$, by Proposition \ref{P:identities}.
Plugging $H={\mathcal G}_{n,C,\bA}$ into \eqref{4.9} and making use of \eqref{4.11} 
(we note that $\Gamma_{n,\bA}[{\mathcal G}_{n,C,\bA}]=\cG_{0,C,\bA}=C^*C$, by \eqref{ogo3})
we get
\begin{align}
{\mathcal G}_{n,C,\bA}=&\sum_{\alpha \in\free: \, |\alpha|\le
N}\bcs{n+|\alpha|-1\\ |\alpha|}\bA^{*\alpha^\top}C^*C\bA^\alpha\notag\\
&+\sum_{\alpha \in\free: \, N+1\le |\alpha|\le N+n}\bcs{N+n\\
|\alpha|}\bA^{*\alpha^\top}\Gamma_{n+N-|\alpha|,\bA}[{\mathcal
G}_{n,C,\bA}]\bA^{\alpha}\notag\\
=&\sum_{\alpha \in\free: \, |\alpha|\le N}\bcs{n+|\alpha|-1\\
|\alpha|}\bA^{*\alpha^\top}C^*C\bA^\alpha \notag\\
&+\sum_{k=1}^n\bigg(\sum_{\alpha \in\free: \, |\alpha|=N+k}
\bcs{N+n\\ N+k}\bA^{*\alpha^\top}\cG_{k,C,\bA}\bA^\alpha\bigg).
\label{4.28}
\end{align}
From the representation  \eqref{4.3} for ${\mathcal G}_{n,C,\bA}$, taking limits as
$N\to\infty$ in \eqref{4.28} gives
$$
\lim_{N \to \infty}\sum_{k=1}^n\bigg(\sum_{\alpha \in\free: \, |\alpha|=N+k} \bcs{N+n\\
N+k}\bA^{*\alpha^\top}\cG_{k,C,\bA}\bA^\alpha\bigg)=0
$$
which is equivalent, since all the terms on the left are positive semidefinite, to the system of
equalities
$$
\lim_{N \to \infty}\sum_{\alpha \in\free: \, |\alpha|=N+k} \bcs{N+n\\
N+k}\bA^{*\alpha^\top}\cG_{k,C,\bA}\bA^\alpha=0\quad\mbox{for}\quad k=1,\ldots,n.
$$
Letting $k=n$ gives
\begin{equation}
\lim_{N \to \infty}\sum_{\alpha \in\free: \, |\alpha|=N+n}
\bA^{*\alpha^\top}\cG_{n,C,\bA}\bA^\alpha =0\quad\mbox{for}\quad k=1,\ldots,n.
\label{4.29}
\end{equation}
The strict positive-definiteness of ${\mathcal G}_{n,C, {\mathbf A}}$
tells us that there is an $\varepsilon > 0$ so that
\begin{equation}  \label{4.30}
        \varepsilon  \| x \|^{2} \le \langle {\mathcal G}_{n,C, {\mathbf A}} x,
        x \rangle\quad  \text{for all} \; \; x \in {\mathcal X}.
\end{equation}
In particular, from \eqref{4.30} with
${\mathbf A}^{\alpha}x$ in place of $x$ combined with \eqref{4.29} we get
$$
        \varepsilon \cdot\sum_{\alpha \in \free \colon |\alpha| = N+n} \|
        {\mathbf A}^{\alpha} x \|^{2}
          \le \sum_{\alpha \in\free \colon |\alpha|=N+n}
        \langle {\mathcal G}_{n,C, {\mathbf A}}{\mathbf A}^{\alpha}
        x, {\mathbf A}^{\alpha}x
        \rangle \begin{array}{c} \\ \longrightarrow\\ {\scriptstyle N\to\infty}\end{array}0
$$
for all $x \in {\mathcal X}$, and we conclude that ${\mathbf A}$ is
strongly stable as asserted.
 \end{proof}
As we will see Section \ref{S:n-obs} below (specifically Remark \ref{R:stabmu}), for the weight sequence
$\bo={\boldsymbol\mu}_n=\{\bin_{n,j}\}_{j\ge 0}$,
strong stability of a ${\boldsymbol\mu}_n$-hypercontraction $\bA$ is equivalent to the usual
strong stability of $\bA$ (see \eqref{bAstable}).

\section{Shifted $\bo$-gramians}   \label{S:shifted-grammian}

We now formally introduce the shifted versions of the observability operator
$\cO_{\bo,C,\bA}$ \eqref{18.2aaa} and
observability gramian $\cG_{\bo,C,\bA}$ \eqref{18.2aa} which will be needed
in the sequel. Observe that the operators $\cO_{\bo,C,\bA}$ and $\cG_{\bo,C,\bA}$
can be expressed as
$$
\cO_{\bo,C,\bA}: \; x\to C R_{\bo}(Z(z)A)x\quad\mbox{and}\quad \cG_{\bo,C,\bA} = R_{\bo}(B_{\bA})[C^{*}C]
$$
where $R_{\bo}$ is the power series \eqref{1.6g} associated with a given admissible weight $\bo$, 
where $B_{\bA}$ is the operator on $\cL(\cX)$ given by
\eqref{4.4} and where $Z(z)$ and $A$ are defined as in \eqref{1.23pre}.
One gets the shifted variants of these objects by simply
replacing  the function $R_\bo$ in the two formulas above by its shifts $R_{\bo,k}$ (see \eqref{1.6g}):
\begin{align}
&{\Ob}_{\bo,k,C,\bA}: \;  x \to C R_{\bo,k}(Z(z)A)x=\sum_{\alpha \in\free}
\left(\omega_{|\alpha|+k}^{-1} C\bA^\alpha x\right) \, z^\alpha, \label{4.31}\\
&{\Gr}_{\bo,k,C,\bA}:= R_{\bo,k}(B_{\bA})[C^{*}C]=
\sum_{\alpha \in\free}  \omega_{|\alpha|+k}^{-1}\bA^{*\alpha^\top}C^*C\bA^\alpha
\label{4.32}
\end{align}
for $k\ge 1$. Letting $k=0$ in \eqref{4.31}, \eqref{4.32} we conclude
$$
{\Ob}_{\bo,0,C,\bA}={\cO}_{\bo,C,\bA}\quad\mbox{and}\quad {\Gr}_{\bo,0,C,\bA}=\cG_{\bo,C,\bA}.
$$
Note that as a consequence of identity \eqref{3.19gg}
we have the identity
\begin{equation}   \label{3.19gg'}
    \Gamma_{\bo, \bA}^{(k)}[ \cG_{\bo, C, \bA}] = \Gr_{\bo, k, C,
    \bA}.
\end{equation}

\begin{remark}  \label{R:bo=bmun}
It is instructive to note the following elementary verification of the identity \eqref{3.19gg'} for the case where
$\bo = \bmu_n$ for some $n=1,2,3,\dots$.  In this case, by making use of the identity 
\eqref{Gamma-bmun} along with its more fundamental companion \eqref{1.10pre} and the definitions of the
various quantities involved, we have
\begin{align*}
\Gamma_{n,\bA}^{(k)}[\cG_{n,C,\bA}] & =
\sum_{\ell = 0} ^{n-1}  \left( \begin{smallmatrix} \ell + k - 1 \\ \ell \end{smallmatrix} \right)
(I - B_\bA)^\ell (I - B_\bA)^{-n} [C^* C]  \\
& = \sum_{\ell = 0}^{n-1} \left( \begin{smallmatrix} \ell + k - 1 \\ \ell \end{smallmatrix} \right) 
(I - B_\bA)^{-(n-\ell)}[C^*C]  \\
& = \sum_{\ell=0}^{n-1} \left(  \begin{smallmatrix} \ell + k - 1 \\ \ell \end{smallmatrix} \right) R_{n-\ell}(B_\bA)[C^*C]  \\
& = R_{n,k}(B_\bA)[C^*C] = \Gr_{n,k,C,\bA},
\end{align*}
and \eqref{3.19gg'} follows for the case $\bo = \bmu_n$.
\end{remark}

\begin{proposition}
\label{P:wghtSteinid} 
If the weight sequence $\bo$ is admissible and 
the pair $(C,\bA)$ is $\bo$-output-stable, then the operator 
$\Gr_{\bo,k,C,\bA}$ is bounded for all $k\ge 1$, and the weighted Stein identity
    \begin{equation}
\sum_{j=1}^d A_j^*\Gr_{\bo,k+1,C,\bA}A_j+\omega_k^{-1}\cdot C^*C=\Gr_{\bo,k,C,\bA}
\label{4.34}
\end{equation}
holds for all $k\ge 0$. Furthermore,
\begin{equation}
\Gr_{\bo,k+1,C,\bA}\succeq \Gr_{\bo,k,C,\bA}\succeq \Gr_{\bo,0,C,\bA}
=\cG_{\bo,C,\bA}\quad \mbox{for all}\quad k\ge 1.
\label{grmonotone}
\end{equation}
\end{proposition}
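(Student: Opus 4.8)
\textbf{Plan for the proof of Proposition \ref{P:wghtSteinid}.}

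The starting point is the explicit series representation \eqref{4.32} for $\Gr_{\bo,k,C,\bA}$, namely $\Gr_{\bo,k,C,\bA} = \sum_{\alpha\in\free}\omega_{|\alpha|+k}^{-1}\bA^{*\alpha^\top}C^*C\bA^\alpha$. First I would establish boundedness: since $(C,\bA)$ is $\bo$-output stable, the series for $\cG_{\bo,C,\bA} = \sum_{\alpha}\omega_{|\alpha|}^{-1}\bA^{*\alpha^\top}C^*C\bA^\alpha$ converges in the strong operator topology. Using the admissibility condition $\omega_{|\alpha|+k} \le \omega_{|\alpha|}$ (so $\omega_{|\alpha|+k}^{-1} \ge \omega_{|\alpha|}^{-1}$) is the wrong direction, so instead I would use $\omega_{|\alpha|+k}^{-1} \le \omega_{k}^{-1}\cdot(\text{something})$; more cleanly, one invokes the monotonicity $\omega_{j}/\omega_{j+1}\le M$ iterated $k$ times to get $\omega_{|\alpha|+k}^{-1}\le M^k \omega_{|\alpha|}^{-1}$, whence each term of the $\Gr_{\bo,k,C,\bA}$ series is dominated by $M^k$ times the corresponding term of the convergent $\cG_{\bo,C,\bA}$ series; this gives absolute (hence strong) convergence and the bound $\Gr_{\bo,k,C,\bA}\preceq M^k \cG_{\bo,C,\bA}$, in particular boundedness. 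Alternatively, one notes $\Gr_{\bo,k,C,\bA} = \Gamma^{(k)}_{\bo,\bA}[\cG_{\bo,C,\bA}]$ by \eqref{3.19gg'}, and Lemma \ref{P:welldefined} together with the summability \eqref{1.8g} already guarantees this is well-defined and bounded once $\cG_{\bo,C,\bA}$ satisfies \eqref{1.9g}, which it does by \eqref{3.18g}.

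Next, for the weighted Stein identity \eqref{4.34}, I would compute directly from \eqref{4.32}. We have
$$
\sum_{j=1}^d A_j^*\Gr_{\bo,k+1,C,\bA}A_j = \sum_{j=1}^d\sum_{\alpha\in\free}\omega_{|\alpha|+k+1}^{-1}A_j^*\bA^{*\alpha^\top}C^*C\bA^\alpha A_j = \sum_{\beta\in\free\colon\beta\ne\emptyset}\omega_{|\beta|+k}^{-1}\bA^{*\beta^\top}C^*C\bA^\beta,
$$
where the reindexing $\beta = \alpha j$ (so $|\beta| = |\alpha|+1$, $\bA^\beta = \bA^\alpha A_j$, $\bA^{\beta^\top} = A_j\bA^{\alpha^\top}$... wait, one must be careful: $\bA^{\beta} = A_j \bA^\alpha$ when $\beta = j\alpha$, and $\bA^{*\beta^\top} = \bA^{*\alpha^\top}A_j^*$; the bookkeeping with transpose and the left/right concatenation should be handled by summing over $\beta = j\alpha$ rather than $\beta=\alpha j$). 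The point is that the double sum over $(j,\alpha)$ is exactly the sum over all nonempty words $\beta$, with $|\beta|+k = |\alpha|+1+k$. Adding the $\emptyset$-term $\omega_k^{-1}C^*C$ (which is $\omega_{|\emptyset|+k}^{-1}\bA^{*\emptyset^\top}C^*C\bA^\emptyset$) restores the full sum $\sum_{\beta\in\free}\omega_{|\beta|+k}^{-1}\bA^{*\beta^\top}C^*C\bA^\beta = \Gr_{\bo,k,C,\bA}$, giving \eqref{4.34}. The rearrangement of the series is legitimate because all terms are positive semidefinite and, tested against any $x\in\cX$, the scalar series converges absolutely. For $k=0$ the same computation works verbatim with $\Gr_{\bo,0,C,\bA} = \cG_{\bo,C,\bA}$.

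Finally, for the monotonicity \eqref{grmonotone}: from \eqref{4.34} rewritten as $\Gr_{\bo,k,C,\bA} - \sum_{j=1}^d A_j^*\Gr_{\bo,k+1,C,\bA}A_j = \omega_k^{-1}C^*C \succeq 0$, together with $\Gr_{\bo,k+1,C,\bA}\succeq 0$, one gets $\Gr_{\bo,k,C,\bA}\succeq \sum_j A_j^*\Gr_{\bo,k+1,C,\bA}A_j \succeq 0$; but this compares $\Gr_{\bo,k}$ with $B_\bA[\Gr_{\bo,k+1}]$, not directly with $\Gr_{\bo,k+1}$. The cleaner route is to compare the defining series \eqref{4.32} term by term: $\Gr_{\bo,k+1,C,\bA} - \Gr_{\bo,k,C,\bA} = \sum_{\alpha}(\omega_{|\alpha|+k+1}^{-1} - \omega_{|\alpha|+k}^{-1})\bA^{*\alpha^\top}C^*C\bA^\alpha$, and since $\bo$ is non-increasing (by \eqref{18.2}) we have $\omega_{|\alpha|+k+1}^{-1} - \omega_{|\alpha|+k}^{-1}\ge 0$, so this difference is a strongly convergent sum of positive semidefinite operators, hence $\succeq 0$. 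Iterating gives $\Gr_{\bo,k+1,C,\bA}\succeq \Gr_{\bo,k,C,\bA}\succeq\cdots\succeq\Gr_{\bo,0,C,\bA} = \cG_{\bo,C,\bA}$. The only mild subtlety — and the step I would be most careful about — is the series rearrangement in the proof of \eqref{4.34}: one should phrase it as a statement about the scalar-valued series $\langle\,\cdot\,x,x\rangle$ and invoke absolute convergence there, since all the operator series involved converge only strongly a priori, and then conclude the operator identity by polarization.
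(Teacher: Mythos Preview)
Your proposal is correct and follows essentially the same approach as the paper. For boundedness you use the same $M^k$ domination argument; for the Stein identity the paper packages your direct series computation as ``apply the $B_\bA$ functional calculus to the identity $R_{\bo,k}(\lambda)=\omega_k^{-1}+\lambda R_{\bo,k+1}(\lambda)$ and evaluate at $C^*C$,'' which is exactly your reindexing $\beta=\alpha j$ stated abstractly; and for monotonicity the paper, like you, compares the series \eqref{4.32} term by term using that $\bo$ is non-increasing.

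One small caution: your alternative route to boundedness via $\Gr_{\bo,k,C,\bA}=\Gamma^{(k)}_{\bo,\bA}[\cG_{\bo,C,\bA}]$ and Lemma~\ref{P:welldefined} invokes condition \eqref{1.8g}, but the Proposition assumes only admissibility \eqref{18.2}; so that alternative is not available here, though your main argument stands on its own. Also, your hesitation about the concatenation is unnecessary: with the paper's conventions $\bA^\alpha A_j=\bA^{\alpha j}$ and $A_j^*\bA^{*\alpha^\top}=\bA^{*(\alpha j)^\top}$, so $\beta=\alpha j$ is the correct reindexing and every nonempty word arises uniquely this way.
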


\begin{proof}
Since $\frac{\omega_j}{\omega_{j+1}}\le M$ for all $j\ge 0$ (see
\eqref{18.2}), we have
$\frac{\omega_{j}}{\omega_{j+k}}\le M^{k}$ for all $k,j\ge 0$,
and then it follows from \eqref{4.32} and \eqref{18.2aa} that
\begin{align*}
{\Gr}_{\bo,k,C,\bA}&=\sum_{\alpha\in\free}^\infty
\frac{\omega_{|\alpha|}}{\omega_{|\alpha|+k}}\omega_{|\alpha|}^{-1}\bA^{*\alpha^\top}C^*C
\bA^{\alpha}\\
&\preceq M^k \sum_{\alpha\in\free}^\infty
\omega_{|\alpha|}^{-1}\bA^{*\alpha^\top}C^*C\bA^{\alpha}=M^k \cdot\cG_{\bo,C,\bA}.
\end{align*}
Since the pair $(C,\bA)$ is $\bo$-output-stable, the $\bo$-gramian $\cG_{\bo,C,\bA}$ is bounded and hence,
${\Gr}_{\bo,k,C,\bA}$ is bounded as well.

\smallskip

We next apply the $B_{\bA}$ calculus to the identity \eqref{1.6g'} to get the operator equality
$$
R_{\bo,k}(B_\bA) - B_\bA\circ R_{\bo,k+1}(B_\bA)= \omega_k^{-1}\cdot I_{\cL(\cX)}
$$
which in turn being applied to the operator $C^*C$ gives, on account of \eqref{4.32},
\begin{align*}
\left( R_{\bo,k}(B_{\bA}) - B_{\bA}\circ R_{\bo,k+1}(B_{\bA}) \right)[C^{*}C]
&=\Gr_{\bo,k,C,\bA} - \sum_{j=1}^d A_j^{*} \Gr_{\bo,k+1,C,\bA} A_j\\
&=\omega_k^{-1}\cdot C^*C,
\end{align*}
and we arrive at \eqref{4.34} as wanted. Inequalities \eqref{grmonotone} follow from power series representation
\eqref{4.32} due to the fact that the sequence $\bo$ is non-increasing.
\end{proof}
For our future purposes, it is convenient to identify conditions which guarantee 
that all the shifted gramians are invertible, i.e., bounded below.
\begin{proposition}   \label{P:shift-gram-inv}
Suppose that the $\bo$-output stable pair $(C, \bA)$ is furthermore exactly $\bo$-observable (as is the
case for example if $(C, \bA)$ is $\bo$-isometric and $\bA$ is $\bo$-strongly stable by part (2) of
Lemma \ref{L:7.6}).  Then the shifted gramians $\Gr_{\bo, k, C, \bA}$ are all bounded below, and hence
invertible.
\end{proposition}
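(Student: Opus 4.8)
The plan is to deduce the statement directly from the monotonicity of the shifted gramians established in Proposition \ref{P:wghtSteinid} together with the definitional meaning of exact $\bo$-observability. First I would recall that, since the admissible weight $\bo$ is non-increasing (condition \eqref{18.2}), one has $\omega_{|\alpha|+k}^{-1}\ge\omega_{|\alpha|}^{-1}$ for every $\alpha\in\free$ and every $k\ge0$. Feeding this termwise into the series representation \eqref{4.32} for $\Gr_{\bo,k,C,\bA}$ and comparing with the series \eqref{18.2aa} for $\cG_{\bo,C,\bA}$ gives $\Gr_{\bo,k,C,\bA}\succeq\cG_{\bo,C,\bA}$ for all $k\ge0$; this is exactly the chain \eqref{grmonotone}, the case $k=0$ being the equality $\Gr_{\bo,0,C,\bA}=\cG_{\bo,C,\bA}$ recorded just before \eqref{3.19gg'}.

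Next I would invoke the hypothesis: exact $\bo$-observability of $(C,\bA)$ means precisely (Definition \ref{D:7.5}) that $\cG_{\bo,C,\bA}$ is bounded and strictly positive-definite, so there is $\varepsilon>0$ with $\cG_{\bo,C,\bA}\succeq\varepsilon^2 I_\cX$. Combining this with the comparison inequality yields $\Gr_{\bo,k,C,\bA}\succeq\varepsilon^2 I_\cX$ for every $k\ge0$, so each $\Gr_{\bo,k,C,\bA}$ is bounded below; since it is also bounded (again by Proposition \ref{P:wghtSteinid}), it is invertible. The parenthetical assertion about $\bo$-isometric pairs with $\bo$-strongly stable $\bA$ is then immediate, since part (2) of Lemma \ref{L:7.6} guarantees that $\cO_{\bo,C,\bA}$ is isometric and hence $(C,\bA)$ is exactly $\bo$-observable in that situation, placing us under the hypotheses already treated.

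There is essentially no obstacle here: the entire content is the termwise comparison of positive operator series (already packaged in \eqref{grmonotone}) plus the definition of exact $\bo$-observability. The only point deserving a word of care is ensuring the comparison inequality is available also for $k=0$, so that the lower bound $\varepsilon^2 I_\cX$ is uniform in $k$; this is handled by the equality $\Gr_{\bo,0,C,\bA}=\cG_{\bo,C,\bA}$, after which the uniform positivity and boundedness of $\Gr_{\bo,k,C,\bA}$ give invertibility at once.
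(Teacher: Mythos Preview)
Your proof is correct and follows essentially the same approach as the paper: the paper's proof simply notes that exact $\bo$-observability means $\cG_{\bo,C,\bA}$ is bounded below and then appeals directly to the chain of inequalities \eqref{grmonotone}, which is exactly what you do (with some additional elaboration of why \eqref{grmonotone} holds).
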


\begin{proof}  Exact $\bo$-observability of $(C, \bA)$ by definition means that $\cG_{\bo, C, A}$ is
bounded below.  The result now follows from the chain of inequalities \eqref{grmonotone}.
\end{proof}

\section{The model shift-operator tuple on $H^2_{\bo, \cY}(\free)$}  \label{S:bo-shift}

In this section we pursue a deeper study of the model operator tuples 
$$
{\bf S}_{\bo,R}=(S_{\bo, R,1},\ldots, S_{\bo, R,d})\quad\mbox{and}\quad
{\bf S}^*_{\bo,R}=(S^*_{\bo, R,1},\ldots, S^*_{\bo, R,d})
$$
on the Hardy-Fock space $H^2_{\bo, \cY}(\free)$ introduced in Section 2.2 by the formulas
\begin{equation}  \label{bo-shift}
  S_{\bo, R, j } \colon f(z) \mapsto f(z) z_j, \quad S^*_{\bo, R, j} \colon \sum_{\alpha \in \free} f_\alpha z^\alpha
  \mapsto \sum_{\alpha \in \free} \frac{\omega_{|\alpha| +1}}{\omega_{|\alpha|}} f_{\alpha j} z^\alpha.
 \end{equation}
We define $E \colon H^2_{\bo, \cY}(\free) \to \cY$ to be the free-coefficient evaluation operator
and we say that the pair
 \begin{equation}  \label{modelE}
(E, \bS^*_{\bo, R})\quad\mbox{with}\quad {\bf S}^*_{\bo,R}=(S^*_{\bo, R,1},\ldots, S^*_{\bo, R,d}),\quad
E  \colon \sum_{\alpha \in \free} f_\alpha z^\alpha \mapsto f_\emptyset
 \end{equation}
is the {\em $\bo$-model output pair}.

\begin{proposition}  \label{P:bo-model}
Let $\bo$ be an admissible weight sequence (in the sense of \eqref{18.2}) and let 
$(E, \bS^*_{\bo, R})$ be the $\bo$-model output pair defined as in \eqref{modelE}.
 \begin{enumerate}
 \item
${\bf S}_{\bo,R}^*$ is strongly stable in the sense that
$$
\lim_{N \to \infty}   \sum_{ \alpha \in \free \colon |\alpha| = N} \| \bS_{\bo,R}^{* \alpha^\top} f \|^2 = 0
\quad\text{for each}\quad f \in H^2_{\bo, \cY}(\free).
$$

\item The $\bo$-observability operator $\cO_{\bo,E, {\bf S}_{\bo,R}^*}$ equals $I_{H^2_{\bo,\cY}(\free)}$ 
and hence the pair $(E, {\bf S}_{\bo,R}^*)$ is $\bo$-output stable and exactly $\bo$-observable.

\item The action of the shifted $\bo$-observability operators  $\Ob_{\bo,k, E, \bS_{\bo,R}^*}$ and the shifted 
$\bo$-gramians ${\mathfrak G}_{\bo,k,E,{\bf S}_{\bo,R}^*}$ on $f\in H^2_{\bo, \cY}(\free)$ are given by the formulas
\begin{equation}
{\mathfrak G}_{\bo,k,E,{\bf S}_{\bo,R}^*}=\Ob_{\bo,k, E, \bS_{\bo,R}^*} \colon \sum_{\alpha \in \free}
    f_{\alpha} z^{\alpha} \mapsto \sum_{\alpha \in \free}
    \frac{\omega_{|\alpha|}}{\omega_{k+|\alpha|}} f_{\alpha}z^{\alpha}  \label{grer}  
\end{equation}
for all $k\ge 0$. Furthermore, for any $\beta\in\free$,
\begin{equation}
{\bf S}_{\bo,R}^{*\beta}{\bf S}_{\bo,R}^{\beta^\top}\Ob_{\bo,|\beta|, E, \bS_{\bo,R}^*}=
{\bf S}_{\bo,R}^{*\beta}{\bf S}_{\bo,R}^{\beta^\top}\Gr_{\bo,|\beta|, E, \bS_{\bo,R}^*}=I_{H^2_{\bo,\cY}(\free)}.
\label{gramss}
\end{equation}

\item For every $f\in H^2_{\bo,\cY}(\free)$,
\begin{align}
f(z)-f_{\emptyset}&=\sum_{j=1}^d (S_{\bo,R,j}{\mathfrak G}_{\bo,1,E,{\bf S}_{\bo,R}^*}S_{\bo,R,j}^*f)(z)\notag\\
&=\sum_{j=1}^d(S_{\bo,R,j}^*{\mathfrak G}_{\bo,1,E,{\bf S}_{\bo,R}^*}f)(z)\cdot z_j.
\label{4.13a}
\end{align}
\item The operator $\Upsilon: \; (H^2_{\bo,\cY}(\free))^d\oplus \cY\to H^2_{\bo,\cY}(\free)$ defined by
\begin{equation}
\Upsilon=\begin{bmatrix}S_{\bo,R,1} {\mathfrak G}_{\bo,1,E,{\bf S}_{\bo,R}^*}^{\frac{1}{2}} & \ldots & 
S_{\bo,R,1} {\mathfrak G}_{\bo,1,E,{\bf S}_{\bo,R}^*}^{\frac{1}{2}} &  E^*\end{bmatrix}
\label{ups}
\end{equation}
is unitary.  In particular the following operator identity holds:
\begin{equation} \label{upscoiso}
    \sum_{j=1}^{d} S_{\bo,R,j} {\mathfrak G}_{\bo,1,E,{\mathbf S}_{\bo,R}^{*}}
    S_{\bo,R,j}^{*} + E^{*} E = I_{H^2_{\bo,\cY}(\free)}.
 \end{equation}
\end{enumerate} 
\end{proposition}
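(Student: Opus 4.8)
The plan is to prove the five assertions of Proposition \ref{P:bo-model} essentially in the order listed, since each feeds into the next, and to keep the computations grounded in the two explicit formulas \eqref{bo-shift} for $S_{\bo,R,j}$ and $S^*_{\bo,R,j}$. For item (1), I would compute directly: from \eqref{bo-shift} one sees that $\bS_{\bo,R}^{*\beta}$ acting on $f(z) = \sum_\alpha f_\alpha z^\alpha$ picks out the coefficients $f_{\alpha\beta}$ (up to the weight ratios $\omega_{|\alpha|+|\beta|}/\omega_{|\alpha|}$), so that $\sum_{|\beta|=N}\|\bS_{\bo,R}^{*\beta^\top}f\|^2_{H^2_{\bo,\cY}}$ is a sum over the tail $\{\alpha : |\alpha|\ge N\}$ of terms dominated (using admissibility $1\le \omega_j/\omega_{j+1}\le M$, hence $\omega_{|\alpha|}/\omega_{|\alpha|+?}\le 1$ in the relevant direction) by $\sum_{|\alpha|\ge N}\omega_{|\alpha|}\|f_\alpha\|^2$, which goes to $0$ as $N\to\infty$ since $f\in H^2_{\bo,\cY}(\free)$. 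The precise bookkeeping of which weight ratios appear is the only slightly fussy point here.

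For item (2), I would simply unwind the definition \eqref{18.2aaa} of $\cO_{\bo,C,\bA}$ with $C=E$ and $\bA = \bS^*_{\bo,R}$: one computes $\omega_{|\alpha|}^{-1}E \bS_{\bo,R}^{*\alpha^\top}f$ and checks, coefficient by coefficient, that the weights telescope to give exactly $f_\alpha$ back, so $\cO_{\bo,E,\bS^*_{\bo,R}} = I$. Boundedness and exact $\bo$-observability are then immediate since $\cG = \cO^*\cO = I$. For item (3), the formula \eqref{grer} for $\Ob_{\bo,k,E,\bS^*_{\bo,R}}$ and ${\mathfrak G}_{\bo,k,E,\bS^*_{\bo,R}}$ follows the same way from \eqref{4.31}, \eqref{4.32}: the composition $E\,(\bS^*_{\bo,R})^{\alpha^\top}$ extracts $f_{\alpha}$ with a factor $\omega_{|\alpha|}/\omega_{\text{(shift)}}$, and summing against $\omega_{|\alpha|+k}^{-1}$ (resp.\ using \eqref{3.19gg'} together with item (2)) produces the stated multiplication-by-$\omega_{|\alpha|}/\omega_{k+|\alpha|}$ operator; the two expressions in \eqref{grer} agree because $\cG_{\bo,E,\bS^*_{\bo,R}}=I$ makes $\Gr_{\bo,k,\cdot} = \Gamma^{(k)}_{\bo,\bA}[I]$ coincide with $\Ob_{\bo,k,\cdot}$. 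The identity \eqref{gramss} is then a direct check: $\bS_{\bo,R}^{*\beta}\bS_{\bo,R}^{\beta^\top}$ is the orthogonal projection onto $\{\alpha : \alpha \text{ ends in } \beta\}$ up to weights, and composing with the multiplication operator \eqref{grer} with $k = |\beta|$ restores the identity on that range.

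Item (4) is the decomposition $f(z)-f_\emptyset = \sum_j S_{\bo,R,j}{\mathfrak G}_{\bo,1,E,\bS^*_{\bo,R}}S^*_{\bo,R,j}f$. Here I would again argue coefficientwise: on the coefficient of $z^{\alpha j}$ the right-hand side contributes $\frac{\omega_{|\alpha|+1}}{\omega_{|\alpha|}}\cdot\frac{\omega_{|\alpha|}}{\omega_{|\alpha|+1}} f_{\alpha j} = f_{\alpha j}$, and these exhaust all nonempty words $\alpha j$; the second equality in \eqref{4.13a} is just the statement that $S_{\bo,R,j}$ is right multiplication by $z_j$. Finally, item (5)---which contains the target identity \eqref{upscoiso}---is where I expect the only real subtlety, and it is largely a matter of correctly reading off which operator is a left inverse of which. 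Note first that \eqref{upscoiso} is precisely the statement ``$\Upsilon\Upsilon^* = I$'' once we identify $\Upsilon\Upsilon^* = \sum_j S_{\bo,R,j}{\mathfrak G}_{\bo,1,E,\bS^*_{\bo,R}}S^*_{\bo,R,j} + E^*E$ (using that ${\mathfrak G}_{\bo,1,E,\bS^*_{\bo,R}}$ is positive and selfadjoint so ${\mathfrak G}^{1/2}{\mathfrak G}^{1/2} = {\mathfrak G}$); and $\Upsilon\Upsilon^* = I$ follows immediately by combining item (4) (which handles the $\sum_j \cdots$ part on $f - f_\emptyset$) with the observation that $E^*E$ is the rank-one-per-$\cY$ projection onto constants, i.e.\ $E^*E f = f_\emptyset$. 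So \eqref{upscoiso} holds, hence $\Upsilon$ is a coisometry. For $\Upsilon$ to be \emph{unitary} one must also check $\Upsilon^*\Upsilon = I_{(H^2_{\bo,\cY})^d\oplus\cY}$; I would verify this by writing out the block entries of $\Upsilon^*\Upsilon$ and using: (i) ${\mathfrak G}_{\bo,1}^{1/2}S^*_{\bo,R,i}S_{\bo,R,j}{\mathfrak G}_{\bo,1}^{1/2} = \delta_{ij} I$, which is exactly the content of \eqref{gramss} with $\beta = j$ (the $i\ne j$ case using that $S_{\bo,R,i}, S_{\bo,R,j}$ have orthogonal ranges, cf.\ \eqref{Sboj*} type computations), (ii) $E S_{\bo,R,j}{\mathfrak G}_{\bo,1}^{1/2} = 0$ since multiplication by $z_j$ produces no constant term, and (iii) $EE^* = I_\cY$. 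Assembling these gives $\Upsilon^*\Upsilon = I$, completing the proof; the main obstacle, such as it is, is bookkeeping the weight ratios consistently across items (3)--(5) so that the cancellations in \eqref{gramss} and \eqref{upscoiso} come out exactly right.
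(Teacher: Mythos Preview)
Your proposal is correct and follows essentially the same approach as the paper: direct coefficientwise computations from \eqref{bo-shift} for each of items (1)--(4), then assembling $\Upsilon\Upsilon^*=I$ from item (4) plus $E^*Ef=f_\emptyset$, and $\Upsilon^*\Upsilon=I$ from the block identities $S^*_{\bo,R,j}S_{\bo,R,j}={\mathfrak G}_{\bo,1}^{-1}$, $S^*_{\bo,R,i}S_{\bo,R,j}=0$ ($i\ne j$), $ES_{\bo,R,j}=0$, and $EE^*=I_\cY$. One small inaccuracy to clean up: in item (3) you describe $\bS_{\bo,R}^{*\beta}\bS_{\bo,R}^{\beta^\top}$ as a projection up to weights, but it is actually the diagonal multiplier $f_\alpha\mapsto \frac{\omega_{|\alpha|+|\beta|}}{\omega_{|\alpha|}}f_\alpha$ (cf.\ the paper's formula derived by iterating \eqref{bo-shift}), which is precisely what inverts ${\mathfrak G}_{\bo,|\beta|}$ and makes \eqref{gramss} hold.
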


\begin{proof}[Proof of (1)]
Observe that if $f(z)={\displaystyle \sum_{\beta\in\free}f_\beta z^\beta}$
belongs to $H^2_{\bo,\cY}(\free)$, then
\begin{equation}
 \lim_{N \to \infty} \sum_{\beta\in\free:|\beta|\ge N}\omega_{|\beta|}\|f_\beta\|^2_{\cY} = 0
\label{ref8g}
\end{equation}
by \eqref{18.1}.   On the other hand, iteration of the second formula in \eqref{bo-shift} gives
\begin{equation}   \label{Sbo*-iterated}
( (\bS^*_{\bo, R})^{\alpha}f)(z) = \sum_{\beta \in \free} \frac{ \omega_{|\beta| + |\alpha|}}{\omega_{|\beta|}} f_{\beta \alpha} z^\beta
\end{equation}
and hence
$$
  \| \bS_{\bo,R}^{* \alpha} f \|^2_{H^2_{\bo, \cY}(\free)}  = \sum_{\beta \in \free}  \frac{ \omega_{|\beta| + |\alpha|}^2}{\omega_{|\beta|}}
  \| f_{\beta \alpha} \|^2_\cY.
$$
We now conclude from this last identity combined with the observation \eqref{ref8g} that
\begin{align*}
 \sum_{\alpha \in \free \colon |\alpha| = N} \| \bS_{\bo, R}^{* \alpha} f \|^2_{H^2_\bo(\free)}
& = \sum_{\alpha \in \free \colon |\alpha| = N} \frac{ \omega^2_{|\beta| + N}}{\omega_{|\beta|}} \| f_{\beta \alpha} \|^2_\cY  \\
& \le
\sum_{\alpha \in \free \colon |\alpha| = N} \sum_{\beta \in \free} \omega_{|\beta| + N} \| f_{\beta \alpha} \|^2_\cY  \\
& = \sum_{\gamma \in \free \colon |\gamma| \ge N} \omega_{|\gamma|} \| f_\gamma \|^2_\cY \to 0\quad 
\text{as}\quad N \to \infty,
\end{align*}
which proves the strong stability of $\bS_{\bo, R}^*$.
\end{proof}
\begin{proof}[Proof of (2)]
To verify that the $\bo$-observability operator $\cO_{\bo, E, \bS_{\bo, R}}$ is the identity operator on $H^2_{\bo, \cY}(\free)$,
note that by the formulas \eqref{Sbo*-iterated} and \eqref{modelE},
\begin{equation}
  E (\bS^*_{\bo, R})^\alpha  f = \omega_{|\alpha|}f_\alpha
\label{esbo}
\end{equation}
and therefore,
$$
\cO_{\bo, E, \bS^*_{\bo, R}} f  = \sum_{\alpha \in \free} ( \omega^{-1}_{|\alpha|} E \bS^{* \alpha}_{\bo, R} f) z^\alpha 
= \sum_{\alpha \in free} \omega^{-1}_{|\alpha|}\omega_{|\alpha|} f_\alpha z^\alpha = \sum_{\alpha \in \free} f_\alpha z^\alpha = f
$$
for all $f \in H^2_{\bo, \cY}(\free)$, i.e., $\cO_{\bo, E, \bS^*_{\bo, R}} = I_{H^2_{\bo, \cY}(\free)}$ as asserted.
\end{proof}

\begin{proof}[Proof of (3)]
To verify \eqref{grer}, we first combine \eqref{4.31} and \eqref{esbo} to compute
$$
\Ob_{\bo,k,E,{\bf S}_{\bo,R}^{*}} f =\sum_{\alpha\in\free}\omega_{k+|\alpha|}^{-1} \left( E
{\bf S}_{\bo,R}^{*\alpha}f\right) z^{\alpha} = \sum_{\alpha\in\free}
\frac{\omega_{|\alpha|}}{\omega_{k+|\alpha|}} f_{\alpha} z^{\alpha}.
$$
Since $\omega_0=1$, the adjoint operator $E^*: \, \cY\to H^2_{\bo,\cY}(\free)$ (see \eqref{modelE}) amounts to 
the inclusion of $\cY$ into $H^2_{\bo,\cY}(\free)$ which along with \eqref{esbo} leads us to
$$
{\bf S}_{\bo,R}^{\alpha\top}E^*E{\bf S}_{\bo,R}^{*\alpha}f=
{\bf S}_{\bo,R}^{\alpha\top}E^*(\omega_{|\alpha|}f_\alpha)=\omega_{|\alpha|}f_\alpha z^\alpha.
$$ 
Combining the latter equality with \eqref{4.32} we get
\begin{equation}
{\mathfrak G}_{\bo,k,E,{\bf S}_{\bo,R}^*}f=
\sum_{\alpha \in\free}\omega_{|\alpha|+k}^{-1}{\bf S}_{\bo,R}^{\alpha\top}E^*E{\bf S}_{\bo,R}^{*\alpha}f=
\sum_{\alpha \in\free}\frac{\omega_{|\alpha|}}{\omega_{|\alpha|+k}}f_\alpha z^\alpha,
\label{gogo}
\end{equation}
which completes the verification of \eqref{grer}. By iterating formulas \eqref{bo-shift} we have 
\begin{equation}
{\bf S}^{*\beta}_{\bo,R}{\bf S}_{\bo,R}^{\beta^{\top}}: \; \sum_{\alpha\in\free}
f_\alpha z^\alpha\to \sum_{\alpha\in\free}
\frac{\omega_{|\alpha|+|\beta|}}{\omega_{|\alpha|}}\, f_\alpha z^\alpha
\label{again43}
\end{equation}
for any $\beta\in\free$ which being combined with \eqref{grer} (with $k=|\beta|$) leads us to 
equalities \eqref{gramss}. We note that specialization of the equalities \eqref{gramss} 
to the case where $\beta = \emptyset$ amounts to the content of part (2) of the proposition.
\end{proof}

\begin{proof}[Proof of (4)] Combining \eqref{bo-shift} and \eqref{grer} (with $k=1$) gives
$$
S_{\bo,R,j} {\mathfrak G}_{\bo,1,E,{\mathbf S}_{\bo,R}^{*}} S_{\bo,R,j}^{*}: \, f(z)=\sum_{\beta\in\free}f_\beta z^\beta
\mapsto \sum_{\beta\in\free}f_{\beta j}z^{\beta j} 
$$
for $j=1,\ldots,d$. Therefore, we have for any $f\in H^2_{\bo,\cY}(\free)$,
\begin{align*}
\bigg(\sum_{j=1}^{d} S_{\bo,R,j} {\mathfrak G}_{\bo,1,E,{\mathbf S}_{\bo,R}^{*}} S_{\bo,R,j}^{*}\bigg)f
&=\sum_{j=1}^d \sum_{\beta\in\free}f_{\beta j} z^{\beta j}\\
&=\sum_{\beta\in\free: \beta\neq\emptyset}f_\beta z^\beta=f(z)-f_{\emptyset} 
\end{align*}
which verifies the identity \eqref{4.13a}.
\end{proof}

\begin{proof}[Proof of (5)] Equality \eqref{upscoiso} is just the operatorial form of the identity 
\eqref{4.13a}. With $\Upsilon$ defined as in \eqref{ups}, \eqref{upscoiso} amounts to the statement that $\Upsilon$
is a coisometric.  We next show that $\Upsilon$ is also isometric. 

\smallskip

Since $E^*$ is the inclusion map of $\cY$ into $H^2_{\bo,\cY}(\free)$, we have
$EE^*=I_{\cY}$. It is readily seen from formulas \eqref{bo-shift} and \eqref{modelE} that
$ES_{\bo,R,j}=0$ for $j=1,\ldots,d$ and $S_{\bo,R,j}^*S_{\bo,R,k}=0$, whenever $j\neq k$. 
On the other hand, the operator $S_{\bo,R,j}^*S_{\bo,R,j}$ acts on $H^2_{\bo,\cY}(\free)$ as follows:
\begin{equation}
S_{\bo, R, j}^*S_{\bo, R, j}: \; \; \sum_{\alpha \in \free} f_{\alpha} z^{\alpha} \mapsto
\sum_{\alpha \in \free}
\frac{\omega_{|\alpha|+1}}{\omega_{|\alpha|}}\cdot f_{\alpha} z^{\alpha},
\label{jan11}
\end{equation}
which together with \eqref{grer} implies that 
\begin{equation}
S_{\bo,R,j}^*S_{\bo,R,j}={\mathfrak G}_{\bo,1,E,{\bf S}_{\bo,R}^*}^{-1}\quad \mbox{for}\quad j=1,\ldots,d.
\label{jan11a}
\end{equation}
Summarizing we have the set of equalities
$$
EE^*=I_{\cY}, \quad ES_{\bo,R,j}=0, \quad {\mathfrak G}_{\bo,1,E,{\bf S}_{\bo,R}^*}^{\frac{1}{2}}S_{\bo,R,j}^*
S_{\bo, R,k}{\mathfrak G}_{\bo,1,E,{\bf S}_{\bo,R}^*}^{\frac{1}{2}}=\delta_{kj} I
$$
for $k,j=1,\ldots,d$ (where $\delta_{kj}$ is the Kronecker symbol) implying that the operator $\Upsilon$ is isometric.
Thus $\Upsilon$ is unitary.
\end{proof}

We emphasize that in Proposition \ref{P:bo-model} we only assumed that the weight sequence $\bo$ is admissible.
Now we impose the extra condition \eqref{1.8g}.
\begin{proposition}  \label{P:bo-model2} 
Let us assume that the weight sequence $\bo$ meets conditions \eqref{18.2}, \eqref{1.8g}. Then the 
operator tuple ${\bf S}_{\bo,R}^*$ on $H^2_{\bo,\cY}(\free)$ is a $\bo$-strongly stable 
$\bo$-hypercontraction, while the $\bo$-model output pair $(E, \bS^*_{\bo, R})$ is $\bo$-isometric. 
Moreover,
\begin{equation}
\Gamma_{\bo,{\bf S}_{\bo,R}^*}[I_{H^2_{\bo,\cY}(\free)}]=E^*E\quad\mbox{and}\quad\
\Gamma^{(k)}_{\bo,{\bf S}_{\bo,R}^*}[I_{H^2_{\bo,\cY}(\free)}]=\Gr_{\bo,k,E,{\bf S}_{\bo,R}^*}
\label{3.1e}
\end{equation}
for all $k\ge 1$, or equivalently,
\begin{align}
\sum_{\alpha\in\free} c_{|\alpha|}\|{\bf S}_{\bo,R}^{*\alpha}f\|_{H^2_{\bo,\cY}(\free)}^2&=
\|f_{\emptyset}\|^2_{\cY},
\label{3.1g}\\  
- \sum_{\alpha\in\free}\bigg(\sum_{\ell=1}^{k}\frac{c_{|\alpha|+\ell}}{\omega_{k-\ell}}\bigg)
\, \|{\bf S}_{\bo,R}^{*\alpha}f\|_{H^2_{\bo,\cY}(\free)}^2&= \sum_{\alpha\in\free}
\frac{\omega_{|\alpha|}^{2}}{\omega_{k+|\alpha|}}\|f_\alpha\|^2_{\cY}
\qquad (k\ge 1)\label{3.1h}
\end{align}
for all $f\in H^2_{\bo,\cY}(\free)$, where $c_j$'s are given in \eqref{1.8g}.
\end{proposition}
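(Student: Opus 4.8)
\textbf{Proof proposal for Proposition \ref{P:bo-model2}.}

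The plan is to reduce everything to the concrete descriptions of the shift-operator tuple and the shifted gramians already obtained in Proposition \ref{P:bo-model}, together with the definitions of $\Gamma_{\bo, \bA}$ and $\Gamma^{(k)}_{\bo, \bA}$ and the identity \eqref{3.19gg'}. First I would observe that conditions \eqref{18.2} and \eqref{1.8g} are exactly what is needed for the operatorial maps $\Gamma_{\bo, \bS^*_{\bo,R}}$ and $\Gamma^{(k)}_{\bo, \bS^*_{\bo,R}}$ to be well defined on $I_{H^2_{\bo,\cY}(\free)}$: by part (1) of Proposition \ref{P:bo-model} the tuple $\bS^*_{\bo,R}$ is strongly stable in the unweighted sense, but more to the point, the contractivity inequality $\sum_j S_{\bo,R,j}^* \cdot I \cdot S_{\bo,R,j} \preceq I$ (which is part of the admissibility hypothesis \eqref{18.2}, as recorded in Remark \ref{R:examples}) puts $I_{H^2_{\bo,\cY}(\free)}$ in the domain \eqref{1.9g} to which Lemma \ref{P:welldefined} applies. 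Hence $\Gamma_{\bo, \bS^*_{\bo,R}}[I]$ and $\Gamma^{(k)}_{\bo, \bS^*_{\bo,R}}[I]$ make sense.

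Next I would establish the two identities \eqref{3.1e} directly. By part (2) of Proposition \ref{P:bo-model}, the $\bo$-observability operator $\cO_{\bo, E, \bS^*_{\bo,R}}$ equals $I_{H^2_{\bo,\cY}(\free)}$, so $\cG_{\bo, E, \bS^*_{\bo,R}} = I_{H^2_{\bo,\cY}(\free)}$. Therefore applying Theorem \ref{T:7.2}(2) to the pair $(E, \bS^*_{\bo,R})$ gives the Stein equation $\Gamma_{\bo, \bS^*_{\bo,R}}[I] = E^*E$, which is the first relation in \eqref{3.1e}; and applying the identity \eqref{3.19gg'} (with $C = E$, $\bA = \bS^*_{\bo,R}$, and $\cG_{\bo, E, \bS^*_{\bo,R}} = I$) gives $\Gamma^{(k)}_{\bo, \bS^*_{\bo,R}}[I] = \Gr_{\bo, k, E, \bS^*_{\bo,R}}$, which is the second relation in \eqref{3.1e}. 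From here the claim that $(E, \bS^*_{\bo,R})$ is $\bo$-isometric is immediate from Definition \ref{D:7.5} (the first equation in \eqref{3.1e} is precisely \eqref{1.23g} with $H = I$, $C = E$, $\bA = \bS^*_{\bo,R}$), and then $\bo$-strong stability of $\bS^*_{\bo,R}$ follows from Lemma \ref{L:7.6}(2): since $(E, \bS^*_{\bo,R})$ is $\bo$-isometric and $\cG_{\bo,E,\bS^*_{\bo,R}} = I_{H^2_{\bo,\cY}(\free)}$, the tuple $\bS^*_{\bo,R}$ is $\bo$-strongly stable, and in particular $\bo$-hypercontractive (the hypercontractivity inequalities $\Gamma^{(k)}_{\bo, \bS^*_{\bo,R}}[I] = \Gr_{\bo,k,E,\bS^*_{\bo,R}} \succeq 0$ hold by \eqref{grmonotone}).

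Finally I would translate the operator identities \eqref{3.1e} into the scalar-valued forms \eqref{3.1g}, \eqref{3.1h} by pairing against a fixed $f \in H^2_{\bo,\cY}(\free)$. For \eqref{3.1g}: by definition \eqref{1.11g}, $\langle \Gamma_{\bo, \bS^*_{\bo,R}}[I] f, f\rangle = \sum_{\alpha \in \free} c_{|\alpha|} \langle (\bS_{\bo,R}^{*\alpha^\top})^* (\bS^*_{\bo,R})^{\alpha^\top} f, f\rangle$; but one must be careful about the placement of transposes — using the formula \eqref{4.5} with $\bA = \bS^*_{\bo,R}$ one has $B^k_{\bS^*_{\bo,R}}[I] = \sum_{|\alpha|=k} (\bS^*_{\bo,R})^{*\alpha^\top}(\bS^*_{\bo,R})^\alpha = \sum_{|\alpha|=k} \bS_{\bo,R}^{\alpha^\top}(\bS^*_{\bo,R})^\alpha$, so that $\langle B^k_{\bS^*_{\bo,R}}[I] f, f \rangle = \sum_{|\alpha|=k} \|(\bS^*_{\bo,R})^\alpha f\|^2$; combining with $\langle E^*E f, f\rangle = \|Ef\|^2 = \|f_\emptyset\|^2_\cY$ yields \eqref{3.1g}. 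For \eqref{3.1h}: the left side is $\langle \Gamma^{(k)}_{\bo, \bS^*_{\bo,R}}[I] f, f\rangle$ expanded via \eqref{3.5ag}, which by the first relation equals $\langle \Gr_{\bo,k,E,\bS^*_{\bo,R}} f, f\rangle$; and by the explicit formula \eqref{grer} for the shifted gramian acting on $f = \sum_\alpha f_\alpha z^\alpha$, this is $\langle \sum_\alpha \frac{\omega_{|\alpha|}}{\omega_{k+|\alpha|}} f_\alpha z^\alpha, f\rangle_{H^2_{\bo,\cY}(\free)} = \sum_\alpha \frac{\omega_{|\alpha|}}{\omega_{k+|\alpha|}} \cdot \omega_{|\alpha|} \|f_\alpha\|^2_\cY = \sum_\alpha \frac{\omega_{|\alpha|}^2}{\omega_{k+|\alpha|}} \|f_\alpha\|^2_\cY$, which is the right side of \eqref{3.1h}. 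The main obstacle — really the only place requiring care — is keeping the transposes $\alpha \mapsto \alpha^\top$ straight when passing between the $B_\bA$-calculus expressions and the norms $\|(\bS^*_{\bo,R})^\alpha f\|$, and checking that the indexing in \eqref{3.5ag} matches the shifted-gramian formula \eqref{4.32}; everything else is bookkeeping with the already-established identities.
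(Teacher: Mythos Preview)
Your proposal is correct and follows essentially the same route as the paper for the identities \eqref{3.1e} and their scalar equivalents \eqref{3.1g}, \eqref{3.1h}: both you and the paper reduce to $\cG_{\bo,E,\bS^*_{\bo,R}} = I$ from Proposition~\ref{P:bo-model}(2) and then invoke the general identities \eqref{3.19g} (equivalently Theorem~\ref{T:7.2}(2)) and \eqref{3.19gg'}.

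The one substantive difference is in the verification of $\bo$-strong stability. You appeal to Lemma~\ref{L:7.6}(2) (isometric pair plus $\cG = I$ forces $\bo$-strong stability), which is clean and avoids any computation. The paper instead carries out an explicit calculation: it computes $\sum_{|\alpha|=k}\langle \Gr_{\bo,k,E,\bS^*_{\bo,R}} \bS_{\bo,R}^{*\alpha}f,\,\bS_{\bo,R}^{*\alpha}f\rangle = \sum_{|\beta|\ge k}\omega_{|\beta|}\|f_\beta\|^2$ directly from \eqref{Grk-quadformg} and \eqref{Sbo*-iterated}, and observes that the tail goes to zero. Your shortcut is legitimate and arguably tidier; the paper's direct computation has the virtue of being self-contained and making the decay visible on the coefficient level. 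Two minor points: your logical order should be adjusted so that $\bo$-hypercontractivity (which you correctly derive from $\Gr_{\bo,k,E,\bS^*_{\bo,R}}\succeq 0$ and $E^*E\succeq 0$) is established \emph{before} declaring the pair $\bo$-isometric, since Definition~\ref{D:7.5} presupposes it; and your displayed contractivity inequality has the adjoints on the wrong side---the condition \eqref{1.9g} with $\bA = \bS^*_{\bo,R}$ reads $\sum_j S_{\bo,R,j}\, I\, S^*_{\bo,R,j} \preceq I$ (the row-contraction property), not $\sum_j S^*_{\bo,R,j} S_{\bo,R,j}$.
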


\begin{proof}
By part (2) in Proposition \ref{P:bo-model}, $\cG_{\bo,E,{\bf S}_{\bo,R}^*}=I$ and hence, 
equalities \eqref{3.1e} follow from general 
formulas \eqref{3.19g} and \eqref{3.19gg'}. 
Since $EE^*\succeq 0$ and $\Gr_{\bo,k,E,{\bf S}_{\bo,R}^*}\succeq 0$, we conclude from \eqref{3.1e} 
that ${\bf S}_{\bo,R}^*$ is $\bo$-hypercontractive. Next we note that the first equality in \eqref{3.1e} is equivalent 
to the quadratic-form identity
$$
\langle \Gamma_{\bo,{\bf S}_{\bo,R}^{*}}[I_{H^2_{\bo,\cY}(\free)}]f, f   
\rangle_{H^2_{\bo,\cY}(\free)} = \langle E^{*} E f, f \rangle_{H^2_{\bo,\cY}(\free)}\quad\mbox{for all}\quad
f \in H^{2}_{\bo}(\cY),
$$
which in turn is equivalent to \eqref{3.1g}.    The equivalence of the
second identity in \eqref{3.1e} with the quadratic-form
identity \eqref{3.1h} follows from  the equality
$$ \langle\Gamma^{(k)}_{\bo,{\bf S}_{\bo,R}^{*}}[I_{H^2_{\bo,\cY}(\free)}]f,\, f
\rangle_{H^2_{\bo,\cY}(\free)} =
-\sum_{\alpha\in\free}\bigg(\sum_{\ell=1}^{k}\frac{c_{|\alpha|+\ell}}{\omega_{k-\ell}}\bigg)
\, \|{\bf S}_{\bo,R}^{*\alpha}f\|_{H^2_{\bo,\cY}(\free)}^2
$$
which is immediate from the definitions, and the identity
\begin{equation}  \label{Grk-quadformg}  
\langle \Gr_{\bo, k,E, {\bf S}_{\bo,R}^{*}}f, \, f
\rangle_{H^2_{\bo,\cY}(\free)} =
\sum_{\alpha \in \free}\frac{\omega_{|\alpha|}^{2}}{\omega_{k+|\alpha|}}
\|f_{\alpha}\|^{2}_{\cY}
\end{equation}
which follows from \eqref{gogo} and the definition of the inner product in $H^2_{\bo,\cY}(\free)$.
Combining \eqref{Grk-quadformg} and \eqref{Sbo*-iterated} gives
\begin{align*}
&\sum_{\alpha\in\free:|\alpha|=k}
\langle \Gr_{\bo,k, E, {\bf S}_{\bo,R}^{*}} 
{\bf S}_{\bo,R}^{*\alpha}f, \, {\bf S}_{\bo,R}^{*\alpha}f\rangle_{H^{2}_{\bo}(\cY)}\\
&\qquad =\sum_{\beta,\alpha\in\free:|\alpha|=k} \frac{\omega_{|\beta|}^{2}}{\omega_{k+|\beta|}}
\|({\bf S}_{\bo,R}^{*\alpha}f)_{\beta}\|^{2}_{\cY}\\
&\qquad=\sum_{\beta,\alpha\in\free:|\alpha|=k} 
\frac{\omega_{|\beta|}^{2}}{\omega_{k+|\beta|}}
\bigg\|\frac{\omega_{|\beta|+|\alpha|}}{\omega_{|\beta|}}f_{\beta\alpha}\bigg\|^{2}_{\cY}\\
&\qquad=\sum_{\beta,\alpha\in\free:|\alpha|=k}
\omega_{|\beta|+|\alpha|}\cdot\|f_{\beta\alpha}\|^2_{\cY}
=\sum_{\beta\in\free: |\beta|\ge k}\omega_{|\beta|}\cdot\|f_{\beta}\|^2_{\cY}.
\end{align*}
This together with \eqref{ref8g} and the second equality in \eqref{3.1e} implies that
\begin{align*}
& \sum_{\alpha\in\free:|\alpha|=k}\langle {\bf S}_{\bo,R}^{\alpha^\top}
\Gamma_{\bo,k,S_{\bo}^*}[I_{\cH}]{\bf S}_{\bo,R}^{*\alpha}f, 
\, f\rangle_{H^{2}_{\bo}(\cY)}  \\
 & \quad \quad \quad \quad  =\sum_{\alpha\in\free:|\alpha|=k}\langle \Gr_{\bo,k, E, {\bf S}_{\bo,R}^{*}} 
{\bf S}_{\bo,R}^{*\alpha}f, \,
{\bf S}_{\bo,R}^{*\alpha}f\rangle_{H^{2}_{\bo}(\cY)}  \\
&\quad \quad \quad \quad  =\sum_{\beta\in\free: |\beta|\ge k}\omega_{|\beta|}\cdot\|f_{\beta}\|^2_{\cY}\to
0\quad\mbox{as}\quad k\to\infty.
\end{align*}
This finally verifies $\bo$-strong stability of $ {\bf S}^*_{\bo,R}$ and completes the proof.
\end{proof}

\section[Characterization of $\bS_{\bo,R}$]{A characterization of the model shift-operator tuple on $H^2_{\bo,\cY}(\free)$}\label{S:olof}
Let us note from the results of Section \ref{S:bo-shift}  that the model shift-operator tuple 
$\bS_{\bo,R}$ on $H^2_{\bo, \cY}(\free)$ enjoys the following properties:
\begin{align}
 & (S_{\bo,R,j}^*S_{\bo,R,j})^{-1}=\Gamma^{(1)}_{\bo,{\bf S}_{\bo,R}^*}[I_{H^2_{\bo,\cY}(\free)}]
\quad\mbox{for}\quad j=1,\ldots,d,
\label{ol-a}   \\
& \operatorname{Ran} S_{\bo, R, j} \perp \operatorname{Ran} S_{\bo, R, k}\quad \text{for}\quad j \ne k,  \label{ol-b} \\
& \bigcap_{N\ge 0}\bigvee_{\alpha\in\free:|\alpha|=N} {\bf S}_{\bo,R}^\alpha H^2_{\bo,\cY}(\free) = \{0\}
\label{ol-c}
\end{align}
(where we use the notation $\bigvee$ to denote {\em closed linear span}).
Indeed \eqref{ol-a} follows by combining \eqref{jan11a} and the second equality in \eqref{3.1e} (for $k=1$),
\eqref{ol-b} is clear from the form of the inner product on $H^2_{\bo, \cY}(\free)$, and \eqref{ol-c}
is clear from the observation that
$$
\bigcap_{N\ge 0}\bigvee_{\alpha\in\free \colon |\alpha|=N} {\bf S}_{\bo,R}^\alpha H^2_{\bo,\cY}(\free) 
\subset \bigcap_{N \ge 0} \bigvee_{ \alpha \in \free \colon |\alpha| = N} \cY\langle \langle z \rangle \rangle \cdot z^\alpha = \{0\}.
$$
The goal of this section is to prove a remarkable converse: {\em if  $\bT = (T_1, \dots, T_d)$ is any operator $d$-tuple 
on a Hilbert space $\cX$ satisfying conditions \eqref{ol-a}, \eqref{ol-b}, \eqref{ol-c}, then $\bT$ is 
unitarily equivalent to $\bS_{\bo, R}$ on $H^2_{\bo, \cY}(\free)$, where the coefficient Hilbert space $\cY$ is chosen so that 
$\dim \cY = \dim (\cX \ominus \bigvee_{j=1, \dots, d} \operatorname{Ran} T_j)$. }

\smallskip

We therefore let $\bo$ to be the weight sequence subject to conditions \eqref{18.2}, \eqref{1.8g}.  Let us define
a class of operators $\cC(\bo)$ to consist of {\em all operator-tuples  $\bT=(T_1,\ldots,T_d)$
of left-invertible operators $T_j\in\cL(\cX)$ with  mutually orthogonal ranges
and satisfying the additional identity}
\begin{equation}
(T_j^*T_j)^{-1}=\Gamma^{(1)}_{\bo,\bT^*}[I_{\cX}].
\label{ol2}
\end{equation}
One can check that for the case $\bo = \b1$ (the weight sequence consisting of all $1$'s), the class $\cC(\b1)$ consists of 
the set of all row isometries; we resist using the term {\em $\bo$-isometries} in general for the class $\cC(\bo)$ since this term is reserved for
 the class of all operator tuples $\bT = (T_1, \dots, T_d)$ such that $\Gamma_{\bo, \bT}(I_\cX) = 0$ (compatible with the terminology
 {\em $\bo$-contraction} for the class of all operator tuples $\bT$ with $\Gamma_{\bo, \bT}(I_\cX) \succeq 0$).
Given an operator-tuple ${\bf T} \in \cC(\bo)$ we associate the subspaces
\begin{equation}
\cE=\cX\ominus\bigg(\bigoplus_{j=1}^d T_j\cX\bigg)\quad\mbox{and}\quad \cX_0=
\bigcap_{N\ge 0}\bigvee_{\alpha\in\free:|\alpha|=N}{\bf T}^\alpha\cX.
\label{ol4}
\end{equation}
of $\cX$. Letting $A_j=T_j^*$, $X=I_{\cX}$ and $k=1$ in \eqref{1.11g} and \eqref{3.5ag} and taking into account that 
$\omega_0=c_0=1$, we get explicit formulas
\begin{equation}
\Gamma_{\bo,\bT^*}[I_{\cX}]=\sum_{\alpha\in\free}c_{|\alpha|}{\bf T}^{\alpha^\top}{\bf T}^{*\alpha},\quad
\Gamma^{(1)}_{\bo,\bT^*}[I_{\cX}]=    -\sum_{\alpha\in\free}c_{|\alpha|+1}{\bf T}^{\alpha^\top}{\bf T}^{*\alpha},
\label{ol2a}
\end{equation}
from which we see that
\begin{equation}  \label{ol2a'}
I_{\cX}-\sum_{j=1}^d T_j\Gamma^{(1)}_{\bo,\bT^*}[I_{\cX}]T_j^*=\Gamma_{\bo,\bT^*}[I_{\cX}],
\end{equation}
which, on account of \eqref{ol2}, can be written as
\begin{equation}
I_{\cX}-\sum_{j=1}^d T_j(T_j^*T_j)^{-1}T_j^*=\Gamma_{\bo,\bT^*}[I_{\cX}].
\label{ol3'}
\end{equation}
Since the ranges of $T_1,\ldots,T_d$ are  mutually orthogonal, it follows from \eqref{ol3'} that
$\Gamma_{\bo,\bT^*}[I_{\cX}]$ is an orthogonal projection. From \eqref{ol4} we see that the range 
space of this projection is the space $\cE$ given by \eqref{ol4} and  hence  we arrive at the more complete
version of \eqref{ol3'}:
\begin{equation}  \label{ol3}
I_{\cX}-\sum_{j=1}^d T_j(T_j^*T_j)^{-1}T_j^*=\Gamma_{\bo,\bT^*}[I_{\cX}] = P_\cE.
\end{equation}
The Cauchy dual tuple ${\bf L}=(L_1,\ldots,L_d)$ of ${\bf T}$ is defined by
\begin{equation}
L_j=T_j(T_j^*T_j)^{-1}\quad\mbox{for}\quad j=1,\ldots,d,
\label{ol5}
\end{equation}
and it is readily seen from \eqref{ol2} that
\begin{equation}
L_j^*T_j=I_\cX\quad\mbox{and}\quad L_j^*T_i=0\quad\mbox{for all}\quad i\neq j.
\label{ol6}
\end{equation}
\begin{proposition}
Let ${\bf T}$ be a tuple of left-invertible operators satisfying conditions \eqref{ol2}, let ${\bf L}$ be its Cauchy dual, and let
$\cE$ be defined as in \eqref{ol4}. Then
\begin{equation}
P_{\cE}{\bf L}^{*\beta}=\omega_{|\beta|}^{-1}P_{\cE}{\bf T}^{*\beta}\quad\mbox{for all}\quad \beta\in\free.
\label{ol7}
\end{equation}
\label{P:ol1}
\end{proposition}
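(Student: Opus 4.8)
The plan is to prove the identity \eqref{ol7} by induction on the length $|\beta|$, using the recursion \eqref{1.6g'} for the power series $R_{\bo, k}$ together with the structural facts \eqref{ol3}, \eqref{ol5}, \eqref{ol6} about $\bT$, $\bL$, and $\cE$.  The base case $\beta = \emptyset$ is trivial since $\omega_0 = 1$ gives $P_\cE \bL^{*\emptyset} = P_\cE = \omega_0^{-1}P_\cE \bT^{*\emptyset}$.  For the inductive step, write a nonempty word as $\beta = \beta' j$ for some $j \in \{1, \dots, d\}$ and $\beta' \in \free$, so that $\bL^{*\beta} = L_j^* \bL^{*\beta'}$ and $\bT^{*\beta} = T_j^* \bT^{*\beta'}$ (recall the convention that $\bL^{*\beta}$ and $\bT^{*\beta}$ are built from $\beta$ read in the appropriate order, so that peeling off the last letter $j$ of $\beta$ corresponds to applying $L_j^*$, resp.\ $T_j^*$, first).

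First I would establish the key one-step relation
$$
P_\cE L_j^* = \omega_1^{-1} P_\cE T_j^* + (\text{correction terms involving } P_\cE \bT^{*\gamma} \text{ with } |\gamma| \ge 1),
$$
or more precisely a relation expressing $P_\cE \bL^{*\beta' j}$ in terms of $P_\cE \bT^{*\alpha}$ for longer words $\alpha$.  The cleanest route is probably to avoid a letter-by-letter peeling and instead work directly with the generating-function identity.  Namely, from \eqref{ol5} we have $L_j = T_j (T_j^* T_j)^{-1}$, and \eqref{ol2} identifies $(T_j^*T_j)^{-1}$ with $\Gamma^{(1)}_{\bo, \bT^*}[I_\cX] = \frac{R_{\bo,1}}{R_\bo}(B_{\bT^*})[I_\cX]$.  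Using the expansion \eqref{3.5ag} of $\Gamma^{(1)}_{\bo, \bT^*}$ as an absolutely convergent series in $B_{\bT^*}$ and iterating, one obtains $\bL^{*\beta}$ as an explicit (absolutely convergent) combination $\sum_{\gamma} \lambda_{\beta, \gamma} \bT^{*\gamma}$ with scalar coefficients $\lambda_{\beta, \gamma}$ depending only on $\bo$ and the lengths $|\beta|, |\gamma|$.  The point of hitting everything with $P_\cE$ is that, by \eqref{ol3}, $P_\cE = \Gamma_{\bo, \bT^*}[I_\cX] = \frac{1}{R_\bo}(B_{\bT^*})[I_\cX] = \sum_{\alpha} c_{|\alpha|}\bT^{\alpha^\top}\bT^{*\alpha}$, so the composition $P_\cE \bL^{*\beta}$ can be reorganized, via the Mertens-type General Principle \eqref{GenPrin} already invoked in the proof of Theorem \ref{T:7.2}, into a single series in which the scalar coefficients telescope using the recursion \eqref{1.8g} defining the $c_j$.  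The target identity $P_\cE \bL^{*\beta} = \omega_{|\beta|}^{-1} P_\cE \bT^{*\beta}$ then amounts precisely to the combinatorial identity \eqref{3.18u} of Lemma \ref{L:useful} (with $k = |\beta|$), which is why that lemma was recorded earlier.

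Concretely, the cleanest organization is: (i) record that $\bL^{*\beta}$, for $|\beta| = m$, equals $\sum_{\gamma \in \free} d_{m, |\gamma|} \bT^{*\beta \gamma^{\text{(appropriate)}}}$ where the scalars $d_{m, r}$ come from iterating the coefficients $c_j^{\bo, 1}$ of $\frac{R_{\bo,1}}{R_\bo}$; (ii) left-multiply by $P_\cE = \sum_\alpha c_{|\alpha|}\bT^{\alpha^\top}\bT^{*\alpha}$ and collapse using orthogonality of the ranges of the $T_j$'s together with \eqref{ol6}; (iii) identify the resulting scalar coefficient of $P_\cE \bT^{*\beta}$ with $\omega_m^{-1}$ by invoking Lemma \ref{L:useful}, and check that the coefficients of $P_\cE \bT^{*\alpha}$ for $\alpha \ne \beta$ of the same length vanish (these vanish automatically by the orthogonality, since $P_\cE \bT^{*\alpha}$ for distinct $\alpha$ land in orthogonal pieces) or get absorbed.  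I would in fact prefer the induction-on-$|\beta|$ formulation over the full generating-function bookkeeping: assuming \eqref{ol7} for all words of length $< m$, one writes $\beta = \beta' j$, uses \eqref{ol5}--\eqref{ol6} plus the one-step consequence of \eqref{ol2}/\eqref{ol3} to push $P_\cE L_j^*$ past, and reduces the needed scalar identity to a single application of the $j = 0$ case of \eqref{3.18u} (i.e.\ $\sum_{r=0}^{0}\frac{1}{\omega_0}\sum_{\ell=1}^{m}\frac{c_{\ell + r}}{\omega_{m-\ell}} = -\omega_m^{-1}$), combined with the inductive hypothesis applied to $\beta'$.

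\textbf{Main obstacle.}  The delicate point is bookkeeping the noncommutative word combinatorics correctly: tracking how the transpose $\alpha^\top$ and the concatenation order interact when one peels a letter off $\beta$ versus when one applies the series for $P_\cE$ on the left, and making sure the absolute convergence hypotheses needed to invoke the General Principle \eqref{GenPrin} are in force (they are, since $\bo$ satisfies \eqref{1.8g} and $I_\cX$ satisfies the inequalities \eqref{1.9g} relative to $\bT^*$, the latter because $\bT \in \cC(\bo)$ forces $\Gamma_{\bo, \bT^*}[I_\cX] = P_\cE \succeq 0$ hence $I_\cX \succeq B_{\bT^*}[I_\cX] \succeq 0$).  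Once the convergence is pinned down, the algebra is forced and the scalar identity is exactly Lemma \ref{L:useful}; so the real work is purely the careful index management, not any new estimate.
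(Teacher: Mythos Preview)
Your proposal has the right skeleton---induction on $|\beta|$, expand the last $L_j^*$ via the identity $(T_j^*T_j)^{-1}=\Gamma^{(1)}_{\bo,\bT^*}[I_\cX]$, and close with the scalar identity from Lemma~\ref{L:useful}---and this is exactly the paper's route. But two things are off and the second is a real gap.

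First, the operator ordering is reversed. With the paper's conventions one has $\bL^{*\beta' j}=\bL^{*\beta'}L_j^*$, not $L_j^*\bL^{*\beta'}$; your own parenthetical (``applying $L_j^*$ first'') is actually consistent with the correct placement on the right, so the displayed formula contradicts it. This matters because it dictates where the expansion happens.

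Second, and more importantly: once $L_j^*$ sits on the right and you expand it as $-\sum_{\gamma}c_{|\gamma|+1}\bT^{\gamma^\top}\bT^{*\gamma}T_j^*$, the essential step is to analyze $P_\cE\,\bL^{*\beta'}\bT^{\gamma^\top}$ for each summand. Using $\bL^{*\sigma}\bT^{\sigma^\top}=I$ and the orthogonal-range relations \eqref{ol6}, this product is nonzero only when $\gamma$ is a right tail of $\beta'$ (say $\beta'=\sigma\gamma$), in which case it collapses to $\bL^{*\sigma}$, or when $\gamma$ properly extends $\beta'$, in which case $P_\cE$ kills it since $P_\cE\bT^{\delta^\top}=0$ for $\delta\ne\emptyset$. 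Only then can the induction hypothesis be applied---to each surviving prefix $\sigma$ of $\beta'$, not merely to $\beta'$ itself---yielding exactly $N+1$ terms whose scalar coefficients sum via \eqref{3.18u} (the case $k=1$, equivalently your $j=0$ case) to $\omega_{N+1}^{-1}$. Your phrase ``push $P_\cE L_j^*$ past'' does not capture this mechanism; in particular, you cannot simply invoke the hypothesis on $\beta'$ alone.

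Finally, the generating-function alternative as you describe it does not work: $\bL^{*\beta}$ is \emph{not} a scalar combination of operators $\bT^{*\gamma}$---expanding each $L_j^*$ produces mixed words in $\bT$ and $\bT^*$ (terms $\bT^{\gamma^\top}\bT^{*\gamma}$), and the noncommutativity prevents the clean reorganization you sketch. The simplification happens only after the left multiplication by $P_\cE$ combined with the cancellations $\bL^{*\sigma}\bT^{\sigma^\top}=I$ just described, which is precisely the inductive computation above.
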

\begin{proof} We prove \eqref{ol7} by induction in $|\beta|$. The basis case $|\beta|=0$ is trivial. Assuming
that \eqref{ol7} holds for all $\beta$ with $|\beta|\le N$ we will verify \eqref{ol7} for $\beta=\alpha j$ for
a fixed $\alpha$ ($|\alpha|=N$). To this end, we first observe from \eqref{ol5} and \eqref{ol2} that
\begin{align}
P_{\cE}{\bf L}^{*\alpha j}=P_{\cE}{\bf L}^{*\alpha}L_j^*&=P_{\cE}{\bf L}^{*\alpha}(T_j^*T_j)^{-1}T_j^*\notag\\
&=P_{\cE}{\bf L}^{*\alpha}\Gamma^{(1)}_{\bo,\bT^*}[I_{\cX}]T_j^*\notag\\
&=-P_{\cE}{\bf L}^{*\alpha}\bigg(\sum_{\beta\in\free}c_{|\beta|+1}{\bf T}^{\beta^\top}{\bf T}^{*\beta}\bigg)T_j^*,\label{ol8}
\end{align}
where we used the second formula from \eqref{ol2a} for the last step.
We next observe from \eqref{ol4} and \eqref{ol6} that ${\bf L}^{*\alpha}{\bf T}^{\beta^\top}\neq 0$ if and only if either
$\beta=\delta\alpha$ (for some non-empty $\delta\in\free$) or $\alpha=\gamma\beta$ (for some $\gamma\in\free$).
In the first case,
$$
P_{\cE}{\bf L}^{*\alpha}{\bf T}^{\beta^\top}=P_{\cE}{\bf L}^{*\alpha}{\bf T}^{\alpha^\top\gamma^\top}=
P_{\cE}{\bf T}^{\gamma^\top}=0,
$$
while in the second case, we have
\begin{align*}
P_{\cE}{\bf L}^{*\alpha}{\bf T}^{\beta^\top}{\bf T}^{*\beta}&=P_{\cE}{\bf L}^{*\gamma\beta}{\bf T}^{\beta^\top}{\bf T}^{*\beta}\\
&=P_{\cE}{\bf L}^{*\gamma}{\bf T}^{*\beta}\\
&=\omega_{|\gamma|}^{-1}P_{\cE}{\bf T}^{*\gamma}{\bf T}^{*\beta}=\omega_{N-|\beta|}^{-1}P_{\cE}{\bf T}^{*\alpha},
\end{align*}
where we have used the induction hypothesis for the third equality. Thus, we have only $N+1$ (nonzero) terms on the
right side of \eqref{ol8}:
$$
P_{\cE}{\bf L}^{*\alpha j}=-P_{\cE}\bigg(\sum_{i=0}^N c_{i+1}\omega_{N-i}^{-1}\bigg){\bf T}^{*\alpha}T_j^*.
$$
By the equality \eqref{3.18u} (for $k=1$), ${\displaystyle\sum_{i=0}^N c_{i+1}\omega_{N-i}^{-1}}=-\omega_{N+1}^{-1}$, which
being combined with the last equality gives
$$
P_{\cE}{\bf L}^{*\alpha j}=\omega_{N+1}^{-1}P_{\cE}{\bf T}^{*\alpha }T_j^*=\omega_{N+1}^{-1}P_{\cE}{\bf T}^{*\alpha j},
$$
and the induction argument completes the proof.
\end{proof}
\begin{proposition}
For the tuples ${\bf T}$ and ${\bf L}$ as above,
\begin{equation}
\cX_0:=\bigcap_{N\ge 0}\bigvee_{\alpha\in\free:|\alpha|=N}{\bf T}^\alpha\cX=
\bigcap_{\alpha\in\free}{\rm Ker} \, P_{\cE}{\bf L}^{*\alpha}=
\bigcap_{\alpha\in\free}{\rm Ker} \, P_{\cE}{\bf T}^{*\alpha}.
\label{ol9}\end{equation}
\label{P:ol2}
\end{proposition}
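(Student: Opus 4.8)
The plan is to prove the triple equality in \eqref{ol9} by establishing two inclusions in each direction, treating the middle term
$\bigcap_{\alpha \in \free} \operatorname{Ker} P_\cE \bL^{*\alpha}$ as the pivot. First I would observe that the second and third intersections coincide immediately:
by Proposition \ref{P:ol1}, $P_\cE \bL^{*\alpha} = \omega_{|\alpha|}^{-1} P_\cE \bT^{*\alpha}$ and since $\omega_{|\alpha|}^{-1} \ne 0$ for every $\alpha$,
we have $\operatorname{Ker} P_\cE \bL^{*\alpha} = \operatorname{Ker} P_\cE \bT^{*\alpha}$ for each $\alpha \in \free$; intersecting over $\alpha$ gives the equality
of the last two sets in \eqref{ol9}. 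So the real content is the equality of $\cX_0$ with $\bigcap_{\alpha} \operatorname{Ker} P_\cE \bT^{*\alpha}$.

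For the inclusion $\cX_0 \subseteq \bigcap_\alpha \operatorname{Ker} P_\cE \bT^{*\alpha}$, I would fix $\alpha$ with $|\alpha| = N$ and note that
for any $\beta$ with $|\beta| = N+1$ one has $\bT^{*\alpha} \bT^\beta = 0$ unless $\beta$ is an extension of $\alpha$ on the left (using the mutual orthogonality
of the ranges of the $T_j$, i.e.\ $T_i^* T_j = 0$ for $i \ne j$, together with left-invertibility); and when $\beta = \beta' \alpha$ (so $|\beta'| = 1$) one gets
$\bT^{*\alpha} \bT^\beta = \bT^{\beta' {}^\top}$, whose range lies in $\bigoplus_{j=1}^d T_j \cX = \cE^\perp$. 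Hence $P_\cE \bT^{*\alpha} \bT^\beta = 0$
for all $|\beta| = N+1$, so $P_\cE \bT^{*\alpha}$ annihilates $\bigvee_{|\beta| = N+1} \bT^\beta \cX$, which contains $\cX_0$. Thus every $x \in \cX_0$
lies in $\operatorname{Ker} P_\cE \bT^{*\alpha}$, and since $\alpha$ was arbitrary we get $\cX_0 \subseteq \bigcap_\alpha \operatorname{Ker} P_\cE \bT^{*\alpha}$.

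For the reverse inclusion I would argue contrapositively, or more constructively: suppose $x \perp \cX_0$, equivalently $x \notin \cX_0$ is not quite
the right phrasing, so instead I would use a reconstruction-of-the-identity argument. Iterating \eqref{ol3} (or \eqref{ol2a'}) in the form
$I_\cX = P_\cE + \sum_{j=1}^d T_j (T_j^* T_j)^{-1} T_j^* = P_\cE + \sum_{j=1}^d T_j L_j^*$ and using $L_j^* T_i = \delta_{ij} I$, one expands
$I_{\cX} = \sum_{\alpha \colon |\alpha| \le N} \bT^\alpha P_\cE \bL^{*\alpha} + \sum_{\alpha \colon |\alpha| = N+1} \bT^\alpha \bL^{*\alpha}$,
a telescoping/Leibniz-type expansion justified by the orthogonality of ranges. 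If $x \in \bigcap_\alpha \operatorname{Ker} P_\cE \bL^{*\alpha}$,
then all the terms $\bT^\alpha P_\cE \bL^{*\alpha} x$ vanish, leaving $x = \sum_{|\alpha| = N+1} \bT^\alpha \bL^{*\alpha} x \in \bigvee_{|\alpha| = N+1} \bT^\alpha \cX$
for every $N$; intersecting over $N$ puts $x$ in $\cX_0$. This gives $\bigcap_\alpha \operatorname{Ker} P_\cE \bL^{*\alpha} \subseteq \cX_0$
and closes the circle.

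The main obstacle I anticipate is making the identity-expansion step fully rigorous: one must carefully verify the combinatorial bookkeeping in
$I_\cX = \sum_{|\alpha| \le N} \bT^\alpha P_\cE \bL^{*\alpha} + \sum_{|\alpha| = N+1} \bT^\alpha \bL^{*\alpha}$ by induction on $N$, using
$\bL^{*\alpha} = \bL^{*\alpha' } L_j^*$ for $\alpha = j\alpha'$ and the relations \eqref{ol6}, and check that no cross-terms survive because of the
mutual orthogonality \eqref{ol-b} of the ranges of the $T_j$ (so that $\bT^\alpha (\cdot)$ applied to different words lands in orthogonal subspaces and
the decomposition is genuinely direct). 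Everything else is routine manipulation with the relations already established in Propositions \ref{P:ol1} and
the identities \eqref{ol2a}--\eqref{ol6}.
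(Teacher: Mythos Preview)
Your overall architecture matches the paper's proof: use \eqref{ol7} to identify the two kernel-intersections, use the telescoping identity $I_\cX = \sum_{|\alpha| \le N-1} \bT^{\alpha^\top} P_\cE \bL^{*\alpha} + \sum_{|\alpha|=N} \bT^{\alpha^\top} \bL^{*\alpha}$ to get $\bigcap_\alpha \operatorname{Ker} P_\cE \bL^{*\alpha} \subseteq \cX_0$, and handle the reverse inclusion by writing $x \in \cX_0$ as an element of $\bigvee_{|\beta|=|\alpha|+1} \bT^\beta \cX$. Your Step~3 and the paper's argument are essentially the same (up to the transpose bookkeeping you flagged).

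The gap is in your Step~2, where you try to argue directly with $\bT^{*\alpha}$. Your claim that $\bT^{*\alpha} \bT^\beta = 0$ unless $\beta$ is a one-letter left extension of $\alpha$, and that in the surviving case the range lands in $\cE^\perp$, does not follow from $T_i^* T_j = 0$ ($i\ne j$) alone. The problem is that $T_i^* T_i$ is invertible but not the identity; after the first cancellation you are left with $T_{i_1}^* \cdots T_{i_{N-1}}^* (T_{i_N}^* T_{i_N}) T_{j_N} \cdots T_{j_1}$, and the operator $T_{i_N}^* T_{i_N}$ need not preserve $\operatorname{Ran} T_{j_N}$, so you cannot iterate the orthogonality argument. (The condition \eqref{ol2} forces all $T_j^* T_j$ to coincide, but that common operator still does not commute with the range projections in general.)

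The fix is exactly the one the paper uses and that you have already set up: argue with $\bL^{*\alpha}$ instead of $\bT^{*\alpha}$. The relations $L_i^* T_j = \delta_{ij} I_\cX$ from \eqref{ol6} give clean letter-by-letter cancellation, so $\bL^{*\alpha} \bT^{\beta^\top}$ vanishes unless $\beta = j\alpha$ for some letter $j$, and then equals $T_j$; hence $P_\cE \bL^{*\alpha} x = P_\cE \sum_j T_j x_{j\alpha} = 0$. Since you have already identified the $\bL$- and $\bT$-kernels via \eqref{ol7}, this immediately gives the inclusion you want.
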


\begin{proof}
Since $P_{\cE}=I_{\cX}-T_1 L_1^*-  \cdots - T_d L_d^*$, it follows that for any $N\ge 0$,
$$
\sum_{\alpha\in\free:|\alpha|\le N-1}{\bf T}^{\alpha^\top}P_{\cE}{\bf L}^{*\alpha}=I_{\cX}-
\sum_{\alpha\in\free:|\alpha|=N}{\bf T}^{\alpha^\top}{\bf L}^{*\alpha}.
$$
Hence, if $x\in\cX$ belongs to ${\rm Ker} \, P_{\cE}{\bf L}^{*\alpha}$ for all $\alpha\in\free$, then
for each $N\ge 0$,
\begin{align}
x&=\sum_{\alpha\in\free:|\alpha|\le N-1}{\bf T}^{\alpha^\top}P_{\cE}{\bf L}^{*\alpha}x+
\sum_{\alpha\in\free:|\alpha|=N}{\bf T}^{\alpha^\top}{\bf L}^{*\alpha}x\notag\\
&=\sum_{\alpha\in\free:|\alpha|=N}{\bf T}^{\alpha}{\bf L}^{*\alpha^\top}x\in\bigvee_{\alpha\in\free:|\alpha|=N}{\bf T}^\alpha\cX,
\label{ol9a}
\end{align}
which verifies the inclusion
\begin{equation}
\bigcap_{\alpha\in\free}{\rm Ker} \, P_{\cE}{\bf L}^{*\alpha}\subseteq \cX_0.
\label{ol10}
\end{equation}
For the converse inclusion, take arbitrary $x\in\cX_0$ and  $\alpha\in\free$. In particular, $x$ can be represented as
$$
x=\sum_{\beta\in\free:|\beta|=|\alpha|+1}{\bf T}^{\beta^\top} x_\beta\quad \mbox{for some}\quad x_\beta\in\cX.
$$
As we have seen in the proof of Proposition \ref{P:ol1}, the inequality
${\bf L}^{*\alpha}{\bf T}^{\beta^\top}\neq 0$ may occur only if $\beta=j\alpha$. Hence, from the latter representation for $x$ we conclude
that
$$
P_{\cE}{\bf L}^{*\alpha}x=P_{\cE}\bigg(\sum_{j=1}^d T_j x_{j\alpha}\bigg) =0
$$
for any $\alpha\in\free$ confirming the reverse inclusion in \eqref{ol10} and hence, verifying the second equality in \eqref{ol9}.
The third equality holds due to \eqref{ol7}.
\end{proof}
\begin{proposition}
Let ${\bf T}$ be a tuple of left-invertible operators satisfying conditions \eqref{ol2} and let $\cE$ and $\cX_0$ be defined as in
\eqref{ol4}. Then $\cX_0$ reduces ${\bf T}$ (i.e., $\cX_0$ is invariant under ${\bf T}$ and ${\bf T}^*$) and furthermore,
\begin{equation}
\cX_0\oplus \cX_s
=\cX \text{ where } \cX_s : = \bigvee_{\alpha \in \free} \bT^\alpha \cE.
\label{ol11}
\end{equation}
\label{P:ol4}
\end{proposition}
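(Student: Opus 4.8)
The plan is to establish the reducing property of $\cX_0$ first, then the orthogonal decomposition $\cX = \cX_0 \oplus \cX_s$. For the reducing property, I would use the characterization $\cX_0 = \bigcap_{\alpha \in \free} \operatorname{Ker} P_\cE \bT^{*\alpha}$ from Proposition \ref{P:ol2}. Invariance of $\cX_0$ under each $T_j^*$ is nearly immediate: if $x \in \cX_0$ then $P_\cE \bT^{*\alpha}(T_j^* x) = P_\cE \bT^{*(\alpha j)} x = 0$ for every $\alpha$, so $T_j^* x \in \cX_0$ as well. For invariance under $T_j$, I would instead use the first description $\cX_0 = \bigcap_{N \ge 0} \bigvee_{|\alpha| = N} \bT^\alpha \cX$: if $x \in \cX_0$, then for each $N$, $T_j x$ lies in $\bigvee_{|\alpha| = N+1} \bT^\alpha \cX \subseteq \bigvee_{|\beta| = N} \bT^\beta \cX$ (absorbing the extra letter), hence $T_j x \in \cX_0$. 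Thus $\cX_0$ reduces $\bT$.

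Next I would identify the orthogonal complement of $\cX_s = \bigvee_{\alpha \in \free} \bT^\alpha \cE$. An element $x$ is orthogonal to $\cX_s$ exactly when $\langle x, \bT^\alpha e \rangle = 0$ for all $\alpha \in \free$ and $e \in \cE$, i.e., when $P_\cE \bT^{*\alpha} x = 0$ for all $\alpha$, since $\cE = \operatorname{Ran} P_\cE$. By Proposition \ref{P:ol2} this says precisely $x \in \cX_0$. Hence $\cX_s^\perp = \cX_0$, which gives the orthogonal decomposition $\cX = \cX_0 \oplus \cX_s$ once we know $\cX_s$ is closed — but it is by definition a closed linear span. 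This yields \eqref{ol11}.

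The one point requiring a little care is the matching of $\cX_s^\perp$ with $\cX_0$: one must be sure that orthogonality to all of $\bT^\alpha \cE$ is equivalent to $P_\cE \bT^{*\alpha} x = 0$ for all $\alpha$, using that $P_\cE$ is the orthogonal projection onto $\cE$ (established in \eqref{ol3}) so that $\langle x, \bT^\alpha e\rangle = \langle \bT^{*\alpha} x, e \rangle = \langle P_\cE \bT^{*\alpha} x, e\rangle$ for $e \in \cE$, and $\cE$ spans its own closure. I expect the main (very mild) obstacle to be simply organizing which of the two descriptions of $\cX_0$ from Proposition \ref{P:ol2} to invoke at each step — the ``span'' description for $T_j$-invariance and the ``kernel'' description for $T_j^*$-invariance and for computing $\cX_s^\perp$ — rather than any substantive difficulty. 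No deeper input beyond Propositions \ref{P:ol1} and \ref{P:ol2} and the projection identity \eqref{ol3} should be needed.
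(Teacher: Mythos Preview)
Your proposal is correct. The proof of $\bT$-invariance and of the orthogonal decomposition $\cX_s^\perp = \cX_0$ is essentially identical to the paper's (the paper dismisses $\bT$-invariance in one sentence as following ``from the very definition \eqref{ol4} of $\cX_0$'', and proves \eqref{ol11} exactly as you do, via $x \perp \bT^\alpha \cE \iff P_\cE \bT^{*\alpha} x = 0$ and the kernel characterization \eqref{ol9}).

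For $\bT^*$-invariance, however, you take a different and more direct route. The paper first invokes a duality: since $\bT$ is the Cauchy dual of $\bL$, the roles of $\bT$ and $\bL$ in \eqref{ol9} can be swapped, giving the additional representation $\cX_0 = \bigcap_{N} \bigvee_{|\alpha|=N} \bL^\alpha \cX$; it then writes $x \in \cX_0$ as $\sum_{|\alpha|=N} \bL^\alpha x_\alpha$ and uses the relations $T_j^* L_i = \delta_{ij} I$ to push $T_j^*$ through. Your argument bypasses the Cauchy dual entirely: using only the kernel description $\cX_0 = \bigcap_\alpha \operatorname{Ker} P_\cE \bT^{*\alpha}$ from Proposition~\ref{P:ol2}, you observe that $\bT^{*\alpha} T_j^* = \bT^{*(\alpha j)}$, so membership in $\cX_0$ is preserved under $T_j^*$ immediately. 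This is cleaner and avoids the need to justify the dual representation \eqref{ol12}. The paper's approach does showcase the $\bT \leftrightarrow \bL$ symmetry, but for the purpose of this proposition your argument is the more economical one.
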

\begin{proof} If ${\bf L}$ is the Cauchy dual of ${\bf T}$, then ${\bf T}$ is the Cauchy dual of ${\bf L}$. Then it follows
from representations \eqref{ol9} that $\cX_0$ can be represented as
\begin{equation}
\cX_0=\bigcap_{N\ge 0}\bigvee_{\alpha\in\free:|\alpha|=N}{\bf L}^\alpha\cX.
\label{ol12}
\end{equation}
Let $x$ be an arbitrary vector in $\cX_0$. Then for any $N\ge 1$, we can represent $x$ as
$$
x=\sum_{\alpha \in \free:|\alpha|=N}{\bf L}^\alpha x_\alpha\quad\mbox{for some}\quad x_\alpha \in \cX.
$$
Due to relations \eqref{ol6} and the assumption that $|\alpha| \ge 1$, for a fixed letter $j$, the inequality
$T_j^*{\bf L}^{\bf \alpha}\neq 0$ holds only if $\alpha=j \beta$.
Applying $T_j^*$ to the vector $x$ as above, we therefore have
$$
T_j^*x=\sum_{\alpha\in\free:|\alpha|=N}T_j^*L_j{\bf L}^\beta x_{j\beta}=\sum_{\beta\in\free:|\beta|=N-1}{\bf L}^\beta x_{j\beta}.
$$
We now conclude that $T_j^*x$ belongs to $\bigvee_{\alpha\in\free:|\beta|=N-1}{\bf L}^\alpha\cX$ for every $N\ge 1$ and therefore,
it belongs to $\cX_0$, due to representation \eqref{ol12}. Thus, $\cX_0$ is ${\bf T}^*$-invariant. Its ${\bf T}$-invariance
follows from the very definition \eqref{ol4} of $\cX_0$.

\smallskip

To prove \eqref{ol12}, note that $x\in\cX$ is orthogonal to ${\bT}^\alpha\cE$ for all $\alpha\in\free$ if and only if
${\bT}^{*\alpha}x$ is orthogonal to $\cE$ for all $\alpha\in\free$, i.e., that $P_{\cE}{\bT}^{*\alpha}x=0$ for
all $\alpha\in\free$. By \eqref{ol9}, the latter means that $x\in\cX_0$.
\end{proof}

It is straightforward to check that the operator-theoretic properties defining the class $\cC(\bo)$ are invariant upon
restriction to a reducing subspace; hence if $\bT \in \cC(\bo)$ on $\cX$ with $\cX$ having the decomposition
$\cX_0 \oplus \cX_s$ 
as in \eqref{ol11}, then the restricted operator-tuples
$\bT_0 : = \bT|_{\cX_0}$ and $\bT_s:= \bT|_{\cX_s}$ are again in the class $\cC(\bo)$. 
This discussion suggests that we introduce two subclasses of the class $\cC(\bo)$, namely:
\begin{equation}
\cC_0(\bo) = \{ \bT \in \cC(\bo) \colon \cX_0 = \cX\}\quad\mbox{and}\quad 
\cC_s(\bo) = \{ \bT \in \cC(\bo) \colon \cX_s= \cX \},
\label{subcl}
\end{equation}
where the subscript $s$ is to suggest a {\em shift} operator tuple.
From the definitions and Proposition \ref{P:ol4} it follows that the 
general element $\bT$ in $\cC(\bo)$ has the form $\bT  = \bT_s \oplus \bT_0$ where 
$\bT_s \in \cC_s(\bo)$ and
$\bT_0 \in \cC_0(\bo)$.  Our next goal is to get a more intrinsic characterization of each of the 
classes $\cC_0(\bo)$ and $\cC_s(\bo)$.

\begin{theorem}   \label{T:olof1}
The operator-tuple $\bT = (T_1, \dots, T_d)$ in the class $\cC(\bo)$ is in the subclass $\cC_0(\bo)$ if and only if
the subspace $\bigoplus_{j=1}^d T_j \cX$ is the whole space $\cX$.  The class $\cC_0(\bo)$ can be equivalently characterized as consisting of
all operator-tuples $T = (T_1, \dots, T_d)$ of left-invertible operators with orthogonal ranges such that $\bT^*$ is an $\bo$-isometry, i.e., such that  
$$
   \Gamma_{\bo, \bT^*}[I_\cX] = 0.
$$
\end{theorem}

\begin{proof} By \eqref{ol11} and the definition \eqref{subcl}, a tuple $\bT\in\cC(\bo)$ is in the subclass $\cC_0(\bo)$ exactly when
the subspace $\cE:  = \cX \ominus \left( \bigoplus_{j=1}^d T_j \cX \right)$ is trivial, which is the same as to say that
$\cX = \bigoplus_{j=1}^d  T_j \cX$.  From the identity \eqref{ol3} we also see that $\cE = \{0\}$ implies that $\Gamma_{\bo, \bT^*}[I_\cX] = 0$,
i.e., $\bT^*$ is a $\bo$-isometry. Hence any operator-tuple $\bT$ in $\cC(\bo)$ meets the criteria in the second characterization.

\smallskip

Conversely, suppose that $\bT$ is a operator-tuple of left-invertible operators with pair-wise orthogonal ranges 
 such that $\Gamma_{\bo, \bT^*}[I_\cX] = 0$.  To show that $\bT$ is in the class $\cC_0(\bo)$, by the first characterization of
 $\cC_0(\bo)$ it remains only to show that 
 \begin{enumerate}
 \item[(i)]  $\bigvee_{j=1, \dots, d} \operatorname{Ran} T_j = \cX$, and 
 \item[(ii)] the identity \eqref{ol2} holds for $j=1, \dots, d$.   
 \end{enumerate}
 The hypothesis that $\Gamma_{\bo, \bT^*}[I_\cX] = 0$ combined with identity \eqref{ol2a'} gives us
$$
  I_\cX = \sum_{j=1}^d T_j \Gamma^{(1)}_{\bo, \bT^*}[I_\cX] T_j^*.
$$
From this identity we read off that indeed item (i) holds. 
Multiply this same identity on the left and on the right by the projection $P_{T_k \cX}$ onto $T_k \cX$ and use the pairwise-orthogonality of the ranges
of the operators $T_k$ to deduce that 
$$
   I_{\operatorname{Ran} T_k} = T_k \Gamma^{(1)}_{\bo, \bT^*}[I_\cX] T_k^* |_{\operatorname{Ran} T_k}\quad\mbox{for}\quad
k = 1, \dots, d.
$$
Now use that the operators 
$T_k$ is invertible as an operator from $\cX$ to $\operatorname{Ran} T_k$,  that
$T_k^*|_{\operatorname{Ran} T_k}$ is invertible as an operator from $\operatorname{Ran} T_k$ to $\cX$, 
and that $\Gamma^{(1)}_{\bo, \bT^*}[I_\cX]$ is an operator on $\cX$.  
Multiplying the last identity on the left by $T_k^{-1}$ (viewed as an operator
from $\operatorname{Ran} T_k$ to $\cX$) and on the right
by $T_k^{*-1}$ (viewed as an operator from $\cX$ to $\operatorname{Ran} T_k$) leads us to the identity
$$
  T_k^{-1} T_k^{*-1} =  \Gamma^{(1)}_{\bo, \bT^*}[I_\cX].
$$
It is now a matter of verifying that  
$$
T_k^{-1} T_k^{*-1} (T_k^* T_k) = T_k^{-1}  (T_k^{*-1} T_k^*) T_k = T_k^{-1} (I_{\operatorname{Ran} T_k} )T_k
= T_k^{-1} T_k = I_\cX
$$
to conclude that  item (ii) holds as well.
\end{proof}

Now we will take a closer look at the second component in the orthogonal decomposition \eqref{ol11}.
It turns out that the observability gramian
$$
\cO_{P_{\cE},{\bf L}^*}: \; x\to P_{\cE}\big(I_{\cX}-z_1L_1^*-\ldots-z_dL_d^*\big)^{-1}x
$$
and the $\bo$-observability gramian
\begin{equation}   \label{ol14}
\cO_{\bo, P_{\cE},{\bf T}^*}: \; x\to P_{\cE}R_{\bo}(z_1T_1*+\ldots+z_dT_d)x=
\sum_{\alpha\in\free}\omega_{|\alpha|}^{-1}P_{\cE}{\bf T}^{*\alpha}x z^\alpha
\end{equation}
are equal. Indeed, combining \eqref{ol7} and \eqref{1.6g} gives
$$
\cO_{P_{\cE},{\bf L}^*}x=\sum_{\alpha\in\free}P_{\cE}{\bf L}^{*\alpha}x z^\alpha=
\sum_{\alpha\in\free}\omega_{|\alpha|}^{-1}P_{\cE}{\bf T}^{*\alpha}x z^\alpha=\cO_{\bo, P_{\cE},{\bf T}^*}x.
$$
It follows from \eqref{ol14} and \eqref{ol9} that
\begin{equation}
{\rm Ker} \, \cO_{\bo, P_{\cE},{\bf T}^*}=
\bigcap_{\alpha\in\free}{\rm Ker} \, P_{\cE}{\bf T}^{*\alpha}=\cX_0,
\label{ol16}
\end{equation}
so the pair $(P_{\cE}, {\bf T}^*)$ is observable if and only if $\cX_0=\{0\}$.

\begin{proposition}
Let ${\bf T}$ be a tuple of left-invertible operators satisfying conditions \eqref{ol2}
and let ${\bf L}$ be its Cauchy dual. Then
\begin{enumerate}
\item The following intertwining relations hold for $j=1,\ldots,d$:
\begin{equation}
S_{\bo,R,j}\cO_{\bo, P_{\cE},{\bf T}^*}=\cO_{\bo, P_{\cE},{\bf T}^*}T_j,\quad
S_{{\bf 1},R,j}^*\cO_{\bo, P_{\cE},{\bf T}^*}=\cO_{\bo, P_{\cE},{\bf T}^*}L_j^*.
\label{ol15}
\end{equation}
\item $\cO_{\bo, P_{\cE},{\bf T}^*}$ is a partial isometry from $\cX$ into the Hardy-Fock space
$H^2_{\bo,\cE}(\free)$ with initial space equal to  $\bigvee_{\alpha \in \free} {\bf T}^\alpha \cE$.
\end{enumerate}
\label{P:ol5}
\end{proposition}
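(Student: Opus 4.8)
\textbf{Proof proposal for Proposition \ref{P:ol5}.}

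The plan is to prove the two intertwining relations in (1) by direct computation from the power-series definition \eqref{ol14} of $\cO_{\bo, P_\cE, \bT^*}$ together with the action \eqref{18.2b} of the shift $S_{\bo, R, j}$ and the relations \eqref{ol6} characterizing the Cauchy dual $\bL$. First I would verify the left-hand relation in \eqref{ol15}: applying $S_{\bo, R, j}$ to $\cO_{\bo, P_\cE, \bT^*} x = \sum_{\alpha \in \free} \omega_{|\alpha|}^{-1} P_\cE \bT^{*\alpha} x \, z^\alpha$ multiplies each monomial $z^\alpha$ on the right by $z_j$, producing $\sum_{\alpha} \omega_{|\alpha|}^{-1} P_\cE \bT^{*\alpha} x \, z^{\alpha j}$, and reindexing $\beta = \alpha j$ this is $\sum_{\beta = \alpha j} \omega_{|\beta|-1}^{-1} P_\cE \bT^{*\alpha} x \, z^\beta$; comparing with $\cO_{\bo, P_\cE, \bT^*} T_j x = \sum_\beta \omega_{|\beta|}^{-1} P_\cE \bT^{*\beta} T_j x \, z^\beta$ reduces matters to the identity $\omega_{|\beta|}^{-1} P_\cE \bT^{*\beta} T_j = \omega_{|\beta|-1}^{-1} P_\cE \bT^{*\beta j^{-1}}$ when $\beta$ ends in $j$ and $P_\cE \bT^{*\beta} T_j = 0$ otherwise --- and this follows by applying Proposition \ref{P:ol1} (in the form $P_\cE \bL^{*\beta} = \omega_{|\beta|}^{-1} P_\cE \bT^{*\beta}$) to rewrite everything in terms of $\bL$, where the computation $\bL^{*\beta} T_j = \bL^{*(\beta j^{-1})}$ (resp. $= 0$) is immediate from \eqref{ol6}. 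The second (backward-shift) relation in \eqref{ol15} is handled the same way: $S_{\b1, R, j}^*$ strips a trailing $j$ off each monomial and kills those not ending in $j$, matching the action of $L_j^*$ on $\bL^{*\alpha}$ via \eqref{ol6}, again after passing through Proposition \ref{P:ol1}.

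For part (2), the key structural fact is the orthogonal decomposition $\cX = \cX_0 \oplus \cX_s$ with $\cX_s = \bigvee_{\alpha \in \free} \bT^\alpha \cE$ from Proposition \ref{P:ol4}, together with the identity \eqref{ol16} showing ${\rm Ker}\, \cO_{\bo, P_\cE, \bT^*} = \cX_0$. So it remains to show that $\cO_{\bo, P_\cE, \bT^*}$ restricted to $\cX_s$ is isometric into $H^2_{\bo, \cE}(\free)$. The natural route is to use the intertwining relations from part (1): since $S_{\bo, R, j}$ is left-invertible and the $S_{\bo, R, j}$ have mutually orthogonal ranges (admissibility of $\bo$), and since by Proposition \ref{P:bo-model}(1) the tuple $\bS_{\bo, R}^*$ is strongly stable, one should be able to run the standard observability-operator argument: for $x \in \cX_s$ write $\|\cO_{\bo, P_\cE, \bT^*} x\|^2_{H^2_{\bo, \cE}(\free)} = \sum_\alpha \omega_{|\alpha|}^{-1} \|P_\cE \bT^{*\alpha} x\|^2_\cE$ and, using the resolution of the identity \eqref{ol3} $I_\cX = \sum_j T_j (T_j^* T_j)^{-1} T_j^* + P_\cE$ iterated along words, telescope to obtain $\|x\|^2 = \sum_{|\alpha| \le N} \omega_{|\alpha|}^{-1}\|P_\cE \bT^{*\alpha} x\|^2 + (\text{remainder at level } N+1)$; the remainder is $\sum_{|\alpha| = N+1} \langle \bT^{\alpha^\top}(\text{something})\bT^{*\alpha} x, x\rangle$ which on $\cX_s$ tends to $0$ by an argument parallel to the $\bo$-strong-stability computation in the proof of Proposition \ref{P:bo-model2}. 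Taking $N \to \infty$ gives the isometry property on $\cX_s$.

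The main obstacle I anticipate is the bookkeeping in part (2): correctly iterating the identity \eqref{ol3} to get a clean telescoping sum whose remainder term can be identified with a shifted gramian applied to the Cauchy-dual orbit, and then showing this remainder vanishes on $\cX_s$. The cleanest formulation is probably to observe that the weighted norm $\sum_{|\alpha| \le N} \omega_{|\alpha|}^{-1} \|P_\cE \bT^{*\alpha} x\|^2$ equals $\langle (I_\cX - \sum_{|\alpha| = N+1} \bT^{\alpha^\top} \Gamma^{(N+1)}_{\bo, \bT^*}[\cdots] \bT^{*\alpha}) x, x\rangle$-type expression --- mirroring identity \eqref{1.21ga} --- and invoke the already-established vanishing of such limits; but one must be careful that $\cX_0$ is exactly the obstruction, so the limit is $0$ precisely on $(\cX_0)^\perp = \cX_s$. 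An alternative that sidesteps some of this is to note that $\cO_{\bo, P_\cE, \bT^*}$ intertwines $\bT$ with $\bS_{\bo, R}$ and $\bT \in \cC(\bo)$, so the restriction to $\cX_s$ is a $\cC_s(\bo)$-tuple, and one can appeal to the uniqueness/characterization machinery being set up in this section; I would keep both options open and use whichever makes the remainder estimate most transparent.
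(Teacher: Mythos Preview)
Your approach to part (1) is correct and matches the paper's: both pass to the Cauchy dual via Proposition~\ref{P:ol1} (the paper uses the identity $\cO_{\bo,P_\cE,\bT^*}=\cO_{P_\cE,\bL^*}$ established just before the proposition) and then invoke the relations \eqref{ol6}.

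For part (2) your telescoping is in fact valid --- iterating \eqref{ol3} (as in the identity preceding \eqref{ol9a}) and applying Proposition~\ref{P:ol1} does yield $\|x\|^2=\sum_{|\alpha|<N}\omega_{|\alpha|}^{-1}\|P_\cE\bT^{*\alpha}x\|^2 + R_N(x)$ with remainder $R_N(x)=\sum_{|\alpha|=N}\langle\bT^{\alpha^\top}\bL^{*\alpha}x,x\rangle$ --- but your proposed method for handling the remainder has a gap. Invoking \eqref{1.21ga} or the $\bo$-strong-stability computation of Proposition~\ref{P:bo-model2} requires $\bT^*$ to be a contractive tuple ($\sum_j T_jT_j^*\preceq I$), which is what makes the $\Gamma^{(k)}_{\bo,\bT^*}$ machinery of Chapter~\ref{S:Stein} available via Lemma~\ref{P:welldefined}; membership in $\cC(\bo)$ does not impose this. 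Your alternative of appealing to the $\cC_s(\bo)$ classification is circular, since Theorem~\ref{T:ol1} depends on this very proposition.

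The paper avoids the remainder issue with a more elementary direct computation. For $x=\sum_{|\alpha|\le M}\bT^\alpha x_\alpha$ with $x_\alpha\in\cE$ (dense in $\cX_s$), it expands $\|x\|^2_\cX$ into a double sum of $\langle\bT^{*\beta^\top}\bT^\alpha x_\alpha,x_\beta\rangle$, inserts $P_\cE$ (since $x_\beta\in\cE$), applies Proposition~\ref{P:ol1} to pass to $\bL$, and uses $L_j^*|_\cE=0$ to show only diagonal terms survive, giving $\|x\|^2=\sum_\alpha\omega_{|\alpha|}\|x_\alpha\|^2$. Separately, the intertwining from part (1) together with $\cO_{\bo,P_\cE,\bT^*}e=e$ for $e\in\cE$ gives $\cO_{\bo,P_\cE,\bT^*}x=\sum_\alpha x_\alpha z^{\alpha^\top}$, whose $H^2_{\bo,\cE}$-norm squared is the same sum. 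Your telescoping can be repaired by the same device: for such finite-sum $x$ one checks directly via $L_j^*|_\cE=0$ that $\bL^{*\alpha}x=0$ once $|\alpha|>M$, so $R_N(x)=0$ identically for $N>M$ --- but at that point you are running the paper's argument through your framework rather than invoking the Stein-equation machinery.
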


\begin{proof} Recalling that ${\bf L}^{*\alpha}T_j\neq 0$ only if $\alpha=\beta j$, we have
\begin{align*}
\cO_{\bo, P_{\cE},{\bf T}^*}T_jx&=\cO_{P_{\cE},{\bf L}^*}T_jx=\sum_{\alpha\in\free} P_{\cE}{\bf L}^{*\alpha}T_jxz^\alpha
=\sum_{\beta\in\free} P_{\cE}{\bf L}^{*\beta}L_j^*T_jxz^{\beta j}\\
&=\sum_{\beta\in\free} P_{\cE}{\bf L}^{*\beta}xz^{\beta j}
=\left(\cO_{P_{\cE},{\bf L}^*}x\right)z_j= S_{\bo,R,j}\cO_{\bo, P_{\cE},{\bf T}^*}x.
\end{align*}
Making use of the explicit formula \eqref{jul21a} for $\bS_{{\bf 1},R,j}^*$, we get
\begin{align*}
S_{{\bf 1},R,j}^*\cO_{\bo, P_{\cE},{\bf T}^*}x&=S_{{\bf 1},R,j}^*\cO_{P_{\cE},{\bf L}^*}x
\\&=\sum_{\alpha\in\free} P_{\cE}{\bf L}^{*\alpha j}xz^\alpha=\cO_{P_{\cE},{\bf L}^*}L_j^*x=\cO_{\bo, P_{\cE},{\bf T}^*}L_j^*x,
\end{align*}
which completes the proof of \eqref{ol15}.

\smallskip

By \eqref{ol16}, $\cO_{\bo, P_{\cE},{\bf T}^*}: \, \cX_0\to 0$. Due to \eqref{ol11}, statement (2) in Proposition \ref{P:ol5}
follows once we show that $\cO_{\bo, P_{\cE},{\bf T}^*}$
maps
$\cX \ominus \cX_0=\bigvee_{\alpha\in\free}{\bf T}^{\alpha}\cE$
isometrically into $H^2_{\bo,\cE}(\free)$. To this end, take $x\in \cX\ominus\cX_0$ which is of the form
\begin{equation}
x=\sum_{\alpha\in\free \colon |\alpha| \le M}{\bf T}^{\alpha}x_\alpha, \quad x_\alpha\in\cE,
\label{ol18}
\end{equation}
for some $M < \infty$. Then we have
\begin{align}
\|x\|^2_{\cX}=\sum_{\alpha,\beta\in\free}\langle {\bf T}^{\alpha}x_\alpha, \, {\bf T}^{\beta}x_\beta \rangle_{\cX}
&=\sum_{\alpha,\beta\in\free}\langle {\bf T}^{*\beta^\top}{\bf T}^{\alpha}x_\alpha, \, x_\beta \rangle_{\cX}\notag\\
&=\sum_{\alpha,\beta\in\free}\langle P_{\cE}{\bf T}^{*\beta^\top}{\bf T}^{\alpha}x_\alpha, \, x_\beta \rangle_{\cX}\notag\\
&=\sum_{\alpha,\beta\in\free}\omega_{|\beta|}\langle P_{\cE}{\bf L}^{*\beta^\top}{\bf T}^{\alpha}x_\alpha, \, x_\beta \rangle_{\cX},
\label{ol19}
\end{align}
where the two last equalities hold  due to \eqref{ol7} since $x_\beta\in\cE$.
As we have seen in the proof of Proposition \ref{P:ol1}, the inequality  $P_{\cE}{\bf L}^{*\beta^\top}{\bf T}^{\alpha}\neq 0$ may
occur only if $\beta=\alpha\gamma$ for some $\gamma\in\free$, in which case
$$
P_{\cE}{\bf L}^{*\beta^\top}{\bf T}^{\alpha}x_\alpha=P_{\cE}{\bf L}^{*\gamma^\top}x_\alpha.
$$
Since  $\cE \perp \operatorname{Ran} T_j$ for all $j\in\{1,\ldots,d\}$ we have for any $e \in \cE$ and $x \in \cX$,
$$
\langle L_j^* e, \, x \rangle = \langle e, \, L_j x \rangle = \langle e, \, T_j (T_j^* T_j)^{-1} x \rangle = 0
$$
and therefore, $L_j^*|_\cE = 0$  (the same computation shows that $T_j^*|_\cE = 0$ as well)
for $j= 1, \dots, d$.  Hence, if $\gamma \neq \emptyset$, then ${\bf L}^{*\gamma^\top}x_\alpha=0$.
 Therefore, all the terms on the right side of \eqref{ol19} with $\alpha\neq \beta$ are equal zero, and hence,
\begin{align}
\|x\|^2_{\cX}&=\sum_{\alpha\in\free}\omega_{|\alpha|}
\langle P_{\cE}{\bf L}^{*\alpha^\top}{\bf T}^{\alpha}x_\alpha, \, x_\alpha \rangle_{\cX}\notag\\
&=\sum_{\alpha\in\free}\omega_{|\alpha|}\langle P_{\cE}x_\alpha, \, x_\alpha \rangle_{\cX}=
\sum_{\alpha\in\free}\omega_{|\alpha|}\|x_\alpha\|^2_{\cX}.\label{ol17}
\end{align}
On the other hand, since $T_j^*|_\cE = 0$, we have for any $e\in\cE$
$$
\cO_{\bo, P_{\cE},{\bf T}^*}e=\sum_{\alpha\in\free}\omega_{|\alpha|}^{-1}P_{\cE}{\bf T}^{*\alpha}e z^\alpha=
\omega_0^{-1}P_{\cE}e=e.
$$ 
Combining this latter observation with the first intertwining relation in \eqref{ol15}, we have for $x$ of the form \eqref{ol18},
$$
\cO_{\bo, P_{\cE},{\bf T}^*}x=\sum_{\alpha\in\free}x_\alpha z^{\alpha^\top}
$$
and hence,
$$
\|\cO_{\bo, P_{\cE},{\bf T}^*}x\|_{H^2_{\bo,\cE}(\free)}^2=\sum_{\alpha\in\free}\omega_{|\alpha|}\cdot\|x_\alpha\|^2_{\cX}.
$$
Comparing the latter equality with \eqref{ol17} we conclude that $\|\cO_{\bo, P_{\cE},{\bf T}^*}x\|_{H^2_{\bo,\cE}(\free)}=\|x\|_{\cX}$
for all $x \in \cX \ominus \cX_0$ of the form \eqref{ol18}.  As the set of all such elements form a dense subset of $\cX \ominus \cX_0$
(by \eqref{ol11}),  we  conclude by continuity that $\cO_{\bo, P_\cE, \bT^*}$ is isometric on all of  $\cX \ominus \cX_0$.
\end{proof}

\begin{theorem}  \label{T:ol1}  The operator tuple $\bT = (T_1, \dots, T_d)$ in the class $\cD(\bo)$ acting on the Hilbert space $\cX$
is in the subclass $\cC_s(\bo)$ if and only if $\bT$ satisfies any one of the following additional conditions:
\begin{enumerate}
\item $\bigcap_{N \ge 0} \bigvee_{\alpha \in \free \colon |\alpha| = N} \bT^\alpha \cX = \{0\}$;

\item $\bigvee_{\alpha \in \free} \bT^\alpha \cE = \cX$ where $\cE = \cX \ominus \left( \bigoplus_{1 \le j \le d} T_j \cX \right)$;

\item There is a coefficient Hilbert space $\cE$ so that $\bT$ is unitarily equivalent to the model shift operator-tuple
$\bS_{\bo, R}$ acting on $H^2_{\bo, \cE}(\free)$.
\end{enumerate}
\end{theorem}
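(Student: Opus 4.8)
\textbf{Proof plan for Theorem \ref{T:ol1}.}

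The plan is to organize the argument around the orthogonal decomposition $\cX = \cX_0 \oplus \cX_s$ from Proposition \ref{P:ol4} together with the identification \eqref{ol16} of $\cX_0$ as the kernel of $\cO_{\bo, P_\cE, \bT^*}$, and then to close a cycle of implications (1) $\Rightarrow$ (2) $\Rightarrow$ (3) $\Rightarrow$ (1), with the equivalence ``$\bT \in \cC_s(\bo)$'' folded in along the way. First I would recall that by definition \eqref{subcl}, $\bT \in \cC_s(\bo)$ means $\cX_s = \cX$, i.e., $\bigvee_{\alpha \in \free} \bT^\alpha \cE = \cX$, which is precisely condition (2); so the real content is to show (1) $\Leftrightarrow$ (2) $\Leftrightarrow$ (3). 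For (1) $\Leftrightarrow$ (2): condition (1) says $\cX_0 = \{0\}$ (by the very definition \eqref{ol4} of $\cX_0$), and by \eqref{ol11} in Proposition \ref{P:ol4} we have $\cX_0 = \{0\}$ if and only if $\cX_s = \cX$, which is condition (2). That disposes of the equivalence of (1), (2), and membership in $\cC_s(\bo)$ essentially by citing Proposition \ref{P:ol4}.

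The substantive step is (2) $\Rightarrow$ (3) and its converse. For (2) $\Rightarrow$ (3): assume $\cX_s = \cX$, so $\cX_0 = \{0\}$ by \eqref{ol16}, hence by Proposition \ref{P:ol5}(2) the operator $\cO_{\bo, P_\cE, \bT^*} \colon \cX \to H^2_{\bo, \cE}(\free)$ is an isometry (its initial space $\bigvee_\alpha \bT^\alpha \cE$ is now all of $\cX$). It remains to see that this isometry is surjective onto $H^2_{\bo, \cE}(\free)$ and that it intertwines $\bT$ with $\bS_{\bo, R}$. The intertwining is already supplied by the first relation in \eqref{ol15}: $S_{\bo, R, j} \cO_{\bo, P_\cE, \bT^*} = \cO_{\bo, P_\cE, \bT^*} T_j$. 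For surjectivity, I would argue that $\operatorname{Ran} \cO_{\bo, P_\cE, \bT^*}$ is a closed (since $\cO$ is isometric) subspace of $H^2_{\bo, \cE}(\free)$ that contains $\cO_{\bo, P_\cE, \bT^*} e = e$ for every $e \in \cE$ (computed at the end of the proof of Proposition \ref{P:ol5}, using $T_j^*|_\cE = 0$), i.e., it contains the constants $\cE$; moreover by \eqref{ol15} it is invariant under each $S_{\bo, R, j}$. Since the model tuple $\bS_{\bo, R}$ on $H^2_{\bo, \cE}(\free)$ satisfies $\bigcap_{N} \bigvee_{|\alpha| = N} \bS_{\bo, R}^\alpha H^2_{\bo, \cE}(\free) = \{0\}$ (property \eqref{ol-c}), the smallest $\bS_{\bo, R}$-invariant closed subspace containing the constants is all of $H^2_{\bo, \cE}(\free)$ --- indeed every $f = \sum_\alpha f_\alpha z^\alpha \in H^2_{\bo, \cE}(\free)$ is the norm-limit of its truncations $\sum_{|\alpha| \le N} \bS_{\bo, R}^{\alpha^\top}(f_\alpha)$, each of which lies in that subspace. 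Hence $\cO_{\bo, P_\cE, \bT^*}$ is onto, so it is a unitary carrying $\bT$ to $\bS_{\bo, R}$ on $H^2_{\bo, \cE}(\free)$, which is (3).

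For (3) $\Rightarrow$ (1): if $\bT$ is unitarily equivalent to $\bS_{\bo, R}$ on $H^2_{\bo, \cE}(\free)$, then condition (1) for $\bT$ is equivalent to the corresponding statement for $\bS_{\bo, R}$, namely $\bigcap_N \bigvee_{|\alpha| = N} \bS_{\bo, R}^\alpha H^2_{\bo, \cE}(\free) = \{0\}$, and this is exactly \eqref{ol-c}, already verified in Section \ref{S:olof}. I would also note the book-keeping point that $\bS_{\bo, R}$ genuinely lies in $\cC(\bo)$ (so that (3) is consistent with the standing hypothesis $\bT \in \cC(\bo)$): this follows from \eqref{ol-a}, \eqref{ol-b} and left-invertibility of each $S_{\bo, R, j}$, all established in Propositions \ref{P:bo-model} and \ref{P:bo-model2}. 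The main obstacle I anticipate is the surjectivity argument in (2) $\Rightarrow$ (3): one must be careful that ``smallest invariant subspace containing the constants equals the whole space'' really does follow from the intersection condition \eqref{ol-c} --- the cleanest route is the explicit truncation/limit argument just sketched, which sidesteps any appeal to a Wold-type decomposition and uses only that the finite-length-monomial subspaces exhaust $H^2_{\bo, \cE}(\free)$. Everything else is a matter of quoting Propositions \ref{P:ol4} and \ref{P:ol5} and the identities \eqref{ol15}, \eqref{ol16}.
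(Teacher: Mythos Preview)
Your proposal is correct and follows essentially the same route as the paper: equivalence of (1), (2), and membership in $\cC_s(\bo)$ via Proposition \ref{P:ol4} and \eqref{ol9}, then (2) $\Rightarrow$ (3) via the intertwining isometry $\cO_{\bo, P_\cE, \bT^*}$ from Proposition \ref{P:ol5}, and (3) $\Rightarrow$ (1) via \eqref{ol-c}. The only difference is that you spell out the surjectivity of $\cO_{\bo, P_\cE, \bT^*}$ through a truncation argument, whereas the paper leaves this implicit in the computation at the end of the proof of Proposition \ref{P:ol5}(2), which shows that $\cO_{\bo, P_\cE, \bT^*}\big(\sum_{|\alpha|\le M}\bT^\alpha x_\alpha\big) = \sum_{|\alpha|\le M} x_\alpha z^{\alpha^\top}$, so the range already contains all polynomials and hence (being the range of an isometry) is all of $H^2_{\bo,\cE}(\free)$.
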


\begin{proof} 
By the general decomposition \eqref{ol11} and the definition \eqref{subcl}, the tuple $\bT \in \cC(\bo)$ is in the subclass 
$\bT \in \cC_s(\bo)$ if and only if $\cX_s=\cX$, which is the same as condition (2). 
That this is equivalent to condition (1) is seen from the first formula for $\cX_0$ in \eqref{ol9}.
Next note that by Proposition \ref{P:ol5}, $\bT|_{\cX_s}$ is unitarily equivalent to $\bS_{\bo, R}$.
\end{proof}

In the case $\bo = \b1$ and $d=1$,  the class $\cC(\b1)$ consists of the isometries and  the decomposition \eqref{ol11} 
for $\bT$ is  referred to as the  {\em Wold decomposition} for $\bT$ (see \cite{rosenrov}).
The next result gives an indication of the extent to which this criterion 
generalizes to the general setting $\cC_s(\bo)$-operator tuples.

\begin{theorem}  Suppose that $\bT = (T_1, \dots, T_d)$ is in the class $\cC(\bo)$ for some admissible weight $\bo$.

\begin{enumerate}
\item  If $\bT$ is in the class $\cC_s(\bo)$, then $\bT^*$ is strongly stable.

\item  Suppose that $\| \begin{bmatrix} T_1 & \cdots & T_d \end{bmatrix} \| \le 1$ and ${\bf T} \in \cC_0(\bo)$.
Then for every $x \in \cX$, it is the case that
$$
   \liminf_{N \to \infty} \sum_{\alpha \in \free \colon |\alpha| = N} \| \bL^{*\alpha^\top} x \|^2 > 0,
$$
 where $\bL$ is the Cauchy dual of $\bT$. \end{enumerate}
\end{theorem}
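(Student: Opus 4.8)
The plan is to prove the two statements separately, exploiting the structural results already established for operator tuples in $\cC(\bo)$.

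\textbf{Part (1).} Suppose $\bT \in \cC_s(\bo)$. By Theorem~\ref{T:ol1}(3), $\bT$ is unitarily equivalent to the model shift operator-tuple $\bS_{\bo,R}$ acting on $H^2_{\bo,\cE}(\free)$ for an appropriate coefficient Hilbert space $\cE$. Strong stability is a unitary invariant, so it suffices to establish that $\bS_{\bo,R}^*$ is strongly stable on $H^2_{\bo,\cE}(\free)$. But this is exactly part~(1) of Proposition~\ref{P:bo-model}, which asserts precisely that $\lim_{N\to\infty}\sum_{|\alpha|=N}\|\bS_{\bo,R}^{*\alpha^\top}f\|^2 = 0$ for each $f\in H^2_{\bo,\cE}(\free)$. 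So Part~(1) is essentially immediate given the identification already in hand. (One should double-check that the notion of "strongly stable" for a $d$-tuple as in \eqref{bAstable} matches the conclusion of Proposition~\ref{P:bo-model}(1) up to the harmless transpose $\alpha \mapsto \alpha^\top$, which merely reindexes the sum over words of length $N$.)

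\textbf{Part (2).} Now assume $\|\begin{bmatrix} T_1 & \cdots & T_d\end{bmatrix}\| \le 1$ and $\bT \in \cC_0(\bo)$. By Theorem~\ref{T:olof1}, $\bT \in \cC_0(\bo)$ is equivalent to $\bigoplus_{j=1}^d T_j\cX = \cX$, i.e. $\cE = \cX \ominus \bigoplus_j T_j\cX = \{0\}$, and also to $\Gamma_{\bo,\bT^*}[I_\cX] = 0$ (so $\bT^*$ is an $\bo$-isometry). The key identity is \eqref{ol3}, which in this situation reads $I_\cX = \sum_{j=1}^d T_j(T_j^*T_j)^{-1}T_j^* = \sum_{j=1}^d L_j L_j^*$ using the Cauchy dual $L_j = T_j(T_j^*T_j)^{-1}$ from \eqref{ol5}. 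First I would iterate this: since the ranges of $T_1,\dots,T_d$ (hence of $L_1,\dots,L_d$) are mutually orthogonal, the map $x \mapsto (L_1^*x,\dots,L_d^*x)$ is isometric from $\cX$ into $\cX^d$, i.e. $\sum_{j=1}^d \|L_j^*x\|^2 = \|x\|^2$ for all $x$. Iterating, $\sum_{\alpha\in\free \colon |\alpha|=N} \|\bL^{*\alpha^\top}x\|^2 = \|x\|^2$ for every $N \ge 0$ and every $x \in \cX$ — here I must be careful that the orthogonality of the ranges of the $L_j$ persists at each stage of the iteration, which follows because each $L_j^*$ maps onto all of $\cX$ isometrically-in-the-aggregate while the decomposition $\cX = \bigoplus_j L_j\cX = \bigoplus_j T_j\cX$ is preserved. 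Consequently the sequence $N \mapsto \sum_{|\alpha|=N}\|\bL^{*\alpha^\top}x\|^2$ is identically $\|x\|^2$, so in particular $\liminf_{N\to\infty}\sum_{|\alpha|=N}\|\bL^{*\alpha^\top}x\|^2 = \|x\|^2 > 0$ whenever $x \ne 0$; and the statement is vacuous for $x = 0$.

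The main obstacle I anticipate is verifying that the orthogonality of the ranges of $L_1,\dots,L_d$ is inherited under iteration so that the Parseval-type identity $\sum_{|\alpha|=N}\|\bL^{*\alpha^\top}x\|^2 = \|x\|^2$ genuinely holds for all $N$, rather than only the inequality "$\ge$" or "$\le$". The clean way to handle this: the operator $\Lambda = \operatorname{Row}_{j}[L_j] \colon \cX^d \to \cX$ satisfies $\Lambda\Lambda^* = \sum_j L_j L_j^* = I_\cX$ by \eqref{ol3} with $\cE = \{0\}$, so $\Lambda$ is a coisometry; and the relations \eqref{ol6}, $L_j^*T_i = \delta_{ij}I_\cX$, together with $\sum_i T_i L_i^* = I_\cX$ (the adjoint form of \eqref{ol3}) show that $\Lambda^*\Lambda = (L_i^* T_j$-block matrix$)$... more directly, $\operatorname{Col}_j[L_j^*]\Lambda = (L_i^* L_j)_{i,j}$; since $\operatorname{Ran} L_i \perp \operatorname{Ran} L_j$ for $i \ne j$ this is block-diagonal, and on the diagonal $L_j^* L_j$ is invertible but need not be $I$. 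The correct normalization is instead to observe that the column operator $\operatorname{Col}_j[L_j^*] \colon \cX \to \cX^d$ has $\operatorname{Col}_j[L_j^*]^* \operatorname{Col}_j[L_j^*] = \sum_j L_j L_j^* = I_\cX$, i.e. $\operatorname{Col}_j[L_j^*]$ is an isometry — this is the identity I actually need, and it gives $\sum_j \|L_j^*x\|^2 = \|x\|^2$ directly. Iterating this isometry statement along words is then routine, and yields the result. I expect the bulk of the write-up to be this bookkeeping with \eqref{ol3}, \eqref{ol5}, \eqref{ol6}, with no serious analytic difficulty.
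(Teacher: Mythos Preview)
Your Part~(1) is correct and coincides with the paper's proof.

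Your Part~(2) is correct but takes a different, and in fact cleaner, route than the paper. The paper uses the representation $x = \sum_{|\alpha|=N} \bT^\alpha \bL^{*\alpha^\top} x$ (coming from \eqref{ol9a} with $P_\cE = 0$), observes that this sum is orthogonal because the ranges of the $\bT^\alpha$ are pairwise orthogonal, and then invokes the row-contraction hypothesis to conclude that each $\bT^\alpha$ is a contraction, yielding only the inequality $\|x\|^2 \le \sum_{|\alpha|=N}\|\bL^{*\alpha^\top}x\|^2$. You instead read off directly from \eqref{ol3} with $P_\cE = 0$ that $\sum_j L_j L_j^* = I_\cX$, which gives $\sum_j \|L_j^* x\|^2 = \|x\|^2$; iterating yields the exact equality $\sum_{|\alpha|=N}\|\bL^{*\alpha^\top}x\|^2 = \|x\|^2$ for every $N$. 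Your argument therefore (i) gives a sharper conclusion (equality rather than an inequality), and (ii) shows that the hypothesis $\|[T_1 \cdots T_d]\| \le 1$ is not actually needed for Part~(2). Your hesitation about whether the iteration is legitimate is unwarranted: once $\sum_j L_j L_j^* = I_\cX$ is in hand, simply apply it with $L_k^* x$ in place of $x$ and sum over $k$; no separate orthogonality verification for the $L_j$ is required.
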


\begin{proof}  \textbf{(1)}  If $\bT$ is in $\cC_s(\bo)$, then $\bT$ is unitarily equivalent to $\bS_{\bo, R}$ on some weighted Hardy space
$H^2_{\bo, \cE}(\free)$.  We have seen that $\bS_{\bo, R}^*$ is strongly stable (item (1) in Proposition \ref{P:bo-model}).
Hence $\bT^*$ is strongly stable.  In particular $\bL^*$ (rather than $\bT^*$) is not strongly stable.

\smallskip

\textbf{(2)}  
Suppose $\bT$ is in $\cC_0(\bo)$, so $\cX_s = \{0\}$.  Choose $0 \ne x \in \cX_0$.
Then  $x$ belongs to ${\rm Ker} \, P_{\cE}{\bf L}^{*\alpha}$ for all $\alpha\in\free$ (by characterization \eqref{ol9}) and then
the computation \eqref{ol9a} shows that 
$$
x =\sum_{\alpha \colon |\alpha| = N} \bT^\alpha \bL^{* \alpha^\top} x
 $$
for all $N = 0,1,2,\dots$. As $\bT$ is a row contraction, it follows that each $T_j$ is a contraction and hence, for
$\alpha = i_N \cdots i_1 \in \free$, 
$\bT^\alpha = T_{i_N} \cdots T_{i_1}$ is a contraction, and we can compute
$$
\| x \|^2 = \sum_{\alpha \in \free \colon |\alpha| = N} \| \bT^\alpha \bL^{* \alpha^\top}  x \|^2
\le \sum_{\alpha \in \free \colon |\alpha| = N} \| \bL^{* \alpha^\top} x \|^2.
$$
It follows that
$$
\liminf_{N \to \infty} \sum_{\alpha \in \free \colon |\alpha| = N} \| \bL^{* \alpha^\top} x \|^2 \ge \| x \|^2 > 0
$$
 In particular, when $\bT \in \cC(\bo_0)$, it is  $\bL^*$  (rather than $\bT^*$)
that  is guaranteed to be not strongly stable.\end{proof}

\begin{remark}  In case $\bo = \b1$,  it is automatic that  $T$ is a contraction and that the Cauchy dual $L$ of $T$ is equal to $T$.
Thus Theorem \ref{T:ol1} recovers the result for the case $\bo = \b1$, $d=1$ mentioned immediately preceding the theorem.  Let us also mention
that it is known that, in the single-variable case ($d=1$), there are many surjective $n$-isometries which are not contractions once $n \ge 3$
(see \cite{AS1, AS2, AS3}).
\end{remark}

\section{Observability-operator range spaces}  \label{S:NC-Obs-g}
The main character of this section is the range of the $\bo$-observability operator
$$
\operatorname{Ran}{\mathcal O}_{\bo,C,\bA} =
\{ CR_\bo(Z(z)A)x \colon \; x \in {\mathcal X}\}.
$$
We recall that $R_\bo$ is the power series \eqref{1.6g} and 
$Z(z)$ and $A$ are defined as in \eqref{1.23pre}.
Throughout this section, we assume that the weight sequence $\bo$ is admissible and meets 
the condition \eqref{1.8g}. This material fleshes out theme \#1 mentioned at the end of Section
\ref{S:Overview} for the general $\bo$-setting.

\begin{theorem}
\label{T:2-1.2}
Suppose that $(C,\bA)$ is a $\bo$-output-stable pair. Then:

\smallskip

\noindent
{\rm (1)} The intertwining relation
\begin{equation}
\label{4.8aga}
S_{\bo,R,j}^{*} {\mathcal O}_{\bo,C,\bA}x ={\mathcal O}_{\bo,C, \bA} A_{j} x\quad (x\in\cX)
\end{equation}
holds for every backward-shift operator $S_{\bo,R,j}^*$ defined in \eqref{Sboj*}
and hence $\operatorname{Ran}{\mathcal O}_{\bo,C, \bA}$
is ${\bf S}_{\bo,R}^*$-invariant.

\smallskip

\noindent
{\rm (2)}  Let $H\in\cL(\cX)$ be a solution of the system \eqref{3.15g}, \eqref{3.16g}
and let ${\mathcal X}'$ be the completion of ${\mathcal X}$ with
$H$-inner product $\| [x]\|_{{\mathcal X}'}^{2} = \langle H x, \, x\rangle_{{\mathcal X}}$.
Then $A_{j}$ and $C$ extend to define bounded operators
$$
A'_{j} \colon {\mathcal X}'\to{\mathcal X}'\; (j = 1, \dots, d)\quad\mbox{and}\quad C'
\colon {\mathcal X}' \to \cY,
$$
so that the $\bo$-observability operator 
${{\mathcal O}}_{\bo,C', {\mathbf A}'}: \, {\mathcal X}'\to H^2_{\bo,\cY}(\free)$ is a contraction. 

\smallskip

\noindent
{\rm (3)} If $H$ is subject to  \eqref{3.15g}, \eqref{3.16g}
and the linear manifold ${\mathcal M}:=\operatorname{Ran}\,
{\mathcal O}_{\bo,C,{\bf A}}$ is given  the lifted norm
\begin{equation}  \label{M-norm'}
  \|{\mathcal O}_{\bo,C,{\bf A}} x \|_{{\mathcal M}}^{2} =
{\displaystyle\inf_{y \in {\mathcal X} \colon{\mathcal O}_{\bo,C,{\mathbf A}}y ={\mathcal O}_{\bo,C,
{\mathbf A}}x} \langle H y, y \rangle_{{\mathcal X}}},
\end{equation}
then:
\begin{itemize}
           \item[(a)]
       ${\mathcal M}$ can be completed to ${\mathcal M}'=
\operatorname{Ran} {\mathcal O}_{\bo,C',{\bf A}'}$ with
contractive inclusion in $H^2_{\bo,\cY}(\free)$:
\begin{equation}   \label{3a-ineq}
\| f \|^{2}_{H^2_{\bo,\cY}(\free)} \le
\| f \|^{2}_{{\mathcal M}'}\quad\text{for all} \quad f\in {\mathcal M}'.
\end{equation}
Furthermore, ${\mathcal M}'$ is isometrically equal to the FNRKHS
with reproducing kernel
\begin{equation}   \label{NCkernel'}
    K_{C', \bA'}(z, \zeta)= C' R_\bo(Z(z) A')R_\bo(Z(\zeta) A')^*C^{\prime *}.
\end{equation}
In case $H$ is invertible, the reproducing kernel can be written
directly in terms of $(C, \bA)$:
\begin{equation}  \label{NCkernel}
K_{C, \bA, H}(z,\zeta) = C R_\bo(Z(z) A)H^{-1}R_\bo(Z(\zeta) A)^*C^*.
\end{equation}

\item[(b)] The restriction $(E\vert_{\cM},{\bf S}^*_{\bo,R}\vert_{\cM})$ of the $\bo$-model
output pair \eqref{modelE} to $\cM$ is $\bo$-contractive. Moreover, it is 
$\bo$-isometric if and only if \eqref{3.17g} holds. 
\end{itemize}
\smallskip

\noindent
{\rm (4)} Conversely, let ${\mathcal M}$ be a Hilbert space
included in $H^2_{\bo,\cY}(\free)$ (not necessarily isometrically or even contractively) such that
\begin{enumerate}
    \item[(i)] $\cM$ is invariant under the backward-shift tuple ${\bf S}_{\bo,R}^*$,
    \item[(ii)] the pair $(E_\cM, \bS^*_{\bo, R}|_\cM)$ is an $\bo$-contractive output pair, i.e., the inequalities
\begin{align}  \label{ineqs}
& \sum_{k=1}^d \| S^*_{\bo, R, k} f \|^2_\cM \le \| f \|^2_\cM, \quad
\sum_{\alpha \in \free} c_{|\alpha|} \|  \bS_{\bo, R}^{* \alpha} f  \|^2_\cM  \ge  \| f_\emptyset \|^2, \\
& \sum_{\alpha \in \free} \bigg( \sum_{\ell = 1}^k \frac{ c_{|\alpha| + \ell}}{\omega_{k-\ell} } \bigg)
 \| \bS_{\bo, R}^{* \alpha} f \|^2_\cM \ge 0 \quad\text{for all}\quad k \ge 1
 \notag 
\end{align}
hold  for all $f \in \cM$.
 \end{enumerate}
Then it follows that $\cM$ is contractively included in $H^2_{\bo,\cY}(\free)$ and
there exists an $\bo$-contractive pair $(C,\bA)$ such that
$$
{\mathcal M} = {\mathcal H}(K_{C,{\bf A},I}) = \operatorname{Ran}{\mathcal O}_{\bo,C,{\bf A}}
$$
isometrically.  One can take $(C, \bA)$ to be an $\bo$-isometric output pair if and only if the second of the 
inequalities \eqref{ineqs} holds with equality.
For example, one may take $\cX = \cM$, $C=E\vert_\cM$ and $\bA={\bf S}^*_{\bo,R}\vert_{\cM}$.
\end{theorem}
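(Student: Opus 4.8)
\textbf{Proof plan for Theorem \ref{T:2-1.2}.}

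The plan is to establish parts (1)--(4) in order, leaning heavily on the machinery already developed: the Stein-equation analysis of Theorem \ref{T:7.2}, the ``$H$-principle'' of Proposition \ref{P:principle}, the properties of the $\bo$-model output pair in Propositions \ref{P:bo-model} and \ref{P:bo-model2}, and the NFRKHS characterization results of Chapter \ref{C:contrmult}.

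For part (1), I would start from the power-series formula $\cO_{\bo,C,\bA}x = \sum_{\alpha\in\free} \omega_{|\alpha|}^{-1}(C\bA^\alpha x)z^\alpha$ and compute $S_{\bo,R,j}^*\cO_{\bo,C,\bA}x$ directly via the formula \eqref{Sboj*} for $S_{\bo,R,j}^*$: applying $S_{\bo,R,j}^*$ sends the coefficient of $z^\alpha$ with leading letter matched to $\alpha j$ and multiplies by $\omega_{|\alpha|+1}/\omega_{|\alpha|}$, so the $z^\alpha$-coefficient becomes $\tfrac{\omega_{|\alpha|+1}}{\omega_{|\alpha|}}\cdot \omega_{|\alpha j|}^{-1}C\bA^{\alpha j}x = \omega_{|\alpha|}^{-1}C\bA^\alpha(A_j x)$, which is exactly the $z^\alpha$-coefficient of $\cO_{\bo,C,\bA}A_j x$. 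This is a short index manipulation.

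For part (2), I would observe that the first two inequalities in \eqref{3.15g}--\eqref{3.16g} give $\Gamma_{\bo,\bA}[H]\succeq C^*C$ and the absolute convergence guaranteed by Lemma \ref{P:welldefined}; then by the (finite-propagation / telescoping) computation in the proof of Theorem \ref{T:7.2} applied with $H$ in place of $\cG_{\bo,C,\bA}$ (cf.\ \eqref{1.21ga}--\eqref{lim-ineq}), one gets $\sum_{\alpha\in\free}\omega_{|\alpha|}^{-1}\bA^{*\alpha^\top}C^*C\bA^\alpha\preceq H$, which is precisely the statement that, after passing to the $H$-completion $\cX'$ (with $A_j'$, $C'$ the induced operators, whose boundedness follows from \eqref{3.15g} first inequality applied to $\|A_j' x\|_{\cX'}^2$ and from $C^*C\preceq\Gamma_{\bo,\bA}[H]\preceq H$ respectively), the gramian $\cG_{\bo,C',\bA'}\preceq I_{\cX'}$, i.e.\ $\cO_{\bo,C',\bA'}$ is a contraction from $\cX'$ to $H^2_{\bo,\cY}(\free)$. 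For part (3), the lifted-norm description \eqref{M-norm'} is the standard pullback-space identification of $\operatorname{Ran}\cO_{\bo,C,\bA}$ with $\cX'\ominus\operatorname{Ker}$; the contractive inclusion \eqref{3a-ineq} follows because $\|\cO_{\bo,C',\bA'}\|\le 1$; the reproducing-kernel formula \eqref{NCkernel'} is an application of Proposition \ref{P:principle} (with $G=I$ on the quotient, hence $K_{C',\bA'}=C'R_\bo(Z(z)A')R_\bo(Z(\zeta)A')^*C'^*$), and \eqref{NCkernel} is the $H$-invertible rewrite $R_\bo(Z(z)A)H^{-1}R_\bo(Z(\zeta)A)^*$; part 3(b) follows by transporting the inequalities/equation \eqref{3.15g}--\eqref{3.17g} through the isometry $\cO_{\bo,C,\bA}\colon\cX'\to\cM'$ using the intertwining \eqref{4.8aga} to match $(E|_\cM,\bS^*_{\bo,R}|_\cM)$ with $(C',\bA')$.

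Part (4) is the converse and the main obstacle. Given $\cM\subset H^2_{\bo,\cY}(\free)$ satisfying (i)--(ii), set $\cX=\cM$ with its own norm, $C=E|_\cM$, $\bA=\bS^*_{\bo,R}|_\cM$. Conditions (ii) are exactly \eqref{3.15g}--\eqref{3.16g} for $H=I_\cX$, so by part (1) of Theorem \ref{T:7.2} the pair $(C,\bA)$ is $\bo$-output-stable and $\bo$-contractive; by Lemma \ref{L:7.6}(1) $\cO_{\bo,C,\bA}$ is a contraction. The key identity to verify is that $\cO_{\bo,C,\bA}=\cO_{\bo,E|_\cM,\bS^*_{\bo,R}|_\cM}$ equals the inclusion map $\cM\hookrightarrow H^2_{\bo,\cY}(\free)$: indeed $\cO_{\bo,C,\bA}f = \sum_{\alpha}\omega_{|\alpha|}^{-1}(E\bS^{*\alpha}_{\bo,R}f)z^\alpha$, and by the computation \eqref{esbo} in Proposition \ref{P:bo-model}, $E\bS^{*\alpha}_{\bo,R}f=\omega_{|\alpha|}f_\alpha$, so $\cO_{\bo,C,\bA}f=\sum_\alpha f_\alpha z^\alpha=f$. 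Hence $\operatorname{Ran}\cO_{\bo,C,\bA}=\cM$ and, since the lifted norm on the range of a contraction with trivial kernel coincides with the domain norm, $\cM$ is isometrically $\operatorname{Ran}\cO_{\bo,C,\bA}$; that $\cM$ is contractively included in $H^2_{\bo,\cY}(\free)$ is then immediate, and the NFRKHS identification $\cM=\cH(K_{C,\bA,I})$ follows from part (3)(a). The hard part will be checking carefully that conditions (i)--(ii) really do let us invoke Theorem \ref{T:7.2} with $H=I$ despite $\cM$ not being a priori contained contractively in the ambient space --- but this is circumvented precisely by the observation that the $\bo$-observability operator of $(E|_\cM,\bS^*_{\bo,R}|_\cM)$ is the identity on $\cM$, so $\cG_{\bo,C,\bA}=I_\cX$ automatically, forcing contractive inclusion a posteriori. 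Finally, the statement about taking $(C,\bA)$ $\bo$-isometric holds exactly when the second inequality in \eqref{ineqs} is an equality, which is \eqref{3.17g} for $H=I$, i.e.\ $\Gamma_{\bo,\bA}[I_\cX]=C^*C$; for a general such $\cM$ one can also, if desired, replace $(C,\bA)$ by a similar pair via Proposition \ref{P:similar} using a strictly positive solution $H$ of \eqref{3.15g}, \eqref{3.17g}.
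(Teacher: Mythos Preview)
Your plan is correct and matches the paper's proof closely: direct power-series computation for (1); minimality of $\cG_{\bo,C,\bA}$ from Theorem~\ref{T:7.2} for (2); Proposition~\ref{P:principle} plus the intertwining \eqref{4.8aga} for (3); and the model choice $\cX=\cM$, $C=E|_\cM$, $\bA=\bS_{\bo,R}^*|_\cM$ together with $\cO_{\bo,C,\bA}=I_\cM$ for (4).

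Two small slips to fix. In part~(2), the inequality $\Gamma_{\bo,\bA}[H]\preceq H$ is not justified and need not hold for a general admissible $\bo$ (the coefficients $c_j$ have mixed signs); the correct route to boundedness of $C'$ is $C^*C\preceq\cG_{\bo,C,\bA}\preceq H$, the second inequality being \eqref{lim-ineq}. In part~(4), your aside ``so $\cG_{\bo,C,\bA}=I_\cX$ automatically'' is false: the gramian is computed with the $\cM$-norm on the domain and the $H^2_{\bo,\cY}(\free)$-norm on the range, so $\langle\cG_{\bo,C,\bA}f,f\rangle_\cM=\|f\|^2_{H^2_{\bo,\cY}(\free)}$, which equals $\|f\|^2_\cM$ only when the inclusion is isometric. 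What you actually need---and already have from Lemma~\ref{L:7.6}(1)---is $\cG_{\bo,C,\bA}\preceq I_\cM$, giving the contractive inclusion a posteriori. There is also no circularity in invoking Theorem~\ref{T:7.2}: it is a statement about an abstract output pair on an abstract $\cX$ and does not require any a~priori relation between the $\cM$-norm and the ambient norm.
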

\begin{proof}[Proof of (1):] Making use of \eqref{Sboj*} and \eqref{18.2aaa} we get for any $j\in\{1,\ldots,d\}$,
\begin{align*}
S_{\bo,R,j}^*{\mathcal O}_{\bo,C, \bA}x&=S_{\bo,R,j}^*\sum_{\alpha\in\free}\omega_{|\alpha|}^{-1}
(C\bA^{\alpha}x)z^\alpha\\
&=\sum_{\alpha\in\free}\omega_{|\alpha|}^{-1}(C\bA^{\alpha j}x)z^\alpha={\mathcal O}_{\bo,C,\bA}A_jx
\end{align*}
which proves \eqref{4.8aga}.\end{proof}
\begin{proof}[Proof of (2):]
The Stein system  \eqref{3.15g}, \eqref{3.16g} amounts to the statement that $(C, {\mathbf A})$ is $\bo$-contractive
and well-defined on the dense subset $[{\mathcal X}]$ of ${\mathcal X}'$ (where $[x]$ is the
equivalence class containing $x$) and hence  extends to a $\bo$-contractive pair $(C', {\mathbf A}')$ on all of
${\mathcal X}'$. By part (2) in Theorem \ref{T:7.2}, $\cG_{\bo,C,\bA} \preceq H$  and therefore,
\begin{equation}
\| \cO_{\bo,C,\bA}x\|^{2}_{H^2_{\bo,\cY}(\free)} = \langle \cG_{\bo,C,\bA}x,\, x\rangle_{\cX} \le
 \langle H x, \, x \rangle_{\cX},
\quad\mbox{for all}\quad x\in\cX,
\label{again6}
\end{equation}
so $\cO_{\bo,C,\bA}$ is contractive from $\cX$ (with the $H$-pseudo-inner product) to $H^2_{\bo,\cY}(\free)$ and hence also
${\mathcal O}_{\bo,C',{\mathbf A}'}$ is contractive from ${\mathcal X}'$ to $H^2_{\bo,\cY}(\free)$.\end{proof}
\begin{proof}[Proof of (3a):]  The definition of the $\cM$-norm makes
    $\cO_{\bo,C,\bA}$ a partial isometry from $[\cX]$ (with the
    $\cM$-norm \eqref{M-norm'}) onto $\cM$.  In case $\cM$ is not
    complete in its norm, we work instead with $\cO_{\bo,C', \bA'}$
    which is a partial isometry from $\cX'$ onto
    $\cM' = {\rm Ran} \cO_{\bo, C', \bA'}$. It suffices to verify the
    inequality \eqref{3a-ineq} for the special case where $f =
    \cO_{\bo, C, \bA} x$ for some $x \in \cX$. In this case, \eqref{3a-ineq} amounts to   
   $\| \cO_{\bo, C,\bA} x \|^{2} \le \langle H x, x \rangle_{\cX}$, which holds true, by \eqref{again6}.

\smallskip

To identify the formal reproducing kernel for $\cM'$, we  compute, for $x\in\cX'$,
    \begin{align*}
& \langle (\cO_{\bo, C', \bA'}x)(\zeta), y \rangle_{\cY\langle \langle \zeta \rangle \rangle \times \cY}  = 
\langle C'R_\bo(Z(\zeta) A')x, \,  y \rangle_{\cY\langle \langle \zeta \rangle \rangle \times \cY}  \\
 & \quad = \langle x, \, R_\bo(Z(\zeta) A')^*C^{\prime *} y \rangle_{\cX' \times \cX'\langle \langle \bzeta \rangle \rangle}  \\
 & \quad  = \langle  (\cO_{\bo, C', \bA'}x)(z), C'R_\bo(Z(z) A')R_\bo(Z(\zeta) A')^*C^{\prime *} y \rangle_{\cM'
 \times \cM'\langle \langle \bzeta \rangle \rangle}.
 \end{align*}
 This verifies that the kernel \eqref{NCkernel'} is the formal reproducing kernel for $\cM'$.
If $H$ is invertible, then $\cX = \cX'$ as sets and we can simply quote Proposition
\ref{P:principle} to   conclude that $\cM$ has formal reproducing
kernel $K_{C,\bA,H}(z, \zeta)$ defined as in \eqref{NCkernel}.
\end{proof}

 \begin{proof}[Proof of (3b):]
It follows from the definition of the range norm and from the intertwining relations
\eqref{4.8aga} that for $f$ of the form $f=\cO_{\bo,C,\bA}x$,
$$
\|f\|^2_{\cM}=\langle Hx, \, x\rangle_{\cX}\quad\mbox{and}\quad
S_{\bo,R,j}^*f=\cO_{\bo,C,\bA}A_jx\quad(j=1,\ldots,d).
$$
Therefore,
\begin{align}
\|f\|^2_{\cM}-\sum_{j=1}^d\|S_{\bo,R,j}^*f\|^2_{\cM}
&=\langle Hx, \, x\rangle_{\cX}-\sum_{j=1}^d \langle HA_jx, \, A_jx\rangle_{\cX}\notag\\
&=\langle (H-\sum_{j=1}^d A_j^*HA_j)x, \; x\rangle_{\cX}.\label{dec1}
\end{align}
Due to the first relation in \eqref{3.15g}, the expression on the right side of \eqref{dec1}
is nonnegative for every $x\in\cX$. Therefore, the expression  on the left side of
\eqref{dec1} is nonnegative for every $f\in\cM$ which means that ${\bf S}^*_{\bo,R}\vert_{\cM}$
is a contractive tuple. Iterating \eqref{4.8aga} gives
\begin{equation}
{\bf S}_{\bo,R}^{*\alpha} {\mathcal O}_{\bo,C, {\mathbf A}}x ={\mathcal
O}_{\bo,C, {\mathbf A}} \bA^{\alpha^\top} x \quad \text{for all}\quad x\in\cX
\text{ and } \alpha\in\free.
\label{dec2}
\end{equation}
For $f=\cO_{\bo,C,\bA}x$ we then have
\begin{equation}
\| {\bf S}_{\bo,R}^{*\alpha} f \|^{2}_{\cM} =
\langle H{\bf A}^{\alpha^\top} x, {\bf A}^{\alpha^\top} x \rangle_{\cX}\quad\mbox{and}\quad
f_{\emptyset} = Ef=C x.
\label{again7}
\end{equation}
With these substitutions, we see that 
\begin{align*}
\big\langle \Gamma_{\bo,{\bf S}^*_{\bo,R}\vert_{\cM}}[I_{\cM}]f, \, f\big\rangle_{\cM}&=
\sum_{\alpha\in\free}c_{|\alpha|}\|{\bf S}^{*\alpha}_{\bo,R}f\|^2_{\cM} \text{ (by definition \eqref{1.11g})}\notag\\
&=\sum_{\alpha\in\free}c_{|\alpha|}\langle H{\bf A}^{\alpha^\top} x, {\bf A}^{\alpha^\top} x \rangle_{\cX}
\text{ (by \eqref{again7})}\notag\\
&=\bigg\langle \sum_{\alpha\in\free}c_{|\alpha|}\bA^{*\alpha^\top}H\bA^{\alpha}x, \, x\bigg\rangle_{\cX}
\text{ (by substitution $\alpha\mapsto \alpha^\top$)}\notag\\ 
&=\big\langle\Gamma_{\bo,\bA}[H]x, \, x\big\rangle_{\cM}\text{ (by definition \eqref{1.11g})},
\end{align*}
(the substitution $\alpha\mapsto \alpha^\top$ is justified by the fact that the latter sum is taken over {\em all} 
elements of $\free$ and that $|\alpha|=|\alpha^\top|$)
and subsequently, in view of the second equality in \eqref{again7} 
\begin{align}
\big\langle \Gamma_{\bo,{\bf S}^*_{\bo,R}\vert_{\cM}}[I_{\cM}]f, \, f\big\rangle_{\cM}-\|Ef\|^2_{\cY}&=
\big\langle\Gamma_{\bo,\bA}[H]x, \, x\big\rangle_{\cM}-\|Cx\|^2_{\cY}\notag\\
&=\big\langle (\Gamma_{\bo,\bA}[H]-C^*C)x, \, x\big\rangle_{\cX}.
\label{again8}
\end{align}
A similar computation relying on the definition \eqref{3.5ag} and relations \eqref{again7} shows that
for any fixed $k\ge 1$, 
\begin{align}
\big\langle\Gamma^{(k)}_{\bo,{\bf S}^*_{\bo,R}\vert_{\cM}}[I_{\cM}]f, \, f\big\rangle_{\cM}&=
-\sum_{\alpha\in\free}\bigg(\sum_{\ell=1}^{k}\frac{c_{|\alpha|+\ell}}{\omega_{k-\ell}}\bigg)
\|{\bf S}^{*\alpha}_{\bo,R}f\|^2_{\cM}\notag\\
&=-\sum_{\alpha\in\free}\bigg(\sum_{\ell=1}^{k}\frac{c_{|\alpha|+\ell}}{\omega_{k-\ell}}\bigg)
\langle H{\bf A}^{\alpha^\top} x, {\bf A}^{\alpha^\top} x \rangle_{\cX}\notag\\
&=\bigg\langle -\sum_{\alpha\in\free}\bigg(\sum_{\ell=1}^{k}\frac{c_{|\alpha|+\ell}}{\omega_{k-\ell}}\bigg)
\bA^{*\alpha}H\bA^{\alpha^\top}x, \, x\bigg\rangle_{\cX}\notag\\
&=\bigg\langle -\sum_{\alpha\in\free}\bigg(\sum_{\ell=1}^{k}\frac{c_{|\alpha|+\ell}}{\omega_{k-\ell}}\bigg)  
\bA^{*\alpha^\top}H\bA^{\alpha}x, \, x\bigg\rangle_{\cX}\notag\\
&=\big\langle\Gamma^{(k)}_{\bo,\bA}[H]x, \, x\big\rangle_{\cX}.
\label{again9}
\end{align}
Since $H$ satisfies inequalities \eqref{3.15g}, \eqref{3.16g}, the right hand side expressions
 in \eqref{again8} and \eqref{again9}
are nonnegative for all $x\in\cX$. Hence, the left sides of
\eqref{again8} and \eqref{again9} are nonnegative for all $f\in\cM$ meaning that 
${\bf S}^*_{\bo,R}\vert_{\cM}$ is an $\bo$-hypercontraction
and that the pair $(E\vert_{\cM},{\bf S}^*_{\bo,R}\vert_{\cM})$ is $\bo$-contractive. Equality \eqref{3.17g}
 is equivalent to both sides 
in \eqref{again8} vanish, i.e., that the pair $(E\vert_{\cM},{\bf S}^*_{\bo,R}\vert_{\cM})$ is $\bo$-isometric.
This completes the verification of part (3).
\end{proof}

\begin{proof}[Proof of (4):]
Suppose that ${\mathcal M}$ is a Hilbert space included in $H^2_{\bo,\cY}(\free)$ which is
invariant under ${\bf S}_{\bo,R}^*$ and satisfies the inequalities \eqref{ineqs} for all $f \in \cM$.
Set ${\mathcal X} ={\mathcal M}$ and let $C=E\vert_{\cM}$ and ${\bf A}={\bf S}^*_{\bo,R}\vert_{\cM}$. 
The import of conditions \eqref{ineqs} is that then the output pair 
$(C,\bA)$ is a $\bo$-contractive output pair (recall Definitions \ref{D:7.5} and \ref{D:7.3}).  Since ${\mathcal O}_{\bo,C, {\mathbf A}}= I_{{\mathcal M}}$ by 
the first part of statement (2) in Proposition \ref{P:bo-model} (a purely algebraic statement independent 
of the choice of norm on $\cM$), it follows that for each $f \in {\mathcal M}$ we have
$$
\| f \|^2_{H^2_\cY(\free)} = \| \cO_{\bo, C, \bA} f \|^2_{H^2_\cY(\free)}
= \langle \cG_{\bo, C , \bA} f, f \rangle_\cM \le \| f \|^2_\cM
$$
where the last step follows from part (1) of Lemma \ref{L:7.6} since we have already observed that
$(C, \bA)$ is a $\bo$-contractive output pair.  We conclude that $\cM$ is contractively included
in $H^2_{\bo, \cY}(\free)$ as claimed.

\smallskip

Since $\cO_{\bo,C, \bA}$ is just the identity operator on $\cM$, we also have
$$
   \| \cO_{\bo,C, \bA} f \|^2_\cM = \| f \|^2_\cM.
$$
Thus we can view $\cM$ as $\cM = \operatorname{Ran}\, \cO_{\bo,C, \bA}$ (with $\cO_{\bo,C, \bA}$ viewed as an operator
from $\cM$ into $H^2_{\bo, \cY}(\free)$).  We can then  follow the same argument as used in the proof of part (3)
of Theorem \ref{T:2-1.2} to identify the reproducing kernel $K_\cM$ for $\cM$ as $K_\cM(z, \zeta) =
K_{C, \bA, I}(z, \zeta)$, as claimed.
 \end{proof}
         
\begin{definition}  \label{D:modsub}
Let us call a subspace $\cM$ contained in $H^2_{\bo,\cY}(\free)$ (not necessarily isometrically or even
contractively)  an {\em $\bo$-model subspace} if it is  ${\bf S}_{n,R}^*$-invariant 
and the restricted $\bo$-model pair $(E\vert_{\cM},{\bf S}^*_{\bo,R}\vert_{\cM})$ is $\bo$-contractive. 
\end{definition}

As explained by part (4) of Theorem \ref{T:2-1.2}, for purposes of study of contractively included $\bo$-model subspaces of
$H^2_{\bo,\cY}(\free)$ without loss of generality we may suppose at the start that we are working with
${\mathcal X}'$ as the original state space ${\mathcal X}$ and with
the solution $H$ of Stein inequalities \eqref{3.15g}, \eqref{3.16g} to be
normalized to $H = I_{{\mathcal X}}$. Then certain simplifications
occur in parts (1)-(3) of Theorem \ref{T:2-1.2} as explained in the next result.  Note that part (4) of Theorem \ref{T:2-1.2}
does not involve a choice of $H$ in the hypotheses and indeed gets $H = I_\cX$ as a conclusion.

         \begin{theorem}  \label{T:2-1.2'}
Suppose that $(C, {\mathbf A})$ is an $\bo$-contractive pair with state space ${\mathcal X}$ and output
         space $\cY$.  Then:

\smallskip
\noindent
{\rm (1)}  $(C, {\mathbf A})$ is $\bo$-output-stable and  $\operatorname{Ran}\cO_{\bo, C,{\mathbf A}}$ is
${\bf S}_{\bo,R}^*$-invariant (by \eqref{4.8aga}).

\smallskip
\noindent
{\rm (2)}  The operator ${\mathcal O}_{\bo,C,{\mathbf A}}$ is a
         contraction from ${\mathcal X}$ into $H^2_{\bo,\cY}(\free)$.
         
         \smallskip
         
\noindent
 {\rm (3)} If the linear manifold ${\mathcal M}:= \operatorname{Ran}
         {\mathcal O}_{\bo,C,{\mathbf A}}$ is given the lifted norm
         \begin{equation}  \label{lifted'}
           \| {\mathcal O}_{\bo,C, {\mathbf A}}x\|_{{\mathcal M}} = \| Q x \|_{{\mathcal X}}
         \end{equation}
         where $Q$ is the orthogonal projection of ${\mathcal X}$ onto
         $(\operatorname{Ker}{\mathcal O}_{\bo,C, {\mathbf A}})^{\perp}$, then
\begin{enumerate}
\item[(a)]  ${\mathcal O}_{\bo,C, {\mathbf A}}$ is a coisometry of ${\mathcal X}$ onto ${\mathcal M}$, and implements a unitary equivalence
between $\bS_{\bo, R}^*|_\cM$ and $Q \bA|_{\operatorname{Ran} Q}$.

\item[(b)] ${\mathcal M}$ is contained contractively in $H^2_{\bo,\cY}(\free)$
         and is isometrically equal to the FNRKHS with
         reproducing kernel given by
         $$
           K_{C, {\mathbf A}}(z,\zeta) = CR_\bo(Z(z)A)R_\bo(Z(\zeta)A)^*C^*.
         $$
         
\item[(c)] The pair $(E\vert_{\cM},{\bf S}^*_{\bo,R}\vert_{\cM})$ is $\bo$-contractive and 
the orthogonal projection $Q$ in \eqref{lifted'} satisfies Stein inequalities \eqref{3.15g}, \eqref{3.16g}.
Moreover, $(E\vert_{\cM},{\bf S}^*_{\bo,R}\vert_{\cM})$ is $\bo$-isometric
         if and only if $Q$ satisfies \eqref{3.17g}.   
         
\item[(d)] In particular, if $(C,A)$ is $\bo$-observable, then $\bA$ in unitarily equivalent to $\bS^*_{\bo,R}|_\cM$ and $(C, \bA)$
is an $\bo$-contractive pair.  If $Q = I$ satisfies \eqref{3.17g}, then $(C, \bA)$ is an $\bo$-isometric output pair.
                   
\item[(e)]  If $(C, \bA)$ and $(\widetilde C, \widetilde \bA)$ are two
      $\bo$-output-stable, observable pairs (with $C\in\cL(\cX,\cY)$ and $\widetilde{C}\in\cL(\widetilde{\cX},\cY)$
realizing the same positive kernel
       \begin{align}
       K_{C, \bA}(z,\zeta)&:=CR_\bo(Z(z)A)R_\bo(Z(\zeta)A)^*C^* \notag \\
       &=\widetilde CR_\bo(Z(z)\widetilde A)R_\bo(Z(\zeta)\widetilde A)^*\widetilde C^{*}=: K_{\widetilde
       C,\widetilde\bA}(z,\zeta),   \label{ker-id}
       \end{align}
then $(C,\bA)$ and $(\widetilde C,\widetilde \bA)$ are {\em unitarily equivalent}, i.e.,
equalities \eqref{sep1} hold for a {\em unitary} operator $T \colon \cX \to \widetilde \cX$.
\end{enumerate}

\end{theorem}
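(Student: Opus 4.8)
The plan is to run the standard ``lurking isometry'' / state-space similarity argument adapted to the weighted setting, using the machinery already developed in this chapter. First I would record the building blocks: since $(C, \bA)$ is $\bo$-output-stable and observable, by part (2) of Theorem \ref{T:7.2} the gramian $\cG_{\bo, C, \bA} = \cO_{\bo, C, \bA}^* \cO_{\bo, C, \bA}$ is the minimal positive solution of the Stein system and, by observability (Definition \ref{D:7.5}), is in fact positive-definite (though not necessarily bounded below). The same holds for $\widetilde \cG := \cG_{\bo, \widetilde C, \widetilde \bA}$. The key point is that the kernel identity \eqref{ker-id} says $K_{C, \bA} = K_{\widetilde C, \widetilde \bA}$, so by part (3)(b) applied to each pair (after passing to the lifted-norm completions $\cM$ and $\widetilde \cM$ as in part (3), or more simply by Proposition \ref{P:principle} once one factors out the kernels of the observability operators) the spaces $\operatorname{Ran} \cO_{\bo, C, \bA}$ and $\operatorname{Ran} \cO_{\bo, \widetilde C, \widetilde \bA}$ are isometrically equal to one and the same NFRKHS $\cH(K_{C, \bA})$. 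By observability, $\cO_{\bo, C, \bA}$ is injective with dense range in this completion; hence $T_0 : = \cO_{\bo, \widetilde C, \widetilde \bA}^{-1} \cO_{\bo, C, \bA}$ is a well-defined, densely-defined, isometric (in the respective lifted norms $\langle \cG_{\bo, C, \bA}\,\cdot\,,\cdot\rangle$ and $\langle \widetilde \cG\,\cdot\,,\cdot\rangle$) bijection from a dense subset of $\cX$ onto a dense subset of $\widetilde \cX$.

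Second, I would verify the intertwining relations that force $T_0$ to be the desired similarity. From the intertwining relation \eqref{4.8aga} (and its iterate \eqref{dec2}) we have $S_{\bo,R,j}^* \cO_{\bo, C, \bA} = \cO_{\bo, C, \bA} A_j$ and likewise $S_{\bo,R,j}^* \cO_{\bo, \widetilde C, \widetilde \bA} = \cO_{\bo, \widetilde C, \widetilde \bA} \widetilde A_j$; composing gives $T_0 A_j = \widetilde A_j T_0$ on the dense domain, so $\widetilde A_j = T_0 A_j T_0^{-1}$. For the output operators, applying the free-coefficient evaluation $E$ to $\cO_{\bo, C, \bA} x$ recovers $C x$ (the $z^\emptyset$-coefficient of $CR_\bo(Z(z)A)x$ is $Cx$ since $\omega_0 = 1$), and similarly for the tilde pair; since $E$ is a bounded functional on the ambient space $H^2_{\bo, \cY}(\free)$ and $\cH(K_{C, \bA})$ is continuously included there (via the lifted norm, after normalizing $H = \cG_{\bo, C, \bA}$ as in part (3)(a)), one gets $\widetilde C T_0 = C$, i.e., $\widetilde C = C T_0^{-1}$. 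Thus $T_0$ satisfies \eqref{sep1} with $T = T_0$.

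Third, I would promote $T_0$ to a genuine \emph{unitary} equivalence. Here ``unitary'' should be understood as: $T_0$ extends to a unitary from the lifted-norm Hilbert space on $\cX$ onto the lifted-norm Hilbert space on $\widetilde \cX$ --- which is exactly the content of $T_0$ being isometric and densely defined with dense range, as established above, so it extends by continuity to a surjective isometry. When $\cG_{\bo, C, \bA}$ and $\widetilde \cG$ happen to be bounded and boundedly invertible (e.g. under exact $\bo$-observability, cf.\ Definition \ref{D:7.5} and Proposition \ref{P:shift-gram-inv}), writing $\cG_{\bo, C, \bA} = P^* P$ and $\widetilde \cG = \widetilde P^* \widetilde P$ with $P, \widetilde P$ invertible, one sees $T : = \widetilde P^{-1} (\text{unitary}) P$ is a bounded invertible operator on the original $\cX$ intertwining the two pairs; and the isometry statement means precisely that $T^* \widetilde \cG T = \cG_{\bo, C, \bA}$, the appropriate normalization. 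In either reading the conclusion \eqref{sep1} holds with the claimed invertible (``unitary'' in the lifted-norm sense) $T$.

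\textbf{Main obstacle.} The delicate point is the well-definedness and isometric property of $T_0$ when the gramians are merely positive-definite rather than bounded below: one must be careful that $\operatorname{Ran} \cO_{\bo, C, \bA}$ and $\operatorname{Ran} \cO_{\bo, \widetilde C, \widetilde \bA}$, taken with their respective \emph{lifted} norms from $\cX$ and $\widetilde \cX$, are not just equal as NFRKHSs (which \eqref{ker-id} guarantees) but that the identification is the \emph{isometric} one --- this is exactly what part (3)(a)--(b) of Theorem \ref{T:2-1.2} provides once one observes that, under observability, $\operatorname{Ker} \cO_{\bo, C, \bA} = \{0\}$ so the lifted norm is honestly transported from $\langle \cG_{\bo, C, \bA}\,\cdot\,,\cdot\rangle$ on $\cX$. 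Everything else is a routine diagram-chase through \eqref{4.8aga}, the $E$-evaluation, and density of $\operatorname{Ran}$ of the observability operators.
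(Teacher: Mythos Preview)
Your route via $T_0 = \cO_{\bo,\widetilde C,\widetilde\bA}^{-1}\,\cO_{\bo,C,\bA}$ is valid, but it differs from the paper's argument and you have introduced a confusion about norms that makes your third step and your ``main obstacle'' red herrings.

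The paper proceeds more elementarily, without invoking the NFRKHS machinery: equating coefficients of $z^\alpha\overline{\zeta}^{\beta^\top}$ in \eqref{ker-id} gives $C\bA^\alpha\bA^{*\beta^\top}C^*=\widetilde C\widetilde\bA^\alpha\widetilde\bA^{*\beta^\top}\widetilde C^*$ for all $\alpha,\beta\in\free$; one then defines $U:\bA^{*\beta^\top}C^*y\mapsto\widetilde\bA^{*\beta^\top}\widetilde C^*y$, checks it extends to an isometry, observes that observability of each pair forces the domain and range to be all of $\cX$ and $\widetilde\cX$ (observability being equivalent to $\bigvee_\beta\operatorname{Ran}\bA^{*\beta^\top}C^*=\cX$), and reads off the intertwining relations directly. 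This is the classical state-space uniqueness argument.

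In your approach the key mis-step is the identification of the lifted norm with the gramian inner product $\langle\cG_{\bo,C,\bA}\cdot,\cdot\rangle$. With observability we have $Q=I$, so by \eqref{lifted'} the lifted norm on $\cM=\operatorname{Ran}\cO_{\bo,C,\bA}$ is simply $\|\cO_{\bo,C,\bA}x\|_{\cM}=\|x\|_{\cX}$, the \emph{original} $\cX$-norm. Thus $\cO_{\bo,C,\bA}:\cX\to\cM$ is already a surjective isometry, $\cM$ is complete (no completion needed), and by \eqref{ker-id} together with uniqueness of a NFRKHS from its kernel, $\cM=\widetilde\cM$ isometrically. Consequently $T_0$ is immediately a genuine unitary $\cX\to\widetilde\cX$ in the original norms; there is nothing to ``promote'' and nothing delicate about the gramians failing to be bounded below. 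Your intertwining verifications in the second paragraph are correct as stated.
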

         
\begin{proof}  As for parts (1), (2), (3a), (3b), (3c) of the theorem,  all statements but the last part of statement (3c) 
concerning $Q$ are direct specializations to the case $H =I_{{\mathcal X}}$ of the corresponding results
in Theorem \ref{T:2-1.2}. To complete the verification of the last part of (3c), observe  from the intertwining relations 
\eqref{4.8aga} and \eqref{dec2} that inequalities
\begin{align}
&\sum_{j=1}^d \|S_{\bo,R,j}^*f\|_{\cM}^2\le \|f\|^2_{\cM},\quad 
\sum_{\alpha\in\free}c_{|\alpha|}\|{\bf S}^{*\alpha}_{\bo,R}f\|^2_{\cM}\ge \|f_\emptyset\|^2_{\cY},\label{again11}\\
&-\sum_{\alpha\in\free}\bigg(\sum_{\ell=1}^{k}\frac{c_{|\alpha|+\ell}}{\omega_{k-\ell}}\bigg)
\|{\bf S}^{*\alpha}_{\bo,R}f\|^2_{\cM}  \ge   0 \quad \mbox{for all}\quad f\in\cM \text{ and }  k\ge 1\notag
\end{align}
explicitly stating the $\bo$-contractivity of the pair $(E\vert_{\cM},{\bf S}^*_{\bo,R}\vert_{\cM})$
for a generic element $f = {\mathcal O}_{\bo,C,\bA}x \in\cM$ mean that
\begin{align*}
\sum_{j=1}^d\|{\mathcal O}_{\bo,C,\bA}A_jx\|^2_{\cM}\le \|{\mathcal O}_{\bo,C,\bA}x\|^2_{\cM},\quad
\sum_{\alpha \in\free} c_{|\alpha|}\cdot
\|{\mathcal O}_{\bo,C,\bA}\bA^{\alpha^\top}x \|^{2}_{\cM}\ge \|Cx\|^2_{\cY},\\
\sum_{\alpha\in\free}\bigg(\sum_{\ell=1}^{k}\frac{c_{|\alpha|+\ell}}{\omega_{k-\ell}}\bigg)
\|{\mathcal O}_{\bo,C,\bA}\bA^{\alpha^\top}x \|^{2}_{\cM}\le 0\quad \mbox{for all}\quad x\in\cX \text{ and }  k\ge 1.
\end{align*}
By definition \eqref{lifted'} of the $\cM$-norm, the latter relations can be
written as
\begin{align}
\|Qx\|^2_{\cM}&\ge \sum_{j=1}^d\|QA_jx\|^2_{\cM},\notag\\
\|Cx\|^2_{\cY}&\le \sum_{\alpha \in\free} c_{|\alpha|}\cdot\|Q\bA^{\alpha^\top}x \|^{2}_{\cM}=
\sum_{\alpha \in\free} c_{|\alpha|}\cdot\|Q\bA^{\alpha}x \|^{2}_{\cM},\label{again13}\\
0&\ge \sum_{\alpha\in\free}\bigg(\sum_{\ell=1}^{k}\frac{c_{|\alpha|+\ell}}{\omega_{k-\ell}}\bigg)
\|Q\bA^{\alpha^\top}x \|^{2}_{\cM}=
\sum_{\alpha\in\free}\bigg(\sum_{\ell=1}^{k}\frac{c_{|\alpha|+\ell}}{\omega_{k-\ell}}\bigg)
\|Q\bA^{\alpha}x \|^{2}_{\cM},\notag
\end{align}
where we used the substitution $\alpha\mapsto \alpha^\top$ in the two last formulas. 
Since $Q=Q^{\frac{1}{2}}$ and since $x\in\cX$ is arbitrary, the latter relations can be written as 
$$
Q \succeq \sum_{j=1}^d A_j^*QA_j,\quad \Gamma_{\bo,\bA}[Q]\succeq C^*C, \quad 
\Gamma^{(k)}_{\bo,\bA}[Q]\succeq 0 \; \; (k\ge 1)
$$
telling us that $Q$ satisfies Stein inequalities \eqref{3.15g}, \eqref{3.16g}. 
Note next that the pair $(E\vert_{\cM},{\bf S}^*_{\bo,R}\vert_{\cM})$ being $\bo$-isometric is equivalent to 
equality in the second relation in \eqref{again11}
(for all $f\in\cM$) which is equivalent to equality in \eqref{again13} (for all $x\in\cX$), which in turn is
 equivalent to the equality
$\Gamma_{\bo,\bA}[Q]=C^*C$, which completes the proof of parts (1)-(3c) of the theorem.
Statement (3d) amounts to the specialization of the last part of (3a) to the case where $Q = I_\cX$.

\smallskip

As for statement (3e), suppose that the output pairs $(C, \bA)$ and $\widetilde C, \widetilde \bA)$ generate the same kernels
as in \eqref{ker-id}.  Equating coefficients of $z^\alpha \overline{\zeta}^{\alpha^\top}$ gives us the system
of equations
$$
\omega_{|\alpha|}^{-1} \omega_{|\beta|}^{-1}  C \bA^\alpha \bA^{* \beta^\top} C^* =
 \omega_{|\alpha|}^{-1}  \omega_{|\beta|}^{-1}       \widetilde C \widetilde \bA^{\alpha} 
 \widetilde \bA^{*\beta^\top} \widetilde C^*\quad
\text{for all}\quad \alpha, \, \beta \in \free, 
$$
or more simply, after cancellation of the common factor  $\omega_{|\alpha|}^{-1} \omega_{|\beta|}^{-1}$,
$$
   C \bA^\alpha \bA^{* \beta^\top} C^*  =   \widetilde C \widetilde \bA^{\alpha} 
 \widetilde \bA^{*\beta^\top} \widetilde C^*\quad
\text{for all}\quad \alpha, \, \beta \in \free.
$$
We conclude that the operator $U$ defined by
\begin{equation}  \label{def-U}
    U \colon \bA^{* \beta^\top} C^* y \mapsto \widetilde \bA^{* \beta^\top} \widetilde C^* y
\end{equation}
extends by linearity and continuity to an isometry from its domain space
$$
 \cD_U = \bigvee  \{ \bA^{* \beta^\top} C^* y \colon \beta \in \free, \, y \in \cY\}
$$
onto its range space
$$
 \cR_U = \bigvee  \{ \widetilde \bA^{* \beta^\top} \widetilde C^* y 
 \colon \beta \in \free, \, y \in \cY\}.
 $$
 The observability assumptions imply that $\cD_U$ is all of $\cX$ and $\cR_U$ is all of $\widetilde \cX$, and
 hence $U \colon \cX \to \widetilde \cX$ is unitary.    From the formula \eqref{def-U} we can read off
 the intertwining relations
 $$
 U C^* = \widetilde C^*, \quad U A_j^* = \widetilde A_j^* U \quad\text{for}\quad j = 1, \dots, d.
 $$
 Since $U$ is unitary we then also get
 $$
 \widetilde C U = C, \quad \widetilde A_j U  = U A_j\quad \text{for}\quad j = 1, \dots, d
 $$
 and we conclude the pairs $(C, \bA)$ and $(\widetilde C,  \widetilde \bA)$ are unitarily equivalent
 as claimed.
 
\smallskip

 We remark that this proof is essentially the same as that of Theorem 2.13 in \cite{BBF1} where the special
 case $\bo = \bmu_1$ is handled.
\end{proof}

We now turn to ${\bf S}_{\bo,R}^*$-invariant subspaces of $H^2_{\bo,\cY}(\free)$ that are 
{\em isometrically} included in $H^2_{\bo,\cY}(\free)$.

\begin{theorem} \label{T:1.2g}
If the pair $(C,\bA)$ is an $\bo$-isometric pair with $\bA$ strongly $\bo$-stable, then the 
observability operator $\cO_{\bo, C, \bA} \colon \cX \to H^2_{\bo,\cY}(\free)$ is an isometry onto a backward-shift-invariant
subspace $\cN \subset H^2_{\bo,\cY}(\free)$ and $\bA$ is unitarily equivalent to $\bS_{\bo, R}^*|_\cN$. Moreover
$\cN$ is the NFRKHS with reproducing kernel
\begin{equation}   \label{again1}
  K_{\bo, C, \bA}(z, \zeta) = C R_\bo(Z(z) A) R_\bo(Z(\zeta) A)^* C^*.
\end{equation}
Conversely, if $\cN$ is a Hilbert space isometrically contained in $H^2_{\bo, \cY}(\free)$ which is invariant
under $\bS^*_{\bo, R}$, then there is an $\bo$-isometric output pair $(C, \bA)$ with $\bA$ strongly $\bo$-stable
so that $\cN = \operatorname{Ran}  \cO_{\bo,C, \bA}$ and $\bA$ is unitarily equivalent to $\bS_{\bo, R}^*|_\cN$, and
the space $\cN$ is isometrically equal to the NFRKHS with reproducing kernel as in \eqref{again1}.
In fact, one can choose
\begin{equation} \label{again1-model}
   C = E|_\cN, \quad \bA = \bS_{\bo, R}^*|_\cN,
	\end{equation}
where $E$ is the model output map on $H^2_{\bo, \cY}(\free)$ given by 
$\; E \colon \sum_{\alpha \in \free} f_\alpha z^\alpha \mapsto f_\emptyset$.
\end{theorem}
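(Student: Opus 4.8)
The plan is to mirror, in the $\bo$-weighted setting, the structure we have already established in Theorem~\ref{T:2-1.2} and Theorem~\ref{T:2-1.2'}, specializing from the contractively-included case to the isometrically-included one. For the forward direction, assume $(C,\bA)$ is an $\bo$-isometric pair with $\bA$ strongly $\bo$-stable. By part~(2) of Lemma~\ref{L:7.6}, the hypotheses force $\cG_{\bo,C,\bA}=I_\cX$, so $\cO_{\bo,C,\bA}\colon\cX\to H^2_{\bo,\cY}(\free)$ is isometric. The intertwining relation \eqref{4.8aga} shows that $\cN:=\operatorname{Ran}\cO_{\bo,C,\bA}$ is $\bS_{\bo,R}^*$-invariant; since $\cO_{\bo,C,\bA}$ is an isometric intertwiner for $A_j$ and $S_{\bo,R,j}^*$, it implements the unitary equivalence between $\bA$ and $\bS_{\bo,R}^*|_\cN$. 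The identification of $\cN$ as the NFRKHS with kernel \eqref{again1} follows from Proposition~\ref{P:principle} applied to $H(z)=CR_\bo(Z(z)A)$ with $G=\cG_{\bo,C,\bA}=I$ (so $K_\cN(z,\zeta)=H(z)H(\zeta)^*$), exactly as in the proof of part (3a) of Theorem~\ref{T:2-1.2} with $H=I_\cX$; alternatively one invokes Theorem~\ref{T:2-1.2'}(3b) directly since an $\bo$-isometric pair is in particular $\bo$-contractive and here the lifted norm agrees with the ambient norm because $\cO_{\bo,C,\bA}$ is already isometric.

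For the converse, suppose $\cN$ is isometrically contained in $H^2_{\bo,\cY}(\free)$ and $\bS_{\bo,R}^*$-invariant. The natural candidate is the restricted $\bo$-model output pair \eqref{again1-model}: set $C=E|_\cN$ and $\bA=\bS_{\bo,R}^*|_\cN$ acting on $\cX:=\cN$. First I would verify that this pair is $\bo$-isometric. The key input is Proposition~\ref{P:bo-model2}, which gives the operator identities $\Gamma_{\bo,\bS_{\bo,R}^*}[I]=E^*E$ and $\Gamma^{(k)}_{\bo,\bS_{\bo,R}^*}[I]=\Gr_{\bo,k,E,\bS_{\bo,R}^*}\succeq0$ on all of $H^2_{\bo,\cY}(\free)$. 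Because $\cN$ is $\bS_{\bo,R}^*$-invariant and isometrically included, the operators $\Gamma_{\bo,\bA}[I_\cN]$ and $\Gamma^{(k)}_{\bo,\bA}[I_\cN]$ are obtained by compressing these identities to $\cN$, which gives $\Gamma_{\bo,\bA}[I_\cN]=P_\cN E^*E|_\cN=C^*C$ (the Kronecker-type argument showing $E^*E$ compresses correctly to $C^*C$ is routine, using $C=E|_\cN$) and $\Gamma^{(k)}_{\bo,\bA}[I_\cN]=P_\cN\Gr_{\bo,k,E,\bS_{\bo,R}^*}|_\cN\succeq0$ for $k\ge1$, as well as $I_\cN\succeq\sum_j A_j^*A_j$ from the row-contractivity of $\bS_{\bo,R}$. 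Hence $(C,\bA)$ is $\bo$-isometric in the sense of \eqref{1.23g}.

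Next I would show $\bA=\bS_{\bo,R}^*|_\cN$ is strongly $\bo$-stable. Here the argument follows the proof of part~(1) of Proposition~\ref{P:bo-model2}: the strong-stability estimate for $\bS_{\bo,R}^*$ on the ambient space, combined with the fact that $\cN$ is a closed subspace with the induced norm, shows that $\sum_{|\alpha|=k}\langle\Gr_{\bo,k,E,\bS_{\bo,R}^*}\bS_{\bo,R}^{*\alpha}f,\bS_{\bo,R}^{*\alpha}f\rangle\to0$ for $f\in\cN$, which via the compression of \eqref{3.1e} to $\cN$ is precisely $\Delta_{\bA,I_\cN}=0$, i.e.\ $\bo$-strong stability of $\bA$. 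Once $(C,\bA)$ is $\bo$-isometric and $\bo$-strongly stable, Lemma~\ref{L:7.6}(2) gives $\cG_{\bo,C,\bA}=I_\cN$, so $\cO_{\bo,C,\bA}$ is isometric; but by Proposition~\ref{P:bo-model}(2) the $\bo$-observability operator of the model pair is the identity, so $\cO_{\bo,C,\bA}$ is the inclusion $\cN\hookrightarrow H^2_{\bo,\cY}(\free)$ and therefore $\operatorname{Ran}\cO_{\bo,C,\bA}=\cN$ with $\bA=\bS_{\bo,R}^*|_\cN$, and the kernel identification \eqref{again1} follows as in the forward direction.

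I expect the main obstacle to be the bookkeeping in the converse: verifying that compressing the operator identities of Proposition~\ref{P:bo-model2} from the ambient Fock space to the invariant subspace $\cN$ genuinely yields the $\bo$-isometry identities for $(E|_\cN,\bS_{\bo,R}^*|_\cN)$, rather than merely inequalities in the wrong direction. The subtlety is that $\Gamma^{(k)}_{\bo,\bA}$ is defined through the (infinite, conditionally manipulated) $B_\bA$-functional calculus with coefficients $c_j^{\bo,k}$, so one must justify that $P_\cN B_{\bS_{\bo,R}^*}^j[I]|_\cN = B_\bA^j[I_\cN]$ term by term and that the resulting series converges—this is exactly where the admissibility condition \eqref{18.2} and the Wiener-class condition \eqref{1.8g} enter, via Lemma~\ref{P:welldefined}. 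The strong-stability verification is comparatively soft once one has the identity $\Gamma^{(k)}_{\bo,\bA}[I_\cN]=\Gr_{\bo,k,E|_\cN,\bS_{\bo,R}^*|_\cN}$ in hand.
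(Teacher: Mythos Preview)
Your proposal is correct and follows essentially the same route as the paper's proof: Lemma~\ref{L:7.6}(2), the intertwining relation \eqref{4.8aga}, and Proposition~\ref{P:principle} for the forward direction; the restricted model pair $(E|_\cN,\bS_{\bo,R}^*|_\cN)$ together with Propositions~\ref{P:bo-model}(2) and~\ref{P:bo-model2} for the converse. Your concern about the compression bookkeeping is somewhat overstated---since $\cN$ is \emph{invariant} (not merely semi-invariant) for $\bS_{\bo,R}^*$, we have $\bA^\alpha=(\bS_{\bo,R}^*)^\alpha|_\cN$ with no projection needed, so the quadratic-form identities $\langle\Gamma_{\bo,\bA}[I_\cN]f,f\rangle_\cN=\langle\Gamma_{\bo,\bS_{\bo,R}^*}[I]f,f\rangle$ (and similarly for $\Gamma^{(k)}$ and the stability limit) transfer directly from the ambient space; the paper handles this in one sentence by noting that these properties are preserved under restriction to an invariant subspace.
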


\begin{proof} 
Assume that $(C, \bA)$ is an $\bo$-isometric output pair with $\bA$ strongly $\bo$-stable.  Then part (2) of Lemma \ref{L:7.6} 
tells us that $\cO_{\bo, C, \bA} \colon \cX \to H^2_{\bo, \cY}(\free)$ is isometric, and hence
$\cO_{\bo, C, \bA}$ can be viewed as a unitary operator from $\cX$ onto its range spaces
$\cN: = \operatorname{Ran} \cO_{\bo, C, \bA}$.  From the general intertwining relation \eqref{4.8aga}
we see that $\cN$ is $\bS_{\bo, R}^*$-invariant and that $\bA$ is unitarily equivalent to $\bS^*_{\bo, R}|_\cN$
via the unitary transformation $\cO_{\bo, C, \bA} \colon \cX \to \cN$.  The fact that $\cN$ has reproducing kernel
of the form \eqref{again1} is a direct consequence of Proposition \ref{P:principle}.

\smallskip

Conversely, suppose that $\cN$ is a backward-shift-invariant subspace of $H^2_{\bo, \cY}(\free)$ isometrically
contained in $H^2_{\bo, \cY}(\free)$.  It suffices to check that $(C, \bA)$ as in \eqref{again1-model} meets all
the requirements of the theorem.  Proposition \ref{P:bo-model} part (2) tells us that
$\cO_{E, \bS^*_{\bo, R}} = I_{H^2_{\bo, \cY}(\free)}$ and hence $\cO_{C, \bA} =
\cO_{E, \bS^*_{\bo, R}}|_\cN = I_\cN$ is an isometry from $\cX = \cN$ to $\cN \subset H^2_{\bo, \cY}(\free)$.
To show that $\bA$ is a $\bo$-strongly stable $\bo$-hypercontraction,
it suffices to note that $\bS^*_{\bo, R}$ is (by Proposition \ref{P:bo-model2}), and then observe 
that restriction of $\bS^*_{\bo, R}$ to an invariant
subspace preserves these properties.  Similarly, by Proposition \ref{P:bo-model2},
the pair $(E, \bS^*_{\bo, R})$ is $\bo$-isometric;  it then suffices to observe that the $\bo$-isometric property 
is preserved upon restriction of $(E, \bS^*_{\bo,R})$ to a $\bS^*_{\bo, R}$-invariant subspace.  
\end{proof}
\begin{theorem} \label{T:beta-stablemodel}  Suppose that the Hilbert-space operator tuple $\bA \in \cL(\cX)^d$ is an
$\bo$-strongly stable $\bo$-hypercontraction.
Let $\cY$ be a coefficient Hilbert
    space with $\operatorname{dim} \cY = \operatorname{rank}
    \Gamma_{\bo,\bA}[I_{\cX}]$.  Then there is a subspace $\cN
    \subset H^{2}_{\bo,\cY}(\free)$ invariant under ${\bf S}_{\bo,R}^{*}$
    so that $\bA$ is unitarily equivalent to $\bS_{\bo,R}^{*}\vert_{\cN}$.
\end{theorem}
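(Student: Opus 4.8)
The plan is to build the output pair $(C, \bA)$ from the data of $\bA$ and then invoke Theorem \ref{T:1.2g}. First I would define $C \colon \cX \to \cY$ by factoring the positive operator $\Gamma_{\bo, \bA}[I_\cX]$: since $\bA$ is an $\bo$-hypercontraction, by Definition \ref{D:7.3} we have $\Gamma_{\bo, \bA}[I_\cX] \succeq 0$, so we may write $\Gamma_{\bo, \bA}[I_\cX] = C^* C$ with $C$ mapping into a Hilbert space $\cY$ of dimension equal to $\operatorname{rank} \Gamma_{\bo, \bA}[I_\cX]$ (take $C$ to have dense range, e.g.\ $C = (\Gamma_{\bo, \bA}[I_\cX])^{1/2}$ composed with a unitary identification of $\overline{\operatorname{Ran}}\,\Gamma_{\bo, \bA}[I_\cX]$ with $\cY$). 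With this choice, the equation \eqref{1.23g} holds by construction, and since $\bA$ is already assumed to be an $\bo$-hypercontraction, the pair $(C, \bA)$ is $\bo$-isometric in the sense of Definition \ref{D:7.5}.

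Next I would observe that $\bA$ is $\bo$-strongly stable by hypothesis, so Theorem \ref{T:1.2g} applies directly to the pair $(C, \bA)$: the observability operator $\cO_{\bo, C, \bA} \colon \cX \to H^2_{\bo, \cY}(\free)$ is an isometry onto a $\bS^*_{\bo, R}$-invariant subspace $\cN \subset H^2_{\bo, \cY}(\free)$, and $\bA$ is unitarily equivalent to $\bS^*_{\bo, R}|_\cN$ via the unitary $\cO_{\bo, C, \bA}$. This is exactly the assertion of the theorem, so the proof amounts to assembling these two ingredients.

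The one point that requires a little care — and which I expect to be the main (though modest) obstacle — is verifying that the dimension bookkeeping is consistent, i.e.\ that one really can take $\dim \cY = \operatorname{rank} \Gamma_{\bo, \bA}[I_\cX]$ rather than merely $\dim \cY \geq \operatorname{rank} \Gamma_{\bo, \bA}[I_\cX]$. This is handled by insisting that the factorization $\Gamma_{\bo, \bA}[I_\cX] = C^* C$ be chosen with $C$ having dense range; then $\cO_{\bo, C, \bA}$ being isometric forces $\cN = \operatorname{Ran} \cO_{\bo, C, \bA}$ to carry the full structure, and no smaller coefficient space would suffice because the free-coefficient evaluation of an element of $\cN$ recovers $Cx$, whose closed span is all of $\cY$. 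I would also note in passing that, although Definition \ref{D:7.5} of an $\bo$-isometric pair requires $\bA$ to be $\bo$-hypercontractive and Definition \ref{D:7.4} frames $\bo$-strong stability for $\bo$-hypercontractions, both conditions are part of the hypothesis on $\bA$ here, so nothing further needs to be checked before quoting Theorem \ref{T:1.2g}.

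\begin{proof}
Since $\bA$ is an $\bo$-hypercontraction, $\Gamma_{\bo, \bA}[I_{\cX}] \succeq 0$ by Definition \ref{D:7.3}, so we may factor
$$
\Gamma_{\bo, \bA}[I_{\cX}] = C^{*} C
$$
where $C \colon \cX \to \cY$ has dense range and $\cY$ is a Hilbert space with $\dim \cY = \operatorname{rank} \Gamma_{\bo, \bA}[I_{\cX}]$; indeed one may take $\cY = \overline{\operatorname{Ran}}\, \Gamma_{\bo,\bA}[I_\cX]$ and $C = \left(\Gamma_{\bo, \bA}[I_{\cX}]\right)^{\frac{1}{2}}$. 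Then \eqref{1.23g} holds by construction, and since $\bA$ is $\bo$-hypercontractive, the pair $(C, \bA)$ is an $\bo$-isometric output pair in the sense of Definition \ref{D:7.5}. By hypothesis $\bA$ is moreover $\bo$-strongly stable. Therefore Theorem \ref{T:1.2g} applies: the observability operator $\cO_{\bo, C, \bA} \colon \cX \to H^2_{\bo, \cY}(\free)$ is an isometry onto a $\bS^*_{\bo, R}$-invariant subspace $\cN := \operatorname{Ran} \cO_{\bo, C, \bA} \subset H^2_{\bo, \cY}(\free)$, and $\bA$ is unitarily equivalent to $\bS^*_{\bo, R}|_{\cN}$ via the unitary transformation $\cO_{\bo, C, \bA} \colon \cX \to \cN$. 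This is precisely the assertion of the theorem.
\end{proof}
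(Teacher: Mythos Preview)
Your proof is correct and follows essentially the same approach as the paper's: factor $\Gamma_{\bo,\bA}[I_\cX]=C^*C$, observe that $(C,\bA)$ is $\bo$-isometric, and then use the $\bo$-strong stability of $\bA$ to conclude that $\cO_{\bo,C,\bA}$ is an isometry implementing the desired unitary equivalence. The only cosmetic difference is that the paper cites Lemma~\ref{L:7.6} together with the intertwining relation~\eqref{4.8aga} directly, whereas you invoke Theorem~\ref{T:1.2g}, which packages those same ingredients.
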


\begin{proof} Since $\bA$ is $\bo$-hypercontractive, $\Gamma_{\bo, \bA}[I_{\cX}]$ is positive semidefinite.
We choose $C \in \cL(\cX, \cY)$ so that $C^{*}C =
    \Gamma_{\bo, \bA}[I_{\cX}]$.  Then $(C,\bA)$ is an $\bo$-isometric pair.
Since $\bA$ is $\bo$-strongly stable, the operator $\cO_{\bo,C,\bA} \colon \cX \to H^{2}_{\bo,\cY}(\free)$ is
isometric, by Lemma \ref{L:7.6}. We set $\cN = \operatorname{Ran} \cO_{\bo, C,\bA}
    \subset H^{2}_{\bo,\cY}(\free)$.  Then the intertwining property
    \eqref{4.8aga} leads to the conclusion that
    $\bA$ is unitarily equivalent to $\bS_{\bo,R}^{*}\vert_{\cN}$ via the
    unitary similarity transformation  $\cO_{\bo,C,\bA} \colon \cX \to \cN$.
\end{proof}

\begin{remark}
Since unitary similarity preserves stability properties of operator tuples,
as a consequence of Theorem \ref{T:beta-stablemodel} combined with the fact that
$\bS_{\bo,R}^{*}$ is strongly stable (by part (1) in Proposition \ref{P:bo-model}), we conclude
that $\bo$-strong stability of an $\bo$-hypercontraction $\bA$ implies its strong stability in the usual sense
\eqref{bAstable}.   The converse direction (strong stability in the usual sense implies of $\bo$-strong stability
for a $\bo$-hypercontraction) is true at least for the special case where $\bo = \bmu_n$ for some $n \in {\mathbb N}$
(see Remark \ref{R:stabmu} below).
\label{R:stab}
\end{remark}

\subsection{Shifted $\bo$-observability operator range spaces}  
 For our subsequent constructions, we will also need the range spaces 
associated with shifted $\bo$-observability operators \eqref{4.31}:
$$
\operatorname{Ran}{\Ob}_{\bo,k,C,\bA}=\{ CR_{\bo,k}(Z(z)A)x \colon \; x \in {\mathcal X}\}.
$$
We first observe a factorization of the shifted $\bo$-gramian \eqref{4.32} in terms of the corresponding 
shifted $\bo$-observability operator.
\begin{proposition} \label{R:2.12}
Let $(C,\bA)$ be a $\bo$-output-stable pair and let
${\Ob}_{\bo,k,C,\bA}$ and ${\Gr}_{\bo,k,C,\bA}$ be defined as in \eqref{4.31}, \eqref{4.32}. Then
\begin{equation}
\|{\bf S}_{\bo,R}^{\alpha\top}\Ob_{\bo,|\alpha|,C,\bA} x\|^2_{H^2_{\bo,\cY}(\free)}=\left\langle
\Gr_{\bo,|\alpha|,C,\bA}x, \, x\right\rangle_{\cX}
\label{st10}
\end{equation}
for every $\alpha\in\free$ and $x\in\cX$, and hence we have the
operator factorization
$$    \Gr_{\bo,|\alpha|,C,\bA} = \Ob_{\bo,|\alpha|,C,\bA}^{*}
    \bS_{\bo,R}^{*\alpha}
    \bS_{\bo,R}^{\alpha^{\top}} \Ob_{\bo,|\alpha|,C,\bA}.
$$
\end{proposition}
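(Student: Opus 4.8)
The plan is to compute the norm $\|{\bf S}_{\bo,R}^{\alpha\top}\Ob_{\bo,|\alpha|,C,\bA} x\|^2_{H^2_{\bo,\cY}(\free)}$ directly from the power-series description of the shifted observability operator. By the definition \eqref{4.31}, the coefficient of $z^\beta$ in $\Ob_{\bo,|\alpha|,C,\bA}x$ is $\omega_{|\beta|+|\alpha|}^{-1} C\bA^\beta x$. Right-multiplication by $z^{\alpha^\top}$ (i.e.\ applying ${\bf S}_{\bo,R}^{\alpha^\top}$) shifts each monomial $z^\beta$ to $z^{\beta\alpha^\top}$, so the coefficient of $z^{\beta\alpha^\top}$ in ${\bf S}_{\bo,R}^{\alpha\top}\Ob_{\bo,|\alpha|,C,\bA}x$ equals $\omega_{|\beta|+|\alpha|}^{-1} C\bA^\beta x$ (with coefficients at words not ending in $\alpha^\top$ being zero).

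Then, using the formula \eqref{18.1} for the $H^2_{\bo,\cY}(\free)$-norm, the squared norm becomes
\[
\sum_{\beta\in\free} \omega_{|\beta\alpha^\top|}\cdot \big\|\omega_{|\beta|+|\alpha|}^{-1} C\bA^\beta x\big\|_\cY^2
= \sum_{\beta\in\free} \omega_{|\beta|+|\alpha|}\cdot \omega_{|\beta|+|\alpha|}^{-2}\big\|C\bA^\beta x\big\|_\cY^2
= \sum_{\beta\in\free}\omega_{|\beta|+|\alpha|}^{-1}\big\|C\bA^\beta x\big\|_\cY^2,
\]
where I used $|\beta\alpha^\top| = |\beta| + |\alpha|$. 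Comparing with the power-series representation \eqref{4.32} of the shifted gramian, $\Gr_{\bo,|\alpha|,C,\bA}= \sum_{\beta\in\free}\omega_{|\beta|+|\alpha|}^{-1}\bA^{*\beta^\top}C^*C\bA^{\beta}$, the last sum is exactly $\langle \Gr_{\bo,|\alpha|,C,\bA}x,x\rangle_\cX$, which is \eqref{st10}. Convergence of all series is guaranteed by $\bo$-output stability of $(C,\bA)$, which by Proposition \ref{P:wghtSteinid} makes $\Gr_{\bo,k,C,\bA}$ bounded for all $k\ge 1$ (the $k=0$ case being $\cG_{\bo,C,\bA}$ itself).

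Finally, the operator factorization is read off from \eqref{st10} by polarization: since $\|{\bf S}_{\bo,R}^{\alpha\top}\Ob_{\bo,|\alpha|,C,\bA}x\|^2 = \langle {\bf S}_{\bo,R}^{*\alpha}{\bf S}_{\bo,R}^{\alpha^\top}\Ob_{\bo,|\alpha|,C,\bA}x,\ \Ob_{\bo,|\alpha|,C,\bA}x\rangle = \langle \Ob_{\bo,|\alpha|,C,\bA}^*{\bf S}_{\bo,R}^{*\alpha}{\bf S}_{\bo,R}^{\alpha^\top}\Ob_{\bo,|\alpha|,C,\bA}x, x\rangle$ for all $x\in\cX$, and both sides are quadratic forms of bounded selfadjoint operators agreeing on all of $\cX$, we conclude $\Gr_{\bo,|\alpha|,C,\bA} = \Ob_{\bo,|\alpha|,C,\bA}^*{\bf S}_{\bo,R}^{*\alpha}{\bf S}_{\bo,R}^{\alpha^\top}\Ob_{\bo,|\alpha|,C,\bA}$. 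There is no serious obstacle here; the only point requiring a little care is the bookkeeping of word lengths and the fact that distinct monomials $z^{\beta\alpha^\top}$ (for distinct $\beta$) remain distinct and mutually orthogonal, so no cross-terms arise in the norm computation—this is where the hypothesis that we are shifting by a \emph{single} fixed word $\alpha^\top$ (rather than summing over words) is used.
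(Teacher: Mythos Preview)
Your proof is correct and follows essentially the same route as the paper's: write out $\bS_{\bo,R}^{\alpha^\top}\Ob_{\bo,|\alpha|,C,\bA}x$ explicitly as a power series, compute the $H^2_{\bo,\cY}(\free)$-norm coefficient by coefficient, and identify the result with the defining series \eqref{4.32} for $\Gr_{\bo,|\alpha|,C,\bA}$. One small notational slip: by the paper's conventions $\bS_{\bo,R}^{\alpha^\top}$ is right-multiplication by $z^{\alpha}$ (not $z^{\alpha^\top}$), so the shifted monomials are $z^{\beta\alpha}$ rather than $z^{\beta\alpha^\top}$; since only $|\beta\alpha| = |\beta| + |\alpha|$ enters the norm computation, this does not affect the argument.
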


\begin{proof}
By definition \eqref{4.31}, we have
$$
{\bf S}_{\bo,R}^{\alpha^\top}{\Ob}_{\bo,|\alpha|,C,\bA}x=
\sum_{\alpha^{\prime}\in\free}\omega_{|\alpha^\prime|+|\alpha|}^{-1}
(C\bA^{\alpha^\prime}x) \, z^{\alpha^\prime \alpha}.
$$
Making use of the latter formula and invoking the definition of the inner product in $H^2_{\bo,\cY}(\free)$, 
we verify \eqref{st10} as follows: 
\begin{align}
\|{\bf S}_{\bo,R}^{\alpha^\top}{\Ob}_{\bo,|\alpha|,C,\bA}x\|^2_{H^2_{\bo,\cY}(\free)}
&=\sum_{\alpha^\prime\in\free}\omega_{|\alpha^\prime|+|\alpha|}^{-1}\big\langle
C\bA^{\alpha^\prime}x, \, C\bA^{\alpha^\prime }x\big\rangle_{\cX}\notag\\
&=\bigg\langle \sum_{\alpha^\prime\in\free} \omega_{|\alpha^\prime|+|\alpha|}^{-1}
\bA^{*\alpha^{\prime\top}}C^*C\bA^{\alpha^\prime}x, \, x \bigg\rangle_{\cX}\notag\\
&=\left\langle \Gr_{\bo,|\alpha|,C,\bA} x, \, x\right\rangle_{\cX},\notag
\end{align}
where the last equality is clear from the definition \eqref{4.32} of $\Gr_{\bo,|\alpha|,C,\bA}$.
\end{proof}

We next represent the range of a $k$-shifted observability operator as a noncommutative formal
reproducing kernel Hilbert space.

\begin{proposition}  \label{T:RanOb=NFRKHS}
Suppose that the pair $(C,\bA)$ is a $\bo$-output-stable and exactly $\bo$-observable.
Then, for any $\beta \in \free$ the subspace 
\begin{equation}
{\mathbf S}^{\beta^{\top}}_{\bo, R} 
\operatorname{Ran} \Ob_{\bo,|\beta|,C,\bA}\subset H^2_{\bo, \cY}(\free)
\label{again4}
\end{equation}
(with inner product induced by $H^2_{\bo, \cY}(\free)$) is a NFRKHS
   with reproducing kernel $\boldsymbol{\mathfrak K}_{\beta}(z, \zeta)$ given by
   \begin{equation}   \label{deffrakk}
\boldsymbol{\mathfrak{K}}_{\beta}(z,\zeta)=
CR_{\bo,|\beta|}(Z(z)A) \big(z^{\beta} \bzeta^{\beta^{\top}}
\Gr_{\bo,|\beta|,C,\bA}^{-1}\big)  R_{\bo,|\beta|}(Z(\zeta)A)^*C^* .
\end{equation}
 \end{proposition}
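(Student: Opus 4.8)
The plan is to reduce the statement to an application of the general principle for identifying when the range of an operator-valued power-series multiplication operator, equipped with a lifted norm, is a noncommutative formal reproducing kernel Hilbert space — namely Proposition \ref{P:principle} together with the computation in part (3a) of Theorem \ref{T:2-1.2}. First I would observe that by Proposition \ref{P:shift-gram-inv}, exact $\bo$-observability of $(C,\bA)$ guarantees that $\Gr_{\bo,|\beta|,C,\bA}$ is bounded below, hence invertible, so that $\Gr_{\bo,|\beta|,C,\bA}^{-1}$ appearing in \eqref{deffrakk} makes sense. Next I would set $H_\beta := \Ob_{\bo,|\beta|,C,\bA}$ and note that, by Proposition \ref{R:2.12}, for every $x \in \cX$ we have
$$
\| {\bf S}_{\bo,R}^{\beta^\top} H_\beta x \|^2_{H^2_{\bo,\cY}(\free)} = \langle \Gr_{\bo,|\beta|,C,\bA} x, \, x \rangle_\cX,
$$
so the composed operator $M_\beta := {\bf S}_{\bo,R}^{\beta^\top} H_\beta \colon \cX \to H^2_{\bo,\cY}(\free)$ (which is a left-multiplication by $z^{\beta}$ followed by $H_\beta$; note ${\bf S}_{\bo,R}^{\beta^\top}$ simply appends $z^\beta$ on the right) is bounded below because $\Gr_{\bo,|\beta|,C,\bA}$ is strictly positive-definite. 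In particular $M_\beta$ has closed range and the subspace \eqref{again4} is exactly $\operatorname{Ran} M_\beta$, with the induced $H^2_{\bo,\cY}(\free)$-norm coinciding with the lifted norm coming from the positive operator $G := \Gr_{\bo,|\beta|,C,\bA}$ on $\cX$; explicitly $\| M_\beta x \|^2 = \langle G x, x \rangle_\cX$.

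The key step is then to recognize the resulting formal power series. Writing $H_\beta(z) = C R_{\bo,|\beta|}(Z(z)A)$ for the operator-valued power series whose multiplication operator is $H_\beta$ (this is the content of \eqref{4.31} rewritten via the realization formula analogous to \eqref{obsreal}), the subspace \eqref{again4} consists of all power series of the form $H_\beta(z) x \cdot z^\beta$ with $x \in \cX$, i.e.\ it is $\operatorname{Ran} M_{\widetilde H_\beta}$ where $\widetilde H_\beta(z) := H_\beta(z) z^\beta$; wait — more precisely the map $x \mapsto {\bf S}_{\bo,R}^{\beta^\top}(H_\beta(z)x)$ sends $x$ to $(H_\beta(z)x) z^{\beta}$. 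Since this map from $\cX$ into $H^2_{\bo,\cY}(\free)$ has image equal to $\operatorname{Ran} M_\beta$ and satisfies the inner-product identity $\langle \cdot , \cdot \rangle_{\operatorname{Ran}} = \langle G \cdot, \cdot \rangle_\cX$ with $G$ invertible positive-definite, Proposition \ref{P:principle} applies directly (with $\cM := \operatorname{Ran} M_\beta$, $\cX$ as the coefficient-index space, and the power series being $x \mapsto (H_\beta(z)x) z^{\beta}$) and yields that $\cM$ is a NFRKHS with reproducing kernel
$$
K_\cM(z,\zeta) = \big(H_\beta(z) \, \cdot \, z^\beta\big)\, G^{-1}\, \big(H_\beta(\zeta) \, \cdot \, \zeta^\beta\big)^* = C R_{\bo,|\beta|}(Z(z)A)\, z^\beta\, \Gr_{\bo,|\beta|,C,\bA}^{-1}\, \bzeta^{\beta^\top}\, R_{\bo,|\beta|}(Z(\zeta)A)^* C^*,
$$
where I have used the convention $(\zeta^\beta)^* = \bzeta^{\beta^\top}$ from Theorem \ref{T:NFRKHS} and the fact that $z^\beta$ and $\bzeta^{\beta^\top}$ commute with the remaining factors in the hereditary-form kernel. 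Rearranging the commuting scalar monomials $z^\beta \bzeta^{\beta^\top}$ to the position shown in \eqref{deffrakk} gives exactly the claimed formula.

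\textbf{Main obstacle.} The step I expect to require the most care is the bookkeeping in applying Proposition \ref{P:principle}: that proposition is stated for a power series $H(z) \in \cL(\cX,\cY)\langle\langle z\rangle\rangle$ with $\cM = \{H(\cdot)x : x \in \cX\}$, and here the relevant power series is $x \mapsto (C R_{\bo,|\beta|}(Z(z)A)x) z^\beta$, which is $H_\beta(z)$ multiplied on the \emph{right} by the fixed monomial $z^\beta$. One must check that this is still of the form $\widetilde H_\beta(z) x$ for an honest operator-valued power series $\widetilde H_\beta \in \cL(\cX,\cY)\langle\langle z\rangle\rangle$ (it is, since right multiplication by $z^\beta$ is just a shift of coefficients: $\widetilde H_\beta(z) = \sum_\alpha (\omega_{|\alpha \beta^{-1}|+|\beta|}^{-1} C \bA^{\alpha\beta^{-1}}) z^\alpha$ with the convention that the coefficient vanishes unless $\alpha$ ends in $\beta$) and that the norm identity \eqref{H-principle} holds with $G = \Gr_{\bo,|\beta|,C,\bA}$ invertible — which is precisely Proposition \ref{R:2.12} combined with exact $\bo$-observability. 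Once these identifications are made cleanly, the kernel formula follows mechanically; the remaining work is only to track the transpose/conjugation conventions so that $z^\beta$ ends up on the left of $\Gr_{\bo,|\beta|,C,\bA}^{-1}$ and $\bzeta^{\beta^\top}$ on its right, matching \eqref{deffrakk}.
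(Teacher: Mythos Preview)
Your proposal is correct and follows essentially the same route as the paper: invoke Proposition~\ref{R:2.12} (equation~\eqref{st10}) to get the norm identity $\|\bS_{\bo,R}^{\beta^\top}\Ob_{\bo,|\beta|,C,\bA}x\|^2 = \langle \Gr_{\bo,|\beta|,C,\bA}x,x\rangle$, then apply Proposition~\ref{P:principle} with $G=\Gr_{\bo,|\beta|,C,\bA}$ (invertible by exact $\bo$-observability via Proposition~\ref{P:shift-gram-inv}) to read off the kernel. The bookkeeping you flag as the ``main obstacle'' is exactly what the paper dismisses as ``straightforward to check,'' so your extra care there is fine but not strictly needed.
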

\begin{proof}
Due to equality \eqref{st10}, a direct application of Proposition \ref{P:principle} 
shows that the formal reproducing kernel for the space \eqref{again4} is given by
$$
( S^{\beta^{\top}}_{\bo, R} \Ob_{\bo, |\beta|, C, \bA})(z)
\Gr_{\bo,|\beta|,C,\bA}^{-1} \big(( S^{\beta^{\top}}_{\bo,R}
\Ob_{\bo,|\beta|,C,A})(\zeta) \big)^{*}.
$$
It is now straightforward to check that this expression agrees exactly with
$\boldsymbol{\mathfrak K}_{\beta}(z, \zeta)$ given by \eqref{deffrakk}.
\end{proof}

\subsection{Range spaces of $n$-observability operators}  \label{S:n-obs}
In this section we will specialize the previous general results to the case $\bo=\bmu_n$, 
where certain simplifications occur. The only result that will be established independently 
(and, presumably, does not hold for general weights) is the characterization of ${\bf S}_{n,R}^*$-invariant
spaces isometrically included in $\cA_{n,\cY}(\free)$. Specializing part (4) of Theorem \ref{T:2-1.2}
to the present case, we see that without loss of generality we may start with the solution $H$ of 
Stein inequalities \eqref{4.20} to be normalized to $H = I_{{\mathcal X}}$.   Recall from 
\cite[Definition 4.7]{BBIEOT} that we say that the output pair $(C,\bA)$ is {\em $n$-contractive} if
(i) $\bA$ is $n$-hypercontractive and in addition $\Gamma_{n,\bA}[I_\cX] \succeq C^*C$.

        \begin{theorem}  \label{T:2-1.2'n}
Suppose that $(C, {\mathbf A})$ is an $n$-contractive pair with state space ${\mathcal X}$ and output space $\cY$.  
Then:

\smallskip

$(1) \;$ $(C, {\mathbf A})$ is $n$-output-stable and $\operatorname{Ran}\cO_{n, C,{\mathbf A}}$ is ${\bf S}_{n,R}^*$-invariant.

\smallskip

$(2) \;$ The operator ${\mathcal O}_{n,C,{\mathbf A}}$ is a
         contraction from ${\mathcal X}$ into $\cA_{n,\cY}(\free)$. Moreover, ${\mathcal O}_{n,C, {\mathbf A}}: \, \cX\to \cA_{n,\cY}(\free)$
is an isometry if and only if $(C, {\mathbf A})$ is an $n$-isometric pair and ${\mathbf A}$ is strongly stable.

\smallskip

$(3) \; $ If the manifold ${\mathcal M}:= \operatorname{Ran}
         {\mathcal O}_{n,C,{\mathbf A}}$ is given the lifted norm
         $\| {\mathcal O}_{n,C, {\mathbf A}}x\|_{{\mathcal M}} =
           \| Q x \|_{{\mathcal X}}$
         where $Q$ is the orthogonal projection of ${\mathcal X}$ onto
         $(\operatorname{Ker}{\mathcal O}_{n,C, {\mathbf A}})^{\perp}$, then
\begin{itemize}
\item[(a)] ${\mathcal M}$ is contained contractively in $\cA_{n,\cY}(\free)$
         and is isometrically equal to the FNRKHS with
         reproducing kernel 
         $$
           K_{C, {\mathbf A}}(z,\zeta) = C (I - Z(z) A)^{-n} (I -
           A^{*}Z(\zeta)^{*})^{-n} C^{*}.
         $$
\item[(b)] The $d$-tuple ${\bf S}_{n,R}^*\vert_{\cM}$ is contractive on $\cM$ and the inequality
\begin{equation}
\sum_{\alpha\in\free: \, |\alpha|\le n}(-1)^{|\alpha|}\bcs{n\\ |\alpha|}\cdot
\|{\bf S}_{n,R}^{*\alpha}f \|^{2}_{\cM}\ge \|f_{\emptyset}\|^{2}_{\cY}
\label{dif-quot'}
\end{equation}
holds for all $f \in\cM$. Moreover, \eqref{dif-quot'} holds with equality
         if and only the orthogonal projection $Q$ of ${\mathcal X}$ onto
         $(\operatorname{Ker} {\mathcal O}_{n,C, {\mathbf A}})^{\perp}$
       is subject to relations
$$
Q\succeq \sum_{j=1}^d A_j^*QA_j\quad\mbox{and}\quad
\Gamma_{n,\bA}[Q]=C^*C.
$$
\end{itemize}

$(4) \; $ Conversely, let ${\mathcal M}$ be a Hilbert space
included in $\cA_{n,\cY}(\free)$ such that
\begin{enumerate}
    \item[(i)] $\cM$ is invariant under the backward shift ${\bf S}_{n,R}^*$,
    \item[(ii)] the restricted tuple ${\bf S}_{n,R}^*\vert_{\cM}$ is contractive, and
the inequality \eqref{dif-quot'} holds for all $f\in\cM$.
\end{enumerate}
Then it follows that $\cM$ is contractively included in $\cA_{n,\cY}(\free)$ and
there exists an $n$-contractive pair $(C,{\mathbf A})$ such that
$$
{\mathcal M} = {\mathcal H}(K_{C,{\bf A}}) = \operatorname{Ran}{\mathcal O}_{n,C,{\bf A}}
$$
isometrically.  If \eqref{dif-quot'} holds with
equality, then $(C,{\mathbf A})$ can be taken to be an $n$-isometric pair. For example, one may take
$\cX = \cM$, $\bA=\bS^{*}_{n,R}\vert_{\cM}$, $C=E\vert_{\cM}$.
\end{theorem}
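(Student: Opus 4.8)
The plan is to obtain Theorem \ref{T:2-1.2'n} as the specialization of the general $\bo$-results to the case $\bo = \bmu_n$, checking that the Stein-inequality system \eqref{3.15g}, \eqref{3.16g} collapses to the simpler system appearing in the statement. First I would observe that by Proposition \ref{P:equiv} the classes of $\bmu_n$-contractive and $n$-contractive output pairs coincide, so there is no ambiguity in the hypothesis. Then, as in the proof of Theorem \ref{T:2-1.1}, I would invoke Lemma \ref{L:squeeze} together with identity \eqref{Gamma-bmun}: if $H$ satisfies $H \succeq \sum_j A_j^* H A_j \succeq 0$ and $\Gamma_{n, \bA}[H] \succeq C^*C$, then $\Gamma_{\ell, \bA}[H] \succeq 0$ for $\ell = 0, \dots, n-1$, and hence $\Gamma^{(k)}_{\bmu_n, \bA}[H] = \sum_{\ell=0}^{n-1} \bcs{\ell + k - 1 \\ \ell} \Gamma_{\ell, \bA}[H] \succeq 0$ automatically for all $k \ge 1$. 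Thus the extra inequalities $\Gamma^{(k)}_{\bo, \bA}[H] \succeq 0$ in \eqref{3.15g} are redundant when $\bo = \bmu_n$, which is exactly why condition (ii) in part (4) here involves only \eqref{dif-quot'} rather than a whole family of inequalities.

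With that reduction in hand, parts (1), (3a), the first part of (3b), and most of (4) follow verbatim by specializing Theorem \ref{T:2-1.2} and Theorem \ref{T:2-1.2'} to $\bo = \bmu_n$, using $R_{\bmu_n}(\lambda) = (1-\lambda)^{-n}$ so that $R_{\bmu_n}(Z(z)A) = (I - Z(z)A)^{-n}$ and the kernel \eqref{NCkernel} takes the displayed form $C(I - Z(z)A)^{-n}(I - A^* Z(\zeta)^*)^{-n} C^*$. For the inequality \eqref{dif-quot'}, I would note that by \eqref{Gamma-ex} the left side is exactly $\langle \Gamma_{n, \bS_{n,R}^*|_\cM}[I_\cM] f, f \rangle_\cM - \| f_\emptyset \|^2_\cY + \| f_\emptyset\|^2_\cY$; more precisely, following the computation \eqref{again8} with $\bo = \bmu_n$ and using $c_{|\alpha|}$ replaced by the binomial coefficients $(-1)^{|\alpha|} \bcs{n \\ |\alpha|}$ (which are the Taylor coefficients of $1/R_{\bmu_n}$ since $R_{\bmu_n}^{-1}$ is the polynomial $(1-\lambda)^n$), one gets that \eqref{dif-quot'} for all $f \in \cM$ is equivalent to $\langle (\Gamma_{n,\bA}[Q] - C^*C) x, x \rangle \ge 0$ for all $x$, where $Q$ is the projection appearing in the lifted norm and $(C, \bA) = (E|_\cM, \bS_{n,R}^*|_\cM)$; equality in \eqref{dif-quot'} corresponds to $\Gamma_{n,\bA}[Q] = C^*C$, giving (3b).

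The one genuinely new ingredient is the converse isometric statement hidden in part (2), namely that $\cO_{n, C, \bA} \colon \cX \to \cA_{n, \cY}(\free)$ is an isometry if and only if $(C, \bA)$ is $n$-isometric and $\bA$ is strongly stable — the point being that for $\bo = \bmu_n$ one may replace ``$\bo$-strongly stable'' in Lemma \ref{L:7.6}(2) by the usual strong stability \eqref{bAstable}. For the ``only if'' direction, isometry of $\cO_{n,C,\bA}$ forces $\cG_{n,C,\bA} = I_\cX$, and then Theorem \ref{T:2-1.1}(2)--(3) together with Proposition \ref{P:2-1.1'converse} (exact $n$-observability plus $n$-output stability implies strong stability) yields that $\bA$ is strongly stable and $(C, \bA)$ is $n$-isometric; conversely, if $(C,\bA)$ is $n$-isometric with $\bA$ strongly stable then both $I_\cX$ and $\cG_{n,C,\bA}$ solve the Stein equation $\Gamma_{n,\bA}[H] = C^*C$, so by the uniqueness in Theorem \ref{T:2-1.1}(2) we get $\cG_{n,C,\bA} = I_\cX$, i.e. $\cO_{n,C,\bA}$ is isometric. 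I expect the main obstacle to be purely bookkeeping: making sure that the role of the admissibility coefficients $c_j$ in the general statements is correctly matched with the binomial coefficients in the $\bmu_n$ case throughout the chain of identities \eqref{again7}--\eqref{again9}, and confirming that no step in Theorem \ref{T:2-1.2} secretly used a hypothesis not available here. Once that matching is verified, the proof is a sequence of one-line invocations of the already-established general results.
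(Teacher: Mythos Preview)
Your proposal is correct, and for parts (1), (3), (4) and the contractivity half of (2) it matches the paper exactly: the paper likewise states that these follow by specializing Theorems \ref{T:2-1.2} and \ref{T:2-1.2'} to $\bo = \bmu_n$, using Lemma \ref{L:squeeze} and \eqref{Gamma-bmun} to see that the auxiliary inequalities $\Gamma^{(k)}_{\bmu_n,\bA}[H]\succeq 0$ are redundant.

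The only substantive difference is in the isometric ``if and only if'' in part (2). The paper gives a direct self-contained argument: starting from the hypothesis that $(C,\bA)$ is $n$-contractive (so $H=I_\cX$ satisfies \eqref{4.20}), it applies identity \eqref{4.9} with $H=I_\cX$ to produce the decreasing sequence $\Lambda_{N,\bA}$ of \eqref{3.7u'} with strong limit $\Delta_\bA$, and the resulting chain of inequalities \eqref{ineq-chain} forces, once $\cG_{n,C,\bA}=I_\cX$, both $\Delta_\bA=0$ (whence strong stability of $\bA$) and $\Gamma_{n,\bA}[I_\cX]=C^*C$ term-by-term. Your route instead bootstraps from already-established results: Proposition \ref{P:2-1.1'converse} supplies strong stability from $\cG_{n,C,\bA}=I_\cX$, part (1) of Theorem \ref{T:2-1.1} gives $\Gamma_{n,\bA}[I_\cX]=\Gamma_{n,\bA}[\cG_{n,C,\bA}]=C^*C$, and the converse direction is the uniqueness in Theorem \ref{T:2-1.1}(2). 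Both arguments are valid; yours is shorter and more modular, while the paper's direct computation is independent of Proposition \ref{P:2-1.1'converse} (which was itself proved using the same identity \eqref{4.9}). One minor citation slip: in your ``only if'' you cite Theorem \ref{T:2-1.1}(2)--(3), but what you actually use there is part (1) (the gramian satisfies the Stein equation) together with Proposition \ref{P:2-1.1'converse}; parts (2)--(3) are only needed for the ``if'' direction.
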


\begin{proof}
We will show that ${\mathcal O}_{n,C, {\mathbf A}}: \, \cX\to \cA_{n,\cY}(\free)$
is an isometry if and only if $(C, {\mathbf A})$ is an $n$-isometric pair and ${\mathbf A}$ is strongly stable.
Indeed, since the pair $(C,\bA)$ is $n$-contractive, the operator $H=I_{\cX}$ satisfies 
the Stein system \eqref{4.20}. Thus, equalities \eqref{4.9} hold for all $N\ge 0$ and for $H=I_{\cX}$.
It follows from \eqref{4.9} that operators 
\begin{equation}  \label{3.7u'}
\Lambda_{N,\bA}=\sum_{\alpha \in\free: \, N+1\le |\alpha|\le N+n}\bcs{N+n\\
|\alpha|}\bA^{*\alpha^\top}\Gamma_{n+N-|\alpha|,\bA}[I_{\cX}]\bA^{\alpha}
\end{equation}
satisfy $\Lambda_{N,\bA}\succeq \Lambda_{N+1,\bA}\succeq 0$ for all $N\ge 0$ and therefore, the limit
\begin{equation} \label{3.7u}
\Delta_{\bA} = \lim_{N \to\infty} \Lambda_{N,\bA}\succeq 0
\end{equation}
exists in the strong sense. Since $C^*C\preceq \Gamma_{n,\bA}[I_{\cX}]$ and $B_{\bA}$ is a positive map, we have 
\begin{align*}
\sum_{\alpha \in\free: \, |\alpha|\le N}\bcs{n+|\alpha|-1\\
|\alpha|}\bA^{*\alpha^\top}C^*C\bA^\alpha 
&\preceq\sum_{\alpha \in\free: \, |\alpha|\le N}\bcs{n+|\alpha|-1\\
|\alpha|}\bA^{*\alpha^\top}\Gamma_{n,\bA}[I_{\cX}]\bA^\alpha\\
&=I_{\cX}-\Lambda_{N,\bA},
\end{align*}
where the last equality holds due to \eqref{4.9}. The latter relations are equivalent to 
\begin{align}
0 &\preceq  \sum_{\alpha \in\free: \, |\alpha |\le N}\bcs{n+|\alpha|-1\\
|\alpha|}\bA^{*\alpha^\top}\left(\Gamma_{n,\bA}[I_{\cX}]-C^*C\right)
\bA^v\notag\\
&= I_{\cX}-\Lambda_{N,\bA}-\sum_{\alpha \in\free: \, |\alpha |\le N}\bcs{n+|\alpha|-1\\
|\alpha|}\bA^{*\alpha^\top}C^*C\bA^\alpha.\label{through}
\end{align}
Letting $N\to\infty$ in \eqref{through} we get, on account \eqref{4.3} and \eqref{3.7u},
\begin{equation}
0 \preceq   \sum_{\alpha \in\free}\bcs{n+|\alpha|-1\\
|\alpha|}\bA^{*\alpha^\top}\left(\Gamma_{n,\bA}[I_{\cX}]-C^*C\right)\bA^\alpha
\preceq  I_{\cX} -{\mathcal G}_{n,C, {\mathbf A}} -\Delta_{{\mathbf A}}.
\label{ineq-chain}
\end{equation}
By definition, ${\mathcal O}_{n,C,{\mathbf A}} \colon{\mathcal X}
\to \cA_{n,\cY}(\free)$ being an isometry means
that ${\mathcal G}_{n,C, {\mathbf A}} = I_{\cX}$ in which case \eqref{ineq-chain} implies
$\Delta_{{\mathbf A}} = 0$ and equalities throughout \eqref{ineq-chain}. Since all the
terms on the right side of \eqref{3.7u'} are positive semidefinite, condition
$\Delta_{{\mathbf A}} = 0$ implies, in particular, that
$$
\lim_{N \to \infty}\sum_{\alpha \in\free: \,
|\alpha|=N+n}\bA^{*\alpha^\top}\Gamma_{0,\bA}[I_{\cX}]\bA^\alpha=
\lim_{N \to \infty}\sum_{\alpha \in\free: \,
|\alpha|=N+n}\bA^{*\alpha^\top}\bA^\alpha=0
$$
which just means that ${\mathbf A}$ is strongly stable. Since all
the terms in the series in \eqref{ineq-chain} are
positive semidefinite, we conclude that each term equals zero. The term corresponding to
the index $\alpha=\emptyset$ is $\Gamma_{n,\bA}[I_{\cX}]-C^*C$. Hence $\Gamma_{n,\bA}[I_{\cX}]=C^*C$ and thus
the Stein equation in \eqref{4.21} holds.

\smallskip

Conversely, by reversing the steps of the preceding argument, we see that ${\mathbf A}$ being
strongly stable and the Stein equation in \eqref{4.21} holding leads to ${\mathcal G}_{n,C,
{\mathbf A}} = I_{\cX}$, i.e., to ${\mathcal O}_{n,C, {\mathbf A}}: {\mathcal X}\to \cA_{n,\cY}(\free)$
being an isometry.
\end{proof}

As we know from Remark \ref{R:stab}, a $\bo$-strongly stable $\bo$-hypercontraction is necessarily
strongly stable in the usual sense. We do not know if the converse holds true in general. However,
in the case $\bo=\bmu_n$ it does!

\begin{remark} \label{R:stabmu}  We note here that 
{\em strong stability and ${\boldsymbol\mu}_{n}$-strong stability are equivalent for 
${\boldsymbol\mu}_{n}$-hypercontractions.}  Indeed, one direction (${\boldsymbol\mu}_n$-strong stability implies
the usual strong stability) holds even for a general admissible weight $\bo$ by Remark \ref{R:stab}.
Thus it remains only to verify that any strongly stable $\bmu_n$-hypercontraction $\bA$ 
(which is an $n$-hypercontraction by Proposition \ref{P:equiv}) is also $\bmu_n$-strongly stable. 
To this end, note that 
by part (2) in Theorem \ref{T:2-1.2'n}, the  strong stability of an $n$-hypercontraction $\bA$ 
implies that $\bA$ is unitarily equivalent to the restriction of 
${\bf S}_{n,R}^{*}$ to an invariant subspace  $\cN \subset H^{2}_{{\boldsymbol\mu}_n,\cY}(\free) =\cA_{n,\cY}(\free)$  
for a suitable coefficient Hilbert space $\cY$ 
as in Theorem \ref{T:beta-stablemodel}.  But by Proposition \ref{P:bo-model2}, $\bS_{n,R}^*$ is $\bmu_n$-strongly stable.
Putting all this together, we see that strong stability for a $\bmu_n$-hypercontraction implies
$\bmu_n$-strong stability.
\end{remark}

\begin{remark}  \label{R:1.2g}
For the standard-weight case, Theorem \ref{T:2-1.2} avoided the exact
observability statement in part (2) of the statement and had the
conclusion that the space $\cM$ with lifted norm is contractively
included in the Bergman space $\cA_{n,\cY}(\free)$ rather than being
isometrically included in $\cA_{n,\cY}(\free)$. For this reason Theorem
\ref{T:1.2g} is only a partial $\bo$-analog of Theorem \ref{T:2-1.2}.
\end{remark}

\chapter{Beurling-Lax theorems based on contractive multipliers}  \label{S:BL}
In Section \ref{S:NC-Obs-g} we characterized $S_{\bo,R}^*$-invariant spaces which are contractively or,
more specifically, isometrically included in $H^2_{\bo,\cY}(\free)$. In this and in the next two chapters we focus
on the spaces that are invariant under the forward-shift operator-tuple ${\bf S}_{\bo,R}$, thereby
fleshing out theme \#2 from the end of Section \ref{S:Overview} for the $\bo$-setting. The material from Chapter
\ref{S:Stein} on backward-shift-invariant subspaces and ranges of observability operators is relevant here in
at least two respects:

\smallskip

$\bullet$ Given $\cM$ contractively included in $H^2_{\bo, \cY}(\free)$ which is invariant under the forward
shift-tuple $\bS_{\bo, R}$,  we may set $\bA = (\bS_{\bo, R}|_\cM)^*$ and (under appropriate hypotheses)
choose $C \colon \cM \to \cU$ so that $(C, \bA)$ is an $\bo'$-isometric output pair (for some weight sequence $\bo'$;
interesting special cases include $\bo'=\bo$ and $\bo'=\bmu_1$). If $\bA$ is also $\bo'$-strongly
stable, then the observability operator $\cO_{\bo',C, \bA}$ is isometric from $\cM$ into $H^2_{\bo', \cU}(\free)$
and implements a unitary equivalence between $\bA$ and
$\bS_{\bo', R}^*|_{\operatorname{Ran} \cO_{\bo', C, \bA}}$,  since
$$
S_{\bo', R, j}^* \cO_{\bo',C, \bA} = \cO_{\bo',C, \bA} A_j \text{ for } j=1, \dots, d
$$
(see Theorem \ref{T:1.2g}). For our application here we prefer to view the codomain of 
$\cO_{\bo',C, \bA}$ as all of $H^2_{\bo', C, \bA}$ rather than only $\operatorname{Ran} \cO_{\bo', C, \bA}$.
Taking adjoints in the above intertwining relation
gives 
$$
\cO_{\bo',C, \bA}^* S_{\bo', R, j} =     A_j^*\cO_{\bo',C, \bA}^*,\quad\mbox{where}\quad A_j^* = S_{\bo, R, j}|_\cM.
$$
Thus  $(\cO_{\bo',C, \bA})^*$ intertwines the respective shift tuples forcing it to have the form $M_\Theta$
for $\Theta$ a contractive multiplier from $H^2_{\bo', \cU}(\free)$ to $H^2_{\bo, \cY}(\free)$.
The conclusion is that $\Theta$ so constructed serves as the Beurling-Lax representer for the
forward-shift-invariant subspace $\cM$.  A finer analysis shows that $M_\Theta$ considered as an operator
from $H^2_{\bo', \cU}(\free)$ into $H^2_{\bo, \cY}(\free)$ is a partial isometry in case $\cM$ is isometrically included
in $H^2_{\bo, \cY}(\free)$.
For complete details, see Theorems \ref{T:NC-BL}, \ref{T:NC-BLisom} and \ref{T:NC-BL'} below.

\smallskip

$\bullet$ If $\cM$ is a forward-shift-invariant subspace isometrically contained in $H^2_{\bo, \cY}(\free)$,
then its orthogonal complement $\cM^\perp$ is a backward-shift-invari\-ant subspace isometrically contained
in $H^2_{\bo, \cY}(\free)$ for which results of Chapter \ref{S:Stein} apply.  Specifically, if we choose
$(C, \bA)$ to be an $\bo$-isometric output pair with $\bA$ $\bo$-strongly stable such that
$\cM^\perp = \operatorname{Ran}\, \cO_{\bo,C, \bA}$ as in the converse direction of Theorem \ref{T:1.2g}, then
the reproducing kernel $k_{\cM^\perp}$ for $\cM^\perp$ has the explicit form
$$
k_{\cM^\perp}(z, \zeta) = C R_\bo(Z(z)A) (R_\bo(Z(\zeta) A))^* C^*.
$$
Then by standard calculus of reproducing
kernels, we see that the reproducing kernel for $\cM$ is given by
\begin{equation}
 k_\cM(z, \zeta) = k_{\bo, {\rm nc}}(z, \zeta) \otimes I_\cY - C R_\bo(Z(z) A) R_\bo(Z(\zeta) A)^* C^*.
\label{kMa}
\end{equation}
In the case where $\bo = \bmu_1$ (the Fock-space case), the Cholesky-factorization construction yields
a pair of operators $B \colon \cU \to \cX$ and $D \colon \cY \to \cY$ so that $U:=
\sbm{ A & B \\ C & D}$ is unitary from which it follows that we get the factorization
$$
k_\cM(z, \zeta)  I_\cY = \Theta(z) (k_{\rm Sz,\, nc}(z, \zeta) I_\cU) \Theta(\zeta)^*\quad\mbox{with}\quad
\Theta(z) = D + C (I - z A)^{-1} B.
$$
It then can be shown that $\Theta$ is a strictly inner multiplier which
serves as the Beurling-Lax representer:   $\cM =  \Theta \cdot H^2_\cU(\free)$. In the case 
where $\bo = \bmu_1$ (the standard weighted Bergman-Fock-space case), a certain iterative 
procedure leads to the representation 
$$
k_\cM(z, \zeta) = \sum_{j=1}^{n} F_{j}(z)k_{{\rm nc }, j}(z,\zeta)F_{j}(\zeta)^{*}
$$
with $F_j$ being contractive multipliers from $\cA_{j,\cU_j}(\free)$ to $\cA_{n,\cY}(\free)$ 
giving rise to Beurling-Lax representations for ${\bf S}_{n,R}$-invariant subspaces of 
$\cA_{n,\cY}(\free)$ with model space equal to the direct sum of $\cA_{j,\cU_j}(\free)$ for $j=1,\ldots,n$;
details are given in Section \ref{S:BL3}. Further elaboration of all these ideas will lead us (in Chapter \ref{S:BLorthog} below)
to the state-space realization formulas for the $H^2_{\bo, \cY}(\free)$-Bergman-inner family 
(see Definition \ref{D:innerfuncfam} to come) representing a given forward-shift-invariant subspace of $H^2_{\bo, \cY}(\free)$.

\section[Beurling-Lax theorem via contractive multipliers]{Beurling-Lax theorem via McCT-inner multipliers and 
more general contractive multipliers}  \label{S:NC-appl}

Beurling-Lax representations based on partially isometric multipliers (for the case of isometric inclusion) or more general
contractive multipliers (for the case of contractive inclusion) appeared first in the work of
Arveson \cite{ArvesonIII} and in a more systematic operator-theoretic framework in McCullough-Trent
\cite{MCT} in the context of the Drury-Arveson space (the commutative version of $H^2_{\bmu_1}$).
In this section, we present noncommutative versions of such representations for shift-invariant subspaces in 
a weighted Hardy-Fock space $H^2_{\bo, \cY}(\free)$ for an admissible weight $\bo$ as in \eqref{18.2}.
We shall in fact prove a more general version for contractively included (rather than isometrically 
included) subspaces of $H^2_{\bo,\cY}(\free)$ due in the classical setting to de Branges (see \cite{dBR1})
and appearing in the noncommutative Fock-space setting (i.e., for $n=1$ and $\bo  = \bmu_1$) in \cite{BBF1}.       
           
\begin{theorem} \label{T:NC-BL}
A Hilbert space $\cM$ is such that
\begin{enumerate}
\item $\cM$ is contractively included in $H^2_{\bo,\cY}(\free)$,
\item $\cM$ is ${\bf S}_{\bo,R}$-invariant,
\item the $d$-tuple $\bT =(T_1,\ldots, T_d)$ where $T_j=(S_{\bo,R,j}|_{\cM})$ for
$j=1,\ldots,d$ is a row contraction 
\end{enumerate}
if and only  if there is a coefficient Hilbert space $\cU$ and a
contractive multiplier $\Theta$ from $H^2_{\cU}(\free)$ to $H^2_{\bo,\cY}(\free)$ so that
\begin{equation}
\cM = M_\Theta H^2_{\cU}(\free)
\label{Mform}
\end{equation}
        with lifted norm
        \begin{equation}  \label{M-norm}
        \| M_\Theta  f \|_{\cM} = \| Q f \|_{H^2_{\cU}(\free)}
        \end{equation}
        where $Q$ is the orthogonal projection onto $(\operatorname{Ker} M_{\Theta})^\perp$.
        
 If the subspace $\cM$ is written more compactly as a pullback space
 $\cH^p(\Pi)$ for an operator $\Pi$ on $H^2_{\bo, \cY}(\free)$ satisfying $0 \preceq \Pi \preceq I$, then conditions (1), (2), (3)
 in Theorem \ref{T:NC-BL} can be encoded in the set of operator inequalities
 \begin{equation}   \label{pullback-ineq}
 0 \preceq \Pi \preceq I, \quad \Pi - \sum_{j=1}^d S_{\bo, R, j}  \Pi S_{\bo, R, j}^* \succeq 0.
 \end{equation}
 \end{theorem}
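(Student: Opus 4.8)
The plan is to reduce the theorem to three already-established facts — the characterization of contractive positive kernels in Proposition~\ref{P:referee}, the structure of lifted-norm NFRKHSs in Theorem~\ref{T:charNFRKHS}, and the positive-kernel criterion for contractive multipliers in Proposition~\ref{P:2.3}(1) — together with one auxiliary ``dictionary'' lemma translating operator inequalities for the Gram operator $\Pi$ of a pullback space into geometric properties of $\cH^{p}(\Pi)$. Concretely, the lemma I would prove first is: \emph{if $\Pi\in\cL(H^{2}_{\bo,\cY}(\free))$ satisfies $0\preceq\Pi\preceq I$ and $\cM=\cH^{p}(\Pi)$, then $\cM$ is invariant under $\bS_{\bo,R}$ and the tuple $\bigl(S_{\bo,R,j}|_{\cM}\bigr)_{j=1}^{d}$ is a row contraction on $\cM$ if and only if $\Pi-\sum_{j=1}^{d}S_{\bo,R,j}\Pi S_{\bo,R,j}^{*}\succeq 0$.} This lemma is exactly the content of the final (pullback-space) assertion of the statement, because condition~(1) of the theorem is precisely the requirement $\Pi\preceq I$ that makes $\cH^{p}(\Pi)$ contractively included in $H^{2}_{\bo,\cY}(\free)$; and the displayed inequality also forces invariance, so the two operator inequalities together encode (1), (2), (3).

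To prove the dictionary lemma I would write $\Pi=XX^{*}$ with $X=\Pi^{1/2}$ (self-adjoint), so that the quotient map $q\colon H^{2}_{\bo,\cY}(\free)\to\cM$, $h\mapsto Xh$, is a coisometry with $qq^{*}=I_{\cM}$ and $q^{*}q=Q$, the orthogonal projection onto $(\operatorname{Ker} X)^{\perp}=\overline{\operatorname{Ran} X}$. The displayed inequality forces $S_{\bo,R,j}\Pi S_{\bo,R,j}^{*}\preceq\Pi$ for each $j$ (the remaining summands being positive), so by Douglas's range-inclusion/factorization lemma there are operators $C_{j}$ with $S_{\bo,R,j}X=XC_{j}$, $\operatorname{Ran} C_{j}\subseteq\overline{\operatorname{Ran} X}$ and $\operatorname{Ker} C_{j}\supseteq\operatorname{Ker} X$; this already yields $S_{\bo,R,j}\cM=\operatorname{Ran}(XC_{j})\subseteq\operatorname{Ran} X=\cM$, and a short computation on the dense subset $\operatorname{Ran} X$ identifies $S_{\bo,R,j}|_{\cM}=qC_{j}q^{*}$. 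Using $q^{*}q=Q$, $QC_{j}=C_{j}$ and $C_{j}Q=C_{j}$ one then checks that $\sum_{j}\bigl(S_{\bo,R,j}|_{\cM}\bigr)\bigl(S_{\bo,R,j}|_{\cM}\bigr)^{*}\preceq I_{\cM}$, the inequality $\sum_{j}C_{j}C_{j}^{*}\preceq Q$, and the inequality $\sum_{j}S_{\bo,R,j}\Pi S_{\bo,R,j}^{*}\preceq\Pi$ are all equivalent to $Q\bigl(\sum_{j}C_{j}C_{j}^{*}-I\bigr)Q\preceq 0$, which closes the equivalence.

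For the \emph{if} direction of the main equivalence, given a contractive multiplier $\Theta\colon H^{2}_{\cU}(\free)\to H^{2}_{\bo,\cY}(\free)$ and $\cM=M_{\Theta}H^{2}_{\cU}(\free)=\cH^{\ell}(M_{\Theta})=\cH^{p}(M_{\Theta}M_{\Theta}^{*})$: assertion~(1) is the contractive inclusion of a lifted-norm space of a contraction, recorded in Section~\ref{S:Brangesian}; for (2) and (3) I would put $\Pi=M_{\Theta}M_{\Theta}^{*}$ and combine the intertwining $S_{\bo,R,j}M_{\Theta}=M_{\Theta}S_{1,R,j}$ with $\sum_{j=1}^{d}S_{1,R,j}S_{1,R,j}^{*}=I-E^{*}E\preceq I$ to get $\sum_{j=1}^{d}S_{\bo,R,j}\Pi S_{\bo,R,j}^{*}=M_{\Theta}(I-E^{*}E)M_{\Theta}^{*}\preceq\Pi$; the dictionary lemma then delivers (2) and (3) at once. (Alternatively one can compute, via Theorem~\ref{T:charNFRKHS}, that $K_{\cM}(z,\zeta)=\Theta(z)\bigl(k_{\rm nc, Sz}(z,\zeta)\otimes I_{\cU}\bigr)\Theta(\zeta)^{*}$ and use identity~\eqref{Sz-id} to see that $K_{\cM}(z,\zeta)-\sum_{j=1}^{d}\bzeta_{j}K_{\cM}(z,\zeta)z_{j}=\Theta(z)\Theta(\zeta)^{*}\succeq 0$, so that $K_{\cM}$ is a contractive positive kernel in the sense of Definition~\ref{kercon}.)

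For the \emph{only if} direction, suppose $\cM$ satisfies (1)--(3). By (1), $\cM$ is contractively included in the NFRKHS $H^{2}_{\bo,\cY}(\free)$, hence (by the general principle used in the proof of Theorem~\ref{T:charNFRKHS}) is itself a NFRKHS with some reproducing kernel $K_{\cM}$, and conditions (2)--(3) say precisely that $K_{\cM}$ is a contractive positive formal kernel in the sense of Definition~\ref{kercon}. Proposition~\ref{P:referee} then gives $K_{\cM}(z,\zeta)=G(z)\bigl(k_{\rm nc, Sz}(z,\zeta)\otimes I_{\cU}\bigr)G(\zeta)^{*}$ for some Hilbert space $\cU$ and some $G\in\cL(\cU,\cY)\langle\langle z\rangle\rangle$; put $\Theta=G$. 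Contractive inclusion $\cM\hookrightarrow H^{2}_{\bo,\cY}(\free)$ is equivalent to positivity of $k_{\bo}(z,\zeta)I_{\cY}-K_{\cM}(z,\zeta)$ (the noncommutative-formal form of the classical comparison of reproducing kernels, which can be extracted from Theorem~\ref{T:charNFRKHS} and Section~\ref{S:Brangesian}), so by Proposition~\ref{P:2.3}(1) $\Theta$ is a contractive multiplier from $H^{2}_{\cU}(\free)$ to $H^{2}_{\bo,\cY}(\free)$; and Theorem~\ref{T:charNFRKHS} identifies $\cH^{\ell}(M_{\Theta})$ as the NFRKHS with reproducing kernel $\Theta(z)(k_{\rm nc, Sz}\otimes I_{\cU})\Theta(\zeta)^{*}=K_{\cM}$, so uniqueness of the reproducing kernel forces $\cM=\cH^{\ell}(M_{\Theta})=M_{\Theta}H^{2}_{\cU}(\free)$ with the lifted norm~\eqref{M-norm}. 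I expect the main obstacle to be the careful bookkeeping in the dictionary lemma — choosing the Douglas factors $C_{j}$ with ranges inside $\overline{\operatorname{Ran}\Pi^{1/2}}$ so that the passage between $\sum_{j}C_{j}C_{j}^{*}\preceq Q$ and the two ``intrinsic'' inequalities is genuinely reversible, and verifying that $qC_{j}q^{*}$ really represents $S_{\bo,R,j}|_{\cM}$; a secondary, routine point is to record the comparison-of-kernels fact in the noncommutative-formal setting.
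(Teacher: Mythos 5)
Your proposal is correct, and its core coincides with the paper's argument: the ``only if'' direction in both cases runs through Theorem~\ref{T:charNFRKHS} (to see that $\cM$ is itself a NFRKHS), then Proposition~\ref{P:referee} to factor $K_\cM(z,\zeta)=G(z)(k_{\rm nc, Sz}(z,\zeta)\otimes I_\cU)G(\zeta)^*$, and the pullback encoding \eqref{pullback-ineq} is in both cases handled by the Douglas lemma applied to $\Pi^{1/2}$. Where you diverge is in the organization. For sufficiency the paper argues directly with the projection $Q$ onto $(\operatorname{Ker} M_\Theta)^\perp$, deriving $T_j^*\colon M_\Theta f\mapsto M_\Theta S_{1,R,j}^*Qf$ and estimating, and only afterwards proves the pullback statement by repeating essentially the same computation for a general $\Pi$; you instead prove the ``dictionary lemma'' first and then get (2)--(3) of sufficiency in one line from the identity $\sum_{j=1}^d S_{\bo,R,j}M_\Theta M_\Theta^* S_{\bo,R,j}^*=M_\Theta\bigl(\sum_{j=1}^d S_{1,R,j}S_{1,R,j}^*\bigr)M_\Theta^*=M_\Theta(I-E^*E)M_\Theta^*\preceq M_\Theta M_\Theta^*$, which removes the duplication of effort (one Douglas-type computation instead of two parallel ones). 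For necessity, after the Proposition~\ref{P:referee} factorization the paper simply quotes part (2) of Proposition~\ref{P:2.3} to see that $M_G$ is a coisometry of $H^2_\cU(\free)$ \emph{onto} $\cM$ and composes with the contractive inclusion $\iota\colon\cM\to H^2_{\bo,\cY}(\free)$; this yields the contractivity of $M_\Theta=\iota M_G$ and the lifted-norm identification \eqref{M-norm} simultaneously, with no auxiliary kernel-comparison lemma. Your route instead needs the (genuinely routine, but not stated in the paper) fact that contractive inclusion of $\cM$ in $H^2_{\bo,\cY}(\free)$ forces $k_\bo(z,\zeta)I_\cY-K_\cM(z,\zeta)$ to be a positive formal kernel, after which Proposition~\ref{P:2.3}(1) and uniqueness of the kernel of $\cH^\ell(M_\Theta)$ finish the job; note you only need the forward implication of that comparison fact, and its proof is the same pairing computation the paper performs inside the proof of Theorem~\ref{T:charNFRKHS}, so this is a small supplementary lemma rather than a gap. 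In the dictionary lemma itself, do record that for the converse direction (invariance plus row contraction $\Rightarrow$ \eqref{pullback-ineq}) the factors $C_j$ come from Douglas's range-inclusion form applied to $\operatorname{Ran}(S_{\bo,R,j}\Pi^{1/2})\subseteq\operatorname{Ran}\Pi^{1/2}$, normalized so that $C_jQ=C_j$ and $QC_j=C_j$; with that normalization your chain of equivalences through $Q\bigl(\sum_j C_jC_j^*-I\bigr)Q\preceq 0$ is indeed reversible, matching the paper's remark that the converse ``follows by reversing the argument.''
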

	
\begin{proof}  To verify sufficiency, let us
suppose that $\cM$ is of the form \eqref{Mform} for a
          contractive multiplier $\Theta$ with $\cM$-norm given by
          \eqref{M-norm}. Since $\|M_{\Theta}\| \le 1$, it follows that for any $f \in H^2_{\cU}(\free)$,
\begin{equation}  \label{ineq1}
\|\ M_\Theta f\|_{H^2_{\bo,\cY}(\free)} = \|M_\Theta Q f\|_{H^2_{\bo,\cY}(\free)} \le 
\|Qf\|_{H^2_{\cU}(\free)}=\|M_\Theta f\|_{\cM},
\end{equation}
i.e., (1) holds. Property (2) follows from intertwining relations 
$$
S_{\bo,R,j} M_{\Theta} = M_{\Theta}S_{1,R,j}\quad\mbox{for}\quad j=1,\ldots,d. 
$$
The latter relations also imply $M_\Theta S_{1,R,j}\vert_{\operatorname{Ker}M_\Theta}=0$, 
which can be written equivalently as $QS_{1,R,j}(I-Q)=0$ where $Q$ the orthogonal projection from \eqref{M-norm}.
Thus, 
\begin{equation}  \label{9.2}
Q S_{1,R,j} = Q S_{1,R,j}Q\quad \text{and}\quad S_{1,R,j}^{*} Q=
QS_{1,R,j}^{*}Q\quad\mbox{for}\quad  j=1,\ldots,d.
\end{equation}
Furthermore, for every $f, \, g \in H^2_{\cU}(\free)$ and $j\in\{1,\ldots,d\}$, we have
\begin{align*}
\langle M_\Theta g, \, T_j^* M_\Theta f\rangle_{\cM}=&\langle S_{\bo,R,j} M_\Theta g, \,  M_\Theta f\rangle_{\cM}\\
=&\langle M_\Theta S_{1,R,j} g, \, M_\Theta f\rangle_{\cM}\\
=&\langle Q S_{1,R,j} g, \,
f\rangle_{H^2_{\cU}(\free)}\\
=&\langle Q S_{1,R,j} Qg, \, f\rangle_{H^2_{\cU}(\free)}\\
=&\langle Q g, \, S_{1,R,j}^*Qf\rangle_{H^2_{\cU}(\free)}=
\langle M_\Theta g, \,M_ \Theta S_{1,R,j}^* Qf\rangle_{\cM},
\end{align*}
which implies that $T_j^*  \colon M_\Theta f  \mapsto M_\Theta S_{1,R,j}^* Qf$. Iterating the latter formula 
and making use of relations \eqref{9.2} we get
\begin{equation}
\bT^{* \alpha} \colon  M_\Theta f  \mapsto M_\Theta ({\bf S}_{1,R}^*)^\alpha Qf\quad\mbox{for}\quad \alpha\in\free.
\label{9.4}
\end{equation}  

It then follows that, for $f \in H^2_{\cU}(\free)$,
\begin{align}
\sum_{j=1}^d \| T_j^* M_\Theta f \|^2_\cM & = \sum_{j=1}^d  \| M_\Theta S^*_{1,R,j} Q f \|_{\cM}^2\notag\\
&= \sum_{j=1}^d \| Q S^*_{1,R,j} Q f \|^2_{H^2_\cU(\free)} \notag \\
& \le \sum_{j=1}^d \| S^*_{1,R,j} Q f \|^2_{H^2_{\cU}(\free)} 
\le \| Q f \|^2_{H^2_\cU(\free)}= \| M_\Theta f \|^2_\cM\label{again20}
\end{align}
(inequalities above hold since $Q$ is a projection and $(\bS_{1,R})$ is a row contraction)
which shows that $\bT$ is a row contraction.  This completes the proof of sufficiency.

\smallskip

Suppose now that the Hilbert space $\cM$ satisfies conditions (1), (2), (3) in the
statement of the theorem.  By Theorem \ref{T:charNFRKHS} it follows that $\cM$ is a NFRKHS
(noncommutative formal reproducing kernel Hilbert space)
in its own right.  As $\cM \subset H^2_{\bo, \cY}(\free)$, the elements of $\cM$ 
are formal power series of the form $f(z) = \sum_{\alpha \in \free} f_\alpha z^\alpha$
with $f_\alpha \in \cY$.

\smallskip

Thus $\cM$ is a NFRKHS which is invariant under the right coordinate multipliers $T_j=S_{\bo, R, j}|_\cM$ 
($1\le j\le d$) by assumption 
(2) and moreover, the tuple $\bT = (T_1, \dots, T_d)$ is a row contraction, by assumption (3).
 We are therefore in a position to apply Proposition \ref{P:referee} to conclude that the reproducing kernel 
$k_\cM(z, \zeta)\in \cL(\cY)\langle \langle z, \overline{\zeta} \rangle \rangle$ for $\cM$ has a factorization
$$
   k_\cM(z, \zeta) = G(z) (k_{\rm nc, Sz}(z, \zeta) \otimes I_\cU) G(\zeta)^*
$$
 for some coefficient Hilbert space $\cU$ and some power series $G(z) \in \cL(\cU, \cY)\langle \langle z \rangle \rangle$.
 Item (2) in Proposition \ref{P:2.3} now tells us that $G$ is a coisometric multiplier from $H^2_\cU(\free)$ onto $\cM$.
 Let us set $\Theta = G$ but consider $\Theta$ as a multiplier from $H^2_\cU(\free)$ into $H^2_{\bo, \cY}(\free)$. Then
 $M_\Theta = \iota M_G$ where $\iota \colon \cM \to H^2_{\bo, \cY}(\free)$ is the inclusion map.  Since $\cM$ is contained in
 $H^2_{\bo, \cY}(\free)$ contractively, $\| \iota \| \le 1$ and hence  $\| M_\Theta \| \le 1$, i.e., $\Theta$ is a contractive
 multiplier from $H^2_\cU(\free)$ to $H^2_{\bo, \cY}(\free)$.  Moreover, as $M_G H^2_\cU(\free) = \cM$, it is still the case that
 $M_\Theta H^2_\cU(\free) = \cM$.  The fact that $M_G \colon H^2_\cU(\free) \to \cM$ is a coisometry can be interpreted as
 saying that the $\cM$-norm is given by the lifted-norm criterion \eqref{M-norm}.
 
We now suppose that the subspace $\cM$ is presented as the pullback space $\cH^p(\Pi)$ for some operator
$\Pi$ on $H^2_{\bo, \cY}(\free)$ satisfying conditions \eqref{pullback-ineq}.
The first condition in \eqref{pullback-ineq} then implies that $\cM = \cH^p(\Pi)$ is contractively included in $H^2_{\bo, \cY}(\free)$.  By the Douglas
 lemma \cite{douglas}, the second condition \eqref{pullback-ineq} implies that there is a row contraction $\bX = (X_1, \dots, X_d)$ on 
 $H^2_{\bo, \cY}(\free)$ so that 
\begin{equation}   
S_{\bo, R, j} \Pi^{\frac{1}{2}} = \Pi^{\frac{1}{2}} X_j\quad\mbox{for}\quad j=1,\ldots,d.
\label{pullback-ineqa}
 \end{equation}
In particular it follows that $\cH^p(\Pi) = \operatorname{Ran} \Pi^{\frac{1}{2}}$ is invariant under the tuple $\bS_{\bo, R}$.
 Furthermore we may redo the computations leading up to \eqref{9.2} and \eqref{9.4} for the present context as follows.
From \eqref{pullback-ineqa} we have $\Pi^{\frac{1}{2}} X_j|_{\operatorname{Ker} \Pi} = 0$ which can be written as 
$Q X_j (I - Q) = 0$, where $Q = P_{(\operatorname{Ker} \Pi)^\perp}$. Thus, $Q X_j = Q X_j Q$ and then for 
$T_j = S_{\bo, R, j}|_{\cH^p(\Pi)}$ and $f,g\in H^2_{\bo, \cY}(\free)$,
we have (again due to \eqref{pullback-ineqa})
 \begin{align*}
\langle \Pi^{\frac{1}{2}} g, \, T_j^*\Pi^{\frac{1}{2}} f \rangle_{\cH^p(\Pi)}
&=\langle S_{\bo, R, j} \Pi^{\frac{1}{2}} g, \, \Pi^{\frac{1}{2}} f \rangle_{\cH^p(\Pi)} \\
&=\langle P^{\frac{1}{2}} X_j g, \, P^{\frac{1}{2}} f \rangle_{\cH^p(\Pi)} \\
& = \langle Q X_j g, \, f \rangle_{H^2_{\bo, \cY}(\free)} \\
& = \langle Q X_j Q g, \, f \rangle_{H^2_{\bo, \cY}(\free)}\\
&= \langle Q g, X_j^* Q f \rangle_{H^2_{\bo, \cY}(\free)} 
  = \langle \Pi^{\frac{1}{2}} g,  \Pi^{\frac{1}{2}} X_j^* Q f \rangle_{\cH^p(\Pi)}
\end{align*}
from which we conclude that
$T_j^* \Pi^{\frac{1}{2}}f = \Pi^{\frac{1}{2}} X_j^* Q f$. Hence the computations as in the derivation of
\eqref{again20} adapt to the present context as follows:   for
$f \in H^2_{\bo, \cY(\free)}$, we have
\begin{align*}
\sum_{j=1}^d \| T_j^* \Pi^{\frac{1}{2}} f \|^2_{\cH^p(\Pi)} & =
\sum_{j=1}^d \| \Pi^{\frac{1}{2}} X_j^* Q f \|_{\cH^p(\Pi)}\\
&= \sum_{j=1}^d \| Q X_j^* Q f \|^2_{H^2_{\bo, \cY}(\free)}  \\
& \le \sum_{j=1}^d \| X_j^* Q f \|^2_{H^2_{\bo, \cY}(\free)}  \le \| Q f \|^2_{H^2_{\bo, \cY}(\free)}  = \| \Pi^{\frac{1}{2}} f \|^2_{\cH^p(\Pi)}.
\end{align*}
We conclude that $\bT=(T_1,\ldots,T_j)$ is indeed a row contraction.  The converse assertion (that 
$\bT$ being a row contraction implies the last inequality in \eqref{pullback-ineq}) follows by reversing the argument.
\end{proof}

We next address the uniqueness issue of a representer $\Theta$ for a ${\bf S}_{\bo,n}$-invariant subspace $\cM$ in Theorem \ref{T:NC-BL}.  

 \begin{proposition}  \label{P:BLunique}
The power series $\Theta' \in \cL(\cU', \cY)\langle \langle z \rangle \rangle$ is another representer for a ${\bf S}_{\bo,n}$-invariant subspace $\cM$ 
as in Theorem \ref{T:NC-BL} if and only if
 \begin{equation}  \label{BLunique1}
 \Theta_\beta \Theta_\alpha^* = \Theta'_\beta \Theta_\beta'^* \; \text{ for all } \; \beta \in \free,
 \end{equation}
 or equivalently, if and only if  there is a partial isometry $U \colon \cU \to \cU'$ such that 
 \begin{equation}  \label{BLunique2}
 \Theta'(z) U = \Theta(z),\quad
 \bigvee_{\alpha \in \free} \operatorname{Ran} \Theta_\alpha^* \subset (\operatorname{Ker} U)^\perp, \quad
 \bigvee_{\alpha \in\free} \operatorname{Ran} \Theta_\alpha'^* \subset  \operatorname{Ran} U.
 \end{equation}
\end{proposition}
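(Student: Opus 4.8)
\textbf{Proof plan for Proposition \ref{P:BLunique}.}

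The plan is to characterize the representer $\Theta$ through the reproducing kernel of $\cM$ and the lifted-norm structure, and then invoke Proposition \ref{P:leech} to produce the partial isometry $U$. First I would recall from the proof of Theorem \ref{T:NC-BL} that $\Theta$ is obtained from a coisometric multiplier $G$ realizing the Kolmogorov factorization $k_\cM(z,\zeta)=G(z)(k_{\rm nc,Sz}(z,\zeta)\otimes I_\cU)G(\zeta)^*$, so that $M_\Theta \colon H^2_\cU(\free)\to H^2_{\bo,\cY}(\free)$ has range $\cM$ with the lifted norm \eqref{M-norm}. A second representer $\Theta'$ gives in the same way $k_\cM(z,\zeta)=\Theta'(z)(k_{\rm nc,Sz}(z,\zeta)\otimes I_{\cU'})\Theta'(\zeta)^*$. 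Hence $\Theta$ and $\Theta'$ are two representers for $\cM$ if and only if the two formal kernels
$$
\Theta(z)(k_{\rm nc,Sz}(z,\zeta)\otimes I_\cU)\Theta(\zeta)^* = \Theta'(z)(k_{\rm nc,Sz}(z,\zeta)\otimes I_{\cU'})\Theta'(\zeta)^*
$$
agree, which after equating coefficients of $z^\beta\bzeta^{\alpha^\top}$ is exactly \eqref{BLunique1} (the scalar coefficients of $k_{\rm nc,Sz}$ cancel, leaving $\Theta_\beta\Theta_\alpha^* = \Theta'_\beta\Theta_\alpha'^*$ for all $\alpha,\beta$; specializing $\alpha=\beta$ and using positivity gives the single-index form, and conversely). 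The one point needing care here is the ``only if'' direction: I need to argue that any representer $\Theta'$ of $\cM$ (with its own lifted norm) must realize the reproducing kernel of $\cM$ via its own Szeg\H{o}-type kernel; this is precisely the content of Theorem \ref{T:charNFRKHS}(1) applied with $K'=k_{\rm nc,Sz}\otimes I_{\cU'}$, since $K_{\cM}(z,\zeta)=\Theta'(z)K'(z,\zeta)\Theta'(\zeta)^*$ for the lifted-norm space $\cM=\cH^\ell(M_{\Theta'})$.

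Next I would pass from \eqref{BLunique1} to \eqref{BLunique2}. Given the kernel identity $\Theta(z)(k_{\rm nc,Sz}\otimes I_\cU)\Theta(\zeta)^* = \Theta'(z)(k_{\rm nc,Sz}\otimes I_{\cU'})\Theta'(\zeta)^*$, Proposition \ref{P:leech} (with $G=\Theta'$, $F=\Theta$, which satisfy \eqref{jan3ca}) yields a partial isometry $U\colon\cU\to\cU'$ with $\Theta(z)=\Theta'(z)U$ and $\Theta'(z)=\Theta(z)U^*$. From the proof of Proposition \ref{P:leech} the initial space of $U$ is $\overline{\rm span}\{\Theta_\beta^* y\colon\beta\in\free,\,y\in\cY\} = \bigvee_\alpha \operatorname{Ran}\Theta_\alpha^*$ (on the orthogonal complement of which $U$ is set to zero), and the final space is $\bigvee_\alpha\operatorname{Ran}\Theta_\alpha'^*$; this gives the two span-inclusions in \eqref{BLunique2} (in fact equalities). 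Conversely, if \eqref{BLunique2} holds, then $\Theta_\beta = \Theta'_\beta U$ and $\Theta'_\beta = \Theta_\beta U^*$ for all $\beta$, from which $\Theta_\beta\Theta_\alpha^* = \Theta'_\beta U U^* \Theta_\alpha'^* = \Theta'_\beta\Theta_\alpha'^*$ (using that $U U^*$ acts as the identity on $\operatorname{Ran}\Theta_\alpha'^*$), recovering \eqref{BLunique1}.

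Finally I would check that \eqref{BLunique2} indeed forces $\Theta'$ to be a genuine representer in the sense of Theorem \ref{T:NC-BL}, i.e. $M_{\Theta'}H^2_{\cU'}(\free) = \cM$ with the correct lifted norm. From $\Theta(z)=\Theta'(z)U$ one gets $M_\Theta = M_{\Theta'}M_U$ where $M_U$ is the (constant-coefficient) multiplier induced by $U$; since $U$ is a partial isometry, $\operatorname{Ran} M_\Theta\subseteq\operatorname{Ran} M_{\Theta'}$, and from $\Theta'(z)=\Theta(z)U^*$ the reverse inclusion, so $\operatorname{Ran} M_{\Theta'}=\operatorname{Ran} M_\Theta=\cM$. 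That the lifted norm on $\cM$ coming from $M_{\Theta'}$ coincides with the one coming from $M_\Theta$ follows because both equal the NFRKHS norm determined by the common kernel $k_\cM$, by Theorem \ref{T:charNFRKHS}(1). The main obstacle I anticipate is bookkeeping the exact initial/final spaces of the partial isometry $U$ produced by Proposition \ref{P:leech} and matching them with the span conditions in \eqref{BLunique2}; this is a matter of carefully re-reading the construction of $V$ (hence $W^*$, hence $U$) in that proof rather than any genuinely new difficulty.
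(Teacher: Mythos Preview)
Your approach is essentially the paper's: both obtain the kernel identity $K_\cM(z,\zeta) = \Theta(z)(k_{\rm nc,Sz}\otimes I_\cU)\Theta(\zeta)^* = \Theta'(z)(k_{\rm nc,Sz}\otimes I_{\cU'})\Theta'(\zeta)^*$ and produce the partial isometry from it---you via Proposition~\ref{P:leech}, the paper by constructing $U_0$ directly, which is exactly the construction inside the proof of Proposition~\ref{P:leech}.

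Two small points. First, equating coefficients of $z^\beta\bzeta^{\alpha^\top}$ in the kernel identity does \emph{not} directly give $\Theta_\beta\Theta_\alpha^* = \Theta'_\beta\Theta'_\alpha^*$: the coefficient is a sum over common right-factors $\gamma$ of $\alpha$ and $\beta$. The paper handles this by first applying the transformation $X \mapsto X - \sum_j \bzeta_j X z_j$ to both sides, which (via the Szeg\H{o} identity \eqref{Sz-id}) strips the $k_{\rm nc,Sz}$ factor and yields $\Theta(z)\Theta(\zeta)^* = \Theta'(z)\Theta'(\zeta)^*$; then equating coefficients gives the two-index form cleanly. Second, your parenthetical that the diagonal form $\Theta_\beta\Theta_\beta^* = \Theta'_\beta\Theta'_\beta^*$ implies the full two-index form ``by positivity'' is false in general (e.g.\ $\Theta_0=\Theta_1=I$, $\Theta'_0=I$, $\Theta'_1=-I$). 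The displayed \eqref{BLunique1} in the paper is a typo and should read $\Theta_\beta\Theta_\alpha^* = \Theta'_\beta\Theta'_\alpha^*$ for all $\alpha,\beta\in\free$; since you obtain the two-index form anyway, this does not affect your argument.
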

\begin{proof}
If $\Theta$ and $\Theta'$ are both Beurling-Lax representers for $\cM$ as in Theorem \ref{T:NC-BL}, then we can compute the 
kernel function $K_\cM(z, \zeta)$ for $\cM$ in two ways:
$$
  K_\cM(z, \zeta) = \Theta(z) \left(k_{\text{nc, Sz}}(z, \zeta)I_\cU\right) \Theta(\zeta)^* = 
      \Theta('z) \left(k_{\text{nc, Sz}}(z, \zeta)I_\cU\right) \Theta'(\zeta)^*.
 $$
 If we apply the transformation $X \mapsto X - \sum_{j=1}^d \overline{\zeta}_j X z_j$ to both sides of the last equality, we get simply
 $\Theta(z) \Theta(\zeta)^* = \Theta'(z) \Theta'(\zeta)^*$ which, upon equating coefficients of $z^\alpha \overline{\zeta}^{\beta^\top}$,
 implies equalities \eqref{BLunique1}. Hence there is a unitary map 
 $$
U_0 \colon \cR \to \cR',\quad\mbox{where}\quad  \cR = \bigvee_{\beta \in \free} \operatorname{Ran} \Theta_\beta^*, \quad
  \cR' = \bigvee_{\beta \in \free} \operatorname{Ran} \Theta_\beta'^*
 $$
 so that
 $\; \Theta_\beta^* y = U_0^* \Theta_\beta'^* y \text{ for all } \beta \in \free, \, y \in \cY$.
 Let $U \colon \cU \to \cU'$ be any partially isometric extension of $U_0$.  Then $U$ meets the last two conditions in \eqref{BLunique2}. From
 $$
   \Theta'_\alpha u = \Theta'_\alpha u_0 = \Theta_\alpha \; \text{ for all } \; \alpha \in \free
$$
we see that $\Theta'(z) U = \Theta(z)$, i.e., the first condition in \eqref{BLunique2} holds as well.

\smallskip

Conversely, if $U \colon \cU \to \cU'$ is a partial isometry satisfying all of \eqref{BLunique2}, one can work back to \eqref{BLunique1}
and conclude that $\cM = \Theta \cdot H^2_\cU(\free)$ and $\cM'  = \Theta' \cdot H^2_\cU(\free)$, both considered with lifted norm as
contractively included subspaces of $H^2_{\bo, \cY}(\free)$, have the same kernel function $K_\cM(z, \zeta) = K_{\cM'}(z, \zeta)$
and hence $\cM = \cM'$. The latter uniqueness criterion has been stated with only the first condition in \eqref{BLunique2} as a necessary condition 
for $\Theta$ and $\Theta'$ to be representers of the same contractively included subspace $\cM$ (see \cite[Theorem 4.1]{MCT} and \cite[Corollary 4.4]{BEKS}).
\end{proof}
 
The special case of Theorem \ref{T:NC-BL} where $\cM$ is isometrically contained in $H^2_{\bo, \cY}(\free)$ is of particular interest.
\begin{theorem} \label{T:NC-BLisom}
A Hilbert space $\cM$ is isometrically included in $H^2_{\bo,\cY}(\free)$ and is ${\bf S}_{\bo,R}$-invariant if and only if 
there is a Hilbert space $\cU$ and a
McCT-inner multiplier $\Theta$ from $H^2_{\cU}(\free)$ to $H^2_{\bo,\cY}(\free)$ so that
\begin{equation}
\cM = \Theta \cdot H^2_{\cU}(\free).
\label{Mform''}
\end{equation}
 \end{theorem}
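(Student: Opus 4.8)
\textbf{Proof plan for Theorem \ref{T:NC-BLisom}.}

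The plan is to derive this as a special case of Theorem \ref{T:NC-BL} together with the characterization of McCT-inner multipliers from Definition \ref{D:mult} and part (2) of Proposition \ref{P:2.3}. First I would handle the easy direction: suppose $\Theta$ is a McCT-inner multiplier from $H^2_\cU(\free)$ to $H^2_{\bo, \cY}(\free)$, so that $M_\Theta$ is a partial isometry, and set $\cM = \Theta \cdot H^2_\cU(\free) = \operatorname{Ran} M_\Theta$. Since $M_\Theta$ is a partial isometry, the lifted norm on $\cM = \cH^\ell(M_\Theta)$ as in \eqref{M-norm} agrees with the norm inherited from $H^2_{\bo, \cY}(\free)$ (this is the observation recorded at the end of Section \ref{S:Brangesian}: when $X$ is a partial isometry, $\cH^\ell(X)$ is isometrically included in the ambient space). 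Thus $\cM$ is isometrically included in $H^2_{\bo, \cY}(\free)$, and the intertwining relations $S_{\bo, R, j} M_\Theta = M_\Theta S_{1, R, j}$ give at once that $\cM$ is $\bS_{\bo, R}$-invariant. This gives sufficiency.

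For necessity, suppose $\cM$ is isometrically included in $H^2_{\bo, \cY}(\free)$ and is $\bS_{\bo, R}$-invariant. The key point is that the row-contractivity hypothesis (3) of Theorem \ref{T:NC-BL} comes for free here: since $\cM$ is isometrically included and $H^2_{\bo, \cY}(\free)$ has admissible weight $\bo$, the tuple $\bS_{\bo, R}$ is itself a row contraction on $H^2_{\bo, \cY}(\free)$ (this is Remark \ref{R:examples} together with the admissibility conditions \eqref{18.2}), and the restriction of a row contraction to an invariant subspace, with the inherited (here isometric) norm, is again a row contraction — indeed $\sum_j \| T_j^* f \|^2_\cM = \sum_j \| P_\cM S^*_{\bo, R, j} f \|^2 \le \sum_j \| S^*_{\bo, R, j} f \|^2 \le \| f \|^2$ for $f \in \cM$. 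Hence $\cM$ satisfies all three hypotheses of Theorem \ref{T:NC-BL}, and we obtain a coefficient Hilbert space $\cU$ and a contractive multiplier $\Theta$ from $H^2_\cU(\free)$ to $H^2_{\bo, \cY}(\free)$ with $\cM = M_\Theta H^2_\cU(\free)$ equipped with the lifted norm \eqref{M-norm}. It remains to upgrade $\Theta$ from contractive to McCT-inner, i.e.\ to show $M_\Theta$ is a partial isometry. But the lifted norm on $\operatorname{Ran} M_\Theta$ is by construction the quotient norm $\| M_\Theta f \|_\cM = \| Q f \|_{H^2_\cU(\free)}$, which always makes $M_\Theta$ a coisometry from $(\operatorname{Ker} M_\Theta)^\perp$ onto $\cM$ (equivalently, a partial isometry from $H^2_\cU(\free)$ onto $\cM$) \emph{when $\cM$ carries the lifted norm}; the content of the isometric-inclusion hypothesis is precisely that this lifted norm coincides with the $H^2_{\bo, \cY}(\free)$-norm on $\cM$, so that $M_\Theta$ is a partial isometry \emph{as an operator into $H^2_{\bo, \cY}(\free)$}. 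Therefore $\Theta$ is McCT-inner by Definition \ref{D:mult}, and $\cM = \Theta \cdot H^2_\cU(\free)$ as in \eqref{Mform''}.

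The only place requiring a little care — and what I would flag as the main obstacle — is the bookkeeping around the two possible norms on $\cM$: the lifted/quotient norm produced by Theorem \ref{T:NC-BL} versus the ambient $H^2_{\bo, \cY}(\free)$-norm. One must check that under the isometric-inclusion hypothesis these genuinely agree, so that the factorization $M_\Theta = \iota M_G$ (with $\iota$ the inclusion of $\cM$, which is now isometric rather than merely contractive) has $M_G$ coisometric onto $\cM$ forcing $M_\Theta$ to be a partial isometry; conversely in the sufficiency direction one uses the general principle from Section \ref{S:Brangesian} that $\cH^p(X X^*)$ collapses to $\cM^\perp$-style orthogonal structure exactly when $X$ is a partial isometry. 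This is purely a matter of tracking which norm is in force at each stage and invokes no new estimates, so the proof is short once Theorem \ref{T:NC-BL} and Proposition \ref{P:2.3}(2) are in hand.
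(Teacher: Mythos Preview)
Your proposal is correct and follows essentially the same approach as the paper: verify that isometric inclusion makes condition (3) of Theorem \ref{T:NC-BL} automatic via the projection inequality $\sum_j \|P_\cM S^*_{\bo,R,j} f\|^2 \le \sum_j \|S^*_{\bo,R,j} f\|^2 \le \|f\|^2$, then observe that since the lifted norm coincides with the ambient norm the coisometry $M_G$ from the proof of Theorem \ref{T:NC-BL} becomes a partial isometry $M_\Theta$ into $H^2_{\bo,\cY}(\free)$. Your treatment of the ``if'' direction is more detailed than the paper's (which simply declares it immediate), but the content is the same.
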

 
\begin{proof}
The "if" direction is immediate. To get the "only if" part as a consequence of Theorem \ref{T:NC-BL}, let us show that 
condition (3) in Theorem \ref{T:NC-BL} is automatic if a ${\bf S}_{\bo,R}$-invariant subspace
$\cM$ is isometrically included in $H^2_{\bo, \cY}(\free)$. 
Indeed, if this is the case, then 
$$
T^*_j=\left(S_{\bo,R,j}\vert_\cM\right)^*=P_\cM S_{\bo, R, j}^*\vert_{\cM}\quad\mbox{for}\quad j=1,\ldots,d,
$$
and therefore, we have
\begin{align*}
 \sum_{j=1}^d \| T_j^* f \|^2_\cM & = \sum_{j=1}^d \| P_\cM (S_{\bo, R, j})^* f \|^2_{H^2_{\bo, \cY}(\free)}\\
&  \le \sum_{j=1}^d \| (S_{\bo, R, j})^* f \|^2_{H^2_{\bo, \cY}(\free)}\le \| f \|^2_{H^2_{\bo, \cY}(\free)}
 \end{align*}
which confirms that condition (3) in the statement of Theorem \ref{T:NC-BL} holds and we indeed have \eqref{Mform''}. 
Furthermore, the fact that the operator $M_G$ in the proof of Theorem \ref{T:NC-BL} is a coisometry translates to
 $M_\Theta$ is a partial isometry, i.e., $\Theta$ is a McCT-inner multiplier from $H^2_\cU(\free)$ to $H^2_{\bo, \cY}(\free)$.
\end{proof}

\begin{remark}  \label{R:BBF3'}  Let us consider the special case of Theorem \ref{T:NC-BLisom} where $\bo = \bmu_1$ 
so $H^2_{\bo, \cY}(\free)$ is equal to the Hardy-Fock space $H^2_\cY(\free)$ as in Section \ref{S:Fock0}.  Then Theorem 
\ref{T:NC-BLisom} gives the existence of a McCT-inner multiplier from some $H^2_\cU(\free)$ to $H^2_\cY(\free)$ to 
represent a closed $\bS_{1, R}$ invariant subspace.  However it is known that in this case one can implement
such a representation via a strictly inner multiplier (see \cite{PopescuBL} and also \cite[Theorem 2.14]{BBF1} and
\cite[Theorems 5.1 and 5.2]{BBF3} for later proofs from other points of view).  We note here that this finer result 
also follows easily from Theorem \ref{T:NC-BLisom}
if we recall Remark \ref{R:BBF3} to the effect that any McCT-inner multiplier $\Theta$ between Hardy-Fock spaces
decomposes as $\Theta = \begin{bmatrix} \Theta_{\rm si} & 0 \end{bmatrix}$ with respect to an appropriate decomposition
$\cU = \sbm{ \cU_{\rm si} \\ \cU_0}$ of the input space.  Replacing the McCT-inner $\Theta$ with the strictly inner 
$\Theta_{\rm si}$ generates the same shift-invariant subspace 
$\cM = \Theta \cdot H^2_\cU(\free) = \Theta_{\rm si} \cdot H^2_{\cU_0}(\free)$.
\end{remark}

We now present an explicit formula for the Beurling-Lax representer for at least a class of shift-invariant
subspaces $\cM$ contained only contractively in $H^2_{\bo, \cY}(\free)$ for the case where the weight sequence
$\bo$ is strictly decreasing. In this case we can define the associated weight sequence $\bgam$ as in \eqref{bgam}. 
If an output pair $(C,\bA)$ is $\bo$-output stable, it is also $\bgam$-output stable, since $\gamma_j^{-1}<\omega_j^{-1}$ 
for all $j\ge 1$ (by \eqref{bgam}) and therefore,
$$
\cG_{\bgam,C,\bA}=\sum_{\alpha\in\free}\gamma_{|\alpha|}^{-1}\bA^{*\alpha^\top}C^*C\bA^\alpha \preceq
\sum_{\alpha\in\free}\omega_{|\alpha|}^{-1}\bA^{*\alpha^\top}C^*C\bA^\alpha=\cG_{\bo,C,\bA}.
$$
It is readily seen from the above power series representations that the gramians $\cG_{\bo,C,\bA}$ and $\cG_{\bgam,C,\bA}$ 
satisfy the Stein equation
\begin{equation}
\cG_{\bo,C,\bA}-\sum_{j=1}^d A_j^*\cG_{\bo,C,\bA}A_j=\cG_{\bgam,C,\bA}.
\label{again21}
\end{equation}
Alternatively, we can apply the $B_\bA$ calculus to the identity
\begin{equation}  \label{Res-id}
(1-\lambda)R_\bo(\lambda)=R_{\bgam}(\lambda)
\end{equation}
and then apply the resulting operator equality to the operator $C^*C$.   Another consequence of \eqref{Res-id}
which we shall have several occasions to use is the identity
\begin{equation}  \label{Res-id-op}
R_\bo(Z(z) A) (I - Z(z) A) = R_\bgam(Z(z)A).
\end{equation}
We next recall the space $\ell^2_{\cY}(\free)$ 
defined in \eqref{jan4a} and the formal power series $\bPs_\bo(z)$ (as well as $\bPs_\bgam(z)$) defined by
\eqref{powerseries} which implement the factorizations
\begin{equation}   \label{factor}
k_{\bo}(z, \zeta) = \bPs_\bo(z) \bPs_\bo(\zeta)^* = \bPs_\bgam(z)
\left( k_{\rm nc, Sz}(z, \zeta) I_{\ell^2(\free)} \right) \bPs_\bgam(\zeta)^*.
\end{equation}
Let us also introduce a normalized time-domain version of the $\bgam$-observability operator 
$\widehat\cO_{\bgam,C,\bA}\colon \cX\to \ell^2_{\cY}(\free)$ defined by
$$
\widehat{\cO}_{\bgam,C,\bA}: \; x\mapsto
\{\gamma_{|\alpha|}^{-\frac{1}{2}}C\bA^\alpha x\}_{\alpha\in\free}.
$$
The equality
$\|\widehat{\cO}_{\bgam,C,\bA}x\|_{\ell^2_{\cY}(\free)}^2=\langle\cG_{\bgam,C,\bA}x, \, x\rangle_{\cX}$
holds for all $x\in\cX$ and justifies the factorization 
\begin{equation}
\cG_{\bgam,C,\bA}=\widehat{\cO}_{\bgam,C,\bA}^*\widehat{\cO}_{\bgam,C,\bA}.
\label{12.2ag}
\end{equation}
Yet another easily verified useful identity is
\begin{equation}    \label{bPs-Obs}
\bPs_\bgam(z) \widehat \cO_{\bgam, C, \bA} = C R_\bgam(Z(z) A) = C R_\bo(Z(z) A) (I - Z(z) A).
\end{equation}
\begin{theorem}  \label{T:explicit2}
Suppose that the admissible weight sequence $\bo$ is strictly decreasing.

\smallskip

{\rm (1)} Let $\cM$ be a Hilbert space contractively included in $H^2_{\bo, \cY}(\free)$.
Assume that the Brangesian complement $\cM^{[\perp]}$ (defined as in Section \ref{S:Brangesian}) satisfies the hypotheses
of part (4) Theorem \ref{T:2-1.2}
and hence has a  representation as
$$
    \cM^{[\perp]} = \operatorname{Ran}  \cO_{\bo, C, \bA}
$$
for some $\bo$-contractive output pair $(C, \bA)$.  Let us impose the additional hypothesis that $(C, \bA)$ satisfies
the inequality
\begin{equation}  \label{extra-hy'}
\sum_{j=1}^d A_j^* (I - \cG_{\bo, C, \bA}) A_j \preceq I - \cG_{\bo, C, \bA}.
\end{equation}
Then the related inequality
\begin{equation}  \label{extra-hy}
\begin{bmatrix} A^* & \widehat \cO_{\bgam, C, \bA}^*  \end{bmatrix}
\begin{bmatrix} A \\ \widehat \cO_{\bgam, C, \bA} \end{bmatrix}
\preceq I_\cX.
\end{equation}
holds and as a consequence of the observation \eqref{fact} we can choose a solution
\begin{equation}
\begin{bmatrix} B \\ D \end{bmatrix}=\begin{bmatrix}  B \\ {\rm col}_{\alpha \in \free}
[D_{\alpha}] \end{bmatrix}\colon \cU \to \begin{bmatrix}\cX^{d} \\
\ell^{2}_{\cY}(\free)\end{bmatrix}
\label{bd}
\end{equation}
of the Cholesky factorization problem
\begin{equation}  \label{Cho-big1}
\begin{bmatrix} B \\ D \end{bmatrix} \begin{bmatrix} B^* & D^* \end{bmatrix} =
\begin{bmatrix}  I_{\cX^d} & 0 \\ 0 & I_{\ell^2_\cY(\free)}\end{bmatrix}
- \begin{bmatrix} A \\ \widehat \cO_{\bgam, C, A} \end{bmatrix}
\begin{bmatrix} A^* & \widehat \cO_{\bgam, C, \bA}^* \end{bmatrix}.
\end{equation}
Define formal power series
\begin{equation}  \label{Theta1}
\mathfrak D(z) = \sum_{\alpha \in \free} \bgam_{|\alpha|}^{-\frac{1}{2}} D_\alpha z^\alpha\quad\mbox{and}
\quad  \Theta(z) = {\mathfrak D}(z) + C R_\bo(Z(z) A) Z(z) B.
\end{equation}
Then $\Theta$ is a Beurling-Lax representer for $\cM$, i.e., $\Theta$ is a contractive multiplier
from $H^2_{\cU}(\free)$ to $H^2_{\bo, \cY}(\free)$ such that $\cM = \Theta \cdot H^2_\cU(\free)$.

\smallskip

{\rm (2)}  Suppose that $\cM$ is  isometrically included subspace of $H^2_{\bo,\cY}(\free)$ which is $\bS_{\bo, R}$-invariant
with $\bS_{\bo, R}^*$-invariant orthogonal complement $\cM^\perp$
represented as $\cM^\perp = \operatorname{Ran} \cO_{\bo,C,\bA}$ for a $\bo$-isometric pair $(C,\bA)$ with
$\bA$ being $\bo$-strongly stable as in Theorem \ref{T:1.2g}.  Then the additional hypothesis \eqref{extra-hy'} is automatic
and a McCT-inner Beurling-Lax representer $\Theta$ for $\cM$ can be constructed explicitly via the procedure given by \eqref{bd}, \eqref{Cho-big1},
\eqref{Theta1}.
\end{theorem}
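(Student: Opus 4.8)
The strategy is to reduce Theorem~\ref{T:explicit2} to a Leech-type factorization of the kernel $k_\cM(z,\zeta)$ together with the realization-type results of Chapter~3. First I would record the kernel of $\cM$. Since $\cM^{[\perp]} = \operatorname{Ran}\cO_{\bo,C,\bA}$ with $(C,\bA)$ an $\bo$-contractive output pair, part~(3) of Theorem~\ref{T:2-1.2} (or Theorem~\ref{T:2-1.2'}) identifies $\cM^{[\perp]}$ isometrically as the NFRKHS with kernel $K_{C,\bA}(z,\zeta) = C R_\bo(Z(z)A) R_\bo(Z(\zeta)A)^* C^*$, so that, by the definition of the Brangesian complement and standard reproducing-kernel calculus,
\begin{equation}  \label{kM-plan}
k_\cM(z,\zeta) = k_\bo(z,\zeta) I_\cY - C R_\bo(Z(z)A) R_\bo(Z(\zeta)A)^* C^*.
\end{equation}
Using the factorization \eqref{factor} of $k_\bo$ via $\bPs_\bgam$ and the identity \eqref{bPs-Obs} (which rewrites $C R_\bgam(Z(z)A) = C R_\bo(Z(z)A)(I-Z(z)A) = \bPs_\bgam(z)\widehat\cO_{\bgam,C,\bA}$), I want to massage \eqref{kM-plan} into the form $k_\cM(z,\zeta) = \Theta(z)\big(k_{\mathrm{nc,Sz}}(z,\zeta) I_\cU\big)\Theta(\zeta)^*$ for $\Theta$ defined as in \eqref{Theta1}; by Proposition~\ref{P:2.3}(1) this would make $\Theta$ a contractive multiplier from $H^2_\cU(\free)$ to $H^2_{\bo,\cY}(\free)$, and by Theorem~\ref{T:charNFRKHS} (applied to $K' = k_{\mathrm{nc,Sz}}I_\cU$, $F = \Theta$) the range $M_\Theta H^2_\cU(\free)$ with lifted norm is then exactly the NFRKHS with kernel $k_\cM$, hence equal to $\cM$.

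\textbf{Key steps.} (i) Derive \eqref{extra-hy} from \eqref{extra-hy'}: by the Stein equation $\cG_{\bo,C,\bA} - B_\bA[\cG_{\bo,C,\bA}] = \cG_{\bgam,C,\bA}$ \eqref{again21} and the factorization \eqref{12.2ag}, the matrix inequality \eqref{extra-hy} is equivalent to $\sum_j A_j^* A_j + \cG_{\bgam,C,\bA} \preceq I_\cX$, i.e.\ $B_\bA[I_\cX] + \big(\cG_{\bo,C,\bA} - B_\bA[\cG_{\bo,C,\bA}]\big) \preceq I_\cX$, which rearranges to exactly \eqref{extra-hy'} after moving the $B_\bA$ terms together. (ii) Invoke Proposition~\ref{P:elementary'} with $U = \sbm{A \\ \widehat\cO_{\bgam,C,\bA}}$ to obtain the Cholesky factor $\sbm{B \\ D}$ solving \eqref{Cho-big1}, so that $\sbm{A & B \\ \widehat\cO_{\bgam,C,\bA} & D}$ is coisometric (and, in case~(2), unitary once $\bA$ is $\bo$-strongly stable, since then the observability operator is isometric and $U$ is injective). (iii) Carry out the kernel computation: the coisometric identity \eqref{Cho-big1} says $I_{\cX^d} - A A^* = B B^*$ and $I_{\ell^2} - \widehat\cO_{\bgam,C,\bA}\widehat\cO_{\bgam,C,\bA}^* = D D^*$, together with the cross term $A \widehat\cO_{\bgam,C,\bA}^* + B D^* = 0$. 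Substituting $R_\bo(Z(z)A)(I - Z(z)A) = R_\bgam(Z(z)A)$ \eqref{Res-id-op} and telescoping — exactly as in the iteration argument at the end of the proof of Proposition~\ref{P:referee} and in Theorem~\ref{T:cm}(1) (via Proposition~3.2-type formulas already used there) — one gets
\begin{align*}
\Theta(z)\big(k_{\mathrm{nc,Sz}}(z,\zeta) I_\cU\big)\Theta(\zeta)^*
&= \bPs_\bgam(z)\big(k_{\mathrm{nc,Sz}}(z,\zeta) I_{\ell^2}\big)\bPs_\bgam(\zeta)^* \\
&\qquad - C R_\bo(Z(z)A) R_\bo(Z(\zeta)A)^* C^* \\
&= k_\bo(z,\zeta) I_\cY - C R_\bo(Z(z)A) R_\bo(Z(\zeta)A)^* C^* = k_\cM(z,\zeta),
\end{align*}
where in the last line we used \eqref{factor}. (iv) Conclude as above via Proposition~\ref{P:2.3} and Theorem~\ref{T:charNFRKHS}.

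\textbf{Case (2) and the main obstacle.} For part~(2), when $\cM$ is isometrically included with $\cM^\perp = \operatorname{Ran}\cO_{\bo,C,\bA}$ for a $\bo$-isometric, $\bo$-strongly stable pair $(C,\bA)$, Lemma~\ref{L:7.6}(2) gives $\cG_{\bo,C,\bA} = I_\cX$, so $I - \cG_{\bo,C,\bA} = 0$ and the hypothesis \eqref{extra-hy'} is trivially satisfied; moreover $\widehat\cO_{\bgam,C,\bA}$ is then isometric (since $\cG_{\bgam,C,\bA} = I_\cX - B_\bA[I_\cX] = \Gamma_{\bmu_1,\bA}[I_\cX]$ is the defect of the row contraction and the relevant gramian identity holds), $\sbm{A & B\\ \widehat\cO_{\bgam,C,\bA} & D}$ becomes unitary, and the resulting $M_\Theta$ is a partial isometry, i.e.\ $\Theta$ is McCT-inner; here $\cM$ isometrically included forces $\cM^{[\perp]} = \cM^\perp$ and the lifted norm on $M_\Theta H^2_\cU(\free)$ to be the genuine $H^2_{\bo,\cY}(\free)$-norm restricted to $\cM$. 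The step I expect to be the real work is the kernel identity in~(iii): one must carefully track the substitution $R_\bo(1-Z A) = R_\bgam$ through both summands of \eqref{kM-plan}, handle the cross term $A\widehat\cO_{\bgam,C,\bA}^* + BD^* = 0$ so that the ``mixed'' contributions cancel, and confirm convergence of the telescoping series in the formal-power-series topology using the admissibility bounds \eqref{18.2} and the Wiener-class hypothesis \eqref{1.8g}; the bookkeeping with the $\bgam_{|\alpha|}^{-1/2}$ normalizations in ${\mathfrak D}(z)$ versus the $\omega$-weights of $R_\bo$ is where errors are most likely to creep in.
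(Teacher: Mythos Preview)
Your proposal is correct and follows essentially the same route as the paper. The one organizational difference worth noting is that the paper makes the kernel computation in your step~(iii) completely transparent by introducing the auxiliary Fock-space-valued power series
\[
\Theta_0(z) = D + \widehat\cO_{\bgam,C,\bA}\,(I - Z(z)A)^{-1} Z(z) B,
\]
verifying the factorization $\Theta(z) = \bPs_\bgam(z)\,\Theta_0(z)$ directly from \eqref{powerseries} and \eqref{bPs-Obs}, and then invoking the already-proved identity \eqref{jan3b} of Theorem~\ref{T:cm} for $\Theta_0$ (with $C_0 = \widehat\cO_{\bgam,C,\bA}$ and $H = I_\cX$). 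Sandwiching that identity between $\bPs_\bgam(z)$ and $\bPs_\bgam(\zeta)^*$ and using \eqref{factor} and \eqref{Res-id-op} yields \eqref{ker-decom1} in three lines. This is exactly the ``bookkeeping with the $\bgam_{|\alpha|}^{-1/2}$ normalizations'' you flagged as the likely trouble spot: the factorization $\Theta = \bPs_\bgam\Theta_0$ packages it all away.

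One small slip in your treatment of part~(2): it is not $\widehat\cO_{\bgam,C,\bA}$ itself that is isometric, but the column $\sbm{A \\ \widehat\cO_{\bgam,C,\bA}}$ (since $\cG_{\bo,C,\bA} = I_\cX$ gives $\widehat\cO_{\bgam,C,\bA}^*\widehat\cO_{\bgam,C,\bA} = \cG_{\bgam,C,\bA} = I_\cX - A^*A$). The McCT-inner conclusion then follows simply because $\cM$ is isometrically included, so $\cM^{[\perp]} = \cM^\perp$ and $M_\Theta M_\Theta^*$ must be the orthogonal projection onto $\cM$.
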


\begin{proof} To see that \eqref{extra-hy'} implies \eqref{extra-hy}, rewrite \eqref{extra-hy'} as
$$
  A^* A + \big(\cG_{\bo, C, \bA} - \sum_{j=1}^d A_j^* \cG_{\bo, C, \bA} A_j  \big)\preceq I
$$
which easily converts into \eqref{extra-hy} upon noting \eqref{again21} and \eqref{12.2ag}.

\smallskip

To verify part (1), i.e., to show that $\Theta$ \eqref{Theta1} is a Beurling-Lax representer for $\cM$, by Theorem \ref{T:charNFRKHS}, 
it suffices to show that $\cM$
has kernel function $K_\cM$ given by 
$$
K_\cM(z, \zeta) = \Theta(z) (k_{\rm nc, Sz}(z, \zeta) I_\cU) \Theta(\zeta)^*.
$$
and furthermore, that the kernel function for the Brangesian complement $\cM^{[\perp]}$ of $\cM$ as a subspace of $H^2_{\bo, \cY}(\free)$
is given by
$$
 K_\cM^{[\perp]}(z, \zeta) = k_\bo(z, \zeta) I_\cY - K_\cM(z, \zeta).
$$
On the other hand, by part (4) of Theorem \ref{T:2-1.2} we know that the kernel function for the Brangesian complement
$\cM^{[\perp]}$ is given by
$$
  K_{\cM^{[\perp]}}(z, \zeta) = C R_\bo(Z(z)A) R_\bo(Z(\zeta) A)^* C^*.
$$
Thus, to show that $\Theta$ is a Beurling-Lax representer for $\cM$ it suffices to demonstrate the kernel decomposition
\begin{equation}   \label{ker-decom1}
 k_{\bo}(z, \zeta) I_\cY - \Theta(z) (k_{\rm nc, Sz}(z, \zeta) I_\cU)  \Theta(\zeta)^* = C R_\bo(Z(z)A) R_\bo(Z(\zeta) A)^* C^*.
 \end{equation}
 To simplify notation let us set $C_0 = \widehat \cO_{\bgam, C, \bA}$.  With $\sbm{ B \\ D }$ constructed as in \eqref{Cho-big1}
 we set 
 $$
   \Theta_0(z) = D + C_0 (I - Z(z) A)^{-1} Z(z) B.
 $$
 Then as a consequence of Theorem \ref{T:cm} (specifically the identity \eqref{jan3b}), we have the kernel decomposition
 \begin{align}  
& k_{\rm nc, Sz}(z, \zeta) I_{{\ell^2_\cY}(\free)} - \Theta_0(z) (k_{\rm nc, Sz}(z, \zeta) I_\cU) \Theta_0(\zeta)^*\notag\\
& \quad= C_0 (I - Z(z) A)^{-1} (I - A^* Z(\zeta)^*)^{-1} C_0^*.\label{ker-decom0}
 \end{align}
 Next we  verify that the formula
\begin{equation}  \label{formula}
   \Theta(z) = \bPs_\bgam(z) \Theta_0(z).
\end{equation}
defines the same power series $\Theta$ as in \eqref{Theta1}. Indeed, $\bPs_\bgam(z) D = {\mathfrak D}(z)$, 
by the formula \eqref{powerseries} for $\bPs_\bgam(z)$, and on the other hand, 
$$
\bPs_\bgam(z) C_0 = \bPs_\bgam(z) \widehat \cO_{\bgam, C, \bA} = C R_\bgam(Z(z) A),
$$
by \eqref{bPs-Obs}.
With these two latter substitutions combined with \eqref{Res-id-op} we have
\begin{align*}
 \Theta(z) & = {\mathfrak D}(z) + C R_\bgam(Z(z) A) (I - Z(z) A)^{-1} Z(z) B  \\
 & = {\mathfrak D}(z) + C R_\bo(Z(z) A) Z(z) B  
\end{align*}
in agreement with \eqref{Theta1}.  Then also
\begin{align*}
& k_{{\rm nc}, \bo}(z, \zeta) I_\cY - \Theta(z) (k_{\rm nc, Sz}(z, \zeta) I_\cU) \Theta(\zeta)^*  \\
& = \bPs_\bgam(z) \left(k_{\rm nc, Sz}(z, \zeta) I_{\ell^2_\cY(\free)} -
\Theta_0(z) (k_{\rm nc, Sz}(z, \zeta) I_\cU) \Theta_0(\zeta)^* \right) \bPs_\bgam(\zeta)^*  \\
& \quad \quad \quad \quad 
\text{ (by \eqref{factor} and \eqref{formula})}  \\
& = \bPs_\bgam(z) C_0 (I - Z(z) A)^{-1} (I - A^* Z(\zeta)^*)^{-1} C_0^*\bPs_\bgam(\zeta)^*
\text{ (by \eqref{ker-decom0})}  \\
& = C R_\bgam(Z(z) A) (I - Z(z) A)^{-1}  (I - A^* Z(\zeta)^*)^{-1} R_\bgam(Z(\zeta)A)^* C^* \\
& = C R_\bo(Z(z) A) R_\bo(Z(\zeta) A)^* C^*
\text{ (by \eqref{Res-id-op})}
\end{align*}
and \eqref{ker-decom1} follows.  This completes the verification of part (1).

\smallskip

With regard to part (2),   since $\cM$ is isometrically contained in $H^2_{\bo, \cY}(\free)$, the Brangesian complement is just the
standard Hilbert-space orthogonal complement $\cM^\perp$.  As $\cM$ is $\bS_{\bo, R}$-invariant, then orthogonal complement
$\cM^\perp$ is $\bS_{\bo, R}^*$-invariant and we may use Theorem \ref{T:1.2g} to represent $\cM^\perp$ as
$\operatorname{Ran} \cO_{C, \bA}$ with $(C, \bA)$ a $\bo$-isometric pair with in addition $\bA$ strongly $\bo$-stable.  Furthermore
in this case the observability operator $\cO_{\bo, C, \bA}$ is isometric, the observability gramian $\cG_{\bo, C, \bA}$ is equal to the identity
$I_\cX$, and the additional hypothesis \eqref{extra-hy'} is valid in the trivial form $0 \preceq 0$.  Hence condition \eqref{extra-hy}
holds with equality and the procedure given in part (1) constructs the McCT-inner Beurling-Lax representer for $\cM$.
\end{proof}

\begin{remark}  \label{R:gen-ker-decom}  
Let us observe that the Stein equation \eqref{again21} can be equivalently rewritten as
$$
  \begin{bmatrix} A^* & ( \widetilde \cO_{\bgam, C, \bA})^* \end{bmatrix}
  \begin{bmatrix} \cG_{\bo, C, \bA} \otimes I_d & 0 \\ 0 & I_{\ell^2_\cY(\free) }\end{bmatrix}
  \begin{bmatrix}  A \\ \widetilde \cO_{\bgam, C, \bA} \end{bmatrix} = \cG_{\bo, C, \bA}.
$$
Then the same calculations as in the derivation of the kernel 
decomposition \eqref{ker-decom1}  in the proof of Theorem \ref{T:explicit2}
leads to the construction of a formal power series $\Theta(z)$ so that  
$$
k_{\bo}(z, \zeta) I_\cY - \Theta(z) (k_{\rm nc, Sz}(z, \zeta) I_\cU) \Theta(\zeta)^* =
C R_\bo(Z(z) A) \cG_{\bo, C, \bA}^{-1} R_\bo(Z(\zeta) A)^* C^*.
$$
This kernel decomposition however appears to have no application to proving results about Beurling-Lax representations
(unless of course $\cG_{\bo, C, \bA} = I_\cX$).
\end{remark}

\begin{remark}  \label{R:NC-BL}  Note that if $\cM$ is a contractively-included subspace of $H^2_{\bo,\cY}(\free)$ such that
its Brangesian complement satisfies the hypotheses of item (4) in Theorem \ref{T:2-1.2}, it follows that $\cM$ satisfies conditions
(2) and (3) in Theorem \ref{T:NC-BL}, i.e., $\cM$ is $\bS_{\bo, R}$-invariant and $(\bS_{\bo, R}|_\cM)^*$ is a row contraction.
In general it need not be the case (unlike the classical case for $d=1$) that $\cM$ contractively included in $H^2_{\bo, \cY}(\free)$
and invariant for $\bS_{\bo, R}$ implies that $\cM^{[\perp]}$ is $\bS_{\bo, R}^*$-invariant (unless $\cM$ is 
isometrically contained in $H^2_{\bo, \cY}(\free)$); see e.g., the end of the report \cite{BBCR}.
\end{remark}

There is a more flexible Beurling-Lax representation theorem in the same spirit but which appears to 
require an additional hypothesis (see Remark \ref{R:2proofs} below).

\begin{theorem} \label{T:NC-BL'}
Let $\bo$ and $\bo'$ be two admissible weights satisfying condition \eqref{1.8g}.
Then a Hilbert space $\cM$ is such that
\begin{enumerate}
\item $\cM$ is contractively included in $H^2_{\bo,\cY}(\free)$,
\item $\cM$ is ${\bf S}_{\bo,R}$-invariant,
\item the $d$-tuple $\; {\bf A} =(A_1,\ldots,A_d)$ where $A_j=(S_{\bo,R,j}|_{\cM})^*$ for
$j=1,\ldots,d$, is a strongly $\bo'$-stable $\bo'$-hypercontraction
\end{enumerate}
if and only  if there is a coefficient Hilbert space $\cU$ and a
contractive multiplier $\Theta$ from $H^2_{\bo',\cU}(\free)$ to $H^2_{\bo,\cY}(\free)$ so that
\begin{equation}
\cM = M_\Theta  H^2_{\bo',\cU}(\free)\quad \mbox{with lifted norm}\quad 
\| M_\Theta f \|_{\cM} = \| Q f \|_{H^2_{\bo',\cU}(\free)},
\label{Mform'}
\end{equation}
where $Q$ is the orthogonal projection onto $(\operatorname{Ker} M_{\Theta})^\perp$.
  
\smallskip
        
Moreover, if it is the case that $\cM$ is isometrically contained in $H^2_{\bo, \cY}(\free)$, then
$\Theta$ can be taken to be a {\rm McCT}-inner multiplier from $H^2_{\bo', \cU}(\free)$ to $H^2_{\bo, \cY}(\free)$.
        \end{theorem}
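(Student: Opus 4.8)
The plan is to mimic the proof of Theorem \ref{T:NC-BL}, but with the unweighted Fock space $H^2_\cU(\free)$ as input space replaced by the weighted space $H^2_{\bo', \cU}(\free)$, exploiting throughout the observability-operator machinery of Chapter \ref{S:Stein}. For the \emph{sufficiency} direction: given $\cM = M_\Theta H^2_{\bo', \cU}(\free)$ with lifted norm as in \eqref{Mform'}, conditions (1) and (2) follow exactly as in the proof of Theorem \ref{T:NC-BL} --- condition (1) from $\| M_\Theta \| \le 1$ and the lifted-norm definition, condition (2) from the intertwining relations $S_{\bo, R, j} M_\Theta = M_\Theta S_{\bo', R, j}$ (which hold because $M_\Theta$ is a left-multiplication operator and the $S$'s are right-multiplication operators). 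For condition (3), I would repeat the computation leading to \eqref{9.4}, obtaining $T_j^* \colon M_\Theta f \mapsto M_\Theta S_{\bo', R, j}^* Q f$ and then, iterating, $\bT^{*\alpha} \colon M_\Theta f \mapsto M_\Theta (\bS_{\bo', R}^*)^{\alpha} Q f$. The point is that $\bS_{\bo', R}^*$ on $H^2_{\bo', \cU}(\free)$ is, by Proposition \ref{P:bo-model2}, a $\bo'$-strongly stable $\bo'$-hypercontraction; since $Q$ is an orthogonal projection commuting appropriately (the analogue of \eqref{9.2}) and these properties pass to compressions to $Q$-invariant structure, the tuple $\bT = (T_1, \dots, T_d)$ with $T_j = (S_{\bo, R, j}|_\cM)^*{}^*$... more precisely $A_j = T_j^* = (S_{\bo, R,j}|_\cM)^*$ inherits $\bo'$-hypercontractivity and $\bo'$-strong stability. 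Here one uses the quadratic-form identities \eqref{3.1g}, \eqref{3.1h} for $\bS_{\bo',R}^*$ evaluated on the vectors $Q f$, together with the fact that $\| M_\Theta (\bS_{\bo',R}^*)^\alpha Q f \|_\cM = \| Q (\bS_{\bo',R}^*)^\alpha Q f \|_{H^2_{\bo',\cU}(\free)} \le \| (\bS_{\bo',R}^*)^\alpha Q f \|_{H^2_{\bo',\cU}(\free)}$, to push the inequalities $\Gamma_{\bo', \bA}[I_\cM] \succeq 0$, $\Gamma^{(k)}_{\bo', \bA}[I_\cM] \succeq 0$ and the strong-stability limit from $\bS_{\bo', R}^*$ down to $\bA$.

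For the \emph{necessity} direction: suppose $\cM$ satisfies (1), (2), (3). By Theorem \ref{T:charNFRKHS} (or the general principle that a Hilbert space contractively included in a NFRKHS is a NFRKHS) $\cM$ is a NFRKHS whose elements are formal power series $\sum_\alpha f_\alpha z^\alpha$ with $f_\alpha \in \cY$. On $\cM$ put $\bA = (A_1, \dots, A_d)$ with $A_j = (S_{\bo, R, j}|_\cM)^*$ and $C = E|_\cM$, the free-coefficient evaluation. The hypothesis (3) says exactly that $\bA$ is a $\bo'$-strongly stable $\bo'$-hypercontraction, so $\Gamma_{\bo', \bA}[I_\cM] \succeq 0$; choose $\cU$ and $C \colon \cM \to \cU$ with $C^* C = \Gamma_{\bo', \bA}[I_\cM]$, making $(C, \bA)$ a $\bo'$-isometric output pair with $\bA$ $\bo'$-strongly stable. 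By Theorem \ref{T:1.2g} (converse direction applied to the pair $(C, \bA)$, noting $\bA$ acts on the abstract space $\cM$ which need not itself sit inside $H^2_{\bo', \cU}(\free)$), the observability operator $\cO_{\bo', C, \bA} \colon \cM \to H^2_{\bo', \cU}(\free)$ is isometric and intertwines: $S_{\bo', R, j}^* \cO_{\bo', C, \bA} = \cO_{\bo', C, \bA} A_j$. Taking adjoints, $\cO_{\bo', C, \bA}^* S_{\bo', R, j} = A_j^* \cO_{\bo', C, \bA}^* = (S_{\bo, R, j}|_\cM) \cO_{\bo', C, \bA}^*$; since $\cO_{\bo', C, \bA}^* \colon H^2_{\bo', \cU}(\free) \to \cM \hookrightarrow H^2_{\bo, \cY}(\free)$ intertwines the right-shift tuples $\bS_{\bo', R}$ and $\bS_{\bo, R}$, Proposition \ref{P:RR} gives $\cO_{\bo', C, \bA}^* = M_\Theta$ for a contractive multiplier $\Theta \in \cL(\cU, \cY)\langle\langle z \rangle\rangle$ from $H^2_{\bo', \cU}(\free)$ to $H^2_{\bo, \cY}(\free)$. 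Since $\cO_{\bo', C, \bA}$ is isometric onto $\cM$ (with $\cM$-norm), $M_\Theta = \cO_{\bo', C, \bA}^*$ is a coisometry from $H^2_{\bo', \cU}(\free)$ onto $\cM$, which is precisely the statement that $\cM = M_\Theta H^2_{\bo', \cU}(\free)$ with the lifted norm \eqref{Mform'}.

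For the final clause: if $\cM$ is isometrically included in $H^2_{\bo, \cY}(\free)$, then the coisometry $M_\Theta \colon H^2_{\bo', \cU}(\free) \to \cM$ composed with the isometric inclusion $\cM \hookrightarrow H^2_{\bo, \cY}(\free)$ is still a partial isometry, i.e.\ $M_\Theta \colon H^2_{\bo', \cU}(\free) \to H^2_{\bo, \cY}(\free)$ is a partial isometry, so $\Theta$ is McCT-inner by Definition \ref{D:mult}. (One should also remark, as in Theorem \ref{T:NC-BLisom}, that when $\cM$ is isometrically included, condition (3) --- here with $\bo'$ in place of the ambient weight --- needs to be \emph{assumed}, not derived, so the isometric case is genuinely the specialization rather than an automatic consequence.)

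I expect the main obstacle to be the precise verification of condition (3)'s transfer in the sufficiency direction: one must show that the abstract compression $A_j = (S_{\bo, R, j}|_\cM)^*$ inherits the $\bo'$-hypercontractivity inequalities $\Gamma^{(k)}_{\bo', \bA}[I_\cM] \succeq 0$ and the $\bo'$-strong stability limit $\Delta_{\bA, I_\cM} = 0$ from $\bS_{\bo', R}^*$ via the partial-isometry relation $\bT^{*\alpha} M_\Theta = M_\Theta (\bS_{\bo', R}^*)^\alpha Q$ and the projection $Q$. This requires care because $Q$ only intertwines the shifts in the one-sided form \eqref{9.2}, so the quadratic forms $\langle \Gamma_{\bo', \bA}[I_\cM] M_\Theta f, M_\Theta f\rangle_\cM$ are bounded below by $\langle \Gamma_{\bo', \bS_{\bo',R}^*}[I] Q f, Q f\rangle = \| (Qf)_\emptyset \|^2 = \| (M_\Theta f)_\emptyset \|^2$ only after one checks the termwise domination $\| \bT^{*\alpha} M_\Theta f \|_\cM \le \| (\bS_{\bo',R}^*)^\alpha Q f \|_{H^2_{\bo',\cU}(\free)}$ and then assembles the series with the (possibly sign-varying) coefficients $c_{|\alpha|}$, $c^{\bo',k}_{|\alpha|}$ --- this is exactly the kind of bookkeeping already carried out in the proof of part (4) of Theorem \ref{T:2-1.2} and should go through by the same arguments, but it is the technical heart of the matter.
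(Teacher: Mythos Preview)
Your overall approach matches the paper's, both in the necessity direction (construct $C$ with $C^*C=\Gamma_{\bo',\bA}[I_\cM]$, use that $\cO_{\bo',C,\bA}$ is isometric by Lemma~\ref{L:7.6}, take adjoints in the intertwining relation, compose with the contractive inclusion $\iota\colon\cM\to H^2_{\bo,\cY}(\free)$, and apply Proposition~\ref{P:RR}) and in the sufficiency direction.

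However, the ``main obstacle'' you flag in the sufficiency direction is resolved more simply than you anticipate, and your proposed workaround would not actually succeed. The one-sided relation $S^*_{\bo',R,j}Q = QS^*_{\bo',R,j}Q$ (the analogue of \eqref{9.2}) says precisely that $\operatorname{Ran} Q=(\operatorname{Ker} M_\Theta)^\perp$ is $\bS_{\bo',R}^*$-invariant. Consequently $(\bS_{\bo',R}^*)^\alpha Qf\in\operatorname{Ran} Q$ already, so
\[
\|\bA^\alpha M_\Theta f\|_{\cM}=\|Q(\bS_{\bo',R}^*)^\alpha Qf\|_{H^2_{\bo',\cU}(\free)}=\|(\bS_{\bo',R}^*)^\alpha Qf\|_{H^2_{\bo',\cU}(\free)}
\]
with \emph{equality}, not merely inequality. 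The quadratic forms defining $\Gamma_{\bo',\bA}[I_\cM]$, $\Gamma^{(k)}_{\bo',\bA}[I_\cM]$, and the $\bo'$-stability defect on $\cM$ are therefore exact transfers of the corresponding quantities for $\bS_{\bo',R}^*$ on $H^2_{\bo',\cU}(\free)$ evaluated at $Qf$, and condition~(3) follows immediately from Proposition~\ref{P:bo-model2}. Your alternative plan --- termwise domination $\|\bA^\alpha M_\Theta f\|_\cM\le\|(\bS_{\bo',R}^*)^\alpha Qf\|$ assembled with the sign-varying coefficients $c_{|\alpha|}$ --- would not yield $\Gamma_{\bo',\bA}[I_\cM]\succeq 0$, since an inequality on individual terms does not propagate through a series whose coefficients change sign. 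Recognizing the equality is the essential step.
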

        
\begin{proof}  
\textbf{Sufficiency:}  Suppose that $\cM$ is of the form \eqref{Mform'} for a contractive multiplier from 
$H^2_{\bo', \cU}(\free)$ to $H^2_{\bo, \cY}(\free)$.  The inequality \eqref{ineq1} takes the form
$$
\| M_\Theta f \|_{H^2_{\omega, \cY}(\free) } \le \| Q f \|_{H^2_{\bo', \cU}(\free)} \le \| M_\Theta f \|_\cM
$$
and hence (1) holds.  Next, relations \eqref{9.2} take the form
\begin{equation}
Q S_{\bo', R, j} = Q S_{\bo', R, j} Q, \quad S^*_{\bo', R, j} Q = Q S^*_{\bo', R, j} Q\quad\text{for}\quad j =1,\dots,d,
\label{9.2'}
\end{equation}
and mimicking the succeeding computation (with $T_j$, $S_{\bo',R,j}$ and $H^2_{\bo',\cU}(\free)$ instead of 
$T_j^*$, $S_{1,R,j}$ and $H^2_{\bo',\cU}(\free)$, respectively) 
we get a more explicit formula for $A_j=(S_{\bo,R,j}|_{\cM})^*$:
\begin{equation}
A_j \colon M_\Theta f  \mapsto M_\Theta S_{\bo',R,j}^* Qf\quad\mbox{for}\quad j=1,\ldots,d. 
\label{again18}
\end{equation}
Making use of the same substitutions as in computation \eqref{again20} we conclude that $\bA^*$ is a row contraction, i.e., 
that the tuple $\bA$ is contractive in the sense of \eqref{4.29}.

\smallskip

Iterating the formula \eqref{again18} gives, on account of relations \eqref{9.2'},
$$
\bA^\alpha\colon M_\Theta f \mapsto M_\Theta 
(\bS^*_{\bo',R})^{\alpha} Q f \quad \text{for}\quad \alpha \in \free.
$$
Combining the latter with the definition of the lifted norm \eqref{M-norm}, we get
$$
\|\bA^{\alpha}\Theta f\|_{\cM}=\|{\bf S}_{\bo',R}^{*\alpha}Qf\|_{H^2_{\bo',\cU}(\free)}\quad \text{for}\quad \alpha \in \free.
$$
Making use of all these relations along with the definitions \eqref{1.11g}, \eqref{3.5ag}, we get
\begin{align*}
\left\langle (\Gamma_{\bo',\bA}[I_{\cM}])\Theta f, \, \Theta f\right\rangle_{\cM}&=
\big\langle \big(\Gamma_{\bo',{\bf S}_{\bo',R}^*}[I_{H^2_{\bo',\cU}(\free)}]\big)
Q f, \, Q f\big\rangle_{H^2_{\bo',\cU}(\free)},\\
\big\langle (\Gamma^{(k)}_{\bo',\bA}[I_{\cM}])\Theta f, \, \Theta
f\big\rangle_{\cM}&=\big\langle \big(\Gamma^{(k)}_{\bo',{\bf S}_{\bo',R}^*}[I_{H^2_{\bo',\cU}(\free)}]\big)
Q f, \, Q f\big\rangle_{H^2_{\bo',\cU}(\free)}
\end{align*}
for $k=1,2,\dots$, and also
\begin{align*}
&\sum_{\alpha\in\free:|\alpha|=k}\big\langle 
\bA^{*\alpha^\top}\Gamma^{(k)}_{\bo',\bA}[I_{\cM}]\bA^{\alpha}\Theta f, \, 
\Theta f\big\rangle_{\cM}\\
&\quad=\sum_{\alpha\in\free:|\alpha|=k}\big\langle\Gamma^{(k)}_{\bo',\bA}[I_{\cM}]\bA^{\alpha}\Theta f, \, 
]\bA^{\alpha}\Theta 
f\big\rangle_{\cM}\\  
&\quad=\sum_{\alpha\in\free:|\alpha|=k}\big\langle\Gamma^{(k)}_{\bo',\bA}[I_{\cM}]\Theta {\bf 
S}_{\bo',R}^{*\alpha}Qf, \, \Theta {\bf S}_{\bo',R}^{*\alpha}Qf\big\rangle_{\cM}\\
&\quad=\sum_{\alpha\in\free:|\alpha|=k}
\big\langle\big(\Gamma^{(k)}_{\bo',{\bf S}_{\bo',R}^*}[I_{H^2_{\bo',\cU}(\free)}]\big)
Q {\bf S}_{\bo',R}^{*\alpha}Qf, \; Q {\bf S}_{\bo,R}^{*\alpha}Qf\big\rangle_{H^2_{\bo',\cU}(\free)}\\
&\quad=\sum_{\alpha\in\free:|\alpha|=k}\big\langle
{\bf S}_{\bo',R}^{\alpha^\top}\big(\Gamma^{(k)}_{\bo',{\bf S}_{\bo',R}^*}[I_{H^2_{\bo',\cU}(\free)}]\big)
{\bf S}_{\bo',R}^{*\alpha}Qf, \; Qf\big\rangle_{H^2_{\bo',\cU}(\free)}.
\end{align*}
Since $S_{\bo',R}^*$ is an $\bo'$-strongly stable $\bo'$-hypercontraction on $H^2_{\bo',\cY}(\free)$
(by Proposition \ref{P:bo-model2}), we conclude from the latter computations that
$$
\left\langle (\Gamma_{\bo',\bA}[I_{\cM}])\Theta f, \, \Theta f\right\rangle_{\cM}\ge 0, \quad
\big\langle (\Gamma^{(k)}_{\bo',\bA}[I_{\cM}])\Theta f, \, \Theta
f\big\rangle_{\cM}\ge 0
$$
for all $f\in H^2_{\bo',\cU}(\free)$ and all $k\ge 1$, and that moreover,
$$
\lim_{k\to \infty}\sum_{\alpha\in\free:|\alpha|=k}\big\langle
\bA^{*\alpha^\top}\Gamma^{(k)}_{\bo',\bA}[I_{\cM}]\bA^{\alpha}\Theta f, \,
\Theta f\big\rangle_{\cM}=0.
$$
We conclude that $\bA$ is an $\bo'$-strongly stable $\bo'$-hypercontraction
on $\cM$ and thereby complete the verification of condition (3). The verification of conditions (1) and (2) 
proceeds  as in the proof of sufficiency in Theorem \ref{T:NC-BL}.

\smallskip

\textbf{Necessity.}  Suppose that $\cM$ is a subspace of $H^2_{\bo, \cY}(\free)$ satisfying conditions (1), (2), (3).
By condition (3), the operator $\Gamma_{\bo', \bA}[I_\cM]$ is positive semidefinite.
Define $C \colon \cM \to \cU:= \overline{ \operatorname{Ran}} (\Gamma_{\bo', \bA}[I_\cM])^{1/2}$ so that
$C^*C = \Gamma_{\bo', \bA}[I_\cM]$.  Then the pair $(C, \bA)$ is a $\bo'$-isometric output pair, and
the $\bo'$-observability operator $\cO_{\bo', C, \bA} \colon \cM \to H^2_{\bo', \cU}(\free)$ is isometric (by part (2) in Lemma 
\ref{L:7.6}) and satisfies the intertwining condition
$$
   S_{\bo', R, j}^* \cO_{\bo', C, \bA} = \cO_{\bo', C, \bA} A_j\quad \text{for}\quad  j = 1, \dots, d.
$$
Taking adjoints then gives
$$
\cO_{\bo', C, \bA}^* S_{\bo', R, j} = A_j^* \cO_{\bo', C, \bA}^* \quad \text{for}\quad  j = 1, \dots, d.
$$
The inclusion map $\iota\colon \cM\to  H^2_{\bo, \cY}(\free)$ is a contraction due to condition (1). Due
to condition (2) and the definition of $A_j$, we also have $\iota \circ A_j^* = S_{\bo, R, j}\circ \iota$. 
Therefore, the operator 
$$
X = \iota \circ\cO_{\bo', C, \bA}^* \colon H^2_{\bo', \cU}(\free) \to H^2_{\bo, \cY}(\free)
$$
is contractive (as $\iota$ is a contraction and $\cO_{\bo', C, \bA}$ is an isometry) and 
satisfies the intertwining relations 
$$
X S_{\bo', R, j}=\iota \circ\cO_{\bo', C, \bA}^*S_{\bo', R, j}=\iota \circ A_j^* \cO_{\bo', C, \bA}^*
=S_{\bo, R, j}\circ \iota\circ \cO_{\bo', C, \bA}^* = S_{\bo, R, j} X
 $$
for $j=1,\ldots,d$. By Proposition \ref{P:RR} there is a contractive multiplier $\Theta$ from 
$H^2_{\bo', \cU}(\free)$ to $H^2_{\bo, \cY}(\free)$ so that $X = M_\Theta$. In case $\cM$ is isometrically included in 
$H^2_{\bo, \cY}(\free)$, one can verify that $M_{\Theta}$ is a partial isometry so that  
$\Theta$ is McCT-inner just as in the proof of  Theorem \ref{T:NC-BL}.
\end{proof}

\begin{remark}  \label{R:2proofs}
 Note that the situation in Theorem \ref{T:NC-BL} is the special case of Theorem \ref{T:NC-BL'} 
where in Theorem \ref{T:NC-BL'} one specializes $\bo'$ to be $\bmu_1$ (i.e., $\omega'_j = 1$ for all $j\ge 0$).
The results differ somewhat in that in Theorem \ref{T:NC-BL} it was not necessary to assume that 
$\bA$ is $\bo'$-strongly stable, or equivalently for the case $\bo' = \mu_1$ as a consequence of Remark 
\ref{R:stabmu}, $\bA$ is strongly stable in the usual sense.  On the other hand, the proof of sufficiency in Theorem
\ref{T:NC-BL'} shows that in the end necessarily $\bA$ is strongly stable after all. In summary we can say that
specializing Theorem \ref{T:NC-BL'} to the case $\bo' = \bmu_1$ leads us to the weaker version
of Theorem \ref{T:NC-BL} where one adds to hypothesis (3) the (in the end unneeded) additional hypothesis
that ${\mathbf T}^*$ is strongly stable.

\smallskip

An observation concerning Theorem \ref{T:NC-BLisom} versus  the isometric-inclusion version of \ref{T:NC-BL'} is the following.  When  $\cM$ is contained in $H^2_{\bo, \cY}(\free)$ isometrically and $\bo' = \bmu_1$,
it is automatic that $\bA$ is a $\bo'$-contraction;  for the case of a general $\bo'$ this is not the case in general (see 
\cite[Remark 7.6]{BBIEOT}).

\smallskip

Finally let us  note that the classical de Branges result in \cite{dBR2} amounts to the case $d=1$, $\bo = \bmu_1$
in Theorem \ref{T:NC-BL}.
\end{remark}

\section{Representations with model space of the form $\bigoplus_{j=1}^n \cA_{j, \cU_j}(\free)$}  \label{S:BL3}

A Beurling-Lax representation as in Theorem \ref{T:NC-BL'} lacks the precision of the 
classical version in several 
respects:  (1) the operator $M_{G}$ is only a partial isometry rather than an isometry (the distinction between 
{\em McCT-inner} and {\em strictly inner}) with the consequence that in general $M_G$ has a huge kernel, and 
(2) the input coefficient space $\cU$ is almost always infinite dimensional. For the case of standard weighted 
Bergman-Fock spaces we will give a more structured version of the Beurling-Lax theorem
which should carry more information about the $\bS_{n,R}$-invariant subspace $\cM$ of $\cA_{n,\cY}(\free)$, 
at least under a strong additional assumption; we leave as an open question whether a version of the following result 
still holds without imposition of this extra assumption.
The extra assumption on the $\bS_{n,R}$-invariant subspace $\cM$ 
of $\cA_{n,\cY}(\free)$ is the following:

\medskip

$\bullet$ The output pair $(C, \bA)$ with $C = E|_{\cM^{\perp}}$ and 
$\bA =(S_{n,R,1}|_{\cM^{\perp}},\ldots,S_{n,R,d}\vert_{\cM^{\perp}})$
is exactly observable, i.e.,
\begin{equation} \label{assume}
    \cG_{1,C, \bA}:=\sum_{\alpha\in\free}\bA^{*\alpha^\top}C^*C\bA^{\alpha}
\succ 0 \text{ (strictly positive definite).}
\end{equation}
Since $\cG_{k,C, \bA}\succeq \cG_{1,C, \bA}$ for all $k>1$ (see the proof of Proposition \ref{P:identities}),
the assumption \eqref{assume} implies that $\cG_{k,C,\bA}$ is strictly positive definite for all $k\ge 1$.

\begin{theorem} \label{T:BL}
Let $\cM$ be a closed ${\bf S}_{n,R}$-invariant subspace of
$\cA_{n,\cY}(\free)$ satisfying the assumption 
\eqref{assume}. Then there exist Hilbert spaces $\cU_{1},\ldots,\cU_{d}$ and 
a McCT-inner multiplier
$F(z) = \begin{bmatrix} F_{1}(z) & \cdots & F_{n}(z) \end{bmatrix}$
from ${\displaystyle\bigoplus_{j=1}^{n} \cA_{j,\, \cU_{j}}(\free)}$ to $\cA_{n, \cY}(\free)$ so that 
$\; \cM = M_{F} \bigg( {\displaystyle\bigoplus_{j=1}^{n} \cA_{j,\, \cU_{j}}(\free)}\bigg)$.
        \end{theorem}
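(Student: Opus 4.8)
\textbf{Proof plan for Theorem \ref{T:BL}.}

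The plan is to proceed inductively by peeling off one Bergman kernel at a time from the reproducing kernel of $\cM$, using at each stage the results of Chapter \ref{S:Stein} to realize the relevant backward-shift-invariant complement as a shifted observability-operator range space and then Theorem \ref{T:cm} (and Proposition \ref{P:2.4}) to produce the corresponding inner factor. First I would set $\cN = \cM^\perp$, which is $\bS_{n,R}^*$-invariant and isometrically contained in $\cA_{n,\cY}(\free)$; by Theorem \ref{T:1.2g} applied with $\bo = \bmu_n$ there is an $n$-isometric pair $(C,\bA)$ with $\bA$ strongly stable (use Proposition \ref{P:equiv} and Remark \ref{R:stabmu} to pass between the notions of stability here) so that $\cN = \operatorname{Ran} \cO_{n,C,\bA}$ and
$$
k_\cN(z,\zeta) = C(I - Z(z)A)^{-n}(I - A^*Z(\zeta)^*)^{-n}C^*.
$$
By the reproducing-kernel calculus this gives
$$
k_\cM(z,\zeta) = k_{{\rm nc},n}(z,\zeta)I_\cY - C(I - Z(z)A)^{-n}(I - A^*Z(\zeta)^*)^{-n}C^*.
$$
The assumption \eqref{assume} guarantees (via the chain \eqref{chain-ineq}) that all the shifted gramians $\cG_{k,C,\bA} = \Gamma_{n,\bA}^{(?)}\dots$ — more precisely $\cG_{k,C,\bA}$ as in \eqref{4.3} — are strictly positive definite, hence invertible; this is the device that keeps the iterative construction well-posed.

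The core of the argument is an iterative kernel decomposition. Using the fundamental identity \eqref{kern/n-1} relating $k_{{\rm nc},n}$ to $k_{{\rm nc},n-1}$ together with the Stein identity \eqref{4.10} for the gramians, I would establish at each level $j = n, n-1, \dots, 1$ a decomposition of the form
$$
k_\cM(z,\zeta) = \sum_{\ell = j}^{n} G_\ell(z)\big(k_{{\rm nc},\ell}(z,\zeta)\otimes I_{\cU_\ell}\big)G_\ell(\zeta)^* + \Theta_j(z)\,k_{{\rm nc},j}(z,\zeta)\,\Theta_j(\zeta)^* - C R_{n,?}\dots
$$
where the last "remainder" term is again of the de Branges--Rovnyak type $C R_{n,k}(Z(z)A)\,\cG_{k,C,\bA}^{-1}\,R_{n,k}(Z(\zeta)A)^*C^*$ for the appropriate shift index $k = n - j$. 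At each step one strips off a $k_{{\rm nc},j}$-factor by applying the operation $X \mapsto X - \sum_{i=1}^d \overline{\zeta}_i X z_i$ combined with \eqref{kern/n-1}, produces a new contractive multiplier $G_j$ via a Cholesky factorization as in Theorem \ref{T:cm}/Proposition \ref{P:elementary'} (here is where the invertibility of $\cG_{k,C,\bA}$ is used, to convert a Stein-type inequality into a factorization through \eqref{fact}), and identifies the reduced remainder with a shifted observability-operator range space as in Proposition \ref{T:RanOb=NFRKHS}. After $n$ steps the remainder term vanishes and one is left with
$$
k_\cM(z,\zeta) = \sum_{j=1}^{n} F_j(z)\big(k_{{\rm nc},j}(z,\zeta)\otimes I_{\cU_j}\big)F_j(\zeta)^*,
$$
and then Proposition \ref{P:2.4} tells us that $F = \begin{bmatrix} F_1 & \cdots & F_n \end{bmatrix}$ is a coisometric multiplier from $\bigoplus_{j=1}^n \cA_{j,\cU_j}(\free)$ onto $\cM$. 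Since $\cM$ is isometrically contained in $\cA_{n,\cY}(\free)$, the multiplication operator $M_F$ into $\cA_{n,\cY}(\free)$ is a partial isometry, i.e. $F$ is McCT-inner, and $\cM = M_F\big(\bigoplus_{j=1}^n \cA_{j,\cU_j}(\free)\big)$ as desired.

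The step I expect to be the main obstacle is the bookkeeping of the iterative kernel decomposition: one must keep careful track of which shift index $R_{n,k}$ and which gramian $\cG_{k,C,\bA}$ appears at each level, verify that applying $X \mapsto X - \sum_i \overline{\zeta}_i X z_i$ to the de Branges--Rovnyak remainder using \eqref{Res-id-op}-type identities (in the $\bmu_n$-adapted form, i.e. $R_{n,k}(Z(z)A)(I - Z(z)A) = R_{n,k-1}(Z(z)A) - \mu_{n,?}^{-1}(\cdots)$, which follows from \eqref{1.9pre}) genuinely reduces the shift index by one and leaves a remainder of the same structural form, and confirm that the Cholesky factor $G_j$ produced at each stage is a \emph{contractive} multiplier from $\cA_{j,\cU_j}(\free)$ (not merely $H^2_{\cU_j}(\free)$) to $\cA_{n,\cY}(\free)$ — this last point is exactly where the admissibility of $\bmu_j$ and the nesting of weighted Bergman-Fock spaces must be invoked. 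The positivity needed to run each Cholesky step is supplied by \eqref{assume} via strict positivity of the $\cG_{k,C,\bA}$, so the induction never stalls; assembling the pieces and checking the base case ($j=n$, which is just the initial decomposition above) then completes the argument.
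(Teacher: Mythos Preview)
Your high-level strategy matches the paper: represent $\cM^\perp=\operatorname{Ran}\cO_{n,C,\bA}$ with $(C,\bA)$ $n$-isometric and $\bA$ strongly stable, iterate Cholesky factorizations to obtain $k_\cM(z,\zeta)=\sum_{j=1}^n F_j(z)k_{{\rm nc},j}(z,\zeta)F_j(\zeta)^*$, and invoke Proposition \ref{P:2.4}. But the specific peeling mechanism you describe is where it breaks down.

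The operation $X\mapsto X-\sum_i\bzeta_i Xz_i$ takes $k_{{\rm nc},m}$ to $k_{{\rm nc},m-1}$ via \eqref{kern/n-1}, but applied to a de Branges--Rovnyak remainder $C(I-Z(z)A)^{-m}P^{-1}(I-A^*Z(\zeta)^*)^{-m}C^*$ it produces nothing tractable: the index shift on kernel coefficients does not correspond to any closed-form transformation of the resolvent factors. Your proposed remainder $CR_{n,k}(Z(z)A)\cG_{k,C,\bA}^{-1}R_{n,k}(Z(\zeta)A)^*C^*$ is also not what arises; the actual remainder at stage $k$ (see \eqref{id-ell}) is
\[
C(I-Z(z)A)^{-(n-k)}\cG_{n-k,C,\bA}^{-1}\,k_{{\rm nc},k}(z,\zeta)\,(I-A^*Z(\zeta)^*)^{-(n-k)}C^*,
\]
with the \emph{lowered} resolvent power $(I-Z(z)A)^{-(n-k)}$ and a \emph{sandwiched} Bergman kernel $k_{{\rm nc},k}$, not the shifted $R_{n,k}$.

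The paper's mechanism is: for each $j=1,\dots,n$ apply Theorem \ref{T:cm} to the Stein \emph{equality} $\cG_{j,C,\bA}-\sum_\ell A_\ell^*\cG_{j,C,\bA}A_\ell=\cG_{j-1,C,\bA}$ from \eqref{4.10} (assumption \eqref{assume} ensures invertibility of each $\cG_{j,C,\bA}$) to construct Fock-space inner multipliers $\To_j$ satisfying the identity \eqref{6.15}, and then set $F_\ell(z)=C(I-Z(z)A)^{-(n-\ell)}\cG_{n-\ell,C,\bA}^{-1/2}\To_{n+1-\ell}(z)$ for $\ell<n$ and $F_n=\To_1$. The inductive step advances not by subtracting shifted copies but by expanding the sandwiched $k_{{\rm nc},k}$ via \eqref{ncsz1} (so that the scalar summands $z^\alpha\bzeta^{\alpha^\top}$ can be pulled past the resolvent factors), applying \eqref{6.15} inside the sum, and recombining so that $k_{{\rm nc},k}$ becomes $k_{{\rm nc},k+1}$. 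Finally, your concern about verifying that each factor is separately a contractive multiplier from $\cA_{j,\cU_j}(\free)$ is unnecessary: Proposition \ref{P:2.4} requires only the kernel identity \eqref{sumker}, after which $F$ is automatically coisometric onto $\cM$ and hence McCT-inner into $\cA_{n,\cY}(\free)$.
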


\begin{proof} We start as at the beginning of Chapter 5 by representing $\cM$ as 
$({\rm Ran}\cO_{n,C,\bA})^\perp$ for some $n$-isometric pair $(C,\bA)$ with $\bA$  
strongly stable, but with the additional assumption \eqref{assume} 
imposed.  

\smallskip

By Proposition \ref{P:2.4}, it suffices to produce
coefficient Hilbert spaces $\cU_{1}, \dots, \cU_{n}$ and power series 
$F_{j}$ in $\mathcal L(\cU_j,\cY)\langle\langle z\rangle\rangle$ so that
\begin{equation}   \label{sumker}
    k_{\cM}(z, \zeta) = \sum_{j=1}^{n} F_{j}(z)k_{{\rm nc }, j}(z,\zeta)F_{j}(\zeta)^{*},
 \end{equation} 
where $k_\cM$ is given in \eqref{kMa} and where $k_{{\rm nc }, j}(z,\zeta)$ is the 
noncommutative
Bergman kernel defined in \eqref{nc-n-Berg-ker}. The construction proceeds via an iterated
application of Theorem \ref{T:cm}. 

\smallskip

We first construct inner multipliers $\To_j(z)$ ($j=1,\ldots,n$)
between Fock spaces as follows. For $j=1$ we use the identity 
$$
{\mathcal G}_{1,C,\bA}-\sum_{j=1}^d A_j^*{\mathcal G}_{1,C,\bA}A_j=C^*C
$$
(i.e., the identity \eqref{4.10} for $k=1$) and find $\sbm{\Bo_1 \\ \Do_1}: \, \cU_n\to 
\sbm{\cX^d \\  \cY}$ such that 
$$
\begin{bmatrix} \Bo_1 \\ \Do_1 \end{bmatrix}\begin{bmatrix}
\Bo_1^{*} & \Do_1^{*}\end{bmatrix}=\begin{bmatrix} I_{\cX^d} & 0 \\0 &
I_{\cY} \end{bmatrix}-\begin{bmatrix} A \\ C
\end{bmatrix}{\mathcal G}_{1,C,\bA}^{-1}\begin{bmatrix} A^{*} & C^{*}\end{bmatrix}.
$$
 By Theorem \ref{T:cm}, the power series 
\begin{equation}
\To_1(z)=\Do_1+C(I-Z(z)A)^{-1}Z(z)\Bo_1
\label{11.4}
\end{equation}
is a strictly inner multiplier from  $H^{2}_{\cU_n}(\free)$ to $H^{2}_{\cY}(\free)$ and 
\begin{align}
&k_{\rm nc, Sz}(z,\zeta)I_{\cY}-
\To_1(z)k_{\rm nc, Sz}(z,\zeta)\To_1(\zeta)^*\notag\\
&=C(I-Z(z)A)^{-1}{\mathcal G}_{1,C,\bA}^{-1}(I-A^*Z(\zeta)^*)^{-1}C^*.
\label{11.5a}
\end{align}
For $j=2, \dots, n$ we do a similar construction based  on the  identity (see \eqref{4.10})
$$
{\mathcal G}_{j,C,\bA}-\sum_{\ell=1}^dA_\ell^*{\mathcal G}_{j,C,\bA}A_\ell={\mathcal G}_{j-1,C,\bA} \quad
(j=2,\ldots,n).
$$
We find operators
$\Bo_{j}\colon \, \cU_{n+1-j} \to \cX^d\;$ and 
$\; \Do_{j} \colon \, \cU_{n+1-j} \to \cY\; $ so that
$$
\begin{bmatrix}A & \Bo_j \\ {\mathcal G}_{j-1,C,\bA}^{\frac{1}{2}} &
\Do_j\end{bmatrix}\begin{bmatrix}{\mathcal G}_{j,C,\bA}^{-1} & 0 \\ 0 &
I_{\cU_{n+1-j}}\end{bmatrix}\begin{bmatrix}A^* & {\mathcal G}_{j-1,C,\bA}^{\frac{1}{2}} \\
\Bo_j^* & \Do_j^*\end{bmatrix}=
\begin{bmatrix}{\mathcal G}_{j,C,\bA}^{-1}\otimes I_d & 0 \\ 0 & I_{\cY}\end{bmatrix}
$$
and
\begin{equation}
\begin{bmatrix}A^* & {\mathcal G}_{j-1,C,\bA}^{\frac{1}{2}} \\ \Bo_j^* &
\Do_j^*\end{bmatrix}\begin{bmatrix}{\mathcal G}_{j,C,\bA}\otimes I_d & 0 \\ 0 &
I_{\cY}\end{bmatrix}\begin{bmatrix}A & \Bo_j \\ {\mathcal G}_{j-1,C,\bA}^{\frac{1}{2}} &
\Do_j\end{bmatrix}=\begin{bmatrix}{\mathcal G}_{j,C,\bA} & 0 \\ 0 &
I_{\cU_{n+1-j}}\end{bmatrix}.
\label{6.12}
\end{equation}
In fact, we claim that the latter equalities determine $\Bo_j$ and $\Do_j$ uniquely up
to a common isometric factor $W_j: \, \cU_{n+1-j}\to\cX^d$ on  the right:
\begin{align}
\Bo_j&=({\mathcal G}_{j,C,\bA}^{-1}\otimes I_d-A{\mathcal G}_{j,C,\bA}^{-1}A^*)^{\frac{1}{2}}W_j,
\quad W_j^*W_j=I_{\cU_{n+1-j}},
\label{6.13a}\\
\Do_j&=-{\mathcal G}_{j-1,C,\bA}^{-\frac{1}{2}}A^*({\mathcal 
G}_{j,C,\bA}\otimes I_d)\Bo_j.
\label{6.13}
\end{align}
To verify this claim, first note that comparison of (1,2)-block 
entries in \eqref{6.12} yields \eqref{6.13}. 
Equating the bottom corner blocks in \eqref{6.12} then gives
$$
\Bo_j^*({\mathcal G}_{j,C,\bA}\otimes I_d)\Bo_j+\Do_j^*\Do_j=I_{\cU_{n+1-j}},
$$
which in view of \eqref{6.13} can be written as 
\begin{equation}   \label{bB-id}
\Bo_j^*\left({\mathcal G}_{j,C,\bA}\otimes I_d+
({\mathcal G}_{j,C,\bA}\otimes I_d)A{\mathcal G}_{j-1,C,\bA}^{-1}A^*({\mathcal G}_{j,C,\bA}\otimes I_d)
\right)\Bo_j= I_{\cU_{n+1-j}}.
\end{equation}
We claim that \eqref{bB-id} can be simplified to
\begin{equation}   \label{bB-id'}
\Bo_j^*\left({\mathcal G}_{j,C,\bA}^{-1}\otimes I_d-A{\mathcal G}_{j,C,\bA}^{-1}A^*\right)^{-1}\Bo_j
=I_{\cU_{n+1-j}},
\end{equation}
from which \eqref{6.13a} will then follow. 

\smallskip

To see this latter claim, we may use the operator-valued version of the Sherman-Morrison formula: 
given any strictly positive definite $2 \times 2$-block operator matrix $\sbm{ X & Y \\ Y^{*} & Z}$ with 
$X$ and $Z$ invertible, then the Schur complement $Z - Y^{*} X^{-1} Y$ 
is invertible if and only if the other Schur complement $X - Y Z^{-1} Y^{*}$ is 
invertible, and then their inverses are connected via the  
Sherman-Morrison formula  (see e.g.\ \cite{GvL}):
\begin{equation} \label{ShMo}
 (Z - Y^{*} X^{-1} Y)^{-1} = Z^{-1} + Z^{-1} Y^{*} (X - Y Z^{-1} 
 Y^{*})^{-1} Y Z^{-1}.
\end{equation}
We apply the identity \eqref{ShMo} to the case where
$$
\begin{bmatrix} X & Y \\ Y^{*} & Z \end{bmatrix} =\begin{bmatrix} 
\cG_{j,C,\bA} & A^{*} \\ A & \cG_{j,C,\bA}^{-1} \otimes I_{d} 
\end{bmatrix}
$$
to conclude that
\begin{equation}   \label{conclude2}
\big( \cG_{j,C,\bA}^{-1} \otimes I_{d} - A \cG_{j,C,\bA}^{-1} A^{*} \big)^{-1} = 
\cG_{j,C,\bA} \otimes I_{d} + (\cG_{j,C,\bA} \otimes I_{d}) A 
\Delta^{-1} A^{*} (\cG_{j,C,\bA} \otimes I_{d})
\end{equation}
where
$$
  \Delta = \cG_{j,C,\bA} - A^{*} (\cG_{j,C,\bA} \otimes I_{d}) A.
$$
However, a consequence of the (1,1)-entry of the identity 
\eqref{6.12} is that $\Delta = \cG_{j-1,C,\bA}$ which is invertible 
as a consequence of assumption \eqref{assume}.  Substituting this expression for $\Delta$ into 
\eqref{conclude2} gives us the equivalence between \eqref{bB-id} and 
\eqref{bB-id'}, and hence also the verification of \eqref{6.13a}, as 
desired.

\smallskip

We now define the power series
\begin{equation}
\To_j(z)=\Do_j+{\mathcal G}_{j-1,C,\bA}^{\frac{1}{2}}(I-Z(z)A)^{-1}Z(z)\Bo_j
\label{6.14}
\end{equation}  
for $j=2,\ldots,n$, which are strictly inner multipliers from $H^2_{\cU_{n-j+1}}(\free)$ to 
$H^2_{\cX}(\free)$ and satisfy the identities
\begin{align}
&k_{\rm nc, Sz}(z,\zeta)I_{\cY}-
\To_j(z)k_{\rm nc, Sz}(z,\zeta)\To_j(\zeta)^*\notag\\
&={\mathcal G}_{j-1,C,\bA}^{\frac{1}{2}}
(I-Z(z)A)^{-1}{\mathcal G}_{j,C,\bA}^{-1}(I-A^*Z(\zeta)^*)^{-1}{\mathcal G}_{j-1,C,\bA}^{\frac{1}{2}}.
\label{6.15}   
\end{align}
We finally define the series $F_{\ell}\in\cL(\cU_{\ell},\cY)\langle\langle z\rangle\rangle$ via formulas
\begin{align}
  &  F_{\ell}(z) = C (I -Z(z)A)^{-(n-\ell)}\cG^{-\frac{1}{2}}_{n-\ell,C,\bA}
    \To_{n+1-\ell}(z) \quad\text{for} \quad\ell = 1, 2, \dots, n-1,
    \notag \\
    & F_{n}(z) = \To_{1}(z)=\Do_1+C(I-Z(z)A)^{-1}Z(z)\Bo_1, 
    \label{defFell}
 \end{align}
and claim that this choice of $F_{1}, \dots, F_{n}$ satisfies
 \eqref{sumker} (with coefficient spaces $\cU_{j} = \cX$ for $j=1,
 \dots, n-1$). Indeed, substituting equality \eqref{6.15} (for $j=n$) into the
formula \eqref{kMa} for $k_{\cM}$ and then making use of formula \eqref{defFell} for $F_1$,
we have
 \begin{align}
k_{\cM}(z, \zeta) & = k_{{\rm nc }, n}(z,\zeta)I_{\cY}
- C(I -Z(z)A)^{-n+1} \cG^{-\frac{1}{2}}_{n-1,C,\bA} \notag\\
&\qquad\qquad\qquad\qquad\cdot\big[ 
k_{\rm nc, Sz}(z,\zeta)I_{\cX}-
\To_n(z)k_{\rm nc, Sz}(z,\zeta)\To_n(\zeta)^*\big] \notag\\
    & \qquad\qquad\qquad\qquad  \cdot \cG^{-\frac{1}{2}}_{n-1,C,\bA} (I -  A^{*}
Z(\zeta)^*)^{-n+1} C^{*}  \notag\\
     & =  k_{{\rm nc }, n}(z,\zeta)I_{\cY} +
     F_{1}(z)k_{\rm nc, Sz}(z,\zeta) F_{1}(\zeta)^{*}\label{dec20}\\
& \quad- C(I - Z(z)A)^{-n+1}\cG^{-1}_{n-1,C,\bA}k_{\rm nc, Sz}(z,\zeta)
     (I - A^{*}Z(\zeta)^*)^{-n+1} C^{*}.\notag
 \end{align}
We next use equality \eqref{6.15} (for $j=n-1$) and formulas \eqref{ncsz},
\eqref{nc-n-Berg-ker} to get
 \begin{align*} 
& C(I - Z(z)A)^{-n+1}\cG^{-1}_{n-1,C,\bA}k_{\rm nc, Sz}(z,\zeta)
     (I -A^{*}Z(\zeta)^*)^{-n+1} C^{*}\\
& =\sum_{\alpha\in\free}\bzeta^{\alpha^\top}C(I - Z(z)A)^{-n+1}
\cG^{-1}_{n-1,C,\bA}
     (I -A^{*}Z(\zeta)^*)^{-n+1} C^{*}z^\alpha\\
& =\sum_{\alpha\in\free}\bzeta^{\alpha^\top} C(I - 
Z(z)A)^{-n+2}\cG^{-\frac{1}{2}}_{n-2,C,\bA}\\
&\qquad\qquad \cdot \big[
k_{\rm nc, Sz}(z,\zeta)I_{\cX}-
\To_{n-1}(z)k_{\rm nc, Sz}(z,\zeta)\To_{n-1}(\zeta)^*\big] \notag\\   
&\qquad\qquad \cdot\cG^{-\frac{1}{2}}_{n-2,C,\bA} (I -  A^{*}
Z(\zeta)^*)^{-n+2} C^{*}z^\alpha\\
&=C(I - Z(z)A)^{-n+1}\cG^{-1}_{n-1,C,\bA}k_{{\rm nc},2}(z,\zeta)
     (I -A^{*}Z(\zeta)^*)^{-n+1} C^{*}\\
&\quad -F_2(z)k_{{\rm nc},2}(z,\zeta)F_2(\zeta)^*.
\end{align*}  
Substitution of this into \eqref{dec20} gives
 \begin{align*} 
k_{\cM}(z, \zeta) =& k_{{\rm nc }, n}(z,\zeta)I_{\cY} +
     F_{1}(z)k_{\rm nc, Sz}(z,\zeta) F_{1}(\zeta)^{*}+F_2(z)k_{{\rm nc},2}(z,\zeta)F_2(\zeta)^*\\
& \quad- C(I - Z(z)A)^{-n+2}\cG^{-1}_{n-2,C,\bA}k_{{\rm nc}, 2}(z,\zeta)
(I - A^{*}Z(\zeta)^*)^{-n+2} C^{*}.
 \end{align*}
 One can now use an inductive argument to show that in general,
 \begin{align}
k_{\cM}(z, \zeta) =&k_{{\rm nc }, n}(z,\zeta)I_{\cY}
    + \sum_{\ell=1}^{k} F_{\ell}(z)  k_{{\rm nc }, \ell}(z,\zeta)F_{\ell}(\zeta)^{*} \notag \\
&     - C (I - Z(z)A)^{-n + k}\cG^{-1}_{n-k, C,\bA}k_{{\rm nc }, k}(z,\zeta)
     (I - A^*Z(\zeta)^*)^{-n+k} C^{*}
      \label{id-ell}
 \end{align} 
 for $k = 1,2, \dots, n-1$.  For the final step, we use \eqref{11.5a} 
 to see that the last term in \eqref{id-ell} with $k=n-1$ is given by
 \begin{align}
& C(I - Z(z)A)^{-1}\cG^{-1}_{1,C,\bA}k_{{\rm nc }, n-1}(z,\zeta)
(I - A^*Z(\zeta)^*)^{-1} C^{*}\notag\\
&=\sum_{\alpha \in\free}\mu_{n-1,|\alpha|}^{-1}\bzeta^{\alpha^\top}
 C(I - Z(z)A)^{-1}\cG^{-1}_{1,C,\bA}(I - A^*Z(\zeta)^*)^{-1} 
 C^{*}z^\alpha\notag\\
&= \sum_{\alpha \in\free}\mu_{n-1,|\alpha|}^{-1}\bzeta^{\alpha^\top}\big[
k_{\rm nc, Sz}(z,\zeta)I_{\cY}-\To_{1}(z)k_{\rm nc, Sz}(z,\zeta) 
\To_{1}(\zeta)^{*}\big]z^\alpha
\notag\\
   & = k_{{\rm nc }, n}(z,\zeta)I_{\cY}-F_{n}(z)k_{{\rm nc }, n}(z,\zeta)F_{n}(\zeta)^{*},
\label{dec21} 
  \end{align}
where the first equality follows from the formula \eqref{nc-n-Berg-ker}, the second one follows
from \eqref{11.5a} and the last equality follows from \eqref{ncsz1}.
  Plugging \eqref{dec21} into the $k = n-1$ case of \eqref{id-ell} leaves us with
  \eqref{sumker} as wanted.
 \end{proof}
 
\begin{remark} \label{R:9.2}
We remark that the entry $F_n(z)=\To_1(z)$ in \eqref{11.4} is a strictly inner multiplier from  
$H^{2}_{\cU_n}(\free)$ to $H^{2}_{\cY}(\free)$. Therefore, if $n=1$, Theorem \ref{T:BL} amounts to 
Theorem \ref{T:bl4a}.
\end{remark}
To conclude this section, we present an alternate formula for $\To_{j}$ (and hence also for
 $F_{\ell}$) which may prove useful in applications.
  \begin{proposition} \label{L:6.4}
     The function $\To_{j}$ in \eqref{6.14} for $j=2, \dots, n$ can
     alternatively be given by
        \begin{align}  \label{6.16}
\To_j(z)&={\mathcal G}_{j-1,C,\bA}^{\frac{1}{2}}(I-Z(z)A)^{-1}{\mathcal G}_{j,C,\bA}^{-1}(Z(z)-A^*)\notag\\
&\qquad \times({\mathcal G}_{j,C,\bA}^{-1}\otimes I_d-A{\mathcal G}_{j,C,\bA}^{-1}A^*)^{-\frac{1}{2}}W_j.
\end{align}
Hence $F_{\ell}(z)$ can alternatively be given for  $\ell = 1, 2, \dots, n-1$ by
\begin{align}
    F_{\ell}(z) & = C (I - Z(z)A)^{-(n-\ell)}
    \cG^{-\frac{1}{2}}_{n-\ell,C,\bA} \To_{n+1-\ell}(z)  \notag \\
    & = C (I - Z(z)A)^{-(n+1-\ell)} \cG^{-1}_{n+1-\ell,C,\bA}(Z(z) - A^{*})\notag\\
&\qquad\times ({\mathcal G}_{n+1-\ell,C,\bA}^{-1}\otimes
I_d-A{\mathcal G}_{n+1-\ell,C,\bA}^{-1}A^*)^{-\frac{1}{2}}W_{n+1-\ell}.
  \label{Fell-new}
 \end{align}
    \end{proposition}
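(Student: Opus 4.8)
The plan is purely computational: substitute the explicit expressions \eqref{6.13a}--\eqref{6.13} for $\Bo_j$ and $\Do_j$ into the defining formula \eqref{6.14} for $\To_j$ and simplify the resulting rational expression in $Z(z)$, using nothing beyond the Stein identity \eqref{4.10} and the fact that the formal scalars $z_i$ commute with operators.

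First I would pull the common left factor $\cG_{j-1,C,\bA}^{\frac{1}{2}}(I-Z(z)A)^{-1}$ out of both summands in \eqref{6.14}. Since $\cG_{j-1,C,\bA}^{\frac{1}{2}}(I-Z(z)A)^{-1}(I-Z(z)A)\cG_{j-1,C,\bA}^{-1}=\cG_{j-1,C,\bA}^{-\frac{1}{2}}$ and $\cG_{j-1,C,\bA}^{\frac{1}{2}}\Do_j=-A^*(\cG_{j,C,\bA}\otimes I_d)\Bo_j$ by \eqref{6.13}, this yields
$$ \To_j(z)=\cG_{j-1,C,\bA}^{\frac{1}{2}}(I-Z(z)A)^{-1}\Big[-(I-Z(z)A)\cG_{j-1,C,\bA}^{-1}A^*(\cG_{j,C,\bA}\otimes I_d)+Z(z)\Big]\Bo_j . $$
Inserting $\Bo_j=(\cG_{j,C,\bA}^{-1}\otimes I_d-A\cG_{j,C,\bA}^{-1}A^*)^{\frac{1}{2}}W_j$ from \eqref{6.13a}, the proposition reduces to the single operator identity
$$ \Big[-(I-Z(z)A)\cG_{j-1,C,\bA}^{-1}A^*(\cG_{j,C,\bA}\otimes I_d)+Z(z)\Big]\big(\cG_{j,C,\bA}^{-1}\otimes I_d-A\cG_{j,C,\bA}^{-1}A^*\big)=\cG_{j,C,\bA}^{-1}(Z(z)-A^*). $$

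To check this I would expand the left side, use $(\cG_{j,C,\bA}\otimes I_d)(\cG_{j,C,\bA}^{-1}\otimes I_d)=I_{\cX^d}$ together with the $(1,1)$-entry of \eqref{6.12} --- equivalently \eqref{4.10} --- in the form $A^*(\cG_{j,C,\bA}\otimes I_d)A=\cG_{j,C,\bA}-\cG_{j-1,C,\bA}$, and then the elementary cancellation $\cG_{j-1,C,\bA}^{-1}(\cG_{j,C,\bA}-\cG_{j-1,C,\bA})\cG_{j,C,\bA}^{-1}=\cG_{j-1,C,\bA}^{-1}-\cG_{j,C,\bA}^{-1}$. After simplification all terms involving $Z(z)A(\cdots)$ and all $\cG_{j-1,C,\bA}^{-1}$ contributions cancel, leaving $-\cG_{j,C,\bA}^{-1}A^*+Z(z)(\cG_{j,C,\bA}^{-1}\otimes I_d)$; finally $Z(z)(\cG_{j,C,\bA}^{-1}\otimes I_d)=\cG_{j,C,\bA}^{-1}Z(z)$ because the $z_i$ are central formal variables, which gives the right side. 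Substituting back produces \eqref{6.16}. The alternate formula \eqref{Fell-new} for $F_\ell$ is then immediate: plug \eqref{6.16} with $j=n+1-\ell$ into the first line of \eqref{defFell}, cancel $\cG_{n-\ell,C,\bA}^{-\frac{1}{2}}\cG_{n-\ell,C,\bA}^{\frac{1}{2}}=I$ (legitimate since $1\le\ell\le n-1$ forces $\cG_{n-\ell,C,\bA}\succeq\cG_{1,C,\bA}\succ0$ by \eqref{assume}), and merge the extra factor $(I-Z(z)A)^{-1}$ into $(I-Z(z)A)^{-(n-\ell)}$.

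There is no genuine obstacle here; the most delicate step is simply the block-operator bookkeeping --- keeping straight domains and codomains ($A\colon\cX\to\cX^d$, $Z(z)\colon\cX^d\to\cX$, and $\cG_{j,C,\bA}^{-1}\otimes I_d$ acting on $\cX^d$) and the central-variable identity $Z(z)(\cG_{j,C,\bA}^{-1}\otimes I_d)=\cG_{j,C,\bA}^{-1}Z(z)$ --- since the one nontrivial algebraic input (the Sherman--Morrison reduction producing \eqref{6.13a}) has already been carried out inside the proof of Theorem \ref{T:BL}.
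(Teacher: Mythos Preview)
Your proof is correct and uses the same ingredients as the paper's proof (substitution of \eqref{6.13}--\eqref{6.13a} into \eqref{6.14} followed by simplification via the Stein identity $A^*(\cG_j\otimes I_d)A=\cG_j-\cG_{j-1}$), with only a minor organizational difference. The paper first factors the bracket as $\cG_j^{-1}(Z(z)-A^*)\cdot(\cG_j\otimes I_d + (\cG_j\otimes I_d)A\cG_{j-1}^{-1}A^*(\cG_j\otimes I_d))$ and then invokes the Sherman--Morrison identity to recognize the second factor as $(\cG_j^{-1}\otimes I_d-A\cG_j^{-1}A^*)^{-1}$, whereas you substitute $\Bo_j$ first and verify the product identity $[\cdots](\cG_j^{-1}\otimes I_d-A\cG_j^{-1}A^*)=\cG_j^{-1}(Z(z)-A^*)$ by direct expansion; your route is marginally more streamlined since it avoids spotting the factorization and needs only the invertibility (not the explicit inverse) of $\cG_j^{-1}\otimes I_d-A\cG_j^{-1}A^*$.
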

\begin{proof}
In the proof, we  shorten notation ${\mathcal G}_{j,C,\bA}$ to  ${\mathcal G}_j$.
We substitute \eqref{6.13} into \eqref{6.14} to get
\begin{align}
\To_j(z)=&-{\mathcal G}_{j-1}^{-\frac{1}{2}}A^*({\mathcal
G}_{j}\otimes I_d)\Bo_j+{\mathcal G}_{j-1}^{\frac{1}{2}}(I-Z(z)A)^{-1}Z(z)\Bo_j
\notag\\
=&{\mathcal G}_{j-1}^{\frac{1}{2}}(I-Z(z)A)^{-1}\left[
-(I-Z(z)A){\mathcal G}_{j-1}^{-1}A^*({\mathcal G}_{j}\otimes I_d)+Z(z)\right]\Bo_j\label{6.17}\\
=&{\mathcal G}_{j-1}^{\frac{1}{2}}(I-Z(z)A)^{-1}\left[Z(z)(I+A{\mathcal G}_{j-1}^{-1}A^*({\mathcal
G}_{j}\otimes I_d))-{\mathcal G}_{j-1}^{-1}A^*({\mathcal G}_{j}\otimes I_d)\right]\Bo_j.\notag
\end{align}
To further simplify the latter expression, we make use of equality \eqref{4.10} in the form
$$
{\mathcal G}_{j}-A^*({\mathcal G}_{j}\otimes I_d)A={\mathcal G}_{j-1}.
$$
We first observe that
\begin{align*}
&{\mathcal G}_{j}^{-1}A^*({\mathcal G}_{j}\otimes I_d)\left(I+A{\mathcal G}_{j-1}^{-1}A^*({\mathcal
G}_{j}\otimes I_d)\right)\\
&={\mathcal G}_{j}^{-1}A^*({\mathcal G}_{j}\otimes I_d)+{\mathcal G}_{j}^{-1}A^*({\mathcal G}_{j}\otimes
I_d)A{\mathcal
G}_{j-1}^{-1}A^*({\mathcal G}_{j}\otimes I_d)\\
&={\mathcal G}_{j}^{-1}A^*({\mathcal G}_{j}\otimes I_d)+{\mathcal G}_{j}^{-1}\left({\mathcal G}_{j}-
{\mathcal G}_{j-1}\right){\mathcal G}_{j-1}^{-1}A^*({\mathcal
G}_{j}\otimes I_d) \\
&={\mathcal G}_{j-1}^{-1}A^*({\mathcal G}_{j}\otimes I_d),
\end{align*}
which can be written equivalently as
$$
    \cG_{j-1}^{-1} A^{*} (\cG_{j} \otimes I_{d}) \left(I + A
    \cG_{j-1}^{-1} A^{*} (\cG_{j} \otimes I_{d}) \right)^{-1} =
    \cG_{j}^{-1} A^{*} (\cG_{j} \otimes I_{d}),
$$
and, being combined with \eqref{6.17} gives
\begin{align*}
\To_j(z)=&{\mathcal G}_{j-1}^{\frac{1}{2}}(I-Z(z)A)^{-1}\left[Z(z)-{\mathcal G}_{j}^{-1}A^*
({\mathcal G}_{j}\otimes I_d)\right](I+A{\mathcal G}_{j-1}^{-1}A^*({\mathcal G}_{j}\otimes I_d))\Bo_j
\notag \\
=&{\mathcal G}_{j-1}^{\frac{1}{2}}(I-Z(z)A)^{-1}{\mathcal G}_{j}^{-1}(Z(z)-A^*)
\notag \\
&\quad\times({\mathcal
G}_{j}\otimes I_d+({\mathcal G}_{j}\otimes I_d)A {\mathcal G}_{j-1}^{-1}A^*({\mathcal G}_{j}\otimes
I_d))\Bo_j\notag\\
=&{\mathcal G}_{j-1}^{\frac{1}{2}}(I-Z(z)A)^{-1}{\mathcal G}_{j}^{-1}(Z(z)-A^*)({\mathcal G}_{j}^{-1}\otimes I_d-A{\mathcal G}_{j}^{-1}A^*)^{-1}\Bo_j.
\end{align*}
Substituting \eqref{6.13a} into the latter formula implies \eqref{6.16}. Formula \eqref{Fell-new}
is a straightforward consequence of \eqref{6.16}.
\end{proof}

\chapter[Non-orthogonal Beurling-Lax representations]{Non-orthogonal Beurling-Lax representations
based on wandering subspaces}\label{S:BL-quasi-wandering}

Nonorthogonal representations of a closed shift-invariant subspace $\cM$ of $\cA_{n,\cY}$
based on the wandering subspace $\cE=\cM\ominus {\bf S}_{n}\cM$ appears in \cite{ars} (for $n=2$)
and \cite{sh1} (for $n=3$).
Another version of a non-orthogonal representation of $\cM$ is that of Izuchi-Izuchi-Izuchi \cite{iziziz} (see also \cite{chen}) is
based on the notion of quasi-wandering subspace $\mathcal Q=P_{\cM} S_n\vert_{\cM^\perp}$.
Fleshing out of this approach for the $\bo$-setting is the topic of Section \ref{S:BL-quasi-wanderinga}.

\smallskip

With any ${\bf S}_{\bo,R}$-invariant space $\cM$ isometrically included in $H^2_{\bo,\cY}(\free)$, one can associate 
the {\em wandering subspace} (the notion goes back to \cite{wold,halmos})
\begin{equation}   \label{m3}
\cE=\cM\ominus\bigg(\bigoplus_{j=1}^d S_{\bo,R,j}\cM\bigg)
\end{equation}
and the {\em quasi-wandering} subspace (introduced more recently in \cite{iziziz})
\begin{equation}   \label{m3a}
\cQ=P_{\cM}\bigg(\bigoplus_{j=1}^d S_{\bo,R,j}\cM^\perp\bigg),
\end{equation}
where $P_{\cM}$ denotes the orthogonal projection of $H^2_{\bo,\cY}(\free)$ onto $\cM$ and where 
$\cM^\perp:=H^2_{\bo,\cY}(\free)\ominus\cM$. 
In the case of the Fock space $H^2_{\cY}(\free)$ (i.e., when $\bo=\bmu_1$) these two subspaces coincide and moreover, 
\begin{equation}   \label{m3b}
\cE=\cQ=G\cdot\cU:=\left\{Gu: \, u\in\cU\right\},
\end{equation}
where $G$ is a strictly inner multiplier appearing in the Beurling-Lax representation $\cM=G H^2_{\cU}(\free)$ of $\cM$ 
(see Theorem \ref{T:bl4a}). Furthermore, since $S_{1,R,1},\ldots,S_{1,R,d}$ are isometries (by formula
\eqref{jan11} with $\bo=\bmu_1$) with mutually orthogonal ranges, the subspaces $\bS_{1,R}^\alpha\cM$ and 
$\bS_{1,R}^\beta\cM$ are mutually orthogonal for all $\alpha\neq \beta$ in $\free$ which eventually leads to the 
Wold-type representation 
\begin{equation}
\cM = \bigoplus_{\alpha\in\free}\bS_{1,R}^\alpha\cE=\bigoplus_{\alpha\in\free}\bS_{1,R}^\alpha\cQ.
\label{m3c}
\end{equation}
In the case of a general admissible weight $\bo$, the operators $S_{\bo,R,1},\ldots,S_{\bo,R,d}$ are contractions 
(rather than isometries) with still mutually orthogonal ranges. Therefore, the formulas \eqref{m3} and \eqref{m3a}
still make sense although (even in the case of the Bergman-Fock spaces $\cA_{n,\cY}(\free)$ for $n>1$) they define 
different subspaces $\cE$ and $\cQ$ of a closed $\bS_{\bo,R}$-invariant subspace $\cM$ of $H^2_{\bo,\cY}(\free)$.

\smallskip

In general, the quasi-wandering subspace $\cQ$ and its shifts $\bS_{\bo,R}^\alpha \cQ$ are not orthogonal to each other.
As we will see later, the wandering subspace $\cE$ is orthogonal to all its shifts $\bS_{\bo,R}^\alpha \cE$
($\alpha\neq\emptyset$). However, it is not true anymore that the subspaces $\bS_{\bo,R}^\alpha\cE$ and $\bS_{\bo,R}^\beta\cE$ 
are orthogonal for $\alpha\neq \beta$. Thus, the best one can hope for is to recover a closed $\bS_{\bo,R}$-invariant subspace 
$\cM\subset H^2_{\bo,\cY}(\free)$ from its wondering subspace $\cE$ or the quasi-wandering subspace $\cQ$ via the closed 
linear span of the respective (not-orthogonal) shifted subspaces $\bS_{\bo,R}^\alpha \cE$ and $\bS_{\bo,R}^\alpha \cQ$.
In the rest of this chapter, we will see to which extent this is the case. A secondary question arising from representation 
formulas \eqref{m3b} is whether it is possible to represent (isometrically or at least as sets) the subspaces \eqref{m3} and 
\eqref{m3a} in the form $\cE=\Theta\cdot \cU$ and $\cQ=F\cdot \cU$ for some Hilbert coefficient space $\cU$ and power series $\Theta$ 
and $F$, will be also discussed below.

\section[Beurling-Lax quasi-wandering subspace representations]
{Beurling-Lax representations based on quasi-wandering subspaces}\label{S:BL-quasi-wanderinga}  
Given a closed  ${\bf S}_{\bo,R}$-invariant subspace $\cM$ of $H^2_{\bo,\cY}(\free)$, let $(C,\bA)$ (with $C\in\cL(\cX,\cY)$)
be an $\bo$-isometric pair such that $\cM=({\rm Ran} \, \cO_{\bo,C,\bA})^{\perp}$. Let us consider the power series 
\begin{equation}
F(z)=CR_{\bo}(Z(z)A)(Z(z)-A^*).
\label{13.3g}
\end{equation}
By \eqref{obsreal}, \eqref{ncobsophardy}, the formula for $F{\bf x}$ (${\bf x}\in\cX^d$) can be written more explicitly as
\begin{equation}
F(z){\bf x}
=\sum_{\alpha\in\free} \sum_{j=1}^{d}\omega_{|\alpha|}^{-1}
C\bA^{\alpha}z^\alpha(z_jx_j-A_j^*x_j) \quad ({\bf x}=\sbm{ x_{1} \\ \vdots \\ x_{d}}\in\cX^d),
\label{13.3h}
\end{equation}
which also can be equivalently written in terms of operators $\cO_{\bo,C,\bA}$ and $S_{\bo,R,j}$ as
\begin{equation}
 F{\bf x}+\cO_{\bo,C,\bA}
\sum_{j=1}^{d}A_j^*x_j=\sum_{j=1}^{d}S_{\bo,R,j}\cO_{\bo,C,\bA}x_j.
\label{apr8}
\end{equation}
Due to the intertwining relations \eqref{4.8aga} and since $\cO_{\bo, C, \bA}$ is an isometry from
$\cX$ into $H^{2}_{\bo, \cY}(\free)$, we have for any $x, x_j\in\cX$ and $j=1,\ldots,d$,
\begin{align*}
&\left\langle (S_{\bo,R,j}\cO_{\bo,C,\bA}-\cO_{\bo,C,\bA}A_j^*)x_j, \; \cO_{\bo,C,\bA}x
\right\rangle_{H^{2}_{\bo, \cY}(\free)}\\
&=\left\langle \cO_{\bo,C,\bA}x_j, \; S_{\bo,R,j}^*\cO_{\bo,C,\bA}x
\right\rangle_{H^{2}_{\bo, \cY}(\free)}-
\left\langle \cO_{\bo,C,\bA}A_j^*x_j, \; \cO_{\bo,C,\bA}x
\right\rangle_{H^{2}_{\bo, \cY}(\free)}\\
&=\left\langle \cO_{\bo,C,\bA}x_j, \; \cO_{\bo,C,\bA}A_jx
\right\rangle_{H^{2}_{\bo, \cY}(\free)}-
\left\langle \cO_{\bo,C,\bA}A_j^*x_j, \; \cO_{\bo,C,\bA}x
\right\rangle_{H^{2}_{\bo, \cY}(\free)}\\
&=\left\langle x_j, \; A_jx\right\rangle_{\cX}-
\left\langle A_j^*x_j, \; x \right\rangle_{\cX}=0.
\end{align*}
The latter equalities imply that the $H^{2}_{\bo, \cY}(\free)$-inner product of
the power series \eqref{13.3h} against the vector $\cO_{\bo,C,\bA}x$ equals zero.
Therefore, $F{\bf x}\perp {\rm Ran} \cO_{\bo, C, \bA}=\cM^\perp$ for any $\bx \in \cX^{d}$,
meaning that $F{\bf x}\in \cM$ for any $\bx \in \cX^{d}$. Therefore, the sum on the left side of 
\eqref{apr8} is orthogonal in $H^{2}_{\bo, \cY}(\free)$-metric (as the first term belongs to $\cM$ and the second
is in ${\rm Ran} \cO_{\bo, C, \bA}=\cM^\perp$). Therefore, 
$$
F{\bf x}=P_{\cM}\bigg(\sum_{j=1}^{d}S_{\bo,R,j}\cO_{\bo,C,\bA}x_j\bigg),
$$
and since the expression in the parentheses presents the generic element of the direct sum 
$\bigoplus_{j=1}^d S_{\bo,R,j}\cM^\perp$,
we conclude from \eqref{m3a} that 
\begin{equation}
F(z)\cX^d=P_{\cM}\bigg(\bigoplus_{j=1}^d S_{\bo,R,j}\cM^\perp\bigg)=\cQ
\label{13.8}
\end{equation}
as sets. The following result is the quasi-wandering version of the Beurling-Lax theorem; the proof is 
adapted from the commutative versions \cite{iziziz,chen}. 

\begin{theorem} \label{T:13.1g}
Let us assume that the weight $\bo$ satisfies conditions \eqref{18.2}, \eqref{1.8g}.
Let $\cM$ be a closed ${\bf S}_{\bo,R}$-invariant subspace of $H^2_{\bo,\cY}(\free)$
containing no nontrivial reducing subspaces for ${\bf S}_{\bo,R}$ i.e., $\cM$ is such that
\begin{equation}
\cM\supset H^2_{\bo}(\free)\otimes y
\label{13.2}
\end{equation}
for no nonzero vector $y\in\cY$. Then 
\begin{equation}
\cM=\bigvee_{\alpha \in\free}{\bf S}^\alpha_{\bo,R}\cQ,
\label{13.3gg}
\end{equation}
where $\cQ$ is the quasi-wandering subspace of $\cM$ defined as in \eqref{m3a}.
\end{theorem}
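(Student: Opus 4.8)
The plan is to establish the two inclusions in \eqref{13.3gg} separately. The inclusion $\bigvee_{\alpha\in\free}\mathbf{S}^\alpha_{\bo,R}\cQ\subseteq\cM$ is immediate: $\cQ=P_\cM(\bigoplus_j S_{\bo,R,j}\cM^\perp)\subseteq\cM$ by definition, and $\cM$ is $\mathbf{S}_{\bo,R}$-invariant. For the reverse inclusion, set $\cN:=\bigvee_{\alpha\in\free}\mathbf{S}^\alpha_{\bo,R}\cQ$ and $\cK:=\cM\ominus\cN$; it then suffices to show $\cK=\{0\}$.

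First I would unwind orthogonality to $\cN$. If $f\in\cM$ then $f\perp\cQ$ is equivalent to $\langle f,S_{\bo,R,j}g\rangle=0$ for all $g\in\cM^\perp$ and $j=1,\dots,d$ (the projection $P_\cM$ may be dropped since $f\in\cM$), i.e.\ to $S^*_{\bo,R,j}f\in\cM$ for $j=1,\dots,d$. Next I would run an induction on $|\beta|$ to prove that in fact $\mathbf{S}^{*\beta}_{\bo,R}f\in\cK$ for every $\beta\in\free$ whenever $f\in\cK$: the inductive step uses (i) that $g\in\cK$ satisfies $g\perp\cQ$, hence $S^*_{\bo,R,j}g\in\cM$ by the previous sentence, and (ii) that for $q\in\cQ$ and $\alpha\in\free$ one has $\langle S^*_{\bo,R,j}g,\mathbf{S}^\alpha_{\bo,R}q\rangle=\langle g,S_{\bo,R,j}\mathbf{S}^\alpha_{\bo,R}q\rangle=\langle g,\mathbf{S}^{\alpha'}_{\bo,R}q\rangle=0$ because $\mathbf{S}^{\alpha'}_{\bo,R}q\in\cN$ and $g\perp\cN$. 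Consequently $\cK$ is a closed $\mathbf{S}^*_{\bo,R}$-invariant subspace of $\cM$.

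The final step is to show that such a $\cK$ must vanish. Here the hypothesis \eqref{13.2} enters through the observation that the only subspaces of $H^2_{\bo,\cY}(\free)$ that are reducing for $\mathbf{S}_{\bo,R}$ are the spaces $H^2_{\bo,\cY_0}(\free)$ with $\cY_0\subseteq\cY$ a closed subspace: the projection onto a reducing subspace commutes with every $S_{\bo,R,j}$ and $S^*_{\bo,R,j}$, hence with $S^*_{\bo,R,j}S_{\bo,R,j}=\mathfrak{G}_{\bo,1,E,\mathbf{S}^*_{\bo,R}}^{-1}$ and therefore, via \eqref{upscoiso}, with $E^*E$, so it leaves the space of constant functions invariant. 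Thus if $\cK\neq\{0\}$ and the $\mathbf{S}_{\bo,R}$-invariant, $\mathbf{S}^*_{\bo,R}$-invariant hull $\cR$ of $\cK$ is shown to still lie inside $\cM$, then $\cR=H^2_{\bo,\cY_0}(\free)$ with $\cY_0\neq\{0\}$, and $\cM\supseteq H^2_{\bo}(\free)\otimes y$ for any nonzero $y\in\cY_0$, contradicting \eqref{13.2}. To carry this out one uses the model realization $\cM^\perp=\operatorname{Ran}\cO_{\bo,C,\bA}$ with $(C,\bA)$ $\bo$-isometric and $\bo$-strongly stable, together with the identity $\cQ=F(z)\cX^d$ for $F(z)=CR_\bo(Z(z)A)(Z(z)-A^*)$ from \eqref{13.3g} and the relation \eqref{apr8}.

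I expect this last step — upgrading the merely $\mathbf{S}^*_{\bo,R}$-invariant subspace $\cK$ to a genuine reducing subspace contained in $\cM$ — to be the main obstacle, precisely because $\mathbf{S}_{\bo,R}$ is not a row isometry when $\bo$ is not identically $1$: $S^*_{\bo,R,j}S_{\bo,R,j}$ is the nontrivial diagonal operator $\mathfrak{G}_{\bo,1,E,\mathbf{S}^*_{\bo,R}}^{-1}$, so the naive assertion that the $\mathbf{S}_{\bo,R}$-invariant hull of an $\mathbf{S}^*_{\bo,R}$-invariant subspace is again $\mathbf{S}^*_{\bo,R}$-invariant (and stays inside $\cM$) breaks down. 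Following the commutative treatments of Izuchi--Izuchi--Izuchi \cite{iziziz} and Chen \cite{chen}, the resolution is to write $\cM$ iteratively as $\bigvee_{|\alpha|<N}\mathbf{S}^\alpha_{\bo,R}(\cQ\vee P_\cM\cY)\ \vee\ \bigvee_{|\alpha|=N}\mathbf{S}^\alpha_{\bo,R}\cM$ (starting from $\cM=P_\cM\cY\vee P_\cM(\bigoplus_j S_{\bo,R,j}H^2_{\bo,\cY}(\free))$ and iterating), to show using the explicit forms of $F$ and $\cO_{\bo,C,\bA}$ that $P_\cM\cY\subseteq\cN$, and then to let $N\to\infty$, exploiting the $\bo$-strong stability of $\mathbf{S}^*_{\bo,R}$ from Proposition \ref{P:bo-model2} to make the residual term $\bigvee_{|\alpha|=N}\mathbf{S}^\alpha_{\bo,R}\cM$ negligible. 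The delicate point is the quantitative estimate controlling this residual term, which combines a tail estimate on the word-coefficients with the convergence of $\|v\|^2=\sum_{\gamma}\omega_{|\gamma|}\|v_\gamma\|^2$ for $v\in\cK$.
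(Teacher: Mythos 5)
Your first half is sound and in fact coincides with the paper's argument: the easy inclusion, the observation that for $f\in\cM$ one has $f\perp\cQ$ if and only if $S_{\bo,R,j}^*f\in\cM$ for all $j$ (the paper reaches this through \eqref{apr8} and the representation $\cQ=F\cdot\cX^d$, while you read it off directly from the definition of $\cQ$ -- that is fine), and the induction giving ${\bf S}_{\bo,R}^{*\alpha}f\in\cM$ (indeed in $\cK$) for every $\alpha\in\free$. The problem is that everything after that point -- the step that actually produces the contradiction with \eqref{13.2} -- is missing, and you say so yourself. Your first substitute (pass to the smallest reducing subspace containing $\cK$) fails for the reason you already identify: the ${\bf S}_{\bo,R}$-invariant hull of the ${\bf S}_{\bo,R}^*$-invariant space $\cK$ need not be ${\bf S}_{\bo,R}^*$-invariant, and the reducing hull need not stay inside $\cM$, so the classification of reducing subspaces cannot be applied. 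Your second substitute (the Izuchi--Chen iteration) rests on two unproven claims: that $P_\cM\cY\subseteq\cN$, and that the non-orthogonal residual $\bigvee_{|\alpha|=N}{\bf S}^\alpha_{\bo,R}\cM$ becomes negligible as $N\to\infty$. Neither is routine here: strong stability of ${\bf S}_{\bo,R}^*$ controls $\|{\bf S}_{\bo,R}^{*\alpha}f\|$, not the size of ${\bf S}^\alpha_{\bo,R}\cM$ inside a closed span, and since the $S_{\bo,R,j}$ are not isometries and the spans are not orthogonal, no tail estimate of the kind you gesture at is available; moreover it is unclear where \eqref{13.2} would enter your scheme except through the claim $P_\cM\cY\subseteq\cN$, which is essentially equivalent to what one must prove.

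The missing idea, which is the actual content of the paper's proof, is the following. Pick $\beta$ with $f_\beta\neq 0$ and apply the truncated sums
$g_N=\sum_{|\alpha|\le N}c_{|\alpha|}\,{\bf S}_{\bo,R}^{\alpha^\top}{\bf S}_{\bo,R}^{*\alpha}\big({\bf S}_{\bo,R}^{*\beta}f\big)$,
where the $c_j$ are the coefficients of $1/R_\bo$ from \eqref{1.8g}. Each $g_N$ lies in $\cM$ (by your invariance step plus ${\bf S}_{\bo,R}$-invariance of $\cM$); the $g_N$ are uniformly bounded because ${\bf S}_{\bo,R}$ is a row contraction and $\sum_j|c_j|<\infty$ -- this is exactly where hypothesis \eqref{1.8g} is used, and note that \eqref{1.8g} never appears in your sketch, which is a warning sign -- and they converge to $\Gamma_{\bo,{\bf S}_{\bo,R}^*}[I]\,{\bf S}_{\bo,R}^{*\beta}f=E^*E\,{\bf S}_{\bo,R}^{*\beta}f=\omega_{|\beta|}f_\beta=:y\neq 0$ by the operator identity \eqref{3.1e}. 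Since $\cM$ is closed, the nonzero constant $y$ lies in $\cM$, and ${\bf S}_{\bo,R}$-invariance then forces $H^2_{\bo}(\free)\otimes y\subseteq\cM$, contradicting \eqref{13.2}. Without this (or an equally concrete) mechanism for manufacturing a nonzero constant inside $\cM$, your argument does not close, so as it stands the proposal has a genuine gap at its decisive step.
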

\begin{proof} By definition \eqref{m3a}, $\cQ\subset\cM$. Since $\cM$ is  a closed ${\bf S}_{\bo,R}$-invariant subspace of 
$H^2_{\bo,\cY}(\free)$, the inclusion $\bigvee_{\alpha \in\free}{\bf S}^\alpha_{\bo,R}\cQ\subset\cM$ follows (regardless condition
\eqref{13.2}). 
To show the reverse containment, we take $\cQ$ in the form \eqref{13.8} with $F(z)$ defined by the formula \eqref{13.3g}
in terms of an $\bo$-isometric pair $(C,\bA)$ such that $\cM=({\rm Ran} \, \cO_{\bo,C,\bA})^{\perp}$, and we suppose that there is 
a nonzero $f\in\cM$ which is orthogonal (in the metric of $H^2_{\bo,\cY}(\free)$) to ${\bf S}^\alpha_{\bo,R}\cQ$ for all $\alpha \in\free$:
\begin{equation}
\langle f, \, \cO_{\bo,C,\bA}x\rangle_{H^2_{\bo,\cY}(\free)}=
0=\langle f, \, {\bf S}^\alpha_{\bo,R}F{\bf x}\rangle_{H^2_{\bo,\cY}(\free)}
\label{13.4}
\end{equation}
for all $x\in\cX$, ${\bf x}\in\cX^d$ and $\alpha \in \free$. Thus $f$ is orthogonal to ${\rm Ran} \, \cO_{\bo, C,
\bA}$ as well as to $\bigvee_{\alpha \in \free} \bS^{\alpha}_{n,R} F
\cX^{d}$. 

\smallskip

It follows from \eqref{apr8} that $f$ is orthogonal to $\sum_{j=1}^{d} S_{\bo,R,j}\cO_{\bo,C, \bA}x_{j}$
for arbitrary $x_{j} \in \cX$ ($j=1, \dots, d$) and furthermore, since 
the vectors $x_j$ can be chosen arbitrarily, it follows that $f$ is orthogonal to 
$S_{\bo,R,j}\cO_{\bo,C, \bA}\cX$ for all $j=1,\ldots,d$. Therefore, 
$S_{\bo,R,j}^*f$ is orthogonal to $\cO_{\bo,C, \bA}\cX=\cM^\perp$, that is,
$$
S_{\bo,R,j}^*f\in \cM \quad\mbox{for}\quad j=1,\ldots,d.
$$
Now it follows that conditions \eqref{13.4} hold for the element $S_{\bo,R,j}^*f$ (rather
than $f$ itself) for any fixed $j\in\{1,\ldots,d\}$. Repeating the preceding arguments,
we conclude that $S_{\bo,R,j}^*S_{\bo,R,i}^*f$ belongs to $\cM$ for any
$i,j\in\{1,\ldots,d\}$. The induction argument verifies that
\begin{equation}
{\bf S}_{\bo,R}^{* \alpha} f\in\cM\quad\mbox{for all}\quad \alpha \in\free.
\label{13.6g}
\end{equation}
Since $f\not\equiv 0$, there is a $\beta \in\free$ such that
$f_\beta \neq 0$ ($f_\beta \in\cY$). Recalling the evaluation operator 
$E: \, H^2_{\bo,\cY}(\free)\to \cY$ defined by $Ef=f_\emptyset$ whose adjoint $E^*$ is just the 
inclusion of $\cY$ into $H^2_{\bo, \cY}(\free)$, and invoking formula \eqref{esbo}, we get
$$
E^*E{\bf S}_{\bo,R}^{*\beta}f=\big({\bf S}_{\bo,R}^{*\beta}f\big)_\emptyset=\omega_{|\beta|}f_\beta:=
y\neq 0.
$$
By the first operator equality from \eqref{3.1e} applied to the function ${\bf S}_{\bo,R}^{*\beta}f$
we have
\begin{equation}
\Gamma_{\bo,{\bf S}_{\bo,R}^*}[I_{H^2_{\bo,\cY}(\free)}]{\bf S}_{\bo,R}^{*\beta}f
=E^*E{\bf S}_{\bo,R}^{*\beta}f=y\neq 0.
\label{3.1ea}
\end{equation}
Due to \eqref{13.6g} and since $\cM$ is ${\bf S}_{\bo,R}$-invariant, it follows that 
${\bf S}_{\bo,R}^{\alpha^\top}{\bf S}_{\bo,R}^{*\alpha}{\bf S}_{\bo,R}^{*\beta}f$ belongs to $\cM$ 
for any 
$\alpha\in\free$ as well as the power series 
\begin{equation}
g_N=\sum_{\alpha\in\free:|\alpha|\le N}c_{|\alpha|}{\bf S}_{\bo,R}^{\alpha^\top}
{\bf S}_{\bo,R}^{*\alpha}{\bf 
S}_{\bo,R}^{*\beta}f\quad\mbox{for all}\quad N\ge 1.
\label{3.1eb}
\end{equation}
Since ${\bf S}_{\bo,R}$ is a row contraction, it follows that for each $N\ge 1$,
\begin{align*}
\sum_{\alpha\in\free:|\alpha|\le N}c_{|\alpha|}{\bf S}_{\bo,R}^{\alpha^\top}{\bf S}_{\bo,R}^{*\alpha}&
\preceq \sum_{\alpha\in\free:|\alpha|\le N}|c_{|\alpha|}|{\bf S}_{\bo,R}^{\alpha^\top}
{\bf S}_{\bo,R}^{*\alpha}\\
&=\sum_{j=0}^N |c_{j}|\sum_{\alpha\in\free:|\alpha|=j}{\bf S}_{\bo,R}^{\alpha^\top}
{\bf S}_{\bo,R}^{*\alpha}
\preceq \sum_{j=0}^N|c_{j}|I_{H^2_{\bo,\cY}(\free)},
\end{align*}
and therefore, 
$$
\bigg\|\sum_{\alpha\in\free:|\alpha|\le N}c_{|\alpha|}{\bf S}_{\bo,R}^{\alpha^\top}
{\bf S}_{\bo,R}^{*\alpha}\bigg\|\le
\sum_{j=0}^\infty|c_{j}|=\|R_\bo^{-1}\|_{W^+}<\infty,
$$
due to the assumption \eqref{1.8g}. Thus, the sequence $\{g_N\}_{N\ge 1}\subset\cM$ defined in
\eqref{3.1eb} is uniformly 
bounded (in the metric of $H^2_{\bo,\cY}(\free)$) and admits a subsequential weak limit 
(which is in fact the strong limit)
$$
\sum_{\alpha\in\free}c_{|\alpha|}{\bf S}_{\bo,R}^{\alpha^\top}{\bf S}_{\bo,R}^{*\alpha}{\bf
S}_{\bo,R}^{*\beta}f=\Gamma_{\bo,{\bf 
S}_{\bo,R}^*}[I_{H^2_{\bo,\cY}(\free)}]{\bf S}_{\bo,R}^{*\beta}f
$$
which therefore, also belongs to $\cM$. We now conclude from \eqref{3.1ea} that $y$ belongs to
$\cM$. Then  $H^2_{\bo}(\free)\otimes y$ is a subspace of $\cM$ and is reducing for ${\bf 
S}_{\bo,R}$ which is a contradiction.
\end{proof}

\begin{remark}   \label{R:quasinorm}
Let us consider the gramian-type operator
$$
\Upsilon_{\bo,C,\bA}=\cO_{\bo,C,\bA}^*S_{\bo,R,j}^*S_{\bo,R,j}\cO_{\bo,C,\bA}=
\sum_{\alpha\in\free}\frac{\omega_{|\alpha|+1}}{\omega_{|\alpha|}^2}\bA^{*\alpha^\top}C^*C\bA^\alpha 
$$
which is independent of the choice of $j\in\{1,\ldots,d\}$. Since $S_{\bo,R,j}$ is a contraction, it is clear 
that $\Upsilon_{\bo,C,\bA}\preceq \cG_{\bo,C,\bA}$ and so $\Upsilon_{\bo,C,\bA}$ is a positive semidefinite contraction
if the pair $(C,\bA)$ is $\bo$-isomeric. Since the sum on the left side of \eqref{apr8} is orthogonal, we have
\begin{align*}
\|F{\bf x}\|^2_{H^2_{\bo,\cY}(\free)}&=
\big\|\sum_{j=1}^{d}S_{\bo,R,j}\cO_{\bo,C,\bA}x_j\big\|^2_{H^2_{\bo,\cY}(\free)}-
\|\cO_{\bo,C,\bA}A^*{\bf x}\|^2_{H^2_{\bo,\cY}(\free)}\\
&=\sum_{j=1}^d\|S_{\bo,R,j}\cO_{\bo,C,\bA}x_j\|_{H^2_{\bo,\cY}(\free)}^2-\|A^*{\bf x}\|_{\cX}^2\\
&=\big\langle (\Upsilon_{\bo,C,\bA}\otimes I_d-AA^*){\bf x}, \, {\bf x}\big\rangle_{\cX}.
\end{align*}
Therefore, the operator $\Delta_{\bo,C,\bA}:=\Upsilon_{\bo,C,\bA}\otimes I_d-AA^*$ is positive semidefinite, 
and in representation \eqref{13.8} we can take $F$ of the form 
$$
F(z)=CR_{\bo}(Z(z)A)(Z(z)-A^*)B
$$
where $B$ is any operator from an auxiliary Hilbert space $\cU$ onto $({\rm Ker}(\Delta_{\bo,C,\bA})^\perp$.
Furthermore, $F\cdot\cU=\cQ$ isometrically (contractively) if and only if $B^*\Delta_{\bo,C,\bA}B=I_{\cU}$
(respectively, $B^*\Delta_{\bo,C,\bA}B\preceq I_{\cU}$).
\end{remark}

\begin{remark}   \label{R:F1-quai-wandering}
In case $\bo = \bmu_n$ it is possible to make a connection with the power series $F=F_1$ from the 
representation \eqref{sumker}. We suppose that $(C,\bA)$ is an $n$-isometric pair
representing the ${\bf S}_{n,R}$-invariant  subspace $\cM\subset\cA_{n,\cY}(\free)$ via formula
$\cM=({\rm Ran} \, \cO_{n,C,\bA})^{\perp}$. We again impose the assumption \eqref{assume}.
Thus, $\cG_{n,C,\bA}=I_\cX$ and according to
the formula \eqref{6.13a} (for $j=n$), $\Bo_n=(I-AA^*)^{\frac{1}{2}}W_n$. Combining the latter 
equalities with \eqref{Fell-new} (for $\ell=1$) gives 
\begin{equation}
F(z)=C(I-Z(z)A)^{-n}(Z(z)-A^*)(I-AA^*)^{-\frac{1}{2}}.
\label{13.1}  
\end{equation}
As it follows from \eqref{sumker}, $F$ is a contractive multiplier from $H^2_{\cX^d}(\free)$ to 
$\cA_{n,\cY}(\free)$.
\end{remark}

\begin{remark} \label{R:13.2}
In case $n=1$, the power series \eqref{13.1} is a strictly inner multiplier 
from $H^2_{\cX^d}(\free)$ to 
$H^2_{\cY}(\free)$, since 
\begin{align*}
F(z)=&C(I-Z(z)A)^{-1}(Z(z)-A^*)(I-AA^*)^{-\frac{1}{2}}\\
=&-CA^*(I-AA^*)^{-\frac{1}{2}}\\
&+C(I-Z(z)A)^{-1}\left[Z(z)-A^*+(I-Z(z)A)A^*\right](I-AA^*)^{-\frac{1}{2}}\\
=&-CA^*(I-AA^*)^{-\frac{1}{2}}+C(I-Z(z)A)^{-1}Z(z)(I-AA^*)^{\frac{1}{2}}
\end{align*}
and the connecting operator $\sbm{A & (I-AA^*)^{\frac{1}{2}} \\ C & -CA^*(I-AA^*)^{-\frac{1}{2}}}$
is isometric. Then the representation \eqref{13.3gg} amounts to $\cM=F\cdot H^2_{\cX^d}(\free)$
and hence, Theorem \ref{T:13.1g} reduces to Theorem \ref{T:bl4a}.
\end{remark} 

\begin{remark} \label{R:13.9}
If we substitute formulas \eqref{6.13a}, \eqref{6.13} (for $j=n$) and $\cG_{n,C,\bA}=I_{\cX}$,
 $\cG_{n-1,C,\bA}=I_{\cX}-A^*A$ into \eqref{6.14}, we get
\begin{align*}
\To_n(z)&=-(I-A^*A)^{-\frac{1}{2}}A^*(I-AA^*)^{\frac{1}{2}}W_n \\
&\quad+
(I-A^*A)^{\frac{1}{2}}(I-Z(z)A)^{-1}Z(z)(I-AA^*)^{\frac{1}{2}}W_n\\
&=-A^*W_n+(I-A^*A)^{\frac{1}{2}}(I-Z(z)A)^{-1}Z(z)(I-AA^*)^{\frac{1}{2}}W_n,
\end{align*}
i.e., $\To_n$ is the characteristic function of the {\em row-contraction} $A$. To get $F_n$, we multiply 
$\To_n$ by $C(I-Z(z)A)^{-n+1}$ on the left. If $n=1$, we multiply 
$F_{n}$ just by $C$.  Note that in this case the final formula makes 
sense even without the imposition of the assumption \eqref{assume}.
\end{remark}

\section[Beurling-Lax  wandering subspace representations]
{Non-orthogonal Beurling-Lax representations based on wandering subspaces}
We now take a look at the wandering subspace $\cE$ (see \eqref{m3}) of a closed ${\bf S}_{\bo,R}$-invariant 
space $\cM$ isometrically included into $H^2_{\bo,\cY}(\free)$. As we will see later, the isometric representation
$\cE=\Theta\cdot\cU$ indeed exists and gives rise to the (essentially unique) $H^2_{\bo,\cY}(\free)$-Bergman inner multiplier
$\Theta$. In this subsection we will focus on the analog of the Beurling-Lax representation \eqref{m3c} which
cannot be orthogonal as the subspaces $\bS_{\bo,R}^\alpha\cE$ and $\bS_{\bo,R}^\beta\cE$ (for nonempty
$\alpha\neq \beta$) are not orthogonal in general. Thus, the best one can hope for is to recover a 
$\bS_{\bo,R}$-invariant subspace $\cM$ from its wandering subspace $\cE$ via the closed linear span 
$\cM=\bigvee_{\alpha\in\free}\bS_{\bo,R}^\alpha\cE$.

\smallskip

In the single-variable Bergman space $\cA_{2,\cY}$ this hope was realized in the seminal work
of Aleman, Richter and Sundberg \cite{ars}. Later elaborations due to Shimorin \cite{sh1, sh2} 
showed that the same result holds in the space $\cA_{3,\cY}$ but in general, not in $\cA_{n,\cY}$
for $n>3$. In \cite{iziziz1}, the result from \cite{ars} was recaptured via a substantial 
simplification of Shimorin's approach. In this section, we adapt the 
Izuchis' approach \cite{iziziz1}
to the noncommutative setting of $\cA_{2,\cY}(\free)$. The main result of the section is  
Theorem \ref{T:14.3} below. We start with some needed preliminaries.
\begin{theorem}
Let ${\bf T}=(T_1,\ldots, T_d)\in\cL(\cH)^d$ be a $*$-strongly stable $d$-tuple:
\begin{equation}
\lim_{N\to\infty}\sum_{\alpha\in\free:|\alpha|=N}\|{\bf T}^{*\alpha}h\|^2=0\quad\mbox{for all}\quad 
h\in\cH.
\label{14.0}
\end{equation}
Assume that the operators $T_1,\ldots,T_d$ are left-invertible (that is, 
there is $c>0$ such that 
$\|T_jh\|\ge c_j \|h\|$ for all $h\in\cH$ and $j=1,\ldots,d$) and have orthogonal ranges:
\begin{equation}
\langle T_i h, \, T_jh^\prime\rangle_{\cH}=0 \quad\mbox{for all}\quad h, h^\prime\in\cH\quad\mbox{and}\quad
i\neq j.
\label{14.00}
\end{equation}
Let us also assume that 
\begin{equation}
\|T_jh\|^2+\sum_{i=1}^d \|T_i^*T_j^*T_jh\|^2\le 2\|T_j^*T_jh\|^2
\quad\mbox{for all}\quad h\in\cH
\label{14.1}  
\end{equation}
and $j\in\{1,\ldots,d\}$. Then  $\cH=\bigvee_{\alpha\in\free}{\bf T}^\alpha\mathcal E$ where 
$\mathcal 
E=\cH\ominus\left(\bigoplus_{j=1}^d 
T_j\cH\right)$.
\label{T:14.1}
\end{theorem}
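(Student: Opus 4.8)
The statement to prove is the wandering-subspace reconstruction theorem (Theorem \ref{T:14.1}): under the hypotheses that $\bT = (T_1,\dots,T_d)$ is $*$-strongly stable, each $T_j$ is left-invertible with pairwise orthogonal ranges, and the Shimorin-type inequality \eqref{14.1} holds, one has $\cH = \bigvee_{\alpha\in\free} \bT^\alpha \cE$ where $\cE = \cH \ominus \bigl(\bigoplus_{j=1}^d T_j\cH\bigr)$. The plan is to adapt the Izuchi--Izuchi--Izuchi argument \cite{iziziz1} to the noncommutative multivariable setting. The overall structure will be to fix an arbitrary $h \in \cH$, decompose it level-by-level along the tree of words $\free$ using the direct-sum decomposition $\cH = \cE \oplus \bigl(\bigoplus_{j=1}^d T_j \cH\bigr)$, and then show that the ``remainder'' after peeling off $N$ levels tends to $0$ as $N \to \infty$. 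Concretely, writing $P_\cE$ for the orthogonal projection onto $\cE$ and $L_j = T_j(T_j^*T_j)^{-1}$ for the components of the Cauchy dual, the first step is the identity $h = \sum_{|\alpha|\le N-1} \bT^{\alpha^\top} P_\cE \bL^{*\alpha} h + \sum_{|\alpha| = N} \bT^{\alpha^\top} \bL^{*\alpha} h$ (the same telescoping identity already used in the proof of Proposition \ref{P:ol2}, which only uses left-invertibility and orthogonality of ranges). Thus it suffices to prove that the tail term $\sum_{|\alpha|=N} \bT^{\alpha^\top}\bL^{*\alpha} h \to 0$.

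\textbf{Key steps.} First I would record the elementary consequences of the hypotheses: from pairwise orthogonality of ranges, $\bT^{*\beta}\bT^\alpha = 0$ unless one of $\alpha,\beta$ is a prefix of the other (in the appropriate sense), and $\|\sum_{|\alpha|=N} \bT^\alpha x_\alpha\|^2 = \sum_{|\alpha|=N}\|\bT^\alpha x_\alpha\|^2$; also $L_j^* T_j = I$, $L_j^* T_i = 0$ for $i\ne j$, and $P_\cE = I - \sum_j T_j L_j^*$. Second, I would reformulate the inequality \eqref{14.1}: since $\langle T_j^* T_j h, h\rangle = \|T_j h\|^2$ and $\sum_i \|T_i^* (T_j^* T_j h)\|^2 = \langle \sum_i T_i T_i^* (T_j^* T_j h), T_j^* T_j h\rangle$, condition \eqref{14.1} says $\|T_j h\|^2 + \langle (\sum_i T_i T_i^*) T_j^* T_j h, T_j^* T_j h\rangle \le 2\|T_j^* T_j h\|^2$ for all $h$, which should be packaged as a statement comparing the quadratic forms of $T_j^* T_j$ and of $T_j^* T_j (\sum_i T_i T_i^*) T_j^* T_j + T_j^* T_j$ — i.e. a ``concavity'' or ``$2$-isometry-like'' estimate on the single operators $T_j$. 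The heart of the argument, third step, is to use this estimate to show that the sequence $\gamma_N(h) := \sum_{|\alpha|=N}\|\bL^{*\alpha} h\|^2_{\text{(weighted by $T_j^*T_j$)}}$ — more precisely a carefully chosen quadratic form $\langle Q_N h, h\rangle$ with $Q_N = \sum_{|\alpha|=N} \bL^\alpha (\text{something}) \bL^{*\alpha}$ — is eventually monotone and bounded, hence convergent, and that its limit equals the norm-squared of the tail term. Combining this with $*$-strong stability \eqref{14.0} (which controls $\sum_{|\alpha|=N}\|\bT^{*\alpha}h\|^2$, and via left-invertibility also $\sum_{|\alpha|=N}\|\bL^{*\alpha} h\|^2$ up to constants) forces the tail to vanish. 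For the single-variable $d=1$ case this is exactly Shimorin's/Izuchi's lemma that a left-invertible operator satisfying $\|Th\|^2 + \|T^*T^2 h\|^2 \le 2\|T^2 h\|^2$ together with a weak stability hypothesis has the wandering subspace property; the multivariable bookkeeping along the free semigroup is the new ingredient but follows the same template as in Section \ref{S:olof}.

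\textbf{Main obstacle.} The delicate point is that \eqref{14.1} is imposed on each $T_j$ \emph{separately} rather than as a joint condition on the tuple, so one must be careful that the level-$N$ estimates assemble correctly across all branches of $\free$ without loss: the cross terms vanish thanks to orthogonality of ranges, but one needs the right inductive quantity so that the per-branch inequality \eqref{14.1} telescopes into a global monotonicity statement. Getting the exact form of the operators $Q_N$ (equivalently, identifying which weighted sum of $\|\bL^{*\alpha}h\|^2$ to track) so that \eqref{14.1} yields $\langle Q_N h, h\rangle \le \langle Q_{N-1} h, h\rangle$ (or the reverse, depending on normalization) is the crux; this is where the factor $2$ and the Cauchy-dual reindexing in the proof of Proposition \ref{P:ol1}--\ref{P:ol2} play together. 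Once that monotone-bounded-hence-convergent scheme is in place, the conclusion that the tail $\to 0$ and hence $h \in \bigvee_\alpha \bT^\alpha \cE$ is routine, and the reverse inclusion $\bigvee_\alpha \bT^\alpha\cE \subset \cH$ is trivial.
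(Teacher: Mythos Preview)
Your overall strategy---a telescoping decomposition plus a monotonicity argument fueled by \eqref{14.1} and terminated by $*$-strong stability---is the right shape, but the specific implementation has a genuine gap, and the paper's route is different in a way that matters.

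\textbf{The gap.} You try to show, for an \emph{arbitrary} $h\in\cH$, that the Cauchy-dual tail $\sum_{|\alpha|=N}\bT^{\alpha^\top}\bL^{*\alpha}h$ tends to $0$. To do that you assert that $*$-strong stability of $\bT^*$ ``via left-invertibility also [controls] $\sum_{|\alpha|=N}\|\bL^{*\alpha}h\|^2$ up to constants.'' This is not justified: at a single step one has $\|L_j^*x\|\le \|(T_j^*T_j)^{-1}\|\,\|T_j^*x\|$, but iterating this over words of length $N$ costs a constant raised to the $N$th power, and no uniform comparison between $\sum_{|\alpha|=N}\|\bL^{*\alpha}h\|^2$ and $\sum_{|\alpha|=N}\|\bT^{*\alpha}h\|^2$ is available. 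You yourself flag that identifying the ``right inductive quantity'' is the crux and leave it unresolved; as written, the argument does not close.

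\textbf{What the paper does instead.} The paper argues by contradiction: take $h$ orthogonal to $\bT^\alpha\cE$ for every $\alpha$. Then $\bT^{*\alpha}h\perp\cE$, so $\bT^{*\alpha}h=\sum_{\ell}T_\ell g_{\alpha\ell}$ for some $g_{\alpha\ell}\in\cH$. Because $T_j^*P_\cE=0$ and the ranges are orthogonal, $T_j^*\bT^{*\alpha}h=T_j^*T_jg_{\alpha j}$, and applying \eqref{14.1} with $h=g_{\alpha j}$ and summing over $j$ yields the node inequality
\[
\|\bT^{*\alpha}h\|^2-2\sum_{j}\|T_j^*\bT^{*\alpha}h\|^2+\sum_{i,j}\|T_i^*T_j^*\bT^{*\alpha}h\|^2\le 0.
\]
Summing over $|\alpha|=N$ gives $R_N-2R_{N+1}+R_{N+2}\le 0$ for $R_N=\sum_{|\alpha|=N}\|\bT^{*\alpha}h\|^2$. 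Separately, \eqref{14.1} forces $(T_j^*T_j)^{-1}+\sum_i T_iT_i^*\preceq 2I$, hence $(T_j^*T_j)^{-1}\succeq I$ and $\bT$ is a row contraction, so $R_N$ is nonincreasing. A nonnegative, nonincreasing, concave sequence with $R_N\to 0$ must have $R_N-R_{N+1}$ nondecreasing yet summable, hence identically zero; so $R_0=\|h\|^2=0$.

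The key idea you are missing is precisely this passage to the orthogonal complement: for such $h$ one has $\bT^{*\alpha}h\in\bigoplus_jT_j\cH$ at every node, which is exactly what lets \eqref{14.1} (stated for vectors of the form $T_jg$) aggregate into a clean second-difference inequality for $R_N$---with no Cauchy-dual bookkeeping and no need to compare $\bL^{*\alpha}$ with $\bT^{*\alpha}$.
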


\begin{proof}
Since $T_j$ is bounded below, $T_j\cH$ is a closed subspace of $\cH$. Since $T_j\cH$ is orthogonal to 
$T_i\cH$ for $j\neq i$, the formula for $\mathcal E$ makes sense. Furthermore, the operator 
$(T_j^*T_j)^{-1}$
is bounded and  letting $h=(T_j^*T_j)^{-1}y$ in \eqref{14.1} we conclude that 
$$
\langle (T_j^*T_j)^{-1}y, \, y\rangle_{\cH}+\sum_{i=1}^d \|T_i^*y\|^2_{\cH}\le 2\|y\|_{\cH}^2
$$
for all $y\in\cH$, which can be written in the operator form as 
\begin{equation}
(T_j^*T_j)^{-1}+\sum_{i=1}^d T_iT_i^* \preceq 2 I_{\cH}.
\label{jan13}
\end{equation}
Therefore, $(T_j^*T_j)^{-1}+T_jT_j^* \preceq 2 I_{\cH}$, which implies (see \cite[p. 444]{iziziz1} 
for the proof)
$(T_j^*T_j)^{-1}\succeq I_{\cH}$. Substituting the latter inequality into \eqref{jan13} we conclude that 
${\bf T}=(T_1,\ldots, T_d)\in\cL(\cH)^d$ is a row-contraction:
\begin{equation}
\sum_{j=1}^d\|T_j^*h\|^2\le \|h\|^2\quad\mbox{for all}\quad h\in\cH.
\label{jan13a}
\end{equation}
Let us assume that $\bigvee_{\alpha\in\free}{\bf 
T}^{\alpha}\mathcal E$ is properly contained in $\cH$. Then there is $h\in\cH$ that is orthogonal
to ${\bf T}^{\alpha}\mathcal E$ for each $\alpha\in\free$. For this element, ${\bf T}^{*\alpha}h$ is 
orthogonal to $\mathcal E$ for each $\alpha\in\free$. Hence,
$$
{\bf T}^{*\alpha}h\in\cH\ominus \mathcal E=\bigoplus_{j=1}^d T_j\cH,
$$
so that for every fixed $\alpha\in\free$, there exist $g_{\alpha 1},\ldots,g_{\alpha d}\in\cH$ such 
that 
\begin{equation}
\label{14.3}  
{\bf T}^{*\alpha}h=\sum_{\ell=1}^d T_\ell g_{\alpha\ell}.
\end{equation}
Due to the orthogonality condition \eqref{14.00}, 
$$
\|{\bf T}^{*\alpha}h\|^2=\sum_{j=1}^d\|T_j g_{\alpha j}\|^2.
$$
On the other hand, combining \eqref{14.3} with \eqref{14.00} gives
$$
T_j^*{\bf T}^{*\alpha}h=\sum_{\ell=1}^d T_j^* T_\ell g_{\alpha\ell}=T_j^*T_j 
g_{\alpha j}\quad\mbox{for}\quad j=1,\ldots,d.
$$
Therefore, for each fixed $\alpha\in\free$,
\begin{align}
&\|{\bf T}^{*\alpha}h\|^2-2\sum_{j=1}^d\|T_j^*{\bf T}^{*\alpha}h\|^2+\sum_{i,j=1}^d\|T_i^*T_j^*{\bf 
T}^{*\alpha}h\|^2\notag\\
&=\sum_{j=1}^d\|T_j g_{\alpha j}\|^2-2\sum_{j=1}^d\|T_j^*T_j 
g_{\alpha j}\|^2+\sum_{i,j=1}^d\|T_i^*T_j^*T_j 
g_{\alpha j}\|^2\notag\\
&=\sum_{j=1}^d\bigg(\|T_j g_{\alpha j}\|^2-2\|T_j^*T_j g_{\alpha j}\|^2+\sum_{i=1}^d\|T_i^*T_j^*T_j 
g_{\alpha j}\|^2\bigg)\le 0,\label{14.5}
\end{align}   
where the last inequality follows from \eqref{14.1} (applied to $g_{\alpha j}$ instead of $h$). If 
we let
$$
R_N=\sum_{\alpha\in\free:|\alpha|=N}\|{\bf T}^{*\alpha}h\|^2,
$$
then \eqref{14.0} tells us that $R_N$ tends to zero as $N\to \infty$. Making use of the inequality
\eqref{jan13a} (applied to ${\bf T}^{*\alpha}h$ rather than $h$) we see that
this convergence is monotone:
\begin{align*}
R_{N+1}=\sum_{\alpha\in\free:|\alpha|=N+1}\|{\bf T}^{*\alpha}h\|^2&=\sum_{j=1}^d 
\sum_{\alpha\in\free:|\alpha|=N}\|T_j^*{\bf T}^{*\alpha}h\|^2\\
&=\sum_{\alpha\in\free:|\alpha|=N}\sum_{j=1}^d \|T_j^*{\bf T}^{*\alpha}h\|^2\\
&\le \sum_{\alpha\in\free:|\alpha|=N}\|{\bf T}^{*\alpha}h\|^2=R_N.
\end{align*}
Summing up inequalities \eqref{14.5} over all elements $\alpha\in\free$ with $|\alpha|=N$ we get
\begin{align*}
R_N-2R_{N+1}+R_{N+2} \le 0
\end{align*}  
which together with the preceding inequality implies 
$$
R_{N+1}-R_{N+2} \ge R_N-R_{N+1} \ge 0.
$$
Since $R_N\to 0$ as $N\to\infty$, it follows that $R_{N}=R_{N+1}$ for all $N\ge 0$.  In particular,
$R_N=R_0=\|{\bf T}^{*\emptyset}h\|^2=\|h\|^2\to 0$ as $N\to\infty$ so that $\|h\|=0$.
\end{proof}

\begin{theorem}
The shift $d$-tuple ${\bf S}_{n,R}$ satisfies all assumptions of Theorem \ref{T:14.1} but
\eqref{14.1}. The tuple ${\bf S}_{2,R}$ satisfies equalities
\begin{align}
\sum_{i=1}^d S_{2,R,i}S_{2,R,i}^*S_{2,R,j}^*S_{2,R,j}f&=2S_{2,R,j}^*S_{2,R,j}f-f,\label{14.6a}\\
\|S_{2,R,j}f\|^2+\sum_{i=1}^d \|S_{2,R,i}^*S_{2,R,j}^*S_{2,R,j}f\|^2&= 2\|S_{2,R,j}^*S_{2,R,j}f\|^2
\label{14.6}
\end{align}
for all $f\in\cA_{2,\cY}(\free)$ and $j=1,\ldots,d$.
\label{T:14.2}
\end{theorem}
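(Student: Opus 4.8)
The plan is to verify the first three hypotheses of Theorem \ref{T:14.1} for ${\bf S}_{n,R}$ in one stroke (valid for every $n$, indeed for every admissible weight), and then to establish \eqref{14.6a} for $n=2$ by a direct diagonal computation in the monomial system, deducing \eqref{14.6} from \eqref{14.6a} by an inner-product pairing.

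For the three ``easy'' hypotheses: the $*$-strong stability \eqref{14.0} of ${\bf T}={\bf S}_{n,R}$ is exactly part (1) of Proposition \ref{P:bo-model} taken with $\bo=\bmu_n$. Left-invertibility of each $S_{n,R,j}$ follows from the lower estimate $\|S_{\bo,R,j}f\|^2\ge\frac1M\|f\|^2$ proved in Section \ref{S:Hardy}, here with $M=n$ because $\mu_{n,j}/\mu_{n,j+1}=(j+n)/(j+1)\le n$. The orthogonality of ranges \eqref{14.00} is immediate and was already recorded as \eqref{ol-b}: $S_{n,R,i}f$ is supported on words ending in the letter $i$, so $\operatorname{Ran}S_{n,R,i}\perp\operatorname{Ran}S_{n,R,j}$ for $i\ne j$. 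This settles the claim that ${\bf S}_{n,R}$ satisfies all hypotheses of Theorem \ref{T:14.1} except possibly \eqref{14.1}.

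Now fix $n=2$ and write $\lambda_k:=\mu_{2,k+1}/\mu_{2,k}=(k+1)/(k+2)$. By \eqref{jan11}, $S_{2,R,j}^*S_{2,R,j}$ multiplies the coefficient of $z^\alpha$ by $\lambda_{|\alpha|}$, independently of $j$; moreover $S_{2,R,i}\colon z^\alpha\mapsto z^{\alpha i}$ and, by \eqref{Sboj*}, $S_{2,R,i}^*\colon z^{\beta i}\mapsto\lambda_{|\beta|}z^\beta$ while $S_{2,R,i}^*$ annihilates monomials not ending in $i$. Iterating these actions on $z^\alpha$ shows that in $\sum_{i=1}^d S_{2,R,i}S_{2,R,i}^*S_{2,R,j}^*S_{2,R,j}z^\alpha$ only the term with $i$ equal to the last letter of $\alpha$ survives, yielding $\lambda_{|\alpha|-1}\lambda_{|\alpha|}z^\alpha$ when $\alpha\ne\emptyset$ and $0$ when $\alpha=\emptyset$. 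Since $2S_{2,R,j}^*S_{2,R,j}z^\alpha-z^\alpha=(2\lambda_{|\alpha|}-1)z^\alpha$, the identity \eqref{14.6a} on each monomial reduces to the arithmetic facts
\[
2\lambda_0-1=0\qquad\text{and}\qquad\lambda_{k-1}\lambda_k=2\lambda_k-1\ \ (k\ge1),
\]
the first because $\lambda_0=\tfrac12$, the second because $\frac{k}{k+1}\cdot\frac{k+1}{k+2}=\frac{k}{k+2}=\frac{2(k+1)}{k+2}-1$. All operators occurring are bounded (each $\|S_{2,R,i}\|\le1$ and $\|S_{2,R,i}^*\|\le1$) and monomials span a dense subset of $\cA_{2,\cY}(\free)$, so \eqref{14.6a} holds for every $f$. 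Finally \eqref{14.6} is obtained by pairing \eqref{14.6a} with $g:=S_{2,R,j}^*S_{2,R,j}f$ and using $\langle\sum_i S_{2,R,i}S_{2,R,i}^*g,\,g\rangle=\sum_i\|S_{2,R,i}^*g\|^2$ together with $\langle f,\,S_{2,R,j}^*S_{2,R,j}f\rangle=\|S_{2,R,j}f\|^2$.

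The argument is elementary from start to finish; there is no serious obstacle, only the bookkeeping of how the adjoints $S_{2,R,i}^*$ act on monomials (which survive, and with which weight). The one point worth flagging is the degenerate value $\lambda_0=1/2$: it is precisely what forces equality in \eqref{14.1} when $n=2$, and its failure ($\lambda_{n,0}=1/n$) once $n\ge3$ is why \eqref{14.6} is stated only for the tuple ${\bf S}_{2,R}$.
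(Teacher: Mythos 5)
Your proof is correct and follows essentially the same route as the paper: the three standing hypotheses are handled exactly as in the text (Proposition \ref{P:bo-model}(1), the lower bound $\tfrac1n\|f\|^2$, and the evident orthogonality of ranges), and both identities come down to the diagonal action of the shifts on monomials together with the scalar relation between consecutive weights — your $\lambda_{k-1}\lambda_k=2\lambda_k-1$ is the $n=2$ case of the paper's binomial identity $\mu_{n,k+1}^{-1}-2\mu_{n,k}^{-1}+\mu_{n,k-1}^{-1}=\mu_{n-2,k+1}^{-1}$. The only organizational difference is that you establish \eqref{14.6a} first and obtain \eqref{14.6} by pairing with $S_{2,R,j}^*S_{2,R,j}f$, whereas the paper computes the three norms in \eqref{14.6} for general $n$ (thereby also recording the strictly positive defect when $n\ge3$) and then specializes to $n=2$ for both identities.
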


\begin{proof}
Strong stability of ${\bf S}^*_{n,R}$
was established in Proposition \ref{P:bo-model}. Orthogonality of ${\rm Ran} \, S_{n,R,j}$ and 
${\rm Ran} \, S_{n,R,i}$ for $i\neq j$ is evident. Boundedness below follows from the estimate
$$
\|S_{n,R,j}f\|^2=\sum_{\alpha\in\free}\mu_{n,|\alpha|+1}\|f_\alpha\|^2\ge 
\bigg(\inf_{\alpha\in\free}\frac{\mu_{n,|\alpha|+1}}{\mu_{n,|\alpha|}}\bigg)\cdot
\sum_{\alpha\in\free}\mu_{n,|\alpha|}\|f_\alpha\|^2
=\frac{1}{n}\|f\|^2.
$$
To verify \eqref{14.6}, take $f(z)=\sum_{\alpha\in\free}f_\alpha z^\alpha\in\cA_{n,\cY}(\free)$ and 
observe that by \eqref{jan11},
\begin{equation}
S_{n,R,j}^*S_{n,R,j}f=\sum_{\alpha\in\free}\frac{\mu_{n,|\alpha|+1}}{\mu_{n,|\alpha|}} 
f_{\alpha}z^\alpha,
\label{14.7}
\end{equation}
so that 
$$
\|S_{n,R,j}^*S_{n,R,j}f\|^2=\sum_{\alpha \in \free} \mu_{n,|\alpha|}\left\|
\frac{\mu_{n,|\alpha|+1}}{\mu_{n,|\alpha|}} f_{\alpha}\right\|^2=\sum_{\alpha \in 
\free}\frac{\mu_{n,|\alpha|+1}^2}{\mu_{n,|\alpha|}}
\|f_\alpha\|^2.
$$
From the formulas \eqref{bo-shift} specialized to the present setting of $\bo=\bmu_n$, we have
$$
S_{n,R,i}S_{n,R,i}^*f=\sum_{\alpha\in\free}\frac{\mu_{n,|\alpha|+1}}{\mu_{n,\alpha}}f_{\alpha i}z^{\alpha i}
$$
for $i=1,\ldots,d$, and therefore,
$$
\sum_{i=1}^d S_{n,R,i}S_{n,R,i}^*f
=\sum_{i=1}^d \sum_{\alpha\in\free}\frac{\mu_{n,|\alpha|+1}}{\mu_{n,|\alpha|}}f_{\alpha i}z^{\alpha i}
=\sum_{\alpha\in\free:\alpha\ne \emptyset}\frac{\mu_{n,|\alpha|}}{\mu_{n,|\alpha|-1}}f_{\alpha}z^{\alpha},
$$
which being combined with \eqref{14.7}, gives
\begin{align}
\sum_{i=1}^d 
S_{n,R,i}S_{n,R,i}^*S_{n,R,j}^*S_{n,R,j}f&=\sum_{\alpha\in\free:\alpha\ne \emptyset}
\frac{\mu_{n,|\alpha|}}{\mu_{n,|\alpha|-1}}\cdot 
\frac{\mu_{n,|\alpha|+1}}{\mu_{n,|\alpha|}}f_\alpha z^\alpha\notag\\
&=\sum_{\alpha\neq \emptyset}
\frac{\mu_{n,|\alpha|+1}}{\mu_{n,|\alpha|-1}}f_\alpha z^\alpha.\label{jan12}
\end{align}
Making use of the latter equality and \eqref{14.7} we get
\begin{align}
\sum_{i=1}^d \|S_{n,R,i}^*S_{n,R,j}^*S_{n,R,j}f\|^2&=
\sum_{i=1}^d\langle S_{n,R,i}S_{n,R,i}^*S_{n,R,j}^*S_{n,R,j}f, \, S_{n,R,j}^*S_{n,R,j}f\rangle\notag\\
&=\sum_{\alpha\neq \emptyset}
\frac{\mu^2_{n,|\alpha|+1}}{\mu_{n,|\alpha|-1}}\|f_\alpha\|^2.\label{14.9}
\end{align}
We finally observe that for each $j\in\{1,\ldots,d\}$,
$$
\|S_{n,R,j}f\|^2=\sum_{\alpha\in\free}\mu_{n,|\alpha|+1}\|f_\alpha\|^2
$$
which together with \eqref{14.7} and \eqref{14.9} brings us to
\begin{align}
&\|S_{n,R,j}f\|^2-2\|S_{n,R,j}^*S_{n,R,j}f\|^2+\sum_{i=1}^d \|S_{n,R,i}^*S_{n,R,j}^*S_{n,R,j}f\|^2
\label{jan14e'}\\
&=\sum_{\alpha\neq 
\emptyset}\mu_{n,|\alpha|+1}^2\left(\mu_{n,|\alpha|+1}^{-1}-2\mu_{n,|\alpha|}^{-1}+\mu_{n,|\alpha|-1}^{-1}
\right)\|f_\alpha\|^2  + \left( \mu_{n,1} - 2 \mu_{n,1}^{2} \right) \|f_{\emptyset} \|^{2}. \notag
\end{align}
Making use of the binomial identity
$$
\mu_{n,|\alpha|+1}^{-1}-2\mu_{n,|\alpha|}^{-1}+\mu_{n,|\alpha|-1}^{-1}=\mu_{n-2,|\alpha|+1}^{-1}\quad (\alpha\neq\emptyset)
$$
which is easily verified using the definition \eqref{1.6pre} (or by equating the coefficients in the power-series identity 
$(1-\lambda)^{-n}(1-\lambda)^2=(1-\lambda)^{-(n-2)}$) and taking into account that $\mu_{n,1}=\frac{1}{n}$ we write \eqref{jan14e'}
as
\begin{align*}
&\|S_{n,R,j}f\|^2-2\|S_{n,R,j}^*S_{n,R,j}f\|^2+\sum_{i=1}^d \|S_{n,R,i}^*S_{n,R,j}^*S_{n,R,j}f\|^2    \notag \\
 & = \frac{n-2}{n^2}\|f_\emptyset\|^2+ \sum_{\alpha\neq \emptyset}\frac{ \mu_{n, |\alpha| +1}^2}{\mu_{n-2,|\alpha|+1}}
\| f_{\alpha} \|^{2}  \notag \\
&=\frac{n-2}{n^2}\|f_\emptyset\|^2+\sum_{\alpha\neq 
\emptyset}\frac{(n-1)(n-2)\mu_{n,|\alpha|+1}}{(n+|\alpha|)(n+|\alpha|-1)}\|f_\alpha\|^2.
\end{align*}
\noindent
Letting $n=2$ in the latter equality we get \eqref{14.6}. Letting $n=2$ in \eqref{jan12} we get
$$
\sum_{i=1}^dS_{2,R,i}S_{2,R,i}^*S_{2,R,j}^*S_{2,R,j}f=\sum_{\alpha\neq \emptyset}
\frac{|\alpha|}{|\alpha|+2}f_\alpha 
z^\alpha=\sum_{\alpha\in\free}\frac{|\alpha|}{|\alpha|+2}f_\alpha z^\alpha,
$$
while \eqref{14.7} (for $n=2$) implies 
$$
(2S_{2,R,j}^*S_{2,R,j}-I)f=\sum_{\alpha\in\free}\left(\frac{2|\alpha|+2}{|\alpha|+2}-1\right) 
f_{\alpha}z^\alpha
=\sum_{\alpha\in\free}\frac{|\alpha|}{|\alpha|+2}f_\alpha z^\alpha.
$$
The two latter equalities imply \eqref{14.6a}.
\end{proof}
Now let $\cH=\cM$ be an ${\bf S}_{2,R}$-invariant closed subspace of $\cA_{2,\cY}(\free)$
and let ${\bf T}$ be the restriction of ${\bf S}_{2,R}$ to $\cM$:
\begin{equation}
{\bf T}=(T_1,\ldots,T_d),\quad\mbox{where}\quad 
T_j=S_{2,R,j}\vert_{\cM}\quad\mbox{for}\quad j=1,\ldots,d.
\label{jan12d}
\end{equation}
By Theorem \ref{T:14.2}, ${\bf T}$ is a $*$-strongly stable row contraction and the operators
$T_1,\ldots,T_d$ are bounded below and their ranges are pair-wise orthogonal. Therefore,
${\bf T}$ meets all the assumptions of Theorem \ref{T:14.1} but, perhaps, \eqref{14.1}.
Let us show that it does satisfy \eqref{14.1}. To this end, we first observe that 
$T_j^*=P_{\cM}S_{2,R,j}^*$. We next observe that for any $f\in\cM$,
\begin{align}
S_{2,R,j}^*S_{2,R,j}f&=(P_\cM+P_{\cM^\perp})S_{2,R,j}^*S_{2,R,j}f 
\notag \\
&=T_j^*T_jf+P_{\cM^\perp}S_{2,R,j}^*S_{2,R,j}f,\label{jan12a} \\
S_{2,R,i}^*S_{2,R,j}^*S_{2,R,j}f&=(P_\cM+P_{\cM^\perp})S_{2,R,i}^*(P_\cM+P_{\cM^\perp})S_{2,R,j}^*
S_{2,R,j}f \notag \\ 
&=P_\cM S_{2,R,i}^*P_\cM 
S_{2,R,j}^*S_{2,R,j}f+P_{\cM^\perp}S_{2,R,i}^*S_{2,R,j}^*S_{2,R,j}f\notag\\
&=T_i^*T_j^*T_jf+P_{\cM^\perp}S_{2,R,i}^*S_{2,R,j}^*S_{2,R,j}f,\label{jan12b}
\end{align}
where we used the equality $P_\cM S_{2,R,i}^*\vert_{\cM^{\perp}}=0$ to get \eqref{jan12b}.
Since the terms on the right sides of \eqref{jan12a} and \eqref{jan12b} are orthogonal and since 
$\cM$ is included in $\cA_{2,\cY}(\free)$ isometrically, we have
\begin{align*} 
\|S_{2,R,j}^*S_{2,R,j}f\|^2_{\cA_{2,\cY}(\free)}=&\|T_j^*T_jf\|^2_{\cM}+\|P_{\cM^\perp}S_{2,R,j}^*
S_{2,R,j}f
\|^2_{\cA_{2,\cY}(\free)},\\
\|S_{2,R,i}^*S_{2,R,j}^*S_{2,R,j}f\|^2_{\cA_{2,\cY}(\free)}=&\|T_i^*T_j^*T_jf\|^2_{\cM}+
\|P_{\cM^\perp}S_{2,R,i}^*S_{2,R,j}^*S_{2,R,j}f\|^2_{\cA_{2,\cY}(\free)}.
\end{align*}
Substituting the two latter equalities into \eqref{14.6} gives
\begin{align}
&\|T_jf\|_{\cM}^2+\sum_{i=1}^d \|T_i^*T_j^*T_jf\|^2_{\cM}-2 \|T_j^*T_jf\|^2_{\cM}\notag\\
&=2\|P_{\cM^\perp}S_{2,R,j}^*S_{2,R,j}f
\|^2_{\cA_{2,\cY}(\free)}-\sum_{i=1}^d\|P_{\cM^\perp}S_{2,R,i}^*S_{2,R,j}^*
S_{2,R,j}f\|^2_{\cA_{2,\cY}(\free)}.
\label{jan12c}
\end{align}
In the next calculation, the first equality follows from \eqref{14.6a}, the second one holds since 
$f\in\cM$
and since $P_{\cM^\perp}S_{2,R,j}\vert\cM=0$, the third inequality holds since we drop the projection,
the fourth equality holds since the ranges of $S_{2,R,j}$'s are orthogonal, and the last inequality 
holds since 
$S_{2,R,i}$ is a contraction:
\begin{align}
&2\|P_{\cM^\perp}S_{2,R,j}^*S_{2,R,j}f
\|^2_{\cA_{2,\cY}(\free)}\notag\\
&=\frac{1}{2}\|P_{\cM^\perp}(I+
\sum_{i=1}^d S_{2,R,i}S_{2,R,i}^*S_{2,R,j}^*S_{2,R,j})f\|^2_{\cA_{2,\cY}(\free)}\notag\\
&=\frac{1}{2}\|P_{\cM^\perp}\sum_{i=1}^d
S_{2,R,i}P_{\cM^\perp}S_{2,R,i}^*S_{2,R,j}^*S_{2,R,j}f\|^2_{\cA_{2,\cY}(\free)}\notag\\
&\le \frac{1}{2}\|\sum_{i=1}^d
S_{2,R,i}P_{\cM^\perp}S_{2,R,i}^*S_{2,R,j}^*S_{2,R,j}f\|^2_{\cA_{2,\cY}(\free)}\notag\\
&=\frac{1}{2}\sum_{i=1}^d \|S_{2,R,i}P_{\cM^\perp}S_{2,R,i}^*S_{2,R,j}^*
S_{2,R,j}f\|^2_{\cA_{2,\cY}(\free)}\notag\\
&\le \frac{1}{2}\sum_{i=1}^d \|P_{\cM^\perp}S_{2,R,i}^*S_{2,R,j}^*S_{2,R,j}f\|^2_{\cA_{2,\cY}(\free)}.\notag
\end{align}
Combining this result with \eqref{jan12c} gives us
\begin{align*}
&\|T_jf\|_{\cM}^2+\sum_{i=1}^d \|T_i^*T_j^*T_jf\|^2_{\cM}-2  \|T_j^*T_jf\|^2_{\cM} \\
& \quad \le - \frac{1}{2} \sum_{i=1}^d \|P_{\cM^\perp}S_{2,R,i}^*S_{2,R,j}^*
S_{2,R,j}f\|^2_{\cA_{2,\cY}(\free)}
\le 0.
\end{align*}
We conclude that the tuple \eqref{jan12d} satisfies \eqref{14.1}, that is, all assumptions of Theorem 
\ref{T:14.1}. 
We arrive at the following version of the Beurling-Lax theorem.

\begin{theorem}
Let $\cM$ be an ${\bf S}_{2,R}$-invariant closed subspace of $\cA_{2,\cY}(\free)$. Then 
$$
\cM=\bigvee_{\alpha\in\free}{\bf S}_{2,R}^\alpha\mathcal E,\quad\mbox{where}\quad \mathcal
E=\cM\ominus\bigg(\bigoplus_{j=1}^d S_{2,R,j}\cM\bigg).
$$
\label{T:14.3}
\end{theorem}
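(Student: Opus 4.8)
The plan is to reduce Theorem \ref{T:14.3} to Theorem \ref{T:14.1} applied to the specific operator tuple $\bT = (T_1, \dots, T_d)$ with $T_j = S_{2,R,j}|_\cM$, following the bridge already laid out in the discussion between Theorems \ref{T:14.2} and \ref{T:14.3}. So the proof is essentially a matter of assembling pieces that are already in place: Theorem \ref{T:14.1} gives the conclusion $\cH = \bigvee_{\alpha \in \free} \bT^\alpha \cE$ (with $\cE = \cH \ominus (\bigoplus_{j=1}^d T_j \cH)$) once one verifies its four hypotheses for $\bT$, namely that $\bT^*$ is $*$-strongly stable (condition \eqref{14.0}), that each $T_j$ is bounded below, that the ranges $T_j \cH$ are pairwise orthogonal, and that the Shimorin-type inequality \eqref{14.1} holds.

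First I would record that, by Theorem \ref{T:14.2}, the shift tuple $\bS_{2,R}$ on $\cA_{2,\cY}(\free)$ is a $*$-strongly stable row contraction whose components are bounded below with pairwise orthogonal ranges; since $\cM$ is $\bS_{2,R}$-invariant and closed, the restricted tuple $\bT = (S_{2,R,1}|_\cM, \dots, S_{2,R,d}|_\cM)$ inherits all of these properties (strong stability of $\bT^*$ follows because $\|\bT^{*\alpha} f\|_\cM = \|P_\cM \bS_{2,R}^{*\alpha} f\|_{\cA_{2,\cY}(\free)} \le \|\bS_{2,R}^{*\alpha} f\|_{\cA_{2,\cY}(\free)}$ for $f \in \cM$; boundedness below and range orthogonality are preserved because $\cM$ is isometrically included). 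Thus the only nontrivial hypothesis of Theorem \ref{T:14.1} that must be checked for $\bT$ is the inequality \eqref{14.1}, that is,
$$
\|T_j f\|^2_\cM + \sum_{i=1}^d \|T_i^* T_j^* T_j f\|^2_\cM \le 2 \|T_j^* T_j f\|^2_\cM \quad \text{for all } f \in \cM, \; j = 1, \dots, d.
$$

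The verification of this inequality is exactly the computation carried out in the paragraphs immediately preceding the theorem statement, and I would simply present it: starting from the identities $T_j^* = P_\cM S_{2,R,j}^*$ and the orthogonal decompositions \eqref{jan12a}, \eqref{jan12b} of $S_{2,R,j}^* S_{2,R,j} f$ and $S_{2,R,i}^* S_{2,R,j}^* S_{2,R,j} f$ into their $\cM$- and $\cM^\perp$-components (the latter using $P_\cM S_{2,R,i}^*|_{\cM^\perp} = 0$), one takes norms (legitimate since $\cM$ is isometrically included in $\cA_{2,\cY}(\free)$) and substitutes into the exact identity \eqref{14.6} from Theorem \ref{T:14.2}. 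This yields \eqref{jan12c}, expressing $\|T_j f\|^2_\cM + \sum_i \|T_i^* T_j^* T_j f\|^2_\cM - 2\|T_j^* T_j f\|^2_\cM$ as $2\|P_{\cM^\perp} S_{2,R,j}^* S_{2,R,j} f\|^2 - \sum_i \|P_{\cM^\perp} S_{2,R,i}^* S_{2,R,j}^* S_{2,R,j} f\|^2$. Then one uses the identity \eqref{14.6a}, the fact that $P_{\cM^\perp} S_{2,R,i}|_\cM = 0$, the contractivity of each $S_{2,R,i}$, and the orthogonality of their ranges to bound $2\|P_{\cM^\perp} S_{2,R,j}^* S_{2,R,j} f\|^2 \le \frac{1}{2}\sum_i \|P_{\cM^\perp} S_{2,R,i}^* S_{2,R,j}^* S_{2,R,j} f\|^2$, which makes the whole expression $\le 0$. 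This establishes \eqref{14.1} for $\bT$.

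With all four hypotheses of Theorem \ref{T:14.1} verified for $\bT = \bS_{2,R}|_\cM$, that theorem yields $\cM = \bigvee_{\alpha \in \free} \bT^\alpha \cE$ with $\cE = \cM \ominus (\bigoplus_{j=1}^d T_j \cM) = \cM \ominus (\bigoplus_{j=1}^d S_{2,R,j} \cM)$; since $\bT^\alpha = \bS_{2,R}^\alpha|_\cM$, this is precisely the asserted representation $\cM = \bigvee_{\alpha \in \free} \bS_{2,R}^\alpha \cE$, completing the proof. Since the hard analytic work has already been done in Theorems \ref{T:14.1} and \ref{T:14.2}, there is no real obstacle here; if I had to single out the most delicate point, it is making sure the passage from the operator identities on $\cA_{2,\cY}(\free)$ to norm inequalities on $\cM$ correctly uses the isometric (not merely contractive) inclusion of $\cM$ and the orthogonality of the two components in \eqref{jan12a}--\eqref{jan12b}, together with the chain of four elementary estimates (drop a projection; split a sum over orthogonal ranges; each $S_{2,R,i}$ is a contraction) that turns \eqref{14.6a} into the needed bound.
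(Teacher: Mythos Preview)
Your proposal is correct and follows essentially the same approach as the paper: reduce to Theorem \ref{T:14.1} by restricting $\bS_{2,R}$ to $\cM$, note that all hypotheses except \eqref{14.1} are inherited from Theorem \ref{T:14.2}, and then verify \eqref{14.1} via the orthogonal decompositions \eqref{jan12a}--\eqref{jan12b}, the identity \eqref{jan12c}, and the chain of estimates driven by \eqref{14.6a}. The paper in fact carries out exactly this computation in the paragraphs immediately preceding the theorem statement, so your write-up amounts to a faithful summary of that argument.
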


\chapter[Orthogonal Beurling-Lax representations]
{Orthogonal Beurling-Lax representations based on wandering subspaces}  \label{S:BLorthog}
In this chapter we present the most comprehensive (in our opinion) and precise version of the Beurling-Lax theorem
for weighted Hardy-Fock spaces based on the notion of a Bergman-inner family and relying on realization formulas for 
such a family. The realization results are then applied to study expansive multiplier properties of Bergman-inner multipliers.

\section{Transfer functions $\Theta_{\bo, \bU_\beta}$ and metric constraints }
\label{S:metric}

In this section we take a closer look at the transfer functions $\Theta_{\bo, \bU_\beta}$  
introduced in Section \ref{S:Hardy} by the realization formula \eqref{thetareal}. 
Let us suppose now that $(C, \bA)$ is an $\bo$-output stable pair and that we construct the formal
power series $\Theta_{\bo, \bU_\beta}$ according to  \eqref{thetareal}, where the operators $B_{j, \beta}$
and $D_\beta$ are still to be determined:
\begin{equation}
\Theta_{\bo, \bU_\beta}(z) = \omega_{|\beta|}^{-1} D_\beta + \sum_{j=1}^d \sum_{\beta' \in \free}
\omega^{-1}_{|\beta| + |\beta'| +1} C \bA^{\beta'} B_{j, \beta} z^{\beta' j}.
\label{jul15}
\end{equation}
In any case, $\Theta_{\bo, \bU_\beta}$ induces the bounded operator $M_{\Theta_{\bo,\bU_\beta}}:\cU_\beta\to H^2_{\bo, \cY}(\free)$. 
We next impose some additional metric relations on the connection matrix \eqref{coll},
specifically one or more of the relations
\begin{align}
\sum_{j=1}^d A_j^*\Gr_{\bo,|\beta|+1,C,\bA}B_{j,\beta}+\omega_{|\beta|}^{-1}\cdot C^*D_\beta &=0,
\label{jul13}\\ 
\sum_{j=1}^d B_{j,\beta}^*\Gr_{\bo,|\beta|+1,C,\bA}B_{j,\beta}+\omega_{\bo,|\beta|}^{-1}\cdot D_\beta^*D_\beta&\preceq
I_{\cU_\beta},\label{jul14}\\
\sum_{j=1}^d B_{j,\beta}^*\Gr_{\bo,|\beta|+1,C,\bA}B_{j,\beta}
+\omega_{|\beta|}^{-1}\cdot D_\beta^*D_\beta&= I_{\cU_\beta},\label{jul14a}
\end{align}
and show how these lead to boundedness and orthogonality properties
for the associated multiplication operator $M_{\Theta_{\bo,\bU_\beta}}$.
Observe that the above relations can be written in terms of the column operators $A$ and 
$\widehat{B}_\beta$ in \eqref{coll} as 
\begin{align*}
A^*(\Gr_{\bo,|\beta|+1,C,\bA}\otimes 
I_d)\widehat{B}_{\beta}+\omega_{\bo,|\beta|}^{-1}\cdot C^*D_\beta &=0,\notag\\
\widehat{B}_{\beta}^*(\Gr_{\bo,|\beta|+1,C,\bA}\otimes 
I_d)\widehat{B}_{\beta}+\omega_{\bo,|\beta|}^{-1}\cdot 
D_\beta^*D_\beta & \preceq I_{\cU_\beta},\notag\\
\widehat{B}_{\beta}^*(\Gr_{\bo,|\beta|+1,C,\bA}\otimes 
I_d)\widehat{B}_{\beta}+\omega_{\bo,|\beta|}^{-1}\cdot 
D_\beta^*D_\beta &=I_{\cU_\beta},\notag
\end{align*}
respectively. Due to equality \eqref{4.34}, it turns out that relations \eqref{jul13} 
and \eqref{jul14} are equivalent to the matrix inequality
\begin{equation}   \label{contr}
  \begin{bmatrix} A^{*} & C^{*} \\ \widehat{B}_{\beta}^{*} & D_{\beta}^{*}
  \end{bmatrix} \begin{bmatrix} \Gr_{\bo,|\beta|+1,C,\bA}\otimes I_d & 0 \\ 0 & 
\omega_{|\beta|}^{-1}\cdot  I_{\cY}
\end{bmatrix}   \begin{bmatrix} A & \widehat{B}_{\beta} \\ C & 
D_{\beta} \end{bmatrix}
 \preceq \begin{bmatrix} \Gr_{\bo,|\beta|,C,\bA} & 0 \\ 0 & I_{\cU_{\beta}} \end{bmatrix},
    \end{equation}
while the equalities \eqref{jul13} and \eqref{jul14a} are equivalent to the matrix equality
\begin{equation}   \label{isom}
  \begin{bmatrix} A^{*} & C^{*} \\ \widehat{B}_{\beta}^{*} & D_{\beta}^{*}
  \end{bmatrix} \begin{bmatrix} \Gr_{\bo,|\beta|+1,C,\bA}\otimes I_d & 0 \\ 0 & 
\omega_{|\beta|}^{-1}\cdot  I_{\cY}
\end{bmatrix}   \begin{bmatrix} A & \widehat{B}_{\beta} \\ C & 
D_{\beta} \end{bmatrix}
 = \begin{bmatrix} \Gr_{\bo,|\beta|,C,\bA} & 0 \\ 0 & I_{\cU_{\beta}} \end{bmatrix}.
    \end{equation}
The two latter conditions are of metric nature; they express the
contractivity or isometric property of the  connection operator 
$ {\bf U}_\beta = \left[ \begin{smallmatrix} A & \widehat{B}_{\beta} \\ C & D_\beta \end{smallmatrix} \right]$
with respect to certain weights. 

\begin{lemma}   \label{L:5.1}
Let $(C,\bA)$ be an $\bo$-output stable pair and let $\Theta_{\bo, \bU_\beta}$ be defined
as in \eqref{thetareal}  for some $\beta \in\free$ and operators 
$B_{1,\beta},\ldots,B_{d,\beta}\in \cL(\cU_\beta, \cX)$ and $D_\beta \in \cL(\cU_\beta,\cY)$.

\smallskip   

$(1)$  If equality \eqref{jul13} holds, then
\begin{itemize}
\item[(a)] $\cO_{\bo,C,\bA}x$ is orthogonal to ${\bf 
S}_{\bo,R}^{\beta^\top}\Theta_{\bo, \bU_\beta}u$ for all 
$\beta \in\free$, $x\in\cX$ and $u\in\cU_\beta$.

\item[(b)] ${\bf S}_{\bo,R}^{\beta^\top}\Theta_{\bo, \bU_\beta}u$ is  orthogonal to
${\bf S}_{\bo,R}^{\beta^{\prime \top}}\Theta_{\bo, \bU_\beta'}u'$ for all
$u \in \cU_\beta$, $u' \in \cU_{\beta'}$ and all $\beta \ne \beta'$ in $\free$. 
 
\item[(c)] ${\bf S}_{\bo,R}^{(\gamma \beta)^\top}\Theta_{\bo, \bU_\beta}u$ and ${\bf 
S}_{\bo,R}^{(\beta \gamma)^\top}\Theta_{\bo, \bU_\beta}u$ are both orthogonal to 
${\bf S}_{\bo,R}^{\beta^\top}\Theta_{\bo, \bU_\beta}u^\prime$ for all 
$\gamma\neq\emptyset$ and $\beta$ in $\free$ and for any $u,u^\prime\in\cU_\beta$.
\end{itemize}

$(2)$ Moreover:
\begin{itemize}
    \item [(a)] If inequality \eqref{jul14} holds, then the 
    operator ${\bf S}_{\bo,R}^{\beta^\top} M_{\Theta_{\bo, \bU_\beta}}$ is a contraction from
    $\cU_{\beta}$ into $H^2_{\bo,\cY}(\free)$.

    \item[(b)] If both \eqref{jul13} and \eqref{jul14} hold, i.e., if
    \eqref{contr} holds, then the operator
    ${\bf S}_{\bo,R}^{\beta^\top}M_{\Theta_{\bo, \bU_\beta}}$ is a contraction from the Fock space
    $H^{2}_{\cU_\beta}(\free)$ into $H^2_{\bo,\cY}(\free)$.
    \end{itemize}

\smallskip

$(3)$ Similarly:
\begin{itemize}
    \item[(a)]  If equality \eqref{jul14a} holds, then the operator 
    ${\bf S}_{\bo,R}^{\beta^\top}M_{\Theta_{\bo, \bU_\beta}}$ is an isometry 
    from $\cU_{\beta}$ into $H^2_{\bo,\cY}(\free)$.

    \item[(b)]  If \eqref{jul13} and \eqref{jul14a} hold, i.e., if
    \eqref{isom} holds, then for every $f\in H^2_{\cU_\beta}(\free)$,
\begin{align}
&\|{\bf S}_{\bo,R}^{\beta^\top}\Theta_{\bo,  \bU_\beta}f\|_{H^2_{\bo,\cY}(\free)}^2=
\|f\|^2_{H^2_{\cU_\beta}(\free)} \label{SThetank-isom}\\
&\qquad\qquad-\sum_{\gamma\in\free}\sum_{j=1}^d
\left\|(I-S_{\bo,R,j}^*S_{\bo,R,j})^{\frac{1}{2}}{\bf
S}_{\bo,R}^{\beta^\top}\Theta_{\bo, \bU_\beta}({\bf S}_{1,R}^{*})^{j\gamma^\top}f
\right\|^2_{H^2_{\bo,\cY}(\free)}.\notag
\end{align}
    \item[(c)] If \eqref{isom} holds, then
\begin{align}
&\omega_{|\beta|}^{-1}I_{\cU_{\beta}} - 
\Theta_{\bo, \bU_\beta}(z)^*\Theta_{\bo, \bU_\beta}(\zeta) \notag\\ 
&=\omega_{|\beta|}\widehat{B}_{\beta}^{*}R_{\bo,|\beta|}(AZ(z))^{*}
(\Gr_{\bo,|\beta|+1,C,\bA}\otimes I_d)R_{\bo,|\beta|}(AZ(\zeta))
\widehat{B}_{\beta}\label{jul18}\\
&\quad-\omega_{|\beta|}\widehat{B}_{\beta}^{*}R_{\bo,|\beta|+1}(AZ(z))^{*}
Z(z)^*\Gr_{\bo,|\beta|,C,\bA}Z(\zeta)
R_{\bo,|\beta|+1}(AZ(\zeta))\widehat{B}_{\beta}.\notag
\end{align}
\end{itemize}  
\end{lemma}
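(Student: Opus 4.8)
The plan is to prove the three parts of the lemma by a combination of direct series manipulation and the weighted Stein identities recorded in Proposition \ref{P:wghtSteinid}. Throughout I would repeatedly exploit the fundamental observation that the ranges of $S_{\bo,R,1},\dots,S_{\bo,R,d}$ are mutually orthogonal (so that inner products of expressions ${\bf S}_{\bo,R}^{\beta^\top}(\cdots)$ and ${\bf S}_{\bo,R}^{\beta'^\top}(\cdots)$ reduce, after peeling off the common suffix letters of $\beta$ and $\beta'$ one at a time, to an inner product involving ${\bf S}_{\bo,R}^{\emptyset}$ on one side) together with the iterated formulas \eqref{Sbo*-iterated} and \eqref{again43} for ${\bf S}_{\bo,R}^{*\alpha}$ and ${\bf S}^{*\beta}_{\bo,R}{\bf S}_{\bo,R}^{\beta^\top}$. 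The expansions \eqref{ncobsophardy} for $\cO_{\bo,C,\bA}$ and \eqref{jul15} for $\Theta_{\bo,\bU_\beta}$ give explicit coefficient formulas, and the shifted gramians $\Gr_{\bo,k,C,\bA}$ \eqref{4.32} package the relevant bilinear sums $\sum_{\gamma}\omega_{|\gamma|+k}^{-1}\bA^{*\gamma^\top}C^*C\bA^\gamma$.

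For part (1), I would compute $\langle \cO_{\bo,C,\bA}x,\ {\bf S}_{\bo,R}^{\beta^\top}\Theta_{\bo,\bU_\beta}u\rangle$ directly: writing $\Theta_{\bo,\bU_\beta}(z)u$ as its series in $z^{\beta'j}$, multiplying by $z^\beta$ on the right, and matching against the $\omega^{-1}_{|\alpha|}C\bA^\alpha x$-coefficients of $\cO_{\bo,C,\bA}x$, the sum over matching words collapses to an expression of the form $\langle(\sum_{j}A_j^*\Gr_{\bo,|\beta|+1,C,\bA}B_{j,\beta}+\omega_{|\beta|}^{-1}C^*D_\beta)u,\ x\rangle$ (up to a harmless scalar weight), which vanishes precisely by \eqref{jul13}; this proves (1a). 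For (1b) with $\beta\ne\beta'$, after stripping the common suffix letters of $\beta$ and $\beta'$ using range-orthogonality, without loss of generality one of the two remaining words is empty, and then the resulting inner product is again of the $\Gr$-weighted form that vanishes by \eqref{jul13} combined with \eqref{4.34}; statement (1c) is the special case where $\beta'$ is an extension $\gamma\beta$ or $\beta\gamma$ of $\beta$, handled the same way.

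For parts (2) and (3), I would first establish (2a)/(3a) — the behaviour on constants $\cU_\beta$ — by a single norm computation: $\|{\bf S}_{\bo,R}^{\beta^\top}\Theta_{\bo,\bU_\beta}u\|^2_{H^2_{\bo,\cY}(\free)}$ expands, via \eqref{jul15} and the weight identity \eqref{4.34}, into $\langle(\widehat B_\beta^*(\Gr_{\bo,|\beta|+1,C,\bA}\otimes I_d)\widehat B_\beta+\omega_{|\beta|}^{-1}D_\beta^*D_\beta)u,u\rangle$, which is $\preceq I_{\cU_\beta}$ under \eqref{jul14} and $=I_{\cU_\beta}$ under \eqref{jul14a}. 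The passage from constants to all of $H^2_{\cU_\beta}(\free)$ in (2b) would follow from (1b)–(1c) by the same orthogonality-plus-approximation argument used in the proof of Theorem \ref{T:12.2} (indeed \eqref{SThetank-isom} is precisely the analogue of the estimate \eqref{march13} there, now with $M_\Theta$ replaced by ${\bf S}_{\bo,R}^{\beta^\top}M_{\Theta_{\bo,\bU_\beta}}$), so I would invoke that machinery rather than redo it: properties (1a)–(1c) supply exactly the orthogonality hypotheses needed, and (3a) supplies the isometric-on-constants hypothesis for the equality case \eqref{SThetank-isom}. Finally, for the kernel identity \eqref{jul18} in (3c), I would write $\omega_{|\beta|}^{-1}I_{\cU_\beta}-\Theta_{\bo,\bU_\beta}(z)^*\Theta_{\bo,\bU_\beta}(\zeta)$ using the realization \eqref{thetareal}, substitute the block equality \eqref{isom} to replace $\omega_{|\beta|}^{-1}I_{\cU_\beta}$ by $\widehat B_\beta^*(\Gr_{\bo,|\beta|+1,C,\bA}\otimes I_d)\widehat B_\beta+\omega_{|\beta|}^{-1}D_\beta^*D_\beta$ and $\Gr_{\bo,|\beta|,C,\bA}$ by the corresponding quadratic form in $A,C$, and then recognize the remaining telescoping combination of $R_{\bo,|\beta|}$ and $R_{\bo,|\beta|+1}$ terms as the right-hand side, using the recursion \eqref{1.6g'} (equivalently $R_{\bo,k}(\lambda)=\lambda R_{\bo,k+1}(\lambda)+\omega_k^{-1}$) in the form of its $B_\bA$-functional-calculus consequence. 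The main obstacle I anticipate is purely bookkeeping: keeping the suffix/prefix conventions for words straight while peeling letters, and correctly tracking the scalar weights $\omega_{|\cdot|}^{-1}$ through the manipulations so that the $\Gr$-identities \eqref{4.34}, \eqref{grmonotone} apply cleanly — there is no conceptual difficulty beyond what is already deployed in Chapters 3 and 4.
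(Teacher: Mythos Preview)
Your proposal is correct and follows essentially the same route as the paper: direct power-series inner-product computations for (1a)--(1c) reducing to the vanishing of $\omega_{|\beta|}^{-1}C^*D_\beta+\sum_j A_j^*\Gr_{\bo,|\beta|+1,C,\bA}B_{j,\beta}$ via \eqref{jul13}; the single norm computation on constants for (2a)/(3a) landing on the $(2,2)$-block of \eqref{contr}/\eqref{isom}; invocation of Theorem~\ref{T:12.2} for (2b)/(3b); and the algebraic expansion-plus-\eqref{isom}-substitution for (3c), finished off with the $R_{\bo,k}$ recursion.

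Two small clarifications. First, in (1b) the paper does not use \eqref{4.34}; only \eqref{jul13} (applied with the longer word $\delta\beta'$ in place of $\beta$) and the definition \eqref{4.32} of $\Gr$ are needed. Second, your description of (1c) as ``the special case of (1b) where $\beta'=\gamma\beta$ or $\beta\gamma$'' is not literally right: in (1c) the transfer-function index is the \emph{same} $\beta$ on both sides, whereas in (1b) it is $\beta$ versus $\beta'$. The computation is indeed ``handled the same way,'' but the instance of \eqref{jul13} that gets used is the one for $\beta$ itself (not for $\gamma\beta$), and for the $(\beta\gamma)$-half one also needs a brief case split on whether $\beta$ divides $\gamma$ on the right. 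None of this affects the validity of your plan.
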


\begin{remark}  \label{R:content}
In the terminology of Section 3.3, the content of parts 
(2b) and (3a) of Lemma \ref{L:5.1} can be phrased as follows: {\em if 
\eqref{isom} holds, then the formal power series $\Theta_{\bo, \bU_\beta}(z) 
z^{\beta}$ is $H^2_{\bo,\cY}(\free)$-Bergman-inner.}    
\end{remark}

\begin{proof}[Proof of (1):] From the power series expansion \eqref{jul15} we have
\begin{equation}
{\bf S}_{\bo,R}^{\beta^\top}\Theta_{\bo,\bU_\beta}(z)=\omega_{\bo,|\beta|}^{-1}D_\beta z^\beta +
\sum_{j=1}^d \sum_{\beta^\prime\in\free}      
\omega_{|\beta^{\prime}|+|\beta|+1}^{-1}
C{\bf A}^{\beta^{\prime}}B_{j,\beta}z^{\beta^{\prime}j \beta}.
\label{jul15a}
\end{equation}
We then make use of expansions \eqref{18.2aaa}, \eqref{jul15a} and the definition of 
the inner product in $H^2_{\bo,\cY}(\free)$ to get
\begin{align*}
&\left\langle {\bf S}_{\bo,R}^{\beta^\top}\Theta_{\bo, \bU_\beta} u, \, \cO_{\bo,C,\bA}x
\right\rangle_{H^2_{\bo,\cY}(\free)}\\
&=\omega_{|\beta|}^{-1}\cdot\left\langle D_\beta  u, \, 
C\bA^{\beta}x\right\rangle_{\cY}  +\sum_{j=1}^d
\sum_{\beta^\prime\in\free}\omega_{|\beta^{\prime}|+|\beta|+1}^{-1}\cdot 
\left\langle C\bA^{\beta^\prime}B_{j,\beta}u, \, C\bA^{\beta^\prime j \beta}x
\right\rangle_{\cY}  \\
&=\bigg\langle \bigg(\omega_{|\beta|}^{-1}C^*D_\beta +   
\sum_{j=1}^d A_j^*\bigg(\sum_{\beta^\prime\in\free}
\omega_{|\beta^{\prime}|+|\beta|+1}^{-1} 
\bA^{*\beta^{\prime\top}}C^*C\bA^{\beta^\prime}\bigg)
B_{j,\beta}\bigg)u,\, \bA^{\beta}x\bigg\rangle_{\cX}  \\
&=\bigg\langle \bigg(\omega_{|\beta|}^{-1}C^*D_\beta +\sum_{j=1}^d
A_j^*\Gr_{\bo,|\beta|+1,C,\bA}B_{j,\beta}\bigg) u,\, \bA^{\beta}x\bigg\rangle_{\cX}=0 
\end{align*}
where the fourth and the fifth equality follow from \eqref{4.32} and 
\eqref{jul13}, respectively. The latter computation verifies part (1a).

\smallskip

The verification of (1b) goes through several cases.

\smallskip

If $|\beta| = |\beta'|$ and $\beta \ne \beta'$,  then any word of the form $\alpha \beta$ is distinct
from any work of the form $\alpha' \beta$, where $\alpha$ and $\alpha'$ are independently arbitrary in 
$\free$.  Note that the power series representations of $\bS^{\beta^\top}_{\bo,R}\Theta_{\bo, \bU_\beta} u$ and 
$\bS^{\beta^{\prime \top}}_{\bo,R} \Theta_{\bo, \bU_\beta'} u'$ have the form
$$
\bS^{\beta^\top} \Theta_{\bo, \bU_\beta} u= 
\sum_{\alpha \in \free} [ \Theta_\bo, \bU_\beta]_\alpha  u z^{\alpha \beta}, \quad
\bS^{\beta^{\prime \top}} \Theta_{\bo, \bu_\beta'} u'= 
\sum_{\alpha' \in \free} [ \Theta_{\bo, \bU_\beta'}]_{\alpha'}  u' z^{\alpha' \beta},
$$
so the orthogonality $\bS^{\beta^\top}_{\bo,R} \Theta_{\bo, \bU_\beta} u \perp \bS^{\beta^{\prime \top}}_{\bo,R} \Theta_{\bo, \bU_\beta'} u'$
holds due to the pairwise orthogonality of the monomials in the respective power series expansions.
Hence we now need only consider the case where $|\beta| \ne | \beta'|$.

\smallskip

Without loss of generality we may assume that $|\beta| > |\beta'|$, so $\beta = \delta \beta''$ where $\delta \ne \emptyset$
and $|\beta''| = |\beta'|$.  In case $\beta' \ne \beta''$, then every word of the form $\alpha \beta = \alpha \delta \beta''$
is distinct from any word of the form $\alpha' \beta'$ for independently arbitrary words $\alpha$ and $\alpha'$ in $\free$,
since the respective right tails of length $|\beta''| = |\beta'|$ disagree.  Again we have pairwise orthogonality of the
monomials in the respective power series expansions for $\bS^{\beta^\top}_{\bo,R} \Theta_{\bo, \bU_\beta} u$ and
$\bS^{\beta^{\prime \top}}_{\bo,R} \Theta_{\bo, \bU_\beta'} u'$, so the desired orthogonality 
 $\bS^{\beta^\top}_{\bo,R} \Theta_{\bo, \bU_\beta} u \perp \bS^{\beta^{\prime \top}}_{\bo,R} \Theta_{\bo, \bU_{\beta'}} u'$ again holds.
 
\smallskip

 It remains only to consider the case where $\beta$ has the form $\beta = \delta \beta'$ where $\delta \ne \emptyset$.
In this case the pairwise orthogonal of the monomials in the respective power series expansion fails and we must
do a detailed calculation of the inner product which is supposed to be zero. Toward this end, we use the formula
\eqref{jul15a} to see that
\begin{align*}
\big(\bS_{\bo,R}^{\beta^{\prime \top}} \Theta_{\bo, \bU_{\beta'}} u' \big)(z) &=
\omega^{-1}_{|\beta'|} D_{\beta'}u' z^{\beta'} +
\sum_{j'=1}^d \sum_{\gamma' \in \free} \omega^{-1}_{|\gamma'| + |\beta'| + 1}
C \bA^{\gamma'} B_{j',\beta'}  z^{\gamma' j' \beta'} u',\\
\big( \bS^{\beta^\top}_{\bo, R} \Theta_{\bo, \bU_\beta} u \big)(z) & =
 \big( \bS^{(\delta \beta^{\prime})^\top}_{\bo, R} \Theta_{\bo, \bU_{\delta \beta'}} u \big)(z) \\
 &= \omega_{|\delta| + |\beta'|}^{-1} D_{\delta \beta'}u z^{\delta \beta'} + \sum_{j=1}^d \sum_{\gamma \in \free}
 \omega^{-1}_{|\gamma| + |\delta| + |\beta'| +1} C \bA^\gamma B_{j, \delta \beta'} z^{\gamma j \delta \beta'}u.
\end{align*}
It is convenient to write the word $\delta$ in the form $\delta = \widetilde \delta \widetilde j$
where $\widetilde \delta$ is a (possibly empty) word and $\widetilde{j}$ is the right-most letter in the nonempty word $\delta$.
Using the latter representations we can now compute
\begin{align*}
& \big\langle \bS^{\beta^\top}_{\bo,R} \Theta_{\bo, \bU_\beta} u, \,
\bS^{\beta^{\prime \top}}_{\bo,R} \Theta_{\bo, \bU_{\beta'}} u' \big\rangle_{H^2_{\bo, \cY}(\free)} \\
& =\omega^{-1}_{|\delta| + |\beta'|} \big\langle D_{\delta \beta'} u,  \, C \bA^{\widetilde \delta} 
B_{\widetilde j, \beta'} u' \big\rangle_{\cY} \\
& \quad \quad \quad \quad 
+ \sum_{j=1}^d \sum_{\gamma \in \free} \omega^{-1}_{|\gamma| + |\delta| + |\beta'| + 1}
\big\langle C \bA^{\gamma } B_{j, \delta \beta'} u, \,
C \bA^{\gamma j \widetilde \delta}  B_{\widetilde j, \beta'} u' \big\rangle_{\cY}   \\
& =  \big\langle\omega^{-1}_{|\delta| + |\beta'|}C^* D_{\delta \beta'}u +
\sum_{j=1}^d A_j^* \bigg( \sum_{\gamma \in \free} \omega^{-1}_{|\gamma| + |\delta| + \beta'| + 1}
\bA^{* \gamma^\top} C^* C \bA^\gamma \bigg) B_{j,  \delta \beta'}u, \,
\bA^{\widetilde \delta} B_{\widetilde j, \beta'} u' \big\rangle \\
& =  \big\langle\big(\omega^{-1}_{|\delta| + |\beta'|}C^* D_{\delta \beta'} +
\sum_{j=1}^d A_j^* \Gr_{\bo, |\delta| +|\beta'| + 1, C, \bA}B_{j,  \delta \beta'}\big)u, \,
\bA^{\widetilde \delta} B_{\widetilde j, \beta'} u' \big\rangle_{\cX}=0, 
\end{align*}
where the two last steps follow respectively from the definition \eqref{4.32} of $\Gr_{\bo,k, C, \bA}$
and equality \eqref{jul13} with $\beta = \delta \beta'$.  This completes the verification of (1b).

\smallskip

Verification of part $(1c)$ is quite similar: let us denote
by $\widetilde{j}\in\{1,\ldots,d\}$ the rightmost letter in the given $\gamma \neq\emptyset$ 
so that $\gamma =\widetilde{\gamma}\widetilde{j}$. We then see from \eqref{jul15a} and a similar 
expansion for ${\bf S}_{\bo,R}^{(\gamma \beta)^\top}\Theta_{\bo,\bU_\beta}$ that
\begin{align*}
&\big\langle {\bf S}_{\bo,R}^{(\gamma \beta)^\top}\Theta_{\bo,\beta} u, 
\, {\bf S}_{\bo,R}^{\beta^\top}\Theta_{\bo, \bU_\beta} u^\prime\big\rangle_{H^2_{\bo,\cY}(\free)}\\
& =  \omega_{|\beta| + |\gamma|}\cdot \big\langle \omega_{|\beta|}^{-1} D_\beta   u, \,  
\omega_{|\beta| + |\gamma|}^{-1} 
C\bA^{\widetilde{\gamma}}B_{\widetilde{j}, \beta}u^\prime\big\rangle_{\cY} \\
&\quad  +\sum_{j=1}^d\sum_{\beta^\prime\in\free} \omega_{|\beta'| + 1 
+ |\widetilde \gamma|}\cdot
\big\langle \omega_{|\beta^{\prime}|+|\beta|+1}^{-1} 
C\bA^{\beta^\prime}B_{j,\beta} u, \, 
\omega_{|\beta'| + 1 + |\widetilde \gamma|}^{-1} C\bA^{\beta^\prime j\widetilde{\gamma}
}B_{\widetilde{j},\beta} u^\prime\big\rangle_{\cY}  \\
 & =   \omega_{|\beta|}^{-1} \big\langle D_\beta u, \,  
C\bA^{\widetilde{\gamma}}B_{\widetilde{j},\beta} u^\prime\big\rangle_{\cY}
+\sum_{j=1}^d\sum_{\beta^\prime\in\free}\omega_{|\beta^{\prime}|+|\beta|+1}^{-1}
\big\langle C\bA^{\beta^\prime}B_{j,\beta}u, \, C\bA^{\beta^\prime 
j\widetilde{\gamma}}B_{\widetilde{j},\beta} u^\prime\big\rangle_{\cY}  \\
&=\big\langle \big(\omega_{|\beta|}^{-1}C^*D_\beta +
\sum_{j=1}^d A_j^*\bigg(\sum_{\beta^\prime\in\free}
\omega_{|\beta^{\prime}|+|\beta|+1}^{-1} 
\bA^{*\beta^{\prime\top}}C^*C\bA^{\beta^\prime}\bigg)
B_{j,\beta}\big)u,\, 
\bA^{\widetilde{\gamma}}B_{\widetilde{j},\beta} u^\prime\big\rangle_{\cX}  \\
&=\big\langle \big(\omega_{|\beta|}^{-1}C^*D_\beta +\sum_{j=1}^d
A_j^*\Gr_{\bo,|\beta|+1,C,\bA}B_{j,\beta}\big)u,\,  
\bA^{\widetilde{\gamma}}B_{\widetilde{j},\beta} u^\prime\big\rangle_{\cX}=0,
\end{align*}
where the two last equalities follow again from \eqref{4.32} and \eqref{jul13}. To 
prove the orthogonality of ${\bf S}_{\bo,R}^{(\beta \gamma)^\top}\Theta_{\bo, \bU_\beta} u$ and 
${\bf S}_{\bo,R}^{\beta^\top}\Theta_{\bo,  \bU_\beta} u^\prime$, we first observe that 
if $\beta$ does not divide $\gamma$ on the right, that is, $\gamma$ 
is not of the form $\gamma=\gamma^\prime \beta$, then every monomial in  
${\bf S}_{\bo,R}^{(\beta \gamma)^\top}\Theta_{\bo,\bU_\beta}u$ is orthogonal to every monomial in
${\bf S}_{\bo,R}^{\beta^\top}\Theta_{\bo, \bU_\beta}u'$ and therefore, the desired orthogonality 
follows. 
On the other hand, if
$\gamma=\widetilde{\gamma}\widetilde{j}\beta$ for some 
$\widetilde{\gamma}\in\free$ (the case $\widetilde{\gamma}=\emptyset$ is not excluded), then the 
calculation similar to the previous one gives
\begin{align*}
&\big\langle {\bf S}_{\bo,R}^{(\beta \gamma)^\top}\Theta_{\bo,\bU_\beta} u, 
\, {\bf S}_{\bo,R}^{\beta^\top}\Theta_{\bo,\bU_\beta}
u^\prime\big\rangle_{H^2_{\bo,\cY}(\free)}\\
&=\omega_{|\beta|}^{-1}\cdot\big\langle D_\beta u, \,
C\bA^{\beta \widetilde{\gamma}}B_{\widetilde{j}\beta} u^\prime\big\rangle_{\cY}
+\sum_{j=1}^d\sum_{\beta^\prime\in\free}\omega_{|\beta^{\prime}|+|\beta|+1}^{-1}
\big\langle C\bA^{\beta^\prime}B_{j,\beta}u, \, C\bA^{\beta^\prime 
j\beta \widetilde{\gamma}}B_{\widetilde{j},\beta} u^\prime\big\rangle_{\cY}  \\
&=\big\langle \omega_{|\beta|}^{-1} C^*D_\beta u, \,
\bA^{\beta \widetilde{\gamma}}B_{\widetilde{j}\beta} u^\prime\big\rangle_{\cY}\\
&\qquad+\bigg\langle 
\sum_{j=1}^d A_j^*\bigg(\sum_{\beta^\prime\in\free}
\omega_{|\beta^{\prime}|+|\beta|+1}^{-1} 
\bA^{*\beta^{\prime\top}}C^*C\bA^{\beta^\prime}\bigg)
B_{j,\beta}u,\, \bA^{\beta\widetilde{\gamma}}B_{\widetilde{j},\beta} u^\prime\bigg\rangle_{\cX}  \\
&=\big\langle \big(\omega_{|\beta|}^{-1}C^*D_\beta +\sum_{j=1}^d
A_j^*\Gr_{\bo,|\beta|+1,C,\bA}B_{j,\beta}\big)u,\,
\bA^{\beta \widetilde{\gamma}}B_{\widetilde{j},\beta}u^\prime\big\rangle_{\cY}=0
\end{align*}
and completes the proof of part (1c). 
\end{proof}
\begin{proof}[Proof of (2):] By \eqref{jul15a} 
we have 
\begin{equation}
\| {\bf S}_{\bo,R}^{\beta^\top}\Theta_{\bo,\bU_\beta}u\|^2_{H^2_{\bo,\cY}(\free)}
=\omega_{|\beta|}^{-1}\cdot \| D_\beta u\|^2_{\cY}+\sum_{j=1}^d
\sum_{\beta^\prime\in\free}\omega_{|\beta^{\prime}|+|\beta|+1}^{-1}\big\|
C\bA^{\beta^\prime}B_{j,\beta}u\big\|^2_{\cY}.\label{jul19}
\end{equation}
Since due to \eqref{4.32}, the second term on the right side can be written as 
\begin{align*}
&\sum_{j=1}^d\sum_{\beta^\prime\in\free}\omega_{|\beta^{\prime}|+|\beta|+1}^{-1}\big\langle
C\bA^{\beta^\prime}B_{j,\beta}u, \, C\bA^{\beta^\prime}B_{j,\beta}u\big\rangle_{\cX}\\
&=\bigg\langle \sum_{j=1}^d B_{j,\beta}^*\bigg(
\sum_{\beta^\prime\in\free}\omega_{|\beta^{\prime}|+|\beta|+1}^{-1}
\bA^{*\beta^{\prime\top}}C^*C\bA^{\beta^\prime}\bigg)B_{j,\beta}u,
\, u\bigg\rangle_{\cU}\\
&=\bigg\langle \sum_{j=1}^d 
B_{j,\beta}^*\Gr_{\bo,|\beta|+1,C,\bA}B_{j,\beta}u,\, u\bigg\rangle_{\cU},
\end{align*}
and since, according to \eqref{jul14},
\begin{equation}
\omega_{|\beta|}^{-1}\cdot \| D_\beta u\|^2_{\cY}+ 
\sum_{j=1}^d\left\langle\Gr_{\bo,|\beta|+1,C,\bA}B_{j,\beta}u, \;
B_{j,\beta}u\right\rangle_{\cX} \le \|u\|^2_{\cU_\beta}
\label{jul20}  
\end{equation}
for all $u\in\cU_\beta$, it now follows from \eqref{jul19} that 
$$
\| {\bf S}_{\bo,R}^{\beta^\top}\Theta_{\bo,\bU_\beta}u\|^2_{H^2_{\bo,\cY}(\free)}\le 
\|u\|^2_{\cU_\beta}.
$$
Thus, ${\bf S}_{\bo,R}^{\beta^\top} M_{\Theta_{\bo, \bU_\beta}}$  is a 
contraction from $\cU_\beta$ to $H^2_{\bo,\cY}(\free)$ and part (2a) follows.
By statements (1c) and (2a) of the lemma, Theorem \ref{T:12.2} applies to the series 
${\bf S}_{\bo,R}^{\beta^\top}\Theta_{\bo, \bU_\beta}$ which is therefore, a contractive multiplier 
from $H^2_{\cU_\beta}$ to $H^2_{\bo,\cY}(\free)$. This completes the proof of (2b) of the lemma.
\end{proof}

\begin{proof}[Proof of (3):]  In case \eqref{jul14a} holds,  then
    \eqref{jul19} and \eqref{jul20} hold with equalities and part (a) of (3) follows.  If 
also \eqref{jul13} holds, then Theorem \ref{T:12.2} applies to the series 
$\Theta={\bf S}_{\bo,R}^{\beta^\top}\Theta_{\bo, \bU_\beta}$ and equality 
\eqref{SThetank-isom} follows from \eqref{march13}.

\smallskip

    It remains to verify the formula \eqref{jul18} under assumption
    \eqref{isom} which is equivalent to the system of equations \eqref{jul13}, 
\eqref{jul14a} and \eqref{4.34}. We use these relations to compute
\begin{align}
& \omega_{|\beta|}^{-1}I_{\cU_{\beta}}- 
\Theta_{\bo, \bU_\beta}(z)^{*}\Theta_{\bo, \bU_\beta}(\zeta)\notag\\
&=\omega_{|\beta|}^{-1}I_{\cU_{\beta}}-\left[ \omega_{|\beta|}^{-1}D_{\beta}^{*} + \widehat{B}_{\beta}^{*}Z(z)^*
    R_{\bo,|\beta|+1}(Z(z)A)^{*} C^{*} \right]\notag\\
&\qquad\qquad\qquad\times    \left[\omega_{|\beta|}^{-1}D_{\beta}+C 
R_{\bo,|\beta|+1}(Z(\zeta) A)Z(\zeta) \widehat{B}_{\beta}\right]  \notag\\
&   = \omega_{|\beta|}^{-1}I_{\cU_{\beta}}-\omega_{|\beta|}^{-2}D_{\beta}^{*}D_{\beta} 
- \omega_{|\beta|}^{-1}\widehat{B}_{\beta}^{*}Z(z)^* 
R_{\bo,|\beta|+1}(Z(z)A)^{*} C^{*}D_\beta \notag\\
&\quad-\omega_{|\beta|}^{-1}D_\beta^*C R_{\bo,|\beta|+1}(Z(\zeta) 
A)Z(\zeta) \widehat{B}_{\beta}\notag\\
    & \quad  - \widehat{B}_{\beta}^{*}Z(z)^*
    R_{\bo,|\beta|+1}(Z(z)A)^{*} C^{*}C R_{\bo,|\beta|+1}(Z(\zeta)A) 
    Z(\zeta)\widehat{B}_{\beta}\notag\\
&=\omega_{|\beta|}\widehat{B}_{\beta}^{*}\left(\omega_{|\beta|}^{-1}I_{\cX^d}+Z(z)^* 
R_{\bo,|\beta|+1}(Z(z)A)^{*}A^*\right)(\Gr_{\bo,|\beta|+1,C,\bA}\otimes I_d)\notag\\
&\qquad\qquad\qquad\times\left(\omega_{|\beta|}^{-1}I_{\cX^d}+AR_{\bo,|\beta|+1}
(Z(\zeta) A) Z(\zeta)\right)\widehat{B}_{\beta}\label{lov4}\\
&\quad -\omega_{|\beta|}\widehat{B}_{\beta}^{*}Z(z)^*R_{\bo,|\beta|+1}(Z(z)A)^{*}
\Gr_{\bo,|\beta|,C,\bA}R_{\bo,|\beta|+1}(Z(\zeta) A)Z(\zeta)  \widehat{B}_{\beta}.\notag
\end{align}
We now observe that 
$$
R_{\bo,|\beta|+1}(Z(\zeta) A)Z(\zeta)=Z(\zeta)R_{\bo,|\beta|+1}(AZ(\zeta)),
$$
which together with \eqref{1.9pre} implies
\begin{align*}
\omega_{|\beta|}^{-1}I_{\cX^d}+AR_{\bo,|\beta|+1}(Z(\zeta) A) Z(\zeta)&=
\omega_{|\beta|}^{-1}I_{\cX^d}+AZ(\zeta)R_{\bo,|\beta|+1}(AZ(\zeta))\\
&=R_{\bo,|\beta|}(AZ(\zeta)).
\end{align*}
Substituting the two latter formulas into \eqref{lov4} verifies formula  
\eqref{jul18}.
\end{proof}
 The following result is an immediate consequence of Lemma \ref{L:5.1}.

\begin{corollary}   \label{C:5.3}
Let us assume that the pair $(C,\bA)$ is $\bo$-output stable and that relations \eqref{jul13},
\eqref{jul14} hold for all $\beta \in\free$. Then the representation \eqref{1.36prehardy} of the
function $\widehat{y}$ is orthogonal in the metric of $H^2_{\bo,\cY}(\free)$ and
\begin{align}
\|\widehat y\|^2_{H^2_{\bo,\cY}(\free)}&=\|\cO_{\bo,C,\bA}x\|_{H^2_{\bo,\cY}(\free)}^2+
\sum_{\beta\in\free}\|{\bf S}_{\bo,R}^{\beta^\top}\Theta_{\bo, \bU_\beta} 
u_\beta \|_{H^2_{\bo,\cY}(\free)}^2\notag\\
&\le\|\cG_{\bo,C,\bA}^{\frac{1}{2}}x\|^2_{\cX}+\sum_{\beta 
\in\free}\|u_\beta\|^2_{\cU_\beta}. \label{jul17}
\end{align}
If relations \eqref{jul14} hold with equalities for all $\beta \in\free$, then equality holds in
\eqref{jul17}.
\end{corollary}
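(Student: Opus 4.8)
\textbf{Proof proposal for Corollary \ref{C:5.3}.}

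The plan is to read off everything from the $\bo$-observability/transfer-function decomposition \eqref{1.36prehardy} together with the orthogonality and metric estimates assembled in Lemma \ref{L:5.1}. Recall that \eqref{1.36prehardy} expresses the output $\widehat y$ generated by initial state $x$ and input $\{u_\beta\}_{\beta\in\free}$ as
$$
\widehat y(z)=\cO_{\bo,C,\bA}x+\sum_{\beta\in\free}\Theta_{\bo,\bU_\beta}(z)z^\beta u_\beta=\cO_{\bo,C,\bA}x+\sum_{\beta\in\free}{\bf S}_{\bo,R}^{\beta^\top}\Theta_{\bo,\bU_\beta}u_\beta.
$$
First I would check that this series makes sense and that the sum is orthogonal in $H^2_{\bo,\cY}(\free)$. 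The cross-term $\langle \cO_{\bo,C,\bA}x,{\bf S}_{\bo,R}^{\beta^\top}\Theta_{\bo,\bU_\beta}u_\beta\rangle$ vanishes for every $\beta$ by part (1a) of Lemma \ref{L:5.1} (which uses exactly hypothesis \eqref{jul13}); the cross-terms $\langle {\bf S}_{\bo,R}^{\beta^\top}\Theta_{\bo,\bU_\beta}u_\beta,{\bf S}_{\bo,R}^{\beta'^\top}\Theta_{\bo,\bU_{\beta'}}u_{\beta'}\rangle$ vanish for $\beta\ne\beta'$ by part (1b) of the same lemma (again invoking \eqref{jul13}). Hence, provided the relevant partial sums converge, the Pythagorean identity gives
$$
\|\widehat y\|^2_{H^2_{\bo,\cY}(\free)}=\|\cO_{\bo,C,\bA}x\|^2_{H^2_{\bo,\cY}(\free)}+\sum_{\beta\in\free}\|{\bf S}_{\bo,R}^{\beta^\top}\Theta_{\bo,\bU_\beta}u_\beta\|^2_{H^2_{\bo,\cY}(\free)},
$$
which is the first line of \eqref{jul17}.

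Next I would estimate the two groups of terms on the right. For the first, $\|\cO_{\bo,C,\bA}x\|^2_{H^2_{\bo,\cY}(\free)}=\langle\cG_{\bo,C,\bA}x,x\rangle_\cX=\|\cG_{\bo,C,\bA}^{1/2}x\|_\cX^2$ directly from the definition \eqref{18.2aa} of the $\bo$-observability gramian (the pair $(C,\bA)$ being $\bo$-output stable ensures $\cG_{\bo,C,\bA}$ is bounded, hence its square root is defined). For the second, part (2a) of Lemma \ref{L:5.1} says that under hypotheses \eqref{jul13}, \eqref{jul14} the operator ${\bf S}_{\bo,R}^{\beta^\top}M_{\Theta_{\bo,\bU_\beta}}$ is a contraction from $\cU_\beta$ into $H^2_{\bo,\cY}(\free)$, so $\|{\bf S}_{\bo,R}^{\beta^\top}\Theta_{\bo,\bU_\beta}u_\beta\|^2_{H^2_{\bo,\cY}(\free)}\le\|u_\beta\|^2_{\cU_\beta}$ for each $\beta$. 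Summing over $\beta\in\free$ and combining with the gramian identity yields the inequality in \eqref{jul17}. Finally, if \eqref{jul14} holds with equality for every $\beta$, then by part (3a) of Lemma \ref{L:5.1} each ${\bf S}_{\bo,R}^{\beta^\top}M_{\Theta_{\bo,\bU_\beta}}$ is an isometry on $\cU_\beta$, so each term-by-term estimate is an equality and hence equality holds in \eqref{jul17}.

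I expect the only genuinely delicate point to be the convergence/summability bookkeeping needed to legitimize the Pythagorean expansion over the infinite index set $\free$: one must know that $\sum_{\beta}{\bf S}_{\bo,R}^{\beta^\top}\Theta_{\bo,\bU_\beta}u_\beta$ converges in $H^2_{\bo,\cY}(\free)$ before splitting the norm. This is handled by the mutual orthogonality just established together with the per-term bound $\|{\bf S}_{\bo,R}^{\beta^\top}\Theta_{\bo,\bU_\beta}u_\beta\|^2\le\|u_\beta\|^2_{\cU_\beta}$: the partial sums over finite subsets of $\free$ form a Cauchy net exactly when $\sum_\beta\|u_\beta\|^2_{\cU_\beta}<\infty$, which is the standing assumption that $\{u_\beta\}$ lies in the appropriate $\ell^2$-direct sum $\bigoplus_{\beta\in\free}\cU_\beta$. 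Once this is in place the rest is a routine assembly of Lemma \ref{L:5.1}(1a), (1b), (2a), (3a).
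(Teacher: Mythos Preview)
Your proposal is correct and follows essentially the same route as the paper, which simply declares the corollary an immediate consequence of Lemma \ref{L:5.1}; you have merely spelled out which parts (1a), (1b), (2a), (3a) supply which pieces. One small correction: part (2a) of Lemma \ref{L:5.1} requires only \eqref{jul14}, not \eqref{jul13}, though both are assumed here so this does not affect your argument.
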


Let us now impose the additional hypothesis that $(C, \bA)$ is exactly $\bo$-observable, so that we can make use
of the invertibility of all the shifted gramians $\Gr_{\bo, k, C, \bA}$ guaranteed to us by
Proposition \ref{P:shift-gram-inv}. Then the inequality \eqref{contr} can
equivalently be expressed as $\| \Xi \| \le 1$ where the operator 
$\Xi \colon \sbm{\cX \\ \cU_{\beta}} \to \sbm{\cX \\ \cY}$ is given by
\begin{equation}   \label{pr7}
 \Xi= \begin{bmatrix} \Gr_{\bo,|\beta|+1,C,\bA}^{\frac{1}{2}}\otimes I_d & 0 \\ 0
 & \omega_{|\beta|}^{-\frac{1}{2}}\cdot I_{\cY} \end{bmatrix}
 \begin{bmatrix} A & {\widehat B}_{\beta} \\ C & D_{\beta} \end{bmatrix}
     \begin{bmatrix} \Gr_{\bo,|\beta|,C,\bA}^{-\frac{1}{2}} & 0 \\ 0 &
         I_{\cU_{\beta}} \end{bmatrix}.
 \end{equation} 
Another equivalent condition is that $\| \Xi^{*} \| \le 1$ which in
turn can be expressed as   
$$
    \begin{bmatrix} A & {\widehat B}_{\beta} \\ C & D_{\beta} \end{bmatrix}
        \begin{bmatrix} \Gr_{\bo,|\beta|,C,\bA}^{-1} & 0 \\ 0 & 
	    I_{\cU_\beta} \end{bmatrix}
        \begin{bmatrix} A^{*} & C^{*} \\ {\widehat B}_{\beta}^{*} & 
	    D_{\beta}^{*}
        \end{bmatrix}\preceq  \begin{bmatrix} \Gr_{\bo,|\beta|+1,C,\bA}^{-1}\otimes I_d & 0 \\ 0 &
        \omega_{|\beta|} I_{\cY} \end{bmatrix}.
$$
 Note that equality \eqref{isom} means that the operator $\Xi$ is isometric.
Of  particular interest  is the case where $\Xi$ is coisometric, that is, where
the connection matrix ${\bf U}_\beta = \left[ \begin{smallmatrix} 
A & {\widehat B}_{\beta}  \\ C & D_\beta \end{smallmatrix} \right]$
is coisometric with respect to the weights indicated below:
\begin{equation}   \label{wghtcoisom}
    \begin{bmatrix} A & {\widehat B}_{\beta}  \\ C & D_{\beta} \end{bmatrix}   
        \begin{bmatrix} \Gr_{\bo,|\beta|,C,\bA}^{-1} & 0 \\ 0 & 
	    I_{\cU_\beta} \end{bmatrix}
        \begin{bmatrix} A^{*} & C^{*} \\ {\widehat B}_{\beta}^{*} & 
	    D_{\beta}^{*}
        \end{bmatrix} = \begin{bmatrix} \Gr_{\bo,|\beta|+1,C,\bA}^{-1}\otimes I_d & 0 \\ 0 &
        \omega_{|\beta|} I_{\cY} \end{bmatrix}.
\end{equation}

\begin{lemma}   \label{L:frakcoisom}
Let $(C,\bA)$ be an exactly $\bo$-observable $\bo$-output stable pair and 
let $\Theta_{\bo, \bU_\beta}$ be defined as in \eqref{jul15}  for some operators $B_{j, \beta} \in 
\cL(\cU_\beta, \cX)$ and $D_\beta \in \cL(\cU_\beta, \cY)$
subject to equality \eqref{wghtcoisom}. Then
\begin{align}
&\omega_{|\beta|}^{-1}I_{\cY} - \Theta_{\bo, \bU_\beta}(z)\Theta_{\bo, \bU_\beta}(\zeta)^{*}\notag\\
& = CR_{\bo,|\beta|}(Z(z)A)\Gr_{\bo,|\beta|,C,\bA}^{-1}R_{\bo,|\beta|}(Z(\zeta) A)^*C^*\label{frakkerid}\\
&\quad -CR_{\bo,|\beta|+1}(Z(z)A)Z(z) (\Gr_{\bo,|\beta|+1,C,\bA}^{-1}\otimes I_d)
Z(\zeta)^* R_{\bo,|\beta|+1}(Z(\zeta) A)^*C^*.
\notag
\end{align}
\end{lemma}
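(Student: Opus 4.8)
The identity \eqref{frakkerid} is the dual ("coisometric") counterpart of the "isometric" identity \eqref{jul18} from Lemma \ref{L:5.1}(3c), so the plan is to mimic the computation in the proof of part (3) of Lemma \ref{L:5.1} but now starting from the coisometry relation \eqref{wghtcoisom} rather than the isometry relation \eqref{isom}. Since $(C,\bA)$ is assumed exactly $\bo$-observable, Proposition \ref{P:shift-gram-inv} guarantees that every shifted gramian $\Gr_{\bo,k,C,\bA}$ is invertible, so all the inverses appearing in \eqref{wghtcoisom} and \eqref{frakkerid} make sense. First I would read off from \eqref{wghtcoisom} (after expanding the block product) the three scalar relations
\begin{align*}
& A\,\Gr_{\bo,|\beta|,C,\bA}^{-1}A^{*} + \widehat B_\beta \widehat B_\beta^{*} = \Gr_{\bo,|\beta|+1,C,\bA}^{-1}\otimes I_d, \\
& A\,\Gr_{\bo,|\beta|,C,\bA}^{-1}C^{*} + \widehat B_\beta D_\beta^{*} = 0, \\
& C\,\Gr_{\bo,|\beta|,C,\bA}^{-1}C^{*} + D_\beta D_\beta^{*} = \omega_{|\beta|}I_\cY,
\end{align*}
which are the exact counterparts of the relations \eqref{jul13}, \eqref{jul14a} together with the Stein identity \eqref{4.34} that were used in the proof of \eqref{jul18}.

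\textbf{Main computation.} Next I would substitute the realization formula $\Theta_{\bo,\bU_\beta}(z)=\omega_{|\beta|}^{-1}D_\beta + C R_{\bo,|\beta|+1}(Z(z)A)Z(z)\widehat B_\beta$ from \eqref{thetareal} into the left side of \eqref{frakkerid} and expand $\Theta_{\bo, \bU_\beta}(z)\Theta_{\bo, \bU_\beta}(\zeta)^*$ into four terms, exactly paralleling the display in the proof of \eqref{jul18}. Using the third scalar relation above to handle the $\omega_{|\beta|}^{-1}I_\cY - \omega_{|\beta|}^{-2}D_\beta D_\beta^{*}$ term, the second relation to rewrite the two mixed cross terms (replacing $\omega_{|\beta|}^{-1} D_\beta$-type factors by $-C\Gr_{\bo,|\beta|,C,\bA}^{-1}\widehat B_\beta^{*}$-type factors after transposing appropriately), and the first relation to combine with the remaining term, the expression should collapse to a product of the form
$$
\omega_{|\beta|}\, C\bigl(\omega_{|\beta|}^{-1}I + R_{\bo,|\beta|+1}(Z(z)A)Z(z)A\bigr)(\Gr_{\bo,|\beta|+1,C,\bA}^{-1}\otimes I_d)\bigl(\omega_{|\beta|}^{-1}I + A^{*}Z(\zeta)^{*}R_{\bo,|\beta|+1}(Z(\zeta)A)^{*}\bigr)C^{*}
$$
minus the corresponding $\Gr_{\bo,|\beta|,C,\bA}^{-1}$-term. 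Finally I would invoke the power-series identity \eqref{1.9pre} in the operator form $\omega_{|\beta|}^{-1}I + R_{\bo,|\beta|+1}(Z(z)A)Z(z)A = R_{\bo,|\beta|}(Z(z)A)$ (noting $R_{\bo,|\beta|+1}(Z(z)A)Z(z) = Z(z)R_{\bo,|\beta|+1}(AZ(z))$ as in the proof of \eqref{jul18}, but here applied on the $Z(z)A$ side), which converts the first term into $C R_{\bo,|\beta|}(Z(z)A)\Gr_{\bo,|\beta|,C,\bA}^{-1}R_{\bo,|\beta|}(Z(\zeta)A)^{*}C^{*}$, the desired first term on the right of \eqref{frakkerid}, while the leftover term is exactly the subtracted term involving $R_{\bo,|\beta|+1}$.

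\textbf{Expected obstacle.} The conceptual content is modest---this is bookkeeping with block-operator identities---but the main nuisance will be getting the adjoints, the transposes $\alpha\mapsto\alpha^{\top}$ built into the $z$ versus $\bzeta$ conventions, and the placement of the $\otimes I_d$ factors all consistent, since one of the two Gramian-inverse factors carries $\otimes I_d$ (acting on $\cX^d$) and the other does not (acting on $\cX$). A secondary subtlety is making sure the algebraic manipulations with the formal power series $R_{\bo,k}(Z(z)A)$ are legitimate: this is justified because, by \eqref{1.12pre} (or \eqref{2.14pre}) and exact $\bo$-observability, all series in question converge in the appropriate sense and the manipulations reduce to equating coefficients of $z^{\alpha}\bzeta^{\alpha^{\top}}$, so no analytic difficulty arises. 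Once the $|\beta|=0$ intuition (where the identity reduces to the Fock-space identity \eqref{jan3b} read in coisometric form) is kept in mind as a sanity check, the general case follows by the same pattern with $\Gr_{\bo,k,C,\bA}$ playing the role of the identity.
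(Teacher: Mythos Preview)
Your plan is exactly the paper's approach: extract the three block relations from \eqref{wghtcoisom}, expand $\Theta_{\bo,\bU_\beta}(z)\Theta_{\bo,\bU_\beta}(\zeta)^*$ via the realization \eqref{thetareal}, substitute, and finish with the shifting identity $\omega_{|\beta|}^{-1}I_\cX + R_{\bo,|\beta|+1}(Z(z)A)Z(z)A = R_{\bo,|\beta|}(Z(z)A)$ (this is \eqref{1.6g'}, the $\bo$-analog of \eqref{1.9pre}).

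One concrete slip: your displayed intermediate expression has the two gramians swapped and carries a spurious $\omega_{|\beta|}$ prefactor. The factor $\omega_{|\beta|}^{-1}I + R_{\bo,|\beta|+1}(Z(z)A)Z(z)A$ lives in $\cL(\cX)$, so it cannot be multiplied against $\Gr_{\bo,|\beta|+1,C,\bA}^{-1}\otimes I_d \in \cL(\cX^d)$; the formula as written is not even dimensionally consistent. What actually happens (and what the paper gets) is
\[
C\bigl(\omega_{|\beta|}^{-1}I_\cX + R_{\bo,|\beta|+1}(Z(z)A)Z(z)A\bigr)\,\Gr_{\bo,|\beta|,C,\bA}^{-1}\,\bigl(\omega_{|\beta|}^{-1}I_\cX + A^*Z(\zeta)^*R_{\bo,|\beta|+1}(Z(\zeta)A)^*\bigr)C^*
\]
minus the $\Gr_{\bo,|\beta|+1,C,\bA}^{-1}\otimes I_d$ term already in its final form. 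The $\omega_{|\beta|}$ prefactor you wrote is an artifact of pattern-matching from the proof of \eqref{jul18}; it is absent here because the constant contribution is $\omega_{|\beta|}^{-1}I_\cY - \omega_{|\beta|}^{-2}D_\beta D_\beta^* = \omega_{|\beta|}^{-2}C\Gr_{\bo,|\beta|,C,\bA}^{-1}C^*$, which already has the right power. You clearly know the correct final form, and you even flagged exactly this kind of $\otimes I_d$ bookkeeping as the expected obstacle---so this is a transcription error, not a gap in the argument.
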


\begin{proof}The proof parallels the verification of the identity
    \eqref{jul18} done above.  The weighted-coisometry condition \eqref{wghtcoisom}
    gives us the set of equations
 \begin{align}  
A\Gr_{\bo,|\beta|,C,\bA}^{-1} A^{*} + {\widehat B}_{\beta} {\widehat B}_{\beta}^{*} &= 
\Gr_{\bo,|\beta|+1,C,\bA}^{-1}\otimes I_d, \notag\\ 
C\Gr_{\bo,|\beta|,C,\bA}^{-1} A^{*} + D_{\beta}{\widehat B}_{\beta}^{*}&= 0,  \label{relations1}\\
C \Gr_{\bo,|\beta|,C,\bA}^{-1} C^{*} + D_{\beta} D_{\beta}^{*}& = 
\omega_{|\beta|} I_{\cY}.\notag
 \end{align}
We then  compute (using relations \eqref{relations1} at the third step below)
 \begin{align*}
    & \omega_{|\beta|}^{-1}I_{\cY} - \Theta_{\bo, \bU_\beta}(z)\Theta_{\bo, \bU_\beta}(\zeta)^{*}\\
& =  \omega_{|\beta|}^{-1}I_{\cY}- \left[\omega_{|\beta|}^{-1}D_\beta +    
     CR_{\bo,|\beta|+1}(Z(z)A)Z(z){\widehat B}_{\beta} \right]\\
&\qquad\qquad\qquad\times  
 \left[ \omega_{|\beta|}^{-1}D_{\beta}^{*} +{\widehat B}_{\beta}^{*}Z(\zeta)^* R_{\bo,|\beta|+1}
(Z(\zeta) A)^*C^*\right] \\
& = \omega_{|\beta|}^{-1}I_{\cY}-\omega_{|\beta|}^{-2}D_\beta 
D_\beta^*- \omega_{|\beta|}^{-1} CR_{\bo,|\beta|+1}(Z(z)A)Z(z){\widehat B}_{\beta}D_{\beta}^{*}\\
&\quad-\omega_{|\beta|}^{-1}D_\beta{\widehat B}_{\beta}^{*}Z(\zeta)^* R_{\bo,|\beta|+1}(Z(\zeta) A)^*C^*\\
     & \quad - CR_{\bo,|\beta|+1}(Z(z)A)Z(z){\widehat 
     B}_{\beta}{\widehat B}_{\beta}^{*}Z(\zeta)^* 
R_{\bo,|\beta|+1}(Z(\zeta) A)^*C^*\\
     & = \omega_{|\beta|}^{-2}C\Gr_{\bo,|\beta|,C,\bA}^{-1}C^* + 
     \omega_{|\beta|}^{-1}CR_{\bo,|\beta|+1}(Z(z)A)Z(z)A\Gr_{\bo,|\beta|,C,\bA}^{-1}C^*\\
 & \quad +  \omega_{|\beta|}^{-1}C\Gr_{\bo,|\beta|,C,\bA}^{-1}A^{*}Z(\zeta)^* 
 R_{\bo,|\beta|+1}(Z(\zeta) A)^*C^* \\
     & \quad  - CR_{\bo,|\beta|+1}(Z(z)A)Z(z)\left[ (\Gr_{\bo,|\beta|+1,C,\bA}^{-1}\otimes I_d) - A
\Gr_{\bo,|\beta|,C,\bA}^{-1}  A^{*}\right]\\
&\qquad\qquad\qquad\qquad\times Z(\zeta)^* R_{\bo,|\beta|+1}(Z(\zeta) A)^*C^*\\
&=C\left(\omega_{|\beta|}^{-1}I_{\cX}+R_{\bo,|\beta|+1}(Z(z)A)Z(z)A\right)\Gr_{\bo,|\beta|,C,\bA}^{-1}\\
&\qquad\qquad\times
\left(\omega_{|\beta|}^{-1}I_{\cX}+A^*Z(\zeta)^*R_{\bo,|\beta|+1}(Z(\zeta) A)^*\right)C^*\\
&\quad-CR_{\bo,|\beta|+1}(Z(z)A)Z(z) (\Gr_{\bo,|\beta|+1,C,\bA}^{-1}\otimes I_d) 
Z(\zeta)^* R_{\bo,|\beta|+1}(Z(\zeta) A)^*C^*
     \end{align*}
which implies \eqref{frakkerid}, due to equality
$$
\omega_{|\beta|}^{-1}I_{\cX}+R_{\bo,|\beta|+1}(Z(z)A)Z(z)A=R_{\bo,|\beta|}(Z(z)A),
$$
which in turn, is a consequence of \eqref{1.6g'}.
\end{proof}
Since equality \eqref{wghtcoisom} implies inequality \eqref{contr}, it follows that under
assumption of Lemma \ref{L:frakcoisom}, all the conclusions of parts (1) and (2) in Lemma \ref{L:5.1} 
are true. To have all conclusions true, we need the operator \eqref{pr7} to be unitary.
The next result amounts to a more structured version of Proposition \ref{P:elementary'}.
 
 \begin{lemma}   \label{L:5.6}
Suppose that we are given an exactly $\bo$-observable
$\bo$-output-stable pair $(C,\bA)$ with $\bA=(A_1,\ldots,A_d)\in\cL(\cX)^d$ and $C\in\cL(\cX,\cY)$. Then 
for every $\beta \in\free$, there exist operators $B_{1,\beta},\ldots 
B_{d,\beta}\in\cL(\cU_\beta, \cX)$
and $D_\beta \in \cL(\cU_\beta, \cY)$ such that equalities \eqref{wghtcoisom} and \eqref{isom} hold
with $A$ and $\widehat{B}_\beta$ defined as in \eqref{coll}.
Explicitly, such  $\widehat{B}_{\beta}$ and $D_{\beta}$ are uniquely
 determined up to a common unitary right factor by solving the Cholesky factorization problem:
  \begin{equation}  \label{pr6}
\begin{bmatrix}\widehat{B}_{\beta} \\ 
    D_\beta \end{bmatrix}\begin{bmatrix}\widehat{B}_{\beta}^* & D_\beta^*\end{bmatrix}=
\begin{bmatrix}\Gr_{\bo,|\beta|+1,C,\bA}^{-1}\otimes I_d & 0 \\ 0 & 
    \omega_{|\beta|} I_{\cY}\end{bmatrix}-
\begin{bmatrix}A \\ C\end{bmatrix}\Gr_{\bo,|\beta|,C,\bA}^{-1}\begin{bmatrix}A^* & C^*\end{bmatrix}
\end{equation}
subject to the additional constraint that the coefficient space
$\cU_{\beta}$ be chosen so that $\left[ \begin{smallmatrix} 
\widehat{B}_{\beta} \\ D_{\beta} \end{smallmatrix} \right] \colon \cU_{\beta} \to \cX^d \oplus \cY$ is
injective. 
\end{lemma}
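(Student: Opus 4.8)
The plan is to reduce the whole construction to the abstract Cholesky-factorization result Proposition \ref{P:elementary'}, applied to a suitably rescaled version of the column operator $\sbm{A \\ C}$, with the weighted Stein identity \eqref{4.34} supplying the one genuinely substantive input. First I would invoke the exact $\bo$-observability hypothesis: by Proposition \ref{P:shift-gram-inv} every shifted gramian $\Gr_{\bo,k,C,\bA}$, in particular $\Gr_{\bo,|\beta|,C,\bA}$ and $\Gr_{\bo,|\beta|+1,C,\bA}$, is bounded and boundedly invertible, so all the square roots and inverses occurring in \eqref{pr6}, \eqref{pr7}, \eqref{wghtcoisom}, \eqref{isom} are honest bounded operators. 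Next, using \eqref{4.34} in the form $\sum_{j=1}^d A_j^*\Gr_{\bo,|\beta|+1,C,\bA}A_j + \omega_{|\beta|}^{-1} C^*C = \Gr_{\bo,|\beta|,C,\bA}$, one checks by a direct computation that the operator
\[
U := \begin{bmatrix} \Gr_{\bo,|\beta|+1,C,\bA}^{\frac12}\otimes I_d & 0 \\ 0 & \omega_{|\beta|}^{-\frac12}I_\cY \end{bmatrix}\begin{bmatrix} A \\ C \end{bmatrix}\Gr_{\bo,|\beta|,C,\bA}^{-\frac12}\colon \cX \to \cX^d\oplus\cY
\]
satisfies $U^*U = \Gr_{\bo,|\beta|,C,\bA}^{-1/2}\big(\sum_j A_j^*\Gr_{\bo,|\beta|+1,C,\bA}A_j + \omega_{|\beta|}^{-1}C^*C\big)\Gr_{\bo,|\beta|,C,\bA}^{-1/2} = I_\cX$; that is, $U$ is an isometry. (Equivalently, one may phrase this via the elementary identity \eqref{fact} to see that the right-hand side of \eqref{pr6} is positive semidefinite.)

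I would then apply Proposition \ref{P:elementary'} with $\cH=\cX$ and $\cK=\cX^d\oplus\cY$ to the isometry $U$: there is a Hilbert space $\cU_\beta$ and an operator $V_0\colon\cU_\beta\to\cX^d\oplus\cY$ with $V_0V_0^* = I_{\cX^d\oplus\cY}-UU^*$, normalized so that $V_0$ is injective. By part (1) of that proposition $V_0$ is then uniquely determined up to a unitary right factor, and by part (2), since $U$ is isometric and $V_0$ is injective, $\begin{bmatrix} U & V_0 \end{bmatrix}$ is \emph{unitary}. Now I would \emph{define}
\[
\begin{bmatrix} \widehat B_\beta \\ D_\beta \end{bmatrix} := \begin{bmatrix} \Gr_{\bo,|\beta|+1,C,\bA}^{-\frac12}\otimes I_d & 0 \\ 0 & \omega_{|\beta|}^{\frac12}I_\cY \end{bmatrix} V_0,
\]
reading off $B_{1,\beta},\dots,B_{d,\beta}$ as the block entries of the column $\widehat B_\beta$ as in \eqref{coll}. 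Conjugating the relation $V_0V_0^* = I_{\cX^d\oplus\cY}-UU^*$ by the invertible left factor above returns exactly the Cholesky equation \eqref{pr6}; since that left factor is invertible, injectivity of $V_0$ is equivalent to injectivity of $\sbm{\widehat B_\beta \\ D_\beta}$, and the uniqueness-up-to-a-common-unitary-right-factor assertion transfers verbatim.

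Finally I would unwind the definition \eqref{pr7} of $\Xi$: with the above choices the first block column of $\Xi$ equals $U$ and the second block column equals $V_0$, so $\Xi = \begin{bmatrix} U & V_0 \end{bmatrix}$ is unitary. Writing $\Xi^*\Xi = I$ out in terms of $A$, $C$, $\widehat B_\beta$, $D_\beta$ and the diagonal weight matrices (i.e. conjugating by $\sbm{\Gr_{\bo,|\beta|,C,\bA}^{1/2} & 0 \\ 0 & I_{\cU_\beta}}$) yields precisely the equality \eqref{isom}, while $\Xi\Xi^* = I$ yields precisely the equality \eqref{wghtcoisom} (which, in any case, is just the rearrangement of \eqref{pr6} obtained by moving $\sbm{A \\ C}\Gr_{\bo,|\beta|,C,\bA}^{-1}\sbm{A^* & C^*}$ to the left side). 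This completes the construction.

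The only delicate point is the bookkeeping: keeping the three weight factors $\Gr_{\bo,|\beta|,C,\bA}$, $\Gr_{\bo,|\beta|+1,C,\bA}\otimes I_d$ and $\omega_{|\beta|}^{\pm1}$ in the correct slots while passing among the equivalent block identities \eqref{pr6}, \eqref{wghtcoisom}, \eqref{isom} and the rescaled column $\Xi$. I do not expect a genuine obstacle beyond this: the lemma is a weighted repackaging of Proposition \ref{P:elementary'}, and the weighted Stein identity \eqref{4.34} — available precisely because $(C,\bA)$ is $\bo$-output stable and exactly $\bo$-observable — is what makes the rescaled column $U$ an isometry, which is all that is needed to run the argument.
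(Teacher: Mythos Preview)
Your proposal is correct and follows essentially the same approach as the paper's proof: use the weighted Stein identity \eqref{4.34} to show that the rescaled column operator $U$ is an isometry, then apply Proposition \ref{P:elementary'} to complete it to a unitary $\begin{bmatrix} U & V_0 \end{bmatrix}$, and finally unscale $V_0$ to obtain $\sbm{\widehat B_\beta \\ D_\beta}$ satisfying \eqref{pr6}, with \eqref{isom} and \eqref{wghtcoisom} read off from $\Xi^*\Xi=I$ and $\Xi\Xi^*=I$ respectively. Your write-up is in fact somewhat more explicit than the paper's about the bookkeeping of the weight factors and the passage between \eqref{pr6}, \eqref{pr7}, \eqref{isom}, and \eqref{wghtcoisom}.
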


\begin{proof}
By Proposition \ref{P:wghtSteinid}, the weighted Stein
identity \eqref{4.34} holds for each $k \ge 0$.
Since $(C,\bA)$ is exactly $\bo$-observable, the gramian $\Gr_{\bo,|\beta|,C,\bA}$ is strictly
positive definite and then it follows from \eqref{4.34} that the operator
$$
U: = \begin{bmatrix}(\Gr_{\bo,|\beta|+1,C,\bA}^{\frac{1}{2}}\otimes I_d)A 
    \Gr_{\bo,|\beta|,C,\bA}^{-\frac{1}{2}} \\
\omega_{|\beta|}^{-\frac{1}{2}}C\Gr_{\bo,|\beta|,C,\bA}^{-\frac{1}{2}}\end{bmatrix} \colon  
\cX\to \cX^d\oplus\cY
$$
is an isometry.  We then find an injective $V \colon \cU_\beta \to \sbm{ \cX^d \\ \cY }$
such that $V V^* = I_{\cX^d \oplus \cY} - U U^*$ as in Proposition \ref{P:elementary'}.
This translates to $V$ having the form
$$
  V = \begin{bmatrix} \Gr^{\frac{1}{2}}_{\bo, |\beta| + 1, C, \bA} \otimes I_d & 0 \\ 0 & \mu^{-\frac{1}{2}}_{\bo, |\beta|} 
  I_\cY \end{bmatrix}  \begin{bmatrix} \widehat B_\beta \\ D_\beta \end{bmatrix}
$$
where $\sbm{ \widehat B_\beta \\ D_\beta }$ is a solution
of the factorization problem \eqref{pr6}.   Uniqueness of the solution 
$\sbm{ \widehat B_\beta \\ D_\beta }$ of the factorization problem \eqref{pr6}
corresponds exactly to the uniqueness of the partial isometry $V$ in the factorization problem
$V V^* = I_{\cX^d \oplus \cY} - U U^*$.  The fact that $\begin{bmatrix} U & V \end{bmatrix}$ is unitary means
that both equalities \eqref{wghtcoisom} and \eqref{isom} hold.
\end{proof}
\begin{remark}   \label{R:4.10}
 The operators $\widehat{B}_{\beta}$ and $D_\beta$ depend on 
$|\beta|$ rather than on $\beta$; 
thus, in the construction above we can always take 
$\cU_\beta=\cU_{\beta^\prime}$ and also 
$\widehat{B}_{\beta}=\widehat{B}_{\beta^\prime}$ and 
$D_\beta=D_{\beta^\prime}$ whenever $|\beta|=|\beta^\prime|$. 
\end{remark}

\section{Beurling-Lax representations based on Bergman-inner families}  \label{S:BL7}
In this section we present our most elaborate version of the Beurling-Lax 
theorem. 
Arguably this version, while more complicated than the previous 
 versions which we have discussed and the classical case, is the most compelling in that
the representer is determined up to a unitary change of basis on the 
input-space sequence, closer to the classical case where the representer 
is determined up to a unitary change of basis on the input space.
There results a unitary invariant for a shift invariant subspace.

\smallskip

In the classical case ($d=1$ and $n=1$), if $\cM$ is a
subspace of the Hardy space $H_{\cY}^{2}$  invariant for the shift 
operator $S \colon f(z) \mapsto z f(z)$, the Beurling-Lax representer 
$\Theta$ for $\cM$ can be constructed by choosing the coefficient 
space $\cU$ to have the same dimension as the {\em wandering 
subspace} $\cE: = \cM \ominus z \cM$ for $\cM$, and letting $\Theta$ 
be any unitary identification map of $\cU$ to $\cE$ (so $\cE = \Theta 
\cU$) and then extending $\Theta$ to $M_{\Theta} \colon H^{2}_{\cU} 
\to H^{2}_{\cY}$ by demanding shift-invariance:  $M_{\Theta} S = S 
M_{\Theta}$.  The subspace $\cM_{\beta}$ given below is the time-varying 
multidimensional analog of the wandering subspace, as explained in 
the following lemma.

\begin{lemma} \label{L:Mdecom}
    Suppose that $\cM \subset H^2_{\bo,\cY}(\free)$ is an ${\mathbf 
    S}_{\bo,R}$-invariant subspace.  For each word $\beta \in \free$, 
    define the subspace
\begin{equation}    \label{Mv}
    \cM_\beta=
\bS_{\bo, R}^{\beta^{\top}} \cM \ominus \bigg(\bigoplus_{j=1}^d 
\bS_{\bo,R}^{\beta^{\top}}\bS_{\bo,R,j} \cM  \bigg).
\end{equation}
Then $\cM$ has the orthogonal direct-sum decomposition
\begin{equation}  \label{Mdecom}
 \cM = \bigoplus_{\beta\in\free} \cM_\beta.
\end{equation}
More generally, for any $\alpha \in \free$, we have the orthogonal 
direct-sum decomposition
\begin{equation}  \label{Mdecom'}
    \bS_{\bo,R}^{\alpha^{\top}} \cM = \bigoplus_{\beta \in \free} 
    \cM_{\beta \alpha}.
\end{equation}
 \end{lemma}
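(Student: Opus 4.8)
The plan is to establish the more general identity \eqref{Mdecom'} directly (it contains \eqref{Mdecom} as the case $\alpha=\emptyset$), while the formula \eqref{Mv} for $\cM_\beta$ is simply its definition. Two facts about right multiplication will be used repeatedly. First, because $\bo$ is admissible \eqref{18.2}, every operator $\bS_{\bo,R}^{\gamma^\top}\colon f(z)\mapsto f(z)z^\gamma$ is bounded below on $H^2_{\bo,\cY}(\free)$ — indeed $\|\bS_{\bo,R}^{\gamma^\top}f\|^2=\sum_{\delta\in\free}\omega_{|\delta|+|\gamma|}\|f_\delta\|^2\ge M^{-|\gamma|}\|f\|^2$ — so it carries closed subspaces to closed subspaces. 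Second, $\bS_{\bo,R}^{\gamma^\top}S_{\bo,R,j}=\bS_{\bo,R}^{(j\gamma)^\top}$ (appending the letter $j$ and then the word $\gamma$ on the right), and since the ranges of $S_{\bo,R,1},\dots,S_{\bo,R,d}$ are mutually orthogonal \eqref{ol-b}, the subspaces $\bS_{\bo,R}^{(\gamma\alpha)^\top}\cM$ for words $\gamma$ of a common length are pairwise orthogonal, their elements being supported on disjoint sets of noncommutative monomials (those words having $\gamma\alpha$ as a right factor).

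The first substantive step is a one-step refinement: for every word $\gamma\in\free$,
\begin{equation*}
\bS_{\bo,R}^{\gamma^\top}\cM=\cM_\gamma\oplus\bigoplus_{j=1}^{d}\bS_{\bo,R}^{(j\gamma)^\top}\cM .
\end{equation*}
This is essentially the definition \eqref{Mv} of $\cM_\gamma$: because $\cM$ is $\bS_{\bo,R}$-invariant, $S_{\bo,R,j}\cM\subseteq\cM$, hence $\bS_{\bo,R}^{(j\gamma)^\top}\cM=\bS_{\bo,R}^{\gamma^\top}(S_{\bo,R,j}\cM)\subseteq\bS_{\bo,R}^{\gamma^\top}\cM$; the $d$ summands on the right are closed (bounded-below images of the closed subspace $S_{\bo,R,j}\cM$) and pairwise orthogonal, so their sum is a closed subspace of $\bS_{\bo,R}^{\gamma^\top}\cM$, of which $\cM_\gamma$ is by definition the orthogonal complement inside $\bS_{\bo,R}^{\gamma^\top}\cM$.

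The second step iterates this. By induction on $N\ge0$, applying the refinement with $\gamma=\beta\alpha$ for each $\beta$ with $|\beta|=N$ and using that $\{j\beta\colon 1\le j\le d,\ |\beta|=N\}$ exhausts the words of length $N+1$, one gets the orthogonal decomposition
\begin{equation*}
\bS_{\bo,R}^{\alpha^\top}\cM=\Bigl(\bigoplus_{\beta\in\free\colon|\beta|\le N-1}\cM_{\beta\alpha}\Bigr)\oplus\Bigl(\bigoplus_{\beta\in\free\colon|\beta|=N}\bS_{\bo,R}^{(\beta\alpha)^\top}\cM\Bigr).
\end{equation*}
The remainder spaces $\cQ_N:=\bigoplus_{|\beta|=N}\bS_{\bo,R}^{(\beta\alpha)^\top}\cM$ form a decreasing chain of closed subspaces, and each $\cQ_N$ is contained in the closed linear span of the monomials $z^\mu y$ with $|\mu|\ge N+|\alpha|$, i.e.\ in $\bigvee_{|\gamma|=N+|\alpha|}\bS_{\bo,R}^{\gamma}H^2_{\bo,\cY}(\free)$. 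By \eqref{ol-c} the intersection of these spans over $N$ is trivial, so $\bigcap_N\cQ_N=\{0\}$; letting $N\to\infty$ therefore yields $\bS_{\bo,R}^{\alpha^\top}\cM=\bigoplus_{\beta\in\free}\cM_{\beta\alpha}$, which is \eqref{Mdecom'}, and $\alpha=\emptyset$ gives \eqref{Mdecom}.

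The composition identity $\bS_{\bo,R}^{\gamma^\top}S_{\bo,R,j}=\bS_{\bo,R}^{(j\gamma)^\top}$ and the monomial-support bookkeeping behind all the orthogonality claims are routine; the one point that uses more than mere orthogonality of the shift ranges is the vanishing of the remainder $\bigcap_N\cQ_N$, which is exactly where property \eqref{ol-c} — hence the admissibility of $\bo$ — is needed, and I expect that to be the step requiring the most care.
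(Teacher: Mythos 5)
Your proof is correct, and at bottom it runs on the same engine as the paper's: peel off the spaces $\cM_{\beta\alpha}$ level by level and use the grading of $H^2_{\bo,\cY}(\free)$ to kill the remainder. The organization, however, is genuinely different in a way worth recording. The paper first proves the pairwise orthogonality $\cM_\beta\perp\cM_\gamma$ ($\beta\ne\gamma$) by a two-case analysis on right factors of words, and then, for \eqref{Mdecom'}, fixes an element $h\in\bS_{\bo,R}^{\alpha^\top}\cM$ orthogonal to every $\cM_{\beta\alpha}$ and pushes it by induction into $\bigoplus_{|\beta|=N}\bS_{\bo,R}^{(\beta\alpha)^\top}\cM$ for all $N$. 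You instead iterate the one-step refinement $\bS_{\bo,R}^{\gamma^\top}\cM=\cM_\gamma\oplus\bigoplus_{j}\bS_{\bo,R}^{(j\gamma)^\top}\cM$ (which is indeed just \eqref{Mv} once the right-hand sum is known to be closed) to obtain an exact telescoping orthogonal decomposition with remainder $\cQ_N$ at every finite stage; this makes the mutual orthogonality of all the pieces fall out of the construction, so the paper's separate case analysis becomes unnecessary, and your explicit appeal to left-invertibility of the shifts (from admissibility \eqref{18.2}) to ensure that $\bigoplus_j\bS_{\bo,R}^{(j\gamma)^\top}\cM$ is closed makes precise a point the paper leaves implicit when it drops the closure after projecting onto $\bS_{\bo,R}^{\alpha^\top}\cM$. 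Two small remarks: the final step ``letting $N\to\infty$'' deserves its one-line justification (any $h\in\bS_{\bo,R}^{\alpha^\top}\cM$ orthogonal to all $\cM_{\beta\alpha}$ lies in every $\cQ_N$, hence is $0$), and the triviality of $\bigcap_N\cQ_N$ via \eqref{ol-c} is a pure grading/support fact that does not use admissibility of $\bo$ — admissibility enters your argument only through the left-invertibility used for closedness.
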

\begin{proof}

    We first show that 
\begin{equation}
\cM_{\beta} \perp \cM_{\gamma} \quad\mbox{if}\quad \beta \ne \gamma,
\label{march1}
\end{equation}
i.e., that $f \perp g$ in $H^2_{\bo, \cY}(\free)$ for any $f \in \cM_{\beta}$ and $g \in \cM_{\gamma}$.
    Without loss of generality we take $|\gamma| \ge |\beta|$.  We then write
    $\gamma =\delta \beta' $ with $|\beta'| = |\beta|$ and $\delta \in \free$ 
    (possibly equal to $\emptyset$).  
    
\smallskip
\noindent
{\bf Case 1:} Suppose that $\beta' \ne \beta$.
    Then any $f \in \bS_{\bo,R}^{\beta^{\top}} \cM$, as an element of $\bS_{\bo,R}^{\beta^{\top}} 
    H^2_{\bo, \cY}(\free)$, has the form $f(z) = \sum_{\alpha \in 
    \free} f_{\alpha \beta } z^{ \alpha \beta}$.  Similarly, any 
     $g \in \bS_{\bo,R}^{\gamma^{\top}} \cM$ has the form 
$$
g(z) =    \sum_{\alpha' \in \free} 
    g_{\alpha' \gamma} z^{\alpha' \gamma} = \sum_{\alpha' \in \free} 
    g_{\alpha' \delta \beta^{\prime}} z^{\alpha' \delta \beta^{\prime}}.
$$  
    As $\beta$ and $\beta'$ are different and of the 
    same length, every word of the form $\alpha \beta$ is 
    distinct from every word of the form $\alpha' \delta \beta^{\prime}$
and since the monomial subspaces $z^{\alpha} \cY$ satisfy
 orthogonality relations
 $z^{\alpha} y \perp z^{\alpha'} y'$ for all $y,y' \in \cY$ and $\alpha \ne \alpha'$ in $\free$,
we have $f_{\alpha\beta} z^{\alpha \beta} \perp g_{\alpha' \delta \beta^{\prime}}z^{\alpha' \delta \beta^{\prime}}$ in 
$H^2_{\bo,\cY}(\free)$ and hence, $f \perp g$ in this case.
    
\smallskip
\noindent
{\bf Case 2:} Suppose that $\beta' = \beta$, so $\gamma  = \delta \beta $.  
    The assumption that $\beta \ne \gamma$ 
    implies that $\delta \ne \emptyset$.  Then
 $$
g \in \cM_{\gamma} \subset {\mathbf S}_{\bo,R}^{\gamma^{\top}} \cM = 
{\mathbf S}_{\bo,R}^{\beta^{\top}} {\mathbf S}_{\bo,R}^{\delta^{\top}} \cM \subset \bigoplus_{j=1}^{d} 
{\mathbf S}_{\bo,R}^{\beta^{\top}} S_{\bo,R,j} \cM
$$
(note that ${\mathbf S}_{\bo,R}^{\beta^{\top}} S_{\bo,R,j} \cM \perp {\mathbf 
S}_{\bo,R}^{\beta^{\top}} S_{\bo,R,i}\cM$ for $i\ne j$ by Case 1).  But $f \in \cM_{\beta}$, and
$\cM_{\beta}$ is orthogonal to  $\bigoplus_{j=1}^{d} {\mathbf S}_{\bo,R}^{\beta^{\top}} S_{\bo,R,j} \cM$ 
by definition \eqref{Mv}.  We conclude that $f \perp g$ in this case as well.  This completes the verification 
of \eqref{march1}.

\smallskip

As \eqref{Mdecom} is the special case of \eqref{Mdecom'} with $\alpha 
= \emptyset$, it remains only to prove \eqref{Mdecom'}.  First note 
that 
$$
  \cM_{\beta \alpha} = \bS_{\bo,R}^{\alpha^{\top}} \bS_{\bo,R}^{\beta^{\top}} \cM \ominus 
  \bigg( \bigoplus_{j=1}^{d} \bS_{\bo,R}^{\alpha^{\top}} \bS_{\bo,R}^{\beta^{\top}} 
  S_{\bo,R,j} \cM \bigg) \subset \bS_{\bo,R}^{\alpha^{\top}} \cM,
 $$
 and it follows that 
 $$
   \bS^{\alpha^{\top}}_{\bo,R}\cM \supset \bigoplus_{\beta \in \free} \cM_{\beta \alpha}.
 $$
 To show the reverse inclusion, it suffices to show that $h = 0$ 
 whenever $h \in \bS_{\bo,R}^{\alpha^{\top}} \cM$ is orthogonal to 
 $\cM_{\beta \alpha}$ for all $\beta \in \free$.
Suppose therefore that $h \in \bS_{\bo,R}^{\alpha^{\top}}\cM$ is orthogonal to 
$\cM_{\beta \alpha}$ for all $\beta \in \free$.  In particular, the 
special case $\beta = \emptyset$ gives us that $h \perp 
\cM_{\alpha} : = \bS_{\bo,R}^{\alpha^{\top}}\cM \ominus \bigoplus_{j=1}^{d} 
\bS_{\bo,R}^{\alpha^{\top}} S_{\bo,R,j} \cM$; thus
\begin{equation}
  h \in \operatorname{closure} \bigoplus_{j=1}^{d} \bS^{\alpha^{\top}}_{\bo,R} S_{\bo,R,j} \cM 
  + (\bS^{\alpha^{\top}}_{\bo,R}\cM)^{\perp}.
\label{again41}
\end{equation}
Since $h \in \bS_{\bo,R}^{\alpha^{\top}}\cM$ and $\bigoplus_{j=1}^{d} 
\bS_{\bo,R}^{\alpha^{\top}} S_{\bo,R,j} \cM \subset 
\bS_{\bo,R}^{\alpha^{\top}}\cM$, it follows from \eqref{again41} (by taking the orthogonal projection of
$H^2_{\bo, \cY}(\free)$ onto $\bS_{\bo,R}^{\alpha^{\top}} \cM$) that
\begin{equation}   \label{N=1}
  h \in \bigoplus_{j=1}^{d} \bS^{\alpha^{\top}}_{\bo,R} S_{\bo,R,j} \cM =
  \bigoplus_{\beta \in \free \colon |\beta| = 1} 
  \bS_{\bo,R}^{\alpha^{\top}} \bS_{\bo,R}^{\beta^{\top}} \cM.
\end{equation}

Inductively suppose that
\begin{equation}   \label{Ngen}
  h \in \bigoplus_{\beta \in \free \colon |\beta| = N} 
  \bS_{\bo,R}^{\alpha^{\top}} \bS_{\bo,R}^{\beta^{\top}} \cM.
\end{equation}
Choose any $\beta_{0} \in \free$ with $|\beta_{0}| = N$.  Then the 
condition $h \perp \cM_{\beta_{0} \alpha}$ leads to
\begin{equation}
 h \in  \operatorname{closure}\, \bigoplus_{j=1}^{d} \bS_{\bo,R}^{\alpha^{\top}} 
 \bS_{\bo,R}^{\beta_{0}^{\top}} S_{\bo,R,j} \cM + \big( 
 \bS_{\bo,R}^{\alpha^{\top}} \bS_{\bo,R}^{\beta_{0}^{\top}} \cM  \big)^{\perp}.
\label{again42}
\end{equation}
Combining the assumption \eqref{Ngen} with the inclusion
$$
\bigoplus_{j=1}^{d} \bS^{\alpha^{\top}}_{\bo,R} 
\bS^{\beta_{0}^{\top}}_{\bo,R} S_{\bo,R,j} \cM \subset
\bigoplus_{\beta \in \free \colon |\beta| = N} 
\bS^{\alpha^{\top}}_{\bo,R} \bS^{\beta^{\top}}_{\bo,R} \cM
$$
we conclude from \eqref{again42} (by taking the orthogonal projection of
$H^2_{\bo, \cY}(\free)$ onto $\bigoplus_{\beta \in \free \colon |\beta| = N}
  \bS_{\bo,R}^{\alpha^{\top}} \bS_{\bo,R}^{\beta^{\top}} \cM$) that 
$$
h \in \bigg( \bigoplus_{j=1}^{d} \bS_{\bo,R}^{\alpha^{\top}} 
\bS_{\bo,R}^{\beta_{0}^{\top}} S_{\bo,R,j} \cM \bigg) \oplus
\bigg( \bigoplus_{ \beta \in \free \colon |\beta| = N, \, \beta \ne 
\beta_{0}} \bS_{\bo,R}^{\alpha^{\top}} \bS_{\bo,R}^{\beta^{\top}} \cM 
\bigg).
$$
We now use that $\beta_{0}$ is an arbitrary element of $\free$ with 
$|\beta_0| = N$.  Intersecting the right-hand side of the latter formula 
over all such $\beta_{0}$ leads to
$$
 h \in \bigoplus_{\beta \in \free \colon |\beta| = N+1} 
 \bS_{\bo,R}^{\alpha^{\top}} \bS_{\bo,R}^{\beta^{\top}} \cM.
$$
Recalling that we established \eqref{N=1} and that the present 
argument started with the assumption \eqref{Ngen}, we conclude by the 
induction principle that \eqref{Ngen} holds for all 
$N=1,2,\dots$.  Thus
$$
 h \in \bigcap_{N=0}^{\infty}\bigoplus_{\beta \in \free \colon 
 |\beta| = N} \bS_{\bo,R}^{\alpha^{\top}} \bS_{\bo,R}^{\beta^{\top}} \cM
 \subset  \bS_{\bo,R}^{\alpha^{\top}} \bigg(
 \bigcap_{N=0}^{\infty} \bigoplus_{\beta \in \free \colon 
 |\beta| = N}  \bS_{\bo,R}^{\beta^{\top}} 
 H^2_{\bo,\cY}(\free) \bigg) = \{0\}
$$
and the orthogonal decomposition \eqref{Mdecom'} is verified.
\end{proof}

In the discussion to follow, given our $\bS_{\bo, R}$-invariant subspace $\cM$ (isometrically included
in $H^2_{\bo, \cY}(\free)$, we fix a choice of $\bo$-isometric output pair $(C, \bA)$ with $\bA$
strongly stable so that $\cM^\perp = \operatorname{Ran} \cO_{\bo,C, \bA}$ and so that $\cM$ has
 reproducing kernel $k_\cM$ defined as in \eqref{kMa}:
\begin{equation}
\label{kM}
k_{\cM}(z, \zeta)=k_{\rm nc, \bo}(z,\zeta)I_{\cY}-CR_\bo(Z(z)A)R_\bo(Z(\zeta)A)^*C^*.
    \end{equation}
We remind the reader that a canonical choice of  $(C, \bA)$ is the model output pair
$$
C = E|_{\cM^\perp}, \quad \bA = \bS_{\bo, R}^*|_{\cM^\perp}
$$
With any such choice, the observability operator $\cO_{C, \bA} \colon \cM \to H^2_{\bo, \cY}(\free)$
is an isometric inclusion and the gramian operator $\cG_{\bo, C, \bA}$ is the identity operator on $\cM$.
Furthermore, as a consequence of Proposition \ref{P:shift-gram-inv}, we are assured that all the 
shifted gramians $\Gr_{\bo, k, C, \bA}$ are bounded and boundedly invertible.
To get the reproducing kernel for ${\bf S}_{\bo,R}^{\beta^{\top}} \cM$ consistent with the
metric of $H^2_{\bo,\cY}(\free)$,  we start with the following characterization of 
the space  $({\bf S}_{\bo,R}^{\beta^{\top}}\cM)^{\perp}$ in terms of the shifted 
observability operator ${\Ob}_{\bo,|\beta|,C,\bA}$ introduced in \eqref{4.31}.

\begin{proposition}   \label{P:SMperp}
    The space $({\bf S}_{\bo,R}^{\beta^{\top}} \cM)^{\perp}$ is characterized as
 \begin{equation}   \label{SMperp}
     \big({\bf S}_{\bo,R}^{\beta^{\top}}\cM\big)^{\perp} = 
\big({\bf S}_{\bo,R}^{\beta^{\top}} H^2_{\bo,\cY}(\free)\big)^\perp\bigoplus
    {\bf S}_{\bo,R}^{\beta^{\top}} \operatorname{Ran} {\Ob}_{\bo,|\beta|,C,\bA}.
 \end{equation}
In particular,
\begin{equation}  \label{SMperp1}
    \big(S_{\bo,R,k} \cM\big)^{\perp} = \big(S_{\bo,R,k} H^2_{\bo,\cY}(\free)\big)^{\perp} 
    \bigoplus S_{\bo,R,k} \operatorname{Ran} \Ob_{\bo,1,C, \bA}
\end{equation}
for $k=1,\ldots,d$. Furthermore,
\begin{equation}   \label{SMperp2}
    \bigg( \bigoplus_{j=1}^{d} S_{\bo,R,j} \cM \bigg)^{\perp} = \cY 
    \bigoplus\bigg( \bigoplus_{k=1}^{d} S_{\bo,R,k} {\rm Ran}\, 
    \Ob_{\bo,1,C,\bA} \bigg).
 \end{equation}
 \end{proposition}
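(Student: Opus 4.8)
The starting point is the decomposition \eqref{Mv}--\eqref{Mdecom} from Lemma \ref{L:Mdecom}: for an $\bS_{\bo,R}$-invariant subspace $\cM\subset H^2_{\bo,\cY}(\free)$ we have the orthogonal decompositions $\cM=\bigoplus_{\beta\in\free}\cM_\beta$ and, more generally, $\bS_{\bo,R}^{\alpha^\top}\cM=\bigoplus_{\beta\in\free}\cM_{\beta\alpha}$. The plan is to take orthogonal complements on both sides of these decompositions inside the appropriate ambient subspaces and then recognize the complementary pieces in terms of the shifted observability operator $\Ob_{\bo,|\beta|,C,\bA}$. The main computation to carry out is therefore the identification of $\cM^\perp$ intersected (or summed) with shifts.

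First I would prove \eqref{SMperp}. Write $H^2_{\bo,\cY}(\free)=\bS_{\bo,R}^{\beta^\top}H^2_{\bo,\cY}(\free)\oplus\big(\bS_{\bo,R}^{\beta^\top}H^2_{\bo,\cY}(\free)\big)^\perp$; this is an orthogonal decomposition since the right-shift monomial subspaces $z^{\alpha\beta}\cY$ (varying $\alpha\in\free$) are pairwise orthogonal in $H^2_{\bo,\cY}(\free)$. Since $\bS_{\bo,R}^{\beta^\top}\cM\subset\bS_{\bo,R}^{\beta^\top}H^2_{\bo,\cY}(\free)$, the orthogonal complement of $\bS_{\bo,R}^{\beta^\top}\cM$ in $H^2_{\bo,\cY}(\free)$ splits as $\big(\bS_{\bo,R}^{\beta^\top}H^2_{\bo,\cY}(\free)\big)^\perp\oplus\cN_\beta$, where $\cN_\beta$ is the orthogonal complement of $\bS_{\bo,R}^{\beta^\top}\cM$ \emph{inside} $\bS_{\bo,R}^{\beta^\top}H^2_{\bo,\cY}(\free)$. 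Now I would use that $\bS_{\bo,R}^{\beta^\top}$, being a composition of the isometries $S_{\bo,R,j}$ which have mutually orthogonal ranges (recall \eqref{jan11} and the orthogonality \eqref{ol-b}), is an isometry of $H^2_{\bo,\cY}(\free)$ onto $\bS_{\bo,R}^{\beta^\top}H^2_{\bo,\cY}(\free)$; hence $\cN_\beta=\bS_{\bo,R}^{\beta^\top}\big(H^2_{\bo,\cY}(\free)\ominus\cM\big)=\bS_{\bo,R}^{\beta^\top}\cM^\perp$. Finally, by the chosen representation $\cM^\perp=\operatorname{Ran}\cO_{\bo,C,\bA}$ with $(C,\bA)$ the $\bo$-isometric pair of \eqref{kM}, one has $\cM^\perp=\operatorname{Ran}\Ob_{\bo,0,C,\bA}$, but the point is that a shifted complement should be expressed via the \emph{shifted} observability operator: $\bS_{\bo,R}^{\beta^\top}\operatorname{Ran}\cO_{\bo,C,\bA}=\operatorname{Ran}\,\bS_{\bo,R}^{\beta^\top}\cO_{\bo,C,\bA}$. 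Here I would invoke the intertwining/identity $\bS_{\bo,R}^{\beta^\top}\Ob_{\bo,|\beta|,C,\bA}$ considered as an operator on $\cX$ — the relation \eqref{gramss} and the factorization in Proposition \ref{R:2.12} (equation \eqref{st10}) give that $\bS_{\bo,R}^{\beta^\top}\Ob_{\bo,|\beta|,C,\bA}$ is injective with closed range — together with the definition \eqref{4.31} of $\Ob_{\bo,|\beta|,C,\bA}$, which shows that $\bS_{\bo,R}^{\beta^\top}\Ob_{\bo,|\beta|,C,\bA}x$ and $\bS_{\bo,R}^{\beta^\top}\cO_{\bo,C,\bA}x$ differ only by the scalar weight ratios $\omega_{|\alpha|}/\omega_{|\alpha|+|\beta|}$ applied coefficient-wise, so they have the same range as subspaces (only the norm differs). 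This yields $\cN_\beta=\bS_{\bo,R}^{\beta^\top}\operatorname{Ran}\Ob_{\bo,|\beta|,C,\bA}$ and hence \eqref{SMperp}.

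Next, \eqref{SMperp1} is the special case $\beta=k$ (a single letter, so $|\beta|=1$) of \eqref{SMperp}, noting $\beta^\top=k$. For \eqref{SMperp2} I would use the second decomposition $\bigoplus_{j=1}^d S_{\bo,R,j}\cM=\bigoplus_{\beta\in\free:\,|\beta|\ge 1}\cM_\beta$, which follows from Lemma \ref{L:Mdecom} (it is $\cM\ominus\cM_\emptyset$, and $\cM_\emptyset$ is by \eqref{Mv} precisely the wandering subspace $\cM\ominus\bigoplus_j S_{\bo,R,j}\cM$). Taking orthogonal complements of $\bigoplus_{j=1}^d S_{\bo,R,j}\cM$ in $H^2_{\bo,\cY}(\free)$ and splitting off the constants $\cY=\big(\bigoplus_{j=1}^d S_{\bo,R,j}H^2_{\bo,\cY}(\free)\big)^\perp$ (which holds by the form of the $H^2_{\bo,\cY}(\free)$ inner product, or equivalently by the coisometry identity \eqref{upscoiso}), one is left with the complement of $\bigoplus_{j=1}^d S_{\bo,R,j}\cM$ inside $\bigoplus_{j=1}^d S_{\bo,R,j}H^2_{\bo,\cY}(\free)$; applying \eqref{SMperp1} in each summand and using the pairwise orthogonality of the ranges of $S_{\bo,R,1},\dots,S_{\bo,R,d}$ gives $\bigoplus_{k=1}^d S_{\bo,R,k}\operatorname{Ran}\Ob_{\bo,1,C,\bA}$, hence \eqref{SMperp2}.

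\textbf{Main obstacle.} The routine bookkeeping (orthogonality of monomial subspaces, isometry property of $\bS_{\bo,R}^{\beta^\top}$) is easy; the delicate point is the identification $\bS_{\bo,R}^{\beta^\top}\cM^\perp=\bS_{\bo,R}^{\beta^\top}\operatorname{Ran}\Ob_{\bo,|\beta|,C,\bA}$ \emph{as subspaces of $H^2_{\bo,\cY}(\free)$}, i.e., verifying that replacing $\cO_{\bo,C,\bA}$ by the shifted operator $\Ob_{\bo,|\beta|,C,\bA}$ changes only the norm and not the range after applying $\bS_{\bo,R}^{\beta^\top}$. I would handle this by a direct coefficient-wise comparison using \eqref{18.2aaa} and \eqref{4.31}: for $x\in\cX$,
$$
\bS_{\bo,R}^{\beta^\top}\cO_{\bo,C,\bA}x=\sum_{\alpha\in\free}\omega_{|\alpha|}^{-1}(C\bA^\alpha x)\,z^{\alpha\beta},\qquad
\bS_{\bo,R}^{\beta^\top}\Ob_{\bo,|\beta|,C,\bA}x=\sum_{\alpha\in\free}\omega_{|\alpha|+|\beta|}^{-1}(C\bA^\alpha x)\,z^{\alpha\beta},
$$
so the two families of power series, as $x$ ranges over $\cX$, are related by the bounded, boundedly-invertible diagonal multiplier $\operatorname{diag}_{\alpha}(\omega_{|\alpha|}/\omega_{|\alpha|+|\beta|})$ acting on the coefficient of $z^{\alpha\beta}$; admissibility \eqref{18.2} ($1\le\omega_j/\omega_{j+1}\le M$) guarantees this diagonal operator is a topological isomorphism, so the two ranges (viewed merely as subspaces, forgetting the norm) coincide, which is exactly what is needed since \eqref{SMperp}--\eqref{SMperp2} are statements about subspaces of $H^2_{\bo,\cY}(\free)$ with its ambient norm.
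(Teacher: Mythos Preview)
There is a genuine gap. You assert that $\bS_{\bo,R}^{\beta^\top}$ is an isometry, citing \eqref{jan11}; but \eqref{jan11} says precisely the opposite: $S_{\bo,R,j}^*S_{\bo,R,j}$ is the diagonal operator multiplying the $z^\alpha$-coefficient by $\omega_{|\alpha|+1}/\omega_{|\alpha|}$, which is the identity only when $\bo$ is constant. In the genuinely weighted case the shifts are merely left-invertible contractions, and your key step $\cN_\beta=\bS_{\bo,R}^{\beta^\top}\cM^\perp$ fails. The correct condition is $\bS_{\bo,R}^{\beta^\top}\widetilde f\perp\bS_{\bo,R}^{\beta^\top}\cM$ if and only if $\bS_{\bo,R}^{*\beta}\bS_{\bo,R}^{\beta^\top}\widetilde f\in\cM^\perp$, and by \eqref{again43} this says that the \emph{rescaled} series $\sum_\alpha(\omega_{|\alpha|+|\beta|}/\omega_{|\alpha|})\widetilde f_\alpha z^\alpha$ lies in $\operatorname{Ran}\cO_{\bo,C,\bA}$. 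Solving coefficient-by-coefficient yields $\widetilde f_\alpha=\omega_{|\alpha|+|\beta|}^{-1}C\bA^\alpha x$, i.e.\ $\widetilde f=\Ob_{\bo,|\beta|,C,\bA}x$, which is exactly the paper's computation and explains why the \emph{shifted} observability operator, not $\cO_{\bo,C,\bA}$, appears in \eqref{SMperp}.

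Your ``main obstacle'' paragraph does not repair the gap. That a bounded invertible diagonal operator sends $\bS_{\bo,R}^{\beta^\top}\cO_{\bo,C,\bA}x$ to $\bS_{\bo,R}^{\beta^\top}\Ob_{\bo,|\beta|,C,\bA}x$ for each fixed $x$ does \emph{not} imply that the two ranges (as $x$ varies) coincide as subspaces---that would require the diagonal operator to map the range to itself, which it generally does not. For a concrete counterexample take $d=1$, $\bo=\bmu_2$ (so $\omega_j^{-1}=j+1$), $C=1$, $A=a\in(0,1)$, $|\beta|=1$: the two ranges are the one-dimensional spans of $\sum_{j\ge0}(j+1)a^jz^{j+1}$ and $\sum_{j\ge0}(j+2)a^jz^{j+1}$, which are linearly independent. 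Your derivation of \eqref{SMperp2} from \eqref{SMperp1} is correct, however, and is a bit more direct than the paper's argument via the auxiliary subspaces $\cN_j$.
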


\begin{proof} We wish to characterize all functions $f(z)={\displaystyle
\sum_{\alpha\in\free}f_{\alpha} z^{\alpha}}$ which are orthogonal to 
${\bf S}_{\bo,R}^{\beta^{\top}} \cM$ in $H^2_{\bo,\cY}(\free)$. We may write 
$$
f(z)  = p(z) + \widetilde f(z)z^{\beta}\quad\text{where}\quad 
p\in \left({\bf S}_{\bo,R}^{\beta^{\top}} H^2_{\bo,\cY}(\free)\right)^\perp, \quad
\widetilde f \in H^2_{\bo,\cY}(\free).
$$
Since ${\bf S}_{\bo,R}^{\beta^{\top}} \cM \subset {\bf 
S}_{\bo,R}^{\beta^{\top}}H^2_{\bo,\cY}(\free)$, it follows
that $\big({\bf S}_{\bo,R}^{\beta^{\top}}H^2_{\bo,\cY}(\free)\big)^\perp\subset\big({\bf 
S}_{\bo,R}^{\beta^{\top}} \cM\big)^\perp$, so it suffices to characterize which functions of the
form $\widetilde f(z)z^{\beta}$ are orthogonal to ${\bf 
S}_{\bo,R}^{\beta^{\top}} \cM$. To this end, observe that
${\bf S}_{\bo,R}^{\beta^{\top}}\widetilde{f}$ is orthogonal to ${\bf 
S}_{\bo,R}^{\beta^{\top}} \cM$ if and only if the power series
${\bf S}^{*\beta}_{\bo,R}{\bf S}_{\bo,R}^{\beta^{\top}}\widetilde{f}$ belongs to $\cM^\perp= 
\operatorname{Ran}\cO_{\bo,C,\bA}$. Since by \eqref{again43},
$$
{\bf S}^{*\beta}_{\bo,R}{\bf S}_{\bo,R}^{\beta^{\top}}: \; \sum_{\alpha\in\free} 
\widetilde{f}_\alpha z^\alpha\to \sum_{\alpha\in\free}
\frac{\omega_{|\alpha|+|\beta|}}{\omega_{|\alpha|}}\, \widetilde{f}_\alpha z^\alpha,
$$
we thus conclude that ${\bf S}_{\bo,R}^{\beta^{\top}}\widetilde{f}$ is orthogonal to 
${\bf S}_{\bo,R}^{\beta^{\top}} \cM$ if and only if there exists a vector $x\in\cX$ such that
$$
\sum_{\alpha\in\free}
\frac{\omega_{|\alpha|+|\beta|}}{\omega_{|\alpha|}}
\, \widetilde{f}_\alpha z^\alpha=(\cO_{\bo,C,\bA}x)(z)=
\sum_{\alpha\in\free}\big(\omega_{|\alpha|}^{-1}\cdot C\bA^\alpha x\big)z^\alpha.
$$
Equating the corresponding Taylor coefficients in the latter equality gives
$$
\widetilde{f}_\alpha=\omega_{|\alpha|+|\beta|}^{-1}\cdot C\bA^\alpha x
$$
for all $\alpha\in\free$, and therefore,
$$
\widetilde{f}(z)=\sum_{\alpha\in\free}\widetilde{f}_\alpha z^\alpha
=\sum_{\alpha\in\free}\big(\omega_{|\alpha|+|\beta|}^{-1}\cdot
C\bA^\alpha x\big)z^\alpha={\Ob}_{\bo,|\beta|,C,\bA}x,
$$
by \eqref{4.31}. Thus, $\widetilde{f}\in\operatorname{Ran} {\Ob}_{\bo,|\beta|,C,\bA}$.
As the analysis is necessary and sufficient, the formula \eqref{SMperp} follows.
Note that formula \eqref{SMperp1} is just the special case of 
\eqref{SMperp} where the word $\beta$ is taken to consist of the 
single letter $k$.

\smallskip

To verify \eqref{SMperp2}, we note first that 
\begin{equation}
 \bigg( \bigoplus_{j=1}^{d} S_{\bo,R,j} \cM \bigg)^{\perp} =
 \bigcap_{j=1}^{d} \big(S_{\bo,R,j} \cM\big)^{\perp}.
\label{march2a}
\end{equation}
We next introduce the subspaces
$$
\cN_j:=S_{\bo,R,j} H^2_{\bo,\cY}(\free)\ominus
S_{\bo,R,j} {\rm Ran}\, \Ob_{\bo,1,C, \bA}
$$
and observe the equalities
\begin{align*}
\big(S_{\bo,R,k} H^2_{\bo,\cY}(\free)\big)^{\perp}&=
\cY \bigoplus \bigg(\bigoplus_{j \colon
  j \ne k} S_{\bo,R,j} H^2_{\bo,\cY}(\free) \bigg)\\
&=\cY\bigoplus \bigg(\bigoplus_{j \colon
  j \ne k} S_{\bo,R,j}{\rm Ran}\, \Ob_{\bo,1,C, \bA} \bigg)
\bigoplus\bigg(\bigoplus_{j \colon j \ne k} \cN_j\bigg)
\end{align*}
which, when substituted into the formula \eqref{SMperp1} for $\big(S_{\bo,R,k} 
\cM\big)^{\perp}$, give
\begin{equation}
\big(S_{\bo,R,k} \cM\big)^{\perp}=\cY\bigoplus \bigg(\bigoplus_{j=1}^d S_{\bo,R,j}{\rm 
Ran}\, \Ob_{\bo,1,C, \bA} \bigg)
\bigoplus\bigg(\bigoplus_{j \colon j \ne k} \cN_j\bigg).
\label{march2}
\end{equation}
Since $\cN_j\perp\cN_i$ for $i\neq j$, it follows that 
\begin{equation}
\bigcap_{k=1}^d \bigg(\bigoplus_{j \colon j \ne k} \cN_j\bigg)=\{0\}.
\label{march2b}
\end{equation}
Making use of representation \eqref{march2} and taking into account \eqref{march2b}
we get
$$
\bigcap_{k=1}^{d} (S_{\bo,R,k}\cM)^{\perp} = \cY \bigoplus \bigg(
\bigoplus_{k=1}^{d} S_{\bo,R,k} {\rm Ran}\, \Ob_{\bo,1,C,\bA}\bigg)
$$
and now the formula \eqref{SMperp2} follows from \eqref{march2a}.
    \end{proof}
It is convenient to introduce, in analogy with shifted observability operators and gramians, the shifted 
positive kernel
\begin{equation}
k_{{\rm nc},\bo,\beta}(z,\zeta):=\sum_{\alpha\in\free}\omega_{|\alpha|+|\beta|}^{-1}
 z^{\alpha \beta} \bzeta^{ (\alpha \beta)^{\top}}=k_{{\bf S}_{\bo,R}^{\beta^{\top}}H^2_{\bo}(\free)}
\quad\mbox{for}\quad \beta\in\free,
\label{SMa}
\end{equation}
which is, as the second equality in \eqref{SMa} indicates, the formal noncommutative reproducing 
kernel for the space ${\bf S}_{\bo,R}^{\beta^\top}H^2_{\bo}(\free)$ with metric inherited from 
$H^2_{\bo,\cY}(\free)$. It is clear that $k_{{\rm nc},\bo,\emptyset}$ is just $k_{{\rm nc},\bo}$.

\smallskip
    
With characterization \eqref{SMperp} and positive kernels \eqref{SMa} in hand, it is straightforward to derive the
kernel function for the space ${\bf S}_{\bo,R}^{\beta^{\top}} \cM$ with respect to the
metric inherited from $H^2_{\bo,\cY}(\free)$.

   \begin{proposition}  \label{P:6.2}  Let $\cM$ be a closed ${\bf S}_{\bo,R}$-invariant 
subspace of $H^2_{\bo,\cY}(\free)$ with reproducing kernel
$k_{\cM}$ given by \eqref{kM}. Then for every $\beta \in\free$, the formal  reproducing kernel functions 
for ${\bf S}_{\bo,R}^{\beta^{\top}} \cM$ 
and for $\cM_\beta$ (defined in \eqref{Mdecom}) in the metric of $H^2_{\bo,\cY}(\free)$ are given by
\begin{align}
k_{{\bf S}_{\bo,R}^{\beta^{\top}} \cM}(z,\zeta) = & k_{{\rm nc},\bo,\beta}(z,\zeta)I_{\cY}
-\boldsymbol{\mathfrak K}_{\beta}(z, \zeta),\label{kscm} \\
 k_{\cM_\beta}(z,\zeta)= & z^{\beta} \bzeta^{\beta^{\top}}
\omega_{|\beta|}^{-1}I_{\cY}-\boldsymbol{\mathfrak K}_{\beta}(z, \zeta)+
\sum_{j=1}^d \boldsymbol{\mathfrak K}_{j\beta}(z, \zeta),\label{kdif}
\end{align}
where $\boldsymbol{\mathfrak{K}}_{\beta}$ is the positive kernel defined as in \eqref{deffrakk}:
$$
\boldsymbol{\mathfrak{K}}_{\beta}(z,\zeta)=
CR_{\bo,|\beta|}(Z(z)A) \big(z^{\beta} \bzeta^{\beta^{\top}}
\Gr_{\bo,|\beta|,C,\bA}^{-1}\big)  R_{\bo,|\beta|}(Z(\zeta)A)^*C^* .
$$
\end{proposition}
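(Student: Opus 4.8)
\textbf{Proof plan for Proposition \ref{P:6.2}.}
The plan is to derive both formulas \eqref{kscm} and \eqref{kdif} as routine consequences of standard reproducing-kernel calculus once the two orthogonal decompositions from Proposition \ref{P:SMperp} and the identification of shifted-observability range spaces from Proposition \ref{T:RanOb=NFRKHS} are in place. First I would verify \eqref{kscm}. By the second equality in \eqref{SMa}, the kernel for ${\bf S}_{\bo,R}^{\beta^\top} H^2_{\bo,\cY}(\free)$ (with the $H^2_{\bo,\cY}(\free)$-inherited metric) is $k_{{\rm nc},\bo,\beta}(z,\zeta) I_\cY$. By Proposition \ref{P:SMperp}, equation \eqref{SMperp}, we have the orthogonal decomposition
$$
{\bf S}_{\bo,R}^{\beta^\top} H^2_{\bo,\cY}(\free) = {\bf S}_{\bo,R}^{\beta^\top}\cM \,\oplus\, {\bf S}_{\bo,R}^{\beta^\top}\operatorname{Ran}\Ob_{\bo,|\beta|,C,\bA},
$$
since the two summands on the right-hand side of \eqref{SMperp}, namely $({\bf S}_{\bo,R}^{\beta^\top}H^2_{\bo,\cY}(\free))^\perp$ and ${\bf S}_{\bo,R}^{\beta^\top}\operatorname{Ran}\Ob_{\bo,|\beta|,C,\bA}$, together make up $({\bf S}_{\bo,R}^{\beta^\top}\cM)^\perp$ inside $H^2_{\bo,\cY}(\free)$; intersecting with ${\bf S}_{\bo,R}^{\beta^\top}H^2_{\bo,\cY}(\free)$ removes the first one. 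Then the reproducing kernel for ${\bf S}_{\bo,R}^{\beta^\top}\cM$ is the difference of the kernel for ${\bf S}_{\bo,R}^{\beta^\top}H^2_{\bo,\cY}(\free)$ and the kernel for ${\bf S}_{\bo,R}^{\beta^\top}\operatorname{Ran}\Ob_{\bo,|\beta|,C,\bA}$. The latter kernel is exactly $\boldsymbol{\mathfrak K}_\beta(z,\zeta)$ by Proposition \ref{T:RanOb=NFRKHS} (formula \eqref{deffrakk}), which requires exact $\bo$-observability of $(C,\bA)$ so that $\Gr_{\bo,|\beta|,C,\bA}$ is invertible — this is guaranteed in the present setting by Proposition \ref{P:shift-gram-inv}, since in the discussion preceding the proposition $(C,\bA)$ is taken $\bo$-isometric with $\bA$ strongly $\bo$-stable so that $\cG_{\bo,C,\bA}=I$. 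This gives \eqref{kscm}.

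Next I would verify \eqref{kdif}. From the definition \eqref{Mv} of $\cM_\beta$ as the orthogonal difference ${\bf S}_{\bo,R}^{\beta^\top}\cM \ominus \bigoplus_{j=1}^d {\bf S}_{\bo,R}^{\beta^\top}S_{\bo,R,j}\cM = {\bf S}_{\bo,R}^{\beta^\top}\cM\ominus\bigoplus_{j=1}^d {\bf S}_{\bo,R}^{(j\beta)^\top}\cM$ (note the word $j\beta$ has length $|\beta|+1$), and using that these $d$ subspaces ${\bf S}_{\bo,R}^{(j\beta)^\top}\cM$ are mutually orthogonal (which follows from Case 1 of the proof of Lemma \ref{L:Mdecom}, or directly from $\operatorname{Ran}S_{\bo,R,i}\perp\operatorname{Ran}S_{\bo,R,j}$ for $i\ne j$), we get
$$
k_{\cM_\beta}(z,\zeta) = k_{{\bf S}_{\bo,R}^{\beta^\top}\cM}(z,\zeta) - \sum_{j=1}^d k_{{\bf S}_{\bo,R}^{(j\beta)^\top}\cM}(z,\zeta).
$$
Substituting \eqref{kscm} (applied once with word $\beta$ and once with each word $j\beta$) turns the right-hand side into
$$
\Big(k_{{\rm nc},\bo,\beta}(z,\zeta)I_\cY - \boldsymbol{\mathfrak K}_\beta(z,\zeta)\Big) - \sum_{j=1}^d\Big(k_{{\rm nc},\bo,j\beta}(z,\zeta)I_\cY - \boldsymbol{\mathfrak K}_{j\beta}(z,\zeta)\Big).
$$
So the remaining task is purely scalar: show that $k_{{\rm nc},\bo,\beta}(z,\zeta) - \sum_{j=1}^d k_{{\rm nc},\bo,j\beta}(z,\zeta) = z^\beta\bzeta^{\beta^\top}\omega_{|\beta|}^{-1}$. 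This is immediate from \eqref{SMa}: the $j=\cdot$ sum reconstitutes exactly the terms of $k_{{\rm nc},\bo,\beta}$ indexed by nonempty $\alpha$ (writing $\alpha=j\alpha'$), leaving only the $\alpha=\emptyset$ term $\omega_{|\beta|}^{-1}z^\beta\bzeta^{\beta^\top}$. Assembling these pieces yields \eqref{kdif}.

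The main obstacle, such as it is, is the bookkeeping in the first step: one must check carefully that the orthogonal decomposition \eqref{SMperp} of $({\bf S}_{\bo,R}^{\beta^\top}\cM)^\perp$ inside all of $H^2_{\bo,\cY}(\free)$ restricts correctly to an orthogonal decomposition of ${\bf S}_{\bo,R}^{\beta^\top}H^2_{\bo,\cY}(\free)$, and that the metric on ${\bf S}_{\bo,R}^{\beta^\top}\operatorname{Ran}\Ob_{\bo,|\beta|,C,\bA}$ used in Proposition \ref{T:RanOb=NFRKHS} is indeed the one inherited from $H^2_{\bo,\cY}(\free)$ (not the lifted norm) — which it is, by the hypothesis of exact $\bo$-observability that makes $\Gr_{\bo,|\beta|,C,\bA}$ invertible and lets Proposition \ref{P:principle} be applied with an invertible Gram operator. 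Everything else is the standard fact that the reproducing kernel of an orthogonal direct sum (respectively orthogonal difference) of subspaces of a common NFRKHS is the sum (respectively difference) of the reproducing kernels, together with the explicit scalar-kernel identity above.
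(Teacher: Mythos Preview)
Your proposal is correct and follows essentially the same approach as the paper's proof: both first obtain \eqref{kscm} by rewriting \eqref{SMperp} as ${\bf S}_{\bo,R}^{\beta^\top}\cM = {\bf S}_{\bo,R}^{\beta^\top}H^2_{\bo,\cY}(\free) \ominus {\bf S}_{\bo,R}^{\beta^\top}\operatorname{Ran}\Ob_{\bo,|\beta|,C,\bA}$ and subtracting the kernels from \eqref{SMa} and \eqref{deffrakk}, then derive \eqref{kdif} by subtracting the $j\beta$-instances of \eqref{kscm} from the $\beta$-instance and invoking the scalar identity $k_{{\rm nc},\bo,\beta}-\sum_j k_{{\rm nc},\bo,j\beta}=\omega_{|\beta|}^{-1}z^\beta\bzeta^{\beta^\top}$ (the paper records this as \eqref{jul1a}). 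Your discussion of the metric and invertibility issues matches the paper's implicit reliance on Proposition \ref{P:shift-gram-inv}.
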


\begin{proof} 
From the formula \eqref{SMperp} for $({\bf S}_{\bo,R}^{\beta^{\top}}\cM)^{\perp}$ we deduce that
in metric of $H^2_{\bo,\cY}(\free)$,
\begin{align*}
{\bf S}_{\bo,R}^{\beta^{\top}}\cM & =\big({\bf S}_{\bo,R}^{\beta^{\top}} H^2_{\bo,\cY}(\free)\big)\bigcap
   \big( {\bf S}_{\bo,R}^{\beta^{\top}} \operatorname{Ran} {\Ob}_{\bo,|\beta|,C,\bA}\big)^\perp
\notag  \\
    & = \big({\bf S}_{\bo,R}^{\beta^{\top}}H^2_{\bo,\cY}(\free)\big)\ominus 
    \big({\bf S}_{\bo,R}^{\beta^{\top}} \operatorname{Ran} {\Ob}_{\bo,|\beta|,C,\bA}\big).
\end{align*}
Therefore, the reproducing kernel for the subspace ${\bf S}_{\bo,R}^{\beta^{\top}}\cM$ is equal to the difference 
of reproducing kernels for the subspaces ${\bf S}_{\bo,R}^{\beta^{\top}}H^2_{\bo,\cY}(\free)$ and 
${\bf S}_{\bo,R}^{\beta^{\top}} \operatorname{Ran} {\Ob}_{\bo,|\beta|,C,\bA}$. But these kernels are equal
to $ k_{{\rm nc},\bo,\beta}(z,\zeta)I_{\cY}$ and $\boldsymbol{\mathfrak{K}}_{\beta}(z,\zeta)$ respectively, by \eqref{SMa} and 
Theorem \ref{T:RanOb=NFRKHS}. Hence \eqref{kscm} follows.
 
\smallskip

It remains to verify the formula \eqref{kdif}.  Toward this end, we first observe that
replacing $\beta$ by $j\beta$ in \eqref{SMa} gives
$$
k_{{\rm nc},\bo,(j\beta)}(z,\zeta)=\sum_{\alpha \in\free}\omega_{|\alpha|+|\beta|+1}^{-1}
z^{ \alpha j \beta} \bzeta^{(\alpha j \beta)^{\top}}\quad\mbox{for}\quad j=1,\ldots,d,
$$
and consequently,
\begin{align}
&k_{{\rm nc},\bo,\beta}(z,\zeta)-\sum_{j=1}^d k_{{\rm nc},\bo,(j\beta)}(z,\zeta)\label{jul1a}\\
&\quad =\sum_{\alpha\in\free}  \omega_{|\alpha|+|\beta|}^{-1} z^{ \alpha \beta }
\bzeta^{ (\alpha \beta)^{\top}}-\sum_{j=1}^d\sum_{\alpha\in\free} \omega_{|\alpha|+|\beta|+1}^{-1}
z^{ \alpha j \beta} \bzeta^{(\alpha j \beta)^{\top} }   \notag \\
&\quad =\sum_{\alpha\in\free}  \omega_{|\alpha|+|\beta|}^{-1} z^{ \alpha \beta }
\bzeta^{ (\alpha \beta)^{\top}}-\sum_{\alpha\in\free \colon \alpha \ne \emptyset}
\omega_{|\alpha|+|\beta|}^{-1}  z^{ \alpha \beta }
\bzeta^{ (\alpha \beta)^{\top}}= \omega_{|\beta|}^{-1}z^\beta \bzeta^{\beta^\top}.\notag
\end{align}
For the next computation we use \eqref{kscm}, \eqref{jul1a} and the orthogonal 
representation for $\cM_\beta$ in \eqref{Mv}, according to which
\begin{align*}
k_{\cM_\beta}(z,\zeta)   &  =  k_{{\bf S}_{\bo,R}^{\beta^{\top}} \cM}(z, \zeta)-
\sum_{j=1}^d k_{{\bf S}_{\bo,R}^{\beta^{\top} j}\cM} (z,  \zeta)\\
&=\big(k_{{\rm nc},\bo,\beta}(z,\zeta)-\sum_{j=1}^d k_{{\rm nc},\bo,(j\beta)}(z,\zeta)\big)I_{\cY}
-\boldsymbol{\mathfrak K}_{\beta}(z, \zeta)+\sum_{j=1}^d \boldsymbol{\mathfrak
K}_{ j \beta }(z, \zeta)  \\
& = \omega_{|\beta|}^{-1}z^\beta 
\bzeta^{\beta^\top}I_{\cY}-\boldsymbol{\mathfrak K}_{\beta}(z,\zeta)
+  \sum_{j=1}^d \boldsymbol{\mathfrak K}_{ j \beta }(z, \zeta).   
\end{align*}
This completes the proof of \eqref{kdif}.
\end{proof}

\begin{lemma}  \label{L:6.8}
Given $\beta \in\free$ and an exactly $\bo$-observable $\bo$-output stable pair
$(C,\bA)$, construct operators $\widehat{B}_\beta=\sbm{B_{1,\beta}\\ 
\vdots\\ B_{d,\beta}}\in \cL(\cU_\beta, \cX^d)$ and $D_\beta \in 
\cL(\cU_\beta, \cY)$  as in Lemma \ref{L:5.6}.  For each $\beta \in \free$ form the connection
$\bU_\beta = \sbm{ A & \widehat B_\beta \\ C & D_\beta }$ and let $\Theta_{\bo, \bU_\beta}$
be the associated transfer-function formal power series as in \eqref{jul15}.
Then the kernel \eqref{kdif} can be factored as
\begin{equation}
k_{\cM_\beta}(z,\zeta)=\Theta_{\bo, \bU_\beta}(z) \big(z^{\beta} 
\bzeta^{\beta^{\top}} I_{\cU_{\beta}}\big)\Theta_{\bo,\bU_\beta}(\zeta)^{*}.
\label{id6}
\end{equation}
Moreover, if we consider ${\mathbf S}_{1,R}^{\beta^{\top}} \cU_{\beta} =  
z^{\beta} \cU_{\beta}$ 
as a Hilbert space with norm 
$$
\|   z^{\beta} u \|_{ z^{\beta} \cU_{\beta}} = \| u \|_{\cU_{\beta}} 
\quad\text{for all}\quad u \in \cU_{\beta},
$$
lifted from $\cU_{\beta}$,
then the operator $M_{\Theta_{\bo, \bU_\beta}} \colon  z^{\beta} u \mapsto 
\Theta_{\bo, \bU_\beta}(z) z^{\beta} u $ is unitary from $z^{\beta} \cU_{\beta}$ 
onto $\cM_{\beta}$ for each $\beta \in  \free$.
\end{lemma}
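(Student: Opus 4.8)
\textbf{Proof plan for Lemma \ref{L:6.8}.}

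The plan is to build the desired factorization \eqref{id6} out of pieces already assembled in Section \ref{S:metric}. First I would invoke Lemma \ref{L:5.6}: since $(C, \bA)$ is exactly $\bo$-observable and $\bo$-output stable, all shifted gramians $\Gr_{\bo, k, C, \bA}$ are invertible (Proposition \ref{P:shift-gram-inv}), so we may choose $\sbm{\widehat B_\beta \\ D_\beta}$ solving the Cholesky problem \eqref{pr6} with $\sbm{\widehat B_\beta \\ D_\beta}$ injective, and then the connection $\bU_\beta$ satisfies both the weighted-coisometry identity \eqref{wghtcoisom} and the weighted-isometry identity \eqref{isom}. With \eqref{wghtcoisom} in hand, Lemma \ref{L:frakcoisom} gives the kernel identity \eqref{frakkerid} for $\omega_{|\beta|}^{-1} I_\cY - \Theta_{\bo, \bU_\beta}(z) \Theta_{\bo, \bU_\beta}(\zeta)^*$. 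The first step of the calculation is then to multiply \eqref{frakkerid} on the left by $z^\beta$ and on the right by $\overline{\zeta}^{\beta^\top}$, noting that $z^\beta$ and $\overline{\zeta}_j$ commute by \eqref{hereditarynorm}, so that the left side becomes $z^\beta \overline{\zeta}^{\beta^\top}(\omega_{|\beta|}^{-1} I_\cY - \Theta_{\bo, \bU_\beta}(z)\Theta_{\bo,\bU_\beta}(\zeta)^*)$ and the right side rearranges into $\boldsymbol{\mathfrak K}_\beta(z,\zeta)$ minus $\sum_{j=1}^d \boldsymbol{\mathfrak K}_{j\beta}(z,\zeta)$. Comparing with the formula \eqref{kdif} for $k_{\cM_\beta}$ from Proposition \ref{P:6.2} yields exactly
$$
  z^\beta \overline{\zeta}^{\beta^\top} \Theta_{\bo, \bU_\beta}(z)\Theta_{\bo, \bU_\beta}(\zeta)^* = \Theta_{\bo, \bU_\beta}(z)(z^\beta \overline{\zeta}^{\beta^\top} I_{\cU_\beta})\Theta_{\bo, \bU_\beta}(\zeta)^*,
$$
which (since $z^\beta$ has scalar coefficients and $\Theta$ has operator coefficients in $\cL(\cU_\beta, \cY)$, the factor $z^\beta \overline{\zeta}^{\beta^\top}$ commutes through) is precisely \eqref{id6}. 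I should double-check the bookkeeping of the shift indices $R_{\bo, |\beta|}$ versus $R_{\bo, |\beta|+1}$ and the $\otimes I_d$ factors, matching the three summands in \eqref{frakkerid} against the three summands in \eqref{kdif} via the definitions \eqref{deffrakk}, \eqref{4.31}; this index-chasing is the one place where a sign or a shift could go wrong.

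For the unitarity claim, the strategy is: (i) $M_{\Theta_{\bo, \bU_\beta}}$ maps $z^\beta \cU_\beta$ \emph{onto} $\cM_\beta$, because \eqref{id6} says that the range, considered as a NFRKHS with lifted norm, has reproducing kernel $k_{\cM_\beta}$, and by Theorem \ref{T:charNFRKHS} (applied to the multiplier $\Theta_{\bo, \bU_\beta}$ acting on the one-generator space $z^\beta \cU_\beta$, alternatively just by Proposition \ref{P:principle}) a space with a given formal reproducing kernel is unique; since $\cM_\beta \subset H^2_{\bo, \cY}(\free)$ also has reproducing kernel $k_{\cM_\beta}$ in the inherited metric by Proposition \ref{P:6.2}, the two spaces coincide isometrically. (ii) To see that no lifting is actually needed, i.e.\ that $M_{\Theta_{\bo, \bU_\beta}}$ is already isometric from $z^\beta \cU_\beta$ into $H^2_{\bo, \cY}(\free)$ with the ambient norm: this is exactly part (3a) of Lemma \ref{L:5.1}, which under \eqref{isom} (equivalently \eqref{jul14a} together with \eqref{jul13}) asserts that $\bS_{\bo,R}^{\beta^\top} M_{\Theta_{\bo, \bU_\beta}}$ is an isometry from $\cU_\beta$ into $H^2_{\bo, \cY}(\free)$; since $\bS_{\bo, R}^{\beta^\top} u = z^\beta u$ and the $z^\beta \cU_\beta$-norm is by definition $\|u\|_{\cU_\beta}$, this is the statement that $M_{\Theta_{\bo, \bU_\beta}}|_{z^\beta \cU_\beta}$ preserves norms. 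Combining (i) and (ii), $M_{\Theta_{\bo, \bU_\beta}}$ is an isometry from $z^\beta \cU_\beta$ onto $\cM_\beta$, hence unitary.

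The main obstacle I anticipate is not conceptual but organizational: making sure that the kernel identity \eqref{frakkerid}, which is stated with the ``bare'' factors $R_{\bo,|\beta|}(Z(z)A)$, $R_{\bo,|\beta|+1}(Z(z)A) Z(z)$, and the weights $\Gr_{\bo,|\beta|,C,\bA}^{-1}$, $\Gr_{\bo,|\beta|+1,C,\bA}^{-1}\otimes I_d$, matches term-by-term the kernel \eqref{kdif} after one multiplies through by $z^\beta \overline{\zeta}^{\beta^\top}$ and re-expresses everything via $\boldsymbol{\mathfrak K}$'s using \eqref{deffrakk}. In particular one must check that the second summand of \eqref{frakkerid}, after multiplication by $z^\beta \overline{\zeta}^{\beta^\top}$ and use of $Z(z) R_{\bo, |\beta|+1}(Z(z)A) = R_{\bo, |\beta|+1}(Z(z)A) Z(z)$ together with the shift relation for indices, collapses exactly to $\sum_{j=1}^d \boldsymbol{\mathfrak K}_{j\beta}(z,\zeta)$; this uses that the monomials $z_j z^{\beta}$ for $j=1,\dots,d$ together with their scalar weights reassemble the $j\beta$-shifted observability-operator range kernels. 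Once that identification is nailed down, the rest is immediate from the cited results.
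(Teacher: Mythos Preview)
Your overall strategy is exactly the paper's: invoke Lemma \ref{L:5.6} to get both metric constraints \eqref{wghtcoisom} and \eqref{isom}, use Lemma \ref{L:frakcoisom} to obtain \eqref{frakkerid}, multiply by the monomial factors to land on \eqref{kdif}, and then read off unitarity. The second half is also fine; your route (isometric by Lemma \ref{L:5.1}(3a), surjective by kernel identification) is a minor and equally valid variant of the paper's (coisometric from the Kolmogorov factorization, injective from \eqref{jul17}).

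There is, however, a genuine slip in the first step. You write that you multiply \eqref{frakkerid} ``on the left by $z^\beta$ and on the right by $\overline{\zeta}^{\beta^\top}$'' and then assert that
\[
z^\beta \overline{\zeta}^{\beta^\top}\,\Theta_{\bo,\bU_\beta}(z)\Theta_{\bo,\bU_\beta}(\zeta)^*
=\Theta_{\bo,\bU_\beta}(z)\bigl(z^\beta \overline{\zeta}^{\beta^\top} I_{\cU_\beta}\bigr)\Theta_{\bo,\bU_\beta}(\zeta)^*,
\]
justified by ``$z^\beta$ has scalar coefficients and \dots commutes through''. This is false: the convention \eqref{hereditarynorm} says only that each $z_k$ commutes with each $\overline{\zeta}_j$; the $z$-variables are freely noncommuting among themselves, so $z^\beta\Theta(z)\ne\Theta(z)z^\beta$ in general. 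Concretely, your multiplication produces monomials $z^{\beta\alpha}\,\overline{\zeta}^{(\beta\gamma)^\top}$, whereas the target \eqref{id6} and the kernels $\boldsymbol{\mathfrak K}_\beta$, $\boldsymbol{\mathfrak K}_{j\beta}$ from \eqref{deffrakk} involve $z^{\alpha\beta}\,\overline{\zeta}^{(\gamma\beta)^\top}$. These do not match.

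The fix is to reverse the sides, exactly as the paper does: multiply \eqref{frakkerid} by $z^\beta$ on the \emph{right} and by $\overline{\zeta}^{\beta^\top}$ on the \emph{left}, then use \eqref{hereditarynorm} to normalize to hereditary form. That way $z^\beta$ already sits to the right of $\Theta(z)$ and $\overline{\zeta}^{\beta^\top}$ to the left of $\Theta(\zeta)^*$, so no spurious commutation with $z$-monomials is needed, and the index bookkeeping (the part you rightly flag as delicate) goes through cleanly: the first term of \eqref{frakkerid} becomes $\boldsymbol{\mathfrak K}_\beta$, and the second becomes $-\sum_{j=1}^d \boldsymbol{\mathfrak K}_{j\beta}$, giving \eqref{kdif} and hence \eqref{id6}.
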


\begin{proof}  By Lemma \ref{L:frakcoisom}, identity \eqref{frakkerid} holds. Multiplying
both parts of \eqref{frakkerid} by $z^{\beta}$ on the right and by 
$\bzeta^{\beta^{\top}}$ on the left, normalizing to a hereditary kernel, and then
combining the obtained equality with \eqref{kdif} easily leads to \eqref{id6}.

We note next that the factorization \eqref{id6} rewritten as
$$
  k_{\cM_{\beta}}(z, \zeta) = \Theta_{\bo, \bU_\beta}(z) z^{\beta} \cdot 
  \bzeta^{\beta^{\top}} \Theta_{\bo,\bU_\beta}(\zeta)^{*}
$$
amounts to a Kolmogorov decomposition for $k_{\cM_{\beta}}(z, \zeta)$, while 
the kernel $k_{z^{\beta} \cU_{\beta}}(z, \zeta)$ has the Kolmogorov 
decomposition
$$
  k_{z^{\beta} \cU_{\beta}}(z, \zeta) = (z^{\beta} 
  I_{\cU_{\beta}}) \cdot 
  (\bzeta^{\beta^{\top}} I_{\cU_{\beta}}).
$$
From the last statement in Theorem \ref{T:NFRKHS}, we can infer that 
$$
  M_{\Theta_{\bo, \bU_\beta}} \colon z^{\beta} u \mapsto \Theta_{\bo,\bU_ \beta}(z) \cdot  z^{\beta} u
 $$
 is a coisometry from $z^{\beta} \cU_{\beta}$ onto $\cM_{\beta}$.  Moreover, from 
 the identity \eqref{jul17} (specialized to the case where $u_{\beta'} = 
 0$ for $\beta' \ne \beta$ and $x= 0$), we see that $\widehat y(z): = 
 \Theta_{\bo, \bU_\beta}(z) u_{\beta} = 0$ forces $u_{\beta} = 0$, and hence also 
 $\Theta_{\bo,\bU_\beta}(z) \cdot z^{\beta } u_{\beta} = 0$ forces 
 $u_{\beta} = 0$, 
  so  $M_{\Theta_{\bo,\bU_\beta}}$ as an operator on $z^{\beta}\cU_{\beta}$ has no kernel.  
 Putting the pieces together, we see that $M_{\Theta_{\bo, \bU_\beta}} \colon 
 z^{\beta} \cU_{\beta} \to \cM_{\beta}$ is unitary as asserted.
\end{proof}
    
\begin{definition}  \label{D:innerfuncfam}
  Let us say that a family $\Theta_{\bo} = \{\Theta_{\bo,\beta}\}_{\beta \in \free}$ of formal 
power series (with $\Theta_{\bo, \beta} \in \cL(\cU_{\beta},\cY)\langle \langle z \rangle \rangle$ 
having operator-valued coefficients in $\cL(\cU_{\beta}, \cY)$  
  for auxiliary Hilbert spaces $\cU_{\beta}$ and $\cY$) is a 
  {\em $H^2_{\bo, \cY}(\free)$-Bergman-inner family} if, for each $\beta \in \free$, we have:
\begin{enumerate}
    \item The operator $M_{\Theta_{\bo,\beta}} \colon z^{\beta}
    \cU_{\beta} \to H^2_{\bo,\cY}(\free)$
    is  isometric,
    
    \item $M_{\Theta_{\bo,\beta}} \left( z^{\beta} \cU_{\beta} \right)$ is
    orthogonal to $M_{\Theta_{\bo, \gamma}} (z^{\gamma} \cU_{\gamma})$ for all 
    $\beta$ and $\gamma$ in $\free$ with $\beta \ne \gamma$,  and
    
 \item For each $\alpha \in \free$, 
 $$
 \bS_{\bo,R}^{\alpha^{\top}} \bigg( \bigoplus_{\beta \in \free} 
 \Theta_{\bo,\beta} z^{\beta} \cU_{\beta} \bigg) = \bigoplus_{\beta \in 
 \free} \Theta_{\bo,\beta \alpha} z^{\beta \alpha} \cU_{\beta \alpha}.
 $$
 \end{enumerate}
  \end{definition}
  We note that, for each $u_{\beta} \in \cU_{\beta}$,
  $M_{\Theta_{\bo, \beta}}(z^{\beta} u_{\beta}) = 
  \bS_{\bo,R}^{\beta^{\top}} M_{\Theta_{\bo, \beta}} u_{\beta} \in 
  \operatorname{Ran} \bS_{\bo,R}^{\beta^{\top}}$ and in general 
  $\operatorname{Ran} \bS_{\bo,R}^{\beta^{\top}} \perp 
  \operatorname{Ran} \bS_{\bo,R}^{\gamma^{\top}}$ if $|\beta| = 
  |\gamma|$ and $\beta \ne \gamma$ or if $\gamma = \delta \beta'$ with 
  $|\beta'| = |\beta|$ and $\beta' \ne \beta$, so the content of 
  condition (2) in Definition \ref{D:innerfuncfam} is that the 
  orthogonality condition holds for the special case where $\gamma = 
  \delta \beta$ and $\delta \ne \emptyset$.
  
 \smallskip 
 \noindent
 Given any collection $\{ \cU_{\beta}\}_{\beta \in \free}$ of coefficient 
  Hilbert spaces indexed by $\free$, we set
\begin{equation}
H^{2}_{\{\cU_{\beta}\}}(\free):= 
  {\displaystyle\bigoplus_{\beta \in\free}{\bf S}_{1,R}^{\beta^{\top}}\cU_{\beta}}
  = \bigoplus_{\beta \in \free} z^{\beta} \cU_{\beta}
\label{hbeta}
\end{equation}
  equal to the {\em time-varying Fock space} (compare with the standard 
  Fock space \eqref{Fock}) where every vector 
$$
{\bf u}=\bigoplus_{\beta \in\free} z^{\beta} u_{\beta} \in 
H^{2}_{\{\cU_{\beta}\}_{\beta\in\free}}(\free)
$$
is assigned the Fock-space norm $\| {\bf u}\|^{2} =
\sum_{\beta \in\free}\| u_{\beta}\|_{\cU_\beta}^{2}$ as in \eqref{Fock}, the difference here 
being that the coefficient space 
$\cU_{\beta}$ is allowed to depend on the index $\beta \in \free$. 

\smallskip

Given a $H^2_{\bo, \cY}(\free)$-Bergman-inner family $\{\Theta_{\bo,\beta}\}_{\beta\in\free}$, let us set
\begin{equation} \label{tvmultop}
\cM =  {\rm row}_{\beta \in \free} [\Theta_{\bo, \beta}] \cdot 
H^{2}_{\{ \cU_{\beta}\}}(\free) 
= \bigoplus_{\beta\in\free}\Theta_{\bo,\beta} {\bf S}_{1,R}^{\beta^{\top}} 
\cU_{\beta} \subset H^2_{\bo,\cY}(\free).
\end{equation}
Then conditions (1) and (2) in Definition \ref{D:innerfuncfam} imply that
the ``multiplication" operator
$M_{\Theta_\bo}\colon H^{2}_{\{\cU_{\beta}\}}(\free)\to H^2_{\bo,\cY}(\free)$ given by  
\begin{equation}  \label{MTheta}
M_{\Theta_{\bo}} = {\rm row}_{\beta \in \free}[\Theta_{\bo, \beta}]: \, 
{\bf u}=\bigoplus_{\beta\in\free} z^{\beta} u_{\beta} \mapsto \sum_{\beta\in\free} 
\Theta_{\bo,\beta}(z) z^{\beta}u_\beta
\end{equation}
maps $H^{2}_{\{\cU_{\beta}\}}(\free)$ isometrically onto the subspace $\cM$
given by \eqref{tvmultop}.
Furthermore, condition (3) in Definition \ref{D:innerfuncfam} implies 
that $\cM$ so defined is $\bS_{\bo,R}$-invariant. The next result is 
the converse: given any $\bS_{\bo,R}$-invariant closed subspace of $H^2_{\bo,\cY}(\free)$, 
there is a $H^2_{\bo, \cY}(\free)$-Bergman-inner family $\{ \Theta_{\bo, \beta}\}_{\beta \in \free}$ so that 
$\cM = M_{\Theta_{\bo}} H^{2}_{\{\cU_{\beta}\}}$, our next analog of the Beurling-Lax theorem for 
the freely noncommutative multivariable Hardy-Fock-space setting.

\begin{theorem}  \label{T:BL3}
    Let $\cM$ be a closed ${\bf S}_{\bo,R}$-invariant subspace of
    $H^2_{\bo,\cY}(\free)$. Define formal power series $\Theta_{\bo, 
    \beta} \in \cL(\cU_{\beta}, \cY)$ so that $M_{\Theta_{\bo, \beta}}$ 
    maps $ z^{\beta} \cU_{\beta}$ isometrically onto
    the subspace $\cM_{\beta}$ given by \eqref{Mv}.  Then $\Theta_{\bo} = 
    \{\Theta_{\bo,\beta}\}_{\beta\in\free}$ 
is a $H^2_{\bo, \cY}(\free)$-Bergman-inner family giving rise to a Beurling-Lax 
representation for the $\bS_{\bo,R}$-invariant subspace $\cM$ (using the notations \eqref{hbeta} and \eqref{MTheta}):
\begin{equation}   \label{BLrep2}
\cM=M_{\Theta_{\bo}}  H^{2}_{\{\cU_{\beta}\}}(\free).
\end{equation}
If $\Theta'_{\bo} = 
    \{\Theta'_{\bo,\beta}\}_{\beta \in \free}$ is another such $H^2_{\bo, \cY}(\free)$-Bergman-inner 
    family, then for each $\beta \in \free$ there is a unitary operator 
    $U_{\beta} \colon  \cU_{\beta}\to \cU'_{\beta}$ so that $\Theta'_{\bo,\beta}(z)U_{\beta} = 
    \Theta_{\bo,\beta}(z)$.

\smallskip

    Furthermore, one can construct such a $H^2_{\bo, \cY}(\free)$-Bergman-inner family 
$\{\Theta_{\bo,\beta}\}_{\beta \in \free}$ representing $\cM$ via  transfer-function 
realizations as follows:  
\begin{enumerate}
        \item Set $\cX =\cM^{\perp}$ and define $A \in \cL(\cX,\cX^d)$ and
        $C \in \cL(\cX, \cY)$ by
$$
  A = S_{\bo,R}^{*}|_{\cM^{\perp}},  \quad Cf = f_{\emptyset} \quad\text{for}\quad f \in
  \cM^{\perp}.
$$
\item Construct injective $\left[ \begin{smallmatrix} \widehat{B}_{\beta} \\ D_{\beta}
\end{smallmatrix} \right]$ by solving the Cholesky factorization
problem \eqref{pr6} in Lemma \ref{L:5.6}.

\item Define $\Theta_{\bo,\beta}(z)$ by
\begin{equation}   \label{innerfam-real}
\Theta_{\bo,\beta}(z) = \omega_{|\beta|}^{-1}D_{\beta}+ CR_{\bo,|\beta|+1}(Z(z)A) Z(z)\widehat{B}_{\beta}.
\end{equation}
\end{enumerate}
\end{theorem}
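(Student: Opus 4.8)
The plan is to verify the three defining conditions of a $H^2_{\bo,\cY}(\free)$-Bergman-inner family (Definition \ref{D:innerfuncfam}) for the concrete family $\{\Theta_{\bo,\beta}\}_{\beta\in\free}$, and then deduce the representation \eqref{BLrep2} and the essential-uniqueness statement. First I would set up the state-space data: with $\cX=\cM^\perp$, $A=\bS_{\bo,R}^*|_{\cM^\perp}$ and $C=E|_{\cM^\perp}$, the pair $(C,\bA)$ is $\bo$-isometric with $\bA$ strongly $\bo$-stable (by the converse direction of Theorem \ref{T:1.2g}, using Propositions \ref{P:bo-model} and \ref{P:bo-model2}), and moreover $\cG_{\bo,C,\bA}=I_{\cM^\perp}$ so $\cM^\perp=\operatorname{Ran}\cO_{\bo,C,\bA}$ isometrically and $\cM$ has reproducing kernel $k_\cM$ as in \eqref{kM}. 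One must check that $(C,\bA)$ is also exactly $\bo$-observable --- this is immediate since $\cG_{\bo,C,\bA}=I$ is boundedly invertible --- so by Proposition \ref{P:shift-gram-inv} all shifted gramians $\Gr_{\bo,k,C,\bA}$ are bounded and boundedly invertible, and Lemma \ref{L:5.6} applies: for each $\beta$ there is an injective $\sbm{\widehat B_\beta\\ D_\beta}$ solving the Cholesky problem \eqref{pr6}, making the weighted connection matrix simultaneously satisfy \eqref{wghtcoisom} and \eqref{isom}. Then $\Theta_{\bo,\beta}$ defined by \eqref{innerfam-real} is precisely the transfer function $\Theta_{\bo,\bU_\beta}$ of \eqref{jul15}.

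Next I would invoke Lemma \ref{L:6.8}: since \eqref{isom} and \eqref{wghtcoisom} both hold, that lemma gives the kernel factorization \eqref{id6}, $k_{\cM_\beta}(z,\zeta)=\Theta_{\bo,\beta}(z)(z^\beta\overline{\zeta}^{\beta^\top}I_{\cU_\beta})\Theta_{\bo,\beta}(\zeta)^*$, and --- crucially --- that $M_{\Theta_{\bo,\beta}}\colon z^\beta\cU_\beta\to\cM_\beta$ is unitary (the injectivity of $\sbm{\widehat B_\beta\\ D_\beta}$ plus the energy identity \eqref{jul17} kills the kernel). This already delivers condition (1) of Definition \ref{D:innerfuncfam}: $M_{\Theta_{\bo,\beta}}$ is isometric on $z^\beta\cU_\beta$. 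Condition (2), the orthogonality $M_{\Theta_{\bo,\beta}}(z^\beta\cU_\beta)\perp M_{\Theta_{\bo,\gamma}}(z^\gamma\cU_\gamma)$ for $\beta\ne\gamma$, follows because by Lemma \ref{L:6.8} these ranges are exactly $\cM_\beta$ and $\cM_\gamma$, and Lemma \ref{L:Mdecom} (equation \eqref{march1}) established $\cM_\beta\perp\cM_\gamma$; alternatively one can quote parts (1b),(1c) of Lemma \ref{L:5.1}, whose hypothesis \eqref{jul13} is part of \eqref{isom}. For condition (3), $\bS_{\bo,R}^{\alpha^\top}\big(\bigoplus_\beta\Theta_{\bo,\beta}z^\beta\cU_\beta\big)=\bigoplus_\beta\Theta_{\bo,\beta\alpha}z^{\beta\alpha}\cU_{\beta\alpha}$, I would again use Lemma \ref{L:6.8} to rewrite the left side as $\bS_{\bo,R}^{\alpha^\top}\bigoplus_\beta\cM_\beta=\bS_{\bo,R}^{\alpha^\top}\cM$ (using \eqref{Mdecom}) and the right side as $\bigoplus_\beta\cM_{\beta\alpha}$, and then cite the general decomposition \eqref{Mdecom'} of Lemma \ref{L:Mdecom}, namely $\bS_{\bo,R}^{\alpha^\top}\cM=\bigoplus_\beta\cM_{\beta\alpha}$.

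Having verified that $\{\Theta_{\bo,\beta}\}$ is a Bergman-inner family, the representation \eqref{BLrep2} is then just the assembly: the multiplication operator $M_{\Theta_\bo}=\operatorname{row}_{\beta\in\free}[\Theta_{\bo,\beta}]$ of \eqref{MTheta} maps $H^2_{\{\cU_\beta\}}(\free)=\bigoplus_\beta z^\beta\cU_\beta$ isometrically onto $\bigoplus_\beta\cM_\beta=\cM$, the last equality being \eqref{Mdecom}. One should note that the decomposition $\cM=\bigoplus_\beta\cM_\beta$ is orthogonal (also from Lemma \ref{L:Mdecom}), so $M_{\Theta_\bo}$ is genuinely an isometry of the Fock-type direct sum, not merely a partial isometry. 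For the essential-uniqueness clause, suppose $\{\Theta'_{\bo,\beta}\}$ is another such family representing $\cM$; then for each $\beta$, $M_{\Theta_{\bo,\beta}}$ and $M_{\Theta'_{\bo,\beta}}$ are both unitaries from $z^\beta\cU_\beta$ (resp.\ $z^\beta\cU'_\beta$) onto the same space $\cM_\beta$ (since $\cM_\beta$ is intrinsically determined by $\cM$ via \eqref{Mv}), so $W_\beta:=M_{\Theta'_{\bo,\beta}}^{-1}M_{\Theta_{\bo,\beta}}$ is unitary from $z^\beta\cU_\beta$ to $z^\beta\cU'_\beta$; transferring via the lifted norm identification $z^\beta u\leftrightarrow u$ yields a unitary $U_\beta\colon\cU_\beta\to\cU'_\beta$ with $\Theta_{\bo,\beta}(z)z^\beta u=\Theta'_{\bo,\beta}(z)z^\beta(U_\beta u)$ for all $u$, i.e.\ $\Theta'_{\bo,\beta}(z)U_\beta=\Theta_{\bo,\beta}(z)$.

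I expect the main obstacle to be bookkeeping rather than a genuinely hard argument: the whole theorem has been carefully prepared by Lemmas \ref{L:Mdecom}, \ref{L:5.1}, \ref{L:frakcoisom}, \ref{L:5.6}, \ref{L:6.8}, so the real work is assembling them in the right order and making sure the hypotheses match --- in particular confirming that the model choice $(C,\bA)=(E|_{\cM^\perp},\bS_{\bo,R}^*|_{\cM^\perp})$ is indeed $\bo$-isometric, $\bo$-strongly stable, \emph{and} exactly $\bo$-observable so that Lemma \ref{L:5.6}'s invertibility requirements are met, and then checking that the $\Theta_{\bo,\beta}$ produced by \eqref{innerfam-real} coincides with the $\Theta_{\bo,\bU_\beta}$ to which Lemma \ref{L:6.8} refers. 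The one spot requiring a little care is condition (3): one must be sure the unitary identification of $z^\beta\cU_\beta$ with $\cM_\beta$ provided by $M_{\Theta_{\bo,\beta}}$ intertwines the relevant shift actions correctly so that applying $\bS_{\bo,R}^{\alpha^\top}$ to $\bigoplus_\beta\Theta_{\bo,\beta}z^\beta\cU_\beta$ lands inside $\bigoplus_\beta\cM_{\beta\alpha}$ with matching indexing, but this is exactly the content of \eqref{Mdecom'} together with the definition of $M_{\Theta_{\bo,\beta\alpha}}$, so no new estimate is needed.
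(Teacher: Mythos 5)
Your proposal is correct and follows essentially the same route as the paper's own proof: it assembles the same ingredients (the decompositions \eqref{Mdecom} and \eqref{Mdecom'} of Lemma \ref{L:Mdecom}, the Cholesky construction of Lemma \ref{L:5.6}, and the kernel factorization and unitarity of $M_{\Theta_{\bo,\beta}}\colon z^\beta\cU_\beta\to\cM_\beta$ from Proposition \ref{P:6.2} and Lemma \ref{L:6.8}), with the same uniqueness argument. The only difference is order of presentation --- you verify the concrete transfer-function realization first and the abstract Bergman-inner-family conditions afterward, whereas the paper does the abstract part and uniqueness first and then checks the recipe --- which is immaterial.
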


\begin{proof} By Lemma \ref{L:Mdecom}, we know that $\cM$ has the 
    orthogonal decomposition \eqref{Mdecom} with $\cM_{\beta}$ given 
    by \eqref{Mv}.  We define $\Theta_{\bo, \beta}$ so that 
    $M_{\Theta_{\bo, \beta}}$ maps $z^\beta\cU_{\beta}$ isometrically onto 
    $\cM_{\beta}$.  We thus have the orthogonal decomposition
    $$
     \cM = \bigoplus_{\beta \in \free} M_{\Theta_{\bo, \beta}} 
     z^{\beta} \cdot \cU_{\beta}.
 $$
 This leads to the the operator $M_{\Theta_{\bo}} \colon H^{2}_{\{ 
 \cU_{\beta}\}}(\free) \to H^2_{\bo, \cY}(\free)$ being a unitary map from 
 $H^{2}_{\{ \cU_{\beta}\}}$ onto $\cM$.  From the orthogonal 
 decomposition \eqref{Mdecom} for $\cM$, we see that any $\{\Theta_{\bo, \beta}\}$ 
constructed from $\cM$ in this way necessarily is a $H^2_{\bo, \cY}(\free)$-Bergman-inner family.

\smallskip
 
 The only constraint on the choice of $\Theta_{\bo, \beta}$ is that 
 $M_{\Theta_{\bo, \beta}}$ maps $z^{\beta} \cU_{\beta}$ isometrically onto the 
 subspace $\cM_{\beta}$.  Hence, any other choice $\{ \Theta'_{\bo, \beta}\}$ 
with respective coefficient spaces $\cU'_{\beta}$ 
 necessarily has the form $\Theta'_{\bo, \beta} z^{\beta} U =\Theta_{\bo, \beta} z^{\beta} $
 for a unitary operator $U \colon \cU_{\beta} \to \cU'_{\beta}$.

\smallskip
 
Suppose now that we are given the closed $\bS_{\bo,R}$-invariant subspace 
$\cM$ of $H^2_{\bo,\cY}(\free)$ and we construct  $\Theta_{\bo, \beta}$ according to the 
recipe (1), (2), (3) in the statement of the theorem.  By Theorem \ref{T:2-1.2} part (4), the 
pair $(C,\bA)$ is the canonical model $\bo$-isometric output pair for which the range 
${\rm Ran}\, \cO_{\bo,C,\bA}$ of the observability operator $\cO_{\bo,C,\bA}$ is exactly $\cM^{\perp}$.
As a result of Lemma \ref{L:5.6}, we see that the metric constraints \eqref{wghtcoisom} and 
\eqref{isom} hold.  By Lemma \ref{L:Mdecom}, the $\bS_{\bo,R}$-invariant subspace $\cM$ has the orthogonal 
decomposition \eqref{Mdecom} with $\cM_{\beta}$ as in \eqref{Mv}.
The kernel function $k_{\cM}(z, \zeta)$ for the subspace $\cM$ is given by \eqref{kMa} 
(also \eqref{kM}).  Then Proposition \ref{P:6.2} applies to tell us that the kernel function for $\cM_{\beta}$ is given 
by the formula \eqref{kdif}.  Then Lemma \ref{L:6.8} applies to tell us that $k_{\cM_{\beta}}(z, \zeta)$ 
has the factorization as in \eqref{id6}, and that the operator 
$M_{\Theta_{\bo,\beta}} \colon z^{\beta} u \mapsto \Theta_{\bo, \beta}(z) z^{\beta} u$ 
is unitary from $z^{\beta} \cU_{\beta}$ onto $\cM_{\beta}$ for each $\beta  \in \free$ as required.
\end{proof}

As a corollary we see that any $H^2_{\bo, \cY}(\free)$-Bergman-inner family 
$\{ \Theta_{\bo, \beta}\}_{\beta \in \free}$ has a realization as in \eqref{innerfam-real}.

\begin{corollary}  \label{C:innerfam-real}
suppose that $\{ \Theta_{\bo, \beta}\}_{\beta \in \free}$ is a 
$H^2_{\bo, \cY}(\free)$-Bergman-inner family.  Then there is a $\bo$-isometric output pair $(C, \bA)$ with
$\bA$ $\bo$-strongly stable embedded in a family of connection
 matrices $\left\{ \sbm{ A & \widehat B_{\alpha} \\ C & D_{\alpha}} 
 \colon \alpha \in \free \right\}$ satisfying the metric constraints 
 \eqref{isom} and \eqref{wghtcoisom} so that $\Theta_{\bo, \beta}(z)$ 
 has the transfer-function realization \eqref{innerfam-real}.
 \end{corollary}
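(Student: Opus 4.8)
The plan is to derive Corollary~\ref{C:innerfam-real} from Theorem~\ref{T:BL3} by running the Beurling--Lax machinery on the shift-invariant subspace built out of the given family. First I would set
$$
  \cM := M_{\Theta_\bo} H^2_{\{\cU_\beta\}}(\free) = \bigoplus_{\beta \in \free} \Theta_{\bo, \beta} z^\beta \cU_\beta
  \subset H^2_{\bo, \cY}(\free),
$$
where the direct sum is orthogonal by condition (2) of Definition~\ref{D:innerfuncfam}, and $M_{\Theta_\bo}$ is isometric by conditions (1) and (2). By condition (3) of Definition~\ref{D:innerfuncfam}, $\cM$ is $\bS_{\bo, R}$-invariant. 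Thus $\cM$ is a closed $\bS_{\bo, R}$-invariant subspace of $H^2_{\bo, \cY}(\free)$, isometrically included, to which Theorem~\ref{T:BL3} applies.

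Next I would invoke the constructive part of Theorem~\ref{T:BL3}: with $\cX = \cM^\perp$, $\bA = \bS_{\bo, R}^*|_{\cM^\perp}$, and $Cf = f_\emptyset$ for $f \in \cM^\perp$, the pair $(C, \bA)$ is $\bo$-isometric with $\bA$ being $\bo$-strongly stable (this is the content of Theorem~\ref{T:2-1.2} part (4) together with Proposition~\ref{P:bo-model2}, as used in the proof of Theorem~\ref{T:BL3}). Moreover $\cM^\perp = \operatorname{Ran} \cO_{\bo, C, \bA}$, so by Proposition~\ref{P:shift-gram-inv} the pair $(C, \bA)$ is exactly $\bo$-observable and all the shifted gramians $\Gr_{\bo, k, C, \bA}$ are invertible. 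Then Lemma~\ref{L:5.6} produces injective operators $\sbm{ \widehat B_\alpha \\ D_\alpha } \colon \cU'_\alpha \to \cX^d \oplus \cY$ solving the Cholesky factorization problem \eqref{pr6}, so that both metric constraints \eqref{isom} and \eqref{wghtcoisom} hold; defining $\Theta'_{\bo, \beta}(z) = \omega_{|\beta|}^{-1} D_\beta + C R_{\bo, |\beta|+1}(Z(z)A) Z(z) \widehat B_\beta$ as in \eqref{innerfam-real}, Theorem~\ref{T:BL3} tells us that $\{\Theta'_{\bo, \beta}\}_{\beta \in \free}$ is a $H^2_{\bo, \cY}(\free)$-Bergman-inner family with $\cM = M_{\Theta'_\bo} H^2_{\{\cU'_\beta\}}(\free)$.

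Finally I would use the uniqueness statement of Theorem~\ref{T:BL3} to transfer the realization from $\{\Theta'_{\bo, \beta}\}$ to the originally given family $\{\Theta_{\bo, \beta}\}$. Since both families are $H^2_{\bo, \cY}(\free)$-Bergman-inner and both represent the \emph{same} subspace $\cM$ (the original one tautologically, the new one by the construction above), the uniqueness clause gives, for each $\beta \in \free$, a unitary $U_\beta \colon \cU_\beta \to \cU'_\beta$ with $\Theta'_{\bo, \beta}(z) U_\beta = \Theta_{\bo, \beta}(z)$. Substituting $U_\beta$ into the realization of $\Theta'_{\bo, \beta}$ yields
$$
  \Theta_{\bo, \beta}(z) = \omega_{|\beta|}^{-1} (D_\beta U_\beta) + C R_{\bo, |\beta|+1}(Z(z)A) Z(z) (\widehat B_\beta U_\beta),
$$
i.e.\ $\Theta_{\bo, \beta}$ has the transfer-function realization \eqref{innerfam-real} relative to the connection matrices $\sbm{ A & \widehat B_\beta U_\beta \\ C & D_\beta U_\beta }$. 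One should check that replacing $\sbm{ \widehat B_\beta \\ D_\beta }$ by $\sbm{ \widehat B_\beta \\ D_\beta } U_\beta$ preserves the metric constraints \eqref{isom}, \eqref{wghtcoisom}: this is immediate since $U_\beta$ is unitary and the constraints are conjugation relations in the $\sbm{ \widehat B_\beta \\ D_\beta }$-block, and injectivity of $\sbm{ \widehat B_\beta \\ D_\beta } U_\beta$ is automatic. The main obstacle here is a bookkeeping one rather than a deep one: one must be careful that the subspace $\cM$ built from the abstract family is genuinely closed (so that $\cM^\perp$ and the model pair are well-defined) and that the invertibility of all shifted gramians is in force before invoking Lemma~\ref{L:5.6}; the orthogonality conditions in Definition~\ref{D:innerfuncfam} are precisely what guarantee closedness of $\cM$ and that $\cM_\beta$ in \eqref{Mv} coincides with $\Theta_{\bo, \beta} z^\beta \cU_\beta$, which is the step that needs the most care.
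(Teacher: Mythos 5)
Your proposal is correct and follows essentially the same route as the paper: generate the closed $\bS_{\bo,R}$-invariant subspace $\cM$ from the given family and then invoke Theorem \ref{T:BL3} (its constructive realization together with its uniqueness-up-to-right-unitary clause, whose invariance of \eqref{pr6}, \eqref{isom}, \eqref{wghtcoisom} under right unitary factors you verify explicitly). The paper states this more tersely, but the content is the same.
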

 
 \begin{proof}  In the discussion preceding the statement of Theorem \ref{T:BL3}, 
 we saw that any $H^2_{\bo, \cY}(\free)$-Bergman-inner family $\{ \Theta_{\bo, \beta}\}_{\beta \in \free}$ 
 generates a $\bS_{n,R}$-invariant subspace $\cM$ of $H^2_{\bo, \cY}$ having 
 $\{ \Theta_{\bo, \beta}\}_{\beta \in \free}$ as its Beurling-Lax representer as 
 in \eqref{tvmultop}.  We may then use the results of Theorem \ref{T:BL3} to generate a 
 transfer-function realization \eqref{innerfam-real} for $\Theta_{\bo, \beta}$ meeting all 
 the desired requirements.
\end{proof}


Note that, by Definition \ref{D:defin}, the content of conditions (1) 
and (2) in Definition \ref{D:innerfuncfam} is that each $\Theta_{\bo, \beta}(z) \cdot z^{\beta}$ 
coming from a $H^2_{\bo, \cY}(\free)$-Bergman-inner family $\{ \Theta_{\bo, \beta} \}_{\beta \in \free}$ is itself 
$H^2_{\bo, \cY}(\free)$-Bergman inner.  
It is condition (3) in Definition  \ref{D:innerfuncfam} which specifies how all 
these $H^2_{\bo,\cY}(\free)$-Bergman-inner power series fit together to form a 
$H^2_{\bo, \cY}(\free)$-Bergman-inner family.  In particular, the first Bergman-inner power series 
$\Theta_{\bo, \emptyset}$ of the $H^2_{\bo, \cY}(\free)$-Bergman-inner family $\{ \Theta_{\bo, 
\beta}\}_{\beta \in \free}$ has a transfer function realization 
$$
\Theta(z) = D + C R_{\bo,1}(Z(z) A) Z(z) \widehat B
$$
with connection matrix 
\begin{equation}
{\bf U}=\begin{bmatrix}A & \widehat B \\ C & D \end{bmatrix}=
\sbm {A_{1} & B_{1} \\ \vdots & \vdots \\ A_{d} & B_{d} \\ C & D}
\label{coll'}
\end{equation}
satisfying the metric constraints of the form  \eqref{isom} and 
\eqref{wghtcoisom}. Our next result is a converse to these observations.

\begin{theorem}   \label{T:inner-real}
    Suppose that the formal power series $\Theta(z) \in \cL(\cU, 
    \cY)\langle \langle z \rangle \rangle$ is $H^2_{\bo, \cY}(\free)$-Bergman-inner. Then:

\smallskip

{\rm (1)}  There is a $H^2_{\bo, \cY}(\free)$-Bergman-inner family $\{ \Theta_{\bo, \beta}\}_{\beta \in \free}$
so that $\Theta = \Theta_{\bo, \emptyset}$. 

\smallskip

{\rm (2)} $\Theta$ has the form
\begin{equation}  \label{inner-real}
    \Theta(z) = D + \sum_{j=1}^{d} \sum_{\alpha \in \free}
    \omega_{|\alpha| +1}^{-1} C \bA^{\alpha} B_{j} z^{\alpha j}
\end{equation}
where $(C, \bA)$ is $\bo$-isometric with $\bA$ $\bo$-strongly stable, and where the connection matrix \eqref{coll'}
satisfies the metric constraints
\begin{align}
 \begin{bmatrix} A^{*} & C^{*} \\ B^{*} & D^{*} \end{bmatrix}
    \begin{bmatrix} \Gr_{\bo,1,C,\bA} \otimes I_{d} & 0 \\ 0 & I_{\cY}\end{bmatrix}
        \begin{bmatrix} A & B \\ C & D \end{bmatrix} &=
\begin{bmatrix} \cG_{\bo, C,  \bA} & 0 \\ 0 & I_{\cU} \end{bmatrix},
    \notag \\
 \begin{bmatrix} A & B \\ C & D \end{bmatrix} \begin{bmatrix}
\cG^{-1}_{\bo,C,\bA} & 0 \\ 0 & I_{\cU} \end{bmatrix}
\begin{bmatrix} A^{*} & C^{*} \\ B^{*} & D^{*} \end{bmatrix}& =
    \begin{bmatrix} \Gr^{-1}_{\bo,1,C,\bA} \otimes I_{d} & 0 \\ 0 &
        I_{\cY} \end{bmatrix}.  \label{inner-metric}
\end{align}
\end{theorem}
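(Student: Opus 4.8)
\textbf{Proof plan for Theorem \ref{T:inner-real}.}

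The plan is to realize the single Bergman-inner multiplier $\Theta$ as the first member $\Theta_{\bo, \emptyset}$ of a Bergman-inner family, and then read off the transfer-function realization and metric constraints from the family machinery already developed in Theorems \ref{T:12.2}, \ref{T:bergin}, and \ref{T:BL3}. First I would use Theorem \ref{T:bergin} (together with the equivalent characterization in Remark \ref{R:3.22}): since $\Theta$ is $H^2_{\bo, \cY}(\free)$-Bergman-inner, it is in particular a contractive multiplier from $H^2_\cU(\free)$ to $H^2_{\bo, \cY}(\free)$ which is isometric on constants, and $\cM_0 := \Theta \cdot H^2_\cU(\free)$ is an $\bS_{\bo, R}$-invariant subspace with $\Theta \cdot \cU$ equal to a wandering subspace for $\cM_0$. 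The key observation is that $\Theta \cdot \cU = \cM_{0, \emptyset}$ in the notation of Lemma \ref{L:Mdecom}, i.e., $\Theta \cdot \cU = \cM_0 \ominus \big( \bigoplus_{j=1}^d S_{\bo, R, j} \cM_0 \big)$; this uses conditions (1) and (2) in Definition \ref{D:defin} for $\Theta$ together with the fact that $S_{\bo, R, j} \cM_0 = \Theta \cdot S_{1, R, j} H^2_\cU(\free)$ (intertwining) and that $\Theta$ is isometric on $\cU$.

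Next I would apply Theorem \ref{T:BL3} to the subspace $\cM := \cM_0$: this produces a $H^2_{\bo, \cY}(\free)$-Bergman-inner family $\{ \Theta_{\bo, \beta}\}_{\beta \in \free}$ with $\cM_0 = M_{\Theta_\bo} H^2_{\{\cU_\beta\}}(\free)$, where $\Theta_{\bo, \emptyset}$ is characterized up to a unitary change of coefficient space by the requirement that $M_{\Theta_{\bo, \emptyset}}$ map $\cU_\emptyset$ isometrically onto $\cM_{0, \emptyset} = \Theta \cdot \cU$. Since $\Theta$ itself already maps $\cU$ isometrically onto $\cM_{0, \emptyset}$ (this is exactly conditions (1), (2) of Definition \ref{D:defin} reinterpreted), the uniqueness clause of Theorem \ref{T:BL3} gives a unitary $U \colon \cU \to \cU_\emptyset$ with $\Theta_{\bo, \emptyset}(z) U = \Theta(z)$. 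Absorbing $U$ (replacing $\cU_\emptyset$ by $\cU$ and $\Theta_{\bo, \emptyset}$ by $\Theta_{\bo, \emptyset} U$, and correspondingly conjugating the connection data $\widehat B_\emptyset, D_\emptyset$ by $U$), we may take $\Theta_{\bo, \emptyset} = \Theta$ outright, which proves (1).

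For part (2), I would invoke Corollary \ref{C:innerfam-real}: the Bergman-inner family just constructed has a realization with a single $\bo$-isometric output pair $(C, \bA)$, with $\bA$ $\bo$-strongly stable, embedded in connection matrices $\sbm{ A & \widehat B_\beta \\ C & D_\beta}$ satisfying the metric constraints \eqref{isom} and \eqref{wghtcoisom}, and with $\Theta_{\bo, \beta}$ given by \eqref{innerfam-real}. Specializing \eqref{innerfam-real} to $\beta = \emptyset$ and noting $\omega_{|\emptyset|} = \omega_0 = 1$ and the expansion of $R_{\bo, 1}(Z(z)A) Z(z)$ from \eqref{1.6g}, I get $\Theta(z) = D + \sum_{j=1}^d \sum_{\alpha \in \free} \omega_{|\alpha|+1}^{-1} C \bA^\alpha B_j z^{\alpha j}$ with $D := D_\emptyset$, $B := \widehat B_\emptyset$, which is exactly \eqref{inner-real}. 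Finally, setting $\beta = \emptyset$ in the metric constraints \eqref{isom} and \eqref{wghtcoisom} and using $\Gr_{\bo, 0, C, \bA} = \cG_{\bo, C, \bA}$ together with $\Gr_{\bo, 1, C, \bA} = \Gamma^{(1)}_{\bo, \bA}[\cG_{\bo, C, \bA}]$ yields precisely the pair of identities in \eqref{inner-metric}.

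The main obstacle I anticipate is the bookkeeping in the first step, namely verifying carefully that conditions (1) and (2) of Definition \ref{D:defin} for $\Theta$ really do identify $\Theta \cdot \cU$ with the wandering subspace $\cM_{0, \emptyset}$ in the sense of Lemma \ref{L:Mdecom} — one must check that $\Theta \cdot \cU$ is orthogonal not just to each $S_{\bo, R, j} \Theta \cdot \cU$ but to all of $\bigoplus_j S_{\bo, R, j} \cM_0 = \bigoplus_j \Theta \cdot S_{1, R, j} H^2_\cU(\free)$, which is condition (2) of Definition \ref{D:defin} applied with general nonempty $\alpha$, and that these together with condition (1) force $M_\Theta$ to be isometric on all of $H^2_\cU(\free)$ (Theorem \ref{T:12.2} gives this, via equality in \eqref{march13}). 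Once that identification is secure, everything else is a direct appeal to Theorem \ref{T:BL3} and Corollary \ref{C:innerfam-real} plus specialization to $\beta = \emptyset$.
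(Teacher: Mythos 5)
There is a genuine gap at the one step you yourself flagged as the "main obstacle": you propose to get the equality $\Theta\cdot\cU = \cM_{0,\emptyset}$ by arguing that conditions (1)--(2) of Definition \ref{D:defin} "force $M_\Theta$ to be isometric on all of $H^2_\cU(\free)$ (Theorem \ref{T:12.2} gives this, via equality in \eqref{march13})." This is false. Equality in \eqref{march13} does not say $\|M_\Theta f\| = \|f\|$; it says $\|M_\Theta f\|^2 = \|f\|^2 - \sum_{\alpha,j}\|(I-S_{\bo,R,j}^*S_{\bo,R,j})^{1/2}M_\Theta(\bS_{1,R}^*)^{j\alpha^\top}f\|^2$, and the defect operators $I-S_{\bo,R,j}^*S_{\bo,R,j}$ vanish only in the unweighted Fock case $\bo=\bmu_1$. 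Worse, for any strictly decreasing admissible $\bo$ (e.g.\ $\bo=\bmu_n$, $n>1$) Proposition \ref{P:inn} shows there are \emph{no} isometric multipliers from $H^2_\cU(\free)$ to $H^2_{\bo,\cY}(\free)$ at all, so the mechanism you invoke to prove $\cM_{0,\emptyset}\subseteq\Theta\cdot\cU$ cannot work. (A related symptom of the same misconception: $\Theta\cdot H^2_\cU(\free)$ need not be closed, so one should work with $\cM=\bigvee_{\alpha\in\free}\bS_{\bo,R}^\alpha\cE$, $\cE=\Theta\cdot\cU$, rather than with $\Theta\cdot H^2_\cU(\free)$ itself.)

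The equality you need is true, but it requires a different argument, which is exactly what the paper supplies. The easy half, $\cE\subseteq\cM_\emptyset$, follows as you say from condition (2) of Definition \ref{D:defin} (after extending by density/continuity from monomials to all of $S_{\bo,R,j}\cM$). For the reverse inclusion one argues directly: if $f\in\cM_\emptyset$ is orthogonal to $\cE$, then since $f\perp S_{\bo,R,k}\cM$ for each $k$ and $\bS_{\bo,R}^\alpha\cE\subseteq S_{\bo,R,k}\cM$ for every nonempty $\alpha$, the vector $f$ is orthogonal to $\bS_{\bo,R}^\alpha\cE$ for all $\alpha\in\free$, hence to $\cM=\bigvee_\alpha\bS_{\bo,R}^\alpha\cE$; as $f\in\cM$, this forces $f=0$, so $\cM_\emptyset=\cE$. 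No isometry of $M_\Theta$ on the whole space is used. Once this identification is repaired, the remainder of your plan coincides with the paper's proof: apply Theorem \ref{T:BL3} to $\cM$, use the uniqueness clause to take $\Theta_{\bo,\emptyset}=\Theta$ after absorbing a unitary on the coefficient space, and then read off \eqref{inner-real} and \eqref{inner-metric} by specializing the family realization and the metric constraints \eqref{isom}, \eqref{wghtcoisom} to $\beta=\emptyset$ (using $\omega_0=1$ and $\Gr_{\bo,0,C,\bA}=\cG_{\bo,C,\bA}$).
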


\begin{proof}  Suppose that $\Theta(z) \in \cL(\cU, \cY)\langle 
    \langle z \rangle \rangle$ is $H^2_{\bo, \cY}(\free)$-Bergman-inner.  
    Define $\cE \subset H^2_{\bo, \cY}(\free)$ by $\cE = M_{\Theta} 
    \cdot \cU$.
Then condition (1) in  Definition \ref{D:defin} tells us that 
$M_{\Theta}$ maps $\cU$ isometrically into $H^2_{\bo, \cY}(\free)$ and 
hence $\cE$ is a closed subspace of $H^2_{\bo, \cY}(\free)$.  We next 
set  
\begin{equation}
\cM = \bigvee_{\alpha \in \free} \bS_{\bo,R}^{\alpha} \cE.
\label{again38}
\end{equation}
Then $\cM$ is a closed $\bS_{\bo,R}$-invariant subspace of $H^2_{\bo,\cY}$.  By Theorem \ref{T:BL3}
it follows that $\cM$ has a representation of the form \eqref{BLrep2} in terms of a 
$H^2_{\bo, \cY}(\free)$-Bergman-inner family $\{ \Theta_{\bo, \beta}\}_{\beta \in \free}$.  
By the second condition in  Definition \ref{D:defin}, we see that, for 
$\beta$ a nonempty word in $\free$, the subspace $\cE =\Theta  \cdot \cU$ is orthogonal to 
the subspace
$$ 
\bS_{\bo,R}^{\beta} \cM =  \bS_{\bo,R}^{\beta} \bigg( 
\bigvee_{\alpha \in \free} \bS_{\bo,R}^{\alpha} \cE \bigg) = 
\bigvee_{\alpha \in \free} \bS_{\bo,R}^{\beta} \bS_{\bo,R}^{\alpha} \Theta \cdot \cU = 
\bigvee_{\alpha \in \free} \Theta \cdot z^{\alpha^\top \beta^\top} \cU.
$$
In particular, $\cE$ is orthogonal to $S_{\bo,R,k} \cM$ for $k=1,\ldots,d$ 
and hence 
\begin{equation}   \label{contain}
\cE\subseteq \cM \ominus \bigg(\bigoplus_{k=1}^{d} 
S_{\bo,R,k} \cM \bigg) =: \cM_{\emptyset}.
\end{equation}
To show that the containment \eqref{contain} actually holds with equality, we verify that the assumption that 
$f \in \cM_{\emptyset}$ is orthogonal to $\cE$ forces $f=0$. Indeed, 
if $f \in \cM_{\emptyset}$, then $f$ is orthogonal to $S_{\bo,R,k} \cM$ for $k=1,\ldots,d$ and 
hence, on account of \eqref{again38}, $f$ is orthogonal to $\bS_{\bo,R}^{\alpha}\cE$ for any nonempty 
$\alpha\in\free$.  If in addition, $f$ is orthogonal $\to\cE$, then, again by, \eqref{again38}, 
it is orthogonal to the whole $\cM$. Since 
$f\in\cM_\emptyset\subset \cM$, it follows that $f=0$,
and hence the containment \eqref{contain} holds with equality.
This means that without loss of generality we may take $\Theta_{\bo, \emptyset}$ 
to be $\Theta$ in the construction in Theorem \ref{T:inner-real}.  Consequently, $\Theta = 
\Theta_{\bo, \emptyset}$ has a realization as in \eqref{inner-real} and 
\eqref{inner-metric} by specialization of the general state-space 
formulas for the whole Bergman-inner family in the second part of 
Theorem \ref{T:BL3} to the case $\beta = \emptyset$.
\end{proof}

\begin{remark} \label{R:sysinterpret}
\textbf{System-theoretic interpretation of Theorem \ref{T:BL3}.}  
In systems theory language, the result of Theorem \ref{T:BL3} can be 
expressed as follows.  Associated with any time-varying noncommutative 
multidimensional linear system $\Sigma(\bU)$ of the form \eqref{1.31pre} is a well-defined 
input-output map
$$
  T_{\bU} \colon \{ u_{\beta} \}_{\beta \in \free} \to \{ 
  y_{\beta}\}_{\beta \in  \free},
$$
where the output string $\{y_{\beta}\}_{\beta \in \free}$ is determined 
from the input string $\{u_{\beta}\}_{\beta \in \free}$ ($u_{\beta} \in 
\cU_{\beta}$ for each $\beta \in \free$) by solving  the system equations 
\eqref{1.31pre}  recursively with initial condition $x(\emptyset)$ set 
equal to zero.  The formal noncommutative $Z$-transform of $T_{\bU}$ 
is the map $\widehat T_{\bU}$ defined by 
$$
  \widehat T_{\bU} \colon \sum_{\beta \in \free} u_{\beta} z^{\beta} \mapsto 
  \sum_{\beta \in \free} y_{\beta} z^{v}
$$
exactly when $T_{\bU} \colon \{u_{\beta}\}_{\beta \in \free} \mapsto \{ 
y_{\beta} \}_{\beta \in \free}$.  Let us say that a time-varying formal 
noncommutative multidimensional linear system $\Sigma_{\bU}$ as in 
\eqref{1.31pre} is {\em $\bo$-Bergman conservative} if the operators $A, 
\widehat B_{\beta}, C, D_{\beta}$ in the connection matrix \eqref{coll'}
satisfy the metric conditions \eqref{isom} and \eqref{wghtcoisom}.  
As a consequence of the identity \eqref{1.36prehardy} (with initial 
condition $x = \widehat x(\emptyset) = 0$), we see that the 
multiplication operator $M_{\Theta}$ has transfer function 
realization $M_{\Theta} = \widehat T_{\bU}$. 
\end{remark}

\section{Expansive multiplier property}  \label{S:exp-mult-wght}
In this section we take advantage of the realization formula \eqref{inner-real}
to discuss another property of $H^2_{\bo,\cY}(\free)$-inner multipliers
(which, however, occurs only for particular weights). The question goes back to \cite{heden1,
DKSS, DKSS1, borhed}, where it was shown that canonical divisors in  the Bergman space
$\cA_2$ (also in the non-Hilbert Bergman spaces $\cA^p_2$) are expansive multipliers in the sense
that $\|Gf\|_{A_2}\ge \|f\|_{A_2}$ for all $f\in A_2$. Canonical divisors form a subclass of
Bergman-inner functions for which the expansive multiplier property holds only in a weaker
version: for any Bergman-inner function $G$, the inequality $\|Gf\|_{\cA_2}\ge \|f\|_{\cA_2}$
holds for all $f\in H^2$ (but not for all $f\in \cA_2$, in genera)l. The latter weak form
of the expansive multiplier property holds for weighted Bergman spaces $\cA_3$ \cite{heden2}
but fails in $\cA_n$ for $n>3$.

\smallskip

In the present noncommutative setting, we define the expansive multiplier (in the weak form)
keeping in mind the Fock space $H^2(\free)$ as a suitable substitute of the Hardy space $H^2$ of
the unit disk.

\smallskip

Let us say that the space $H^2_{\bo,\cY}(\free)$ possesses the {\em expansive multiplier property}
if for any $\cL(\cU,\cY)$-valued $H^2_{\bo,\cY}(\free)$-inner multiplier $\Theta$,
\begin{equation}
\|\Theta f\|_{H^2_{\bo,\cY}(\free)}\ge \|f\|_{H^2_{\bo,\cU}(\free)}\quad \text{for all}\quad
f\in H^2_{\cU}(\free).
\label{18.5}
\end{equation}

\begin{remark}
If $\Theta\in\cL(\cU,\cY)\langle\langle z\rangle\rangle$ is $H^2_{\bo,\cY}(\free)$-inner, then for
every noncommutative polynomial $p(z) = p_{\emptyset} + \widetilde
p(z)$ where $\widetilde p(z) = \sum_{1 \le |\alpha| \le K} p_{\alpha}
z^{\alpha}$, we have
$$
\|\Theta p\|^2_{H^2_{\bo,\cY}(\free)}=\|\Theta p_\emptyset\|^2_{H^2_{\bo,\cY}(\free)}
+\|\Theta \widetilde{p}\|^2_{H^2_{\bo,\cY}(\free)}
=\|p_\emptyset\|^2_{\cU}+\|\Theta \widetilde{p}\|^2_{H^2_{\bo,\cY}(\free)}.
$$
Since $\; \|p\|^2_{H^2_{\bo,\cU}(\free)}=
\|p_{\emptyset}\|^2_{\cU}+\|\widetilde{p}\|^2_{H^2_{\bo,\cU}(\free)}, \;$
we conclude that the inequality
$$
\|\Theta p\|_{H^2_{\bo,\cY}(\free)}\ge \|p\|_{H^2_{\bo,\cU}(\free)}
$$
holds for all polynomials in $\cU \langle z \rangle$ if and only if it holds for all polynomials
$p$ with $p_\emptyset=0$.
\label{R:18.2}
\end{remark}

\begin{lemma} \label{L:18.3}
Let $\Theta$ be an $\cL(\cU,\cY)$-valued $H^2_{\bo,\cY}(\free)$-inner power series realized as
in Theorem \ref{T:inner-real}. Then for every polynomial
\begin{equation}
p(z)=\sum_{\alpha\in\free: 1\le|\alpha|\le k}p_{\alpha} z^\alpha,
\label{18.7}
\end{equation}
the following equality holds:
\begin{align}
&\|\Theta p\|^2_{H^2_{\bo,\cY}(\free)}-\|p\|^2_{H^2_{\bo,\cU}(\free)}\label{18.8}\\
&  =\sum_{v\in\free:|v|\ge 2}\sum_{\alpha \bi \alpha^\prime=v=\beta
\bj \beta^\prime} \left(\frac{\omega_{|v|}}{\omega_{|\alpha|+1}\omega_{|\beta|+1}}-
\frac{\omega_{{\rm max}(|\alpha^\prime|,|\beta^\prime|)}}
{\omega_{|v|-{\rm max}(|\alpha^\prime|,|\beta^\prime|)}\cdot
\omega_{||\alpha^\prime|-|\beta^\prime||}}\right)\notag\\
&\qquad\qquad\qquad\qquad \cdot \big\langle C\bA^{\alpha}B_\bi p_{\alpha^\prime},
\,  C\bA^{\beta}B_\bj p_{\beta^\prime}\big\rangle_{\cY}
\notag
\end{align}
where, according to \eqref{18.1},
$\|p\|^2_{H^2_{\bo,\cU}(\free)}={\displaystyle\sum_{\alpha\in\free: 1\le|\alpha|\le
k}\omega_{|\alpha|}\|p_\alpha\|^2_{\cU}}$.
\end{lemma}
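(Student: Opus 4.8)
The identity to be established expands the two norms in terms of the power-series coefficients of $\Theta p$ and $p$, cancels the diagonal ($v$-length-$1$ and length-$0$) terms, and identifies the surviving off-diagonal sum. The natural starting point is the realization \eqref{inner-real}, which gives
$$
\Theta(z) p(z) = \bigg( D + \sum_{j=1}^d \sum_{\alpha \in \free} \omega_{|\alpha|+1}^{-1} C \bA^\alpha B_j z^{\alpha j} \bigg) \bigg( \sum_{\alpha' \colon 1 \le |\alpha'| \le k} p_{\alpha'} z^{\alpha'} \bigg).
$$
First I would multiply this out, collecting the coefficient of $z^v$ for each $v \in \free$; each such coefficient is a finite sum over factorizations $v = \alpha j \alpha'$ with $1 \le |\alpha'| \le k$, plus the $D p_v$ term. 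Then $\|\Theta p\|^2_{H^2_{\bo, \cY}(\free)} = \sum_v \omega_{|v|} \| (\Theta p)_v \|^2_\cY$ becomes a double sum over pairs of factorizations $v = \alpha \bi \alpha' = \beta \bj \beta'$, weighted by $\omega_{|v|} \omega_{|\alpha|+1}^{-1} \omega_{|\beta|+1}^{-1}$, together with cross terms involving $D$. The strategy is to use the metric constraints \eqref{inner-metric} (equivalently relations of the type \eqref{jul13}, \eqref{jul14a} with $\beta = \emptyset$, and the Stein identities \eqref{bbf1.5}-type relations $\cG_{\bo, C, \bA} = \Gamma_{\bo, \bA}[\cG_{\bo,C,\bA}]$ and \eqref{4.34}) to eliminate all terms containing $D$ and to simplify the purely-$C\bA^\bullet B_\bullet$ terms.

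The key cancellation mechanism mirrors the proof of Lemma \ref{L:5.1}: the relation $\sum_{j=1}^d A_j^* \Gr_{\bo, 1, C, \bA} B_j + C^* D = 0$ (the $\beta = \emptyset$ case of \eqref{jul13}) together with $B^* \Gr_{\bo, 1, C, \bA} B + D^* D = I_\cU$ (the $\beta = \emptyset$ case of \eqref{jul14a}) should collapse the norm $\|\Theta p\|^2$ down to an expression in which the ``leading'' diagonal contributions reassemble to $\|p\|^2_{H^2_{\bo, \cU}(\free)}$ plus a remainder. More precisely, I expect that after resumming the geometric-type series $\sum_{\alpha} \omega_{|\alpha|+1}^{-1} \bA^{*\alpha^\top} C^* C \bA^\alpha = \Gr_{\bo, 1, C, \bA}$ wherever the inner-word parts $\alpha, \beta$ agree, the constraint on $B^* \Gr_{\bo, 1, C, \bA} B$ converts those matched terms into $\sum_{\alpha'} (\text{something}) \| p_{\alpha'} \|^2$; comparing with $\|p\|^2_{H^2_{\bo, \cU}(\free)} = \sum_{\alpha'} \omega_{|\alpha'|} \|p_{\alpha'}\|^2$ will produce exactly the stated weight $\omega_{\max(|\alpha'|,|\beta'|)} \omega_{|v| - \max(|\alpha'|, |\beta'|)}^{-1} \omega_{||\alpha'| - |\beta'||}^{-1}$ when $|\alpha'| = |\beta'|$ (giving $\omega_{|\alpha'|} \omega_{|v| - |\alpha'|}^{-1} \omega_0^{-1}$), and this is where the off-diagonal ($|\alpha'| \ne |\beta'|$) terms must be tracked carefully. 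The bookkeeping of which factorizations $v = \alpha \bi \alpha'$ and $v = \beta \bj \beta'$ share a common tail versus a common head — precisely the combinatorics already appearing in the Case analysis of part (1) of Lemma \ref{L:5.1} — is the organizing principle.

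\textbf{Main obstacle.} The hard part will be the purely combinatorial/bookkeeping task of organizing the double sum over all pairs of factorizations $(\alpha \bi \alpha', \beta \bj \beta')$ of a fixed word $v$ and correctly matching it against the weights on the right-hand side. In particular one must verify that after using the metric constraints all terms with $|v| \le 1$ cancel against the corresponding pieces of $\|p\|^2$, that the $D$-terms disappear entirely, and that the residual weight for a pair of factorizations is exactly $\frac{\omega_{|v|}}{\omega_{|\alpha|+1}\omega_{|\beta|+1}} - \frac{\omega_{\max(|\alpha'|,|\beta'|)}}{\omega_{|v| - \max(|\alpha'|,|\beta'|)} \omega_{||\alpha'|-|\beta'||}}$ — the first term being the obvious contribution from $\|\Theta p\|^2$ and the second being what survives from $\|p\|^2$ after the diagonal resummation. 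I would handle this by fixing $v$, enumerating the (finitely many) ways $v = \alpha \bi \alpha'$ with $1 \le |\alpha'| \le k$, and writing the inner product $\langle C \bA^\alpha B_\bi p_{\alpha'}, C \bA^\beta B_\bj p_{\beta'} \rangle_\cY$ with $\bA$, $\bA^\top$ manipulated via the Stein/gramian identities to reduce $\sum_{\text{matched inner words}} \omega_{\cdot}^{-1} \bA^{*\cdot} C^* C \bA^{\cdot}$ to a shifted gramian $\Gr_{\bo, \cdot, C, \bA}$, then invoking \eqref{inner-metric}. Once the weight identity is pinned down for a single $v$, summing over $v$ with $|v| \ge 2$ yields \eqref{18.8}.
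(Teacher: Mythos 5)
Your proposal follows essentially the same route as the paper's proof: expand $\Theta p$ via the realization \eqref{inner-real}, split $\|\Theta p\|^2_{H^2_{\bo,\cY}(\free)}$ into the $D$-square, cross, and pure $C\bA^{\alpha}B_{\bi}$ contributions, and use the two nontrivial blocks of the metric constraint \eqref{inner-metric} (equivalently \eqref{jul13}, \eqref{jul14a} at $\beta=\emptyset$) to convert the $D$-terms, after which the double sum over pairs of factorizations of each word $v$ produces exactly the weights in \eqref{18.8}. The only minor slip is attributing the second weight for off-diagonal pairs ($|\alpha'|\ne|\beta'|$) to the resummation of $\|p\|^2$ — in the paper it arises from the $D$-cross terms via the $(2,1)$-block identity — but this does not alter the argument.
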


\begin{proof}
Making use of \eqref{inner-real} and \eqref{18.7}, we write the power series expansion
\begin{align}
\Theta(z)p(z)=&D\sum_{j=1}^d p_jz_j+
\sum_{v\in\free: \, 2\le|v|\le k} \bigg(Dp_v+\sum_{\alpha \bi \alpha^\prime=v}\omega_{|\alpha|+1}^{-1}
C\bA^\alpha B_\bi p_{\alpha^\prime}\bigg)z^v\notag\\
&+\sum_{v\in\free: \, |v|> k} \bigg(\sum_{\alpha \bi \alpha^\prime=v}\omega_{|\alpha|+1}^{-1}
C\bA^\alpha B_\bi p_{\alpha^\prime}\bigg)z^v.\notag
\end{align}
By the definition \eqref{18.1} of the $H^2_{\bo,\cY}(\free)$-norm, we have
\begin{align}
\|\Theta p\|^2_{H^2_{\bo,\cY}(\free)}=&\sum_{i=1}^d
\omega_{1}\cdot \|Dp_i\|^2_{\cY}\notag\\
&+\sum_{v\in\free: \, 2\le|v|\le k}\omega_{|v|}\cdot \big\|
Dp_v+\sum_{\alpha \bi \alpha^\prime=v}\omega_{|\alpha|+1}^{-1}
C\bA^\alpha B_\bi p_{\alpha^\prime}\big\|^2_{\cY}\notag\\
&+\sum_{v\in\free: \, |v|> k}\omega_{|v|}\cdot \big\|
\sum_{\alpha \bi \alpha^\prime=v}\omega_{|\alpha|+1}^{-1}
C\bA^\alpha B_\bi p_{\alpha^\prime}\big\|^2_{\cY}\notag\\ 
&= {\rm I} + {\rm II} + {\rm III}, \label{18.9}
\end{align}
where
\begin{align}
{\rm I} & = \sum_{v \in \free \colon |v| \le k} \omega_{|v|}\cdot
\|Dp_v\|^2_{\cY}, \label{dI}\\
{\rm II} & = 2 {\rm Re} \bigg(\sum_{v\in\free: \, 2\le|v|\le k}\omega_{|v|}\cdot
\bigg\langle Dp_v, \, \sum_{\alpha \bi \alpha^\prime=v}\omega_{|\alpha|+1}^{-1}
C\bA^\alpha B_\bi p_{\alpha^\prime}\bigg\rangle_{\cY}\bigg), \label{dII}\\
{\rm III} & = \sum_{v\in\free: \, |v|\ge 2} \omega_{|v|}\cdot\bigg\|\sum_{\alpha
\bi \alpha^\prime=v}\omega_{|\alpha|+1}^{-1}
C\bA^\alpha B_\bi p_{\alpha^\prime}\bigg\|^2_{\cY}.\label{dIII}
\end{align}
To simplify the notation we set $p_{v} = 0$ for $|v|>k$ and for any two representations
\begin{equation}
v=\alpha \bi \alpha^\prime=\beta \bj \beta^\prime \quad (|\alpha^\prime|\le k, \; |\beta^\prime|\le k)
\label{18.15}
\end{equation}
of a word $v\in\free$,  we let
$$
d_{v,\alpha^\prime,\beta^\prime}=\frac{\omega_{|v|}}{\omega_{|\alpha|+1}\cdot
\omega_{|\beta|+1}}\cdot\big\langle C\bA^\alpha B_\bi p_{\alpha^\prime}, \,
C\bA^\beta B_\bj p_{\beta^\prime}\big\rangle_{\cY}.
$$
Observe that the elements $\alpha$, $\beta$, $\bi$ and $\bj$ in representations \eqref{18.15}
are completely determined by $v$, $\alpha^\prime$ and $\beta^\prime$. 
We also note that if $|\alpha^\prime|=|\beta^\prime|$, then it follows from \eqref{18.15} that
$\alpha^\prime=\beta^\prime$, $\alpha=\beta$ and $\bi=\bj$  (see
Remark \ref{R:determined} below).
In case $|\alpha^\prime|>|\beta^\prime|$,
it follows that
\begin{equation}
\alpha^\prime=\gamma \bj\beta^\prime\quad\mbox{and}\quad \beta=\alpha
\bi \gamma\quad\mbox{for
some}\quad \gamma\in\free.
\label{18.16}
\end{equation}
Using the latter notation we rewrite \eqref{dIII} as
\begin{equation}
{\rm III}=\sum_{v\in\free}\sum_{\alpha \bi \alpha^\prime=v}
d_{v,\alpha^\prime,\alpha^\prime}+2{\rm Re}
\bigg(\sum_{v\in\free}\sum_{{\scriptsize\begin{array}{c}\alpha \bi
\alpha^\prime=v=\beta \bj \beta^\prime\\
|\beta^\prime|<|\alpha^\prime|\le k\end{array}}}d_{v,\alpha^\prime,\beta^\prime}\bigg).
\label{18.17}
\end{equation}
For the first term on the right side, we have
\begin{align}
\sum_{v\in\free}\sum_{\alpha \bi \alpha^\prime=v}
d_{v,\alpha^\prime,\alpha^\prime}
&=\sum_{v\in\free}\sum_{\alpha \bi \alpha^\prime=v}
\frac{\omega_{|v|}}{\omega^2_{|\alpha|+1}}\cdot \left \|
C\bA^\alpha B_\bi p_{\alpha^\prime}\right\|^2_{\cY}\notag \\
&=\sum_{\alpha,\alpha^\prime\in\free}\sum_{\bi=1}^d
\frac{\omega_{|\alpha|+|\alpha^\prime|+1}}{\omega^2_{|\alpha|+1}}\cdot
\left \|C\bA^\alpha B_\bi p_{\alpha^\prime}\right\|^2_{\cY}.\label{18.18}
\end{align}
For the second term on the right side of \eqref{18.17}, we have, on account of
\eqref{18.16},
\begin{align}
&\sum_{v\in\free}\sum_{{\scriptsize\begin{array}{c}\alpha \bi \alpha^\prime=v=\beta
\bj \beta^\prime\\ |\beta^\prime|<|\alpha^\prime|\le
k\end{array}}}d_{v,\alpha^\prime,\beta^\prime}\label{18.19}\\
&=\sum_{{\scriptsize\begin{array}{c}
\alpha \bi \alpha^\prime=\beta \bj\beta^\prime,\\
|\beta^\prime|<|\alpha^\prime|\le k\end{array}}}
\frac{\omega_{|\alpha|+|\alpha^\prime|+1}}{\omega_{|\alpha|+1}\cdot
\omega_{|\beta|+1}}\cdot\big\langle C\bA^\alpha B_\bi p_{\alpha^\prime}, \,
C\bA^\beta B_\bj p_{\beta^\prime}\big\rangle_{\cY}\notag\\
&=\sum_{{\scriptsize\begin{array}{c}
\alpha^\prime=\gamma \bj\beta^\prime,\\
|\alpha^\prime|\le k\end{array}}}\sum_{j=1}^d
\frac{\omega_{|\alpha|+|\alpha^\prime|+1}}{\omega_{|\alpha|+1}\cdot
\omega_{|\alpha|+|\gamma|+2}}
\cdot \big\langle C\bA^\alpha B_\bi
p_{\alpha^\prime}, \,
C\bA^{\alpha \bi \gamma} B_\bj p_{\beta^\prime}\big\rangle_{\cY}.\notag
\end{align}
Substituting \eqref{18.18} and \eqref{18.19} into \eqref{18.17} gives
\begin{align}
&{\rm III}=\sum_{\alpha,\alpha^\prime\in\free}\sum_{\bi=1}^d
\frac{\omega_{|\alpha|+|\alpha^\prime|+1}}{\omega^2_{|\alpha|+1}}\cdot
\left \|C\bA^\alpha B_\bi p_{\alpha^\prime}\right\|^2_{\cY}\label{18.20}\\
&+2{\rm Re}\bigg(
\sum_{{\scriptsize\begin{array}{c}
\alpha^\prime=\gamma \bj \beta^\prime,\\
|\alpha^\prime|\le k\end{array}}}\sum_{\bi=1}^d
\frac{\omega_{|\alpha|+|\alpha^\prime|+1}}{\omega_{|\alpha|+1}\cdot
\omega_{|\alpha|+|\gamma|+2}}\cdot\big\langle C\bA^\alpha B_\bi
p_{\alpha^\prime}, \,
C\bA^{\alpha \bi \gamma} B_\bj p_{\beta^\prime}\big\rangle_{\cY}\bigg).\notag
\end{align}
By equality of bock (2,2)-entries in the first matrix equality in
\eqref{inner-metric} and on account of \eqref{4.32},
\begin{align*}
\left\|Dp_v\right\|^2_{\cY}=&\left\|p_v\right\|^2_{\cU}-\sum_{\bi=1}^d\langle
\Gr_{\bo,1,C,\bA}B_\bi p_v, \, B_\bi p_v\rangle_{\cX}\\
=&\left\|p_v\right\|^2_{\cU}-\sum_{\bi=1}^d\bigg\langle
\sum_{\alpha\in\free}\omega_{|\alpha|+1}^{-1}\bA^{*\alpha^\top}C^*C\bA^{\alpha}B_\bi p_v, \,
B_\bi p_v\bigg\rangle_{\cX}\\
=&\left\|p_v\right\|^2_{\cU}-\sum_{\bi=1}^d\sum_{\alpha\in\free}\omega_{|\alpha|+1}^{-1}
\cdot \big\|C\bA^{\alpha}B_\bi p_v\big\|^2_{\cX}
\end{align*}
and therefore (see \eqref{dI}),
\begin{align}
{\rm I} &= \sum_{v\in\free: \, 1\le |v|\le k} \omega_{|v|}\cdot \|p_v\|^2_{\cU}
-\sum_{v\in\free: \, 1\le |v|\le m}\sum_{\bi=1}^d\sum_{\alpha\in\free}
\frac{\omega_{|v|}}{\omega_{|\alpha|+1}}\cdot
\big\|C\bA^{\alpha}B_\bi p_v\big\|^2_{\cX}\notag\\
&= \|p\|^2_{H^2_{\bo,\cU}(\free)}
-\sum_{\alpha,\alpha^\prime\in\free}\sum_{\bi =1}^d
\frac{\omega_{|\alpha^\prime|}}{\omega_{|\alpha|+1}}\cdot
\big\|C\bA^{\alpha}B_\bi p_{\alpha^\prime}\big\|^2_{\cX}.
\label{18.21}
\end{align}
We next combine the equality of the (2,1)-block entries in the first
matrix equation in \eqref{inner-metric} with \eqref{4.32} to get
\begin{align*}
\big\langle Dp_{\alpha^\prime}, \, C\bA^\gamma B_\bi p_{\beta^\prime}\big\rangle_{\cY}=&
\bigg\langle p_{\alpha^\prime}, \, D^*C\bA^\gamma B_\bj
p_{\beta^\prime}\bigg\rangle_{\cU}\\
=&-\bigg\langle p_{\alpha^\prime}, \, \sum_{\bi=1}^d
B_\bi^*\Gr_{\bo,1,C,\bA}\bA^{\bi\gamma}B_\bj
p_{\beta^\prime}\bigg\rangle_{\cU}\\
=&-\bigg\langle p_{\alpha^\prime}, \,
\sum_{\bi=1}^d\sum_{\alpha\in\free}\omega_{|\alpha|+1}^{-1}B_\bi^*
\bA^{*\alpha^\top}C^*C\bA^{\alpha \bi \gamma}B_{\bj} p_{\beta^\prime}\bigg\rangle_{\cU}\\
=&-\sum_{\alpha\in\free}\sum_{\bi=1}^d\omega_{|\alpha|+1}^{-1}\cdot \big\langle
C\bA^{\alpha}B_\bi p_{\alpha^\prime}, \,  C\bA^{\alpha \bi
\gamma}B_\bj p_{\beta^\prime}\big\rangle_{\cY}
\end{align*}
which together with \eqref{dII} and \eqref{18.16} gives
\begin{align}
{\rm II}= & 2 {\rm Re} \bigg(\sum_{\alpha^\prime\in\free: \, |\alpha^\prime|\le
k}\omega_{|\alpha^\prime|}\cdot
\bigg\langle Dp_{\alpha^\prime}, \, \sum_{\alpha^\prime=\gamma \bj\beta^\prime}
\omega_{|\gamma|+1}^{-1}
C\bA^\gamma B_\bj p_{\beta^\prime}\bigg\rangle_{\cY}\bigg)\label{18.22}\\
=&-2 {\rm Re}
\bigg(\sum_{{\scriptsize\begin{array}{c}
\alpha^\prime=\gamma \bj \beta^\prime,\\
|\alpha^\prime|\le k\end{array}}}\sum_{\bi=1}^d
\frac{\omega_{|\alpha^\prime|}}{\omega_{|\alpha|+1}\cdot\omega_{|\gamma|+1}}
\cdot \big\langle C\bA^{\alpha}B_\bi p_{\alpha^\prime},
\,  C\bA^{\alpha \bi \gamma}B_\bj p_{\beta^\prime}\big\rangle_{\cY}\bigg).\notag
\end{align}
We now substitute \eqref{18.20}, \eqref{18.21} and \eqref{18.22} into \eqref{18.9}:
\begin{align}
&\|\Theta p\|^2_{H^2_{\bo,\cY}(\free)}-\|p\|^2_{H^2_{\bo,\cU}(\free)}\notag\\
&=\sum_{\alpha,\alpha^\prime\in\free:|\alpha^\prime|\le k}\sum_{\bi=1}^d
\bigg(\frac{\omega_{|\alpha|+|\alpha^\prime|+1}}{\omega^2_{|\alpha|+1}}
-\frac{\omega_{|\alpha^\prime|}}{\omega_{|\alpha|+1}}\bigg)\cdot
\left \|C\bA^\alpha B_\bi p_{\alpha^\prime}\right\|^2_{\cY}\notag\\
&\qquad +2 {\rm Re}
\bigg(\sum_{{\scriptsize\begin{array}{c}
\alpha^\prime=\gamma \bj \beta^\prime,\\
|\alpha^\prime|\le k\end{array}}}\sum_{\bi=1}^d\left(
\frac{\omega_{|\alpha|+|\alpha^\prime|+1}}{\omega_{|\alpha|+1}\cdot
\omega_{|\alpha|+|\gamma|+2}}-
\frac{\omega_{|\alpha^\prime|}}{\omega_{|\alpha|+1}\cdot\omega_{|\gamma|+1}}\right)\notag\\
&\qquad\qquad\qquad\qquad \cdot \big\langle C\bA^{\alpha}B_\bi p_{\alpha^\prime},
\,  C\bA^{\alpha j\gamma}B_\bj p_{\beta^\prime}\big\rangle_{\cY}\bigg).\notag
\end{align}
Making use of representations \eqref{18.15} and \eqref{18.16} we rewrite the latter equality as
\begin{align}
&\|\Theta p\|^2_{H^2_{\bo,\cY}(\free)}-\|p\|^2_{H^2_{\bo,\cU}(\free)}\notag\\
&=\sum_{v\in\free: |v|\ge 2}\sum_{\alpha \bi \alpha^\prime=v}
\bigg(\frac{\omega_{|v|}}{\omega^2_{|\alpha|+1}}
-\frac{\omega_{|\alpha^\prime|}}{\omega_{|\alpha|+1}}\bigg)\cdot
\left \|C\bA^\alpha B_\bi p_{\alpha^\prime}\right\|^2_{\cY}\notag\\
&\qquad +2 {\rm Re}
\bigg(\sum_{v\in\free:
|v|\ge 2}\sum_{{\scriptsize\begin{array}{c}\alpha \bi
\alpha^\prime=v=\beta \bj \beta^\prime\\
\alpha^\prime=\gamma \bj \beta^\prime,\\
|\alpha^\prime|\le k\end{array}}}\left(
\frac{\omega_{|v|}}{\omega_{|\alpha|+1}\cdot \omega_{|\beta|+1}}-
\frac{\omega_{|\alpha^\prime|}}{\omega_{|\alpha|+1} \cdot
\omega_{|\alpha^\prime|-|\beta^\prime|}}\right)\notag\\
&\qquad\qquad\qquad\qquad \cdot \big\langle C\bA^{\alpha}B_\bi p_{\alpha^\prime},
\,  C\bA^{\beta}B_\bj p_{\beta^\prime}\big\rangle_{\cY}\bigg),\notag
\end{align}
which can be written, since $\omega_0=1$, in a more compact form as in \eqref{18.8}.
\end{proof}

\begin{remark}  \label{R:determined}
There is an alternative way of writing the right-hand side of
\eqref{18.8} which will be more convenient for our purposes.  Fix a
word $v \in \free$ with $|v| \ge 2$ and consider a splitting of
$v$ as
\begin{equation}   \label{v-decom}
 v = \alpha \bi \alpha'
 \end{equation}
where $1 \le |\alpha'| \le |v| - 1$.
We note that once the length $i = |\alpha'|$ is specified then each
of the three factors $\alpha, \bi, \alpha'$ in the decomposition
\eqref{v-decom} is uniquely
determined.  We may therefore write $\alpha = \alpha_{v, i}$, $\bi =
\bi_{v, i}$, $\alpha' = \alpha'_{v, i}$. Similarly, in the
decomposition $v = \beta \bj \beta'$, we may write $\beta =
\alpha_{v, j}$, $\bj = \bi_{v,j}$, $\beta' = \alpha'_{v,j}$.
Then we also have
$$  |\alpha'_{v,i}| = i, \quad |\alpha_{v,i}| + 1 = |v| - i.
$$
Then the right-hand side of \eqref{18.8} has the form
\begin{align}
&  \sum_{v \in \free \colon |v| \ge 2} \sum_{i,j=1}^{|v|-1}
\left( \frac{ \omega_{|v|}}{\omega_{|v|-i} \, \, \omega_{|v| - j}} -
\frac{ \omega_{\max(i,j)}}{\omega_{|v| - \max(i,j)} \cdot \omega_{\max(||i|- |j||}} \right)
\notag \\
& \quad \quad \quad \quad
\cdot \langle C \bA^{\alpha_{v,i}} B_{\bi_{v,i}} p_{\alpha'_{v,i}},
C \bA^{\alpha_{v,j}} B_{\bi_{v,j}} p_{\alpha'_{v,j}} \rangle.
\label{rewrite}
\end{align}
\end{remark}

The following theorem presents sufficient conditions (in terms of the weight sequence $\bo$)
for the space $H^2_{\bo,\cY}(\free)$ to possess the expansive multiplier property.

\begin{theorem}
\label{T:18.3}
Let $\bo=\{\omega_j\}_{j\ge 0}$ be the weight sequence satisfying conditions \eqref{18.2}.
For every pair $(k,r)$   of positive integers $k<r$, define the symmetric
real
matrix
$$
M^{(k,r)}=\left[M^{(k,r)}_{ij}\right]_{i,j=1}^k
$$
with the entries $M^{(k,r)}_{ij}$ given by
\begin{equation}
M^{(k,r)}_{ij}=M^{(k,r)}_{ji}=1-\frac{\omega_j\omega_{r-i}}{\omega_r\omega_{j-i}}\quad\mbox{for}\quad
1\le i\le j\le k.
\label{18.23}
\end{equation}
If $\det M^{(k,r)}\ge 0$ for all $1\le k<r$, then $H^2_{\bo,\cY}(\free)$ possesses
the expansive multiplier property, i.e.,  condition
\eqref{18.5} holds for any $H^2_{\bo,\cY}(\free))$-inner power series $\Theta$.
\end{theorem}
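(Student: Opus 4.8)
The plan is to reduce the inequality \eqref{18.5} to a finite matrix-positivity statement about the $M^{(k,r)}$ and then feed in the determinant hypothesis. First, by Remark \ref{R:18.2} it suffices to prove $\|\Theta p\|_{H^2_{\bo,\cY}(\free)}\ge\|p\|_{H^2_{\bo,\cU}(\free)}$ for every $\cU$-valued polynomial $p$ with $p_\emptyset=0$; the general case $f\in H^2_{\cU}(\free)$ then follows by approximating $f$ by such polynomials, using that $M_\Theta\colon H^2_{\cU}(\free)\to H^2_{\bo,\cY}(\free)$ is bounded (indeed contractive, by Theorem \ref{T:12.2}) and that $\|\cdot\|_{H^2_{\bo,\cU}(\free)}\le\|\cdot\|_{H^2_{\cU}(\free)}$ since $\bo$ is non-increasing with $\omega_0=1$. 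Next I would realize $\Theta$ as in Theorem \ref{T:inner-real}, i.e.\ as the transfer function \eqref{inner-real} of an $\bo$-isometric, $\bo$-strongly stable pair $(C,\bA)$ embedded in a connection matrix satisfying the metric constraints \eqref{inner-metric}, and apply Lemma \ref{L:18.3} to express $\|\Theta p\|^2_{H^2_{\bo,\cY}(\free)}-\|p\|^2_{H^2_{\bo,\cU}(\free)}$ via \eqref{18.8}, in the reorganized form \eqref{rewrite} of Remark \ref{R:determined}.

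The heart of the argument is to show that each summand of \eqref{rewrite} indexed by a word $v$ with $|v|\ge2$ is nonnegative. Fix such a $v$ and write $r=|v|$. For each $i\in\{1,\dots,r-1\}$ the splitting $v=\alpha_{v,i}\,\bi_{v,i}\,\alpha'_{v,i}$ with $|\alpha'_{v,i}|=i$ is uniquely determined, so the $v$-summand is the Hermitian form $\sum_{i,j=1}^{r-1}c^{(v)}_{ij}\langle\xi_i,\xi_j\rangle_\cY$ in the $\cY$-vectors $\xi_i:=C\bA^{\alpha_{v,i}}B_{\bi_{v,i}}p_{\alpha'_{v,i}}$, where $c^{(v)}_{ij}=\frac{\omega_r}{\omega_{r-i}\omega_{r-j}}-\frac{\omega_{\max(i,j)}}{\omega_{r-\max(i,j)}\,\omega_{|i-j|}}$. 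A direct computation (take $i\le j$, so $\max(i,j)=j$ and $|i-j|=j-i$, then factor out $\frac{\omega_r}{\omega_{r-i}\omega_{r-j}}$) shows $c^{(v)}_{ij}=\frac{\omega_r}{\omega_{r-i}\omega_{r-j}}\,M^{(r-1,r)}_{ij}$ with $M^{(r-1,r)}_{ij}$ as in \eqref{18.23}. Since $p_{\alpha'}=0$ once $|\alpha'|>\deg p$, only indices $i,j\le\min(\deg p,\,r-1)=:k$ contribute, so the $v$-summand equals $\omega_r\sum_{i,j=1}^{k}\langle\omega_{r-i}^{-1}\xi_i,\omega_{r-j}^{-1}\xi_j\rangle_\cY\,M^{(k,r)}_{ij}$ (here $1\le k<r$); this is nonnegative for every choice of the $\xi_i$ precisely when $M^{(k,r)}\succeq0$. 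Summing over $v$ then yields $\|\Theta p\|^2\ge\|p\|^2$.

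Thus everything reduces to the matrix-theoretic claim: the hypothesis $\det M^{(k,r)}\ge0$ for all $1\le k<r$ forces $M^{(k,r)}\succeq0$ for all $1\le k<r$. This is the step I expect to be the main obstacle, because for fixed $r$ the matrices $M^{(k,r)}$, $k=1,\dots,r-1$, are the nested leading principal submatrices of $M^{(r-1,r)}$, so the hypothesis only says that all leading principal minors of $M^{(r-1,r)}$ are nonnegative, which for a general symmetric matrix does not imply positive semidefiniteness. I would run an induction on $k$: if $M^{(k-1,r)}\succeq0$, then Cauchy interlacing shows $M^{(k,r)}$ has at most one negative eigenvalue, and $\det M^{(k,r)}\ge0$ then excludes the case of exactly one negative eigenvalue unless a leading minor vanishes; the resulting degenerate chain would have to be ruled out using the explicit entries \eqref{18.23}, conveniently rewritten through the ratios $\rho_\ell=\omega_{\ell-1}/\omega_\ell\ge1$ of the admissible weight. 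Alternatively, and perhaps more cleanly, I would approximate the given $\bo$ from below by admissible weights $\bo^{(\varepsilon)}$ for which every $\det M^{(k,r)}$ is strictly positive (so $M^{(k,r)}(\bo^{(\varepsilon)})\succ0$ by Sylvester), prove \eqref{18.5} for each $\bo^{(\varepsilon)}$, and pass to the limit $\varepsilon\downarrow0$.

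Once $M^{(k,r)}\succeq0$ is established for all $1\le k<r$, the per-word forms above are nonnegative, hence so is \eqref{rewrite}, hence $\|\Theta p\|_{H^2_{\bo,\cY}(\free)}\ge\|p\|_{H^2_{\bo,\cU}(\free)}$ for all polynomials $p$, and the density argument from the first paragraph finishes the proof. Specializing to $\bo=\bmu_2$ (and $d=1$) should recover the classical expansive-multiplier theorem for the Bergman space $\cA_2$.
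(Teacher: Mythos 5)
Your argument up through the rewriting step is exactly the paper's proof: reduce to polynomials with $p_\emptyset=0$ via Remark \ref{R:18.2}, use Lemma \ref{L:18.3} in the reorganized form \eqref{rewrite} of Remark \ref{R:determined}, and observe that the coefficient matrix of each $v$-summand is a diagonal congruence of $M^{(k,r)}$ --- the paper records this as $N^{(r,k)}=\omega_r\,\Omega^{-1}M^{(k,r)}\Omega^{-1}$ with $\Omega=\operatorname{diag}(\omega_{r-1},\dots,\omega_{r-k})$, which is the same factorization you perform entrywise --- so that positive semidefiniteness of $M^{(k,r)}$ makes every summand of \eqref{rewrite} nonnegative. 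Up to that point your proposal and the paper coincide (the paper does not spell out the limiting passage from polynomials to general $f\in H^2_{\cU}(\free)$, but it is routine, as you indicate, using contractivity of $M_\Theta$ from Theorem \ref{T:12.2} and $\omega_j\le\omega_0=1$).

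Where you diverge is your final ``matrix-theoretic claim,'' and there your proposal stalls. The paper does not attempt to deduce $M^{(k,r)}\succeq 0$ from nonnegativity of the leading principal minors alone: its proof simply uses positive semidefiniteness of $M^{(k,r)}$ directly, and in the two applications stated immediately after the theorem ($\bo=\bmu_2$ and $\bo=\bmu_3$) the determinants are shown to be strictly positive, so positive definiteness follows from Sylvester's criterion and no such implication is ever needed. You are right that, read literally, ``$\det M^{(k,r)}\ge 0$ for all $1\le k<r$'' is weaker than positive semidefiniteness (nonnegative leading minors do not suffice for a general symmetric matrix), so your objection is a fair criticism of the wording of the hypothesis; but neither of your proposed repairs is actually carried out. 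The interlacing induction leaves the degenerate case open (``would have to be ruled out using the explicit entries''), and the approximation route presupposes, without justification, that one can perturb $\bo$ within the admissible class so that all of the infinitely many determinants $\det M^{(k,r)}$ become simultaneously strictly positive --- this stability is not evident and is not how the paper proceeds. If you instead read the hypothesis as $M^{(k,r)}\succeq 0$ for all $1\le k<r$ (or as strict positivity of the determinants, which gives positive definiteness via Sylvester), your argument is complete and is the paper's argument.
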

\begin{proof} For fixed $0<k<r$, let
$\Omega$ be the $k\times k$ diagonal matrix with the weights
$\omega_{r-1}, \omega_{r-2},\ldots\omega_{r-k}$ on the main diagonal. It is readily checked that
$$
N^{(r,k)}:=\left[\frac{\omega_{r}}{\omega_{r-j}\omega_{r-i}}-
\frac{\omega_{{\rm max}(i,j)}}{\omega_{r-{\rm max}(i,j)}\cdot \omega_{|j-i|}}
\right]_{i,j=1}^k= \omega_r \Omega^{-1} M^{(r,k)}\Omega^{-1},
$$
and hence, the assumption $M^{(r,k)}\ge 0$ implies $N^{(r,k)}\ge 0$.
Note that the right-hand side of \eqref{18.8}, rewritten as in
\eqref{rewrite}, becomes
$$  
 \sum_{v \in \free \colon |v| \ge 2}  \sum_{i,j=1}^{|v|-1} [N^{(|v|,
 |v|-1}]_{i,j} \cdot \langle C \bA^{\alpha_{v,i}} B_{\bi_{v,i}} p_{\alpha'_{v,i}},
C \bA^{\alpha_{v,j}} B_{\bi_{v,j}} p_{\alpha'_{v,j}} \rangle.
$$
Since $N^{(r,k)}$ is positive semidefinite, we see immediately that
this expression is nonnegative for any choice of operators $A$, $B_j$,
$C$ and vectors $p_\alpha\in\cU$. Therefore, inequality \eqref{2.4a} holds for all
polynomials vanishing at the  origin and now Remark \ref{R:18.2} completes the proof.
\end{proof}
If $\omega_j=\mu_{2,j}=\frac{1}{j+1}$, then \eqref{18.23} takes the form
$M^{(k,r)}_{ij}=\frac{i(r-j)}{(j+1)(r-i+1)}$, and it was shown in \cite{BBkarm} that
$$
\det M^{(k,r)}=\frac{2^k(r-k)(r+1)^{k-1}(r-k+1)!}{r^2(k+1) (k+1)!\, (r-1)!}\quad\mbox{for all}\quad 2\le
k<r.
$$
Thus, $\det M^{(k,r)}>0$ for all $1\le k<r$ and thus,
the expansive multiplier property holds in the space $\cA_{2,\cY}(\free)$ by Theorem \ref{T:18.3}.

\smallskip

If $\omega_j=\mu_{3,j}=\frac{2}{(j+1)(j+2)}$, then \eqref{18.23} takes the form
$$
M^{(k,r)}_{ij}=M^{(k,r)}_{ji}=1-\frac{(r+1)(r+2)(j-i+1)(j-i+2)}{(j+1)(j+2)(r-i+1)(r-i+2)}.
$$
It was shown in \cite{BBkarm} that also in this case, $\det M^{(k,r)}>0$ for all $1\le k<r$.
By Theorem \ref{T:18.3}, we conclude that the Bergman-Fock space  $\cA_{3,\cY}(\free)$
also possesses the expansive multiplier property.

\section[Extremal solutions of interpolation problems]
{Bergman-inner multipliers as extremal solutions of interpolation problems}
\label{S:ncint}

The Bergman-inner function associated with a sequence of nonzero points $\{z_j\}$ in the unit disk 
${\mathbb D}$  appeared in \cite{heden1, DKSS} as a unit-norm element $f_{0}$ of the subspace 
$\cM$ of the Bergman space consisting of all functions $f$ with zero set containing $\{z_{j}\}$ 
which achieves the maximal possible value of $|f(0)|$. In this section we show 
$H^2_{\bo,\cY}(\free)$-Bergman-inner multipliers (see Definition \ref{D:defin})
appear as extremal solutions of certain operator-argument interpolation problems in the space 
$H^2_{\bo, \cL(\cU, \cY)}: = H^2_{\bo}(\free)\otimes\cL(\cU,\cY)$.

\smallskip

Being equipped with the $\cL(\cU)$-valued inner product
\begin{equation} \label{17.1}
[F,G]=\sum_{\alpha\in\free}\omega_{|\alpha|}G_\alpha^*F_\alpha
\quad\mbox{for}\quad F(z) = {\displaystyle\sum_{\alpha \in \free} F_{\alpha} z^{\alpha}},
\; \; G(z) = {\displaystyle\sum_{\alpha \in \free} G_{\alpha} z^{\alpha}},
\end{equation}
the space $H^2_{\bo}(\free) \otimes \cL(\cU,
\cY)$ becomes a $C^{*}$-module (sometimes also called a Hilbert
module) over the $C^{*}$-algebra $\cA = \cL(\cU)$ (see e.g.\ \cite{BlM}).
Note that here we take the inner product to be linear in the first
argument (as is standard for Hilbert spaces for the case where the
$C^{*}$-algebra $\cA$ is equal to the complex numbers ${\mathbb C}$
but is the reverse of the standard convention for $C^{*}$-modules).
It is readily seen from \eqref{17.1} and the definition \eqref{18.1} of the norm in $H^2_{\bo}(\free)$
that
\begin{equation}
[F,F]_{H^2_{\bo,\cL(\cU, \cY)}(\free)}=I_\cU\quad\Longleftrightarrow
\quad \|Fu\|_{H^2_{\bo,\cY}(\free)}=\|u\|_\cU\quad
\mbox{for all $u\in\cU$}.
\label{17.2}
\end{equation}
For an $\bo$-output-stable pair $(E, {\bf T})$ with ${\bf T}=(T_1,\ldots,T_d)$, we define a
{\em left-tangential functional calculus}
$F \to (E^{*}F)^{\wedge  L}(\bT^{*})$ on $H^2_{\bo,\cL(\cU,\cY)}(\free)$ by
          \begin{equation}
\label{2.1}
(E^{*} F)^{\wedge L}(\bT^{*}) = \sum_{ \alpha\in\free}
             \bT^{* \alpha^\top} E^{*} F_{\alpha}
        \end{equation}
for $F(z)\in H^2_{\bo,\cL(\cU,\cY)}(\free)$ as in \eqref{17.1}. The computation
        \begin{align*}
\bigg\langle \sum_{\alpha\in\free} \bT^{* \alpha^\top}E^{*}
F_{\alpha}u, \; x \bigg\rangle_{\cX} & =
\sum_{\alpha\in\free} \left\langle F_{\alpha}u, \; E\bT^{\alpha}x \right \rangle_{\cY} \\
& = \sum_{\alpha\in\free} \omega_{|\alpha|}
\left\langle F_{\alpha}u, \; \omega_{|\alpha|}^{-1}E\bT^{\alpha}x \right \rangle_{\cY}\\
&=\langle Fu, \; \mathcal {O}_{\bo,E,\bT} x\rangle_{H^2_{\bo,\cY}(\free)}
        \end{align*}
shows that the $\bo$-output-stability of the pair $(E, \bT)$ is exactly
what is needed to verify that the infinite series in the definition
\eqref{2.1} of $(E^{*}F)^{\wedge L}(\bT^{*})$ converges
in the weak topology on $\cX$. In fact, the left-tangential
evaluation with operator argument $f \to (E^{*}F)^{\wedge L}(\bT^{*})$
amounts to the adjoint of the $\bo$-observability operator:
$$
            (E^{*} F)^{\wedge L}(\bT^{*}) = \cO_{\bo,E, \bT}^{*}
            F \quad\text{for}\quad F \in H^2_{\bo,\cL(\cU,\cY)}(\free)
$$
and suggests the interpolation problem  with
operator argument {\bf OAP}$({\bf T}, E, N)$ whose data set consists
of a $d$-tuple ${\bf T}=(T_1,\ldots,T_d)$ and operators
$E\in{\mathcal L}(\cX,\cY)$ and $F\in{\mathcal L}(\cX,\cU)$ such that
the pair $(E,{\bT})$ is $\bo$-output stable. We assume in addition that $(E,{\bT})$
is exactly $\bo$-observable so that the gramian $\cG_{\bo,E, \bT}$ is strictly positive definite,

\medskip
\noindent
{\bf OAP$({\bf T}, E, N)$:} {\em
Given the data set $\{\bT, E, N\}$ as above,
find all $F\in H^2_{\bo,\cL(\cU,\cY)}(\free)$ such that
\begin{equation}
(E^*F)^{\wedge L}({\bT}^*):= \cO_{\bo,E, \bT}^{*}M_F\vert_{\cU}=N^*.
\label{2.4}
\end{equation}}
\begin{theorem}
All solutions $F\in H^2_{\bo,\cL(\cU,\cY)}(\free)$ of the problem \eqref{2.4}
are parametrized by the formula
\begin{equation}
F(z)=F_{\rm min}(z)+G(z),
\label{march9u}
\end{equation}
where
\begin{equation}
F_{\rm min}(z)=\sum_{\alpha\in\free}\omega_{|\alpha|}^{-1}E{\bf T}^\alpha \cG_{\bo,E,\bT}^{-1}N^*z^\alpha=
ER_{\bo}(Z(z)T)\cG_{\bo,E,\bT}^{-1}N^*
\label{march9a}
\end{equation}
and where $G(z) \in H^2_{\bo, \cL(\cU, \cY)}$ subject to $M_{G}|_{\cU}
\subset  ({\rm Ran} \, \cO_{\bo,E, \bT})^\perp$ is a free parameter.
Furthermore, the representation \eqref{march9a} is orthogonal with respect to the
inner product \eqref{17.1}. Therefore,
$$
[F,F]_{H^2_{\bo, \cL(\cU, \cY)}(\free)}=[F_{\rm min},F_{\rm
min}]_{H^2_{\bo, \cL(\cU, \cY)}(\free)}+[G,G]_{H^2_{\bo, \cL(\cU, \cY)}(\free)}
$$
so that $F_{\rm min}$ has the minimal $\cL(\cU)$-valued
self-inner-product (and hence also the minimum possible norm) among all solutions to the problem
\eqref{2.4}.
 \label{T:17.1}
\end{theorem}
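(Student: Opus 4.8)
\textbf{Proof plan for Theorem \ref{T:17.1}.}
The plan is to recognize that the interpolation condition \eqref{2.4} is an affine condition on $F$ in the $C^*$-module $H^2_{\bo,\cL(\cU,\cY)}(\free)$, and so the solution set (once nonempty) is $F_{\rm min} + \ker$, where $F_{\rm min}$ is the minimal-norm solution and the ``$\ker$'' consists of those $F$ for which the left-hand side of \eqref{2.4} vanishes. So the first step is to translate \eqref{2.4} into Hilbert-space language: using the computation already displayed after \eqref{2.1}, namely $\langle Fu, \cO_{\bo,E,\bT}x\rangle_{H^2_{\bo,\cY}(\free)} = \langle (E^*F)^{\wedge L}(\bT^*)u, x\rangle_\cX$, condition \eqref{2.4} says precisely that $\cO_{\bo,E,\bT}^* M_F|_\cU = N^*$, i.e.\ that for each $u\in\cU$ the vector $M_F u \in H^2_{\bo,\cY}(\free)$ satisfies $\cO_{\bo,E,\bT}^*(M_F u) = N^* u$. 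The homogeneous version of this is $M_F u \perp \operatorname{Ran}\cO_{\bo,E,\bT}$ for all $u$, which is exactly the condition on $G$ in the statement. Thus, modulo producing one particular solution, the parametrization \eqref{march9u} is immediate, and the orthogonality of the decomposition is built in: $F_{\rm min}u \in \operatorname{Ran}\cO_{\bo,E,\bT}$ (by the realization formula \eqref{obsreal} applied to the pair $(E,\bT)$ with ``$C$''$=E$ and ``$\bA$''$=\bT$, since $\cG_{\bo,E,\bT}^{-1}N^* u \in \cX$), while $Gu \perp \operatorname{Ran}\cO_{\bo,E,\bT}$. Since the inner product \eqref{17.1} is exactly the $\cL(\cU)$-valued form built from the Hilbert-space inner products $\langle F u, F u'\rangle_{H^2_{\bo,\cY}(\free)}$, the pointwise orthogonality $F_{\rm min} u \perp G u'$ for all $u,u'$ gives $[F_{\rm min}, G] = 0$, hence the Pythagorean identity $[F,F] = [F_{\rm min},F_{\rm min}] + [G,G]$ and the extremality claim follows because $[G,G] \succeq 0$.

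The second step is to verify that $F_{\rm min}$ as defined in \eqref{march9a} is in fact a solution, i.e.\ that $\cO_{\bo,E,\bT}^* M_{F_{\rm min}}|_\cU = N^*$. The key observation is that $M_{F_{\rm min}} u = \cO_{\bo,E,\bT}\big(\cG_{\bo,E,\bT}^{-1} N^* u\big)$: indeed, by \eqref{march9a} and \eqref{ncobsophardy}, $F_{\rm min}(z) u = \sum_{\alpha\in\free} \omega_{|\alpha|}^{-1}(E\bT^\alpha \cG_{\bo,E,\bT}^{-1}N^* u) z^\alpha = \cO_{\bo, E, \bT}(\cG_{\bo,E,\bT}^{-1}N^*u)$. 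Applying $\cO_{\bo,E,\bT}^*$ and recalling $\cO_{\bo,E,\bT}^* \cO_{\bo,E,\bT} = \cG_{\bo,E,\bT}$ gives $\cO_{\bo,E,\bT}^* M_{F_{\rm min}} u = \cG_{\bo,E,\bT}\cG_{\bo,E,\bT}^{-1}N^* u = N^* u$, as required; here we use that $(E,\bT)$ is exactly $\bo$-observable so $\cG_{\bo,E,\bT}$ is boundedly invertible. One must also check that $F_{\rm min}$ genuinely lies in $H^2_{\bo,\cL(\cU,\cY)}(\free)$, which again follows from $\bo$-output stability: for each $u$, $F_{\rm min} u = \cO_{\bo,E,\bT}(\cG_{\bo,E,\bT}^{-1}N^* u) \in H^2_{\bo,\cY}(\free)$ with norm controlled by $\|\cO_{\bo,E,\bT}\|\,\|\cG_{\bo,E,\bT}^{-1}\|\,\|N^*\|\,\|u\|$.

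The third step is to record that every solution $F$ has $M_F|_\cU - M_{F_{\rm min}}|_\cU$ mapping $\cU$ into $(\operatorname{Ran}\cO_{\bo,E,\bT})^\perp$, so $G := F - F_{\rm min}$ is of the advertised form, and conversely $F_{\rm min} + G$ solves \eqref{2.4} for any such $G$; this is just linearity of $\cO_{\bo,E,\bT}^* M_{(\cdot)}|_\cU$. Then the orthogonality and extremality conclusions follow as in the first paragraph. I do not expect a serious obstacle here: the only mildly delicate points are the bookkeeping that the $C^*$-module inner product \eqref{17.1} is compatible with the scalar Hilbert-space inner product on $H^2_{\bo,\cY}(\free)$ (so that pointwise-in-$\cU$ orthogonality upgrades to module orthogonality), and the use of exact $\bo$-observability to invert $\cG_{\bo,E,\bT}$. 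If anything is ``the hard part,'' it is making sure the functional-calculus identity $(E^*F)^{\wedge L}(\bT^*) = \cO_{\bo,E,\bT}^* F$ is invoked with the correct transpose/adjoint conventions, but this is already established in the discussion surrounding \eqref{2.1}, so the argument is essentially a direct assembly of earlier facts (the realization formula \eqref{obsreal}/\eqref{ncobsophardy}, the identity $\cO^*\cO = \cG$, and exact $\bo$-observability).
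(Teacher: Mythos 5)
Your proposal is correct and follows essentially the same route as the paper: you verify that $F_{\rm min}u=\cO_{\bo,E,\bT}(\cG_{\bo,E,\bT}^{-1}N^*u)$ solves \eqref{2.4} via $\cO^*_{\bo,E,\bT}\cO_{\bo,E,\bT}=\cG_{\bo,E,\bT}$ (the paper does the same computation written out in power series), identify the homogeneous solutions as those $G$ with $M_G|_\cU\subset(\operatorname{Ran}\cO_{\bo,E,\bT})^\perp$, and obtain the orthogonality of \eqref{march9u} from $F_{\rm min}u\in\operatorname{Ran}\cO_{\bo,E,\bT}$. No gaps; the compatibility of the $\cL(\cU)$-valued pairing \eqref{17.1} with the scalar inner products $\langle Fu,Gu'\rangle$ that you invoke is exactly what makes the Pythagorean identity work, as in the paper.
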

\begin{proof}
We start with computations
\begin{align*}
&[F_{\rm min},F_{\rm min}]
=\sum_{\alpha\in\free}\omega_{|\alpha|}^{-1}
N\cG_{\bo,E,\bT}^{-1}{\bf T}^{*\alpha^\top}E^*E{\bf T}^\alpha \cG_{\bo,E,\bT}^{-1}N^*
=N\cG_{\bo,E,\bT}^{-1}N^*,\\
&(E^*F_{\rm min})^{\wedge L}({\bT}^*)=\sum_{\alpha\in\free}\omega_{|\alpha|}^{-1}
{\bf T}^{*\alpha^\top}E^*E{\bf T}^\alpha\cG_{\bo,E,\bT}^{-1}N^*=N^*
\end{align*}
showing that $F_{\rm min}(z)$ belongs to $H^2_{\bo, \cL(\cU, \cY)}(\free)$ and satisfies condition
\eqref{2.4}.
Next observe that $G\in H^2_{\bo, \cL((\cU, \cY)}(\free)$ satisfies the homogeneous condition
$\cO_{\bo,E, \bT}^{*}G(z)u$ $=0$ if and only if $Gu$ belongs to $({\rm Ran} \, \cO_{\bo,E, \bT})^\perp$.
Therefore,
\begin{equation}
(E^*G)^{\wedge L}({\bT}^*):= \cO_{\bo,E, \bT}^{*}M_G\vert_{\cU}=0 \; 
\Longleftrightarrow \; M_{G}|_{\cU} \subset ({\rm Ran} \, \cO_{\bo,E, \bT})^\perp,
\label{2.4a}
\end{equation}
and representation \eqref{march9u} follows. The representation is
orthogonal with respect to the inner product \eqref{17.1},
since $F_{\rm min}(z)u$ belongs to ${\rm Ran} \, \cO_{\bo,E, \bT}$ for any $u\in\cU$.
\end{proof}

A more detailed parametrization in \eqref{march9u} can be obtained by invoking any one of 
Beurling-Lax type theorems presented above describing the $\bS_{\bo,R}$-invariant subspace
$({\rm Ran} \, \cO_{\bo,E,\bT})^\perp$ of $H^2_{\bo,\cY}(\free)$.

\smallskip

We next consider a more structured interpolation problem in $H^2_{\bo, \cL(\cU, \cY)}(\free)$.
Given an $\bo$-isometric pair $(C,\bA)$ with $C\in\cL(\cX,\cY)$ and
$\bA=(A_1,\ldots,A_d)\in\cL(\cX)^d$, and given $D\in\cL(\cU,\cY)$, let
\begin{align}
& T_j=\begin{bmatrix}0 & 0\\ 0 & A_j\end{bmatrix} \in \cL(\cY \oplus
\cX) \text{ for } j=1,\ldots,d, \notag \\
& E=\begin{bmatrix}I_{\cY} & C\end{bmatrix} \in \cL(\cY \oplus \cX,
\cY), \quad
N=\begin{bmatrix}D^*& 0 \end{bmatrix} \in \cL(\cY \oplus \cX, \cU).
\label{march9c}
\end{align}
Then, for $F(z) = \sum_{\alpha \in\free} F_{\alpha} z^{\alpha} \in
H^2_{\bo, \cL(\cU, \cY)}(\free)$,
$$
(E^*F)^{\wedge L}({\bT}^*)=\begin{bmatrix} F_\emptyset \\ C^*F_\emptyset\end{bmatrix}+
\sum_{\alpha\in\free:|\alpha|>0}\begin{bmatrix}0 & 0\\ 0 & \bA^{*\alpha^\top}\end{bmatrix}
\begin{bmatrix}I_{\cY} \\ C^*\end{bmatrix} F_\alpha=
\begin{bmatrix} F_\emptyset \\ (C^*F)^{\wedge L}({\bA}^*)\end{bmatrix}
$$
and then the interpolation condition \eqref{2.4} amounts to
\begin{equation}
(C^*F)^{\wedge L}({\bA}^*)=0\quad\mbox{and}\quad F_\emptyset=D.
\label{march9e}
\end{equation}
Observe that by \eqref{march9c} and the power series representations \eqref{4.3} and \eqref{2.13pre},
\begin{equation}
\cG_{\bo,E,\bT}=\begin{bmatrix}I_{\cY} & C \\ C^* & \cG_{\bo,C,\bA}\end{bmatrix},\quad
ER_{\bo}(Z(z)T)=I_{\cY}+ CR_{\bo}(Z(z)A).
\label{march12}
\end{equation}
The assumption that the pair $(E, \bT)$ be exactly $\bo$-observable
means that $\cG_{\bo,E,\bT}$ be invertible in $\cL(\cX)$;  from the first formula in
\eqref{march12} and a Schur-complement calculation, we see that this
is equivalent to the condition that $\cG_{\bo,C,\bA}\succ C^{ *}C$ 
In particular, $\cG_{\bo,C,\bA}\succ 0$, i.e., $(C, \bA)$ is
exactly $\bo$-observable.  We next clarify when the converse direction holds.

\begin{proposition}   \label{P:GnETinv}
Let $(C, \bA)$ be an $\bo$-isometric output pair. Then $\cG_{\bo,C,\bA}\succ C^{*} C$ 
if and only if 
$(C, \bA)$ is exactly $\bo$-observable (so $\cG_{\bo,C,\bA}\succ 0)$
and ${\displaystyle\sum_{j=1}^{d} A_{j}^{*} A_{j}} \succ 0$.
\end{proposition}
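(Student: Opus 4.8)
\textbf{Proof plan for Proposition \ref{P:GnETinv}.}

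The plan is to establish the equivalence by exploiting the $\bo$-isometric hypothesis, which gives the identity $\Gamma_{\bo, \bA}[I_\cX] = C^*C$ (see \eqref{1.23g}), together with the fundamental recursion \eqref{1.6g'} and the observability gramian identities of Section \ref{S:shifted-grammian}. First I would recall from Proposition \ref{P:wghtSteinid} (the weighted Stein identity \eqref{4.34} with $k=0$) that since $\omega_0 = 1$,
$$
  \cG_{\bo, C, \bA} = \sum_{j=1}^d A_j^* \Gr_{\bo, 1, C, \bA} A_j + C^*C.
$$
Rearranging, the condition $\cG_{\bo, C, \bA} \succ C^*C$ is therefore \emph{equivalent} to
$$
  \sum_{j=1}^d A_j^* \Gr_{\bo, 1, C, \bA} A_j \succ 0,
$$
i.e.\ to $\operatorname{Ker}\bigl(\sum_{j=1}^d A_j^* \Gr_{\bo, 1, C, \bA} A_j\bigr) = \{0\}$. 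This reduces the whole proposition to analyzing this one positivity condition.

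For the ``if'' direction, assume $(C,\bA)$ is exactly $\bo$-observable and $\sum_{j=1}^d A_j^* A_j \succ 0$. By \eqref{grmonotone} in Proposition \ref{P:wghtSteinid} we have $\Gr_{\bo, 1, C, \bA} \succeq \cG_{\bo, C, \bA} \succ 0$, so $\Gr_{\bo, 1, C, \bA}$ is bounded below, say $\Gr_{\bo, 1, C, \bA} \succeq \varepsilon^2 I_\cX$. Writing $A = \operatorname{col}_{1 \le j \le d}[A_j]$ as in \eqref{1.23pre}, we then have
$$
  \sum_{j=1}^d A_j^* \Gr_{\bo, 1, C, \bA} A_j = A^* (\Gr_{\bo, 1, C, \bA} \otimes I_d) A \succeq \varepsilon^2 A^* A \succ 0
$$
since $A^*A = \sum_{j=1}^d A_j^* A_j \succ 0$ by hypothesis. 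This gives $\cG_{\bo, C, \bA} \succ C^*C$ by the equivalence above. For the ``only if'' direction, assume $\cG_{\bo, C, \bA} \succ C^*C$, hence $\sum_{j=1}^d A_j^* \Gr_{\bo, 1, C, \bA} A_j \succ 0$. Strict positivity of this sum forces $A^*A \succ 0$: indeed if $A x = 0$ for some $x$, then each $A_j x = 0$ and the sum annihilates $x$, contradicting strict positivity; so $\sum_{j=1}^d A_j^*A_j \succ 0$. It remains to deduce exact $\bo$-observability, i.e.\ $\cG_{\bo, C, \bA} \succ 0$; but this I already noted follows directly, since $\cG_{\bo, C, \bA} \succeq C^*C + \bigl(\sum_j A_j^* \Gr_{\bo,1,C,\bA} A_j\bigr) \succ 0$ as the sum of a positive semidefinite operator and a strictly positive one (using $C^*C \succeq 0$ and $\Gr_{\bo, 1, C, \bA} \succeq 0$, which hold since $(C, \bA)$ is $\bo$-isometric hence $\bo$-output stable by Lemma \ref{L:7.6}, so all the quantities in \eqref{4.34} are well defined and nonnegative).

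The main obstacle I anticipate is a subtle one of bootstrapping: to run the ``if'' direction I used $\Gr_{\bo, 1, C, \bA} \succeq \cG_{\bo, C, \bA}$, which requires $(C, \bA)$ to be $\bo$-output stable so that $\Gr_{\bo, 1, C, \bA}$ is a bona fide bounded operator—this is guaranteed because an $\bo$-isometric pair is automatically $\bo$-output stable (Lemma \ref{L:7.6}(1), $\bo$-contractivity with $H = I_\cX$, or directly from $\Gamma_{\bo, \bA}[I_\cX] = C^*C \preceq I$ plus part (1) of Theorem \ref{T:7.2}), so no circularity arises. The other point to handle carefully is that strict positivity of $A^*(\Gr_{\bo,1,C,\bA} \otimes I_d)A$ genuinely needs $\Gr_{\bo,1,C,\bA}$ bounded below and not merely positive definite in infinite dimensions; this is exactly what \eqref{grmonotone} combined with exact $\bo$-observability provides, so the argument closes.
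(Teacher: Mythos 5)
Your reduction via the weighted Stein identity \eqref{4.34} with $k=0$ and the monotonicity \eqref{grmonotone} is exactly the paper's route, and your ``if'' direction coincides with the paper's argument (bounded-below $\Gr_{\bo,1,C,\bA}$ plus $\sum_j A_j^*A_j \succ 0$ gives $\cG_{\bo,C,\bA}-C^*C = \sum_j A_j^*\Gr_{\bo,1,C,\bA}A_j \succeq \varepsilon^2 \sum_j A_j^*A_j \succ 0$). The bootstrapping remark about $\bo$-output stability of an $\bo$-isometric pair is also fine.

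There is, however, a genuine gap in your ``only if'' direction. In this paper $X \succ 0$ means \emph{strictly} positive definite, i.e.\ bounded below (Definition \ref{D:op-ineq}(3)), not merely injective; your parenthetical ``i.e.\ to $\operatorname{Ker}(\sum_j A_j^*\Gr_{\bo,1,C,\bA}A_j)=\{0\}$'' already conflates the two, and the error propagates: from $\sum_j A_j^*\Gr_{\bo,1,C,\bA}A_j \succ 0$ you conclude $\sum_j A_j^*A_j \succ 0$ by the kernel argument ``$Ax=0$ would contradict strict positivity.'' That argument only yields injectivity of $A^*A$, and in infinite dimensions injectivity does not imply bounded below, so the required conclusion $\sum_j A_j^*A_j \succ 0$ does not follow. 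The paper closes this step quantitatively: since $\Gr_{\bo,1,C,\bA} \preceq \|\Gr_{\bo,1,C,\bA}\|\, I_\cX$, one has
$$
\sum_{j=1}^d A_j^*A_j \;\succeq\; \frac{1}{\|\Gr_{\bo,1,C,\bA}\|}\sum_{j=1}^d A_j^*\Gr_{\bo,1,C,\bA}A_j \;\succ\; 0,
$$
which does deliver a lower bound. With that replacement (your deduction of exact $\bo$-observability from $\cG_{\bo,C,\bA} \succeq \sum_j A_j^*\Gr_{\bo,1,C,\bA}A_j \succ 0$ is fine as is), your proof agrees with the paper's.
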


\begin{proof} If $\cG_{\bo,C,\bA}\succ C^{*}C$, then $\Gr_{\bo,1,C,\bA} \succeq \cG_{\bo,C,\bA} \succ 0$,
by \eqref{grmonotone}.
From the identity \eqref{4.34} applied with $k=0$, we have
    $$
    \sum_{j=1}^{d} A_{j}^{*} \Gr_{\bo,1,C,\bA} A_{j} = \cG_{\bo,C,\bA} -
    C^{*}C \succ 0
    $$
and therefore,    
    $$
    \sum_{j=1}^{d} A_{j}^{*} A_{j} \succeq \frac{1}{\| \Gr_{\bo,1,C,\bA}
    \|} \sum_{j=1}^{d} A_{j}^{*} \Gr_{\bo,1,C,\bA} A_{j} \succ 0
    $$
which verifies the ``only if" part of the statement.

\smallskip

Conversely, if $\cG_{\bo,C,\bA}\succeq  \delta I_{\cX}$ for some $\delta > 0$ and 
$\sum_{j=1}^{d} A_{j}^{*} A_{j} \succ 0$, then  $\Gr_{\bo,1,C,\bA} \succeq \cG_{\bo,C,\bA}\succeq 
\delta I_{\cX}$, and hence,
  $$
\cG_{\bo,C,\bA} - C^{*}C = \sum_{j=1}^{d} A_{j}^{*} \Gr_{\bo,1,C, \bA}
A_{j} \succeq  \delta \cdot \sum_{j=1}^{d} A_{j}^{*} A_{j} \succ 0
$$
which completes the proof.
\end{proof}

We note that the condition $\sum_{j=1}^{d} A_{j}^{*} A_{j}\succ 0$ can be viewed as a version of the assumption that
none of the specified zero locations $\{z_{j} \colon j =
1,2,\dots\}$ for the shift-invariant subspace $\cM$ are at the origin
in the classical case mentioned above.

\smallskip

We now consider the following extremal problem:

\medskip

\noindent
\textbf{Extremal Interpolation Problem (EP):}
{\em Given an exactly $\bo$-observable pair $(C,\bA)$ with $\Gr_{\bo,1, C \bA}$ positive  definite,
find an injective operator $D\in\cL(\cU,\cY)$ acting
from an auxiliary Hilbert space $\cU$ into $\cY$ so that the minimal
norm solution $F_{\rm min}$ to the problem
\eqref{march9e} satisfies $[F_{\rm min},F_{\rm min}]=I_{\cU}$.}

\smallskip

The solution of the Extremal Interpolation Problem is as follows.

\begin{theorem}  \label{T:extint}  Suppose that $(C, \bA)$ is an
    exactly $\bo$-observable output pair. Then any solution $F_{\rm min}$ of the
   {\rm \textbf{EP}} is $H^2_{\bo,\cY}$-Bergman-inner.  Moreover,
   any such solution $F_{\rm min}$ is given by
\begin{equation}   \label{Fmin}
 F_{\rm min}(z) = D + C R_{\bo,1}(Z(z) A) Z(z) B
 \end{equation}
where
\begin{equation}   \label{Fminbd}
 B = - A (\cG_{\bo,C,A} - C^{*}C)^{-1} C^{*} D\quad\mbox{and}\quad
D = \big( I - C \cG_{\bo,C,\bA}^{-1}C^{*} \big)^{\frac{1}{2}} V,
\end{equation}
with $V$ be any isometry from an auxiliary Hilbert space $\cU$
onto $\overline{\operatorname{Ran}} (I - C \cG_{\bo,C,\bA}^{-1}
C^{*})^{\frac{1}{2}}$.
\end{theorem}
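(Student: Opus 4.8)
The plan is to start from Theorem \ref{T:17.1}, applied to the structured data \eqref{march9c}, and combine it with the metric-constraint machinery of Lemma \ref{L:6.8} / Theorem \ref{T:inner-real}. First I would recall that for the data \eqref{march9c} the interpolation condition \eqref{2.4} reduces to \eqref{march9e}, and that by Proposition \ref{P:GnETinv} the exact $\bo$-observability of the associated pair $(E,\bT)$ together with injectivity of $D$ is governed by the condition $\cG_{\bo,C,\bA} \succ C^*C$ together with $\sum_j A_j^* A_j \succ 0$. Using the formulas \eqref{march12} for $\cG_{\bo,E,\bT}$ and $ER_\bo(Z(z)T)$, the minimal-norm solution $F_{\min}$ from \eqref{march9a} can be computed by a Schur-complement inversion of the $2\times 2$ block operator matrix $\cG_{\bo,E,\bT} = \sbm{ I_\cY & C \\ C^* & \cG_{\bo,C,\bA} }$; the bottom row of $\cG_{\bo,E,\bT}^{-1} N^* = \cG_{\bo,E,\bT}^{-1}\sbm{D \\ 0}$ yields precisely the operator $B = -A(\cG_{\bo,C,\bA} - C^*C)^{-1}C^*D$ (after also incorporating the factor $A$ coming from $T = \sbm{0 & 0 \\ 0 & A}$ acting inside $R_\bo(Z(z)T)$), and the top row yields the free coefficient $D$ of $F_{\min}$, so that $F_{\min}(z)$ has exactly the realization \eqref{Fmin}. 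This is essentially a bookkeeping computation using \eqref{1.6g'} to pass from $R_\bo$ to $R_{\bo,1}$.

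Next I would identify $[F_{\min}, F_{\min}]$. By Theorem \ref{T:17.1}, $[F_{\min},F_{\min}] = N \cG_{\bo,E,\bT}^{-1} N^*$, which by the Schur-complement formula for the $(1,1)$-entry of $\cG_{\bo,E,\bT}^{-1}$ equals $D^* (I_\cY - C \cG_{\bo,C,\bA}^{-1}C^*)^{-1} D$ — wait, more carefully, $N\cG_{\bo,E,\bT}^{-1}N^* = \sbm{D^* & 0} \cG_{\bo,E,\bT}^{-1} \sbm{D \\ 0}$, and the $(1,1)$-block of $\cG_{\bo,E,\bT}^{-1}$ is $(I_\cY - C\cG_{\bo,C,\bA}^{-1}C^*)^{-1}$. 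Thus $[F_{\min},F_{\min}] = V^* (I - C\cG_{\bo,C,\bA}^{-1}C^*)^{\frac12} (I - C\cG_{\bo,C,\bA}^{-1}C^*)^{-1} (I - C\cG_{\bo,C,\bA}^{-1}C^*)^{\frac12} V = V^*V = I_\cU$ precisely when $D$ is chosen as in \eqref{Fminbd} with $V$ an isometry onto $\overline{\operatorname{Ran}}(I - C\cG_{\bo,C,\bA}^{-1}C^*)^{\frac12}$. This shows that \eqref{Fminbd} solves the \textbf{EP} and, conversely, that any solution must have $D^*(I - C\cG_{\bo,C,\bA}^{-1}C^*)^{-1}D = I_\cU$, which forces $D$ (up to the indicated isometric freedom) to be of the stated form; here one must check that $I - C\cG_{\bo,C,\bA}^{-1}C^*$ is a positive semidefinite contraction, which follows since $\| \cG_{\bo,C,\bA}^{-\frac12}C^* \| \le 1$ is equivalent to $C\cG_{\bo,C,\bA}^{-1}C^* \preceq I_\cY$, a consequence of $C^*C \preceq \cG_{\bo,C,\bA}$.

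Then I would argue that $F_{\min}$ is $H^2_{\bo,\cY}(\free)$-Bergman-inner. By \eqref{17.2}, $[F_{\min},F_{\min}] = I_\cU$ says exactly that $\| M_{F_{\min}} u \|_{H^2_{\bo,\cY}(\free)} = \| u \|_\cU$ for all $u \in \cU$, giving condition (1) of Definition \ref{D:defin}. For condition (2), the orthogonality $\Theta u \perp \Theta \bS^\alpha_{1,R} v$ for nonempty $\alpha$, I would invoke Theorem \ref{T:bergin}: a contractive multiplier from $H^2_\cU(\free)$ to $H^2_{\bo,\cY}(\free)$ which is isometric on constants is automatically Bergman-inner. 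So it remains to verify that $F_{\min}$ is a contractive multiplier. Here I would check that the realization \eqref{Fmin}, \eqref{Fminbd} is the specialization with $\beta = \emptyset$ of the construction in Lemma \ref{L:5.6} / Theorem \ref{T:inner-real}: namely that $\sbm{ A & B \\ C & D }$ satisfies the metric constraints \eqref{inner-metric}, equivalently that $\sbm{ B \\ D }$ solves the Cholesky factorization problem \eqref{pr6} with $\beta = \emptyset$. Concretely, I would verify $B B^* = \Gr_{\bo,1,C,\bA}^{-1}\otimes I_d - A\cG_{\bo,C,\bA}^{-1}A^*$-type relations and $\sbm{B \\ D}\sbm{B^* & D^*} = \sbm{\Gr_{\bo,1,C,\bA}^{-1}\otimes I_d & 0 \\ 0 & \omega_0 I_\cY} - \sbm{A \\ C}\cG_{\bo,C,\bA}^{-1}\sbm{A^* & C^*}$, using the weighted Stein identity \eqref{4.34} (with $k=0$), the relation $\cG_{\bo,C,\bA} - C^*C = \sum_j A_j^*\Gr_{\bo,1,C,\bA}A_j$, and the specific formulas for $B$ and $D$. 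Lemma \ref{L:frakcoisom} and Lemma \ref{L:5.1}(3) then certify that $\Theta_{\bo,\emptyset}$ is Bergman-inner, finishing the proof.

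The main obstacle I expect is the linear-algebra verification that the formulas \eqref{Fminbd} for $B$ and $D$ are exactly those produced by the Schur-complement inversion of $\cG_{\bo,E,\bT}$ in Theorem \ref{T:17.1} \emph{and} simultaneously solve the Cholesky problem \eqref{pr6} — these are two apparently different factorization requirements, and reconciling them requires careful use of the Sherman–Morrison-type identity (already invoked in the proof of Theorem \ref{T:BL}) together with the Stein identity \eqref{4.34}. A secondary subtlety is the uniqueness direction of the \textbf{EP}: showing that \emph{every} solution $F_{\min}$ — not just the one built from \eqref{Fminbd} — is of the stated form and is Bergman-inner; this follows because Theorem \ref{T:17.1} shows $F_{\min}$ is uniquely determined by the data $(C,\bA,D)$ via \eqref{march9a}, so the only freedom is in the choice of $D$, which the norm-normalization $[F_{\min},F_{\min}] = I_\cU$ pins down up to the isometry $V$.
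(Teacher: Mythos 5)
Your overall architecture is the paper's: specialize Theorem \ref{T:17.1} to the data \eqref{march9c}, invert $\cG_{\bo,E,\bT}$ by Schur complements to obtain the realization \eqref{Fmin} with $B$ as in \eqref{Fminbd}, and pin down $D$ from the normalization $[F_{\rm min},F_{\rm min}]=D^*(I-C\cG_{\bo,C,\bA}^{-1}C^*)^{-1}D=I_\cU$ via a Sherman--Morrison identity; those computations are fine and match the paper. The genuine problem is your route to the Bergman-inner property: you propose to verify the \emph{full} metric constraints \eqref{inner-metric}, equivalently that $\sbm{B \\ D}$ solves the Cholesky problem \eqref{pr6} with $\beta=\emptyset$, and you lean on Lemma \ref{L:frakcoisom}, which needs the weighted coisometry \eqref{wghtcoisom}. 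That step fails: since $B=-A(\cG_{\bo,C,\bA}-C^*C)^{-1}C^*D$ we have $\operatorname{Ran}B\subset\operatorname{Ran}A$, so the $(1,1)$-block identity $A\cG_{\bo,C,\bA}^{-1}A^*+BB^*=\Gr_{\bo,1,C,\bA}^{-1}\otimes I_d$ would force $A\colon\cX\to\cX^d$ to have dense range, which is impossible for $d\ge 2$ (already by a rank count when $\dim\cX<\infty$). The underlying reason is that here $(C,\bA)$ is merely exactly $\bo$-observable, not the $\bo$-isometric model pair of Theorem \ref{T:inner-real}; so the ``reconciliation'' you flag as the main obstacle is not an obstacle to be overcome but an identity that is simply false for this colligation.

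The repair is what the paper actually does: only the column-isometry half \eqref{isom} (i.e.\ \eqref{jul13} and \eqref{jul14a} with $\beta=\emptyset$) is needed, and it does hold. The Stein identity \eqref{4.34} with $k=0$ gives $A^*(\Gr_{\bo,1,C,\bA}\otimes I_d)A=\cG_{\bo,C,\bA}-C^*C$, whence $A^*(\Gr_{\bo,1,C,\bA}\otimes I_d)B+C^*D=0$ and $B^*(\Gr_{\bo,1,C,\bA}\otimes I_d)B+D^*D=D^*(I-C\cG_{\bo,C,\bA}^{-1}C^*)^{-1}D=I_\cU$; then Lemma \ref{L:5.1}(2b),(3a) and Remark \ref{R:content} already give contractivity and the Bergman-inner property, so neither \eqref{wghtcoisom} nor Theorem \ref{T:bergin} is needed (citing Theorem \ref{T:bergin} also quietly imports the strictly-decreasing hypothesis on $\bo$, which Theorem \ref{T:extint} does not assume). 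The paper moreover has a realization-free argument, which your proposal lacks, showing directly that \emph{every} EP solution is Bergman-inner: by Theorem \ref{T:17.1}, $F_{\rm min}u$ lies in $\operatorname{Ran}\cO_{\bo,E,\bT}$, so for every nonempty $\alpha$ the element $\bS_{\bo,R}^{*\alpha}F_{\rm min}u$ lies in $\operatorname{Ran}\cO_{\bo,C,\bA}$, while the homogeneous interpolation condition places $F_{\rm min}v$ in $(\operatorname{Ran}\cO_{\bo,C,\bA})^\perp$; this orthogonality together with $[F_{\rm min},F_{\rm min}]=I_\cU$ is exactly Definition \ref{D:defin}. With the coisometry/Cholesky claims deleted and replaced by either of these arguments, your proof goes through.
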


\begin{proof}
The solution set for the homogeneous problem \eqref{march9e} (i.e.,
with $D=0$) consists of $G \in H^2_{\bo, \cL(\cU, \cY)}$ with
$ \operatorname{Ran} M_{G}|_{\cU} \subset  (\operatorname{Ran} \cO_{\bo,E, \bT})^\perp$,
so by Theorem \ref{T:17.1}, the minimal norm solution
$F_{\rm min}$ of the non-homogeneous problem is such that
$M_{F}|_{\cU} \subset {\rm Ran} \, \cO_{\bo,E, \bT}\otimes \cU$.
In other words, for every $u\in\cU$, the series $F_{\rm min}(z)u$ belongs to
${\rm Ran} \, \cO_{\bo,E, \bT}$, and according to the second formula in \eqref{march12}, 
$F_{\rm min}(z)u$ is of the form
$$
F_{\rm min}(z)u=y+C(I-Z(z)A)^{-n}x
$$
for some $y\in\cY$ and $x\in\cX$. Since ${\rm Ran} \, \cO_{\bo,C, \bA}$ is $\bS_{\bo,R}^*$-invariant,
$\bS_{\bo,R}^{*\alpha^\top}F_{\rm min}u$ belongs to ${\rm Ran} \,\cO_{\bo,C, \bA}$
for any non-empty $\alpha\in\free$. On the other hand, due to the first (homogeneous) condition in \eqref{march9e},
$F_{\rm min}v$ belongs to $({\rm Ran} \, \cO_{\bo,C, \bA})^\perp$ for each $v\in\cU$. Therefore,
$\bS_{\bo,R}^{*\alpha^\top}F_{\rm min}u$ is orthogonal to $F_{\rm min}v$ or, equivalently, $F_{\rm min}u$
is orthogonal to $\bS_{\bo,R}^{\alpha}F_{\rm min}v$ for all $u,v\in\cU$ and non-empty $\alpha\in\free$.
Besides, $\|F_{\rm min}u\|_{H^2_{\bo,\cY}(\free)}=\|u\|_{\cU}$ for all $u\in\cU$, by \eqref{17.2}.
Then $F_{\rm min}$ is $H^2_{\bo, \cY}(\free)$-Bergman-inner, by  Definition \ref{D:defin}.

\smallskip

We now calculate the extremal series $F_{\rm min}$. By the first formula in \eqref{march12},
$$
\cG_{\bo,E,\bT}^{-1}=\begin{bmatrix}I_{\cY} & 0 \\ 0 & 0\end{bmatrix}+
\begin{bmatrix}-C \\ I_{\cX}\end{bmatrix}(\cG_{\bo,C,\bA}-C^*C)^{-1}\begin{bmatrix}-C^* &
    I_{\cX}\end{bmatrix}
$$
and substituting the latter equality into \eqref{march9a} gives
\begin{align*}
F_{\rm min}(z)&=\begin{bmatrix}I_{\cY} & C\end{bmatrix}\begin{bmatrix}I_{\cY} & 0 \\ 0 &
R_\bo(Z(z)A)\end{bmatrix}\cG_{\bo,E,\bT}^{-1}\begin{bmatrix}D\\ 0 \end{bmatrix}\notag\\
&=\begin{bmatrix}I_{\cY} & CR_\bo(Z(z)A)\end{bmatrix}\bigg(
\begin{bmatrix}D\\ 0 \end{bmatrix}+\begin{bmatrix}-C \\ I_{\cX}\end{bmatrix}(\cG_{\bo,C,\bA}-C^*C)^{-1}C^*D
\bigg)\notag\\
&=D+C(R_\bo(Z(z)A)-I_{\cX})(\cG_{\bo,C,\bA}-C^*C)^{-1}C^*D\\
&=D+CR_{\bo,1}(Z(z)A)(\cG_{\bo,C,\bA}-C^*C)^{-1}C^*D
\end{align*}
The latter formula can be written in the form \eqref{Fmin} with $B$ defined as in \eqref{Fminbd}
and $D$ to be yet determined. By formula \eqref{4.34} (for $k=0$),
\begin{equation}
A^*(\Gr_{\bo,1,C,\bA}\otimes I_d)A=\sum_{j=1}^d A_j^*\Gr_{\bo,1,C,\bA}A_j+C^*C=\cG_{\bo,C,\bA}.
\label{sab1}
\end{equation}
Then for $B$ of the form \eqref{Fminbd} we have 
\begin{align}
A^*(\Gr_{\bo,1,C,\bA}\otimes I_d)B&=-A^*(\Gr_{\bo,1,C,\bA}\otimes I_d)
A (\cG_{\bo,C,A} - C^{*}C)^{-1} C^{*} D\notag\\
&=- C^{*} D,\label{sab2}\\
B^*(\Gr_{\bo,1,C,\bA}\otimes I_d)B&=-D^*C(\cG_{\bo,C,\bA}-C^*C)^{-1}
A^*(\Gr_{\bo,1,C,\bA}\otimes I_d)B^*\notag\\
&=D^*C(\cG_{\bo,C,\bA}-C^*C)^{-1}C^*D.\label{sab3}
\end{align}
For $F_{\rm min}$ of the form \eqref{Fmin} with the entry $B$ given as in \eqref{Fminbd}, we have
\begin{align}
[F_{\rm min},F_{\rm min}]_{H^2_{\bo,\cL(\cU,\cY)}}=
&D^*D+\sum_{j=1}^d\sum_{\alpha\in\free}\omega_{|\alpha|+1}^{-1}
B_j^*\bA^{*\alpha^\top}C^*C\bA^{\alpha}B_j\notag\\
=&D^*D+B^*(\Gr_{\bo,1,C,\bA}\otimes I_d)B.\label{march10c}
\end{align}
Equalities \eqref{sab1}, \eqref{sab2}, \eqref{march10c} can be written in the matrix form 
$$
\begin{bmatrix} A^{*} & C^{*} \\ B^{*} & D^{*}\end{bmatrix}
\begin{bmatrix} \Gr_{\bo,1,C,\bA}\otimes I_d & 0 \\ 0 & I_{\cY}
\end{bmatrix}
\begin{bmatrix} A & B \\ C & D \end{bmatrix} =
\begin{bmatrix} \cG_{\bo,C,\bA} & 0 \\ 0 & [F_{\rm min},F_{\rm min}] \end{bmatrix}.
$$
Since $F_{\rm min}$ satisfies the condition $[F_{\rm min},F_{\rm min}]=I_{\cU}$, we conclude from the 
latter equality by Remark \ref{R:content} that $F_{\rm min}$ is $H^2_{\bo,\cY}(\free)$-Bergman-inner.
To find $D$ explicitly, we combine \eqref{sab3} and \eqref{march10c} and use the Sherman-Morrison formula as follows:
\begin{align*}
I_{\cY}=&D^*D+D^*C(\cG_{\bo,C,\bA}-C^*C)^{-1}C^*D\\
=&D^*\left(I+C(\cG_{\bo,C,\bA}-C^*C)^{-1}C^*\right)D
=D^*\big(I-C\cG_{\bo,C,\bA}^{-1}C^*\big)^{-1}D.
\end{align*}
We then conclude from the latter equality that $D$ is of the form as in \eqref{Fminbd}.
\end{proof}

\chapter{Model theory for $\bo$-hypercontractive operator $d$-tuples} 
\label{C:model}

Let us say that a commutative operator $d$-tuple $\bT = (T_1, \dots, T_d)$ is an {\em abstract $\bo$-shift}
if $\bT$ satisfies conditions \eqref{ol2} and $\cX_0 = \{0\}$ where $\cX_0$ is the space displayed in \eqref{ol4}.
Then, according to Theorem \ref{T:ol1}, any such $\bT$ is unitarily equivalent to the concrete $\bo$-shift
$\bS_{\bo, R}$ on $H^2_{\bo, \cY}(\free)$ for an appropriate coefficient Hilbert space $\cY$, i.e., $\bS_{\bo, R}$
on $H^2_\bo(\free)$ (with an appropriate multiplicity) serves as the functional model for any abstract $\bo$-shift
(up to unitary equivalence and some multiplicity).    In this chapter we will characterize abstract operator tuples
$\bT = (T_1, \dots, T_d)$ such that $\bA = \bT^* = (T_1^*, \dots, T_d^*)$ is unitarily equivalent to the
restriction of the adjoint shift-tuple $\bS_{\bo, R}^*$ on $H^2_{\bo, \cY}(\free)$ to an invariant subspace $\cN$,
where $\cN$ is isometrically included, or more generally only contractively included, in the ambient space
$H^2_{\bo, \cY}(\free)$.  We already have some results in this direction, namely Theorem \ref{T:beta-stablemodel}.
In the classical case one can go further by arriving at an explicitly computable characteristic function $\Theta_T$
which serves as a complete unitary invariant and from which one can define an explicit functional model for the operator $T$.
Our goal in this chapter is to enhance Theorems \ref{T:beta-stablemodel} by identifying 
a characteristic multiplier $\Theta_\bT$ which serves as a complete unitary invariant for the original operator tuple $\bT$,
just as in the Sz.-Nagy--Foias theory \cite{NF}.

\smallskip

In this chapter we  present two approaches to this problem for the class of $*$-$\bo$-hypercontractions $\bT$ which are
completely non-coisometric (c.n.u.)---corresponding to the case for $d=1$ where the contraction operator $T$ has no
coinvariant subspace $\cN$ such that $T^*|_\cN$ is isometric.  

\smallskip

The second approach, developed for the case $d=1$ in our earlier work \cite{BBIEOT, BBSzeged}, restricts the analysis
to the so-called pure case ($\bT^*$ is $\bo$-strongly stable) with the payoff that one gets a more explicit model theory
by using the Beurling-Lax representation via a Bergman-inner family $\{\Theta_\alpha \colon \alpha \in \free\}$
 for the $\bS_{\bo, R}$-invariant
subspace $\cM = \cN^\perp$, including an explicit formula for each member $\Theta_\alpha$ of the Bergman-inner
family in terms of the original operator $d$-tuple $\bT$.  This is the topic of Section \ref{S:model}.
We also show how to recover the original operator-tuple $\bT$ directly from the characteristic Bergman-inner family
and verify that the characteristic Bergman-inner family is a complete unitary invariant for $\bT$.

\section[Contractive multipliers as characteristic functions]{Model theory based on contractive-multiplier/McCT-inner 
multiplier as characteristic function}  \label{S:Popescu-model}

    Recall Definition \ref{D:7.3} of a {\em $\bo$-hypercontractive
    operator $d$-tuple $\bA = (A_{1}, \dots, A_{d})$}:  an operator
    $d$-tuple $\bA = (A_{1}, \dots, A_{d})$ on a Hilbert space $\cX$
    is {\em $\bo$-hypercontractive} if it is contractive (in the sense of \eqref{4.22})
and is subject to inequalities
$$
\Gamma^{(k)}_{\bo,\bA}[I_{\cX}]:=
-\sum_{\alpha\in\free}\bigg(\sum_{\ell=1}^{k}\frac{c_{|\alpha|+\ell}}{\omega_{k-\ell}}\bigg)
\, \bA^{*\alpha^\top}\bA^\alpha \succeq 0 \quad\mbox{for all}\quad k\ge 1,
$$ 
$$
\Gamma_{\bo,\bA}[I_\cX]=\sum_{\alpha\in\free}c_{|\alpha|}\bA^{*\alpha^\top}\bA^\alpha\succeq 0.
$$
For consistency with the Sz.-Nagy-Foias model theory, we consider an operator $d$-tuple 
$\bT = (T_{1}, \dots, T_{d})$ for which the adjoint tuple $\bA: = \bT^{*}: = (T_{1}^{*}, \dots, T_{d}^{*})$
is a $\bo$-hypercontraction.  We shall use the convention that non-bold $T$ denotes the operator
$$
T = \begin{bmatrix} T_{1} & \cdots & T_{d} \end{bmatrix} \colon\cX^{d} \to \cX
$$
    so that $A = T^{*}$ is the column operator
    $$
     A =  T^{*} = \begin{bmatrix} T_{1}^{*} \\ \vdots \\ T_{d}^{*} \end{bmatrix}
      \colon \cX \to \begin{bmatrix} \cX \\ \vdots \\ \cX \end{bmatrix}.
    $$

We consider an operator $d$-tuple $\bT = (T_1, \dots, T_d)$ $\cX$ such that $\bT^*$ is a $\bo$-c.n.c.~$\bo$-hypercontractive
operator tuple, i.e., we assume that $\bT$ is a {$*$-$\bo$-hypercontractive operator tuple.}  
Let us introduce the $\bo$-defect operator $D_ {\bo, \bT^*}$ defined by
\begin{equation}   \label{defect-boT*}
  D_{\bo, \bT^*} = \Gamma_{\bo, \bA}[I_\cX]^{\frac{1}{2}} \colon \cX \to \cD_{\bo, \bT^*}
\end{equation}
where we set 
$$
     \cD_{\bo, \bT^*} =\overline{ \operatorname{Ran}} \, D_{\bo, \bT^*}.
$$
Then by construction the pair $(D_{\bo, \bT^*}, \bT^*)$ is an $\bo$-isometric output pair.  We define the $\bo$-observability operator
$\cO_{\bo, D_{\bo, \bT^*}, \bT^*}$ as in \eqref{18.2aaa}.  We impose the condition that the output-pair $(D_{\bo, \bT^*}, \bT^*)$
be observable, i.e., that the associated observability operator $\cO_{\bo, D_{\bo, \bT^*}, \bT^*}$  have only trivial kernel.
In terms of the original $d$-tuple $\bT$, we say that $\bT$ is {\em $\bo$-completely non-coisometric} ({\em $\bo$-c.n.c.}\ for short).
As $\cO_{\bo, \cD_{\bo, \bT^*}}$ has trivial kernel, we can make $\cN: = \operatorname{Ran} \cO_{\bo, \cD_{\bo, \bT^*}}$ a Hilbert space
by assigning to it the lifted norm
$$
  \| \cO_{\bo, D_{\bo, \bT^*}, \bT^*} x \|_\cN = \| x \|_\cX.
$$
From Theorem \ref{T:2-1.2'} we see that $\cN = \cH(K_\cN)$ is in fact a 
NKRKHS with reproducing kernel $K_\cN$ given by 
$$
K_\cN(z, \zeta)  = D_{\bo, \bT^*} R_\bo(Z(z) \bT^*) R_\bo(Z(\zeta) \bT^*)^* D_{\bo, \bT^*},
$$
that $\cN$ is contained contractively in $H^2_{\bo, \cD_{\bo, \bT^*}}(\free)$, 
and moreover, $\cN$ is $\bS_{\bo, R}^*$-invariant with $\cO_{\bo, \cD_{\bT^*}, \bT^*}$ implementing a unitary equivalence between $\bT^*$ and 
$\bS_{\bo, R}^*|_\cN$. Let us make the following formal definition.

\begin{definition}  \label{D:char-func}
We shall say that the $\bo$-c.n.c.~$*$-$\bo$-hypercontractive tuple $\bT$ {\em admits a characteristic multiplier $\Theta_\bT$}
if the Brangesian complement $\cM = \cN^{[\perp]}$ of $\cN = \operatorname{Ran} \cO_{\bo, D_{\bT^*}, \bT^*}$ (equipped
with the lifted norm) admits a Beurling-Lax representation $\cM = \Theta \cdot H^2_{\cU}(\free)$ as in Theorem \ref{T:NC-BL}.
We then say that $\Theta$ is a {\em characteristic  multiplier} $\Theta_\bT$ for $\bT$.
\end{definition}

We then have the following characterization as to which $\bo$-c.n.c.~$*$-$\bo$-hypercon\-tractive tuples have a characteristic
function $\Theta_\bT$ in the sense of Definition \ref{D:char-func}.

\begin{theorem}  \label{T:char-func}  Let $\bT = (T_1, \dots, T_d)$ be a $\bo$-c.n.c.~$*$-$\bo$-hypercontractive tuple.
\begin{enumerate}
\item $\bT$ admits a characteristic multiplier if and only if 
\begin{align}
& I - \cO_{\bo, D_{\bo, \bT^*}, \bT^*} (\cO_{\bo, D_{\bo, \bT^*}, \bT^*})^*  \succeq   \notag \\
& \quad  
\sum_{j=1}^d S_{\bo, R, j}   ( I - \cO_{\bo, D_{\bo, \bT^*}, \bT^*} (\cO_{\bo, D_{\bo, \bT^*}, \bT^*})^*)  S_{\bo,R,j}^*.
\label{exist-char}
\end{align}
\item 
In particular, \eqref{exist-char} holds if it is the case that
\begin{align}
& I - (\cO_{\bo, D_{\bo, \bT^*}, \bT^*})^* \cO_{\bo, D_{\bo, \bT^*}, \bT^*}  \succeq \notag \\
& \quad \sum_{j=1}^d T_j (I - (\cO_{\bo, D_{\bo, \bT^*}, \bT^*})^* \cO_{\bo, D_{\bo, \bT^*}, \bT^*} ) T_j^*,
\label{suff-char}
\end{align}
in which case an explicit version of a characteristic multiplier $\Theta_\bT$ for $\bT$ can be constructed by implementing the algorithm
given in Theorem \ref{T:explicit2} with $\bA = \bT^*$, $C = D_{\bo, \bT^*}$.

\item In case $\cO_{\bo, D_{\bo, \bT^*}, \bT^*}$ is isometric (or equivalently, in case $\bT^*$ is $\bo$-strongly stable),
$\cN$ and $\cM = \cN^\perp$ are isometrically contained in $H^2_{\bo, \cD_{\bo, \bT^*}}(\free)$, 
$$
I-P_{\cM}= \cO_{\bo, D_{\bo, \bT^*}, \bT^*} \cO_{\bo, D_{\bo, D_{\bT^*}}, \bT^*}^* = P_\cN,
$$
and both conditions \eqref{exist-char} and \eqref{suff-char} are automatic.
\end{enumerate}
\end{theorem}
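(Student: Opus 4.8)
The plan is to treat the three items in order, with item (1) as the crux, items (2) and (3) being relatively quick consequences of earlier results. For item (1), I would start from the definition of ``admits a characteristic multiplier'': the Brangesian complement $\cM = \cN^{[\perp]}$ of $\cN = \operatorname{Ran} \cO_{\bo, D_{\bo, \bT^*}, \bT^*}$ (with lifted norm) must admit a Beurling-Lax representation $\cM = \Theta \cdot H^2_{\cU}(\free)$ in the sense of Theorem \ref{T:NC-BL}. Now $\cN$, equipped with the lifted norm from $\cX$, is the pullback space $\cH^p(\Pi)$ with $\Pi = \cO_{\bo, D_{\bo, \bT^*}, \bT^*} (\cO_{\bo, D_{\bo, \bT^*}, \bT^*})^*$ (using the identification of lifted-norm and pullback spaces from Section \ref{S:Brangesian}, and that $\cO$ has trivial kernel by the $\bo$-c.n.c.\ hypothesis so that the lifted norm on $\cN$ is exactly the $\cH^p(\Pi)$-norm). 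Since $\cO_{\bo, D_{\bo, \bT^*}, \bT^*}$ is a contraction from $\cX$ into $H^2_{\bo, \cD_{\bo, \bT^*}}(\free)$ (by Theorem \ref{T:2-1.2'} part (2), as $(D_{\bo, \bT^*}, \bT^*)$ is $\bo$-isometric hence $\bo$-contractive), we have $0 \preceq \Pi \preceq I$. The Brangesian complement is then $\cM = \cH^p(I - \Pi)$. Writing $\Pi' := I - \Pi$, we have $0 \preceq \Pi' \preceq I$, so the key point is to identify when $\cM = \cH^p(\Pi')$ admits a Beurling-Lax representation as in Theorem \ref{T:NC-BL}. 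By the pullback-space reformulation at the end of the statement of Theorem \ref{T:NC-BL}, $\cM = \cH^p(\Pi')$ admits such a representation if and only if the operator inequalities \eqref{pullback-ineq} hold, i.e.\ $0 \preceq \Pi' \preceq I$ (which we already have) together with $\Pi' - \sum_{j=1}^d S_{\bo, R, j} \Pi' S_{\bo, R, j}^* \succeq 0$. Substituting $\Pi' = I - \cO_{\bo, D_{\bo, \bT^*}, \bT^*}(\cO_{\bo, D_{\bo, \bT^*}, \bT^*})^*$ turns this last inequality into precisely \eqref{exist-char}. This gives both directions of item (1).

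The main obstacle I anticipate is a bookkeeping subtlety rather than a deep one: verifying carefully that the Brangesian complement $\cN^{[\perp]}$, which a priori is $\cH^\ell((I - XX^*)^{1/2})$ for any operator $X$ with $XX^* = \iota\iota^*$ (where $\iota$ is the contractive inclusion $\cN \hookrightarrow H^2_{\bo, \cD_{\bo, \bT^*}}(\free)$), is indeed isometrically equal to $\cH^p(I - \Pi)$ with $\Pi = \cO \cO^*$ — i.e.\ that we may legitimately take $X = \cO_{\bo, D_{\bo, \bT^*}, \bT^*}$ (viewed as a map into the ambient Fock space) in the Brangesian-complement construction. This is exactly the content of the discussion in Section \ref{S:Brangesian}: since $\cN$ is contractively included with inclusion $\iota$ and $\operatorname{Ran} \cO = \operatorname{Ran} \iota = \cN$ with the lifted norm on $\cN$ matching both the $\cH^\ell(\cO)$ and $\cH^p(\cO\cO^*)$ norms, we indeed have $\cO \cO^* = \iota \iota^*$ as operators on $H^2_{\bo, \cD_{\bo, \bT^*}}(\free)$, and the independence of $\cM^{[\perp]}$ from the representing operator $X$ lets us use $X = \cO$. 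Once this identification is in hand, the rest of item (1) is the direct translation described above.

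For item (2), I would invoke the general fact \eqref{fact} (or rather its spirit) together with the Douglas-lemma-type argument: the intertwining relation $S_{\bo, R, j}^* \cO_{\bo, D_{\bo, \bT^*}, \bT^*} = \cO_{\bo, D_{\bo, \bT^*}, \bT^*} T_j^*$ from \eqref{4.8aga} (with $\bA = \bT^*$, so $A_j = T_j^*$) shows that $\cO^* S_{\bo,R,j} = T_j \cO^*$. Hence $\sum_j S_{\bo,R,j}(I - \cO\cO^*)S_{\bo,R,j}^* = \sum_j S_{\bo,R,j}S_{\bo,R,j}^* - \cO(\sum_j T_j^* \cdots)$; a short computation converting \eqref{suff-char}, which is an inequality on $\cX$ of the form $I - \cO^*\cO \succeq \sum_j T_j(I - \cO^*\cO)T_j^*$, via the observability operator into an inequality on the Fock space, yields \eqref{exist-char}. (This parallels how $\Gamma_{\bo, \bS_{\bo,R}^*}$-type identities were manipulated in the proof of part (3b) of Theorem \ref{T:2-1.2}.) Once \eqref{exist-char} is established, part (1) of Theorem \ref{T:explicit2} applies with $\cM^{[\perp]} = \cN = \operatorname{Ran} \cO_{\bo, D_{\bo, \bT^*}, \bT^*}$ and the $\bo$-contractive output pair $(C, \bA) = (D_{\bo, \bT^*}, \bT^*)$ — the hypothesis \eqref{suff-char} is exactly condition \eqref{extra-hy'} of Theorem \ref{T:explicit2} after noting $\cG_{\bo, D_{\bo, \bT^*}, \bT^*} = \cO^* \cO$ — and produces the explicit characteristic multiplier via \eqref{bd}, \eqref{Cho-big1}, \eqref{Theta1}. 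For item (3), if $\cO_{\bo, D_{\bo, \bT^*}, \bT^*}$ is isometric then $\Pi = \cO\cO^*$ is the orthogonal projection $P_\cN$, $\cN$ and $\cM = \cN^\perp = \operatorname{Ran}(I - \Pi)^{1/2}$ are isometrically included, $I - P_\cM = P_\cN = \cO\cO^*$, and both $I - \cO\cO^* = P_{\cN^\perp}$ and $I - \cO^*\cO = 0$ make the right-hand sides of \eqref{exist-char} and \eqref{suff-char} collapse (for \eqref{suff-char}, $I - \cO^*\cO = I - \cG_{\bo, D_{\bo, \bT^*}, \bT^*} = I - I_\cX = 0$ since $\cG = I_\cX$ by exact $\bo$-observability/isometry, recalling Lemma \ref{L:7.6}), so both conditions hold trivially; the equivalence with $\bo$-strong stability of $\bT^*$ is Lemma \ref{L:7.6} part (2) again. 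I expect items (2) and (3) to be short; item (1), and specifically the Brangesian-complement identification, is where the care is needed.
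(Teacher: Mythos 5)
Your overall route coincides with the paper's: statement (1) is obtained by identifying $\cN$ (with the lifted norm) with the pullback space $\cH^p(\cO\cO^*)$ where $\cO = \cO_{\bo, D_{\bo,\bT^*},\bT^*}$, so that $\cM = \cN^{[\perp]} = \cH^p(I - \cO\cO^*)$, and then invoking the pullback-space formulation \eqref{pullback-ineq} in the last part of Theorem \ref{T:NC-BL} with $\Pi = I - \cO\cO^*$; statements (2) and (3) come from Theorem \ref{T:explicit2} applied to $(C,\bA) = (D_{\bo,\bT^*},\bT^*)$. Your handling of (1) and (3) is sound, including the Brangesian bookkeeping that legitimizes taking $X = \cO$ (indeed $\cO\cO^* = \iota\iota^*$ since the $\bo$-c.n.c.\ hypothesis makes $\cO$, viewed into $\cN$ with lifted norm, a unitary), which is exactly the point from Section \ref{S:Brangesian}.

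The one step that does not work as written is the ``short computation'' in item (2) by which you propose to convert \eqref{suff-char} directly into \eqref{exist-char} through the observability operator. The intertwining relation \eqref{4.8aga} gives $S_{\bo,R,j}^*\cO = \cO T_j^*$, equivalently $\cO^* S_{\bo,R,j} = T_j \cO^*$; it does \emph{not} give $S_{\bo,R,j}\cO = \cO T_j$, so the term $\sum_{j} S_{\bo,R,j}\,\cO\cO^*\, S_{\bo,R,j}^*$ cannot be rewritten in the form $\cO(\cdots)\cO^*$, and your sketched identity ``$\sum_j S_{\bo,R,j}(I-\cO\cO^*)S_{\bo,R,j}^* = \sum_j S_{\bo,R,j}S_{\bo,R,j}^* - \cO(\sum_j T_j^*\cdots)$'' breaks down: \eqref{suff-char} is an inequality on $\cX$ about $\cO^*\cO$, whereas \eqref{exist-char} concerns $\cO\cO^*$ on all of $H^2_{\bo,\cD_{\bo,\bT^*}}(\free)$, and the passage between the two is not a matter of conjugating by $\cO$. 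Fortunately you do not need this computation: as you yourself observe, \eqref{suff-char} is literally hypothesis \eqref{extra-hy'} of Theorem \ref{T:explicit2} for the pair $(D_{\bo,\bT^*},\bT^*)$ (since $\cG_{\bo, D_{\bo,\bT^*},\bT^*} = \cO^*\cO$), so part (1) of that theorem already produces a contractive-multiplier Beurling--Lax representer for $\cM$; hence $\bT$ admits a characteristic multiplier, and \eqref{exist-char} then follows from your already-proved item (1). So simply run the two halves of your item (2) in the opposite order and delete the direct computation; this indirect derivation is also the paper's (implicit) argument.
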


\begin{proof}  Statement (1) follows as an application of Theorem \ref{T:NC-BL} (see the last statement there) to the case where $\Pi =
I - \cO_{D_{\bo, \bT^*}, T^*} (\cO_{D_{\bo, \bT^*}, T^*} )^*$.  Statements (2) and (3) follows as an application
of Theorem \ref{T:explicit2} to the case $(C, \bA) = (D_{\bo, \bT^*}, \bT^*)$.
\end{proof}
 
 Not all contractive multipliers can be characteristic multipliers; there are contractive multipliers $\Theta$ form
 $H^2_\cU(\free)$ to $H^2_{\bo, \cY}(\free)$ so that the space 
$\cN = \cH^p(I - M_\Theta M_\Theta^*)$ is not $\bS_{\bo, R}^*$-invariant (see the end of \cite{BBCR}).  The following is a
 characterization (albeit not easily verifiable) of which contractive multipliers can arise as a characteristic multipliers
 for some $\bo$-c.n.c.~$*$-$\bo$-hypercontractive operator tuple $\bT$.
 
 \begin{theorem}  \label{T:Theta-char}
 Let $\Theta$ be a contractive multiplier form $H^2_\cU(\free)$ to $H^2_{\bo, \cY}(\free)$.  Then there is an
 $\bo$-c.n.c.~$*$-$\bo$-hypercontractive tuple $\bT = (T_1, \dots, T_d)$ and a unitary identification map
 $\iota \colon \cY \to \cD_{\bo, \bT^*}$ so that $\iota \cdot \Theta = \Theta_{\bT}$ if and only if the space
 $\cN = \cH^p(I - M_\Theta M_\Theta^*) \subset H^2_{\bo, \cY}(\free)$ satisfies the conditions of item (4) in
 Theorem \ref{T:2-1.2} in the stronger form where the second of the inequalities \eqref{ineqs} is required to hold with equality.
 Explicitly, we may take $\bT = ( S_{\bo, R}^*|_{\cH^p(I - M_\Theta M_\Theta^*)})^*$.

If $\Theta = \Theta_{\bT'}$ for an $\bo$-c.n.c.~$*$-$\bo$-hypercontractive tuple $\bT'$, then $\bT'$
is unitarily equivalent to the model $\bT$ as constructed in the previous paragraph.
\end{theorem}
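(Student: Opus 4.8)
\textbf{Proof plan for Theorem \ref{T:Theta-char}.}

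The plan is to treat the two assertions of the theorem separately, the first being an ``if and only if'' existence statement and the second a uniqueness statement. For the forward direction of the first assertion, suppose $\Theta = \Theta_{\bT}$ for some $\bo$-c.n.c.~$*$-$\bo$-hypercontractive tuple $\bT$ via a unitary identification $\iota \colon \cY \to \cD_{\bo, \bT^*}$. By Definition \ref{D:char-func} and the construction preceding it, $\cN = \operatorname{Ran} \cO_{\bo, D_{\bo, \bT^*}, \bT^*}$ equipped with the lifted norm is a NFRKHS contained contractively in $H^2_{\bo, \cD_{\bo, \bT^*}}(\free)$, it is $\bS_{\bo, R}^*$-invariant, and $\cO_{\bo, D_{\bo, \bT^*}, \bT^*}$ implements a unitary equivalence between $\bT^*$ and $\bS_{\bo, R}^*|_\cN$. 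Moreover $\cM = \cN^{[\perp]}$ admits a Beurling-Lax representation $\cM = \Theta \cdot H^2_\cU(\free)$, so by Theorem \ref{T:charNFRKHS} the kernel of $\cN$ is $k_{\cM^{[\perp]}}$ in the notation there, and $\cN = \cH^p(I - M_\Theta M_\Theta^*)$ after transporting via $\iota$. Since $(D_{\bo, \bT^*}, \bT^*)$ is an $\bo$-isometric output pair, the pair $(E|_\cN, \bS_{\bo, R}^*|_\cN)$ is $\bo$-isometric, which means exactly that the second inequality in \eqref{ineqs} holds with equality; the first and third inequalities in \eqref{ineqs} hold since $(E|_\cN, \bS_{\bo, R}^*|_\cN)$ is at least $\bo$-contractive (being the restriction of the $\bo$-strongly-stable $\bo$-hypercontractive model pair of Proposition \ref{P:bo-model2} to an invariant subspace). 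Hence the hypotheses of item (4) of Theorem \ref{T:2-1.2} hold in the strong form required.

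For the converse direction, suppose $\cN = \cH^p(I - M_\Theta M_\Theta^*) \subset H^2_{\bo, \cY}(\free)$ satisfies the conditions of item (4) of Theorem \ref{T:2-1.2} in the strengthened form. First I would apply part (4) of Theorem \ref{T:2-1.2} to conclude that $\cN$ is contractively included in $H^2_{\bo, \cY}(\free)$ and, since the second inequality in \eqref{ineqs} holds with equality, that there is an $\bo$-isometric output pair $(C, \bA)$ with $\cN = \operatorname{Ran} \cO_{\bo, C, \bA} = \cH(K_{C, \bA, I})$ isometrically; the canonical choice is $\cX = \cN$, $C = E|_\cN$, $\bA = \bS_{\bo, R}^*|_\cN$. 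Set $\bT = \bA^* = (T_1, \dots, T_d)$ where $A_j^* = T_j$. The plan is then to verify that $\bT$ is a $*$-$\bo$-hypercontractive tuple by using the intertwining relation $S_{\bo, R, j}^* \cO_{\bo, C, \bA} = \cO_{\bo, C, \bA} A_j$ from \eqref{4.8aga} together with the computations in the proof of part (3b) of Theorem \ref{T:2-1.2}, which show that the $\Gamma_{\bo, \bA}$- and $\Gamma^{(k)}_{\bo, \bA}$-positivity for $\bA$ follows from the $\bo$-contractivity of $(E|_\cN, \bS_{\bo, R}^*|_\cN)$. Next I would identify $\cY$ with $\cD_{\bo, \bT^*}$: the equality case in the second inequality \eqref{ineqs} combined with the Stein identity \eqref{3.17g} gives $\Gamma_{\bo, \bA}[I_\cN] = C^* C$, and since $C = E|_\cN$ has trivial kernel (as $\cN \ne 0$ is a NFRKHS of formal power series and $E|_\cN f = f_\emptyset$; more precisely the $\bo$-c.n.c.\ property is equivalent to observability of $(C, \bA)$, which holds here by construction since $\cO_{\bo, C, \bA} = I_\cN$ is injective), we get a unitary identification $\iota \colon \cY \to \cD_{\bo, \bT^*}$ intertwining $C$ with $D_{\bo, \bT^*}$. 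Under this identification $\cO_{\bo, C, \bA} = \cO_{\bo, D_{\bo, \bT^*}, \bT^*}$ is isometric (hence $\bT^*$ is $\bo$-strongly stable, so $\bT$ is automatically $\bo$-c.n.c.), $I - P_\cN = I - \cO_{\bo, D_{\bo, \bT^*}, \bT^*} \cO_{\bo, D_{\bo, \bT^*}, \bT^*}^* = M_\Theta M_\Theta^*$, and therefore $\cM = \cN^\perp = \cN^{[\perp]} = \cH^p(M_\Theta M_\Theta^*) = \Theta \cdot H^2_\cU(\free)$ by Theorem \ref{T:charNFRKHS}. Thus $\iota \cdot \Theta = \Theta_\bT$ by Definition \ref{D:char-func}.

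For the final uniqueness assertion, suppose $\Theta = \Theta_{\bT'}$ for some $\bo$-c.n.c.~$*$-$\bo$-hypercontractive tuple $\bT'$ on a Hilbert space $\cX'$. By the construction recalled above, $\cN' := \operatorname{Ran} \cO_{\bo, D_{\bo, \bT'^*}, \bT'^*}$ with lifted norm is unitarily equivalent via $\cO_{\bo, D_{\bo, \bT'^*}, \bT'^*}$ to $\cX'$ carrying $\bS_{\bo, R}^*|_{\cN'}$ over to $\bT'^*$, and after identifying $\cD_{\bo, \bT'^*}$ with $\cY$ via the given identification map one has $I - P_{\cN'} = M_\Theta M_\Theta^*$, hence $\cN' = \cH^p(I - M_\Theta M_\Theta^*) = \cN$ as subspaces of $H^2_{\bo, \cY}(\free)$ with the same (lifted = ambient, by the equality case) norm. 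Therefore $\bT'^*$ is unitarily equivalent to $\bS_{\bo, R}^*|_\cN = \bA = \bT^*$, and taking adjoints $\bT'$ is unitarily equivalent to the model $\bT$. The one point requiring a little care here is that the identification of $\cD_{\bo, \bT'^*}$ with $\cY$ is exactly the data of ``$\iota' \cdot \Theta = \Theta_{\bT'}$'', so that the kernel $k_{\cN'}$ of $\cN'$ coincides with $k_{\cM^{[\perp]}}$ computed from $\Theta$; once that is in hand, equality of $\cN'$ and $\cN$ as NFRKHSs follows from the uniqueness of a NFRKHS with a given reproducing kernel (Theorem \ref{T:NFRKHS}).

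The main obstacle I anticipate is the bookkeeping around the equality case in the second of the inequalities \eqref{ineqs}: one must be careful that this is precisely the condition that matches up the $\bo$-isometric property of $(E|_\cN, \bS_{\bo, R}^*|_\cN)$ on the one side with the vanishing of the defect (i.e.\ $\Gamma_{\bo, \bA}[I] = C^*C$) and hence the isometry of $\cO_{\bo, D_{\bo, \bT^*}, \bT^*}$ on the other, which in turn forces $\bT^*$ to be $\bo$-strongly stable and the Brangesian complement to collapse to the ordinary orthogonal complement. All the remaining steps are routine applications of Theorems \ref{T:2-1.2}, \ref{T:charNFRKHS}, \ref{T:NFRKHS} and Proposition \ref{P:bo-model2}.
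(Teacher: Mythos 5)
Your overall route is the same as the paper's: the forward direction comes straight from the construction of $\Theta_\bT$, and in the converse you take $(C,\bA)=(E|_\cN,\,\bS_{\bo,R}^*|_\cN)$, set $\bT=\bA^*$, and identify $C$ with $D_{\bo,\bT^*}$ through a unitary $\iota$. The forward direction and the closing uniqueness discussion are essentially fine. The genuine gap is in the middle of your converse, where you assert that under the identification $\cO_{\bo,C,\bA}=\cO_{\bo,D_{\bo,\bT^*},\bT^*}$ is isometric, hence $\bT^*$ is $\bo$-strongly stable, $I-P_\cN=M_\Theta M_\Theta^*$, and $\cM=\cN^\perp=\cN^{[\perp]}$. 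None of this follows from the hypotheses, and it is false in general: the equality case of the second inequality in \eqref{ineqs} says only that $(E|_\cN,\bS_{\bo,R}^*|_\cN)$ is an $\bo$-isometric output pair, i.e.\ $\Gamma_{\bo,\bA}[I_\cN]=C^*C$, and by Lemma \ref{L:7.6}(2) isometry of the observability operator (equivalently $\cG_{\bo,C,\bA}=I_\cN$) would require in addition $\bo$-strong stability of $\bA$ — which is exactly what is \emph{not} assumed. The theorem is aimed at general $\bo$-c.n.c.\ (not necessarily pure) tuples, for which $\cN=\cH^p(I-M_\Theta M_\Theta^*)$ is only contractively included in $H^2_{\bo,\cY}(\free)$, the lifted norm differs from the ambient norm, and $\cN^{[\perp]}\ne\cN^\perp$. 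As written, your argument covers only the pure case and would wrongly force every such $\bT$ to have $\bo$-strongly stable adjoint.

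The repair is the paper's argument: stay at the level of kernels and pullback spaces and never pass to orthogonal complements. By the strengthened item (4) of Theorem \ref{T:2-1.2}, $\cN$ (with its $\cH^p(I-M_\Theta M_\Theta^*)$ norm) equals $\operatorname{Ran}\cO_{\bo,C,\bA}$ with lifted norm and has reproducing kernel $CR_\bo(Z(z)A)R_\bo(Z(\zeta)A)^*C^*$, while by Theorem \ref{T:charNFRKHS} the same space has kernel $k_\bo(z,\zeta)I_\cY-\Theta(z)\big(k_{\rm nc, Sz}(z,\zeta)I_\cU\big)\Theta(\zeta)^*$. Conjugating this identity by $\iota$ gives
$$
D_{\bo,\bT^*}R_\bo(Z(z)T^*)R_\bo(Z(\zeta)T^*)^*D_{\bo,\bT^*}
= k_\bo(z,\zeta)I_{\cD_{\bo,\bT^*}}-\iota\Theta(z)\big(k_{\rm nc, Sz}(z,\zeta)I_\cU\big)\Theta(\zeta)^*\iota^*,
$$
which says precisely that the Brangesian complement of $\operatorname{Ran}\cO_{\bo,D_{\bo,\bT^*},\bT^*}$ (with lifted norm) is $\iota\Theta\cdot H^2_\cU(\free)$ with lifted norm, i.e.\ $\iota\Theta=\Theta_\bT$ in the sense of Definition \ref{D:char-func}; no isometry, strong stability, or orthogonal complements are needed. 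The same correction applies to the parenthetical ``lifted $=$ ambient, by the equality case'' in your uniqueness paragraph: that equality does not hold in general, but the kernel identification you invoke at the end (two NFRKHSs with the same kernel coincide) is the right and sufficient argument there.
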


\begin{proof}  If $\Theta = \Theta_\bT$, then by construction $\cN = \operatorname{Ran} \cO_{\bo, D_{\bT^*}, \bT^*}$
has the required form (with $\iota = I_{\cD_{\bo, \bT^*}}$).  Conversely, if $\cN$ satisfies the stronger version of the conditions
in item (4) of Theorem \ref{T:2-1.2}, then we may take $C = E|_\cN$, $\bT^* = \bS_{\bo, R}^*|_\cN$ and find a unitary 
identification map $\iota \colon \cY \to \cD_{\bo, \bT^*}$ so that $\iota C = D_{\bo, \bT^*}$.  Then
\begin{align*}
& D_{\bo, \bT^*} R_\bo(Z(z) T^*) R_\bo(Z(\zeta) T^*)^* D_{\bo, \bT^*} =
\iota C R_\bo(Z(z) T^*) R_\bo(Z(\zeta) T^*)^* D_{\bo, \bT^*} \iota^* \\
& = k_{\bo, {\rm nc}}(z, \zeta) I_{\cD_{\bo, \bT^*}} - \iota \Theta(z) (k_{\rm nc, Sz}(z, \zeta) I_\cU) \Theta(\zeta)^* \iota^*
\end{align*}
so $\iota \Theta(z)$ serves as a characteristic function for $\bT$.
\end{proof}

\begin{remark}  \label{R:zeroMFM}
In the classical setting, while any pure contractive analytic function arises as the characteristic function of a completely nonunitary
contraction operator $T$, those which arise as the characteristic function of a completely non-coisometric contraction operator
are characterized as those contractive analytic functions $\Theta(z) \colon \cU \to \cY$ for which $I - \Theta(\zeta)^* \Theta(\zeta)$ has
{\em zero maximal factorable minorant}, i.e., such that {\em the only $\cL(\cU,\mathcal F)$-valued analytic function $A(z)$ such that 
$A(\zeta)^* A(\zeta) \preceq I_\cU - \Theta(\zeta)^* \Theta(\zeta)$ for a.e. $\zeta \in {\mathbb T}$ is $A(z)\equiv 0$}. 
(see \cite[Theorem 6]{BallKriete}).  See \cite{Cuntz-scat} for a formulation of all this in the Fock space setting.  There does not
appear to be in results in this direction for the weighted Bergman space setting.
\end{remark}

\section[Bergman-inner family as characteristic multiplier-family]
{Model theory for $*$-$\bo$ strongly stable hypercontractions via characteristic Bergman-inner families}
\label{S:model}

We now specialize the discussion to the case where $\bT = (T_1, \dots, T_d)$ is such that $\bT^*$ is $\bo$-strongly stable as 
well as a $\bo$-hypercontractive operator tuple---we then say simply that {\em $\bT$ is a $*$-$\bo$ strongly stable, hypercontraction.}
The significance of the {\em $\bo$-strong stability} hypothesis is that then the observability operator $\cO_{\bo, D_{\bo, \bT^*}, \bT^*}$
is isometric (not just injective), so $\cN: = \operatorname{Ran} \cO_{\bo, D_{\bo, \bT^*}, \bT^*}$ with lifted norm is isometrically 
(not just contractively) contained in $H^2_{\bo, \cD_{\bo, \bT^*}}$ (see Theorem \ref{T:1.2g}).  Thus Theorem \ref{T:2-1.2'} specialized 
to this setup tells us that $\bT^*$ is unitarily equivalent
to $\bS^*_{\bo, R}|_\cN$ where $ \cN = \operatorname{Ran} \cO_{\bo, D_{\bT^*}, \bT^*} $
with lifted norm  is a $\bS_{\bo, R}^*$-invariant subspace isometrically contained in $H^2_{\bo, \cD_{\bo, \bT^*}}(\free)$.  
We define the {\em characteristic Bergman-inner family} $\{ \Theta_\beta\}_{\beta \in \free}$ to be the Bergman-inner Beurling-Lax
representer for the subspace $\cM$ having orthogonal complement equal to the range of the observability operator
$\cO_{\bo, D_{\bT^*}, \bT}^*$.  

\smallskip

In detail, define the shifted observability operator $\Ob_{\bo,k,D_{\bo, \bT^{*}}, \bT^{*}}$ and the shifted
    gramian $\Gr_{\bo,k,D_{\bo, \bT^{*}}, \bT^{*}}$ via formulas
    \eqref{4.31} and \eqref{4.32}; then Proposition
    \ref{P:wghtSteinid} gives us the validity of the weighted Stein
    identity
$$
        \sum_{j=1}^{d} T_{j} \Gr_{\bo,k+1,D_{\bo,\bT^{*}}, \bT^{*}}
        T_{j}^{*} + \omega_k^{-1}D_{\bo, \bT^{*}}^{*}D_{\bo, \bT^{*}} =
        \Gr_{\bo,k,D_{\bo,\bT^{*}}, \bT^{*}}.
$$
As explained in the proof of Lemma \ref{L:5.6}, it follows that the
operator
$$
X_{\bo,k}: = \begin{bmatrix} \Gr^{-1}_{\bo,k+1,D_{\bo,\bT^{*}}, \bT^{*}}
\otimes I_{d} & 0 \\ 0 & \omega_{k} I_{\cD_{\bo, \bT^{*}}} \end{bmatrix}
- \begin{bmatrix} T^{*} \\ D_{\bo, \bT^*} \end{bmatrix}
 \Gr^{-1}_{\bo,k,D_{\bo,\bT^{*}}, \bT^{*}} \begin{bmatrix} T & D_{\bo, \bT^*}
 \end{bmatrix}
$$
is positive semidefinite.
We let $\bD_{\bo,k, \bT}$ denote the positive semidefinite square root
of $X_{\bo,k}$, and let  $\bcD_{\bo,k, \bT}:=\overline{{\rm Ran}}\, \bD_{\bo,k, \bT}$. 
We then view $\bD_{\bo,k, \bT}$ as an operator from
$\bcD_{\bo,k, \bT}$ into$ \left[ \begin{smallmatrix} \cX \\ \cD_{\bo,
\bT^{*}} \end{smallmatrix} \right]$.  For any $\beta\in \free$,
we decompose $\bD_{\bo,|\beta|, \bT}$ as
\begin{equation}   \label{defectT}
  \bD_{\bo,|\beta|, \bT} = \begin{bmatrix} \widehat B_{\beta} \\ D_{\beta}
\end{bmatrix} \colon \bcD_{\bo, |\beta|, \bT} \to \begin{bmatrix} \cX \\
\cD_{\bo, \bT^{*}} \end{bmatrix}
\end{equation}
(so  $\left[ \begin{smallmatrix} \widehat B_{\beta} \\ D_{\beta}
\end{smallmatrix} \right]$ depends on $\beta$  only through $|\beta|$).
We finally introduce the family of connection matrices
$$
  \bU_{\beta} = \begin{bmatrix} T^{*} & \widehat B_{\beta} \\ D_{\bo, \bT^{*}}
  & D_{\beta} \end{bmatrix} \colon \begin{bmatrix} \cH \\ \bcD_{\bo, \beta,\bT} 
\end{bmatrix} \to \begin{bmatrix} \cH \\ \cD_{\bo, \bT^{*}}\end{bmatrix}
$$
and define the {\em characteristic Bergman-inner family} for $\bT$ by
$\Theta_{\bT} = \{ \Theta_{\bT, \beta} \}_{\beta \in \free}$ where
\begin{align}
\Theta_{\bT, \beta}(z)& =  \omega_{|\beta|}^{-1}D_{\beta} +
D_{\bo, \bT^{*}} R_{\bo, |\beta| + 1}(Z(z) T^{*}) Z(z) \widehat B_{\beta}
    \notag \\
  & \text{where } \begin{bmatrix} \widehat B_{\beta} \\ D_{\beta}
\end{bmatrix} =  \bD_{\bo,|\beta|, \bT} \text{ (see \eqref{defectT})}.
 \label{charfunc}
 \end{align}
The following statement summarizes the results from Chapter \ref{S:BLorthog} when
applied to the present model-theory context.

 \begin{theorem}  \label{T:charfuncT}
 Suppose that $\bT = (T_{1}, \dots, T_{d})$ is a $*$-$\bo$ strongly stable, hypercontractive operator-tuple
 on the Hilbert space $\cX$.
 Let $\Theta_{\bT} = \{ \Theta_{\bT, \beta}\}_{\beta \in \free}$ be the
 characteristic Bergman-inner family of $\bT$ as defined in \eqref{charfunc}.
 Then $\{\Theta_{\bT, \beta}\}$ is a $H^2_{\bo, \cY}(\free)$-Bergman-inner family and $\bT$ is 
 unitarily equivalent to the compressed shift operator-tuple $P_{\cM^{\perp}} \bS_{\bo,R}|_{\cM^{\perp}}$, where
 $$
 \cM = M_{\Theta_\bT} \cdot H^{2}_{\{\bcD_{\bo, |\beta|, \bT}\}_{\beta \in \free}}(\free)
 $$
 is the $\bS_{\bo,R}$-invariant subspace of $H^2_{\bo, \cD_{\bo, \bT^{*}}}(\free)$ 
associated with the $H^2_{\bo, \cY}(\free)$-Bergman-inner family $\{\Theta_{\bT, \beta}\}_{\beta \in \free}$.

 Furthermore, if $\bT'$ is another $*$-$\bo$ strongly stable, hypercontractive operator-tuple on a Hilbert space $\cX$ 
 having $\{\Theta_{\bT',\beta}\}_{\beta \in \free}$ as its  characteristic Bergman-inner family, 
then $\bT$ and $\bT'$ are unitarily equivalent if and
 only if the characteristic Bergman-inner families
 $\{\Theta_{\bT, \beta}\}_{\beta \in \free}$ and 
$\{ \Theta_{\bT',\beta}\}_{\beta \in \free}$ coincide in the following
 sense:  for each $k \in {\mathbb Z}_{+}$ there is a unitary operator
 ${\mathfrak v}_{k} \colon \bcD_{\bo, k, \bT } \to \bcD_{\bo,k, \bT' }$ and
 there is a unitary operator ${\mathfrak u} \colon \cD_{\bo, \bT^{*}} \to
 \cD_{\bo, \bT^{\prime *}}$ so that $\;
   {\mathfrak u} \Theta_{\bT, \beta}(z) = \Theta_{\bT', \beta}(z)
   {\mathfrak v}_{|\beta|}$.
 \end{theorem}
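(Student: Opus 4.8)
The plan is to assemble Theorem \ref{T:charfuncT} from pieces already established, treating the three assertions (that $\{\Theta_{\bT,\beta}\}$ is a Bergman-inner family, that $\bT$ is unitarily equivalent to the compressed shift, and that the characteristic family is a complete unitary invariant) in order. First I would set up the standing objects: since $\bT^*$ is a $\bo$-strongly stable $\bo$-hypercontraction, the pair $(D_{\bo,\bT^*},\bT^*)$ with $D_{\bo,\bT^*}=\Gamma_{\bo,\bT^*}[I_\cX]^{1/2}$ is $\bo$-isometric with $\bT^*$ strongly $\bo$-stable; by part (2) of Lemma \ref{L:7.6} the observability operator $\cO_{\bo,D_{\bo,\bT^*},\bT^*}$ is isometric, and by Theorem \ref{T:1.2g} its range $\cN$ (with the inherited, equivalently lifted, norm) is a $\bS_{\bo,R}^*$-invariant subspace isometrically contained in $H^2_{\bo,\cD_{\bo,\bT^*}}(\free)$, with $\cO_{\bo,D_{\bo,\bT^*},\bT^*}$ implementing a unitary equivalence between $\bT^*$ and $\bS_{\bo,R}^*|_\cN$. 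Hence $\cM:=\cN^\perp$ is a closed $\bS_{\bo,R}$-invariant subspace of $H^2_{\bo,\cD_{\bo,\bT^*}}(\free)$ with reproducing kernel $k_\cM$ of the form \eqref{kM} for the pair $(C,\bA)=(D_{\bo,\bT^*},\bT^*)$, which is exactly $\bo$-observable since $\cG_{\bo,D_{\bo,\bT^*},\bT^*}=I_\cX$. Moreover the shifted gramians $\Gr_{\bo,k,D_{\bo,\bT^*},\bT^*}$ are bounded and boundedly invertible by Proposition \ref{P:shift-gram-inv}, so the Cholesky factorization \eqref{pr6} producing the operators $\sbm{\widehat B_\beta \\ D_\beta}$ in \eqref{defectT} is legitimate, and by Lemma \ref{L:5.6} the associated connection matrices $\bU_\beta$ satisfy the metric constraints \eqref{isom} and \eqref{wghtcoisom}.

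Next I would invoke Theorem \ref{T:BL3} directly: applied with $\cY=\cD_{\bo,\bT^*}$ and the subspace $\cM$, it tells us that the formal power series $\Theta_{\bo,\beta}$ constructed via the recipe (1)--(3) of that theorem — which is precisely formula \eqref{innerfam-real}, matching \eqref{charfunc} — form a $H^2_{\bo,\cD_{\bo,\bT^*}}(\free)$-Bergman-inner family, and that $\cM = M_{\Theta_\bT}\,H^2_{\{\bcD_{\bo,|\beta|,\bT}\}}(\free)$ with $M_{\Theta_\bT}$ mapping $H^2_{\{\bcD_{\bo,|\beta|,\bT}\}}(\free)$ isometrically onto $\cM$. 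This gives the first assertion. For the second, since $\bT^*$ is unitarily equivalent to $\bS_{\bo,R}^*|_\cN = \bS_{\bo,R}^*|_{\cM^\perp}$, taking adjoints gives $\bT$ unitarily equivalent to $(\bS_{\bo,R}^*|_{\cM^\perp})^* = P_{\cM^\perp}\bS_{\bo,R}|_{\cM^\perp}$, which is the stated compressed-shift model.

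The substantive part is the complete-unitary-invariant statement. One direction is quick: if $W\colon \cX\to\cX'$ is a unitary with $W T_j = T_j' W$, then $W$ intertwines $\Gamma_{\bo,\bT^*}$-type operators with their primed analogues, so $W$ induces a unitary ${\mathfrak u}\colon \cD_{\bo,\bT^*}\to\cD_{\bo,\bT'^*}$ with ${\mathfrak u} D_{\bo,\bT^*} = D_{\bo,\bT'^*} W$; this unitary carries each shifted gramian $\Gr_{\bo,k,D_{\bo,\bT^*},\bT^*}$ to $\Gr_{\bo,k,D_{\bo,\bT'^*},\bT'^*}$ and hence carries the Cholesky data $\sbm{\widehat B_\beta \\ D_\beta}$ to primed data up to a unitary ${\mathfrak v}_k$ on $\bcD_{\bo,k,\bT}$ (using the uniqueness-up-to-unitary-right-factor in Lemma \ref{L:5.6}), whence ${\mathfrak u}\Theta_{\bT,\beta}(z) = \Theta_{\bT',\beta}(z){\mathfrak v}_{|\beta|}$ by inspection of \eqref{charfunc}. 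For the converse, suppose such ${\mathfrak u}$ and $\{{\mathfrak v}_k\}$ exist. Then ${\mathfrak u}$ transports the Beurling-Lax data: the subspace $\cM' = M_{\Theta_{\bT'}}H^2_{\{\bcD_{\bo,|\beta|,\bT'}\}}(\free) \subset H^2_{\bo,\cD_{\bo,\bT'^*}}(\free)$ is the image under $I_{H^2}\otimes{\mathfrak u}$ of $\cM$, because $\Theta_{\bT',\beta}(z) = {\mathfrak u}\Theta_{\bT,\beta}(z){\mathfrak v}_{|\beta|}^*$ and the ${\mathfrak v}_{|\beta|}$ only amount to a unitary change of basis on the input-space sequence; consequently $(I\otimes{\mathfrak u})$ restricts to a unitary $\cM^\perp \to \cM'^\perp$ intertwining $\bS_{\bo,R}^*|_{\cM^\perp}$ with $\bS_{\bo,R}^*|_{\cM'^\perp}$. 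Composing with the model identifications $\bT\cong P_{\cM^\perp}\bS_{\bo,R}|_{\cM^\perp}$ and $\bT'\cong P_{\cM'^\perp}\bS_{\bo,R}|_{\cM'^\perp}$ yields the desired unitary equivalence $\bT\cong\bT'$.

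\textbf{Main obstacle.} The routine checks in the forward direction (that a unitary intertwiner of the tuples transports all the derived data) are straightforward bookkeeping. The delicate point is the converse, specifically verifying that the freedom encoded in the $\beta$-dependent unitaries ${\mathfrak v}_{|\beta|}$ is exactly the freedom already present in the definition of a Bergman-inner family (condition on the coefficient-space sequence), so that equality of characteristic families in the stated sense really does force equality of the associated shift-invariant subspaces up to the single unitary ${\mathfrak u}$ on the coefficient space. This requires carefully tracking how ${\mathfrak v}_{|\beta|}$ interacts with the Fock-space grading of $H^2_{\{\bcD_{\bo,|\beta|,\bT}\}}(\free)$ and confirming that $M_{\Theta_\bT}$ and $M_{\Theta_{\bT'}}$ have the same range modulo $(I\otimes{\mathfrak u})$ — essentially a uniqueness statement for Beurling-Lax representations via Bergman-inner families, which is the content of the uniqueness clause of Theorem \ref{T:BL3}, applied here with the extra coefficient-space twist ${\mathfrak u}$ threaded through.
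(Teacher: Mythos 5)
Your proposal is correct and follows essentially the same route as the paper: the Bergman-inner-family and Beurling-Lax assertions are read off from Theorem \ref{T:BL3} applied to $\cM$ with $\cM^\perp = \operatorname{Ran}\cO_{\bo, D_{\bo,\bT^*},\bT^*}$, the compressed-shift model comes from the intertwining relation \eqref{4.8aga} with the isometric observability operator implementing the equivalence, and the complete-invariant statement is handled by passing a unitary intertwiner through the defect operators in one direction and transporting the Beurling-Lax data (via the ${\mathfrak u}$, ${\mathfrak v}_k$ identifications) in the other. You simply spell out details the paper leaves as "a matter of checking" and "straightforward," so no further comparison is needed.
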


 \begin{proof} The characteristic Bergman-inner family $\Theta_{\bT} = \{\Theta_{\bT, \beta}\}_{\beta \in \free}$ for a $*$-$\bo$ strongly stable, hypercontractive operator tuple $\bT$ is by definition the $H^2_{\bo, \cY}(\free)$-Bergman-inner family 
constructed in Theorem \ref{T:BL3} for the case where the
$\bS_{\bo,R}$-invariant subspace $\cM$ is chosen with $\cM^{\perp} = \operatorname{Ran} \cO_{\bo, D_{\bT^{*}}, \bT^{*}}$. 
     
The fact that $\Theta_{\bT}$ is an $H^2_{\bo, \cY}(\free)$-Bergman-inner family is now a
consequence of Theorem \ref{T:BL3}.  The fact that $\bT$ is unitarily equivalent to the compressed shift operator-tuple
$P_{\cM^{\perp}} \bS_{\bo,R}|_{\cM^{\perp}}$ is a simple consequence of the intertwining relations 
$$ S^*_{\bo, R, j} \cO_{\bo, D_{\bo, \bT^*}, \bT^*}  = \cO_{\bo, D_{\bo, \bT^*}, \bT^*} T_j^*
$$
(see \eqref{4.8aga}) with the observability operator $\cO_{\bo, D_{\bo, \bT^*}, \bT^{*}}$
     implementing the unitary equivalence. 
     
\smallskip

 If $\bT$ and $\bT'$ are two $*$-$\bo$ strongly stable, hypercontractive operator tuples which are unitarily equivalent
 via a unitary operator $u \colon \cX \to \cX'$ (so $T'_j   = u T_j$ for $j=1, \dots, d$), then it is a matter of checking that the
 unitary operator $u$ can be passed through the definition of the defect operators $D_{\bo, \bT^*}$ \eqref{defect-boT*}
 and $\bD_{\bo, |\beta|, \bT}$ \eqref{defectT} to arrive at the coincidence of the respective characteristic
 Bergman-inner families.  The converse fact that coincidence of the respective Bergman-inner families     
 implies unitary equivalence of the respective model $\bo$-$*$ strongly stable, hypercontractive operator-tuples
 $\bT$ and $\bT'$ is straightforward.
  \end{proof}

Corollary \ref{C:innerfam-real} tells us that any $H^2_{\bo, \cY}(\free)$-Bergman-inner family 
$\Theta = \{ \Theta_{\beta}\}_{\beta \in \free}$ can
be realized so that $M_{\Theta} = {\rm row}_{\beta \in \free} [ M_{\Theta_{\beta}} ]
= \widehat T_{\bU}$ for a $\bo$-Bergman-conservative linear system $\bU$.
One way of constructing $\{\bU_{\beta} \}_{\beta \in \free}$ is via the
construction outlined in Theorem \ref{T:BL3} based on the
$\bS_{\bo,R}$-invariant subspace $\cM = M_{\Theta}H^{2}_{\{\cU_{\beta}\}}(\free)$.
A second (closely related) way is to view
$\{\Theta_{\beta}\}_{\beta \in \free}$ as the characteristic-function
family for the operator $d$-tuple $\bT = P_{\cM^{\perp}}
\bS_{\bo,R}|_{\cM^{\perp}}$ where $\cM$ is again taken equal to
$\cM = M_{\Theta} H^{2}_{\{\cU_{\beta}\}}(\free)$.
We now give a third, more direct way which expresses the operator
components $A, \widehat B_{\beta}, C, D_{\beta}$ more directly as
operators on function spaces without involving solving Cholesky
factorization problems.  Here we use the conventions that
\begin{equation}   \label{convention0}
\bS_{\bo,R} = (S_{\bo,R,1}, \dots, S_{\bo,R,d})
\end{equation}
is the shift operator $d$-tuple on $H^2_{\bo, \cY}(\free)$ for which
we have the functional calculus notation
$$
\bS_{\bo,R}^{\alpha^{\top}}  = S_{\bo,R,i_{1}} \cdots S_{\bo,R, i_{N}}\quad \text{if}\quad
\alpha = i_{N} \cdots i_{1} \in \free,
$$
while
\begin{equation}   \label{convention1}
S_{\bo,R} = \begin{bmatrix} S_{\bo,R,1} &
\cdots & S_{\bo,R,d} \end{bmatrix} \colon (H^2_{\bo, \cY}(\free))^{d} \to H^2_{\bo, \cY}(\free)
\end{equation}
and hence
\begin{equation}   \label{convention2}
S_{\bo,R}^{*} = \begin{bmatrix} S^*_{\bo,R,1} \\ \vdots \\ S^*_{\bo,R,d} 
\end{bmatrix} \colon H^2_{\bo, \cY}(\free) \to (H^2_{\bo, \cY}(\free))^{d}.
\end{equation}

\begin{theorem}  \label{T:model}
    Suppose that we are given a $H^2_{\bo, \cY}(\free)$-Bergman-inner family
    $\Theta = \{ \Theta_{\beta} \}_{\beta \in \free}$ as
    in Definition \ref{D:innerfuncfam} generating the
    $\bS_{\bo,R}$-invariant subspace
    $$
    \cM = M_{\Theta}  H^{2}_{\{\cU_{\beta}\}_{\beta \in \free}}(\free).
    $$
Then a Bergman conservative realization $\{\bU_{\beta}\}_{\beta \in\free}$ 
for $\Theta$ can be constructed as follows:
        take $\cX = \cM^{\perp} \subset H^2_{\bo, \cY}(\free)$ and set
   \begin{equation}   \label{canUv}
       \bU_{\beta} = \begin{bmatrix} S_{\bo,R}^{*}|_{\cM^{\perp}} &
       S_{\bo,R}^{*} \bS_{\bo,R}^{* \beta} \bS_{\bo,R}^{\beta^{\top}}
       M_{\Theta_{\beta}}|_{\cU_{\beta}}  \\
       E|_{\cM^{\perp}} & \omega_{|\beta|}
       [\Theta_{\beta}]_{\emptyset} \end{bmatrix} \colon \begin{bmatrix}
       \cM^{\perp} \\ \cU_{\beta} \end{bmatrix} \to \begin{bmatrix}
       (\cM^{\perp})^{d} \\ \cY \end{bmatrix}.
   \end{equation}
   In all these formulas the conventions \eqref{convention0},
   \eqref{convention1}, \eqref{convention2} apply.
    \end{theorem}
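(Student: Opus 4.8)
The plan is to identify the operator-tuple $\bU_\beta$ in \eqref{canUv} with the connection matrix $\sbm{A & \widehat B_\beta \\ C & D_\beta}$ produced abstractly in the proof of Theorem \ref{T:BL3}, and then simply invoke Theorem \ref{T:BL3} (and Remark \ref{R:sysinterpret}) to conclude. The first step is to recall from Theorem \ref{T:BL3}(1) that the canonical model output pair is $C = E|_{\cM^\perp}$ and $\bA = \bS_{\bo, R}^*|_{\cM^\perp}$, so the upper-left and lower-left blocks of \eqref{canUv} agree with those of the construction in Theorem \ref{T:BL3} on the nose. It then remains to check that the upper-right block $S_{\bo,R}^* \bS_{\bo,R}^{*\beta} \bS_{\bo,R}^{\beta^\top} M_{\Theta_\beta}|_{\cU_\beta}$ and the lower-right block $\omega_{|\beta|}[\Theta_\beta]_\emptyset$ coincide with a valid choice of $\widehat B_\beta$ and $D_\beta$, i.e., with a solution of the Cholesky factorization problem \eqref{pr6} in Lemma \ref{L:5.6} (equivalently, that $\sbm{ A & \widehat B_\beta \\ C & D_\beta}$ satisfies the metric constraints \eqref{isom} and \eqref{wghtcoisom}), and that the associated transfer function via \eqref{innerfam-real} reproduces $\Theta_\beta$.

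For the lower-right block, I would compute $[\Theta_\beta]_\emptyset$ directly from the defining property that $M_{\Theta_\beta}$ maps $z^\beta \cU_\beta$ isometrically onto $\cM_\beta$: writing $\Theta_\beta(z) z^\beta u = \bS_{\bo, R}^{\beta^\top} M_{\Theta_\beta} u$, the coefficient of $z^\beta$ on the left is $[\Theta_\beta]_\emptyset u$, while on the right, applying the evaluation-type identity $E (\bS^{*\beta}_{\bo, R} g) = \omega_{|\beta|} g_\beta$ from \eqref{esbo} (equivalently \eqref{again43}) shows that this coefficient equals $\omega_{|\beta|}^{-1} (M_{\Theta_\beta}u)$ evaluated appropriately; chasing the bookkeeping gives $D_\beta := \omega_{|\beta|}[\Theta_\beta]_\emptyset = \omega_{|\beta|}(M_{\Theta_\beta}u)_\emptyset / \cdot$, matching the relation $\Theta_{\bo, \bU_\beta}(z) = \omega_{|\beta|}^{-1} D_\beta + \dots$ in \eqref{jul15}. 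For the upper-right block I would similarly extract the higher-order coefficients: the operator $\widehat B_\beta$ should be read off from $CR_{\bo, |\beta|+1}(Z(z) A) Z(z) \widehat B_\beta = \Theta_\beta(z) - \omega_{|\beta|}^{-1} D_\beta$, and one checks that multiplying $M_{\Theta_\beta}|_{\cU_\beta}$ by $\bS_{\bo,R}^{\beta^\top}$, then by $\bS^{*\beta}_{\bo,R}$, then by $S^*_{\bo,R}$ (the column of backward shifts) strips off exactly the leading $D_\beta z^\beta$-term and the re-normalizing factor $\omega_{|\beta|+|\alpha|}/\omega_{|\alpha|}$ coming from $\bS^{*\beta}_{\bo,R}\bS^{\beta^\top}_{\bo, R}$ (see \eqref{again43}), landing in $(\cM^\perp)^d$ because $\cM^\perp = \operatorname{Ran} \cO_{\bo, C, \bA}$ is $\bS^*_{\bo, R}$-invariant by Theorem \ref{T:2-1.2}(1). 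The key intermediate identities here are \eqref{esbo}, \eqref{again43}, \eqref{4.31}, and the orthogonal decomposition $\cM = \bigoplus_\beta \cM_\beta$ from Lemma \ref{L:Mdecom}; together they show that the $\bU_\beta$ of \eqref{canUv} is precisely the connection matrix $\sbm{ A & \widehat B_\beta \\ C & D_\beta}$ with $\Theta_\beta$ as its transfer function \eqref{innerfam-real}.

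The main obstacle I anticipate is verifying that this explicitly-written $\sbm{ \widehat B_\beta \\ D_\beta}$ actually solves the Cholesky problem \eqref{pr6}, i.e., that the resulting $\bU_\beta$ satisfies the coisometry constraint \eqref{wghtcoisom} (and, modulo injectivity of $\sbm{\widehat B_\beta \\ D_\beta}$ which we may arrange by passing to $\bcD_{\bo, |\beta|, \bT}$, the isometry constraint \eqref{isom}). The cleanest route is probably not to verify \eqref{wghtcoisom} by brute force but instead to argue by uniqueness: Lemma \ref{L:5.6} says that the solution $\sbm{\widehat B_\beta \\ D_\beta}$ of \eqref{pr6} is unique up to a common unitary right factor, and Lemma \ref{L:6.8} together with the realization \eqref{innerfam-real} shows that any such solution yields a transfer function that is the unique (up to unitary change of coefficient space, by Theorem \ref{T:BL3}) Bergman-inner representer of $\cM_\beta$. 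Since the operator defined by \eqref{canUv} manifestly has $\Theta_\beta$ as its transfer function and $\Theta_\beta$ is the given Bergman-inner representer of $\cM_\beta$, the operator in \eqref{canUv} must agree with the abstractly-constructed $\bU_\beta$ up to the allowed unitary freedom, hence is itself a $\bo$-Bergman conservative realization. Finally, I would note that $M_\Theta = \operatorname{row}_{\beta \in \free}[M_{\Theta_\beta}] = \widehat T_{\bU}$ by Remark \ref{R:sysinterpret}, completing the proof.
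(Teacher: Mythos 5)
Your strategy is genuinely different from the paper's: the paper proves the theorem by a direct four-step verification (that the blocks of \eqref{canUv} map $\sbm{\cM^{\perp} \\ \cU_{\beta}}$ into $\sbm{(\cM^{\perp})^{d} \\ \cY}$, that the weighted isometry \eqref{isom} and weighted coisometry \eqref{wghtcoisom} identities hold, and that the realization formula returns $\Theta_{\beta}$), whereas you propose to compute the transfer function of \eqref{canUv} by coefficient extraction and then deduce conservativity by matching \eqref{canUv} against the abstract Cholesky construction of Theorem \ref{T:BL3} via uniqueness. The route can be made to work, but as written it has a genuine gap precisely at the uniqueness step. Knowing that the power series attached to \eqref{canUv} equals $\Theta_{\beta}(z) = \Theta^{\rm abs}_{\beta}(z)V_{\beta}$ (with $\Theta^{\rm abs}_{\beta}$ the representer coming from Lemma \ref{L:5.6} and $V_{\beta}$ unitary) only gives equality of coefficients, i.e.\ $D_{\beta} = D^{\rm abs}_{\beta}V_{\beta}$ and $C\bA^{\beta'}B_{j,\beta} = C\bA^{\beta'}B^{\rm abs}_{j,\beta}V_{\beta}$ for all $\beta'$ and $j$. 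To pass from this to $\widehat B_{\beta} = \widehat B^{\rm abs}_{\beta}V_{\beta}$ — which is what transports the metric constraints \eqref{isom}, \eqref{wghtcoisom} to \eqref{canUv} — you must invoke observability of the model output pair $(E|_{\cM^{\perp}}, \bS_{\bo,R}^{*}|_{\cM^{\perp}})$ (available from Proposition \ref{P:bo-model}, since $\cO_{\bo, E, \bS_{\bo,R}^{*}} = I$), exactly as in the argument around \eqref{have}--\eqref{need} in the proof of Theorem \ref{T:cm}. ``Same transfer function'' does not by itself imply ``same connection matrix up to a unitary,'' and this observability step never appears in your write-up; it is the load-bearing part of the argument you are proposing.

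A second, smaller error: you justify that the upper-right block maps $\cU_{\beta}$ into $(\cM^{\perp})^{d}$ by the $\bS_{\bo,R}^{*}$-invariance of $\cM^{\perp}$. That reasoning does not apply, because the vector $\bS_{\bo,R}^{*\beta}\bS_{\bo,R}^{\beta^{\top}}M_{\Theta_{\beta}}u$ to which $S_{\bo,R,j}^{*}$ is applied is not in $\cM^{\perp}$ (it is a reweighting of $\Theta_{\beta}u$). The containment comes instead from condition (3) of Definition \ref{D:innerfuncfam} through the decomposition \eqref{Mv}, \eqref{Mdecom'}: for $f \in \cM$ one has $\langle f, B_{j,\beta}u\rangle = \langle \bS_{\bo,R}^{\beta^{\top}j}f, \, \bS_{\bo,R}^{\beta^{\top}}\Theta_{\beta}u\rangle = 0$ since $\bS_{\bo,R}^{\beta^{\top}}\Theta_{\beta}\cU_{\beta}$ is orthogonal to $\bigoplus_{j}\bS_{\bo,R}^{\beta^{\top}}S_{\bo,R,j}\cM$ — this is how the paper argues. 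Within your scheme the containment could alternatively fall out of the identification $\widehat B_{\beta} = \widehat B^{\rm abs}_{\beta}V_{\beta}$, but then that identification must be established first, carrying out the coefficient computation with the ambient operators $E$ and $\bS_{\bo,R}^{*}$ rather than with $C$ and $\bA$, and again using the observability step above.
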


    \begin{proof}
Given the $H^2_{\bo, \cY}(\free)$-Bergman-inner family $\Theta = \{\Theta_{\beta}\}_{\beta \in \free}$, 
we set 
$$
\cM = \bigoplus_{\beta
\in \free} \bS_{\bo,R}^{\beta^{\top}} M_{\Theta_{\beta}} \cU_{\beta}.
$$
Then as a consequence of  analysis in the proof of Theorem
\ref{T:BL3}, we know that $\cM$ is $\bS_{\bo,R}$ invariant, and
due to property (3) in Definition \ref{D:innerfuncfam}, we have
\begin{align*}
&\bS_{\bo,R}^{\beta^{\top}} \cM \ominus
  \bigg(  \bigoplus_{j=1}^{d} \bS_{\bo,R}^{\beta^{\top}}S_{\bo,R,j} \cM\bigg)\\
&=  \bigg(\bigoplus_{\alpha \in \free} \Theta_{\alpha \beta}
    z^{\alpha \beta} \cU_{\alpha \beta} \bigg) \ominus
    \bigg( \bigoplus_{\alpha \in \free} \bigoplus_{j=1}^{d}
\Theta_{\alpha j\beta} z^{\alpha j \beta}\cU_{\alpha j \beta} \bigg)  \\
&= \bigg( \bigoplus_{\alpha \in \free} \Theta_{\alpha \beta}
z^{\alpha \beta}\cU_{\alpha \beta} \bigg) \ominus \bigg( \bigoplus_{\alpha \in \free
\colon \alpha \ne \emptyset}  \Theta_{\alpha \beta} z^{\alpha \beta}\cU_{\alpha \beta}\bigg)
= \Theta_{\beta} z^{\beta} \cU_{\beta}.
\end{align*}
Thus,
\begin{equation}  \label{Mbeta-form'}
 \bS_{\bo,R}^{\beta^{\top}} \cM \ominus \bigg(
 \bigoplus_{k=1}^{d}\bS_{\bo,R}^{\beta^{\top}} S_{\bo,R,k} \cM \bigg)
 = \bS_{\bo,R}^{\beta^{\top}} \Theta_{\beta} \cU_{\beta}.
\end{equation}
Let us set $A=S_{\bo,R}^{*}|_{\cM^{\perp}}$ and 
$\widehat B_{\beta}=S_{\bo,R}^{*} \bS_{\bo,R}^{* \beta} \bS_{\bo,R}^{\beta^{\top}} 
M_{\Theta_{\beta}}|_{\cU_{\beta}}$, i.e., 
$$
A=\begin{bmatrix}A_1 \\ \vdots \\ A_d\end{bmatrix}=
\begin{bmatrix}S_{\bo,R,1}^{*}|_{\cM^{\perp}} \\ \vdots \\ S_{\bo,R,d}^{*}|_{\cM^{\perp}}\end{bmatrix},\quad
\widehat B_{\beta}=\begin{bmatrix}B_{1,\beta} \\ \vdots \\ B_{d,\beta}\end{bmatrix}
=\begin{bmatrix}S_{\bo,R,1}^{*} \bS_{\bo,R}^{* \beta} \bS_{\bo,R}^{\beta^{\top}} M_{\Theta_{\beta}}|_{\cU_{\beta}}
 \\ \vdots \\ 
S_{\bo,R,d}^{*} \bS_{\bo,R}^{* \beta} \bS_{\bo,R}^{\beta^{\top}} M_{\Theta_{\beta}}|_{\cU_{\beta}}
\end{bmatrix},
$$
and let us write the connection matrix \eqref{canUv} as 
\begin{equation}  \label{Uv}
    \bU_{\beta} = \begin{bmatrix} A & \widehat B_{\beta} \\ C &
    D_{\beta} \end{bmatrix},\quad\mbox{where}\quad C=E|_{\cM^{\perp}},\quad
\quad D_\beta=\omega_{|\beta|}^{-1}[\Theta_{\beta}]_{\emptyset}.
\end{equation}
\smallskip
\noindent
\textbf{Step 1:} {\em The connection matrix $\bU_{\beta} = \sbm{A & \widehat B_{\beta} \\ C &
    D_{\beta}}$ maps $\sbm{\cM^{\perp} \\ \cU_{\beta}}$ into $\sbm{(\cM^{\perp})^{d}\\ \cY}$.}

\smallskip
\noindent
{\em Proof:} Since $\cM$ is ${\bf S}_{\bo,R}$-invariant, its orthogonal complement $\cM^\perp$ is 
${\bf S}_{\bo,R}^*$-invariant and hence $A_j: \cM^\perp\to\cM^\perp$ for $j=1,\ldots,d$. As 
$C: \, \cM^\perp\to\cY$ and $D_\beta: \, \cU_\beta\to\cY$ by construction, it remains to show that
$B_{j,\beta}$ maps $\cU_{\beta}$  into $\cM^{\perp}$ for $j=1,\ldots,d$.

\smallskip

To this end, we apply formula \eqref{again43} to the power series
representation for $\Theta_{\beta}$:
$$
  \Theta_{\beta}(z) = \sum_{\alpha \in \free} [\Theta_{\beta}]_{\alpha} z^{\alpha}
$$
to conclude that for any $u\in\cU_\beta$,
\begin{equation}   \label{SvSvT}
\left(\bS_{\bo,R}^{*\beta} \bS_{\bo,R}^{\beta^{\top}} M_{\Theta} u
\right)(z) = \sum_{\alpha \in \free} \frac{ \omega_{|\alpha| +
|\beta|}}{\omega_{|\alpha|}} [\Theta_{\beta}]_{\alpha}u z^{\alpha},
\end{equation}
from which we get
\begin{equation}   \label{Bvk}
\left(B_{j,\beta}u\right)(z) = \left(S_{\bo,R,j}^{*} \bS_{\bo,R}^{* \beta}
\bS_{\bo,R}^{\beta^{\top}}M_{\Theta_{\beta}}u  \right)(z) =
\sum_{\alpha \in \free} \frac{ \omega_{|\alpha| + |\beta| +1}}{ \omega_{|\alpha| }}
[\Theta_{\beta}]_{\alpha j}  u z^{\alpha}.
\end{equation}
For any $f\in\cM$, we then have 
\begin{align*}
    \langle f, \, B_{j,\beta} u \rangle_{H^2_{\bo, \cY}(\free)} &
    = \langle f,  \, S_{\bo,R,j}^{*} \bS_{\bo,R}^{* \beta} \bS_{\bo,R}^{\beta^{\top}}
    \Theta_{\beta} \cdot u \rangle_{H^2_{\bo, \cY}(\free)}  \\
    & = \langle \bS_{\bo,R}^{\beta^{\top}j}f, \, \bS_{\bo,R}^{\beta^{\top}}
    \Theta_{\beta}\cdot u \rangle_{H^2_{\bo,\cY}(\free)} = 0,
\end{align*}
by \eqref{Mbeta-form'}. Thus, for any $u\in\cU_\beta$, the vector $B_{j,\beta} u$ is orthogonal to any 
$f\in\cM$ and hence, $B_{j,\beta} \colon \cU_{\beta} \to \cM^{\perp}$ as required.

\medskip
\noindent
\textbf{Step 2:} {\em $\bU_{\beta}$ satisfies the weighted isometry
property \eqref{isom}.}

\smallskip
\noindent
{\em Proof:} To verify the property \eqref{isom}, it suffices to
verify the three pieces:
\begin{align}
  &  A^{*} \left( \Gr_{\bo,|\beta|+1, C, \bA} \otimes I_{d} \right) A
  +  \omega_{|\beta|}^{-1} C^{*} C =
    \Gr_{\bo,|\beta|, C, \bA}, \label{i} \\
& A^{*} \left( \Gr_{\bo, |\beta| + 1, C, \bA} \otimes I_{d} \right) \widehat
B_{\beta} + \omega_{|\beta|}^{-1} C^{*} D_{\beta} = 0, \label{ii} \\
& \widehat B_{\beta}^{*} \left( \Gr_{\bo, |\beta| + 1, C, \bA} \otimes I_{d}
\right) \widehat B_{\beta} + \omega_{|\beta|}^{-1} D_{\beta}^{*} D_{\beta} =
I_{\cU_{\beta}}.  \label{iii}
\end{align}
We first note that the identity \eqref{i} is a consequence of the general identity
\eqref{4.34}.  Note next that \eqref{ii} is equivalent to the validity of
\begin{equation}   \label{ii'}
    \bigg\langle \begin{bmatrix} \left(\Gr_{\bo,|\beta| + 1, C,\bA} \otimes
    I_{d}\right) A \\  C \end{bmatrix} f, \, \begin{bmatrix} \widehat
    B_{\beta} \\ \omega_{|\beta|}^{-1} D_{\beta} \end{bmatrix} u \bigg
    \rangle_{H^2_{\bo,\cY}(\free) \oplus \cY} = 0
\end{equation}
for all $f \in \cM^{\perp}$ and $u \in \cU_{\beta}$.
Due to the facts that $\bA = \bS_{\bo,R}^{*}|_{\cM^{\perp}}$ and $\cM^{\perp}$ is
$\bS_{\bo,R}^{*}$-invariant,  we can rewrite the left-hand side of \eqref{ii'} as
\begin{align}
     &\big  \langle \big( \Gr_{\bo, |\beta|+1, E, \bS_{\bo,R}^{*}} \otimes I_{d} \big)
    S_{\bo,R}^{*} f, S_{\bo,R}^{*} \bS_{\bo,R}^{* \beta} \bS_{\bo,R}^{\beta^{\top}}
    M_{\Theta_{\beta}} u \big\rangle_{H^2_{\bo,\cY}(\free)} + \langle Ef,
    [\Theta_{\beta}]_{\emptyset} u \rangle_{\cY}  \notag \\
    & = \langle S_{\bo,R} \big(\Gr_{\bo, |\beta| + 1, E, \bS_{\bo,R}^{*}} \otimes I_{d}
    \big) S_{\bo,R}^{*} f, \, \bS_{\bo,R}^{*\beta} \bS_{\bo,R}^{\beta^{\top}}
    M_{\Theta_{\beta}} u  \rangle_{H^2_{\bo,\cY}(\free)} +
\langle  E f,  [\Theta_{\beta}]_{\emptyset} u \rangle_{\cY}  \notag \\
& = \langle ( \Gr_{\bo, |\beta|, E, \bS_{\bo,R}^{*}} - \omega_{|\beta|}^{-1}
  E^{*} E) f, \, \bS_{\bo,R}^{*\beta} \bS_{\bo,R}^{\beta^{\top}}
    M_{\Theta_{\beta}} u  \rangle_{H^2_{\bo,\cY}(\free)} +
\langle  E f,  \, [\Theta_{\beta}]_{\emptyset} u \rangle_{\cY}  \notag \\
& = \langle \Gr_{\bo, |\beta|, E, \bS_{\bo,R}^{*}} f, \bS_{\bo,R}^{*\beta}
\bS_{\bo,R}^{\beta^{\top}}
    M_{\Theta_{\beta}} u  \rangle_{H^2_{\bo,\cY}(\free)}  \notag \\
    & \quad  \quad -
    \langle \omega_{|\beta|}^{-1} Ef, E \bS_{\bo,R}^{*\beta} \bS_{\bo,R}^{\beta^{\top}}
    M_{\Theta_{\beta}} u  \rangle_{H^2_{\bo,\cY}(\free)}
     + \langle  E f,  \, [\Theta_{\beta}]_{\emptyset} u \rangle_{\cY}
    \label{ii''}
\end{align}
where we made use of the general identity \eqref{4.34} applied with
$\bA = \bS_{\bo,R}^{*}$ and $C = E$:
$$
  S_{\bo,R} \left( \Gr_{\bo, |\beta| + 1, E, \bS_{\bo,R}} \otimes I_{d} \right)
  S_{\bo,R}^{*} = \Gr_{\bo, |\beta|, E, \bS_{\bo,R}^{*}} - \omega_{|\beta|}^{-1}
  E^{*} E.
$$
From the formula \eqref{SvSvT} we read off that
\begin{equation}  \label{Ubeta21}
E \bS_{\bo,R}^{*\beta} \bS_{\bo,R}^{\beta^{\top}}
M_{\Theta_{\beta}} u  = \omega_{|\beta|}  [\Theta_{\beta}]_{\emptyset}
\end{equation}
and hence the last two terms in \eqref{ii''} cancel.
Thus verification of the identity \eqref{ii} collapses to
verification that the first term in the last expression in \eqref{ii''} is zero, i.e., that
\begin{equation}
\big\langle \Gr_{\bo, |\beta|, E, \bS_{\bo,R}^{*}} f, \, \bS_{\bo,R}^{*\beta}
    \bS_{\bo,R}^{\beta^{\top}}M_{\Theta_{\beta}} u  \big\rangle_{H^2_{\bo,\cY}(\free)}=0.
\label{ii'''}
\end{equation}
Making use of the second equality in \eqref{gramss}, we can write \eqref{ii'''} as 
\begin{align*}
&\langle \Gr_{\bo, |\beta|, E, \bS_{\bo,R}^{*}} f, \, \bS_{\bo,R}^{*\beta}
    \bS_{\bo,R}^{\beta^{\top}}M_{\Theta_{\beta}} u  \rangle_{H^2_{\bo,\cY}(\free)}\\
&=\langle \bS_{\bo,R}^{*\beta}
    \bS_{\bo,R}^{\beta^{\top}}\Gr_{\bo, |\beta|, E, \bS_{\bo,R}^{*}} f, \, M_{\Theta_{\beta}} u  \rangle_{H^2_{\bo,\cY}(\free)}
=\langle f, \, M_{\Theta_{\beta}} u  \rangle_{H^2_{\bo,\cY}(\free)}.
\end{align*}
As $f \in \cM^\perp$ and $M_{\Theta_\beta} \colon \cU_\beta \to \cM_\beta$, we now can conclude that
\eqref{ii'''} holds, and hence the verification of \eqref{ii} is complete.

\smallskip

As for  \eqref{iii}, note that  an equivalent condition is
\begin{equation}  \label{iii'}
   \big\langle \big( \Gr_{\bo, |\beta|+1, E, \bS_{\bo,R}^{*}} \otimes I_{d}
    \big) \widehat B_{\beta} u, \widehat B_{\beta} u
   \big\rangle_{H^2_{\bo,\cY}(\free)}  + \omega_{|\beta|}^{-1} \cdot \|
   D_{\beta} u\|_{\cY}^{2} = \| u \|_{\cU}^{2}
\end{equation}
for all $u \in \cU_{\beta}$.  From \eqref{Bvk} and \eqref{grer} we see
that the first term on the left side is
\begin{align*}
& \left \langle \left( \Gr_{\bo, |\beta|+1, E, \bS_{\bo,R}^{*}} \otimes I_{d}
 \right) \widehat B_{\beta} u, \widehat B_{\beta} u
   \right \rangle_{H^2_{\bo,\cY}(\free)}   \\
 &  = \sum_{j=1}^{d} \bigg\langle \sum_{\alpha \in \free}
 \frac{ \omega_{|\alpha|}}{\omega_{|\alpha| + |\beta| +1}} \cdot
 \frac{\omega_{|\alpha| + |\beta| + 1}}{\omega_{|\alpha|}} [
 \Theta_{\beta}]_{\alpha j} u \, z^{\alpha}, \,
 \sum_{\alpha \in \free}
 \frac{\omega_{|\alpha| + |\beta| + 1}}{\omega_{|\alpha|}}
 [ \Theta_{\beta}]_{\alpha j} u \, z^{\alpha} \bigg\rangle_{H^2_{\bo, \cY}(\free)} \\
 & = \sum_{j=1}^{d} \sum_{\alpha \in \free} \omega_{|\alpha| + |\beta| +1} 
\| [\Theta_{\beta}]_{\alpha j} u \|^{2}_{\cY}\\
&  = \sum_{\alpha \in \free \colon \alpha \ne \emptyset}
 \omega_{|\alpha| + |\beta|} \| [\Theta_{\beta}]_{\alpha} u \|^{2}_{\cY}=
\| {\bf S}_{\bo,R}^{\beta^\top}\Theta_{\beta}u \|^{2}_{H^2_{\bo, \cY}(\free)}- 
\omega_{|\beta|} \| [\Theta_{\beta}]_{\emptyset} u \|_{\cY}^{2}.
 \end{align*}
Combining the latter computation with equalities
$$
\| {\bf S}_{\bo,R}^{\beta^\top}\Theta_{\beta}u \|^{2}_{H^2_{\bo, \cY}(\free)}=\|u\|^2_{\cU_\beta}\quad
\mbox{and}\quad \omega_{|\beta|} \| [\Theta_{\beta}]_{\emptyset} u \|_{\cY}^{2}
=\omega_{|\beta|}^{-1} \| D_{\beta} u \|^{2}_{\cY},
$$
that hold by property (1) of an $H^2_{\bo, \cY}(\free)$-Bergman-inner family (Definition \ref{D:innerfuncfam}) and 
the definition of $D_\beta$ in \eqref{Uv}, respectively, we arrive at \eqref{iii'} as wanted. 

\medskip
\noindent
\textbf{Step 3:} {\em $\bU_{\beta}$ satisfies the weighted coisometry
property \eqref{wghtcoisom}.}

\smallskip
\noindent
{\em Proof:} To verify the weighted coisometry property \eqref{wghtcoisom}, we
note that, in view of the validity of the weighted isometry property \eqref{isom}
already checked, it suffices to show that the connection matrix
$\bU_{\beta}$ \eqref{Uv} maps $\cM^{\perp} \oplus \cU_{\beta}$ onto
$\cM^{\perp} \oplus \cY$.   To show that $\bU_{\beta}$ is onto, we
suppose that 
a vector $\sbm{ g \\ y } \in \sbm{ (\cM^{\perp})^{d} \\ \cY }$ is
orthogonal to $\bU_{\beta} \sbm{ \cM^{\perp} \\ \cU_{\beta} }$ in the
$\sbm{ \Gr_{\bo, |\beta| + 1, C, \bA} \otimes I_{d} & 0 \\ 0 &
I_{\cY}}$-metric, i.e., we suppose that vectors $g\in(\cM^{\perp})^{d}$ and $y\in\cY$ are such that  
\begin{equation}   \label{know0}
\left\langle \sbm{\Gr_{\bo, |\beta| + 1, C, \bA} \otimes I_{d} & 0 \\ 0 &
I_{\cY}}\bU_{\beta}\begin{bmatrix}f \\ u\end{bmatrix}, \, \begin{bmatrix}g \\ y\end{bmatrix}
\right\rangle_{(\cM^{\perp})^{d} \oplus \cY} =0
\end{equation}
for all $f\in\cM^\perp, \; u\in\cU_\beta$, and we wish to show that this forces $g=0$ and $y=0$.
From the identity \eqref{Ubeta21} we see that the connection
matrix  $\bU_{\beta}$ \eqref{canUv} has the factorization
$$
 \bU_{\beta} = \begin{bmatrix} S_{\bo,R}^{*} \\ E \end{bmatrix}
 \begin{bmatrix} I_{\cM^{\perp}} & \bS_{\bo,R}^{* \beta}
     \bS_{\bo,R}^{\beta^{\top}} M_{\Theta_{\beta}} \end{bmatrix} \colon
     \begin{bmatrix} \cM^{\perp} \\ \cU_{\beta} \end{bmatrix} \to
         \begin{bmatrix} (\cM^{\perp})^{d} \\ \cY \end{bmatrix}.
$$
Substituting this factorization into \eqref{know0} we get
$$
\left\langle \begin{bmatrix}\big(\Gr_{\bo, |\beta| + 1, E,
\bS_{\bo,R}^{*}}\otimes I_{d} \big) S_{\bo,R}^{*} \\ E \end{bmatrix}( f + \bS_{\bo,R}^{*
\beta} \bS_{\bo,R}^{\beta^{\top}} M_{\Theta_{\beta}} u), \,
\begin{bmatrix} g \\ y \end{bmatrix}
    \right\rangle_{(\cM^{\perp})^{d} \oplus \cY} = 0.
$$
Breaking out $g \in (\cM^{\perp})^{d}$ into its components $g_{j} \in
\cM^{\perp}$:  $g = \sbm{ g_{1} \\ \vdots \\ g_{d} }$, rewrite the last equality
in the form
\begin{equation}   \label{know1a}
\big \langle f + \bS_{\bo,R}^{* \beta} \bS_{\bo,R}^{\beta^{\top}}
 M_{\Theta_{\beta}} u, \, \sum_{j=1}^{d} S_{\bo,R,j} \Gr_{\bo, |\beta| +
 1, E, \bS_{\bo,R}^{*}} g_{j} + E^{*} y \big\rangle_{H^2_{\bo, \cY}(\free)} =
 0
 \end{equation}
 for all $f \in \cM^{\perp}$ and $u \in \cU_{\beta}$.  For ease of
 notation let us set
 \begin{equation}  \label{hgy}
   h_{g,y}: =  \sum_{j=1}^{d} S_{\bo,R,j} \Gr_{\bo, |\beta| +
 1, E, \bS_{\bo,R}^{*}} g_{j} + E^{*} y.
 \end{equation}
Setting $u = 0\in\cU_{\beta}$ and varying $f$
 arbitrarily in $\cM^{\perp}$ we conclude from \eqref{know1a} that
 \begin{equation}  \label{conclude1}
     h_{g,y} \in \cM.
 \end{equation}
If we set $f =0$ and vary $u$ arbitrarily in $\cU_{\beta}$ in
\eqref{know1a}, we get
 $$
     \langle \bS_{\bo,R}^{\beta^{\top}} M_{\Theta_{\beta}} u,
     \,  \bS_{\bo,R}^{\beta^{\top}} h_{g,y}
     \rangle_{H^2_{\bo,\cY}(\free)} = 0\quad\mbox{for all}\quad u \in
     \cU_{\beta}.
$$
When we combine this with \eqref{conclude1} we arrive at
 \begin{equation} \label{know2}
     \bS_{\bo,R}^{\beta^{\top}} h_{g,y} \in \bS_{\bo,R}^{\beta^{\top}} \cM
     \cap (\bS_{\bo,R}^{\beta^{\top}} M_{\Theta_{\beta}}
     \cU_{\beta})^{\perp}.
     \end{equation}
As a consequence of the identity \eqref{Mbeta-form'}, we know that
$$
\bS_{\bo,R}^{\beta^{\top}} \cM
     \cap (\bS_{\bo,R}^{\beta^{\top}} M_{\Theta_{\beta}}
     \cU_{\beta})^{\perp} =  \bigoplus_{j=1}^{d}
     \bS_{\bo,R}^{\beta^{\top}} S_{\bo,R,j} \cM
$$
and hence condition \eqref{know2} gives us
\begin{equation}   \label{know2a}
    \bS_{\bo,R}^{\beta^{\top}} h_{g,y} \in \bigoplus_{j=1}^{d}
     \bS_{\bo,R}^{\beta^{\top}} S_{\bo,R,j} \cM.
\end{equation}
Applying the operator $\bS_{\bo,R}^{\beta^{\top}}$ to both parts of 
\eqref{hgy} and taking into account that 
$\Gr_{\bo,k,E,\bS_{\bo,R}^{*}} = \Ob_{\bo,k,E, \bS_{\bo,R}^{*}}$ (by 
explicit formulas \eqref{grer}), we get
\begin{equation}   \label{Sbetahgy}
  \bS_{\bo,R}^{\beta^{\top}} h_{g,y} = \sum_{j=1}^{d}
  \bS_{\bo,R}^{\beta^{\top}} S_{\bo,R,j} \Ob_{\bo, |\beta| +1, E,
  \bS_{\bo,R}^{*}} g_{j} + \bS_{\bo,R}^{\beta^{\top}} E^{*} y.
\end{equation}
From \eqref{SMperp} we know that
\begin{equation}  \label{know2'}
\big( \bS_{\bo,R}^{(j\beta)^{\top}} \cM \big)^{\perp} =
\big( \bS_{\bo,R}^{(j\beta)^{\top}} H^2_{\bo,\cY}(\free)
\big)^{\perp} \bigoplus \bS_{\bo,R}^{(j\beta)^{\top}} {\rm
Ran}\, \Ob_{\bo,|\beta|+1, E, \bS_{\bo,R}^{*}}.
\end{equation}
Let us note that the $j$-th term in the sum on the right hand side of
\eqref{Sbetahgy} satisfies
$$
    \bS^{\beta^{\top}}_{\bo,R} S_{\bo,R,j} \Ob_{\bo, |\beta|+1, E,
    \bS^{*}_{\bo,R}} g_{j} \in \bS_{\bo, R}^{(j\beta)^{\top}}
    \operatorname{Ran} \Ob_{\bo, |\beta| + 1, E, \bS_{\bo,R}^{*}}
$$
and hence from \eqref{know2'} we conclude that
\begin{equation} \label{note1}
     \bS^{\beta^{\top}}_{\bo,R} S_{\bo,R,j} \Ob_{\bo, |\beta|+1, E,
    \bS^{*}_{\bo,R}} g_{j} \in \big( \bS_{\bo,R}^{(j \cdot \beta)^{\top}} \cM \big)^{\perp}\quad
\text{for each}\quad j = 1, \dots, d.
\end{equation}
But for $j \ne k$, $\operatorname{Ran} \bS_{\bo,R}^{(j\beta)^{\top} } \perp \operatorname{Ran} 
\bS_{\bo,R}^{(k \beta)^{\top}}$ and hence \eqref{note1} actually implies
\begin{equation}   \label{note1'}
     \bS^{\beta^{\top}}_{\bo,R} S_{\bo,R,j} \Ob_{\bo, |\beta|+1, E,
    \bS^{*}_{\bo,R}} g_{j} \in \bigg( \bigoplus_{k=1}^{d} \bS_{\bo,R}^{(k\beta)^{\top}}
    \cM \bigg)^{\perp}\; \text{for each}\;  j = 1, \dots, d.
\end{equation}
Note next that $\bS_{\bo,R}^{\beta^{\top}} E^{*}y$,
the last term on the right hand side of \eqref{Sbetahgy},
satisfies the orthogonality property
$$
\langle \bS_{\bo,R}^{\beta^{\top}} E^{*} y, \, \bS_{\bo,R}^{(j\beta)^{\top}} 
f \rangle_{H^2_{\bo, \cY}(\free)} =
\langle \bS_{\bo,R}^{\beta^{\top}} E^{*} y, \, \bS_{\bo,R}^{\beta^{\top}}
S_{\bo,R,j} f \rangle_{H^2_{\bo, \cY}(\free)} = 0
$$
for all $f\in H^2_{\bo,\cY}(\free)$ and for all $ j = 1, \dots, d$; in other words,
$$ 
\bS_{\bo,R}^{\beta^{\top}} E^{*} y \in \bigg( \bigoplus_{j=1}^{d}
\bS_{\bo,R}^{(j\beta)^{\top}} H^2_{\bo,\cY}(\free)\bigg)^{\perp}.
$$
In particular, we have
\begin{equation} \label{note1''}
    \bS_{\bo,R}^{\beta^{\top}} E^{*} y \in \bigg( \bigoplus_{j=1}^{d}
\bS_{\bo,R}^{(j\beta)^{\top}} \cM \bigg)^{\perp}.
\end{equation}
Adding up \eqref{note1'} over all $j=1, \dots, d$
together with \eqref{note1''} and recalling \eqref{Sbetahgy} and
\eqref{know2a}, we are left with
$$
  \bS_{\bo,R}^{\beta^{\top}} h_{g,y} \in \bigg(\bigoplus_{j=1}^{d}
  \bS_{\bo,R}^{\beta^{\top}} S_{\bo,R,j} \cM \bigg) \bigcap
  \bigg(\bigoplus_{j=1}^{d}
  \bS_{\bo,R}^{\beta^{\top}} S_{\bo,R,j} \cM \bigg)^{\perp},
$$
and hence $\bS_{\bo,R}^{\beta^{\top}}h_{g,y} = 0$.  As $\bS_{\bo,R}^{\beta^{\top}}$ is injective,
it then follows that $h_{g,y}= 0$. Note that the sum in the definition \eqref{hgy} is orthogonal, and hence
$y= 0$ and
$$
 S_{\bo,R,j} \Gr_{\bo, |\beta| +1, E, \bS_{\bo,R}^{*}} g_{j} = 0 \quad\text{for each}\quad j = 1, \dots, d.
$$
As $S_{\bo,R,j} \Gr_{\bo, |\beta| +1, E, \bS_{\bo,R}^{*}}$ is injective, we
conclude that each $g_{j} = 0$ as well, and the coisometric property
of $\bU_{\beta}$ now follows.

\smallskip

\noindent
\textbf{Step 4:  Verification of the transfer-function realization
formula \eqref{thetahardy} for $\Theta_{\beta}$.}
It remains now only to check that we recover $\Theta_{\beta}$ via the
realization formula \eqref{thetahardy} (or equivalently the
representation \eqref{jul15}) explicitly in terms of power-series
coefficients:
$$
\Theta_{\beta}(z) =  \omega_{|\beta|}^{-1} D_{\beta} +
\sum_{j=1}^{d} \sum_{\beta' \in \free} \omega^{-1}_{|\beta'| + |\beta| +1} C
\bA^{\beta'} B_{j,\beta} z^{\beta' j }.
$$
We compute, using \eqref{Bvk} and \eqref{esbo},
\begin{align*}
    C \bA^{\beta'}B_{j,\beta} & = E \bS_{\bo,R}^{* \beta'} \bigg( \sum_{\alpha \in
    \free} \frac{\omega_{|\alpha| + |\beta| +1}}{\omega_{|\alpha|}} [
    \Theta_{\beta}]_{\alpha j} z^{\alpha} \bigg) \\
&=\omega_{|\beta'|}\cdot \left[\sum_{\alpha \in
    \free} \frac{\omega_{|\alpha| + |\beta| +1}}{\omega_{|\alpha|}} [
    \Theta_{\beta}]_{\alpha j} z^{\alpha} \right]_{\beta'}\\
&=\omega_{|\beta'|}\cdot \frac{\omega_{|\beta'| + |\beta| +1}}{\omega_{|\beta'|}}\cdot
\left[\Theta_{\beta}\right]_{[\beta' j}= \omega_{|\beta'| + |\beta| +1}  \left[\Theta_{\beta}\right]_{\beta' j}.
\end{align*}
We combine this with the identity $D_{\beta} = \omega_{|\beta|}[\Theta_\beta]_{\emptyset}$ to get
\begin{align*}
   & \omega_{|\beta|}^{-1} D_{\beta} + \sum_{j=0}^{d} \sum_{\beta' \in \free}
    \omega_{|\beta'| + |\beta| +1}^{-1} C \bA^{\beta'} B_{j,\beta}
    z^{\beta' k} \\
    &  \quad = [\Theta_\beta]_{\emptyset} + \sum_{j=0}^{d} \sum_{\beta' \in
    \free} [\Theta_{\beta}]_{\beta'j} z^{\beta' j} = \Theta_{\beta}(z)
\end{align*}
as wanted.
\end{proof}

\section{Model theory for $n$-hypercontractions}   \label{S:modeln}

    Recall Definition \ref{D:4.1} of an {\em $n$-hypercontractive 
    operator $d$-tuple $\bA = (A_{1}, \dots, A_{d})$}:  an operator 
    $d$-tuple $\bA = (A_{1}, \dots, A_{d})$ on a Hilbert space $\cX$ 
    is {\em $n$-hypercontractive} if the defect operators 
    $$
    \Gamma_{m,\bA}[I_{\cX}] = (I - B_{\bA})^{m}[I_{\cX}] = \sum_{\alpha \in \free 
    \colon  |\alpha| \le m} (-1)^{|\alpha|} \bcs{ m \\ |\alpha| } 
    \bA^{*\alpha^{\top}} \bA^{\alpha}
    $$
    are all positive semidefinite for $1\le m \le n$.  As in the general $\bo$-setting,
we consider an  operator $d$-tuple $\bT = (T_{1}, \dots, T_{d})$ for which the 
    adjoint tuple $\bA: = \bT^{*}: = (T_{1}^{*}, \dots, T_{d}^{*})$ 
    is an $n$-hypercontraction.    Note that while it is the case that $\Gamma_{m, \bA} = \Gamma_{\bmu_m, \bA}$,
    the definition of $\bmu_n$-hypercontraction requires that the shifted defect operators
    $$
    \Gamma_{\bmu_n, \bA}^{(k)}[I_\cX] = \frac{R_{\bmu_n, k}}{R_{\bmu_n}}(B_A)
    $$
    be positive-semidefinite for all $k \ge 0$.  However the linking identity \eqref{Gamma-bmun}
    enables one to show that these two classes are identical.  One can check that in all the formulas
    in the preceding sections of this Chapter, while shifts of observability gramian operators are ubiquitous, shifted
    defect operators ($\Gamma_{\bmu_k}$ versus $\Gamma^{(k)}$ with $k \ge 1$) do not occur.  Furthermore,
    by Remark \ref{R:stabmu}, in case $\bo = \bmu_n$, $\bmu_n$-strong stability may be replaced by ordinary strong
    stability.
   Thus all the results
    concerning model theory for $*$-$\bo$-hypercontractive operator tuples apply {\em mutatis mutandis}
    for the particular case of $n$-hypercontractions by simply specializing the weight $\bo$ to the case
    $\bo = \bmu_n$, and with {\em $\bo$-strong stability} replaced by the standard notion of {\em strong stability}.

\chapter[Regular formal power series]{Hardy-Fock spaces built from a regular formal power series}
\label{S:change}
\section{Introduction}   \label{S:change-intro}
We let $p$ be a \textbf{regular noncommutative formal power series};  by this we mean
that $p$ has a formal power series  representation
\begin{equation}  \label{apr12}
p(z) = \sum_{\alpha \in \free} p_{\alpha} z^{\alpha}
\end{equation}
with scalar coefficients $p_{\alpha} \in {\mathbb C}$ such that
\begin{equation} \label{apr12-1}
p_{\emptyset} = 0, \quad  p_{\alpha} > 0 \; \text{ if } \; |\alpha| =1, \quad
p_{\alpha} \ge 0 \; \text{ for all } \; \alpha \in \free.
\end{equation} 
We assume that this series has  a positive radius of convergence, i.e., there is a number 
$\rho > 0$ such that, whenever  $\bA = (A_{1}, \dots, A_{d})$ is a Hilbert-space operator
$d$-tuple such that 
$\| \begin{bmatrix} A_{1}, \dots, A_{d} \end{bmatrix}\| < \rho$, then the series
$$
p(\bA): =\sum_{k=1}^{\infty} \sum_{\alpha \colon |\alpha| = k} p_{\alpha}\bA^{\alpha}
$$
converges in the strong operator topology (see \cite{PopescuJFA2006} for additional background on
free holomorphic functions defined on multivariable operator balls  via formal power series in freely noncommutative indeterminates as
we have here).  We also apply this
functional calculus to operator $d$-tuples in $\cL(\cL(\cX))^{d}$, in
particular to the operator $d$-tuple $B_{\bA} = (B_{A_1}, \dots, B_{A_d})$ where $B_{A_j }\colon X \mapsto A_j^* X A_j$
(not to be confused with the Chapter \ref{S:Stein} notation \eqref{4.4} appropriate for the case where $p_\alpha = p_{|\alpha|}$):  
a standing assumption throughout this
chapter is that
\begin{equation}   \label{bA-assume}
\| \begin{bmatrix} B_{A_1} & \cdots & B_{A_d} \end{bmatrix}  \| = \| \begin{bmatrix} A_1&\cdots & A_d \end{bmatrix} \| < \rho
\end{equation}
so that the series  
$$
p(B_{\bA}): \; X\mapsto \sum_{\alpha \in \free} p_\alpha \bA^{*\alpha^\top}X\bA^\alpha
$$
converges in the norm topology of $\cL(\cL(\cX))$.
Returning to viewing $p$ as a formal power series \eqref{apr12}, for any $n \in
{\mathbb Z}_{+}$ we may form the formal power series $(1- p)^{n}$ given by
\begin{equation}
(1-p)^{n}(z)=\sum_{k=0}^{n} (-1)^{k} \bcs{n \\ k }
\bigg(\sum_{\alpha \ne \emptyset} p_{\alpha} z^{\alpha}\bigg)^{k}
=\sum_{\alpha \in \free} c_{p,n; \alpha} z^{\alpha}\label{march27}
\end{equation}
where
$$
  c_{p,n; \alpha} = \sum_{k=0}^{n}  (-1)^{k} \bcs{n \\ k}
\sum_{\beta_{1}, \cdots, \beta_{k} \ne \emptyset \colon \beta_{1} \cdots \beta_{k}
= \alpha} p_{\beta_{1}} \cdots p_{\beta_{k}}.
$$
Applying the functional calculus described above, given an operator
$d$-tuple $\bA$ with the assumption
\eqref{bA-assume} in force,  we may form the operator
$(1 - p)^{n}(B_{\bA}) \in \cL(\cL(\cX))$:
\begin{equation}
\Gamma_{p,n;\bA}[X] := (1 - p)^{n}(B_{\bA})[X]
= \sum_{\alpha \in \free} c_{p,n; \alpha} \bA^{* \alpha^{\top}} X\bA^{\alpha}. 
\label{march26}
\end{equation}
\begin{definition}
We shall say that the operator-tuple $\bA = (A_{1}, \dots, A_{d})$ is
\textit{$(p,n)$-contractive} if, in addition to  \eqref{bA-assume},  it is the case that
$\Gamma_{p,n;\bA}[I_{\cX}] \succeq 0$.  We shall say that $\bA$ is \textit{$(p,n)$-hypercontractive} if
$$
\Gamma_{p,k;\bA}[I_{\cX}] \succeq 0\quad\mbox{for}\quad 1 \le k \le n.
$$
Below, many of the results of Chapters 3--6 concerning $n$-hypercontractions (which are
$(p,n)$-hypercontractions for the choice of $p(z) = z_{1} + \cdots + z_{d}$) will 
be extended to the general $(p,n)$-setting for the general $p(z)$ subject to \eqref{apr12}, \eqref{apr12-1}.
\label{D:10.1}
\end{definition}

\smallskip

In addition to the auxiliary power series \eqref{march27}, we  shall have need of 
the $(p,n)$-analog of the resolvent-power function $R_n$ \eqref{1.8pre}:
\begin{align}
R_{p,n}(z) := R_{n}(p(z)) &=  (1 - p(z))^{-n} \notag\\
&=    \sum_{j=0}^{\infty} \bcs{ n+j-1 \\ j } \bigg( \sum_{\alpha \ne
    \emptyset} p_{\alpha} z^{ \alpha}\bigg)^{j}
     := \sum_{\alpha \in \free} \omega^{-1}_{p,n; \alpha} z^{\alpha},
    \label{Rpn}
\end{align}
where $\omega_{p,n; \emptyset} = 1$ and 
\begin{equation}   \label{omegapnalpha}
\omega^{-1}_{p,n; \alpha} := \sum_{j=0}^{|\alpha|} \bcs{n+j-1 \\ j}
\sum_{\beta_{1},\dots, \beta_{j} \ne \emptyset \colon \beta_{1}\cdots
\beta_{j} = \alpha} p_{\beta_{1}} \cdots p_{\beta_{j}} \; \text{ for } |\alpha| \ge 1.
\end{equation}
Due to assumptions \eqref{apr12-1}, all the terms on the right side of \eqref{omegapnalpha} are nonnegative and 
at least one term
(corresponding to $j=|\alpha|$) is positive. Thus, the reciprocal notation on the left side of \eqref{apr12-1} makes sense and it 
is clear that $\omega_{p,n; \alpha}>0$ for all $\alpha\in\free$. We next introduce the shifted counterparts of \eqref{Rpn}
\begin{equation}
R_{p,n;\beta}(z):= \sum_{\alpha \in \free} \omega_{p,n; \alpha\beta}^{-1} z^{\alpha}
\quad\mbox{for all}\quad \beta\in\free
\label{march20}
\end{equation}
(the $(p,n)$-analog of \eqref{1.8pre}) and observe that for any $\beta\in\free$,
\begin{equation}
R_{p,n;\beta}(z)=\omega_{p,n; \beta}^{-1} +\sum_{j=1}^d R_{p,n;j\beta}(z)z_j
\label{march20a}
\end{equation}
which can be seen as the noncommutative $(p,n)$-analog of the relation \eqref{1.9pre}. 

\smallskip

Given an operator $d$-tuple $\bA = (A_{1}, \dots, A_{d})$ and $z =
(z_{1}, \dots, z_{d})$ our set of free noncommutative indeterminates,
we set $z \bA$ equal to the $d$-tuple of monomials with operator
coefficients
$$
  z  \bA  = (z_{1} A_{1}, \dots, z_{d} A_{d})
$$
and then define $R_{p,n}(z  \bA)$ and $R_{p,n,\beta}(z  \bA)$ via formal power series
\begin{equation}
R_{p,n}(z\bA)=\sum_{\alpha \in\free} \omega_{p,n;\alpha}^{-1} {\bf
A}^\alpha z^{\alpha},\qquad R_{p,n;\beta}(z  \bA)=\sum_{\alpha \in\free} \omega_{p,n;\alpha\beta}^{-1} 
{\bf A}^\alpha z^{\alpha}.
\label{2.14pre-p}
\end{equation}
Note that the assumption that $p_{\alpha} \ge 0$ for all $\alpha \in
\free$ and $p_{\alpha} > 0$ when $|\alpha| =1$ guarantees that the
term $j=|\alpha|$ in the sum \eqref{omegapnalpha} is strictly
positive while all other terms are nonnegative; hence $\omega_{p,n;
\alpha}^{-1} > 0$, justifying the inverse notation.  We now have
arrived at a weight function $\bo_{p,n}: = \{ \omega_{p,n; \alpha}
\}_{\alpha \in \free}$ (where $\omega_{p,n; \alpha} = \left(\omega_{p,n;
\alpha}^{-1}\right)^{-1}$) of positive real numbers.
The $(p,n)$-analog of \eqref{1.31pre} is the weighted system
$$
\Sigma_{\{\bU_\alpha\}, p, n} \colon
\left\{ \begin{array}{cl}
x(1 \alpha) &=
{\displaystyle\frac{\omega_{p,n;\alpha}}{\omega_{p,n;1\alpha}}}A_{1}x(\alpha)+
{\displaystyle\frac{1}{\omega_{p,n;1\alpha}}}B_{1,\alpha} u(\alpha)\\
\vdots & \qquad\qquad\vdots \qquad\qquad\qquad\vdots\\
x(d \alpha) &=
{\displaystyle\frac{\omega_{p,n;\alpha}}{\omega_{p,n;d\alpha}}}A_{d}x(\alpha)+
{\displaystyle\frac{1}{\omega_{p,n;d\alpha}}}B_{d,\alpha} u(\alpha)\vspace{2mm}\\
y(\alpha) &  =  Cx(\alpha)+\omega_{p,n;\alpha}^{-1} D_\alpha u(\alpha).\end{array}\right.
$$
Upon running the latter system with the fixed initial condition $x_\emptyset=x$ we get
$$
y(\alpha)=\omega_{p,n;\alpha}^{-1}\cdot\bigg(C\bA^\alpha x+D_\alpha u(\alpha)+
\sum_{\alpha^{\prime\prime}j\alpha^\prime=\alpha}
C{\bf A}^{\alpha^{\prime\prime}}B_{j,\alpha^\prime}u(\alpha^\prime)\bigg).
$$
Applying the noncommutative $Z$-transform \eqref{1.25pre} to the latter formula and taking into account
\eqref{2.14pre-p} we have
\begin{align}
\widehat y(z)=&\sum_{\alpha\in\free} \omega_{p,n;\alpha}^{-1}
\bigg(C\bA^\alpha x +D_\alpha u(\alpha)+
\sum_{\alpha^{\prime\prime}j\alpha^\prime=\alpha}
C{\bA}^{\alpha^{\prime\prime}}B_{j,\alpha^\prime}u(\alpha^\prime)\bigg)z^\alpha \notag\\
=&CR_{p,n}(z\bA)x+\sum_{\alpha\in\free} \omega_{p,n;\alpha}^{-1}D_\alpha z^\alpha u(\alpha)\notag\\
&+\sum_{\alpha^\prime\in\free}\sum_{j=1}^d
\bigg(\bigg(\sum_{\alpha^{\prime\prime}\in\free}\omega_{p,n;\alpha^{\prime\prime} j\alpha^{\prime}}^{-1}
C{\bA}^{\alpha^{\prime\prime}}
z^{\alpha^{\prime\prime}}\bigg)z_jB_{j,\alpha^\prime}\bigg)z^{\alpha^\prime}u(\alpha^\prime)\notag\\
=&CR_{p,n}(z\bA)x+\sum_{\alpha\in\free}
\bigg(\omega_{p,n;\alpha}^{-1} D_\alpha+C\sum_{j=1}^d R_{p,n;j\alpha}(z\bA)z_jB_{j,\alpha}\bigg)
z^{\alpha}u(\alpha)
\notag\\
=&\cO_{p,n;C,{\bf A}}x+\sum_{\alpha\in\free}\Theta_{p,n,\bU_\alpha}(z)z^{\alpha}u(\alpha),
\label{1.36pre-p}
\end{align}
where the first term represents the $(p,n)$-observability operator
\begin{equation}
\cO_{p,n; C, \bA} \colon x \mapsto C R_{p,n}( z \bA) x
= \sum_{\alpha\in\free} (\omega_{p,n; \alpha}^{-1} C\bA^{\alpha} x) z^{\alpha}
\label{march21}
\end{equation}
associated with the output pair $(C,{\bf A})$ and where
\begin{align}
\Theta_{p,n,\bU_\alpha}(z)&=\omega_{p,n;\alpha}^{-1} D_\alpha+C\sum_{j=1}^d 
R_{p,n;j\alpha}( z \bA)z_jB_{j,\alpha}\notag\\
&=\omega_{p,n;\alpha}^{-1} D_\alpha+\sum_{\gamma\in\free}\sum_{j=1}^d 
\omega_{p,n;\gamma j\alpha}^{-1} C\bA^{\gamma}B_{j,\alpha}z^{\gamma j}
\label{1.37pre-p}
\end{align}
is a family of transfer functions. Observe that except for the input-to-state operator $\widehat B_{\alpha}$,
the transfer function $\Theta_{\bo,\bU_\alpha}$ in \eqref{thetahardy} depends on $|\alpha|$ rather than $\alpha$.
In contrast to this basic case ($p(z) = z_{1} + \cdots + z_{d}$), the dependence of 
$\Theta_{p,n,\bU_\alpha}$ in \eqref{1.37pre-p} on $\alpha$ in general
is more substantial. We point out, however, that the formula \eqref{1.37pre-p} can be written
in the form resembling the basic case \eqref{1.37pre} as follows:
\begin{equation}
\Theta_{p,n, \bU_\beta}(z)=\omega_{p,n;\beta}^{-1}D_{\beta}
+C\widehat{R}_{p,n;\beta}(z\bA)\widehat{Z}(z)\widehat{B}_\beta,
\label{apr22}
\end{equation}
where $\widehat{B}_\beta= \sbm{ B_{1,\beta} \\ \vdots \\ B_{d,
\beta} }\in\cL(\cU_\beta,\cX^d)$ is defined as in \eqref{coll} and where
\begin{equation}
\widehat{R}_{p,n;\beta}(z)=\begin{bmatrix}R_{p,n;1\beta}(z) &\ldots & 
R_{p,n;d\beta}(z)\end{bmatrix},\quad
\widehat{Z}(z)=\begin{bmatrix}z_1 I_{\cX}&& 0 \\ &\ddots& \\ 0&& z_d I_{\cX}\end{bmatrix}.
\label{march20ab}
\end{equation}

\section{Contractive multipliers from $H^{2}_{\widetilde p, \cU}(\free)$ to 
$H^{2}_{\bo_{p,n}, \cY}(\free)$}  \label{S:con-mult-p}

Starting with a power series \eqref{apr12} with nonnegative coefficients
and composing it with $R_n$ in \eqref{Rpn} we came up with the weight function 
$\bo_{p,n}: = \{ \omega_{p,n; \alpha}\}_{\alpha \in \free}$. The corresponding
Hardy-Fock space $H^{2}_{\bo_{p,n},\cY}(\free)$ is now defined as in \eqref{18.1}:
\begin{equation}
H^2_{\bo_{p,n},\cY}(\free) = \bigg\{\sum_{\alpha \in \free}
f_{\alpha}z^{\alpha}
\in \cY\langle\langle z\rangle\rangle\colon \;
\sum_{\alpha \in \free} \omega_{p,n;\alpha}\cdot \|f_{\alpha}\|_{\cY}^{2} <\infty \bigg\},
\label{18.1p}
\end{equation}
and it follows, as in \eqref{kbo}, that $H^{2}_{\bo_{p,n}, \cY}(\free)$ is a NFRKS with
noncommutative reproducing kernel
\begin{equation}   \label{kbo-p}
    k_{\bo_{p,n}}(z, \zeta) = \sum_{\alpha \in \free}
    \omega_{p,n; \alpha}^{-1} z^{\alpha}
    \overline{\zeta}^{\alpha^{\top}}.
\end{equation}
 However, it is no longer the case that the sequence $\bo = \bo_{p,n}$
 satisfies the conditions \eqref{18.2}.  We therefore must develop
 some of the results from Chapter  \ref{C:contrmult} from scratch.

\smallskip

To get an analog of Theorem \ref{T:12.2} for the $(p,n)$-setting, 
we need an adjustment of the Hardy-Fock space $H^{2}_{\cU}(\free)$ 
serving as the input space for $M_{\Theta}$ in the statement of the 
theorem. Given $p(z)$ as in \eqref{apr12}, we let $\widetilde p$ be the linear part of 
$p$, or, more precisely, define $\widetilde p$ according to the 
following recipe:
$$
p(z) = \widetilde p(z) + \sum_{\alpha \in \free \colon |\alpha| \ge 
2} p_{\alpha} z^{\alpha}, \quad \text{so}\quad \widetilde p(z) = \sum_{j=1}^{d} 
p_{j} z_{j}.
$$
Then the generalized Hardy space $H^{2}_{\bo_{\widetilde{p},1}, \cU}(\free)$ is the adjustment of the Hardy-Fock space
$H^2_{\cU}(\free)$ which we seek.
Note that if $\widetilde p(z) = z_{1} + \cdots + z_{d}$ (i.e., 
$p_{\alpha} = 1$ for each word $\alpha$ with $|\alpha | = 1$), then $H^{2}_{\bo_{\widetilde{p},1}, \cU}(\free)$ is just the same space as 
Hardy-Fock space $H^{2}_{\cU}(\free)$ and we have the redundant notation
$k_{\rm nc, Sz} = k_{\bo_{\widetilde{p},1}}=k_{\bf 1}$
for the noncommutative Szeg\H{o} kernel 
(where $\bf 1$ is the weight sequence consisting of all $1$'s).

\smallskip

For the case of a general $p$, for each $\alpha=i_1i_2\ldots i_N\in\free$, we let 
\begin{equation}
d_{\widetilde{p},\alpha}=d_{\widetilde{p},i_1i_2\ldots i_N}:=p_{i_1}p_{i_2}\cdots p_{i_N}\quad\mbox
{and}\quad
d_{\widetilde{p},\emptyset}=1.
\label{jan4bxa}
\end{equation}
Then the kernel
\begin{equation}
k_{\bo_{\widetilde{p},1}}(z,\zeta)=\sum_{\alpha\in\free}d_{\widetilde{p},\alpha}z^\alpha\bzeta^{\alpha^\top}
\label{jan4bx}
\end{equation}
is the  reproducing kernel for the NFRKHS $H^2_{\bo_{\widetilde{p},1},\cU}(\free)$,  i.e., 
$\omega_{\widetilde p, 1; \alpha}^{-1} = d_{\widetilde p, \alpha}$, and hence also
\begin{equation}   \label{jan4bx'}
\| \sum_{\alpha \in \free} f_\alpha z^\alpha \|^2_{H^2_{\bo_{\widetilde{p}, 1}, \cY}(\free)} =
\sum_{\alpha \in \free} d_{\widetilde p, \alpha}^{-1} \| f_\alpha \|^2.
\end{equation}

The spaces $H^2_{\bo_{\widetilde{p},1}, \cZ}(\free)$ have many properties in common with the Fock space
$H^2_\cZ(\free)$.  Let us adapt Definition \ref{D:mult} to the $H^2_{\bo_{\widetilde{p},1}, \cZ}(\free)$-setting; in particular
we say that a multiplier $\Theta$ from $H^2_{\bo_{\widetilde{p},1}, \cU}(\free)$ to $H^2_{\bo_{\widetilde{p},1}, \cY}(\free)$ is {\em strictly inner}
if the multiplication operator $M_\Theta \colon H^2_{\bo_{\widetilde{p},1}, \cU}(\free) \to H^2_{\bo_{\widetilde{p},1}, \cY}(\free)$ is
isometric.  Then we have the following analog of Lemma \ref{L:in}.

\begin{lemma}
Let $F$ be a contractive multiplier from $H^2_{\bo_{\widetilde{p},1},\cU}(\free)$ to 
$H^2_{\bo_{\widetilde{p},1},\cY}(\free)$.
Then $F$ is a strictly inner if and only if $\|Fu\|_{H^2_{\widetilde{p},\cY}(\free)}=
\|u\|_{\cU}$ for all
$u\in\cU$.
\label{L:in-p}
\end{lemma}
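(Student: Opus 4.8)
\textbf{Plan of proof for Lemma \ref{L:in-p}.}
The statement is the exact $(p,n)$-analog of Lemma \ref{L:in} (the case $p(z) = z_1 + \cdots + z_d$), so the plan is to transplant the proof of that lemma to the weighted Hardy-Fock space $H^2_{\bo_{\widetilde p, 1}, \cZ}(\free)$. The ``only if'' direction is trivial: if $M_F$ is isometric then in particular $\|Fu\|_{H^2_{\bo_{\widetilde p, 1}, \cY}(\free)} = \|u\|_\cU$ for all $u \in \cU$ viewed as constant power series. For the ``if'' direction, the key structural fact I would first establish is the appropriate analog of the right-shift operators $S_{1, R, j}$ on $H^2_{\bo_{\widetilde p, 1}, \cZ}(\free)$: namely, using the norm formula \eqref{jan4bx'} together with the multiplicativity $d_{\widetilde p, \alpha j} = d_{\widetilde p, \alpha} \cdot p_j$, one sees that the operator $f(z) \mapsto f(z) z_j$ satisfies $\| f(z) z_j \|^2_{H^2_{\bo_{\widetilde p, 1}, \cZ}(\free)} = p_j \| f \|^2_{H^2_{\bo_{\widetilde p, 1}, \cZ}(\free)}$, so that $p_j^{-1/2}$ times right multiplication by $z_j$ is an isometry, and moreover the ranges of right multiplication by $z_i$ and $z_j$ are orthogonal for $i \ne j$. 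Equivalently, the normalized right-shift tuple is a row isometry; this is the replacement for ``$S_{1,R,1}, \dots, S_{1,R,d}$ are isometries with mutually orthogonal ranges.''

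With that in hand, the argument of Lemma \ref{L:in} goes through essentially verbatim. Since $M_F$ intertwines right multiplication by $z_j$ on $H^2_{\bo_{\widetilde p, 1}, \cU}(\free)$ with right multiplication by $z_j$ on $H^2_{\bo_{\widetilde p, 1}, \cY}(\free)$, and the normalized right shifts are row isometries on both spaces, one gets for any $u \in \cU$ and $\alpha \in \free$ that $\| F(z) z^\alpha u \|^2_{H^2_{\bo_{\widetilde p, 1}, \cY}(\free)} = d_{\widetilde p, \alpha} \| F u \|^2_{H^2_{\bo_{\widetilde p, 1}, \cY}(\free)} = d_{\widetilde p, \alpha} \| u \|^2_\cU = \| z^\alpha u \|^2_{H^2_{\bo_{\widetilde p, 1}, \cU}(\free)}$. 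Then for $\alpha \ne \beta$ in $\free$ and $u, v \in \cU$, one expands $\| F(z)(z^\alpha u + z^\beta v) \|^2$ two ways: using the hypothesis plus the previous identity it equals $\| z^\alpha u \|^2 + \| z^\beta v \|^2 + 2\operatorname{Re}\langle F z^\alpha u, F z^\beta v \rangle$, while the contractivity of $M_F$ together with orthogonality of $z^\alpha \cU$ and $z^\beta \cU$ in the domain gives $\| F(z)(z^\alpha u + z^\beta v) \|^2 \le \| z^\alpha u + z^\beta v \|^2 = \| z^\alpha u \|^2 + \| z^\beta v \|^2$. Hence $2\operatorname{Re}\langle F z^\alpha u, F z^\beta v \rangle \le 0$ for all $u, v$; replacing $u$ by $-u$ and by $\pm i u$ forces $\langle F z^\alpha u, F z^\beta v \rangle = 0$. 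Therefore $M_F$ is isometric on $\cZ$-valued polynomials, and by density it is isometric on all of $H^2_{\bo_{\widetilde p, 1}, \cU}(\free)$, i.e., $F$ is strictly inner.

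The only point requiring a little care — and the main (mild) obstacle — is verifying that the contractive multiplier hypothesis indeed gives the inequality $\| F(z)(z^\alpha u + z^\beta v) \|^2_{H^2_{\bo_{\widetilde p, 1}, \cY}(\free)} \le \| z^\alpha u + z^\beta v \|^2_{H^2_{\bo_{\widetilde p, 1}, \cU}(\free)}$ with the correct norms on each side; this is immediate once one knows the norm identity \eqref{jan4bx'} and that $z^\alpha \cU \perp z^\beta \cU$ in $H^2_{\bo_{\widetilde p, 1}, \cU}(\free)$ for $\alpha \ne \beta$, both of which follow directly from \eqref{jan4bx}–\eqref{jan4bx'}. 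A secondary bookkeeping point is that here the normalization constants $d_{\widetilde p, \alpha}$ are not all equal to $1$ (unlike the Fock-space case), so one should carry the factor $d_{\widetilde p, \alpha}$ through the computation rather than suppress it; but since these factors appear identically on both sides of every inequality, they cancel and cause no trouble. I would also note explicitly that the row-isometry property used above relies only on $p_j > 0$ for $|\alpha| = 1$ (part of the regularity hypothesis \eqref{apr12-1}), which guarantees the normalizations $p_j^{-1/2}$ make sense.
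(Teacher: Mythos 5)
Your proof is correct and follows essentially the same route as the paper's: the (scaled) right shifts on $H^2_{\bo_{\widetilde p,1}}$ form a row isometry with mutually orthogonal ranges, the hypothesis gives $\|F\cdot z^\alpha u\| = \|z^\alpha u\|$ for all $\alpha$, and the expansion of $\|F(z^\alpha u + z^\beta v)\|^2$ against contractivity plus the $u \mapsto -u, \pm i u$ polarization trick yields the orthogonality needed to conclude isometry on polynomials and then by density. One small bookkeeping correction: since the norm weights are $d_{\widetilde p,\alpha}^{-1}$ by \eqref{jan4bx'}, right multiplication by $z_j$ satisfies $\|f(z)z_j\|^2 = p_j^{-1}\|f\|^2$ (equivalently $S_{\bo_{\widetilde p,1},R,j}^{*}S_{\bo_{\widetilde p,1},R,j} = p_j^{-1}I$, as in the paper), so it is $p_j^{1/2}$ times the shift, not $p_j^{-1/2}$ times, that is isometric; the factor enters both sides of your identities identically, so the argument is otherwise unaffected.
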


\begin{proof}
The proof of the non-trivial "if" part follows the lines of the proof of Lemma \ref{L:in}. 
Note that the shift operators 
$S_{\bo_{\widetilde{p},1},R,j}: \, f(z)\mapsto f(z)z_j$ satisfy
$$
S_{\bo_{\widetilde{p},1},R,j}^*S_{\bo_{\widetilde{p},1},R,j}=p_j^{-1} I_{H^2_{\bo_{\widetilde{p},1},\cY}(\free)}
\quad(j=1,\ldots,d)
$$
and have mutually orthogonal ranges. Therefore,
$$
\|\bS_{\widetilde{p},R}^{\alpha} M_{F}u\|_{H^2_{\widetilde{p},\cY}(\free)}=
\frac{\|M_{F} u\|_{H^2_{\widetilde{p},\cY}(\free)}}{\sqrt{d_{\widetilde{p},\alpha}}}
=\frac{\|u\|_\cU}{\sqrt{d_{\widetilde{p},\alpha}}}
\quad\mbox{for all}\quad u\in\cU, \; \; \alpha\in\free.
$$
Therefore, for any $u,v\in\cU$ and $\alpha,\beta\in\free$, we have
\begin{align*}
& \|\bS_{\bo_{\widetilde{p},1},R}^\alpha M_{F}u+
\bS_{\bo_{\widetilde{p},1},R}^\beta M_{F}v\|_{H^2_{\bo_{\widetilde{p},1},\cY}(\free)}^2  \\
& \quad =\frac{\|u\|^2_{\cU}}{d_{\widetilde{p},\alpha}}+
\frac{\|v\|^2_{\cU}}{d_{\widetilde{p},\beta}}+2{\rm Re} \big\langle 
\bS_{\bo_{\widetilde{p},1},R}^\alpha M_{F}u, \,
\bS_{\bo_{\widetilde{p},1}  ,R}^\beta M_{F}v\big\rangle_{H^2_{\bo_{\widetilde{p},1},\cY}(\free)}.
\end{align*}
On the other hand, as $M_{F}: \;
H^2_{\bo_{\widetilde{p},1},\cU}(\free)\to H^2_{\bo_{\widetilde{p},1},\cY}(\free)$ is a contraction, we have 
\begin{align*}
\|\bS_{\bo_{\widetilde{p},1},R}^\alpha M_{F}u+
\bS_{\bo_{\widetilde{p},1},R}^\beta M_{F}v\|_{H^2_{\bo_{\widetilde{p},1},\cY}(\free)}^2
&\le \|\bS_{\bo_{\widetilde{p},1},R}^\alpha  u+\bS_{\bo_{\widetilde{p},1},R}^\beta v\|_{H^2_{\bo_{\widetilde{p},1},\cU}(\free)}^2 \\
& =\frac{\|u\|^2_{\cU}}{d_{\widetilde{p},\alpha}}+
\frac{\|v\|^2_{\cU}}{d_{\widetilde{p},\beta}}
\end{align*}
for all $\alpha\neq \beta$. 
From the two latter relations, we conclude 
(as in the proof of Lemma \ref{L:in}) that
$$
\big\langle \bS_{\bo_{\widetilde{p},1},R}^\alpha M_{F}u, \, 
\bS_{\bo_{\widetilde{p},1},R}^\beta M_{F}v\big\rangle_{H^2_{\bo_{\widetilde{p},1},\cY}(\free)}= 0,
$$
and therefore, $\|Fh\|_{H^2_{\bo_{\widetilde{p},1},\cY}(\free)}=\|h\|_{H^2_{\bo_{\widetilde{p},1},\cU}(\free)}$ 
for every $\cU$-valued ``polynomial"
$h$ and therefore, for any $h\in H^2_{\bo_{\widetilde{p},1},\cU}(\free)$. 
Therefore, $F$ is a strictly inner multiplier from $H^2_{\bo_{\widetilde{p},1},\cU}(\free)$ to 
$H^2_{\bo_{\widetilde{p},1},\cY}(\free)$.
\end{proof}

We present next the $\bo_{\widetilde{p},1}$-analog of Theorems \ref{T:NC1} and  \ref{T:cm}.  To reduce this more general result
to the setting already covered in Chapter \ref{S:con-mult}, we use the change of variable
$$  
z = (z_1, \dots, z_d)  \mapsto \iota z := (p_1^{-\frac{1}{2}} z_1, \dots, p_d^{-\frac{1}{2}} z_d)
$$
together with the induced map on formal power series
\begin{equation}  \label{change-of-variable}
 f(z) = \sum_{\alpha \in \free} f_\alpha z^\alpha \mapsto (\iota f)(z) : = f(\iota z) =
 \sum_{\alpha \in \free} f_\alpha \, (\iota z)^\alpha = \sum_{\alpha \in \free} d_{\widetilde p, \alpha}^{-\frac{1}{2}} f_\alpha z^\alpha.
\end{equation}
Then using \eqref{jan4bx'} it is easily checked that $\iota$ is unitary as a map from $H^2_{\bo_{\widetilde p, 1}, \cU}(\free)$ onto
the Fock space $H^2_\cU(\free)$:
$$ 
 \| \iota f \|^2_{H^2_\cU(\free)} = \| f \|^2_{H^2_{\bo_{\widetilde p, 1}, \cU}(\free)}\quad \text{for all}\quad f \in H^2_{\bo_{\widetilde p, 1}, \cU}(\free),
$$
while in terms of kernel functions we have the identity
\begin{equation}   \label{ker-identity}
  k_{\bo_{\widetilde p, 1}} (\iota z, \iota \zeta) = k_{\rm nc, Sz}(z, \zeta).
\end{equation}
\begin{theorem}  \label{T:cm-p}  Define $\widetilde P$ and $Z_{\widetilde p}(z)$ as
\begin{equation}
\widetilde{P}=\begin{bmatrix} p_1 & & 0\\ &  \ddots & & \\ 0& & p_d\end{bmatrix},\quad
Z_{\widetilde{p}}(z)=\begin{bmatrix} p_1z_1 & \cdots & p_1z_1\end{bmatrix}\otimes I_{\cX}
\label{jan39}
\end{equation}
(where the Hilbert space $\cX$ is determined by the context).

\medskip
{\rm (1)} Any contractive multiplier $G$ from $H^2_{\bo_{\widetilde p, 1}, \cU}(\free)$ to $H^2_{\bo_{\widetilde p, 1}, \cY}(\free)$
admits a $\widetilde p$-unitary realization in the following sense:
there exists a Hilbert space $\cX$ and a connection matrix $\bU = \sbm{ A & B \\ C & D}$ of the form \eqref{1.20} and subject to constraints
\begin{equation}
\bU^*\begin{bmatrix} \widetilde P \otimes I_\cX  & 0 \\ 0 & I_\cY \end{bmatrix}\bU =\begin{bmatrix} I_\cX & 0 \\ 0 & I_\cU \end{bmatrix}, \quad
\bU\bU^*=\begin{bmatrix} \widetilde P^{-1}  \otimes I_\cX & 0 \\ 0 & I_\cY \end{bmatrix},
\label{punitary}
\end{equation}
so that
\begin{equation}   \label{ptilde-real}
G(z) = D + \sum_{j=1}^d  d_{\widetilde p, \alpha} C \bA^\alpha B_j z^{\alpha j} = D + C (I - Z_{\widetilde p}(z) A )^{-1} Z_{\widetilde p}(z) B.
\end{equation}
Conversely, if $G\in\cL(\cU,\cY)\langle\langle z\rangle\rangle$ has a realization as in \eqref{ptilde-real} with 
the connection matrix $\bU = \sbm{ A & B \\ C & D}$, such that
$\bU^*\sbm{\widetilde P \otimes I_\cX  & 0 \\ 0 & I_\cY}\bU \preceq  \sbm{I_\cX & 0 \\ 0 & I_{\cU}}$,
then $G$ is a contractive multiplier from $H^2_{\bo_{\widetilde p, 1}, \cU}(\free)$ to $H^2_{\bo_{\widetilde p, 1}, \cY}(\free)$.

\smallskip

{\rm (2)} Given a tuple $\bA=(A_1,\ldots,A_d)\in\cL(\cX)^d$ and $C\in\cL(\cX,\cY)$,
let $H\in\cL(\cX)$ be a strictly positive definite operator such that
\begin{equation}
H-\sum_{j=1}^dp_j A_j^*HA_j  \succeq C^*C.
\label{jan38}
\end{equation}
Let $A$ be defined as in \eqref{1.23pre} and let $\sbm{B \\ D}: \, \cU\to \sbm{\cX^d \\ \cY}$ be a solution 
of the Cholesky factorization problem
\begin{equation}
\begin{bmatrix} B \\ D \end{bmatrix}\begin{bmatrix}
B^{*} & D^{*}\end{bmatrix}=\begin{bmatrix} ( \widetilde{P} \otimes H)^{-1} & 0 \\0 &
I_{\cY} \end{bmatrix}-\begin{bmatrix} A \\ C
\end{bmatrix}H^{-1}\begin{bmatrix} A^{*} & C^{*}\end{bmatrix}.
\label{1039}
\end{equation}

\begin{enumerate}
\item[(i)]
Then the power series
$$
G(z)=D+C(I-Z_{\widetilde{p}}(z)A)^{-1}Z_{\widetilde{p}}(z)B
$$
is a contractive multiplier from $H^2_{\bo_{\widetilde{p},1},\cU}(\free)$ to $H^2_{\bo_{\widetilde{p},1},\cY}(\free)$.
Moreover, 
\begin{align*}
& k_{\bo_{\widetilde{p},1}}(z,\zeta)I_{\cY}-G(z)(k_{\bo_{\widetilde{p},1}}(z,\zeta) I_\cU)G(\zeta)^*    \\
& \quad =C(I-Z_{\widetilde{p}}(z)A)^{-1}H^{-1}(I-A^*Z_{\widetilde{p}}(\zeta)^*)^{-1}C^*.
\end{align*}

\item[(ii)]
If \eqref{jan38} holds with equality and $\bA$ is $\widetilde{p}$-strongly stable in the sense that  
$$
\lim_{N \to \infty} \widetilde{p}(B_{\bA})^{N}[I_{\cX}] := \lim_{N \to \infty} \sum_{\alpha\in\free:|\alpha|=N}
d_{\widetilde{p},\alpha}\bA^{*\alpha^\top}\bA^\alpha= 0 
$$
in the strong operator topology, then $G$ is McCT-inner. 

\item[(iii)]
If \eqref{jan38} holds with equality, $\bA$ is strongly stable, and the solution $\sbm{ B \\ D }$ of \eqref{1039}
is injective, then $G$ is a strictly inner multiplier.
\end{enumerate}
Conversely, any contractive,  McCT-inner and strictly inner multiplier from $H^2_{\bo_{\widetilde{p},1},\cU}(\free)$ to 
$H^2_{\bo_{\widetilde{p},1},\cY}(\free)$ arises in the way described in parts (1), (2), (3) above.
\end{theorem}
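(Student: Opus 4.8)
The plan is to reduce Theorem \ref{T:cm-p} entirely to its already-established unweighted-Fock-space counterparts, Theorems \ref{T:NC1} and \ref{T:cm}, by transporting everything through the change-of-variable unitary $\iota$ of \eqref{change-of-variable}. Recall that $\iota$ is unitary from $H^2_{\bo_{\widetilde{p},1},\cZ}(\free)$ onto the Fock space $H^2_\cZ(\free)$ for any coefficient space $\cZ$, that $(\iota f)(z)=f(\iota z)$ with $\iota z=(p_1^{-\frac12}z_1,\dots,p_d^{-\frac12}z_d)$ (hence $\iota^{-1}z=(p_1^{\frac12}z_1,\dots,p_d^{\frac12}z_d)$), and that $k_{\bo_{\widetilde{p},1}}(\iota z,\iota\zeta)=k_{\rm nc, Sz}(z,\zeta)$ by \eqref{ker-identity}. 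From $\iota(\Theta\cdot f)(z)=\Theta(\iota z)(\iota f)(z)$ one gets $M_{\iota\Theta}=\iota M_\Theta\iota^{-1}$, so $\Theta$ is a contractive (resp.\ McCT-inner, resp.\ strictly inner) multiplier from $H^2_{\bo_{\widetilde{p},1},\cU}(\free)$ to $H^2_{\bo_{\widetilde{p},1},\cY}(\free)$ if and only if $\iota\Theta$ is a contractive (resp.\ McCT-inner, resp.\ strictly inner) multiplier between the corresponding Fock spaces. All that remains is to track how state-space realizations transform under $\iota$.

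For part (1), first I would apply Theorem \ref{T:NC1} to $\iota G$ to get a genuinely unitary connection matrix $\bU_0=\sbm{A_0 & B_0\\ C_0 & D_0}$ of the form \eqref{1.20} with $(\iota G)(z)=D_0+C_0(I-Z(z)A_0)^{-1}Z(z)B_0$. Substituting $z\mapsto\iota^{-1}z$ and setting $A=(\widetilde{P}^{-\frac12}\otimes I_\cX)A_0$, $B=(\widetilde{P}^{-\frac12}\otimes I_\cX)B_0$, $C=C_0$, $D=D_0$ — chosen so that $Z(\iota^{-1}z)A_0=Z_{\widetilde{p}}(z)A$ and $Z(\iota^{-1}z)B_0=Z_{\widetilde{p}}(z)B$ — yields the realization \eqref{ptilde-real}, whose power-series form comes from expanding $(I-Z_{\widetilde{p}}(z)A)^{-1}=R_{\widetilde{p},1}(z\bA)$. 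The new connection matrix is $\bU=\sbm{\widetilde{P}^{-\frac12}\otimes I_\cX & 0\\ 0 & I_\cY}\bU_0$, and from $\bU_0^*\bU_0=I$ and $\bU_0\bU_0^*=I$ one reads off exactly the two identities in \eqref{punitary}. The converse in (1) is the same computation in reverse: a realization \eqref{ptilde-real} whose $\bU$ satisfies $\bU^*\sbm{\widetilde{P}\otimes I_\cX & 0\\ 0 & I_\cY}\bU\preceq I$ gives, via $\bU_0=\sbm{\widetilde{P}^{\frac12}\otimes I_\cX & 0\\ 0 & I_\cY}\bU$, a contractive-colligation realization of $\iota G$, whence $\iota G$ (and so $G$) is a contractive multiplier by the converse half of Theorem \ref{T:NC1}.

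For part (2), I would set $\bA''=(p_1^{\frac12}A_1,\dots,p_d^{\frac12}A_d)$; then \eqref{jan38} is precisely the Stein inequality \eqref{jan3} for $(C,\bA'')$, so Theorem \ref{T:cm}(1) supplies a solution $\sbm{B''\\ D''}$ of the Cholesky problem \eqref{10} for $(C,\bA'')$ together with a contractive multiplier $G''=D''+C(I-Z(z)A'')^{-1}Z(z)B''$ between Fock spaces and the kernel identity \eqref{jan3b}. Setting $B=(\widetilde{P}^{-\frac12}\otimes I_\cX)B''$, $D=D''$ and $G(z)=D+C(I-Z_{\widetilde{p}}(z)A)^{-1}Z_{\widetilde{p}}(z)B$ gives $\iota G=G''$; a short computation using $A''=(\widetilde{P}^{\frac12}\otimes I_\cX)A$ and the fact that $\widetilde{P}^{\pm\frac12}\otimes I_\cX$ commutes with $H^{-1}\otimes I_d$ shows that $\sbm{B\\ D}$ solves \eqref{1039}, while applying $\iota^{-1}$ to \eqref{jan3b} and using \eqref{ker-identity} produces the displayed kernel decomposition of 2(i). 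For 2(ii)--(iii): since $\bA''^\alpha=d_{\widetilde{p},\alpha}^{\frac12}\bA^\alpha$, one has $\sum_{|\alpha|=N}\|\bA''^\alpha x\|^2=\langle\widetilde{p}(B_{\bA})^N[I_\cX]x,x\rangle$, so ordinary strong stability of $\bA''$ is the same as $\widetilde{p}$-strong stability of $\bA$; moreover $M_G$ is a partial isometry (resp.\ isometry) iff $M_{G''}$ is, and $\sbm{B\\ D}$ is injective iff $\sbm{B''\\ D''}$ is, the two differing by the invertible factor $\sbm{\widetilde{P}^{-\frac12}\otimes I_\cX & 0\\ 0 & I_\cY}$; hence Theorem \ref{T:cm}(2),(3) transfer verbatim. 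The global converse (``any contractive, McCT-inner, or strictly inner multiplier arises this way'') is gotten by transporting the given multiplier to the Fock space via $\iota$, invoking the corresponding converse in Theorem \ref{T:cm}, and transporting the realization back. The hard part will be purely organizational: keeping the rescalings $\widetilde{P}^{\pm\frac12}$, the tensor products $\widetilde{P}\otimes H$ and $H^{-1}\otimes I_d$, and the distinct state- and output-weightings straight so that \eqref{punitary}, \eqref{1039} and the two kernel identities come out exactly as stated; no new analytic ingredient beyond Theorems \ref{T:NC1} and \ref{T:cm} is required.
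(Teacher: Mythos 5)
Your proposal is correct and coincides with the paper's own proof: the paper likewise reduces Theorem \ref{T:cm-p} to Theorems \ref{T:NC1} and \ref{T:cm} via the substitution $z\mapsto \iota z$, obtains a unitary Fock-space realization of $G(\iota z)$, and rescales by $\widetilde{P}^{\pm\frac{1}{2}}\otimes I_\cX$ to get \eqref{punitary}, \eqref{ptilde-real} and \eqref{1039}, leaving part (2) to the same transfer argument you spell out. The only cosmetic difference is that the paper transfers contractivity through kernel positivity (Proposition \ref{P:2.3}) rather than through the conjugation $M_{\iota G}=\iota M_G\iota^{-1}$, which amounts to the same thing.
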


\begin{proof}  
By Proposition \ref{P:2.3}, $G$ being a contractive multiplier from $H^2_{\bo_{\widetilde p, 1}, \cU}(\free)$ to $H^2_{\bo_{\widetilde p, 1}, \cY}(\free)$
can be expressed in terms of the positivity of the associated formal kernel
$$
 K^G_{\widetilde p}(z, \zeta):= k_{\bo_{\widetilde p, 1}}(z, \zeta) I_\cY - G(z) (k_{\bo_{\widetilde p, 1}}(z, \zeta) I_\cU) G(\zeta)^*.
$$
Replace $z$ with $\iota z$ and $\zeta$ with $\iota \zeta$ to see that
$$
K^G_{\widetilde p}(\iota z, \iota \zeta) =
k_{\rm nc, Sz}(z, \zeta) I_\cY - G(\iota z) (k_{\rm nc, Sz}(z, \zeta) I_\cU) G(\iota \zeta)^*
$$
is also a positive formal kernel.  Again by Proposition \ref{P:2.3}, this in turn  means that the formal power series 
$G(\iota z)$ is a contractive multiplier from $H^2_\cU(\free)$ into $H^2_\cY(\free)$.  
By Theorem \ref{T:NC1} there is a unitary connection matrix as in \eqref{1.20}  which we write in the form
 $\widetilde{\bU} = \sbm{ \widetilde A & \widetilde B \\ C & D}$ so that
\begin{equation}   \label{real-1}
G(\iota z) = D + C (I - Z(z) \widetilde A)^{-1}  Z(z) \widetilde B.
\end{equation}
If we let $A : = (\widetilde P^{-\frac{1}{2}} \otimes I_\cX) \widetilde A$ and $B : = \widetilde P^{-\frac{1}{2}} B$, then the operator
$$
\bU := \begin{bmatrix} A & B \\ C & D \end{bmatrix}=\begin{bmatrix} \widetilde P^{-\frac{1}{2}} \otimes I_\cX)& 0 \\ 0 & I_{\cU}\end{bmatrix}\widetilde{\bU}
$$
meets the constraints \eqref{punitary}, since $\widetilde{\bU}$ is unitary. Furthermore, we see from \eqref{jan39} and \eqref{1.23pre} that
$Z_{\widetilde p}(z)=Z(z)(\widetilde P \otimes I_\cX)$ and hence,
\begin{align*}
Z(\iota^{-1} z) \widetilde A &= Z(z) (\widetilde P^{\frac{1}{2}} \otimes I_\cX) \widetilde A = Z_{\widetilde p}(z)
(\widetilde P^{-\frac{1}{2}} \otimes I_\cX) \widetilde A) =Z_{\widetilde p}(z)  A,\\
Z(\iota^{-1} z) \widetilde B&= Z(z) (\widetilde P^{\frac{1}{2}} \otimes I_\cX) \widetilde B =
Z_{\widetilde p}(z) (\widetilde P^{-\frac{1}{2}} \otimes I_\cX)\widetilde B =Z_{\widetilde P}(z) B.
\end{align*}
Replacing $z$ with $\iota^{-1} z$ in \eqref{real-1} and making use of the latter equalities we get
$$
  G(z) = D + C (I - Z(\iota^{-1} z) \widetilde A)^{-1} Z(\iota^{-1} z) \widetilde B=
D + C (I - Z_{\widetilde p}(z)  A)^{-1} Z_{\widetilde p}(z)  B.
$$
Thus, $G(z)$ admits a realization \eqref{ptilde-real} with the connection matrix $\widetilde \bU$ subject to conditions \eqref{punitary}.
In this way part (1) in Theorem \ref{T:cm-p} reduces to the content of Theorem \ref{T:NC1}.
Furthermore,  replacement of z by $\iota z$ and $\zeta$ by $\iota \zeta$ in 
the kernel identity \eqref{ker-identity} (with $\widetilde A$ in place of $A$ and $\widetilde B$ in place of $B$)
implies that replacement of z by $\iota z$ and $\zeta$ by $\iota \zeta$
leads to the identity
$$
k_{\rm nc, Sz}(z, \zeta) I_\cY - G(\iota z) (k_{\rm Sz, nc}(z, \zeta) I_\cU) G(\iota \zeta)^* = 
C (I - Z(z) A)^{-1} (I - A^* Z(\zeta))^{-1} C^*.
$$
Continuing along these lines, one can reduce all the assertions of the second statement (2) of Theorem \ref{T:cm-p} to the 
corresponding assertions in Theorem \ref{T:cm}.
\end{proof}

An application of the lurking isometry argument (or more simply of the change-of-variable  map \eqref{change-of-variable}
to reduce the statement to that of Theorem \ref{T:Leech})  leads us to the Leech theorem in the present setting.

\begin{theorem}  \label{T:Leech-p}
Given power series $G\in\cL(\cY,\cX)\langle\langle z\rangle\rangle$ and $F\in\cL(\cU,\cX)\langle\langle
z\rangle\rangle$, the formal kernel
$$
K_{G,F}(z, \zeta): = G(z)(k_{\bo_{\widetilde{p},1}}(z,\zeta)\otimes 
I_{\cY})G(\zeta)^*-F(z)(k_{\bo_{\widetilde{p},1}}(z,\zeta)\otimes I_{\cU})F(\zeta)^*
$$
is positive if and only if there exists a contractive multiplier $S$ from
$H^2_{\bo_{\widetilde{p},1},\cU}(\free)$ to $H^2_{\bo_{\widetilde{p},1},\cY}(\free)$ such that $F(z)=G(z)S(z)$.
\end{theorem}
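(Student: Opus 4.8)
The strategy is to reduce Theorem \ref{T:Leech-p} to the already-established Theorem \ref{T:Leech} (the noncommutative Leech theorem for Fock spaces) by means of the change-of-variable unitary $\iota$ introduced in the proof of Theorem \ref{T:cm-p}, which sends $z_j \mapsto p_j^{-\frac12} z_j$ and acts on formal power series by $(\iota f)(z) = f(\iota z)$. Recall its two key properties: it is unitary from $H^2_{\bo_{\widetilde p, 1}, \cZ}(\free)$ onto the Fock space $H^2_\cZ(\free)$ for any coefficient space $\cZ$ (by \eqref{jan4bx'}), and it transforms kernels via $k_{\bo_{\widetilde p, 1}}(\iota z, \iota \zeta) = k_{\rm nc, Sz}(z, \zeta)$ (see \eqref{ker-identity}). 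The easy direction is immediate: if $F(z) = G(z) S(z)$ for a contractive multiplier $S$ from $H^2_{\bo_{\widetilde p, 1}, \cU}(\free)$ to $H^2_{\bo_{\widetilde p, 1}, \cY}(\free)$, then
$$
K_{G,F}(z, \zeta) = G(z) \bigl( k_{\bo_{\widetilde p, 1}}(z, \zeta) I_\cY - S(z)( k_{\bo_{\widetilde p, 1}}(z, \zeta) I_\cU) S(\zeta)^* \bigr) G(\zeta)^*,
$$
and the inner factor is a positive formal kernel by the characterization of contractive multipliers in Proposition \ref{P:2.3}, hence so is $K_{G,F}$.

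For the converse, assume $K_{G,F}$ is a positive formal kernel. Substituting $z \mapsto \iota z$ and $\zeta \mapsto \iota \zeta$ simultaneously in the hereditary expansion $K_{G,F}(z,\zeta) = \sum_{\alpha, \beta} [K_{G,F}]_{\alpha,\beta} z^\alpha \bzeta^{\beta^\top}$ multiplies the $(\alpha,\beta)$-coefficient by $d_{\widetilde p, \alpha}^{-\frac12} d_{\widetilde p, \beta}^{-\frac12}$, i.e., conjugates the coefficient array by a diagonal positive operator; this preserves positivity of the formal kernel. Using \eqref{ker-identity} we therefore get that
$$
G(\iota z)(k_{\rm nc, Sz}(z,\zeta) I_\cY) G(\iota \zeta)^* - F(\iota z)(k_{\rm nc, Sz}(z,\zeta) I_\cU) F(\iota \zeta)^* = K_{G,F}(\iota z, \iota \zeta)
$$
is a positive formal kernel. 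By Theorem \ref{T:Leech} there is a contractive multiplier $\widetilde S$ from $H^2_{\cU}(\free)$ to $H^2_{\cY}(\free)$ with $F(\iota z) = G(\iota z) \widetilde S(z)$. Set $S(z) := \widetilde S(\iota^{-1} z) = (\iota^{-1} \widetilde S)(z)$; replacing $z$ by $\iota^{-1} z$ in the preceding identity yields $F(z) = G(z) S(z)$. Finally, $M_S = \iota^{-1} M_{\widetilde S}\, \iota$ as an operator from $H^2_{\bo_{\widetilde p, 1}, \cU}(\free)$ to $H^2_{\bo_{\widetilde p, 1}, \cY}(\free)$, and since $\iota$ is unitary on both spaces this conjugate of a contraction is again a contraction, so $S$ is a contractive multiplier in the required sense. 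This completes the reduction.

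\textbf{Main obstacle.} The only genuinely non-routine point is the bookkeeping in the two assertions ``the change of variable preserves positivity of hereditary formal kernels'' and ``conjugation by $\iota$ carries contractive multipliers between the $\bo_{\widetilde p, 1}$-spaces to contractive multipliers between Fock spaces and back''; both hinge on checking that $\iota$ acts consistently on the $z$- and $\bzeta$-variables and that $(\iota f)(\iota z)$-type compositions behave as expected on products $G(z) S(z)$. An alternative would be to rerun Proof II of Theorem \ref{T:Leech} verbatim in the present setting, using in place of \eqref{Sz-id} the identity $k_{\bo_{\widetilde p, 1}}(z, \zeta) - \sum_{k=1}^d p_k \bzeta_k k_{\bo_{\widetilde p, 1}}(z, \zeta) z_k = 1$ and invoking Theorem \ref{T:cm-p} (rather than Theorem \ref{T:cm}) at the final step to identify the lurking-isometry output $S(z) = D + C(I - Z_{\widetilde p}(z) A)^{-1} Z_{\widetilde p}(z) B$ as a contractive multiplier; but the change-of-variable argument is shorter and I would present that.
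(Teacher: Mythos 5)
Your proposal is correct and follows exactly the route the paper itself indicates: the paper gives no detailed argument for Theorem \ref{T:Leech-p}, remarking only that it follows either by the lurking-isometry argument or, more simply, by using the change-of-variable map \eqref{change-of-variable} to reduce to Theorem \ref{T:Leech}, and you carry out that reduction (with the correct unitarity and kernel-positivity bookkeeping for $\iota$) while noting the lurking-isometry alternative. Nothing to add.
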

Now we will establish the $p$-version of Theorem \ref{T:3.8}. Given $p(z)$ as in 
\eqref{apr12}--\eqref{apr12-1} and the associated weight
function \eqref{omegapnalpha}, let us define another non-negative weight ${\bgam}_{p,n}$ 
associated with $p$, namely, ${\bgam}_{p,n}=\{\gamma_{p,n;\alpha}\}_{\alpha\in\free}$,
where
\begin{equation}  \label{apr30c}
\gamma_{p,n;\alpha j}^{-1}=\omega_{p,n;\alpha j}^{-1}-\omega_{p,n;\alpha}^{-1}p_j\quad (\alpha\in\free, \;
j\in\{1,\ldots,d\}).
\end{equation}
If $n=1$, then we let $\gamma_{p,0;\alpha}=1$ if $\alpha=\emptyset$ and $\gamma_{p,0;\alpha}=0$, 
otherwise. The positivity of ${\bgam}_{p,n}$ is clear from the formula 
$$
\omega_{p,n;\alpha j}^{-1}-\omega_{p,n;\alpha}^{-1}p_j=
\omega_{p,n-1;\alpha j}^{-1}+
\sum_{\alpha^\prime\alpha^{\prime\prime}=\alpha: \alpha^{\prime\prime}\neq \emptyset}
\omega_{p,n;\alpha^{\prime}}^{-1}p_{\alpha^{\prime\prime}j}>0,
$$
which in turn, is verified by equating the coefficients of $z^{\alpha j}$ in the identity
$$
( 1 - p(z))^{-n}- (1 - p(z))^{-n+1} = (1 - p(z))^{-n} p(z)
$$
combined with  \eqref{Rpn} and \eqref{omegapnalpha} as follows:
$$
\omega_{p,n;\alpha j}^{-1}-\omega_{p,n-1;\alpha j}^{-1}=\sum_{\alpha^\prime\alpha^{\prime\prime}=\alpha}
\omega_{p,n;\alpha^\prime}^{-1}p_{\alpha^{\prime\prime}j} 
=\omega_{p,n;\alpha}^{-1}p_j+\sum_{\alpha^\prime\alpha^{\prime\prime}=\alpha: \alpha^{\prime\prime}\neq \emptyset}
p_{\alpha^{\prime\prime}j}.
$$
Let us define the {\em weighted $Z$-transform} $\bPs_{\bgam_{p,n}}$ by
\begin{equation} \label{jan4v}
 \bPs_{\bgam_{p,n}} = {\rm Row}_{\alpha \in \free}[\gamma_{p,n;\alpha}^{-\frac{1}{2}}
z^{\alpha}] \colon  \{f_\alpha \}_{\alpha \in\free}\mapsto \sum_{\alpha
\in\free}\gamma_{p,n;\alpha}^{-\frac{1}{2}}f_\alpha z^\alpha
\end{equation}
and let us identify $\bPs_{\gamma_{p,n}}$ with operator-valued power series
\begin{equation}
\bPs_{\bgam_{p,n}}(z) = \sum_{\alpha \in \free} \left({\rm Row}_{\beta \in \free}
[\delta_{\alpha, \beta} (\gamma_{p,n;\alpha}^{-\frac{1}{2}} \otimes
I_{\cY})] \right) z^{\alpha}
\label{jan4bv}
\end{equation}
where $\delta_{\alpha, \beta}$ is the Kronecker symbol. 
Then we have the following analog of item (2) in Theorems \ref{T:3.8}.

\begin{theorem}  \label{T:dopmay}
Let $M_{\bPs_{\bgam_{p,n}}}$ be the multiplication operator associated with the operator-valued function $\bPs_{\bgam_{p,n}}$
\eqref{jan4v}.  Then:
\begin{enumerate}
\item $M_{\bPs_{\bgam_{p,n}}}|_{\ell^2_\cY(\free)}$ is an isometry from the space of constant functions $\ell^2_\cY(\free)$ 
in $H^2_{\ell^2_\cY(\free)}(\free)$ into $H^2_{\bgam_{p,n}, \cY}(\free)$.

\item   $M_{\bPs_{\bgam_{p,n}}}$ is a coisometry from   $H^2_{\bo_{\widetilde{p},1}, \ell^2_\cY(\free)}(\free)$
to $H^2_{\bo_{p,n},\cY}(\free)$.

\item
A given formal power series $F(z) \in \cL(\cU, \cY)\langle \langle z \rangle \rangle$ is a contractive multiplier from 
$H^2_{\bo_{\widetilde{p},1}, \cU}(\free)$ to $H^2_{\bo_{p,n}, \cY}(\free)$ if and only if it is of the form 
\begin{equation}   \label{jan4d'-p}
F(z) = \bPs_{\bgam_{p,n}}(z) S(z)
\end{equation}
for some contractive multiplier $S(z)$ from $H^2_{\bo_{{\widetilde p},1}, \cU}(\free)$ to $H^2_{\bo_{\widetilde{p},1}, \ell^2_\cY(\free)}(\free)$.
\end{enumerate}
\end{theorem}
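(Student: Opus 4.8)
The guiding idea is the same one already exploited in the proof of Theorem~\ref{T:3.8}: identify $k_{\bo_{p,n}}$ as a ``sandwiched'' kernel based on a contractive factor $\bPs_{\bgam_{p,n}}$ against the kernel $k_{\bo_{\widetilde p,1}}\otimes I_{\ell^2_\cY(\free)}$, so that the Leech theorem (in the $\widetilde p$-version, Theorem~\ref{T:Leech-p}) converts contractive multipliers into $\bPs_{\bgam_{p,n}}$-multiples. The crucial preliminary identity to establish is the analog of \eqref{id1}, namely
\begin{equation}  \label{id1-p}
k_{\bo_{p,n}}(z, \zeta) = \sum_{\alpha \in \free} \overline{\zeta}^{\alpha^{\top}} k_{\bgam_{p,n}}(z, \zeta) z^{\alpha}.
\end{equation}
To get this, I would first record, from the defining recursion \eqref{apr30c}, that the kernel $k_{\bgam_{p,n}}$ satisfies
$$
k_{\bgam_{p,n}}(z,\zeta) = k_{\bo_{p,n}}(z, \zeta) - \sum_{j=1}^d p_j\, \overline{\zeta}_j\, k_{\bo_{p,n}}(z,\zeta)\, z_j,
$$
which is the $(p,n)$-analog of \eqref{kbgam}; this is a matter of matching coefficients of $z^{\alpha j}\overline{\zeta}^{(\alpha j)^\top}$ on both sides, exactly the coefficient computation already done right after \eqref{apr30c}. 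Iterating this identity $N$ times (replacing $k_{\bo_{p,n}}$ on the right by $k_{\bgam_{p,n}}$ plus the weighted shift term, over and over) and passing to the limit as $N\to\infty$, using that $\{p_j\}$ generates convergent geometric-type tails inside the radius of convergence $\rho$, yields \eqref{id1-p}. Then recalling that $\bPs_{\bgam_{p,n}}$ is precisely the Kolmogorov factor for $k_{\bgam_{p,n}}$ (i.e.\ $k_{\bgam_{p,n}}(z,\zeta) = \bPs_{\bgam_{p,n}}(z)\bPs_{\bgam_{p,n}}(\zeta)^*$, by the same computation as in \eqref{kbo-Koldecom}), and observing that $\sum_{\alpha}\overline{\zeta}^{\alpha^\top}(\,\cdot\,)z^\alpha$ applied to a Szeg\"o-type kernel reproduces it — more precisely that $k_{\bo_{\widetilde p,1}}(z,\zeta)\otimes I$ sandwiched between $\bPs_{\bgam_{p,n}}(z)$ and $\bPs_{\bgam_{p,n}}(\zeta)^*$ equals $\sum_\alpha \overline{\zeta}^{\alpha^\top}k_{\bgam_{p,n}}(z,\zeta)z^\alpha$ — I would rewrite \eqref{id1-p} as the expected factorization
\begin{equation}  \label{kpn-factor}
k_{\bo_{p,n}}(z, \zeta) = \bPs_{\bgam_{p,n}}(z) \big( k_{\bo_{\widetilde p, 1}}(z, \zeta) \otimes I_{\ell^2_\cY(\free)} \big) \bPs_{\bgam_{p,n}}(\zeta)^*.
\end{equation}

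With \eqref{kpn-factor} in hand, statements (1) and (2) are immediate. For (1), $\|\bPs_{\bgam_{p,n}}(z)\by\|^2_{H^2_{\bgam_{p,n},\cY}(\free)} = \sum_\alpha \gamma_{p,n;\alpha}\|\gamma_{p,n;\alpha}^{-1/2}y_\alpha\|^2 = \|\by\|^2_{\ell^2_\cY(\free)}$, exactly as in the proof of Theorem~\ref{T:3.8}(1). For (2), $\bPs_{\bgam_{p,n}}$ viewed as a multiplier from $H^2_{\bo_{\widetilde p,1}, \ell^2_\cY(\free)}(\free)$ to $H^2_{\bo_{p,n},\cY}(\free)$ has the property that the associated kernel $k_{\bo_{p,n}}\otimes I_\cY - \bPs_{\bgam_{p,n}}(z)(k_{\bo_{\widetilde p,1}}(z,\zeta)\otimes I_{\ell^2_\cY(\free)})\bPs_{\bgam_{p,n}}(\zeta)^*$ is \emph{zero} by \eqref{kpn-factor}, so part (2) of Proposition~\ref{P:2.3} says $M_{\bPs_{\bgam_{p,n}}}$ is coisometric. (I should double-check here that $\bPs_{\bgam_{p,n}}$ is genuinely a bounded multiplier into the right space, but this follows from the kernel being positive, again via Proposition~\ref{P:2.3}.)

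For (3), I would run the same Leech-theorem argument as in the proof of Theorem~\ref{T:3.8}(3). By Proposition~\ref{P:2.3}, $F$ is a contractive multiplier from $H^2_{\bo_{\widetilde p,1},\cU}(\free)$ to $H^2_{\bo_{p,n},\cY}(\free)$ iff the formal kernel
$$
K_F(z,\zeta) := k_{\bo_{p,n}}(z,\zeta)\otimes I_\cY - F(z)(k_{\bo_{\widetilde p,1}}(z,\zeta)\otimes I_\cU)F(\zeta)^*
$$
is positive; using \eqref{kpn-factor} this becomes
$$
K_F(z,\zeta) = \bPs_{\bgam_{p,n}}(z)(k_{\bo_{\widetilde p,1}}(z,\zeta)\otimes I_{\ell^2_\cY(\free)})\bPs_{\bgam_{p,n}}(\zeta)^* - F(z)(k_{\bo_{\widetilde p,1}}(z,\zeta)\otimes I_\cU)F(\zeta)^*,
$$
which is precisely the kernel $K_{G,F}$ of Theorem~\ref{T:Leech-p} with $G = \bPs_{\bgam_{p,n}}$. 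That theorem then gives positivity of $K_F$ iff $F = \bPs_{\bgam_{p,n}} S$ for a contractive multiplier $S$ from $H^2_{\bo_{\widetilde p,1},\cU}(\free)$ to $H^2_{\bo_{\widetilde p,1},\ell^2_\cY(\free)}(\free)$, which is exactly \eqref{jan4d'-p}. The converse direction (that any $F$ of the form \eqref{jan4d'-p} is a contractive multiplier) follows by composing the contractive $M_S$ with the coisometric $M_{\bPs_{\bgam_{p,n}}}$ from part (2).

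\textbf{Main obstacle.} The one genuinely new piece of work is the iteration/limit argument establishing \eqref{id1-p}, equivalently \eqref{kpn-factor}. In the $\bo = \bmu_n$ case the coefficients $\omega_{n,j}$ depend only on word length and the telescoping is transparent; here $\omega_{p,n;\alpha}$ depends on the full word $\alpha$ through the convolution powers of $p$, so one must verify the single-step recursion $k_{\bgam_{p,n}} = k_{\bo_{p,n}} - \sum_j p_j \overline{\zeta}_j k_{\bo_{p,n}} z_j$ by a careful coefficient bookkeeping that respects the noncommutative word structure, and then check that the tail $\sum_{|\alpha| = N+1}\overline{\zeta}^{\alpha^\top}k_{\bo_{p,n}}(z,\zeta)z^\alpha$ genuinely tends to $0$ in the appropriate formal/analytic sense — this uses regularity of $p$ and the standing assumption \eqref{bA-assume} (or rather its formal shadow: that $1 - p$ is invertible as a formal power series with the geometric series having controlled growth). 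Everything downstream of \eqref{kpn-factor} is then a faithful transcription of the arguments already given for Theorem~\ref{T:3.8}, so I expect no further difficulties.
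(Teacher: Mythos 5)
Your proposal is correct and follows essentially the same route as the paper: the single-step recursion $k_{\bgam_{p,n}}(z,\zeta)=k_{\bo_{p,n}}(z,\zeta)-\sum_j p_j\overline{\zeta}_j k_{\bo_{p,n}}(z,\zeta)z_j$ via \eqref{apr30c}, its iteration to the factorization \eqref{may1}, and then Proposition~\ref{P:2.3} for parts (1)--(2) and Theorem~\ref{T:Leech-p} for part (3). The ``main obstacle'' you flag is handled in the paper by exactly the coefficient bookkeeping and formal limit you describe, so no further work is needed.
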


\begin{proof} 
For $\by = \{ y_\alpha\}_{\alpha \in \free}$ in $\ell^2_\cY(\free)$, note that
$$
\| \bPs_{\bgam_{p,n}}(z) \by \|^2_{H^2_{\bgam_{p,n}, \cY}(\free)} = 
\sum_{\alpha\in\free}\gamma_{p,n; \alpha} \| \gamma^{-\frac{1}{2}}_{p,n; \alpha} y_\alpha \|^2
= \| \by \|^2_{\ell^2_\cY(\free)}.
$$
This verifies statement (1).
To verify statement (2), by Proposition \ref{P:2.3}, it suffices to verify the identity (the analog of identity \eqref{jan4c})
\begin{equation}
k_{\bo_{p,n}}(z,\zeta) I_{\cY}=\bPs_{\bgam_{p,n}}(z)(k_{\bo_{\widetilde{p},1}}(z,\zeta)
 I_{\ell^2_{\cY}(\free)})\bPs_{\bgam_{p,n}}(\zeta)^*.
\label{may1}
\end{equation}
Indeed, since $\omega^{-1}_{p,n; \emptyset}=1$, we can write \eqref{kbo-p} as 
$$
k_{\bo_{p,n}}(z, \zeta)=1+\sum_{j=1}^d \sum_{\alpha\in\free}\omega^{-1}_{p,n; \alpha j}z^{\alpha j}
\overline{\zeta}^{j\alpha^\top}
$$
and then, upon taking into account the formula \eqref{apr30c} for $\gamma^{-1}_{p,n;\alpha}$ we see that
\begin{align*}
k_{\bo_{p,n}}(z, \zeta) - \sum_{j=1}^d p_j \overline{\zeta}_j k_{\bo_{p,n}}(z, \zeta) z_j 
& = 1+\sum_{j=1}^d \sum_{\alpha\in\free}
\big(\omega^{-1}_{p,n; \alpha j}- p_j \omega^{-1}_{p,n; \alpha} \big) z^{\alpha j}\overline{\zeta}^{j\alpha^\top}  \\
& =1+ \sum_{j=1}^d \sum_{\alpha\in\free}\gamma^{-1}_{p,n; \alpha j} z^{\alpha j} \overline{\zeta}^{j\alpha^\top}
=\sum_{\alpha\in\free}\gamma^{-1}_{p,n; \alpha} z^{\alpha} \overline{\zeta}^{\alpha^\top}
   \end{align*}
which is to say that
$$
  k_{\bo_{p,n}}(z, \zeta) = \sum_{j=1}^d p_j \overline{\zeta_j} k_{\bo_{p,n}}(z, \zeta) z_j + k_{\bgam_{p,n}}(z, \zeta).
$$
Iteration of this last identity and recalling the definition of $d_{\widetilde{p}, \alpha}$ \eqref{jan4bxa} then gives
$$
k_{\bo_{p,n}}(z, \zeta) = \sum_{\alpha \colon |\alpha| = N+1} d_{\widetilde p, \alpha} \overline{\zeta}^{\alpha^\top} k_{\bo_{p,n}}(z,\zeta) z^\alpha
   + \sum_{\alpha \colon |\alpha| \le N} d_{\widetilde p,\alpha} \overline{\zeta}^{\alpha^\top} k_{\bgam_{p,n}}(z, \zeta) z^\alpha
$$
for $N=1,2,\dots$.  Taking the limit as $N \to \infty$ then leads us to
\begin{align*}
k_{\bo_{p,n}}(z, \zeta) & = \sum_{\alpha} d_{\widetilde{p}, \alpha} \overline{\zeta}^{\alpha^\top} k_{\bgam_{p,n}} (z, \zeta) z^\alpha \\
& = \sum_{\alpha\in\free} d_{\widetilde p, \alpha} \overline{\zeta}^{\alpha^\top} \bPs_{\bgam_{p,n}}(z) \bPs_{\bgam_{p,n}}(\zeta)^* z^\alpha \\
&  = \bPs_\bgam(z) \bigg( \sum_{\alpha\in\free} d_{\widetilde{p}, \alpha} z^\alpha \overline{\zeta}^{\alpha^\top} \bigg) \bPs_{\bgam_{p,n}}(\zeta)^* \\
& =  \bPs_{\bgam_{p,n}}(z) \big( k_{\bo_{\widetilde{p},1}}(z, \zeta) \otimes I_{\ell^2_\cY(\free)} \big) \bPs_{\bgam_{p,n}}(\zeta)^*
\end{align*}
where we used \eqref{jan4bx} for the last step, and \eqref{may1} is thus verified.

\smallskip

The proof of (3) parallels the proof of statement (3) in Theorem \ref{T:3.8}.  Let $F(z) \in
\cL(\cU, \cY)\langle \langle z \rangle \rangle$ be a contractive multiplier from $H^2_{\bo_{\widetilde{p},1}, \cU}(\free)$ to 
$H^2_{\bo_{p,n}}(\free)$.  By Proposition \ref{P:2.3}, this amounts to the statement that the kernel
$$
  K_{\bo_{p,n}}^F(z, \zeta) = k_{\bo_{p,n}}(z, \zeta) \otimes I_\cY -
  F(z) \big(k_{\bo_{\widetilde{p},1}}(z, \zeta) \otimes I_{\ell^2_\cY(\free)} \big)F(\zeta)^*
$$
be a positive formal kernel.  On the other hand, by the Leech theorem (Theorem \ref{T:Leech-p}), the existence 
of the factorization \eqref{jan4d'-p} with $S(z)$ a contractive multiplier from $H^2_{\bo_{{\widetilde p},1}, \cU}(\free)$
to $H^2_{\bo_{\widetilde p,1}, \ell^2_\cY(\free)}(\free)$  is equivalent to the positivity of the kernel
$$
K_{\bo_{\widetilde{p}, 1}}^{\bPs_{p,n}, F}(z, \zeta) = \bPs_{\bgam_{p,n}}(z) \big( k_{\bo_{\widetilde{p},1}}(z, \zeta) \otimes I_{\ell^2_\cY(\free)} \big)
\bPs_{\bgam_{p,n}}(\zeta)^* - F(z) (k_{\bo_{\widetilde{p},1}}(z, \zeta) \otimes I_\cU) F(\zeta)^*.
$$
But as a consequence of the identity \eqref{may1}, the kernel $K_{\bo_{\widetilde{p},1}}^{\bPs_{p,n}, F}$ is the same as the kernel
$K_{\bo_{p,n}}^F$ and (3) now follows.
\end{proof}

In parallel with Definitions \ref{D:defin},  we  now introduce the following definition of {\em (p,n)-Bergman-inner}.
\begin{definition}   \label{D:8in}
    We say that a power series $\Theta \in \cL(\cU, \cY)\langle
    \langle z \rangle \rangle$ is {\em $(p,n)$-Bergman inner} if
    \begin{enumerate}
    \item[(i)] $\|M_\Theta u \|_{H^{2}_{\bo_{p,n}, \cY}(\free)}  = \| u \|_\cU \text{ for all } u \in \cU$, and
    \item[(ii)] $M_{\Theta}u  \perp {\bf S}_{\bo_{p,n},R}^\alpha M_{\Theta}v$ in $H^2_{\bo_{p,n}}(\free)$  for all $u,v \in \cU$ and
    $\emptyset \ne \alpha \in \free$.
   \end{enumerate}
 \end{definition}
 
The next two theorems  (the $(p,n)$-analogs of Theorems \ref{T:12.2} and \ref{T:bergin}) show that 
$(p,n)$-Bergman-inner power series can be alternatively defined as contractive multipliers from 
$H^{2}_{\bo_{\widetilde{p},1}, \cU}(\free)$ to $H^{2}_{\bo_{p,n}, \cY}(\free)$ that act isometrically on constants.

\begin{theorem}  \label{T:12.2-p}
    Let $\Theta \in \cL(\cU, \cY)\langle \langle z \rangle \rangle$
    be such that
\begin{enumerate}
    \item $\| M_{\Theta} u \|_{H^{2}_{\bo_{p,n}, \cY}(\free)} \le \|
    u \|_{\cU}$ for all $u \in \cU$, and
    \item $M_{\Theta} u \perp {\bf S}_{\bo_{p,n},R}^\alpha M_{\Theta} v$ for all $u,v
    \in \cU$ and all nonempty $\alpha \in \free$.
\end{enumerate}
Then $\Theta$ is a contractive multiplier from $H^{2}_{\bo_{\widetilde{p},1},
\cU}(\free)$ to $H^{2}_{\bo_{p,n}, \cY}(\free)$ and
\begin{align}
\|M_{\Theta} f\|_{H^2_{\bo_{p,n},\cY}(\free)}^2 \le &\|f\|^2_{H^2_{\bo_{\widetilde{p},1}, \cU}(\free)}\label{march13-g}\\
&-
\sum_{\alpha \in\free}\sum_{j=1}^d d_{\widetilde p, j\alpha}
\left\|\mathfrak D_j
M_{\Theta}{\bf S}_{\bo_{\widetilde{p},1},R}^{*})^{j \alpha^\top}f
\right\|^2_{H^2_{\bo_{p,n},\cY}(\free)}\notag
\end{align}
where $d_{\widetilde p, j\alpha}$ is defined in \eqref{jan4bxa} and where we let 
$$
\mathfrak D_j=(I- p_j S_{\bo_{p,n},R,j}^*S_{\bo_{p,n},R,j})^{\frac{1}{2}}
$$
for short.
Moreover, if $\| M_{\Theta} u \|_{H^{2}_{\bo_{p,n},\cY}(\free)} = \|
u \|_{\cU}$ for all $u \in \cU$, then equality holds in \eqref{march13-g}
and $\Theta$ is $(p,n)$-Bergman inner.
\end{theorem}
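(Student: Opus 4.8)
The plan is to mimic the proof of Theorem \ref{T:12.2} almost verbatim, making the necessary bookkeeping adjustments for the $(p,n)$-setting. The key algebraic facts to be invoked are: (i) the weighted Hardy-Fock space $H^2_{\bo_{\widetilde p,1},\cU}(\free)$ has norm given by \eqref{jan4bx'}, so that $\|f\|^2_{H^2_{\bo_{\widetilde p,1},\cU}(\free)} = \sum_{\alpha} d_{\widetilde p,\alpha}^{-1}\|f_\alpha\|^2$; (ii) the right shift tuple $\bS_{\bo_{\widetilde p,1},R}$ on $H^2_{\bo_{\widetilde p,1},\cU}(\free)$ has mutually orthogonal ranges and satisfies $S_{\bo_{\widetilde p,1},R,j}^*S_{\bo_{\widetilde p,1},R,j} = p_j^{-1} I$; and (iii) the right shift tuple $\bS_{\bo_{p,n},R}$ on $H^2_{\bo_{p,n},\cY}(\free)$ likewise has mutually orthogonal ranges, with $\operatorname{Ran} S_{\bo_{p,n},R,i}\perp\operatorname{Ran} S_{\bo_{p,n},R,j}$ for $i\ne j$. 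These are exactly the structural ingredients that made the proof of Theorem \ref{T:12.2} work, and they persist here (the failure of the admissibility conditions \eqref{18.2} for $\bo_{p,n}$ is irrelevant to this particular argument, which never used them).

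First I would establish \eqref{march13-g} for a $\cU$-valued polynomial $f(z) = \sum_{|\alpha|\le m} f_\alpha z^\alpha$. Writing $f = f_\emptyset + \sum_{j=1}^d \sum_{|\alpha|\le m-1} f_{\alpha j} z^{\alpha j}$ and using the orthogonality of the ranges of the $S_{\bo_{p,n},R,i}$ together with hypotheses (1) and (2), one gets, exactly as in the chain \eqref{nov10},
\[
\|M_\Theta f\|^2 \le \|f_\emptyset\|^2 + \sum_{j=1}^d \big\| S_{\bo_{p,n},R,j} M_\Theta (S_{\bo_{\widetilde p,1},R,j}^* f)\big\|^2,
\]
where the key point is that $M_\Theta$ intertwines the two shift tuples in the sense $S_{\bo_{p,n},R,j} M_\Theta = M_\Theta S_{\bo_{\widetilde p,1},R,j}$ (this follows since both are right-multiplication operators once one checks boundedness on polynomials, which hypothesis (1) provides). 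Then I would rewrite $\|S_{\bo_{p,n},R,j} g\|^2 = \|g\|^2 - \|(I - p_j S_{\bo_{p,n},R,j}^* S_{\bo_{p,n},R,j})^{1/2} g\|^{2}\cdot\text{(scalar)}$, tracking the scalar factor $p_j$ carefully — this is where the $d_{\widetilde p, j\alpha}$ weights enter, because iterating $S_{\bo_{\widetilde p,1},R,j}^*$ produces the products $p_{i_1}\cdots p_{i_N}$. Iterating the one-step inequality via the analog of \eqref{8.3'} (using that $(\bS_{\bo_{\widetilde p,1},R}^{*\alpha^\top} f)_\emptyset = f_\alpha$), and noting that the polynomial truncation collapses the telescoping sum once the iteration depth exceeds $m$, yields \eqref{march13-g} for polynomials. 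Passing to general $f\in H^2_{\bo_{\widetilde p,1},\cU}(\free)$ by approximating with truncations of the power series completes the estimate, and in particular shows $\|M_\Theta\|\le 1$, i.e. $\Theta$ is a contractive multiplier from $H^2_{\bo_{\widetilde p,1},\cU}(\free)$ to $H^2_{\bo_{p,n},\cY}(\free)$.

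For the final clause, if equality holds in hypothesis (1) then every $\le$ in the polynomial computation above becomes an $=$ (the only inequalities used were hypothesis (1) applied to $f_\emptyset$ and the discarded nonnegative defect terms, and with the isometry hypothesis the former is an equality); hence \eqref{march13-g} holds with equality. It then remains to verify condition (ii) of Definition \ref{D:8in}, i.e. $M_\Theta u \perp \bS_{\bo_{p,n},R}^\alpha M_\Theta v$ for nonempty $\alpha$ — but this is precisely hypothesis (2), so there is nothing further to prove: $\Theta$ is $(p,n)$-Bergman inner by definition. I expect the main obstacle to be purely notational: keeping the scalar weights $p_j$, $d_{\widetilde p,\alpha}$, $\omega_{p,n;\alpha}$ and $\gamma_{p,n;\alpha}$ straight through the telescoping iteration, and confirming that the intertwining $S_{\bo_{p,n},R,j}M_\Theta = M_\Theta S_{\bo_{\widetilde p,1},R,j}$ is the correct one (rather than, say, an intertwining with $S_{\bo_{\widetilde p,1},R}$ scaled by $p_j$) — this is forced by matching power-series coefficients and is the one place where the choice of $H^2_{\bo_{\widetilde p,1}}$ (as opposed to the unweighted Fock space) as the domain is essential.
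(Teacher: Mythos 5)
Your proposal is correct and follows essentially the same route as the paper's proof: decompose a polynomial $f$ along the last letter, use the orthogonality of the ranges of the $S_{\bo_{p,n},R,j}$ and hypotheses (1)–(2) to get the one-step inequality with defect terms $\mathfrak D_j$, iterate (the $(p,n)$-analog of \eqref{8.3'}), let the telescoping collapse once the depth exceeds $\deg f$, pass to general $f$ by truncation, and observe that equality in (1) turns every step into an equality so that Definition \ref{D:8in} applies directly. The only loose points (the exact placement of the factors $p_j$ versus $d_{\widetilde p,\alpha}$, and the unnecessary appeal to boundedness for the purely algebraic intertwining on polynomials) are bookkeeping issues you already flag and do not affect the argument.
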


\begin{proof}
For any $\cU$-valued polynomial $f$ as in \eqref{jul21} and the right backward shift
tuple  ${\bf S}_{\widetilde{p},R}^*$ on $H^2_{\widetilde{p},\cU}(\free)$, we have
\begin{equation}   \label{actual}
S_{\widetilde p, R, j}^* f =  p_j^{-1} \sum_{\alpha \in \free \colon |\alpha| < m}  f_{\alpha j} z^\alpha.
\end{equation}
Due to assumptions of the theorem (the same as in Theorem \ref{T:12.2}), and since the ranges of
$S_{\bo_{p,n},R,i}$ and $S_{\bo_{p,n},R,j}$ are orthogonal whenever $i\neq j$, we can repeat the calculation
\eqref{nov10}) adjusting the formula \eqref{actual} instead of \eqref{jul21a} as follows:
\begin{align}
\|M_{\Theta} f\|^2
\le & \|f_{\emptyset}\|^2+
\bigg\|\sum_{j=1}^d\sum_{\alpha\in\free:|\alpha|\le m-1}
 {\bf S}_{\bo_{p,n},R}^{j\alpha^\top}M_{\Theta} f_{\alpha j} \bigg\|^2\label{nov10-g}\\
 = & \| f_{\emptyset} \|^2 + \sum_{j=1}^d \bigg\| S_{\bo_{p,n},j} M_\Theta \bigg(
\sum_{\alpha\in\free:|\alpha| \le m-1} f_{\alpha j} z^\alpha\bigg)  \bigg\|^2   \notag   \\
 =&\|f_{\emptyset}\|^2+\sum_{j=1}^d\left\|
S_{\bo_{p,n},R,j}M_{\Theta} ( p_j S_{\bo_{\widetilde{p},1},R,j}^*f) \right\|^2   \text{ (by \eqref{actual}) } \notag\\
 =&\|f_{\emptyset}\|^2+\sum_{j=1}^d\big\| p_j^{1/2}
S_{\bo_{p,n},R,j}M_{\Theta} ( p_j^{1/2} S_{\bo_{\widetilde{p},1},R,j}^*f) \big\|^2    \notag \\
 = &  \| f_{\emptyset} \|^2   + \sum_{j=1}^d \bigg( \big\| M_\Theta (p_j^{1/2} S_{\bo_{\widetilde{p},1}, R, j}^* f) \big\|^2 \notag\\
& \quad \qquad  -   \big\| \big( I - p_j S^*_{\bo_{p,n}, R,j} S_{\bo_{p,n}, R, j} \big)^{1/2}
M_\Theta \big( p_j^{1/2} S_{\bo_{\widetilde{p},1}, R, j}^* f \big) \big\|^2 \bigg)\notag \\
=&\| f_{\emptyset} \|^2   + \sum_{j=1}^d \left( \big\| M_\Theta (p_j^{1/2} S_{\bo_{\widetilde{p},1}, R, j}^* f) \big\|^2 
-\big\| \mathfrak D_jM_\Theta \big( p_j^{1/2} S_{\bo_{\widetilde{p},1}, R, j}^* f \big) \big\|^2 \right).  \notag
\end{align}
Replace $f$ by $d_{\widetilde p, \alpha}^{1/2} {\bf S}_{\bo_{\widetilde{p},1},R}^{*\alpha^\top}f$ in \eqref{nov10-g} and take into account that
$({\bf S}_{\bo_{\widetilde{p},1},R}^{*\alpha^\top}f)_{\emptyset}= d_{\widetilde p, \alpha}^{-1} f_{\alpha}$ (as a consequence of \eqref{actual})
 to arrive at
 \begin{align}
\|M_{\Theta}d_{\widetilde p, \alpha}^{1/2} ({\bf S}_{\bo_{\widetilde{p},1},R}^*)^{\alpha^\top}f\|^2
& \le d_{\widetilde p, \alpha}^{-1}  \|f_\alpha\|^2
+\sum_{j=1}^d\left\|M_{\Theta} (d_{\widetilde p, j \alpha}^{1/2}) ({\bf S}_{\bo_{\widetilde{p},1},R}^{*})^{j \alpha^\top}f \right\|^2 \label{8.3'-g}\\
&\qquad\qquad -\sum_{j=1}^d\left\|\mathfrak D_jM_{\Theta} ( d_{\widetilde p, j \alpha}^{1/2}) 
({\bf S}_{\bo_{\widetilde{p}1},R}^{*})^{j \alpha^\top}f \right\|^2.
\notag
\end{align}
Iterating the inequality \eqref{nov10-g}   and using \eqref{8.3'-g} then gives, for any $m' =1,2,\dots$,
\begin{align}
\|M_{\Theta} f\|^2 &\le \sum_{|\alpha| < m'}  d_{\widetilde p, \alpha}^{-1} \|f_\alpha \|^2_{\cU}
+ \sum_{|\alpha| = m'} \left\| M_\Theta (d_{\widetilde p, \alpha}^{1/2}) (\bS_{\bo_{\widetilde{p},1}, R}^{*})^{ \alpha^\top} f \right\|^2
\notag \\
&  \qquad -\sum_{|\alpha|< m'}\sum_{j=1}^d
\left\|\mathfrak D_j
M_{\Theta}(d_{\widetilde p, j \alpha}^{1/2}) ({\bf S}_{\bo_{\widetilde{p},1},R}^{*})^{j \alpha^\top}f
\right\|^2.  \notag
\end{align}
Since $f_\alpha = 0$ once $|\alpha| \ge m$ it follows that $(\bS_{\bo_{\widetilde{p},1}, R}^*)^\alpha f = 0$ for
any $\alpha$ with $|\alpha| \ge m$ and we see that, once $m' \ge m$, this last inequality collapses to
\begin{align}
&\|M_{\Theta} f\|^2
\le \|f\|^2
- \sum_{|\alpha|< m' }\sum_{j=1}^d
\left\|\mathfrak D_j
M_{\Theta}(d_{\widetilde p, j \alpha}^{1/2}) ({\bf S}_{\bo_{\widetilde{p},1},R}^{*})^{j \alpha^\top} f
\right\|^2.
\notag
\end{align}
Letting $m' \to \infty$ in the last inequality then gives us \eqref{march13-g} 
for any $\cU$-valued noncommutative polynomial $f$.
We then get the result for a general $f$ in $H^{2}_{\bo_{\widetilde{p},1},\cU}(\free)$ by
approximating $f$ by finite truncations of its power series representation.

\smallskip

If equality $\|M_{\Theta} u\|_{H^2_{\bo_{p,n},\cY}(\free)}=\|u\|_{\cU}$ holds for
all $u\in\cU$, then we have equalities throughout \eqref{nov10-g}, \eqref{8.3'-g} and therefore,
in \eqref{march13-g} as well.   Moreover as a consequence of \eqref{march13-g} we have that
$\Theta$ is a contractive multiplier from $H^2_{\bo_{\widetilde{p},1}, \cU}(\free)$ to $H^2_{\bo_{p,n}, \cY}(\free)$.
As we are also assuming the equality $\|M_{\Theta} u\|_{H^2_{\bo_{p,n},\cY}(\free)}=\|u\|_{\cU}$ for
all $u \in \cU$, it follows from Definition \ref{D:8in} that $\Theta$ is $(p,n)$-Bergman inner.
\end{proof}

\begin{theorem}  \label{T:berginp}
Let $\Theta\in\cL(\cU,\cY)\langle\langle z\rangle\rangle$ be a contractive multiplier from 
$H^2_{\bo_{\widetilde{p},1},\cU}(\free)$ to
$H^2_{\bo_{p,n}, \cY}(\free)$ which is isometric on constants: $\| M_\Theta u \|_{H^2_{\bo_{p,n}, \cY}(\free)} = \| u \|_\cU$ for all $u \in \cU$.
Then $\Theta$ is $(p,n)$-Bergman inner. Furthermore, there is a unique contractive 
multiplier $S$ from $H^2_{\bo_{\widetilde{p},1},\cU}(\free)$ to
$H^2_{\ell^{2}_{\widetilde{p},\cY}(\free)}(\free)$ such that
\begin{equation}
\Theta(z) =\bPs_{\bgam_{p,n}}(z)S(z)
\label{march9p}
\end{equation}
where $\bPs_{\bgam_{p,n}}(z)$ is defined as in \eqref{apr30c} and \eqref{jan4bv}. 
Moreover, this unique $S$ is a strictly inner multiplier
from $H^2_{\bo_{\widetilde{p},1}, \cU}(\free)$ to $H^2_{\bo_{\widetilde{p},1},\ell^{2}_{\cY}(\free)}(\free)$.
\end{theorem}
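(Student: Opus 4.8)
The plan is to mimic the proof of Theorem~\ref{T:bergin} (the $\bo = \bmu_n$-case), substituting the factorization result Theorem~\ref{T:dopmay} for Theorem~\ref{T:3.8} and Lemma~\ref{L:in-p} for Lemma~\ref{L:in}. First I would observe that since $\Theta$ is, by hypothesis, a contractive multiplier from $H^2_{\bo_{\widetilde{p},1},\cU}(\free)$ to $H^2_{\bo_{p,n},\cY}(\free)$, part (3) of Theorem~\ref{T:dopmay} applies and gives a factorization $\Theta(z) = \bPs_{\bgam_{p,n}}(z) S(z)$ for some contractive multiplier $S$ from $H^2_{\bo_{\widetilde{p},1},\cU}(\free)$ to $H^2_{\bo_{\widetilde{p},1},\ell^2_\cY(\free)}(\free)$. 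The ``$(p,n)$-Bergman inner'' conclusion is then a consequence of Theorem~\ref{T:12.2-p}: the hypothesis that $\Theta$ is isometric on constants, combined with $\Theta$ being a contractive multiplier from $H^2_{\bo_{\widetilde{p},1},\cU}(\free)$ to $H^2_{\bo_{p,n},\cY}(\free)$, is precisely the ``equality'' branch of that theorem, so I would simply invoke it to get conditions (i) and (ii) of Definition~\ref{D:8in}.

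Next I would pin down $S$ and show it is strictly inner. In multiplication-operator form the factorization reads $M_\Theta = M_{\bPs_{\bgam_{p,n}}} M_S \colon H^2_{\bo_{\widetilde{p},1},\cU}(\free) \to H^2_{\bo_{p,n},\cY}(\free)$. For $u \in \cU$ we have, using the isometric-on-constants hypothesis,
\begin{equation*}
\|u\|_\cU = \|M_\Theta u\|_{H^2_{\bo_{p,n},\cY}(\free)} = \|M_{\bPs_{\bgam_{p,n}}} M_S u\|_{H^2_{\bo_{p,n},\cY}(\free)}.
\end{equation*}
Since $M_S \colon \cU \to H^2_{\bo_{\widetilde{p},1},\ell^2_\cY(\free)}(\free)$ is contractive while $M_{\bPs_{\bgam_{p,n}}}$ is a coisometry (part (2) of Theorem~\ref{T:dopmay}), this forces $\|M_S u\|_{H^2_{\bo_{\widetilde{p},1},\ell^2_\cY(\free)}(\free)} = \|u\|_\cU$ together with $M_S u \perp \operatorname{Ker} M_{\bPs_{\bgam_{p,n}}}$ for all $u \in \cU$. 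The first of these, via Lemma~\ref{L:in-p}, says exactly that $S$ is a strictly inner multiplier from $H^2_{\bo_{\widetilde{p},1},\cU}(\free)$ to $H^2_{\bo_{\widetilde{p},1},\ell^2_\cY(\free)}(\free)$. The second relation, $\operatorname{Ran} M_S|_\cU \subset (\operatorname{Ker} M_{\bPs_{\bgam_{p,n}}})^\perp$, yields $M_S = M_{\bPs_{\bgam_{p,n}}}^* M_{\bPs_{\bgam_{p,n}}} M_S$; multiplying the factorization $M_\Theta = M_{\bPs_{\bgam_{p,n}}} M_S$ on the left by $M_{\bPs_{\bgam_{p,n}}}^*$ then gives $M_S = M_{\bPs_{\bgam_{p,n}}}^* M_\Theta$, so $S(z)u = \big(M_{\bPs_{\bgam_{p,n}}}^* M_\Theta u\big)(z)$ for all $u \in \cU$, which determines $S$ uniquely from $\Theta$. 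This settles uniqueness.

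The only genuinely new technical point compared with the $\bmu_n$-case is checking that the coisometry and Leech/factorization machinery used here is indeed available for the weights $\bo_{\widetilde p,1}$ and $\bo_{p,n}$, which generally fail the admissibility conditions \eqref{18.2}; but that is exactly what Theorems~\ref{T:Leech-p}, \ref{T:dopmay} and \ref{T:12.2-p} have already established via the change-of-variable map \eqref{change-of-variable}, so in this proof I can quote them as black boxes. I expect the main obstacle to be purely bookkeeping: making sure that the orthogonality relation $M_S u \perp \operatorname{Ker} M_{\bPs_{\bgam_{p,n}}}$ is extracted correctly from the chain of norm equalities (the standard ``contraction composed with coisometry is isometric implies range lands in the coimage'' argument) and that the identification $\operatorname{Ker} M_{\bPs_{\bgam_{p,n}}}$ with the relevant $\bS_{\bo_{\widetilde p,1},R}$-invariant subspace is consistent with the statement. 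Finally, for completeness I would note that if one is after the analogue of part (2) of Theorem~\ref{T:bergin} (the orthogonality $\Theta u \perp \Theta \bS^\alpha v$), it follows by the same computation as there: using $M_\Theta = M_{\bPs_{\bgam_{p,n}}} M_S$, $M_S = M_{\bPs_{\bgam_{p,n}}}^* M_{\bPs_{\bgam_{p,n}}} M_S$, and the fact that $S$ is strictly inner, one gets $\langle M_\Theta u, M_\Theta \bS_{\bo_{p,n},R}^\alpha v\rangle = \langle M_S u, M_S \bS^\alpha v\rangle = 0$ for nonempty $\alpha$, which re-derives condition (ii) of Definition~\ref{D:8in} independently of Theorem~\ref{T:12.2-p}.
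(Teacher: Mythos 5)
Your route is essentially the paper's: factor $\Theta=\bPs_{\bgam_{p,n}}S$ via part (3) of Theorem \ref{T:dopmay}, use the coisometry of $M_{\bPs_{\bgam_{p,n}}}$ from part (2) together with the norm chain $\|u\|=\|M_{\bPs_{\bgam_{p,n}}}M_S u\|$ to get $\|M_S u\|=\|u\|$ and $M_S u\perp \operatorname{Ker}M_{\bPs_{\bgam_{p,n}}}$, invoke Lemma \ref{L:in-p} for strict innerness, and recover $S=M_{\bPs_{\bgam_{p,n}}}^*M_\Theta$ for uniqueness; this is exactly how the paper argues (it simply says the computations of Theorem \ref{T:bergin} repeat verbatim once the contractivity/coisometry inputs are supplied by Theorem \ref{T:dopmay}).

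One misstep should be corrected, though it does not sink the proof because you repair it yourself at the end. You cannot get the $(p,n)$-Bergman-inner conclusion by ``simply invoking'' the equality branch of Theorem \ref{T:12.2-p}: that theorem assumes \emph{both} contractivity on constants \emph{and} the orthogonality relations $M_\Theta u\perp \bS_{\bo_{p,n},R}^\alpha M_\Theta v$ as hypotheses, and its ``moreover'' clause still sits under hypothesis (2). The orthogonality is precisely condition (ii) of Definition \ref{D:8in}, i.e.\ the very thing Theorem \ref{T:berginp} asserts, so appealing to Theorem \ref{T:12.2-p} there is circular. Your closing ``for completeness'' computation --- using $M_\Theta=M_{\bPs_{\bgam_{p,n}}}M_S$, $M_S=M_{\bPs_{\bgam_{p,n}}}^*M_{\bPs_{\bgam_{p,n}}}M_S$ and the strict innerness of $S$ to get $\langle M_\Theta u,\,\bS_{\bo_{p,n},R}^\alpha M_\Theta v\rangle=\langle M_S u,\,M_S\bS^\alpha v\rangle=0$ for nonempty $\alpha$ --- is the correct derivation of (ii) and coincides with the paper's argument; promote it from a remark to the actual proof of the Bergman-inner property and delete the appeal to Theorem \ref{T:12.2-p}. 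With that adjustment the proposal matches the paper's proof.
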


\begin{proof}
Since $\Theta$ is a contractive multiplier from $H^2_{\bo_{\widetilde{p},1},\cU}(\free)$ to
$H^2_{\bo_{p,n},\cY}(\free)$, it is (by Theorem \ref{T:dopmay} (3)) of the form \eqref{march9p}
for some contractive multiplier $S(z)$ from $H^2_{\bo_{\widetilde{p},1},\cU}(\free)$ to $H^2_{\bo_{\widetilde{p},1},\ell^{2}_{\cY}(\free)}(\free)$
By assumption we have
$$
\|u\|_{\cU}= \|M_{\Theta} u\|_{H^2_{\bo_{p,n},\cY}(\free)}=\|M_{\bPs_{\bgam}} M_{S} u\|_{H^2_{\bo_{p,n},\cY}(\free)}
$$
for all $u\in\cU$. To repeat verbatim the arguments from the proof of Theorem \ref{T:bergin}
regarding uniqueness and strict-inner property of $S$ as well as the 
orthogonality relations $\Theta u\perp {\bf S}_{\bo_{p,n}}\Theta v$
(required to conclude that $\Theta$ is $(p,n)$-Bergman inner) we need the operators 
$M_{S} \colon \cU \to H^2_{\bo_{\widetilde{p},1},\ell^{2}_{\cY}(\free)}(\free)$ and 
$\; M_{\bPs_{\bgam}} \colon H^2_{\bo_{\widetilde{p},1},\ell^{2}_{\cY}(\free)}(\free)\to  H^2_{\bo_{p,n},\cY}(\free)$ 
to be respectively contractive and coisometric. But we have this by part (2) in Theorem \ref{T:dopmay} and 
the fact that $S(z)$ is a contractive multiplier from $H^2_{\bo_{\widetilde{p},1},\cU}(\free)$ to
$H^2_{\bo_{\widetilde{p},1},\ell^{2}_{\cY}(\free)}(\free)$.
\end{proof}

\section{Output stability, Stein equations and inequalities}  \label{S:obs-gram-p}

For convenience of future reference, let us collect the following definitions.
\begin{definition}  \label{D:def-pn}  Given $C \in \cL(\cX, \cY)$ and $\bA = (A_1, \dots, A_d) \in \cL(\cX)^d$, we say that:

\smallskip

{\rm (1)} $(C,\bA)$  is {\em $(p,n)$-output stable}
if the $(p,n)$-observability operator $\cO_{p,n; C, \bA}$ defined in \eqref{march21} is
bounded from $\cX$ into $H^{2}_{\bo_{p,n}, \cY}(\free)$. When this is the case, it makes sense
to introduce the {\em $(p,n)$-observability gramian}
$$
\cG_{p,n; C, \bA}: = \cO_{p,n;C,\bA}^{*} \cO_{p,n;C,\bA} \in
\cL(\cX),
$$
whose representation in terms of a strongly convergent series
\begin{equation}
\cG_{p,n;C, \bA} = R_{p,n}(B_{\bA})[C^{*}C] = \sum_{\alpha \in
\free} \omega_{p,n; \alpha}^{-1} \bA^{* \alpha^{\top}} C^{*} C\bA^{\alpha}
\label{rest5}
\end{equation}
follows from \eqref{Rpn} and \eqref{18.1p} and suggests to let $\cG_{p,0;C, \bA} = C^{*}C$.

In addition we say that the output pair $C, \bA)$ is {\em $(p,n)$-observable} if the observability operator
$\cO_{p,n, C, \bA}$ has trivial kernel.  If moreover, the observability operator
$\cO_{p,n, C, \bA}$ is bounded below (equivalently, the gramian $\cG_{p,n, C, \bA}$ is strictly positive definite), we say that
$(C, \bA)$ is {\em exactly observable}.

\smallskip

{\rm (2)} $\bA = (A_1, \dots, A_d)$ is {\em $p$-strongly stable} if it holds that
 \begin{equation} \label{bAstable-p}
     \lim_{N \to \infty} p(B_{\bA})^{N}[I_{\cX}] = 0 \; \text{ in the
     strong operator topology.}
 \end{equation}
Since $p(B_{\bA})^{N}[I_{\cX}]\succeq 0$ for all $N\ge 0$, the convergence
 in the strong operator topology is equivalent to convergence in the
 weak operator topology in \eqref{bAstable-p}.

\smallskip

{\rm (3)} $(C, \bA)$ is {\em $(p,n)$-contractive} if $\bA$ is $(p,n)$-hypercontractive (as in Definition \ref{D:10.1}) and in addition 
$\Gamma_{p,n;\bA}[I_{\cX}] \succeq C^{*}C$, where the $(p,n)$-defect operator $\Gamma_{p,n; \bA}$ is given by \eqref{march26}.

\smallskip

{\rm (4)} $(C, \bA)$ is {\em $(p,n)$-isometric} if $\bA$ is $(p,n)$-hypercontractive and in addition 
$\Gamma_{p,n; \bA}[I_{\cX}]=C^{*}C$.
\end{definition}

 \begin{proposition}  \label{P:identities-p}
Let $\Gamma_{p,n; \bA}$ be the operatorial map defined in \eqref{march26}
(defined as long as $\| B_\bA \| < \rho$).
Then for all $H \in \cL(\cX)$ and any integers $k \ge
1$ and $N \ge 0$,
\begin{align}
  &  \Gamma_{p,k; \bA}[H] =  \Gamma_{p,k-1; \bA}[H] - p(B_{\bA})[
    \Gamma_{p,k-1; \bA}[H]],  \label{4.8-p} \\
& H = \bigg( \sum_{j=0}^{N} \bcs{ k+j-1 \\ j } p^{j}(B_{\bA}) \Gamma_{p,k;\bA}
 + \sum_{j=1}^{k} \bcs{ N+k \\ N+j } p(B_{\bA})^{N+j}
\Gamma_{p,k-j;\bA} \bigg)[H]. \label{4.9-p}
 \end{align}
 Furthermore, if $(C, \bA)$ is $(p,n)$-output stable, then it is also
 $(p,k)$-output stable, and the
  equalities 
 \begin{equation}
\Gamma_{p,k; \bA}[\cG_{p,n;, C, \bA}] = \cG_{p,n-k; C, \bA}
  \;  \text{ for } \;  k=0,1, \dots, n. \label{4.10-p}
 \end{equation}
 hold, as well as the chain of inequalities:
 \begin{equation}   \label{chain-ineq-p}
 C^* C \preceq \cG_{p,1;C, \bA} \preceq \cdots \preceq \cG_{p,n; C, \bA}.
 \end{equation}
   \end{proposition}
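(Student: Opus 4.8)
The plan is to prove Proposition \ref{P:identities-p} by following closely the structure of the proof of Proposition \ref{P:identities} in the $\bmu_n$-setting, with the polynomial $z_1 + \cdots + z_d$ replaced by the general regular power series $p(z)$ and the scalar operator $B_\bA$ of \eqref{4.4} replaced by the composition operator $p(B_\bA)$ on $\cL(\cX)$ determined by the functional calculus described at the start of Chapter \ref{S:change}. The key point throughout is that all the identities in question are consequences of polynomial (or formal power series) identities in one variable, applied to the operator $p(B_\bA)$ which by the standing assumption \eqref{bA-assume} has spectral radius (in fact operator norm as a map on $\cL(\cL(\cX))$) strictly less than the radius of convergence $\rho$ of the relevant series; this is what makes all the formal manipulations rigorous.

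First I would establish \eqref{4.8-p}: it is simply the result of applying the operator $(1-p)^k(B_\bA) = (1-p)^{k-1}(B_\bA) - p(B_\bA)(1-p)^{k-1}(B_\bA)$ — the $p(B_\bA)$-functional-calculus image of the scalar identity $(1-w)^k = (1-w)^{k-1} - w(1-w)^{k-1}$ — to the operator $H$, using the definition \eqref{march26} of $\Gamma_{p,k;\bA}$. Next, \eqref{4.9-p} comes from the scalar identity
$$
1 = \sum_{j=0}^N \bcs{k+j-1\\j} w^j (1-w)^k + \sum_{j=1}^k \bcs{N+k\\N+j} w^{N+j}(1-w)^{k-j},
$$
which is the truncation formula for $(1-w)^{-k}$ used already in \eqref{4.11'} (cited there from \cite[Section 2]{BBIEOT}); applying the $p(B_\bA)$-functional calculus to this identity and then evaluating at $H$ yields \eqref{4.9-p} directly, once one unwinds the definitions of $\Gamma_{p,\ell;\bA}$ in terms of $p(B_\bA)^{\cdot}$. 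These two steps are essentially bookkeeping.

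The substantive step is \eqref{4.10-p} together with the $(p,k)$-output stability claim and the chain \eqref{chain-ineq-p}. Here I would argue as in the proof of Proposition \ref{P:identities}: $\Gamma_{p,k;\bA} = (1-p)^k(B_\bA)$ is a polynomial in $p(B_\bA)$ (hence absolutely convergent when applied to any bounded operator), while by the $(p,n)$-output-stability hypothesis the series
$$
\cG_{p,n;C,\bA} = R_{p,n}(B_\bA)[C^*C] = (1-p)^{-n}(B_\bA)[C^*C] = \sum_{\alpha\in\free} \omega_{p,n;\alpha}^{-1}\bA^{*\alpha^\top}C^*C\bA^\alpha
$$
from \eqref{rest5} converges strongly. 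One then invokes Mertens' theorem on the product of an absolutely convergent series and a convergent series (the \textbf{General Principle} \eqref{GenPrin} already used in Chapter \ref{S:Stein}), applied with the two factors corresponding to the power series $(1-w)^k$ and $(1-w)^{-n}$ of $p(B_\bA)$, to justify
$$
\Gamma_{p,k;\bA}[\cG_{p,n;C,\bA}] = (1-p)^k(B_\bA)\bigl[(1-p)^{-n}(B_\bA)[C^*C]\bigr] = (1-p)^{-(n-k)}(B_\bA)[C^*C] = \cG_{p,n-k;C,\bA},
$$
which is \eqref{4.10-p}; in particular the right-hand side is a bounded operator, giving $(p,k)$-output stability for $0 \le k \le n$. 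Then \eqref{chain-ineq-p} follows: from \eqref{4.8-p} applied to $\cG_{p,k;C,\bA}$ (using \eqref{4.10-p} with $n$ replaced by $k$ to identify $\Gamma_{p,1;\bA}[\cG_{p,k;C,\bA}]$, $\Gamma_{p,0;\bA}[\cG_{p,k;C,\bA}]$, etc.), one gets $\cG_{p,k;C,\bA} - p(B_\bA)[\cG_{p,k;C,\bA}] = \cG_{p,k-1;C,\bA}$, and since $p(B_\bA)$ is a positive map (being a sum with nonnegative coefficients $p_\alpha$ of the completely positive maps $X\mapsto \bA^{*\alpha^\top}X\bA^\alpha$), we conclude $\cG_{p,k;C,\bA} \succeq \cG_{p,k-1;C,\bA}$, with the base case $\cG_{p,1;C,\bA} \succeq \cG_{p,0;C,\bA} = C^*C$.

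\textbf{The main obstacle} I anticipate is making the application of Mertens' General Principle fully rigorous in the operator-composition setting: one must carefully identify the two series as series in powers of the single operator $p(B_\bA) \in \cL(\cL(\cX))$ — reorganizing $\sum_{\alpha} p_\alpha \bA^{*\alpha^\top}(\cdot)\bA^\alpha$ into $\sum_j p(B_\bA)^j(\cdot)$ with the correct scalar coefficients $\omega_{p,n;\alpha}^{-1}$ and $c_{p,n;\alpha}$ — and check that the absolute convergence needed for Mertens' theorem holds, which is exactly guaranteed by $\|p(B_\bA)\| = \|[A_1\ \cdots\ A_d]\| < \rho$. Once the correct single-variable formal identities are in hand and this bookkeeping is done, no further difficulty arises; everything else parallels the $\bmu_n$ case verbatim.
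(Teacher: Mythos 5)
Your proposal is correct, and for \eqref{4.8-p}, \eqref{4.9-p}, \eqref{chain-ineq-p} and the deduction of $(p,k)$-output stability it coincides with the paper's argument (same scalar identities, same truncation formula behind \eqref{4.11'}, same use of positivity of $p(B_\bA)$). The one genuine difference is how the product identity underlying \eqref{4.10-p} is justified. The paper keeps the series indexed by the free semigroup $\free$ and introduces for this purpose a \emph{Free Noncommutative Mertens Theorem} \eqref{ncGenPrin} (an $\free$-indexed Cauchy-product statement whose proof is left to the reader), applying it directly to $(1-p)^k(B_\bA)$ and $(1-p)^{-n}(B_\bA)[C^*C]$. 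You instead resum everything as power series in the single operator $p(B_\bA)$ and invoke the already-established integer-indexed principle \eqref{GenPrin}, exactly as in Proposition \ref{P:identities} for the $\bmu_n$ case, with ${\mathbf a}_j$ the finitely many terms of the polynomial $(1-w)^k$ and ${\mathbf b}_j = \bcs{n+j-1\\ j} p(B_\bA)^j[C^*C]$. This works, but the step you flag as ``bookkeeping'' is precisely where the content lies: the regrouping of $\sum_{\alpha}\omega_{p,n;\alpha}^{-1}\bA^{*\alpha^\top}C^*C\bA^{\alpha}$ into $\sum_j \bcs{n+j-1\\ j} p(B_\bA)^j[C^*C]$ should be justified explicitly, e.g.\ by noting that all coefficients $p_\alpha$, $\omega_{p,n;\alpha}^{-1}$ are nonnegative and all operators $\bA^{*\alpha^\top}C^*C\bA^\alpha$ are positive semidefinite, so for each $x$ the scalar series $\sum_\alpha \omega_{p,n;\alpha}^{-1}\|C\bA^\alpha x\|^2$ has nonnegative terms and may be rearranged and regrouped at will, the decomposition of $\omega_{p,n;\alpha}^{-1}$ in \eqref{omegapnalpha} doing the rest (also, $\|p(B_\bA)\|$ need not equal $\|\begin{bmatrix} A_1 & \cdots & A_d\end{bmatrix}\|$; what \eqref{bA-assume} gives you is norm convergence of the series defining $p(B_\bA)$, which is all you need). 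With that sentence added, your route is a self-contained alternative that avoids appealing to the unproved $\free$-indexed Mertens statement, at the cost of the regrouping lemma; the paper's route avoids the regrouping at the cost of a new (unproved) summation principle.
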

   
   \begin{proof}
   Formula \eqref{4.8-p} follows from substitution of the operator argument $H$ into the identity
$$
  (1 - p)^{k} = (1 - p)^{k-1} - p (1 - p)^{k-1}.
$$
By substituting $p(B_{\bA})$ for $B_{\bA}$ and $(1-p)^{k}(B_{\bA})$ 
for $(I - B_{\bA})^{k}$ in the derivation of \eqref{4.11'} we arrive 
at the following $p$-analog of \eqref{4.11'}:
$$
I_{\cL(\cX)} = \sum_{j=0}^{N} \bcs{ k+j -1 \\ j } p^{j}(B_{\bA}) (1 - 
p)^{k}(B_{\bA}) + \sum_{j=1}^{k} \bcs{ N+k \\ N+j } p^{N+j}(B_{\bA}) 
(1 - p)^{k-j}(B_{\bA}).
$$
Applying this operator identity to $H$ leads to \eqref{4.9-p}.

\smallskip

To verify \eqref{4.10-p}, we shall make use of the following extension of 
\eqref{GenPrin} which we shall call the \textbf{Free Noncommutative Mertens Theorem}:

\smallskip
\noindent
{\em If the series ${\displaystyle\sum_{\alpha\in\free}{\bf a}_\alpha}$ converges absolutely and if $\; {\displaystyle\sum_{\alpha\in\free}{\bf a}_\alpha}={\bf a}\; $
and $\; {\displaystyle\sum_{\alpha\in\free}{\bf b}_\alpha}={\bf b}$, then }
\begin{equation}
\sum_{\alpha\in\free}\bigg(\sum_{\beta, \gamma \in \free \colon \beta \gamma = \alpha}{\bf a}_\beta {\bf b}_{\gamma}\bigg)=
\bigg(\sum_{\alpha\in\free}{\bf a}_\alpha\bigg)\cdot
\bigg(\sum_{\alpha\in\free}{\bf b}_\alpha\bigg) ={\bf a}\cdot{\bf b}.
\label{ncGenPrin}
\end{equation}
The proof follows by an adaptation of the proof of \cite[Theorem 3.50]{Rudin}, the details of which we leave to the reader for lack of space.

\smallskip

We apply this Mertens theorem 
as follows.  As $(C, \bA)$ is $(p,n)$-output stable, the series representation for
$\cG_{p,n; C, \bA} = (1 -p)^{-n}(B_\bA)[C^*C]$ is convergent.  By the assumption that $\| \begin{bmatrix} A_1 & \cdots & A_d \end{bmatrix} \|
< \rho$, the series representation for $\Gamma_{p,k; \bA} = (1 - p)^k(B_\bA)$ is absolutely convergent.  Hence by the free noncommutative
general principle \eqref{ncGenPrin}, we can compute
\begin{align*}
\Gamma_{p,k,\bA}[\cG_{p,n; C, \bA}] & = (1 - p)^k(B_\bA)[ (1-p)^{-n} (B_\bA)[C^* C]]  \\
& = \left( (1 - p)^k \cdot (1 - p)^{-n} \right) (B_\bA) [C^* C]  \\
& = (1 - p)^{-(n-k)}(B_\bA) [C^* C] =: \cG_{p,n; C, \bA}.
\end{align*}
In particular it falls out that $(C, \bA)$ being $(p,n)$-output stable implies that $(C, \bA)$ is $(p,k)$-output stable for $1 \le k \le n$.
The special case $k=1$ in \eqref{4.10-p} tells as that
\begin{align*}
\cG_{p,n-1; C, \bA} & = \Gamma_{p,1; \bA}[\cG_{p,n; C, \bA}]  \\
& = (1 - p)(B_\bA)[\cG_{p,n;C, \bA}] \\
& = \cG_{p,n; C, \bA} - p(B_\bA)[\cG_{p,n; C, \bA}] \preceq \cG_{p,n;C, \bA}
\end{align*}
since $p(B_\bA)$ is a positive map.  Iteration of this argument then gives us the chain of inequalities \eqref{chain-ineq-p}.  As in
Proposition \ref{P:identities} for the special case $p(z) = z_1 + \cdots + z_d$,  the fact that $(p,n)$-output stability implies $(p,k)$-output
stability for $k = 0,1,\dots, n-1$ can also be seen as a consequence of the chain of inequalities \eqref{chain-ineq-p}.  Finally one can also
see these inequalities as a consequence of verifying directly from the formula \eqref{omegapnalpha} the coefficient inequalities 
$\omega^{-1}_{p,k-1,\alpha} \le  \omega^{-1}_{p, k, \alpha}$ for $k=1, \dots, n$.
\end{proof}
\noindent
We next verify that contractivity implies hyper-contractivity for the $(p,n)$-setting.

\begin{lemma}
Let  us assume that operators $H$ and $A_j$ in $\cL(\cX)$ are such that
\begin{equation}
H \succeq p(B_{\bA})[H]\succeq
0\quad\mbox{and}\quad\Gamma_{p,n;\bA}[H]\succeq 0
\label{4.12-p}
\end{equation}
for some integer $n\ge 3$. Then
$\Gamma_{p,k;\bA}[H]\succeq 0$ for all $k=1,\ldots,n-1$.
\label{L:squeeze-p}
\end{lemma}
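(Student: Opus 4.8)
The plan is to follow the proof of Lemma~\ref{L:squeeze} essentially verbatim, systematically replacing the completely positive map $B_{\bA}$ by the map $p(B_{\bA})$ on $\cL(\cX)$. Two structural facts make this replacement go through. First, $p(B_{\bA})$ is a \emph{bounded positive} linear map on $\cL(\cX)$: positivity holds because $p_\alpha\ge 0$ for every $\alpha\in\free$, and boundedness is guaranteed by the standing assumption \eqref{bA-assume}. Second, iterating the recursion \eqref{4.8-p} yields the operator identity $\Gamma_{p,m;\bA}=(I-p(B_{\bA}))^{m}$ on $\cL(\cX)$, so that $\Gamma_{p,m;\bA}$ is literally a polynomial in the single bounded operator $p(B_{\bA})$. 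Consequently every formal power series identity used below is a legitimate identity among mutually commuting bounded operators on $\cL(\cX)$, and no appeal to the (free noncommutative) Mertens theorem is needed here.

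First I would observe that the left-hand inequalities in \eqref{4.12-p} say exactly that $H\succeq0$ and $\Gamma_{p,1;\bA}[H]=H-p(B_{\bA})[H]\succeq0$; since $p(B_{\bA})$ is a positive map with $p(B_{\bA})[H]\preceq H$, one iterates (using monotonicity of the positive map at each step: $p(B_{\bA})^{j+1}[H]=p(B_{\bA})\big[p(B_{\bA})^{j}[H]\big]\preceq p(B_{\bA})[H]\preceq H$) to obtain the uniform two-sided bound $0\preceq p(B_{\bA})^{j}[H]\preceq H$ for all $j\ge0$. Next, in analogy with \eqref{4.15}, I would introduce the Hermitian operators
$$
S_{m,k}:=p(B_{\bA})^{k}\big[\Gamma_{p,m;\bA}[H]\big]=p(B_{\bA})^{k}(I-p(B_{\bA}))^{m}[H]=\sum_{\ell=0}^{m}(-1)^{\ell}\binom{m}{\ell}\,p(B_{\bA})^{k+\ell}[H]
$$
for $k\in{\mathbb Z}_{+}$ and $m=0,1,\dots,n$, and establish $-2^{m}H\preceq S_{m,k}\preceq 2^{m}H$ by keeping, in the displayed alternating sum, only the terms of one parity and bounding each surviving $p(B_{\bA})^{k+\ell}[H]$ between $0$ and $H$ — this is the $(p,n)$-analog of \eqref{4.16}.

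The core of the argument then proceeds as in Lemma~\ref{L:squeeze}. From \eqref{4.8-p} with $k=n$ and the hypothesis $\Gamma_{p,n;\bA}[H]\succeq0$ one gets $\Gamma_{p,n-1;\bA}[H]\succeq p(B_{\bA})[\Gamma_{p,n-1;\bA}[H]]$; applying the positive map $p(B_{\bA})^{j}$ shows the sequence $\{S_{n-1,j}\}_{j\ge0}$ is non-increasing. On the other hand, the telescoping identity $\sum_{j=0}^{N}\big(p(B_{\bA})^{j}-p(B_{\bA})^{j+1}\big)=I-p(B_{\bA})^{N+1}$, multiplied through by $(I-p(B_{\bA}))^{n-2}$, gives $\sum_{j=0}^{N}S_{n-1,j}=S_{n-2,0}-S_{n-2,N+1}$. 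Pairing against $x\in\cX$ and invoking the $m=n-2$ case of the two-sided bound produces $\big|\sum_{j=0}^{N}\langle S_{n-1,j}x,x\rangle\big|\le 2^{n-1}\langle Hx,x\rangle$ for every $N$; since the partial sums of the non-increasing scalar sequence $\langle S_{n-1,j}x,x\rangle$ are uniformly bounded, each term is $\ge0$, and in particular $\langle S_{n-1,0}x,x\rangle=\langle\Gamma_{p,n-1;\bA}[H]x,x\rangle\ge0$ for all $x$, i.e.\ $\Gamma_{p,n-1;\bA}[H]\succeq0$. If $n-1\ge3$ one repeats the whole argument with $n$ replaced by $n-1$; the recursion halts once one has reached $\Gamma_{p,2;\bA}[H]\succeq0$, while $\Gamma_{p,1;\bA}[H]\succeq0$ is already part of the hypothesis \eqref{4.12-p}, so all of $\Gamma_{p,k;\bA}[H]\succeq0$, $k=1,\dots,n-1$, follow.

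The main point requiring care — rather than a real obstacle — is that, unlike in the $\bmu_n$ case where $B_{\bA}^{k}$ admits the explicit ``sum over words of length $k$'' expansion and the alternating binomial sums collapse combinatorially, here one must argue abstractly with the single bounded positive operator $p(B_{\bA})$ on $\cL(\cX)$. Once the identity $\Gamma_{p,m;\bA}=(I-p(B_{\bA}))^{m}$ is recorded the manipulations are routine, the only quantitative inputs being the uniform comparison $0\preceq p(B_{\bA})^{j}[H]\preceq H$ and the elementary fact that a non-increasing real sequence with uniformly bounded partial sums is nonnegative.
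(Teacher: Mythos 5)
Your proposal is correct and follows essentially the same route as the paper's proof: the same operators $S_{m,k}=p(B_{\bA})^{k}(1-p)^{m}(B_{\bA})[H]$, the same two-sided bound $-2^{m}H\preceq S_{m,k}\preceq 2^{m}H$, the monotonicity of $S_{n-1,j}$ from \eqref{4.8-p}, the telescoping identity $\sum_{j=0}^{N}S_{n-1,j}=S_{n-2,0}-S_{n-2,N+1}$, and the conclusion that a non-increasing sequence with uniformly bounded partial sums has nonnegative terms, followed by downward recursion. Your preliminary remark that $\Gamma_{p,m;\bA}=(I-p(B_{\bA}))^{m}$ as powers of a single bounded positive operator merely makes explicit what the paper uses implicitly, so no substantive difference.
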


\begin{proof}
Since the map $X\mapsto p(B_{\bA})[X]$ is positive, iterating the first condition in \eqref{4.12-p}
gives
\begin{equation}
H\succeq p^j(B_{\bA})[H]\quad\mbox{for all}\quad j\ge 0.
\label{4.13-p}
\end{equation}
Then we introduce Hermitian operators
$$
S_{m,k}:=p^k(1-p)^m(B_{\bA})[H]\quad\mbox{for $k\in{\mathbb Z}_+\; $ and $\; m=0,1,\ldots,n$}.
$$
and verify inequalities \eqref{4.16} for these operators. Indeed, by \eqref{4.13-p},
\begin{align}
S_{m,k}& = p^k(1-p)^m(B_{\bA})[H]\notag\\
& = \sum_{j=0}^m   (-1)^j    \bcs{m \\ j}p^{k+j}(B_{\bA})[H]
\preceq \sum_{j=0}^m \bcs{m \\ j}H=2^m \cdot H,\label{4.17-pa}
\end{align}
thus proving the right inequality in \eqref{4.16}. The left inequality follows
by a similar argument using lower estimates. We next observe that due to \eqref{4.8-p} and the second inequality in
\eqref{4.12-p},
$$
\Gamma_{p,n-1;\bA}[H] \succeq p(B_{\bA})[\Gamma_{p,n-1;\bA}[H]].
$$
Since the map $X\mapsto p(B_{\bA})[X]$ is positive, we then have
$$
p^j(B_{\bA})[\Gamma_{p,n-1;\bA}[H]]\succeq p^{j+1}(B_{\bA})[\Gamma_{p,n-1;\bA}[H]],
$$
which, on account of \eqref{4.17-p} can be written as
\begin{equation}
S_{n-1,j}\succeq S_{n-1,j+1}.
\label{4.17-p}
\end{equation}
Also, it follows from definitions \eqref{4.17-p} that for any  $N\ge 1$,
\begin{align}
\sum_{j=0}^N S_{n-1,j}&=
\sum_{j=0}^N p^j(1-p)^{n-1}(B_\bA)[H]\notag\\
&=(1-p^{N+1})(1-p)^{n-2}(B_\bA)[H]=S_{n-2,0}-S_{n-2,N+1}.\notag
\end{align}
Using the same arguments as in the proof of Lemma \ref{L:squeeze} we conclude that
the operator $S_{n-1,0}=(1-p)^{n-1}(B_{\bA})[H]=\Gamma_{p,n-1;\bA}[H]$ is positive semidefinite.
We then obtain recursively that $\Gamma_{p,k;\bA}[H]\succeq 0$ for all $k=1,\ldots, n-1$.
\end{proof}

Note that for $H=I_{\cX}$, the left condition in \eqref{4.12-p} amounts to $I_{\cX}\succeq p(B_{\bA}[I_{\cX}])$ or
equivalently, to 
$$
(1-p)(B_{\bA}[I_{\cX}])=\Gamma_{p,1;\bA}[I_{\cX}] \succeq 0.
$$ 
Hence, specializing Lemma \ref{L:squeeze-p} to the case where $H=I_{\cX}$ and recalling Definition \ref{D:10.1}
we arrive at the following result.

\begin{corollary} \label{C:10.2}
If the operator tuple $\bA=(A_1,\ldots,A_d)$ is $(p,1)$-contractive and $(p,n)$-contractive for some $n\ge 3$, then it is 
also $(p,n)$-hypercontractive.
\end{corollary}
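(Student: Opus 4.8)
\textbf{Proof plan for Corollary \ref{C:10.2}.}  The statement to prove is:  if $\bA = (A_1, \dots, A_d)$ is $(p,1)$-contractive and $(p,n)$-contractive for some $n \ge 3$, then $\bA$ is $(p,n)$-hypercontractive, i.e.\ $\Gamma_{p,k; \bA}[I_\cX] \succeq 0$ for all $1 \le k \le n$.  The plan is to apply Lemma \ref{L:squeeze-p} with the choice $H = I_\cX$.  First I would unwind what the two hypotheses say in terms of the operatorial maps.  By Definition \ref{D:10.1}, $(p,1)$-contractivity of $\bA$ means (together with the standing convergence assumption $\|\begin{bmatrix} A_1 & \cdots & A_d \end{bmatrix}\| < \rho$) that $\Gamma_{p,1; \bA}[I_\cX] \succeq 0$, and $(p,n)$-contractivity means $\Gamma_{p,n; \bA}[I_\cX] \succeq 0$.

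Next I would observe that $\Gamma_{p,1; \bA}[I_\cX] = (1-p)(B_\bA)[I_\cX] = I_\cX - p(B_\bA)[I_\cX]$, so the inequality $\Gamma_{p,1; \bA}[I_\cX] \succeq 0$ is exactly $I_\cX \succeq p(B_\bA)[I_\cX]$.  Moreover $p(B_\bA)$ is a positive map (each summand $X \mapsto p_\alpha \bA^{*\alpha^\top} X \bA^\alpha$ with $p_\alpha \ge 0$ preserves positivity, and the series converges in norm by \eqref{bA-assume}), so applying $p(B_\bA)$ to the positive operator $I_\cX$ yields $p(B_\bA)[I_\cX] \succeq 0$.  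Thus the pair of conditions in the left half of \eqref{4.12-p} holds with $H = I_\cX$:
$$
I_\cX \succeq p(B_\bA)[I_\cX] \succeq 0.
$$
Combined with the hypothesis $\Gamma_{p,n; \bA}[I_\cX] \succeq 0$, which is precisely the right half of \eqref{4.12-p}, all the assumptions of Lemma \ref{L:squeeze-p} (with $H = I_\cX$ and the given $n \ge 3$) are met.

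Lemma \ref{L:squeeze-p} then immediately gives $\Gamma_{p,k; \bA}[I_\cX] \succeq 0$ for all $k = 1, \dots, n-1$.  Together with the hypothesis $\Gamma_{p,1; \bA}[I_\cX] \succeq 0$ (already among those $k$) and $\Gamma_{p,n; \bA}[I_\cX] \succeq 0$, this yields $\Gamma_{p,k; \bA}[I_\cX] \succeq 0$ for the full range $1 \le k \le n$.  By Definition \ref{D:10.1}, $\bA$ is $(p,n)$-hypercontractive, completing the proof.

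This argument is essentially a bookkeeping reduction to Lemma \ref{L:squeeze-p}, so I do not anticipate a genuine obstacle; the only point requiring a moment's care is the translation between the ``$(p,1)$-contractive'' phrasing and the operator inequality $I_\cX \succeq p(B_\bA)[I_\cX] \succeq 0$ needed to invoke the lemma, and noting that the case $k = 1$ of the conclusion is automatic from the first hypothesis (so no circularity arises even though the lemma only delivers $k \le n-1$ directly).  One small caveat is that the lemma as stated requires $n \ge 3$; the corollary inherits this hypothesis, so nothing extra is needed, but it is worth flagging that for $n \in \{1, 2\}$ the statement is either vacuous or trivial and is not covered by this route.
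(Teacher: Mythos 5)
Your proposal is correct and follows exactly the paper's route: the paper also observes that for $H=I_\cX$ the left condition of \eqref{4.12-p} is precisely $\Gamma_{p,1;\bA}[I_\cX]\succeq 0$ (i.e.\ $(p,1)$-contractivity, with $p(B_\bA)[I_\cX]\succeq 0$ automatic from the nonnegative coefficients), and then specializes Lemma \ref{L:squeeze-p} with $H=I_\cX$ to get $\Gamma_{p,k;\bA}[I_\cX]\succeq 0$ for $1\le k\le n-1$, which together with the hypotheses gives $(p,n)$-hypercontractivity.
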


We next present the $(p,n)$-analogue of Theorem \ref{T:7.2} and Theorem \ref{T:2-1.1}.

  \begin{theorem}  \label{T:2-1.1-p}
      Let $C \in \cL(\cX, \cY)$ and $\bA = (A_{1}, \dots, A_{d}) \in
      \cL(\cX)^{d}$.  Then:
  \begin{enumerate}
      \item The pair $(C, \bA)$ is $(p,n)$-output stable if and only
      if there exists an operator $H \in \cL(\cX)$ satisfying the
      inequalities
      \begin{equation} \label{4.20-p}
          H \succeq p(B_{\bA})[H] \succeq 0 \quad\text{and}\quad \Gamma_{p,n;
          \bA}[H] \succeq C^{*}C.
\end{equation}

\item If $(C, \bA)$ is $(p,n)$-output stable, then the observability
gramian $H = \cG_{p,n;C,\bA}$ satisfies
\begin{equation} \label{4.21-p}
    H \succeq p(B_{\bA})[H] \succeq 0 \quad\text{and}\quad \Gamma_{p,n;
    \bA}[H] = C^{*}C
\end{equation}
and is the minimal positive semidefinite solution of the system \eqref{4.20-p}.

\item
There is a unique solution $H$ of the system \eqref{4.21-p} with $H =
\cG_{p,n; C, \bA}$ if $\bA$ is $p$-strongly stable.  Moreover, in
case $\bA$ is $p$-contractive in the sense that $p(B_{\bA})[I_{\cX}]
\preceq I_{\cX}$, then the solution of the $(p,n)$-Stein equation in
\eqref{4.21-p} is unique if and only if $\bA$ is $p$-strongly stable.
 \end{enumerate}
  \end{theorem}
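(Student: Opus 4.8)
The strategy is to reduce this result to the already-established general machinery by tracking the proof of Theorem \ref{T:7.2} and Theorem \ref{T:2-1.1} with $\bo$ replaced by $\bo_{p,n}$ and the polynomial weight structure replaced by the power-series structure of $p$. The three statements naturally split into three proof blocks, and I would present them in the order (1)--(2) together, then (3).

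For statements (1) and (2), I would first handle the ``only if'' direction: assuming $(C,\bA)$ is $(p,n)$-output stable, the series \eqref{rest5} for $\cG_{p,n;C,\bA}$ converges, so I would set $H = \cG_{p,n;C,\bA}$ and verify the two conditions in \eqref{4.21-p}. The inequality $H \succeq p(B_\bA)[H] \succeq 0$ follows from the fact that $p(B_\bA)$ is a positive map together with the coefficient monotonicity $\omega_{p,n;\alpha}^{-1} \ge \omega_{p,n;\alpha'}^{-1}$ whenever $\alpha = j\alpha'$ (which is built into \eqref{omegapnalpha}), exactly parallel to \eqref{3.18g}. The Stein equality $\Gamma_{p,n;\bA}[\cG_{p,n;C,\bA}] = C^*C$ is the $k=n$ case of \eqref{4.10-p} in Proposition \ref{P:identities-p}, using $\cG_{p,0;C,\bA} = C^*C$. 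For the ``if'' direction, given any $H$ solving \eqref{4.20-p}, Lemma \ref{L:squeeze-p} (together with $\Gamma_{p,1;\bA}[H] \succeq 0$, which is the left inequality in \eqref{4.20-p}) gives $\Gamma_{p,k;\bA}[H] \succeq 0$ for $k=1,\dots,n-1$. I would then mimic the telescoping argument from the second half of the proof of Theorem \ref{T:7.2}: apply $p^j(B_\bA)$ to the identity \eqref{4.8-p} (divided appropriately), sum, use the analog of \eqref{1.20g}--\eqref{1.21ga} to get $\sum_{|\alpha| < k} \omega_{p,n;\alpha}^{-1}\bA^{*\alpha^\top}\Gamma_{p,n;\bA}[H]\bA^\alpha = H - \Omega_k$ with $\Omega_k \succeq 0$ decreasing, and conclude via \eqref{4.20-p} that $\sum_{\alpha} \omega_{p,n;\alpha}^{-1}\bA^{*\alpha^\top}C^*C\bA^\alpha \preceq H$ converges, i.e. $(C,\bA)$ is $(p,n)$-output stable and $\cG_{p,n;C,\bA}$ is the minimal solution.

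For statement (3), the uniqueness when $\bA$ is $p$-strongly stable is the $(p,n)$-analog of part (2) of Theorem \ref{T:2-1.1}. The key observation is that \eqref{bAstable-p} makes the map $I_{\cL(\cX)} - p(B_\bA)$ invertible on $\cL(\cX)$ with inverse $Q \mapsto \sum_{\alpha\in\free} \omega_{p,1;\alpha}^{-1}\bA^{*\alpha^\top}Q\bA^\alpha$ (this uses the free noncommutative Mertens theorem \eqref{ncGenPrin} to justify the geometric-series manipulation), hence $(I_{\cL(\cX)} - p(B_\bA))^n$ is invertible with inverse $Q \mapsto \sum_\alpha \omega_{p,n;\alpha}^{-1}\bA^{*\alpha^\top}Q\bA^\alpha$, so the Stein equation $\Gamma_{p,n;\bA}[H] = (1-p)^n(B_\bA)[H] = C^*C$ has $H = \cG_{p,n;C,\bA}$ as its unique solution. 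For the converse direction in part (3), I would prove the contrapositive exactly as in the proof of Theorem \ref{T:2-1.1}(3): if $\bA$ is $p$-contractive but not $p$-strongly stable, then $\Delta := \lim_N p(B_\bA)^N[I_\cX] \succeq 0$ exists (by $p$-contractivity the sequence is decreasing) and is nonzero; using the identity $\sum_{j=0}^k (-1)^j\binom{k}{j}p^{k-j}(B_\bA)[\Delta_{N+k-j}] = (1-p)^k(B_\bA)[\Delta_{N+k}]$ with $\Delta_N = p(B_\bA)^N[I_\cX]$, and passing to the limit $N\to\infty$, one gets $\Gamma_{p,k;\bA}[\Delta] = 0$ for $k=1,\dots,n$, so $H = \cG_{p,n;C,\bA} + \Delta$ is a second positive-semidefinite solution of \eqref{4.21-p}.

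The main obstacle I anticipate is not any single step but the bookkeeping in the telescoping argument for the ``if'' direction of (1): in the genuine $\bo$-setting one uses the admissibility condition \eqref{18.2} to control $R_{\bo,k}/R_\bo$, but here $\bo_{p,n}$ need not be admissible, and the shifted defect operators are replaced by iterated applications of $p(B_\bA)$. I would need the identity \eqref{4.9-p} from Proposition \ref{P:identities-p} to play the role that \eqref{4.9} played for the $\bmu_n$-case, and check carefully that the boundedness of $\Gamma_{p,k;\bA}[H]$ for all $k$ (guaranteed by Lemma \ref{L:squeeze-p} and the standing assumption \eqref{bA-assume}) is enough to run the argument; everything else is routine adaptation of the proofs of Theorems \ref{T:7.2} and \ref{T:2-1.1}.
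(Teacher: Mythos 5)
Your treatment of parts (1) and (2), and of the uniqueness half of part (3), follows the paper's own proof essentially step for step: the direct direction via \eqref{4.10-p} with $k=1$ and $k=n$, the converse via the truncation identity \eqref{4.9-p} together with Lemma \ref{L:squeeze-p}, and uniqueness under $p$-strong stability by iterating the first-order Stein equation (your loose telescoping display with the weights $\omega_{p,n;\alpha}^{-1}$ is not quite the right truncation, but it is harmless since you ultimately invoke \eqref{4.9-p}, which is exactly what the paper uses; in particular the ``main obstacle'' you anticipate in the converse of (1) is already dispatched by \eqref{4.9-p} and Lemma \ref{L:squeeze-p}).

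The genuine gap is in the converse half of part (3). First, your displayed identity is mis-indexed: since $p^{j}(B_{\bA})[\Delta_{N+k-j}]=\Delta_{N+k}$, the correct analogue of the $\bmu_n$-case identity is $\sum_{j=0}^{k}(-1)^{j}\bcs{k \\ j}\,p^{j}(B_{\bA})[\Delta_{N+k-j}]=0$, not what you wrote. More seriously, ``passing to the limit $N\to\infty$'' is precisely the step that does not transfer from Theorem \ref{T:2-1.1}: there each $B_{\bA}^{j}$ is a \emph{finite} sum over words of length $j$, so strong convergence $\Delta_{M}\to\Delta$ passes through term by term, whereas here $p^{j}(B_{\bA})$ is an infinite series over $\free$ and the interchange of the strong limit with that series requires justification. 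The paper spends the second half of its proof of (3) on exactly this point: it first gets $p(B_{\bA})[\Delta]\preceq\Delta$ from monotonicity of $\Delta_N$ and positivity of $p(B_\bA)$, and then proves the reverse inequality by a dominated-convergence type estimate, splitting $\langle p(B_{\bA})[\Delta_{n}-\Delta]x,x\rangle$ into a finite head (small because $\Delta_n\to\Delta$ strongly) and a tail (small because $\sum_{\alpha}p_{\alpha}\|\bA^{\alpha}x\|^{2}<\infty$, which is where \eqref{bA-assume} and $p$-contractivity enter). Once $p(B_{\bA})[\Delta]=\Delta$ is established, $\Gamma_{p,k;\bA}[\Delta]=\sum_{j=0}^{k}(-1)^{j}\bcs{k \\ j}\Delta=0$ is immediate and the non-uniqueness follows as you say; without that step your argument is incomplete. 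A milder caveat of the same flavor applies to your appeal to Mertens to invert $I_{\cL(\cX)}-p(B_{\bA})$: $p$-strong stability is only a strong-topology hypothesis, so the ``inverse'' is justified (as in the paper) by iterating the Stein equation on positive semidefinite data and letting the remainder $p^{N+1}(B_{\bA})[P]$ tend to zero, not by a norm-convergent Neumann series.
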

  
  Note that  \eqref{4.20-p} and \eqref{4.21-p} holding with $H = I_\cX$ is just the definition of
  the output pair $(C, \bA)$ being $(p,n)$-contractive or $(p,n)$-isometric, respectively
  (see Definition \ref{D:def-pn}).

  \begin{proof}
If $(C,\bA)$ is $(p,n)$-output stable, then the series in \eqref{rest5} converges in the strong operator topology to 
the operator $H=\cG_{p,n; C, \bA}\succeq 0$. By formulas \eqref{march26} and \eqref{4.10-p} (with $k=1$), we have
\begin{align*}
\cG_{p,n; C, \bA}-p(B_{\bA})[\cG_{p,n; C, \bA}]&=(1-p)(B_{\bA})[\cG_{p,n; C, \bA}]\\
&=\Gamma_{p,1;\bA}[\cG_{p,n; C, \bA}]=\cG_{p,n-1; C, \bA}\succeq 0.
\end{align*}
By formula  \eqref{4.10-p} (for $k=n$), we have 
$$
\Gamma_{p,n;\bA}[\cG_{p,n; C, \bA}] =\cG_{p,0; C, \bA}=C^*C.
$$
Thus, the operator $H=\cG_{p,n; C, \bA}\succeq 0$ satisfies relations \eqref{4.21-p} and hence also inequalities \eqref{4.20-p}.

\smallskip

Suppose now that $H$ is any solution of the inequalities \eqref{4.20-p}.
Let us note that the analog of \eqref{4.9}
(based on identity \eqref{4.9-p} and Lemma \ref{L:squeeze-p} rather than
identity \eqref{4.9} and Lemma \ref{L:squeeze}) is
  \begin{align}
   &   \sum_{j=0}^{N} \bcs{ n+j-1 \\ j} p^{j}(B_{\bA})[C^{*}C]
       \preceq \sum_{j=0}^{N} \bcs{n+j-1 \\ j} p^{j}(B_{\bA})
      \Gamma_{p,n;\bA}[H] \notag \\
      &  \quad = H - \sum_{j=1}^{n} \bcs{ N+n \\ N+j} p^{N+j}(B_{\bA})
      [\Gamma_{p,n-j;\bA}[H] ] \preceq H. \label{4.24-p}
 \end{align}
 Thus the nondecreasing operator sequence of positive semidefinite
 operators
 $$
  S_{N} = \sum_{j=0}^{N} \bcs{ n+j-1 \\ j} p(B_{\bA})^{j}[C^{*}C], \quad
  N=1,2, \dots
 $$
 is bounded above (by $H$)
and hence converges strongly to a positive semidefinite operator. From the computation
 \eqref{Rpn} we see that the limit of this series is exactly
 $R_{p,n}(\bA)[C^{*}C] = \cG_{p,n; C, \bA}$. Passing to the limit in
 \eqref{4.24-p} as $N \to \infty$ now gives $\cG_{p,n;C, \bA} \preceq H$. 
In particular, $\cG_{p,n;C, \bA}$ is bounded (since $H$ is) and
 therefore the pair $(C, \bA)$ is $(p,n)$-output stable. Besides, as $H$ is an arbitrary positive semidefinite 
solution to the system \eqref{4.20-p}, the latter inequality tells us that $\cG_{p,n;C, \bA}$ is the minimal 
such solution. This completes the proof of parts (1) and (2).

\smallskip

Now suppose that $\bA$ is $p$-strongly stable. and that $H$ solves
the system \eqref{4.20-p}.  We shall show that necessarily $H =
 \cG_{p,n; C, \bA}$. We first observe that if positive semidefinite operators 
$P,Q\in\cL(\cX)$ satisfy the Stein equation 
\begin{equation}
   \Gamma_{p,1; \bA}[P]: = P - p(B_{\bA})[P] = Q,
\label{last1}
\end{equation}
then $P$ is uniquely recovered from \eqref{last1} via 
the strongly convergent series
\begin{equation}
    P = \sum_{j=0}^{\infty} p^j(B_{\bA})[Q].
\label{last2}
\end{equation}
Indeed, iterating \eqref{last1} gives 
\begin{equation}
P=\sum_{j=0}^N p^j(B_\bA)[Q]+p^{N+1}(B_\bA)[P],
\label{last3}
\end{equation}
and since all the terms in the latter equality are positive semidefinite, 
the strong convergence of the series on the right side of \eqref{last2} follows.
The $p$-strong stability of $\bA$ guarantees that $p^{N+1}(B_\bA)[P]$ tends to zero (strongly) as $N\to\infty$,
and then, upon letting $N\to\infty$ in \eqref{last3}, we arrive at \eqref{last2}.
In terms of the operator $B_\bA$, the latter uniqueness means that 
that the operator $I_{\cL(\cX)} - p(B_\bA)$ is invertible with inverse given by
$$
  ( I_{\cL(\cX)} - p(B_\bA))^{-1} \colon Q \mapsto \sum_{j=0}^\infty p^j(B_{\bA})[Q].
$$
Taking the $n$-the power of the latter operator and making use of the formula \eqref{Rpn} we get
$$
  (I - p(B_\bA))^{-n} [Q]=\sum_{j=0}^{\infty} \bcs{ n+j-1 \\ j } p^j(B_\bA)[Q]=
\sum_{\alpha \in \free} \omega^{-1}_{p,n; \alpha}\bA^{*\alpha^\top}Q\bA^\alpha 
$$
Next note that the Stein equation in \eqref{4.21-p} can be viewed as the equation
$$
   (I - p(B_\bA))^n[H] = C^* C.
 $$
 Hence 
$$
H = (I - B_\bA)^{-n}[C^*C]=\sum_{\alpha \in \free} \omega^{-1}_{p,n; \alpha}\bA^{*\alpha^\top}Q\bA^\alpha
$$
necessarily is the unique solution.  As $\cG_{p,n: C, \bA}$ is given by the same formula  \eqref{rest5}
 we conclude that $\cG_{p,n; C, \bA}$ is the unique solution.

\smallskip

It remains to  show that if $\bA$ is a $p$-contraction and the system \eqref{4.21-p}  admits a unique solution 
(which necessarily is $H={\mathcal G}_{p,n;C,\bA}$), then the tuple $\bA$ is $p$-strongly stable. As in the 
proof of Theorem \ref{T:2-1.1}, we prove the contrapositive: {\em if $\bA$ is not $p$-strongly stable, then 
the solution of \eqref{4.21-p} is not unique.}

\smallskip

Due to assumption $p(B_{\bA})[I_{\cX}]\preceq I_{\cX}$, the sequence of operators
\begin{equation}
\Delta_{N} = p^N(B_{\bA})[I_{\cX}]=
\sum_{\beta_{1},\dots, \beta_{N} \ne \emptyset \colon \beta_{1}\cdots
\beta_{N} = \alpha} p_{\beta_{1}} \cdots p_{\beta_{N}} 
{\mathbf A}^{*\alpha^{\top}}{\mathbf A}^{\alpha},\qquad N=1,2, \dots
\label{last5}
\end{equation}
is decreasing and therefore has a strong limit $\Delta={\displaystyle\lim_{N \to\infty} \Delta_{N}}\succeq 0$.
Since $\bA$ is assumed not to be $p$-strongly stable, this limit  $\Delta$ is not zero. 
Let us assume for a moment that 
\begin{equation}
p(B_{\bA})[\Delta]=\Delta.
\label{last6}
\end{equation}
Then it follows from \eqref{march26} and \eqref{last5} that 
$$
\Gamma_{p,n;\bA}[\Delta]=(1-p)^{n}(B_{\bA})[\Delta]=\sum_{j=0}^{n} (-1)^{j} \bcs{n \\ j}p^j(B_\bA)[\Delta]
=\sum_{j=0}^k(-1)^j\bcs{k \\ j}\Delta=0,
$$
and therefore, the operator $H={\mathcal G}_{n,C,\bA}+\Delta$
(as well as ${\mathcal G}_{n,C,\bA}$) satisfies the system \eqref{4.21-p}
which therefore has more than one positive-semidefinite solution.

\smallskip

It remains to verify \eqref{last6}. To this end, we first observe that since $\Delta\preceq \Delta_N$ for all 
$N$ and since all the coefficients of $p$ are non-negative,
$$
\sum_{|\alpha|\le k}p_\alpha {\mathbf A}^{*\alpha^{\top}}\Delta{\mathbf A}^{\alpha}\preceq 
p(B_{\bA})[\Delta_N]=\Delta_{N+1}. 
$$
Letting $k,N\to\infty$ in the latter inequality we conclude that $p(B_{\bA})[\Delta]$ is well-defined and satisfies 
\begin{equation}
p(B_{\bA})[\Delta]\preceq \Delta.
\label{last9}
\end{equation}
We now fix a nonzero $x\in\cX$ and $\varepsilon>0$. Since the series representing 
$p(B_\bA)[I_{\cX}]\preceq I_{\cX}$ converges, we can find $k\ge 0$ such that 
$$
\sum_{\alpha\in\free:|\alpha|>k}p_\alpha\big\|{\mathbf A}^{\alpha}x\big\|^2_{\cX}<\varepsilon.
$$
Then for any operator $Q\in\cL(\cX)$ such that $0\preceq Q\preceq I_{\cX}$, we also have 
\begin{equation}
\sum_{\alpha\in\free:|\alpha|> k}p_\alpha\cdot \big\langle Q\bA^\alpha x,\, \bA^\alpha x\big\rangle_{\cX}\le
\sum_{\alpha\in\free:|\alpha|>k}p_\alpha\big\|{\mathbf A}^{\alpha}x\big\|^2_{\cX}<\varepsilon.
\label{last7}
\end{equation}
Since the sequence $\Delta_N$ converges to $\Delta$ (weakly or strongly), for every $\alpha$, we can find $n_\alpha$ such that 
$$
p_{\alpha}\cdot \big\langle(\Delta_n-\Delta)\bA^\alpha x,\, \bA^\alpha x\big\rangle_{\cX}<d^{-k}\varepsilon\quad \mbox{for all}\quad n>n_\alpha. 
$$
Then for any $n>\max(n_\alpha: \, |\alpha|\le k)$, we have 
\begin{equation}
\sum_{\alpha\in\free:|\alpha|\le k}p_\alpha\cdot \big\langle(\Delta_n-\Delta)\bA^\alpha x,\, \bA^\alpha x\big\rangle_{\cX}<\varepsilon.
\label{last8}
\end{equation}
Combining \eqref{last7} (with $Q=\Delta_n-\Delta$) and \eqref{last8} gives, for the same $n$ as above,
\begin{align*}
\big\langle p(B_\bA)[\Delta_n-\Delta]x, \, x\big\rangle_{\cX}&=
\sum_{\alpha\in\free:|\alpha|\le k}p_\alpha\cdot \big\langle(\Delta_n-\Delta)\bA^\alpha x,\, \bA^\alpha x\big\rangle_{\cX}\\
&\quad +\sum_{\alpha\in\free:|\alpha|> k}p_\alpha\cdot \big\langle(\Delta_n-\Delta)\bA^\alpha x,\, \bA^\alpha x\big\rangle_{\cX}<2\varepsilon.
\end{align*}
Therefore, for any fixed $x\in\cX$ and any $\varepsilon>0$, there is $n$ such that
\begin{align*}
\langle \Delta x, \, x\rangle\le \langle \Delta_{n+1} x, \, x\rangle&=\big\langle p(B_\bA)[\Delta_n]x, \, x\big\rangle\\
&=\big\langle p(B_\bA)[\Delta]x, \, x\big\rangle+
\big\langle p(B_\bA)[\Delta_n-\Delta]x, \, x\big\rangle\\
&\le \big\langle p(B_\bA)[\Delta]x, \, x\big\rangle+2\varepsilon.
\end{align*}
Letting $\varepsilon\to 0^+$ in the latter inequality we get $\langle \Delta x, \, x\rangle\le \langle p(B_\bA)[\Delta]x, \, x\big\rangle$.
By polarization, $\Delta\le p(B_\bA)[\Delta]$, which together with \eqref{last9} implies \eqref{last6}, thus completing the proof of the 
theorem.
 \end{proof}

The following analog of Theorem \ref{T:2-1.1} is a direct 
consequence of Definition \ref{D:def-pn} (part (3)) applied to the statements 
in Theorem \ref{T:2-1.1-p} with $H = I_\cX$.
 \begin{proposition}  \label{P:2-1.1-p}
{\rm (1)} Suppose that $(C, \bA)$ is a
         $(p,n)$-contractive pair.  Then $(C, \bA)$ is
         $(p,n)$-output-stable with $\cG_{p,n;C,\bA} \preceq I_{\cX}$
         and the gramian $\cG_{p,n;C, \bA}$ is the unique positive
         semidefinite solution of the system \eqref{4.21-p} if and
         only if $\bA$ is $p$-strongly stable.

\smallskip

{\rm (2)} Suppose that $(C, \bA)$ is a $(p,n)$-isometric pair.
         Then $(C, \bA)$ is $(p,n)$-output-stable.  Moreover $H =
         I_{\cX}$ is the unique solution of the system \eqref{4.21-p}
         if and only if $\bA$ is $p$-strongly stable.  In this case
         $\cO_{p,n;C, \bA}$ is isometric and hence also $(C, \bA)$ is
         exactly $(p,n)$-observable.
\end{proposition}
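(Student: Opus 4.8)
The plan is to deduce Proposition \ref{P:2-1.1-p} directly from Theorem \ref{T:2-1.1-p} by specializing the free operator parameter $H$ there to $H = I_{\cX}$, after translating the hypotheses through Definition \ref{D:def-pn}. The one preliminary observation that makes everything go through is that a $(p,n)$-hypercontractive tuple $\bA$ is automatically $p$-contractive: the $k=1$ instance of the defining inequalities of Definition \ref{D:10.1} reads $\Gamma_{p,1;\bA}[I_{\cX}] = (1-p)(B_{\bA})[I_{\cX}] = I_{\cX} - p(B_{\bA})[I_{\cX}] \succeq 0$, i.e.\ $p(B_{\bA})[I_{\cX}] \preceq I_{\cX}$; and $p(B_{\bA})[I_{\cX}] \succeq 0$ because $p$ has nonnegative coefficients so $p(B_{\bA})$ is a positive map.

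For part (1) I would unpack ``$(C,\bA)$ is $(p,n)$-contractive'' via Definition \ref{D:def-pn}(3) into ``$\bA$ is $(p,n)$-hypercontractive and $\Gamma_{p,n;\bA}[I_{\cX}] \succeq C^{*}C$''. Combined with the preliminary observation, $H = I_{\cX}$ then satisfies the system \eqref{4.20-p}, so Theorem \ref{T:2-1.1-p}(1) yields $(p,n)$-output stability of $(C,\bA)$, while Theorem \ref{T:2-1.1-p}(2) identifies $\cG_{p,n; C, \bA}$ as the minimal positive-semidefinite solution of \eqref{4.20-p}; comparing with the admissible solution $H = I_{\cX}$ gives $\cG_{p,n; C, \bA} \preceq I_{\cX}$. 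For the uniqueness clause I would invoke the second half of Theorem \ref{T:2-1.1-p}(3): since $\bA$ is $p$-contractive, the positive-semidefinite solution of the $(p,n)$-Stein system \eqref{4.21-p} is unique exactly when $\bA$ is $p$-strongly stable, and whenever it is unique it must equal $\cG_{p,n; C, \bA}$, because the gramian always solves \eqref{4.21-p} by Theorem \ref{T:2-1.1-p}(2).

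Part (2) runs along the same lines starting from Definition \ref{D:def-pn}(4): $(p,n)$-isometric means $\bA$ is $(p,n)$-hypercontractive (hence $p$-contractive, as above) and $\Gamma_{p,n;\bA}[I_{\cX}] = C^{*}C$, so $H = I_{\cX}$ solves \eqref{4.21-p} and \emph{a fortiori} \eqref{4.20-p}; Theorem \ref{T:2-1.1-p}(1) gives $(p,n)$-output stability and Theorem \ref{T:2-1.1-p}(3) gives that $I_{\cX}$ is the unique solution of \eqref{4.21-p} if and only if $\bA$ is $p$-strongly stable. In that case uniqueness forces $\cG_{p,n; C, \bA} = I_{\cX}$ (both solve \eqref{4.21-p}), whence $\cO_{p,n; C, \bA}^{*}\cO_{p,n; C, \bA} = \cG_{p,n; C, \bA} = I_{\cX}$, so $\cO_{p,n; C, \bA}$ is isometric and in particular its gramian is strictly positive definite, which is exact $(p,n)$-observability. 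I do not anticipate a genuine obstacle here, since all the substantive content is already in Theorem \ref{T:2-1.1-p}; the only point demanding care is the bookkeeping that ``positive-semidefinite solution of \eqref{4.21-p}'' and ``solution of the $(p,n)$-Stein equation'' agree in the relevant class --- using that $H \succeq 0$ makes $p(B_{\bA})[H] \succeq 0$ automatic, and that the companion inequality $H \succeq p(B_{\bA})[H]$ is preserved under adding the fixed point $\Delta = \lim_{N\to\infty} p(B_{\bA})^{N}[I_{\cX}]$ that appears in the proof of Theorem \ref{T:2-1.1-p}(3).
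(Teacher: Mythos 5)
Your proposal is correct and follows exactly the route the paper intends: the paper states Proposition \ref{P:2-1.1-p} as a direct consequence of Definition \ref{D:def-pn} applied to Theorem \ref{T:2-1.1-p} with $H = I_{\cX}$, which is precisely your specialization, including the needed observation that the $k=1$ hypercontractivity inequality makes $\bA$ $p$-contractive so that the ``if and only if'' clause of Theorem \ref{T:2-1.1-p}(3) is available. No gaps; your write-up merely supplies the bookkeeping the paper leaves implicit.
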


\section{The $\bo_{p,n}$-shift model operator tuple $\bS_{\bo_{p,n},R}$ } \label{S:bo-model}

 We define the shift operator tuple
$\bS_{\bo_{p,n},R}$ on $H^{2}_{\bo_{p,n},\cY}(\free)$
as in \eqref{18.2b} and observe that it is not a row-contraction, in general. However, it is 
a $(p,1)$-contraction (see Proposition \ref{P:4.1-p} below), i.e., 
$\sum_{\alpha\in\free}p_\alpha {\mathbf S}^{\alpha^\top}_{\bo_{p,n},R}{\mathbf S}^{*\alpha}_{\bo_{p,n},R}\preceq I$. 
In particular, $p_j S_{\bo_{p,n},R,j}S_{\bo_{p,n},R,j}^*\preceq I$ where $p_{j}$ is the coefficient of 
$z_{j}$ in the 
formal power series representation \eqref{apr12}), and therefore,
$$
\| S_{\bo_{p,n},R,j} \| \le p_{j}^{-\frac{1}{2}} < \infty\quad\mbox{for}\quad 1 \le j \le d.
$$
The formula for $S_{\bo_{p,n},R,j}^{*}$ is again given by 
\eqref{Sboj*} 
\begin{equation}
S_{\bo_{p,n},R,j}^*: \; \sum_{\gamma \in\free}f_\gamma z^\gamma \mapsto
        \sum_{\gamma \in\free}\frac{\omega_{p,n;\gamma j}}{\omega_{p,n;\gamma}} \,
        f_{\gamma j}z^\gamma\quad\mbox{for}\quad j=1,\ldots,d
\label{5.2p}
\end{equation}
(with $\bo_{p,n}$ in place of $\bo$) while its iteration gives 
\begin{equation}
(\bS_{\bo_{p,n},R}^*)^{\alpha}: \; \sum_{\gamma \in\free}f_\gamma z^\gamma \mapsto
        \sum_{\gamma \in\free}\frac{\omega_{p,n;\gamma\alpha}}{\omega_{p,n;\gamma}} \,
        f_{\gamma\alpha}z^{\gamma}\quad\mbox{for all}\quad\alpha\in\free.
\label{SRLv-p}
\end{equation}

\begin{proposition}  \label{P:4.1-p}
The operator tuple  ${\mathbf 
S}^*_{\bo_{p,n},R}=(S_{\bo_{p,n},R,1}^{*},\ldots,S_{\bo_{p,n},R,d}^{*})$ is a 
$p$-strongly stable
$(p,n)$-hypercontraction. Furthermore, the model pair $(E,{\mathbf S}^*_{\bo_{p,n},R})$
is $(p,n)$-isometric, where $E$ is the operator defined by
$Ef=f_{\emptyset}$ for $f\in H^2_{\bo_{p,n},\cY}(\free)$. 
\end{proposition}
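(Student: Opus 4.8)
The plan is to verify the two assertions --- that $\bS^*_{\bo_{p,n},R}$ is a $p$-strongly stable $(p,n)$-hypercontraction, and that the model pair $(E,\bS^*_{\bo_{p,n},R})$ is $(p,n)$-isometric --- by combining a direct computation of the $(p,n)$-observability gramian of $(E,\bS^*_{\bo_{p,n},R})$ with the general machinery of Theorem \ref{T:2-1.1-p} and Proposition \ref{P:identities-p}. The key observation, entirely parallel to part (2) of Proposition \ref{P:bo-model} in the $\bo$-setting, is that $\cO_{p,n;E,\bS^*_{\bo_{p,n},R}} = I_{H^2_{\bo_{p,n},\cY}(\free)}$. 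First I would establish this: from the iterated backward-shift formula \eqref{SRLv-p} we get $E(\bS^*_{\bo_{p,n},R})^\alpha f = \omega_{p,n;\alpha} f_\alpha$ (the analog of \eqref{esbo}, using $\omega_{p,n;\emptyset}=1$), and then by the definition \eqref{march21} of the $(p,n)$-observability operator,
\[
\cO_{p,n;E,\bS^*_{\bo_{p,n},R}} f = \sum_{\alpha\in\free}\bigl(\omega_{p,n;\alpha}^{-1} E (\bS^*_{\bo_{p,n},R})^\alpha f\bigr) z^\alpha = \sum_{\alpha\in\free} f_\alpha z^\alpha = f.
\]
Consequently the pair $(E,\bS^*_{\bo_{p,n},R})$ is $(p,n)$-output stable with $\cG_{p,n;E,\bS^*_{\bo_{p,n},R}} = I$.

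Given this, I would invoke part (2) of Theorem \ref{T:2-1.1-p}: since $(E,\bS^*_{\bo_{p,n},R})$ is $(p,n)$-output stable, the gramian $H = \cG_{p,n;E,\bS^*_{\bo_{p,n},R}} = I$ satisfies \eqref{4.21-p}, i.e.
\[
I \succeq p(B_{\bS^*_{\bo_{p,n},R}})[I] \succeq 0 \quad\text{and}\quad \Gamma_{p,n;\bS^*_{\bo_{p,n},R}}[I] = E^*E \succeq 0.
\]
The first chain says exactly that $\bS^*_{\bo_{p,n},R}$ is $(p,1)$-contractive, and $\Gamma_{p,n;\bS^*_{\bo_{p,n},R}}[I]\succeq 0$ says it is $(p,n)$-contractive; by Corollary \ref{C:10.2} (or directly by Lemma \ref{L:squeeze-p}) it is then $(p,n)$-hypercontractive. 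The equality $\Gamma_{p,n;\bS^*_{\bo_{p,n},R}}[I] = E^*E$ is precisely the statement that $(E,\bS^*_{\bo_{p,n},R})$ is a $(p,n)$-isometric pair (Definition \ref{D:def-pn}(4)). Finally, $p$-strong stability of $\bS^*_{\bo_{p,n},R}$ follows from part (2) of Proposition \ref{P:2-1.1-p}: a $(p,n)$-isometric pair $(C,\bA)$ with $\cG_{p,n;C,\bA} = I_\cX$ forces $\bA$ to be $p$-strongly stable (equivalently, one runs the uniqueness-of-Stein-solution argument in reverse). Alternatively I could give a direct proof of $p$-strong stability mimicking the proof of part (1) of Proposition \ref{P:bo-model}: using \eqref{SRLv-p} and the monotonicity $\omega_{p,n;\gamma\alpha}^{-1}\le$ (a bounded multiple of) $\omega_{p,n;\gamma}^{-1}\,\omega_{p,n;\alpha}^{-1}$ implicit in \eqref{omegapnalpha}, one estimates $\sum_{|\alpha|=N}\|(\bS^*_{\bo_{p,n},R})^\alpha f\|^2$ by a tail $\sum_{|\gamma|\ge N}\omega_{p,n;\gamma}\|f_\gamma\|^2\to 0$.

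The main obstacle I anticipate is the bookkeeping needed to relate $p(B_{\bS^*_{\bo_{p,n},R}})$ and $\Gamma_{p,n;\bS^*_{\bo_{p,n},R}}$ acting on $I$ to the weight recursions for $\bo_{p,n}$ --- that is, making the identity $\cO_{p,n;E,\bS^*_{\bo_{p,n},R}} = I$ and the gramian computation fully rigorous requires carefully tracking how the coefficients $\omega_{p,n;\alpha}$ interact with the non-length-dependent structure of $p$. Unlike the $\bmu_n$ or admissible-$\bo$ cases, here $\omega_{p,n;\alpha}$ genuinely depends on $\alpha$ and not just $|\alpha|$, so the cancellations in formulas like \eqref{3.19g}--\eqref{3.19gg} in the $\bo$-setting must be redone using the free-noncommutative Mertens theorem \eqref{ncGenPrin} and the shifted recursion \eqref{march20a}. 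Once the gramian identity $\cG_{p,n;E,\bS^*_{\bo_{p,n},R}} = I$ is secured, everything else is a routine appeal to the already-proved Theorem \ref{T:2-1.1-p} and Proposition \ref{P:2-1.1-p}; I would keep the written proof short by emphasizing the observability-operator-equals-identity computation and then citing those results.
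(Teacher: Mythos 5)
Your first step is right, and your overall route is genuinely different from the paper's: you compute $\cO_{p,n;E,\bS^*_{\bo_{p,n},R}}=I$ (correct, and recorded later in Remark \ref{R:GpnES=I}) and then try to extract the Stein system \eqref{4.21-p} with $H=I$ from the general machinery of Theorem \ref{T:2-1.1-p}, Lemma \ref{L:squeeze-p} and Proposition \ref{P:2-1.1-p}. The paper instead argues entirely by hand: it first proves the super-multiplicativity inequality \eqref{march24} for the weights $\omega_{p,n;\alpha}$, uses it to get $p$-strong stability directly, and then computes the quadratic forms $\langle \Gamma_{p,k;\bS^*_{\bo_{p,n},R}}[I]f,f\rangle$ coefficientwise on polynomials (then by approximation), obtaining explicit diagonal formulas which give hypercontractivity and $\Gamma_{p,n;\bS^*_{\bo_{p,n},R}}[I]=E^*E$ simultaneously. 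This is not just a stylistic choice, and it exposes two gaps in your plan.

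First, Theorem \ref{T:2-1.1-p}(2) and the identity \eqref{4.10-p} behind it (Proposition \ref{P:identities-p}) are proved under the standing hypothesis \eqref{bA-assume}, $\|\begin{bmatrix}A_1 & \cdots & A_d\end{bmatrix}\|<\rho$, which is what licenses the absolute convergence of $(1-p)^k(B_\bA)$ and the Mertens rearrangement giving $\Gamma_{p,n;\bA}[\cG_{p,n;C,\bA}]=C^*C$. For $\bA=\bS^*_{\bo_{p,n},R}$ one only knows $\|S_{\bo_{p,n},R,j}\|\le p_j^{-\frac{1}{2}}$, and when $p$ is not a polynomial the norm of the shift tuple can exceed $\rho$ (already for $d=1$: take $p(z)=\epsilon z+\sum_{k\ge 2}M^k z^k$ with $\epsilon$ small, so $\rho=1/M$ while $\|S_{\bo_{p,n},R}\|^2\ge \omega_{p,n;\emptyset}^{-1}/\omega_{p,n;1}^{-1}=1/(n\epsilon)>\rho^2$). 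So the "routine appeal" is to theorems outside their verified scope; making it legitimate means redoing exactly the coefficient computation \eqref{march30}--\eqref{march31} that the paper performs, which is the actual content of the proposition. Second, your derivation of $p$-strong stability misreads Proposition \ref{P:2-1.1-p}(2): that statement gives $p$-strong stability $\Leftrightarrow$ uniqueness of the solution of \eqref{4.21-p}, and ($p$-strong stability $\Rightarrow$ $\cO_{p,n;C,\bA}$ isometric); it does not contain the converse implication $\cG_{p,n;E,\bS^*_{\bo_{p,n},R}}=I\Rightarrow$ $p$-strong stability that you need. That implication is true, but requires the argument you only gesture at: apply \eqref{4.9-p} with $H=I$, note all tail terms are positive semidefinite by hypercontractivity, and let $N\to\infty$ using $\cG=I$ to force in particular $p^{N+n}(B_{\bS^*_{\bo_{p,n},R}})[I]\to 0$. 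Your fallback "direct" proof of stability is also off target: it aims at ordinary strong stability of $\bS^*_{\bo_{p,n},R}$ rather than $p$-strong stability, and the weight estimate you invoke points the wrong way --- what is needed is the lower bound \eqref{march24} on $\omega^{-1}_{p,n;\gamma\alpha}$, which is the first thing the paper establishes.
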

\begin{proof}
Since $p_\beta\ge 0$ for all $\beta\in\free$, we have for any $\alpha,\gamma\in\free$ and 
$N\le |\alpha|$,
\begin{align*}
&\sum_{i=0}^{|\alpha|+|\gamma|}\bcs{n+i-1 \\ n-1}\sum_{\phi_{1},
    \dots, \phi_{i} \ne \emptyset \colon \phi_{1} \cdots \phi_{i}
    = \gamma\alpha}p_{\phi_{1}} \cdots p_{\phi_{i}}\\
&\ge \sum_{j=0}^{|\gamma|}\bcs{n+N+j-1 \\ n-1}\sum_{{\scriptsize\begin{array}{c}
\phi_{1},\dots, \phi_{j} \ne \emptyset \colon \phi_{1} \cdots \phi_{j}= \gamma,\\
\beta_{1},\dots,\beta_N\ne \emptyset \colon \beta_{1}\cdots\beta_N=\alpha\end{array}}}
p_{\phi_{1}} \cdots p_{\phi_{j}}p_{\beta_{1}} \cdots p_{\beta_{N}}\\
&=\bigg(\sum_{j=0}^{|\gamma|}\bcs{n+N+j-1 \\ n-1}\sum_{{\scriptsize\begin{array}{c}
\phi_{1},\dots, \phi_{j} \ne \emptyset \colon\\
 \phi_{1} \cdots \phi_{j}= \gamma\end{array}}}p_{\phi_{1}} \cdots p_{\phi_{j}}
\bigg)\cdot \sum_{{\scriptsize\begin{array}{c}
\beta_{1},\dots,\beta_N\ne \emptyset \colon \\
\beta_{1}\cdots\beta_N=\alpha\end{array}}}
p_{\beta_{1}} \cdots p_{\beta_{N}}\\
&\ge \bigg(\sum_{j=0}^{|\gamma|}\bcs{n+j-1 \\ n-1}\sum_{{\scriptsize\begin{array}{c}
\phi_{1},\dots, \phi_{j} \ne \emptyset \colon\\ 
 \phi_{1} \cdots \phi_{j}= \gamma\end{array}}}p_{\phi_{1}} \cdots p_{\phi_{j}}
\bigg)\cdot \sum_{{\scriptsize\begin{array}{c}
\beta_{1},\dots,\beta_N\ne \emptyset \colon \\
\beta_{1}\cdots\beta_N=\alpha\end{array}}} 
p_{\beta_{1}} \cdots p_{\beta_{N}},
\end{align*}
which, upon making use of notation \eqref{omegapnalpha}, can be written as 
\begin{equation}
\omega^{-1}_{p,n;\gamma\alpha}\ge \omega^{-1}_{p,n;\gamma}\cdot \sum_{
\beta_{1},\dots,\beta_N\ne \emptyset \colon 
\beta_{1}\cdots\beta_N=\alpha}p_{\beta_{1}} \cdots p_{\beta_{N}};
\label{march24}
\end{equation}
Note that this condition may be viewed as the more general 
$p$-version of the inequality $1 \le \frac{\omega_{j}}{\omega_{j+1}}$ 
appearing in assumption \eqref{18.2} for the weighted case discussed 
in Chapter \ref{S:Hardy}. 

\smallskip

For an arbitrary element $f\in H^2_{\bo_{p,n},\cY}(\free)$, 
the series  representing $\|f\|^2_{H^2_{\bo_{p,n},\cY}(\free)}$ converges and hence,
\begin{equation}
\label{5.7-p}
\lim_{N \to \infty} \sum_{\gamma \in \free \colon |\gamma|\ge
N}\omega_{p,n;\gamma}\|f_\gamma\|^2_{\cY}=0,\quad\mbox{if}\quad 
f(z) = {\displaystyle\sum_{\gamma \in \free} f_{\gamma} z^{\gamma}}.
\end{equation}
Making use of \eqref{SRLv-p}, \eqref{18.1p} and \eqref{march24}, we have
\begin{align}
&\left\langle p^N(B_{{\mathbf S}^*_{\bo_{p,n},R}})[I_{H^2_{\bo_{p,n},\cY}(\free)}]
f, \; f\right\rangle_{H^2_{\bo_{p,n},\cY}(\free)}\notag\\
&=\sum_{\alpha\in\free}\bigg(\sum_{\beta_{1},
    \dots, \beta_{N} \ne \emptyset \colon \beta_{1} \cdots \beta_{N}
    = \alpha} p_{\beta_{1}} \cdots p_{\beta_{N}} \bigg)
\|S_{\bo_{p,n},R}^{*\alpha}f\|^2_{H^2_{\bo_{p,n},\cY}(\free)}\notag\\
&=\sum_{\alpha,\gamma\in\free}\bigg(\sum_{\beta_{1},
    \dots, \beta_{N} \ne \emptyset \colon \beta_{1} \cdots \beta_{N}
    = \alpha}p_{\beta_{1}} \cdots p_{\beta_{N}} \bigg)
\frac{\omega^2_{p,n;\gamma\alpha}}{\omega_{p,n;\gamma}}\|f_{\gamma\alpha}\|^2_{\cY}\notag\\
&\le \sum_{\alpha,\gamma\in\free: |\alpha|\ge N}\omega_{p,n;\gamma\alpha}\|f_{\gamma\alpha}\|^2_{\cY}
\le \sum_{\beta\in\free \colon |\beta|\ge N}\omega_{p,n;\beta}\|f_{\beta}\|^2_{\cY}.\notag
\end{align}
Combining the latter estimate with \eqref{5.7-p} we conclude
that $p^N(B_{{\mathbf S}^*_{\bo_{p,n},R}})[I_{H^2_{\bo_{p,n},\cY}(\free)}]$ tends to
zero in the strong operator topology meaning that the operator tuple
${\mathbf S}^*_{\bo_{p,n},R}$ is $p$-strongly stable.

\smallskip

We next make use of \eqref{march26} and \eqref{SRLv-p} to compute for the same 
$f$ as in \eqref{5.7-p},
\begin{align}                       
&\left\langle \Gamma_{p,k;{\mathbf S}^*_{\bo_{p,n},R}}[I_{H^2_{\bo_{p,n},\cY}(\free)}]f, \,
f\right\rangle_{H^2_{\bo_{p,n},\cY}(\free)}\notag\\
&=\sum_{\alpha \in\free}c_{p,k;\alpha}
\| {\mathbf  S}_{\bo_{p,n},R}^{* \alpha} f\|^{2}_{H^2_{\bo_{p,n},\cY}(\free)}\notag\\
&=\sum_{\alpha \in\free}c_{p,k;\alpha}\sum_{\gamma\in\free}
\frac{\omega^2_{p,n;\gamma\alpha}}{\omega_{p,n;\gamma}}\|f_{\gamma\alpha}\|^2_{\cY}\notag\\
&=\sum_{\gamma\in\free}\bigg(\sum_{\alpha,\beta\in\free\colon\beta\alpha=\gamma}
c_{p,k;\alpha}\omega_{p,n;\beta}^{-1}\bigg)\omega^2_{p,n;\gamma}\|f_{\gamma}\|^2_{\cY}.\label{march30}
\end{align}
for all $f \in H^2_{\bo_{p,n},\cY}(\free)$ and $k=1,\ldots,n$.\footnote{If $f$ is a polynomial, then the 
sums in \eqref{march30} are finite, which justifies rearrangements used in that calculation. 
Then the general case follows by approximation arguments.}

\smallskip
 
Equating the coefficients of $z^{\gamma}$ in the power series identity
$$
(1-p(z))^{-n}(1-p(z))^k=(1-p(z))^{-(n-k)}
$$
for a fixed $k\in\{1,\ldots,n\}$, we conclude, on account of 
\eqref{march27}, \eqref{Rpn} and \eqref{18.1p} that 
$$
\sum_{\alpha,\beta\in\free:\beta\alpha=\gamma}\omega_{p,n;\beta}^{-1}c_{p,k;\alpha}
=\omega_{p,n-k;\gamma}^{-1}\quad\mbox{for}\quad k=0,\ldots,n-1,
$$
and that 
$$
\sum_{\alpha,\beta\in\free:\beta\alpha=\gamma}\omega_{p,n;\beta}^{-1}c_{p,n;\alpha}
 = \left\{\begin{array}{ccl} 1 &\mbox{if} & |\gamma|=0,\\
0 &\mbox{if} & |\gamma|>0.\end{array}\right.
$$
Combining the two latter formulas with \eqref{march30} leads us to equalities
$$
\left\langle \Gamma_{p,k;{\mathbf S}^*_{\bo_{p,n},R}}[I_{H^2_{\bo_{p,n},\cY}(\free)}]f, \,
f\right\rangle_{H^2_{\bo_{p,n},\cY}(\free)}=
\sum_{\gamma\in\free}\frac{\omega^2_{p,n;\gamma}}{\omega_{p,n-k;\gamma}}
\|f_{\gamma}\|^2_{\cY}
$$
for all $k=1,\ldots,n-1$, and also
\begin{equation}
\left\langle \Gamma_{p,n;{\mathbf S}^*_{\bo_{p,n},R}}[I_{H^2_{\bo_{p,n},\cY}(\free)}]f, \,
f\right\rangle_{H^2_{\bo_{p,n},\cY}(\free)}=\|f_\emptyset\|_{\cY}^2=\|Ef\|^2_{\cY}.
\label{march31}
\end{equation}
Since the latter relations hold for all $f\in H^2_{\bo_{p,n},\cY}(\free)$, it follows that 
$$
\Gamma_{p,k;{\mathbf S}^*_{\bo_{p,n},R}}[I_{H^2_{\bo_{p,n},\cY}(\free)}]\succeq 0\quad\mbox{for all}
\quad k=1,\ldots,n,
$$ 
and in addition, 
$\Gamma_{p,n;{\mathbf S}^*_{\bo_{p,n},R}}[I_{H^2_{\bo_{p,n},\cY}(\free)}]=E^*E$. Hence, the 
tuple ${\mathbf S}^*_{\bo_{p,n},R}$ is $(p,n)$-hypercontractive and the pair 
$(E,{\mathbf S}^*_{\bo_{p,n},R})$ is $(p,n)$-isometric.
\end{proof}

\begin{remark}  \label{R:GpnES=I}  The results of Proposition \ref{P:4.1-p} tell us that $\bS_{p,n;R}^*$ is $p$-strongly stable
and that the output-pair $(E, \bS_{p,n;R}^*)$ is $(p,n)$-isometric.  Hence, as a consequence of part (2) of Proposition \ref{P:2-1.1-p}
it follows that $\cO_{p,n; E, \bS_{p,n;R}^*}$ is isometric, i.e., 
$$
(\cO_{p,n; E, \bS_{p,n;R}})^*   \cO_{p,n; E, \bS_{p,n;R}} = I_{H^2_{\bo_{p,n}, \cY}(\free)}.
$$
In fact, as a simple consequence of the formula \eqref{SRLv-p} for
$(\bS_{\bo_{p,n}, R})^\alpha$ and the definition \eqref{march21} of $\cO_{p,n; C, \bA}$ (applied with $(C, \bA) = (E, \bS_{p,n; R}^*)$,
one can see that  in fact already 
$$
\cO_{p,n; E, \bS_{n,p; R}^*} = I_{H^2_{\bo_{p,n}, \cY}(\free)}.
$$
\end{remark}

Specifying Proposition \ref{P:4.1-p} to the linear case we arrive at the following result.

\begin{corollary}
For $\widetilde{p}(z)=p_1z_1 + \cdots + p_dz_d$ with $p_1,\ldots,p_d>0$, the operator tuple  
${\mathbf S}^*_{\bo_{\widetilde{p},1},R}$ is a $\widetilde{p}$-strongly stable
$(\widetilde{p},1)$-contraction on $H^2_{\bo_{\widetilde{p},1},\cY}(\free)$. Furthermore, the model pair $(E,{\mathbf S}^*_{\widetilde{p},R})$
is $(\widetilde{p},1)$-isometric.
\label{C:10.11}
\end{corollary}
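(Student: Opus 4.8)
\textbf{Proof plan for Corollary \ref{C:10.11}.}
The plan is simply to recognize this as the special case of Proposition \ref{P:4.1-p} obtained by taking $n = 1$ and $p(z) = \widetilde p(z) = p_1 z_1 + \cdots + p_d z_d$, and to verify that the hypotheses on $p$ imposed in Section \ref{S:change-intro} are met in this situation so that Proposition \ref{P:4.1-p} applies. First I would check that $\widetilde p$ is a regular noncommutative formal power series in the sense of \eqref{apr12}--\eqref{apr12-1}: indeed $\widetilde p_{\emptyset} = 0$, the coefficient of $z_j$ is $p_j > 0$ for $|\alpha| = 1$, and all higher coefficients vanish (hence are $\ge 0$); moreover the series has infinite radius of convergence $\rho = \infty$, so the standing assumption \eqref{bA-assume} on $\bA$ is vacuous here. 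Consequently the functional calculus $p(B_{\bA})$ and the operators $\Gamma_{\widetilde p, 1; \bA}$, $R_{\widetilde p, 1}$, and the weight sequence $\bo_{\widetilde p, 1} = \{\omega_{\widetilde p, 1; \alpha}\}_{\alpha \in \free}$ with $\omega_{\widetilde p, 1; \alpha}^{-1} = d_{\widetilde p, \alpha} = p_{i_1} \cdots p_{i_N}$ (see \eqref{jan4bxa}--\eqref{jan4bx}) are all well defined.

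Next I would invoke Proposition \ref{P:4.1-p} with the parameters $n = 1$ and $p = \widetilde p$. That proposition gives directly that ${\mathbf S}^*_{\bo_{\widetilde p, 1}, R} = (S^*_{\bo_{\widetilde p, 1}, R, 1}, \dots, S^*_{\bo_{\widetilde p, 1}, R, d})$ is a $\widetilde p$-strongly stable $(\widetilde p, 1)$-hypercontraction, and that the model output pair $(E, {\mathbf S}^*_{\bo_{\widetilde p, 1}, R})$ is $(\widetilde p, 1)$-isometric, which is precisely the assertion of the corollary. The only point worth a sentence is that for $n = 1$ the notions of $(p,1)$-hypercontraction and $(p,1)$-contraction coincide (the definition in Definition \ref{D:10.1} only requires $\Gamma_{p,k;\bA}[I_\cX] \succeq 0$ for $1 \le k \le n$, which for $n=1$ is the single condition $\Gamma_{p,1;\bA}[I_\cX] \succeq 0$, i.e.\ $(p,1)$-contractivity); hence the conclusion "$(\widetilde p, 1)$-hypercontraction" from Proposition \ref{P:4.1-p} is the same as "$(\widetilde p, 1)$-contraction" as stated in the corollary.

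There is essentially no obstacle here: the entire content of the corollary is bookkeeping, confirming that $\widetilde p$ falls under the regular-power-series hypothesis and reading off the $n=1$ case of the preceding proposition. If one wished to make the proof self-contained rather than a one-line citation, the only mildly substantive step would be to re-derive the specific identities $S^*_{\bo_{\widetilde p, 1}, R, j} S_{\bo_{\widetilde p, 1}, R, j} = p_j^{-1} I$ (already used in the proof of Lemma \ref{L:in-p}) and the $(\widetilde p, 1)$-isometry identity $\Gamma_{\widetilde p, 1; {\mathbf S}^*_{\bo_{\widetilde p, 1}, R}}[I] = E^* E$; but these are exactly the $n = 1$ specializations of the computations \eqref{march30}--\eqref{march31} in the proof of Proposition \ref{P:4.1-p}, so nothing new is needed.
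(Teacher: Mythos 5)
Your proposal is correct and is exactly the paper's route: the paper obtains Corollary \ref{C:10.11} by specializing Proposition \ref{P:4.1-p} to the linear case $p=\widetilde p$, $n=1$, just as you do. Your extra remarks (regularity of $\widetilde p$, vacuousness of \eqref{bA-assume}, and the coincidence of $(p,1)$-hypercontractivity with $(p,1)$-contractivity when $n=1$) are accurate bookkeeping consistent with the paper.
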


\section{Observability operator range spaces in $H^2_{\bo_{p,n}, \cY}(\free)$}  \label{S:obsoprange-p}

Although it is possible to work out a $(p,n)$-analog of Theorem 
\ref{T:2-1.2}, for the sake of simplicity we present here only an abridged version of
its simplified version (the  $(p,n)$-analog of Theorem \ref{T:2-1.2'})  along with the $(p,n)$-version
of the converse statement  (part (4)) in Theorem \ref{T:2-1.2} as well as Theorem \ref{T:1.2g}.

\begin{theorem}  \label{T:2-1.2'-p}
Let $(C, {\mathbf A})$ be a $(p,n)$-contractive pair with state space ${\mathcal X}$ and output
         space $\cY$.  Then:
\smallskip

\noindent
 {\rm (1)} $(C, {\mathbf A})$ is $(p,n)$-output-stable and the intertwining relation
\begin{equation}
\label{4.8a-p}
S_{\bo_{p,n},R,j}^{*} {\mathcal O}_{p,n;C, {\mathbf A}}x ={\mathcal O}_{p,n;C, {\mathbf A}} A_{j} x\quad (x\in\cX)
\end{equation}
holds for all $j=1,\ldots,d$. Hence $\operatorname{Ran}\cO_{p,n; C,{\mathbf A}}$ is ${\bf S}_{\bo_{p,n},R}^*$-invariant.

\smallskip

\noindent
{\rm (2)} The operator ${\mathcal O}_{p,n;C,{\mathbf A}}$ is a
         contraction from ${\mathcal X}$ into $H^2_{\bo_{p,n},\cY}(\free)$.
         Moreover, ${\mathcal O}_{p,n;C, {\mathbf A}}$ is isometric if
         and only if $(C, {\mathbf A})$ is a $(p,n)$-isometric pair and ${\mathbf
         A}$ is $p$-strongly stable.

\smallskip

\noindent
{\rm (3)}  If the linear manifold ${\mathcal M}:= \operatorname{Ran}
         {\mathcal O}_{p,n;C,{\mathbf A}}$ is given the lifted norm
         \begin{equation}  \label{lifted'-p}
           \| {\mathcal O}_{p,n;C, {\mathbf A}}x\|_{{\mathcal M}} =
           \| Q x \|_{{\mathcal X}}
         \end{equation}
         where $Q$ is the orthogonal projection of ${\mathcal X}$ onto
         $(\operatorname{Ker}{\mathcal O}_{p,n;C, {\mathbf A}})^{\perp}$, then:
   \begin{enumerate}
   \item[(a)]
         ${\mathcal O}_{p,n;C, {\mathbf A}}$ is a coisometry from
         ${\mathcal X}$ onto ${\mathcal M}$ and implements a unitary equivalence between
         $\bS_{p,n; R}^*|_\cM$ and $Q \bA|_{\operatorname{Ran} Q}$.
         
         \item[(b)]
 ${\mathcal M}$ is contained contractively in $H^2_{\bo_{p,n},\cY}(\free)$ 
 $\cM$ is isometrically equal to the FNRKHS ${\mathcal H}(K_{p,n;C,{\mathbf A}})$ with
         reproducing kernel $K_{p,n;C,{\mathbf A}}(z,\zeta)$ given by
\begin{equation}   \label{apr1a}
K_{p,n;C, {\mathbf A}}(z,\zeta) = CR_{p,n}(Z(z) \bA) R_{p,n}(Z(\zeta) \bA)^*C^*.  
\end{equation}

\item[(c)] The pair $(E|_\cM, \bS_{n,p;R}^*|_\cM)$ is a $(p,n)$-contractive output pair, or explicitly
 (in view of Corollary \ref{C:10.2}),  ${\bf S}^*_{\bo_{p,n},R}|_\cM$ is $(p,1)$-contractive on $\cM$, i.e., 
\begin{align}
&  \langle \Gamma_{p,1; \bS^*_{p,n;R}|_\cM}[I_\cM] f, f \rangle_\cM 
= \|f \|^2_\cM -\sum_{\alpha\in\free}p_{\alpha}\cdot \|{\bf S}_{\bo_{p,n},R}^{*\alpha}f\|^2_{\cM} \ge 0,  \text{ or }  \notag \\
&  \langle  (\Gamma_{p,n; \bS_{p,n; R}^*|_\cM}[I_\cM] - E^* E) f, f \rangle_\cM =
\sum_{\alpha\in\free} c_{p,n;\alpha}\cdot
\|{\bf S}_{\bo_{p,n},R}^{*\alpha}f \|^{2}_{\cM} -  \|f_{\emptyset}\|^{2}_{\cY} \ge 0 
\label{dif-quot'-p}
\end{align}
for all  $f \in \cM$.
 Moreover, \eqref{dif-quot'-p} holds with equality
         if and only the orthogonal projection $Q$ of ${\mathcal X}$ onto
         $(\operatorname{Ker} {\mathcal O}_{p,n,C, {\mathbf A}})^{\perp}$
         is subject to relations
 $$
Q\succeq \sum_{\alpha\in\free}p_\alpha \bA^{*\alpha^\top}Q\bA^\alpha\quad\mbox{and}\quad
\Gamma_{p,n;\bA}[Q]=C^*C.
$$
         
 \item[(d)] 
In particular, if $(C, {\mathbf A})$ is observable, then $\cO_{p,n; C, \bA}$ implements a unitary equivalence between $\bA$ and
$\bS_{p,n; R}^*|_\cM$.  Furthermore,
 \eqref{dif-quot'-p} holds with
equality if and only if $(C, {\mathbf A})$ is a $(p,n)$-isometric pair.

\item[(e)]  If $\bA$ is $p$-strongly stable and $(C, \bA)$ is $(p,n)$-isometric, then $\cO_{p,n;  C, \bA}$ is isometric 
(and hence in particular $(C, \bA)$ is $(p,n)$-observable).
Hence $\cM$ is contained in $H^2_{p,n; \cY}(\free)$ isometrically and $\cO_{p,n; C, \bA}$ implements a unitary equivalence between
$\bA$ and $\bS_{p,n; R}^*|_{\cM}$, where $\cM$ is the orthogonal complement of a subspace $\cN = \cM^\perp$ isometrically included
in $H^2_{p,n, \cY}(\free)$ which is invariant under the forward shift tuple $\bS_{p,n; R}$.
\end{enumerate}

\smallskip

\noindent
{\rm (4)}
Conversely, let $\cM$ be a Hilbert space contractively included in $H^2_{\bo_{n,p}, \cY}(\free)$ (not necessarily isometrically or even
contractively)  such that
\begin{enumerate}
\item[(i)] $\cM$ is invariant under the backward shift $\bS_{\bo_{n,p}, R}^*$,

\item[(ii)] $(E|_\cM, \bS_{\bo_{n,p}, R}^*|_\cM)$ is a $(p,n)$-contractive output pair.
\end{enumerate}
Then it follows that $\cM$ is contractively included in $H^2_{\bo_{n,p}, \cY}(\free)$ and there exists an $(n,p)$-contractive pair $(C, \bA)$
so that
$ \cM =  \cH(K_{p,n; C, \bA}$ with $K_{p,n; C, \bA}$ as in \eqref{apr1a}.

If $\cM$ is isometrically included in $H^2_{p,n; \cY}(\free)$ and invariant under $\bS_{p,n; R}^*$, then $\cM$ has the form
$\cM = \operatorname{Ran} \cO_{p,n; C, \bA}$ with $(C, \bA)$ a $(p,n)$-isometric output pair and $\bA$ $p$-strongly stable.
In fact one can take $(C, \bA) = (E|\cM, \bS_{p,n; R}^*|_\cM)$.
\end{theorem}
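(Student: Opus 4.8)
The plan is to follow the template of the proof of Theorem \ref{T:2-1.2'} (together with the relevant portions of Theorems \ref{T:2-1.2} and \ref{T:1.2g}), systematically replacing each ingredient from the $\bo$-setting by its $(p,n)$-counterpart established earlier in this chapter: the Stein-system characterization Theorem \ref{T:7.2} is replaced by Theorem \ref{T:2-1.1-p}, Proposition \ref{P:identities} by Proposition \ref{P:identities-p}, the model-pair facts in Propositions \ref{P:bo-model}, \ref{P:bo-model2} by Proposition \ref{P:4.1-p} and Remark \ref{R:GpnES=I}, and Lemma \ref{L:7.6} by Proposition \ref{P:2-1.1-p}. First I would dispatch (1): since $(C,\bA)$ is $(p,n)$-contractive, $H=I_\cX$ satisfies the inequalities \eqref{4.20-p}, so Theorem \ref{T:2-1.1-p}(1) gives $(p,n)$-output stability; the intertwining relation \eqref{4.8a-p} is an immediate computation from the power-series formula \eqref{march21} for $\cO_{p,n;C,\bA}$ and the formula \eqref{5.2p} for $S_{\bo_{p,n},R,j}^*$ (the factors $\omega_{p,n;\alpha j}/\omega_{p,n;\alpha}$ cancel those appearing in \eqref{march21}), whence $\bS_{\bo_{p,n},R}^*$-invariance of $\operatorname{Ran}\cO_{p,n;C,\bA}$.

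For (2), boundedness of $\cO_{p,n;C,\bA}$ together with $\cG_{p,n;C,\bA}\preceq I_\cX$ (Theorem \ref{T:2-1.1-p}(2)) gives the contraction statement; the isometric characterization is the delicate point and I would argue it exactly as in the proof of Theorem \ref{T:2-1.2'n}. Assuming $(C,\bA)$ is $(p,n)$-isometric so that $H=I_\cX$ also solves \eqref{4.21-p}, I apply identity \eqref{4.9-p} with $H=I_\cX$, use Lemma \ref{L:squeeze-p} and the chain \eqref{chain-ineq-p} to see that the ``tail'' operators $\Lambda_{N}$ (the second sum in \eqref{4.9-p} with $H=I_\cX$) form a decreasing sequence of positive semidefinite operators, and then let $N\to\infty$ to squeeze $\cG_{p,n;C,\bA}$ between two operators that force $\cG_{p,n;C,\bA}=I_\cX$ iff the limit of the $\Lambda_N$ vanishes, i.e.\ iff $\bA$ is $p$-strongly stable; conversely $\cO_{p,n;C,\bA}$ isometric means $\cG_{p,n;C,\bA}=I_\cX$, which forces that limit to vanish and hence $\bA$ to be $p$-strongly stable. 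Then (3a) is formal: the lifted norm \eqref{lifted'-p} makes $\cO_{p,n;C,\bA}$ a coisometry of $\cX$ onto $\cM$, and transporting \eqref{4.8a-p} through this coisometry gives the unitary equivalence between $\bS_{\bo_{p,n},R}^*|_\cM$ and $Q\bA|_{\operatorname{Ran}Q}$. For (3b) I run the reproducing-kernel computation as in the proof of Theorem \ref{T:2-1.2}(3a): in the invertible case one quotes Proposition \ref{P:principle} to get the kernel \eqref{apr1a}, while contractive inclusion in $H^2_{\bo_{p,n},\cY}(\free)$ follows from $\|\cO_{p,n;C,\bA}x\|^2\le\langle I_\cX x,x\rangle_\cX$.

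Part (3c) is the other computational core: for a generic $f=\cO_{p,n;C,\bA}x$ I use \eqref{4.8a-p} (iterated, via \eqref{SRLv-p}) and \eqref{lifted'-p} to write $\|\bS_{\bo_{p,n},R}^{*\alpha}f\|_\cM^2=\langle Q\bA^{\alpha^\top}x,\bA^{\alpha^\top}x\rangle_\cX$ and $f_\emptyset=Cx$, expand $\langle\Gamma_{p,1;\bS_{\bo_{p,n},R}^*|_\cM}[I_\cM]f,f\rangle_\cM$ and $\langle(\Gamma_{p,n;\bS_{\bo_{p,n},R}^*|_\cM}[I_\cM]-E^*E)f,f\rangle_\cM$ using the coefficient formula \eqref{march26}, perform the substitution $\alpha\mapsto\alpha^\top$ (legitimate since the sum ranges over all of $\free$ and $|\alpha|=|\alpha^\top|$) together with $Q=Q^{1/2}$, and identify the outcomes with $\langle(Q-\sum_\alpha p_\alpha\bA^{*\alpha^\top}Q\bA^\alpha)x,x\rangle_\cX$ and $\langle(\Gamma_{p,n;\bA}[Q]-C^*C)x,x\rangle_\cX$; the inequalities \eqref{dif-quot'-p} and the equality criterion on $Q$ then read off. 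Parts (3d), (3e) are specializations: (3d) is (3a)/(3c) with $Q=I_\cX$; (3e) invokes Proposition \ref{P:2-1.1-p}(2) to upgrade ``contractively included'' to ``isometrically included'' and then gives the $(p,n)$-analog of Theorem \ref{T:1.2g}. For the converse (4), take $\cX=\cM$, $C=E|_\cM$, $\bA=\bS_{\bo_{p,n},R}^*|_\cM$; hypothesis (ii) says precisely that $(C,\bA)$ is a $(p,n)$-contractive output pair (Definition \ref{D:def-pn}), and by Remark \ref{R:GpnES=I} the operator $\cO_{p,n;E,\bS_{\bo_{p,n},R}^*}$ is the identity, so $\cO_{p,n;C,\bA}=I_\cM$ into $H^2_{\bo_{p,n},\cY}(\free)$; then Proposition \ref{P:2-1.1-p}(1) gives $\cG_{p,n;C,\bA}\preceq I_\cM$, whence $\|f\|_{H^2_{\bo_{p,n},\cY}(\free)}^2=\langle\cG_{p,n;C,\bA}f,f\rangle_\cM\le\|f\|_\cM^2$, and the kernel of $\cM$ is $K_{p,n;C,\bA}$ by the argument of (3b); the isometrically-included case then follows because $\cO_{p,n;C,\bA}=I$ is isometric, $(C,\bA)$ is $(p,n)$-isometric, and $\bA$ is $p$-strongly stable since $\bS_{\bo_{p,n},R}^*$ is (Proposition \ref{P:4.1-p}) and $p$-strong stability passes to restrictions to invariant subspaces.

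The main obstacle I anticipate is the isometry criterion in part (2) and its re-use in (3e): unlike the plain $\bo$-setting, where one may simply cite Theorem \ref{T:2-1.2}, there is no separately-proved ``$(p,n)$-version of Theorem \ref{T:2-1.2}'', so the squeeze argument via the truncation identity \eqref{4.9-p}, Lemma \ref{L:squeeze-p}, and the chain of inequalities \eqref{chain-ineq-p} must be carried out in full, and in doing so one must take care that $\bS_{\bo_{p,n},R}$ is only a $(p,1)$-contraction (not a row contraction), so monotonicity of the relevant partial sums has to be deduced from $\Gamma_{p,1;\bA}[\,\cdot\,]\succeq 0$ rather than from a norm bound. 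A secondary, purely bookkeeping difficulty is keeping the genuine $\alpha$-dependence (rather than merely $|\alpha|$-dependence) of the weights $\omega_{p,n;\alpha}$ straight through all the transpose substitutions in (3c), and checking that the interchanges of summation there are justified either by reduction to $\cU$-valued polynomials and approximation or by the free noncommutative Mertens theorem \eqref{ncGenPrin}.
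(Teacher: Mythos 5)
Your proposal is correct and follows essentially the same route as the paper: the paper's proof is precisely the "transcription" you describe, verifying \eqref{4.8a-p} by the power-series computation from \eqref{march21} and \eqref{5.2p}, citing Proposition \ref{P:principle} for the kernel \eqref{apr1a}, running the $(p,n)$-analogue of the quadratic-form computation from Theorem \ref{T:2-1.2} for (3c), and getting (4) from Remark \ref{R:GpnES=I}. The one small difference is part (2) (and (3e)): where you propose to redo the truncation-identity squeeze of Theorem \ref{T:2-1.2'n} via \eqref{4.9-p}, Lemma \ref{L:squeeze-p} and \eqref{chain-ineq-p}, the paper simply quotes Proposition \ref{P:2-1.1-p} (which rests on the uniqueness statement of Theorem \ref{T:2-1.1-p}), so your anticipated "main obstacle" is already packaged there, though your explicit squeeze does have the merit of delivering both directions of the isometry criterion in one argument.
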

         
\begin{proof} 
With regard to statement (1),  making use of the power series expansion \eqref{march21} and of
formula \eqref{5.2p} to get for any fixed $x\in\cX$,
            \begin{eqnarray}
S_{\bo_{p,n},R,j}^*\cO_{p,n,C,\bA}x&=&S_{\bo_{p,n},R,j}^*\bigg(\sum_{\gamma\in\free}
\left(\omega_{p,n;\gamma}^{-1}C\bA^\gamma x\right) \, z^\gamma \bigg)\nonumber\\
&=&\sum_{\gamma\in\free}\bigg(\frac{\omega_{p,n;\gamma j}}{\omega_{p,n;\gamma}}\cdot
\omega_{p,n;\gamma j}^{-1}C\bA^{\gamma j} x\bigg) \, z^\gamma \nonumber\\
            &=&\sum_{\gamma \in\free}\omega_{p,n;\gamma}^{-1}(C\bA^{\gamma j}x)z^\gamma=
           {\cO}_{p,n;C, \bA}A_j x,\nonumber
            \end{eqnarray}
and \eqref{4.8a-p} follows. Statement (2) is a direct consequence of Proposition \ref{P:2-1.1-p}.

Statement (3a) is a direct consequence of the intertwining relation \eqref{4.8a-p} and the definition of the $\cM$-norm \eqref{lifted'-p}.

Statement (3b) is a consequence of the definition of the norm \eqref{lifted'-p}; the calculation of the associated kernel \eqref{apr1a}
follows from Proposition \ref{P:principle}.

Statement (3c) is a consequence of the assumed condition that $(C, \bA)$ is a $(p,n)$-contractive pair and the definition \eqref{lifted'} of the
$\cM$-norm, analogous to the proof of (3b) in Theorem \ref{T:2-1.2}.

Statement (3d) is just the specialization (3a) to the case where $Q = I_\cX$.

Statement (3e) is a consequence of statement (2) in Proposition \ref{P:2-1.1-p}.

Statement (4) can be seen as a consequence of the previous statements combined with the observation of Remark \ref{R:GpnES=I} 
that $\cO_{E, \bS_{p,n;R}^*} = I_{H^2_{\bo_{p,n}, \cY}(\free)}$.
\end{proof}

\section{Beurling-Lax theorems: $(p,n)$-versions}
 \label{S:BL-p}
 
In this section we present Beurling-Lax representation theorems for 
shift-invariant subspaces of the space $H^{2}_{\bo_{p,n}, \cY}(\free)$ which can be 
considered as $(p,n)$-versions of the theorems presented in Sections \ref{S:NC-appl} and 
\ref{S:BL-quasi-wanderinga}. We start with the Beurling-Lax representation for contractively
included shift-invariant subspaces.

\begin{theorem} \label{T:NC-BL-p}
Let $\widetilde{p}$ be the linear part of a regular noncommutative formal power series $p$ as 
in \eqref{apr12}, \eqref{apr12-1}. A Hilbert space $\cM$ is such that
\begin{enumerate}
\item $\cM$ is contractively included in $H^2_{\bo_{p,n},\cY}(\free)$,
\item $\cM$ is ${\bf S}_{\bo_{p,n},R}$-invariant,
\item the $d$-tuple $\; {\bf T} =(T_1,\ldots,T_d)$ where $T_j=S_{\bo_{p,n},R,j}|_{\cM}$ for
$j=1,\ldots,d$, is $(\widetilde{p},1)$-contractive
\end{enumerate}
if and only  if there is a coefficient Hilbert space $\cU$ and a
contractive multiplier $\Theta$ from $H^2_{\bo_{\widetilde{p},1},\cU}(\free)$ to
$H^2_{\bo_{p,n},\cY}(\free)$ so that
\begin{equation}
\cM = \Theta \cdot H^2_{\bo_{\widetilde{p},1},\cU}(\free),\quad\mbox{with lifted norm}\quad 
\| \Theta \cdot f \|_{\cM} = \| Q f \|_{H^2_{\bo_{\widetilde{p},1},\cU}(\free)}
\label{Mform-p}
\end{equation}
where $Q$ is the orthogonal projection onto $(\operatorname{Ker} M_{\Theta})^\perp$.
          
If $\cM$ is represented as a pullback space $\cM = \cH^p(\Pi)$ for some positive semidefinite operator $\Pi \in \cL(H^2_{\bo_{p,n}, \cY}(\free))$,
then conditions (1), (2), (3) above can be expressed more succinctly simply as
\begin{equation}   \label{pullback-ineq-p}
  0 \preceq \Pi \preceq I_{H^2_{\bo_{p,n}, \cY}(\free)}, \quad \Pi - \sum_{j=1}^d p_j S_{\bo_{p,n},R} \Pi S_{\bo_{p,n}, R}^* \succeq 0.
\end{equation}
\end{theorem}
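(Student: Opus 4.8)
The plan is to adapt the proof of Theorem~\ref{T:NC-BL}, with the ambient space $H^2_{\bo,\cY}(\free)$ replaced by $H^2_{\bo_{p,n},\cY}(\free)$ and the input Fock space $H^2_{\cU}(\free)$ replaced by $H^2_{\bo_{\widetilde{p},1},\cU}(\free)$; correspondingly the noncommutative Szeg\H{o} kernel $k_{\rm nc,Sz}$ is replaced by $k_{\bo_{\widetilde{p},1}}$ and the row-contraction condition by the $(\widetilde{p},1)$-contractivity condition. Three ingredients drive the argument: the intertwining identity $S_{\bo_{p,n},R,j}M_{\Theta}=M_{\Theta}S_{\bo_{\widetilde{p},1},R,j}$, valid for any multiplier $\Theta$ from $H^2_{\bo_{\widetilde{p},1},\cU}(\free)$ into $H^2_{\bo_{p,n},\cY}(\free)$; Corollary~\ref{C:10.11}, which says that the model tuple $\bS^*_{\bo_{\widetilde{p},1},R}$ is a $(\widetilde{p},1)$-contraction, equivalently $\sum_{j=1}^{d}p_j S_{\bo_{\widetilde{p},1},R,j}S^*_{\bo_{\widetilde{p},1},R,j}\preceq I$; and Theorem~\ref{T:charNFRKHS}, whose general principle that a Hilbert space contractively included in an NFRKHS is itself an NFRKHS lets us treat $\cM$ as an NFRKHS of $\cY$-valued formal power series.

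For sufficiency, suppose $\cM=\Theta\cdot H^2_{\bo_{\widetilde{p},1},\cU}(\free)$ with the lifted norm \eqref{Mform-p}. The estimate $\|M_{\Theta}\|\le 1$ gives (1) exactly as in \eqref{ineq1}; the intertwining identity gives (2) together with $QS_{\bo_{\widetilde{p},1},R,j}=QS_{\bo_{\widetilde{p},1},R,j}Q$ for $Q$ the orthogonal projection onto $(\operatorname{Ker}M_{\Theta})^{\perp}$; and, mimicking the derivation of \eqref{again18}, $T_j^*\colon M_{\Theta}f\mapsto M_{\Theta}S^*_{\bo_{\widetilde{p},1},R,j}Qf$, so that
$$
\sum_{j=1}^{d}p_j\|T_j^*M_{\Theta}f\|_{\cM}^2=\sum_{j=1}^{d}p_j\|QS^*_{\bo_{\widetilde{p},1},R,j}Qf\|^2\le\sum_{j=1}^{d}p_j\|S^*_{\bo_{\widetilde{p},1},R,j}Qf\|^2\le\|Qf\|^2=\|M_{\Theta}f\|_{\cM}^2,
$$
the last step being Corollary~\ref{C:10.11}; thus $\bT$ satisfies (3). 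For necessity, suppose $\cM$ satisfies (1)--(3). By Theorem~\ref{T:charNFRKHS}, $\cM$ is an NFRKHS; its reproducing kernel $k_{\cM}$ is a positive formal kernel, $\cH(k_{\cM})$ is invariant under the coordinate multipliers $M^r_{z_j}=S_{\bo_{p,n},R,j}|_{\cM}$, and the restricted tuple satisfies the $(\widetilde{p},1)$-contractivity condition of (3). The crucial step is the $(p,n)$-analog of Proposition~\ref{P:referee}: such a kernel admits a factorization $k_{\cM}(z,\zeta)=G(z)\big(k_{\bo_{\widetilde{p},1}}(z,\zeta)\otimes I_{\cU}\big)G(\zeta)^*$ for some auxiliary Hilbert space $\cU$ and $G\in\cL(\cU,\cY)\langle\langle z\rangle\rangle$. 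One proves this by running the argument of Proposition~\ref{P:referee} with the coefficients $p_j$ inserted: the $(\widetilde{p},1)$-contractivity of (3) is equivalent, via the adjoint-action computation \eqref{adj-action}, to positivity of $L_p(z,\zeta):=k_{\cM}(z,\zeta)-\sum_{j=1}^{d}\overline{\zeta}_j\,p_j\,k_{\cM}(z,\zeta)\,z_j$, hence to $L_p(z,\zeta)=G(z)G(\zeta)^*$; iterating $k_{\cM}(z,\zeta)=G(z)G(\zeta)^*+\sum_{j}p_j\overline{\zeta}_j k_{\cM}(z,\zeta)z_j$ and using $d_{\widetilde{p},\alpha j}=p_j d_{\widetilde{p},\alpha}$ produces, after equating coefficients of $z^{\alpha}\overline{\zeta}^{\alpha^{\top}}$, the factorization through $k_{\bo_{\widetilde{p},1}}(z,\zeta)=\sum_{\alpha}d_{\widetilde{p},\alpha}z^{\alpha}\overline{\zeta}^{\alpha^{\top}}$. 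Proposition~\ref{P:2.3}(2) then gives that $M_G$ is a coisometry from $H^2_{\bo_{\widetilde{p},1},\cU}(\free)=\cH(k_{\bo_{\widetilde{p},1}}\otimes I_{\cU})$ onto $\cM$; taking $\Theta:=G$ but viewed as a multiplier into $H^2_{\bo_{p,n},\cY}(\free)$, so that $M_{\Theta}=\iota\circ M_G$ with $\iota$ the contractive inclusion $\cM\hookrightarrow H^2_{\bo_{p,n},\cY}(\free)$, shows $\Theta$ is a contractive multiplier, $\cM=\Theta\cdot H^2_{\bo_{\widetilde{p},1},\cU}(\free)$, and the coisometry of $M_G$ is precisely the lifted-norm formula \eqref{Mform-p}.

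For the pullback reformulation $\cM=\cH^p(\Pi)$: the first inequality $0\preceq\Pi\preceq I$ in \eqref{pullback-ineq-p} is exactly the contractive inclusion $\cH^p(\Pi)\hookrightarrow H^2_{\bo_{p,n},\cY}(\free)$, while the Douglas lemma \cite{douglas} converts the second inequality into the existence of operators $X_1,\dots,X_d$ with $\sqrt{p_j}\,S_{\bo_{p,n},R,j}\Pi^{\frac{1}{2}}=\Pi^{\frac{1}{2}}X_j$ and $\operatorname{Row}_{j}[X_j]$ a contraction. From this, $\cH^p(\Pi)=\operatorname{Ran}\Pi^{\frac{1}{2}}$ is $\bS_{\bo_{p,n},R}$-invariant, and redoing the computations leading to \eqref{9.2}, \eqref{9.4} and the displayed inequality above (with $\Pi^{\frac{1}{2}}$ in place of $M_{\Theta}$ and $Q=P_{(\operatorname{Ker}\Pi)^{\perp}}$) shows that $\bT$ satisfies (3); the converse follows by reversing the argument, exactly as in Theorem~\ref{T:NC-BL}.

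The main obstacle is establishing the $(p,n)$-analog of Proposition~\ref{P:referee}, and in particular keeping the bookkeeping straight: one must be careful that it is the $(\widetilde{p},1)$-contractivity of the restricted shift tuple (with the \emph{linear part} $\widetilde{p}$) that is equivalent to positivity of $L_p$, even though the ambient space $H^2_{\bo_{p,n},\cY}(\free)$ is built from the full series $(1-p)^{-n}$; and, since $\bS_{\bo_{p,n},R}$ is only a $(p,1)$-type contraction rather than a row contraction, one should confirm that the residual term $\sum_{|\alpha|=N+1}d_{\widetilde{p},\alpha}\overline{\zeta}^{\alpha^{\top}}k_{\cM}(z,\zeta)z^{\alpha}$ in the iteration indeed drops out when coefficients of $z^{\alpha}\overline{\zeta}^{\alpha^{\top}}$ are equated. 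The remaining steps are routine transcriptions of the Fock-space arguments.
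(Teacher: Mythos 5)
Your proposal is correct and follows essentially the same route as the paper's proof: sufficiency via the intertwining relations and Corollary \ref{C:10.11}, necessity via viewing $\cM$ as an NFRKHS and running the $(\widetilde p,1)$-analog of Proposition \ref{P:referee} (positivity of $k_\cM - \sum_j p_j \bzeta_j k_\cM z_j$, Kolmogorov factor, iteration to factor through $k_{\bo_{\widetilde p,1}}$, then Proposition \ref{P:2.3}(2)), and the pullback reformulation via the Douglas lemma exactly as in Theorem \ref{T:NC-BL}. The "obstacles" you flag are handled just as you suggest (the residual term in the iteration is harmless because coefficients of each fixed monomial $z^\alpha\bzeta^{\beta^\top}$ stabilize once $N>|\alpha|$), so no gap remains.
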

        
\begin{proof} If $\Theta$ is a contractive multiplier
from $H^2_{\bo_{\widetilde{p},1},\cU}(\free)$ to $H^2_{\bo_{p,n},\cY}(\free)$ and
$\cM$ is of the form \eqref{Mform-p}, then it follows as in \eqref{ineq1} that
$\|\Theta f\|_{H^2_{\bo_{p,n},\cY}(\free)}\le \|\Theta f\|_{\cM}$ which verifies property
(1). Property (2) follows from the intertwining equalities 
$$
S_{\bo_{p,n},R,j}M_{\Theta}= M_{\Theta}S_{\bo_{\widetilde{p},1},R,j}
$$ 
where $M_{\Theta}$ is multiplication by $\Theta$ on the left.
Furthermore, the formulas \eqref{9.2} and \eqref{9.4} still hold true (with ${\bf S}_{\widetilde{p},R}$ instead of
${\bf S}_{1,R}$). Since $Q$ is a projection and 
the tuple ${\bf S}_{\widetilde{p},R}$  is $(\widetilde{p},1)$-contractive on $H^2_{\widetilde{p},\cU}(\free)$ (by Corollary \ref{C:10.11}),
we can mimic the computation \eqref{again20} to get
$$
\sum_{j=1}^d p_j\| T_j^* M_\Theta f \|^2_\cM = \sum_{j=1}^d p_j\| Q S^*_{\widetilde{p},R,j} Q f \|^2_{H^2_{\widetilde{p},\cU}(\free)}
 \le \| Q f \|^2_{H^2_{\widetilde{p},\cU}(\free)}= \| M_\Theta f \|^2_\cM,
$$
which shows that $\bT$ is $(\widetilde{p},1)$-contractive on $\cM$ and completes the proof of sufficiency.

\smallskip

Conversely, let us assume that the Hilbert space $\cM$ satisfies conditions (1), (2), (3), 
i.e. (on account of Theorem \ref{T:charNFRKHS}), $\cM$ is a NFRKHS
contractively included in $H^2_{\bo_{p,n},\cY}(\free)$,
which is invariant under the right coordinate multipliers $T_j=S_{\bo, R, j}|_\cM$
($1\le j\le d$) and moreover, the tuple $\bT = (T_1, \dots, T_d)$ is $(\widetilde{p},1)$-contractive:
$$
\Gamma_{\widetilde{p},1,{\bf T}^*}[I_{\cM}]\succeq 0.
$$
Let $k_{\cM}(z,\zeta)$ denote the reproducing kernel for $\cM$. Computations similar to those in the proof of 
Proposition \ref{P:referee} show that the inequality 
$$
\left\langle \big( \Gamma_{\widetilde{p},1,{\bf T}^*}[I_{\cM}] \otimes I_{{\mathbb C} \langle \langle
\overline{\zeta} \rangle \rangle} \big)
k_\cM(\cdot, \zeta) y, \, k_\cM(\cdot, z) y' \right\rangle_{\cM \langle \langle
\overline{\zeta} \rangle \rangle \times \cM\langle \langle\overline{z} \rangle \rangle}\ge 0
$$
holding for all $y,y'\in\cY$ is equivalent to the kernel
\begin{equation}   \label{kernelL-p}
L(z,\zeta):=k_\cM(z, \zeta)-\sum_{j=1}^d p_j \bzeta_j k_\cM(z,  \zeta) z_j
\end{equation}
  being positive, that is, to having a Kolmogorov decomposition $L(z, \zeta) = G(z) G(\zeta)^{*}$
  for some $G\in \cL(\cU, \cY)\langle\langle z\rangle\rangle$.
  Write \eqref{kernelL-p} in the form
  $$
  k_{\cM}(z, \zeta) = G(z) G(\zeta)^{*} + \sum_{j=1}^d p_j \bzeta_j k_\cM(z,  \zeta) z_j
  $$
and then iterate the latter identity while making use of the notation \eqref{jan4bxa} to arrive at 
$$
k_{\cM}(z, \zeta)= \sum_{\alpha \colon |\alpha| \le N} d_{\widetilde p, \alpha} \overline{\zeta}^{\alpha^\top} G(z) G(\zeta)^* z^\alpha
+ \sum_{\alpha \colon |\alpha| = N+1} d_{\widetilde p, \alpha} \overline{\zeta}^{\alpha^\top} k_\cM(z, \zeta) z^\alpha
$$
for all $N=0,1,2,\dots$.  Taking the limit as $N \to \infty$ then gives us
\begin{align*}
k_{\cM}(z, \zeta) &=  \sum_{\alpha \in \free} d_{\widetilde p, \alpha} \overline{\zeta}^{\alpha^\top} G(z) G(\zeta)^* z^\alpha \\
&= G(z) \bigg( \sum_{\alpha \in \free} d_{\widetilde p,\alpha} z^\alpha \overline{\zeta}^{\alpha^\top} \bigg) G(\zeta)^*
=G(z)k_{\widetilde{p}}(z,\zeta)G(\zeta)^{*}.
\end{align*}
 By part (2) in Proposition \ref{P:2.3}. the latter identity tells us that $G$ is a coisometric multiplier from 
$H^2_{\widetilde{p},\cU}(\free)$ onto $\cM$. The rest of the proof is the same as in Theorem \ref{T:NC-BL}. We consider 
$\Theta = G$ as a multiplier from $H^2_{\bo_{\widetilde{p},1},\cU}(\free)$ into $H^2_{\bo_{p,n}, \cY}(\free)$. Since the inclusion map 
$\iota \colon \cM \to H^2_{\bo_{p,n}, \cY}(\free)$ is a contraction (by condition (1)), we have 
$\|M_\Theta\|=\|\iota M_G\|\le 1$, i.e.,  $\Theta$ is a contractive
 multiplier from $H^2_{\bo_{\widetilde p, 1},\cU}(\free)$ to $H^2_{\bo_{p,n}, \cY}(\free)$. Moreover, as 
 $M_G H^2_{\bo_{\widetilde{p},1},\cU}(\free) = \cM$, we have
$\Theta H^2_{\bo_{\widetilde{p},1},\cU}(\free) = \cM$. The fact that $M_G \colon H^2_{\bo_{\widetilde p, 1},\cU}(\free) \to \cM$ is a 
coisometry can be interpreted as saying that the $\cM$-norm is given by \eqref{Mform-p}.

\smallskip

Finally, the reformulation of conditions (1), (2), (3) in the form \eqref{pullback-ineq-p} follows via a straightforward $\widetilde p$-adaptation
of the argument in Theorem \ref{T:NC-BL}.
\end{proof}

We next derive from Theorem \ref{T:NC-BL-p} the $(p,n)$-analog of Theorem \ref{T:NC-BLisom} on isometrically included shift invariant 
subspaces $\cM$ of $H^2_{\bo_{p,n}, \cY}(\free)$.

\begin{theorem} \label{T:NC-BLisom-p}
A Hilbert space $\cM$ is isometrically included in $H^2_{\bo_{p,n},\cY}(\free)$ and is ${\bf S}_{\bo_{p,n},R}$-invariant if and only if
there is a Hilbert space $\cU$ and a McCT-inner multiplier $\Theta$ from $H^2_{\bo_{\widetilde{p},1},\cU}(\free)$ to $H^2_{\bo_{p,n},\cY}(\free)$ 
so that
\begin{equation}
\cM = \Theta \cdot H^2_{\bo_{\widetilde{p},1},\cU}(\free).
\label{Mform''-p}
\end{equation}
 \end{theorem}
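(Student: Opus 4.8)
The plan is to reduce this to the contractively-included case already settled in Theorem \ref{T:NC-BL-p}, exactly paralleling the passage from Theorem \ref{T:NC-BL} to Theorem \ref{T:NC-BLisom} in the Fock-space setting.

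The ``if'' direction is immediate. If $\Theta$ is a McCT-inner multiplier from $H^2_{\bo_{\widetilde p,1},\cU}(\free)$ to $H^2_{\bo_{p,n},\cY}(\free)$, then $M_\Theta$ is a partial isometry, so the lifted norm on $\cM=\operatorname{Ran}M_\Theta$ coincides with the norm inherited from $H^2_{\bo_{p,n},\cY}(\free)$; thus \eqref{Mform''-p} exhibits $\cM$ as isometrically included, and $\cM$ is $\bS_{\bo_{p,n},R}$-invariant because of the intertwining relations $S_{\bo_{p,n},R,j}M_\Theta=M_\Theta S_{\bo_{\widetilde p,1},R,j}$.

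For the ``only if'' direction I would invoke Theorem \ref{T:NC-BL-p}. The only point to verify is that hypothesis (3) there — that the restricted shift tuple $\bT=(T_1,\dots,T_d)$, $T_j=S_{\bo_{p,n},R,j}|_\cM$, is $(\widetilde p,1)$-contractive — holds automatically once $\cM$ is isometrically included in $H^2_{\bo_{p,n},\cY}(\free)$. Indeed, isometric inclusion forces $T_j^*=P_\cM S_{\bo_{p,n},R,j}^*|_\cM$, so for $f\in\cM$
\begin{align*}
\Big\langle \sum_{j=1}^d p_j T_jT_j^*f,\,f\Big\rangle_\cM
&=\sum_{j=1}^d p_j\|P_\cM S_{\bo_{p,n},R,j}^*f\|_{H^2_{\bo_{p,n},\cY}(\free)}^2
\le \sum_{j=1}^d p_j\|S_{\bo_{p,n},R,j}^*f\|_{H^2_{\bo_{p,n},\cY}(\free)}^2 \\
&=\Big\langle \widetilde p\big(B_{\bS_{\bo_{p,n},R}^*}\big)[I]\,f,\,f\Big\rangle_{H^2_{\bo_{p,n},\cY}(\free)}.
\end{align*}
Since $p-\widetilde p$ has nonnegative coefficients, $(p-\widetilde p)(B_{\bS_{\bo_{p,n},R}^*})[I]=\sum_{|\alpha|\ge 2}p_\alpha\,\bS_{\bo_{p,n},R}^{*\alpha^\top}\bS_{\bo_{p,n},R}^{*\alpha}\succeq 0$, so $\widetilde p(B_{\bS_{\bo_{p,n},R}^*})[I]\preceq p(B_{\bS_{\bo_{p,n},R}^*})[I]$; and by Proposition \ref{P:4.1-p} the tuple $\bS_{\bo_{p,n},R}^*$ is a $(p,n)$-hypercontraction, hence $(p,1)$-contractive, so $p(B_{\bS_{\bo_{p,n},R}^*})[I]\preceq I$. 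Combining, $\sum_{j=1}^d p_j T_jT_j^*\preceq I_\cM$, i.e. $\Gamma_{\widetilde p,1,\bT^*}[I_\cM]\succeq 0$, which is the asserted $(\widetilde p,1)$-contractivity. With hypothesis (3) in force, Theorem \ref{T:NC-BL-p} gives a coefficient space $\cU$ and a contractive multiplier $\Theta$ from $H^2_{\bo_{\widetilde p,1},\cU}(\free)$ to $H^2_{\bo_{p,n},\cY}(\free)$ with $\cM=\Theta\cdot H^2_{\bo_{\widetilde p,1},\cU}(\free)$; moreover the proof of that theorem produces $\Theta=G$ where $M_G\colon H^2_{\bo_{\widetilde p,1},\cU}(\free)\to\cM$ is a coisometry onto $\cM$ (obtained via part (2) of Proposition \ref{P:2.3} from the Kolmogorov factorization $k_\cM(z,\zeta)=G(z)k_{\bo_{\widetilde p,1}}(z,\zeta)G(\zeta)^*$). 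Since $\cM$ is isometrically included, the inclusion $\iota\colon\cM\to H^2_{\bo_{p,n},\cY}(\free)$ is an isometry, so $M_\Theta=\iota M_G$ is a partial isometry; that is, $\Theta$ is McCT-inner, which yields \eqref{Mform''-p}.

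I do not foresee a genuine obstacle: the substance is entirely in Theorem \ref{T:NC-BL-p}, which is already proved in the excerpt, and the only extra input — that isometric inclusion both upgrades ``contractive multiplier'' to ``McCT-inner multiplier'' and forces the restricted shift tuple to be $(\widetilde p,1)$-contractive — is the routine computation above. The one mild care point is keeping the positivity and monotonicity of the $(p,n)$-functional calculus straight (replacing $\widetilde p$ by the larger $p$), and there Proposition \ref{P:4.1-p} does the heavy lifting.
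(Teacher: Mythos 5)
Your proposal is correct and follows essentially the same route as the paper: reduce to Theorem \ref{T:NC-BL-p}, verify that isometric inclusion forces the restricted shift tuple to be $(\widetilde p,1)$-contractive via $T_j^*=P_\cM S_{\bo_{p,n},R,j}^*|_\cM$ together with the $(p,1)$-contractivity of $\bS_{\bo_{p,n},R}^*$ from Proposition \ref{P:4.1-p}, and then upgrade the resulting contractive representer to a McCT-inner one because the coisometry $M_G$ onto $\cM$ composed with the isometric inclusion is a partial isometry. The paper's argument is the same, only phrased through the quadratic-form chain $\sum_j p_j\|S^*_jf\|^2\le\sum_\alpha p_\alpha\|\bS^{*\alpha}f\|^2\le\|f\|^2$ rather than your operator inequality $\widetilde p(B_{\bS^*})[I]\preceq p(B_{\bS^*})[I]\preceq I$.
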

 
\begin{proof}  As the "if" direction is immediate, it suffices to consider only the "only if" direction.  As $\cM$ is assumed to be isometrically
included in $H^2_{\bo, \cY}(\free)$,  a Beurling-Lax representation of the form \eqref{Mform''-p} with $\Theta$ a contractive multiplier forces $\Theta$
to actually be a McCT-inner multiplier. Thus it remains only to verify condition (3) in Theorem \ref{T:NC-BL-p}.
Toward this end we note that
$$
T^*_j=\left(S_{\bo_{p,n},R,j}\vert_\cM\right)^*=P_\cM S_{\bo_{p,n}, R, j}^*\vert_{\cM}\quad\mbox{for}\quad j=1,\ldots,d,
$$
for the adjoints of $T_j=S_{\bo_{p,n},R,j}|_{\cM}$ (which holds true if $\cM$ is a closed subspace of 
$H^2_{\bo_{p,n},\cY}(\free)$) and use the fact that the tuple ${\bf S}_{\bo_{p,n},R}$ is $(p,1)$-contractive to compute
\begin{align*}
 \sum_{j=1}^d p_j\| T_j^* f \|^2_\cM & = \sum_{j=1}^d  p_j\| P_\cM (S_{\bo, R, j})^* f \|^2_{H^2_{\bo, \cY}(\free)}\\
&  \le \sum_{j=1}^d p_j\| (S_{\bo, R, j})^* f \|^2_{H^2_{\bo, \cY}(\free)}\\
&\le \sum_{\alpha\in\free} p_\alpha\| (S_{\bo, R, j})^{*\alpha} f \|^2_{H^2_{\bo, \cY}(\free)}
\le \| f \|^2_{H^2_{\bo, \cY}(\free)}
 \end{align*}
which confirms that condition (3) in the statement of Theorem \ref{T:NC-BL} holds and we indeed have \eqref{Mform''}.
Furthermore, the fact that the operator $M_G$ in the proof of Theorem \ref{T:NC-BL-p} is a coisometry translates to
 $M_\Theta$ is a partial isometry, i.e., $\Theta$ is a McCT-inner multiplier from $H^2_{\bo_{\widetilde{p},1},\cU}(\free)$ 
to $H^2_{\bo_{p,n}, \cY}(\free)$.
\end{proof}

To get more explicit formulas for the  McCT-inner multiplier $\Theta$ in Theorem \ref{T:NC-BLisom-p}, we use the weight 
$\bgam_{p,n}=\{\gamma_{p,n;\alpha}\}$ constructed from $p(z)$ via formulas \eqref{apr30c}, the time-domain 
$\bgam$-observability operator $\widehat\cO_{\bgam_{p,n};C,\bA}\colon \cX\to \ell^2_{\cY}(\free)$ defined by
\begin{equation}
\widehat{\cO}_{\bgam_{p,n};C,\bA}: \; x\mapsto
\{\gamma_{p,n;\alpha}^{-\frac{1}{2}}C\bA^\alpha x\}_{\alpha\in\free},
\label{12.2-p}
\end{equation}
and the $\bgam$-gramian
\begin{equation}
\cG_{\bgam_{p,n},C,\bA}=\widehat{\cO}_{\bgam_{p,n};C,\bA}^*\widehat{\cO}_{\bgam_{p,n};C,\bA}=
\sum_{\alpha\in\free}\gamma_{p,n;\alpha}^{-1}\bA^{*\alpha^\top}C^*C\bA^\alpha. 
\label{12.2ag-p}
\end{equation}
Straightforward power series computation based on formulas \eqref{march21}, \eqref{12.2ag-p} and \eqref{apr30c} verifies
the Stein equation
$$
\cG_{p,n,C,\bA}-\sum_{j=1}^d p_j A_j^*\cG_{p,n,C,\bA}A_j=\cG_{\bgam_{p,n},C,\bA}.
$$
We next apply the operator-valued power series \eqref{jan4bv} to the operator $\widehat{\cO}_{\bgam_{p,n};C,\bA}$
and show that, in terms of notation from  \eqref{march21} and \eqref{jan39}, 
\begin{equation}
\bPs_{\bgam_{p,n}}(z)\widehat{\cO}_{\bgam_{p,n};C,\bA}=CR_{p,n}(z\bA)(I-Z_{\widetilde{p}}(z)A).
\label{again22-p}
\end{equation}
Indeed, from the  explicit formulas 
\eqref{jan4bv}, \eqref{12.2-p}, we have, on account of \eqref{apr30c},
\begin{align*}
\bPs_{\bgam_{p,n}}(z)\widehat{\cO}_{\bgam_{p,n};C,\bA}&=\sum_{\alpha\in\free}\gamma_{p,n;\alpha}^{-1}C\bA^\alpha z^\alpha\\
&=C+\sum_{j=1}^d\sum_{\alpha\in\free}\gamma_{p,n;\alpha j}^{-1}C\bA^{\alpha j} z^{\alpha j}\\
&=C+\sum_{j=1}^d\sum_{\alpha\in\free}(\omega_{p,n;\alpha j}^{-1}-\omega_{p,n;\alpha}^{-1}p_j)C\bA^{\alpha j}z^{\alpha j},
\end{align*}
from which \eqref{again22-p} follows, since
\begin{align*}
C+\sum_{j=1}^d\sum_{\alpha\in\free}\omega_{p,n;\alpha j}^{-1}C\bA^{\alpha j}z^{\alpha j}&=
\sum_{\alpha\in\free}\omega_{p,n;\alpha}^{-1}C\bA^{\alpha}z^{\alpha}=CR_{p,n}(z\bA),\\
\sum_{j=1}^d\sum_{\alpha\in\free}\omega_{p,n;\alpha}^{-1}p_jC\bA^{\alpha j}z^{\alpha j}
&=\bigg(\sum_{\alpha\in\free}\omega_{p,n;\alpha}^{-1}C\bA^\alpha z^\alpha\bigg) \bigg(\sum_{j=1}^d p_jA_jz_j \bigg) \\
&=CR_{p,n}(z\bA)Z_{\widetilde{p}}(z)A.
\end{align*}

We now present the $(p,n)$-version of Theorem \ref{T:explicit2}.

\begin{theorem}  \label{T:explicit2-p}  
{\rm (1)} Let us assume that a Hilbert space $\cM$ is contractively contained in $H^2_{\bo_{p,n}, \cY}(\free)$ and that its Brangesian complement 
$\cM^{[\perp]}$ satisfies conditions (i) and (ii) in part (4) of Theorem \ref{T:2-1.2'-p}, and hence has a representation a 
$$
   \cM^{[\perp]} = \operatorname{Ran} \cO_{n,p; C, \bA}
$$
for some $(p,n)$-contractive output pair $(C, \bA)$.  Let us impose the additional hypothesis that $(C, \bA)$ satisfies the inequality
\begin{equation}   \label{extra-hy'-p}
 \sum_{j=1}^d p_j A_j^* (I - \cG_{n,p; C, \bA}) A_j \preceq I - \cG_{p,n, C, \bA}.
 \end{equation}
 Then the related inequality 
 \begin{equation}   \label{extra-hy-p}
 \begin{bmatrix} A^* & \widehat \cO_{\bgam, C, \bA}^* \end{bmatrix} \begin{bmatrix} P \otimes I_\cX  & 0 \\ 0 & I_\cY \end{bmatrix}
 \begin{bmatrix}  A \\ \widehat \cO_{\bgam_{p,n}; C, \bA} \end{bmatrix} \preceq I_\cX
 \end{equation}
 holds and as a consequence of  \eqref{fact} we may choose a solution
 \begin{equation}   \label{bd-p}
 \begin{bmatrix} B \\ D \end{bmatrix} = \begin{bmatrix} B \\ \operatorname{col}_{\alpha \in \free} [D_\alpha] \end{bmatrix} \colon \cU
 \to \begin{bmatrix} \cX^d \\ \ell^2_\cY(\free) \end{bmatrix}
 \end{equation}
 of the Cholesky factorization problem
 \begin{equation} \label{Cho-big1-p}
 \begin{bmatrix} B \\ D \end{bmatrix} \begin{bmatrix} B^* & D^* \end{bmatrix} =
 \begin{bmatrix} \widetilde P^{-1} \otimes I_\cX & 0 \\ 0 & I_\cY \end{bmatrix} - 
 \begin{bmatrix} A \\ \widehat \cO_{\bgam_{p,n}; C, \bA} \end{bmatrix} 
 \begin{bmatrix} A^* & (\widehat \cO_{\bgam_{p,n}; C, \bA})^* \end{bmatrix}.
 \end{equation}
 Define formal power series
 \begin{equation}   \label{Theta1-p}
 {\mathfrak D}(z) = \sum_{\alpha \in \free} \gamma_{p,n; \alpha}^{-\frac{1}{2}} D_\alpha z^\alpha, \quad
 \Theta(z) = {\mathfrak D}(z) + CR_{p,n}( z \bA) Z_{\widetilde p}(z) B.
 \end{equation}
 Then $\Theta$ is a Beurling-Lax representer for $\cM$, i.e.,  $\Theta$ is a contractive multiplier from $H^2_{\bo_{\widetilde p, 1}, \cU}(\free)$
 to $H^2_{\bo_{p,n}, \cY}(\free)$ such that $\cM = \Theta \cdot H^2_{\bo_{\widetilde p, 1}, \cU}(\free)$.

\smallskip

{\rm (2)} Suppose that $\cM$ is isometrically-included subspace of $H^2_{\bo_{p,n}, \cY}(\free)$ which is $\bS_{p,n; R}$-invariant with
  $\bS_{p,n;R}^*$-invariant orthogonal complement $\cM^\perp$ represented as $\cM^\perp = \operatorname{Ran} \cO_{p,n; C, \bA}$
  with $\bA$ being $p$-strongly stable and $(C, \bA)$ a $(p,n)$-isometric output pair  as in the last part of item (4) in Theorem \ref{T:2-1.2'-p}.  
Then the additional hypothesis
\eqref{extra-hy'-p} in part (1) above is automatic and a McCT-inner Beurling-Lax representer $\Theta$ for $\cM$ can be constructed explicitly via the procedure given by \eqref{bd-p}, \eqref{Cho-big1-p}, \eqref{Theta1-p}.
\end{theorem}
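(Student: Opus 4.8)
The plan is to run the proof of Theorem~\ref{T:explicit2} \emph{mutatis mutandis}, replacing every ingredient used there by the $(p,n)$-counterpart developed in the preceding sections. First I would dispose of the implication \eqref{extra-hy'-p}$\Rightarrow$\eqref{extra-hy-p}: rewriting \eqref{extra-hy'-p} as $\sum_{j=1}^d p_j A_j^*(I-\cG_{p,n;C,\bA})A_j+\cG_{p,n;C,\bA}\preceq I_\cX$ and using the Stein equation $\cG_{p,n;C,\bA}-\sum_j p_j A_j^*\cG_{p,n;C,\bA}A_j=\cG_{\bgam_{p,n},C,\bA}$ (established just before \eqref{again22-p}) together with the factorization $\cG_{\bgam_{p,n},C,\bA}=\widehat\cO_{\bgam_{p,n};C,\bA}^*\widehat\cO_{\bgam_{p,n};C,\bA}$ from \eqref{12.2ag-p}, this becomes $A^*(\widetilde P\otimes I_\cX)A+\widehat\cO_{\bgam_{p,n};C,\bA}^*\widehat\cO_{\bgam_{p,n};C,\bA}\preceq I_\cX$, which is exactly \eqref{extra-hy-p}. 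By the elementary observation \eqref{fact} (applied with $H=I_\cX$, $H'=\widetilde P\otimes I_\cX$, and $C$ there equal to $\widehat\cO_{\bgam_{p,n};C,\bA}$), inequality \eqref{extra-hy-p} is precisely the condition guaranteeing the solvability of the Cholesky factorization \eqref{Cho-big1-p}, so a solution $\sbm{B\\D}$ as in \eqref{bd-p} exists.

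Next I would show that $\Theta$ defined by \eqref{Theta1-p} is a Beurling--Lax representer for $\cM$. By Theorem~\ref{T:charNFRKHS}, together with part~(4) of Theorem~\ref{T:2-1.2'-p}, which identifies the reproducing kernel of $\cM^{[\perp]}$ as $CR_{p,n}(z\bA)R_{p,n}(\zeta\bA)^*C^*$, it suffices to establish the kernel decomposition
\[
k_{\bo_{p,n}}(z,\zeta)I_\cY-\Theta(z)\bigl(k_{\bo_{\widetilde p,1}}(z,\zeta)I_\cU\bigr)\Theta(\zeta)^*=CR_{p,n}(z\bA)R_{p,n}(\zeta\bA)^*C^*.
\]
To this end I would set $C_0=\widehat\cO_{\bgam_{p,n};C,\bA}$ and $\Theta_0(z)=D+C_0(I-Z_{\widetilde p}(z)A)^{-1}Z_{\widetilde p}(z)B$; since \eqref{extra-hy-p} is the hypothesis of Theorem~\ref{T:cm-p}(2) with $H=I_\cX$ and $C_0$ in place of $C$, and \eqref{Cho-big1-p} is the associated Cholesky factorization, that theorem supplies
\[
k_{\bo_{\widetilde p,1}}(z,\zeta)I_{\ell^2_\cY(\free)}-\Theta_0(z)\bigl(k_{\bo_{\widetilde p,1}}(z,\zeta)I_\cU\bigr)\Theta_0(\zeta)^*=C_0(I-Z_{\widetilde p}(z)A)^{-1}(I-A^*Z_{\widetilde p}(\zeta)^*)^{-1}C_0^*.
\]
Using the formula \eqref{jan4bv} for $\bPs_{\bgam_{p,n}}$ (so that $\bPs_{\bgam_{p,n}}(z)D={\mathfrak D}(z)$) and the identity \eqref{again22-p} (so that $\bPs_{\bgam_{p,n}}(z)C_0=CR_{p,n}(z\bA)(I-Z_{\widetilde p}(z)A)$), one checks that $\Theta(z)=\bPs_{\bgam_{p,n}}(z)\Theta_0(z)$ agrees with \eqref{Theta1-p}; here \eqref{again22-p} plays the role that the resolvent identity \eqref{Res-id-op} played in Theorem~\ref{T:explicit2}. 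Then sandwiching the displayed identity for $\Theta_0$ between $\bPs_{\bgam_{p,n}}(z)$ and $\bPs_{\bgam_{p,n}}(\zeta)^*$, invoking the kernel factorization \eqref{may1} of $k_{\bo_{p,n}}$ on the left and \eqref{again22-p} on the right, and cancelling the telescoping factors $(I-Z_{\widetilde p}(z)A)(I-Z_{\widetilde p}(z)A)^{-1}$, yields the required kernel decomposition; Theorem~\ref{T:charNFRKHS} then gives $\cM=\Theta\cdot H^2_{\bo_{\widetilde p,1},\cU}(\free)$ with $\Theta$ contractive, proving~(1).

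For~(2), the Brangesian complement of the isometrically included, $\bS_{p,n;R}$-invariant subspace $\cM$ is its ordinary orthogonal complement $\cM^\perp$, which is $\bS_{p,n;R}^*$-invariant; by the last assertion of part~(4) (equivalently, part~(3)(e)) of Theorem~\ref{T:2-1.2'-p} applied to $\cM^\perp$, one may write $\cM^\perp=\operatorname{Ran}\cO_{p,n;C,\bA}$ with $(C,\bA)$ a $(p,n)$-isometric pair and $\bA$ $p$-strongly stable, so that $\cG_{p,n;C,\bA}=I_\cX$. Then \eqref{extra-hy'-p} reads $\sum_j p_j A_j^*(I-I_\cX)A_j\preceq I-I_\cX$, i.e.\ $0\preceq 0$, and is therefore automatic; part~(1) then produces $\Theta$ via \eqref{bd-p}, \eqref{Cho-big1-p}, \eqref{Theta1-p}. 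Since $\cM$ is isometrically included, the lifted norm on $\cM=\Theta\cdot H^2_{\bo_{\widetilde p,1},\cU}(\free)$ coincides with the ambient $H^2_{\bo_{p,n},\cY}(\free)$-norm, which forces $M_\Theta$ to be a partial isometry, i.e.\ $\Theta$ to be McCT-inner (as in the proof of Theorem~\ref{T:NC-BLisom-p}). I expect the main obstacle to be purely a matter of bookkeeping: one must verify that the two $(p,n)$-analogues \eqref{may1} and \eqref{again22-p} of the single resolvent identity \eqref{Res-id-op} interlock correctly, since in the $(p,n)$-setting $1-p(z)\neq 1-\widetilde p(z)$ and the relevant telescoping is encoded in the definition \eqref{apr30c} of $\bgam_{p,n}$ rather than in one algebraic identity, and one must be careful never to conflate $Z(z)$ with $Z_{\widetilde p}(z)$.
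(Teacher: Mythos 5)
Your proposal is correct and follows essentially the same route as the paper: reduce \eqref{extra-hy'-p} to \eqref{extra-hy-p} via the Stein equation and the factorization \eqref{12.2ag-p}, form $\Theta_0$ from the Cholesky data and apply Theorem \ref{T:cm-p}, then use \eqref{again22-p}, \eqref{may1} and the factorization $\Theta=\bPs_{\bgam_{p,n}}\Theta_0$ to obtain the kernel decomposition, with part (2) reducing to $\cG_{p,n;C,\bA}=I_\cX$ so that \eqref{extra-hy'-p} holds trivially and isometric inclusion forces $\Theta$ to be McCT-inner.
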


\begin{proof}  The proof  of statement (1) follows the outline of the proof of statement (1) in Theorem \ref{T:explicit2} with suitable adjustments, 
specifically:  
\begin{enumerate}
\item[(i)] Use the fact that, by item (3b) in Theorem \ref{T:2-1.2'-p}, 
\begin{equation}   \label{kerkmp-p}
K_{\cM^{[\perp]}}(z, \zeta) = C R_{p,n}(z \bA) R_{p,n} (\zeta \bA)^* C^*,
\end{equation}
and the argument as in the first step in the proof of Theorem \ref{T:explicit2} that we are only required to produce a formal power series
$\Theta(z)$ so that the following kernel decomposition holds: 
\begin{equation}   \label{ker-decom1-p}
  k_{\bo_{p,n}}(z, \zeta) I_\cY - \Theta(z) (k_{\widetilde p, 1}(z, \zeta) I_\cU) \Theta(\zeta)^* = C R_{p,n}(z \bA) R_{p,n} (\zeta \bA)^* C^*.
\end{equation}
\item[(ii)] Verify that \eqref{extra-hy'-p} implies \eqref{extra-hy-p} (in fact they are equivalent) in view of the factorization \eqref{12.2ag-p}.

\item[(iii)]  With $C_0 = \widehat \cO_{\bgam_{p,n}, C, \bA}$ and $\sbm{ B \\ D}$ constructed as in \eqref{bd-p}, introduce
$$
   \Theta_0(z) = D + C_0 (I - Z_{\widetilde p}(z) A)^{-1} Z_{\widetilde p}(z) B.
$$

\item[(iv)] Use the definition of ${\mathfrak D}(z)$ and  the identity \eqref{again22-p} to verify that 
\begin{equation}  \label{formula-p}
   \bPs_{\bgam_{p,n}}(z) \Theta_0(z) = \Theta(z)
 \end{equation}
 where $\Theta(z) $ is as in \eqref{Theta1-p}.
 Use Theorem \ref{T:cm-p} (in place of Theorem \ref{T:cm}) to verify that $\Theta_0$ satisfies (in place of \eqref{ker-decom0}) the kernel decomposition
 $$
 k_{\bo_{\widetilde p, 1}}(z, \zeta) I_{\ell^2_\cY(\free)} - \Theta_0(z) (k_{\bo_{\widetilde p, 1}}(z, \zeta) I_\cU) \Theta_0(\zeta)^*= 
 C_0 (I - Z_{\widetilde p}(z) A)^{-1} (I - A^* Z_{\widetilde p}(\zeta)^*)^{-1} C_0^*.
 $$
\item[(v)] Multiply this last identity on the left by $\bPs_{\bgam_{p,n}}(z)$ and on the right by $\bPs_{\bgam_{p,n}}(\zeta)^*$, use identities
\eqref{may1} (in place of \eqref{factor}), \eqref{formula-p} (in place of \eqref{formula}), and \eqref{again22-p} (in place of \eqref{bPs-Obs})
to arrive at the kernel decomposition \eqref{ker-decom1-p} as required.
\end{enumerate}

As for statement (2), note that,  as a consequence of part (3e) of Theorem \ref{T:2-1.2'-p}, $\bA$ being $p$-strongly stable and
$(C, \bA)$ being $(p,n)$-isometric implies that $\cO_{p,n, C, \bA}$ is isometric, i.e., that
$\cG_{p,n,C, \bA} = I_\cX$. Then condition \eqref{extra-hy'-p} holds trivially (in the form $o \preceq 0$), and hence all the analysis of part (1) applies.  
As $\cM$ is isometrically included in $H^2_{\bo_{-,n}, \cY}(\free)$, the resulting contractive-multiplier Beurling-Lax representer $\Theta$ is in fact
McCT-inner.
\end{proof}

We now present the $(p,n)$-version of Theorem \ref{T:NC-BL'}.
\begin{theorem} \label{T:NC-BL-pp}
Let $p$ and $q$ be two noncommutative regular power series and let $n$ and $m$ be two positive integers.
A Hilbert space $\cM$ is such that
\begin{enumerate}
\item $\cM$ is contractively included in $H^2_{\bo_{p,n},\cY}(\free)$,
\item $\cM$ is ${\bf S}_{\bo_{p,n},R}$-invariant,
\item the $d$-tuple $\; {\bf A} =(A_1,\ldots,A_d)$ where $A_j=(S_{\bo_{p,n},R,j}|_{\cM})^*$ for
$j=1,\ldots,d$, is a $q$-strongly stable $(q,m)$-hypercontraction
\end{enumerate}
if and only  if there is a coefficient Hilbert space $\cU$ and a
contractive multiplier $\Theta$ from $H^2_{\bo_{q,m},\cU}(\free)$ to 
$H^2_{\bo_{p,n},\cY}(\free)$ so that
\begin{equation}
\cM = \Theta \cdot H^2_{\bo_{q,m},\cU}(\free)\quad\mbox{with lifted norm}\quad
\| \Theta \cdot f \|_{\cM} = \| Q f \|_{H^2_{\bo_{q,m},\cU}(\free)}
\label{Mform-pp}
\end{equation}
        where $Q$ is the orthogonal projection onto $(\operatorname{Ker}
          M_{\Theta})^\perp$.
        \end{theorem}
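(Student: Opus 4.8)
The plan is to mirror the proof of Theorem \ref{T:NC-BL'}, with the Fock-type input space $H^2_{\bo',\cU}(\free)$ there replaced by the generalized Hardy-Fock space $H^2_{\bo_{q,m},\cU}(\free)$, and with the $(q,m)$-analogues of the structural facts of Chapter \ref{S:change} playing the roles filled in that proof by Propositions \ref{P:bo-model2}, \ref{P:RR}, Theorem \ref{T:7.2} and Lemma \ref{L:7.6}. Concretely, Proposition \ref{P:4.1-p} supplies that $\bS_{\bo_{q,m},R}^*$ is a $q$-strongly stable $(q,m)$-hypercontraction and that the model pair $(E,\bS_{\bo_{q,m},R}^*)$ is $(q,m)$-isometric; Proposition \ref{P:2-1.1-p}(2) supplies that a $(q,m)$-isometric output pair with $q$-strongly stable state tuple has isometric observability operator; and one needs the $(q,m)/(p,n)$-version of Proposition \ref{P:RR}, whose proof applies verbatim since it uses only density of noncommutative polynomials and the intertwining relations, not any admissibility of the weights.

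\textbf{Sufficiency.} Suppose $\cM = M_\Theta H^2_{\bo_{q,m},\cU}(\free)$ with the lifted norm \eqref{Mform-pp}, $\Theta$ a contractive multiplier. Then $\|M_\Theta\|\le1$ gives $\|M_\Theta f\|_{H^2_{\bo_{p,n},\cY}(\free)}\le\|Qf\|_{H^2_{\bo_{q,m},\cU}(\free)}=\|M_\Theta f\|_\cM$, which is condition (1), while the intertwining relations $S_{\bo_{p,n},R,j}M_\Theta = M_\Theta S_{\bo_{q,m},R,j}$ give condition (2). For (3), the argument producing \eqref{9.2'} and \eqref{again18} shows $QS_{\bo_{q,m},R,j}=QS_{\bo_{q,m},R,j}Q$ and $A_j\colon M_\Theta f\mapsto M_\Theta S_{\bo_{q,m},R,j}^*Qf$, hence, iterating, $\bA^\alpha\colon M_\Theta f\mapsto M_\Theta (\bS_{\bo_{q,m},R}^*)^\alpha Qf$ and therefore $\|\bA^\alpha M_\Theta f\|_\cM=\|(\bS_{\bo_{q,m},R}^*)^\alpha Qf\|_{H^2_{\bo_{q,m},\cU}(\free)}$. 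Consequently, for every $k$,
\begin{align*}
\langle \Gamma_{q,k;\bA}[I_\cM]M_\Theta f,\,M_\Theta f\rangle_\cM
&=\sum_{\alpha\in\free}c_{q,k;\alpha}\,\|\bA^\alpha M_\Theta f\|_\cM^2\\
&=\langle \Gamma_{q,k;\bS_{\bo_{q,m},R}^*}[I]\,Qf,\,Qf\rangle_{H^2_{\bo_{q,m},\cU}(\free)}
\end{align*}
and similarly $\langle q(B_\bA)^N[I_\cM]M_\Theta f,M_\Theta f\rangle_\cM = \langle q(B_{\bS_{\bo_{q,m},R}^*})^N[I]\,Qf,Qf\rangle$. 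Since $\bS_{\bo_{q,m},R}^*$ is $(q,m)$-hypercontractive and $q$-strongly stable (Proposition \ref{P:4.1-p}), the right-hand sides are $\ge0$ for $1\le k\le m$ and tend to $0$ as $N\to\infty$; hence $\Gamma_{q,k;\bA}[I_\cM]\succeq0$ for $1\le k\le m$ and $q(B_\bA)^N[I_\cM]\to0$ (weakly, hence strongly, since these operators are positive semidefinite), i.e.\ $\bA$ is a $q$-strongly stable $(q,m)$-hypercontraction, which is condition (3).

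\textbf{Necessity.} Assume (1)--(3). By (3) the operator $\Gamma_{q,m;\bA}[I_\cM]$ (formed as in \eqref{march26}, using the coefficients \eqref{march27} of $(1-q)^m$) is positive semidefinite; choose $C\colon\cM\to\cU:=\overline{\operatorname{Ran}}\,\Gamma_{q,m;\bA}[I_\cM]^{1/2}$ with $C^*C=\Gamma_{q,m;\bA}[I_\cM]$, so that $(C,\bA)$ is a $(q,m)$-isometric output pair and $\bA$ is $q$-strongly stable. By Proposition \ref{P:2-1.1-p}(2), $\cO_{q,m;C,\bA}\colon\cM\to H^2_{\bo_{q,m},\cU}(\free)$ is isometric, and by the $(q,m)$-instance of \eqref{4.8a-p} (equivalently, directly from \eqref{5.2p} and the definition of $\cO_{q,m;C,\bA}$), $S_{\bo_{q,m},R,j}^*\cO_{q,m;C,\bA}=\cO_{q,m;C,\bA}A_j$, whence $\cO_{q,m;C,\bA}^*S_{\bo_{q,m},R,j}=A_j^*\cO_{q,m;C,\bA}^*$. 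The inclusion $\iota\colon\cM\to H^2_{\bo_{p,n},\cY}(\free)$ is a contraction by (1), and $\iota\circ A_j^*=S_{\bo_{p,n},R,j}\circ\iota$ by (2) and the definition of $A_j$. Hence $X:=\iota\circ\cO_{q,m;C,\bA}^*$ is a contraction from $H^2_{\bo_{q,m},\cU}(\free)$ to $H^2_{\bo_{p,n},\cY}(\free)$ with $XS_{\bo_{q,m},R,j}=S_{\bo_{p,n},R,j}X$. By the $(q,m)/(p,n)$-version of Proposition \ref{P:RR}, $X=M_\Theta$ for a contractive multiplier $\Theta$ from $H^2_{\bo_{q,m},\cU}(\free)$ to $H^2_{\bo_{p,n},\cY}(\free)$; then $\operatorname{Ran}M_\Theta=\iota(\operatorname{Ran}\cO_{q,m;C,\bA}^*)=\iota(\cM)=\cM$, and since $\cO_{q,m;C,\bA}^*$ is a coisometry of $H^2_{\bo_{q,m},\cU}(\free)$ onto $\cM$ with $\operatorname{Ker}M_\Theta=\operatorname{Ker}\cO_{q,m;C,\bA}^*$, the norm of $\cM$ is the lifted norm \eqref{Mform-pp}.

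\textbf{Main obstacle.} The delicate point is the sufficiency direction, step (3): transferring both the $(q,m)$-hypercontractivity and, especially, the $q$-strong stability of $\bS_{\bo_{q,m},R}^*$ through the intertwining multiplier $M_\Theta$ to the compressed backward-shift tuple $\bA$ on $\cM$ --- this is the $(q,m)$-analogue of the computation following \eqref{again18} in the proof of Theorem \ref{T:NC-BL'}, and it relies on the projection identity $QS_{\bo_{q,m},R,j}=QS_{\bo_{q,m},R,j}Q$ to carry $(\bS_{\bo_{q,m},R}^*)^\alpha$ past $Q$ when iterating. A secondary point, establishing the $(q,m)/(p,n)$-version of Proposition \ref{P:RR}, is routine as explained above. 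Finally, as in Remark \ref{R:2proofs}, specializing $q=\widetilde p$ and $m=1$ returns us to (a form of) Theorem \ref{T:NC-BL-p}, with $q$-strong stability of $\bS_{\bo_{\widetilde p,1},R}^*$ then being automatic by Corollary \ref{C:10.11}.
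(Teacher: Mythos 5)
Your proposal is correct and follows essentially the same route as the paper's proof: the sufficiency direction transfers hypercontractivity and $q$-strong stability from $\bS_{\bo_{q,m},R}^*$ through the identities $\|\bA^\alpha M_\Theta f\|_\cM=\|(\bS_{\bo_{q,m},R}^*)^\alpha Qf\|$, and the necessity direction factors $\Gamma_{q,m;\bA}[I_\cM]=C^*C$, uses isometricity of $\cO_{q,m;C,\bA}$ and the intertwining of $\iota\circ\cO_{q,m;C,\bA}^*$ together with the adapted Proposition \ref{P:RR}, exactly as in the paper (which invokes Theorem \ref{T:2-1.2'-p}(2) where you cite the equivalent Proposition \ref{P:2-1.1-p}(2)). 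No gaps.
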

	
\begin{proof}[Sketch of the proof] 
If $\Theta$ is a contractive multiplier 
from $H^2_{\bo_{q,m},\cU}(\free)$ to $H^2_{\bo_{p,n},\cY}(\free)$ and 
$\cM$ is of the form \eqref{Mform-pp}, then it follows as in the proof of Theorem \ref{T:NC-BL'} 
that $\|\Theta f\|_{H^2_{\bo_{p,n},\cY}(\free)}\le \|\Theta f\|_{\cM}$ which verifies property
(1). Property (2) follows from the intertwining equalities $S_{\bo_{p,n},R,j}
M_{\Theta}= M_{\Theta}S_{\bo_{q,m},R,j}$.
Furthermore, the formulas \eqref{9.2'} and \eqref{again18} still hold true (with ${\bf S}_{\bo_{q,m},R}$
instead of ${\bf S}_{\bo',R}$) and therefore,
$$
\| {\bf A}^\alpha\Theta f\|^2_{\cM}=\|{\bf S}_{\bo_{p,n},R}^{*\alpha}
Qf\|^2_{H^2_{\bo_{q,m},\cU}(\free)}\quad\mbox{for all}\quad \alpha\in\free, \, 
f\in H^2_{\bo_{q,m},\cU}(\free).
$$
Since ${\bf S}_{\bo_{q,m},R}^*$ is a $q$-strongly stable $(q,m)$-hypercontraction on 
$H^2_{\bo_{q,m}}(\free)$, we have
\begin{align*}
&\left\langle q^N(B_{\bA})[I_{\cM}]\Theta f, \; \Theta f\right\rangle_{\cM}\notag\\
&=\sum_{\alpha\in\free}\bigg(\sum_{\beta_{1},
    \dots, \beta_{N} \ne \emptyset \colon \beta_{1} \cdots \beta_{N}
    = \alpha} q_{\beta_{1}} \cdots q_{\beta_{N}} \bigg)
\|\bA^\alpha \Theta f\|^2_{\cM}\notag\\
&=\sum_{\alpha\in\free}\bigg(\sum_{\beta_{1},
    \dots, \beta_{N} \ne \emptyset \colon \beta_{1} \cdots \beta_{N}
    = \alpha} q_{\beta_{1}} \cdots q_{\beta_{N}} \bigg)
\|{\bf S}_{\bo_{q,m},R}^{*\alpha}Qf\|^2_{H^2_{\bo_{q,m},\cU}(\free)}\notag\\
&=\left\langle q^N(B_{{\mathbf S}^*_{\bo_{q,m},R}})[I_{H^2_{\bo_{q,m},\cU}(\free)}]
Qf, \; Qf\right\rangle_{H^2_{\bo_{q,m},\cY}(\free)}\to 0 \; \; (\mbox{as $N\to\infty$})
\end{align*}
and 
\begin{align*}
\left\langle \Gamma_{q,k;\bA}[I_{\cM}]\Theta f, \, \Theta f\right\rangle_{\cM}
&=\sum_{\alpha \in\free}c_{q,k;\alpha}\|\bA^{\alpha} \Theta f\|^{2}_{\cM}\notag\\
&=\sum_{\alpha \in\free}c_{q,k;\alpha}
\| {\mathbf  S}_{\bo_{q,m},R}^{* \alpha} Qf\|^{2}_{H^2_{\bo_{q,m},\cU}(\free)}\notag\\
&=\left\langle \Gamma_{q,k;\bA}[I_{H^2_{\bo_{q,m},\cY}(\free)}]Q f, \,
Qf\right\rangle_{H^2_{\bo_{q,m},\cY}(\free)}\ge 0
\end{align*}
for $k=1,\ldots,m$, which shows that ${\bf A}$ is a $q$-strongly stable $(q,m)$-hypercontraction on
$\cM$ and therefore completes the proof of sufficiency. 

\smallskip

For the necessity part, given a Hilbert space $\cM$ subject to conditions (1), (2), (3), we start
with the $(q,m)$-hypercontractive tuple $\; {\bf A}=(A_1,\ldots,A_d)$ where $A_j=
(S_{\bo_{q,m},R,j}|_{\cM})^*$ and factor the  positive semidefinite operator 
$\Gamma_{q,m,\bA}[I_{\cM}]$ as 
$\Gamma_{q,m,\bA}[I_{\cM}]=C^*C$ for an appropriately chosen operator  $C \colon \cM \to \cU$ from 
$\cM$ into a  Hilbert space $\cU$ subject to condition 
$\operatorname{dim} \cU = \operatorname{rank} \Gamma_{q,m,\bA}[I_{\cM}]$.
Then the pair $(C, \bA)$ is $(q,m)$-isometric and, since $\bA$ is $q$-strongly stable by hypothesis (3),
it follows from  part (2) of Theorem \ref{T:2-1.2'-p} that the observability operator $\cO_{q,m;C, \bA}$
is an isometry from $\cM$ into $H^2_{\bo_{q,m},\cU}(\free)$. Since the inclusion map 
$\iota \colon \cM \to H^2_{\bo_{p,n},\cY}(\free)$ is a contraction by hypothesis (1), the operator 
$$
G = \iota \circ \cO_{q,m;C, \bA}^{*} \colon H^2_{\bo_{q,m},\cU}(\free)\to 
H^2_{\bo_{p,n},\cY}(\free)
$$ 
is a contraction. Making use of intertwining relations \eqref{4.8a-p} and taking into account that 
$\iota \circ A_j^*=S_{\bo_{p,n},R,j}\circ \iota$, we conclude as in the proof of 
Theorem \ref{T:NC-BL'} that 
$$
G S_{\bo_{q,m},R,j}=S_{\bo_{p,n},R,j} G\quad\mbox{for}\quad j=1,\ldots,d.
$$
By the adaptation of Proposition \ref{P:RR} to the setting of weighted spaces of the form 
$H^{2}_{\bo_{p,n},\cY}(\free)$,  it follows that
$G$ is a multiplication operator, i.e., there is a contractive multiplier $\Theta$ so that 
$G = M_{\Theta}$.

\smallskip

Since $\cO_{q,m;C, \bA} \colon \cM \to H^2_{\bo_{q,m},\cU}(\free)$ is an isometry, 
it follows that $\operatorname{Ran} \cO_{q,m;C,\bA}^{*} = \cM$ and also that $\cM = \Theta \cdot
H^2_{\bo_{q,m},\cU}(\free)$ with $\cM$-norm given by \eqref{Mform-pp}.  This completes the proof.
\end{proof}
The next result shows that the quasi-wandering-subspace version of the Beurling-Lax theorem
extends verbatim  to the $(p,n)$-setting.

\begin{theorem} \label{T:13.1p}
Let $\cM$ be a closed ${\bf S}_{\bo_{p,n},R}$-invariant subspace of $H^2_{\bo_{p,n},\cY}(\free)$
containing no nontrivial reducing subspaces for ${\bf S}_{\bo_{p,n},R}$. Then 
\begin{equation}
\cM=\bigvee_{\alpha \in\free}{\bf S}^\alpha_{\bo_{p,n},R}\cQ, \quad\mbox{where}\quad 
\cQ=P_{\cM}\bigg(\bigoplus_{j=1}^d S_{\bo_{p,n},R,j}\cM^\perp\bigg).
\label{13.3gp}
\end{equation}
\end{theorem}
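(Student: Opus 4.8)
The plan is to follow the proof of Theorem~\ref{T:13.1g} line by line, replacing the one ingredient that genuinely uses the Wiener-class hypothesis \eqref{1.8g} — the uniform bound on the truncations of $\Gamma_{\bo,\bS_{\bo,R}^*}[I]$ — by an argument that exploits instead that $(1-p)^n$ is a \emph{polynomial} in $p$ together with the $(p,1)$-contractivity of $\bS^*_{\bo_{p,n},R}$ established in Proposition~\ref{P:4.1-p}. The inclusion $\bigvee_{\alpha\in\free}\bS_{\bo_{p,n},R}^\alpha\cQ\subseteq\cM$ in \eqref{13.3gp} is immediate from $\cQ=P_\cM\big(\bigoplus_{j=1}^d S_{\bo_{p,n},R,j}\cM^\perp\big)\subset\cM$ and the fact that $\cM$ is a closed $\bS_{\bo_{p,n},R}$-invariant subspace, so the content is the reverse inclusion, which I would prove by contradiction.

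Assume there is a nonzero $f\in\cM$ with $f\perp\bS_{\bo_{p,n},R}^\alpha\cQ$ for every $\alpha\in\free$; the goal is to produce a nontrivial reducing subspace of $\cM$. Since $\cM$ is (automatically isometrically) included in $H^2_{\bo_{p,n},\cY}(\free)$ and is $\bS_{\bo_{p,n},R}$-invariant, its orthogonal complement is $\bS^*_{\bo_{p,n},R}$-invariant and isometrically included, so by the last part of item~(4) of Theorem~\ref{T:2-1.2'-p} one may write $\cM^\perp=\operatorname{Ran}\cO_{p,n;C,\bA}$ for a $(p,n)$-isometric output pair $(C,\bA)$ with $\bA$ being $p$-strongly stable. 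A generic element of $\bigoplus_j S_{\bo_{p,n},R,j}\cM^\perp$ is then $\sum_j S_{\bo_{p,n},R,j}\cO_{p,n;C,\bA}x_j$; using $f\in\cM$, $f\perp\cQ$, and varying the $x_j$ independently, one gets $S^*_{\bo_{p,n},R,j}f\perp\operatorname{Ran}\cO_{p,n;C,\bA}=\cM^\perp$, hence $S^*_{\bo_{p,n},R,j}f\in\cM$ for each $j$. Moreover, since $S_{\bo_{p,n},R,j}\bS_{\bo_{p,n},R}^\alpha=\bS_{\bo_{p,n},R}^\gamma$ for a nonempty word $\gamma$, the element $S^*_{\bo_{p,n},R,j}f$ again lies in $\cM$ and is orthogonal to $\bS_{\bo_{p,n},R}^\alpha\cQ$ for all $\alpha\in\free$, so it satisfies the same hypotheses as $f$; iterating, $\bS_{\bo_{p,n},R}^{*\alpha}f\in\cM$ for every $\alpha\in\free$. (This sidesteps the need to construct an explicit power-series representative of $\cQ$ in the style of \eqref{13.3g}, \eqref{13.8}.)

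The crux is the convergence step. Pick $\beta$ with $f_\beta\neq0$; by \eqref{SRLv-p} (and $\omega_{p,n;\emptyset}=1$) we have $E\bS_{\bo_{p,n},R}^{*\beta}f=\omega_{p,n;\beta}f_\beta=:y\neq0$, where $Ef=f_\emptyset$. By Proposition~\ref{P:4.1-p}, $\Gamma_{p,n;\bS^*_{\bo_{p,n},R}}[I]=E^*E$, the series $\Gamma_{p,n;\bS^*_{\bo_{p,n},R}}[I]=\sum_{\alpha\in\free}c_{p,n;\alpha}\bS_{\bo_{p,n},R}^{\alpha^\top}(\cdot)\bS_{\bo_{p,n},R}^{*\alpha}$ being understood via its degree truncations $T_N:=\sum_{|\alpha|\le N}c_{p,n;\alpha}\bS_{\bo_{p,n},R}^{\alpha^\top}(\cdot)\bS_{\bo_{p,n},R}^{*\alpha}$. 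The new observation I would establish is that $\|T_N\|\le 2^n$ uniformly in $N$: writing $(1-p)^n=\sum_{k=0}^n(-1)^k\binom{n}{k}p^k$ and splitting the truncation accordingly, each inner truncated sum of $p^k(B_{\bS^*_{\bo_{p,n},R}})[I]$ is an increasing net of positive operators bounded above by $p^k(B_{\bS^*_{\bo_{p,n},R}})[I]\preceq I$ (using that the coefficients of $p^k$ are nonnegative and that $\bS^*_{\bo_{p,n},R}$ is $(p,1)$-contractive, so $p(B_{\bS^*_{\bo_{p,n},R}})[I]\preceq I$ and hence $p^k(B_{\bS^*_{\bo_{p,n},R}})[I]\preceq I$), whence the bound $\sum_{k=0}^n\binom{n}{k}=2^n$. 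Since $T_N$ terminates on polynomials and there agrees with $E^*E$ (by \eqref{march31} and polarization), the uniform bound upgrades this to $T_N\to E^*E$ in the strong operator topology. Applying this to $\psi:=\bS_{\bo_{p,n},R}^{*\beta}f$, the vectors $g_N:=T_N\psi$ converge in norm to $E^*y$; each $g_N\in\cM$ because $\bS_{\bo_{p,n},R}^{*\alpha}\psi\in\cM$ and $\cM$ is $\bS_{\bo_{p,n},R}$-invariant, so $E^*y\in\cM$, i.e. the constant function $y$ lies in $\cM$. Then $H^2_{\bo_{p,n}}(\free)\otimes y=\overline{\operatorname{span}}\{\bS_{\bo_{p,n},R}^{\alpha^\top}y:\alpha\in\free\}\subseteq\cM$, and by \eqref{5.2p} this subspace is reducing for $\bS_{\bo_{p,n},R}$, contradicting the hypothesis. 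Hence no such $f$ exists and \eqref{13.3gp} follows.

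The main obstacle I anticipate is exactly this replacement of the Wiener-class argument: one must verify carefully that the $2^n$ operator-norm bound on the truncations of $\Gamma_{p,n;\bS^*_{\bo_{p,n},R}}[I]$ — obtained by term-by-term estimation of the binomial expansion of $(1-p)^n$ against the $(p,1)$-contraction $\bS^*_{\bo_{p,n},R}$ — does force strong-operator convergence, and that the limit coincides with the operator $E^*E$ furnished by Proposition~\ref{P:4.1-p}. Everything else is a routine transcription of the proof of Theorem~\ref{T:13.1g}, with $\bo$ replaced by $\bo_{p,n}$ and the convergence of $\{g_N\}$ recovered as above.
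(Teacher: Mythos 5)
Your proof is correct and follows essentially the same route as the paper's: the reverse inclusion by contradiction, the reduction that $\bS_{\bo_{p,n},R}^{*\alpha}f\in\cM$ for all $\alpha\in\free$, and---crucially---the uniform bound $2^n$ on the truncations of $\Gamma_{p,n;\bS^{*}_{\bo_{p,n},R}}[I]=E^*E$ coming from the binomial expansion of $(1-p)^n$ together with the $(p,1)$-contractivity of $\bS^{*}_{\bo_{p,n},R}$, which is exactly the paper's estimate \eqref{13.3gpb}, followed by the same reducing-subspace contradiction $H^2_{\bo_{p,n}}(\free)\otimes y\subset\cM$. Your two deviations---working directly from the definition $\cQ=P_{\cM}\big(\bigoplus_j S_{\bo_{p,n},R,j}\cM^\perp\big)$ instead of the explicit representer $F(z)\cX^d$, and upgrading the paper's subsequential weak-limit step to strong convergence of the truncations via their eventual stabilization on polynomials---are streamlinings of the same argument rather than a genuinely different approach.
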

\begin{proof} 
The proof follows the lines of that of Theorem \ref{T:13.1g}.
We let $(C,\bA)$ be an $(p,n)$-isometric pair such that $\cM=({\rm Ran} \, \cO_{p,n;C,\bA})^{\perp}$, and then 
observe that the quasi-wandering subspace $\cQ$ in \eqref{13.3gp} is equal (as a set) to
\begin{equation}
\cQ=F(z)\cX^d,\quad\mbox{where}\quad F(z)=CR_{p,n}(z\bA)(Z(z)-A^*).
\label{apr8ap}
\end{equation}
In more details, for a fixed $\bx\in\cX^d$, we represent $F(z)\bx$ as in \eqref{13.3h} 
(with $\omega_{p,n;\alpha}$ instead of $\omega_{|\alpha|}$), and then write this representation in terms of  
operators $\cO_{\bo_{p,n},C,\bA}$ and $S_{\bo_{p,n},R,j}$ as
\begin{equation}
 F{\bf x}+\cO_{p,n;,C,\bA}\sum_{j=1}^{d}A_j^*x_j=\sum_{j=1}^{d}S_{\bo_{p,n},R,j}\cO_{p,n;C,\bA}x_j.
\label{apr8p}
\end{equation}
Making use of intertwining relations \eqref{4.8a-p}, it is not hard to verify that the two terms
on the right side of \eqref{apr8p} are orthogonal in $H^2_{\bo_{p,n},\cY}(\free)$-metric, from which 
\eqref{apr8ap} follows, since $\cM^\perp={\rm Ran} \, \cO_{p,n;C,\bA}$.

\smallskip

Then, assuming that there is a nonzero $f\in\cM= ({\rm Ran} \cO_{p,n; C, \bA})^\perp$ which is orthogonal to
${\bf S}_{\bo_{p,n},R}^\alpha F\cX^d$ for all $\alpha\in\free$, we use the decomposition
\eqref{apr8} (adjusted to the present setting) to conclude that
${\bf S}_{\bo_{p,n},R}^{* \alpha} f$ belongs to $\cM$ for all $\alpha \in\free$. Since $\cM$ is
${\bf S}_{\bo_{n,p},R}$-invariant, it now follows that
\begin{equation}
{\bf S}_{\bo_{p,n},R}^{\gamma}{\bf S}_{\bo_{p,n},R}^{* \alpha} f\in\cM\quad\mbox{for all}\quad
\alpha,\gamma\in\free.
\label{13.3gpa}
\end{equation}
Since ${\bf S}^{*}_{\bo_{p,n},R}$ is a $(p,1)$-contraction, we have (as in \eqref{4.13-p})
$p^j(B_{{\bf S}^{*}_{\bo_{p,n},R}})[I]\preceq I$ for all $j\ge 0$. 
It then follows, as in the computation \eqref{4.17-pa}, that
\begin{align*}
\sum_{\alpha\in\free:|\alpha|\le N}c_{p,n;\alpha}{\bf S}_{\bo_{p,n},R}^{\alpha^\top}
{\bf S}_{\bo_{p,n},R}^{*\alpha}
&\preceq (1+p)^n(B_{{\bf S}^{*}_{\bo_{p,n},R}})[I_{H^2_{\bo_{p,n},\cY}(\free)}]\\
&=\sum_{j=0}^n \bcs{n \\ j}p^j(B_{{\bf S}^{*}_{\bo_{p,n},R}})[I_{H^2_{\bo_{p,n},\cY}(\free)}]\\
&\preceq  \sum_{j=0}^n \bcs{n \\ j} I_{H^2_{\bo_{p,n},\cY}(\free)}=2^n\cdot 
I_{H^2_{\bo_{p,n},\cY}(\free)}
\end{align*}
and hence,
\begin{equation}
\bigg\|\sum_{\alpha\in\free:|\alpha|\le N}c_{p,n;\alpha}
{\bf S}_{\bo_{p,n},R}^{\alpha^\top}{\bf S}_{\bo_{p,n},R}^{*\alpha}\bigg\|\le 2^n\quad\mbox{for each} 
\quad N\ge 1.
\label{13.3gpb}
\end{equation}
Since $f\not\equiv 0$, 
there is a $\beta \in\free$ such that $f_\beta \neq 0$. If $E: \, f\to f_\emptyset$ is the 
evaluation operator 
on $H^2_{\bo_{p,n},\cY}(\free)$, then its adjoint $E^*$ is the inclusion of $\cY$ into 
$H^2_{\bo_{p,n},\cY}(\free)$,
and from the formula \eqref{SRLv-p} we have
$$
E^*E{\bf S}_{\bo_{p,n},R}^{*\beta}f=\big({\bf S}_{\bo_{p,n},R}^{*\beta}f\big)_\emptyset=\omega_{p,n;\beta}
f_\beta:=y\neq 0.
$$
By the equality \eqref{march31} applied to the power series ${\bf S}_{\bo_{p,n},R}^{*\beta}f$ we have
\begin{equation}
\Gamma_{p,n,{\bf S}_{\bo_{p,n},R}^*}[I_{H^2_{\bo_{p,n},\cY}(\free)}]{\bf S}_{\bo_{p,n},R}^{*\beta}f
=E^*E{\bf S}_{\bo_{p,n},R}^{*\beta}f=y\neq 0.
\label{3.1eaa}
\end{equation}
Due to \eqref{13.3gpa}, the power series
$$
g_N=\sum_{\alpha\in\free:|\alpha|\le N}c_{p,n;\alpha}{\bf S}_{\bo,R}^{\alpha^\top}
{\bf S}_{\bo,R}^{*\alpha}{\bf
S}_{\bo,R}^{*\beta}f
$$
belongs to $\cM$ for all $N\ge 1$, and the sequence $\{g_N\}_{N\ge 1}\subset\cM$ is uniformly
bounded in metric of $H^2_{\bo_{p,n},\cY}(\free)$, due to \eqref{13.3gpb}. 
Therefore, it admits a subsequential weak limit 
$$
\sum_{\alpha\in\free}c_{p,n;\alpha}{\bf S}_{\bo_{p,n},R}^{\alpha^\top}
{\bf S}_{\bo_{p,n},R}^{*\alpha}{\bf
S}_{\bo_{p,n},R}^{*\beta}f=\Gamma_{\bo_{p,n},{\bf
S}_{\bo_{p,n},R}^*}[I_{H^2_{\bo_{p,n},\cY}(\free)}]{\bf S}_{\bo_{p,n},R}^{*\beta}f
$$
which belongs to $\cM$ and which is equal to $y$, by \eqref{3.1eaa}. Then  
$H^2_{\bo_{p,n}}(\free)\otimes y$ 
is a subspace of $\cM$ and is reducing for ${\bf S}_{\bo_{p,n},R}$ which is a contradiction.
\end{proof}

\section{$H^2_{\bo_{p,n},\cY}(\free)$-Bergman-inner families}\label{S:binnerp}

We next present the $(p,n)$-analog of the most elaborate version of the 
Beurling-Lax theorem presented in Theorem \ref{T:BL3}. To get such an analog we need to make the extra assumption
that the right shift operators $S_{\bo_{p,n},R,j}$ on $H^2_{\bo_{p,n},\cY}(\free)$ are left invertible for 
$j=1,\ldots,d$. As follows from \eqref{5.2p},
$$
S^*_{\bo_{p,n},R,j}S_{\bo_{p,n},R,j}: \;  \; \sum_{\alpha \in\free}f_\gamma z^\alpha \mapsto
        \sum_{\alpha \in\free}\frac{\omega_{p,n;\alpha j}}{\omega_{p,n;\alpha}} \,
        f_{\alpha j}z^\alpha\quad (j=1,\ldots,d)
$$       
and hence our assumption can be expressed in terms of the numbers \eqref{omegapnalpha} as follows.

\smallskip

$\bullet$ The regular power series $p$ is such that the associated positive numbers $\omega_{p,n;\alpha}$ are subject to
inequalities
\begin{equation}
\frac{\omega_{p,n;\alpha}}{\omega_{p,n;\alpha j}}\le M\quad\mbox{for some $\; M>0\; $ and 
 all $\; \alpha\in\free\; $ and $\; j=1,\ldots,d$}.
\label{rest}
\end{equation}
The latter assumption is not very restrictive -- it holds true if the coefficients $p_\alpha$ \eqref{apr12} are 
uniformly bounded (in particular, if $p(z)$ is a noncommutative polynomial) and even more generally, if 
$p_{\alpha}$ does not grow too fast with respect to $|\alpha|$.
\begin{proposition}
Let $p(z)$ be a regular noncommutative power series as in \eqref{apr12}, \eqref{apr12-1} and let us assume that 
$p_{\alpha j}\le \varphi p_{\alpha}$ for some $\varphi>0$ and all $\alpha\in\free$ and $j=1,\ldots,d$. Then 
\eqref{rest} holds.
\label{P:rest}
\end{proposition}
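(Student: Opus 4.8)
The plan is to derive the estimate \eqref{rest} directly from the explicit combinatorial description \eqref{omegapnalpha} of the weights $\omega_{p,n;\alpha}^{-1}$. Since all the $\omega_{p,n;\alpha}$ are positive, the bound $\omega_{p,n;\alpha}/\omega_{p,n;\alpha j}\le M$ is equivalent to the coefficient inequality $\omega_{p,n;\alpha j}^{-1}\le M\,\omega_{p,n;\alpha}^{-1}$, and it is this inequality, with a constant $M$ depending only on $n$, $\varphi$ and $p_1,\dots,p_d$, that I would establish.

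First I would expand $\omega_{p,n;\alpha j}^{-1}$ by \eqref{omegapnalpha} and group the decompositions $\beta_1\cdots\beta_i=\alpha j$ according to their final block $\beta_i$. Because $p_\emptyset=0$, each block is nonempty, so $\beta_i$ is a nonempty word ending in the letter $j$; write $\beta_i=\gamma j$. Two cases arise: (a) $\gamma=\emptyset$, in which case $\beta_i=j$ is a single letter and $(\beta_1,\dots,\beta_{i-1})$ is a decomposition of $\alpha$; (b) $\gamma\neq\emptyset$, in which case $\alpha=(\beta_1\cdots\beta_{i-1})\gamma$ and $(\beta_1,\dots,\beta_{i-1},\gamma)$ is a decomposition of $\alpha$ into $i$ nonempty blocks. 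One checks that, for each fixed $i$, this last correspondence is a bijection between the terms of type (b) with $i$ blocks and the decompositions of $\alpha$ into $i$ nonempty blocks, with $i$ running over $1\le i\le|\alpha|$.

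For the type-(a) part, reindexing $i\mapsto i-1$ turns the sum into $p_j\sum_{i'\ge 0}\binom{n+i'}{i'+1}\sum_{\beta_1\cdots\beta_{i'}=\alpha}p_{\beta_1}\cdots p_{\beta_{i'}}$; the elementary inequality $\binom{n+i'}{i'+1}\le n\binom{n+i'-1}{i'}$ (which reduces to $n+i'\le n(i'+1)$) then bounds this part by $np_j\,\omega_{p,n;\alpha}^{-1}$. For the type-(b) part I would invoke the hypothesis $p_{\gamma j}\le\varphi p_\gamma$ — legitimate since $\gamma\neq\emptyset$ there — to replace each term $\binom{n+i-1}{i}p_{\beta_1}\cdots p_{\beta_{i-1}}p_{\gamma j}$ by $\varphi\binom{n+i-1}{i}p_{\beta_1}\cdots p_{\beta_{i-1}}p_\gamma$; by the bijection above the resulting sum is exactly $\varphi$ times (the $i\ge 1$ part of) the expansion \eqref{omegapnalpha} of $\omega_{p,n;\alpha}^{-1}$, hence is $\le\varphi\,\omega_{p,n;\alpha}^{-1}$. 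Adding the two estimates gives $\omega_{p,n;\alpha j}^{-1}\le (n\max_{1\le k\le d}p_k+\varphi)\,\omega_{p,n;\alpha}^{-1}$ for every nonempty $\alpha$; the degenerate case $\alpha=\emptyset$, where $\omega_{p,n;\emptyset}^{-1}=1$ and $\omega_{p,n;j}^{-1}=np_j$, is subsumed by the same constant. Therefore \eqref{rest} holds with $M=n\max_{1\le k\le d}p_k+\varphi$.

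The argument is entirely elementary, and the only delicate point is the bookkeeping in the ``final block'' grouping: one must make sure that the type-(b) terms are counted exactly once, that after the substitution $p_{\gamma j}\mapsto\varphi p_\gamma$ they match termwise the expansion of $\omega_{p,n;\alpha}^{-1}$, and that the summation index $i$ is confined to $1\le i\le|\alpha|$. I expect this indexing check, rather than any analytic difficulty, to be the main (mild) obstacle. I would also note in passing that the hypothesis ``$p_{\alpha j}\le\varphi p_\alpha$ for all $\alpha\in\free$'' must be understood for nonempty $\alpha$ (for $\alpha=\emptyset$ it would force $p_j\le 0$), which is precisely the range in which it is used.
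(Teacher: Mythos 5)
Your proof is correct and follows essentially the same route as the paper: both split the expansion \eqref{omegapnalpha} of $\omega_{p,n;\alpha j}^{-1}$ according to whether the final block of the decomposition is the single letter $j$ or a word $\gamma j$ with $\gamma\neq\emptyset$, bound the first part via the binomial inequality $\bcs{n+\ell \\ n-1}\le n\bcs{n+\ell-1 \\ n-1}$ and the second via the hypothesis $p_{\gamma j}\le\varphi p_\gamma$, arriving at the same constant $M=\varphi+n\max\{p_1,\dots,p_d\}$. Your side remark that the hypothesis is only needed (and only sensible) for nonempty $\gamma$ is a fair observation but does not affect the argument.
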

\begin{proof} For a fixed $\alpha\in\free$ and $j\in\{1,\ldots,d\}$, we have by \eqref{omegapnalpha},
$$
\omega^{-1}_{p,n; \alpha j} = \sum_{\ell=0}^{|\alpha|+1} \bcs{n+\ell-1 \\ n-1}
\sum_{\beta_{1},\dots, \beta_{\ell} \ne \emptyset \colon \beta_{1}\cdots
\beta_{\ell} = \alpha j} p_{\beta_{1}} \cdots p_{\beta_{\ell}}. 
$$
For each fixed $\ell\ge 0$, we consider separately the terms $p_{\beta_{1}} \cdots p_{\beta_{\ell}}$
with $\beta_\ell=j$ and with $\beta_\ell=\beta'_\ell j$ for some $\beta^\prime_\ell\neq\emptyset$.
We therefore have $\omega^{-1}_{p,n; \alpha j}=\Pi_1+\Pi_2$, where
\begin{align}
\Pi_1&=\sum_{\ell=1}^{|\alpha|+1} \bcs{n+\ell-1 \\ n-1}\sum_{\beta_{1},\dots, \beta_{\ell-1} \ne \emptyset \colon
\beta_{1}\cdots \beta_{\ell-1} = \alpha} p_{\beta_{1}} \cdots p_{\beta_{\ell-1}}p_j,\label{rest3}\\
\Pi_2&=\sum_{\ell=0}^{|\alpha|} \bcs{n+\ell-1 \\ n-1}\sum_{\beta_{1},\dots, \beta_{\ell} \ne \emptyset \colon
\beta_{1}\cdots\beta_{\ell} = \alpha} p_{\beta_{1}} \cdots p_{\beta_{\ell-1}}p_{\beta_{\ell}j}.\label{rest4}
\end{align}
Note that in \eqref{rest4} we changed notation $\beta'_\ell$ back to $\beta_\ell$. 
Since $p_{\beta_{\ell}j}\le \varphi p_{\beta_{\ell}}$ for all $\beta\in\free$ (by the assumption), we have
from \eqref{rest4} and \eqref{omegapnalpha}
$$
\Pi_2\le \varphi \sum_{\ell=0}^{|\alpha|} \bcs{n+\ell-1 \\ n-1}\sum_{\beta_{1},\dots, \beta_{\ell} \ne \emptyset \colon
\beta_{1}\cdots\beta_{\ell} = \alpha} p_{\beta_{1}} \cdots p_{\beta_{\ell-1}}p_{\beta_{\ell}}=\varphi 
\omega^{-1}_{p,n; \alpha}
$$
Shifting the index $\ell$ in \eqref{rest3} and taking into account that 
$\bcs{n+\ell \\ n-1}\le n\bcs{n+\ell-1 \\ n-1}$ for all $\ell\ge 0$, we get
\begin{align*}
\Pi_1&=p_j\sum_{\ell=0}^{|\alpha|} \bcs{n+\ell \\ n-1}
\sum_{\beta_{1}\cdots \beta_{\ell} = \alpha} p_{\beta_{1}} \cdots p_{\beta_{\ell}}\\
&\le np_j\sum_{\ell=0}^{|\alpha|} \bcs{n+\ell-1 \\ n-1}
\sum_{\beta_{1}\cdots \beta_{\ell} = \alpha} p_{\beta_{1}} \cdots p_{\beta_{\ell}}=np_j\omega^{-1}_{p,n; \alpha}.
\end{align*}
From the two last inequalities, $\omega^{-1}_{p,n; \alpha j}\le \omega^{-1}_{p,n; \alpha}(\varphi+np_j)$ and 
therefore \eqref{rest} holds with $M=\varphi+n\max\{p_1,\ldots,p_d\}$.
\end{proof}
To consider the family of transfer functions \eqref{1.37pre-p} under suitable metric conditions imposed on
the system matrices \eqref{coll}, we first need to introduce the shifted $(p,n)$-observability 
operators and gramians which can be done upon making use of the power series \eqref{march20} as follows.
Given an output pair $(C, \bA)$ with $C\in\cL(\cX,\cY)$ and $\bA\in\cL(\cX)^d$ we formally define 
for each $\beta\in\free$,
\begin{align}
 \Ob_{p,n,\beta; C, \bA} &\colon x \mapsto C R_{p,n;\beta}(z \bA) =
   \sum_{\alpha\in\free} (\omega_{p,n; \alpha\beta}^{-1} C\bA^{\alpha} x)
   z^{\alpha},   \label{4.31-p} \\
  \Gr_{p,n,\beta; C, \bA} &= R_{p,n;\beta}(B_\bA)[C^*C]= \sum_{\alpha\in\free}\omega_{p,n;
  \alpha\beta}^{-1} \bA^{* \alpha^{\top}} C^{*} C \bA^{\alpha}.\label{4.32-p}
\end{align}
Letting $\beta=\emptyset$  in \eqref{4.31-p} and \eqref{4.32-p} we conclude from \eqref{march21} and \eqref{rest5}
$$
\Ob_{p,n,\emptyset; C, \bA} = \cO_{p,n; C, \bA}\quad \text{and}\quad
\Gr_{p,n,\emptyset; C, \bA} = \cG_{p,n; C, \bA}.
$$
The $(p,n)$-output stability of the pair $(C,\bA)$ is exactly what is needed for convergence of 
the series in \eqref{rest5}. In general, this condition does not guarantee the convergence 
in \eqref{4.32-p} for $\beta\neq \emptyset$. Now the assumption \eqref{rest} comes into play.
 \begin{lemma}
Let us assume that a regular noncommutative power series $p$ satisfies the assumption \eqref{rest} and 
let the pair $(C, \bA)$ as above be $(p,n)$-output stable. Then
for each $\beta\in\free$, the formulas \eqref{4.31-p} and \eqref{4.32-p}
define bounded operators $\Ob_{p,n,\beta: C, \bA}\in\cL(\cX,H^2_{\bo_{p,n},\cY}(\free))$ and 
$\Gr_{p,n,\beta; C, \bA}\in\cL(\cX)$.

\smallskip

If in addition, the pair  $(C, \bA)$ is exactly $(p,n)$-observable, then 
\begin{enumerate}
\item The operator $\Gr_{p,n,\beta; C, \bA}$ is strictly positive definite.
\item The space ${\mathbf S}^{\beta^{\top}}_{\bo_{p,n}, R}\operatorname{Ran} \Ob_{p,n,\beta;C,\bA}$
(with $H^2_{\bo_{p,n}, \cY}(\free)$-inner product) is a NFRKHS
   with reproducing kernel given by
   \begin{equation}   \label{deffrakk-p}
\boldsymbol{\mathfrak{K}}_{p,n;\beta}(z,\zeta)=
CR_{p,n;\beta}(z\bA) \big(z^{\beta} \bzeta^{\beta^{\top}}
\Gr_{p,n,\beta;C,\bA}^{-1}\big)  R_{p,n,\beta}(\zeta \bA)^*C^* .
\end{equation}
\end{enumerate}
\label{L:sign}
 \end{lemma}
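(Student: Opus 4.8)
The proof will follow the template of Propositions \ref{P:wghtSteinid}, \ref{R:2.12} and \ref{T:RanOb=NFRKHS} from Chapter \ref{S:Stein}, with the non-increasing-weight hypothesis \eqref{18.2} used there replaced by the one-step comparison \eqref{rest}. The first step is to iterate the two elementary comparisons available for the weight $\bo_{p,n}$: iterating \eqref{rest} along the letters of $\beta$ gives $\omega_{p,n;\alpha}/\omega_{p,n;\alpha\beta}\le M^{|\beta|}$, and iterating \eqref{march24} one letter at a time gives $\omega_{p,n;\alpha\beta}^{-1}\ge d_{\widetilde{p},\beta}\,\omega_{p,n;\alpha}^{-1}$, where $d_{\widetilde{p},\beta}=\prod p_j>0$ is the product over the letters of $\beta$ (notation \eqref{jan4bxa}). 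Consequently, for every $\alpha\in\free$,
$$
d_{\widetilde{p},\beta}\,\omega_{p,n;\alpha}^{-1}\ \le\ \omega_{p,n;\alpha\beta}^{-1}\ \le\ M^{|\beta|}\,\omega_{p,n;\alpha}^{-1},\qquad
\frac{\omega_{p,n;\alpha}}{\omega_{p,n;\alpha\beta}^{2}}\ \le\ M^{2|\beta|}\,\omega_{p,n;\alpha}^{-1}.
$$
Granting these, boundedness is immediate: the partial sums of the series \eqref{4.32-p} form an increasing net of positive operators dominated by $M^{|\beta|}\,\cG_{p,n;C,\bA}$, which is bounded since $(C,\bA)$ is $(p,n)$-output stable, so $\Gr_{p,n,\beta;C,\bA}\in\cL(\cX)$; and $\|\Ob_{p,n,\beta;C,\bA}x\|^{2}_{H^{2}_{\bo_{p,n},\cY}(\free)}=\sum_{\alpha\in\free}\frac{\omega_{p,n;\alpha}}{\omega_{p,n;\alpha\beta}^{2}}\,\|C\bA^{\alpha}x\|^{2}_{\cY}\le M^{2|\beta|}\,\|\cO_{p,n;C,\bA}x\|^{2}$, so $\Ob_{p,n,\beta;C,\bA}\in\cL(\cX,H^{2}_{\bo_{p,n},\cY}(\free))$. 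This settles the first assertion.

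For part (1), applying the lower bound $\omega_{p,n;\alpha\beta}^{-1}\ge d_{\widetilde{p},\beta}\,\omega_{p,n;\alpha}^{-1}$ term by term in \eqref{4.32-p} gives $\Gr_{p,n,\beta;C,\bA}\succeq d_{\widetilde{p},\beta}\,\cG_{p,n;C,\bA}$; since exact $(p,n)$-observability means $\cG_{p,n;C,\bA}$ is strictly positive definite (Definition \ref{D:def-pn}) and $d_{\widetilde{p},\beta}>0$, also $\Gr_{p,n,\beta;C,\bA}$ is strictly positive definite, hence boundedly invertible. For part (2), the key identity is the $(p,n)$-analog of \eqref{st10}: for $T_{\beta}:={\mathbf S}^{\beta^{\top}}_{\bo_{p,n},R}\Ob_{p,n,\beta;C,\bA}\colon\cX\to H^{2}_{\bo_{p,n},\cY}(\free)$ one computes directly from \eqref{4.31-p} and the definition of the $H^{2}_{\bo_{p,n},\cY}(\free)$-norm that
$$
\|T_{\beta}x\|^{2}_{H^{2}_{\bo_{p,n},\cY}(\free)}=\sum_{\alpha\in\free}\omega_{p,n;\alpha\beta}\,\big\|\omega_{p,n;\alpha\beta}^{-1}C\bA^{\alpha}x\big\|^{2}_{\cY}=\big\langle\Gr_{p,n,\beta;C,\bA}x,\,x\big\rangle_{\cX},
$$
i.e.\ $T_{\beta}^{*}T_{\beta}=\Gr_{p,n,\beta;C,\bA}$. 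As this operator is strictly positive, $T_{\beta}$ is injective and bounded below, so ${\mathbf S}^{\beta^{\top}}_{\bo_{p,n},R}\operatorname{Ran}\Ob_{p,n,\beta;C,\bA}=\operatorname{Ran}T_{\beta}$ is a closed subspace of $H^{2}_{\bo_{p,n},\cY}(\free)$ whose inherited inner product satisfies $\langle T_{\beta}(\cdot)x,\,T_{\beta}(\cdot)x'\rangle=\langle\Gr_{p,n,\beta;C,\bA}x,x'\rangle_{\cX}$. Identifying $T_{\beta}$ with the power series $T_{\beta}(z)=CR_{p,n;\beta}(z\bA)\,z^{\beta}$ (using that $S_{\bo_{p,n},R,j}$ is right multiplication by $z_{j}$ and $z^{\alpha}z^{\beta}=z^{\alpha\beta}$), Proposition \ref{P:principle} applies verbatim with $H(z)=T_{\beta}(z)$ and the invertible positive $G=\Gr_{p,n,\beta;C,\bA}$, giving that $\operatorname{Ran}T_{\beta}$ is a NFRKHS with reproducing kernel $T_{\beta}(z)\,\Gr_{p,n,\beta;C,\bA}^{-1}\,T_{\beta}(\zeta)^{*}$. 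Finally I would expand $T_{\beta}(\zeta)^{*}=\bzeta^{\beta^{\top}}R_{p,n;\beta}(\zeta\bA)^{*}C^{*}$ and use the commutation $\bzeta_{k}z_{j}=z_{j}\bzeta_{k}$ of \eqref{hereditarynorm} together with $(\alpha\beta)^{\top}=\beta^{\top}\alpha^{\top}$ to rearrange this kernel into exactly $\boldsymbol{\mathfrak{K}}_{p,n;\beta}(z,\zeta)$ as in \eqref{deffrakk-p}.

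The only genuinely delicate point is the word-concatenation bookkeeping in the first step — checking that \eqref{rest} and \eqref{march24} iterate to the two-sided comparison between $\omega_{p,n;\alpha\beta}^{-1}$ and $\omega_{p,n;\alpha}^{-1}$ for all $\alpha,\beta\in\free$, and that this single pair of estimates simultaneously yields boundedness of both $\Ob_{p,n,\beta;C,\bA}$ and $\Gr_{p,n,\beta;C,\bA}$ and the comparison $\Gr_{p,n,\beta;C,\bA}\succeq d_{\widetilde{p},\beta}\,\cG_{p,n;C,\bA}$. Everything downstream is a routine transcription of the arguments already carried out for the admissible-weight case in Chapter \ref{S:Stein}.
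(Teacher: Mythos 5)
Your proposal is correct and follows essentially the same route as the paper's proof: the upper bound from \eqref{rest} gives boundedness of $\Ob_{p,n,\beta;C,\bA}$ and $\Gr_{p,n,\beta;C,\bA}$ by comparison with $\cO_{p,n;C,\bA}$ and $\cG_{p,n;C,\bA}$, the lower bound from \eqref{march24} gives $\Gr_{p,n,\beta;C,\bA}\succeq d_{\widetilde p,\beta}\,\cG_{p,n;C,\bA}$ and hence strict positivity, and the identity $T_\beta^*T_\beta=\Gr_{p,n,\beta;C,\bA}$ combined with Proposition \ref{P:principle} yields the kernel \eqref{deffrakk-p}. The only cosmetic difference is that \eqref{march24} already contains the needed lower bound directly (take $N=|\alpha|$ there), so no letter-by-letter iteration is required.
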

\begin{proof}
By \eqref{rest}, $\frac{\omega_{p,n;\alpha}}{\omega_{p,n;\alpha\beta}}\le M^{|\beta|}$ for all $\alpha\in\free$.
Then we have from \eqref{4.32-p} and \eqref{rest5},
$$
\Gr_{p,n,\beta; C, \bA}=\sum_{\alpha\in\free}\frac{\omega_{p,n;\alpha}}{\omega_{p,n;\alpha\beta}}\cdot
\omega_{p,n;\alpha}^{-1} \bA^{* \alpha^{\top}} C^{*} C \bA^{\alpha}
\le M^{|\beta|}\cdot\cG_{p,n;C,\bA}
$$
Since the pair $(C,\bA)$ is $(p,n)$-output-stable, the gramian $\cG_{p,n;C,\bA}$ is bounded and hence,
${\Gr}_{p,n;\beta,C,\bA}$ is bounded as well. A similar computation based on formulas \eqref{4.32-p} and \eqref{march21}
verifies the inequality 
$$
\|\Ob_{p,n,\beta; C, \bA}x\|_{H^2_{\bo_{p,n}, \cY}(\free)}\le M^{|\beta|}\cdot 
\|\cO_{p,n;\bA,C}x\|_{H^2_{\bo_{p,n}, \cY}(\free)}
\quad\mbox{for all}\quad x\in\cX
$$
which implies that the operator $\Ob_{p,n,\beta; C, \bA}: \cX\to H^2_{\bo_{p,n}, \cY}(\free)$ is bounded. On the other hand,
by virtue of \eqref{march24} with $\beta=i_1\ldots i_N$,
$$
\omega^{-1}_{p,n;\alpha\beta}\ge \omega^{-1}_{p,n;\alpha}\cdot \varepsilon_{p,\beta},\quad\mbox{where}\quad
\varepsilon_{p,\beta}=p_{i_1}\ldots p_{i_N}>0
$$
which on account of \eqref{4.32-p} and \eqref{rest5}, implies
$$
\Gr_{p,n,\beta; C, \bA}=\sum_{\alpha\in\free}\frac{\omega_{p,n;\alpha}}{\omega_{p,n;\alpha\beta}}\cdot
\omega_{p,n;\alpha}^{-1}\bA^{* \alpha^{\top}} C^{*} C \bA^{\alpha}\succeq \varepsilon_{p,\beta}^{-1}\cdot\cG_{p,n;C,\bA}.
$$
If the pair $(C,\bA)$ is exactly $(p,n)$-observable, then $\cG_{p,n;C,\bA}\succ 0$ and hence,
${\Gr}_{p,n;\beta,C,\bA}$ is strictly positive definite as well. This proves part (1) in the the last statement.
To prove part (2), we write for a fixed $x\in\cX$,
$$
{\bf S}_{\bo_{p,n},R}^{\beta^\top}{\Ob}_{p,n,\beta;C,\bA}x=
\sum_{\alpha\in\free}\omega_{p,n;\alpha\beta}^{-1}(C\bA^{\alpha}x) \, z^{\alpha\beta}
$$
and then invoke the definition of the inner product in $H^2_{\bo_{p,n},\cY}(\free)$ to get
\begin{align*}
\|{\bf S}_{\bo,R}^{\beta^\top}{\Ob}_{p,n,\beta;,C,\bA}x\|^2_{H^2_{\bo_{p,n},\cY}(\free)}&=
\sum_{\alpha\in\free}\omega_{p,n;\alpha\beta}^{-1}
\big\langle C\bA^{\alpha}x, \, C\bA^{\alpha}x\big\rangle_{\cX}\notag\\
&=\bigg\langle \sum_{\alpha\in\free} \omega_{p,n;\alpha\beta}^{-1}
\bA^{*\alpha^{\top}}C^*C\bA^{\alpha}x, \, x \bigg\rangle_{\cX}
\end{align*}
which can be written, by the definition \eqref{4.32-p}, as 
$$
\|{\bf S}_{\bo,R}^{\alpha\top}\Ob_{p,n,\beta;C,\bA} x\|^2_{H^2_{\bo,\cY}(\free)}=\left\langle
\Gr_{p,n,\beta;C,\bA}x, \, x\right\rangle_{\cX}.
$$
Now a direct application of Proposition \ref{P:principle}
shows that the formal reproducing kernel for the space 
${\mathbf S}^{\beta^{\top}}_{\bo_{p,n}, R}\operatorname{Ran} \Ob_{p,n,\beta;C,\bA}$
is given by
$$
( S^{\beta^{\top}}_{\bo_{p,n}, R} \Ob_{p,n,\beta; C, \bA})(z)
\Gr_{p,n,\beta;C,\bA}^{-1} \big(( S^{\beta^{\top}}_{\bo_{p,n},R}
\Ob_{p,n,\beta;C,A})(\zeta) \big)^{*},
$$
which agrees exactly with
$\boldsymbol{\mathfrak K}_{p,n;\beta}(z, \zeta)$ given by \eqref{deffrakk-p}.
\end{proof}
A power-series computation based on the formula \eqref{4.32-p} and similar formulas for 
$\Gr_{p,n,j\beta; C, \bA}$ ($j=1,\ldots,d$) confirms the weighted Stein identity
 \begin{equation} \label{4.34-p}
\sum_{j=1}^d A_j^*\Gr_{p,n,j\beta; C, \bA}A_j + \omega_{p,n;\beta}^{-1}\cdot
     C^{*}C = \Gr_{p,n,\beta;C, \bA}\quad\mbox{for all}\quad  \beta\in\free,
 \end{equation}
which can be written in a more compact form as 
$$
A^*\widehat{\Gr}_{p,n,\beta; C, \bA}A+\omega_{p,n;\beta}^{-1}\cdot
     C^{*}C = \Gr_{p,n,\beta;C, \bA},
$$
where we have set
\begin{equation}   \label{blockgram}
\widehat{\Gr}_{p,n,\beta; C, \bA}=\begin{bmatrix}
\Gr_{p,n,1\beta; C, \bA} &&0\\  &\ddots &\\ 0 && \Gr_{p,n,d\beta; C, \bA} \end{bmatrix}\quad
\mbox{for all}\quad \beta\in\free.
\end{equation}
We now introduce the $(p,n)$-analog of the metric relation \eqref{isom}:
\begin{equation}   \label{isom-p}
  \begin{bmatrix} A^{*} & C^{*} \\ \widehat{B}_{\beta}^{*} & D_{\beta}^{*}
  \end{bmatrix} \begin{bmatrix} \widehat{\Gr}_{p,n,\beta;C,\bA} & 0 \\ 0 &
\omega_{p,n;\beta}^{-1}\cdot  I_{\cY}
\end{bmatrix}   \begin{bmatrix} A & \widehat{B}_{\beta} \\ C &
D_{\beta} \end{bmatrix}
 = \begin{bmatrix} \Gr_{p,n,\beta;C,\bA} & 0 \\ 0 & I_{\cU_{\beta}} \end{bmatrix}
    \end{equation}
and note that equality \eqref{4.34-p} corresponds to the (1,1)-entry of \eqref{isom-p}.
\begin{remark}
We next remark that if we are given only a $(p,n)$-output stable pair $(C, \bA)$ 
which is exactly $(p,n)$-observable, then by solving a suitable Cholesky factorization problem
(i.e., following the construction in Lemma \ref{L:5.6}) we can construct operators
$B_{1,\beta},\ldots,B_{d,\beta}\in \cL(\cU_\beta, \cX)$
and $D_\beta \in \cL(\cU_\beta,\cY)$  satisfying not only equality
\eqref{isom-p} but also the weighted coisometry condition
\begin{equation}   \label{wghtcoisom-p}
    \begin{bmatrix} A & {\widehat B}_{\beta}  \\ C & D_{\beta} \end{bmatrix}
        \begin{bmatrix} \Gr_{p,n,\beta;C,\bA}^{-1} & 0 \\ 0 &
            I_{\cU_\beta} \end{bmatrix}
        \begin{bmatrix} A^{*} & C^{*} \\ {\widehat B}_{\beta}^{*} &
            D_{\beta}^{*}
        \end{bmatrix} = \begin{bmatrix} \widehat{\Gr}_{p,n,\beta;C,\bA}^{-1} & 0 \\ 0 &
        \omega_{p,n;\beta} I_{\cY} \end{bmatrix}.
\end{equation}
\label{R:rest}
\end{remark}

\begin{lemma}   \label{L:5.1-p}
Let $(C,\bA)$ be a $(p,n)$-output stable pair and let $\Theta_{p,n, \bU_\beta}$ be defined
as in \eqref{1.37pre-p}  for some $\beta\in\free$ and some operators
$B_{1,\beta},\ldots,B_{d,\beta}\in \cL(\cU_\beta, \cX)$ and $D_\beta
\in \cL(\cU_\beta,\cY)$ subject to equality \eqref{isom-p}. Then $\Theta_{p,n, \bU_\beta}(z)z^\beta$ is 
$(p,n)$-Bergman inner. Moreover,
\begin{enumerate}
\item $\cO_{p,n;C,\bA}x$ is orthogonal to ${\bf S}_{\bo_{p,n},R}^{\beta^\top}\Theta_{p,n, \bU_\beta}u$ 
for all $x\in\cX$ and $u\in\cU_\beta$.

\item ${\bf S}_{\bo_{p,n},R}^{(\gamma \beta)^\top}\Theta_{p,n,\bU_\beta}u$ and 
${\bf S}_{\bo_{p,n},R}^{(\beta \gamma)^\top}\Theta_{p,n, \bU_\beta}u$ are both orthogonal to
${\bf S}_{\bo_{p,n},R}^{\beta^\top}\Theta_{p,n, \bU_\beta}u^\prime$ for all
$\gamma\neq\emptyset$ and for any $u,u^\prime\in\cU_\beta$.

\item With notation as in \eqref{apr22}, the following power-series identity holds:
\begin{align}
&\omega_{p,n;\beta}^{-1}I_{\cU_{\beta}}-
\Theta_{p,n,\bU_\beta}(z)^{*}\Theta_{p,n, \bU_\beta}(\zeta)\notag\\
&=\omega_{p,n;\beta} \widehat{B}_{\beta}^{*}
\big(\omega_{p,n;\beta}^{-1}I+\widehat{R}_{p,n;\beta}(z\bA)^*A^*\big)
\widehat{\Gr}_{p,n,\beta;C,\bA}\big(\omega_{p,n;\beta}^{-1}I+A\widehat{R}_{p,n;\beta}(\zeta\bA)
\big)\widehat{B}_{\beta}\notag\\
&\quad -\omega_{p,n;\beta}\widehat{B}_{\beta}^{*}
\widehat{R}_{p,n;\beta}(z\bA)^*\Gr_{p,n,\beta;C,\bA}
\widehat{R}_{p,n;\beta}(\zeta\bA)\widehat{B}_{\beta}.\label{jul18-p}
\end{align}
\end{enumerate}
\end{lemma}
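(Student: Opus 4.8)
The plan is to mimic the proof of Lemma \ref{L:5.1}, with the weighted Stein identity \eqref{4.34} replaced by its $(p,n)$-counterpart \eqref{4.34-p} and the metric constraint \eqref{isom} replaced by \eqref{isom-p}, the essential new feature being that the weights $\omega_{p,n;\alpha}$ now depend on the whole word $\alpha$ and not just on $|\alpha|$. First I would unpack the three block entries of \eqref{isom-p} (using the notation \eqref{blockgram}): the $(1,1)$-entry is precisely the Stein identity \eqref{4.34-p}, the $(1,2)$-entry reads $\sum_{j=1}^d A_j^* \Gr_{p,n,j\beta;C,\bA} B_{j,\beta} + \omega_{p,n;\beta}^{-1} C^* D_\beta = 0$ (the $(p,n)$-analogue of \eqref{jul13}), and the $(2,2)$-entry reads $\sum_{j=1}^d B_{j,\beta}^* \Gr_{p,n,j\beta;C,\bA} B_{j,\beta} + \omega_{p,n;\beta}^{-1} D_\beta^* D_\beta = I_{\cU_\beta}$ (the analogue of \eqref{jul14a}).

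For part (1), I would expand $\cO_{p,n;C,\bA}x$ via \eqref{march21} and $\bS_{\bo_{p,n},R}^{\beta^\top}\Theta_{p,n,\bU_\beta}u$ via the $(p,n)$-analogue of \eqref{jul15a} read off from \eqref{1.37pre-p}, form the $H^2_{\bo_{p,n},\cY}(\free)$-inner product of the two, collect the resulting terms into the series \eqref{4.32-p} defining $\Gr_{p,n,j\beta;C,\bA}$, and conclude that it vanishes by the $(1,2)$-block relation. Part (2) is handled by a case distinction on whether $\beta$ divides $\gamma$ on the relevant side: if it does not, every monomial occurring in one of the two relevant power series is orthogonal in $H^2_{\bo_{p,n},\cY}(\free)$ to every monomial occurring in the other (since $z^{\alpha}\cY\perp z^{\alpha'}\cY$ for $\alpha\ne\alpha'$), so orthogonality is immediate; if $\gamma=\delta\beta$ with $\delta=\widetilde\delta\widetilde j\ne\emptyset$ (respectively $\gamma=\widetilde\gamma\widetilde j\beta$), a direct inner-product computation collapses --- again using \eqref{4.32-p} and the $(1,2)$-block relation --- to $0$, exactly as in the verification of parts (1b), (1c) of Lemma \ref{L:5.1}.

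Next I would show that $\Psi(z):=\Theta_{p,n,\bU_\beta}(z)z^\beta$ is $(p,n)$-Bergman inner in the sense of Definition \ref{D:8in}. Condition (ii) there is precisely part (2) already proved. For condition (i), I would compute $\|\bS_{\bo_{p,n},R}^{\beta^\top}\Theta_{p,n,\bU_\beta}u\|^2_{H^2_{\bo_{p,n},\cY}(\free)}$ from the power series expansion, rewrite the sum (via \eqref{4.32-p}) as $\langle\bigl(\sum_{j=1}^d B_{j,\beta}^*\Gr_{p,n,j\beta;C,\bA}B_{j,\beta}+\omega_{p,n;\beta}^{-1}D_\beta^*D_\beta\bigr)u,u\rangle_{\cU_\beta}$, and invoke the $(2,2)$-block relation to see that this equals $\|u\|^2_{\cU_\beta}$; this is the $(p,n)$-analogue of \eqref{jul19}--\eqref{jul20}. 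Equivalently, one may apply Theorem \ref{T:12.2-p} to $\Psi$: part (2) supplies hypothesis (2) and the isometry computation supplies hypothesis (1) with equality, so the conclusion is that $\Psi$ is $(p,n)$-Bergman inner.

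For the identity in part (3), I would substitute the realization form \eqref{apr22} of $\Theta_{p,n,\bU_\beta}$ into $\omega_{p,n;\beta}^{-1}I_{\cU_\beta}-\Theta_{p,n,\bU_\beta}(z)^*\Theta_{p,n,\bU_\beta}(\zeta)$, expand into four terms, and use the three block relations of \eqref{isom-p} together with the shift recursion \eqref{march20a} (the $(p,n)$-analogue of \eqref{1.9pre}), which lets one absorb the factor $\omega_{p,n;\beta}^{-1}I$ into $\widehat R_{p,n;\beta}(\cdot\,\bA)$ after an application of $A$; the resulting rearrangement completes the square and reproduces \eqref{jul18-p}, in exact parallel with the passage from \eqref{lov4} to \eqref{jul18}. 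The main obstacle throughout is the bookkeeping: because $\omega_{p,n;\alpha}$ depends on the full word $\alpha$, the weight-ratio cancellations and telescoping that were transparent in the length-only-dependent setting must now be tracked word by word, and in part (2) the subcase $\gamma=\delta\beta$ requires care to pin down precisely which concatenation index appears so that the correct instance of \eqref{4.32-p} and of the $(1,2)$-block relation of \eqref{isom-p} can be applied.
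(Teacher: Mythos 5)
Your proposal is correct and follows essentially the same route as the paper: reading off the $(1,2)$- and $(2,2)$-block relations of \eqref{isom-p} (with the $(1,1)$-block being the Stein identity \eqref{4.34-p}), verifying (1) and (2) by the same inner-product computations collapsed via \eqref{4.32-p}, obtaining the isometry on constants from the $(2,2)$-block so that Bergman-innerness follows (the paper concludes directly from Definition \ref{D:8in} rather than via Theorem \ref{T:12.2-p}, a negligible difference), and proving (3) by substituting \eqref{apr22} and completing the square using \eqref{isom-p} and the recursion \eqref{march20a}. No gaps.
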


\begin{proof} To lighten the notation in the proof, we write simply $\Theta_{p,n,\beta}$ rather than $\Theta_{p,n, \bU_\beta}$.
Statements (1) and (2) rely on the equality 
\begin{equation}   \label{isom-p12}
\omega_{p,n;\beta}^{-1}\cdot C^*D_\beta +\sum_{j=1}^d
A_j^*\Gr_{p,n,j\beta,C,\bA}B_{j,\beta} = 0
    \end{equation}
which occurs as the (1,2)-entry in \eqref{isom-p}. Making use of 
expansions \eqref{march21}, \eqref{1.37pre-p} and the definition of the inner product in
$H^2_{\bo_{p,n},\cY}(\free)$ we get
\begin{align*}
&\big\langle {\bf S}_{\bo_{p,n},R}^{\beta^\top}\Theta_{p,n,\beta} u, \, \cO_{p,n;C,\bA}x
\big\rangle_{H^2_{\bo_{p,n},\cY}(\free)}\\
&=\omega_{p,n;\beta}^{-1}\cdot\left\langle D_\beta  u, \,
C\bA^{\beta}x\right\rangle_{\cY}  +\sum_{j=1}^d
\sum_{\alpha\in\free}\omega_{p,n;\alpha j\beta}^{-1}\cdot
\left\langle C\bA^{\alpha}B_{j,\beta}u, \, C\bA^{\alpha j \beta}x
\right\rangle_{\cY}  \\
&=\bigg\langle \bigg(\omega_{p,n;\beta}^{-1}\cdot C^*D_\beta +
\sum_{j=1}^d A_j^*\bigg(\sum_{\alpha\in\free}
\omega_{p,n;\alpha j\beta}^{-1}
\bA^{*\alpha^{\top}}C^*C\bA^{\alpha}\bigg)
B_{j,\beta}\bigg)u,\, \bA^{\beta}x\bigg\rangle_{\cX}  \\
&=\bigg\langle \bigg(\omega_{p,n;\beta}^{-1}\cdot C^*D_\beta +\sum_{j=1}^d
A_j^*\Gr_{p,n,j\beta,C,\bA}B_{j,\beta}\bigg) u,\, \bA^{\beta}x\bigg\rangle_{\cX}=0
\end{align*}
thus proving statement (1). Letting $\widetilde{j}\in\{1,\ldots,d\}$ to denote the rightmost 
letter in the given $\gamma \neq\emptyset$ (i.e., $\gamma =\widetilde{\gamma}\widetilde{j}$)
we compute (as in the proof of Lemma \ref{L:5.1}),
\begin{align*}
&\big\langle {\bf S}_{\bo_{p,n},R}^{(\gamma \beta)^\top}\Theta_{n,p;\beta} u,
\; {\bf S}_{\bo_{p,n},R}^{\beta^\top}\Theta_{p,n,\beta} 
u^\prime\big\rangle_{H^2_{\bo_{p,n},\cY}(\free)}\\
& =\omega_{p,n;\beta\gamma}\cdot \big\langle \omega_{p,n;\beta}^{-1} D_\beta   u, \;
\omega_{p,n;\beta\gamma}^{-1}
C\bA^{\widetilde{\gamma}}B_{\widetilde{j}, \beta}u^\prime\big\rangle_{\cY} \\
&\quad  +\sum_{j=1}^d\sum_{\alpha\in\free} \omega_{p,n, \alpha j\gamma}\cdot
\big\langle \omega_{p,n,\alpha j\beta}^{-1}
C\bA^{\alpha}B_{j,\beta} u, \;
\omega_{p,n, \alpha j\gamma}^{-1} C\bA^{\alpha j\widetilde{\gamma}
}B_{\widetilde{j},\beta} u^\prime\big\rangle_{\cY}  \\
 & = \omega_{p,n;\beta}^{-1}\cdot \big\langle D_\beta u, \;
C\bA^{\widetilde{\gamma}}B_{\widetilde{j},\beta} u^\prime\big\rangle_{\cY}
+\sum_{j=1}^d\sum_{\alpha\in\free}\omega_{p,n;\alpha j\beta}^{-1}\cdot
\big\langle C\bA^{\alpha}B_{j,\beta}u, \; C\bA^{\alpha j\widetilde{\gamma}}
B_{\widetilde{j},\beta} u^\prime\big\rangle_{\cY}  \\
&=\bigg\langle \bigg(\omega_{p,n;\beta}^{-1}C^*D_\beta +
\sum_{j=1}^d A_j^*\bigg(\sum_{\alpha\in\free}
\mu_{p,n;\alpha j\beta}^{-1}
\bA^{*\alpha^{\top}}C^*C\bA^{\alpha}\bigg) B_{j,\beta}\bigg)u, \;
\bA^{\widetilde{\gamma}}B_{\widetilde{j},\beta} u^\prime\bigg\rangle_{\cX}  \\
&=\bigg\langle \bigg(\omega_{p,n;\beta}^{-1} \cdot C^*D_\beta +\sum_{j=1}^d
A_j^*\Gr_{p,n,j\beta;C,\bA}B_{j,\beta}\bigg)u,\,
\bA^{\widetilde{\gamma}}B_{\widetilde{j},\beta} u^\prime\bigg\rangle_{\cX}=0,
\end{align*}
where the last equality follows from \eqref{isom-p12}. Thus, 
${\bf S}_{\bo_{p,n},R}^{(\gamma \beta)^\top}\Theta_{p,n,\beta}u$ is orthogonal 
to ${\bf S}_{\bo_{p,n},R}^{\beta^\top}\Theta_{p,n,\beta}u^\prime$. Orthogonality
of ${\bf S}_{\bo_{p,n},R}^{(\gamma \beta)^\top}\Theta_{p,n,\beta}u$ to 
${\bf S}_{\bo_{p,n},R}^{\beta^\top}\Theta_{p,n,\beta}u^\prime$ is verified in much the same way.
Yet another inner-product computation,
\begin{align*}
&\| {\bf S}_{\bo_{p,n},R}^{\beta^\top}\Theta_{p,n,\beta}u\|^2_{H^2_{\bo_{p,n},\cY}(\free)}\\
&=\omega_{p,n;\beta}^{-1}\cdot \| D_\beta u\|^2_{\cY}+\sum_{j=1}^d
\sum_{\alpha\in\free}\omega_{p,n;\alpha j\beta}^{-1}\big\|
C\bA^{\alpha}B_{j,\beta}u\big\|^2_{\cX}\\
&=\omega_{p,n;\beta}^{-1}\cdot \| D_\beta u\|^2_{\cY}+
\bigg\langle \sum_{j=1}^d B_{j,\beta}^*\bigg(
\sum_{\alpha\in\free}\omega_{p,n;\alpha j\beta}^{-1}
\bA^{*\alpha^{\top}}C^*C\bA^{\alpha}\bigg)B_{j,\beta}u, \; u\bigg\rangle_{\cU}\\
&=\bigg\langle \bigg(\omega_{p,n;\beta}^{-1}\cdot D_\beta^*D_\beta
+\sum_{j=1}^d B_{j,\beta}^*\Gr_{p,n,j\beta;C,\bA}B_{j,\beta}\bigg)u, \; u\bigg\rangle_{\cU}=\|u\|^2_{\cU}
\end{align*}
(where the last equality is justified by the equality of $(2,2)$-entries in \eqref{isom-p}) shows that 
the operator ${\bf S}_{\bo_{p,n},R}^{\beta^\top}M_{\Theta_{p,n,\beta}}$ is an isometry
from $\cU_{\beta}$ into $H^2_{\bo_{p,n},\cY}(\free)$. Combining this fact with part (2) of the lemma, we conclude 
by Definition \ref{D:8in}, that $\Theta_{p,n,\beta}(z)z^\beta$ is $(p,n)$-Bergman inner.

\smallskip

Finally, substituting the representation \eqref{apr22} into the left hand side of \eqref{jul18-p}
and making use of \eqref{isom-p} we get
\begin{align*}
&\omega_{p,n;\beta}^{-1}I_{\cU_{\beta}} -
    \big( \omega_{p,n;\beta}^{-1}D_{\beta}^{*} + \widehat{B}_\beta^*\widehat{Z}(z)^*
    \widehat{R}_{p,n;\beta}(z\bA)^*C^*
\big)\big(\omega_{p,n;\beta}^{-1}D_{\beta}+ C\widehat{R}_{p,n;\beta}(\zeta\bA)\widehat{Z}(\zeta)
\widehat{B}_\beta\big)  \notag\\
&   = \omega_{p,n;\beta}^{-1}I_{\cU_{\beta}}-\omega_{p,n;\beta}^{-2}D_{\beta}^{*}D_{\beta}
- \omega_{p,n;\beta}^{-1}\widehat{B}_\beta^*\widehat{Z}(z)^*\widehat{R}_{p,n;\beta}(z\bA)^*C^*D_\beta
\notag\\
&\quad-\omega_{p,n;\beta}^{-1}D_\beta^*C \widehat{R}_{p,n;\beta}(\zeta\bA)\widehat{Z}(\zeta)
\widehat{B}_\beta
- \widehat{B}_\beta^*\widehat{Z}(z)^*\widehat{R}_{p,n;\beta}(z\bA)^*C^*C
\widehat{R}_{p,n;\beta}(\zeta\bA)
\widehat{Z}(\zeta)\widehat{B}_\beta\notag\\
    &  = \omega_{p,n;\beta}^{-1} \widehat{B}_{\beta}^{*}\widehat\Gr_{p,n,\beta;C,\bA}
    \widehat{B}_{\beta}
+\widehat{B}_\beta^*\widehat{Z}(z)^*\widehat{R}_{p,n;\beta}(z\bA)^* A^{*}\widehat\Gr_{p,n,\beta;C,\bA}
\widehat{B}_{\beta}\notag\\
&\quad + \widehat{B}_{\beta}^{*} \widehat\Gr_{p,n,\beta;C,\bA} A
\widehat{R}_{p,n;\beta}(\zeta\bA) \widehat{Z}(\zeta)\widehat{B}_\beta\notag\\
     & \quad - \omega_{p,n;\beta} \widehat{B}_\beta^*\widehat{Z}(z)^*
     \widehat{R}_{p,n;\beta}(z\bA)^*
(\Gr_{p,n,\beta;C,\bA}-A^*\widehat\Gr_{p,n,\beta;C,\bA}A)\widehat{R}_{p,n;\beta}(\zeta\bA)
\widehat{Z}(\zeta)\widehat{B}_\beta.
\end{align*}
which is the same as the expression on the right side of \eqref{jul18-p}.
\end{proof}

As a corollary we obtained the following analogue of Corollary \ref{C:5.3}

\begin{corollary}  \label{C:5.3-p}  Assume that $(C, \bA)$ and $B_{1, \beta}, \dots, B_{d, \beta} \in \cL(\cU_\beta, \cX)$, $D_\beta \in \cL(\cU_\beta, \cY)$
are as in the hypotheses of Lemma \ref{L:5.1-p} with associated family of transfer functions $\Theta_{p,n \bU_\beta}$ as in \eqref{1.37pre-p}.
Then the representation \eqref{1.36pre-p} is orthogonal in the metric of $H^2_{\bo_{p,n}, \cY}(\cF)$ and we have
\begin{align*}
\| \widehat y \|^2_{H^2_{\bo_{p,n}}(\free)} & = \| \cO_{\bo_{p,n}, C, \bA} x \|^2_{H^2_{\bo_{p,n}, \cY}(\free)} +
\| \bS_{\bo_{p,n}, R} \Theta_{\bo_{p,n}, \bU_\beta} u_\beta \|^2_{H^2_{\bo_{p,n}, \cY}}  \\
& = \| \cG^{\frac{1}{2}}_{\bo_{p,n}, c, \bA} x \|^2_\cX + \sum_{\beta \in \free} \| u_\beta \|^2_{\cU_\beta}.
\end{align*}
\end{corollary}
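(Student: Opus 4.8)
The statement is essentially an immediate consequence of Lemma \ref{L:5.1-p}, obtained from it exactly as Corollary \ref{C:5.3} is obtained from Lemma \ref{L:5.1}. First I would rewrite the $Z$-transformed trajectory formula \eqref{1.36pre-p} in the form
$$
\widehat y(z) = \cO_{p,n;C,\bA}x + \sum_{\beta\in\free} \bS_{\bo_{p,n},R}^{\beta^\top}\Theta_{p,n,\bU_\beta}u_\beta ,
$$
displaying $\widehat y$ as the sum of the zero-input observability term $\cO_{p,n;C,\bA}x$ and the family of impulse-response terms $\bS_{\bo_{p,n},R}^{\beta^\top}\Theta_{p,n,\bU_\beta}u_\beta$ indexed by $\beta\in\free$. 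The orthogonality assertion then amounts to verifying that these summands are mutually orthogonal in $H^2_{\bo_{p,n},\cY}(\free)$.

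Orthogonality of $\cO_{p,n;C,\bA}x$ to each $\bS_{\bo_{p,n},R}^{\beta^\top}\Theta_{p,n,\bU_\beta}u_\beta$ is precisely part (1) of Lemma \ref{L:5.1-p}. For two distinct impulse-response blocks, with $\beta\ne\beta'$ in $\free$, I would distinguish two cases exactly as in the proof of part (1b) of Lemma \ref{L:5.1}: if neither of $\beta,\beta'$ is a right factor of the other, then every monomial occurring in $\bS_{\bo_{p,n},R}^{\beta^\top}\Theta_{p,n,\bU_\beta}u_\beta$ is of the form $z^{\alpha\beta}$ and every monomial in $\bS_{\bo_{p,n},R}^{\beta'^\top}\Theta_{p,n,\bU_{\beta'}}u_{\beta'}$ of the form $z^{\alpha'\beta'}$, and these words are pairwise distinct, so the two blocks are orthogonal because the monomial subspaces $z^\gamma\cY$ are pairwise orthogonal in $H^2_{\bo_{p,n},\cY}(\free)$; if instead $\beta'=\delta\beta$ with $\delta\ne\emptyset$ (or symmetrically), the orthogonality is obtained by the same inner-product computation carried out in the proof of part (2) of Lemma \ref{L:5.1-p} --- expanding both power series via \eqref{1.37pre-p}, collecting the coefficient of the common monomial, and invoking the $(1,2)$-block relation \eqref{isom-p12} (with $\delta\beta$ in place of $\beta$). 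Thus all summands above are mutually orthogonal, which establishes orthogonality of the representation \eqref{1.36pre-p}.

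With orthogonality in hand, the Pythagorean identity gives
$$
\|\widehat y\|^2_{H^2_{\bo_{p,n},\cY}(\free)} = \|\cO_{\bo_{p,n},C,\bA}x\|^2_{H^2_{\bo_{p,n},\cY}(\free)} + \sum_{\beta\in\free}\|\bS_{\bo_{p,n},R}^{\beta^\top}\Theta_{p,n,\bU_\beta}u_\beta\|^2_{H^2_{\bo_{p,n},\cY}(\free)},
$$
which is the first equality. For the second, the first term equals $\langle\cG_{p,n;C,\bA}x,x\rangle_\cX=\|\cG_{p,n;C,\bA}^{\frac{1}{2}}x\|^2_\cX$ by the definition \eqref{rest5} of the $(p,n)$-observability gramian, and for each $\beta$ the term $\|\bS_{\bo_{p,n},R}^{\beta^\top}\Theta_{p,n,\bU_\beta}u_\beta\|^2$ equals $\|u_\beta\|^2_{\cU_\beta}$: this is precisely the computation made inside the proof of Lemma \ref{L:5.1-p}, where $\|\bS_{\bo_{p,n},R}^{\beta^\top}M_{\Theta_{p,n,\bU_\beta}}u_\beta\|^2$ is expanded as $\omega_{p,n;\beta}^{-1}\|D_\beta u_\beta\|^2 + \langle\sum_j B_{j,\beta}^*\Gr_{p,n,j\beta;C,\bA}B_{j,\beta}u_\beta,u_\beta\rangle$ and reduced to $\|u_\beta\|^2_{\cU_\beta}$ via the $(2,2)$-block of the metric relation \eqref{isom-p}. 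Summing over $\beta$ then yields the second form of the norm identity; I would close by noting that the rearrangements of the infinite series of non-negative terms are legitimate because of the mutual orthogonality just established together with the $(p,n)$-output stability of $(C,\bA)$.

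The one point that is not a literal quotation of Lemma \ref{L:5.1-p} is the off-diagonal cross-orthogonality between the blocks indexed by $\beta$ and by $\delta\beta$ with $\delta\ne\emptyset$: part (2) of that lemma is stated only for the diagonal case in which $\Theta_{p,n,\bU_\beta}$ appears on both sides, whereas here one needs $\Theta_{p,n,\bU_{\delta\beta}}$ paired against $\Theta_{p,n,\bU_\beta}$. I expect this to be the only real work --- it requires reproducing a minor variant of the inner-product computation from the proof of part (2) of Lemma \ref{L:5.1-p}, or, more cleanly, first recording that off-diagonal statement as an additional clause of Lemma \ref{L:5.1-p}, exactly as Lemma \ref{L:5.1} already does in its part (1b) in the $\bo$-setting. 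Everything else is routine bookkeeping.
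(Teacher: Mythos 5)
Your proposal is correct and follows exactly the route the paper intends: the paper states Corollary \ref{C:5.3-p} with no proof, as an immediate consequence of Lemma \ref{L:5.1-p} (mirroring how Corollary \ref{C:5.3} follows from Lemma \ref{L:5.1}), and your argument simply fills in those details. You also rightly flag the only point not literally covered by Lemma \ref{L:5.1-p} — the cross-orthogonality between blocks indexed by $\beta$ and $\delta\beta$ with $\delta\ne\emptyset$, the $(p,n)$-analogue of part (1b) of Lemma \ref{L:5.1} — and your fix via the relation \eqref{isom-p12} with $\delta\beta$ in place of $\beta$ is exactly the computation the authors implicitly rely on.
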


We now get back to closed ${\bf S}_{\bo_{p,n},R}$-invariant subspaces of $H^2_{\bo_{p,n},\cY}(\free)$.
A careful inspection of the proof of Lemma \ref{L:Mdecom} shows that the same proof goes through for any 
closed ${\bf T}$-invariant subspace of a Hilbert space $\cH$ for any operator tuple ${\bf T}=(T_1,\ldots,T_d)$
of bounded operators with orthogonal ranges. Since the right coordinate-variable multipliers 
$\bS_{\bo_{p,n},R,1},\ldots,\bS_{\bo_{p,n},R,d}$ on $H^2_{\bo_{p,n},\cY}(\free)$ have mutually orthogonal ranges,
we have, in particular, the following result.  

\begin{lemma} \label{L:Mdecom-p}
If a closed subspace $\cM \subset H^2_{\bo_{p,n},\cY}(\free)$ is ${\mathbf S}_{\bo_{p,n},R}$-invariant, then 
\begin{equation}
 \cM = \bigoplus_{\beta\in\free} \cM_\beta,\quad\mbox{where}\quad 
\cM_\beta:=\bS_{\bo_{p,n}, R}^{\beta^{\top}} \cM \ominus \bigg(\bigoplus_{j=1}^d
\bS_{\bo_{p,n},R}^{\beta^{\top}}\bS_{\bo_{p,n},R,j} \cM  \bigg).
\label{Mdecom-p}
\end{equation}
Furthermore, for any $\alpha \in \free$, we have the orthogonal direct-sum decomposition
$$
    \bS_{\bo_{p,n},R}^{\alpha^{\top}} \cM = \bigoplus_{\beta \in \free}
    \cM_{\beta \alpha}.
$$
 \end{lemma}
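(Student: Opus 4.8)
The plan is to observe that the proof of Lemma \ref{L:Mdecom} is purely structural: it nowhere invokes the admissibility conditions \eqref{18.2} on the weight sequence (in particular it never uses that the shift operators are contractions or left-invertible), but only three features of the ambient space and the shift tuple. First I would isolate those features as they apply to $H^2_{\bo_{p,n},\cY}(\free)$ and $\bS_{\bo_{p,n},R}$: (i) the monomial orthogonality $z^\alpha y \perp z^{\alpha'} y'$ in $H^2_{\bo_{p,n},\cY}(\free)$ for all $y,y'\in\cY$ and all $\alpha\ne\alpha'$ in $\free$; (ii) the triviality $\bigcap_{N\ge 0}\bigvee_{\alpha\in\free\colon|\alpha|=N}\bS_{\bo_{p,n},R}^{\alpha^\top}H^2_{\bo_{p,n},\cY}(\free)=\{0\}$; and (iii) the pairwise orthogonality $\operatorname{Ran} S_{\bo_{p,n},R,i}\perp\operatorname{Ran} S_{\bo_{p,n},R,j}$ for $i\ne j$. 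Feature (i) is immediate from the defining norm $\|f\|^2=\sum_{\alpha\in\free}\omega_{p,n;\alpha}\|f_\alpha\|^2$ in \eqref{18.1p}; feature (ii) follows from the containment $\bigcap_N\bigvee_{|\alpha|=N}\bS_{\bo_{p,n},R}^{\alpha^\top}H^2_{\bo_{p,n},\cY}(\free)\subset\bigcap_N\bigvee_{|\alpha|=N}\cY\langle\langle z\rangle\rangle\cdot z^\alpha=\{0\}$; and feature (iii) is clear from the form of the inner product together with the action $S_{\bo_{p,n},R,j}\colon f(z)\mapsto f(z)z_j$ (this is exactly the point already recorded in the discussion preceding the statement).

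Granting this, I would run the argument of Lemma \ref{L:Mdecom} verbatim with $\bS_{\bo,R}$ replaced throughout by $\bS_{\bo_{p,n},R}$. The first step is the pairwise orthogonality $\cM_\beta\perp\cM_\gamma$ for $\beta\ne\gamma$ in $\free$, which splits into two cases. Writing $\gamma=\delta\beta'$ with $|\beta'|=|\beta|$: if $\beta'\ne\beta$, then every element of $\bS_{\bo_{p,n},R}^{\beta^\top}\cM$ is supported on monomials $z^{\alpha\beta}$ while every element of $\bS_{\bo_{p,n},R}^{\gamma^\top}\cM$ is supported on monomials $z^{\alpha'\delta\beta'}$, and these are pairwise orthogonal by (i); if $\beta'=\beta$, so $\gamma=\delta\beta$ with $\delta\ne\emptyset$, then $\cM_\gamma\subset\bS_{\bo_{p,n},R}^{\gamma^\top}\cM=\bS_{\bo_{p,n},R}^{\beta^\top}\bS_{\bo_{p,n},R}^{\delta^\top}\cM\subset\bigoplus_{j=1}^d\bS_{\bo_{p,n},R}^{\beta^\top}S_{\bo_{p,n},R,j}\cM$ (the internal sum being orthogonal by (iii)), and this space is orthogonal to $\cM_\beta$ by the defining formula \eqref{Mdecom-p}. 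The second step establishes $\bS_{\bo_{p,n},R}^{\alpha^\top}\cM=\bigoplus_{\beta\in\free}\cM_{\beta\alpha}$, of which the asserted decomposition $\cM=\bigoplus_\beta\cM_\beta$ is the special case $\alpha=\emptyset$; here one reproduces the induction in Lemma \ref{L:Mdecom}: the inclusion $\supseteq$ is immediate from $\cM_{\beta\alpha}\subset\bS_{\bo_{p,n},R}^{\alpha^\top}\cM$, and for $\subseteq$ one shows by induction on $N$ that if $h\in\bS_{\bo_{p,n},R}^{\alpha^\top}\cM$ is orthogonal to every $\cM_{\beta\alpha}$ then $h\in\bigvee_{|\beta|=N}\bS_{\bo_{p,n},R}^{\alpha^\top}\bS_{\bo_{p,n},R}^{\beta^\top}\cM$ for all $N$, whence $h\in\bS_{\bo_{p,n},R}^{\alpha^\top}\bigcap_N\bigvee_{|\beta|=N}\bS_{\bo_{p,n},R}^{\beta^\top}H^2_{\bo_{p,n},\cY}(\free)=\{0\}$ by (ii).

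I do not expect a genuine obstacle: the content of the lemma is precisely the general principle that the proof of Lemma \ref{L:Mdecom} extracts, and the only point requiring a moment's attention is the verification (noted above) that $\bS_{\bo_{p,n},R}$ has pairwise orthogonal ranges, so feature (iii) holds despite the fact that $\bo_{p,n}$ need not satisfy \eqref{18.2}. Accordingly, in the write-up I would present the result as an immediate corollary, phrasing the proof of Lemma \ref{L:Mdecom} as establishing the stated decomposition for any closed $\bT$-invariant subspace of any Hilbert space $\cH$ of formal power series in which distinct monomials are orthogonal, $\bigcap_N\bigvee_{|\alpha|=N}\bT^{\alpha^\top}\cH=\{0\}$, and the components of $\bT=(T_1,\ldots,T_d)$ have pairwise orthogonal ranges, and then applying this with $\bT=\bS_{\bo_{p,n},R}$ and $\cH=H^2_{\bo_{p,n},\cY}(\free)$.
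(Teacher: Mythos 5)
Your proposal is correct and is essentially the paper's own argument: the paper disposes of Lemma \ref{L:Mdecom-p} by remarking that the proof of Lemma \ref{L:Mdecom} uses none of the admissibility conditions \eqref{18.2} and carries over verbatim to any tuple of bounded operators with mutually orthogonal ranges, in particular to $\bS_{\bo_{p,n},R}$ on $H^2_{\bo_{p,n},\cY}(\free)$. Your explicit isolation of the three structural ingredients (monomial orthogonality, triviality of $\bigcap_N\bigvee_{|\alpha|=N}\bS_{\bo_{p,n},R}^{\alpha^\top}H^2_{\bo_{p,n},\cY}(\free)$, and orthogonal ranges) just spells out why that transfer is legitimate, so no further comparison is needed.
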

 
Given a  ${\bf S}_{\bo_{p,n},R}$-invariant closed subspace $\cM$ of $H^2_{\bo_{p,n},\cY}(\free)$, we 
let $(C,\bA)$ be a $(p,n)$-isometric pair such that $\cM^{\perp} =\operatorname{Ran} \cO_{p,n; C,\bA}$
(by Theorem \ref{T:2-1.2'-p}, we can let $(C,\bA) = (E|_{\cM}, {\bf S}_{\bo_{p,n},R}^{*}|_{\cM})$). 
Thus, $\cM^{\perp}$ is the NFRKHS with reproducing kernel \eqref{kerkmp-p}. 
Since  $k_{{\rm nc},\bo_{p,n}}(z,\zeta)$ \eqref{kbo-p} is the reproducing kernel for
$H^2_{\bo_{p,n}}(\free)$, it  follows that $\cM$ has reproducing kernel
\begin{equation}
\label{kMa-p}
k_{\cM}(z, \zeta) =k_{{\rm nc},\bo_{p,n}}(z,\zeta)I_{\cY}- C R_{p,n}(z\bA)\cG_{p,n;C,\bA}^{-1}R_{p,n}(\zeta\bA)^*C^*.
 \end{equation}
The next step is to characterize the spaces ${\bf S}_{\bo_{p,n},R}^{\beta^{\top}}\cM$ (or their orthogonal complements)
in terms of the chosen pair $(C,\bA)$.
\begin{proposition}   \label{P:SMperp-p}
    The space $({\bf S}_{\bo_{n,p},R}^{\beta^{\top}} \cM)^{\perp}$ is characterized as
 \begin{equation}   \label{SMperp-p}
     \big({\bf S}_{\bo_{p,n},R}^{\beta^{\top}}\cM\big)^{\perp} =
\big({\bf S}_{\bo_{n.p},R}^{\beta^{\top}} H^2_{\bo_{n,p},\cY}(\free)\big)^\perp\bigoplus
    {\bf S}_{\bo_{n,p},R}^{\beta^{\top}} \operatorname{Ran} {\Ob}_{p,n,\beta;C,\bA}
 \end{equation}
where the shifted gramian $\operatorname{Ran} {\Ob}_{p,n,\beta;C,\bA}$ is defined as in \eqref{4.31-p}.
 \end{proposition}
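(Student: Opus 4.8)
The plan is to adapt the proof of Proposition \ref{P:SMperp} essentially verbatim to the $(p,n)$-setting, making the substitutions: the weight $\bo$ is replaced by $\bo_{p,n}$, the length-indexed shifted observability operator $\Ob_{\bo,|\beta|,C,\bA}$ is replaced by its word-indexed counterpart $\Ob_{p,n,\beta;C,\bA}$ from \eqref{4.31-p}, and the iterated backward-shift formula \eqref{again43} is replaced by its $(p,n)$-analog obtained by iterating \eqref{5.2p} (i.e. \eqref{SRLv-p}). Throughout we fix, as in the discussion preceding the proposition and in Theorem \ref{T:2-1.2'-p}, a $(p,n)$-isometric output pair $(C,\bA)$ with $\cM^{\perp} = \operatorname{Ran}\cO_{p,n;C,\bA}$ (for instance the model pair $(E|_\cM,\bS_{\bo_{p,n},R}^{*}|_\cM)$).

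First I would decompose an arbitrary $f \in H^2_{\bo_{p,n},\cY}(\free)$ as $f(z) = q(z) + \widetilde f(z) z^\beta$ with $q \in (\bS_{\bo_{p,n},R}^{\beta^\top} H^2_{\bo_{p,n},\cY}(\free))^\perp$ and $\widetilde f \in H^2_{\bo_{p,n},\cY}(\free)$. Since $\bS_{\bo_{p,n},R}^{\beta^\top}\cM \subset \bS_{\bo_{p,n},R}^{\beta^\top} H^2_{\bo_{p,n},\cY}(\free)$, the summand $q$ is automatically orthogonal to $\bS_{\bo_{p,n},R}^{\beta^\top}\cM$, so it remains to decide which functions of the form $\widetilde f(z) z^\beta = \bS_{\bo_{p,n},R}^{\beta^\top}\widetilde f$ are orthogonal to $\bS_{\bo_{p,n},R}^{\beta^\top}\cM$. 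By the adjoint identity $\langle \bS_{\bo_{p,n},R}^{\beta^\top}\widetilde f,\, \bS_{\bo_{p,n},R}^{\beta^\top} g\rangle = \langle \bS_{\bo_{p,n},R}^{*\beta}\bS_{\bo_{p,n},R}^{\beta^\top}\widetilde f,\, g\rangle$ valid for $g \in \cM$, this happens precisely when $\bS_{\bo_{p,n},R}^{*\beta}\bS_{\bo_{p,n},R}^{\beta^\top}\widetilde f \in \cM^\perp = \operatorname{Ran}\cO_{p,n;C,\bA}$.

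Next I would compute, using formula \eqref{5.2p} (equivalently, its iterate \eqref{SRLv-p}) together with $\bS_{\bo_{p,n},R}^{\beta^\top}\colon h(z)\mapsto h(z)z^\beta$, that
\[
\bS_{\bo_{p,n},R}^{*\beta}\bS_{\bo_{p,n},R}^{\beta^\top}\colon \sum_{\alpha\in\free}\widetilde f_\alpha z^\alpha \mapsto \sum_{\alpha\in\free}\frac{\omega_{p,n;\alpha\beta}}{\omega_{p,n;\alpha}}\,\widetilde f_\alpha z^\alpha,
\]
the $(p,n)$-analog of \eqref{again43}. Hence $\bS_{\bo_{p,n},R}^{\beta^\top}\widetilde f \perp \bS_{\bo_{p,n},R}^{\beta^\top}\cM$ if and only if there is an $x\in\cX$ with $\sum_\alpha \frac{\omega_{p,n;\alpha\beta}}{\omega_{p,n;\alpha}}\widetilde f_\alpha z^\alpha = (\cO_{p,n;C,\bA}x)(z) = \sum_\alpha(\omega_{p,n;\alpha}^{-1}C\bA^\alpha x)z^\alpha$; equating coefficients of $z^\alpha$ gives $\widetilde f_\alpha = \omega_{p,n;\alpha\beta}^{-1}C\bA^\alpha x$ for all $\alpha\in\free$, i.e. $\widetilde f = \Ob_{p,n,\beta;C,\bA}x$ by the defining formula \eqref{4.31-p}. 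Thus $\widetilde f\in\operatorname{Ran}\Ob_{p,n,\beta;C,\bA}$, and since the analysis is both necessary and sufficient, the formula \eqref{SMperp-p} follows.

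Since the argument is a direct transcription of that of Proposition \ref{P:SMperp}, I expect essentially no obstacle; the only computational ingredient is the $(p,n)$-shift identity for $\bS_{\bo_{p,n},R}^{*\beta}\bS_{\bo_{p,n},R}^{\beta^\top}$ displayed above, which is immediate from \eqref{5.2p}. The one mild point worth flagging is that $\Ob_{p,n,\beta;C,\bA}$ need not define a \emph{bounded} operator without the extra assumption \eqref{rest} (and exact $(p,n)$-observability of $(C,\bA)$); however, exactly as in Proposition \ref{P:SMperp}, its \emph{range} is a well-defined set of formal power series in $\cY\langle\langle z\rangle\rangle$, and \eqref{SMperp-p} should be read as an identity of subspaces of $H^2_{\bo_{p,n},\cY}(\free)$, so this causes no difficulty.
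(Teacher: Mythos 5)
Your proposal is correct and follows essentially the same route as the paper's own proof: the same decomposition $f = q + \widetilde f\, z^\beta$, the same reduction via the $(p,n)$-analog of \eqref{again43} (namely \eqref{SRLv-p}), and the same coefficient comparison identifying $\widetilde f$ with $\Ob_{p,n,\beta;C,\bA}x$. Nothing further is needed.
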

\begin{proof}
Following the strategy of the proof of Proposition \ref{P:SMperp}, we write a power series 
$f\in H^2_{\bo_{p,n},\cY}(\free)$ as 
$$
f(z)=p(z)+\widetilde{f}(z)z^\beta\quad\mbox{with}\quad p\in  
\left({\bf S}_{\bo_{p,n},R}^{\beta^{\top}} H^2_{\bo_{p,n},\cY}(\free)\right)^\perp\subset {\bf
S}_{\bo_{p,n},R}^{\beta^{\top}}H^2_{\bo_{p,n},\cY}(\free)
$$
and then characterize power series $\widetilde{f}\in H^2_{\bo_{p,n},\cY}(\free)$ such that 
${\bf S}_{\bo_{p,n},R}^{\beta^{\top}}\widetilde f$ is orthogonal to ${\bf
S}_{\bo_{p,n},R}^{\beta^{\top}} \cM$, or equivalently, such that  
${\bf S}^{*\beta}_{\bo_{p,n},R}{\bf S}_{\bo_{p,n},R}^{\beta^{\top}}\widetilde{f}$ is orthogonal to $\cM$, that is,
belongs to $\cM^\perp= \operatorname{Ran}\cO_{p,n;C,\bA}$. Since by \eqref{SRLv-p},
$$
{\bf S}^{*\beta}_{\bo_{p,n},R}{\bf S}_{\bo_{p,n},R}^{\beta^{\top}}: \; \sum_{\alpha\in\free}
\widetilde{f}_\alpha z^\alpha\to \sum_{\alpha\in\free}
\frac{\omega_{p,n;\alpha\beta}}{\omega_{p,n;\alpha}}\, \widetilde{f}_\alpha z^\alpha,
$$
we conclude that ${\bf S}_{\bo_{p,n},R}^{\beta^{\top}}\widetilde{f}$ is orthogonal to
${\bf S}_{\bo_{p,n},R}^{\beta^{\top}} \cM$ if and only if 
$$
\sum_{\alpha\in\free}
\frac{\omega_{p,n;\alpha\beta}}{\omega_{p,n;\alpha}}
\, \widetilde{f}_\alpha z^\alpha=(\cO_{p,n;,C,\bA}x)(z)=
\sum_{\alpha\in\free}\big(\omega_{p,n;\alpha}^{-1}\cdot C\bA^\alpha x\big)z^\alpha.
$$
for some vector $x\in\cX$. Equating the corresponding Taylor coefficients in the latter equality gives
$\widetilde{f}_\alpha=\omega_{p,n;\alpha\beta}^{-1}\cdot C\bA^\alpha x$ for all $\alpha\in\free$, and therefore,
$$
\widetilde{f}(z)=\sum_{\alpha\in\free}\widetilde{f}_\alpha z^\alpha
=\sum_{\alpha\in\free}\big(\omega_{p,n;\alpha\beta}^{-1}\cdot
C\bA^\alpha x\big)z^\alpha={\Ob}_{p,n,,\beta;C,\bA}x,
$$
by \eqref{4.31}. Thus, $\widetilde{f}\in\operatorname{Ran} {\Ob}_{p,n,\beta;C,\bA}$.
As the analysis is necessary and sufficient, the formula \eqref{SMperp-p} follows.
\end{proof}
   \begin{lemma}  \label{L:6.2-p}  Let $\cM$ be a closed ${\bf S}_{\bo_{p,n},R}$-invariant
subspace of $H^2_{\bo_{p,n},\cY}(\free)$ with reproducing kernel
$k_{\cM}$ given by \eqref{kMa-p}. Then for every $\beta \in\free$, the formal  reproducing kernel 
for the space $\cM_\beta$ (defined in \eqref{Mdecom-p}) in the metric of $H^2_{\bo_{p,n},\cY}(\free)$ is given by
\begin{equation}
 k_{\cM_\beta}(z,\zeta)= z^{\beta} \bzeta^{\beta^{\top}}
\omega_{p,n;\beta}^{-1}I_{\cY}-\boldsymbol{\mathfrak K}_{p,n;\beta}(z, \zeta)+
\sum_{j=1}^d \boldsymbol{\mathfrak K}_{p,n;j\beta}(z, \zeta),\label{kdif-p}
\end{equation}
where $\boldsymbol{\mathfrak{K}}_{p,n;\beta}$ is the kernel defined as in \eqref{deffrakk-p}.
\end{lemma}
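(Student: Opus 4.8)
The plan is to follow the template of the proof of Proposition \ref{P:6.2}, adapting each step to the $(p,n)$-setting. First I would establish the $(p,n)$-analog of the identity \eqref{jul1a}, namely that
\begin{equation*}
k_{{\rm nc},\bo_{p,n},\beta}(z,\zeta)-\sum_{j=1}^d p_j \bzeta_j\, k_{{\rm nc},\bo_{p,n},j\beta}(z,\zeta)\, z_j
= \omega_{p,n;\beta}^{-1} z^\beta \bzeta^{\beta^\top},
\end{equation*}
where $k_{{\rm nc},\bo_{p,n},\beta}$ is the reproducing kernel for ${\bf S}_{\bo_{p,n},R}^{\beta^\top}H^2_{\bo_{p,n}}(\free)$ in the $H^2_{\bo_{p,n},\cY}(\free)$-metric. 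This is a purely combinatorial identity in the weights: it amounts to checking that $\omega_{p,n;\alpha\beta}^{-1}-\sum_{j} p_j\,\omega_{p,n;\beta}^{-1}\cdot(\text{coefficient bookkeeping})$ collapses correctly, which I expect to follow from the definition \eqref{omegapnalpha} together with the relation \eqref{march20a} (the shifted-resolvent identity $R_{p,n;\beta}=\omega_{p,n;\beta}^{-1}+\sum_j R_{p,n;j\beta}(z)z_j$) read off at the level of power-series coefficients. The point is that the "extra" monomials carry the factor $p_j$ precisely to match the shift-operator adjoint formula \eqref{5.2p}.

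Next I would record the kernel for ${\bf S}_{\bo_{p,n},R}^{\beta^\top}\cM$ in the $H^2_{\bo_{p,n},\cY}(\free)$-metric. By Proposition \ref{P:SMperp-p} we have the orthogonal decomposition
\begin{equation*}
\big({\bf S}_{\bo_{p,n},R}^{\beta^\top}\cM\big)^\perp=\big({\bf S}_{\bo_{p,n},R}^{\beta^\top}H^2_{\bo_{p,n},\cY}(\free)\big)^\perp\oplus {\bf S}_{\bo_{p,n},R}^{\beta^\top}\operatorname{Ran}\Ob_{p,n,\beta;C,\bA},
\end{equation*}
so, intersecting inside ${\bf S}_{\bo_{p,n},R}^{\beta^\top}H^2_{\bo_{p,n},\cY}(\free)$,
\begin{equation*}
{\bf S}_{\bo_{p,n},R}^{\beta^\top}\cM=\big({\bf S}_{\bo_{p,n},R}^{\beta^\top}H^2_{\bo_{p,n},\cY}(\free)\big)\ominus\big({\bf S}_{\bo_{p,n},R}^{\beta^\top}\operatorname{Ran}\Ob_{p,n,\beta;C,\bA}\big).
\end{equation*}
Hence $k_{{\bf S}_{\bo_{p,n},R}^{\beta^\top}\cM}=k_{{\rm nc},\bo_{p,n},\beta}I_\cY-\boldsymbol{\mathfrak K}_{p,n;\beta}$, where $\boldsymbol{\mathfrak K}_{p,n;\beta}$ is exactly the kernel of ${\bf S}_{\bo_{p,n},R}^{\beta^\top}\operatorname{Ran}\Ob_{p,n,\beta;C,\bA}$ supplied by part (2) of Lemma \ref{L:sign} (formula \eqref{deffrakk-p}); exact $(p,n)$-observability of $(C,\bA)$ is what guarantees $\Gr_{p,n,\beta;C,\bA}$ is invertible so that $\boldsymbol{\mathfrak K}_{p,n;\beta}$ is well-defined. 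This is the step that relies on the standing hypothesis of the surrounding section (the assumption \eqref{rest} and exact observability), and I would make sure to invoke it explicitly.

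Finally, I would assemble \eqref{kdif-p} from the orthogonal decomposition $\cM_\beta={\bf S}_{\bo_{p,n},R}^{\beta^\top}\cM\ominus\bigoplus_{j=1}^d{\bf S}_{\bo_{p,n},R}^{\beta^\top}S_{\bo_{p,n},R,j}\cM$, which gives
\begin{equation*}
k_{\cM_\beta}(z,\zeta)=k_{{\bf S}_{\bo_{p,n},R}^{\beta^\top}\cM}(z,\zeta)-\sum_{j=1}^d k_{{\bf S}_{\bo_{p,n},R}^{(j\beta)^\top}\cM}(z,\zeta),
\end{equation*}
then substitute the formula from the previous paragraph for each term and use the weight identity from the first paragraph to combine the $k_{{\rm nc},\bo_{p,n},\cdot}$ pieces into $\omega_{p,n;\beta}^{-1}z^\beta\bzeta^{\beta^\top}I_\cY$, leaving exactly $-\boldsymbol{\mathfrak K}_{p,n;\beta}+\sum_{j=1}^d\boldsymbol{\mathfrak K}_{p,n;j\beta}$. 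One subtlety to check is that the $d$ subspaces ${\bf S}_{\bo_{p,n},R}^{(j\beta)^\top}\cM$ are mutually orthogonal so that subtracting the sum of their kernels is legitimate; this holds because the ranges of $S_{\bo_{p,n},R,1},\dots,S_{\bo_{p,n},R,d}$ are mutually orthogonal (the same fact used in Lemma \ref{L:Mdecom-p}). I expect the main obstacle to be the first step: getting the $(p,n)$-version of \eqref{jul1a} exactly right, because the presence of the coefficients $p_\alpha$ with $|\alpha|\ge 2$ in $p$ means the naive telescoping used in the $\bmu_n$ case must be replaced by careful bookkeeping with the numbers $\omega_{p,n;\alpha}$ defined by \eqref{omegapnalpha}; everything downstream of that identity is then a routine reprise of the $\bmu_n$ argument.
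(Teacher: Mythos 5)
Your overall route is exactly the paper's route (Proposition \ref{P:SMperp-p}, Lemma \ref{L:sign}, the orthogonal decomposition of $\cM_\beta$), but the auxiliary identity you put at the center of your first step is false as stated, and it is also not the identity your own final assembly needs. You propose
\begin{equation*}
k_{{\rm nc},\bo_{p,n},\beta}(z,\zeta)-\sum_{j=1}^d p_j\,\bzeta_j\, k_{{\rm nc},\bo_{p,n},j\beta}(z,\zeta)\, z_j
= \omega_{p,n;\beta}^{-1} z^\beta \bzeta^{\beta^\top}.
\end{equation*}
Since $k_{{\rm nc},p,n;j\beta}$ is already the reproducing kernel of the shifted subspace ${\bf S}_{\bo_{p,n},R}^{(j\beta)^\top}H^2_{\bo_{p,n}}(\free)$, its monomials already end in $j\beta$; multiplying by $\bzeta_j(\cdot)z_j$ produces monomials ending in $\beta j$, which do not even occur in $k_{{\rm nc},p,n;\beta}$, so no cancellation is possible (test $\beta=\emptyset$). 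You are conflating two different identities: the one carrying the weights $p_j$ and the multiplications $\bzeta_j(\cdot)z_j$ is $k_{\bo_{p,n}}(z,\zeta)-\sum_j p_j\bzeta_j k_{\bo_{p,n}}(z,\zeta)z_j=k_{\bgam_{p,n}}(z,\zeta)$ (used for the factorization through $\bPs_{\bgam_{p,n}}$ in Theorem \ref{T:dopmay}), and it plays no role here. What the proof needs --- and what the paper uses --- is the plain difference
\begin{equation*}
k_{{\rm nc},p,n;\beta}(z,\zeta)-\sum_{j=1}^d k_{{\rm nc},p,n;j\beta}(z,\zeta)=\omega_{p,n;\beta}^{-1}z^\beta\bzeta^{\beta^\top},
\end{equation*}
with no $p_j$'s and no extra variable factors, because this is precisely the combination that appears when you subtract the kernels $k_{{\bf S}^{(j\beta)^\top}\cM}=k_{{\rm nc},p,n;j\beta}I_\cY-\boldsymbol{\mathfrak K}_{p,n;j\beta}$ of the mutually orthogonal subspaces ${\bf S}_{\bo_{p,n},R}^{(j\beta)^\top}\cM$ from $k_{{\bf S}^{\beta^\top}\cM}$. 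This identity is a triviality of word-level reindexing: every nonempty word $\gamma$ factors uniquely as $\gamma=\alpha j$, so the double sum over $j$ and $\alpha$ of $\omega_{p,n;\alpha j\beta}^{-1}z^{\alpha j\beta}\bzeta^{(\alpha j\beta)^\top}$ is the sum over all nonempty $\gamma$ of $\omega_{p,n;\gamma\beta}^{-1}z^{\gamma\beta}\bzeta^{(\gamma\beta)^\top}$, and the difference telescopes to the single term $\alpha=\emptyset$. In particular, the "careful bookkeeping with the numbers $\omega_{p,n;\alpha}$" you anticipate as the main obstacle is not needed: because the kernels are indexed by words rather than by lengths, the $\bmu_n$ telescoping goes through verbatim, and no appeal to \eqref{omegapnalpha} or \eqref{march20a} beyond trivial reindexing is required.

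The remainder of your proposal matches the paper's proof: taking orthogonal complements in \eqref{SMperp-p} to get ${\bf S}_{\bo_{p,n},R}^{\beta^\top}\cM=({\bf S}_{\bo_{p,n},R}^{\beta^\top}H^2_{\bo_{p,n},\cY}(\free))\ominus{\bf S}_{\bo_{p,n},R}^{\beta^\top}\operatorname{Ran}\Ob_{p,n,\beta;C,\bA}$, invoking part (2) of Lemma \ref{L:sign} (with exact $(p,n)$-observability and \eqref{rest} guaranteeing invertibility of $\Gr_{p,n,\beta;C,\bA}$) to identify $\boldsymbol{\mathfrak K}_{p,n;\beta}$, and then subtracting over $j$ using the mutual orthogonality of the ranges of the $S_{\bo_{p,n},R,j}$. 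So once the first-step identity is replaced by the unweighted telescoping identity above, your argument is the paper's argument.
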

\begin{proof}
Taking orthogonal complements in the equality \eqref{SMperp-p} gives
\begin{equation}
{\bf S}_{\bo_{p,n},R}^{\beta^{\top}}\cM  
= \big({\bf S}_{\bo_{p,n},R}^{\beta^{\top}}H^2_{\bo_{p,n},\cY}(\free)\big)\ominus
    \big({\bf S}_{\bo_{p,n},R}^{\beta^{\top}} \operatorname{Ran} {\Ob}_{p,n,\beta;C,\bA}\big).
    \label{SM-p}
\end{equation}
Since for any $\beta\in\free$, the reproducing kernel for the space 
${\bf S}_{\bo_{p,n},R}^{\beta^{\top}}H^2_{\bo_{p,n}}(\free)$
equals
\begin{equation}
k_{{\rm nc},p,n;\beta}(z,\zeta):=\sum_{\alpha\in\free}\omega_{p,n;\alpha\beta}^{-1}
 z^{\alpha \beta} \bzeta^{ (\alpha \beta)^{\top}}=k_{{\bf S}_{\bo_{p,n},R}^{\beta^{\top}}H^2_{\bo_{p,n}}(\free)}(z,\zeta),
\label{SMa-p}
\end{equation}
and since the reproducing kernel for the space 
${\bf S}_{\bo_{p,n},R}^{\beta^{\top}} \operatorname{Ran} {\Ob}_{p,n,\beta;C,\bA}$ equals 
$\boldsymbol{\mathfrak{K}}_{p,n;\beta}$ (by Lemma \ref{L:sign}), we conclude from the formula
\eqref{SM-p} that the the reproducing kernel for the space ${\bf S}_{\bo_{p,n},R}^{\beta^{\top}}\cM$
(in the metric of $H^2_{\bo_{p,n}}(\free)$ equals
\begin{equation}
k_{{\bf S}_{\bo_{p,n},R}^{\beta^{\top}} \cM}(z,\zeta) = k_{{\rm nc},p,n;\beta}(z,\zeta)I_{\cY}
-\boldsymbol{\mathfrak K}_{n,p;\beta}(z, \zeta).\label{kscm-p} 
\end{equation}
Replacing $\beta$ by $j\beta$ in \eqref{kscm-p} gives
\begin{equation}
k_{{\bf S}_{\bo_{p,n},R}^{\beta^{\top}}S_{\bo_{p,n},R,j}\cM}(z,\zeta) = k_{{\rm nc},p,n;j\beta}(z,\zeta)I_{\cY}
-\boldsymbol{\mathfrak K}_{n,p;j\beta}(z, \zeta).\label{kscm-pp}
\end{equation}
Using the reproducing kernels \eqref{kscm-p} and \eqref{kscm-pp}, we conclude from the orthogonal representation
\eqref{Mdecom-p} for $\cM_\beta$ that the reproducing kernel for $\cM_\beta$ equals
\begin{align*}
k_{\cM_\beta}(z,\zeta)&=k_{{\bf S}_{\bo_{p,n},R}^{\beta^{\top}} \cM}(z,\zeta)-\sum_{j=1}^d
k_{{\bf S}_{\bo_{p,n},R}^{\beta^{\top}}S_{\bo_{p,n},R,j}\cM}(z,\zeta)\\
&=k_{{\rm nc},p,n;\beta}(z,\zeta)I_{\cY}
-\boldsymbol{\mathfrak K}_{n,p;\beta}(z, \zeta)-\sum_{j=1}^d 
\big(k_{{\rm nc},p,n;j\beta}(z,\zeta)I_{\cY}
-\boldsymbol{\mathfrak K}_{n,p;j\beta}(z, \zeta)\big).
\end{align*}
We finally observe from \eqref{SMa-p} that
\begin{align*}
&k_{{\rm nc},p,n;\beta}(z,\zeta)-\sum_{j=1}^d k_{{\rm nc},p,n;(j\beta)}(z,\zeta)\\
&\quad =\sum_{\alpha\in\free}  \omega_{p,n;\alpha \beta}^{-1} z^{ \alpha \beta }
\bzeta^{ (\alpha \beta)^{\top}}-\sum_{j=1}^d\sum_{\alpha\in\free} \omega_{p,n;\alpha j\beta}^{-1}
z^{ \alpha j \beta} \bzeta^{(\alpha j \beta)^{\top} }   \notag \\
&\quad =\sum_{\alpha\in\free}  \omega_{p,n;\alpha\beta}^{-1} z^{ \alpha \beta }
\bzeta^{ (\alpha \beta)^{\top}}-\sum_{\alpha\in\free \colon \alpha \ne \emptyset}
\omega_{p,n;\alpha\beta|}^{-1}  z^{ \alpha \beta }
\bzeta^{ (\alpha \beta)^{\top}}= \omega_{\beta}^{-1}z^\beta \bzeta^{\beta^\top}.\notag
\end{align*}
Combining the two latter equalities completes the proof of \eqref{kdif-p}.
\end{proof}
We next factor the kernel \eqref{kdif-p} as follows.
\begin{lemma}
Given an exactly $(p,n)$-observable and $(p,n)$-output-stable pair $(C,\bA)$ with $C\in\cL(\cX,\cY)$ and 
$\bA=(A_1,\ldots,A_d)\in\cL(\cX)^d$, let 
$\sbm{\widehat B_\beta \\ D_\beta}: \, \cU_\beta\to \cX^d\oplus\cY$ (with $\widehat B_\beta=\sbm{B_{1,\beta}\\
\vdots \\B_{d,\beta}}$) be an injective solution to the 
Cholesky factorization problem:
  \begin{equation}  \label{pr6-p}
\begin{bmatrix}\widehat{B}_{\beta} \\
    D_\beta \end{bmatrix}\begin{bmatrix}\widehat{B}_{\beta}^* & D_\beta^*\end{bmatrix}=
\begin{bmatrix} \widehat{\Gr}_{p,n,\beta;C,\bA}^{-1} & 0 \\ 0 &
        \omega_{p,n;\beta} I_{\cY} \end{bmatrix}-
\begin{bmatrix} A \\ C\end{bmatrix} \Gr_{p,n,\beta;C,\bA}^{-1} 
        \begin{bmatrix} A^{*} & C^{*}\end{bmatrix},
\end{equation}
(where $\Gr_{p,n,\beta;C,\bA}$ and $\widehat{\Gr}_{p,n,\beta;C,\bA}$ are given by \eqref{4.32-p} and 
\eqref{blockgram}) and let $\Theta_{p,n, \bU_\beta}(z)$ be defined as in \eqref{apr22}. Then 
\begin{enumerate}
\item The equality \eqref{isom-p} holds and hence, $\Theta_{p,n,\bU_\beta}(z)$ is $(p,n)$-Bergman inner.
\item Then the kernel \eqref{kdif-p} can be factored as
\begin{equation}
k_{\cM_\beta}(z,\zeta)=\Theta_{n,p,\bU_\beta}(z) \big(z^{\beta}
\bzeta^{\beta^{\top}} I_{\cU_{\beta}}\big)\Theta_{n,p, \bU_\beta}(\zeta)^{*}.
\label{id6-p}
\end{equation}
\item The operator $M_{\Theta_{n,p, \bU_\beta}} \colon  z^{\beta} u \mapsto
\Theta_{n,p, \bU_\beta}(z) z^{\beta} u $ is unitary from $z^{\beta} \cU_{\beta}$
(considered as a Hilbert space with lifted norm 
$\|   z^{\beta} u \|_{ z^{\beta} \cU_{\beta}} = \| u \|_{\cU_{\beta}}$)
onto $\cM_{\beta}$.
\end{enumerate}
\label{L:rest}
\end{lemma}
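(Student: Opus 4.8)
\textbf{Proof plan for Lemma \ref{L:rest}.} The plan is to verify the three statements in order, following closely the pattern already established in the $\bmu_n$-setting (Lemmas \ref{L:5.6}, \ref{L:frakcoisom}, \ref{L:6.8}) and the $(p,n)$-ingredients already assembled in Section \ref{S:binnerp}. First I would show that the Cholesky problem \eqref{pr6-p} is solvable and that any injective solution $\sbm{ \widehat B_\beta \\ D_\beta}$ makes the block operator
$$
\bU_\beta = \begin{bmatrix} A & \widehat B_\beta \\ C & D_\beta \end{bmatrix}
$$
satisfy both the weighted isometry condition \eqref{isom-p} and the weighted coisometry condition \eqref{wghtcoisom-p}. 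The key point here is the weighted Stein identity \eqref{4.34-p}, which says exactly that the operator
$$
U := \begin{bmatrix} \widehat{\Gr}_{p,n,\beta;C,\bA}^{\frac{1}{2}} A \Gr_{p,n,\beta;C,\bA}^{-\frac{1}{2}} \\ \omega_{p,n;\beta}^{-\frac{1}{2}} C \Gr_{p,n,\beta;C,\bA}^{-\frac{1}{2}} \end{bmatrix} \colon \cX \to \cX^d \oplus \cY
$$
is an isometry (here we use that $\Gr_{p,n,\beta;C,\bA}$ and $\Gr_{p,n,j\beta;C,\bA}$ are strictly positive definite, which is guaranteed by Lemma \ref{L:sign} under the hypotheses that $(C,\bA)$ is exactly $(p,n)$-observable and $(p,n)$-output stable). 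Then Proposition \ref{P:elementary'} produces an injective $V$ with $VV^* = I - UU^*$, and unwinding the weights shows $V$ is nothing but $\sbm{ \widehat{\Gr}_{p,n,\beta;C,\bA}^{\frac{1}{2}} & 0 \\ 0 & \omega_{p,n;\beta}^{-\frac{1}{2}} I_\cY } \sbm{ \widehat B_\beta \\ D_\beta}$ for $\sbm{\widehat B_\beta \\ D_\beta}$ solving \eqref{pr6-p}; the fact that $\begin{bmatrix} U & V \end{bmatrix}$ is unitary is precisely \eqref{isom-p} together with \eqref{wghtcoisom-p}. Once \eqref{isom-p} is in force, Lemma \ref{L:5.1-p} immediately gives that $\Theta_{p,n,\bU_\beta}(z) z^\beta$ is $(p,n)$-Bergman inner, completing statement (1).

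For statement (2), I would derive, exactly as in the proof of Lemma \ref{L:frakcoisom} but using the $(p,n)$-resolvent-identity \eqref{march20a} (the analogue of \eqref{1.6g'}) in place of its commutative counterpart, a ``coisometric'' kernel identity
$$
\omega_{p,n;\beta}^{-1} I_\cY - \Theta_{p,n,\bU_\beta}(z)\Theta_{p,n,\bU_\beta}(\zeta)^* = CR_{p,n;\beta}(z\bA)\Gr_{p,n,\beta;C,\bA}^{-1}R_{p,n;\beta}(\zeta\bA)^*C^* - C\widehat R_{p,n;\beta}(z\bA)\widehat Z(z)(\widehat{\Gr}_{p,n,\beta;C,\bA}^{-1})\widehat Z(\zeta)^*\widehat R_{p,n;\beta}(\zeta\bA)^*C^*
$$
which follows from the relations \eqref{relations1}-type equations read off from \eqref{wghtcoisom-p}. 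Multiplying this on the right by $z^\beta$, on the left by $\bzeta^{\beta^\top}$, normalizing to a hereditary kernel, and then identifying $\boldsymbol{\mathfrak K}_{p,n;\beta}$ and $\boldsymbol{\mathfrak K}_{p,n;j\beta}$ via \eqref{deffrakk-p} (recalling that $\widehat R_{p,n;\beta}\widehat Z(z) = \operatorname{row}_j[R_{p,n;j\beta}(z\bA)z_j]$ and $\widehat{\Gr}_{p,n,\beta;C,\bA}$ is block-diagonal with entries $\Gr_{p,n,j\beta;C,\bA}$), together with the kernel decomposition \eqref{kdif-p} from Lemma \ref{L:6.2-p}, yields \eqref{id6-p}.

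For statement (3): the factorization \eqref{id6-p}, rewritten as $k_{\cM_\beta}(z,\zeta) = (\Theta_{p,n,\bU_\beta}(z) z^\beta)(\bzeta^{\beta^\top}\Theta_{p,n,\bU_\beta}(\zeta)^*)$, is a Kolmogorov decomposition of $k_{\cM_\beta}$, while $k_{z^\beta \cU_\beta}(z,\zeta) = (z^\beta I_{\cU_\beta})(\bzeta^{\beta^\top} I_{\cU_\beta})$; by the last statement of Theorem \ref{T:NFRKHS} the map $M_{\Theta_{p,n,\bU_\beta}} \colon z^\beta u \mapsto \Theta_{p,n,\bU_\beta}(z) z^\beta u$ is a coisometry from $z^\beta \cU_\beta$ onto $\cM_\beta$. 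Injectivity then follows from Corollary \ref{C:5.3-p} applied with $x = 0$ and $u_{\beta'} = 0$ for $\beta' \ne \beta$, which forces $\Theta_{p,n,\bU_\beta} u_\beta = 0 \Rightarrow u_\beta = 0$; hence $M_{\Theta_{p,n,\bU_\beta}}$ has no kernel and is unitary onto $\cM_\beta$.

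The main obstacle I anticipate is the bookkeeping in statement (2): verifying the coisometric kernel identity requires the correct $(p,n)$-analogue of the recursion \eqref{1.6g'} --- namely \eqref{march20a} --- and one must be careful that the ``block'' structure $\widehat R_{p,n;\beta}$, $\widehat Z(z)$, $\widehat{\Gr}_{p,n,\beta;C,\bA}$ assembles correctly so that the cross terms collapse to $R_{p,n;\beta}(z\bA)$ via $\omega_{p,n;\beta}^{-1} I + \widehat R_{p,n;\beta}(z\bA)\widehat Z(z) A = R_{p,n;\beta}(z\bA)$ (a direct consequence of \eqref{march20a}). All other steps are formal manipulations parallel to the $\bmu_n$-case already carried out in the preceding chapters.
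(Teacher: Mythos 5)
Your proposal is correct and follows essentially the same route as the paper: statement (1) via the weighted Stein identity \eqref{4.34-p} and the Proposition \ref{P:elementary'}/Cholesky argument of Lemma \ref{L:5.6} (yielding both \eqref{isom-p} and \eqref{wghtcoisom-p}) together with Lemma \ref{L:5.1-p}; statement (2) via the coisometric kernel identity obtained from the blocks of \eqref{wghtcoisom-p} and the resolvent relation $\omega_{p,n;\beta}^{-1}I+\widehat{R}_{p,n;\beta}(z\bA)\widehat{Z}(z)A=R_{p,n;\beta}(z\bA)$ coming from \eqref{march20a}, then comparison with \eqref{kdif-p}; and statement (3) by the Kolmogorov-decomposition argument of Lemma \ref{L:6.8} with injectivity from Corollary \ref{C:5.3-p}. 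No gaps to report.
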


\begin{proof}
The existence of an injective solution to the factorization problem \eqref{pr6-p} and verification of the 
equality \eqref{isom-p} goes through by the arguments used in the proof of Lemma \ref{L:5.6}. 
The multiplier $\Theta_{p,n, \bU_\beta}$ is $(p,n)$-Bergman inner, by Lemma \ref{L:5.1-p}. Making use 
of the formula \eqref{apr22} and taking into account equalities of the corresponding 
blocks in \eqref{wghtcoisom-p}, we get
 \begin{align}
&\omega_{p,n;\beta}^{-1}I_{\cY} - \Theta_{p,n, \bU_\beta}(z)\Theta_{p,n, \bU_\beta}(\zeta)^{*}\notag\\
&= \omega_{p,n;\beta}^{-1}I_{\cY}-
\big(\omega_{p,n;\beta}^{-1}D_{\beta}+ C\widehat{R}_{p,n;\beta}(z\bA)\widehat{Z}(z)\widehat{B}_\beta\big)
\big(\omega_{p,n;\beta}^{-1}D_{\beta}^{*}+\widehat{B}_\beta^*\widehat{Z}(\zeta)^*\widehat{R}_{p,n;\beta}
(\zeta\bA)^*C^*\big) \notag\\
& = \omega_{p,n;\beta}^{-1}I_{\cY}-\omega_{p,n;\beta}^{-2}D_\beta
D_\beta^*- \omega_{p,n;\beta}^{-1}C\widehat{R}_{p,n;\beta}(z\bA)\widehat{Z}(z)\widehat{B}_\beta
D_{\beta}^{*}\notag\\
&\quad-\omega_{p,n;\beta}^{-1}D_\beta\widehat{B}_\beta^*\widehat{Z}(\zeta)^*
\widehat{R}_{p,n;\beta}(\zeta\bA)^*C^*
     - C\widehat{R}_{p,n;\beta}(z\bA)\widehat{Z}(z)\widehat{B}_\beta
\widehat{B}_\beta^*\widehat{Z}(\zeta)^*\widehat{R}_{p,n;\beta}(\zeta\bA)^*C^*\notag\\
     & = \omega_{p,n;\beta}^{-2}C\Gr_{p,n,\beta;C,\bA}^{-1}C^* +\omega_{p,n;\beta}^{-1}
C\widehat{R}_{p,n;\beta}(z\bA)\widehat{Z}(z)A\Gr_{p,n,\beta;C,\bA}^{-1}C^*\notag\\
 & \quad + \omega_{p,n;\beta}^{-1}C\Gr_{p,n,\beta;C,\bA}^{-1}A^*\widehat{Z}(\zeta)^*
 \widehat{R}_{p,n;\beta}(\zeta\bA)^*C^*\notag\\
     & \quad  - C\widehat{R}_{p,n;\beta}(z\bA)\widehat{Z}(z)
\left[ \widehat{\Gr}_{p,n,\beta;C,\bA}^{-1} - A
\Gr_{p,n,\beta;C,\bA}^{-1}  A^{*}\right]\widehat{Z}(\zeta)^*
\widehat{R}_{p,n;\beta}(\zeta\bA)^*C^*\notag\\
&=C\big(\omega_{p,n;\beta}^{-1}I_{\cX}+\widehat{R}_{p,n;\beta}(z\bA)\widehat{Z}(z)A\big)
\Gr_{p,n,\beta;C,\bA}^{-1}
\big(\omega_{p,n;\beta}^{-1}I_{\cX}+A^*\widehat{Z}(\zeta)^*
\widehat{R}_{p,n;\beta}(\zeta\bA)^* \big) C^* \notag\\
&\quad-C\widehat{R}_{p,n;\beta}(z\bA)\widehat{Z}(z)
\widehat{\Gr}_{p,n,\beta;C,\bA}^{-1}
\widehat{Z}(\zeta)^*\widehat{R}_{p,n;\beta}(\zeta\bA)^*C^*\notag\\
&=CR_{p,n;\beta}(z\bA)\Gr_{p,n,\beta;C,\bA}^{-1}R_{p,n;\beta}(\zeta\bA)^*C^*\notag\\
&\qquad-C\widehat{R}_{p,n;\beta}(z\bA)\widehat{Z}(z)
\widehat{\Gr}_{p,n,\beta;C,\bA}^{-1}
\widehat{Z}(\zeta)^*\widehat{R}_{p,n;\beta}(\zeta\bA)^*C^*,\label{rest8}
     \end{align}
where for the last step we used the equality
$$
\omega_{p,n;\beta}^{-1}I+\widehat{R}_{p,n;\beta}(z\bA)\widehat{Z}(z)A
=\omega_{p,n;\beta}^{-1}I_{\cX}+\sum_{j=1}^d R_{p,n;j\beta}(z\bA)z_jA_j=R_{p,n;\beta}(z\bA),
$$   
which follows from \eqref{march20a} by definitions \eqref{march20ab}. We next observe from
definitions \eqref{blockgram} and again \eqref{march20ab} that
\begin{align*}
&\widehat{R}_{p,n;\beta}(z\bA)\widehat{Z}(z)
\widehat{\Gr}_{p,n,\beta;C,\bA}^{-1}
\widehat{Z}(\zeta)^*\widehat{R}_{p,n;\beta}(\zeta\bA)^*\\
&\qquad=\sum_{j=1}^d R_{p,n;j\beta}(z\bA)z_j{\Gr}_{p,n,j\beta;C,\bA}^{-1}\overline{\zeta}_j R_{p,n;j\beta}(\zeta\bA)^*
\end{align*}
Substituting the latter equality into \eqref{rest8} and then multiplying both sides in \eqref{rest8} by
$z^\beta$ on the right and by $\overline{\zeta}^{\beta^\top}$ on the left, we get
\begin{align*}
&\omega_{p,n;\beta}^{-1}z^{\beta}
\bzeta^{\beta^{\top}}I_{\cY} - \Theta_{n,p, \bU_\beta}(z) \big(z^{\beta}
\bzeta^{\beta^{\top}} I_{\cU_{\beta}}\big)\Theta_{n,p, \bU_\beta}(\zeta)^{*}\\
&=CR_{p,n;\beta}(z\bA) \big(z^{\beta} \bzeta^{\beta^{\top}}
\Gr_{p,n,\beta;C,\bA}^{-1}\big)  R_{p,n,\beta}(\zeta \bA)^*C^* \\
&\qquad-
\sum_{j=1}^d R_{p,n;j\beta}(z\bA) \big(z^{\beta j} \bzeta^{(\beta j)^{\top}}{\Gr}_{p,n,j\beta;C,\bA}^{-1}\big)
R_{p,n;j\beta}(\zeta\bA)^*\\
&=\boldsymbol{\mathfrak K}_{p,n;\beta}(z, \zeta)-\sum_{j=1}^d \boldsymbol{\mathfrak K}_{p,n;j\beta}(z, \zeta),
\end{align*}
where we used the definition \eqref{deffrakk-p} for the last step. Comparing the latter formula with \eqref{kdif-p}
we conclude \eqref{id6-p}. The last statement of the lemma is justified exactly as in Lemma \ref{L:6.8}.
\end{proof}

\section[$*$-$(p,n)$-hypercontractive model theory]{Operator model theory for c.n.c. $*$-$(p,n)$-hypercontractive operator tuples $\bT$}  \label{S:model-pn}

In this brief final section we consider a model theory for $*$-$(p,n)$-hypercontractive operator tuples analogous to that developed in
Chapter 8 for the case of $*$-$\bo$-hypercontractive tuples.  As in Chapter 8 there are two flavors: (i) model theory with
characteristic function equal to a contractive multiplier, and (ii) model theory based on Bergman-inner families.

\subsection{ $(p,n)$-model theory based on contractive multipliers}
We now consider operator $d$-tuples $\bT = (T_1, \dots, T_d)$ on a Hilbert space $\cX$ such that the operator $d$-tuple
$\bA = \bT^* = (T_1^*, \dots, T_d^*)$ is a $(p,n)$-hypercontractive, or equivalently, so that $\bT^*$ is $(p,1)$- and $(p,n)$-contractive
(see Definition \ref{D:10.1} and Corollary \ref{C:10.2}).  Thus in particular $\Gamma_{p,n; \bT^*}[I_\cX] \succeq 0$.  We define the $(p,n)$-defect
operator $D_{p,n, \bT^*}$ of $\bT^*$ and the $(p,n)$-defect space $\cD_{p,n, \bT^*}$ by
\begin{equation}  \label{pn-defect}
D_{p,n, \bT^*} = \Gamma_{p,n}[I_\cX]^{\frac{1}{2}}, \quad \cD_{p,n \bT^*} = \overline{\operatorname{Ran}} \, D_{p,n \bT^*} \subset \cX.
\end{equation}
Then by construction the pair $(D_{p,n, \bT^*}, \bT^*)$ is a $(p,n)$-isometric output pair (see Definition \ref{D:def-pn}).  
We next form the $(p,n)$-observability operator $ \cO_{p,n; D_{p,n, \bT^*}, \bT^*}$ as in \eqref{march21}.  As a consequence of
part (2) of Theorem \ref{T:2-1.2'-p}, $\cO_{p,n; D_{p,n \bT^*}, \bT^*}$ maps $\cX$ contractively into $H^2_{\bo_{p,n},  \cD_{p,n, \bT^*}}(\free)$,
so in particular the output pair $(D_{p,n, \bT^*}, \bT^*)$ is also $(p,n)$ stable in the sense of the definition given at the beginning of
Section \ref{S:obs-gram-p}.

We shall assume in addition that $\bT$ is {\em $(p,n)$-completely noncoisometric} (or {\em $(p,n)$-c.n.c.}\ for short), by which we mean that the
observability operator $\cO_{p,n, D_{n,p, \bT^*}, \bT^*}$ has trivial kernel.  Thus we may define a norm on the space
$\cN: = \operatorname{Ran} \cO_{\bo, D_{\bo, \bT^*}, \bT^*}$ as the {\em lifted norm} defined by
$$
  \| \cO_{\bo, D_{\bo, \bT^*}, \bT^*} x \|_\cN = \| x \|_\cX.
$$
Then as a consequence of Theorem \ref{T:2-1.2'-p}, $\cN$ has the structure of a NFRKHS $\cN = \cH(K_\cN)$ with kernel $K_\cN$ given by
$$
  K_\cN(z, \zeta) = D_{p,n, \bT^*} R_{p,n}(Z(z) T^*) R_{p,n}(Z(\zeta) T^*)^* D_{p,n, \bT^*}.
$$
$\cN$ is $\bS_{\bo_{p,n}, R}^*$-invariant, and the observability operator $\cO_{\bo, D_{\bo, \bT^*}, \bT^*}$ implements a unitary equivalence
between $\bT^*$ and $\bS_{\bo_{p,n}, R}^*|_\cN$.

Let us make the following formal definition.

\begin{definition}  \label{D:char-func-pn}
We shall say that the $(p,n)$-c.n.c.~$*$-$(p,n)$-hypercontractive tuple $\bT$ {\em admits a characteristic multiplier $\Theta_\bT$}
if the Brangesian complement $\cM = \cN^{[\perp]}$ of $\cN = \operatorname{Ran} \cO_{p,n;, D_{\bT^*}, \bT^*}$ (equipped
with the lifted norm) admits a Beurling-Lax representation $\cM = \Theta \cdot H^2_{\cU}(\free)$ as in Theorem \ref{T:NC-BL-p}.
We then say that $\Theta$ is a {\em characteristic  multiplier} $\Theta_\bT$ for $\bT$.
\end{definition}

With this definition in hand it is possible to formulate $(p,n)$-analogues of Theorem \ref{T:char-func} and \ref{T:Theta-char}.  We leave
the details to the interested reader.

\subsection{$(p,n)$-model theory based on Bergman-inner families}
This flavor of model theory for the moment handles only the class of $*$-$(p,n)$-hypercontractive tuples $\bT = (T_1, \dots, T_d)$
with $\bT^*$ $p$-strongly stable.  In this case we define the $(p,n)$-defect operator $D_{p,n, \bT^*}$ as in \eqref{pn-defect}
and form the observability operator $\cO_{p,n, D_{p,n, \bT^*}, \bT^*}$.  As a consequence of part (3e) in Theorem \ref{T:2-1.2'-p}
$\cO_{p,n, D_{p,n, \bT^*}, \bT^*}$ is isometric and $\cN; = \operatorname{Ran} \cO_{p,n, D_{p,n, \bT^*}, \bT^*}$
sits isometrically in $H^2_{\bo_{p,n}, \cD_{p,n, \bT^*}}(\free)$ as a closed $\bS_{p,n,R}^*$-invariant subspace.
Then $\cM : = \cN^\perp$ is $\bS_{p,n,R}$-invariant and, assuming that $(p,n)$ satisfy the mild restriction \eqref{rest}, 
 we can represent $\cM$ via a $(p,n)$-Bergman-inner family $\{ \Theta_{\bT, \beta}   \colon \beta \in \free \}$ as in Section \ref{S:binnerp}.
 This $(p,n)$-Bergman-inner family we declare to be the {\em characteristic Bergman-inner family} for $\bT$ since it serves as a complete
 unitary invariant (up to natural identifications) for the original $p$-strongly stable, $*$-$(p,n)$-hypercontractive $d$-tuple $\bT$.  With the 
 Beurling-Lax representation theorems of Section \ref{S:BL-p} replacing those of Section \ref{S:BL7}, we get a direct analog of Theorem
 \ref{T:charfuncT}.  Furthermore one can check that the structures are sufficiently parallel that one can derive a $(p,n)$-analog of the realization
 formula \eqref{canUv} for a given $(p,n)$-Bergman-inner family $\{ \Theta_\beta \colon \beta \in \free\}$.

\end{document}